\title{\textbf{Th{\'e}orie homotopique des DG-cat{\'e}gories}} 
\author{Gon{\c c}alo Tabuada}
\date{}
\begin{document}

\titlepage{
\begin{center}\Large{\textbf{THESE DE L'UNIVERSIT{\'E} PARIS DIDEROT - PARIS 7}}\end{center}
\begin{center}\textbf{MATH{\'E}MATIQUES}\end{center}
\begin{center}\textbf{E.D. SCIENCES MATH{\'E}MATIQUES DE PARIS - CENTRE}\end{center}
\vspace{2cm}
\begin{center}{\small{\textbf{Gon{\c c}alo Nery Tabuada}}}\end{center}
\vspace{3cm}
\begin{center}\huge{\textbf{Th{\'e}orie homotopique des DG-cat{\'e}gories}}\end{center}
\vspace{4cm}
\begin{center}
\texttt{Soutenue le 20 septembre 2007, devant le jury compos{\'e} de~:}
\end{center}
$$
\begin{array}{lcr}
\mbox{\textbf{M. Pierre Cartier (I.H.{\'E}.S.)}} & & \mbox{\texttt{Pr{\'e}sident}}\\
\mbox{\textbf{M. Denis-Charles Cisinski (Universit{\'e} Paris 13)}} & & \\
\mbox{\textbf{M. Bernhard Keller (Universit{\'e} Paris 7)}} &&  \texttt{Directeur}\\
\mbox{\textbf{M. Maxim Kontsevich (I.H.{\'E}.S.)}} && \\
\mbox{\textbf{M. Rapha{\"e}l Rouquier (University of Oxford)}} && \\
\mbox{\textbf{M. Bertrand To{\"e}n (C.N.R.S.-Laboratoire Emile Picard)}}
&& \texttt{Rapporteur} \\
\end{array}
$$

\vspace{0.4cm}
\begin{center}
\texttt{Rapporteur externe (absent {\`a} la soutenance)}
\end{center}
\begin{center}
\textbf{M. Mikhail Kapranov (Yale University)}
\end{center}
}

\def\og{\leavevmode\raise.3ex\hbox{$\scriptscriptstyle\langle\!\langle$~}}
\def\fg{\leavevmode\raise.3ex\hbox{~$\!\scriptscriptstyle\,\rangle\!\rangle$}}
\newcommand{\ul}{ \rule[0.2mm]{0.12mm}{2.2mm} \rule[2.3mm]{2.2mm}{0.12mm}}
\newcommand{\dgcat}{\mathsf{dgcat}}
\newcommand{\kar}{\rule[-0.1mm]{0.12mm}{1.5mm}\hspace{-0.36mm} \ni}
\newcommand{\ko}{\: , \;}
\newcommand{\la}{\langle}
\newcommand{\ra}{\rangle}
\newcommand{\HdC}{\bold H^{2}_{\mathbb C}}
\newcommand{\HdR}{\bold H^{2}_{\mathbb R}}
\newcommand{\HuC}{\bold H^{1}_{\mathbb C}}
\newcommand{\HnC}{\bold H^{n}_{\mathbb C}}
\newcommand{\Cdu}{\mathbb C^{2,1}}
\newcommand{\Ct}{\mathbb C^{3}}
\newcommand{\C}{\mathbb C}
\newcommand{\Mn}{\mbox{M}_n\left(\mathbb C\right)}
\newcommand{\R}{\mathbb R}
\newcommand{\A}{\mathbb A}
\newcommand{\cC}{\mathfrak C}
\newcommand{\cCp}{\mathfrak C_{\mbox{p}}}
\newcommand{\cT}{\mathfrak T}
\newcommand{\rR}{\mathfrak R}
\newcommand{\hH}{\mathfrak H}
\newcommand{\Td}{T^{2}_{\left(1,1\right)}}
\newcommand{\T}{T_{\left(1,1\right)}}
\newcommand{\PSL}{\mbox{\rm PSL(2,$\R$)}}
\newcommand{\Pu}{\mbox{\rm PU(2,1)}}
\newcommand{\X}{\mbox{\textbf{X}}}
\newcommand{\tr}{\mbox{\rm tr}}
\newcommand{\M}{{\rm M}}
\newcommand{\N}{{\rm N}}
\newcommand{\B}{\mathcal{B}}
\newcommand{\n}{\noindent}
\newcommand{\p}{{\bf p}}
\newcommand{\q}{{\bf q}}
\renewcommand{\Re}{\mbox{\rm Re}\,}
\renewcommand{\Im}{\mbox{\rm Im}\,}
\renewcommand{\leq}{\leqslant}
\renewcommand{\geq}{\geqslant}

\newtheorem{theorem}{Theorem}[chapter]
\newtheorem{theoreme}{Th{\'e}or{\`e}me}[chapter]
\newtheorem{theo}{Th{\'e}or{\`e}me}[chapter]
\newtheorem{corollaire}[theoreme]{Corollaire}
\newtheorem{lemma}[theorem]{Lemma}
\newtheorem{lemme}[theoreme]{Lemme}
\newtheorem{corollary}[theorem]{Corollary}
\newtheorem{proposition}[theorem]{Proposition}
\newtheorem{propositionf}[theoreme]{Proposition}
\newtheorem{prop}[theoreme]{Proposition}
\theoremstyle{remark}
\newtheorem{terminology}[theorem]{Terminology}
\newtheorem{notation}[theoreme]{Notation}
\newtheorem{Notation}[theorem]{Notation}
\newtheorem{remark}[theorem]{Remark}
\newtheorem{remarque}[theoreme]{Remarque}
\newtheorem{example}[theorem]{Example}
\newtheorem{exemple}[theoreme]{Exemple}
\newtheorem{nonexample}[theorem]{Nonexample}
\newtheorem{definitionf}[theoreme]{D{\'e}finition}
\newtheorem*{theo*}{\bf Theorem}
\newenvironment{modif}[1]{\bf (MODIF #1):}{(end MODIF)\rm}

\theoremstyle{definition}
\newtheorem{definition}[theorem]{Definition}
\newtheorem{defi}[theoreme]{Definition}
\newenvironment{proof1}{\noindent{\sc D{\'e}monstration.}}{$\,\Box$\smallskip}

\newcommand{\Ho}{\mathsf{Ho}}
\newcommand{\Map}{\mathsf{Map}}
\newcommand{\rep}{\mathsf{rep}}
\newcommand{\idem}{\mathrm{idem}}
\newcommand{\idemh}{\mathrm{idemh}}
\newcommand{\fact}{\mathrm{fact}}
\newcommand{\facth}{\mathrm{facth}}
\newcommand{\ca}{{\mathcal A}}
\newcommand{\cl}{{\mathcal L}}
\newcommand{\cm}{{\mathcal M}}
\newcommand{\cb}{{\mathcal B}}
\newcommand{\cc}{{\mathcal C}}
\newcommand{\cd}{{\mathcal D}}
\newcommand{\cp}{{\mathcal P}}
\newcommand{\cu}{{\mathcal U}}
\newcommand{\cv}{{\mathcal V}}
\newcommand{\cw}{{\mathcal W}}
\newcommand{\ten}{\otimes}
\newcommand{\iso}{\stackrel{_\sim}{\rightarrow}}
\newcommand{\id}{\mathbf{1}}

\chapter*{Remerciements\markboth{Remerciements}{}}

Je suis tr{\`e}s heureux de pouvoir exprimer ici toute ma gratitude envers Bernhard Keller qui a dirig{\'e} cette th{\`e}se et guid{\'e} mes premiers pas dans la
recherche. Il est impossible de r{\'e}sumer en quelques
phrases tout ce que je lui dois. Il m'a soutenu d{\`e}s le premier
jour et m'a enseign{\'e} le m{\'e}tier avec beaucoup de d{\'e}vouement.

\smallskip
{\em`Merci'}
\smallskip

Mes remerciements les plus chaleureux vont aussi {\`a} Denis-Charles
Cisinski et Bertrand To{\"e}n. Ils m'ont fait profiter de leur immense
richesse math{\'e}matique et cela m'a ouvert de nouveaux horizons de
recherche. Je suis tr{\`e}s heureux qu'ils fassent partie du jury.

\smallskip

Je tiens ensuite {\`a} exprimer ma gratitude {\`a} Mikhail Kapranov et
Bertrand To{\"e}n, qui m'ont fait
l'honneur de rapporter cette th{\`e}se. Je remercie Mikhail Kapranov pour
sa lecture et pour avoir fait le lien entre cette th{\`e}se et son
travail fondateur avec Alexei Bondal. Je remercie profond{\'e}ment
Bertrand To{\"e}n pour toutes
ses corrections et suggestions qui ont permis d'am{\'e}liorer {\'e}norm{\'e}ment la
qualit{\'e} scientifique de ce texte.

\smallskip
L'un des objets math{\'e}matiques au centre de cette th{\`e}se est une dg-cat{\'e}gorie
$\mathcal{K}$ due {\`a} Maxim~\mbox{Kontsevich}. Je lui suis
tr{\`e}s reconnaissant d'avoir accept{\'e}, si promptement, de si{\'e}ger dans le
jury.

\smallskip

Messieurs Pierre Cartier et Rapha{\"e}l Rouquier me font un tr{\`e}s grand
honneur de si{\'e}ger dans le jury. Leurs expos{\'e}s au cours de ces
derni{\`e}res ann{\'e}es m'ont apport{\'e} beaucoup.

\smallskip

La source principale d'inspiration pour cette th{\`e}se a {\'e}t{\'e} l'article
{\em DG quotients of DG categories} par Vladimir Drinfeld. J'esp{\`e}re que cette th{\`e}se lui fera plaisir.

\smallskip

Pendant la pr{\'e}paration de la th{\`e}se j'ai profit{\'e} de discussions
{\'e}lectroniques aussi bien que de vive voix avec plusiers math{\'e}maticiens~: Joseph Ayoub, Michael
Batanin, Clemens Berger, Rick Jardine, Francesco Lemma, Jean-Louis
Loday, Orlando Neto, Stefan Schwede et Mark Weber. Je les remercie
tous, ainsi que ceux que j'ai oubli{\'e}s de citer.

\smallskip

Je voudrait dire {\em`Obrigado'} {\`a} Gustavo Granja, qui depuis Lisbonne,
m'a encourag{\'e} et a r{\'e}pondu {\`a} mes questions na{\"\i}ves sur la th{\'e}orie d'homotopie
(stable).

\smallskip

D{\`e}s mon arriv{\'e}e {\`a} Paris, j'ai eu la chance de profiter des cours et
groupes de travail organis{\'e}s par Jean Barge et Fabien Morel. Je les
remercie tous les deux pour l'influence qu'ils ont eue dans mes choix
math{\'e}matiques.

\smallskip

L'excellente ambiance r{\'e}gnant sur le plateau des doctorants a {\'e}t{\'e}
capitale pendant ces ann{\'e}es. Je tiens donc {\`a} exprimer toute mon amiti{\'e}
{\`a} tous ceux que j'ai pu rencontrer.

\smallskip

Je voudrait remercier Monique Douchez et Mich{\`e}le Wasse pour toute leur
aide et efficacit{\'e} dans la partie administrative ainsi que la {\em Funda{\c c}{\~a}o
para a Ci{\^e}ncia e Tecnologia - Portugal} pour m'avoir
soutenu financi{\`e}rement avec la bourse SFRH/BD/144035/2003.

\smallskip

Pour finir, je remercie profond{\'e}ment mes parents, mes oncles et Liliana
pour leur amour, ainsi que mon fr{\`e}re Paulo Tabuada pour m'avoir
enseign{\'e} {\`a} ne pas avoir peur {\em`des math{\'e}matiques, du futur, de~la~vie.'}

\tableofcontents
\addcontentsline{toc}{chapter}{Introduction}

\chapter*{Introduction\markboth{Introduction}{}}

Nous renvoyons {\`a} l'expos{\'e} de Keller~\cite{ICM} pour une pr{\'e}sentation et motivation de la
plupart des concepts math{\'e}matiques qui interviennent dans cette
th{\`e}se. Nous proposons plusieurs contributions originales {\`a}
l'{\'e}tude~:
\begin{itemize}
\item[-] des dg-cat{\'e}gories et de leurs invariants,
\item[-] des cat{\'e}gories triangul{\'e}es (alg{\'e}briques) bien engendr{\'e}es au sens
  de Neeman et
\item[-] de l'approche des alg{\`e}bres `cluster' au
  sens de Fomin-Zelevinsky par la th{\'e}orie des repr{\'e}sentations. 
\end{itemize}
Cette th{\`e}se correspond aux articles \cite{cras}
\cite{addendum} \cite{IMRN} \cite{Documenta} \cite{dgquot}
\cite{Triangcat} \cite{univ} et {\`a} un appendice, o{\`u} une preuve simple et purement
homotopique d'un th{\'e}or{\`e}me
d{\^u} {\`a} Drinfeld est present{\'e}e.
Dans le r{\'e}sum{\'e} suivant, nous pr{\'e}sentons les r{\'e}sultats principaux de
cette th{\`e}se d'une fa{\c c}on plus d{\'e}taill{\'e}e.

\subsection*{DG-cat{\'e}gories}

Les cat{\'e}gories diff{\'e}rentielles gradu{\'e}es (=dg-cat{\'e}gories)
`enrichissent' notre comprehension des cat{\'e}gories triangul{\'e}es qui
apparaissent naturellement en alg{\`e}bre et g{\'e}om{\'e}trie.

L'id{\'e}e d'utiliser les dg-cat{\'e}gories pour `enrichir' les cat{\'e}gories
triangul{\'e}es remonte aux travaux de Bondal-Kapranov~\cite{Bon-Kap}. Leur motivation
principale {\'e}tait l'{\'e}tude des collections exceptionnelles de faisceaux
coh{\'e}rents sur les vari{\'e}t{\'e}s projectives. Elles ont {\'e}t{\'e} utilis{\'e}es aussi par
Keller~\cite{DerivingDG} dans l'{\'e}tude de la th{\'e}orie de Morita d{\'e}riv{\'e}e
et de la  dualit{\'e} de Koszul.

Actuellement, les dg-cat{\'e}gories sont consid{\'e}r{\'e}es comme des sch{\'e}mas
non-commutatifs par Drinfeld~\cite{Chicagotalk}~\cite{Drinfeld} et
Kontsevich~\cite{ENS}~\cite{IHP} dans leur programme de g{\'e}ometrie alg{\'e}brique non-commutative.

L'une des op{\'e}rations importantes qu'on peut r{\'e}aliser dans les cat{\'e}gories
triangul{\'e}es est le passage au quotient. Cette op{\'e}ration de quotient {\`a}
{\'e}t{\'e} relev{\'e}e au monde de dg-cat{\'e}gories par Keller dans
\cite{DerivingDG} et r{\'e}cemment par Drinfeld dans \cite{Drinfeld}.

\subsection*{Chapitre 1}

Dans le but de r{\'e}interpreter la construction du
dg-quotient de Drinfeld d'un point de vue purement homotopique afin
de mieux comprendre sa propri{\'e}t{\'e} universelle, on construit une structure de cat{\'e}gorie de mod{\`e}les de Quillen sur la
cat{\'e}gorie des petites cat{\'e}gories diff{\'e}rentielles gradu{\'e}es $\dgcat$. 
Rappelons qu'un dg-foncteur $F:\mathcal{C} \rightarrow \mathcal{D}$
est une {\it quasi-equivalence} si :
\begin{itemize}
\item[-] pour tous objets $c_1$ et $c_2$ dans
$\mathcal{C}$, le morphisme de complexes de $\mathrm{Hom}_{\mathcal{C}}(c_1,
c_2)$ vers $\mathrm{Hom}_{\mathcal{D}}(F(c_1), F(c_2))$ est un
quasi-isomorphisme et
\item[-] le foncteur $\mathrm{H}^0(F)$ de
$\mathrm{H}^0(\mathcal{C})$ vers $\mathrm{H}^0(\mathcal{D})$ est
essentiellement surjectif. 
\end{itemize} 

\begin{theoreme}[\ref{mal}]\label{thm1}
La cat{\'e}gorie $\dgcat$ admet une structure de cat{\'e}gorie de mod{\`e}les
de Quillen {\`a} engendrement cofibrant dont les {\'e}quivalences faibles sont
les quasi-equivalences. Les fibrations sont les dg-foncteurs $F:
\mathcal{A} \rightarrow \mathcal{B}$ qui induisent des surjections de
complexes 
$\mathsf{Hom}_{\mathcal{A}}(X,Y)\rightarrow \mathsf{Hom}_{\mathcal{B}}(F(X),F(Y))$
pour tous $X$, $Y$ dans $\mathcal{A}$ et tels que pour tout objet $X
\in \mathcal{A}$ et tout morphisme $v \in
\mathsf{Hom}_{\mathcal{B}}(F(X),Z)$ qui devient un isomorphisme dans
$\mathsf{H}^0(\mathcal{B})$, il existe un morphisme $u \in
\mathsf{Hom}_{\mathcal{A}}(X,Y)$ tel que $F(u)=v$ et qui devient un isomorphisme dans
$\mathsf{H}^0(\mathcal{A})$.
\end{theoreme} 

\begin{remarque}
Notre construction est inspir{\'e}e par des arguments dans \cite{Rezk} et par la construction
du dg-quotient donn{\'e}e par Drinfeld dans \cite{Drinfeld}.
La clef pour cette construction est une certaine dg-cat{\'e}gorie $\mathcal{K}$
d{\'e}finie par Drinfeld dans \cite[3.7.1]{Drinfeld} et qui est due {\`a} Kontsevich.
Conceptuellement, elle joue le m{\^e}me role dans $\dgcat$ que
l'intervalle dans la cat{\'e}gorie des espaces topologiques. 
\end{remarque}

\begin{remarque}
Dans l'appendice, on donne une preuve simple et purement
homotopique de la propri{\'e}te universelle du dg-quotient de Drinfeld \cite{Drinfeld}, qui est bas{\'e}e seulement sur le th{\'e}or{\`e}me~\ref{thm1}. 
\end{remarque}

Rappelons maintenant quelques r{\'e}sultats du travail fondamental de
To{\"e}n~\cite{Toen}, rendus possibles par le th{\'e}or{\`e}me~\ref{thm1}.

On note $\mathsf{Heq}$ la localisation de $\dgcat$ par rapport {\`a}
la classe des quasi-{\'e}quivalences.
Les morphismes dans la localisation (de Dwyer-Kan \cite{Dwyer}) sont d{\'e}crits de la
fa{\c c}on suivante: soient $\mathcal{A}$ et $\mathcal{B}$ deux dg-cat{\'e}gories. Si necessaire,
on peut remplacer $\mathcal{A}$ par une dg-cat{\'e}gorie quasi-{\'e}quivalente de
fa{\c c}on que $\mathcal{A}$ soit $k$-plate, c'est-{\`a}-dire le foncteur
$\mathcal{A}(X,Y)\otimes ?$ pr{\'e}serve les quasi-isomorphismes pour tous
$X$,$Y$ dans $\mathcal{A}$ (par exemple, on peut prendre une
r{\'e}solution cofibrante de $\mathcal{A}$).
Soit $\mathsf{rep}(\mathcal{A},\mathcal{B})$ la sous cat{\'e}gorie pleine
de la cat{\'e}gorie d{\'e}riv{\'e}e $\mathcal{D}(\mathcal{A}^{op} \otimes
\mathcal{B})$ des $\mathcal{A}$-$\mathcal{B}$-bimodules form{\'e}e des
bimodules $X$ tels que le foncteur produit tensoriel d{\'e}riv{\'e}
$$ ?\overset{\mathbb{L}}{\otimes_{\mathcal{A}}}X :
\mathcal{D}(\mathcal{A}) \rightarrow \mathcal{D}(\mathcal{B}) $$
envoie les $\mathcal{A}$-modules repr{\'e}sentables vers des objets qui
sont isomorphes dans $\mathcal{D}(\mathcal{B})$ {\`a} des $\mathcal{B}$-modules repr{\'e}sentables. D'une
fa{\c c}on {\'e}quivalente, on demande que $X(?,A)$ soit isomorphe dans
$\mathcal{D}(\mathcal{B})$ {\`a} un
$\mathcal{B}$-module repr{\'e}sentable pour tout objet $A$ dans
$\mathcal{A}$.

\begin{theoreme}[\cite{Toen}]
Les morphismes de $\mathcal{A}$ vers $\mathcal{B}$ dans
$\mathsf{Heq}$ sont en bijection naturelle avec les classes
d'isomorphisme de la cat{\'e}gorie $\mathsf{rep}(\mathcal{A},\mathcal{B})$.
\end{theoreme}

Maintenant, soit $\mathcal{R}(\mathcal{A},\mathcal{B})$ la cat{\'e}gorie
qui a les m{\^e}mes objets que $\mathsf{rep}(\mathcal{A},\mathcal{B})$ et
dont les morphismes sont les quasi-isomorphismes de dg-bimodules. La
  cat{\'e}gorie $\mathcal{R}(\mathcal{A},\mathcal{B})$ est donc une sous
  categorie non pleine de la cat{\'e}gorie des dg-bimodules
  $\mathcal{C}(\mathcal{A}^{op}\otimes\mathcal{B})$.

\begin{theoreme}[\cite{Toen}]
Il existe une {\'e}quivalence faible canonique d'ensembles simpliciaux
entre \newline
$\mathsf{Map}(\mathcal{A},\mathcal{B})$ et le nerf de la
cat{\'e}gorie $\mathcal{R}(\mathcal{A},\mathcal{B})$.
\end{theoreme}

\begin{theoreme}[\cite{Toen}]\label{thmToen}
La cat{\'e}gorie mono{\"\i}dale
$(\mathsf{Heq},- \overset{\mathbb{L}}{\otimes} -)$ admet un foncteur
$\mbox{Hom}$-interne \newline 
$\mathcal{R}\mathsf{Hom}(-,-)$. Pour deux
dg-cat{\'e}gories $\mathcal{A}$ et $\mathcal{B}$, telles que $\mathcal{A}$
est $k$-plate, la dg-cat{\'e}gorie
$\mathcal{R}\mathsf{Hom}(\mathcal{A},\mathcal{B})$ est isomorphe dans
$\mathsf{Heq}$ {\`a} la dg-cat{\'e}gorie
$\mathsf{rep}_{dg}(\mathcal{A},\mathcal{B})$, c'est-{\`a}-dire la sous
cat{\'e}gorie pleine de la dg-cat{\'e}gorie des
$\mathcal{A}$-$\mathcal{B}$-bimodules, dont les objets sont ceux de
$\mathsf{rep}(\mathcal{A},\mathcal{B})$ qui sont en plus cofibrants
comme bimodules. 
\end{theoreme}

\subsection*{Chapitre 2}

On remarque que tous les invariants fonctoriels classiques comme la $K$-th{\'e}orie alg{\'e}brique, l'homologie
de Hochschild, l'homologie cyclique, $\ldots$ se prolongent
naturellement des $k$-alg{\`e}bres vers les dg-cat{\'e}gories. D'une fa{\c c}on
analogue au cas des $k$-alg{\`e}bres ordinaires, ces invariants sont
pr{\'e}serv{\'e}s par les {\'e}quivalences de Morita d{\'e}riv{\'e}es.
Cela conduit au probl{\`e}me de donner une description explicite de la
`cat{\'e}gorie d'homotopie de Morita', c'est-{\`a}-dire la localisation de 
$\dgcat$ par rapport aux {\'e}quivalences de Morita d{\'e}riv{\'e}es, puisque tous ces
invariants descendent {\`a} cette cat{\'e}gorie.

On r{\'e}sout ce probl{\`e}me en utilisant le formalisme d'alg{\`e}bre
homotopique de Quillen. En effet, on
construit une structure de cat{\'e}gorie de mod{\`e}les de Quillen sur
$\dgcat$, dont les {\'e}quivalences faibles sont les dg-foncteurs {\it
  de Morita}, c'est-{\`a}-dire les dg-foncteurs $F:\mathcal{A} \rightarrow
\mathcal{B}$ qui induisent une {\'e}quivalence $\mathcal{D}(\mathcal{B})
\stackrel{\sim}{\rightarrow} \mathcal{D}(\mathcal{A})$ entre
cat{\'e}gories d{\'e}riv{\'e}es.

\begin{theoreme}[\ref{theorem2}]\label{thm2}
La cat{\'e}gorie $\dgcat$ admet une structure de cat{\'e}gorie de mod{\`e}les de
Quillen {\`a} engendrement cofibrant, dont les equivalences faibles sont
les dg-foncteurs de Morita et dont les cofibrations sont les m{\^e}mes que
celles du th{\'e}or{\`e}me~\ref{thm1}.
\end{theoreme}

\begin{remarque}
Notre structure a {\'e}t{\'e} construite {\`a} partir de celle du
th{\'e}or{\`e}me~\ref{thm1} en deux {\'e}tapes. (On remarque qu'une quasi-equivalence
est un dg-functor de Morita).

Premi{\`e}rement, on a construit une structure de cat{\'e}gorie de mod{\`e}les de Quillen
intermediaire, dont les {\'e}quivalences faibles sont les dg-foncteurs
quasi-{\'e}quiconiques, voir section~\ref{secquasicon}. Dans cette
{\'e}tape, on s'est inspir{\'e} de la construction de Bondal-Kapranov de l'envelope
pre-triangul{\'e}e d'une dg-cat{\'e}gorie, voir \cite{Bon-Kap}.

Dans une deuxi{\`e}me {\'e}tape, on a relev{\'e} la construction de la
Karoubianisation d'une cat{\'e}gorie triangul{\'e}e, voir~\cite{Balmer}, au
monde des dg-cat{\'e}gories en utilisant des facteurs directs {\`a} homotopie pr{\`e}s. Cela nous a permis de contr{\^o}ler les objets compacts de la cat{\'e}gorie d{\'e}riv{\'e}e d'une dg-cat{\'e}gorie. 
\end{remarque}

On d{\'e}signe par $\mathsf{Hmo}$ la localisation de $\dgcat$ par
rapport aux dg-foncteurs de Morita. Dans $\mathsf{Hmo}$, les {\'e}quivalences
d{\'e}riv{\'e}es au sens de Rickard-Keller \cite{Rickard}
\cite{Rickard1} \cite{DerivingDG} correspondent {\`a} des isomorphismes et
le `groupe de Picard d{\'e}riv{\'e}' \cite{Rou-Zim} au sens de Rouquier-Zimmermann y appara{\^\i}t comme un groupe
d'automorphismes.

\begin{prop}[\ref{nova4}]
Une dg-cat{\'e}gorie $\mathcal{A}$ est Morita fibrante si est non vide et l'image essentielle du plongement
$\mathsf{H}^0(\mathcal{A}) \hookrightarrow \mathcal{D}(\mathcal{A})$
est stable par suspensions, c{\^o}nes et facteurs directs.
\end{prop}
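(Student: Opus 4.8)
\begin{proof1}

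The plan is to realise $\mathcal{A}$ as a retract of a Morita-fibrant replacement of it, exploiting that every object of $\dgcat$ is fibrant for the model structure of Theorem~\ref{thm1}: by the explicit description of its fibrations, the dg-functor $\mathcal{A}\to\underline{0}$ is always one, $\underline{0}$ being the terminal dg-category --- surjectivity on $\mathsf{Hom}$-complexes is onto the zero complex, and the lifting condition for morphisms becoming isomorphisms in $\mathsf{H}^{0}$ is met by identity morphisms. I also use that the cofibrations of the Morita structure (Theorem~\ref{thm2}) are those of Theorem~\ref{thm1}. First I would reformulate the hypothesis. The Yoneda embedding $\mathsf{H}^{0}(\mathcal{A})\hookrightarrow\mathcal{D}(\mathcal{A})$ is fully faithful and its essential image consists of compact objects generating $\mathcal{D}(\mathcal{A})$; if $\mathcal{A}$ is non-empty and this image is stable under suspensions, cones and direct factors, then (containing the zero object, a cone of an identity) it is a thick triangulated subcategory containing all representables, hence equals $\mathcal{D}_{c}(\mathcal{A})$ by Neeman's theorem, the representables forming a set of compact generators. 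So the hypothesis is equivalent to: the Yoneda embedding induces an equivalence $\mathsf{H}^{0}(\mathcal{A})\iso\mathcal{D}_{c}(\mathcal{A})$.

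Next I would factor $\mathcal{A}\to\underline{0}$ in the Morita structure as $\mathcal{A}\stackrel{j}{\longrightarrow}\widehat{\mathcal{A}}\longrightarrow\underline{0}$ with $j$ a Morita-trivial cofibration and $\widehat{\mathcal{A}}$ Morita fibrant, and prove that $j$ is a \emph{quasi-equivalence}. Since $j$ is a Morita equivalence it is quasi-fully faithful (the standard fact that a Morita dg-functor induces quasi-isomorphisms on $\mathsf{Hom}$-complexes, because induction along $j$ is a quasi-inverse of the restriction equivalence $j^{*}\colon\mathcal{D}(\widehat{\mathcal{A}})\iso\mathcal{D}(\mathcal{A})$ and carries Yoneda images to Yoneda images), so $\mathsf{H}^{0}(j)$ is fully faithful. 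For essential surjectivity, let $y$ be an object of $\widehat{\mathcal{A}}$; the image under $j^{*}$ of the Yoneda image of $y$ is compact in $\mathcal{D}(\mathcal{A})$, hence, by the reformulated hypothesis, isomorphic in $\mathcal{D}(\mathcal{A})$ to the Yoneda image of some object $a$ of $\mathcal{A}$, which is itself isomorphic to the image under $j^{*}$ of the Yoneda image of $j(a)$ (by quasi-full faithfulness of $j$). As $j^{*}$ reflects isomorphisms, $y$ and $j(a)$ have isomorphic Yoneda images in $\mathcal{D}(\widehat{\mathcal{A}})$, and since the Yoneda embedding of $\widehat{\mathcal{A}}$ is fully faithful on $\mathsf{H}^{0}$ this gives $y\cong j(a)$ in $\mathsf{H}^{0}(\widehat{\mathcal{A}})$. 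Hence $\mathsf{H}^{0}(j)$ is an equivalence and $j$ is a quasi-equivalence.

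Finally I would conclude formally. Being a cofibration and a quasi-equivalence, $j$ is a trivial cofibration for the structure of Theorem~\ref{thm1}; and $\mathcal{A}\to\underline{0}$ is a fibration there. The lift in the commutative square with top $\mathbf{1}_{\mathcal{A}}$, left $j$, right $\mathcal{A}\to\underline{0}$ and bottom $\widehat{\mathcal{A}}\to\underline{0}$ is a dg-functor $r\colon\widehat{\mathcal{A}}\to\mathcal{A}$ with $r\circ j=\mathbf{1}_{\mathcal{A}}$, so $\mathcal{A}$ is a retract of $\widehat{\mathcal{A}}$ compatibly with the structure maps to $\underline{0}$. Since $\widehat{\mathcal{A}}\to\underline{0}$ is a Morita fibration and Morita fibrations are stable under retracts, $\mathcal{A}\to\underline{0}$ is a Morita fibration, i.e.\ $\mathcal{A}$ is Morita fibrant.

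The only non-formal step is the middle one --- that the Morita-fibrant replacement $j$ is a quasi-equivalence --- which is the sole place the hypothesis really intervenes; the rest rests on Neeman's theorem on compactly generated triangulated categories and on formal manipulation in the two model structures, the key such being that all objects are fibrant for the structure of Theorem~\ref{thm1}. The hypothesis \og non vide\fg{} is used only to place the zero object in the essential image of $\mathsf{H}^{0}(\mathcal{A})$, making it a thick subcategory.

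\end{proof1}
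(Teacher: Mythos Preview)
Your argument is correct and takes a genuinely different route from the paper's. The paper proves this direction by directly verifying the right lifting property of $\mathcal{A}\to 0$ against each generating trivial cofibration of the Morita structure: for $F(n)$, $R(n)$, $I_n(k_0,\ldots,k_n)$ it invokes the earlier characterisation of the quasi-equiconic fibrant objects (Proposition~\ref{fibrants}); for $C$ it uses non-emptiness together with stability under cones to produce a contractible object; and for $L\!h_n(k_0,\ldots,k_n)$ it forms the pushout, appeals to Lemma~\ref{pushout-homotopique} to see that the resulting $\mathcal{A}\to\mathcal{B}$ is a quasi-equivalence (the pushout only adjoins a direct factor already present by hypothesis), and then runs exactly your retraction trick using fibrancy in the structure of Theorem~\ref{mal}. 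Your approach replaces this case-by-case verification by a single global step: take any Morita-fibrant replacement $j$, argue via Neeman's description of compact objects that $j$ is a quasi-equivalence, and retract once. This is cleaner and does not require the explicit list of generating trivial cofibrations, at the cost of importing the characterisation of $\mathcal{D}_c(\mathcal{A})$ as the thick subcategory generated by the representables. Note incidentally that your justification of quasi-full faithfulness of $j$ is correct but could be shortened: in this paper, condition~(M2) in the definition of a Morita dg functor (section~\ref{secmor}) already includes quasi-full faithfulness whenever the source is non-empty.
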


\begin{prop}[\ref{Bousf}, \ref{Bousfil}]
La structure de cat{\'e}gorie de mod{\`e}les de Quillen du th{\'e}or{\`e}me~\ref{thm2}
est une localisation de Bousfield {\`a} gauche de la structure de
cat{\'e}gorie de mod{\`e}les de Quillen du th{\'e}or{\`e}me~\ref{thm1}.
\end{prop}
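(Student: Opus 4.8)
The plan is to realise each of the two model structures built in this chapter --- the quasi-equiconic one of Section~\ref{secquasicon} and the Morita one of Theorem~\ref{thm2} --- as a left Bousfield localization of the structure of Theorem~\ref{thm1} at an explicit \emph{set} of dg-functors, and then to invoke the fact that a composite of left Bousfield localizations is again a left Bousfield localization. First I would record the input: the model structure of Theorem~\ref{thm1} is cofibrantly generated, cellular and left proper (the same analysis of generating cofibrations and generating trivial cofibrations that establishes Theorem~\ref{thm1} yields these properties), so that for any set $S$ of dg-functors the left Bousfield localization $L_S\dgcat$ exists; its cofibrations are those of Theorem~\ref{thm1}, its fibrant objects are the $S$-local ones, and its weak equivalences are the $S$-local equivalences. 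Since left Bousfield localizations of left proper cellular model categories are again left proper and cellular, one may then localize a second time. The other ingredient is the principle (due to Joyal) that a model structure is determined by its class of cofibrations together with its class of fibrant objects; as the quasi-equiconic and the Morita structures both have exactly the cofibrations of Theorem~\ref{thm1}, it suffices at each stage to match the fibrant objects with the $S$-local objects for a well-chosen $S$.

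For the quasi-equiconic stage (Proposition~\ref{Bousf}), I would take $S_1$ to be the finite set of dg-functors that freely adjoin shifts and cones, in the spirit of the Bondal--Kapranov pre-triangulated envelope \cite{Bon-Kap}: the inclusion of the dg-category with one object into the one obtained by formally adjoining a shift of that object, and the inclusion of the dg-category freely generated by a single closed degree-zero morphism into the one obtained by adjoining a contractible cone over that morphism. Using Toën's description of mapping spaces (the nerve of the category $\mathcal{R}(\mathcal{A},\mathcal{B})$ of quasi-isomorphisms of bimodules, \cite{Toen}, together with Theorem~\ref{thmToen}), I would compute $\mathsf{Map}(s,\mathcal{A})$ for $s\in S_1$ and check that it is a weak equivalence for every such $s$ precisely when the essential image of $\mathsf{H}^0(\mathcal{A})\hookrightarrow\mathcal{D}(\mathcal{A})$ is stable under suspensions and cones, that is, precisely when $\mathcal{A}$ is quasi-equiconic fibrant. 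For the Morita stage (Proposition~\ref{Bousfil}) I would take $S_2$ to consist of the single dg-functor that adjoins the image of a homotopy idempotent (the inclusion of the dg-category freely generated by a homotopy idempotent into the one in which that idempotent is split up to homotopy); an analogous mapping-space computation, now carried out relative to the quasi-equiconic structure, shows that the $S_2$-local objects are those $\mathcal{A}$ for which the image of $\mathsf{H}^0(\mathcal{A})$ in $\mathcal{D}(\mathcal{A})$ is in addition stable under direct summands, i.e. the Morita fibrant objects of Proposition~\ref{nova4}.

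Granting the two identifications, the conclusion is formal: the quasi-equiconic structure equals $L_{S_1}\dgcat$, the Morita structure equals the $S_2$-localization of the quasi-equiconic structure, hence equals $L_{S_1\cup S_2}\dgcat$, so in particular it is a left Bousfield localization of the structure of Theorem~\ref{thm1}. Along the way one records the routine compatibilities, which are in any case automatic once the cofibrations agree: every quasi-equivalence is quasi-equiconic and every quasi-equiconic dg-functor is a Morita dg-functor, so the weak equivalences only grow, the fibrations only shrink, and the identity functor is left Quillen at each stage.

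The main obstacle is the mapping-space computation identifying the $S_i$-local objects. The delicate step is to translate ``$\mathsf{Map}(s,\mathcal{A})$ is a weak equivalence'' into a concrete stability statement about $\mathsf{H}^0(\mathcal{A})$ inside $\mathcal{D}(\mathcal{A})$: one must unwind \cite{Toen} to see that $\mathsf{Map}(\mathcal{C},\mathcal{A})$ is modelled by the category of quasi-isomorphisms between cofibrant $\mathcal{C}$-$\mathcal{A}$-bimodules lying in the relevant subcategory $\mathsf{rep}(\mathcal{C},\mathcal{A})$, and then recognise that contractibility of the homotopy fibres of restriction along $s$ says exactly that the cone (respectively the shift, respectively the homotopy direct summand) of a representable $\mathcal{A}$-module is again isomorphic in $\mathcal{D}(\mathcal{A})$ to a representable one. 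A subsidiary technical point, needed only to guarantee that the localizations $L_S$ exist, is the verification of left properness and the cellularity hypotheses for the structure of Theorem~\ref{thm1}; this is handled by the same cell-by-cell analysis used to prove Theorem~\ref{thm1} itself.
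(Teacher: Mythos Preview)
Your approach is essentially the paper's: localize first at the dg-functors that adjoin shifts and cones (the paper uses $\{F(1),F(-1),I_0(0)\}$), then at the one that splits a homotopy idempotent ($Lh_0:\idemh\to\facth$), and identify the local objects at each stage with the fibrant objects of the target structure via To\"en's description of $\Map$ through $\rep$.

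One organizational point is worth flagging. You propose to construct $L_S\dgcat$ abstractly via Hirschhorn's existence theorem, invoking left properness, and then compare. The paper sidesteps this entirely: since the quasi-equiconic and Morita structures are already built directly (Theorems~\ref{theorem} and~\ref{theorem2}), it simply verifies that each one satisfies the defining properties of the relevant $L_S$ --- same cofibrations, $S$-local objects agree with the target fibrants, and hence (since $\Map$ is unchanged when the cofibrations are) $S$-local equivalences agree with the target weak equivalences. Left properness is never used, and your claim that it follows from ``the same analysis'' as Theorem~\ref{thm1} is not substantiated in this thesis; in analogous settings (e.g.\ simplicial categories) it is known to fail, so this is not a step to wave through. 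This is not a genuine gap in your strategy so much as an unnecessary detour: drop the appeal to Hirschhorn, take the target structures as given, and the rest of your outline goes through verbatim and matches the paper's proof.
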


On note $\mathcal{A} \mapsto \mathcal{A}_{fib}$ une resolution Morita fibrante
fonctorielle de $\mathcal{A}$. 

\begin{corollaire}[\ref{adjonct2}, \ref{adjonct5}]\label{prop1}
Le foncteur $\mathcal{A} \mapsto \mathcal{A}_{fib}$ nous
fournit un adjoint {\`a} droite du foncteur quotient $\mathsf{Heq}
\rightarrow \mathsf{Hmo}$ et induit une equivalence entre
$\mathsf{Hmo}$ et la sous cat{\'e}gorie pleine de $\mathsf{Heq}$ form{\'e}e des
dg-cat{\'e}gories Morita fibrantes.
\end{corollaire}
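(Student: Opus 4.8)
The plan is to deduce this corollary from the two preceding propositions together with standard facts about left Bousfield localizations of model categories. By the cited Propositions (\ref{Bousf}, \ref{Bousfil}), the Morita model structure of Theorem~\ref{thm2} is a left Bousfield localization of the quasi-equivalence model structure of Theorem~\ref{thm1}; both have the same cofibrations, and the Morita weak equivalences contain the quasi-equivalences. Write $\dgcat_{qe}$ and $\dgcat_{Mor}$ for the two model categories, so that the identity functor $\dgcat_{qe}\to\dgcat_{Mor}$ is a left Quillen functor whose right adjoint is again the identity. The passage to homotopy categories therefore gives a quotient functor $\mathsf{Heq}=\Ho(\dgcat_{qe})\to\Ho(\dgcat_{Mor})=\mathsf{Hmo}$, which is exactly the functor in the statement.

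**The adjoint and the fibrant replacement.** For a left Bousfield localization, the general theory (e.g. Hirschhorn) tells us that the right derived functor of the right-adjoint identity functor $\dgcat_{Mor}\to\dgcat_{qe}$ is computed by first taking a Morita-fibrant replacement. Concretely, I would argue: the total right derived functor $\mathbb{R}\mathrm{Id}:\mathsf{Hmo}\to\mathsf{Heq}$ sends $\mathcal{A}$ to (the image in $\mathsf{Heq}$ of) $\mathcal{A}_{fib}$, because a fibrant replacement in $\dgcat_{Mor}$ is in particular a Morita equivalence, hence becomes an isomorphism in $\mathsf{Hmo}$, while $\mathcal{A}_{fib}$ is fibrant so no further modification is needed to compute the derived functor into $\mathsf{Heq}$. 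The adjunction $(\mathbb{L}\mathrm{Id},\mathbb{R}\mathrm{Id})$ between the homotopy categories is then formal (Quillen adjunctions descend to adjunctions of homotopy categories), so $\mathcal{A}\mapsto\mathcal{A}_{fib}$ is right adjoint to the quotient functor $\mathsf{Heq}\to\mathsf{Hmo}$. This part should be essentially bookkeeping once the two propositions are in hand.

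**The equivalence onto the fibrant subcategory.** For the second assertion I would use the standard fact that for a left Bousfield localization, the right derived functor $\mathbb{R}\mathrm{Id}:\mathsf{Hmo}\to\mathsf{Heq}$ is fully faithful, with essential image the full subcategory of $\mathsf{Heq}$ spanned by the local (here: Morita-fibrant) objects. The key points to check are: (i) a dg-category is Morita-fibrant in $\dgcat_{Mor}$ iff it is both fibrant in $\dgcat_{qe}$ and local with respect to the Morita equivalences — this is where Proposition~\ref{nova4} enters, identifying these objects intrinsically via the stability of $\mathsf{H}^0(\mathcal{A})\hookrightarrow\mathcal{D}(\mathcal{A})$ under suspensions, cones, and direct summands; (ii) the unit $\mathcal{A}\to\mathcal{A}_{fib}$ is a Morita equivalence for every $\mathcal{A}$, so the counit of the derived adjunction is an isomorphism in $\mathsf{Hmo}$, giving one half of the equivalence; (iii) on Morita-fibrant objects the unit in $\mathsf{Heq}$ is already an isomorphism, giving the other half. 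Combining (ii) and (iii) yields that $\mathbb{R}\mathrm{Id}$ restricts to an equivalence between $\mathsf{Hmo}$ and the full subcategory of Morita-fibrant objects of $\mathsf{Heq}$.

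**Main obstacle.** The genuinely delicate point is not the abstract localization machinery but making sure the hypotheses needed to invoke it are verified in this concrete setting: namely that the Morita model structure really is a \emph{left} Bousfield localization in the technical sense (combinatorial or cellular, left proper), so that Hirschhorn's characterization of local objects and of the derived adjunction applies verbatim. Since Theorem~\ref{thm2} asserts cofibrant generation and the cofibrations agree with those of Theorem~\ref{thm1}, left properness and the cellular/combinatorial hypotheses should follow from the corresponding facts for the quasi-equivalence structure, but I would want to state this carefully. Granting that, together with the explicit description of fibrant objects in Proposition~\ref{nova4}, the corollary follows by the argument sketched above with no further difficulty.
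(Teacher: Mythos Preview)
Your approach is correct and matches the paper's: the corollaries \ref{adjonct2} and \ref{adjonct5} are stated there with essentially no proof beyond citing the Bousfield localization propositions, and your sketch simply spells out the standard derived-adjunction argument that underlies them. Your ``main obstacle'' is a non-issue, however: the preceding propositions (\ref{Bousf}, \ref{Bousfil}) already assert that the Morita structure \emph{is} a left Bousfield localization in Hirschhorn's sense, so left properness and cellularity are part of what you are allowed to assume, not something you need to reverify; once those propositions are in hand the corollary is a formal consequence, exactly as you describe.
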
 

On d{\'e}signe par $\mathsf{rep}_{mor}(\mathcal{A},\mathcal{B})$,
resp. $\mathcal{R}_{mor}(\mathcal{A},\mathcal{B})$,
resp. $\mathcal{R}\mathsf{Hom}_{mor}(\mathcal{A},\mathcal{B})$ les
cat{\'e}gories  $\mathsf{rep}(\mathcal{A},\mathcal{B}_{fib})$,
resp. $\mathcal{R}(\mathcal{A},\mathcal{B}_{fib})$,
resp.
$\mathcal{R}\mathsf{Hom}(\mathcal{A},\mathcal{B}_{fib})$.

La corollaire~\ref{prop1} nous permet d'{\'e}tendre les r{\'e}sultats de
To{\"e}n au cadre des dg-foncteurs de Morita. Soient $\mathcal{A}$ et
$\mathcal{B}$ deux dg-cat{\'e}gories.

\begin{corollaire}[\ref{adjonct6}]\label{lemmeGon}

\begin{itemize}
\item[-] Les morphismes de $\mathcal{A}$ vers $\mathcal{B}$ dans
  $\mathsf{Hmo}$ sont en bijection naturelle avec les classes
  d'isomorphime de la cat{\'e}gorie $\mathsf{rep}_{mor}(\mathcal{A},\mathcal{B})$.
\item[-] On dispose d'une {\'e}quivalence faible canonique d'ensembles simpliciaux entre $\mathsf{Map}(\mathcal{A},\mathcal{B})$ et le nerf
  de la cat{\'e}gorie $\mathcal{R}_{mor}(\mathcal{A},\mathcal{B})$.
\item[-] La cat{\'e}gorie mono{\"\i}dale sym{\'e}trique $(\mathsf{Hmo},
  - \overset{\mathbb{L}}{\otimes} -)$ admet un foncteur
    $\mathsf{Hom}$-interne $\mathcal{R}\mathsf{Hom}_{mor}(-,-)$.
\end{itemize}
\end{corollaire}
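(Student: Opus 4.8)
The plan is to deduce the three assertions from Toën's theorems recalled above by transporting everything from $\mathsf{Heq}$ to $\mathsf{Hmo}$ along Corollaire~\ref{prop1}. The basic observation is that, since the quotient functor $\mathsf{Heq}\to\mathsf{Hmo}$ is the identity on objects and admits $\mathcal{B}\mapsto\mathcal{B}_{fib}$ as a right adjoint — with unit the Morita-fibrant resolution $\mathcal{B}\to\mathcal{B}_{fib}$ — one has a natural bijection
$$\mathrm{Hom}_{\mathsf{Hmo}}(\mathcal{A},\mathcal{B})\;\cong\;\mathrm{Hom}_{\mathsf{Heq}}(\mathcal{A},\mathcal{B}_{fib})$$
for all dg-categories $\mathcal{A},\mathcal{B}$. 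Since $\mathsf{rep}_{mor}(\mathcal{A},\mathcal{B})=\mathsf{rep}(\mathcal{A},\mathcal{B}_{fib})$ by definition, the first assertion is exactly Toën's description of the right-hand side applied to the pair $(\mathcal{A},\mathcal{B}_{fib})$. As in the quasi-equivalence case, one first replaces $\mathcal{A}$ by a cofibrant — hence $k$-flat — resolution; this affects neither side, the cofibrations of the two model structures agreeing by Théorème~\ref{thm2}.

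For the second assertion, recall from Proposition~\ref{Bousf}, \ref{Bousfil} that the Morita model structure is a left Bousfield localisation of the quasi-equivalence one. Thus, $\mathcal{A}$ being cofibrant and $\mathcal{B}_{fib}$ being local (= Morita fibrant), the homotopy function complex $\mathsf{Map}(\mathcal{A},\mathcal{B})$ computed in the Morita structure is canonically weakly equivalent to that of $(\mathcal{A},\mathcal{B}_{fib})$, and the latter coincides with the homotopy function complex computed in the quasi-equivalence structure (mapping spaces into a local object are insensitive to left Bousfield localisation). Toën's theorem identifies this last simplicial set with the nerve of $\mathcal{R}(\mathcal{A},\mathcal{B}_{fib})=\mathcal{R}_{mor}(\mathcal{A},\mathcal{B})$, which is the claim.

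For the third assertion, one first notes that $-\overset{\mathbb{L}}{\otimes}-$ descends to a symmetric monoidal structure on $\mathsf{Hmo}$, i.e. that tensoring a Morita dg-functor with a fixed dg-category again yields a Morita dg-functor; this is checked at the level of model categories, $\dgcat$ with the Morita structure being again a symmetric monoidal model category. Recall that by definition $\mathcal{R}\mathsf{Hom}_{mor}(\mathcal{A},\mathcal{B})=\mathcal{R}\mathsf{Hom}(\mathcal{A},\mathcal{B}_{fib})$. Combining the displayed adjunction with Théorème~\ref{thmToen}, one obtains for every dg-category $\mathcal{C}$ natural bijections
$$\mathrm{Hom}_{\mathsf{Hmo}}(\mathcal{C}\overset{\mathbb{L}}{\otimes}\mathcal{A},\mathcal{B})\cong\mathrm{Hom}_{\mathsf{Heq}}(\mathcal{C}\overset{\mathbb{L}}{\otimes}\mathcal{A},\mathcal{B}_{fib})\cong\mathrm{Hom}_{\mathsf{Heq}}(\mathcal{C},\mathcal{R}\mathsf{Hom}(\mathcal{A},\mathcal{B}_{fib})).$$
To rewrite the last group as $\mathrm{Hom}_{\mathsf{Hmo}}(\mathcal{C},\mathcal{R}\mathsf{Hom}_{mor}(\mathcal{A},\mathcal{B}))$ — and thereby realise $\mathcal{R}\mathsf{Hom}_{mor}(\mathcal{A},\mathcal{B})$ as an internal Hom in $\mathsf{Hmo}$ — it suffices, again by the adjunction of Corollaire~\ref{prop1}, to know that $\mathcal{R}\mathsf{Hom}(\mathcal{A},\mathcal{B}_{fib})$ is itself Morita fibrant.

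This last point is the main obstacle. By Théorème~\ref{thmToen}, $\mathcal{R}\mathsf{Hom}(\mathcal{A},\mathcal{B}_{fib})$ is isomorphic in $\mathsf{Heq}$ to $\mathsf{rep}_{dg}(\mathcal{A},\mathcal{B}_{fib})$, so by the criterion of Proposition~\ref{nova4} it suffices to show that the essential image of $\mathsf{H}^0(\mathsf{rep}_{dg}(\mathcal{A},\mathcal{B}_{fib}))$ in $\mathcal{D}(\mathcal{A}^{op}\otimes\mathcal{B}_{fib})$ is non-empty and stable under suspensions, cones and direct summands. I would argue this pointwise: a dg-bimodule belongs to $\mathsf{rep}$ precisely when it carries every representable $\mathcal{A}$-module to an object of $\mathcal{D}(\mathcal{B}_{fib})$ isomorphic to a representable $\mathcal{B}_{fib}$-module; suspensions, cones and direct summands of bimodules are computed objectwise on representables; and the representable $\mathcal{B}_{fib}$-modules are stable under these three operations inside $\mathcal{D}(\mathcal{B}_{fib})$ exactly because $\mathcal{B}_{fib}$ is Morita fibrant (Proposition~\ref{nova4} again). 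Hence $\mathsf{rep}$, and a fortiori its full dg-subcategory $\mathsf{rep}_{dg}$ of cofibrant objects, satisfies the fibrancy criterion. The three chains of bijections above being natural in all variables, this completes the proof.
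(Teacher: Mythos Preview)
Your argument for the first two points follows the paper's approach exactly: both reduce to To\"en's results via the adjunction $\mathrm{Hom}_{\mathsf{Hmo}}(\mathcal{A},\mathcal{B})\cong\mathrm{Hom}_{\mathsf{Heq}}(\mathcal{A},\mathcal{B}_{fib})$ of Corollaire~\ref{prop1}. For the mapping-space statement the paper phrases the reduction slightly differently---observing that the two model structures share their cofibrations, hence their trivial fibrations, hence their cosimplicial frames $\Gamma^{\ast}$---but this is equivalent to your Bousfield-localisation argument.

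For the third point there is one genuine error. You assert that ``$\dgcat$ with the Morita structure [is] again a symmetric monoidal model category''. This is false, and is precisely the difficulty flagged in the To\"en quotation at the beginning of Chapter~4: the tensor product of two cofibrant dg categories is in general not cofibrant, so the pushout-product axiom fails already for the quasi-equivalence structure, and a fortiori for its Bousfield localisations. The descent of $-\overset{\mathbb L}{\otimes}-$ to $\mathsf{Hmo}$ is true, but for a different reason: once $\mathcal{A}$ is replaced by a cofibrant (hence $k$-flat) resolution $\mathcal{A}_c$, the functor $\mathcal{A}_c\otimes-$ sends Morita dg functors to Morita dg functors, as one checks on derived categories of modules.

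Your subsequent reduction to the Morita fibrancy of $\mathsf{rep}_{dg}(\mathcal{A},\mathcal{B}_{fib})$, and your pointwise verification of it, are correct---with the caveat that the criterion of Proposition~\ref{nova4} literally concerns the Yoneda image in $\mathcal{D}(\mathsf{rep}_{dg})$, not in $\mathcal{D}(\mathcal{A}^{op}\otimes\mathcal{B}_{fib})$; the two conditions coincide because $\mathsf{rep}_{dg}$ sits fully faithfully inside the dg category of cofibrant bimodules, which already has all cones, shifts and summands. The paper does not spell this out: Remarque~\ref{monoi} simply records that $(\mathsf{Hmo},-\overset{\mathbb L}{\otimes}-)$ is closed with internal Hom $\mathbb{R}\underline{\mathrm{Hom}}_{\mathsf{Heq}}(\mathcal{A},\mathcal{B}_{fib})$, referring to \cite{Hovey} and \cite{Toen}. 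So apart from the monoidal-model-category slip, you have supplied a proof where the paper gives only a reference.
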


Rappelons que tous les invariants habituels comme la $K$-th{\'e}orie alg{\'e}brique,
l'homologie de Hochschild, l'homologie cyclique, $\ldots$ descendent {\`a} la cat{\'e}gorie
$\mathsf{Hmo}$. Ces invariants d{\'e}pendent en fait de moins
de structure, voir l'exemple~\ref{exemple1}. Cela motive la
construction suivante.
\par
Soit $\mathsf{Hmo}_0$ la cat{\'e}gorie qui a pour objets les petites
dg-cat{\'e}gories et telle que
$\mathsf{Hom}_{\mathsf{Hmo}_0}(\mathcal{A},\mathcal{B})$ est le groupe
de Grothendieck de la cat{\'e}gorie triangul{\'e}e
$\mathsf{rep}_{mor}(\mathcal{A},\mathcal{B})$. La composition est
induite par celle de $\mathsf{Hmo}$.
On dispose d'un foncteur canonique $\mathsf{Hmo} \rightarrow
\mathsf{Hmo}_0$. 

\begin{lemme}[\ref{ad}]
La cat{\'e}gorie $\mathsf{Hmo}_0$ est additive et le produit tensoriel
$-\otimes^{\mathbb{L}}-$ de $\mathsf{Hmo}$ induit une structure mono{\"\i}dale sym{\'e}trique sur $\mathsf{Hmo}_0$.
\end{lemme} 

Soit maintenant $F:\mathsf{Hmo}\rightarrow \mathsf{C}$ un foncteur {\`a} valeurs
dans une cat{\'e}gorie additive $\mathsf{C}$.

\begin{theoreme}[\ref{Univ}]\label{universal}
Les conditions suivantes sont {\'e}quivalentes~:
\begin{itemize}
\item[1)] Le foncteur $F$ est compos{\'e} d'un foncteur additif
  $\mathsf{Hmo}_0 \rightarrow \mathsf{C}$ et du foncteur canonique
  $\mathsf{Hmo}\rightarrow \mathsf{Hmo}_0$.
\item[2)] Pour toutes dg-cat{\'e}gories $\mathcal{A}$, $\mathcal{B}$, l'identit{\'e}
  $F(\left[X\right])+F(\left[Z\right])=F(\left[Y\right])$, est v{\'e}rifi{\'e}e
  dans $\mathrm{Hom}_{\mathsf{C}}(F(\mathcal{A}), F(\mathcal{B}))$ pour
  tout triangle $X \rightarrow Y \rightarrow Z \rightarrow X\left[1\right]$
  de $\mathsf{rep}_{mor}(\mathcal{A}, \mathcal{B})$.
\item[3)] Pour toute dg-cat{\'e}gorie $\mathcal{A}$, le morphisme 
$$
\xymatrix{
F(\mathcal{A})\oplus F(\mathcal{A})
\ar[rr]^-{\left[F(i_1)
        \, , \,F(i_2)\right]} && F(\mbox{T}(\mathcal{A}))
}
$$
est un isomorphisme dans $\mathsf{C}$.

\item[4)] Pout toute dg-cat{\'e}gorie pr{\'e}triangul{\'e}e $\mathcal{A}$ munie de sous-dg-cat{\'e}gories pleines pr{\'e}triangul{\'e}es
  $\mathcal{B}$ et $\mathcal{C}$ qui donnent lieu {\`a} une d{\'e}composition
  semi-orthogonale
  $\mathrm{H}^0(\mathcal{A})=(\mathrm{H}^0(\mathcal{B}),
  \mathrm{H}^0(\mathcal{C}))$, voir
  \cite{Bondal}, le morphisme
$$
F(\mathcal{B})\oplus F(\mathcal{C}) \rightarrow F(\mathcal{A})
$$
induit par les inclusions est un isomorphisme dans $\mathsf{C}$.
\end{itemize}
\end{theoreme}

\begin{remarque}
Toute {\'e}quivalence d{\'e}riv{\'e}e, voir \cite{Rickard}, donne un isomorphisme dans $\mathsf{Hmo}$ et donc dans
$\mathsf{Hmo}_0$. Cependant, il existe d'autres isomorphismes dans
$\mathsf{Hmo}_0$~: si $k$ est un corps alg{\'e}briquement clos et $A$ une $k$-alg{\`e}bre (ordinaire) de dimension finie sur $k$ et de dimension
globale finie, alors $A$ est isomorphe {\`a} son plus grand quotient
semisimple $A/\mathrm{rad}(A)$ dans $\mathsf{Hmo}_0$ (voir
\cite[2.5]{cyclicDG}) mais dans $\mathsf{Hmo}$, $A$ est isomorphe {\`a}
$A/\mathrm{rad}(A)$ seulement si $A$ est semisimple.
\end{remarque}

On appelle un foncteur $F:\mathsf{Hmo} \rightarrow \mathsf{C}$ qui
satisfait aux conditions du th{\'e}or{\`e}me~\ref{universal} un {\em invariant additif}.

\begin{exemple}\label{exemple1}
\begin{itemize}
\item[-] {\bf $K$-th{\'e}orie} $\,$ Soit $\mathcal{A}$ une petite dg-cat{\'e}gorie. On
  definit la $K$-th{\'e}orie de $\mathcal{A}$ comme la $K$-th{\'e}orie de
  Waldhausen associ{\'e} {\`a} la cat{\'e}gorie des dg-modules cofibrants et
  parfaits dont les cofibrations sont les monomorphismes (scind{\'e}es
  dans la cat{\'e}gorie gradu{\'e}e) et les
  {\'e}quivalences faibles les quasi-isomorphismes. Cette
  construction nous fournit un foncteur $K$ de $\dgcat$ vers la
  cat{\'e}gorie homotopique des spectres (sym{\'e}triques). Par des r{\'e}sultats
  dus {\`a} Waldhausen \cite{Waldhausen} le foncteur $K$ est un invariant
  additif.

\item[-] {\bf Homologie de Hochschild, cyclique, $\ldots$} Soit $\mathcal{A}$ une
  petite dg-cat{\'e}gorie (qu'on peut supposer cofibrante). On associe {\`a}
  $\mathcal{A}$ son complexe mixte $C(\mathcal{A})$ au sens de
  Kassel \cite{Kassel}, voir aussi \cite{cyclichomology}. Cela nous fournit un foncteur $C$ de
  $\dgcat$ vers $\mathcal{D}(\Lambda)$, o{\`u} $\Lambda$ est la dg-alg{\`e}bre
  $k[B]/(B^2)$, avec $B$ de degr{\'e} $-1$ et $\partial B=0$. Toutes les
  variantes de l'homologie cyclique d{\'e}pendent seulement de
  $C(\mathcal{A})$ consid{\'e}r{\'e} comme un object dans
  $\mathcal{D}(\Lambda)$. Par des r{\'e}sultats dus {\`a} Keller
  \cite{cyclicDG} \cite{cyclichomology}, le foncteur $C$ est un invariant additif. 
\end{itemize}
\end{exemple}

Maintenant on remarque que les th{\'e}or{\`e}mes~\ref{thm2} et \ref{universal} nous
fournissent une construction explicite de `l'invariant additif
universel', c'est-{\`a}-dire un foncteur
$$
\mathcal{U}_a:\dgcat \rightarrow \mathsf{Hmo}_0
$$
{\`a} valeurs dans une cat{\'e}gorie additive qui rend inversibles les
dg-foncteurs de Morita, transforme les d{\'e}compositions
semi-orthogonales au sens de Bondal-Orlov \cite{Bon-Orl} en sommes directes et qui est
universel pour ces propri{\'e}t{\'e}s.

Cette construction a {\'e}t{\'e} reli{\'e}e r{\'e}cemment {\`a} la th{\'e}orie des
motifs par Kontsevich dans \cite[4]{Konnew}~:
\\

{\it Triangulated categories of the type $D(X)$ where $X$ is a smooth
  projective variety over a field {\bf k} belong to a larger class of smooth
  proper triangulated {\bf k}-linear dg-categories (another name is
  `saturated categories'), see \cite{KonY}, \cite{Toen-Vaq}. We see
  that the above quotient category of pure motives is a full
  subcategory of $\mbox{K}_0$-decategorification (with $\mathbb{Q}$
  coefficients) of the $2$-category of smooth proper {\bf k}-linear
  dg-categories. This construction was descibed recently (without
  mentioning the relation to motives) in \cite{IMRN}.

Analogously, if one takes the quotient of the Voevodsky triangulated
category of mixed motives by the endofunctor
$\mathbb{Q}(1)[2]\otimes.$, the resulting triangulated category seems
to be similar to a full subcategory of the full decategorification of
the $2$-category of smooth proper {\bf k}-linear dg-categories.
}
\\

Un exemple d'un invariant additif est le groupe de Grothendieck
$K_0$. Nous observons que dans $\mathsf{Hmo}_0$, le foncteur $K_0$
devient corepr{\'e}sentable. Soit $\underline{k}$ la dg-cat{\'e}gorie avec un
seul object et dont la dg-alg{\`e}bre d'endomorphismes est $k$.

\begin{lemme}[\ref{jolie}]\label{etant}
On a un isomorphisme naturel de groupes ab{\'e}liens
$$\mathrm{Hom}_{\mathsf{Hmo}_0}(\underline{k}, \mathcal{C})
\stackrel{\sim}{\longrightarrow} \mbox{K}_0(\mathcal{C})\, .$$
\end{lemme}

Ce lemme nous fournit imm{\'e}diatement les caract{\`e}res de Chern. En effet,
soit $n \in \mathbb{N}$ et $\mathrm{HC}_n$ le $n$-i{\`e}me groupe
d'homologie cyclique. Comme $\mathrm{HC}_n$ est un foncteur
additif, on dispose d'un foncteur 
$$\mathrm{HC}_n: \mathsf{Hmo}_0 \rightarrow
\mbox{Mod}\,\mathbb{Z}\,.$$
Maintenant, {\`a} partir de l'isomorphisme
$$ \mathrm{HC}_*(k)\simeq k \left[ u \right], \, \, \left| u \right|
=2\, , $$
et du lemma de Yoneda, on obtient les caract{\`e}res de Chern
$$ch_{2n}: \mbox{K}_0 \rightarrow \mathrm{HC}_{2n} \, . $$

Dans une tentative de construction d'une {\it mesure motivique}, Bondal-Larsen-Lunts ont introduit dans \cite{Bondal} un anneau de
Grothendieck $\mathcal{PT}$. On remarque que l'anneau $\mathcal{PT}$ s'annule si on n'impose pas des
conditions de finitude. On le modifie en d{\'e}finissant
$\mathcal{PT}^{cl}_{kar}$ comme le groupe
ab{\'e}lien engendr{\'e} par les classes de quasi-{\'e}quivalences $[\mathcal{A}]$
de petites dg-cat{\'e}gories $\mathcal{A}$ Morita-fibrantes qui sont en
outre compactes et lisses au sens de Kontsevich, voir \cite{IHP}, soumis
aux relations qui proviennent des d{\'e}compositions semi-orthogonales au
sens de Bondal-Orlov \cite{Bon-Orl}.
On relie $\mathcal{PT}^{cl}_{kar}$ {\`a} notre construction de
la cat{\'e}gorie
$\mathsf{Hmo}_0$.

Soit $\mbox{K}_0 (\mathsf{Hmo}_0^{cl})$
l'anneau de Grothendieck de la cat{\'e}gorie additive
$(\mathsf{Hmo}_0^{cl} , \oplus)$ o{\`u} l'on se restreint aux dg-cat{\'e}gories
compactes et lisses, voir \cite{IHP}
\begin{prop}[\ref{surj}]
On a un morphisme surjectif d'anneaux commutatifs
$$
\mathcal{PT}^{cl}_{kar} \rightarrow \mbox{K}_0(\mathsf{Hmo}_0^{cl})\,.
$$
\end{prop}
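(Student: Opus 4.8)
The plan is to construct the ring homomorphism $\mathcal{PT}^{cl}_{kar} \rightarrow \mathrm{K}_0(\mathsf{Hmo}_0^{cl})$ on generators and then check that it respects the defining relations and is surjective. Recall that $\mathcal{PT}^{cl}_{kar}$ is the free abelian group on quasi-equivalence classes $[\mathcal{A}]$ of small dg-categories which are Morita-fibrant, compact and smooth in the sense of Kontsevich, modulo the relations coming from semi-orthogonal decompositions. First I would send a generator $[\mathcal{A}]$ to the class of $\mathcal{U}_a(\mathcal{A}) = \mathcal{A}$ viewed as an object of the additive category $\mathsf{Hmo}_0^{cl}$, and then pass to its class in the Grothendieck ring $\mathrm{K}_0(\mathsf{Hmo}_0^{cl})$. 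This is well defined at the level of the free abelian group because a quasi-equivalence is in particular a Morita dg-functor, hence becomes an isomorphism in $\mathsf{Hmo}$ and therefore in $\mathsf{Hmo}_0$; compactness and smoothness are Morita-invariant, so the assignment lands in the prescribed full subcategory.

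Next I would verify that the semi-orthogonal relations are killed. This is exactly the content of the equivalence $(1)\Leftrightarrow(4)$ in Theorem~\ref{universal}: the universal additive invariant $\mathcal{U}_a \colon \dgcat \rightarrow \mathsf{Hmo}_0$ transforms a semi-orthogonal decomposition $\mathrm{H}^0(\mathcal{A}) = (\mathrm{H}^0(\mathcal{B}), \mathrm{H}^0(\mathcal{C}))$ into a direct sum $\mathcal{U}_a(\mathcal{A}) \cong \mathcal{U}_a(\mathcal{B}) \oplus \mathcal{U}_a(\mathcal{C})$ in $\mathsf{Hmo}_0$. Passing to $\mathrm{K}_0$ of the additive category $(\mathsf{Hmo}_0^{cl}, \oplus)$ this gives $[\mathcal{A}] = [\mathcal{B}] + [\mathcal{C}]$, which is precisely the relation imposed in $\mathcal{PT}^{cl}_{kar}$. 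Hence the map on the free abelian group descends to a group homomorphism $\mathcal{PT}^{cl}_{kar} \rightarrow \mathrm{K}_0(\mathsf{Hmo}_0^{cl})$. For multiplicativity I would observe that the product in $\mathcal{PT}^{cl}_{kar}$ is induced by the derived tensor product $- \otimes^{\mathbb{L}} -$, which by the lemma cited as \ref{ad} descends to the symmetric monoidal structure on $\mathsf{Hmo}_0$; one needs here that the tensor product of two compact smooth dg-categories is again compact and smooth, which is a standard fact about saturated dg-categories, and that the unit $\underline{k}$ corresponds to the multiplicative unit on both sides.

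Finally, surjectivity: by definition $\mathrm{K}_0(\mathsf{Hmo}_0^{cl})$ is the Grothendieck ring of $(\mathsf{Hmo}_0^{cl}, \oplus)$, so every element is an integer combination of classes of objects $\mathcal{A}$ of $\mathsf{Hmo}_0^{cl}$; but the objects of $\mathsf{Hmo}_0^{cl}$ are by construction precisely the compact smooth dg-categories, each of which (after replacing it by a quasi-equivalent Morita-fibrant model, using the functorial fibrant replacement $\mathcal{A} \mapsto \mathcal{A}_{fib}$ of Corollary~\ref{prop1} and the characterization of Morita-fibrant objects in Proposition~\ref{nova4}) is the image of a generator of $\mathcal{PT}^{cl}_{kar}$. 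Hence the map hits every generator of the target ring and is surjective. The step I expect to be the main obstacle is the bookkeeping in the first paragraph, namely checking that the chosen Morita-fibrant, compact, smooth representative can be made functorial enough for the assignment to be unambiguous, and that compactness and smoothness are genuinely preserved under the fibrant replacement and the tensor product; the relation-checking is essentially immediate from Theorem~\ref{universal}, and surjectivity is formal once the generators are identified.
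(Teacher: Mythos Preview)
Your proposal is correct and follows essentially the same route as the paper. The only difference is presentational: for the relation-checking step you invoke the equivalence $(1)\Leftrightarrow(4)$ of Theorem~\ref{universal} (applied to the canonical functor $\mathsf{Hmo}\to\mathsf{Hmo}_0$, which trivially satisfies~(1)), whereas the paper carries out the underlying Yoneda argument directly, showing that for every $\mathcal{U}$ the induced map $\mbox{K}_0(\mathsf{rep}_{ec}(\mathcal{U},\mathcal{B}\oplus\mathcal{C}))\to \mbox{K}_0(\mathsf{rep}_{ec}(\mathcal{U},\mbox{pre-tr}(\mathcal{A})))$ is bijective via the induced semi-orthogonal decomposition. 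Your packaging is slightly cleaner; the paper's version is self-contained. The paper treats surjectivity as ``claire par construction'' and does not dwell on the fibrant-replacement subtlety you flag, nor does it spell out multiplicativity in the proof itself.
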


\subsection*{Chapitre 3}

Dans ce chapitre en utilisant le formalisme des d{\'e}rivateurs de
Grothendieck, on construit `l'invariant localisant universel des
dg-cat{\'e}gories'. On entend par cela un morphisme $\mathcal{U}_l$ du
d{\'e}rivateur point{\'e} $\mathsf{HO}(\mathsf{dgcat})$ associ{\'e} {\`a} la th{\'e}orie
homotopique de Morita des dg-cat{\'e}gories vers un derivateur triangul{\'e}
fort $\mathcal{M}_{dg}^{loc}$, tel que $\mathcal{U}_l$ commute aux
colimites homotopiques filtrantes, pr{\'e}serve le point, envoie chaque
suite exacte courte de dg-cat{\'e}gories vers un triangle et est
universel pour ces propriet{\'e}s.

{\`A} cause de cette propri{\'e}t{\'e} universelle {\em motivique}, \emph{cf.} section
$4.1$ de \cite{Konnew}, on appelle $\mathcal{M}_{dg}^{loc}$ {\em le
  motivateur localisant} (stable) des dg-cat{\'e}gories. Par exemple, la construction du complexe
mixte \cite{cyclichomology} et la $K$-th{\'e}orie non-connective
\cite{Marco} sont des invariants localisants et se factorisent donc par
$\mathcal{U}_l$.

On construit aussi `l'invariant additif universel des
dg-cat{\'e}gories', c'est-{\`a}-dire le morphisme universel de d{\'e}rivateurs
$\mathcal{U}_a$ du derivateur $\mathsf{HO}(\mathsf{dgcat})$ vers un
d{\'e}rivateur triangul{\'e}e fort $\mathcal{M}^{add}_{dg}$ qui satisfait les
deux premi{\`e}res propri{\'e}t{\'e}s et la troisi{\`e}me seulement pour les suites
exactes scind{\'e}es. On appelle $\mathcal{M}^{add}_{dg}$ le {\em
  motivateur additif} (stable) des dg-cat{\'e}gories. 

On montre que le spectre de $K$-th{\'e}orie de Waldhausen appara{\^\i}t comme
un spectre de morphismes dans la cat{\'e}gorie de base $\mathcal{M}_{dg}^{add}(e)$ du
motivateur additif. Cela est la premi{\`e}re caract{\'e}risation conceptuelle
de la $K$-th{\'e}orie de Quillen-Waldhausen
\cite{Quillen}~\cite{Waldhausen} et nous donne une toute nouvele fa{\c c}on
de penser la $K$-th{\'e}orie.

La corepr{\'e}sentation du spectre de $K$-th{\'e}orie de Waldhausen comme un
spectre de morphismes {\'e}tend
le lemme~\ref{etant}. La diff{\'e}rence essentielle est qu'on consid{\`e}re des morphismes de
d{\'e}rivateurs {\`a} valeurs dans des d{\'e}rivateurs triangul{\'e}es forts au lieu
des foncteurs {\`a} valeurs dans des cat{\'e}gories additives.

Afin de construire les motivateurs $\mathcal{M}_{dg}^{loc}$ et $\mathcal{M}_{dg}^{add}$, on a {\'e}t{\'e} amen{\'e} {\`a}
d{\'e}velopper plusieurs outils originaux et {\`a} {\'e}tablir de nouveaux liens entre la th{\'e}orie des d{\'e}rivateurs de Grothendieck
\cite{Grothendieck} et la th{\'e}orie des cat{\'e}gories de mod{\`e}les de Quillen
\cite{Quillen}. Le formalisme des d{\'e}rivateurs de Grothendieck nous
permet d'{\'e}noncer et de d{\'e}montrer des propriet{\'e}s universelles rigoureuses
et de nous d{\'e}barasser de plusieurs probl{\`e}mes techniques pr{\'e}sents dans
l'approche de Quillen.

Notre construction se d{\'e}compose en plusieurs parties. On commence
par developper les id{\'e}es de Cisinski sur la localisation des d{\'e}rivateurs.
Soit $\mathbb{D}$ un d{\'e}rivateur et $S$ une classe de morphismes de
$\mathbb{D}(e)$, o{\`u} $e$ designe la cat{\'e}gorie terminale.

\begin{defi}[\ref{defCis} Cisinski]
Le derivateur $\mathbb{D}$ admet une {\em localisation de Bousfield {\`a}
gauche} par rapport {\`a} $S$ s'il existe un morphisme de derivateurs
$$\gamma : \mathbb{D} \rightarrow \mathsf{L}_{S}\mathbb{D}\,,$$
qui commute aux colimites homotopiques, envoie les elements de $S$
vers des isomorphismes dans $\mathsf{L}_S\mathbb{D}(e)$ et satisfait {\`a}
la propri{\'e}t{\'e} universelle suivante~: pour chaque d{\'e}rivateur
$\mathbb{D}'$, le morphisme $\gamma$ induit une {\'e}quivalence de cat{\'e}gories
$$\underline{\mathsf{Hom}}_!(\mathsf{L}_S\mathbb{D},\mathbb{D}')
  \stackrel{\gamma^{\ast}}{\longrightarrow}
  \underline{\mathsf{Hom}}_{!,S}(\mathbb{D},\mathbb{D}')\,,$$
o{\`u} $\underline{\mathsf{Hom}}_{!,S}(\mathbb{D},\mathbb{D}')$ d{\'e}signe la
cat{\'e}gorie des morphismes de d{\'e}rivateurs qui commutent aux colimites
homotopiques et envoient les {\'e}l{\'e}ments de $S$ vers des isomorphismes dans
$\mathbb{D}(e)$.
\end{defi}
Soient maintenant $\mathcal{M}$ une cat{\'e}gorie de mod{\`e}les de Quillen cellulaire et
propre {\`a} gauche, $S$ un ensemble de morphismes de $\mathcal{M}$ et
$\mathsf{L}_S\mathcal{M}$ la localisation de Bousfield {\`a} gauche,
\emph{cf.} la section $3$ de \cite{Hirschhorn}. On demontre le th{\'e}or{\`e}me clef suivant.

\begin{theoreme}[\ref{Cisinsk} Cisinski]\label{intcin}
Le morphisme de d{\'e}rivateurs
$$\gamma: \mathsf{HO}(\mathcal{M})
\stackrel{\mathbb{L}Id}{\longrightarrow} \mathsf{HO}(\mathsf{L}_S
\mathcal{M})$$
est la localisation de Bousfield {\`a} gauche de
$\mathsf{HO}(\mathcal{M})$ par l'image de l'ensemble $S$ dans $\mathsf{Ho}(\mathcal{M})$.
\end{theoreme}

\begin{remarque}
Ce th{\'e}or{\`e}me nous permet de caract{\'e}riser le d{\'e}rivateur associ{\'e} {\`a} la
localisation de Bousfield {\`a} gauche $\mathsf{L}_S\mathcal{M}$ de
$\mathcal{M}$ par une propri{\'e}t{\'e} universelle.

La preuve de ce th{\'e}or{\`e}me est bas{\'e}e sur une description constructive
des $S$-{\'e}quivalences locales, voir proposition~\ref{Cisin}.
\end{remarque}

On d{\'e}veloppe ensuite une th{\'e}orie des `ind-objets'. Supposons que $\mathcal{M}$ est une cat{\'e}gorie de
mod{\`e}les de Quillen cellulaire qui satisfait certaines conditions de
finitude, voir section~\ref{homotopy}. A partir de $\mathcal{M}$, on construit
une petite sous-cat{\'e}gorie $\mathcal{M}_f$ de $\mathcal{M}$ telle que le
d{\'e}rivateur $\mathsf{HO}(\mathcal{M})$ correspond aux `ind-objets' du
pr{\'e}d{\'e}rivateur associ{\'e} {\`a} la localisation de $\mathcal{M}_f$ par rapport
aux {\'e}quivalences faibles.

\begin{theoreme}[\ref{flt}]
Le morphisme de d{\'e}rivateurs
$$ \mathsf{HO}(\mathcal{M})
\stackrel{\mathbb{R}\underline{h}}{\longrightarrow}
\mathsf{L}_{\Sigma}\mathsf{Hot}_{\mathcal{M}_f}\,,$$
induit une {\'e}quivalence de cat{\'e}gories
$$ \underline{\mathsf{Hom}}_!(\mathsf{L}_{\Sigma}\mathsf{Hot}_{\mathcal{M}_f},
\mathbb{D}) \stackrel{\mathbb{R}\underline{h}^{\ast}}{\longrightarrow}
\underline{\mathsf{Hom}}_{flt}(\mathsf{HO}(\mathcal{M}),
\mathbb{D})\,,$$
o{\`u}
$\underline{\mathsf{Hom}}_{flt}(\mathsf{HO}(\mathcal{M}),\mathbb{D})$
d{\'e}signe la cat{\'e}gorie des morphismes de d{\'e}rivateurs qui commutent aux
colimites homotopiques filtrantes.
\end{theoreme}

\begin{remarque}
Les conditions de finitude sur $\mathcal{M}$ nous permettent de construire
une petite cat{\'e}gorie g{\'e}n{\'e}ratrice $\mathcal{M}_f$ {\`a} l'interieur de
$\mathcal{M}$. On obtient alors
$\mathsf{L}_{\Sigma}\mathsf{Hot}_{\mathcal{M}_f}$ en consid{\'e}rant une
localisation de Bousfield {\`a} gauche de la cat{\'e}gorie des pr{\'e}-faisceaux simpliciaux sur
$\mathcal{M}_f$.

La preuve de ce th{\'e}or{\`e}me fait intervenir la th{\'e}orie des extensions
d{\'e}riv{\'e}es de Kan, au sens de Cisinski~\cite{Cisinski}, th{\'e}or{\`e}me~\ref{intcin} et des r{\'e}sultats dus {\`a} To{\"e}n-Vezzosi~\cite{HAG}.
\end{remarque}

Afin de mieux comprendre les contextes stables on r{\'e}interpr{\`e}te les
diff{\'e}rentes constructions de stabilisation existant dans la
litt{\'e}rature. On {\'e}tablit le lien entre la stabilisation de Heller et la
stabilisation de Hovey/Schwede en d{\'e}montrant que si on commence avec
une cat{\'e}gorie de mod{\`e}les de Quillen point{\'e}e $\mathcal{M}$, qui
satisfait quelques hypoth{\`e}ses de g{\'e}n{\'e}ration, alors les deux
constructions nous fournissent des r{\'e}sultats {\'e}quivalents.
On d{\'e}montre le th{\'e}or{\`e}me de comparaison suivant.

\begin{theoreme}[\ref{repre}]
Le morphisme induit entre les d{\'e}rivateurs triangul{\'e}s 
$$ \varphi: \mathsf{St}(\mathsf{HO}(\mathcal{M})) \longrightarrow
\mathsf{HO}(\mathsf{Sp}^{\mathbb{N}}(\mathcal{M}))$$
est une {\'e}quivalence, o{\`u} $\mathsf{St}(\mathsf{HO}(\mathcal{M}))$
d{\'e}signe la stabilisation au sens de Heller.
\end{theoreme}

\begin{remarque}
Les conditions de g{\'e}n{\'e}ration sur $\mathcal{M}$ nous permettent d'avoir une description
explicite de l'espace des morphismes de
$\mathsf{Sp}^{\mathbb{N}}(\mathcal{M})$ et de r{\'e}duire l'analyse du
morphisme $\varphi$ {\`a} la cat{\'e}gorie terminale $e$. Une analyse de
l'espace des morphismes entre les g{\'e}n{\'e}rateurs dans
$\mathsf{St}(\mathsf{HO}(\mathcal{M}))$ et un argument sur les
cat{\'e}gories triangul{\'e}es {\`a} engendrement compact nous permettent alors de d{\'e}montrer
le th{\'e}or{\`e}me.

Ce th{\'e}or{\`e}me nous permet de caract{\'e}riser la construction de
Hovey/Schwede par une propri{\'e}t{\'e} universelle
\end{remarque}

En appliquant tous les outils d{\'e}j{\`a} developp{\'e}s {\`a} la th{\'e}orie
d'homotopie de Morita des dg-cat{\'e}gories, on construit un morphisme de
d{\'e}rivateurs
$$ \mathcal{U}_t: \mathsf{HO}(\mathsf{dgcat})
\rightarrow
\mathsf{St}(\mathsf{L}_{\Sigma,P}\mathsf{Hot}_{\mathsf{dgcat}_f})$$
{\`a} valeurs dans un d{\'e}rivateur triangul{\'e} fort. Le
morphisme $\mathcal{U}_t$ est caract{\'e}ris{\'e} par la propri{\'e}t{\'e} universelle suivante.
Soit $\mathbb{D}$ un derivateur triangul{\'e} fort.

\begin{prop}[\ref{Trin}]
On dispose d'une {\'e}quivalence de cat{\'e}gories
$$\underline{\mathsf{Hom}}_!(\mathsf{St}(\mathsf{L}_{\Sigma,P} \mathsf{Hot}_{\mathsf{dgcat}_f}),\mathbb{D})
\stackrel{\mathcal{U}_t^{\ast}}{\longrightarrow} \underline{\mathsf{Hom}}_{flt,p}(\mathsf{HO}(\mathsf{dgcat}),\mathbb{D})\,,$$
o{\`u}
$\underline{\mathsf{Hom}}_{flt,p}(\mathsf{HO}(\mathsf{dgcat}),\mathbb{D})$
d{\'e}signe la cat{\'e}gorie des morphismes de d{\'e}rivateurs qui commutent aux
colimites homotopiques filtrantes et qui pr{\'e}servent le point.
\end{prop}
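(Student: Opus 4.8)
The plan is to realize the asserted equivalence as a composite of three universal properties already at our disposal, one for each of the three constructions out of which $\mathcal{U}_t$ is built: the ind-object description of $\mathsf{HO}(\mathsf{dgcat})$ (Theorem~\ref{flt}), the universal property of left Bousfield localization of derivators (Definition~\ref{defCis} together with Theorem~\ref{Cisinsk}), and the universal property of stabilization furnished by the comparison Theorem~\ref{repre}. Throughout, $\mathbb{D}$ denotes a strong triangulated derivator; in particular it is pointed and stable, so that every homotopy cocontinuous morphism with target $\mathbb{D}$ automatically preserves the point.

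First I would check that $\mathsf{dgcat}$, equipped with the Morita model structure of Theorem~\ref{thm2}, is cellular and meets the finiteness conditions of Section~\ref{homotopy}, so that the small full subcategory $\mathsf{dgcat}_f$ is available and Theorem~\ref{flt} applies with $\mathcal{M}=\mathsf{dgcat}$. This produces the morphism $\mathbb{R}\underline{h}\colon \mathsf{HO}(\mathsf{dgcat}) \rightarrow \mathsf{L}_{\Sigma}\mathsf{Hot}_{\mathsf{dgcat}_f}$ and the equivalence
$$
\underline{\mathsf{Hom}}_!(\mathsf{L}_{\Sigma}\mathsf{Hot}_{\mathsf{dgcat}_f},\mathbb{D}) \xrightarrow[\ \sim\ ]{\mathbb{R}\underline{h}^{\ast}} \underline{\mathsf{Hom}}_{flt}(\mathsf{HO}(\mathsf{dgcat}),\mathbb{D})\,.
$$
Next I would feed this into the two remaining universal properties. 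On the one hand, Definition~\ref{defCis} and Theorem~\ref{Cisinsk} identify $\mathsf{L}_{\Sigma,P}\mathsf{Hot}_{\mathsf{dgcat}_f}$ with the left Bousfield localization of the derivator $\mathsf{L}_{\Sigma}\mathsf{Hot}_{\mathsf{dgcat}_f}$ at $P$ — here one checks that the twice-localized presheaf model category $\mathsf{L}_{\Sigma\cup P}\mathsf{Hot}_{\mathsf{dgcat}_f}$ is still cellular and left proper — so that the localization morphism $\gamma$ gives an equivalence
$$
\underline{\mathsf{Hom}}_!(\mathsf{L}_{\Sigma,P}\mathsf{Hot}_{\mathsf{dgcat}_f},\mathbb{D}) \xrightarrow[\ \sim\ ]{\gamma^{\ast}} \underline{\mathsf{Hom}}_{!,P}(\mathsf{L}_{\Sigma}\mathsf{Hot}_{\mathsf{dgcat}_f},\mathbb{D})\,.
$$
On the other hand, $\mathsf{L}_{\Sigma,P}\mathsf{Hot}_{\mathsf{dgcat}_f}$ is pointed (this is precisely what the extra localization at $P$ arranges: it forces the image of the zero dg-category to become a zero object) and satisfies the generation hypotheses required by Theorem~\ref{repre}; hence stabilization has the expected universal property against strong triangulated derivators, and precomposition with the stabilization morphism yields an equivalence
$$
\underline{\mathsf{Hom}}_!(\mathsf{St}(\mathsf{L}_{\Sigma,P}\mathsf{Hot}_{\mathsf{dgcat}_f}),\mathbb{D}) \xrightarrow{\ \sim\ } \underline{\mathsf{Hom}}_!(\mathsf{L}_{\Sigma,P}\mathsf{Hot}_{\mathsf{dgcat}_f},\mathbb{D})\,.
$$

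It then remains to splice these equivalences together. Under the Theorem~\ref{flt} equivalence, a homotopy cocontinuous morphism out of $\mathsf{L}_{\Sigma}\mathsf{Hot}_{\mathsf{dgcat}_f}$ inverts (the image of) $P$ if and only if the corresponding filtered-cocontinuous morphism out of $\mathsf{HO}(\mathsf{dgcat})$ preserves the point: this is the one genuinely non-formal verification, and it holds because $P$ is defined exactly so that inverting it is equivalent to sending the zero dg-category to a zero object, a condition that $\mathbb{R}\underline{h}$ translates into point-preservation. Restricting the first displayed equivalence to these full subcategories therefore gives $\underline{\mathsf{Hom}}_{!,P}(\mathsf{L}_{\Sigma}\mathsf{Hot}_{\mathsf{dgcat}_f},\mathbb{D}) \simeq \underline{\mathsf{Hom}}_{flt,p}(\mathsf{HO}(\mathsf{dgcat}),\mathbb{D})$, and composing all three equivalences yields the desired one. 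Finally, since $\mathcal{U}_t$ is by construction the composite $\mathsf{HO}(\mathsf{dgcat}) \rightarrow \mathsf{L}_{\Sigma}\mathsf{Hot}_{\mathsf{dgcat}_f} \rightarrow \mathsf{L}_{\Sigma,P}\mathsf{Hot}_{\mathsf{dgcat}_f} \rightarrow \mathsf{St}(\mathsf{L}_{\Sigma,P}\mathsf{Hot}_{\mathsf{dgcat}_f})$ of $\mathbb{R}\underline{h}$, $\gamma$, and the stabilization morphism, the composite functor is precisely $\mathcal{U}_t^{\ast}$.

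The main obstacle is not any single one of these steps in isolation — each is an application of a theorem already in hand — but the accumulation of hypothesis-checking: that $\mathsf{dgcat}$ with the Morita structure really does satisfy the finiteness conditions of Section~\ref{homotopy} (so that $\mathsf{dgcat}_f$ exists and Theorem~\ref{flt} applies), that $\mathsf{L}_{\Sigma\cup P}\mathsf{Hot}_{\mathsf{dgcat}_f}$ remains cellular and left proper (so that Theorem~\ref{Cisinsk} is applicable), and that it is pointed and meets the generation hypotheses of Theorem~\ref{repre}. The only conceptual point is the compatibility between inverting $P$ and preserving the point, which is what makes the subscript $p$ on the right-hand side match the localization $\mathsf{L}_{\Sigma,P}$ on the left.
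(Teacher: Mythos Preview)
Your approach is essentially the same as the paper's: the paper states the proposition as an immediate consequence of Heller's stabilization theorem (Theorem~\ref{HellerT}) together with Proposition~\ref{ext}, and Proposition~\ref{ext} is proved exactly by combining Theorem~\ref{flt} with Theorem~\ref{Cisinsk} and then matching inverting $P$ with point-preservation, just as you outline.

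One correction: the universal property of stabilization that you invoke in the third step is Theorem~\ref{HellerT}, not Theorem~\ref{repre}. Theorem~\ref{repre} is the comparison between Heller's $\mathsf{St}(-)$ and Hovey/Schwede's $\mathsf{Sp}^{\mathbb{N}}(-)$; it says nothing about a universal property. What you need here is Heller's result that $stab^{\ast}$ is an equivalence when the target is a strong triangulated derivator, and the hypotheses for that are that the source derivator be regular, pointed, and strong --- not the generation hypotheses of Theorem~\ref{repre}. So the hypothesis-checking for that step is that $\mathsf{L}_{\Sigma,P}\mathsf{Hot}_{\mathsf{dgcat}_f}$ is regular (sequential homotopy colimits commute with finite products and homotopy pullbacks), pointed, and strong; the generation conditions are used elsewhere in the paper, but not for this proposition.
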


Pour chaque inclusion $\mathcal{A} \stackrel{K}{\hookrightarrow}
\mathcal{B}$ d'une sous dg-cat{\'e}gorie pleine, on dispose d'un morphisme
induit
$$ S_K: \mathsf{cone}(\mathcal{U}_t(\mathcal{A}
\stackrel{K}{\hookrightarrow} \mathcal{B})) \rightarrow
\mathcal{U}_t(\mathcal{B}/\mathcal{A})$$
dans $\mathsf{L}_{\Sigma,P} \mathsf{Hot}_{\mathsf{dgcat}_f}(e)$, o{\`u}
$\mathcal{B}/\mathcal{A}$ d{\'e}signe le dg-quotient de Drinfeld. On
d{\'e}montre que tous les morphismes $S_K$ peuvent {\^e}tre invers{\'e}s, ce qui nous
permet d'obtenir finalement {\em l'invariant localisant universel des dg-cat{\'e}gories}
$$ \mathcal{U}_l : \mathsf{HO}(\mathsf{dgcat}) \rightarrow \mathcal{M}_{dg}^{loc}\,.$$

\begin{defi}[\ref{defloc}]
Un morphisme de d{\'e}rivateurs $F$ de $\mathsf{HO}(\mathsf{dgcat})$ vers
$\mathbb{D}$ satisfait la {\em condition Dr)} si:
\begin{quote}
 - pour chaque inclusion  $\mathcal{A}
  \stackrel{K}{\hookrightarrow} \mathcal{B}$ d'une sous dg-cat{\'e}gorie
  pleine, le morphisme induit
$$ S_K: \mathsf{cone}(F(\mathcal{A}
\stackrel{K}{\hookrightarrow} \mathcal{B})) \rightarrow
F(\mathcal{B}/\mathcal{A})$$
est un isomorphisme dans $\mathbb{D}(e)$.
\end{quote}
\end{defi}

\begin{theoreme}[\ref{principal}]
Le morphisme $\mathcal{U}_l$ induit une {\'e}quivalence de cat{\'e}gories
$$
\underline{\mathsf{Hom}}_!(\mathcal{M}_{dg}^{loc},
\mathbb{D}) \stackrel{\mathcal{U}_l^{\ast}}{\longrightarrow}
\underline{\mathsf{Hom}}_{flt,\,Dr,\,p}(\mathsf{HO}(\mathsf{dgcat}),\mathbb{D})\,,$$
o{\`u} $\underline{\mathsf{Hom}}_{flt,\,Dr\,,
  p}(\mathsf{HO}(\mathsf{dgcat}),\mathbb{D})$ d{\'e}signe la cat{\'e}gorie des
morphismes de d{\'e}rivateurs qui commutent aux colimites homotopiques
filtrantes, satisfont la condition Dr) et pr{\'e}servent le point.
\end{theoreme}

\begin{remarque}
La preuve de ce th{\'e}or{\`e}me se d{\'e}compose en plusieurs propositions et
th{\'e}or{\`e}mes. On commence par construire un ensemble $\mathcal{E}$
d'inclusions $\mathcal{G} \hookrightarrow \mathcal{H}$, avec
$\mathcal{H}$ une $I$-cellule strictement finie. Cela nous permet d'exprimer chaque inclusion $\mathcal{A} \stackrel{K}{\hookrightarrow}
\mathcal{B}$ d'une sous dg-cat{\'e}gorie pleine comme la colimite
homotopique d'un diagramme filtrant o{\`u} chaque terme appartient {\`a}
$\mathcal{E}$.
On en d{\'e}duit le th{\'e}or{\`e}me suivant~:
\end{remarque}
\begin{theoreme}[\ref{invert}]
Si 
$$G:
\mathsf{St}(\mathsf{L}_{\Sigma,P}\mathsf{Hot}_{\mathsf{dgcat}_f})
\rightarrow \mathbb{D}$$
est un morphisme de d{\'e}rivateurs triangul{\'e}s qui commute aux colimites
homotopiques et tel que $G(e)(S_L)$ est inversible pour chaque $L$
dans $\mathcal{E}$, alors $G(e)(S_K)$ est inversible pour chaque
inclusion $\mathcal{A} \stackrel{K}{\hookrightarrow} \mathcal{B}$
d'une sous dg-cat{\'e}gorie pleine. 
\end{theoreme}
Finalement, en utilisant les t{\'e}chniques de localisation des d{\'e}rivateurs
d{\'e}velopp{\'e}es jusqu'ici et le fait que le d{\'e}rivateur
$\mathsf{St}(\mathsf{L}_{\Sigma,P}\mathsf{Hot}_{\mathsf{dgcat}_f})$
admet un mod{\`e}le de Quillen stable, on construit le motivateur
localisant des dg-cat{\'e}gories $\mathcal{M}_{dg}^{loc}$.

On {\'e}tablit aussi un lien entre la construction $S_{\bullet}$ de
Waldhausen et le foncteur de suspension dans la cat{\'e}gorie triangul{\'e}e
$\mathcal{M}_{dg}^{loc}(e)$. Soit $\Delta$ la cat{\'e}gorie simpliciale et $p:
\Delta \rightarrow e$ le foncteur de projection.

\begin{prop}[\ref{real}]
On dispose d'un isomorphisme canonique dans $\mathcal{M}_{dg}^{loc}(e)$
$$ p_!\mathcal{U}_l(S_{\bullet}\mathcal{A})
\stackrel{\sim}{\rightarrow} \mathcal{U}_l(\mathcal{A})[1]\,.$$
\end{prop}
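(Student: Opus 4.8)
The plan is to show that the simplicial object $[n]\mapsto\mathcal{U}_l(S_n\mathcal{A})$ of $\mathcal{M}_{dg}^{loc}$ is (naturally in $[n]$) the bar construction on $\mathcal{U}_l(\mathcal{A})$, and then to compute its homotopy colimit $p_!$ inside the triangulated strong derivator $\mathcal{M}_{dg}^{loc}$.

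\emph{Step 1: $\mathcal{U}_l(S_n\mathcal{A})\cong\mathcal{U}_l(\mathcal{A})^{\oplus n}$.} Here $S_\bullet\mathcal{A}$ denotes Waldhausen's $S_\bullet$-construction applied to (a small model of) the Waldhausen dg-category of perfect cofibrant $\mathcal{A}$-modules of Example~\ref{exemple1}; concretely $S_n\mathcal{A}$ is the dg-category of $n$-step filtered perfect modules together with chosen successive subquotients, so that $S_0\mathcal{A}\simeq 0$ and $S_1\mathcal{A}\simeq\mathcal{A}$ in $\mathsf{Hmo}$. Forgetting all but the first filtration step gives a full inclusion $\mathcal{A}\hookrightarrow S_n\mathcal{A}$ whose Drinfeld quotient is $S_{n-1}\mathcal{A}$, and this short exact sequence of dg-categories is split by the degeneracy functor $s_1\cdots s_{n-1}\colon S_1\mathcal{A}\to S_n\mathcal{A}$. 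Since $\mathcal{U}_l$ satisfies condition Dr) and preserves the point (Theorem~\ref{principal}), it carries such a split short exact sequence to a split distinguished triangle in $\mathcal{M}_{dg}^{loc}(e)$; iterating yields $\mathcal{U}_l(S_n\mathcal{A})\cong\mathcal{U}_l(\mathcal{A})^{\oplus n}$. This is the derivator analogue of the fact, recorded at the $\mathsf{Hmo}_0$-level in Theorem~\ref{universal}(4), that an additive invariant splits semi-orthogonal decompositions.

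\emph{Step 2: identification with the bar construction.} The levelwise isomorphisms of Step~1 must be upgraded to an isomorphism of $\Delta$-shaped diagrams, i.e. one must check that, transported along these isomorphisms, the faces and degeneracies of $S_\bullet\mathcal{A}$ become the standard structure maps of the bar construction $B_\bullet\bigl(\mathcal{U}_l(\mathcal{A})\bigr)$: the outer faces $d_0,d_n$ project off an end summand, the inner faces $d_i$ add adjacent summands, and the degeneracies insert a zero summand. Since every comparison used in Step~1 is induced by an explicit dg-functor between the $S_n\mathcal{A}$, this reduces to computing the action of those dg-functors on chosen subquotients; it is precisely Waldhausen's additivity theorem, read inside the derivator $\mathcal{M}_{dg}^{loc}$. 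Establishing it coherently in $n$, rather than just degreewise, is the step I expect to be the main obstacle.

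\emph{Step 3: the homotopy colimit.} Granting $\mathcal{U}_l(S_\bullet\mathcal{A})\cong B_\bullet\bigl(\mathcal{U}_l(\mathcal{A})\bigr)$, it remains to evaluate $p_!$. For the bar construction on an object $X$ of an additive category the latching (degeneracy) object agrees with the whole of $X^{\oplus n}$ in degrees $\geq 2$, so the skeletal filtration of $B_\bullet(X)$ has trivial associated graded above degree one, and $p_!B_\bullet(X)$ is computed by the homotopy pushout of $0\leftarrow X\rightarrow 0$, which in the pointed triangulated derivator $\mathcal{M}_{dg}^{loc}$ is canonically $X[1]$. Taking $X=\mathcal{U}_l(\mathcal{A})$ and tracing through the identifications of Steps~1 and~2 gives the asserted canonical isomorphism $p_!\mathcal{U}_l(S_\bullet\mathcal{A})\stackrel{\sim}{\rightarrow}\mathcal{U}_l(\mathcal{A})[1]$.
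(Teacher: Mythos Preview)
Your approach differs from the paper's, and the gap you flag in Step~2 is real: in a derivator, levelwise isomorphisms $\mathcal{U}_l(S_n\mathcal{A})\cong\mathcal{U}_l(\mathcal{A})^{\oplus n}$ do not automatically assemble into an isomorphism of $\Delta$-diagrams, and producing the required coherent identification with the bar construction would take substantial additional work that you have not carried out.

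The paper sidesteps this coherence problem entirely. Rather than decomposing each $S_n\mathcal{A}$ individually, it uses the path object $PS_\bullet\mathcal{A}$ (where $PS_n\mathcal{A}=S_{n+1}\mathcal{A}$) and the levelwise short exact sequence
\[
0 \longrightarrow \mathcal{A}_\bullet \longrightarrow PS_\bullet\mathcal{A} \longrightarrow S_\bullet\mathcal{A} \longrightarrow 0
\]
of \emph{simplicial} dg categories, where $\mathcal{A}_\bullet$ is constant with value $\mathcal{A}$. This sequence lives in $\mathsf{HO}(\mathsf{dgcat})(\Delta)$ before any invariant is applied, so $\mathcal{U}_l$ carries it to a sequence in $\mathcal{M}_{dg}^{loc}(\Delta)$; condition Dr) at each level together with the conservativity axiom of the derivator then shows the comparison map from the cone of the first morphism to $\mathcal{U}_l(S_\bullet\mathcal{A})$ is an isomorphism, yielding a genuine triangle in $\mathcal{M}_{dg}^{loc}(\Delta)$. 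Applying $p_!$ gives a triangle in $\mathcal{M}_{dg}^{loc}(e)$. Since $\mathcal{A}_\bullet$ is constant, $p_!\mathcal{U}_l(\mathcal{A}_\bullet)\cong\mathcal{U}_l(\mathcal{A})$; since $PS_\bullet\mathcal{A}$ is a contractible simplicial object (extra degeneracy), $p_!\mathcal{U}_l(PS_\bullet\mathcal{A})\cong 0$. The triangle then forces $p_!\mathcal{U}_l(S_\bullet\mathcal{A})\cong\mathcal{U}_l(\mathcal{A})[1]$.

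The moral: instead of trying to name the simplicial structure on $\mathcal{U}_l(S_\bullet\mathcal{A})$ explicitly after the fact, the paper fits $S_\bullet\mathcal{A}$ into a short exact sequence of simplicial objects \emph{in} $\mathsf{dgcat}$, so that the coherence is already present before the invariant is applied, and the other two terms have trivially computable homotopy colimits.
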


On montre aussi que le derivateur $\mathcal{M}_{dg}^{loc}$ admet un mod{\`e}le de
Quillen stable donn{\'e} par une localisation de Bousfield {\`a} gauche d'une
certaine cat{\'e}gorie de pr{\'e}-faisceaux en spectres.

\begin{prop}[\ref{prespectres}]
On dispose d'une {\'e}quivalence de d{\'e}rivateurs
$$ \mathsf{HO}(\mathsf{L}_{\widetilde{\mathcal{E}_{st}},G}\mathsf{Fun}(\mathsf{dgcat}_f^o,\mathsf{Sp}^{\mathbb{N}}(Sset_{\bullet}))) \stackrel{\sim}{\longrightarrow} \mathcal{M}_{dg}^{loc} \,.$$
\end{prop}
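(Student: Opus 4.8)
The plan is to unwind the construction of $\mathcal{M}_{dg}^{loc}$ stage by stage, realising at each level an explicit presheaf-type Quillen model, and then to glue the resulting identifications using Cisinski's Theorem~\ref{intcin}.

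First, since $\mathsf{dgcat}_f$ is small, the projective model structure on $\mathcal{N}:=\mathsf{Fun}(\mathsf{dgcat}_f^o,Sset_{\bullet})$ is a pointed, cellular, left proper model category whose homotopy derivator is the derivator $\mathsf{Hot}_{\mathsf{dgcat}_f}$ of pointed presheaves. Its left Bousfield localization $\mathsf{L}_{\Sigma,P}\mathcal{N}$ at the set of maps defining $\Sigma$ (which forces the representable presheaves to commute with the relevant filtered homotopy colimits of $\mathsf{dgcat}_f$) together with the point-collapsing map $P$ is again pointed, cellular and left proper, and by Theorem~\ref{flt} (in its pointed refinement) one has $\mathsf{HO}(\mathsf{L}_{\Sigma,P}\mathcal{N})=\mathsf{L}_{\Sigma,P}\mathsf{Hot}_{\mathsf{dgcat}_f}$.

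Next I would stabilise: form $\mathsf{Sp}^{\mathbb{N}}(\mathsf{L}_{\Sigma,P}\mathcal{N})$, a stable, cellular, left proper model category. Since $\mathcal{N}$ satisfies the generation hypotheses of Theorem~\ref{repre} and these are inherited by left Bousfield localizations, Theorem~\ref{repre} gives $\mathsf{HO}(\mathsf{Sp}^{\mathbb{N}}(\mathsf{L}_{\Sigma,P}\mathcal{N}))\simeq\mathsf{St}(\mathsf{L}_{\Sigma,P}\mathsf{Hot}_{\mathsf{dgcat}_f})$. As spectrum objects are computed levelwise, $\mathsf{Sp}^{\mathbb{N}}(\mathcal{N})\cong\mathsf{Fun}(\mathsf{dgcat}_f^o,\mathsf{Sp}^{\mathbb{N}}(Sset_{\bullet}))$, and the spectra of a left Bousfield localization form a left Bousfield localization of the spectra: there is a set $G$ (the levelwise maps $\mathsf{F}_n f$ with $f\in\Sigma\cup\{P\}$, $n\geq 0$) such that $\mathsf{Sp}^{\mathbb{N}}(\mathsf{L}_{\Sigma,P}\mathcal{N})\simeq\mathsf{L}_G\,\mathsf{Fun}(\mathsf{dgcat}_f^o,\mathsf{Sp}^{\mathbb{N}}(Sset_{\bullet}))$. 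Hence the latter is a stable, cellular, left proper Quillen model for $\mathsf{St}(\mathsf{L}_{\Sigma,P}\mathsf{Hot}_{\mathsf{dgcat}_f})$.

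Finally, by Theorem~\ref{invert}, inverting in $\mathsf{St}(\mathsf{L}_{\Sigma,P}\mathsf{Hot}_{\mathsf{dgcat}_f})$ the set $\widetilde{\mathcal{E}_{st}}$ of (de)suspended maps $S_L$, $L\in\mathcal{E}$, makes every $S_K$ invertible, hence realises exactly the localization defining $\mathcal{M}_{dg}^{loc}$; together with Theorem~\ref{principal} this identifies $\mathcal{M}_{dg}^{loc}$ with $\mathsf{L}_{\widetilde{\mathcal{E}_{st}}}\mathsf{St}(\mathsf{L}_{\Sigma,P}\mathsf{Hot}_{\mathsf{dgcat}_f})$. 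Applying Theorem~\ref{intcin} to the cellular left proper model $\mathsf{L}_G\,\mathsf{Fun}(\mathsf{dgcat}_f^o,\mathsf{Sp}^{\mathbb{N}}(Sset_{\bullet}))$, and using that iterated left Bousfield localizations compose ($\mathsf{L}_{\widetilde{\mathcal{E}_{st}}}\mathsf{L}_G=\mathsf{L}_{\widetilde{\mathcal{E}_{st}},G}$), yields $\mathsf{HO}(\mathsf{L}_{\widetilde{\mathcal{E}_{st}},G}\mathsf{Fun}(\mathsf{dgcat}_f^o,\mathsf{Sp}^{\mathbb{N}}(Sset_{\bullet})))\simeq\mathcal{M}_{dg}^{loc}$. \textbf{The main obstacle} is the stabilisation step: one must check that passing to spectra commutes with the unstable Bousfield localizations --- i.e.\ exhibit the set $G$ with $\mathsf{Sp}^{\mathbb{N}}(\mathsf{L}_{\Sigma,P}\mathcal{N})\simeq\mathsf{L}_G\,\mathsf{Sp}^{\mathbb{N}}(\mathcal{N})$ --- and that Theorem~\ref{repre} genuinely applies to the localized presheaf category; this needs the generation and finiteness hypotheses to be stable under left Bousfield localization and under forming presheaf categories, together with a careful comparison of local objects on the two sides. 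Everything else is a formal chaining of Theorems~\ref{intcin}, \ref{flt}, \ref{repre}, \ref{invert} and \ref{principal}.
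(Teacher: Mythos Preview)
Your proposal is correct and follows essentially the same route as the paper: identify $\mathsf{Sp}^{\mathbb{N}}(\mathsf{Fun}(\mathsf{dgcat}_f^o,Sset_{\bullet}))$ with $\mathsf{Fun}(\mathsf{dgcat}_f^o,\mathsf{Sp}^{\mathbb{N}}(Sset_{\bullet}))$, show that stabilising the $(\Sigma,P)$-localized presheaves coincides at the derivator level with the $G$-localization of presheaves of spectra, and then localize further at $\widetilde{\mathcal{E}_{st}}$. The one place you diverge from the paper is the step you single out as the ``main obstacle'': rather than carrying out a direct comparison of local objects to prove $\mathsf{Sp}^{\mathbb{N}}(\mathsf{L}_{\Sigma,P}\mathcal{N})\simeq\mathsf{L}_G\,\mathsf{Sp}^{\mathbb{N}}(\mathcal{N})$, the paper simply observes that Theorems~\ref{Cisinsk} and~\ref{repre} give both associated derivators the \emph{same} universal property (as the stabilisation of $\mathsf{L}_{\Sigma,P}\mathsf{Hot}_{\mathsf{dgcat}_f}$), so they must be equivalent---no hands-on analysis of fibrant objects or of stability of generation hypotheses is needed. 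Your description of $G$ as the maps $\mathsf{F}_n f$ is essentially the paper's: there $G$ is defined as (representatives of) the loop-space stabilisation of the image of $(\Sigma\cup\{P\})_+$ under $\mathbb{L}\Sigma^{\infty}$, which amounts to the same set up to weak equivalence.
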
 

Ensuite, afin d'{\'e}tudier les suites exactes courtes scind{\'e}es on introduit la
notion de {\em dg-cat{\'e}gorie triangulaire sup{\'e}rieure}. Une dg-cat{\'e}gorie triangulaire
sup{\'e}rieure $\underline{\mathcal{B}}$ est donn{\'e}e par une matrice
$$
\begin{array}{rcl}
\underline{\mathcal{B}} &  := & \begin{pmatrix} \mathcal{A} &
  \mathcal{B} \\ 0 & \mathcal{C}   \end{pmatrix} 
\end{array}
$$
o{\`u} $\mathcal{A}$ et $\mathcal{C}$ sont des petites dg-cat{\'e}gories et
$\mathcal{B}$ est un $\mathcal{A}$-$\mathcal{C}$-bimodule. Un
morphisme de dg-cat{\'e}gories triangulaires sup{\'e}rieures $\underline{F}:
\underline{\mathcal{B}} \rightarrow \underline{\mathcal{B}'}$ est
donn{\'e} par $\underline{F}:=(F_{\mathcal{A}},F_{\mathcal{C}},F_X)$, o{\`u} $F_{\mathcal{A}}$, resp. $F_{\mathcal{C}}$, est un dg-foncteur de
$\mathcal{A}$ vers $\mathcal{A}'$, resp. de $\mathcal{C}$ vers
$\mathcal{C}'$, et $F_X$ est un morphisme de
$\mathcal{A}$-$\mathcal{C}$-bimodules de $X$ vers $X'$. On d{\'e}signe par
$\mathsf{dgcat}^{tr}$ la cat{\'e}gorie des petites dg-cat{\'e}gories
triangulaires sup{\'e}rieures. On dispose d'une adjonction
$$
\xymatrix{
\mathsf{dgcat}^{tr} \ar@<-1ex>[d]_{|-|} \\
\mathsf{dgcat} \ar@<-1ex>[u]_I \,,
}
$$
o{\`u} 
$$
\begin{array}{rcl}
I(\mathcal{B}') & := & \begin{pmatrix} \mathcal{B}' & \mathsf{Hom}_{\mathcal{B}'}(-,-) \\
0 & \mathcal{B}' \end{pmatrix}\,.
\end{array}
$$

\begin{theoreme}[\ref{theo1}]\label{triang}
La cat{\'e}gorie $\mathsf{dgcat}^{tr}$ admet une structure de cat{\'e}gorie de
mod{\`e}les de Quillen {\`a} engendrement cofibrant, dont les {\'e}quivalences
faibles sont les morphismes $\underline{F}$ tels que $F_{\mathcal{A}}$
et $F_{\mathcal{B}}$ sont des dg-foncteurs de Morita et $F_X$ est un
quasi-isomorphisme de $\mathcal{A}$-$\mathcal{C}$-bimodules.
\end{theoreme}

\begin{prop}[\ref{comspl}]\label{realization}
Si $\underline{\mathcal{B}}$ est une $I$-cellule strictement finie
dans $\mathsf{dgcat}^{tr}$, alors $\mathcal{A}$, $\mathcal{C}$ et
$|\underline{\mathcal{B}}|$ sont des $I$-cellules strictement finies
dans $\mathsf{dgcat}$.
\end{prop}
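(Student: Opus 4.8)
The plan is to proceed by induction on the construction of $\underline{\mathcal{B}}$ as an $I$-cell, where $I$ is the generating set of cofibrations for the model structure of Theorem~\ref{triang}. First I would make explicit what the generating cofibrations in $\mathsf{dgcat}^{tr}$ look like: under the adjunction $(|-|, I)$, the functor $I$ is a right adjoint, and cofibrations should be built from the generating cofibrations of $\mathsf{dgcat}$ (those of Theorems~\ref{thm1} and \ref{thm2}) placed in the three "slots" $\mathcal{A}$, $\mathcal{C}$, and the bimodule $X$. Concretely, one expects three families: cells that modify $\mathcal{A}$ alone, cells that modify $\mathcal{C}$ alone, and cells that freely attach a bimodule generator, i.e. a morphism of the triangular category adding one element to $X(a,c)$ for chosen objects $a\in\mathcal A$, $c\in\mathcal C$, together with the initial map $\emptyset \to (\text{one new object in }\mathcal A \text{ or }\mathcal C)$. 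Identifying this generating set is the conceptual heart of the argument, since "strictly finite $I$-cell" is defined relative to it.

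Next I would set up the induction. The base case is the initial object of $\mathsf{dgcat}^{tr}$, whose three components $\mathcal{A}$, $\mathcal{C}$ and $|\underline{\mathcal{B}}|$ are all the empty (or appropriately trivial) dg-category, which is a strictly finite $I$-cell in $\mathsf{dgcat}$. For the inductive step, suppose $\underline{\mathcal{B}}$ is obtained from $\underline{\mathcal{B}}_0$ by a single pushout along a generating cofibration, where $\mathcal A_0,\mathcal C_0,|\underline{\mathcal B}_0|$ are already strictly finite $I$-cells. One checks that the functors $\underline{\mathcal B}\mapsto \mathcal A$ and $\underline{\mathcal B}\mapsto\mathcal C$ are left adjoints (projection onto a corner), hence preserve pushouts; so $\mathcal A$, resp. $\mathcal C$, is obtained from $\mathcal A_0$, resp. $\mathcal C_0$, by a pushout along either a generating cofibration of $\mathsf{dgcat}$ or an identity — in the first two families of cells, exactly one of $\mathcal A$, $\mathcal C$ changes by a single generating cofibration cell, and in the bimodule family neither changes. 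That settles $\mathcal A$ and $\mathcal C$.

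The remaining and most delicate point is $|\underline{\mathcal{B}}|$. Since $|-|$ is a left adjoint it preserves the relevant colimits, so $|\underline{\mathcal{B}}|$ is a pushout of $|\underline{\mathcal B}_0|$ along the image under $|-|$ of the generating cofibration used. I would compute this image explicitly on each of the three families: for a cell modifying $\mathcal A$ (or $\mathcal C$) alone, $|-|$ should send it to the corresponding generating cofibration of $\mathsf{dgcat}$ (up to the way $|-|$ glues the two corners along the bimodule), so $|\underline{\mathcal B}|$ is again a finite cell attachment; for the bimodule cell adding a generator to $X(a,c)$, $|-|$ should produce the dg-functor that adjoins a single morphism $a\to c$ (a finite $I$-cell of $\mathsf{dgcat}$, of the type appearing in Theorems~\ref{thm1}/\ref{thm2}). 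The main obstacle I anticipate is precisely this explicit identification of $|I|$: one must check that $|-|$ applied to each generator of $\mathsf{dgcat}^{tr}$ is not merely a cofibration but a \emph{strictly finite} $I$-cell of $\mathsf{dgcat}$ — i.e. a finite composite of pushouts of the standard generators with no hidden infinite colimit — and that these attachments compose correctly with $|\underline{\mathcal B}_0|$. Once that lemma is in hand, finiteness is preserved at each of the (finitely many) steps of the cell structure of $\underline{\mathcal B}$, and the induction closes, giving the statement for all three of $\mathcal A$, $\mathcal C$, $|\underline{\mathcal B}|$ simultaneously.
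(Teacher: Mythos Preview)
Your proposal is correct and follows essentially the same approach as the paper: induction on the cell structure, with the base case being the initial object and the inductive step split according to the type of generating cofibration, using that $p_1$, $p_2$ and $|-|$ preserve pushouts. The paper organizes the generating set as $I = E(I\times I)\cup\tilde I$ (where $\tilde I$ is exactly your bimodule family $\begin{pmatrix}k & S^{n-1}\\ 0 & k\end{pmatrix}\hookrightarrow\begin{pmatrix}k & D^n\\ 0 & k\end{pmatrix}$), and your ``main obstacle'' --- computing $|-|$ on each generator and checking the result is a strictly finite $I$-cell in $\mathsf{dgcat}$ --- is precisely what the paper verifies: for $E(I\times I)$ one gets a pushout along $G_{\mathcal A'}\amalg G_{\mathcal C'}$, and for $\tilde I$ one gets a pushout along the generating cofibration $S(n)$.
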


On d{\'e}montre le r{\'e}sultat `d'approximation' suivant, qui est un
engr{\'e}dient important dans la preuve du th{\'e}or{\`e}me de corepresentabilit{\'e}e de la $K$-th{\'e}orie de Waldhausen, voir th{\'e}or{\`e}me~\ref{representabilidade}.

\begin{prop}[\ref{aproxsplit}]\label{apr}
Chaque suite exacte courte scind{\'e}e de dg-cat{\'e}gories est faiblement
{\'e}quivalente {\`a} une colimite homotopique filtrante de suites exactes
courtes scind{\'e}es dont les composantes sont des $I$-cellules
strictement finies dans $\mathsf{dgcat}$.
\end{prop}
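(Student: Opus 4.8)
The plan is to lift the statement to the model category $\mathsf{dgcat}^{tr}$ of Theorem~\ref{triang} and then to transport it down along the realization functor $|-|\colon\mathsf{dgcat}^{tr}\to\mathsf{dgcat}$. First I would observe that a split short exact sequence of dg-categories is nothing but the datum of an upper triangular dg-category: to
$$
\underline{\mathcal{B}}=\begin{pmatrix}\mathcal{A}&\mathcal{B}\\ 0&\mathcal{C}\end{pmatrix}
$$
one associates the sequence $\mathcal{A}\hookrightarrow|\underline{\mathcal{B}}|\to\mathcal{C}$, where the first map is the fully faithful inclusion, the second the canonical projection, and $\mathcal{C}\hookrightarrow|\underline{\mathcal{B}}|$ a section; conversely every split short exact sequence arises this way, with $\mathcal{B}$ the bimodule of morphisms from the image of $\mathcal{A}$ to the image of $\mathcal{C}$. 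Moreover a weak equivalence $\underline{F}$ in $\mathsf{dgcat}^{tr}$ --- one whose components $F_{\mathcal A},F_{\mathcal C}$ are Morita dg-functors and $F_X$ a quasi-isomorphism of bimodules --- induces a levelwise weak equivalence of the associated split short exact sequences. Hence it suffices to prove the assertion for the split short exact sequence attached to an arbitrary object $\underline{\mathcal{B}}$ of $\mathsf{dgcat}^{tr}$.

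Next I would use that the model structure of Theorem~\ref{triang} is cofibrantly generated by maps whose domains and codomains are finitely presentable (this is inherited from the generating cofibrations of $\mathsf{dgcat}$, together with the elementary maps of bimodules needed for the entry $X$). Replacing $\underline{\mathcal{B}}$ by a cofibrant resolution produced by the small object argument, one may assume $\underline{\mathcal{B}}$ is an $I$-cell object; it is then the filtered colimit of its strictly finite subcomplexes $\underline{\mathcal{B}}_i$, the transition maps being inclusions of subcomplexes, hence cofibrations between cofibrant objects. Such a filtered colimit is a filtered homotopy colimit, so $\underline{\mathcal{B}}\simeq\mathrm{hocolim}_i\,\underline{\mathcal{B}}_i$ in $\mathsf{dgcat}^{tr}$.

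Finally I would apply $|-|$. As a left adjoint (to $I$) it commutes with filtered colimits, and --- being left Quillen for the structures of Theorem~\ref{triang} --- it carries the diagram $(\underline{\mathcal{B}}_i)$, consisting of cofibrant objects and cofibrations, to a diagram of cofibrant objects and cofibrations in $\mathsf{dgcat}$; therefore $|\underline{\mathcal{B}}|\simeq\mathrm{hocolim}_i\,|\underline{\mathcal{B}}_i|$, and reading off the two diagonal entries the associated split short exact sequences satisfy $(\mathcal{A}\hookrightarrow|\underline{\mathcal{B}}|\to\mathcal{C})\simeq\mathrm{hocolim}_i\,(\mathcal{A}_i\hookrightarrow|\underline{\mathcal{B}}_i|\to\mathcal{C}_i)$. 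By Proposition~\ref{realization}, for each $i$ the dg-categories $\mathcal{A}_i$, $\mathcal{C}_i$ and $|\underline{\mathcal{B}}_i|$ are strictly finite $I$-cells in $\mathsf{dgcat}$, so each term of this filtered system is a split short exact sequence all of whose components are strictly finite $I$-cells. Together with the reduction of the first paragraph this gives the claim.

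The main obstacle I expect is the interface between the two model categories: one has to check that $|-|$ is genuinely left Quillen (equivalently that $I$ preserves fibrations and trivial fibrations for the structure of Theorem~\ref{triang}), and in particular that $|-|$ sends the diagonal cofibrations of $\mathsf{dgcat}^{tr}$ to cofibrations of $\mathsf{dgcat}$, so that applying $|-|$ to the filtered system of finite subcomplexes still yields a homotopy colimit; Proposition~\ref{realization} is precisely the object-level shadow of this. A secondary, more routine point is the verification that every $I$-cell object of $\mathsf{dgcat}^{tr}$ is the filtered colimit of its strictly finite subcomplexes, which comes down to the finite presentability of the sources and targets of the generating cofibrations --- the same finiteness discipline used throughout Chapter~3.
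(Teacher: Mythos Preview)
Your proof is correct and follows essentially the same strategy as the paper: pass to $\mathsf{dgcat}^{tr}$, use the finiteness properties of the model structure there (remark~\ref{filtri}) to write any object as a filtered (homotopy) colimit of strict finite $I$-cells, then push back down using proposition~\ref{comspl}. Two minor remarks: first, the bimodule in the upper-triangular matrix is $\mathsf{Hom}_{\mathcal{B}}(i_{\mathcal{C}}(-),i_{\mathcal{A}}(-))$, i.e.\ morphisms from $\mathcal{C}$-objects to $\mathcal{A}$-objects, not the other way round; second, your detour through the left Quillen property of $|-|$ is unnecessary --- the paper simply observes that the realization functor preserves colimits (lemma~\ref{triancomp}) and that in $\mathsf{dgcat}$ filtered colimits already compute filtered homotopy colimits (proposition~\ref{prop}), which disposes of the obstacle you flag at the end.
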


\begin{remarque}
Notre preuve est bas{\'e}e sur le lien entre dg-cat{\'e}gories
triangulaires sup{\'e}rieures et suites exactes courtes scind{\'e}es. On
utilise les propri{\'e}t{\'e}es de finitude de la cat{\'e}gorie de mod{\`e}les de
Quillen du th{\'e}or{\`e}me~\ref{triang} et la proposition~\ref{realization}.
\end{remarque}

En utilisant les techniques de localisation du th{\'e}or{\`e}me~\ref{intcin},
on construit le morphisme universel de d{\'e}rivateurs
$$ \mathcal{U}_u :\mathsf{HO}(\mathsf{dgcat}) \rightarrow
\mathcal{M}_{dg}^{unst}$$
qui commute aux colimites homotopiques filtrantes, preserve le point et
envoie chaque suite exacte scind{\'e}e vers une cofibration
homotopique. Le d{\'e}rivateur $\mathcal{M}_{dg}^{unst}$ est le {\em
  motivateur unstable des dg-cat{\'e}gories}. Il admet un mod{\`e}le de
Quillen et l'espace de $K$-th{\'e}orie de Waldhausen y appara{\^\i}t comme un
espace de morphismes.

\begin{prop}[\ref{kth}]
On dispose d'une {\'e}quivalence faible d'ensembles simpliciaux
$$ \mathsf{Map}(\mathcal{U}_u(k),S^1 \wedge \mathcal{U}_u(\mathcal{A}))
\stackrel{\sim}{\rightarrow} |N.wS.\mathcal{A}_f|\,.$$
En particulier, on dispose des isomorphismes
$$\pi_{i+1}
\mathsf{Map}(\mathcal{U}_a(k),S^1 \wedge \mathcal{U}_u(\mathcal{A}))
\stackrel{\sim}{\rightarrow} K_i(\mathcal{A}), \, \forall i \geq
0\,.$$
\end{prop}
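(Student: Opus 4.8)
The plan is to identify the mapping space $\mathsf{Map}(\mathcal{U}_u(k), S^1\wedge\mathcal{U}_u(\mathcal{A}))$ with a mapping space computed in the model for $\mathcal{M}_{dg}^{unst}$, namely a left Bousfield localization of simplicial presheaves on $\mathsf{dgcat}_f$, and then to recognize the latter explicitly as $|N.wS.\mathcal{A}_f|$ using Waldhausen's additivity and the $S_\bullet$-construction. First I would use the universal property characterizing $\mathcal{U}_u$ (the analogue, for the unstable motivator, of Proposition~\ref{Trin}) to reduce to a computation over the terminal category $e$: by Yoneda in the localized presheaf category, $\mathsf{Map}(\mathcal{U}_u(k), S^1\wedge\mathcal{U}_u(\mathcal{A}))$ is the derived mapping space $\mathsf{Map}_{\mathsf{L}}(\underline{h}_k, S^1\wedge\underline{h}_{\mathcal{A}_f})$, where $\underline{h}$ denotes the (fibrant replacement of the) representable presheaf and the localization is at the set of maps forcing filtered homotopy colimit preservation, pointedness, and the split-exact-sequence-to-homotopy-cofibration condition. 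The key input here is Proposition~\ref{apr}: every split short exact sequence is a filtered homotopy colimit of split short exact sequences whose components are strictly finite $I$-cells, so the localizing set can be taken to consist of maps between such finite objects, which makes the local-object condition tractable.

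Next I would analyze what it means for a simplicial presheaf $\mathcal{F}$ on $\mathsf{dgcat}_f$ to be local for this set. Pointedness plus filtered-colimit-commutation forces $\mathcal{F}$ to be determined by its values on strictly finite $I$-cells and to send $\underline{0}$ to a point; the split-exactness relations force, for every split short exact sequence $\mathcal{A}\to\mathcal{B}\to\mathcal{C}$, the square sending it into $\mathcal{F}$ to be homotopy cartesian, i.e. $\mathcal{F}(\mathcal{B})\simeq\mathcal{F}(\mathcal{A})\times\mathcal{F}(\mathcal{C})$ compatibly. This is exactly the condition that $\mathcal{A}\mapsto\mathcal{F}(\mathcal{A})$ is an "additive functor" in Waldhausen's sense on the category of (perfect) dg-modules, so by Waldhausen's additivity theorem the universal such pointed, filtered-colimit-preserving, additive invariant receiving a map from the representables is precisely $\mathcal{A}\mapsto |N.wS.\mathcal{A}_f|$, with the $S_\bullet$-construction supplying the delooping. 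The map $S^1\wedge\underline{h}_{\mathcal{A}_f}$ then corresponds, after localization, to the classifying space of the $S_\bullet$-construction, and evaluating the derived mapping out of $\underline{h}_k$ — using Lemma~\ref{etant}'s observation that $\underline{k}$ corepresents $K_0$, now upgraded to the space level — yields $|N.wS.\mathcal{A}_f|$, the $K$-theory space of $\mathcal{A}$.

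For the final "in particular" I would simply take homotopy groups: $\pi_{i+1}|N.wS.\mathcal{A}_f| = \pi_i|N.wS.S_\bullet\mathcal{A}_f| = K_i(\mathcal{A})$ by Waldhausen's definition of higher $K$-theory via iterated $S_\bullet$, together with the group-completion/delooping identification $\Omega|N.wS.\mathcal{A}_f|\simeq K(\mathcal{A})$; the shift by one in the index is precisely the single suspension $S^1\wedge(-)$ appearing in the statement, and replacing $\mathcal{U}_u$ by $\mathcal{U}_a$ in the displayed isomorphism is legitimate since $\mathcal{U}_a$ factors through $\mathcal{U}_u$ and $K_i$ is an additive invariant. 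The main obstacle I anticipate is the second step: verifying carefully that the local objects for the explicitly chosen localizing set $\mathcal{E}$ (from Proposition~\ref{apr}) are exactly the Waldhausen-additive invariants, and that no extra relations are imposed — this requires the finiteness and cellularity hypotheses on the model category together with the approximation result, and it is where Waldhausen's additivity theorem does the real work. Once that identification is in place, the corepresentability by $\underline{k}$ and the $S_\bullet$-delooping are formal.
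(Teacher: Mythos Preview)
Your overall architecture — compute the derived mapping space in the localized presheaf model, identify a fibrant replacement for $S^1\wedge\mathcal{U}_u(\mathcal{A})$, and then evaluate at $k$ via Yoneda — matches the paper's. But the justification you give for the central step is close to circular, and two of the ingredients are misattributed.

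First, you cannot appeal to ``Waldhausen's additivity theorem'' to conclude that $|N.wS_\bullet(-)|$ is the \emph{universal} additive invariant receiving the representables. That universality is precisely what theorem~\ref{corep} establishes later; invoking it here would be circular. The paper proceeds in the opposite direction: it writes down the explicit presheaf $K(\mathcal{A}):\mathcal{B}\mapsto|N.wS_\bullet\mathsf{rep}_{mor}(\mathcal{B},\mathcal{A})|$ and checks directly that it is fibrant in the localized model (proposition~\ref{fibrant}). The relevant local-object condition is not that split sequences go to products but that they induce homotopy \emph{fiber} sequences; this is Waldhausen's \emph{fibration theorem} \cite[1.6.4]{Waldhausen}, not additivity.

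Second, the identification $S^1\wedge\mathcal{U}_u(\mathcal{A})\simeq K(\mathcal{A})$ (proposition~\ref{clef}) is not obtained abstractly. The paper uses the simplicial sequence $\mathcal{A}_\bullet\to PS_\bullet\mathcal{A}\to S_\bullet\mathcal{A}$: at each simplicial degree this is a split short exact sequence of dg categories, so the defining relations of the localization force the cone maps $S_{I_n}$ to be invertible; conservativity then gives $S_I$ invertible in $\mathcal{M}_{dg}^{unst}(\Delta)$, and realizing (with $PS_\bullet$ contractible) yields the equivalence. No Waldhausen theorem enters this step at all.

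Finally, proposition~\ref{apr} is not used for proposition~\ref{kth}: the localizing set $\widetilde{\mathcal{E}^s_{un}}$ is already built from retractions between strictly finite $I$-cells, so the fibrant-object characterization needs no approximation. Proposition~\ref{apr} is invoked only later, in the proof of theorem~\ref{fibres} for the stable motivator. Once propositions~\ref{fibrant} and~\ref{clef} are in hand, the paper's proof is three lines:
\[
\mathsf{Map}(\mathcal{U}_u(k),S^1\wedge\mathcal{U}_u(\mathcal{A}))\;\simeq\;\mathsf{Map}(\mathbb{R}\underline{h}(k),K(\mathcal{A}))\;\simeq\;K(\mathcal{A})(k)\;\simeq\;|N.wS_\bullet\mathcal{A}_f|\,.
\]
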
 

Finalement on stabilise le d{\'e}rivateur $\mathcal{M}_{dg}^{unst}$ et on
obtient l'invariant additif universel des dg-cat{\'e}gories
$$ \mathcal{U}_a : \mathsf{HO}(\mathsf{dgcat}) \rightarrow \mathcal{M}^{add}_{dg}\,.$$

On appelle $\mathcal{M}_{dg}^{add}$ le {\em motivateur additif des dg-cat{\'e}gories}.

Soit $\mathbb{D}$ un d{\'e}rivateur triangul{\'e} fort.

\begin{defi}[\ref{condadit}]
Un morphisme de d{\'e}rivateurs $F$ de $\mathsf{HO}(\mathsf{dgcat})$ vers
$\mathbb{D}$ satisfait la {\em condition A)} si:
\begin{quote}
 - pour chaque suite exacte courte scind{\'e}e (voir definition~\ref{adjsplt})
$$ 
\xymatrix{
0 \ar[r] & \mathcal{A} \ar[r]_{i_{\mathcal{A}}} & \mathcal{B} \ar@<-1ex>[l]_R
\ar[r]_P & \mathcal{C} \ar@<-1ex>[l]_{i_{\mathcal{C}}} \ar[r] & 0 \,,
}
$$
les dg-foncteurs
$i_{\mathcal{A}}$ et $i_{\mathcal{C}}$ induisent un isomorphisme
$$ \mathcal{U}_a(\mathcal{A}) \oplus \mathcal{U}_a(\mathcal{C})
\stackrel{\sim}{\rightarrow} \mathcal{U}_a(\mathcal{B})$$
dans $\mathbb{D}(e)$.
\end{quote}
\end{defi}

\begin{theoreme}[\ref{principal1}]
Le morphisme $\mathcal{U}_a$ induit une {\'e}quivalence de cat{\'e}gories
$$
\underline{\mathsf{Hom}}_!(\mathcal{M}_{dg}^{add},
\mathbb{D}) \stackrel{\mathcal{U}_a^{\ast}}{\longrightarrow}
\underline{\mathsf{Hom}}_{flt,\,A),\,p}(\mathsf{HO}(\mathsf{dgcat}),\mathbb{D})\,,$$
o{\`u} $\underline{\mathsf{Hom}}_{flt,\,A)\,,
  p}(\mathsf{HO}(\mathsf{dgcat}),\mathbb{D})$ d{\'e}signe la cat{\'e}gorie des
morphismes de d{\'e}rivateurs qui commutent aux colimites homotopiques
filtrantes, satisfont la condition $A)$ et pr{\'e}servent le point.
\end{theoreme}

La $K$-th{\'e}orie connective de Waldhausen est un invariant additif et
descend donc {\`a} $\mathcal{M}_{dg}^{add}$. En utilisant le fait que le
d{\'e}rivateur $\mathcal{M}_{dg}^{add}$ admet un mod{\`e}le de Quillen enrichi
sur les spectres, on
d{\'e}montre le th{\'e}or{\`e}me de cor{\'e}pr{\'e}sentabilit{\'e}e suivant.

\begin{theoreme}[\ref{corep}]\label{representabilidade}
On dispose d'une {\'e}quivalence faible de spectres
$$ \mathsf{Hom}^{\mathsf{Sp}^{\mathbb{N}}}(\mathcal{U}_a(\mathcal{A}),\mathcal{U}_a(\mathcal{B})[1])
\stackrel{\sim}{\rightarrow}
K^c(\mathsf{rep}_{mor}(\mathcal{A},\mathcal{B}))\,,$$
o{\`u} $K^c(\mathsf{rep}_{mor}(\mathcal{A},\mathcal{B}))$ d{\'e}signe le
spectre de $K$-th{\'e}orie connective de Waldhausen de
$\mathsf{rep}_{mor}(\mathcal{A},\mathcal{B})$.

En particulier, on dipose d'une {\'e}quivalence faible d'ensembles simpliciaux
$$\mathsf{Map}(\mathcal{U}_a(\mathcal{A}),\mathcal{U}_a(\mathcal{B})[1])
\stackrel{\sim}{\rightarrow}
|N.wS_{\bullet}\mathsf{rep}_{mor}(\mathcal{A},\mathcal{B})|$$
et des isomorphismes
$$\pi_{i+1}
\mathsf{Map}(\mathcal{U}_a(A),\mathcal{U}_a(\mathcal{B})[1])
\stackrel{\sim}{\rightarrow} K_i(\mathsf{rep}_{mor}(\mathcal{A},\mathcal{B})), \,\,\, \forall i \geq 0\,.$$
\end{theoreme}

\begin{remarque}
Notre preuve est constitu{\'e}e de deux parties. Premi{\`e}rement, on montre
que le spectre de $K$-th{\'e}orie connective de Waldhausen est un objet fibrant par rapport {\`a}
notre mod{\`e}le de Quillen de $\mathcal{M}_{dg}^{add}$. Pour cela, on
utilise proposition~\ref{apr} et le th{\'e}or{\`e}me de fibration de
Waldhausen, voir \cite{Waldhausen}.

Dans une deuxi{\`e}me partie, on construit un morphisme naturel de
$\mathcal{U}_a(\mathcal{A})[1]$ vers la $K$-th{\'e}orie connective et on
montre qu'il est une {\'e}quivalence faible en utilisant inductivement
certaines suites exactes courtes scind{\'e}es.
\end{remarque}

\begin{remarque}
Si dans le th{\'e}or{\`e}me pr{\'e}c{\'e}dent, on prend $\mathcal{A}=k$, on obtient
$$ \mathsf{Hom}^{\mathsf{Sp}^{\mathbb{N}}}(\mathcal{U}_a(k),\mathcal{U}_a(\mathcal{B})[1])
\stackrel{\sim}{\rightarrow}
K^c(\mathcal{B})\,.$$
Le spectre de $K$-th{\'e}orie connective de Waldhausen devient donc
corepresentable dans $\mathcal{M}^{add}_{dg}$.
\end{remarque}
 
\subsection*{Chapitre 4}

Dans ce chapitre, $k$ d{\'e}signe un corps.

Le probl{\`e}me suivant a {\'e}t{\'e} formul{\'e} par To{\"e}n dans \cite{Toen}~:
\\

{\it The model category $\dgcat$ together with the symmmetric monoidal
  structure $-\otimes-$ is not a symmetric monoidal model category, as
  the tensor product of two cofibrant objects in $\dgcat$ is not
  cofibrant in general. A direct consequence of this fact is that the
  internal Hom object between cofibrant-fibrant objects in $\dgcat$
  can not be invariant by quasi-equivalences, and thus does not
  provide internal Hom's for the homotopy categories
  $\mathsf{Heq}$. This fact is the main difficulty in computing the
  mapping spaces in $\dgcat$, as the naive approach simply does not
  work.}
\\

Clairement ce probl{\`e}me est analogue pour la cat{\'e}gorie homotopique
$\mathsf{Hmo}$. Rappelons que To{\"e}n dans \cite{Toen}, a construit le
foncteur $\mathsf{Hom}$-interne $\mathsf{rep}_{dg}(-,-)$ pour
$\mathsf{Heq}$ en utilisant une certaine dg-cat{\'e}gorie de bimodules
quasi-repr{\'e}sentables {\`a} droite, voir le th{\'e}or{\`e}me~\ref{thmToen}.
On a {\'e}tendu cet object $\mathsf{Hom}$-interne {\`a} $\mathsf{Hmo}$, voir le lemme~\ref{lemmeGon}.

Pour r{\'e}soudre le probl{\`e}me pos{\'e} par To{\"e}n, on construit une
nouvelle cat{\'e}gorie de mod{\`e}les de Quillen $\mathsf{Lp}$ en utilisant
les concepts de {\it paire de localisation} et la construction
explicite du dg-quotient donn{\'e}e par Drinfeld dans \cite{Drinfeld}.

Rappelons qu'une paire de localisation $\mathcal{A}$
est donn{\'e}e par une petite dg-cat{\'e}gorie $\mathcal{A}_1$ et une sous
dg-cat{\'e}gorie pleine $\mathcal{A}_0 \subset \mathcal{A}_1$. On d{\'e}signe par $\mathsf{Lp}$ la cat{\'e}gorie des paires de localisation. Un
morphisme de paires de localisation $F:\mathcal{A} \rightarrow
\mathcal{B}$ est une $Q$-{\it {\'e}quivalence faible} si le dg-foncteur induit
$$ \mathcal{A}_1 / \mathcal{A}_0 \rightarrow \mathcal{B}_1 /
\mathcal{B}_0 $$
entre les dg-quotients de Drinfeld est un dg-foncteur de Morita.

\begin{theoreme}[\ref{main}]\label{thm4}
La cat{\'e}gorie $\mathsf{Lp}$ admet une structure de cat{\'e}gorie de mod{\`e}les
de Quillen dont les {\'e}quivalences faibles sont les $Q$-{\'e}quivalences faibles.
\end{theoreme}

\begin{remarque}
Notre construction est constitu{\'e}e de plusieurs parties et est inspir{\'e}e
par plusieurs arguments presents dans la construction de la
cat{\'e}gorie homotopique stable des spectres au sens de
Bousfield-Friedlander \cite{Bos-Fri}. Rappelons que dans \cite{Drinfeld}, Drinfeld a donn{\'e} une construction
explicite du dg-quotient d'une dg-cat{\'e}gorie $\mathcal{A}$ modulo une
sous dg-cat{\'e}gorie pleine $\mathcal{B}$ en imposant certaines
conditions de platitude, qui sont automatiquement verifi{\'e}es si on
travaille sur un corps $k$.
Notre preuve repose fortement sur cette construction explicite.
\end{remarque}

Dans section~\ref{secmon}, on construit un produit tensoriel $-\otimes-$
sur $\mathsf{Lp}$ et un foncteur $\mathsf{Hom}(-,-)$.

\begin{prop}[\ref{monferme}]
La cat{\'e}gorie $\mathsf{Lp}$ munie des foncteurs $-\otimes-$ et
$\mathsf{Hom}(-,-)$ est une cat{\'e}gorie mono{\"\i}dale sym{\'e}trique ferm{\'e}e.
\end{prop}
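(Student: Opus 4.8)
The plan is to deduce the closed symmetric monoidal structure on $\mathsf{Lp}$ from the one on $\dgcat$ by transporting it along the forgetful functor $(-)_1\colon\mathsf{Lp}\to\dgcat$, $\mathcal{A}=(\mathcal{A}_0\subseteq\mathcal{A}_1)\mapsto\mathcal{A}_1$. Recall that $(\dgcat,\otimes,\underline{k})$ is closed symmetric monoidal, the internal Hom $\underline{\mathsf{Hom}}_{\dgcat}(\mathcal{B},\mathcal{C})$ being the dg-category of dg-functors $\mathcal{B}\to\mathcal{C}$, with graded natural transformations as morphisms. By the constructions of Section~\ref{secmon}, the tensor product on $\mathsf{Lp}$ has $(\mathcal{A}\otimes\mathcal{B})_1=\mathcal{A}_1\otimes\mathcal{B}_1$ and $(\mathcal{A}\otimes\mathcal{B})_0$ the full dg-subcategory of $\mathcal{A}_1\otimes\mathcal{B}_1$ spanned by the objects $x\otimes y$ for which $x\in\mathcal{A}_0$ or $y\in\mathcal{B}_0$ (i.e.\ the pushout-product of $\mathcal{A}_0\hookrightarrow\mathcal{A}_1$ and $\mathcal{B}_0\hookrightarrow\mathcal{B}_1$, computed inside $\mathcal{A}_1\otimes\mathcal{B}_1$), the unit is $\mathcal{I}=(\emptyset\subseteq\underline{k})$, and the structural isomorphisms are those of $\dgcat$. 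In particular $(-)_1$ is strictly strong monoidal and faithful, so the pentagon, triangle and hexagon identities, the bifunctoriality of $-\otimes-$, and all naturality conditions hold in $\mathsf{Lp}$ as soon as the underlying diagrams commute in $\dgcat$. It therefore remains to check that the data involved lift to $\mathsf{Lp}$ --- i.e.\ are compatible with the full subcategories $\mathcal{A}_0\subseteq\mathcal{A}_1$ --- and that the tensor--Hom adjunction survives the passage to $\mathsf{Lp}$.

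For bifunctoriality and the coherence isomorphisms this is routine. A pair of morphisms of localization pairs is a pair of dg-functors $F\colon\mathcal{A}_1\to\mathcal{A}'_1$, $G\colon\mathcal{B}_1\to\mathcal{B}'_1$ with $F(\mathcal{A}_0)\subseteq\mathcal{A}'_0$ and $G(\mathcal{B}_0)\subseteq\mathcal{B}'_0$; then $F\otimes G$ sends an object $x\otimes y$ with $x\in\mathcal{A}_0$ (resp.\ $y\in\mathcal{B}_0$) to $F(x)\otimes G(y)$ with $F(x)\in\mathcal{A}'_0$ (resp.\ $G(y)\in\mathcal{B}'_0$), hence restricts to $(\mathcal{A}\otimes\mathcal{B})_0\to(\mathcal{A}'\otimes\mathcal{B}')_0$ and is a morphism of pairs. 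The associator $(\mathcal{A}_1\otimes\mathcal{B}_1)\otimes\mathcal{C}_1\xrightarrow{\sim}\mathcal{A}_1\otimes(\mathcal{B}_1\otimes\mathcal{C}_1)$ carries $(x\otimes y)\otimes z$ to $x\otimes(y\otimes z)$, and on either side the condition defining the $0$-part --- that at least one of $x\in\mathcal{A}_0$, $y\in\mathcal{B}_0$, $z\in\mathcal{C}_0$ holds --- is literally the same, so it and its inverse respect the $0$-parts and it is an isomorphism in $\mathsf{Lp}$; similarly the symmetry $\mathcal{A}_1\otimes\mathcal{B}_1\xrightarrow{\sim}\mathcal{B}_1\otimes\mathcal{A}_1$ exchanges the two $0$-parts, and the left unitor $\underline{k}\otimes\mathcal{A}_1\xrightarrow{\sim}\mathcal{A}_1$ identifies $(\mathcal{I}\otimes\mathcal{A})_0=\{u\otimes a\mid a\in\mathcal{A}_0\}$ with $\mathcal{A}_0$ (the clause ``$u\in\mathcal{I}_0$'' being vacuous since $\mathcal{I}_0=\emptyset$), which in particular gives $\mathcal{I}\otimes\mathcal{A}\cong\mathcal{A}$ in $\mathsf{Lp}$. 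By faithfulness of $(-)_1$ the coherence diagrams then commute.

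The essential point is the adjunction. By construction $\mathsf{Hom}(\mathcal{B},\mathcal{C})_1$ is the full dg-subcategory of $\underline{\mathsf{Hom}}_{\dgcat}(\mathcal{B}_1,\mathcal{C}_1)$ on the dg-functors carrying $\mathcal{B}_0$ into $\mathcal{C}_0$ --- these being exactly the morphisms of localization pairs $\mathcal{B}\to\mathcal{C}$ --- and $\mathsf{Hom}(\mathcal{B},\mathcal{C})_0$ is the further full subcategory on those carrying all of $\mathcal{B}_1$ into $\mathcal{C}_0$; that is, $\mathsf{Hom}(\mathcal{B},\mathcal{C})$ is the pullback-hom of $\mathcal{B}_0\hookrightarrow\mathcal{B}_1$ and $\mathcal{C}_0\hookrightarrow\mathcal{C}_1$. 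Fix $\mathcal{A},\mathcal{B},\mathcal{C}$, and start from the adjunction isomorphism of $\dgcat$, under which a dg-functor $F\colon\mathcal{A}_1\otimes\mathcal{B}_1\to\mathcal{C}_1$ corresponds to $G\colon\mathcal{A}_1\to\underline{\mathsf{Hom}}_{\dgcat}(\mathcal{B}_1,\mathcal{C}_1)$ with $G(x)(y)=F(x\otimes y)$. Then $F$ is a morphism of pairs $\mathcal{A}\otimes\mathcal{B}\to\mathcal{C}$ exactly when $F$ sends $(\mathcal{A}\otimes\mathcal{B})_0$ into $\mathcal{C}_0$, i.e.\ when $F(x\otimes y)\in\mathcal{C}_0$ whenever $x\in\mathcal{A}_0$ (for every $y\in\mathcal{B}_1$) and whenever $y\in\mathcal{B}_0$ (for every $x\in\mathcal{A}_1$); while $G$ is a morphism of pairs $\mathcal{A}\to\mathsf{Hom}(\mathcal{B},\mathcal{C})$ exactly when $G$ lands in $\mathsf{Hom}(\mathcal{B},\mathcal{C})_1$ --- i.e.\ $G(x)(\mathcal{B}_0)\subseteq\mathcal{C}_0$ for every $x\in\mathcal{A}_1$ --- and sends $\mathcal{A}_0$ into $\mathsf{Hom}(\mathcal{B},\mathcal{C})_0$ --- i.e.\ $G(x)(\mathcal{B}_1)\subseteq\mathcal{C}_0$ for every $x\in\mathcal{A}_0$. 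Substituting $G(x)(y)=F(x\otimes y)$, these two pairs of conditions become word-for-word identical, so the $\dgcat$-adjunction bijection restricts to a bijection $\mathrm{Hom}_{\mathsf{Lp}}(\mathcal{A}\otimes\mathcal{B},\mathcal{C})\cong\mathrm{Hom}_{\mathsf{Lp}}(\mathcal{A},\mathsf{Hom}(\mathcal{B},\mathcal{C}))$, natural in the three variables (naturality being inherited from $\dgcat$ through the faithful $(-)_1$). Together with the unit computation this shows $(\mathsf{Lp},-\otimes-,\mathsf{Hom}(-,-))$ is closed symmetric monoidal.

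The one delicate step is exactly this matching of the $0$-parts: $(\mathcal{A}\otimes\mathcal{B})_0$ must be taken to be the pushout-product (equivalently, the full subcategory on the $x\otimes y$ with $x\in\mathcal{A}_0$ or $y\in\mathcal{B}_0$), $\mathsf{Hom}(\mathcal{B},\mathcal{C})$ the corresponding pullback-hom, and the unit $(\emptyset\subseteq\underline{k})$; with any other bookkeeping the two systems of conditions above would cease to coincide and either the adjunction or the unit axiom would fail. One should also record the minor fact that $\mathsf{Lp}$ is closed under these pushout-products --- the pushout of two full-subcategory inclusions, computed inside $\mathcal{A}_1\otimes\mathcal{B}_1$, is again a full-subcategory inclusion. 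Everything else --- associativity, symmetry, and all coherence and naturality --- is purely formal, transported along the faithful, strictly strong monoidal forgetful functor $(-)_1\colon\mathsf{Lp}\to\dgcat$ from the closed symmetric monoidal structure of $\dgcat$.
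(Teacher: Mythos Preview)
Your proof is correct and follows essentially the same strategy as the paper's: both reduce the tensor--Hom adjunction in $\mathsf{Lp}$ to the one in $\dgcat$ and check that the extra $0$-part conditions on each side match up. The paper packages this via commutative squares in $\dgcat$ and the $(S,U)$ adjunction between $\dgcat^L$ and $\mathsf{Lp}$ (observing that the pushout $\mathcal{A}_0\otimes\mathcal{B}_1 \amalg_{\mathcal{A}_0\otimes\mathcal{B}_0}\mathcal{A}_1\otimes\mathcal{B}_0$ maps into $\mathcal{A}_1\otimes\mathcal{B}_1$ with image exactly $(\mathcal{A}\otimes\mathcal{B})_0$), whereas you unwind the same bijection elementwise via $G(x)(y)=F(x\otimes y)$; the content is identical. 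Your explicit verification that the associator, symmetry, and unitor of $\dgcat$ respect the $0$-parts, transported along the faithful strict monoidal functor $(-)_1$, is a point the paper leaves implicit, so if anything your write-up is more complete on the coherence side.
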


\begin{prop}[\ref{tensorprod}]
Le produit tensoriel $-\otimes-$ admet un foncteur d{\'e}riv{\'e} total {\`a}
gauche
$$ - \overset{\mathbb{L}}{\otimes}- : \mathsf{Ho}(\mathsf{Lp}) \times
\mathsf{Ho}(\mathsf{Lp}) \longrightarrow
\mathsf{Ho}(\mathsf{Lp})\,.$$
\end{prop}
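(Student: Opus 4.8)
The plan is to invoke Quillen's criterion for the existence of the total left derived functor of a bifunctor: it suffices to produce a functorial cofibrant resolution $Q$ for the model structure of Theorem~\ref{thm4} and to check that $-\otimes-$ sends weak equivalences between pairs of cofibrant objects to $Q$-weak equivalences; one then sets $\mathcal{A}\overset{\mathbb{L}}{\otimes}\mathcal{B}:=Q\mathcal{A}\otimes Q\mathcal{B}$. Passing to the product model structure on $\mathsf{Lp}\times\mathsf{Lp}$, whose cofibrant objects are exactly the pairs of cofibrant objects, and factoring a weak equivalence $(f,g)$ of such pairs as $(f,\mathrm{id})\circ(\mathrm{id},g)$, the symmetry of $-\otimes-$ reduces everything to the following claim: for every cofibrant $\mathcal{B}\in\mathsf{Lp}$, the functor $\mathcal{B}\otimes-$ carries trivial cofibrations between cofibrant objects to $Q$-weak equivalences (whence, by Ken Brown's lemma, all weak equivalences between cofibrant objects). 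Note that one should not expect $-\otimes-$ to be a left Quillen bifunctor: exactly as in the situation described by To\"en for $\dgcat$, the tensor product of two cofibrant objects need not be cofibrant, so $Q\mathcal{A}\otimes Q\mathcal{B}$ is only an object of $\mathsf{Ho}(\mathsf{Lp})$, and the content of the proposition is that a total derived functor exists nonetheless.

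First I would read off from the construction underlying Theorem~\ref{thm4} a description of the cofibrant objects of $\mathsf{Lp}$: a cofibrant localization pair $\mathcal{A}=(\mathcal{A}_0\subset\mathcal{A}_1)$ has $\mathcal{A}_1$ cofibrant for the Morita model structure of Theorem~\ref{thm2} — in particular $k$-flat, which over the field $k$ holds automatically for every dg-category — with $\mathcal{A}_0$ a suitably cofibrant full sub-dg-category. The heart of the argument is then the homotopical compatibility of the Drinfeld dg-quotient with the tensor product: for cofibrant $\mathcal{A}$ and $\mathcal{B}$ there is a natural Morita equivalence
$$(\mathcal{A}\otimes\mathcal{B})_1/(\mathcal{A}\otimes\mathcal{B})_0\;\simeq\;(\mathcal{A}_1/\mathcal{A}_0)\otimes(\mathcal{B}_1/\mathcal{B}_0)\,.$$
This is established using Drinfeld's explicit model for the dg-quotient from \cite{Drinfeld} — adjoining contracting homotopies to the endomorphism complexes of the objects of the chosen full subcategory — by checking that this adjunction of generators commutes, up to the stated equivalence, with $-\otimes\mathcal{B}$; the flatness hypotheses needed in \cite{Drinfeld} are automatic because we work over a field, and the cofibrancy of $\mathcal{A}$ and $\mathcal{B}$ guarantees that the underlying strict tensor product already computes the derived one.

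Granting this, the claim follows. If $f\colon\mathcal{A}\to\mathcal{A}'$ is a $Q$-weak equivalence between cofibrant objects, i.e.\ $\mathcal{A}_1/\mathcal{A}_0\to\mathcal{A}'_1/\mathcal{A}'_0$ is a Morita dg-functor, then tensoring with the $k$-flat dg-category $\mathcal{B}_1/\mathcal{B}_0$ keeps it a Morita dg-functor: this is precisely the assertion that $-\overset{\mathbb{L}}{\otimes}-$ is a well-defined functor on $\mathsf{Hmo}$, that is, that $(\mathsf{Hmo},-\overset{\mathbb{L}}{\otimes}-)$ is a symmetric monoidal category as in Corollary~\ref{lemmeGon}. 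Transporting back along the Morita equivalence displayed above shows that $f\otimes\mathrm{id}_{\mathcal{B}}$ is a $Q$-weak equivalence, which is the required property. Quillen's criterion then yields the total left derived functor $-\overset{\mathbb{L}}{\otimes}-\colon\mathsf{Ho}(\mathsf{Lp})\times\mathsf{Ho}(\mathsf{Lp})\to\mathsf{Ho}(\mathsf{Lp})$, automatically equipped with its universal (right Kan extension) property.

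The main obstacle is the displayed compatibility between the Drinfeld dg-quotient and $-\otimes-$. Drinfeld's construction is not strictly monoidal, so one must be careful both in choosing the cofibrant resolutions — so that no further derivation of the strict tensor product of dg-categories is needed — and in controlling how the extra contracting-homotopy generators behave under $-\otimes\mathcal{B}$. This is exactly the point where working over a field $k$ enters in an essential way: it trivializes the flatness conditions of \cite{Drinfeld} and reconciles $\otimes$ with $\overset{\mathbb{L}}{\otimes}$ already at the level of dg-categories. Everything else — the reduction via the product model structure, Ken Brown's lemma, and the final appeal to Quillen's criterion — is formal, and Proposition~\ref{monferme} is what makes the bifunctor $-\otimes-$ available in the first place.
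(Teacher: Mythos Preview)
Your proposal is correct and follows essentially the same route as the paper: both arguments reduce, via Ken Brown and symmetry, to showing that for cofibrant $\mathcal{A}$ the functor $\mathcal{A}\otimes-$ preserves $Q$-weak equivalences between cofibrant objects, and both establish this via the compatibility of Drinfeld's dg quotient with the tensor product (proposition~1.6.3 of \cite{Drinfeld}), using lemma~\ref{cofibrant} to ensure the underlying dg categories are cofibrant. The only cosmetic difference is that you package the compatibility as a single Morita equivalence $(\mathcal{A}\otimes\mathcal{B})_1/(\mathcal{A}\otimes\mathcal{B})_0\simeq(\mathcal{A}_1/\mathcal{A}_0)\otimes(\mathcal{B}_1/\mathcal{B}_0)$, whereas the paper unpacks it as a two-step iterated quotient (first by $\mathcal{A}_1\otimes\mathcal{B}_0$, then by the image of the $\mathcal{A}_0$-objects); these are the same computation.
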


\begin{theoreme}[\ref{rep}]\label{thm5}
Le foncteur $\mathsf{Hom}(-,-)$ admet un foncteur d{\'e}riv{\'e} total {\`a}
droite 
$$ \mathcal{R}\mathsf{Hom}(-,-):  \mathsf{Ho}(\mathsf{Lp}^{op} \times \mathsf{Lp}) \longrightarrow
\mathsf{Ho}(\mathsf{Lp})\,.$$
\end{theoreme}

\begin{remarque}
Notre preuve est bas{\'e}e sur une analyse profonde de la notion
d'homotopie entre dg-foncteurs. Pour cela, on r{\'e}-interpr{\`e}te une
construction d'une certaine dg-cat{\'e}gorie de morphismes donn{\'e}e par
Drinfeld dans \cite[2.9]{Drinfeld}, comme un objet de chemins fonctoriel
pour la structure de mod{\`e}les de Quillen du th{\'e}or{\`e}me~\ref{thm1}. Cela
nous permet d'obtenir une description plus simple de la notion
d'homotopie entre dg-foncteurs et de d{\'e}montrer le th{\'e}or{\`e}me. 
\end{remarque}

Rappelons qu'on dispose d'une adjonction
$$
\xymatrix{
\mathsf{Lp} \ar@<1ex>[d]^{Ev_1}\\
\mathsf{dgcat} \ar@<1ex>[u]^F \,,
}
$$
o{\`u} $Ev_1$ est le foncteur d'{\'e}valuation dans la premi{\`e}re composante et
le foncteur $F$ associe {\`a} une dg-cat{\'e}gorie $\mathcal{A}$ la paire de
localisation $(\emptyset \subset \mathcal{A})$.

\begin{prop}[\ref{relat}]
Si on consid{\`e}re dans $\dgcat$ la structure de mod{\`e}les de Quillen du
th{\'e}or{\`e}me~\ref{thm2} et dans $\mathsf{Lp}$ la structure de mod{\`e}les de
Quillen du th{\'e}or{\`e}me~\ref{thm4}, l'adjonction pr{\'e}c{\'e}dente est une {\'e}quivalence de Quillen.
\end{prop}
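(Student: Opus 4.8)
The strategy is the standard one: first check that $(F,Ev_1)$ is a Quillen adjunction, and then that the total left derived functor $\mathbb{L}F$ is an equivalence of homotopy categories. The second point suffices: since $\mathbb{L}F\dashv\mathbb{R}Ev_1$, once $\mathbb{L}F$ is an equivalence so is $\mathbb{R}Ev_1$, and the derived unit and counit are isomorphisms, which is exactly what ``Quillen equivalence'' means.

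\emph{Quillen adjunction.} Here $F$ is the left adjoint, because $\mathrm{Hom}_{\mathsf{Lp}}(F\mathcal{A},\mathcal{B})=\mathrm{Hom}_{\dgcat}(\mathcal{A},Ev_1\mathcal{B})$ (the empty dg-category maps uniquely into any $\mathcal{B}_0$). I would prove that $F$ is left Quillen by inspecting the construction behind Theorem~\ref{thm4}: that model structure is built in stages on top of the Morita model structure of Theorem~\ref{thm2}, and by construction $F$ carries the generating cofibrations of $\dgcat$ to cofibrations of $\mathsf{Lp}$, so $F$ preserves cofibrations. Moreover $F$ preserves \emph{all} weak equivalences: for a dg-functor $g\colon\mathcal{A}\to\mathcal{B}$ the Drinfeld dg-quotient attached to $F(g)$ is $\mathcal{A}/\emptyset\to\mathcal{B}/\emptyset$, i.e. $g$ itself, so $F(g)$ is a $Q$-equivalence iff $g$ is a Morita dg-functor. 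Preserving cofibrations and weak equivalences, $F$ preserves trivial cofibrations too, hence is left Quillen.

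\emph{The derived equivalence.} Since $F$ preserves weak equivalences, $\mathbb{L}F$ is naturally isomorphic to the functor $\bar F\colon\mathsf{Hmo}\to\mathsf{Ho}(\mathsf{Lp})$ induced by $F$. I build a quasi-inverse from the Drinfeld dg-quotient functor $q\colon\mathsf{Lp}\to\dgcat$, $(\mathcal{A}_0\subset\mathcal{A}_1)\mapsto\mathcal{A}_1/\mathcal{A}_0$; over the field $k$ this is an honest functor (Drinfeld's explicit construction needs no flatness), and by the very definition of $Q$-equivalence it sends $Q$-equivalences to Morita dg-functors, hence descends to $\bar q\colon\mathsf{Ho}(\mathsf{Lp})\to\mathsf{Hmo}$. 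One has $q\circ F=\mathrm{id}_{\dgcat}$ strictly, so $\bar q\circ\bar F=\mathrm{id}_{\mathsf{Hmo}}$. For the other composite, given $\mathcal{B}=(\mathcal{B}_0\subset\mathcal{B}_1)$ let $\mathcal{D}_0(\mathcal{B})$ be the full sub-dg-category of $\mathcal{B}_1/\mathcal{B}_0$ on the images of the objects of $\mathcal{B}_0$ (these images are contractible, each acquiring in the dg-quotient a contracting homotopy of its identity). There is a natural zig-zag in $\mathsf{Lp}$
\[
\mathcal{B}\;\longrightarrow\;\bigl(\mathcal{D}_0(\mathcal{B})\subset\mathcal{B}_1/\mathcal{B}_0\bigr)\;\longleftarrow\;\bigl(\emptyset\subset\mathcal{B}_1/\mathcal{B}_0\bigr)=F(q(\mathcal{B}))\,,
\]
the left arrow having first component the canonical projection $\mathcal{B}_1\to\mathcal{B}_1/\mathcal{B}_0$ and the right arrow the identity on $\mathcal{B}_1/\mathcal{B}_0$. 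Applying $q$ to either arrow yields the projection $\mathcal{B}_1/\mathcal{B}_0\to(\mathcal{B}_1/\mathcal{B}_0)/\mathcal{D}_0(\mathcal{B})$, which is a Morita dg-functor because one is quotienting by a full sub-dg-category of contractible objects; hence both arrows are $Q$-equivalences. This gives $\bar F\circ\bar q\cong\mathrm{id}_{\mathsf{Ho}(\mathsf{Lp})}$, so $\bar F\cong\mathbb{L}F$ is an equivalence of categories and $(F,Ev_1)$ is a Quillen equivalence, with $\mathbb{R}Ev_1\cong\bar q$.

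\textbf{Main obstacle.} The formal skeleton above is robust; the points needing genuine care are (i) extracting from the explicit, multi-step construction behind Theorem~\ref{thm4} that $F$ is really left Quillen, i.e. that the generating (trivial) cofibrations of $\mathsf{Lp}$ are compatible with $F$; and (ii) the basic but non-formal fact used in the zig-zag, namely that passing to the Drinfeld dg-quotient by a full sub-dg-category consisting of contractible objects is a Morita dg-functor, together with checking that the displayed zig-zag consists of bona fide morphisms of localization pairs and is natural in $\mathcal{B}$.
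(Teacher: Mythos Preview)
Your proof is correct, but you take a different route from the paper for the Quillen-equivalence step. The paper's argument for the Quillen adjunction is essentially the same as yours (it checks that $Ev_1$ preserves trivial fibrations, which is equivalent by adjunction to your claim that $F$ preserves cofibrations; both rely on the fact that the generating cofibrations $\mathbf{F}^1_I$ sit inside $\tilde{\mathbf{F}}^L_I$). For the Quillen equivalence, however, the paper uses the direct criterion: for $\mathcal{A}$ cofibrant in $\dgcat$ and $\mathcal{B}$ $Q$-fibrant in $\mathsf{Lp}$, one must show that a map $F(\mathcal{A})\to\mathcal{B}$ is a $Q$-weak equivalence iff its adjoint $\mathcal{A}\to Ev_1(\mathcal{B})$ is a Morita dg-functor. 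This is a one-line check once one invokes the characterization of $Q$-fibrant objects (they are of the form $(\mathcal{B}_{contr}\subset\mathcal{B})$ with $\mathcal{B}$ Morita fibrant), since then $\mathcal{B}\to\mathcal{B}/\mathcal{B}_{contr}$ is already a Morita dg-functor and both conditions amount to ``$\mathcal{A}\to\mathcal{B}$ is Morita''.

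Your approach instead builds an explicit quasi-inverse to $\mathbb{L}F$ via the Drinfeld quotient and a natural zig-zag through $Q(\mathcal{B})=(\overline{\mathcal{B}_0}\subset\mathcal{B}_1/\mathcal{B}_0)$; the left leg of your zig-zag is precisely the morphism $\eta_{\mathcal{B}}$ that drives the whole $Q$-model construction, and the right leg is a $Q$-weak equivalence for the same reason (quotienting by contractibles). So your argument is self-contained---it does not invoke the description of fibrant objects---but it re-derives facts that the paper has already packaged into that description. The paper's proof is shorter precisely because it cashes in on the prior work characterizing $Q$-fibrant objects.
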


\begin{prop}[\ref{Quilleneq}]
Les foncteurs d{\'e}riv{\'e}s totaux $-\overset{\mathbb{L}}{\otimes}-$ et
$\mathcal{R}\mathsf{Hom}(-,-)$ dans $\mathsf{Ho}(\mathsf{Lp})$
correspondent par l'{\'e}quivalence
$$
\xymatrix{
\mathsf{Ho}(\mathsf{Lp}) \ar@<1ex>[d]^{\mathcal{R}Ev_1} \\
\mathsf{Ho}(\mathsf{dgcat}) \ar@<1ex>[u]^F
}
$$  
aux foncteurs $-\overset{\mathbb{L}}{\otimes}-$ et
$\mathsf{rep}_{dg}^{mor}(-,-)$ du corollaire~\ref{lemmeGon}
\end{prop}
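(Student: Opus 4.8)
The plan is to derive everything from a few elementary properties of the left adjoint $F\colon\mathsf{dgcat}\to\mathsf{Lp}$, $\mathcal{A}\mapsto(\emptyset\subset\mathcal{A})$: it is strong symmetric monoidal, it preserves all weak equivalences, and (being left Quillen, by Proposition~\ref{relat}) it preserves cofibrant objects. The assertion about $-\overset{\mathbb{L}}{\otimes}-$ is then a direct computation, and the assertion about internal Hom's follows formally from it and from the Quillen equivalence of Proposition~\ref{relat}.

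First I would check that $F$ is strong symmetric monoidal on the nose. By the explicit construction of $-\otimes-$ on $\mathsf{Lp}$ in section~\ref{secmon}, the localization pair $(\mathcal{A}_0\subset\mathcal{A}_1)\otimes(\mathcal{B}_0\subset\mathcal{B}_1)$ has $\mathcal{A}_1\otimes\mathcal{B}_1$ as underlying dg-category, with distinguished full sub-dg-category generated by the objects having at least one coordinate in $\mathcal{A}_0$ or in $\mathcal{B}_0$; for $\mathcal{A}_0=\mathcal{B}_0=\emptyset$ that sub-dg-category is empty, so $F(\mathcal{A})\otimes F(\mathcal{B})=F(\mathcal{A}\otimes\mathcal{B})$, and the unit of $\mathsf{Lp}$ is $F(\underline{k})$. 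Next, since the Drinfeld quotient $\mathcal{A}/\emptyset$ is $\mathcal{A}$ itself, a morphism $F(f)$ is a $Q$-weak equivalence if and only if $f$ is a Morita dg-functor; in particular $F$ preserves all weak equivalences.

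Now for the tensor product: fix dg-categories $\mathcal{A},\mathcal{B}$ and cofibrant replacements $Q\mathcal{A}\to\mathcal{A}$, $Q\mathcal{B}\to\mathcal{B}$ in $\mathsf{dgcat}$ (the cofibrations of Theorems~\ref{thm1} and~\ref{thm2} coinciding). Then $F(Q\mathcal{A})\to F(\mathcal{A})$ and $F(Q\mathcal{B})\to F(\mathcal{B})$ are cofibrant replacements in $\mathsf{Lp}$, so by Proposition~\ref{tensorprod}
$$ F(\mathcal{A})\overset{\mathbb{L}}{\otimes}F(\mathcal{B})\;\simeq\;F(Q\mathcal{A})\otimes F(Q\mathcal{B})\;=\;F\bigl(Q\mathcal{A}\otimes Q\mathcal{B}\bigr)\,. $$
As $Q\mathcal{A}$ is cofibrant, hence $k$-flat, $Q\mathcal{A}\otimes Q\mathcal{B}$ computes $\mathcal{A}\overset{\mathbb{L}}{\otimes}\mathcal{B}$ in $\mathsf{dgcat}$ by Toën's result recalled in Chapter~1 (see \cite{Toen}), and $F$ preserves weak equivalences, so the right-hand side is $F(\mathcal{A}\overset{\mathbb{L}}{\otimes}\mathcal{B})$. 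Thus the equivalence $F\colon\mathsf{Ho}(\mathsf{dgcat})\to\mathsf{Ho}(\mathsf{Lp})$ of Proposition~\ref{relat}, with quasi-inverse $\mathcal{R}Ev_1$, is strong symmetric monoidal for the two derived tensor products; this is the first half of the statement.

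For the internal Hom's I would argue by uniqueness of adjoints. By Proposition~\ref{monferme} and Theorem~\ref{rep}, $\mathcal{R}\mathsf{Hom}(-,-)$ is right adjoint to $-\overset{\mathbb{L}}{\otimes}-$ on $\mathsf{Ho}(\mathsf{Lp})$, i.e. an internal Hom for the induced closed structure, and $\mathsf{rep}_{dg}^{mor}(-,-)$ is an internal Hom for $(\mathsf{Hmo},-\overset{\mathbb{L}}{\otimes}-)$ by the third item of Corollary~\ref{lemmeGon}. A strong monoidal equivalence of closed symmetric monoidal categories transports internal Hom's to internal Hom's, so $F\bigl(\mathsf{rep}_{dg}^{mor}(\mathcal{A},\mathcal{B})\bigr)\simeq\mathcal{R}\mathsf{Hom}\bigl(F(\mathcal{A}),F(\mathcal{B})\bigr)$, equivalently $\mathcal{R}Ev_1\,\mathcal{R}\mathsf{Hom}(X,Y)\simeq\mathsf{rep}_{dg}^{mor}(\mathcal{R}Ev_1 X,\mathcal{R}Ev_1 Y)$, which is the second half. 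The point I expect to require the most care is not this formal argument but the input that $\mathcal{R}\mathsf{Hom}(-,-)$ — which Theorem~\ref{rep} delivers merely as the total right derived functor of the point-set internal Hom of $\mathsf{Lp}$ — is genuinely right adjoint to $-\overset{\mathbb{L}}{\otimes}-$: in an honest symmetric monoidal model category this is automatic, but since one of the motivations of Chapter~4 is precisely that $\mathsf{dgcat}$ is \emph{not} such a category, one must check that the model structure of Theorem~\ref{main} together with $-\otimes-$ satisfies enough of the pushout-product and unit axioms (or construct the derived unit and counit directly) before the derived adjunction $-\overset{\mathbb{L}}{\otimes}-\dashv\mathcal{R}\mathsf{Hom}(-,-)$ is available. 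Everything else is bookkeeping with the Quillen equivalence of Proposition~\ref{relat}.
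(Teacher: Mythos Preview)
Your treatment of the tensor product is essentially the paper's: the paper also uses that $L$ (your $F$) is strict monoidal and sends cofibrant dg categories to cofibrant localization pairs, and then writes the same zig-zag (only replacing one variable instead of both).

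For the internal $\mathsf{Hom}$, however, your approach differs from the paper's and the gap you yourself flag is real and not easily closed. You want to use uniqueness of right adjoints, which needs as input that $\mathcal{R}\mathsf{Hom}(-,-)$ is right adjoint to $-\overset{\mathbb{L}}{\otimes}-$ on $\mathsf{Ho}(\mathsf{Lp})$. Theorem~\ref{rep} does not give you this: it only produces a total right derived functor, and the paper states explicitly in the introduction to Chapter~4 that ``the $Q$-model is not a monoidal model category, either'', so the pushout-product route you propose is blocked. In the logic of the paper the derived adjunction is a \emph{consequence} of Proposition~\ref{Quilleneq} (once $\mathcal{R}\mathsf{Hom}$ is identified with $\mathsf{rep}_{dg}^{mor}$, the adjunction comes for free from Corollary~\ref{lemmeGon}), not an input to it; your argument is therefore circular as written.

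The paper avoids this by a direct computation. It unpacks the explicit recipe $\mathcal{R}\mathsf{Hom}(L(\mathcal{A}),L(\mathcal{B}))=\mathsf{Hom}\bigl((L(\mathcal{A}))_c,\,(L(\mathcal{B})_f)_{\lambda}\bigr)$ from the proof of Theorem~\ref{rep}, observes that for $L(\mathcal{A})=(\emptyset\subset\mathcal{A})$ this reduces to $\bigl(\mathsf{Fun}_{dg}(\mathcal{A}_c,((\mathcal{B}_f)_\lambda)_{contr})\subset\mathsf{Fun}_{dg}(\mathcal{A}_c,(\mathcal{B}_f)_\lambda)\bigr)$, and then produces a short explicit zig-zag of $Q$-weak equivalences landing in $L(\mathsf{rep}_{dg}(\mathcal{A}_c,\mathcal{B}_f))$, using Theorem~\ref{rep1} for the last step. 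No derived adjunction is needed; that is the point.
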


Ces r{\'e}sultats nous permettent de r{\'e}-interpreter le foncteur
$\mathsf{rep}_{dg}^{mor}(-,-)$ comme un foncteur d{\'e}riv{\'e} total {\`a}
droite $\mathcal{R}\mathsf{Hom}(-,-)$ dans
$\mathsf{Ho}(\mathsf{Lp})$. Cela donne une solution au probl{\`e}me formul{\'e} par
To{\"e}n au d{\'e}but de ce chapitre.

\subsection*{Chapitre 5}
Dans son livre \cite{Neeman}, Neeman introduit une classe importante
de cat{\'e}gories triangul{\'e}es appel{\'e}es {\it bien engendr{\'e}es}. Rappelons
qu'une cat{\'e}gorie triangul{\'e}e $\mathcal{T}$ est bien engendr{\'e}e si elle
est {\`a} engendrement $\alpha$-compact, pour un cardinal r{\'e}gulier
$\alpha$, voir \cite{Krause} \cite{Neeman}. Neeman montre le
th{\'e}or{\`e}me de repr{\'e}sentabilit{\'e} de Brown pour les cat{\'e}gories triangul{\'e}es
bien engendr{\'e}es et aussi que cette classe de cat{\'e}gories triangul{\'e}es
est stable par localisations et passage {\`a} des sous-cat{\'e}gories localisantes
engendr{\'e}es par un ensemble d'objets.

\begin{exemple}
Soit $\mathcal{B}$ une cat{\'e}gorie ab{\'e}lienne de Grothendieck, par
exemple la cat{\'e}gorie des modules sur un espace annel{\'e}. Par le th{\'e}or{\`e}me
de Popescu-Gabriel~\cite{Pop-Gab}, $\mathcal{B}$ est une localisation
de la cat{\'e}gorie $\mbox{Mod}\,A$ des $A$-modules sur un anneau $A$. On
d{\'e}duit de cela~\cite{DocNeeman} que la cat{\'e}gorie deriv{\'e}e non born{\'e}e de
la cat{\'e}gorie ab{\'e}lienne $\mathcal{B}$ est une localisation de
$\mathcal{D}(A)$ et est donc bien engendr{\'e}e.
\end{exemple}
Keller~\cite{ICM} introduit la notion de cat{\'e}gorie
triangul{\'e}e alg{\'e}brique afin de r{\'e}sumer les propri{\'e}t{\'e}s de
toutes les cat{\'e}gories triangul{\'e}es qui apparaissent naturellement en
alg{\`e}bre et g{\'e}om{\'e}trie alg{\'e}brique. Rappelons qu'une cat{\'e}gorie triangul{\'e}e
$\mathcal{T}$ est {\it alg{\'e}brique} si elle est {\'e}quivalente au sens
triangul{\'e} {\`a} la cat{\'e}gorie stable $\underline{\mathcal{E}}$ d'une
cat{\'e}gorie de Frobenius $\mathcal{E}$. Cela revient {\`a} demander
que $\mathcal{T}$ soit {\'e}quivalente {\`a} une sous-cat{\'e}gorie pleine de la
cat{\'e}gorie des complexes {\`a} homotopie pr{\`e}s sur une cat{\'e}gorie additive.

Afin de mieux comprendre cette classe importante de
cat{\'e}gories triangul{\'e}es d'un point de vue homotopique, on construit,
pour chaque cardinal r{\'e}gulier $\alpha$, une cat{\'e}gorie
$\dgcat_{ex,\alpha}$ dont les objets sont essentiellement les
dg-cat{\'e}gories qui sont stables par suspensions, cosuspensions, c{\^o}nes
et sommes $\alpha$-petites.
On proc{\`e}de en deux {\'e}tapes.

\begin{prop}[\ref{monade}, \ref{sums}]
On dispose d'une monade $\mathsf{T}_{\alpha}$ sur la cat{\'e}gorie
$\dgcat$ dont les alg{\`e}bres sont les dg-cat{\'e}gories qui admettent tous
les coproduits $\alpha$-petits.
\end{prop}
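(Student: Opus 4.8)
The statement to prove is that there is a monad $\mathsf{T}_\alpha$ on $\dgcat$ whose algebras are exactly the dg-categories admitting all $\alpha$-small coproducts. I would organize the argument as a standard application of the theory of monads arising from free constructions, with the only real work being the explicit description of the free dg-category with $\alpha$-small coproducts.

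\textbf{Step 1: the forgetful functor.} Let $\dgcat_\alpha^{\oplus}$ denote the category of dg-categories equipped with a choice of all $\alpha$-small coproducts, with morphisms the dg-functors that preserve the chosen coproducts (strictly, or up to the canonical comparison map — I would work with strict choices to stay in a purely algebraic, 1-categorical setting). There is an evident forgetful functor $U\colon \dgcat_\alpha^{\oplus} \to \dgcat$. The plan is to produce a left adjoint $F$ to $U$ and then set $\mathsf{T}_\alpha := U F$; the monad structure (unit and multiplication) comes for free from the adjunction, and Beck's monadicity theorem will identify the algebras.

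\textbf{Step 2: construction of the free object.} Given a small dg-category $\mathcal{C}$, I would build $F(\mathcal{C})$ by freely adjoining $\alpha$-small coproducts. Concretely, the objects of $F(\mathcal{C})$ are formal symbols $\bigoplus_{i\in I} c_i$ with $I$ an $\alpha$-small set and $c_i \in \mathcal{C}$; the Hom-complex between $\bigoplus_{i\in I} c_i$ and $\bigoplus_{j\in J} d_j$ is the product over $j$ of the coproduct over $i$ of $\mathrm{Hom}_{\mathcal{C}}(c_i, d_j)$ — the universal formula forced by the requirement that the coproducts be categorical coproducts (note $\alpha$-smallness of $I$ together with the convention that makes this product-of-coproducts behave correctly is what keeps everything well-defined; for infinite $J$ one takes the genuine product of complexes). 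Composition is the obvious matrix multiplication of complexes. One checks this is a dg-category, that it carries canonical $\alpha$-small coproducts, that $\mathcal{C} \hookrightarrow F(\mathcal{C})$ via one-element coproducts, and that any dg-functor $\mathcal{C} \to U(\mathcal{D})$ extends uniquely to a coproduct-preserving dg-functor $F(\mathcal{C}) \to \mathcal{D}$. This last universal property is precisely the adjunction $\mathrm{Hom}_{\dgcat_\alpha^{\oplus}}(F(\mathcal{C}), \mathcal{D}) \cong \mathrm{Hom}_{\dgcat}(\mathcal{C}, U(\mathcal{D}))$.

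\textbf{Step 3: monadicity.} With the adjunction in hand, $\mathsf{T}_\alpha = UF$ is a monad on $\dgcat$. To identify its algebras with $\dgcat_\alpha^{\oplus}$, I would invoke the (crude, or reflexive-coequalizer) monadicity theorem: $U$ reflects isomorphisms (a coproduct-preserving dg-functor that is an iso on underlying dg-categories is an iso of objects-with-coproducts), and $\dgcat_\alpha^{\oplus}$ has, and $U$ preserves, the relevant coequalizers — these are computed at the level of underlying dg-categories since coproducts in a dg-category, being a limit/colimit-type universal structure, are themselves preserved by the relevant colimits in $\dgcat$. Hence $U$ is monadic and the comparison functor $\dgcat_\alpha^{\oplus} \xrightarrow{\sim} (\dgcat)^{\mathsf{T}_\alpha}$ is an equivalence, which is exactly the assertion.

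\textbf{Main obstacle.} The delicate point is Step 2: getting the Hom-complexes of $F(\mathcal{C})$ right so that the formal coproducts are genuine categorical coproducts, and checking that the $\alpha$-smallness hypothesis makes the construction internally consistent (in particular that $F(\mathcal{C})$ remains a \emph{small} dg-category when $\mathcal{C}$ is, and that the extension in the universal property is well-defined on morphisms, not just objects). Everything else — the monad axioms, monadicity — is formal once the free object is correctly in place. I would also remark that one must be slightly careful about whether $\dgcat_\alpha^{\oplus}$ is taken with strict or pseudo choices of coproducts; the monadic picture wants the strict version, and one notes separately that this is equivalent, for the purpose of the homotopy theory developed later, to the bicategorical one.
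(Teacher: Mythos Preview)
Your approach via an adjunction and Beck's monadicity theorem is a legitimate route but differs substantially from the paper's. The paper defines $\mathsf{T}_\alpha$ directly as an endofunctor and verifies the monad axioms by hand: objects of $\mathsf{T}_\alpha(\mathcal{A})$ are maps $I_\beta \to \mathrm{obj}(\mathcal{A})$ indexed by \emph{ordinals} $\beta < \alpha$ (not arbitrary $\alpha$-small sets), and the multiplication $\mu$ is given by ordinal sum $\sum_{x\in\beta}\gamma_x$. Associativity of $\mu$ then reduces to associativity of ordinal addition, and the unit laws are immediate. This use of ordinals is precisely what resolves the smallness obstacle you flag: with arbitrary $\alpha$-small sets as indices, your $F(\mathcal{C})$ has a proper class of objects, and no amount of ``$\alpha$-smallness hypothesis'' fixes that without choosing a small skeleton of indexing sets --- which is exactly what the ordinals provide. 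For the algebras, the paper does not invoke Beck; Proposition~\ref{sums} shows by a direct argument, using the unit and associativity squares of the algebra, that $R(F_\beta)$ is the coproduct of the $F_\beta(i)$. In fact the referenced propositions only establish this forward implication (algebras $\Rightarrow$ $\alpha$-small sums), which suffices for the paper's purposes; the full monadicity you aim for is a stronger statement.

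Two further remarks. First, your Hom-complex formula is reversed: for $\bigoplus_{i\in I} c_i$ to be a genuine coproduct one needs the product over the \emph{source} index and the coproduct over the \emph{target} index, i.e.\ $\prod_{i\in I}\bigoplus_{j\in J}\mathrm{Hom}_{\mathcal{C}}(c_i,d_j)$, as in the paper's Definition~\ref{monadf}, not $\prod_j\bigoplus_i$. Second, your Step~3 is too quick: the claim that the relevant coequalizers in $\dgcat_\alpha^{\oplus}$ are created by $U$ is not formal --- colimits in $\dgcat$ are opaque, and there is no general reason a chosen-coproduct structure descends along a coequalizer computed on underlying dg-categories. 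The paper's direct approach sidesteps this entirely.
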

\begin{remarque}
En utilisant la th{\'e}orie des ordinaux et son arithm{\'e}tique, on construit
la monade $\mathsf{T}_{\alpha}$ en adaptant au monde des dg-cat{\'e}gories
la monade $\mathsf{Fam}$, voir \cite{Fam} et \cite{Kelly}.

On montre ensuite que les propri{\'e}t{\'e}es d'unit{\'e} et d'associativit{\'e} d'une
$\mathsf{T}_{\alpha}$-alg{\`e}bre correspondent {\`a} l'existence de
coproduits $\alpha$-petits.

\end{remarque}
On d{\'e}signe par $\mathsf{T}_{\alpha}$-$\mathsf{alg}$
la cat{\'e}gorie des $\mathsf{T}_{\alpha}$-alg{\`e}bres. On dispose d'une adjonction naturelle
$$
\xymatrix{
\mathsf{T}_{\alpha}\mbox{-}\mathsf{alg} \ar@<1ex>[d]^U \\
\mathsf{dgcat} \ar@<1ex>[u]^F\,.
}
$$

\begin{theoreme}[\ref{main2}]\label{thm6}
La cat{\'e}gorie $\mathsf{T}_{\alpha}$-$\mathsf{alg}$ est munie d'une
structure de cat{\'e}gorie de mod{\`e}les de Quillen {\`a} engendrement cofibrant,
telle qu'un morphisme $F : A \rightarrow B$ de
$\mathsf{T}_{\alpha}$-alg{\`e}bres est une {\'e}quivalence faible,
resp. fibration, si et seulement si $U(F)$ est une {\'e}quivalence faible,
resp. fibration, dans la structure de mod{\`e}les de Quillen du th{\'e}or{\`e}me~\ref{thm1}.
\end{theoreme}

\begin{remarque}
Notre preuve est inspir{\'e}e d'un argument de rel{\`e}vement d{\^u} {\`a} Quillen~\cite{Quillen}. Cet argument nous permet d'{\'e}viter une analyse des
sommes amalgam{\'e}es dans la cat{\'e}gorie
$\mathsf{T}_{\alpha}$-$\mathsf{alg}$. On montre, en effet, qu'un objet de chemins fonctoriel dont on dispose dans $\dgcat$ (voir la
preuve du th{\'e}or{\`e}me~\ref{thm5}) admet une structure d'alg{\`e}bre sur la monade
$\mathsf{T}_{\alpha}$. 
\end{remarque}

Dans une deuxi{\`e}me {\'e}tape, on construit une cat{\'e}gorie
$\dgcat_{ex, \alpha}$ en consid{\'e}rant certains diagrammes dans la
cat{\'e}gorie $\mathsf{T}_{\alpha}$-$\mathsf{alg}$. On dispose d'un
foncteur d'oubli
$$ U_1 : \dgcat_{ex,\alpha} \longrightarrow
\mathsf{T}_{\alpha}\mbox{-}\mathsf{alg} \,. $$

\begin{prop}[\ref{gauche}]
Le foncteur $U_1$ admet un adjoint {\`a} gauche.
\end{prop}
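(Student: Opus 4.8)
The plan is to construct the left adjoint $L_1$ to $U_1$ by the adjoint functor theorem, or rather by exhibiting it explicitly as a composite of known left adjoints followed by a localization/completion construction. Recall that $\dgcat_{ex,\alpha}$ is defined as a category of certain diagrams in $\mathsf{T}_\alpha\text{-}\mathsf{alg}$ — namely those $\mathsf{T}_\alpha$-algebras (i.e.\ dg-categories with all $\alpha$-small coproducts, by the preceding proposition) which are in addition stable under suspensions, cosuspensions and cones, with morphisms the $\mathsf{T}_\alpha$-algebra morphisms compatible with this extra structure. The forgetful functor $U_1$ simply forgets the stability conditions (it is, up to the diagrammatic bookkeeping, a full inclusion of a reflective-type subcategory). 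So the real content is: given a $\mathsf{T}_\alpha$-algebra $A$, produce the universal $\mathsf{T}_\alpha$-algebra $L_1(A)$ equipped with a map from $A$ which is stable under the pretriangulated operations.

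First I would reduce to the Bondal--Kapranov pretriangulated envelope. The operations ``closure under suspensions, cosuspensions and cones'' are exactly the operations realized by the pretriangulated envelope $\mathcal{A} \mapsto \mathcal{A}^{\mathrm{pretr}}$ of Bondal--Kapranov, which is already known to be left adjoint to the inclusion of pretriangulated dg-categories into $\dgcat$ (this is recalled in the discussion around Chapitre 2). The subtlety here is that one must perform this envelope \emph{inside} the world of $\mathsf{T}_\alpha$-algebras: applying $(-)^{\mathrm{pretr}}$ to a dg-category with $\alpha$-small coproducts need not again have $\alpha$-small coproducts, so one must interleave the pretriangulated closure with the free-$\mathsf{T}_\alpha$-algebra functor $F$ from the adjunction $F \dashv U$. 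Concretely, I would set up a small-object-type transfinite iteration: starting from $A$, alternately apply the pretriangulated-envelope operations (adding cones, shifts, coshifts of existing morphisms) and re-close under $\mathsf{T}_\alpha$ (re-freeing the coproducts), and pass to the colimit over $\alpha$ (or a large enough ordinal). Because all the operations involved are themselves left adjoints or filtered colimits, the colimit solves the universal problem and defines $L_1(A)$.

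The key steps, in order: (1) identify $\dgcat_{ex,\alpha}$ precisely as a non-full subcategory of a diagram category over $\mathsf{T}_\alpha\text{-}\mathsf{alg}$ and observe that $U_1$ is accessible and preserves limits and filtered colimits; (2) recall the Bondal--Kapranov envelope and its universal property, and observe that adding a single cone/shift is a pushout along a map of dg-categories, hence interacts well with $F \dashv U$; (3) construct the transfinite tower alternating $F$ and the envelope operations, taking care that at limit ordinals one takes the colimit in $\mathsf{T}_\alpha\text{-}\mathsf{alg}$ (which exists, $U$ being monadic over the cocomplete $\dgcat$); (4) check the tower stabilizes (an accessibility/cardinality argument using that $\alpha$ is regular and all objects in sight are $\alpha$-presentable after one application of $F$); (5) verify the universal property by a diagram chase, using that a map out of $L_1(A)$ to an object of $\dgcat_{ex,\alpha}$ is forced on each stage by the respective universal properties of $F$ and of $(-)^{\mathrm{pretr}}$.

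The hard part will be step (3)--(4): controlling the transfinite interleaving of the \emph{monad} $\mathsf{T}_\alpha$ (which adds $\alpha$-small coproducts, and is genuinely large — it involves all ordinals below $\alpha$ and their arithmetic, as the remark on $\mathsf{Fam}$ indicates) with the pretriangulated operations, and proving that the process converges to an actual $\mathsf{T}_\alpha$-algebra rather than merely a colimit that fails one of the algebra axioms. The danger is that re-freeing coproducts after adding cones re-opens the need for more cones \emph{ad infinitum} in a way that does not close off at stage $\alpha$; the regularity of $\alpha$ and the fact that cones only involve finitely many objects at a time should be what saves this, but making the bookkeeping precise — essentially showing the relevant functors are finitary relative to the $\alpha$-filtered colimit — is where the real work lies. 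An alternative, if the explicit construction becomes unwieldy, is to invoke the (accessible) adjoint functor theorem directly: $U_1$ preserves limits and $\kappa$-filtered colimits for suitable $\kappa$, and both categories are locally presentable (the model structure of Theorem~\ref{thm6} combined with the diagrammatic definition gives local presentability of $\dgcat_{ex,\alpha}$), so a left adjoint exists formally; but I would prefer the explicit construction since it is what subsequent chapters on well-generated triangulated categories will need.
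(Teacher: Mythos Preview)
Your overall direction—build the free object by a small-object/transfinite process adjoining suspensions, cosuspensions and cones—matches the paper's, but you diverge in two places. First, a misreading: $U_1$ is \emph{not} a full inclusion. An object of $\dgcat_{ex,\alpha}$ is a $\mathsf{T}_\alpha$-algebra $A$ equipped with \emph{chosen} structure morphisms $S_A, S_A^{-1}, C_A$ (a specific suspension and cosuspension for each object, a specific cone for each closed degree-zero morphism), and morphisms must strictly commute with these choices. So $U_1$ forgets structure, not a property, and two distinct objects of $\dgcat_{ex,\alpha}$ can sit over the same $\mathsf{T}_\alpha$-algebra. Second, and more substantively, there is no need to ``interleave'' with the free functor $F$: the paper runs the small object argument \emph{inside} $\mathsf{T}_\alpha\mbox{-}\mathsf{alg}$ (which is cocomplete) relative to the set $\{F(\mathcal{P})\to F(\mathcal{S}),\ F(\mathcal{P})\to F(\mathcal{S}^{-1}),\ F(\mathcal{M})\to F(\mathcal{C})\}$, so every pushout in the tower is already a $\mathsf{T}_\alpha$-algebra and the $\alpha$-coproduct structure is carried along automatically. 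Your steps (3)--(4) collapse to a single small-object factorization $A\to\mathsf{Ex}_\alpha(A)\to 0$.

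The paper then takes a different route from your explicit construction: it does \emph{not} claim that $\mathsf{Ex}_\alpha(A)$ is the value of the left adjoint. Instead it equips $\mathsf{Ex}_\alpha(A)$ with the tautological structure morphisms coming from the freely adjoined cones and shifts, checks that any $Q:A\to U_1(\underline{B})$ factors through it by a morphism in $\dgcat_{ex,\alpha}$, and invokes Freyd's General Adjoint Functor Theorem after verifying that $\dgcat_{ex,\alpha}$ has small limits preserved by $U_1$. This completely sidesteps the stabilization issue you flagged as ``the hard part''. Your alternative via the accessible adjoint functor theorem would also work in principle, but it requires local presentability of $\dgcat_{ex,\alpha}$, which the paper never establishes.
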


\begin{prop}[\ref{monadic}]
Le foncteur $U_1$ est monadique.
\end{prop}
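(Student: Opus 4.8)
The plan is to deduce the statement from Beck's (precise) monadicity theorem, applied to the adjunction between $\dgcat_{ex,\alpha}$ and $\mathsf{T}_{\alpha}\mbox{-}\mathsf{alg}$ whose right adjoint is $U_1$ and whose left adjoint is the one produced in Proposition~\ref{gauche}. Concretely, one has to check three things: that $U_1$ has a left adjoint, which is exactly Proposition~\ref{gauche}; that $U_1$ reflects isomorphisms; and that $U_1$ creates coequalizers of $U_1$-split parallel pairs (equivalently: such a pair has a coequalizer in $\dgcat_{ex,\alpha}$ which $U_1$ preserves). Granting these, the comparison functor from $\dgcat_{ex,\alpha}$ to the category of $(U_1F_1)$-algebras is an equivalence, i.e.\ $U_1$ is monadic. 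The first two conditions are essentially formal; the third is where the real content lies.

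For the reflection of isomorphisms: an object of $\dgcat_{ex,\alpha}$ is a $\mathsf{T}_{\alpha}$-algebra together with the extra exact structure (a chosen zero object, functorial suspension and cosuspension, functorial mapping cones and the coherence data tying them together), and a morphism is a $\mathsf{T}_{\alpha}$-algebra morphism compatible with all of this. If $F$ is such a morphism and $U_1(F)$ is invertible in $\mathsf{T}_{\alpha}\mbox{-}\mathsf{alg}$, then $U_1(F)^{-1}$ is again a $\mathsf{T}_{\alpha}$-algebra morphism, and conjugating the coherence squares of the target by it shows that it commutes with suspension, cosuspension and cones; hence $F^{-1}$ lives in $\dgcat_{ex,\alpha}$ and $U_1$ is conservative. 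For the coequalizer condition, let $f,g\colon A \rightrightarrows B$ be a parallel pair in $\dgcat_{ex,\alpha}$ whose images under $U_1$ admit a split coequalizer $q\colon U_1(B)\to Q$ in $\mathsf{T}_{\alpha}\mbox{-}\mathsf{alg}$ — this $Q$ exists since $\mathsf{T}_{\alpha}\mbox{-}\mathsf{alg}$ is cocomplete, being the underlying category of a Quillen model structure by Theorem~\ref{thm6} (and monadic over $\dgcat$ by \ref{monade}, \ref{sums}). The plan is to transport the exact structure from $U_1(B)$ to $Q$ along $q$: because a split coequalizer is absolute, it is still computed, levelwise on Hom-complexes, as a split coequalizer of the underlying dg-categories, so it commutes with $\alpha$-small coproducts and with the zero object, which yields unique suspension, cosuspension and cone functors on $Q$; the coherence identities descend because $q$ jointly epi‑reflects them. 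One then checks that $Q$, so equipped, has the universal property of the coequalizer against an arbitrary object of $\dgcat_{ex,\alpha}$, using that a cocone in $\dgcat_{ex,\alpha}$ is a cocone in $\mathsf{T}_{\alpha}\mbox{-}\mathsf{alg}$ compatible with the exact structure, and compatibility over the absolute coequalizer $q$ is automatic.

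The main obstacle is the step of carrying the mapping-cone functor across the coequalizer: unlike the $\alpha$-small coproducts, a mapping cone is \emph{not} a colimit internal to the dg-category — it is built from the dg-enrichment by a twisted differential — so one cannot simply invoke preservation of colimits by a left adjoint. I would dispose of this by first recording a representability description of cones: $\mathrm{cone}(u)$, for $u\colon X\to Y$, is the object representing the dg-module $Z\mapsto \mathsf{Hom}(Z,Y)\oplus \mathsf{Hom}(Z,X)[1]$ with differential twisted by $u$, a dg-module built functorially from the Hom-complexes and from the shift operation $[1]$. A split coequalizer of $\mathsf{T}_{\alpha}$-algebras is computed componentwise on Hom-complexes, hence is compatible with finite direct sums, with $[1]$, and with the twist; therefore it sends representing objects to representing objects, and the cone functor on $U_1(B)$ descends to $Q$ as required. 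With that lemma in place, Beck's third condition is verified and the monadicity of $U_1$ follows.
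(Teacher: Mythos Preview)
Your overall strategy matches the paper's: both invoke Beck's precise monadicity theorem (Mac~Lane, VI.7, Theorem~1), and the real work is transporting the exact structure to the split coequalizer $D$ in $\mathsf{T}_{\alpha}\mbox{-}\mathsf{alg}$. The paper's execution, however, exploits Definition~\ref{exact1} directly: the exact structure is encoded by structure morphisms $S_A, S_A^{-1}, C_A$ from coproducts indexed by the sets of dg functors $\mathcal{P}\to\mathcal{A}$ and $\mathcal{M}\to\mathcal{A}$. Since split coequalizers are absolute, applying the endofunctors $\coprod_{\mathcal{M}\to ?}\mathcal{M}$ and $\coprod_{\mathcal{M}\to ?}\mathcal{C}$ (and their suspension analogues) of $\mathsf{dgcat}$ to the underlying split coequalizer produces coequalizer rows, and the compatible pair $C_{\mathcal{A}}, C_{\mathcal{B}}$ induces $C_{\mathcal{D}}$ on the quotient by universality; a short diagram chase using the section then shows the required triangle commutes. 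No unpacking of what a cone represents is needed.

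Your representability detour leaves a gap. What must be shown is not merely that cones \emph{exist} in $Q$, but that the coequalizer map $q$ lies in $\mathsf{dgcat}_{ex,\alpha}$, i.e.\ that $q(\mathrm{cone}_B(w)) = \mathrm{cone}_Q(q(w))$ for \emph{every} closed degree-zero morphism $w$ of $B$, not only those of the form $s(v)$. If you set $\mathrm{cone}_Q(v) := q(\mathrm{cone}_B(s(v)))$ using the section $s$, you must then verify $q(\mathrm{cone}_B(w)) = q(\mathrm{cone}_B(sq(w)))$ for arbitrary $w$; this uses the full splitting data $ft = 1_B$, $gt = sq$, $qf = qg$ together with the fact that $f,g$ --- but not $s,t$ --- are morphisms of $\mathsf{dgcat}_{ex,\alpha}$ and hence preserve the \emph{chosen} cones. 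Your sketch (``sends representing objects to representing objects'', ``compatibility \ldots\ is automatic'') does not supply this computation, and the representability description alone cannot, since it only determines cones up to isomorphism. Minor point: the structure in Definition~\ref{exact1} consists of $S_A, S_A^{-1}, C_A$ only; there is no separately chosen zero object.
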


\begin{theoreme}[\ref{final}]\label{thm7}
La cat{\'e}gorie $\dgcat_{ex,\alpha}$ est munie d'une structure de
cat{\'e}gorie de mod{\`e}les de Quillen {\`a} engendrement cofibrant, dont un
morphisme $F:\underline{A} \rightarrow \underline{B}$ dans
$\dgcat_{ex,\alpha}$ est une {\'e}quivalence faible, resp. fibration, si et
seulement si $U_1(F)$ est une {\'e}quivalence faible, resp. fibration, dans
la structure de mod{\`e}les de Quillen du th{\'e}or{\`e}me~\ref{thm6}.
\end{theoreme}
Finalement, on definit un foncteur $\mathcal{D}_{\alpha}$ de
$\dgcat_{ex,\alpha}$ vers la cat{\'e}gorie $\mathsf{Tri}_{\alpha}$ des
cat{\'e}gories triangul{\'e}es alg{\'e}briques {\`a} engendrement $\alpha$-compact, qui par
\cite{Porta} \cite{Ober} v{\'e}rifie les conditions suivantes~:
\begin{itemize}
\item[-] toute cat{\'e}gorie dans $\mathsf{Tri}_{\alpha}$ est {\'e}quivalente {\`a}
  $\mathcal{D}_{\alpha}(\underline{A})$, pour $\underline{A}$ dans
  $\dgcat_{ex,\alpha}$ et 
\item[-] un morphisme $F$ dans $\dgcat_{ex,\alpha}$ est une
  equivalence faible si et seulement si $\mathcal{D}_{\alpha}(F)$ est
  une {\'e}quivalence de cat{\'e}gories triangul{\'e}es. 
\end{itemize}

Cela nous montre que les cat{\'e}gories triangul{\'e}es alg{\'e}briques
bien engendr{\'e}es admettent un enrichissement homotopique donn{\'e} par notre
structure de mod{\`e}les de Quillen du th{\'e}or{\`e}me~\ref{thm7}.

\subsection*{Chapitre 6}

Dans ce chapitre, on propose une description d'une certaine classe de
cat{\'e}gories de Calabi-Yau en utilisant le formalisme des dg-cat{\'e}gories
et la notion de `stabilisation' au sens de la description
de la cat{\'e}gorie d'orbites triangul{\'e}e suivant Keller, voir
\cite{orbit}. On fixe un corps $k$.

Soit $d \geq 2$ et $\mathcal{C}$ une cat{\'e}gorie triangul{\'e}e alg{\'e}brique
$d$-Calabi-Yau munie d'une sous-cat{\'e}gorie $\mathcal{T}$ qui est
$d$-amas-basculante ($d$-cluster-tilting), voir section~\ref{preli}.

Ces cat{\'e}gories apparaissent (pour $d=2$) naturellement dans
\begin{itemize}
\item[-] l'approche conceptuelle des alg{\`e}bres amass{\'e}es (cluster algebras) au sens de
  Fomin-Zelevinsky,
\item[-] dans l'{\'e}tude des modules de Cohen-Macaulay sur certaines
  singularit{\'e}es isol{\'e}es et dans l'{\'e}tude des r{\'e}solutions cr{\'e}pantes non-commutatives.
\end{itemize}

{\`A} partir de $\mathcal{C}$ et $\mathcal{T}$, on construit le carr{\'e}
commutatif cart{\'e}sien

{\small
$$
\xymatrix{
*+<1pc>{\mathcal{M}} \ar@{^{(}->}[r] \ar@{->>}[d] \ar@{}[dr]|{\ulcorner} & \mathcal{E} \ar@{->>}[d] \\
*+<1pc>{\mathcal{T}}  \ar@{^{(}->}[r]  & \underline{\mathcal{E}} =\mathcal{C} \,,
}
$$}
o{\`u} $\mathcal{E}$ est un `mod{\`e}le alg{\'e}brique' de la cat{\'e}gorie
$\mathcal{C}$, c'est-{\`a}-dire une cat{\'e}gorie de Frobenius $k$-lin{\'e}aire o{\`u}
les idempotents se scindent et telle que la cat{\'e}gorie stable
$\underline{\mathcal{E}}$ est {\'e}quivalente {\`a} $\mathcal{C}$ en tant que
cat{\'e}gorie triangul{\'e}e.

D'apr{\`e}s \cite{Palu}, on dispose d'une suite exacte courte de cat{\'e}gories triangul{\'e}es
$$ 0 \longrightarrow \mathcal{H}^b_{\mathcal{E}\mbox{-}ac}(\mathcal{M}) \longrightarrow
\mathcal{H}^b(\mathcal{M})/ \mathcal{H}^b\mathcal(\mathcal{P})
\longrightarrow \mathcal{C} \longrightarrow 0 \,,$$ 
o{\`u} $\mathcal{H}^b_{\mathcal{E}\mbox{-}ac}(\mathcal{M})$ d{\'e}signe la
cat{\'e}gorie homotopique des complexes born{\'e}s sur $\mathcal{M}$ et
$\mathcal{E}$-acycliques. Maintenant, soit $\mathcal{B}$ la sous
dg-cat{\'e}gorie pleine de $\mathcal{C}^b(\mathcal{M})_{dg}$ form{\'e}e des
complexes $\mathcal{E}$-acycliques.

On definit un foncteur $G: \mathcal{H}^{-}(\mathcal{M}) \rightarrow
\mathcal{D}(\mathcal{B}^{op})^{op}$, qui envoie un complexe born{\'e} {\`a}
droite $X$ sur le dg-module
$$ B \mapsto \mathsf{Hom}^{\bullet}_{\mathcal{M}}(X,B)\,,$$
o{\`u} $B$ appartient {\`a} $\mathcal{B}$.

Cette construction induit un foncteur
$$ G:\mathcal{H}^b(\mathcal{M})/\mathcal{H}^b(\mathcal{P}) \rightarrow
\mathcal{D}(\mathcal{B}^{op})^{op}\,.$$

\begin{prop}[\ref{pleinfidele}]
Le foncteur $G$ est pleinement fid{\`e}le.
\end{prop}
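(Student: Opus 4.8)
The plan is to realise $G$ as an exact functor of triangulated categories, to reduce fully faithfulness to a generating family by dévissage, and then to match the two Hom-groups by a direct computation.

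\textbf{Step 1: $G$ is a triangle functor.} The functor $G$ is induced by the dg functor $X\mapsto \mathrm{Hom}^{\bullet}_{\mathcal{M}}(X,-)|_{\mathcal{B}}$ from $\mathcal{C}^b(\mathcal{M})_{dg}$ to the dg category of $\mathcal{B}^{op}$-modules. This dg functor sends the mapping cone of a closed degree zero morphism to the corresponding cone of modules, hence sends the distinguished triangles of $\mathcal{H}^{-}(\mathcal{M})$ to distinguished triangles of $\mathcal{D}(\mathcal{B}^{op})^{op}$. Restricting to bounded complexes and noting that $G(\mathcal{H}^b(\mathcal{P}))=0$ — for $P$ projective in $\mathcal{E}$ and $B$ an $\mathcal{E}$-acyclic bounded complex, $\mathrm{Hom}_{\mathcal{E}}(P,B^{\bullet})$ is an acyclic complex of $k$-modules, and an arbitrary object of $\mathcal{H}^b(\mathcal{P})$ is reduced to its stalks by stupid truncation — one obtains the exact functor on the Verdier quotient appearing in the statement.

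\textbf{Step 2: dévissage.} The objects of $\mathcal{M}$, viewed as complexes concentrated in degree zero, generate $\mathcal{H}^b(\mathcal{M})/\mathcal{H}^b(\mathcal{P})$ as a triangulated category, since any bounded complex over $\mathcal{M}$ is a finite iterated mapping cone of shifts of its stalks. As $G$ is exact, a standard argument (apply the five lemma twice to the long exact Hom-sequences, fixing first the source in $\mathcal{M}$ and then the target) shows that $G$ is fully faithful as soon as
$$ \mathrm{Hom}_{\mathcal{H}^b(\mathcal{M})/\mathcal{H}^b(\mathcal{P})}(M,N[n]) \longrightarrow \mathrm{Hom}_{\mathcal{D}(\mathcal{B}^{op})^{op}}(G(M),G(N[n])) $$
is bijective for all $M,N$ in $\mathcal{M}$ and all $n\in\mathbb{Z}$.

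\textbf{Step 3: the objects already in $\mathcal{B}$.} For $B\in\mathcal{B}$ the $\mathcal{B}^{op}$-module $G(B)=\mathrm{Hom}^{\bullet}_{\mathcal{M}}(B,-)|_{\mathcal{B}}$ coincides with $\mathrm{Hom}^{\bullet}_{\mathcal{B}}(B,-)$, i.e. with the representable module attached to $B$. By the short exact sequence of Palu recalled above, $\mathcal{H}^b_{\mathcal{E}\mbox{-}ac}(\mathcal{M})=\mathrm{H}^0(\mathcal{B})$ is a full triangulated subcategory of $\mathcal{H}^b(\mathcal{M})/\mathcal{H}^b(\mathcal{P})$, and the restriction of $G$ to it is simply the Yoneda embedding $\mathrm{H}^0(\mathcal{B})\hookrightarrow\mathcal{D}(\mathcal{B}^{op})^{op}$, which is fully faithful. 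Together with the exact sequence $0\to\mathcal{H}^b_{\mathcal{E}\mbox{-}ac}(\mathcal{M})\to\mathcal{H}^b(\mathcal{M})/\mathcal{H}^b(\mathcal{P})\to\mathcal{C}\to 0$, this reduces the bijectivity in Step~2 to a comparison of Hom-spaces governed by $\mathcal{C}$.

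\textbf{Step 4: the general comparison (main obstacle).} For arbitrary $M,N$ in $\mathcal{M}$, I would compute the left-hand group by the calculus of fractions in the Verdier quotient, using the Frobenius structure of $\mathcal{E}$ — admissible conflations and the projective-injective objects $\mathcal{P}\subseteq\mathcal{M}$ — to represent each roof by morphisms of honest complexes; and compute the right-hand group from a semifree (bar) resolution of $G(N)$ over the dg category $\mathcal{B}$, which exhibits $\mathrm{Hom}_{\mathcal{D}(\mathcal{B}^{op})}(G(N)[n],G(M))$ as the cohomology of a total complex built from the graded spaces $\mathrm{Hom}^{\bullet}_{\mathcal{M}}(N,B_0)$, $\mathrm{Hom}^{\bullet}_{\mathcal{B}}(B_i,B_{i+1})$ and $\mathrm{Hom}^{\bullet}_{\mathcal{M}}(M,B_k)$ with the $B_i$ ranging over $\mathcal{B}$. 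The identification of these two cohomologies is the hard part: modulo the acyclic part dealt with in Step~3 it amounts to an identification of groups attached to $\mathcal{C}$, and it is precisely here that the hypotheses that $\mathcal{C}$ be $d$-Calabi--Yau and that $\mathcal{T}$ be $d$-cluster-tilting are used in an essential way, the Calabi--Yau duality being what makes the two sides agree in the range of degrees $n$ where the naive comparison fails. Everything else — exactness of $G$, the dévissage, and the Yoneda step — is formal.
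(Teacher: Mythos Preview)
Your Step~4 is an outline, not a proof: you describe the two sides you would compute --- Verdier-quotient morphisms via calculus of fractions, and a bar-resolution total complex over $\mathcal{B}$ --- but you do not carry out the identification. The closing sentence, that Calabi--Yau duality is ``what makes the two sides agree'', is a hope rather than an argument. In fact the paper's proof of full faithfulness does not invoke Serre duality at all; the structural input it uses is that finitely presented $\underline{\mathcal{M}}$-modules have projective dimension at most $d+1$ over $\mathcal{M}$ (theorem~5.4 of \cite{cluster}, a consequence of the cluster-tilting setup), and it enters through a completely different mechanism than the one you sketch.

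The paper avoids the direct Hom-comparison entirely by factoring $G=\Phi\circ\Psi\circ\Upsilon$. Here $\Upsilon:\mathcal{H}^b(\mathcal{M})/\mathcal{H}^b(\mathcal{P})\to\mathcal{H}^-_{\mathcal{E}\text{-}ac}(\mathcal{M})$ sends $X$ to its $\mathcal{E}$-acyclic part $\mathbf{a}_pX$ in the semiorthogonal decomposition $(\mathcal{H}^-(\mathcal{P}),\,\mathcal{H}^-_{\mathcal{E}\text{-}ac}(\mathcal{M}))$ of $\mathcal{H}^-(\mathcal{M})$, and $\Psi$ is the Yoneda embedding into $\mathcal{D}^-_{\underline{\mathcal{M}}}(\mathcal{M})$; both are fully faithful by standard arguments. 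The substantive step is full faithfulness of $\Phi$ on the essential image $\mathcal{V}$ of $\Psi$. For this one writes each object of $\mathcal{V}$ as the homotopy limit of its canonical truncations $\tau_{\geq -i}$; the finite-projective-dimension result ensures these truncations lie in $\mathrm{per}_{\underline{\mathcal{M}}}(\mathcal{M})$, on which $\Phi$ is an equivalence onto $\mathrm{per}(\mathcal{B}^{op})^{op}$. Since $B\in\mathcal{B}$ is bounded, $\Phi$ commutes with these holims, and a holim/hocolim/compactness manipulation then extends full faithfulness from perfect objects to all of $\mathcal{V}$. No d\'evissage on stalks, no bar complex, no Serre duality.
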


\begin{remarque}
La d{\'e}monstration consiste {\`a} factoriser le foncteur $G$ comme le foncteur
compos{\'e} suivant
$$ \mathcal{H}^b(\mathcal{M})/\mathcal{H}^b(\mathcal{P})
\stackrel{\Upsilon}{\hookrightarrow}
\mathcal{H}^{-}_{\mathcal{E}-ac}(\mathcal{M})
\stackrel{\Psi}{\rightarrow}
\mathcal{D}^{-}_{\underline{\mathcal{M}}}(\mathcal{M})
  \stackrel{\Phi}{\rightarrow} \mathcal{D}(\mathcal{B}^{op})^{op}\,.$$
On montre ensuite que les foncteurs $\Upsilon$ et $\Psi$ sont pleinement fid{\`e}les
et que $\Phi$ l'est aussi lorsqu'on le restreint {\`a} l'image
essentielle de $\Psi$.
\end{remarque}
Afin de caracteriser l'image essentielle du foncteur $G$, on introduit de
nouvelles cat{\'e}gories et foncteurs r{\'e}sum{\'e}s dans le diagramme suivant~:
$$
\xymatrix{
& & & \mathcal{D}(\mathcal{B}^{op})^{op} & \mathcal{B} \ar[d] \\
\mathcal{H}^b(\mathcal{M})/\mathcal{H}^b(\mathcal{P})
\ar@{^{(}->}[r]^-{\Upsilon} & \mathcal{H}^-_{\mathcal{E}\mbox{-}ac}(\mathcal{M})
\ar[r]^{\Psi} & \mathcal{D}^-_{\underline{\mathcal{M}}}(\mathcal{M})
\ar[d]_L \ar[r]^{{\Phi}'} \ar[ur]^{\Phi}  & \mathcal{D}(\mathcal{B}'^{op})^{op}
\ar[d]^{R'} \ar[u]_{\sim} & \mathcal{B}' \\
 &  & \mathcal{D}^-(\underline{\mathcal{M}}) \ar[r]_{\Gamma} &
 \mathcal{D}(\underline{\mathcal{M}}^{op})^{op} & \underline{\mathcal{M}} \ar[u] \\
}
$$

\begin{defi}[\ref{stable}]
Soit $\mathcal{D}(\mathcal{B}^{op})^{op}_f$ la sous-cat{\'e}gorie pleine
de $\mathcal{D}(\mathcal{B}^{op})^{op}$ dont les objets sont les $Y$
tels que $\tau_{\geq -n}Y$ appartient {\`a}
$\mathrm{per}(\mathcal{B}^{op})^{op}$, pour tout $n \in \mathbb{Z}$,
et $R'(Y)$ appartient {\`a} $\mathrm{per}(\underline{\mathcal{M}}^{op})^{op}$.
\end{defi}

\begin{prop}[\ref{caracterisation2}]
Un objet $Y$ dans $\mathcal{D}(\mathcal{B}^{op})^{op}$ appartient {\`a}
l'image essentielle du foncteur
$G:\mathcal{H}^b(\mathcal{M})/\mathcal{H}^b(\mathcal{P}) \rightarrow
\mathcal{D}(\mathcal{B}^{op})^{op}$ si et seulement s'il appartient {\`a} $\mathcal{D}(\mathcal{B}^{op})^{op}_f$.
\end{prop}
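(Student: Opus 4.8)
Le plan est d'exploiter la factorisation $G = \Phi \circ \Psi \circ \Upsilon$ \'etablie dans la preuve de la proposition~\ref{pleinfidele}, dans laquelle $\Upsilon$ et $\Psi$ sont pleinement fid\`eles, $\Phi$ l'\'etant sur l'image essentielle de $\Psi$, et les trois foncteurs \'etant triangul\'es~: il suffit alors de d\'ecrire l'image essentielle de $G$, ce que je ferai en deux \'etapes correspondant respectivement aux deux conditions de la d\'efinition~\ref{stable}.

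Je commencerai par montrer que la condition \og $\tau_{\geq -n}Y \in \mathrm{per}(\mathcal{B}^{op})^{op}$ pour tout $n \in \mathbb{Z}$ \fg\ caract\'erise l'image essentielle de $\Phi \circ \Psi$. Tout objet $B_0$ de $\mathcal{B}$, consid\'er\'e comme objet de $\mathcal{H}^b_{\mathcal{E}\mbox{-}ac}(\mathcal{M}) \subset \mathcal{H}^b(\mathcal{M})/\mathcal{H}^b(\mathcal{P})$, est envoy\'e par $G$ sur le $\mathcal{B}^{op}$-module repr\'esentable $\mathsf{Hom}^{\bullet}_{\mathcal{M}}(B_0,-)$; comme l'image essentielle de $\Phi \circ \Psi$ est une sous-cat\'egorie triangul\'ee stable par facteurs directs et contenant les images des objets de $\mathcal{M}$, elle contient tous les modules parfaits. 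Pour la n\'ecessit\'e, un d\'ecompte sur les troncatures (chaque $\tau_{\geq -n}G(X)$ ne faisant intervenir qu'un nombre fini de termes du complexe born\'e \`a droite $\mathcal{E}$-acyclique $X$, donc un nombre fini de tels modules repr\'esentables) montrera que $\tau_{\geq -n}G(X)$ est parfait pour tout $n$. Pour la suffisance, j'\'ecrirai $Y \simeq \mathrm{holim}_n\, \tau_{\geq -n}Y$, je rel\`everai chaque $\tau_{\geq -n}Y$ (parfait, donc dans l'image de $\Phi \circ \Psi$) ainsi que les morphismes de transition, et la limite homotopique obtenue dans $\mathcal{D}^-_{\underline{\mathcal{M}}}(\mathcal{M})$ fournira un ant\'ec\'edent de $Y$.

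Dans la seconde \'etape, j'identifierai, parmi les $Y = \Phi(\Psi(X))$ avec $X$ dans $\mathcal{H}^-_{\mathcal{E}\mbox{-}ac}(\mathcal{M})$ (fournis par l'\'etape~1), ceux qui sont dans l'image essentielle de $G$~: ce sont exactement ceux pour lesquels $X$ provient de $\Upsilon$, c'est-\`a-dire est born\'e \`a homotopie pr\`es modulo $\mathcal{P}$, ce qui \'equivaut \`a ce que l'image $L(\Psi(X))$ de $X$ dans $\mathcal{D}^-(\underline{\mathcal{M}})$ appartienne \`a $\mathrm{per}(\underline{\mathcal{M}})$. J'invoquerai alors le carr\'e commutatif du grand diagramme, $\Gamma \circ L = R' \circ \Phi'$, le fait que $\Phi'(\Psi(X))$ correspond \`a $Y$ via l'\'equivalence $\mathcal{D}(\mathcal{B}'^{op})^{op} \simeq \mathcal{D}(\mathcal{B}^{op})^{op}$, et le fait que $\Gamma$ (pleinement fid\`ele) pr\'eserve et refl\`ete les objets parfaits~: la condition pr\'ec\'edente \'equivaudra alors \`a $R'(Y) \in \mathrm{per}(\underline{\mathcal{M}}^{op})^{op}$. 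En r\'eunissant les deux \'etapes, $Y$ appartiendra \`a l'image essentielle de $G$ si et seulement si les deux conditions de la d\'efinition~\ref{stable} sont satisfaites, c'est-\`a-dire si et seulement si $Y \in \mathcal{D}(\mathcal{B}^{op})^{op}_f$.

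La principale difficult\'e sera la suffisance dans la premi\`ere \'etape~: reconstruire, \`a partir des troncatures parfaites, un complexe born\'e \`a droite sur $\mathcal{M}$ ayant la bonne image, ce qui demande de contr\^oler les limites homotopiques d\'enombrables dans $\mathcal{D}^-_{\underline{\mathcal{M}}}(\mathcal{M})$ et d'y v\'erifier la stabilit\'e de l'image de $\Phi \circ \Psi$, ainsi que de justifier proprement que cette image est stable par facteurs directs et que chaque objet de $\mathcal{M}$ donne, via $G$, un $\mathcal{B}^{op}$-module parfait.
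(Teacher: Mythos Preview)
Your two-step plan matches the paper's argument closely: the paper also lifts the tower $(\tau_{\geq -n}Y)_n$ through the equivalence $\Phi:\mathrm{per}_{\underline{\mathcal{M}}}(\mathcal{M})\stackrel{\sim}{\to}\mathrm{per}(\mathcal{B}^{op})^{op}$ to objects $M_{-n}$, checks that $\mathrm{holim}_n M_{-n}$ lies in $\mathcal{V}$ and maps to $Y$ under $\Phi$ (this is your Step~1), and then invokes proposition~\ref{caracterization} together with the square $\Gamma\circ L \simeq R'\circ\Phi'$ to convert the condition on $R'(Y)$ into $L(\mathrm{holim}_n M_{-n})\in\mathrm{per}(\underline{\mathcal{M}})$ (your Step~2). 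So the strategy is the same.

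One imprecision worth fixing: $\Gamma$ is not fully faithful on $\mathcal{D}^-(\underline{\mathcal{M}})$ in general, so you cannot deduce reflection of perfectness from that alone. The paper handles this with a separate lemma: if $X\in\mathcal{D}^-_{\mathrm{mod}\,\underline{\mathcal{M}}}(\underline{\mathcal{M}})$ and $\Gamma(X)$ is perfect, then $X$ is perfect. To apply it you must check that $L(\mathrm{holim}_n M_{-n})$ has finitely presented cohomology, which the paper obtains from the fact that $\mathrm{holim}_n M_{-n}\in\mathcal{V}$. Also, rather than arguing that the image of $\Phi\circ\Psi$ is closed under direct factors (which is not obvious), it is cleaner to use directly that $\Phi$ restricts to the equivalence $\mathrm{per}_{\underline{\mathcal{M}}}(\mathcal{M})\simeq\mathrm{per}(\mathcal{B}^{op})^{op}$ established earlier, so every perfect object is already in the image.
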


\begin{remarque}
Pour la preuve de cette proposition, on caract{\'e}rise d'abord l'image
essentielle du foncteur $\Psi \circ \Upsilon$. En effet, un objet $Y$
dans $\mathcal{D}^{-}_{\underline{\mathcal{M}}}(\mathcal{M})$ appartient
  {\`a} l'image essentielle de $\Psi \circ \Upsilon$ si et seulement si
  $\tau_{\geq -n}Y$ appartient {\`a}
  $\mathrm{per}_{\underline{\mathcal{M}}}(\mathcal{M})$, pour tout $n
    \in \mathbb{Z}$, et $L(Y)$ appartient {\`a}
    $\mathrm{per}(\underline{\mathcal{M}})$. En utilisant le fait que la
    $t$-structure canonique sur $\mathcal{D}(\mathcal{M})$ se
    restreint {\`a} $\mathrm{per}_{\underline{\mathcal{M}}}(\mathcal{M})$
      et l'{\'e}quivalence
      $\mathrm{per}_{\underline{\mathcal{M}}}(\mathcal{M})^{op}
        \stackrel{\sim}{\rightarrow} \mathrm{per}(\mathcal{B}^{op})$,
        on {\'e}tend cette $t$-structure {\`a}
        $\mathcal{D}(\mathcal{B}^{op})^{op}$.

Finalement, on d{\'e}montre que le foncteur $\Phi :
\mathcal{D}_{\underline{\mathcal{M}}}(\mathcal{M}) \rightarrow
  \mathcal{D}(\mathcal{B}^{op})^{op}$ est exact par rapport {\`a} ces $t$-structures, lorsqu'on se
  restreint {\`a} l'image essentielle de $\Psi$. 
\end{remarque}

On d{\'e}signe par $M$ un objet de $\mathcal{M}$ ainsi que le complexe
associ{\'e} dans $\mathcal{H}^b(\mathcal{M})$. Puisque la cat{\'e}gorie
$\mathcal{H}^b(\mathcal{M})/\mathcal{H}^b(\mathcal{P})$ est engendr{\'e}e
par les objets $M \in \mathcal{M}$ et que le foncteur $G$ est pleinement
fid{\`e}le, on remarque que la cat{\'e}gorie
$\mathcal{D}(\mathcal{B}^{op})^{op}_f$ s'identifie avec la sous
cat{\'e}gorie triangul{\'e}e de $\mathcal{D}(\mathcal{B}^{op})^{op}$ engendr{\'e}e
par les objets de $G(M)$.

En utilisant un argument d{\^u} {\`a} M.~Van den Bergh, on caract{\'e}rise les
objets $G(M)$, $M \in \mathcal{M}$ de la fa{\c c}on suivante: soit $P_M$ le $\underline{\mathcal{M}}$-module
$\underline{\mathcal{M}}(?,M)$ projectif associ{\'e} {\`a} $M \in
\mathcal{M}$ et $X_M$ l'image de $M$ par le foncteur $\Psi \circ \Upsilon$.

\begin{lemme}[\ref{repre2}]
On dispose d'un isomorphisme
$$
\mathrm{Hom}_{\mathcal{D}_{\underline{\mathcal{M}}}^-(\mathcal{M})}(X_M,Y)
\stackrel{\sim}{\longleftarrow}
\mathrm{Hom}_{\mathrm{mod}\,\underline{\mathcal{M}}}(P_M,
\mathrm{H}^0(Y))\,,$$
pour tous $Y \in \mathcal{D}_{\underline{\mathcal{M}}}^-(\mathcal{M})$.
\end{lemme}

On commence par caract{\'e}riser les objets $G(M)=\Phi(X_M)$, $M \in \mathcal{M}$,
dans la cat{\'e}gorie triangul{\'e}e $\mathcal{D}(\mathcal{B}^{op})$,
c'est-{\`a}-dire, on caract{\'e}rise le foncteur~:
$$ R_M:= \mathrm{Hom}_{\mathcal{D}(\mathcal{B}^{op})}(?,
\Phi(X_M)):\mathcal{D}(\mathcal{B}^{op})^{op} \rightarrow
\mathrm{Mod}\,k\,.$$
Pour cela on consid{\'e}re le foncteur
$$ F_M:=
\mathrm{Hom}_{\mathrm{per}(\mathcal{B}^{op})}(\mathrm{H}^0(?),
\Phi(P_M)): \mathrm{per}(\mathcal{B}^{op})^{op} \rightarrow
\mathrm{mod}\,k$$
et le foncteur $DF_M$ obtenue en composent $F_M$ avec le foncteur de
dualit{\'e} $D=\mathrm{Hom}(?,k)$.

\begin{lemme}[\ref{DF}]
On dispose d'un isomorphisme de foncteurs
$$
DF_M \stackrel{\sim}{\longrightarrow}
\mathrm{Hom}_{\mathcal{D}(\mathcal{B}^{op})}(\Phi(X_M), ?[d+1])\,.$$
\end{lemme}
On d{\'e}signe par $E_M$ l'extension de Kan {\`a} gauche de $DF_M$ par
l'inclusion $\mathrm{per}(\mathcal{B}^{op}) \hookrightarrow
  \mathcal{D}(\mathcal{B}^{op})$. On remarque que le foncteur $E_M$
  est homologique et pr{\'e}serve les coproduits et donc $DE_M$ est
  cohomologique et transforme coproduits en produits. Puisque $\mathcal{D}(\mathcal{B}^{op})$ est une cat{\'e}gorie triangul{\'e}e {\`a}
  engendrement compact, le th{\'e}or{\`e}me de repr{\'e}sentabilit{\'e} de Brown,
  \emph{cf.}~\cite{Neeman}, nous fournit un objet $Z_M \in
  \mathcal{D}(\mathcal{B}^{op})$ tel que
$$ DE_M \stackrel{\sim}{\longrightarrow}
\mathrm{Hom}_{\mathcal{D}(\mathcal{B}^{op})}(?, Z_M)\,.$$
On dispose de la caract{\'e}risation suivante des objets
$G(M),\, M\in \mathcal{M}$.

\begin{theoreme}[\ref{G(M)}]
On dispose d'un isomorphisme
$$ G(M) \stackrel{\sim}{\rightarrow} Z_M\,.$$
\end{theoreme} 

Jusqu'ici on a construit {\`a} partir de $\mathcal{C}$ et $\mathcal{T}$ une
dg-cat{\'e}gorie $\mathcal{B}$ et une $t$-structure $\mathcal{U}$ sur
$\mathsf{H}^0(\mathcal{B})$ qui satisfont certaines conditions de
finitude. On retrouve alors $\mathcal{C}$ et $\mathcal{T}$ en
utilisant le processus de `stabilisation' suivant, voir section~\ref{mainsec}~:
\begin{itemize}

\item[-] Soit $\mathcal{Q}$ la cat{\'e}gorie des projectifs du coeur
  $\mathcal{H}$ de la $t$-structure sur
  $\mathsf{H}^0(\mathcal{B})$. On construit un morphisme $\mathcal{Q}
  \stackrel{j}{\rightarrow} \mathcal{B}$ dans la cat{\'e}gorie homotopique
  des dg-cat{\'e}gories qui induit un foncteur de restriction
  $j^{\ast}:\mathcal{D}(\mathcal{B}) \rightarrow
  \mathcal{D}(\mathcal{Q})$.

\item[-] Soit $\mathcal{D}(\mathcal{A}^{op})^{op}_f$ la sous-cat{\'e}gorie
triangul{\'e}e pleine de $\mathcal{D}(\mathcal{A}^{op})^{op}$ form{\'e}e des
objets $Y$ tels que $\tau_{\geq -n}$ appartient {\`a}
$\mathrm{per}(\mathcal{A}^{op})^{op}$, pour tout $n \in \mathbb{Z}$,
et $j^{\ast}(Y)$ appartient {\`a} $\mathrm{per}(\mathcal{Q}^{op})^{op}$.

\item[-] La cat{\'e}gorie stable est le quotient triangul{\'e}
$$ \mbox{stab}(\mathcal{B},\mathcal{U}) := \mathcal{D}(\mathcal{B}^{op})^{op}_f / \mathrm{per}(\mathcal{B}^{op})^{op}\,.$$
\end{itemize}

\begin{theoreme}[\ref{main3}]
Le foncteur $G$ induit une {\'e}quivalence de cat{\'e}gories
$$ \widetilde{G}:\mathcal{C} \rightarrow \mathrm{stab}(\mathcal{B},\mathcal{U})\,.$$
\end{theoreme}

Soient $\mathcal{T}$ une cat{\'e}gorie triangul{\'e}e {\`a} engendrement compact
et $\mathcal{T}_c$ la sous-cat{\'e}gorie de ses objets compacts.
Dans l'appendice on montre comment {\'e}tendre une
$t$-structure $\mathcal{U}$ sur $\mathcal{T}_c$ en une $t$-structure
sur $\mathcal{T}$.

\begin{prop}[\ref{extension2}]
\begin{itemize}
\item[a)]
L'aile gauche $\mathcal{U}$ admet une extension minimale en une aile
gauche $\mathcal{T}_{\leq 0}$ dans $\mathcal{T}$.
\item[b)] Si $\mathcal{U}\subseteq \mathcal{T}_c$ est non degener{\'e}
  (c'est-{\`a}-dire que $f:X \rightarrow Y$ est inversible si et seulement
  si $\mathsf{H}^p(f)$ est inversible pour tout $p \in \mathbb{Z}$)
  et pour chaque $X$ dans $\mathcal{T}_c$, il existe un entier $N$ tel
  que $\mathsf{Hom}(X,S^N\mathcal{U})=0$ pour tout $U$ dans
  $\mathcal{U}$, alors $\mathcal{T}_{\leq0}$ est encore non deg{\'e}ner{\'e}.
\end{itemize}
\end{prop}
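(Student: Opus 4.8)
The plan is to take for $\mathcal{T}_{\leq 0}$ the smallest full subcategory of $\mathcal{T}$ containing $\mathcal{U}$ and closed under arbitrary coproducts, extensions and the suspension $S$, and to set $\mathcal{T}_{\geq 1}:=\mathcal{U}^{\perp}=\{\,Y\in\mathcal{T}\mid \mathsf{Hom}(U,Y)=0\ \text{for all }U\in\mathcal{U}\,\}$. First I would check that $\mathcal{T}_{\leq 0}^{\perp}=\mathcal{U}^{\perp}$: one inclusion is clear, and for the other, if $Y\in\mathcal{U}^{\perp}$ then the class $\{\,X\mid \mathsf{Hom}(S^{n}X,Y)=0\ \text{for all }n\geq 0\,\}$ contains $\mathcal{U}$ (because $\mathcal{U}$ is $S$-stable) and is closed under coproducts, extensions and $S$, hence contains $\mathcal{T}_{\leq 0}$. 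The only non-formal point of a) is the existence, for each $X$, of a truncation triangle $\tau_{\leq 0}X\to X\to \tau_{\geq 1}X\to S\tau_{\leq 0}X$ with the outer terms in $\mathcal{T}_{\leq 0}$ and in $\mathcal{U}^{\perp}$: here one runs the usual transfinite small-object/Bousfield-localization construction, building $\tau_{\leq 0}X$ as a homotopy colimit of an increasing chain $0=X_{0}\to X_{1}\to\cdots$ where $X_{\alpha+1}$ is obtained from $X_{\alpha}$ by coning off, along suspensions $S^{n}U$ ($n\geq 0$, $U\in\mathcal{U}$), all morphisms into the cofibre of $X_{\alpha}\to X$; compactness of the objects of $\mathcal{U}$ makes $\mathsf{Hom}(S^{n}U,-)$ commute with these colimits and forces the final cofibre into $\mathcal{U}^{\perp}$. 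This gives a $t$-structure $(\mathcal{T}_{\leq 0},\mathcal{T}_{\geq 1})$, so $\mathcal{T}_{\leq 0}$ is a left aisle. It is an extension of $\mathcal{U}$, i.e. $\mathcal{T}_{\leq 0}\cap\mathcal{T}_c=\mathcal{U}$: for $C\in\mathcal{T}_c$ the truncation triangle of $C$ for the left aisle $\mathcal{U}\subseteq\mathcal{T}_c$ has outer terms in $\mathcal{T}_{\leq 0}$ and in $\mathcal{U}^{\perp}=\mathcal{T}_{\geq 1}$, so by uniqueness it is the one above; and if moreover $C\in\mathcal{T}_{\leq 0}$ the cofibre lies in $\mathcal{T}_{\leq 0}\cap\mathcal{T}_{\geq 1}=0$, whence $C\in\mathcal{U}$. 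Minimality is then immediate: any left aisle of $\mathcal{T}$ containing $\mathcal{U}$ is automatically closed under coproducts (if $W_{i}$ lie in a left aisle the map $\coprod W_{i}\to \tau_{\geq 1}\coprod W_{i}$ vanishes on each $W_{i}$, hence vanishes), as well as under extensions and $S$, so it contains $\mathcal{T}_{\leq 0}$.

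For b) I would prove non-degeneracy in the equivalent form $\bigcap_{m}\mathcal{T}_{\leq m}=0=\bigcap_{m}\mathcal{T}_{\geq m}$. For the first: if $0\neq X\in\bigcap_{m}\mathcal{T}_{\leq m}$, choose by compact generation $C\in\mathcal{T}_c$ with $\mathsf{Hom}(C,X)\neq 0$; applying the hypothesis to each $S^{j}U$ with $j\geq 0$ (which lies again in $\mathcal{U}$) one gets $N$ with $\mathsf{Hom}(C,S^{l}U)=0$ for all $l\geq N$ and all $U\in\mathcal{U}$, so $\{\,S^{l}U\mid l\geq N\,\}$ lies in the class $\{\,V\mid \mathsf{Hom}(C,S^{j}V)=0\ \text{for all }j\geq 0\,\}$, which is closed under coproducts, extensions and $S$; since $\mathcal{T}_{\leq -N}=S^{N}\mathcal{T}_{\leq 0}$ is the closure of that set, $X\in\mathcal{T}_{\leq -N}$ forces $\mathsf{Hom}(C,X)=0$, a contradiction. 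The crucial preliminary for the second intersection is that the hypothesis forces every compact object to be bounded below for the $t$-structure on $\mathcal{T}_c$: choosing $N$ with $\mathsf{Hom}(C,S^{N}U)=0$ for all $U\in\mathcal{U}$, i.e. $\mathsf{Hom}(C,(\mathcal{T}_c)_{\leq -N})=0$, in the truncation triangle $\tau^{c}_{\leq -N}C\to C\xrightarrow{q}\tau^{c}_{\geq -N+1}C\to \tau^{c}_{\leq -N}C[1]$ both $\mathsf{Hom}(C,\tau^{c}_{\leq -N}C)$ and $\mathsf{Hom}(C,\tau^{c}_{\leq -N}C[1])$ vanish, so $q_{*}$ is bijective, $q$ is a (weak) monomorphism, the first map is $0$, the triangle splits, and the summand $\tau^{c}_{\leq -N}C[1]$ lies in $(\mathcal{T}_c)_{\leq -N-1}\cap(\mathcal{T}_c)_{\geq -N+1}=0$; hence $C=\tau^{c}_{\geq -N+1}C$.

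Finally I would combine boundedness below with the non-degeneracy of $\mathcal{U}$ on $\mathcal{T}_c$ to show $\mathrm{Loc}(\mathcal{U})=\mathcal{T}$, which finishes b) since $\bigcap_{m}\mathcal{T}_{\geq m}=\mathrm{Loc}(\mathcal{U})^{\perp}$; the heart $\mathcal{H}_c$ of the $t$-structure on $\mathcal{T}_c$ lies in $(\mathcal{T}_c)_{\leq 0}=\mathcal{U}$, so every bounded compact object is built from finitely many objects of $\mathcal{H}_c$ by triangles and lies in $\mathrm{thick}(\mathcal{U})\subseteq\mathrm{Loc}(\mathcal{U})$, and for a bounded-below compact $C$ one wants $C=\mathrm{hocolim}_{n}\tau^{c}_{\leq n}C$ and hence $C\in\mathrm{Loc}(\mathcal{U})$, giving $\mathcal{T}_c\subseteq\mathrm{Loc}(\mathcal{U})$. \emph{The hard part is exactly this homotopy-colimit identification}: its cofibre $W=\mathrm{hocolim}_{n}\tau^{c}_{\geq n+1}C$ has vanishing extended cohomology and lies in $\bigcap_{m}\mathcal{T}_{\geq m}$, so one cannot declare $W=0$ for free — one must feed in the already-established vanishing of $\bigcap_{m}\mathcal{T}_{\leq m}$ together with the non-degeneracy of $\mathcal{U}$, analysing $\mathsf{Hom}(D,W)=\mathrm{colim}_{n}\mathsf{Hom}(D,\tau^{c}_{\geq n+1}C)$ for compact $D$ through the Milnor exact sequence and the identifications $\mathsf{Hom}(\tau^{c}_{\geq n+1}C,Y)\cong\mathsf{Hom}(C,Y)$, reducing to bounded pieces of the heart, and killing it. Once both intersections are zero, the "$f$ invertible $\Leftrightarrow \mathsf{H}^{p}(f)$ invertible for all $p$" form of non-degeneracy for the extended $t$-structure follows formally.
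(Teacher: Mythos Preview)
For part a) your argument and the paper's are essentially the same: both take $\mathcal{T}_{\leq 0}$ to be the closure of $\mathcal{U}$ under coproducts, extensions and $S$, produce the truncation triangle by the small-object argument (coning off morphisms from objects of $\mathcal{U}$ and passing to a homotopy colimit, using compactness), check that the induced $t$-structure on $\mathcal{T}$ restricts to the given one on $\mathcal{T}_c$, and deduce minimality from the fact that any aisle is closed under coproducts.

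For part b) the paper takes a much shorter and more direct route than you do. It never proves $\bigcap_m\mathcal{T}_{\geq m}=0$ in isolation, and it never needs $\mathrm{Loc}(\mathcal{U})=\mathcal{T}$ or any homotopy-colimit identification for compacts. Instead it argues in one pass with the given $X$: assuming $\mathrm{H}^p(X)=0$ for all $p$, the truncation triangles $\mathrm{H}^{n+1}(X)\to\tau_{>n}X\to\tau_{>n+1}X\to$ show that each $\tau_{>n}X$ lies in $\bigcap_m\mathcal{T}_{>m}$; then one uses that every compact $C$ lies in some $\mathcal{T}_{\leq k}$ to get $\mathsf{Hom}(C,\tau_{>n}X)=0$, hence $\tau_{>n}X=0$ for all $n$; this forces $X\in\bigcap_m\mathcal{T}_{\leq m}$, and then exactly your argument for that intersection (compactness plus the vanishing hypothesis $\mathsf{Hom}(C,S^N\mathcal{U})=0$, extended through coproducts and extensions to $\mathcal{T}_{\leq -N}$) gives $X=0$. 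The whole detour through $\mathrm{Loc}(\mathcal{U})$ and the ``hard part'' you flag is simply absent.

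There is also a genuine gap in your ``bounded below'' step. From $\mathsf{Hom}(C,\tau^c_{\leq -N}C)=0$ and $\mathsf{Hom}(C,\tau^c_{\leq -N}C[1])=0$ you correctly deduce that $q_*:\mathsf{Hom}(C,C)\to\mathsf{Hom}(C,\tau^c_{\geq -N+1}C)$ is bijective, but this does \emph{not} say that the map $\tau^c_{\leq -N}C\to C$ is zero: for that you would need $\mathsf{Hom}(\tau^c_{\leq -N}C,C)=0$, which is the wrong variable. So the splitting and the conclusion $\tau^c_{\leq -N}C=0$ are unjustified as written. Combined with the fact that your ``hard part'' is only sketched (the vanishing of $W=\mathrm{hocolim}_n\tau^c_{\geq n+1}C$ is precisely a statement about $\bigcap_m\mathcal{T}_{\geq m}$, so the argument threatens to be circular), your route to $\bigcap_m\mathcal{T}_{\geq m}=0$ does not close, whereas the paper's direct argument sidesteps the issue entirely.
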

\part{{\`A} la poursuite du Motivateur des DG-cat{\'e}gories}
\chapter{Une structure de cat{\'e}gorie de mod{\`e}les de Quillen sur la cat{\'e}gorie des DG-cat{\'e}gories}

\textit{\small{Ce chapitre correspond {\`a} l'article \cite{cras}.}}

\section{Introduction}
Nous construisons une structure de cat{\'e}gorie de mod{\`e}les de Quillen {\`a}
engendrement cofibrant sur la cat{\'e}gorie des petites cat{\'e}gories
diff{\'e}rentielles gradu{\'e}es.

\section{Pr{\'e}liminaires}
Dans toute la suite, $k$ d{\'e}signe un anneau commutatif avec $\mathbf{1}$.
Le produit tensoriel $\otimes$ d{\'e}signe toujours le produit tensoriel
sur $k$. Nous commen{\c c}ons par introduire les d{\'e}finitions de
base de la th{\'e}orie des dg-cat{\'e}gories suivant \cite{ICM}.
\begin{definitionf}\label{dgcategorie}
\begin{itemize}
\item[-] Une {\em dg-cat{\'e}gorie} $\mathcal{A}$ est la donn{\'e}e d'une
  classe d'objets $\mbox{obj}(\mathcal{A})$, d'un dg $k$-module
  $\mathsf{Hom}_{\mathcal{A}}(X,Y)$ pour tous $X,Y \in
  \mbox{obj}(\mathcal{A})$, et d'applications de composition associatives
$$ \mathsf{Hom}_{\mathcal{A}}(Y,Z) \otimes
\mathsf{Hom}_{\mathcal{A}}(X,Y) \rightarrow
\mathsf{Hom}_{\mathcal{A}}(X,Z), \,\, (f,g) \mapsto fg$$
qui admettent des unit{\'e}s $\mathbf{1}_X \in
\mathsf{Hom}_{\mathcal{A}}(X,X)$.
\item[-] Un {\em dg-foncteur} $F:\mathcal{A} \rightarrow \mathcal{A}'$
  est donn{\'e} par une application $F:\mbox{obj}(\mathcal{A}) \rightarrow
  \mbox{obj}(\mathcal{A}')$ et par des morphismes de dg $k$-modules
$$ F(X,Y): \mathsf{Hom}_{\mathcal{A}}(X,Y) \rightarrow
\mathsf{Hom}_{\mathcal{A}'}(FX,FY), \,\, X,Y \in \mbox{obj}(\mathcal{A}),$$
compatibles avec la composition et les unit{\'e}s.
\end{itemize}
\end{definitionf}
Une dg-cat{\'e}gorie $\mathcal{A}$ est {\em petite} si $\mbox{obj}(\mathcal{A})$
est un ensemble. On d{\'e}signe par $\dgcat$ la cat{\'e}gorie des
petites dg-cat{\'e}gories.

\begin{exemple}
Soit $B$ une $k$-alg{\`e}bre et $\mathcal{C}(B)$ la cat{\'e}gorie
des complexes de $B$-modules {\`a} droite
$$ \cdots \rightarrow M^p \stackrel{d_M}{\rightarrow} M^{p+1}
\rightarrow \cdots , \, \, p \in \mathbb{Z}\,.$$
Pour deux complexes $L,M$ et un entier $n \in \mathbb{Z}$, on
d{\'e}finit $\mathsf{Hom}^n(L,M)$ comme le $k$-module des morphismes
$f:L\rightarrow M$ d'objets gradu{\'e}s de degr{\'e} $n$, c'est-{\`a}-dire des
familles $f=(f^p)$ de morphismes $f^p:L^p \rightarrow M^{p+n}, \, p
\in \mathbb{Z}$, de $B$-modules. On d{\'e}finit $\mathsf{Hom}(L,M)$ comme
le $k$-module gradu{\'e} {\`a} composantes $\mathsf{Hom}^n(L,M)$ et dont la
diff{\'e}rentielle est le commutateur
$$ d(f)=d_M\circ f - (-1)^{n} f \circ d_L\,.$$
La {\em dg-cat{\'e}gorie} $\mathcal{C}_{dg}(B)$ a pour objets tous les
complexes de $B$-modules et les morphismes sont d{\'e}finis par
$$\mathcal{C}_{dg}(B)(L,M) = \mathsf{Hom}(L,M)\,.$$
La composition est la composition des applications gradu{\'e}es.
\end{exemple}
Soit $\mathcal{A}$ une dg-cat{\'e}gorie. La {\em dg-cat{\'e}gorie oppos{\'e}e}
$\mathcal{A}^{op}$ a les m{\^e}mes objets que $\mathcal{A}$ et ses
espaces de morphismes sont d{\'e}finis par
$$ \mathcal{A}^{op}(X,Y) = \mathcal{A}(Y,X)\,;$$
la composition de $f \in \mathcal{A}^{op}(Y,X)^p$ avec $g \in
\mathcal{A}^{op}(Z,Y)^q$ est donn{\'e}e par $(-1)^{pq}gf$.
\begin{definitionf}
Un {\em dg $\mathcal{A}$-module {\`a} droite} est un dg-foncteur
$$ M:\mathcal{A}^{op} \rightarrow \mathcal{C}_{dg}(k)\,.$$
\end{definitionf}
\noindent
Si $L$ et $M$ sont des dg $\mathcal{A}$-modules et $n$ est un
entier, un {\em morphisme gradu{\'e} $f$ de degr{\'e} $n$} de $L$ dans $M$
est la donn{\'e}e de morphismes $fA$, $A\in\mathcal{A}$, appartenant {\`a}
\[
\mathsf{Hom}(LA,MA)^n
\]
tels que $(fA)(La)=(Ma)(fB)$ pour tous morphismes $a:A\to B$ de $\ca$.
La {\em diff{\'e}rentielle $d(f)$} est alors donn{\'e}e par les morphismes
$d(fA)$, $A\in \ca$. On note $\mathsf{Hom}(L,M)$
le complexe ainsi obtenu. La {\em composition} de morphismes gradu{\'e}s
est d{\'e}finie de la fa{\c c}on naturelle.
\begin{definitionf} La {\em dg-cat{\'e}gorie des $\mathcal{A}$-modules {\`a} droite},
not{\'e}e $\mathcal{C}_{dg}(\mathcal{A})$  ou
$\mathrm{Mod}\,\ca$ est la dg-cat{\'e}gorie dont les objets sont les dg $\ca$-modules
et les complexes de morphismes les complexes $\mathsf{Hom}(L,M)$.
Le {\em dg-foncteur de Yoneda} est d{\'e}fini par
$$
\widehat{  }\, : \mathcal{A}  \longrightarrow \mathcal{C}_{dg}(\mathcal{A}) \ko
X  \mapsto  \widehat{X}:= \mathsf{Hom}_{\mathcal{A}}(?,X).
$$
\end{definitionf}
\begin{definitionf}
\begin{itemize}
\item[-] La {\em cat{\'e}gorie $\mathsf{Z}^0(\mathcal{A})$} a les m{\^e}mes
  objets que $\mathcal{A}$ et ses morphismes sont d{\'e}finis par
$$ \mathsf{Hom}_{\mathsf{Z}^0(\mathcal{A})}(X,Y) = \mathsf{Z}^0
\mathsf{Hom}_{\mathcal{A}}(X,Y)\,,$$
o{\`u} $\mathsf{Z}^0$ est le noyau de $d:\mathsf{Hom}^0_{\mathcal{A}}(X,Y)
\rightarrow \mathsf{Hom}^1_{\mathcal{A}}(X,Y)$.
\item[-] La {\em cat{\'e}gorie $\mathsf{H}^0(\mathcal{A})$} a les m{\^e}mes
  objets que $\mathcal{A}$ et ses espaces de morphismes sont d{\'e}finis
  par
$$ \mathsf{Hom}_{\mathsf{H}^0(\mathcal{A})}(X,Y) =
\mathsf{H}^0(\mathsf{Hom}_{\mathcal{A}}(X,Y))\,,$$
o{\`u} $\mathsf{H}^0$ d{\'e}signe la cohomologie en degr{\'e} z{\'e}ro
du complexe $\mathsf{Hom}_{\mathcal{A}}(X,Y)$.
\end{itemize}
\end{definitionf}

\begin{definitionf}\label{quasieq}
Un dg-foncteur $F$ de $\mathcal{C}$ vers $\mathcal{D}$ est une {\em
  quasi-{\'e}quivalence} si:
\begin{itemize}
\item[-] pour tous objects $c_1$ et $c_2$ de
$\mathcal{C}$, le morphisme de complexes de $\mathrm{Hom}_{\mathcal{C}}(c_1,
c_2)$ vers $\mathrm{Hom}_{\mathcal{D}}(F(c_1), F(c_2))$ est un
quasi-isomorphisme et
\item[-] le foncteur $\mathrm{H}^0(F)$ de
$\mathrm{H}^0(\mathcal{C})$ vers $\mathrm{H}^0(\mathcal{D})$ est
essentiellement surjectif.
\end{itemize}
\end{definitionf}

Pour les cat{\'e}gories de mod{\`e}les de Quillen, nous renvoyons {\`a}
\cite{Hovey}. On introduira une structure de cat{\'e}gorie de mod{\`e}les de
Quillen {\`a} engendrement cofibrant dans $\dgcat$ dont les {\'e}quivalences faibles sont les
quasi-{\'e}quivalences.

\section{Th{\'e}or{\`e}me principal}\label{secdef}

Suivant \cite[3.7.1]{Drinfeld}, nous d{\'e}finissons $\mathcal{K}$ comme
la dg-cat{\'e}gorie avec deux objets $1$, $2$ et dont
les morphismes sont engendr{\'e}s par $f \in \mathrm{Hom}_{\mathcal{K}}^0 (1,2)$,
$g \in \mathrm{Hom}_{\mathcal{K}}^0 (2,1)$,
$r_1 \in\mathrm{Hom}_{\mathcal{K}}^{-1} (1,1)$,
$r_2 \in \mathrm{Hom}_{\mathcal{K}}^{-1} (2,2)$ et $r_{12}
\in \mathrm{Hom}_{\mathcal{K}}^{-2} (1,2)$ soumis aux relations $df=dg=0$,
$dr_1=gf-\mathbf{1}_1$, $dr_2 =fg-\mathbf{1}_2$ et $dr_{12}=fr_1 - r_2f$.
$$\xymatrix{
    1 \ar@(ul,dl)[]_{r_1} \ar@/^/[r]^f \ar@/^0.8cm/[r]^{r_{12}} &
    2 \ar@(ur,dr)[]^{r_2} \ar@/^/[l]^g }
$$
Soit $\mathcal{B}$ une petite dg-cat{\'e}gorie.
\begin{propositionf}\label{nova}
On a une bijection naturelle entre
$\mathsf{Hom}_{\dgcat}(\mathcal{K},\mathcal{B})$ et l'ensemble des
couples $(s,h)$, o{\`u} $s$ est un morphisme de degr{\'e} $0$
dans $\mathcal{B}$ tel que $d(s)=0$ et $h$ est une contraction du c{\^o}ne
de $\widehat{s}$ dans $\mathcal{C}_{dg}(\mathcal{B})$.
\end{propositionf}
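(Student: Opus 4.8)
The plan is to unwind each side of the asserted bijection into the same list of data and match the two lists via the dg-Yoneda lemma. First I would record the universal property of $\mathcal{K}$: since, by construction, the morphism complexes of $\mathcal{K}$ are generated under composition, $k$-linear combinations and the differential by $f,g,r_1,r_2,r_{12}$ subject only to the displayed relations, giving a dg-functor $F\colon\mathcal{K}\to\mathcal{B}$ amounts to giving two objects $X:=F(1)$ and $Y:=F(2)$ of $\mathcal{B}$ together with morphisms $F(f)\in\mathsf{Hom}^{0}_{\mathcal{B}}(X,Y)$, $F(g)\in\mathsf{Hom}^{0}_{\mathcal{B}}(Y,X)$, $F(r_1)\in\mathsf{Hom}^{-1}_{\mathcal{B}}(X,X)$, $F(r_2)\in\mathsf{Hom}^{-1}_{\mathcal{B}}(Y,Y)$ and $F(r_{12})\in\mathsf{Hom}^{-2}_{\mathcal{B}}(X,Y)$ satisfying $dF(f)=dF(g)=0$, $dF(r_1)=F(g)F(f)-\mathbf{1}_{X}$, $dF(r_2)=F(f)F(g)-\mathbf{1}_{Y}$ and $dF(r_{12})=F(f)F(r_1)-F(r_2)F(f)$. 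In particular $s:=F(f)$ is a degree-$0$ morphism with $d(s)=0$, hence a genuine morphism of dg-modules, so $\widehat{s}\colon\widehat{X}\to\widehat{Y}$ makes sense in $\mathcal{C}_{dg}(\mathcal{B})$.

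Next I would spell out a contraction of the cone. Write $C:=\mathrm{Cone}(\widehat{s})$, whose underlying graded $\mathcal{B}$-module is $\widehat{Y}\oplus\widehat{X}[1]$ and whose differential is $\left(\begin{smallmatrix} d_{\widehat{Y}} & \widehat{s}\\ 0 & -d_{\widehat{X}}\end{smallmatrix}\right)$. A contraction $h$ of $C$ is a degree $-1$ endomorphism of $C$ with $d_{C}\circ h+h\circ d_{C}=\mathbf{1}_{C}$. Decomposing $h$ into a $2\times 2$ matrix of homogeneous maps among $\widehat{Y}$ and $\widehat{X}[1]$ and applying the dg-Yoneda isomorphism $\mathsf{Hom}^{n}_{\mathcal{C}_{dg}(\mathcal{B})}(\widehat{A},\widehat{B})\cong\mathsf{Hom}^{n}_{\mathcal{B}}(A,B)$ (with the evident degree shift in the entries involving $\widehat{X}[1]$), the four entries of $h$ identify with a quadruple lying in $\mathsf{Hom}^{-1}_{\mathcal{B}}(Y,Y)$, $\mathsf{Hom}^{-2}_{\mathcal{B}}(X,Y)$, $\mathsf{Hom}^{0}_{\mathcal{B}}(Y,X)$ and $\mathsf{Hom}^{-1}_{\mathcal{B}}(X,X)$, i.e.\ with exactly the data $(r_2,r_{12},g,r_1)$.

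Then I would expand the equation $d_{C}\circ h+h\circ d_{C}=\mathbf{1}_{C}$ entry by entry and read it back through Yoneda. The bottom-right corner yields $d(r_1)=gs-\mathbf{1}_{X}$, the top-left corner yields $d(r_2)=sg-\mathbf{1}_{Y}$, the bottom-left corner yields $d(g)=0$, and the top-right corner yields $d(r_{12})=sr_{1}-r_{2}s$ (all up to the global signs fixed in the cone differential and in the shift $[1]$, which one pins down precisely so as to reproduce Drinfeld's signs — this is exactly what his particular presentation of $\mathcal{K}$ is arranged for). Together with $d(s)=0$, these are the defining relations of $\mathcal{K}$ with $f$ replaced by $s$. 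Hence $F\mapsto(s,h)$, where $s=F(f)$ and $h$ is the matrix with entries $F(r_2),F(r_{12}),F(g),F(r_1)$, is a bijection from $\mathsf{Hom}_{\dgcat}(\mathcal{K},\mathcal{B})$ onto the set of pairs $(s,h)$, the inverse reading off $X$, $Y$ and the five morphisms from a given pair. Naturality in $\mathcal{B}$ is then immediate, since push-forward along a dg-functor $\mathcal{B}\to\mathcal{B}'$ commutes with the Yoneda identification and with the formation of $\mathrm{Cone}(\widehat{s})$.

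The only genuine obstacle is the sign bookkeeping in the last step: one must choose the conventions for the cone differential, for the shift $[1]$ and for the dg-Yoneda isomorphism coherently, so that the componentwise expansion of $d_{C}h+hd_{C}=\mathbf{1}_{C}$ comes out verbatim as $dr_1=gf-\mathbf{1}_1$, $dr_2=fg-\mathbf{1}_2$ and $dr_{12}=fr_1-r_2f$ rather than up to spurious signs (e.g.\ one may have to replace some of the matrix entries by their negatives). Everything else is formal.
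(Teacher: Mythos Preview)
Your proposal is correct and follows essentially the same approach as the paper: both unwind a dg-functor $\mathcal{K}\to\mathcal{B}$ into the data $(s;g,r_1,r_2,r_{12})$, write a degree $-1$ endomorphism of $\mathrm{Cone}(\widehat{s})=\widehat{Y}\oplus\widehat{X}[1]$ as the $2\times 2$ matrix $\left(\begin{smallmatrix} r_2 & r_{12}\\ g & r_1\end{smallmatrix}\right)$ via Yoneda, and then check that the contraction equation $d_C h + h d_C = \mathbf{1}_C$ expands entrywise to the defining relations of $\mathcal{K}$. The paper simply displays the matrix equation explicitly rather than discussing the sign conventions abstractly as you do, but the content is identical.
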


\begin{proof}[D{\'e}monstration]
Soit $F$ un dg-foncteur de $\mathcal{K}$ vers $\mathcal{B}$. Notons
que l'image de $f$ par $F$ est un morphisme $s:X \rightarrow
Y$ de degr{\'e} $0$ dans $\mathcal{B}$ tel que $d(s)=0$. On remarque que
le c{\^o}ne de $\widehat{s}$ dans $\mathcal{C}_{dg}(\mathcal{B})$, qu'on
note $\mbox{cone}(s)$, est le
$\mathcal{B}$-module gradu{\'e}
$$ \widehat{Y} \oplus \widehat{X}[1]\,,$$
muni de la diff{\'e}rentielle
$$
\begin{bmatrix}
d_Y & \widehat{s} \\
0 & -d_X \\
\end{bmatrix}\,.
$$
Un morphisme $h$ dans
$\mathsf{Hom}^{-1}_{\mathcal{C}_{dg}(\mathcal{B})}(\mbox{cone}(\widehat{s}),
  \mbox{cone}(\widehat{s}))$ correspond donc {\`a} une matrice
$$
\begin{bmatrix}
r_2 & r_{12} \\
g & r_1 \\
\end{bmatrix}\,,
$$
o{\`u} $g \in \mathsf{Hom}^0_{\mathcal{B}}(Y,X)$, $r_1 \in
\mathsf{Hom}_{\mathcal{B}}^{-1}(X,X)$, $r_2 \in
\mathsf{Hom}_{\mathcal{B}}^{-1}(Y,Y)$ et $r_{12} \in
\mathsf{Hom}_{\mathcal{B}}^{-2}(X,Y)$. Finalement on observe que $h$ est
une contraction de $\mbox{cone}(\widehat{s})$ si et seulement si l'{\'e}quation
suivante est satisfaite~:
$$
\begin{bmatrix} r_2 & r_{12} \\
g & r_1\\ \end{bmatrix} \begin{bmatrix} d_Y & s \\
0 & -d_X\\ \end{bmatrix} - (-1)^{-1} \begin{bmatrix} d_Y & s \\
0 & -d_X\\ \end{bmatrix} \begin{bmatrix} r_2 & r_{12} \\
g & r_1\\ \end{bmatrix} = \begin{bmatrix} \mathbf{1}_2 & 0 \\
0 & \mathbf{1}_1\\ \end{bmatrix}\,,
$$
o{\`u}, pour am{\'e}liorer la lisibilit{\'e}, nous avons omis tous les `chapeaux'.
On r{\'e}cup{\`e}re donc les relations impos{\'e}es dans la d{\'e}finition de la
dg-cat{\'e}gorie $\mathcal{K}$.

Cela montre la proposition.
\end{proof}

Soit $\mathcal{A}$ la dg-cat{\'e}gorie avec un seul object
$3$ et telle que $\mathrm{Hom}_{\mathcal{A}}(3,3)=k$. Soit $F$ le
dg-foncteur de $\mathcal{A}$ vers $\mathcal{K}$ qui envoie $3$
sur $1$.
Soit $\mathcal{B}$ la dg-cat{\'e}gorie avec deux
objects $4$ et $5$ telle que
$
\mathrm{Hom}_{\mathcal{B}}(4,4)=k \ko
\mathrm{Hom}_{\mathcal{B}}(5,5)=k \ko
\mathrm{Hom}_{\mathcal{B}}(4,5)=0  \ko
\mathrm{Hom}_{\mathcal{B}}(5,4)=0
$.
Soit $n \in \mathbb{Z}$. On note $S^{n-1}$ le complexe $k[n-1]$
et $D^n$ le c{\^o}ne sur le morphisme identique de $S^{n-1}$.
On note $\mathcal{P}(n)$ la dg-cat{\'e}gorie avec deux
objets $6$ et $7$ et telle que
$
\mathrm{Hom}_{\mathcal{P}(n)}(6,6)=k \ko
\mathrm{Hom}_{\mathcal{P}(n)}(7,7)=k \ko
\mathrm{Hom}_{\mathcal{P}(n)}(7,6)=0 \ko
\mathrm{Hom}_{\mathcal{P}(n)}(6,7)=D^n
$.
Soit $R(n)$ le dg-foncteur de $\mathcal{B}$ vers
$\mathcal{P}(n)$ qui envoie $4$ sur $6$ et $5$ sur $7$.
On consid{\`e}re la dg-cat{\'e}gorie $\mathcal{C}(n)$ avec deux objects
$8$ et $9$ telle que
$
\mathrm{Hom}_{\mathcal{C}(n)}(8,8)=k \ko
\mathrm{Hom}_{\mathcal{C}(n)}(9,9)=k \ko
\mathrm{Hom}_{\mathcal{C}(n)}(9,8)=0  \ko
\mathrm{Hom}_{\mathcal{C}(n)}(8,9)=S^{n-1}
$.
Soit $S(n)$ le dg-foncteur de $\mathcal{C}(n)$ vers
$\mathcal{P}(n)$ qui envoie $8$ sur $6$, $9$ sur $7$ et $S^{n-1}$ dans $D^n$
par l'identit{\'e} sur $k$ en degr{\'e} $n-1$.
Soit finalement $Q$ le dg-foncteur de la dg-cat{\'e}gorie vide $\mathcal{O}$, qui est l'objet initial dans $\dgcat$,
vers $\mathcal{A}$.

\begin{theoreme}\label{mal}
Si on consid{\`e}re pour cat{\'e}gorie $\mathcal{C}$ la cat{\'e}gorie $\dgcat$,
pour classe $W$ la sous-cat{\'e}gorie de $\dgcat$ des quasi-{\'e}quivalences,
pour classe $J$ les dg-foncteurs $F$ et $R(n)$,
$n\in \mathbb{Z}$, et pour classe $I$ les dg-foncteurs $Q$ et $S(n)$,
$n\in \mathbb{Z}$, alors les conditions du th{\'e}or{\`e}me \cite[2.1.19]{Hovey} sont satisfaites.
\end{theoreme}
{\`A} la proposition~\ref{nova3} apr{\`e}s la d{\'e}monstration du th{\'e}or{\`e}me,
nous allons donner une description explicite des fibrations de la
structure de cat{\'e}gorie de mod{\`e}les obtenue ainsi.
\begin{remarque} Un r{\'e}sultat analogue pour les cat{\'e}gories simpliciales a {\'e}t{\'e}
obtenu dans \cite{Bergner}. Notre construction est inspir{\'e}e par le
r{\'e}sultat principal de \cite{Rezk} montr{\'e} d'abord dans \cite{Joyal} et par la construction des dg-quotients dans \cite{Drinfeld}.

En s'appuyant sur le th{\'e}or{\`e}me~\ref{mal} B.To{\"e}n a d{\'e}crit la
localization de Dwyer-Kan \cite{Dwyer} de $\dgcat$ dans \cite{Toen}.
\end{remarque}

\subsection{Preuves}
Rappelons d'abord les conditions du th{\'e}or{\`e}me 2.1.19 de \cite{Hovey}~:
\bigskip
\begin{itemize}
\item[(i)] La classe $W$ poss{\`e}de la propri{\'e}t{\'e} $2$ sur $3$ et est stable par r{\'e}tracts.
\item[(ii)] Les domaines des morphismes de $I$ sont petits par rapport {\`a}
$I-\mbox{cell}$.
\item[(iii)] Les domaines des morphismes de $J$ sont petits par rapport {\`a}
$J-\mbox{cell}$.
\item[(iv)] On a $J -\mbox{cell} \subseteq W\cap I-\mbox{cof}$.
\item[(v)] On a $I-\mbox{inj}\subseteq W \cap J-\mbox{inj}$.
\item[(vi)] On a $W\cap I-\mbox{cof}\subseteq J-\mbox{cof}$ ou
$W\cap J-\mbox{inj} \subseteq I-\mbox{inj}$.
\end{itemize}
\bigskip
On observe facilement que la classe $W$ poss{\`e}de la
  propri{\'e}t{\'e} $2$ sur $3$ et est stable par r{\'e}tracts. On observe
  aussi que les domaines des morphismes de $I$ et de  $J$ sont petits dans
  la cat{\'e}gorie $\dgcat$. Ils
  le sont donc en particulier relativement aux
  classes $I-\mbox{cell}$ et $J-\mbox{cell}$. Ainsi, les
  trois premi{\`e}res conditions sont v{\'e}rifi{\'e}es.

\begin{lemme}\label{J-cell-W} On a $J-\mbox{cell}\subseteq W$.
\end{lemme}

\begin{proof}
On montre d'abord que la classe $W$ est stable par
  compositions transfinies (voir \cite{Hirschhorn} pour la d{\'e}finition de
  cette construction).
  En effet, le
  foncteur $\mathsf{H}^0(-)$ commute aux compositions transfinies et
  la classe des foncteurs essentiellement surjectifs est stable par
  compositions transfinies. Puisque la classe des
  quasi-isomorphismes de complexes de $k$-modules est aussi stable par compositions
  transfinies, la classe $W$ satisfait la condition.

Soient maintenant $n\in \mathbb{Z}$ et $T : \mathcal{B} \rightarrow \mathcal{J}$
un dg-foncteur quelconque dans $\dgcat$. On consid{\`e}re la somme
amalgam{\'e}e suivante
$$
\xymatrix{
\mathcal{B} \ar[r]^{T} \ar[d]_{R(n)} \ar@{}[dr]|{\lrcorner} & \mathcal{J}
\ar[d]^{\mbox{inc}} \\
\mathcal{P}(n) \ar[r] & \mathcal{U}
}
$$
dans $\dgcat$. Il s'agit de v{\'e}rifier que $\mbox{inc}$
est une quasi-{\'e}quivalence.
La dg-cat{\'e}gorie $\mathcal{U}$ s'obtient {\`a} partir de la dg-cat{\'e}gorie
$\mathcal{J}$ en rajoutant un nouveau morphisme
$l$ de $T(4)$ vers $T(5)$ de degr{\'e} $n-1$ et un
nouveau morphisme $j$ de $T(4)$ vers $T(5)$ de
degr{\'e} $n$ tels que $dl=j$. Pour des objets $X$ et $Y$ de $\mathcal{J}$,
on a donc une d{\'e}composition de
$\mathrm{Hom}_{\mathcal{U}}(X,Y)$ en  somme directe de complexes
$$
\mathrm{Hom}_{\mathcal{U}}(X,Y)=\bigoplus_{m \geq 0} \mathrm{Hom}_{\mathcal{U}}^{(m)}(X,Y)
$$
avec
$$
\mathrm{Hom}_{\mathcal{U}}^{(m)}(X,Y)= \underbrace{(T(5),Y) \otimes D^n \otimes
(T(5),T(4)) \otimes D^n \otimes \cdots \otimes D^n
\otimes (X,T(4))}_{m\textrm{ {\scriptsize facteurs} } D^n} \ko
$$
o{\`u} l'on {\'e}crit $(,)$ pour $\mathrm{Hom}_{\mathcal{J}}(,)$.
Puisque le complexe $D^n$ est contractile, l'inclusion
$$
\mathrm{Hom}_{\mathcal{J}}(X,Y) \hookrightarrow \mathrm{Hom}_{\mathcal{U}}(X,Y)
$$
est un quasi-isomorphisme.
Comme le dg-foncteur d'inclusion est l'identit{\'e} au niveau des objets,
c'est une quasi-{\'e}quivalence.
Soit maintenant $N : \mathcal{A} \rightarrow \mathcal{L}$ un dg-foncteur
quelconque dans $\dgcat$.
On consid{\`e}re la somme amalgam{\'e}e suivante
$$
\xymatrix{
\mathcal{A} \ar[r]^{N} \ar[d]_{F} \ar@{}[dr]|{\lrcorner} & \mathcal{L}
\ar[d]^{\mbox{inc}} \\
\mathcal{K} \ar[r]_{N'} & \mathcal{M}
}
$$
dans $\dgcat$. Il s'agit de montrer que $\mbox{inc}$ est
une quasi-{\'e}quivalence.
La dg-cat{\'e}gorie  $\mathcal{M}$ s'obtient {\`a} partir de la dg-cat{\'e}gorie
$\mathcal{L}$ en rajoutant la dg-cat{\'e}gorie $\mathcal{K}$ {\`a}
$\mathcal{L}$ en identifiant les objets $N(3)$ et $F(3)$. Nous allons
maintenant donner une autre description de $\cm$:
soit $\mathcal{L}_{0}$ la dg-cat{\'e}gorie $\mathcal{L}$ {\`a} laquelle on rajoute
un morphisme $s$ avec $ds=0$ de $N(3)$ vers un nouvel objet $H$.
Notons $\mathrm{Mod} \, \mathcal{L}_0=\mathcal{C}_{dg}(\mathcal{L}_0)$
la dg-cat{\'e}gorie des dg-modules ({\`a} droite) sur
$\mathcal{L}_0$. On consid{\`e}re le plongement de Yoneda
$$
\mathcal{L}_{0} {\hookrightarrow} \mathrm{Mod}\, \mathcal{L}_0 \ko X \mapsto \widehat{X}.
$$
Soit $\mathcal{L}_{1}$ la sous-dg-cat{\'e}gorie pleine de
$\mathrm{Mod}\, \mathcal{L}_{0}$ dont les objets sont le c{\^o}ne $C$ sur $\widehat{s}$
et les dg-foncteurs repr{\'e}sentables. Soit
$\mathcal{L}_{2}$ la dg-cat{\'e}gorie obtenue en rajoutant dans $\mathcal{L}_{1}$ un
morphisme $h$ de degr{\'e} $-1$ {\`a} l'anneau d'endomorphismes de
$C$  tel que $dh$ est {\'e}gal {\`a} l'identit{\'e} de $C$.
R{\'e}sumons les notations
pour les objets dans le diagramme suivant (o{\`u} nous avons omis les `chapeaux')~:
\[
\xymatrix{N'(2) \ar@/^/[r]^{N'(g)} & N'(1) \ar@/^/[l]^{N'(f)} \ar[r]_{s} & H \ar[r] & C \ar@(dr,ur)[]_h}
\quad .
\]
Soit $\cl_3$ la sous-dg-cat{\'e}gorie pleine de $\cl_2$ dont les objets
sont les images dans $\cl_2$ des objets de $\cl_0$.
Nous allons montrer
que notre dg-cat{\'e}gorie $\mathcal{M}$ s'identifie naturellement {\`a} $\cl_3$.
En effet, la d{\'e}monstration de la proposition~\ref{nova} montre que le dg-foncteur naturel
$\cl\to \cl_3$ admet un unique prolongement en un dg-foncteur $\cm\to\cl_3$
qui envoie $N'(2)$ sur $H$ (plus pr{\'e}cis{\'e}ment~: l'image de $H$ dans $\cl_3$ \ldots),
$N'(f)$ sur $s$ et $N'(g)$, $N'(r_1)$, $N'(r_2)$ et $N'(r_{12})$ sur les quatre composantes de
la contraction $h$. Nous affirmons que ce dg-foncteur est un isomorphisme.
Pour le montrer, construisons un inverse~: tout d'abord, la d{\'e}finition
de $\cl_0$ montre qu'il existe un dg-foncteur $\cl_0\to \cm$ qui prolonge
l'inclusion de $\cl$ et qui envoie $H$ sur $N'(2)$ et $s$ sur $N'(f)$. Ce
dg-foncteur admet un prolongement $\cl_1 \to \mathrm{Mod}\,\cm$ qui envoie le
c{\^o}ne $C$ sur $\widehat{s}$ sur le c{\^o}ne $C_\cm$ sur l'image de $N'(f)$ par le foncteur
de Yoneda. Par la proposition~\ref{nova}, le c{\^o}ne $C_\cm$ est muni
d'une contraction naturelle $h_\cm$. Par la d{\'e}finition de $\cl_2$, nous
obtenons un unique dg-foncteur $\cl_2 \to \mathrm{Mod}\,\cm$ qui prolonge
$\cl_1 \to \mathrm{Mod}\cm$ et qui envoie $h$ sur $h_\cm$. On v{\'e}rifie
facilement que la restriction de $\cl_2 \to \mathrm{Mod}\,\cm$ {\`a} $\cl_3$
prend ses valeurs dans $\cm$ et est inverse du dg-foncteur naturel
$\cm\to\cl_3$.

Soient $X$ et $Y$ des objets de $\mathcal{L}$.
On a alors une d{\'e}composition de $k$-modules gradu{\'e}s comme dans \cite[3.1]{Drinfeld}
$$
\mathrm{Hom}_{\mathcal{M}}(X,Y)\simeq
\mathrm{Hom}_{\mathcal{L}_2}(\widehat{X},\widehat{Y}) = \bigoplus_{n\geq 0}
\mathrm{Hom}_{\mathcal{L}_2}^{(n)}(\widehat{X},\widehat{Y}) \ko
$$
o{\`u}
$$
\mathrm{Hom}_{\mathcal{L}_2}^{(n)}(\widehat{X},\widehat{Y})=
\underbrace{\mathrm{Hom}_{\mathcal{L}_1}(C,
\widehat{Y})\otimes S^{1}\otimes \mathrm{Hom}_{\mathcal{L}_1}(C,C)
\otimes S^{1} \otimes \cdots \otimes S^{1}
\otimes \mathrm{Hom}_{\mathcal{L}_1}(\widehat{X},C)}_{n \textrm{ {\scriptsize facteurs} } S^{1}}.
$$
Mais dans cette situation, on n'a pas une somme directe de complexes. Soit
$g_{n+1}\cdotp h \cdotp g_n \cdot h \cdots h \cdotp g_1 \in \mathrm{Hom}_{\mathcal{L}_2}^{(n)}(\widehat{X},\widehat{Y})$.
Comme on a $dh=\mathbf{1}$, l'image par $d$ de cet {\'e}l{\'e}ment est {\'e}gale {\`a}
$$
d(g_{n+1})\cdotp h \cdotp g_n \cdotp h  \cdots
 h \cdotp g_1 + \underbrace{(-1)^{\mid g_{n+1} \mid}\cdotp
   g_{n+1}\cdotp \mathbf{1} \cdotp  g_n \cdotp h  \cdots  h \cdotp g_1}_{(n-1) \textrm{ {\scriptsize facteurs} } h} + \cdots \;\; .
$$
On remarque que, pour tout $m\geq 0$, la somme
$$
\bigoplus_{n \geq 0}^m \mathrm{Hom}_{\mathcal{L}_2}^{(n)}(\widehat{X},\widehat{Y})
$$
est un sous-complexe de $\mathrm{Hom}_{\mathcal{L}_2}(\widehat{X},\widehat{Y})$ et on dispose donc d'une
filtration exhaustive de $\mathrm{Hom}_{\mathcal{L}_2}(\widehat{X},\widehat{Y})$. Le $n$-i{\`e}me
sous-quotient s'identifie {\`a}
$\mathrm{Hom}_{\mathcal{L}_2}^{(n)}(\widehat{X},\widehat{Y})$.
Le complexe $\mathrm{Hom}_{\mathcal{L}_1}(\widehat{X},C)$ s'identifie
au c{\^o}ne sur l'isomorphisme
$$
s_{\ast} : \mathrm{Hom}_{\mathcal{L}_0}(X,N(3))
\stackrel{\sim}{\rightarrow}
\mathrm{Hom}_{\mathcal{L}_0}(X,H).
$$
Il est donc contractile et l'inclusion
$$
\mathrm{Hom}_{\mathcal{L}}(X,Y) {\hookrightarrow}
\mathrm{Hom}_{\mathcal{M}}(X,Y) \simeq \mathrm{Hom}_{\mathcal{L}_2}(\widehat{X},\widehat{Y})
$$
est un quasi-isomorphisme. Comme $s$ devient un
isomorphisme dans $\mathrm{H}^0(\mathcal{M})$ et que le dg-foncteur d'inclusion est
l'identit{\'e} au niveau des objets, il est bien une quasi-{\'e}quivalence.
\end{proof}

D{\'e}montrons maintenant que   $J-\mbox{inj}\cap W = I-\mbox{inj}$.
Pour cela, on consid{\`e}re la classe $\mbox{{\bf Surj}}$ form{\'e}e des foncteurs
$G : \mathcal{H} \rightarrow \mathcal{I}$ dans $\dgcat$ qui v{\'e}rifient:
\begin{itemize}
\item[-] $G$ induit une surjection de l'ensemble des objets de $\mathcal{H}$ sur l'ensemble des
objets de $\mathcal{I}$ et
\item[-] $G$ induit des quasi-isomorphismes surjectifs dans les complexes de morphismes.
\end{itemize}

\begin{lemme}\label{I-inj--Surj}
On a $I-\mbox{inj} = \mbox{{\bf Surj}}$.
\end{lemme}
\begin{proof}[D{\'e}monstration]
Soit $\mathcal{C}$ une cat{\'e}gorie quelconque et $\mathcal{V}$ une classe quelconque
de morphismes dans $\mathcal{C}$. On note $\mathcal{V}-\mbox{drt}$ la classe de morphismes qui
ont la propri{\'e}t{\`e} de rel{\`e}vement {\`a} droite par rapport {\`a} $\mathcal{V}$.
 La classe $Q-\mbox{drt}$ est form{\'e}e des foncteurs qui
  sont surjectifs au niveau des objets. La classe
    $S(n)-\mbox{drt}, \,n \in \mathbb{Z}$ est form{\'e}e des foncteurs qui sont des
  quasi-isomorphismes surjectifs au niveau des complexes de
  morphismes. En effet, un carr{\'e} commutatif dans $\dgcat$
$$
\xymatrix{
\mathcal{C}(n) \ar[r]^{D} \ar[d]_{S(n)} & \mathcal{H} \ar[d]^{G} \\
\mathcal{P}(n) \ar[r]^{E} & \mathcal{I}
}
$$
correspond {\`a} la donn{\'e}e d'un carr{\'e} commutatif dans la cat{\'e}gorie des complexes
$$
\xymatrix{
S^{n-1} \ar@{^{(}->}[d]_{i_n} \ar[r]^-{D} &
  \mathrm{Hom}_{\mathcal{H}}(D(8),D(9)) \ar@{|->}[d]^{G}\\
D^{n} \ar[r]^-{E} & \mathrm{Hom}_{\mathcal{I}}(E(6),E(7))
}
$$
o{\`u} $D(8)$ et  $D(9)$ sont des objets
quelconques dans $\mathcal{H}$. La propri{\'e}t{\'e} r{\'e}sulte de la
caract{\'e}risation des quasi-isomorphismes surjectifs dans la cat{\'e}gorie
des complexes sur $k$. Voir \protect{\cite[2.3.5]{Hovey}}.
\end{proof}

\begin{lemme}\label{J-inj-Surj}
On a $J-\mbox{inj} \cap W = \mbox{{\bf Surj}}$.
\end{lemme}
\begin{proof}[D{\'e}monstration]
Montrons l'inclusion $\supseteq$.
Soit $H$ un foncteur de $\mathcal{N}$ vers
$\mathcal{E}$ dans la classe $\mbox{{\bf Surj}}$. Comme $H$
est surjectif au niveau des objets et un quasi-isomorphisme au niveau
des complexes de morphismes, on a $H\in W$.

La classe $R(n)-\mbox{drt}$ est form{\'e}e des foncteurs surjectifs aux
niveau des complexes de morphismes. Il suffit donc de montrer que
$H \in F-\mbox{drt}$. La donn{\'e}e d'un carr{\'e} commutatif
$$
\xymatrix{
\mathcal{A} \ar[r]^{P} \ar[d]_{F} &
  \mathcal{N} \ar[d]^{H} \\
\mathcal{K} \ar[r]^{U} & \mathcal{E}
}
$$
correspond {\`a} la donn{\'e}e de la partie inf{\'e}rieure gauche du diagramme
$$
\xymatrix{
P(3) \ar@{|->}[d]_-{H} \ar@{-->}[r]^-{\overline{U(f)}} & D
\ar@{|->}[d]^-{H} \\
U(1) \ar[r]^-{U(f)} & U(2)
}
$$
dans les dg-cat{\'e}gories $\mathcal{N}$ et $\mathcal{E}$ et {\`a} la donn{\'e} d'une contraction $h$ du c{\^o}ne $C_1$ de
$\widehat{U(f)}$ dans $\mathcal{C}_{dg}(\mathcal{E})$.
Comme $H$ est surjectif au niveau des objets, il
existe $D \in \mathcal{N}$ telle que $H(D)=U(2)$. Le
foncteur $H$ est un quasi-isomorphisme surjectif au niveau
des complexes de morphismes. Donc on peut relever $U(f)$ en
$\overline{U(f)}$.
Dans les cat{\'e}gories des dg-modules respectives, on obtient le
diagramme suivant
$$
\xymatrix{
\widehat{P(3)}\ar@{|->}[d]_-{\widehat{H}} \ar@{-->}[r]^-{\widehat{\overline{U(f)}}} &
\widehat{D} \ar@{|->}[d]^-{\widehat{H}}  \ar[r] &
C_2 \ar@(dr,ur)[]_{h^{\star}} \ar@{|->}[d]^-{\widehat{H}} \\
\widehat{U(1)} \ar[r]^-{\widehat{U(f)}} &
\widehat{U(2)} \ar[r] &  C_1 \ar@(dr,ur)[]_h
}
$$
o{\`u} $C_1$ et $C_2$ d{\'e}signent les c{\^o}nes sur les morphismes
respectifs et  $h$ est la contraction de $C_1$. Puisque
  $H$ est un quasi-isomorphisme surjectif au niveau des
  complexes de morphismes et $C_1$ et $C_2$ sont des c{\^o}nes sur des
  morphismes entre r{\'e}presentables, le dg-foncteur $\widehat{H}$ induit
  aussi un quasi-isomorphisme surjectif de l'alg{\`e}bre d'endomorphismes
  de $C_2$ sur celle de $C_1$.
  On peut relever $h$ en une contraction $h^*$ de $C_2$ par application du lemme
\cite[2.3.5]{Hovey} au couple $(h,1)$.

Montrons maintenant l'inclusion $\subseteq$.
Soit $L$ un foncteur de $\mathcal{D}$ vers
$\mathcal{S}$ qui appartient {\`a} $J-\mbox{inj} \cap W$. La classe
$R(n)-\mbox{drt}$ est form{\'e}e des foncteurs surjectifs au niveau
des complexes de morphismes. Comme $L \in W$, il
suffit de montrer que $L$ est surjectif au niveau des
objets.
Soit $E \in \mathcal{S}$ un objet quelconque. Comme $L \in W$,
il existe $C \in \mathcal{D}$ et un morphisme $q \in
\mathrm{Hom}_{\mathcal{S}}(L(C),E)$ qui devient un isomorphisme dans
$\mathrm{H}^{0}(\mathcal{S})$
$$
\xymatrix{
C \ar@{|->}[d]_{L} & \\
L(C) \ar[r]^{q} & E .
}
$$
Ainsi, $q$ est l'image de $f$ par un dg-foncteur de
$\mathcal{K}$ vers $\mathcal{S}$.
Comme on a $L \in J-\mbox{inj}$, on peut relever le
morphisme  $q$ et par cons{\'e}quence l'objet $E$. Le foncteur $L$ est donc bien
surjectif au niveau des  objets.
\end{proof}

Nous avons v{\'e}rifi{\'e} que $J-\mbox{cell}\subseteq W$ (lemme~\ref{J-cell-W}) et
que $I-\mbox{inj}$ est {\'e}gal {\`a} $J-\mbox{inj}\cap W$
(lemmes~\ref{I-inj--Surj} et \ref{J-inj-Surj}). Ainsi les trois
derni{\`e}res conditions du th{\'e}or{\`e}me $2.1.19$ de \cite{Hovey} sont aussi verifi{\'e}es. Cela demontre le th{\'e}or{\`e}me~\ref{mal}.

\begin{propositionf}\label{nova3}
Un dg-foncteur $G$ de $\mathcal{C}$ vers $\mathcal{D}$
est une fibration, pour la structure de cat{\'e}gorie de mod{\`e}les de
Quillen du th{\'e}or{\`e}me~\ref{mal}, si et seulement si:
\begin{itemize}
\item[1)] pour tous objets $c_1$ et $c_2$ dans $\mathcal{C}$, le morphisme
  de complexes de $\mathrm{Hom}_{\mathcal{C}}(c_1, c_2)$ vers
  $\mathrm{Hom}_{\mathcal{D}}(G(c_1),G(c_2))$ est surjectif en chaque
  composante et
\item[2)] pour tout objet $c_1 \in \mathcal{C}$ et tout morphisme $v
  \in \mathsf{Hom}_{\mathcal{D}}(G(c_1),d)$ qui devient un isomorphisme
    dans $\mathsf{H}^0(\mathcal{D})$, il existe un morphisme $u \in
    \mathsf{Hom}_{\mathcal{C}}(c_1,c_2)$ tel que $G(u)=v$ et qui
      devient un isomorphisme dans $\mathsf{H}^0(\mathcal{C})$.
\end{itemize}
\end{propositionf}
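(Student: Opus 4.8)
The plan is to identify the fibrations of the model structure produced by Theorem~\ref{mal} via the small object argument, for which we know that the fibrations are exactly the class $J\text{-}\mathrm{inj}$ (the maps with the right lifting property with respect to the generating trivial cofibrations $J=\{F,R(n)\}_{n\in\mathbb{Z}}$). So the entire statement reduces to unwinding what it means for a dg-functor $G:\mathcal{C}\to\mathcal{D}$ to lie in $F\text{-}\mathrm{drt}\cap R(n)\text{-}\mathrm{drt}$ for all $n$.

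First I would treat the $R(n)$-part. As already observed in the proof of Lemma~\ref{I-inj--Surj} and reused in Lemma~\ref{J-inj-Surj}, the class $R(n)\text{-}\mathrm{drt}$ (for all $n\in\mathbb{Z}$) consists precisely of the dg-functors that are surjective on each component of every Hom-complex; this is a direct translation of a commutative square along $R(n):\mathcal{B}\to\mathcal{P}(n)$ into a lifting problem of complexes against $D^n\to 0$, i.e.\ componentwise surjectivity. This is exactly condition~1). Next I would treat the $F$-part. A commutative square with left leg $F:\mathcal{A}\to\mathcal{K}$ corresponds, by Proposition~\ref{nova} (the bijection between $\mathrm{Hom}_{\dgcat}(\mathcal{K},-)$ and pairs $(s,h)$ consisting of a closed degree-$0$ morphism $s$ together with a contraction $h$ of $\mathrm{cone}(\widehat{s})$), to the data of: an object $c_1\in\mathcal{C}$, a closed degree-$0$ morphism $v\in\mathsf{Hom}_{\mathcal{D}}(G(c_1),d)$ that becomes invertible in $\mathsf{H}^0(\mathcal{D})$ (invertibility being equivalent to contractibility of $\mathrm{cone}(\widehat{v})$), and a chosen contraction of that cone; a lift is then precisely a closed degree-$0$ morphism $u\in\mathsf{Hom}_{\mathcal{C}}(c_1,c_2)$ with $G(u)=v$ and with $\mathrm{cone}(\widehat u)$ contractible, i.e.\ $u$ invertible in $\mathsf{H}^0(\mathcal{C})$. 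That is exactly condition~2).

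For the two implications: if $G$ is a fibration then it lies in $R(n)\text{-}\mathrm{drt}$ and $F\text{-}\mathrm{drt}$, hence satisfies 1) and 2) by the translations above. Conversely, if $G$ satisfies 1) and 2), then 1) gives $G\in R(n)\text{-}\mathrm{drt}$ for all $n$; and to get $G\in F\text{-}\mathrm{drt}$ one still needs, given a lifting problem along $F$, to produce not just the morphism $u$ but also to extend the chosen contraction of $\mathrm{cone}(\widehat v)$ to a contraction of $\mathrm{cone}(\widehat u)$ mapping to it under $\widehat G$. This is the one genuinely non-formal point, and it is handled exactly as in the ``$\supseteq$'' part of Lemma~\ref{J-inj-Surj}: componentwise surjectivity of $G$ (condition~1) forces $\widehat G$ to induce a componentwise-surjective quasi-isomorphism between the endomorphism algebras of the cones $\mathrm{cone}(\widehat u)$ and $\mathrm{cone}(\widehat v)$ (both being cones on morphisms of representables), and then one lifts the contraction $h$ to a contraction $h^\ast$ by applying \cite[2.3.5]{Hovey} to the pair $(h,\mathbf{1})$. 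So conditions 1) and 2) together do yield $G\in J\text{-}\mathrm{inj}$, i.e.\ $G$ is a fibration.

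The main obstacle, then, is not the bookkeeping but precisely this last step: one must be careful that condition~2) as stated only asks for the \emph{existence} of $u$ with $G(u)=v$ and $u$ invertible in $\mathsf{H}^0$, whereas a lift against $F$ is a priori stronger data (the full contraction must lift compatibly). The point to verify carefully is that the freedom supplied by condition~1) — surjectivity in every degree of every Hom-complex — is exactly what is needed to upgrade the bare existence of $u$ to a compatible lift of the contraction, via the acyclic-fibration lifting lemma for complexes. Everything else is a routine unwinding of Proposition~\ref{nova} and of the description of $R(n)\text{-}\mathrm{drt}$ already established.
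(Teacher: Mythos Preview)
Your proposal is correct and follows essentially the same route as the paper: reduce to $J\text{-}\mathrm{inj}$, identify $R(n)\text{-}\mathrm{drt}$ with condition~1), use Proposition~\ref{nova} to translate lifting against $F$, and for the converse invoke the argument of the $\supseteq$ part of Lemma~\ref{J-inj-Surj}. One small imprecision: you attribute the quasi-isomorphism between the endomorphism algebras of the two cones to condition~1), but condition~1) only supplies the componentwise surjectivity; the quasi-isomorphism comes from the fact that \emph{both} endomorphism algebras are acyclic, which follows from condition~2) (the lift $u$ is invertible in $\mathsf{H}^0(\mathcal{C})$, so $\mathrm{cone}(\widehat{u})$ is contractible, hence its endomorphism algebra is acyclic, and similarly for $\widehat{v}$). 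The paper makes this explicit; your argument is correct once this point is stated clearly.
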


\begin{remarque}\label{toutfibrant}
Puisque l'objet final dans $\dgcat$ est la dg-cat{\'e}gorie
nulle, cette proposition implique que tout objet est fibrant.
\end{remarque}

\begin{proof}[D{\'e}monstration]
Le dg-foncteur $G$ est une fibration si et seulement s'il a la
propri{\'e}t{\'e} de r{\'e}l{\`e}vement {\`a} droite par rapport {\`a} l'ensemble des
cofibrations g{\'e}n{\'e}ratrices $J=\{ F, R(n), \, n \in
\mathbb{N}\}$. Clairement un dg-foncteur satisfait la condition $1)$ si
et seulement s'il a la propri{\'e}t{\'e} de r{\'e}l{\`e}vement {\`a} droite par rapport
{\`a} l'ensemble $\{R(n), \, n \in \mathbb{Z}\}$.

Notons que pour tout isomorphisme $[v]$ dans
$\mathsf{H}^0(\mathcal{D})$ on dispose d'une contraction du c{\^o}ne de
$\widehat{v}$ dans $\mathcal{C}_{dg}(\mathcal{D})$. Par la
proposition~\ref{nova}, cela nous fournit un dg-foncteur $T$ de
$\mathcal{K}$ vers $\mathcal{D}$ tel que $T(f)=v$. Il est donc clair que si un
dg-foncteur a la propri{\'e}t{\'e} de rel{\`e}vement {\`a} droite par rapport {\`a} $F$
alors il satisfait la condition $2)$. Montrons maintenant qu'un
dg-foncteur qui satisfait conditions $1)$ et $2)$ a la propri{\'e}t{\'e} de
rel{\`e}vement {\`a} droite par rapport {\`a} $F$.

Pour cela, on utilise un argument analogue {\`a} celui de la d{\'e}montration
de l'inclusion $\supseteq$ du lemme~\ref{J-inj-Surj} ci-dessus. On
remarque que la condition $2)$ nous fournit le rel{\`e}vement
$\overline{U(f)}$ et implique que la dg-alg{\`e}bre d'endomorphismes de
$C_2$ est acyclique. Par la condition $1)$ le dg-foncteur $\widehat{H}$ induit donc un
quasi-isomorphisme surjectif de l'alg{\`e}bre d'endomorphismes de $C_2$
sur celle de $C_1$ et l'affirmation suit.

Cela demontre la proposition.
\end{proof}
\chapter{Invariants additifs de DG-cat{\'e}gories}

\textit{\small{Ce chapitre correspond aux articles \cite{addendum} et \cite{IMRN}.}}

\section{Introduction}
Dans cet article, nous poursuivons l'{\'e}tude \cite{cras} \cite{Toen} de
la cat{\'e}gorie des petites cat{\'e}gories diff{\'e}rentielles gradu{\'e}es
($=$ dg-cat{\'e}gories) du point de vue des cat{\'e}gories de mod{\`e}les de Quillen
\cite{Quillen}. Plus pr{\'e}cis{\'e}ment, nous munissons la cat{\'e}gorie des
petites dg-cat{\'e}gories d'une structure de cat{\'e}gorie de mod{\`e}les {\`a} engendrement
cofibrant dont les {\'e}quivalences
faibles sont exactement les dg-foncteurs {\em de Morita}, c'est-{\`a}-dire
les dg-foncteurs $F:\mathcal{A} \rightarrow \mathcal{B}$ qui
induisent une {\'e}quivalence $\mathcal{D}(\mathcal{B})
\stackrel{\sim}{\rightarrow} \mathcal{D}(\mathcal{A})$ entre
cat{\'e}gories d{\'e}riv{\'e}es. Dans la cat{\'e}gorie homotopique $\mathsf{Hmo}$
obtenue ainsi, les {\'e}quivalences d{\'e}riv{\'e}es au sens de \cite{Rickard}
\cite{Rickard1} \cite{DerivingDG} correspondent {\`a} des isomorphismes et
le `groupe de Picard d{\'e}riv{\'e}' de \cite{Rou-Zim} y appara{\^\i}t comme un groupe
d'automorphismes.
\par
La cat{\'e}gorie $\mathsf{Hmo}$ est fortement reli{\'e}e {\`a} la cat{\'e}gorie des
petites cat{\'e}gories triangul{\'e}es mais s'en distingue par sa structure
plus riche. Elle donne un cadre commode pour formuler des propri{\'e}t{\'e}s
universelles comme celles du dg-quotient de \cite{Drinfeld}, de
l'envelope pr{\'e}triangul{\'e}e de \cite{Bon-Kap} ou des cat{\'e}gories d'orbites
de \cite{orbit}.
\par
Une autre motivation pour son {\'e}tude provient de la
g{\'e}om{\'e}trie alg{\'e}brique non commutative au sens de Drinfeld
\cite{Chicagotalk} et Kontsevich \cite{IHP} \cite{ENS},
c'est-{\`a}-dire l'{\'e}tude des dg-cat{\'e}gories et de leurs invariants
homologiques. Dans cette veine, nous construisons `l'invariant additif
universel', c'est-{\`a}-dire un foncteur
$$
\mathcal{U}_a:\dgcat \rightarrow \mathsf{Hmo}_0
$$
{\`a} valeurs dans une cat{\'e}gorie additive qui rend inversibles les
dg-foncteurs de Morita, transforme les d{\'e}compositions
semi-orthogonales \cite{Bon-Orl} en sommes directes et qui est universel pour ces propri{\'e}t{\'e}s. Par exemple, la $K$-th{\'e}orie et
l'homologie cyclique sont des invariants additifs et se factorisent
donc par $\mathcal{U}_a$. Nous observons que dans $\mathsf{Hmo}_0$, le
foncteur $K_0$ devient corepr{\'e}sentable, ce qui donne imm{\'e}diatement les
caract{\`e}res de Chern.
\par
 La cat{\'e}gorie $\mathsf{Hmo}_0$ est {\'e}troitement
reli{\'e}e au anneau de Grothendieck ${\mathcal{P}\mathcal{T}}_{kar}$ des
  dg-cat{\'e}gories pr{\'e}triangul{\'e}es Karobiennes de \cite{Bondal}. Nous pr{\'e}cisons
  ce lien en exhibant une surjection
$$
    {\mathcal{P}\mathcal{T}}^{cl}_{kar} \rightarrow  \mbox{K}_0(\mathsf{Hmo}_0^{cl})
$$
apr{\`e}s avoir impos{\'e} des conditions de finitude convenables.

\section{Conventions}

Dans toute la suite, $k$ d{\'e}signe un anneau commutatif avec $\mathbf{1}$.
Le produit tensoriel $\otimes$ d{\'e}signe toujours le produit tensoriel
sur $k$. Soit $n \in \mathbb{Z}$. On note $S^{n-1}$ le complexe
$k[n-1]$ et $D^n$ le cone sur le morphisme identique de $S^{n-1}$.
Par une {\em dg-cat{\'e}gorie}, nous entendons une $k$-cat{\'e}gorie
diff{\'e}rentielle gradu{\'e}e, voir definition~\ref{dgcategorie}.
Soit $\dgcat$ la cat{\'e}gorie des petites dg-cat{\'e}gories.
Pour la construction des foncteurs $\mathbb{Z}$ et $\mbox{pre-tr}$,
voir \cite{cyclichomology} \cite{Bon-Kap}.
Pour une dg-cat{\'e}gorie $\mathcal{A}$, on note $\widehat{ } :\mathcal{A}
\rightarrow \mathrm{Mod}\,\mathcal{A}$ le dg-foncteur de Yoneda et
$\mathcal{D}(\mathcal{A})$ la cat{\'e}gorie d{\'e}riv{\'e}e, voir \cite{Drinfeld} \cite{DerivingDG}.
Soit $\mathcal{C}$ une cat{\'e}gorie quelconque et $\mathcal{V}$ une
classe quelconque de morphismes dans $\mathcal{C}$. On note
$\mathcal{V}-\mbox{drt}$, $\mbox{gch}-\mathcal{V}$, la classe de
morphismes qui ont la propri{\'e}t{\'e} de rel{\`e}vement {\`a} droite, repectivement
{\`a} gauche, par rapport {\`a} $\mathcal{V}$. On note
$\mbox{cof}(\mathcal{V})$ la classe $\mbox{gch}-(\mathcal{V}-\mbox{drt})$.
Pour les cat{\'e}gories de mod{\`e}les de Quillen, nous renvoyons {\`a}
\cite{Hovey} \cite{Quillen}. Rappelons seulement th{\'e}or{\`e}me $2.1.19$ dans
\cite{Hovey}.
\begin{theoreme}\label{thm}
Soit $\mathcal{C}$ une cat{\'e}gorie qui admet toutes les petites limites
inductives et projectives. Soit $\mathcal{W}$ une classe de morphismes
de $\mathcal{C}$ et $I$ et $J$ des ensembles de morphismes de
$\mathcal{C}$. Alors, il existe une structure de cat{\'e}gorie de mod{\`e}les
ferm{\'e}e {\`a} engendrement cofibrant dans $\mathcal{C}$, avec $I$ les
cofibrations g{\'e}n{\'e}ratrices, $J$ les cofibrations acycliques
g{\'e}n{\'e}ratrices et $\mathcal{W}$ les {\'e}quivalences faibles si et seulement
si les conditions suivantes sont satisfaites~:
\begin{itemize}
\item[(i)] La classe $\mathcal{W}$ est stable par retracts et dans
  tout triangle commutatif de $\mathcal{C}$, si deux fl{\`e}ches sont des
  {\'e}quivalences faibles, alors la troisi{\`e}me en est une.
\item[(ii)] Les sources de l'ensemble $I$ sont petites par rapport {\`a}
  la classe $I-\mbox{cell}$.
\item[(ii)] Les sources de l'ensemble $J$ sont petites par rapport {\`a}
  la classe $J-\mbox{cell}$.
\item[(iv)] $J-\mbox{cell} \subset \mathcal{W} \cap \mbox{cof}(I)$.
\item[(v)] $I-\mbox{drt} \subset \mathcal{W} \cap J-\mbox{drt}$.
\item[(vi)] $\mathcal{W}\cap \mbox{cof}(I) \subset \mbox{cof}(J)$ ou
  $\mathcal{W}\cap J-\mbox{drt} \subset I-\mbox{drt}$.
\end{itemize}

\end{theoreme}

\section{DG-foncteurs quasi-{\'e}quiconiques}\label{secquasicon}

Un dg-foncteur $F$ de $\mathcal{C}$ vers $\mathcal{D}$ est
{\em quasi-{\'e}quiconique} si:
\begin{itemize}
\item[(i)] pour tous objets $c_1$ et $c_2$ dans $\mathcal{C}$, le morphisme
  de complexes de $\mathrm{Hom}_{\mathcal{C}}(c_1, c_2)$ vers
  $\mathrm{Hom}_{\mathcal{D}}(F(c_1),F(c_2))$ est un
  quasi-isomorphisme et
\item[(ii)] le foncteur $\mathrm{H}^0(\mbox{pre-tr}(F))$ de
  $\mathrm{H}^0(\mbox{pre-tr}(\mathcal{C}))$ vers
  $\mathrm{H}^0(\mbox{pre-tr}(\mathcal{D}))$ est essentiellement surjectif.
\end{itemize}
On introduira une structure de cat{\'e}gorie de mod{\`e}les de
Quillen {\`a} engendrement cofibrant dans $\dgcat$ dont les
{\'e}quivalences faibles sont les dg-foncteurs
quasi-{\'e}quiconiques. Pour cela, on se servira du th{\'e}or{\`e}me~\ref{thm}.
Nous d{\'e}finissons $\mathcal{K}(n)$, $ n \in \mathbb{Z}$, comme
la dg-cat{\'e}gorie avec deux objets $1$, $2$ et dont
les morphismes sont engendr{\'e}s par $f \in \mathrm{Hom}_{\mathcal{K}(n)}^n (1,2)$,
$g \in \mathrm{Hom}_{\mathcal{K}(n)}^{-n} (2,1)$,
$r_1 \in\mathrm{Hom}_{\mathcal{K}(n)}^{-1} (1,1)$,
$r_2 \in \mathrm{Hom}_{\mathcal{K}(n)}^{-1} (2,2)$ et $r_{12}
\in \mathrm{Hom}_{\mathcal{K}(n)}^{n-1} (1,2)$ soumis aux relations $df=dg=0$,
$dr_1=gf-\mathbf{1}_1$, $dr_2 =fg-\mathbf{1}_2$ et $dr_{12}=fr_1 - r_2f$.
$$\xymatrix{
    1 \ar@(ul,dl)[]_{r_1} \ar@/^/[r]^f \ar@/^0.8cm/[r]^{r_{12}} &
    2 \ar@(ur,dr)[]^{r_2} \ar@/^/[l]^g }
$$
On pose $\mathcal{K}=\mathcal{K}(0)$. Soit $\mathcal{A}$ la dg-cat{\'e}gorie avec un seul object
$3$ et telle que $\mathrm{Hom}_{\mathcal{A}}(3,3)=k$. Soit $F(n)$, $n
\in \mathbb{Z}$, le dg-foncteur de $\mathcal{A}$ vers $\mathcal{K}(n)$ qui envoie $3$
sur $1$.
Pour un entier $n >0$ et des entiers $k_0, \ldots , k_n$, soit
$\mathcal{M}_n(k_0, \ldots , k_n)$ la dg-cat{\'e}gorie avec $n+1$ objets
$0, \ldots , n$ et dont les morphismes sont engendr{\'e}s par des
morphismes $q_{i,j}$ d{\'e}finis pour $0 \leq j < i \leq n$, de source $j$ et de
but $i$, de degr{\'e} $k_i-k_j +1$ et soumis aux relations
$$d(Q)+Q^2=0\,,$$
o{\`u} $Q$ est la matrice strictement triangulaire inf{\'e}rieure de
coefficients $q_{i,j}$ pour $0 \leq j < i \leq n$.

Soit $\mathrm{cone}_n(k_0, \ldots , k_n)$ la sous-dg-cat{\'e}gorie pleine de la
dg-cat{\'e}gorie des dg-modules ({\`a} droite) sur $\mathcal{M}_n(k_0, \ldots , k_n)$
dont les objets sont les $\hat{l}$, $0 \leq l \leq n$, et le {\em c{\^o}ne
it{\'e}r{\'e}} $X_n$, qui a le m{\^e}me module gradu{\'e} sous-jacent que le dg-module
$$ X=\bigoplus^n_{l=0} \hat{l}\left[ k_l \right]$$
et dont la diff{\'e}rentielle est $d_X+\widehat{Q}$.

Soit $L:\mathcal{A} \rightarrow \mathrm{cone}_n(k_0, \ldots , k_n)$ le dg
foncteur qui envoie $3$ sur $X_n$. On consid{\`e}re la somme amalgam{\'e}e
$$\xymatrix{
\mathcal{A} \ar[r]^-{L} \ar[d]_F \ar@{}[dr]|{\lrcorner} & \mathrm{cone}_n(k_0, \ldots , k_n) \ar[d] \\
\mathcal{K} \ar[r] & \mathrm{cone}_n(k_0, \ldots ,
k_n) \amalg_{\mathcal{A}} \mathcal{K}
}$$
et on d{\'e}finit $\mathrm{coneh}_n(k_0, \ldots , k_n)$ comme la sous-dg-cat{\'e}gorie
pleine de la somme amalgam{\'e}e dont les objets sont les images des
objets $\hat{l}$, $0 \leq l \leq n$, de $\mathrm{cone}_n(k_0, \ldots , k_n)$
et l'image $X\!h\,_n$ de l'object $2$ de $\mathcal{K}$. On note $I_n(k_0, \ldots , k_n)$ le dg foncteur fid{\`e}le, mais non plein, de
$\mathcal{M}_n(k_0, \ldots , k_n)$ dans $\mathrm{coneh}_n(k_0, \ldots , k_n)$.

\begin{theoreme}\label{theorem}
 Si on consid{\`e}re pour cat{\'e}gorie $\mathcal{C}$ la
  cat{\'e}gorie $\dgcat$, pour classe $W$ la sous-cat{\'e}gorie de $\dgcat$ des
dg-foncteurs quasi-{\'e}quiconiques, pour classe $J$ les dg-foncteurs $F$ et $R(n)$,
$n\in \mathbb{Z}$, (voir section~\ref{secdef}), $F(n)$, $n \in \mathbb{Z}$, et $I_n(k_0,
 \ldots , k_n)$ et pour classe $I$ les dg-foncteurs $Q$ et $S(n)$,
$n\in \mathbb{Z}$, (voir section~\ref{secdef}) alors les conditions du th{\'e}or{\`e}me~\ref{thm} sont satisfaites.
\end{theoreme}

{\`A} la proposition~\ref{fibrants} apr{\`e}s la d{\'e}monstration du th{\'e}or{\`e}me,
nous allons donner une description explicite des objets fibrants de la
structure de cat{\'e}gorie de mod{\`e}les obtenue ainsi.

On observe facilement que les conditions $\mbox{{\it (i)}}$,
$\mbox{{\it (ii)}}$ et $\mbox{{\it (iii)}}$ du th{\'e}or{\`e}me~\ref{thm} sont verifi{\'e}es.

D{\'e}montrons maintenant que   $J-\mbox{inj}\cap W = I-\mbox{inj}$.
Pour cela, on consid{\`e}re la classe $\mbox{{\bf Surj}}$ form{\'e}e des foncteurs
$G : \mathcal{H} \rightarrow \mathcal{I}$ dans $\dgcat$ qui
v{\'e}rifient~:
\begin{itemize}
\item[-] $G$ induit une surjection de l'ensemble des objets de $\mathcal{H}$ sur l'ensemble des
objets de $\mathcal{I}$ et
\item[-] $G$ induit des quasi-isomorphismes surjectifs
dans les complexes de morphismes.
\end{itemize}

\begin{lemme}\label{I-inj-Surj}
On a $\mbox{{\bf Surj}} = I-\mbox{inj}$.
\end{lemme}
\begin{proof}[D{\'e}monstration]
 La classe $Q-\mbox{drt}$ est form{\'e}e des foncteurs qui
  sont surjectifs au niveau des objets. La classe
    $S(n)-\mbox{drt}, \,n \in \mathbb{Z}$ est form{\'e}e des foncteurs qui sont des
  quasi-isomorphismes surjectifs au niveau des complexes de
  morphismes. En effet, un carr{\'e} commutatif dans $\dgcat$
$$
\xymatrix{
\mathcal{C}(n) \ar[r]^{D} \ar[d]_{S(n)} & \mathcal{H} \ar[d]^{G} \\
\mathcal{P}(n) \ar[r]^{E} & \mathcal{I}
}
$$
correspond {\`a} la donn{\'e}e d'un carr{\'e} commutatif dans la cat{\'e}gorie des complexes
$$
\xymatrix{
S^{n-1} \ar@{^{(}->}[d]_{i_n} \ar[r]^-{D} &
  \mathrm{Hom}_{\mathcal{H}}(D(8),D(9)) \ar@{|->}[d]^{G}\\
D^{n} \ar[r]^-{E} & \mathrm{Hom}_{\mathcal{I}}(E(6),E(7))
}
$$
o{\`u} $D(8)$ et  $D(9)$ sont des objets
quelconques dans $\mathcal{H}$. La propri{\'e}t{\'e} r{\'e}sulte de la
caract{\'e}risation des quasi-isomorphismes surjectifs dans la cat{\'e}gorie
des complexes sur $k$. Voir \protect{\cite[2.3.5]{Hovey}}.
\end{proof}

\begin{lemme}\label{J-inj-Surj 1}
On a $\mbox{{\bf Surj}} \supseteq J-\mbox{inj} \cap W$.
\end{lemme}
\begin{proof}[D{\'e}monstration]
Soit $L$ un dg-foncteur de $\mathcal{D}$ vers
$\mathcal{S}$ qui appartient {\`a} $J-\mbox{inj}\cap W$. La classe
$R(n)-\mbox{drt}$ est form{\'e}e des dg-foncteurs surjectifs au niveau des
complexes de morphismes. Comme $L \in W$, il suffit de montrer que $L$
est surjectif au niveau des objets. Soit $E \in \mathcal{S}$ un
object quelconque. Comme $L \in W$, il existe $C \in
\mbox{pre-tr}(\mathcal{D})$ et un morphisme ferm{\'e} $q$ de
$\mbox{pre-tr}(\mathcal{S})$ qui fournit un isomorphisme entre l'image
par $\mbox{pre-tr}(L)$ de $C$ et $E$ dans
$\mathrm{H}^0(\mbox{pre-tr}(\mathcal{S}))$.  On identifie les
dg-cat{\'e}gories $\mathcal{D}$ et $\mathbb{Z}\,\mathcal{D}$ avec leurs
images dans $\mbox{pre-tr}(\mathcal{D})$. Il existe alors trois possibilit{\'e}s~:
\begin{enumerate}
\item l'objet $C$ appartient {\`a} la dg-cat{\'e}gorie $\mathcal{D}$. Alors on
  est dans les conditions de l'inclusion $\subseteq$ du lemme \ref{J-inj-Surj}.
\item l'objet $C$ appartient {\`a} la dg-cat{\'e}gorie $\mathbb{Z}\,\mathcal{D}$. Alors $C$ est de la forme
  $C=(\overline{C}, n)$, o{\`u} $ \overline{C} \in \mathcal{D}$ et $n \in \mathbb{Z}$.
On a la situation suivante
$$\xymatrix{
C=((\overline{C}), n) \ar@{|->}[d]^{\mathbb{Z}(L)} & \\
\mathbb{Z}(L)(C)=(L(\overline{C}), n) \ar[r]^-{q} & (E,0).
}$$
Ainsi, $q$ est l'image de $f$ par un dg-foncteur de $\mathcal{K}(-n)$
vers $\mathcal{S}$. Comme on a $L \in J-\mbox{inj}$, on peut relever
le morphisme $q$ et par cons{\'e}quence l'objet $E$.
\item l'objet $C$ appartient {\`a} la dg-cat{\'e}gorie
  $\mbox{pre-tr}(\mathcal{D})$ mais non pas {\`a} la dg-cat{\'e}gorie
  $\mathbb{Z}\,\mathcal{D}$. On sait d'apr{\`e}s \cite{Bon-Kap} que, dans
  $\mbox{pre-tr}(\mathcal{D})$, l'objet $C$ s'{\'e}crit d'une fa{\c c}on
  canonique comme un c{\^o}ne it{\'e}r{\'e} sur des morphismes de
  $\mathcal{D}$. Comme le dg-foncteur $\mbox{pre-tr}(L)$ pr{\'e}serve les c{\^o}nes, l'object
  $\mbox{pre-tr}(L)(C)$ s'identifie au c{\^o}ne it{\'e}r{\'e} sur leurs images par
  $L$. On peut donc construire le carr{\'e} commutatif suivant
$$\xymatrix @R=1pc{
*+<1pc>{\mathcal{M}_n(k_0, \ldots , k_n)} \ar[r] \ar@{^{(}->}[dd]_{can}
& \mathcal{D} \ar[d]^L\\
 & \mathcal{S} \ar[d] \\
\mathrm{cone}_n(k_0, \ldots , k_n) \ar[r]^-H & \mbox{pre-tr}(\mathcal{S})\,.
}$$
Ainsi, $q$ est l'image de $f$ par un dg-foncteur de $\mathcal{K}$ vers
$\mbox{pre-tr}(\mathcal{S})$ qui envoie l'object $1$ sur $H(X_n)$ et l'object $2$
sur l'object $E$. Le dg-foncteur $H$ s'{\'e}tend donc en un dg-foncteur
$\overline{H}$ de $\mathrm{cone}_n(k_0, \ldots , k_n) \amalg_{\mathcal{A}}
\mathcal{K}$ vers $\mbox{pre-tr}(\mathcal{S})$. On observe que la
restriction de $\overline{H}$, qu'on note $\widetilde{H}$, {\`a} la dg-cat{\'e}gorie
pleine $\mathrm{coneh}_n(k_0, \ldots , k_n)$, a son image dans la dg-cat{\'e}gorie
$\mathcal{S}$. Cela permet de construire le diagramme commutatif suivant
$$\xymatrix@!0 @R=3pc @C=4pc{
 & & \mathcal{M}_n \ar[rr] \ar[dr]_-{I_n} \ar[dl]^-{can} &
 & \mathcal{D} \ar[d]^L \\
\mathcal{A} \ar@{}[drr]|{\lrcorner} \ar[r] \ar[dr]_-F & \mathrm{cone}_n \ar[dr] \ar[drrr]^(0.3){H} & &
 \mathrm{coneh}_n \ar[r]^-{\widetilde{H}}
 \ar@{^{(}->}[dl]|\hole \ar@{.>}[ur]  & \mathcal{S} \ar[d] \\
 & \mathcal{K} \ar[r] & \mathrm{cone}_n \amalg_{\mathcal{A}} \mathcal{K} \ar[rr]_-{\overline{H}} &  & \mbox{pre-tr}(\mathcal{S})\,.
}$$
Comme $L$ appartient {\`a} $J-\mbox{inj}$, l'objet $E$ est bien l'image d'un objet de $\mathcal{D}$.
\end{enumerate}
\end{proof}
Soit $\mathcal{B}$ une dg-cat{\'e}gorie et $C$ un dg-foncteur de $\mathcal{M}_n(k_0, \ldots , k_n)$ vers $\mathcal{B}$.

\begin{remarque}
Le dg-foncteur $C$ se prolonge en un dg-foncteur
$$ C_!: \mathrm{Mod}\,\mathcal{M}_n(k_0, \ldots , k_n)
\rightarrow \mathrm{Mod}\,\mathcal{B}$$
adjoint {\`a} gauche de la restriction le long de $C$. Le dg-foncteur $C$
{\em admet un c{\^o}ne} $Y$ dans $\mathcal{B}$ si $C_!(X_n)$  est
repr{\'e}sentable par un objet $Y$ de $\mathcal{B}$. C'est le cas ssi le
foncteur $C$ se prolonge en un dg-foncteur
$$\widetilde{C}: \mathrm{cone}_n(k_0, \ldots , k_n) \rightarrow \mathcal{B}\,.$$
Si $G: \mathcal{B} \rightarrow \mathcal{B}'$ est un dg-foncteur et
$C$ admet le c{\^o}ne $Y$ dans $\mathcal{B}$, alors $G\circ C$ admet le
c{\^o}ne $G(Y)$ dans $\mathcal{B}'$.
Si $C$ se factorise par la sous-dg-cat{\'e}gorie pleine
$\mathcal{B}\,\backslash \{Y \}$ et admet le c{\^o}ne $Y$ dans
$\mathcal{B}$, alors le carr{\'e}
$$\xymatrix{
*+<1pc>{\mathcal{M}_n(k_0, \ldots , k_n)} \ar[r] \ar@{^{(}->}[d] \ar@{}[dr]|{\lrcorner} & *+<1pc>{\mathcal{B}\,\backslash
\{Y \}} \ar@{^{(}->}[d]\\
\mathrm{cone}_n(k_0, \ldots , k_n) \ar[r] & \mathcal{B}
}$$
est cocart{\'e}sien.
\end{remarque}
Ces remarques impliquent le lemme suivant.
\begin{lemme} \label{lemme-clef}
Le carr{\'e} commutatif
$$\xymatrix{
*+<1pc>{\mathcal{M}_n(k_0, \ldots , k_n)} \ar[rr]^-{I_n(k_0, \ldots , k_n)}
\ar@{^{(}->}[d]_-{can}  \ar@{}[drr]|{\lrcorner} & &  *+<1pc>{\mathrm{coneh}_n(k_0, \ldots , k_n)}
\ar@{^{(}->}[d] \\
\mathrm{cone}_n(k_0, \ldots , k_n) \ar[rr]  & &  \mathrm{cone}_n(k_0, \ldots , k_n) \amalg_{\mathcal{A}}\mathcal{K}
}$$
est cocart{\'e}sien.
\end{lemme}

Soit $F: \mathcal{A}\rightarrow \mathcal{B}$ un dg-foncteur.
\begin{remarque}\label{rem-pre}
Rappelons de \cite{Bon-Kap} \cite[2.4]{Drinfeld} que les objets de
$\mbox{pre-tr}(\mathcal{A})$ sont des expressions formelles
$C=(\bigoplus^n_{i=0} C_i[r_i],q)$, o{\`u} $C_i\in \mathcal{A}$, $r_i \in
\mathbb{Z}$, $n\geq 0$, $q=(q_{ij})$, $q_{ij} \in
\mathsf{Hom}^1_{\mathcal{A}}(C_j,C_i)[r_i-r_j]$, $q_{ij}=0$ pour $i\geq
  j$, $dq +q^2 =0$ et le complexe de morphismes
  $\mathsf{Hom}_{\mbox{pre-tr}(\mathcal{A})}(C,C')$ est un espace de
  matrices formelles $f=(f_{ij})$, $f_{ij} \in
  \mathsf{Hom}(C_j,C_i')[r_i'-r_j]$, voir \cite{Bon-Kap}. Le
  dg-foncteur $\mbox{pre-tr}(F)$ envoie donc les expressions formelles
en expressions formelles. Cela implique que la classe $\mbox{{\bf
    Surj}}$ est stable par application du foncteur $\mbox{pre-tr}(-)$ et
qu'un objet de $\mbox{pre-tr}(\mathcal{A})$ dont l'image par
$\mbox{pre-tr}(F)$ est dans $\mathcal{B}$ est en fait dans
$\mathcal{A}$.
\end{remarque}

\begin{lemme}\label{J-inj-Surj 2}
On a $\mbox{{\bf Surj}} \subseteq J-\mbox{inj} \cap W$.
\end{lemme}
\begin{proof}[D{\'e}monstration]
Soit $H$ un dg-foncteur
de $\mathcal{N}$ vers $\mathcal{E}$ dans la classe $\mbox{{\bf Surj}}$. Comme $H$ est surjectif au niveau des objets et un
quasi-isomorphisme au niveau des complexes de morphismes, on a $H \in
W$. La classe $R(n)-\mbox{drt}$ est form{\'e}e des dg-foncteurs surjectifs
au niveau des complexes de morphismes. Il suffit de montrer que $H$ a
la propri{\'e}t{\'e} de rel{\`e}vement {\`a} droite par rapport {\`a} $F$, $F(n)$, $n \in
\mathbb{Z}$, et $I_n(k_0, \ldots , k_n)$. On consid{\`e}re ces trois cas~:
\begin{enumerate}
\item on sait par le lemme~\ref{J-inj-Surj} que $H \in F-\mbox{drt}$;
\item en effet, on a $H \in F(n)-\mbox{drt}$, $n \in \mathbb{Z}$, par un argument compl{\`e}tement
  analogue au cas pr{\'e}c{\'e}dent;
\item on consid{\`e}re le diagramme suivant~:
$$\xymatrix{
\mathcal{M}_n(k_0,\ldots, k_n) \ar[r] \ar[d]_{I_n(k_0,
  \ldots , k_n)} & \mathcal{N} \ar[d]^H\\
\mathrm{coneh}_n(k_0, \ldots , k_n) \ar[r]_-{L} & \mathcal{E}
}$$
Le lemme~\ref{lemme-clef} permet d'{\'e}tendre le dg-foncteur $L$ en un dg-foncteur
$\overline{L}$ de $\mathrm{cone}_n(k_0, \ldots , k_n)\amalg_{\mathcal{A}}
 \mathcal{K}$ vers $\mbox{pre-tr}(\mathcal{E})$.
$$\xymatrix@!0 @R=2pc @C=4pc{
&  & \mathcal{M}_n \ar[rr] \ar[dl] \ar[dd]|\hole & &  *+<1pc>{\mathcal{N}}
\ar[dd]^-H \ar@{^{(}->}[dl] \\
\mathcal{A} \ar@{}[ddr]|{\lrcorner} \ar[dd]_F \ar[r] & \mathrm{cone}_n \ar[dd] \ar[rr]  & &
\mbox{pre-tr}(\mathcal{N}) \ar[dd] & \\
& & \mathrm{coneh}_n \ar[rr]|\hole^(0.3){L} \ar[dl] &    & *+<1pc>{\mathcal{E}}
\ar@{^{(}->}[dl] \\
\mathcal{K} \ar[r] & \mathrm{cone}_n \amalg_{\mathcal{A}} \mathcal{K}
\ar[rr]^-{\overline{L}} &   & \mbox{pre-tr}(\mathcal{E}) &
}$$
Puisque $H$ appartient {\`a} la classe $\mbox{{\bf Surj}}$ et cette
classe est stable par application du foncteur
$\mbox{pre-tr}(-)$, voir remarque~\ref{rem-pre}, on peut
relever le dg-foncteur $\overline{L}$ vers
$\mbox{pre-tr}(\mathcal{N})$.
Finalement, par la remarque~\ref{rem-pre}, la restriction du rel{\`e}vement du dg-foncteur $\overline{L}$ {\`a} la sous-dg-cat{\'e}gorie pleine $\mathrm{coneh}_n(k_0, \ldots
,k_n)$ nous fournit un rel{\`e}vement du dg-foncteur $L$ vers la
dg-cat{\'e}gorie $\mathcal{N}$.
\end{enumerate}
\end{proof}

\begin{lemme} \label{J-cell-dans-W} On a $J-\mbox{cell}\subset W$.
\end{lemme}
\begin{proof}[D{\'e}monstration]
On sait d{\'e}j{\`a} par le lemme~\ref{J-cell-W} que les classes $F-\mbox{cell}$ et
$R(n)-\mbox{cell}$ sont form{\'e}es des quasi-{\'e}quivalences et donc
contenues dans la classe $W$. Soit $n \in \mathbb{Z}$ et $T:\mathcal{A}
\rightarrow \mathcal{J}$ un dg-foncteur quelconque. On consid{\`e}re la
somme amalgam{\'e}e suivante
$$\xymatrix{
\mathcal{A} \ar[d]_{F(n)} \ar[r]^T \ar@{}[dr]|{\lrcorner} & \mathcal{J} \ar[d]^{inc}\\
\mathcal{K}(n) \ar[r] & \mathcal{U}
}$$
dans $\dgcat$. Il s'agit de v{\'e}rifier que $inc \in W$. Il
faut verifier deux conditions:
\begin{enumerate}
\item L'inclusion $\mathrm{Hom}_{\mathcal{J}}(X,Y) \rightarrow \mathrm{Hom}_{\mathcal{U}} (inc(X), inc(Y))$ est un quasi-isomorphisme
  pour tous $X,Y \in \mathcal{J}$. Pour cela, on raisonne comme dans
  le cas o{\`u} on a le dg-foncteur $F$ au lieu de
  $F(n)$, voir lemme~\ref{J-cell-W}.
\item Le foncteur
  $\mathrm{H}^0(\mathbb{Z}(inc)):\mathrm{H}^0(\mathbb{Z}\,\mathcal{J})
  \rightarrow \mathrm{H}^0(\mathbb{Z}\,\mathcal{U})$ est essentiellement
  surjectif. En effet, la dg-cat{\'e}gorie $\mathcal{U}$ s'obtient {\`a} partir de
  $\mathcal{J}$ en rajoutant un nouvel objet $S$ homotopiquement {\'e}quivalent {\`a} un d{\'e}cal{\'e} de l'objet $T(3)$. Plus
  pr{\'e}cis{\'e}ment, l'object $(S,0)$
  devient isomorphe {\`a} $(T(3),-n)$ dans $\mathrm{H}^0(\mathbb{Z}\,\mathcal{U})$.
\end{enumerate}

Soit maintenant $T:\mathcal{M}_n(k_0,\ldots, k_n) \rightarrow \mathcal{J}$ un
dg-foncteur quelconque. On consid{\`e}re la somme amalgam{\'e}e suivante
$$\xymatrix{
\mathcal{M}_n(k_0,\ldots, k_n) \ar[d]_{I_n(k_0, \ldots ,
  k_n)} \ar[r]^-{T} \ar@{}[dr]|{\lrcorner}   & \mathcal{J} \ar[d]^{inc} \\
\mathrm{coneh}_n(k_0, \ldots , k_n) \ar[r] & \mathcal{U}
}$$
dans $\dgcat$. Il s'agit de v{\'e}rifier que $inc \in W$. Il
faut verifier deux conditions:
\begin{enumerate}
\item L'inclusion $\mathrm{Hom}_{\mathcal{J}}(X,Y) \rightarrow \mathrm{Hom}_{\mathcal{U}} (inc(X), inc(Y))$ est un quasi-isomor\-phisme
  pour tous $X,Y \in \mathcal{J}$. Dans le diagramme commutatif
  suivant, la dg-cat{\'e}gorie $\mathcal{N}$ est la somme amalgam{\'e}e de
  $\mathcal{J}$ et $\mathrm{cone}_n$, et $\mathcal{E}$ est la somme
  amalgam{\'e}e de $\mathcal{N}$ et $\mathcal{K}$.
$$\xymatrix@!0 @R=2pc @C=4pc{
&  & \mathcal{M}_n \ar[rr]^-T \ar[dl] \ar[dd]|\hole & &  \mathcal{J}
\ar[dd]^-{inc} \ar[dl]^L \\
\mathcal{A} \ar@{}[ddr]|{\lrcorner} \ar[dd]_F \ar[r] & \mathrm{cone}_n \ar[dd] \ar[rr] & &
\mathcal{N} \ar[dd]^(0.3){H}  & \\
& & \mathrm{coneh}_n \ar[rr]|\hole \ar@{^{(}->}[dl] &    & *+<1pc>{\mathcal{U}}
\ar@{^{(}->}[dl] \\
\mathcal{K} \ar[r] & \mathrm{cone}_n \amalg_{\mathcal{A}} \mathcal{K}
\ar[rr] &   & \mathcal{E} &
}$$
On peut identifier
 la dg-cat{\'e}gorie $\mathcal{N}$ {\`a} la sous-dg-cat{\'e}gorie pleine des dg-modules ({\`a}
 droite) sur $\mathcal{J}$ dont les objets sont les dg-modules
 represent{\'e}es et le $n$-i{\`e}me c{\^o}ne it{\'e}r{\'e} sur l'image de la famille
 $T(q_{i,j})$ par le foncteur de Yoneda. Le dg-foncteur $L$ s'identifie alors au plongement
 de Yoneda et il est donc pleinement fid{\`e}le. Le dg-foncteur $H$ s'identifie au dg-foncteur de $\mathcal{N}$ vers
 la somme amalgam{\'e}e avec $\mathcal{K}$. On sait d'apr{\`e}s le lemme~\ref{J-cell-W}
 que $H$ est donc une quasi-{\'e}quivalence. Notons maintenant que la
 dg-cat{\'e}gorie $\mathcal{U}$ s'identifie {\`a} la sous-dg-cat{\'e}gorie pleine
 de $\mathcal{E}$ dont les objets sont ceux dans l'image du
 dg-foncteur
$$ \mathrm{coneh}_n \hookrightarrow \mathrm{cone}_n
\amalg_{\mathcal{A}} \mathcal{K} \rightarrow \mathcal{E}\,.$$
Cela implique que $inc$ satisfait la condition.

\item  Le foncteur
  $\mathrm{H}^0(\mbox{pre-tr}(inc)):\mathrm{H}^0(\mbox{pre-tr}(\mathcal{J}))
  \rightarrow \mathrm{H}^0(\mbox{pre-tr}(\mathcal{U}))$ est essentiellement
  surjectif. En effet, la dg-cat{\'e}gorie $\mathcal{U}$ poss{\`e}de un object
  de plus que la dg-cat{\'e}gorie $\mathcal{J}$, {\`a} savoir l'image de $X\!h\,_n$. Soit $T(X_n)$,
  dans $\mbox{pre-tr}(\mathcal{J})$, le $n$-i{\`e}me c{\^o}ne it{\'e}r{\'e} associ{\'e} au
  dg-foncteur $T$. Alors le lemme~\ref{lemme-clef} nous montre que l'image de $T(X_n)$ par le dg-foncteur
  $\mbox{pre-tr}(inc)$ et $X\!h\,_n$ sont homotopiquement {\'e}quivalents
  dans $\mbox{pre-tr}(\mathcal{U})$.
\end{enumerate}
\end{proof}

Nous avons v{\'e}rifi{\'e} que $J-\mbox{cell}\subseteq W$ (lemme~\ref{J-cell-dans-W}) et
que $I-\mbox{inj}$ est {\'e}gal {\`a} $J-\mbox{inj}\cap W$
(lemmes~\ref{I-inj-Surj}, \ref{J-inj-Surj 1} et \ref{J-cell-dans-W}).
Ces conditions impliquent celles du th{\'e}or{\`e}me~\ref{thm}.

\begin{prop}\label{fibrants}
Les objets fibrants, pour la structure de cat{\'e}gorie de mod{\`e}les de
Quillen du th{\'e}or{\`e}me~\ref{theorem}, sont les dg-cat{\'e}gories
$\mathcal{B}$ telles que l'image essentielle du plongement $\mathrm{H}^0(\mathcal{B}) \hookrightarrow
\mathcal{D}(\mathcal{B})$ est stable par suspensions et c{\^o}nes.
\end{prop}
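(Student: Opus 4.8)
The plan is to read off fibrancy directly from the generating trivial cofibrations of Theorem~\ref{theorem}. By definition $\mathcal{B}$ is fibrant iff the canonical dg-functor from $\mathcal{B}$ to the terminal (null) dg-category has the right lifting property with respect to $J=\{F\}\cup\{R(n)\}_{n\in\mathbb{Z}}\cup\{F(n)\}_{n\in\mathbb{Z}}\cup\{I_n(k_0,\ldots,k_n)\}$. First I would dispose of the cheap generators: the terminal dg-category has zero morphism complexes, so the $R(n)$-lifting condition (surjectivity on $\mathrm{Hom}$-complexes) is empty, and a lift $\mathcal{K}\to\mathcal{B}$ extending a chosen object $X$ exists by Proposition~\ref{nova}, taking $s=\mathbf{1}_X$ together with the evident contraction of $\mathrm{cone}(\widehat{\mathbf{1}_X})$. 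So only the two new families $F(n)$ and $I_n(k_0,\ldots,k_n)$ can obstruct fibrancy here; this is the refinement, relative to Remark~\ref{toutfibrant}, of the fact that every object is fibrant for the coarser structure of Theorem~\ref{mal}.

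Next I would treat the $F(n)$. Exactly as in the proof of Proposition~\ref{nova}, with $\mathcal{K}$ replaced by $\mathcal{K}(n)$, a dg-functor $\mathcal{K}(n)\to\mathcal{B}$ is a closed morphism $s$ of degree $n$ in $\mathcal{B}$ together with a contraction of $\mathrm{cone}(\widehat{s})$ in $\mathcal{C}_{dg}(\mathcal{B})$, and such a contraction exists iff the target of $s$ represents, up to homotopy, the shift by $n$ of the representable attached to the source. Hence the canonical dg-functor has the RLP against $F(n)$ iff for every $X\in\mathcal{B}$ there is $Y\in\mathcal{B}$ with $\widehat{Y}\simeq\widehat{X}[n]$ in $\mathrm{H}^0(\mathcal{C}_{dg}(\mathcal{B}))$. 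Since $\widehat{X}[n]$ and $\widehat{Y}$ are cofibrant dg-modules, this homotopy equivalence is the same as an isomorphism in $\mathcal{D}(\mathcal{B})$, and, the Yoneda embedding being fully faithful at the level of $\mathrm{H}^0$, it says precisely that the essential image of $\mathrm{H}^0(\mathcal{B})\hookrightarrow\mathcal{D}(\mathcal{B})$ is stable under the suspension $[n]$. Letting $n$ run over $\mathbb{Z}$ gives stability under all (co)suspensions.

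Finally the $I_n(k_0,\ldots,k_n)$. Here I would use the construction of $\mathrm{coneh}_n(k_0,\ldots,k_n)$ as the appropriate full subcategory of $\mathrm{cone}_n(k_0,\ldots,k_n)\amalg_{\mathcal{A}}\mathcal{K}$: restricting a dg-functor $\mathrm{coneh}_n(k_0,\ldots,k_n)\to\mathcal{B}$ along $I_n(k_0,\ldots,k_n)$ produces a twisted complex $(C_0,\ldots,C_n;q_{ij})$ in $\mathcal{B}$ with iterated cone $X_n\in\mathcal{C}_{dg}(\mathcal{B})$, while the extra data (the image of the $\mathcal{K}$-part) is, by Proposition~\ref{nova}, the choice of an object $Y\in\mathcal{B}$ with $\widehat{Y}\simeq X_n$ in $\mathrm{H}^0(\mathcal{C}_{dg}(\mathcal{B}))$ — this is the content of Lemma~\ref{lemme-clef}, used in this form in the proof of Lemma~\ref{J-inj-Surj 2}. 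Thus the canonical dg-functor has the RLP against all $I_n(k_0,\ldots,k_n)$ iff every iterated cone of a twisted complex of objects of $\mathcal{B}$, with arbitrary shifts, is isomorphic in $\mathcal{D}(\mathcal{B})$ to an object of $\mathcal{B}$. The case $n=1$, varying $k_0,k_1$, gives stability of the essential image under mapping cones of morphisms between its objects; conversely, by the Bondal--Kapranov description of iterated cones~\cite{Bon-Kap} and an induction on $n$, peeling off one cone at a time and using at each step stability under cones together with the suspension-stability established above, every such $X_n$ lies in the essential image. Combining the three analyses yields Proposition~\ref{fibrants}, in parallel with the explicit description of fibrations for the structure of Theorem~\ref{mal} given in Proposition~\ref{nova3}.

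The main obstacle is this last paragraph. One must pin down what ``$\widehat{Y}\simeq X_n$'' means, identifying homotopy equivalence of dg-modules, isomorphism in $\mathcal{D}(\mathcal{B})$, and membership of $X_n$ in the essential image of $\mathrm{H}^0(\mathcal{B})$, using cofibrancy of representables and full faithfulness of Yoneda on $\mathrm{H}^0$; and the inductive reduction of an arbitrary iterated cone to ordinary cones and shifts must be organised so that only cones of morphisms between objects already available up to isomorphism in $\mathcal{D}(\mathcal{B})$, and shifts thereof, intervene. This is exactly where the auxiliary dg-categories $\mathrm{cone}_n(k_0,\ldots,k_n)$, $\mathrm{coneh}_n(k_0,\ldots,k_n)$ and the shifts $k_0,\ldots,k_n$ have to be handled with the same care as in the proof of Lemma~\ref{J-inj-Surj 2}; the remaining verifications are a routine transcription of lifting properties already established for Theorem~\ref{theorem}.
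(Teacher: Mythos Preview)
Your proposal is correct and follows essentially the same strategy as the paper: translate the lifting conditions against $F(n)$ and $I_n(k_0,\ldots,k_n)$ into stability of the essential image under suspensions and (iterated) cones, the latter reduced inductively to single cones plus shifts. The paper is terser on that induction and leaves the disposal of $F$ and $R(n)$ implicit via Remark~\ref{toutfibrant}, but the argument is the same; your flagged ``main obstacle'' is genuine bookkeeping rather than a gap.
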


\begin{proof}[D{\'e}monstration]
Soit $n \in \mathbb{Z}$ et $\mathcal{B}$ une dg-cat{\'e}gorie
fibrante. Soit $X\in \mathsf{H}^0(\mathcal{B})$ et $X[n] \in
\mathcal{D}(\mathcal{B})$ la $n$-i{\`e}me suspension de $X$ dans
$\mathcal{D}(\mathcal{B})$. L'objet $X$ correspond {\`a} un dg-foncteur
$L:\mathcal{A} \rightarrow \mathcal{B}$ et puisque $\mathcal{B}$
est fibrante on dispose du diagram commutatif suivant
$$
\xymatrix{
\mathcal{A} \ar[r]^L \ar[d]_{F(n)} & \mathcal{B} \\
\mathcal{K}(n) \ar@{-->}[ur]_{\overline{L}} & .
}
$$
Notons que l'objet $\overline{L}(2)$ correspond {\`a} la $n$-i{\`e}me
suspension de $X$ dans la cat{\'e}gorie
$\mathsf{H}^0(\mbox{Mod}(\mathcal{B}))$ et donc dans
$\mathcal{D}(\mathcal{B})$. Les objets $\overline{L}(2)$ et $X[n]$
sont donc isomorphes dans $\mathcal{D}(\mathcal{B})$.

Soit maintenant $[f]$ un morphisme dans $\mathsf{H}^0(\mathcal{B})$ et
$\mbox{cone}([f])$ le c{\^o}ne de $[f]$ dans
$\mathcal{D}(\mathcal{B})$. Le morphisme $f \in
\mathsf{Z}^0(\mathcal{B})$ correspond {\`a} un dg-foncteur
$L:\mbox{cone}_0(0) \rightarrow \mathcal{B}$ et puisque
$\mathcal{B}$ est fibrante on dispose du diagram commutatif suivant
$$
\xymatrix{
\mbox{cone}_0(0) \ar[r]^L \ar[d]_{I_0(0)} & \mathcal{B} \\
\mbox{coneh}_0(0) \ar@{-->}[ur]_{\overline{L}} & .
}
$$
Notons maintenant que l'objet $\overline{L}(X\!h\,_0)$ correspond au
c{\^o}ne de $[f]$ dans $\mathsf{H}^0(\mbox{Mod}(\mathcal{B}))$ et
donc dans $\mathcal{D}(\mathcal{B})$. Les objets
$\overline{L}(X\!h\,_0)$ et $\mbox{cone}([f])$ sont donc isomorphes
dans $\mathcal{D}(\mathcal{B})$. Cela montre que l'image essentielle
du plongement $\mathrm{H}^0(\mathcal{B}) \hookrightarrow
\mathcal{D}(\mathcal{B})$ est stable par suspensions et c{\^o}nes.

Soit maintenant $\mathcal{B}$ une dg-cat{\'e}gorie telle que l'image essentielle
du plongement $\mathrm{H}^0(\mathcal{B}) \hookrightarrow
\mathcal{D}(\mathcal{B})$ est stable par suspension et c{\^o}nes.
On consid{\'e}re les diagrammes suivants~:
$$
\xymatrix{
\mathcal{A} \ar[d]_{F(n)} \ar[r]^L & \mathcal{B} & &
\mathcal{M}_n(k_0, \ldots, k_n) \ar[r]^-S \ar[d]_{I_n(k_0,\ldots,k_n)}
& \mathcal{B} \\
\mathcal{K}(n) & & & \mbox{cone}h_n(k_0,\ldots,k_n) & .
}
$$
Puisque la suspension dans $\mathcal{D}(\mathcal{B})$ des
$\mathcal{B}$-modules representables s'identifie {\`a} la suspension dans
$\mathsf{H}^0(\mbox{Mod}(\mathcal{B}))$, il existe un dg-foncteur
$\overline{L}: \mathcal{K}(n) \rightarrow \mathcal{B}$ tel que $\overline{L}\circ F(n) = L$. D'une fa{\c c}on
analogue les c{\^o}nes it{\'e}r{\'e}s dans $\mathcal{D}(\mathcal{B})$ entre
$\mathcal{B}$-modules representables s'identifient {\`a} les c{\^o}nes it{\'e}r{\'e}s
dans $\mathsf{H}^0(\mbox{Mod}(\mathcal{B}))$. Il existe donc un
dg-foncteur $\overline{S}:\mbox{cone}h_n(k_0,\ldots, k_n) \rightarrow
\mathcal{B}$ tel que $\overline{S} \circ I_n(k_0,\ldots,k_n)=S$. Cela
montre que $\mathcal{B}$ est une dg-cat{\'e}gorie fibrante et la
proposition est d{\'e}montr{\'e}e.
\end{proof}
\begin{remarque}
On remarque que les conditions de la proposition~\ref{fibrants} sont {\'e}quivalents {\`a} ce que
le dg-foncteur $\mathcal{B} \hookrightarrow
\mbox{pre}$-$\mbox{tr}(\mathcal{B})$, voir \cite{Bon-Kap}, soit une quasi-{\'e}quivalence.
\end{remarque}

\begin{notation}
On note $\mathcal{Q}eq$ la classe des quasi-{\'e}quivalences, voir
definition~\ref{quasieq}, et $\mathcal{Q}ec$ la classe plus large des
dg-foncteurs quasi-{\'e}quiconiques. On note par $\mathsf{Heq}$ et
$\mathsf{Hec}$ les cat{\'e}gories homotopiques de $\dgcat$ par rapport {\`a}
ces classes et par $\mathcal{B}_{fib}$ un remplacement fibrant
de la dg-cat{\'e}gorie $\mathcal{B}$ par rapport {\`a} la structure de
cat{\'e}gorie de mod{\`e}les de Quillen quasi-{\'e}quiconique.
\end{notation}

\begin{lemme}\label{homotopes}
Les dg-cat{\'e}gories $\mathcal{B}_{fib}$ et
$\mbox{pre-tr}(\mathcal{B})$ sont isomorphes dans $\mathsf{Heq}$.
\end{lemme}
\begin{proof}[D{\'e}monstration]
On consid{\`e}re le carr{\'e} commutatif
$$
\xymatrix{
*+<2pc>{\mathcal{B}} \ar[r] \ar@{>->}[d]_-F & \mbox{pre-tr}(\mathcal{B})
\ar[d]^-{pre-tr(F)} \\
*+<1pc>{\mathcal{B}_{fib}} \ar@{^{(}->}[r] &
\mbox{pre-tr}(\mathcal{B}_{fib})\,.
}
$$
Le dg-foncteur $F$ appartient {\`a} la classe $\mathcal{Q}ec$ et donc par
definition $\mbox{pre-tr}(F)$ appartient {\`a} la classe
$\mathcal{Q}eq$. Puisque la dg-cat{\'e}gorie $\mathcal{B}_{fib}$ est
fibrante, la proposition~\ref{fibrants} implique que le dg-foncteur $\mathcal{B}_{fib}
\hookrightarrow \mbox{pre-tr}(\mathcal{B}_{fib})$ appartient {\`a} la
classe $\mathcal{Q}eq$. Cela implique le lemme.
\end{proof}

\begin{prop}\label{Bousf}
La structure de cat{\'e}gorie de mod{\`e}les de Quillen quasi-{\'e}quiconique est
la localisation de Bousfield {\`a} gauche, voir \cite[3.3.1]{Hirschhorn},
de celle du th{\'e}or{\`e}me~\ref{mal}, par rapport {\`a} l'ensemble
$$ S: = \{ F(1), F(-1), I_0(0) \}\,.$$
\end{prop}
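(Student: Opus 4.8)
The plan is to verify the universal property characterizing a left Bousfield localization directly, using Hirschhorn's criterion (\cite[3.3.1]{Hirschhorn}): one must show that the quasi-équiconique model structure and the $S$-localization of the model structure of Theorem~\ref{mal} have \emph{the same cofibrations} and \emph{the same fibrant objects}. For the cofibrations this is immediate: the left Bousfield localization $L_S$ never changes the cofibrations, and by Theorem~\ref{theorem} the cofibrations of the quasi-équiconique structure are generated by $Q$ and $S(n)$, which are exactly the generating cofibrations $I$ of Theorem~\ref{mal}; so both structures share the class $\mathrm{cof}(I)$. The real work is therefore to identify the fibrant objects, i.e.\ to show that a dg-category $\mathcal{B}$ is \emph{$S$-local} (fibrant in $L_S$ of the structure of Theorem~\ref{mal}) if and only if it satisfies the condition of Proposition~\ref{fibrants}, namely that the essential image of $\mathrm{H}^0(\mathcal{B}) \hookrightarrow \mathcal{D}(\mathcal{B})$ is stable under suspensions and cones.

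The first step is to recall that, since every object is fibrant in the structure of Theorem~\ref{mal} (Remark~\ref{toutfibrant}), an object $\mathcal{B}$ is $S$-local exactly when for each $s\colon U\to V$ in $S$ the induced map on homotopy function complexes $\mathrm{Map}(V,\mathcal{B})\to \mathrm{Map}(U,\mathcal{B})$ is a weak equivalence. Using To\"en's description (Theorem~\ref{thmToen}, or just the description of $\mathrm{Map}$ via $\mathcal{R}\mathsf{Hom}$ recalled in the introduction), testing against the three generators of $S$ has a transparent meaning. The map $F(1)\colon \mathcal{A}\to\mathcal{K}(1)$ has source the dg-category $\underline{k}$ with one object and endomorphism ring $k$, and target a cofibrant resolution of the ``data of an object together with a contraction of the cone of its own shift by $1$''; being local against $F(1)$ says precisely that every object of $\mathrm{H}^0(\mathcal{B})$ admits a $1$-suspension inside $\mathrm{H}^0(\mathcal{B})$ (up to isomorphism in $\mathcal{D}(\mathcal{B})$). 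Symmetrically, being local against $F(-1)$ gives closure under desuspension, hence — together with $F(1)$ — closure under all suspensions $[n]$, $n\in\mathbb{Z}$. Being local against $I_0(0)\colon \mathrm{cone}_0(0)\to\mathrm{coneh}_0(0)$ says that every closed degree-$0$ morphism of $\mathcal{B}$ has a cone representable in $\mathrm{H}^0(\mathcal{B})$. So the $S$-local objects are exactly those for which $\mathrm{H}^0(\mathcal{B})$ is closed in $\mathcal{D}(\mathcal{B})$ under shifts and cones on closed degree-$0$ maps — and a routine argument (any morphism of $\mathrm{H}^0(\mathcal{B})$ lifts to a closed degree-$0$ morphism after composing with an isomorphism, as in the proof of Lemma~\ref{J-inj-Surj}) upgrades ``cones on closed degree-$0$ maps'' to ``cones on arbitrary maps of $\mathrm{H}^0(\mathcal{B})$''. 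This matches Proposition~\ref{fibrants} verbatim.

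The concluding step is bookkeeping: having shown that the $S$-local objects coincide with the quasi-équiconique-fibrant objects and that the two model structures have the same underlying category, the same weak equivalences on fibrant objects are forced (a map between fibrant objects is a weak equivalence iff it induces weak equivalences on all $\mathrm{Map}(X,-)$, and both structures have the same $\mathrm{Map}$ since they have the same cofibrations and the model structure of Theorem~\ref{mal} is the common source), and Hirschhorn's uniqueness of left Bousfield localizations with a prescribed class of cofibrations and fibrant objects identifies $L_S$ of Theorem~\ref{mal} with the structure of Theorem~\ref{theorem}. The main obstacle I expect is the analysis of locality against $F(1)$ and $I_0(0)$: one must compute the mapping spaces $\mathrm{Map}(\mathcal{K}(1),\mathcal{B})$, $\mathrm{Map}(\mathcal{A},\mathcal{B})$, $\mathrm{Map}(\mathrm{coneh}_0(0),\mathcal{B})$ and the maps between them, which requires either unwinding Proposition~\ref{nova} (and its quasi-équiconique analogue) to see that a dg-functor out of $\mathcal{K}(1)$, resp.\ $\mathrm{coneh}_0(0)$, is the same as choosing a suspension, resp.\ a cone, together with contractible spaces of auxiliary homotopy data — so that up to weak equivalence the mapping space records only the (iso class of the) chosen suspension or cone — or else invoking the explicit fibrant-replacement description already used in Lemma~\ref{homotopes} to reduce everything to $\mathrm{pre\text{-}tr}(\mathcal{B})$. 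Once that identification is in hand the equivalence of conditions is formal.
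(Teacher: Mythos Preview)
Your approach is essentially the paper's own: both verify that the $S$-local objects coincide with the quasi-\'equiconique fibrant objects of Proposition~\ref{fibrants}, using To\"en's $\mathrm{rep}$-description of the mapping spaces to interpret locality against $F(\pm 1)$ and $I_0(0)$ as closure under shifts and cones, and then deduce the identification of model structures. The paper handles the easy direction (quasi-\'equiconique fibrant $\Rightarrow$ $S$-local) slightly more efficiently, by simply observing that every map in $S$ is already a quasi-\'equiconique weak equivalence, so $\mathrm{Map}(-,\mathcal{B})$ automatically inverts them when $\mathcal{B}$ is fibrant for that structure; it then also verifies directly that the $S$-local equivalences agree with the quasi-\'equiconique ones, rather than quoting a uniqueness statement from Hirschhorn.
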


\begin{proof}[D{\'e}monstration]
 On commence par montrer qu'une dg-cat{\'e}gorie $\mathcal{B}$ est
 $S$-locale, voir \cite[3.4]{Hirschhorn}, si et seulement si elle est
 fibrante pour la structure quasi-{\'e}quiconique. Notons que puisque
 les deux structures de mod{\`e}les de Quillen sur $\dgcat$ ont les m{\^e}mes
 cofibrations, et donc les m{\^e}mes fibrations triviales, le foncteur de
 remplacement cofibrant simplicial $\Gamma^{\ast}$, voir
 \cite{Hirschhorn}, est le m{\^e}me dans les deux situations. Puisque
 $S$ est form{\'e} de foncteurs quasi-{\'e}quiconiques, cela
 implique que si $\mathcal{B}$ est fibrante pour la structure
 quasi-{\'e}quiconique, alors pour tout dg-foncteur
 $L:\mathcal{A} \rightarrow \mathcal{C}$ dans $S$, le morphisme
 induit
$$ \mathsf{Map}(\mathcal{C},\mathcal{B})
\stackrel{\sim}{\longrightarrow}
\mathsf{Map}(\mathcal{A},\mathcal{B})$$
est une {\'e}quivalence faible d'ensembles simpliciaux.

Soit maintenant $\mathcal{B}$ un objet $S$-local dans $\dgcat$.
Montrons qu'il est fibrant. Par la proposition~\ref{fibrants},
il s'agit de montrer que l'image essentielle du plongement
$\mathsf{H}^0(\mathcal{B}) \hookrightarrow \mathcal{D}(\mathcal{B})$
est stable par suspensions et c{\^o}nes. On consid{\'e}re maintenant le
cas des c{\^o}nes (l'argument pour le cas des suspensions est
analogue). La donn{\'e}e d'un morphisme ferm{\'e} $f$ de degr{\'e} $0$ de $\cb$
fournit un morphisme $\mathcal{M}_0(0) \to \cb$ et donc un objet
de $\rep(\mathcal{M}_0(0),\cb)$.
Par la description de $\mathsf{Map}(-,-)$ dans \cite{Toen} et l'hypoth{\`e}se
sur $\cb$, le foncteur
\[
\rep(\mbox{coneh}_0(0), \mathcal{B}) \to \rep(\mathcal{M}_0(0),\mathcal{B})
\]
est une {\'e}quivalence. Cela implique qu'il existe bien dans $\cb$ un objet
$C'(f)$ qui, dans $\cd\cb$, devient isomorphe au c{\^o}ne sur l'image
de $f$ par le foncteur de Yoneda.

On montre maintenant qu'un dg-foncteur est quasi-{\'e}quiconique si et
seulement s'il est une $S$-{\'e}quivalence locale, voir
\cite[3.1.4]{Hirschhorn}. Un dg-foncteur $F$ est une
$S$-{\'e}quivalence locale ssi $\Map(F,\cc)$ est une {\'e}quivalence faible pour tout
$\cc$ $S$-locale, ssi $\Map(F,\cc)$ est une {\'e}quivalence faible pour tout $\cc$ fibrant pour
la structure quasi-{\'e}quiconique. Une fois que les foncteurs
$\mathsf{Map}(-,\mathcal{C})$ sont {\'e}quivalents pour les deux structures, avec
$\mathcal{C}$ fibrante (pour la structure quasi-{\'e}quiconique), $F$ est
quasi-{\'e}quiconique ssi est une $S$-{\'e}quivalence locale.
\end{proof}

Soient $\mathcal{A}$ et $\mathcal{B}$ des dg-cat{\'e}gories et $\mbox{can}_1:\mathsf{Heq} \rightarrow \mathsf{Hec}$ le
foncteur canonique.
\begin{corollaire}\label{adjonct1}
On a une adjonction
$$
\mathrm{Hom}_{\mathsf{Hec}}(\mbox{can}_1(\mathcal{A}),
\mathcal{B}) \stackrel{\sim}{\longrightarrow} \mathrm{Hom}_{\mathsf{Heq}}(\mathcal{A},
\mathcal{B}_{fib})\,.
$$
\end{corollaire}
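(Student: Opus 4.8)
The plan is to deduce this corollary from the general machinery of derived Quillen adjunctions applied to the identity functor of $\dgcat$. By Proposition~\ref{Bousf} the quasi-\'equiconique model structure is the left Bousfield localization of the model structure of Theorem~\ref{mal} along $S$; since the two structures share their cofibrations and since $\mathcal{Q}eq \subseteq \mathcal{Q}ec$, the identity functor $\mathrm{Id}: (\dgcat,\mathcal{Q}eq)\to(\dgcat,\mathcal{Q}ec)$ preserves cofibrations and trivial cofibrations, hence is a left Quillen functor whose right adjoint is again $\mathrm{Id}$. First I would invoke the standard fact (see \cite{Hovey}) that such a Quillen pair induces an adjunction $\mathbb{L}\mathrm{Id}:\mathsf{Heq}\rightleftarrows\mathsf{Hec}:\mathbb{R}\mathrm{Id}$, that is, a natural isomorphism
$$\mathrm{Hom}_{\mathsf{Hec}}(\mathbb{L}\mathrm{Id}(\mathcal{A}),\mathcal{B}) \stackrel{\sim}{\longrightarrow} \mathrm{Hom}_{\mathsf{Heq}}(\mathcal{A},\mathbb{R}\mathrm{Id}(\mathcal{B}))\,.$$

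Next I would identify the two derived functors with the functors appearing in the statement. On the left, $\mathbb{L}\mathrm{Id}(\mathcal{A})$ is computed by a functorial cofibrant resolution $Q\mathcal{A}\stackrel{\sim}{\to}\mathcal{A}$ for the quasi-\'equivalence structure (available since it is cofibrantly generated); as this morphism is a quasi-\'equivalence it is in particular quasi-\'equiconique, hence an isomorphism in $\mathsf{Hec}$, so $\mathbb{L}\mathrm{Id}(\mathcal{A})$ is naturally isomorphic to $\mbox{can}_1(\mathcal{A})$. On the right, $\mathbb{R}\mathrm{Id}(\mathcal{B})$ is computed by a fibrant resolution of $\mathcal{B}$ for the quasi-\'equiconique structure, which is by definition $\mathcal{B}_{fib}$; and because every dg-category is fibrant for the quasi-\'equivalence structure (Remark~\ref{toutfibrant}), $\mathcal{B}_{fib}$ is already fibrant there, so $\mathbb{R}\mathrm{Id}(\mathcal{B})$ is simply $\mathcal{B}_{fib}$ regarded as an object of $\mathsf{Heq}$. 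Substituting these two identifications into the displayed isomorphism yields exactly
$$\mathrm{Hom}_{\mathsf{Hec}}(\mbox{can}_1(\mathcal{A}),\mathcal{B}) \stackrel{\sim}{\longrightarrow} \mathrm{Hom}_{\mathsf{Heq}}(\mathcal{A},\mathcal{B}_{fib})\,,$$
naturally in $\mathcal{A}$ and $\mathcal{B}$, which is the assertion.

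I expect the only real point requiring care — rather than a genuine obstacle — to be the verification that $\mathrm{Id}$ is left Quillen and that these derived functors are computed correctly: one must check that the common cofibrations are preserved (immediate, same generating set $I=\{Q,S(n)\}$) and that $\mathcal{Q}eq\subseteq\mathcal{Q}ec$ (the pre-triangulated hull of a quasi-\'equivalence is again a quasi-\'equivalence, whence condition (ii) of quasi-\'equiconicity), or alternatively to quote the general statement \cite[3.1.1]{Hirschhorn} that $\mathcal{M}\to\mathsf{L}_S\mathcal{M}$ is always left Quillen together with Proposition~\ref{Bousf}. The naturality of the isomorphism $\mathbb{L}\mathrm{Id}(\mathcal{A})\cong\mbox{can}_1(\mathcal{A})$ then follows from the functoriality of the cofibrant replacement, and no delicate computation seems to be needed.
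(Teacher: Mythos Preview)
Your argument is correct and is exactly the approach the paper has in mind: the corollary is stated immediately after Proposition~\ref{Bousf} without proof, as a direct consequence of the fact that a left Bousfield localization yields a Quillen adjunction via the identity functor, whose derived version is the displayed isomorphism. Your identification of $\mathbb{L}\mathrm{Id}$ with $\mathrm{can}_1$ and of $\mathbb{R}\mathrm{Id}$ with $(-)_{fib}$ spells out precisely what the paper leaves implicit.
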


\begin{notation}\label{notexact}
On note $\mathsf{Heq}_{ex}$ la sous-cat{\'e}gorie pleine de
$\mathsf{Heq}$ dont les objets sont les dg-cat{\'e}gories exactes,
voir \cite{cyclichomology}. On note $\mbox{inc}: \mathsf{Heq}_{ex} \hookrightarrow
\mathsf{Heq}$ l'inclusion. 
\end{notation}

On remarque que le corollaire~\ref{adjonct1} et le lemme~\ref{homotopes} nous
fournissent une {\'e}quivalence de cat{\'e}gories entre $\mathsf{Hec}$ et
$\mathsf{Heq}_{ex}$. On a donc le diagramme suivant
$$
\xymatrix{
\mathsf{Heq} \ar[d]_-{can_1} &
\mathsf{Heq}_{ex} \ar@{^{(}->}[l]_-{inc}\\
\mathsf{Hec}  \ar[ur]^-{\sim}_{\mbox{pre-tr}(-)} & \,.
}
$$
On note $\mathsf{rep}_{ec}(\mathcal{A}, \mathcal{B})$ la sous-cat{\'e}gorie pleine
de la cat{\'e}gorie d{\'e}riv{\'e}e $\mathcal{D}(\mathcal{A}^{op} \otimes^{\mathbb{L}} \mathcal{B})$,
  voir \cite{Drinfeld} \cite{DerivingDG}, dont les objets sont les dg
  $\mathcal{A}\mbox{-}\mathcal{B}$-bimodules $X$ tels que $X(?, A)$ est
  isomorphe dans $\mathcal{D}(\mathcal{B})$ {\`a} un objet de l'image du
  dg-foncteur canonique $\mbox{pre-tr}(\mathcal{B})\rightarrow
  \mathrm{Mod}\, \mathcal{B}$, pour tout $A \in \mathcal{A}$. On note
  $\left[X \right]$ la classe d'isomorphisme de $X$ dans
  $\mathsf{rep}_{ec}(\mathcal{A},\mathcal{B})$. Pour une
  cat{\'e}gorie essentiellement petite $\mathcal{C}$, on note
  $\mathrm{Iso}(\mathcal{C})$ l'ensemble de ses classes d'isomorphisme.

\begin{corollaire}\label{adjonct4}
On a une bijection
$$\mathrm{Hom}_{\mathsf{Hec}}(\mathcal{A},
\mathcal{B}) \stackrel{\sim}{\longrightarrow} \mathrm{Iso}(\mathsf{rep}_{ec}(\mathcal{A}, \mathcal{B}))\,.
$$
\end{corollaire}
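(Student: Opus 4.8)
The plan is to obtain the desired bijection as a composite of three bijections, the first two of which are already available. First I would observe that the canonical functor $\mbox{can}_1 : \mathsf{Heq} \rightarrow \mathsf{Hec}$ is the identity on objects, so that $\mathrm{Hom}_{\mathsf{Hec}}(\mathcal{A},\mathcal{B})$ is literally $\mathrm{Hom}_{\mathsf{Hec}}(\mbox{can}_1(\mathcal{A}),\mathcal{B})$, and Corollary~\ref{adjonct1} then supplies a bijection $\mathrm{Hom}_{\mathsf{Hec}}(\mathcal{A},\mathcal{B}) \stackrel{\sim}{\longrightarrow} \mathrm{Hom}_{\mathsf{Heq}}(\mathcal{A},\mathcal{B}_{fib})$. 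Next, To{\"e}n's description of the morphisms in $\mathsf{Heq}$ recalled in the Introduction, applied to the pair $(\mathcal{A},\mathcal{B}_{fib})$ and replacing $\mathcal{A}$ by a $k$-flat (for instance cofibrant) resolution if necessary, gives a bijection $\mathrm{Hom}_{\mathsf{Heq}}(\mathcal{A},\mathcal{B}_{fib}) \stackrel{\sim}{\longrightarrow} \mathrm{Iso}(\mathsf{rep}(\mathcal{A},\mathcal{B}_{fib}))$.

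It then remains to identify the category $\mathsf{rep}(\mathcal{A},\mathcal{B}_{fib})$ with $\mathsf{rep}_{ec}(\mathcal{A},\mathcal{B})$. For this I would invoke Lemma~\ref{homotopes}: the dg-categories $\mathcal{B}_{fib}$ and $\mbox{pre-tr}(\mathcal{B})$ being isomorphic in $\mathsf{Heq}$, the quasi-equiconic dg-functor $\mathcal{B}\rightarrow\mathcal{B}_{fib}$ becomes a quasi-equivalence after applying $\mbox{pre-tr}(-)$. Combining this with the standard fact that restriction along $\mathcal{B}\hookrightarrow\mbox{pre-tr}(\mathcal{B})$ is an equivalence of derived categories, one gets an equivalence $\mathcal{D}(\mathcal{B}_{fib})\stackrel{\sim}{\rightarrow}\mathcal{D}(\mathcal{B})$ under which the representable $\mathcal{B}_{fib}$-modules go bijectively to the objects of $\mathcal{D}(\mathcal{B})$ lying in the essential image of the canonical dg-functor $\mbox{pre-tr}(\mathcal{B})\rightarrow\mathrm{Mod}\,\mathcal{B}$ (the stability of this essential image under suspensions and cones being exactly Proposition~\ref{fibrants}). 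Tensoring with a cofibrant resolution of $\mathcal{A}^{op}$, using that $-\otimes^{\mathbb{L}}-$ preserves quasi-equivalences, and passing through $\mbox{pre-tr}$ in the second variable (which does not change the derived category of bimodules), one obtains an equivalence $\mathcal{D}(\mathcal{A}^{op}\otimes^{\mathbb{L}}\mathcal{B}_{fib})\stackrel{\sim}{\rightarrow}\mathcal{D}(\mathcal{A}^{op}\otimes^{\mathbb{L}}\mathcal{B})$. Since the objects of $\mathsf{rep}(-,-)$ are singled out by a condition on $X(?,A)$ imposed separately for each $A$, this last equivalence restricts to an equivalence $\mathsf{rep}(\mathcal{A},\mathcal{B}_{fib})\stackrel{\sim}{\rightarrow}\mathsf{rep}_{ec}(\mathcal{A},\mathcal{B})$, in particular a bijection on isomorphism classes. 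Composing the three bijections yields the corollary.

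The step I expect to be the main obstacle is this last identification: one has to check carefully that the quasi-equiconic functor $\mathcal{B}\rightarrow\mathcal{B}_{fib}$ induces an equivalence of the derived categories of $\mathcal{A}$-$\mathcal{B}$-bimodules, and that under this equivalence the representability condition defining $\mathsf{rep}(\mathcal{A},\mathcal{B}_{fib})$ corresponds precisely to the condition ``$X(?,A)$ is isomorphic in $\mathcal{D}(\mathcal{B})$ to an object of the image of $\mbox{pre-tr}(\mathcal{B})\rightarrow\mathrm{Mod}\,\mathcal{B}$'' that defines $\mathsf{rep}_{ec}(\mathcal{A},\mathcal{B})$. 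This bookkeeping rests on Lemma~\ref{homotopes}, Proposition~\ref{fibrants}, and the compatibility of the $k$-flat (cofibrant) resolution of $\mathcal{A}$ with the constructions involved; everything else is a formal concatenation of already-proved statements.
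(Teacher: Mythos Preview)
Your proposal is correct and follows essentially the same route as the paper. The only cosmetic difference is that the paper invokes Lemma~\ref{homotopes} at the very first step, replacing $\mathcal{B}_{fib}$ by $\mbox{pre-tr}(\mathcal{B})$ before applying To{\"e}n's theorem, and then identifies $\mathsf{rep}(\mathcal{A},\mbox{pre-tr}(\mathcal{B}))$ with $\mathsf{rep}_{ec}(\mathcal{A},\mathcal{B})$ directly via the fully faithful dg-functor $\mathcal{A}^{op}\otimes\mathcal{B}\to\mathcal{A}^{op}\otimes\mbox{pre-tr}(\mathcal{B})$ and the observation that the strictly representable bimodules generate; this slightly shortens the bookkeeping you flag as the main obstacle.
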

\begin{proof}[D{\'e}monstration]
D'apr{\`e}s le corollaire~\ref{adjonct1} et le lemme~\ref{homotopes}, $\mathrm{Hom}_{\mathsf{Hec}}(\mathcal{A},
\mathcal{B})$ s'identifie {\`a}  $\mathrm{Hom}_{\mathsf{Heq}}(\mathcal{A},
\mbox{pre-tr}(\mathcal{B}))$. Par \cite{Toen} on sait que
cet ensemble s'identifie {\`a} $\mbox{Iso}(\mathsf{rep}(\mathcal{A},\mbox{pre-tr}(\mathcal{B}))$.
On montre maintenat que la cat{\'e}gorie
$\mathsf{rep}_{ec}(\mathcal{A},\mathcal{B})$ s'identifie {\`a}
$\mathsf{rep}(\mathcal{A},\mbox{pre-tr}(\mathcal{B}))$. Le dg-foncteur
canonique $\mathcal{A}^{op}\otimes \mathcal{B} \rightarrow
\mathcal{A}^{op}\otimes \mbox{pre-tr}(\mathcal{B})$ est pleinement
fid{\`e}le et le foncteur induit
$$ \mathcal{D}(\mathcal{A}^{op} \otimes \mathcal{B}) \rightarrow
\mathcal{D}(\mathcal{A}^{op}\otimes \mbox{pre-tr}(\mathcal{B}))$$
envoie l'ensemble des dg-modules strictement representables dans
$\mathcal{D}(\mathcal{A}^{op}\otimes \mathcal{B})$ vers un ensemble de
petits g{\'e}n{\'e}rateurs. Ce foncteur est donc une {\'e}quivalence
triangul{\'e}e. Via cette {\'e}quivalence la sous-cat{\'e}gorie
$\mathsf{rep}_{ec}(\mathcal{A},\mathcal{B})$ s'identifie {\`a}
$\mathsf{rep}(\mathcal{A}, \mbox{pre-tr}(\mathcal{B}))$.

Cela montre le corollaire.
\end{proof}
\begin{remarque}\label{monoidale}
On sait que les structures de cat{\'e}gorie de mod{\`e}les de Quillen dans
$\dgcat$ du th{\'e}or{\`e}me~\ref{mal} et du th{\'e}or{\`e}me~\ref{theorem} ont les m{\^e}mes
cofibrations et donc les m{\^e}mes fibrations acycliques. Cela a pour
consequence que
\begin{itemize}
\item[-] Le foncteur de remplacement cofibrant simplicial $\Gamma^{\ast}$, voir
\cite{Hovey}, dans $\dgcat$ est le m{\^e}me dans les deux situations. Cela
implique que les corollaires~\ref{adjonct1}, et~\ref{adjonct4} sont encore vrais si on remplace $\mathrm{Hom}$ par
l'espace de morphismes, $\underline{\mathrm{Map}}$, voir \cite{Hovey}, et
$\mbox{Iso}(\mathsf{rep}_{ec}(\mathcal{A},\mathcal{B}))$,
dans le corollaire~\ref{adjonct4}, par le nerf de la sous-cat{\'e}gorie de
$\mathrm{Mod}(\mathcal{A}^{op} \otimes^{\mathbb{L}}\mathcal{B})$ dont les objets
sont les m{\^e}mes que ceux de $\mathsf{rep}_{ec}(\mathcal{A},
\mathcal{B})$ et dont les morphismes sont les quasi-isomorphismes.
\item[-] Le produit tensoriel $-\otimes^{\mathbb{L}}-$ de $\mathsf{Heq}$
  descend {\`a} la localis{\'e}e $\mathsf{Hec}$. La cat{\'e}gorie mono{\"\i}dale sym{\'e}trique $(\mathsf{Hec},\,-\otimes^{\mathbb{L}}-)$ est ferm{\'e}e, voir \cite{Hovey}, et l'espace de
  morphismes interne de $\mathsf{Hec}$, $\mathbb{R}\underline{\mathrm{Hom}}_{\mathsf{Hec}}(\mathcal{A},
    \mathcal{B})$ s'identifie {\`a} $\mathbb{R}\underline{\mathrm{Hom}}_{\mathsf{Heq}}(\mathcal{A},
      \mbox{pre-tr}(\mathcal{B}))$, voir \cite{Toen}.
\end{itemize}
\end{remarque}

\section{Additivisation}
Soient $\mathcal{A}$, $\mathcal{B}$ et $\mathcal{C}$ des
dg-cat{\'e}gories. La composition dans $\mathsf{Hec}$ est induite par le
bifoncteur
$$
\xymatrix@!0 @R=1,5pc @C=12pc{
-\otimes^{\mathbb{L}}_{\mathcal{B}}-: \mathsf{rep}_{ec}(\mathcal{A},
 \mathcal{B}) \times \mathsf{rep}_{ec}(\mathcal{B}, \mathcal{C})
\ar[r]   & \mathsf{rep}_{ec}(\mathcal{A}, \mathcal{C})\\
*+<2pc>{(X , Y)} \ar@{|->}[r]  & X_{cof}\otimes_{\mathcal{B}}Y\, ,
}
$$
o{\`u} $X_{cof}$ est un remplacement cofibrant de $X$ dans la cat{\'e}gorie
$\mathrm{Mod}(\mathcal{A}^{op}\otimes^{\mathbb{L}}\mathcal{B})$,
par rapport {\`a} sa structure de cat{\'e}gorie de mod{\`e}les de Quillen, voir \cite{Toen}.

\begin{remarque}\label{bitri}
Le bifoncteur $-\otimes^{\mathbb{L}}-$ est bi-triangul{\'e}, puisqu'il est induit par le dg-bifoncteur produit tensoriel de bimodules.
\end{remarque}

Soit $\mathsf{Hec}_0$ la cat{\'e}gorie qui a pour objets les petites
dg-cat{\'e}gories et telle que $\mathrm{Hom}_{\mathsf{Hec}_0}(\mathcal{A},
\mathcal{B})$ est le groupe de Grothendieck de la cat{\'e}gorie triangul{\'e}e
$\mathsf{rep}_{ec}(\mathcal{A}, \mathcal{B})$. L'operation de
composition est induite par le bifoncteur $-\otimes^{\mathbb{L}}-$
d'apr{\`e}s la remarque~\ref{bitri}. On dispose d'un foncteur canonique
$\mbox{add}_1:\mathsf{Hec}\rightarrow \mathsf{Hec}_0$.

\begin{lemme}\label{addit}
La cat{\'e}gorie $\mathsf{Hec}_0$ est additive et le foncteur canonique
$\mathsf{Hec}\rightarrow \mathsf{Hec}_0$ transforme les produits finis
et les coproduits finis en sommes directes.
\end{lemme}
\begin{proof}[D{\'e}monstration]
Par construction, les ensembles de morphismes de $\mathsf{Hec}_0$ sont
des groupes ab{\'e}liens et la composition est $\mathbb{Z}$-bilin{\'e}aire. Il
reste {\`a} verifier que $\mathsf{Hec}_0$ poss{\`e}de des sommes directes. En
effet, montrons que la somme directe et le produit direct dans $\mathsf{Hec}_0$
sont les m{\^e}mes que dans $\dgcat$. Puisque l'on a
$$ \mathsf{rep}_{ec}(\mathcal{A}\amalg \mathcal{B}, \mathcal{C}) \backsimeq
\mathsf{rep}_{ec}(\mathcal{A}, \mathcal{C})\times  \mathsf{rep}_{ec}(\mathcal{B}, \mathcal{C})$$
et que le groupe de Grothendieck pr{\'e}serve les produits, on a
$$\mathrm{Hom}_{\mathsf{Hec}_0}(\mathcal{A}\amalg\mathcal{B},
  \mathcal{C}) \backsimeq \mathrm{Hom}_{\mathsf{Hec}_0}(\mathcal{A},
  \mathcal{C})\oplus \mathrm{Hom}_{\mathsf{Hec}_0}(\mathcal{B},
  \mathcal{C})\,.$$
Un argument analogue montre aussi que $\mathcal{A}\times\mathcal{B}$
  est bien le produit dans $\mathsf{Hec}_0$.
\end{proof}

\begin{lemme}\label{tens}
Le produit tensoriel $-\otimes^{\mathbb{L}}-$ de $\mathsf{Hec}$, voir remarque~\ref{monoidale}, induit
une structure mono{\"\i}dale sym{\'e}trique sur $\mathsf{Hec}_0$.
\end{lemme}
\begin{proof}[D{\'e}monstration]
Soient $\mathcal{A}$, $\mathcal{B}$, $\mathcal{C}$ et $\mathcal{D}$
des dg-cat{\'e}gories. Comme dans la remarque~\ref{bitri}, le bifoncteur
$$
\xymatrix@!0 @R=1,5pc @C=12pc{
-\otimes^{\mathbb{L}}_k-: \mathsf{rep}_{ec}(\mathcal{A},
 \mathcal{B}) \times \mathsf{rep}_{ec}(\mathcal{C}, \mathcal{D})
\ar[r] & \mathsf{rep}_{ec}(\mathcal{A}\otimes\mathcal{C},
 \mathcal{B}\otimes \mathcal{D})\\
*+<2pc>{(X , Y)} \ar@{|->}[r]  & X_{cof}\otimes_k Y\, ,
}
$$
o{\`u} $X_{cof}$ est un remplacement cofibrant de $X$ dans la cat{\'e}gorie
$\mathrm{Mod}(\mathcal{A}^{op}\otimes^{\mathbb{L}}\mathcal{B})$,
par rapport {\`a} sa structure de cat{\'e}gorie de mod{\`e}les de Quillen, est
bi-triangul{\'e} puisqu'il provient aussi d'un dg-bifoncteur.
\end{proof}

\begin{remarque}
Soit $\mathsf{Heq}_{{ex}_0}$ la sous-cat{\'e}gorie pleine de
$\mathsf{Hec}_0$ dont les objets sont les dg-cat{\'e}gories
exactes. L'{\'e}quivalence entre $\mathsf{Hec}$ et $\mathsf{Heq}_{ex}$ permet d'{\'e}tablir une {\'e}quivalence entre $\mathsf{Hec}_0$ et $\mathsf{Heq}_{{ex}_0}$.
\end{remarque}

Soit $\mathcal{I}$ la dg-cat{\'e}gorie avec objets $1$, $2$ et dont les morphismes sont engendr{\'e}s par $m_{12}\in \mathrm{Hom}_{\mathcal{I}}^0(1, 2)$ soumis {\`a}
la relation $dm_{12}=0$. Pour une dg-cat{\'e}gorie
    $\mathcal{A}$, on note $\mbox{T}(\mathcal{A})$ la dg-cat{\'e}gorie
    $\mathcal{I}\otimes\mathcal{A}$. On dispose de deux inclusions
    canoniques $\mathcal{A} \stackrel{i_1}{\hookrightarrow} \mbox{T}(\mathcal{A})$ et $\mathcal{A} \stackrel{i_2}{\hookrightarrow} \mbox{T}(\mathcal{A})$ et d'une projection canonique
    $\mbox{T}(\mathcal{A})\rightarrow \mathcal{A}$.
\begin{remarque}\label{bimod}
La donn{\'e}e d'un $\mathcal{A}$-$\mathcal{B}$-bimodule $X$ est {\'e}quivalente
{\`a} la donn{\'e}e d'un dg-foncteur $X:\mathcal{A}\rightarrow
\mathrm{Mod}$-$\mathcal{B}$. Puisque la cat{\'e}gorie mono{\"\i}dale sym{\'e}trique
$(\dgcat,\otimes)$ est ferm{\'e}e, un
$\mathcal{A}$-$\mbox{T}(\mathcal{B})$-bimodule $X$ s'identifie {\`a} la
donn{\'e}e d'un morphisme de $\mathcal{A}$-$\mathcal{B}$-bimodules.
\end{remarque}
On note {\'e}galement $i_1$ et $i_2$ les morphismes de $\mathsf{Hec}$ associ{\'e}s
respectivement aux inclusions $i_1$ et $i_2$. Soit
$F:\mathsf{Hec}\rightarrow \mathsf{C}$ un foncteur {\`a} valeurs dans une
cat{\'e}gorie additive $\mathsf{C}$.

\begin{theoreme}\label{semiloc}
Les conditions suivantes sont {\'e}quivalentes~:
\begin{itemize}
\item[1)] Le foncteur $F$ est compos{\'e} d'un foncteur additif $\mathsf{Hec}_0\rightarrow \mathsf{C}$ et du foncteur canonique
  $\mathsf{Hec}\rightarrow \mathsf{Hec}_0$.
\item[2)] Pour toutes dg-cat{\'e}gories $\mathcal{A}$, $\mathcal{B}$, l'identit{\'e}
  $F(\left[X\right])+F(\left[Z\right])=F(\left[Y\right])$, est v{\'e}rifi{\'e}e
  dans $\mathrm{Hom}_{\mathsf{C}}(F(\mathcal{A}), F(\mathcal{B}))$ pour
  tout triangle $X \rightarrow Y \rightarrow Z \rightarrow X\left[1\right]$
  de $\mathsf{rep}_{ec}(\mathcal{A}, \mathcal{B})$.
\item[3)] Pour toute dg-cat{\'e}gorie $\mathcal{A}$, le morphisme
$$
\xymatrix{
F(\mathcal{A})\oplus F(\mathcal{A})
\ar[rr]^-{\left[F(i_1)
        \, , \,F(i_2)\right]} && F(\mbox{T}(\mathcal{A}))
}
$$
est un isomorphisme dans $\mathsf{C}$.

\item[4)] Pout toute dg-cat{\'e}gorie pr{\'e}triangul{\'e}e $\mathcal{A}$ munie de sous-dg-cat{\'e}gories pleines pr{\'e}triangul{\'e}es
  $\mathcal{B}$ et $\mathcal{C}$ qui donnent lieu {\`a} une d{\'e}composition
  semi-orthogonale
  $\mathrm{H}^0(\mathcal{A})=(\mathrm{H}^0(\mathcal{B}),
  \mathrm{H}^0(\mathcal{C}))$, voir \cite{Bondal}, le morphisme
$$
F(\mathcal{B})\oplus F(\mathcal{C}) \rightarrow F(\mathcal{A})
$$
induit par les inclusions est un isomorphisme dans $\mathsf{C}$.

\end{itemize}
\end{theoreme}

\begin{proof}[D{\'e}monstration]
Les conditions $1)$ et $2)$ sont {\'e}quivalentes par construction du groupe
de Grothendieck d'une cat{\'e}gorie triangul{\'e}e. Clairement la condition
$4)$ implique la condition $3)$. Montrons maintenant que la
condition $3)$ entra{\^\i}ne la condition $2)$. Pour une dg-cat{\'e}gorie
$\mathcal{E}$, notons $\mathrm{Mod}_{cf}\, \mathcal{E}$ la
sous-cat{\'e}gorie des objets cofibrants de
$\mathrm{Mod}\,\mathcal{E}$. Dans la suite, nous supposons que
$\mathcal{B}$ est cofibrant en tant que dg-cat{\'e}gorie. La cat{\'e}gorie
$\mathrm{Mod}\, \mbox{T}(\mathcal{B})$ s'identifie {\`a} la cat{\'e}gorie des
morphismes (ferm{\'e}s de degr{\'e} $0$)
$$ M_2 \stackrel{f}{\rightarrow} M_1$$
de $\mathrm{Mod}\, \mathcal{B}$. Les objets cofibrants correspondent aux cofibrations entre objets cofibrants de $\mathrm{Mod}\,
\mathcal{B}$. Pour un objet cofibrant $M$, on dispose d'une suite
exacte
$$ 0 \rightarrow M_2 \stackrel{f}{\rightarrow} M_1 \rightarrow
\mbox{Cok}f\rightarrow 0$$
de $\mathrm{Mod}\, \mathcal{B}$ fonctorielle en $M$. Notons
$$ 0 \rightarrow P_2 \rightarrow P_1 \rightarrow P_1/P_2 \rightarrow
0$$
la suite de dg-foncteurs de $\mathrm{Mod}_{cf}\,\mbox{T}(\mathcal{B})$ dans
$\mathrm{Mod}_{cf}\, \mathcal{B}$ obtenue ainsi. Clairement, les
foncteurs $P_2$, $P_1$, $P_1/P_2$ envoient les dg-modules
repr{\'e}sentables sur des dg-modules repr{\'e}sentables et donnent donc lieu
{\`a} des objets dans $\mathsf{rep}(\mbox{T}(\mathcal{B}), \mathcal{B})$. Soit
$$ X \rightarrow Y \rightarrow Z \rightarrow X\left[1\right]$$
un triangle de $\mathsf{rep}_{ec}(\mathcal{A}, \mathcal{B})$. Il est
isomorphe dans
$\mathcal{D}(\mathcal{A}^{op}\otimes^{\mathbb{L}}\mathcal{B})$ au
triangle associ{\'e} {\`a} une suite exacte courte
$$ 0 \rightarrow X' \stackrel{i}{\rightarrow} Y' \rightarrow Z'
\rightarrow 0$$
de bimodules cofibrants. Nous pouvons consid{\'e}rer le morphisme $M=(X'
\stackrel{i}{\rightarrow}Y')$ comme un dg-foncteur de
$\mathrm{Mod}_{cf}(\mathcal{A})$ dans $\mathrm{Mod}_{cf}\,
\mbox{T}(\mathcal{B})$. Clairement, il donne lieu {\`a} un objet de
$\mathsf{rep}(\mathcal{A}, \mbox{T}(\mathcal{B}))$. Nous avons
$$ P_2(M)=X', \, \, \, \, P_1(M)=Y', \, \,\, \, (P_1/P_2)(M)=Z'$$
dans $\mathsf{Hec}$. Pour montrer que l'on a $F(X')+F(Z')=F(Y')$, il
suffit donc de v{\'e}rifier que
$$ F(P_2)+F(P_1/P_2)=F(P_1)\,.$$
Notons
$$
\xymatrix@!0 @R=1,5pc @C=5pc{
I_1: L \ar@{|->}[r] & (0\rightarrow L)\\
I_2:L \ar@{|->}[r] & ( L \stackrel{\mathbf{1}}{\rightarrow}L)
}
$$
 les dg-foncteurs de $\mathrm{Mod}_{cf}\, \mathcal{A}$ dans
 $\mathrm{Mod}_{cf}\, \mbox{T}(\mathcal{A})$ induits par $i_1$ et
 $i_2$. Clairement, nous avons
$$
(P_1/P_2) \circ I_1=\mathbf{1}, \, \, \, (P_1/P_2) \circ I_2=0, \, \, \,
P_2 \circ I_1=0, \, \, \, P_2 \circ I_2=\mathbf{1}$$
dans $\mathsf{Hec}$.
Il s'ensuit que dans la cat{\'e}gorie additive $\mathsf{C}$, nous avons
$$
\begin{bmatrix}F(P_1/P_2)\\
F(P_2)
\end{bmatrix}
\circ
\begin{bmatrix}
F(I_1) & F(I_2)
\end{bmatrix}
=
\begin{bmatrix}
\mathbf{1} & 0 \\
0 & \mathbf{1}
\end{bmatrix}
$$
et donc
$$
(F(P_1/P_2)+F(P_2))\circ \begin{bmatrix}F(I_1) & F(I_2) \end{bmatrix}
= \begin{bmatrix}\mathbf{1} & \mathbf{1}\end{bmatrix}\,.$$
De l'autre c{\^o}t{\'e}, nous avons
$$P_1 \circ I_1=\mathbf{1}, \, \, \, P_1 \circ I_2=\mathbf{1}$$
dans $\mathsf{Hec}$ et donc
$$
F(P_1)\circ \begin{bmatrix}F(I_1) & F(I_2)
\end{bmatrix}=\begin{bmatrix}\mathbf{1} &  \mathbf{1}\end{bmatrix}$$
dans $\mathsf{C}$. Comme $ \begin{bmatrix} F(I_1) & F(I_2)\end{bmatrix}$ est
inversible, il s'ensuit que l'on a bien
$$
F(P_1/P_2)+F(P_2)=F(P_1)\,.$$

Montrons maintenant que la condition $1)$ implique la condition
$3)$. Ecrivons $F$ comme compos{\'e} d'un foncteur additif
$\overline{F}:\mathsf{Hec}_0 \rightarrow \mathsf{C}$ et du foncteur
canonique $\mathsf{Hec}\rightarrow \mathsf{Hec}_0$. Nous avons
$$
\overline{F}\,\mathcal{A}\oplus \overline{F}\,\mathcal{A}\simeq
\overline{F}(\mathcal{A}\oplus \mathcal{A})\, .
$$
Il suffit donc de montrer que dans $\mathsf{Hec}_0$, le morphisme
canonique
$$
\mathcal{A}\oplus\mathcal{A} \longrightarrow \mbox{T}(\mathcal{A})
$$
devient inversible. Par le lemme de Yoneda, il suffit de montrer que
pour toute dg-cat{\'e}gorie $\mathcal{U}$, l'application
$$
\mbox{K}_0(\mathsf{rep}_{ec}(\mathcal{U},\mathcal{A}))\oplus
\mbox{K}_0(\mathsf{rep}_{ec}(\mathcal{U},\mathcal{A})) \longrightarrow
\mbox{K}_0(\mathsf{rep}_{ec}(\mathcal{U}, \mbox{T}(\mathcal{A})))
$$
est bijective. Ceci r{\'e}sulte du fait que la cat{\'e}gorie triangul{\'e}e
$\mathsf{rep}_{ec}(\mathcal{U}, \mbox{T}(\mathcal{A}))$ admet une
d{\'e}composition semi-orthogonale en deux sous-cat{\'e}gories {\'e}quivalentes {\`a}
$\mathsf{rep}_{ec}(\mathcal{U}, \mathcal{A})$. En effet, si
$X$ est un objet de $\mathsf{rep}_{ec}(\mathcal{U},
\mbox{T}(\mathcal{A}))$ qui est cofibrant dans
$$
\mathrm{Mod}(\mathcal{U}^{op}\otimes
\mbox{T}(\mathcal{A}))\stackrel{\sim}{\longrightarrow}
\mathrm{Mod}(\mbox{T}(\mathcal{U}^{op}\otimes \mathcal{A}))\, ,$$
nous pouvons l'identifier avec une cofibration $X_2
\stackrel{i}{\rightarrow} X_1$ entre objets cofibrants appartenant {\`a}
$\mathsf{rep}_{ec}(\mathcal{U},\mathcal{A})$. Alors le triangle
associ{\'e} {\`a} $X$ par la d{\'e}composition semi-orthogonale est induit par la
suite exacte de morphismes
$$
\xymatrix{
0 \ar[r] \ar[d] & X_2 \ar@{=}[r] \ar@{=}[d] & X_2 \ar[d]^-{i} \ar[r] & 0
\ar[d] \ar[r] & 0 \ar[d]\\
0 \ar[r] & X_2 \ar[r]_-{i} & X_1 \ar[r] & X_1/X_2 \ar[r] & 0\,.
}
$$
Un argument analogue au pr{\'e}c{\'e}dent montre que la condition $1)$
implique la condition $4)$.
\end{proof}

\section{DG-foncteurs de Morita}\label{secmor}

Soit $\mathcal{T}$ une cat{\'e}gorie triangul{\'e}e. On note
$\mathcal{T}^{\,\kar}$ sa compl{\'e}tion idempotente, voir \cite{Balmer}.
Ici, l'exposant repr{\'e}sente la moiti{\'e} du symbole $\oplus$.
Un dg-foncteur $F$ de $\mathcal{C}$ vers $\mathcal{E}$ est {\em de
  Morita} s'il satisfait l'une des conditions $(M1)$ ou $(M2)$~:
\begin{itemize}
\item[(M1)]
\begin{itemize}
\item la dg-cat{\'e}gorie $\mathcal{C}$ est vide et tous les objects
  de la dg-cat{\'e}gorie $\mathcal{E}$ sont contractiles.
\end{itemize}
\item[(M2)]
\begin{itemize}
\item[-] pour tous objets $c_1$ et $c_2$ dans $\mathcal{C}$, le morphisme
  de complexes
\[
\mathrm{Hom}_{\mathcal{C}}(c_1, c_2) \to \mathrm{Hom}_{\mathcal{E}}(F(c_1),F(c_2))
\]
 est un quasi-isomorphisme et
\item[-] le foncteur $\mathrm{H}^0(\mbox{pre-tr}(F))^{\kar}$ de
  $\mathrm{H}^0(\mbox{pre-tr}(\mathcal{C}))^{\kar}$ vers
  $\mathrm{H}^0(\mbox{pre-tr}(\mathcal{E}))^{\kar}$ est essentiellement surjectif.
\end{itemize}

\end{itemize}
\begin{remarque}
Les dg-foncteurs $H$ v{\'e}rifiant $(M1)$ ne v{\'e}rifient pas $(M2)$ si
$\mathcal{E}$ est non vide puisque
$\mbox{pre-tr}(\emptyset)=\emptyset$.
La notion de dg-foncteur de Morita est importante car un dg-foncteur $F:\mathcal{C}
\rightarrow \mathcal{E}$ est de Morita si et seulement s'il induit une
{\'e}quivalence
$$F^*:\mathcal{D}(\mathcal{E})
\stackrel{\sim}{\rightarrow}\mathcal{D}(\mathcal{C})$$
dans les cat{\'e}gories d{\'e}riv{\'e}es, voir \cite{DerivingDG}.
\end{remarque}

On introduira une structure de cat{\'e}gorie de mod{\`e}les de
Quillen {\`a} engendrement cofibrant dans $\dgcat$ dont les
{\'e}quivalences faibles sont les dg-foncteurs de Morita. Pour cela, on se servira du th{\'e}or{\`e}me~\ref{thm}.

Soit $\mathrm{idem}$ la dg-cat{\'e}gorie avec un seul object $0$ et
dont les morphismes sont engendr{\'e}s par l'endomorphisme $e$ de degr{\'e}
$0$ tel que $de=0$ et $e^2=e$.
Pour un entier $n>0$ et des entiers $k_0, \ldots , k_n$, soit
$\mathrm{idem}_n(k_0, \ldots , k_n)$ la dg-cat{\'e}gorie obtenue {\`a} partir
de la dg-cat{\'e}gorie $\mathcal{M}_n(k_0, \ldots , k_n)$ en rajoutant de
nouveaux generateurs et relations~: on rajoute, pour tous $0 \leq i, j
\leq n$, des morphismes $e_{i, j}$ de source $j$ et de but $i$ de
degr{\'e} $k_i-k_j$ soumis aux relations
$$
\begin{array}{ccc}
d(E)=0 & \mbox{et} & E^2=E\,,
\end{array}
$$
o{\`u} $E$ est la matrice de coefficients $e_{i, j}$.

Pour $n\geq 0$, soit $\mathrm{fact}_n(k_0, \ldots , k_n)$ la sous-dg-cat{\'e}gorie pleine
de la dg-cat{\'e}gorie des dg-modules ({\`a} droite) sur
$\mathrm{idem}_n(k_0, \ldots , k_n)$ dont les objets sont les
$\hat{l}$, $0 \leq l \leq n$ et le facteur directe $X_n$ associ{\'e} {\`a}
l'idempotent $\widehat{E}$ du c{\^o}ne it{\'e}r{\'e} qui a le m{\^e}me
module gradu{\'e} sous-jacent que le dg-module
$$ X=\bigoplus^n_{l=0} \hat{l}\left[ k_l \right]$$
et dont la diff{\'e}rentielle est $d_X+\widehat{Q}$.

On note $L_n(k_0,\ldots, k_n)$ le dg-foncteur naturel et pleinement
fid{\`e}le, de $\mathrm{idem}_n(k_0,\ldots, k_n)$ dans
$\mathrm{fact}_n(k_0,\ldots, k_n)$. On consid{\`e}re $L_n(k_0,\ldots,
k_n)$ comme un objet de la cat{\'e}gorie des morphismes de
$\mathsf{dgcat}$ munie de sa structure de cat{\'e}gorie de mod{\`e}les
canonique (o{\`u} une {\'e}quivalence faible est une quasi-{\'e}quivalence en
chaque composante). On choisit, une fois pour toutes, un remplacement
cofibrant $L\!h_n(k_0,\ldots,k_n)$ de cet objet~:
$$
\xymatrix{
\mathrm{idem}_n(k_0,\ldots , k_n) \ar[d]_{L(k_0,\ldots ,k_n)} &
*+<1.5pc>{\mathrm{idemh}_n(k_0,\ldots , k_n)} \ar@{->>}[l]_{\sim}
\ar@{>->}[d]^{L\!h_n(k_0,\ldots , k_n)} \\
\mathrm{fact}_n(k_0,\ldots, k_n) & \mathrm{facth}_n(k_0,\ldots, k_n) \ar@{->>}[l]_{\sim}\,.
}
$$
\begin{notation}
On note par $\mathrm{idem}$, resp. $\mathrm{idemh}$, la dg-cat{\'e}gorie
$\mathrm{idem}_0(0)$, resp. $\mathrm{idemh}_0(0)$ et par $\mathrm{fact}$, resp. $\mathrm{facth}$, la dg-cat{\'e}gorie
$\mathrm{fact}_0(0)$, resp. $\mathrm{facth}_0(0)$.
\end{notation}

Nous d{\'e}finissons $\mathcal{B}$ comme la dg-cat{\'e}gorie avec un seul
object $7$ et dont les morphismes sont engrendr{\'e}s par l'endomorphisme
$\underline{h}$ de degr{\'e}e $-1$ soumis {\`a} la relation
$d(\underline{h})=\mathbf{1}_7$. Soit $C$ l'unique dg-foncteur de la
dg-cat{\'e}gorie vide $\mathcal{O}$ vers $\mathcal{B}$.

\begin{theoreme}\label{theorem2}
Si on consid{\`e}re pour cat{\'e}gorie $\mathcal{C}$ la cat{\'e}gorie $\dgcat$,
pour classe $W$ la sous-cat{\'e}gorie de $\dgcat$ des dg-foncteurs
de Morita, pour classe $J$ les dg-foncteurs $F$ et $R(n)$,
$n\in \mathbb{Z}$, (voir section~\ref{secdef}), $F(n)$, $n \in \mathbb{Z}$, $I_n(k_0,
 \ldots , k_n)$, $L\!h_n(k_0, \ldots , k_n)$ et $C$ et pour classe $I$ les dg-foncteurs $Q$ et $S(n)$,
$n\in \mathbb{Z}$, (voir section~\ref{secdef}), alors les conditions du
th{\'e}or{\`e}me~\ref{thm} sont satisfaites.
\end{theoreme}

{\`A} la proposition~\ref{nova4} apr{\`e}s la d{\'e}monstration du th{\'e}or{\`e}me,
nous allons donner une description explicite des objets fibrants de la
structure de cat{\'e}gorie de mod{\`e}les obtenue ainsi.

Une fois que tous les objets dans $\mathsf{dgcat}$ sont petits, voir
\cite[10.4.1]{Hirschhorn}, les conditions $\mbox{{\it (i)}}$,
$\mbox{{\it (ii)}}$ et $\mbox{{\it (iii)}}$ du th{\'e}or{\`e}me~\ref{thm} sont verifi{\'e}es.
Pour v{\'e}rifier les autres conditions, nous avons besoin d'un certain nombre
de lemmes.

\begin{lemme}\label{important1}
Soit $\ca$ une petite dg-cat{\'e}gorie.
\begin{itemize}
\item[a)] Soit $e$ un endomorphisme idempotent d'un objet $A$ de $H^0(\ca)$.
Alors il existe un isomorphisme $F:\ca \to \ca'$ dans $\mathsf{Heq}$ et
un idempotent ferm{\'e} $\tilde{e}$ dans $\ca'(FA,FA)$ tel que
l'image de $e$ par $H^0(F)$ est {\'e}gale {\`a} la classe de $\tilde{e}$.
\item[b)] Tout foncteur $\idem \to H^0(\ca)$ se rel{\`e}ve en un
morphisme $\idemh \to \ca$ de $\dgcat$.
\end{itemize}
\end{lemme}

\begin{proof}[D{\'e}monstration]
a) Dans la cat{\'e}gorie d{\'e}riv{\'e}e $\cd(\ca)$, les idempotents se scindent
puisque $\cd(\ca)$ est une cat{\'e}gorie triangul{\'e}e aux coproduits d{\'e}nombrables.
L'idem\-potent $e$ donne donc lieu {\`a} une d{\'e}composition
\[
\widehat{A} \iso P \oplus Q
\]
dans $\cd(\ca)$ telle que $e$ corresponde {\`a} l'endomorphisme $\id_P\oplus 0$.
On peut, et on va, supposer que $P$ et $Q$ sont des dg-modules cofibrants et
que nous avons un quasi-isomorphisme surjectif $p: P \oplus Q \to \widehat{A}$.
Notons $\cb$ la sous-cat{\'e}gorie pleine de $\cc_{dg}(\ca)$ dont les
objets sont $P\oplus Q$ et les foncteurs repr{\'e}sentables autres
que $\widehat{A}$. Notons $\ca'$ la sous-cat{\'e}gorie non pleine
de $\cb$ qui a les m{\^e}mes objets que $\cb$ et telle que pour tout $X$ dans $\ca'$,
on a
\[
\ca'(X,P\oplus Q)=\cb(X,P\oplus Q)
\]
et $\ca'(P\oplus Q,X) \subset \cb(P\oplus Q,X)$ est form{\'e} des morphismes
qui s'annulent sur le noyau de $p$. On a des dg-foncteurs canoniques
\[
\xymatrix{\cb & \ca' \ar[l] \ar[r] & \ca}
\]
et il n'est pas difficile de constater que ce sont des quasi-{\'e}quivalences
(bijectives sur les objets). Dans $\mathsf{Heq}$, ces dg-foncteurs donnent lieu a l'isomorphisme
annonc{\'e} $F: \ca\to\cb$. Pour $\tilde{e}$, on prend l'endomorphisme $\id_P\oplus 0$.

b) La donn{\'e}e d'un foncteur $\idem \to H^0(\ca)$ donne lieu {\`a} un
idempotent $e$ d'un objet $A$ de $\ca$. Gr{\^a}ce {\`a} a), on obtient
un isomorphisme $\cb\to \ca$ dans $\mathsf{Heq}$ et un morphisme
$\idem \to \cb$ dans $\dgcat$ tels que le foncteur compos{\'e}
\[
H^0(\idem) \to H^0(\cb) \to H^0(\ca)
\]
est le foncteur donn{\'e}. Consid{\'e}rons la composition
\[
\idemh \to \idem \to \cb \to \ca
\]
dans $\mathsf{Heq}$. Puisque $\idemh$ est cofibrant, cette composition
se rel{\`e}ve en un vrai morphisme $\idemh \to \ca$ dans $\dgcat$. La construction
montre qu'il a la propri{\'e}t{\'e} requise.
\end{proof}

\begin{lemme}\label{important3}
Soit $U:\ca\to\cb$ un dg-foncteur entre petites dg-cat{\'e}gories.
Supposons donn{\'e} un carr{\'e} commutatif de cat{\'e}gories
\[
\xymatrix{\idem \ar[r] \ar[d] & H^0(\ca) \ar[d]^{H^0(U)} \\
\fact \ar[r] & H^0(\cb).}
\]
Alors il existe un carr{\'e} commutatif de dg-cat{\'e}gories
\[
\xymatrix{\idemh \ar[r] \ar[d] & \ca \ar[d]^U \\
\facth \ar[r] & \cb}
\]
dont les fl{\`e}ches horizontales sont des relev{\'e}s (stricts) des fl{\`e}ches
du diagramme donn{\'e}.
\end{lemme}

\begin{proof}[D{\'e}monstration] Notons $A$ l'image de l'objet de $\idem$,
$B=UA$ l'image du `grand' objet de $\fact$ et $B'$ l'image de l'objet
`facteur direct' de $\fact$. Notons $e$ le morphisme de $H^0(\ca)$ qui
est l'image du morphisme $e_0$ de $\idem$. Comme dans le lemme ci-dessus,
nous choisissons une d{\'e}composition $\widehat{A}\iso P\oplus Q$ dans
la cat{\'e}gorie d{\'e}riv{\'e}e de $\ca$ telle que $P$ est une image pour $e$
et $P$ et $Q$ sont cofibrants.
Relevons l'isomorphisme $\widehat{A}\iso P\oplus Q$ en un morphisme
dans $\cc_{dg}(A)$ (qui sera donc une {\'e}quivalence d'homotopie).
Le foncteur $U$ s'{\'e}tend en un foncteur $U_!$ de $\cc_{dg}(\ca)$
vers $\cc_{dg}(B)$ (adjoint {\`a} gauche de la restriction le long de $U$)
et nous pouvons supposer que $U_!(\widehat{A})=\widehat{UA}=\widehat{B}$
et que, plus g{\'e}n{\'e}ralement, l'image par $U_!$ d'un module
repr{\'e}sentable $\widehat{A''}$ est le module repr{\'e}sentable $\widehat{UA''}$.
Nous obtenons ainsi un morphisme $U_!(P\oplus Q) \to \widehat{B}$ dans
$\cc_{dg}(\cb)$ (qui est une {\'e}quivalence d'homotopie) et nous
choisissons une {\'e}quivalence d'homotopie $U_!(P) \to \widehat{B'}$
compatible avec les donn{\'e}es provenant du carr{\'e} commutatif de d{\'e}part.
Nous d{\'e}finissons $\ca'$ comme la sous-dg-cat{\'e}gorie pleine
de la dg-cat{\'e}gorie $\cp(\cc_{dg}(\ca))$ (voir definition~\ref{1path1})
dont les objets sont les {\'e}quivalences d'homotopie
identiques
\[
\widehat{A''} \to \widehat{A''}
\]
pour tous objets $A''$ de $\ca$ distincts de $A$ et l'{\'e}quivalence
d'homotopie
\[
\widehat{A} \to P\oplus Q .
\]
Le foncteur naturel $\ca' \to \ca$ (projection sur la premi{\`e}re
composante) est alors bijectif au niveau des objets et induit
des quasi-isomorphismes surjectifs dans les espaces de morphismes.
De fa{\c c}on analogue, nous d{\'e}finissons $\cb'$ comme la
sous-dg-cat{\'e}gorie pleine de $\cp(\cc_{dg}(\cb))$ dont les
objets sont les {\'e}quivalences d'homotopie identiques
\[
\widehat{B''}\to \widehat{B''}
\]
pour tous objets $B''$ de $\cb$ distincts de $B$ et $B'$, et
les {\'e}quivalences d'homotopie construites
\[
\widehat{B} \to U_!(P\oplus Q)  \mbox{ et } \widehat{B'} \to U_!(P).
\]
Par construction, le dg-foncteur $U_!$ induit un dg-foncteur
$U':\ca'\to\cb'$.
Finalement, nous d{\'e}finissons $\ca''$ comme la sous-dg-cat{\'e}gorie
pleine de $\cc_{dg}(\ca)$ dont les objets sont les images de ceux
de $\ca'$ par la deuxi{\`e}me projection et de fa{\c c}on analogue pour $\cb''$.
Le foncteur $U'$ induit un foncteur $U'': \ca'' \to \cb''$. Les
deux projections d{\'e}finissent alors des morphismes de morphismes
de dg-cat{\'e}gories
\[
\xymatrix{ U & U' \ar[l] \ar[r] & U''}
\]
et ces morphismes ont pour composantes des quasi-{\'e}quivalences
(bijectives au niveau des objets). Notons $U_0$ le morphisme
de (dg-)cat{\'e}gories $\idem \to \fact$. Nous obtenons ainsi un morphisme
de $U_0=H^0(U_0)$ vers $H^0(U'')$. Notre construction de $\ca''$, $\cb''$ et
$U''$ nous permet alors de relever ce morphisme en un morphisme
$U_0 \to U''$, c'est-{\`a}-dire en un carr{\'e} commutatif de dg-foncteurs
\[
\xymatrix{\idem \ar[r] \ar[d] & \ca \ar[d]^{U} \\
\fact \ar[r] & \cb.}
\]
Notons $U\!h_0$ le dg-foncteur $\idemh \to \facth$. Par construction,
$U\!h_0 \to U_0$ est un remplacement cofibrant dans la cat{\'e}gorie
des morphismes de $\dgcat$. Les morphismes
\[
\xymatrix{ U & U' \ar[l] \ar[r] & U''}
\]
sont des {\'e}quivalences faibles dans cette cat{\'e}gorie. On peut
donc trouver un morphisme $U\!h_0 \to U$ dans la cat{\'e}gorie des
morphismes de $\dgcat$ tel que le diagramme
\[
\xymatrix{U\!h_0 \ar[d] \ar[rr] & & U_0 \ar[d] \\
U & U' \ar[l] \ar[r] & U''}
\]
commute dans la cat{\'e}gorie homotopique de la cat{\'e}gorie des
morphismes de $\dgcat$. On v{\'e}rifie facilement que le carr{\'e}
commutatif correspondant {\`a} $U\!h_0 \to U$
a les propri{\'e}t{\'e}s requises dans l'{\'e}nonc{\'e}.

\end{proof}

\begin{lemme}\label{J-inj-Surj2}
$\mbox{{\bf Surj}} = J-\mbox{inj} \cap W\,.$
\end{lemme}
\begin{proof}[D{\'e}monstration]
Montrons l'inclusion $\supseteq$.
Soit $H$ un dg-foncteur de $\mathcal{D}$ vers
$\mathcal{S}$ qui appartient {\`a} $J - \mbox{inj} \cap W$.
On montre maintenant que $H$ satisfait forc{\'e}ment la condition
$(M2)$. Supposons que $H$ satisfait la condition $(M1)$. Si $\cd$
est vide, alors $H$ satisfait $(M2)$. Supposons que $\cd$ est
non vide. Alors il
existe un object $X$ dans $\mathcal{S}$ et un morphisme $h$, de degr{\'e}e
$-1$ tel que $d(h)=\mathbf{1}_X$. On peut donc construire le carr{\'e}
naturel commutatif suivant~:
$$
\xymatrix{
\mathcal{O} \ar[d]_C \ar@{=}[r] & \mathcal{D} =\mathcal{O} \ar[d]^H \\
\mathcal{B} \ar[r] \ar@{-->}[ur] & \mathcal{S}\,.
}
$$
Puisque $H$ appartient {\`a} $J-\mbox{inj}$, on a un dg-foncteur de
$\mathcal{B}$ vers $\mathcal{O}$. On obtient donc une
contradition. Cela montre que $H$ satisfait la condition $(M2)$.

Maintenant on montre que $H$ est en effet un dg-foncteur
quasi-{\'e}quiconique. Le dg-foncteur $H$ a la propri{\'e}t{\'e}
de rel{\`e}vement {\`a} droite par rapport aux
dg-foncteurs $L\!h_n(k_0,\ldots, k_n), \, n \geq 0$, et donc
le dg-foncteur
$$ \mbox{pre-tr}(H): \mbox{pre-tr}(\mathcal{D}) \longrightarrow
\mbox{pre-tr}(S)$$
l'a par rapport au dg-foncteur
$\mathrm{idemh} \rightarrow \mathrm{facth}$. Puisque $H$
est par hypoth{\`e}se de Morita, le foncteur $\mathsf{H}^0(\mbox{pre-tr}(H))$
est pleinement fid{\`e}le et on dispose d'une {\'e}quivalence de cat{\'e}gories
$$\mathsf{H}^0(\mbox{pre-tr}(\mathcal{D}))^{\kar}
\stackrel{\sim}{\longrightarrow}
\mathsf{H}^0(\mbox{pre-tr}(\mathcal{S}))^{\kar}\,.$$
Soit maintenant $X \in \mbox{pre-tr}(\mathcal{S})$. L'objet $X$ est
donc isomorphe dans $\mathsf{H}^0(\mbox{pre-tr}(\mathcal{S}))$ {\`a} un
facteur direct d'un objet qui est dans l'image par
$\mathsf{H}^0(\mbox{pre-tr}(H))$. Cela nous permet de construire un
carr{\'e} commutatif de cat{\'e}gories
$$
\xymatrix{
\mathrm{idem} \ar[d] \ar[r] &
\mathsf{H}^0(\mbox{pre-tr}(\mathcal{D})) \ar[d]\\
\mathrm{fact} \ar[r]_-F & \mathsf{H}^0(\mbox{pre-tr}(S))
}
$$
tel que $F(C_0)=X$. Par le lemme~\ref{important3} on dispose d'un carr{\'e}
commutatif
$$
\xymatrix{
\mathrm{idemh} \ar[r] \ar[d] & \mbox{pre-tr}(\mathcal{D}) \ar[d] \\
\mathrm{facth} \ar[r]_-{F_h} \ar@{-->}[ur] & \mbox{pre-tr}(\mathcal{S}) \,,
}
$$
o{\`u} $F_h$ est un relev{\'e} strict de $F$. Cela implique que le foncteur
$\mbox{pre-tr}(H)$ est essentiellement surjectif et donc
$H$ est un dg-foncteur quasi-{\'e}quiconique. Puisque $H$ a la
propri{\'e}t{\'e} de rel{\`e}vement {\`a} droite par rapport aux cofibrations
triviales g{\'e}n{\'e}ratrices de la structure quasi-{\'e}quiconique,
le lemme~\ref{J-inj-Surj 1} implique que $H$ appartient {\`a} $\mbox{{\bf Surj}}$.

Montrons maintenant l'inclusion $\subseteq$. Soit $H$ un dg-foncteur
de $\mathcal{N}$ vers $\mathcal{E}$ dans la classe $\mbox{{\bf Surj}}$. Comme $H$ est surjectif au niveau des objets et un
quasi-isomorphisme au niveau des complexes de morphismes, on a $H \in
W$. La classe $R(n)-\mbox{drt}$ est form{\'e}e des dg-foncteurs surjectifs
au niveau des complexes de morphismes. Il suffit de montrer que $H$ a
la propri{\'e}t{\'e} de rel{\`e}vement {\`a} droite par rapport {\`a} $F$, $F(n)$, $n \in
\mathbb{Z}$, $I_n(k_0, \ldots , k_n)$ et $L\!h_n(k_0, \ldots , k_n)$ et $C$. On
consid{\`e}re ces trois cas~:
\begin{enumerate}
\item on sait par le th{\'e}or{\`e}me pr{\'e}c{\'e}dent que  $H \in F-\mbox{drt}$,
  $H \in F(n)-\mbox{drt}$ et $H \in I_n(k_0, \ldots , k_n)-\mbox{drt}$.
\item On consid{\`e}re le diagramme suivant~:
$$\xymatrix{
*+<1.5pc>{\mathrm{idemh}_n(k_0,\ldots, k_n)} \ar[r] \ar@{>->}[d]_{L\!h_n(k_0,
  \ldots , k_n)} & \mathcal{N} \ar[d]^H\\
\mathrm{facth}_n(k_0, \ldots , k_n) \ar[r]_-{T} & \mathcal{E}
}$$
Puisque $H$ est une fibration triviale  et $L\!h_n(k_0,\ldots,
k_n)$ une cofibration, le dg-foncteur $H$ a la propri{\'e}t{\'e} de rel{\`e}vement {\`a} droite par
rapport {\`a} $L\!h_n(k_0,\ldots, k_n)$.

\item On consid{\`e}re le diagramme suivant~:
$$
\xymatrix{
\mathcal{O} \ar[r] \ar[d]_C & \mathcal{N} \ar[d]^H \\
\mathcal{B} \ar[r]_T & \mathcal{E} \,.
}
$$
Soit $X$ l'image par $T$ de l'objet $7$ de $\mathcal{B}$. Puisque $H$
appartient {\`a} $\mbox{{\bf Surj}}$, il existe un object $Y$ dans
$\mathcal{N}$ tel que $H(Y)=X$. Maintenant puiqu'on dispose d'un
quasi-isomorphisme surjectif de dg-alg{\`e}bres de
$\mathsf{End}_{\mathcal{N}}(Y)$ vers $\mathsf{End}_{\mathcal{E}}(X)$,
un argument classique montre qu'on peut relever le long de $H$ une
contraction de $Y$ vers une contraction de $X$. Cela nous fournit le
rel{\`e}vement recherch{\'e}.
\end{enumerate}
\end{proof}

Soient $\cu$ une petite dg-cat{\'e}gorie et
\[
T:\idemh_n(k_0, \ldots, k_n) \to \cu
\]
un dg-foncteur. Soit la somme amalgam{\'e}e
\[
\xymatrix{\idemh_n \ar[r]^{T} \ar[d]_{L\! h_n} & \cu \ar[d]^{L'} \\
\facth_n \ar[r]_{T'} & \cv}
\]
dans $\dgcat$.

\begin{lemme}\label{pushout-homotopique} Le dg-foncteur $L'$ est de Morita. La restriction
$\cd(\cv) \to \cd(\cu)$ le long de $L'$ induit une {\'e}quivalence de
$H^0(\cv)$ sur la sous-cat{\'e}gorie pleine de $\cd(\cu)$ form{\'e}e
des foncteurs repr{\'e}sentables et de l'image du facteur direct
$X_n$ (voir la d{\'e}finition de $\facth_n$) par la composition
\[
\xymatrix{\cd(\idem_n) \ar[r] &  \cd(\idemh_n) \ar[r]^-{\mathbf{L}T_!} & \cd(\cu).}
\]
\end{lemme}

\begin{remarque} Le lemme montre que le carr{\'e} induit
\[
\xymatrix{\idem_n \ar[r] \ar[d] & H^0(\cu) \ar[d] \\
\fact_n \ar[r] & H^0(\cv)}
\]
est cocart{\'e}sien ({\`a} isomorphisme pr{\`e}s).
\end{remarque}

\begin{proof}[D{\'e}monstration]
Soit $\cw$ une petite dg-cat{\'e}gorie. Puisque le morphisme
$\idemh_n \to \facth_n$ est une cofibration entre dg-cat{\'e}gorie
cofibrantes, la somme amalgam{\'e}e est aussi une somme amalgam{\'e}e
homotopique. Donc le carr{\'e}
\begin{equation}\label{map-square}
\xymatrix{\Map(\idemh,\cw) & \Map(\cu,\cw) \ar[l]\\
\Map(\facth,\cw) \ar[u] & \Map(\cv, \cw). \ar[l] \ar[u]}
\end{equation}
est homotopiquement cart{\'e}sien. D'apr{\`e}s le th{\'e}or{\`e}me principal
de \cite{Toen}, les groupes $\pi_0$ dans ce diagramme
sont en bijection avec les classes d'isomorphisme et les groupes
d'homotopie sup{\'e}rieurs avec les espaces d'extensions sup{\'e}rieures dans les
cat{\'e}gories du carr{\'e} correspondant
\begin{equation} \label{rep-square}
\xymatrix{\rep(\idemh,\cw) & \rep(\cu,\cw) \ar[l]\\
\rep(\facth,\cw) \ar[u] & \rep(\cv, \cw). \ar[l] \ar[u]}
\end{equation}
Or, ici la fl{\`e}che verticale de gauche s'identifie au foncteur
\[
\rep(\fact, \cw) \to \rep(\idem, \cw)
\]
induit par la restriction de modules sur $\fact_n^{op}\ten \cw$
{\`a} $\idem_n^{op}\ten\cw$. Or la dg-cat{\'e}gorie $\fact_n^{op}\ten \cw$
est obtenue {\`a} partir de $\idem_n^{op}\ten\cw$ en rajoutant des
facteurs directs de foncteurs repr{\'e}sentables. La restriction est
donc pleinement fid{\`e}le et pr{\'e}serve le caract{\`e}re cofibrant
d'un module. Elle induit donc un foncteur pleinement fid{\`e}le
dans les cat{\'e}gorie d{\'e}riv{\'e}es et le foncteur
\[
\rep(\fact_n,\cw) \to \rep(\idem_n,\cw)
\]
induit une injection dans les classes d'isomorphisme et
des bijections dans le groupes d'extensions sup{\'e}rieurs.
Il s'ensuit que le morphisme
\[
\Map(\facth_n, \cw) \to \Map(\idemh_n,\cw)
\]
est un monomorphisme homotopique. Gr{\^a}ce au carr{\'e}
homotopiquement cart{\'e}sien (\ref{map-square}), le morphisme
\[
\Map(\cv,\cw) \to \Map(\cu,\cw)
\]
est {\'e}galement un monomorphisme homotopique. Ainsi, le dg-foncteur
$\cu\to\cv$ induit un foncteur pleinement fid{\`e}le
\[
\cd(\cv^{op}\ten \cw) \to \cd(\cu^{op}\ten \cw).
\]
En prenant pour $\cw$ la dg-cat{\'e}gorie finale, nous obtenons
que $\cu\to\cv$ est quasi-pleinement fid{\`e}le. Les classes
d'isomorphisme d'objets de $H^0(\cv)$ proviennent de ceux
de $\cu$ et de ceux de $H^0(\facth_n)$, qui est {\'e}quivalent
{\`a} $\fact_n$. Ceci nous donne que $\cd(\cv)\to\cd(\cu)$
est une {\'e}quivalence et la description de l'image de
$H^0(\cv)$ dans $\cd(\cu)$.
\end{proof}

\begin{lemme}\label{J-cell-dans-W2}
 On a $J-\mbox{cell}\subset W$.
\end{lemme}
\begin{proof}[D{\'e}monstration]
On sait d{\'e}j{\`a} par le th{\'e}or{\`e}me pr{\'e}c{\'e}dent que les classes
$F-\mbox{cell}$, $R(n)-\mbox{cell}$, $F(n)-\mbox{cell}$ et $I_n(k_0,
\ldots , k_n)-\mbox{cell}$ sont form{\'e}es de dg-foncteurs
quasi-{\'e}quiconiques donc contenues dans la classe $W$. Il est clair que
la classe $C$-$\mbox{cell}$ est contenue dans $W$.

Soit maintenant $T:\mathrm{idemh}_n(k_0,\ldots, k_n) \rightarrow \mathcal{U}$ un
dg-foncteur quelconque. On consid{\`e}re la somme amalgam{\'e}e suivante
$$\xymatrix{
*+<1.5pc>{\mathrm{idemh}_n(k_0,\ldots, k_n)} \ar@{>->}[d]_{L\!h_n(k_0, \ldots ,
  k_n)} \ar[r]^-{T}  \ar@{}[dr]|{\lrcorner} & \mathcal{U} \ar[d]^{inc} \\
\mathrm{facth}_n(k_0, \ldots , k_n) \ar[r] & \mathcal{V}
}$$
dans $\dgcat$. Il r{\'e}sulte du lemme~\ref{pushout-homotopique} que $inc \in W$.
\end{proof}

Nous avons v{\'e}rifi{\'e} que $J-\mbox{cell}\subseteq W$ (lemme~\ref{J-cell-dans-W2}) et
que $I-\mbox{inj}$ est {\'e}gal {\`a} $J-\mbox{inj}\cap W$ (lemmes~\ref{I-inj-Surj} et \ref{J-inj-Surj2}).
Ces conditions impliquent celles du th{\'e}or{\`e}me~\ref{thm}.

\begin{prop}\label{nova4}
Les dg-cat{\'e}gories Morita fibrantes, c'est-{\`a}-dire les objets fibrants pour la cat{\'e}gorie de mod{\`e}les de Quillen du
th{\'e}or{\`e}me~\ref{theorem2}, sont les dg-cat{\'e}gories $\mathcal{A}$
non vides telles que l'image essentielle du plongement
$\mathsf{H}^0(\mathcal{A}) \hookrightarrow \mathcal{D}(\mathcal{A})$
est stable par suspensions, c{\^o}nes et facteurs directs.
\end{prop}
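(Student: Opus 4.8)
The plan is to identify the Morita fibrant dg-categories as those for which the canonical dg-functor to the terminal object of $\dgcat$ (the null dg-category, see remark~\ref{toutfibrant}) has the right lifting property with respect to the set $J$ of generating trivial cofibrations of theorem~\ref{theorem2}. Since that $J$ is the union of the set $J'=\{F,R(n),F(n),I_n(k_0,\ldots,k_n)\}$ of generating trivial cofibrations of the quasi-équiconique structure of theorem~\ref{theorem} with the two extra families $\{L\!h_n(k_0,\ldots,k_n)\}$ and $\{C\}$, and since having the right lifting property with respect to a union of sets amounts to having it with respect to each, a dg-category $\mathcal{A}$ is Morita fibrant exactly when: (a) $\mathcal{A}$ is fibrant for the quasi-équiconique structure; (b) $\mathcal{A}$ has the right lifting property with respect to $C$; and (c) $\mathcal{A}$ has the right lifting property with respect to every $L\!h_n(k_0,\ldots,k_n)$. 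Here $F$ and $R(n)$ need no separate discussion, being part of $J'$.

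First I would dispose of (a) and (b). By proposition~\ref{fibrants}, (a) holds if and only if the essential image of $\mathsf{H}^0(\mathcal{A})\hookrightarrow\mathcal{D}(\mathcal{A})$ is stable under suspensions and cones. As for (b): the dg-functor $C$ runs from the empty dg-category to the dg-category $\mathcal{B}$ with a single object carrying a contracting homotopy of its identity, so the right lifting property with respect to $C$ says precisely that $\mathcal{A}$ contains a contractible object; in particular $\mathcal{A}$ is non-empty. Conversely, if $\mathcal{A}$ is non-empty and its essential image in $\mathcal{D}(\mathcal{A})$ is stable under cones, then for any object $X$ of $\mathcal{A}$ the cone of the identity of $X$ is the zero object of $\mathcal{D}(\mathcal{A})$, which is then isomorphic to the image of an object $Z$ of $\mathcal{A}$; such a $Z$ is contractible, so (b) holds. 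Hence (a) and (b) together are equivalent to: $\mathcal{A}$ is non-empty and the essential image of $\mathsf{H}^0(\mathcal{A})\hookrightarrow\mathcal{D}(\mathcal{A})$ is stable under suspensions and cones.

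It then remains to prove that, in the presence of these hypotheses, condition (c) is equivalent to the essential image being moreover stable under direct summands. For the implication from (c): an idempotent $e$ of an object $A$ of $\mathsf{H}^0(\mathcal{A})$ lifts, by lemma~\ref{important1} b), to a dg-functor $\mathrm{idemh}\to\mathcal{A}$; condition (c) applied to $L\!h_0(0)$ extends it to a dg-functor $\mathrm{facth}\to\mathcal{A}$, and the description of $\mathrm{facth}$ (via lemma~\ref{pushout-homotopique}) shows that the image of the `direct summand' object realizes, up to isomorphism in $\mathcal{D}(\mathcal{A})$, the summand of $\widehat{A}$ cut out by $e$; since $e$ was arbitrary this gives stability under direct summands. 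For the converse, given a dg-functor $T:\mathrm{idemh}_n(k_0,\ldots,k_n)\to\mathcal{A}$ I would form the pushout $\mathcal{V}=\mathrm{facth}_n(k_0,\ldots,k_n)\amalg_{\mathrm{idemh}_n(k_0,\ldots,k_n)}\mathcal{A}$ along $L\!h_n(k_0,\ldots,k_n)$; by lemma~\ref{pushout-homotopique} the canonical dg-functor $\mathcal{A}\to\mathcal{V}$ is a Morita dg-functor and $\mathsf{H}^0(\mathcal{V})$ is obtained from $\mathsf{H}^0(\mathcal{A})$ by adjoining one object which, in $\mathcal{D}(\mathcal{A})$, is a direct summand of an iterated cone of representables. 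Stability under suspensions, cones and direct summands shows this new object is already isomorphic in $\mathcal{D}(\mathcal{A})$ to an object of $\mathsf{H}^0(\mathcal{A})$, the iterated case $n>0$ being reduced to $n=0$ by the Bondal--Kapranov description of iterated cones exactly as in the proof of proposition~\ref{fibrants}. Feeding this isomorphism back through lemma~\ref{important3} then produces the strict lift $\mathrm{facth}_n(k_0,\ldots,k_n)\to\mathcal{A}$ required by (c).

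The hard part will be that last step. Lemma~\ref{important3} manufactures \emph{some} strict lift of an $\mathsf{H}^0$-level square, whereas (c) asks to extend the \emph{prescribed} dg-functor $T$; so one must still argue that any two dg-functors $\mathrm{idemh}_n(k_0,\ldots,k_n)\to\mathcal{A}$ inducing the same data on $\mathsf{H}^0$ are homotopic — using the computation of the mapping spaces due to To{\"e}n — and that, $L\!h_n(k_0,\ldots,k_n)$ being a cofibration between cofibrant dg-categories, the existence of an extension is a homotopy-invariant property of $T$. This homotopy-coherent bookkeeping, together with the reduction of twisted-complex idempotents to genuine idempotents of objects of $\mathcal{A}$ (which uses that $\mathcal{A}$ already has the relevant cones), is where the real work lies; by contrast, the passage from stability under cones and suspensions in the elementary sense of proposition~\ref{fibrants} to stability under all the iterated-cone and summand constructions appearing in $I_n(k_0,\ldots,k_n)$ and $L\!h_n(k_0,\ldots,k_n)$ is routine but should be checked with care.
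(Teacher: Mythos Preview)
Your overall strategy matches the paper's: decompose Morita fibrancy into (a) quasi-{\'e}quiconique fibrancy, (b) the right lifting property against $C$, and (c) the right lifting property against the $L\!h_n$'s; handle (a)+(b) exactly as you do; and for (c) form the pushout $\mathcal{V}$ along $L\!h_n$ and invoke lemma~\ref{pushout-homotopique}. The divergence is only in the final step.

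After establishing that $\mathcal{A}\to\mathcal{V}$ is quasi-fully faithful and that the single new object of $\mathsf{H}^0(\mathcal{V})$ already lies in the essential image of $\mathsf{H}^0(\mathcal{A})$, the paper simply observes that $\mathcal{A}\to\mathcal{V}$ is then a \emph{quasi-equivalence} which is also a \emph{cofibration} for the model structure of theorem~\ref{mal} (being a pushout of the cofibration $L\!h_n$). Since every dg-category is fibrant for that structure (remark~\ref{toutfibrant}), this trivial cofibration admits a retraction $r:\mathcal{V}\to\mathcal{A}$. The composite $\mathrm{facth}_n\to\mathcal{V}\xrightarrow{r}\mathcal{A}$ is then the desired strict extension of $T$: by the pushout square, its precomposition with $L\!h_n$ equals $r$ applied to the image of $T$ in $\mathcal{V}$, which is $T$ itself since $r$ retracts $\mathcal{A}\to\mathcal{V}$.

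Your route via lemma~\ref{important3} and To{\"e}n's mapping spaces is therefore unnecessarily indirect. You correctly identify the gap in it --- lemma~\ref{important3} produces \emph{a} lift of the $\mathsf{H}^0$-level data rather than an extension of the \emph{given} $T$ --- and while the ``homotopy-coherent bookkeeping'' you propose to close it could probably be made to work, it is heavy compared to the one-line retraction argument above. The reduction of iterated cones to single cones, which you flag as needing care, is indeed routine and the paper treats it as such.
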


\begin{proof}[D{\'e}monstration]
Soit $\mathcal{A}$ une dg-cat{\'e}gorie Morita fibrante. On montre
maintenant que $\mathcal{A}$ ne peut pas {\^e}tre la dg-cat{\'e}gorie
vide. Suposons que $\mathcal{A} = \emptyset$. Puisque
$\mathcal{A}$ est Morita fibrante il existe un dg-foncteur $E$
$$
\xymatrix{
\emptyset \ar@{=}[r] \ar[d]_C & \emptyset \\
\mathcal{B} \ar@{-->}[ur]_E & .
}
$$
Puisque $\mathcal{B}$ est non vide on obtient donc une
contradiction. Par la proposition~\ref{fibrants}, le plongement
$\mathsf{H}^0(\mathcal{A}) \hookrightarrow \mathcal{D}(\mathcal{A})$
est stable par suspensions et c{\^o}nes. Il reste {\`a} montrer qu'il est
aussi stable par facteurs directs. Soit $X \in
\mathsf{H}^0(\mbox{pre-tr}(\mathcal{A}))$ et $e$ un idempotent dans
$\mathsf{End}_{\mathcal{D}(\mathcal{A})}(X)$. Cela correspond {\`a} un
foncteur $\mathrm{idem} \rightarrow
\mathsf{H}^0(\mbox{pre-tr}(\mathcal{A})$ et par le
lemme~\ref{important1} on a un dg-foncteur $L:\mathrm{idemh}
\to \mbox{pre-tr}(\mathcal{A})$. Puisque $\mathcal{A}$ est une dg-cat{\'e}gorie Morita
fibrante on dispose du diagramme commutatif suivant
$$
\xymatrix{
\mbox{idemh} \ar[d] \ar[r]^L & \mathcal{A} \\
\mbox{facth} \ar@{-->}[ur]_{\overline{L}} & .
}
$$
Notons que le facteur direct associ{\'e} {\`a} $\overline{L}$ est isomorphe
dans $\mathcal{D}(\mathcal{A})$ au facteur direct associ{\'e} {\`a}
l'idempotent $e$.

Soit maintenant $\mathcal{A}$ une dg-cat{\'e}gorie non-vide telle que
l'image essentielle du plongement $\mathsf{H}^0(\mathcal{A}) \hookrightarrow \mathcal{D}(\mathcal{A})$
est stable par suspensions, c{\^o}nes et facteurs directes. Par
la proposition~\ref{fibrants}, le dg-foncteur $\mathcal{A} \rightarrow 0$
a la propri{\'e}t{\'e} de rel{\`e}vement {\`a} droite par rapport aux dg-foncteurs
$F(n), R(n)$ et $I_n(k_0, \ldots, k_n),\, n \in \mathbb{Z}$. On montre
maintenant qu'il a la propri{\'e}t{\'e} de rel{\`e}vement {\`a} droite par rapport {\`a}
$C$. Puisque $\mathcal{A}$ est non vide, il existe un objet $X
\in \mathcal{A}$. On consid{\`e}re le diagram
$$
\xymatrix{
\mbox{cone}_0(0) \ar[r]^L \ar[d]_{I_0(0)} & \mathcal{A} \\
\mbox{coneh}_0(0) \ar@{-->}[ur]_{\overline{L}} &
}
$$
o{\`u} le dg-foncteur $L$ correspond au morphisme identit{\'e} de
$X$. Alors $\overline{L}(X\!h\,_0)$ est un objet contractile dans
$\mathcal{A}$ et il existe donc un morphisme $R$ comme dans le diagramme
suivant.
$$
\xymatrix{
\emptyset \ar[d] \ar[r] & \mathcal{A} \\
\mathcal{B} \ar@{-->}[ur]_R & .
}
$$
On montre maintenant que le dg-foncteur $\mathcal{A} \rightarrow 0$ a
la propri{\'e}t{\'e} de relevement {\`a} droite par rapport aux dg-foncteurs
$L\!h_n(k_0,\ldots,k_n), n \in \mathbb{Z}$. On se donne un diagramme
$$
\xymatrix{
*+<1.5pc>{\mbox{idemh}_n(k_0,\ldots, k_n)} \ar[r]^-L \ar@{>->}[d]_{L_n(k_0,\ldots,k_n)}
& \mathcal{A} \\
\mbox{facth}_n(k_0, \ldots, k_n) &
}
$$
et on forme le carr{\'e} cocart{\'e}sien
$$
\xymatrix{
*+<1.5pc>{\mbox{idemh}_n(k_0,\ldots, k_n)} \ar[r]^-L
\ar@{>->}[d]_{L_n(k_0,\ldots,k_n)} \ar@{}[dr]|{\lrcorner}
& *+<1.5pc>{\mathcal{A}} \ar@{>->}[d] \\
\mbox{facth}_n(k_0, \ldots, k_n) \ar[r] & \cb\,.
}
$$
Par le lemme~\ref{pushout-homotopique}, le foncteur $\ca\to\cb$ est quasi-pleinement
fid{\`e}le et la cat{\'e}gorie $H^0(\cb)$ est obtenue
{\`a} partir de $H^0(\ca)$ en rajoutant un facteur direct d'une extension
it{\'e}r{\'e}e. Or, par hypoth{\`e}se, la cat{\'e}gorie $H^0(\ca)$ est stable par rajout
d'extensions it{\'e}r{\'e}es et de facteurs directs. Donc le foncteur $\ca\to\cb$
est une quasi-{\'e}quivalence. Mais il est aussi une cofibration pour la
structure dont les {\'e}quivalences faibles sont les quasi-{\'e}quivalences
(puisque le foncteur $\idemh_n\to \facth_n$ est une telle cofibration).
La dg-cat{\'e}gorie $\cb$ est fibrante pour cette structure (comme toute
petite dg-cat{\'e}gorie). Donc le foncteur $\ca\to\cb$ admet une r{\'e}traction.
Cela montre la proposition.
\end{proof}
On note $\mathcal{M}or$ la classe des dg-foncteurs de Morita et
$\mathsf{Hmo}$ la cat{\'e}gorie homotopique de $\dgcat$ par raport {\`a}
cette classe. 
\begin{prop}\label{Bousfil}
La structure de cat{\'e}gorie de mod{\`e}les de Quillen du th{\'e}or{\`e}me~\ref{theorem2} est
la localisation de Bousfield {\`a} gauche, voir \cite[3.3.1]{Hirschhorn},
de la structure quasi-{\'e}quiconique, par rapport au morphisme
$$ Lh_0 : \mathrm{idemh} \rightarrow \mathrm{facth}\,.$$
\end{prop}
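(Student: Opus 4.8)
The plan is to imitate closely the proof of Proposition~\ref{Bousf}. Put $S=\{Lh_0\}$ with $Lh_0:\idemh\to\facth$, recall that the quasi-équiconique structure of Theorem~\ref{theorem} is cellular and left proper so that the left Bousfield localization $\mathsf{L}_S(\dgcat)$ exists, and note that it has the same cofibrations as the structures of Theorem~\ref{mal} and Theorem~\ref{theorem2}. Since a model structure is determined by its cofibrations together with its weak equivalences, it will suffice to show that the Morita dg-functors are exactly the $S$-local equivalences of the quasi-équiconique structure. Two preliminary observations: first, because the quasi-équiconique and Morita structures share their cofibrations they share their trivial fibrations, hence their simplicial cofibrant replacement functor $\Gamma^{\ast}$, so for any dg-category that is fibrant in both structures the spaces $\mathsf{Map}(-,-)$ computed in the two situations coincide; second, every quasi-équiconique dg-functor is a Morita dg-functor (quasi-full-faithfulness identifies idempotents on the source with idempotents on the image, so essential surjectivity upgrades to essential surjectivity after Karoubi completion), whence every Morita fibration is a quasi-équiconique fibration and, in particular, every Morita fibrant dg-category is quasi-équiconique fibrant.

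Next I would prove that a dg-category $\mathcal{B}$ is $S$-local for the quasi-équiconique structure if and only if it is Morita fibrant. For the implication "Morita fibrant $\Rightarrow$ $S$-local": the dg-functor $Lh_0$ is a cofibration between cofibrant objects, and it is a Morita dg-functor, since it sits in the square of cofibrant replacements over $L_0(0):\idem\to\fact$ and $L_0(0)$ is fully faithful with image cofinal up to one direct factor; thus $Lh_0$ is a Morita trivial cofibration, so a Morita fibrant $\mathcal{B}$ has the right lifting property against it and $\mathsf{Map}(Lh_0,\mathcal{B})$ is a trivial fibration of simplicial sets. For the converse, an $S$-local $\mathcal{B}$ is quasi-équiconique fibrant, hence by Proposition~\ref{fibrants} the essential image of $\mathsf{H}^0(\mathcal{B})\hookrightarrow\mathcal{D}(\mathcal{B})$ is stable under suspensions and cones; to get stability under direct factors I would take an object $X$ of $\mathsf{H}^0(\mathcal{B})$ and an idempotent $e$ of $X$ in $\mathcal{D}(\mathcal{B})$, lift the resulting functor $\idem\to\mathsf{H}^0(\mathcal{B})$ to a dg-functor $\idemh\to\mathcal{B}$ by Lemma~\ref{important1}, and then use that $\mathsf{Map}(Lh_0,\mathcal{B})$ is a weak equivalence together with Toën's description \cite{Toen} of mapping spaces (the $\pi_0$ giving isomorphism classes, the higher groups the extension spaces) to deduce that the functor $\mathsf{rep}(\facth,\mathcal{B})\to\mathsf{rep}(\idemh,\mathcal{B})$ is an equivalence. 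Exactly as in the treatment of cones in the proof of Proposition~\ref{Bousf}, and using the quasi-équivalence $\facth\simeq\fact$ whose supplementary object is the direct factor in question, this produces in $\mathcal{B}$ an object which becomes isomorphic in $\mathcal{D}(\mathcal{B})$ to the direct factor of $X$ cut out by $e$; so $\mathcal{B}$ satisfies the conditions of Proposition~\ref{nova4} and is Morita fibrant.

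Finally I would identify the weak equivalences. In any model category a morphism $F$ is a weak equivalence if and only if $\mathsf{Map}(F,\mathcal{C})$ is a weak equivalence for every fibrant $\mathcal{C}$; applied to the Morita structure, this says that $F$ is a Morita dg-functor if and only if $\mathsf{Map}(F,\mathcal{C})$ is a weak equivalence for every Morita fibrant $\mathcal{C}$. By the previous paragraph the Morita fibrant dg-categories are exactly the $S$-local ones, and for such $\mathcal{C}$ the space $\mathsf{Map}(F,\mathcal{C})$ agrees with the one computed in the quasi-équiconique structure; so the condition is equivalent to $F$ being an $S$-local equivalence. Thus the structure of Theorem~\ref{theorem2} has the quasi-équiconique cofibrations and the $S$-local equivalences, which is the defining property of $\mathsf{L}_S(\dgcat)$, and the two structures coincide. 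The main obstacle is the implication "$S$-local $\Rightarrow$ Morita fibrant", and within it the passage from the homotopical statement that $\mathsf{Map}(Lh_0,\mathcal{B})$ is a weak equivalence to the strict statement that every homotopy idempotent on an object of $\mathcal{B}$ is realised by an honest direct summand object of $\mathcal{B}$; this is precisely where Toën's computation of $\mathsf{Map}$ in terms of the categories $\mathsf{rep}(-,-)$ and the explicit identification $\facth\simeq\fact$ do the real work.
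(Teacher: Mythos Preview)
Your proof is correct and follows essentially the same route as the paper's: identify the $Lh_0$-local objects with the Morita fibrant dg-categories via Proposition~\ref{nova4}, Lemma~\ref{important1} and To\"en's description of $\mathsf{Map}$, then deduce that the $Lh_0$-local equivalences are exactly the Morita dg-functors. One minor point of framing: you need not invoke cellularity and left properness to construct $\mathsf{L}_S(\dgcat)$ abstractly, since Theorem~\ref{theorem2} already provides the candidate model structure and it suffices to verify directly that it has the required local objects and local equivalences---which is exactly what the paper does.
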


\begin{proof}[D{\'e}monstration]
 On commence par montrer qu'une dg-cat{\'e}gorie $\mathcal{B}$ est
 $Lh_0$-locale, voir \cite[3.4]{Hirschhorn}, si et seulement si elle est Morita fibrante. Notons que puisque
 les deux structures de mod{\`e}les de Quillen sur $\dgcat$ ont les m{\^e}mes
 cofibrations, et donc les m{\^e}mes fibrations triviales, le foncteur de
 remplacement cofibrant simplicial $\Gamma^{\ast}$, voir
 \cite{Hirschhorn}, est le m{\^e}me dans les deux situations. Puisque
 $Lh_0$ est un dg-foncteur de Morita, cela
 implique que si $\mathcal{B}$ est Morita fibrante, le morphisme
 induit
$$ \mathsf{Map}(\mathrm{facth},\mathcal{B})
\stackrel{\sim}{\longrightarrow}
\mathsf{Map}(\mathrm{idemh},\mathcal{B})$$
est une {\'e}quivalence faible d'ensembles simpliciaux.

Soit maintenant $\mathcal{B}$ un objet $Lh_0$-local dans $\dgcat$.
Montrons qu'il est Morita fibrant. Par la proposition~\ref{nova4},
il s'agit de montrer que l'image essentielle du plongement
$\mathsf{H}^0(\mathcal{B}) \hookrightarrow \mathcal{D}(\mathcal{B})$
est stable par suspensions, c{\^o}nes et facteurs directs. Une fois que
$\mathcal{B}$ est fibrante pour la structure quasi-{\'e}quiconique
proposition~\ref{fibrants} implique que le plongement
$\mathsf{H}^0(\mathcal{B}) \hookrightarrow \mathcal{D}(\mathcal{B})$
est dej{\`a} stable par suspensions et c{\^o}nes. Maintenant la donn{\'e}e d'un
idempotent $e$ dans $\mathsf{H}^0(\mathcal{B})$ se rel{\`e}ve, par
lemme~\ref{important1}, en un morphisme $\mathrm{idemh} \rightarrow
\mathcal{B}$ et fournit donc une objet de $\rep(\mathrm{idemh},\cb)$.
Par la description de $\mathsf{Map}(-,-)$ dans \cite{Toen} et l'hypoth{\`e}se
sur $\cb$, le foncteur
\[
\rep(\mathrm{idemh}, \mathcal{B}) \to \rep(\mathrm{facth},\mathcal{B})
\]
est une {\'e}quivalence. Cela implique qu'il existe bien dans $\cb$ un objet
$F_e$ qui, dans $\cd\cb$, devient isomorphe au facteur directe associ{\'e}
{\`a} l'idempotent $e$.

On montre maintenant qu'un dg-foncteur est de Morita si et
seulement s'il est une $Lh_0$-{\'e}quivalence locale, voir
\cite[3.1.4]{Hirschhorn}. Un dg-foncteur $F$ est une
$Lh_0$-{\'e}quivalence locale ssi $\Map(F,\cc)$ est une {\'e}quivalence faible pour tout
$\cc$ $Lh_0$-locale, ssi $\Map(F,\cc)$ est une {\'e}quivalence faible pour toute
$\cc$ Morita fibrante. Une fois que les foncteurs $\mathsf{Map}(-,\mathcal{C})$
sont {\'e}quivalents pour les deux structures, avec $\mathcal{C}$ Morita
fibrante, $F$ est de Morita ssi c'est une $Lh_0$-{\'e}quivalence locale.
\end{proof}

Soient $\mathcal{A}$ et $\mathcal{B}$ des dg-cat{\'e}gories et $\mbox{can}_2: \mathsf{Heq} \rightarrow \mathsf{Hmo}$ le
foncteur canonique. La proposition~\ref{Bousfil} et le
corollaire~\ref{adjonct1} impliquent~:

\begin{corollaire} \label{adjonct2}
On a une adjonction
$$\mathrm{Hom}_{\mathsf{Hmo}}(\mbox{can}_2(\mathcal{A}),
\mathcal{B}) \stackrel{\sim}{\longrightarrow} \mathrm{Hom}_{\mathsf{Heq}}(\mathcal{A},
\mathcal{B}_{fib})\,,
$$
o{\`u} $\mathcal{B}_{fib}$ est un remplacement Morita fibrant
de la dg-cat{\'e}gorie $\mathcal{B}$.
\end{corollaire}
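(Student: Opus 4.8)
The plan is to read this off from Proposition~\ref{Bousfil} together with the standard behaviour of left Bousfield localizations on homotopy categories, and then to splice in Corollaire~\ref{adjonct1}. First I would record that every quasi-{\'e}quiconic dg-foncteur is a dg-foncteur de Morita — passing to the Karoubi completion only enlarges the class of essentially surjective functors $\mathrm{H}^0(\mbox{pre-tr}(-))$ — so by the universal property of localization there is a unique functor $\mathrm{can}:\mathsf{Hec}\to\mathsf{Hmo}$ with $\mathrm{can}\circ\mbox{can}_1=\mbox{can}_2$. It therefore suffices to produce a natural bijection $\mathrm{Hom}_{\mathsf{Hmo}}(\mathrm{can}(\mathcal{A}'),\mathcal{B})\cong\mathrm{Hom}_{\mathsf{Hec}}(\mathcal{A}',\mathcal{B}_{fib})$ for every object $\mathcal{A}'$ of $\mathsf{Hec}$, where $\mathcal{B}_{fib}$ denotes a remplacement Morita fibrant of $\mathcal{B}$, and then to evaluate Corollaire~\ref{adjonct1} at $\mathcal{A}'=\mbox{can}_1(\mathcal{A})$ and compose.

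For the bijection just displayed I would invoke the general description of the homotopy category of a left Bousfield localization, \emph{cf.} \cite[Ch.~3]{Hirschhorn}: by Proposition~\ref{Bousfil} the Morita model structure is the left Bousfield localization of the quasi-{\'e}quiconic one at $L\!h_0$, hence $\mathsf{Hmo}$ is canonically equivalent to the full subcategory of $\mathsf{Hec}$ on the $L\!h_0$-local objects, which by Proposition~\ref{nova4} are exactly the dg-cat{\'e}gories Morita fibrantes; under this equivalence $\mathrm{can}$ becomes left adjoint to the inclusion, with unit at $\mathcal{B}$ the dg-foncteur de Morita $\mathcal{B}\to\mathcal{B}_{fib}$ onto a Morita fibrant replacement, and the resulting adjunction isomorphism is precisely $\mathrm{Hom}_{\mathsf{Hmo}}(\mathrm{can}(\mathcal{A}'),\mathcal{B})\cong\mathrm{Hom}_{\mathsf{Hec}}(\mathcal{A}',\mathcal{B}_{fib})$. (Alternatively one may observe that, since iterated left Bousfield localizations are again left Bousfield localizations, $\mathsf{Hmo}$ is directly a reflective localization of $\mathsf{Heq}$, but the two-step route is the one the cited statements provide.)

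It remains to feed $\mathcal{B}_{fib}$ into Corollaire~\ref{adjonct1}: this yields $\mathrm{Hom}_{\mathsf{Hec}}(\mbox{can}_1(\mathcal{A}),\mathcal{B}_{fib})\cong\mathrm{Hom}_{\mathsf{Heq}}(\mathcal{A},(\mathcal{B}_{fib})')$, where $(\mathcal{B}_{fib})'$ is a remplacement quasi-{\'e}quiconique fibrant of $\mathcal{B}_{fib}$. The only point deserving a line of verification — the rest being off-the-shelf, and hence the main (mild) obstacle — is the compatibility of these two flavours of fibrant replacement: a dg-cat{\'e}gorie Morita fibrante is a fortiori quasi-{\'e}quiconique fibrante. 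This is immediate either by comparing Propositions~\ref{fibrants} and~\ref{nova4}, or from Th{\'e}or{\`e}me~\ref{theorem2}, where the generating quasi-{\'e}quiconic trivial cofibrations are listed among the generating Morita trivial cofibrations, so that a Morita fibrant object has the right lifting property against the former. Consequently $(\mathcal{B}_{fib})'$ is quasi-{\'e}quivalente, in particular isomorphe dans $\mathsf{Heq}$, to $\mathcal{B}_{fib}$ itself. Composing the two natural bijections gives $\mathrm{Hom}_{\mathsf{Hmo}}(\mbox{can}_2(\mathcal{A}),\mathcal{B})\cong\mathrm{Hom}_{\mathsf{Heq}}(\mathcal{A},\mathcal{B}_{fib})$, which is the asserted adjunction.
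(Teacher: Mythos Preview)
Your proof is correct and follows essentially the same approach as the paper, which simply states that the corollary is implied by Proposition~\ref{Bousfil} and Corollaire~\ref{adjonct1}; you have spelled out in detail how these two ingredients combine via the intermediate factorisation through $\mathsf{Hec}$ and the observation that Morita fibrant objects are already quasi-{\'e}quiconique fibrants.
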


Soit $\mathsf{Heq}_{fib}$ la sous-cat{\'e}gorie pleine de $\mathsf{Heq}$
form{\'e}e des dg-cat{\'e}gories Morita fibrantes. On note
$\mbox{inc}:\mathsf{Heq}_{fib}\hookrightarrow \mathsf{Heq}$ l'inclusion
{\'e}vidente. Le corollaire~\ref{adjonct2} a comme consequence le

\begin{corollaire}\label{adjonct5}
On a une {\'e}quivalence de cat{\'e}gories
$$
\xymatrix{
\mathsf{Heq} \ar[d]_-{can_2}  &
\mathsf{Heq}_{fib} \ar@{^{(}->}[l]_-{inc}\\
\mathsf{Hmo} \ar[ur]^-{\sim}_{(-)_{fib}} &
}
$$
\end{corollaire}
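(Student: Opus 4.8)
The plan is to deduce the corollary from the adjunction of Corollary~\ref{adjonct2} together with the explicit description of the Morita fibrant dg-categories supplied by Proposition~\ref{nova4}. Corollary~\ref{adjonct2} says precisely that $\mathcal{B}\mapsto\mathcal{B}_{fib}$ is right adjoint to $\mbox{can}_2:\mathsf{Heq}\to\mathsf{Hmo}$, and by construction $\mathcal{B}_{fib}$ is Morita fibrant, so this right adjoint factors as $\mathsf{Hmo}\stackrel{(-)_{fib}}{\longrightarrow}\mathsf{Heq}_{fib}\stackrel{inc}{\hookrightarrow}\mathsf{Heq}$. It therefore suffices to prove two things: that the right adjoint $(-)_{fib}$ is fully faithful, and that its essential image is exactly $\mathsf{Heq}_{fib}$. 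A fully faithful functor onto its essential image is an equivalence; corestricting we obtain the desired equivalence $\mathsf{Hmo}\stackrel{\sim}{\longrightarrow}\mathsf{Heq}_{fib}$, composing with $inc$ recovers $(-)_{fib}$, and the quasi-inverse is the restriction of $\mbox{can}_2$.

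First I would check that the counit $\varepsilon_{\mathcal{B}}:\mbox{can}_2(\mathcal{B}_{fib})\to\mathcal{B}$ is invertible, which is equivalent to full faithfulness of $(-)_{fib}$. Under the bijection of Corollary~\ref{adjonct2} the morphism $\varepsilon_{\mathcal{B}}$ corresponds to $\mathbf{1}_{\mathcal{B}_{fib}}$, so by a triangle identity $\varepsilon_{\mathcal{B}}=\mbox{can}_2(\iota_{\mathcal{B}})^{-1}$, where $\iota_{\mathcal{B}}:\mathcal{B}\to\mathcal{B}_{fib}$ is the functorial fibrant-replacement morphism for the Morita model structure of Theorem~\ref{theorem2}. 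Since $\iota_{\mathcal{B}}$ is a trivial cofibration in that structure it is in particular a dg-foncteur de Morita, hence $\mbox{can}_2(\iota_{\mathcal{B}})$ is an isomorphism in $\mathsf{Hmo}$ and so is $\varepsilon_{\mathcal{B}}$. This step is purely formal.

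The substantial point is the description of the essential image. It is contained in $\mathsf{Heq}_{fib}$ by construction; conversely, since $(-)_{fib}$ is a fully faithful right adjoint, an object $\mathcal{A}$ lies in its essential image as soon as the unit $\eta_{\mathcal{A}}:\mathcal{A}\to(\mbox{can}_2\mathcal{A})_{fib}=\mathcal{A}_{fib}$ is invertible in $\mathsf{Heq}$, and this unit is precisely $\iota_{\mathcal{A}}$ viewed in $\mathsf{Heq}$. So what must be shown is that \emph{if $\mathcal{A}$ is Morita fibrant, then $\iota_{\mathcal{A}}:\mathcal{A}\to\mathcal{A}_{fib}$ is a quasi-{\'e}quivalence}. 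Both $\mathcal{A}$ and $\mathcal{A}_{fib}$ are non vides (Proposition~\ref{nova4}), so $\iota_{\mathcal{A}}$ satisfies condition $(M2)$: it is a quasi-isomorphism on Hom-complexes, whence $\mbox{pre-tr}(\iota_{\mathcal{A}})$ is too and $\mathsf{H}^0(\mbox{pre-tr}(\iota_{\mathcal{A}}))^{\kar}:\mathrm{per}(\mathcal{A})\to\mathrm{per}(\mathcal{A}_{fib})$ is an equivalence. On the other hand, for any Morita fibrant $\mathcal{B}$, Proposition~\ref{nova4} says the essential image of $\mathsf{H}^0(\mathcal{B})\hookrightarrow\mathcal{D}(\mathcal{B})$ is stable under suspensions, cones and direct summands; since it contains the representables and is contained in $\mathrm{per}(\mathcal{B})$, the dg-Yoneda lemma identifies $\mathsf{H}^0(\mathcal{B})$ with $\mathrm{per}(\mathcal{B})$. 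Applying this to $\mathcal{A}$ and to $\mathcal{A}_{fib}$ and using naturality of Yoneda in the commutative square built from $\iota_{\mathcal{A}}$, one gets that $\mathsf{H}^0(\iota_{\mathcal{A}})$ is an equivalence, in particular essentially surjective; combined with the quasi-isomorphisms on Hom-complexes this shows $\iota_{\mathcal{A}}$ is a quasi-{\'e}quivalence, hence an isomorphism in $\mathsf{Heq}$.

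Putting this together yields that $(-)_{fib}:\mathsf{Hmo}\to\mathsf{Heq}$ is fully faithful with essential image $\mathsf{Heq}_{fib}$, hence corestricts to an equivalence $\mathsf{Hmo}\stackrel{\sim}{\longrightarrow}\mathsf{Heq}_{fib}$ whose composite with $inc$ is $(-)_{fib}$ and whose quasi-inverse is the restriction of $\mbox{can}_2$ (counit invertible by the second step, unit by the third), which is exactly the diagram of the statement. The only non-formal ingredient, and hence the main obstacle, is the implication ``$\mathcal{A}$ Morita fibrant $\Rightarrow$ $\iota_{\mathcal{A}}$ is a quasi-{\'e}quivalence'', that is: a dg-foncteur de Morita between Morita fibrant dg-categories is a quasi-{\'e}quivalence — and this is exactly where Proposition~\ref{nova4} enters in an essential way.
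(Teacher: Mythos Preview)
Your proof is correct and follows essentially the approach the paper has in mind, though the paper treats the corollary as an immediate consequence of Corollary~\ref{adjonct2} and gives no details. You supply exactly those details: full faithfulness via invertibility of the counit, and identification of the essential image via the unit.

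One remark on efficiency: for the ``substantial point'' you prove by hand, via Proposition~\ref{nova4}, that a Morita dg-functor between Morita fibrant dg-categories is a quasi-{\'e}quivalence. This is correct, but Proposition~\ref{Bousfil} (established just before) says the Morita structure is a left Bousfield localization of the quasi-{\'e}quivalence structure, and a standard fact about left Bousfield localizations (e.g.\ \cite[3.2.13]{Hirschhorn}) is that a local weak equivalence between local objects is already a weak equivalence in the original structure. Invoking this gives the same conclusion in one line and is presumably what the paper's terse ``a comme cons{\'e}quence'' is pointing to. Your route via $\mathsf{H}^0(\mathcal{B})\simeq\mathrm{per}(\mathcal{B})$ has the virtue of being self-contained, but the Bousfield-localization argument is both shorter and more robust (it does not require unpacking the characterization of fibrant objects).
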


Soit $\mathsf{Heq}_{kar}$ la sous-cat{\'e}gorie pleine de
$\mathsf{Heq}_{ex}$, voir notation~\ref{notexact}, form{\'e}e des dg-cat{\'e}gories $\mathcal{A}$ telles
que l'image du plongement $h:\mathrm{H}^0(\mathcal{A})
\hookrightarrow \mathcal{D}(\mathcal{A})$ est stable par facteurs
directs.

\begin{lemme}\label{lemme3}
Les cat{\'e}gories $\mathsf{Heq}_{kar}$ et $\mathsf{Heq}_{fib}$ sont {\'e}quivalentes.
\end{lemme}
\begin{proof}[D{\'e}monstration]
Soir $\mathcal{B} \in \mathsf{Heq}_{fib}$. Alors on a une
quasi-{\'e}quivalence
$$ \mathcal{B} \stackrel{\sim}{\longrightarrow}
\mbox{pre-tr}(\mathcal{B}) \,.$$
Par proposition~\ref{nova4}, $\mbox{pre-tr}(\mathcal{B})$ est Morita
fibrante et une fois qu'elle est aussi exacte elle appartient {\`a} $\mathsf{Heq}_{kar}$. 
\end{proof}

On note $\mathsf{rep}_{mor}(\mathcal{A}, \mathcal{B})$ la sous-cat{\'e}gorie pleine
de la cat{\'e}gorie d{\'e}riv{\'e} $\mathcal{D}(\mathcal{A}^{op} \otimes^{\mathbb{L}} \mathcal{B})$,
  voir \cite{Drinfeld} \cite{DerivingDG}, dont les objets sont les dg
  $\mathcal{A}\mbox{-}\mathcal{B}\,$-bimodules $X$ telles que $X(?, A)$ est
  isomorphe dans $\mathcal{D}(\mathcal{B})$ {\`a} un objet de l'image du
  dg-foncteur canonique $\mathrm{H}^0(\mbox{pre-tr}(\mathcal{B}))^{\kar}
  \rightarrow
  \mathrm{H}^0(\mathrm{Mod}\, \mathcal{B})$, pour tout $A \in \mathcal{A}$.

\begin{corollaire}\label{adjonct6}
On a une bijection
$$\mathrm{Hom}_{\mathsf{Hmo}}(\mathcal{A},
\mathcal{B}) \stackrel{\sim}{\longrightarrow} \mathrm{Iso}(\mathsf{rep}_{mor}(\mathcal{A}, \mathcal{B}))\,.
$$
\end{corollaire}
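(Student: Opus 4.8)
The statement to prove is Corollary~\ref{adjonct6}: a natural bijection
$$\mathrm{Hom}_{\mathsf{Hmo}}(\mathcal{A},\mathcal{B})\stackrel{\sim}{\longrightarrow}\mathrm{Iso}(\mathsf{rep}_{mor}(\mathcal{A},\mathcal{B})).$$
The natural strategy is to run exactly the argument already used for Corollary~\ref{adjonct4}, but now replacing the quasi-équiconique machinery by the Morita machinery. First I would invoke Corollary~\ref{adjonct2}, which gives an adjunction $\mathrm{Hom}_{\mathsf{Hmo}}(\mathrm{can}_2(\mathcal{A}),\mathcal{B})\cong\mathrm{Hom}_{\mathsf{Heq}}(\mathcal{A},\mathcal{B}_{fib})$, where $\mathcal{B}_{fib}$ is a Morita-fibrant replacement of $\mathcal{B}$. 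By Lemma~\ref{lemme3} (combined with Lemma~\ref{homotopes}), we may take $\mathcal{B}_{fib}$ to be quasi-équivalent to $\mathrm{pre\text{-}tr}(\mathcal{B})_{fib}$, i.e. a dg-category whose $\mathsf{H}^0$ embeds in $\mathcal{D}(\mathcal{B})$ with essential image stable under suspensions, cones and direct factors — in other words, essentially $\mathsf{H}^0(\mathrm{pre\text{-}tr}(\mathcal{B}))^{\kar}$ realized inside $\mathrm{Mod}\,\mathcal{B}$.

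**Key steps.** Next I would apply Toën's theorem (the first theorem attributed to \cite{Toen} in the excerpt): $\mathrm{Hom}_{\mathsf{Heq}}(\mathcal{A},\mathcal{B}_{fib})$ is in natural bijection with $\mathrm{Iso}(\mathsf{rep}(\mathcal{A},\mathcal{B}_{fib}))$. It then remains to identify $\mathsf{rep}(\mathcal{A},\mathcal{B}_{fib})$ with $\mathsf{rep}_{mor}(\mathcal{A},\mathcal{B})$. For this I would repeat the equivalence-of-derived-categories argument from the proof of Corollary~\ref{adjonct4}: the canonical dg-functor $\mathcal{A}^{op}\otimes\mathcal{B}\rightarrow\mathcal{A}^{op}\otimes\mathcal{B}_{fib}$ is fully faithful (since $\mathcal{B}\hookrightarrow\mathcal{B}_{fib}$ is, being a quasi-equivalence followed by a fully faithful embedding into pre-tr and its Karoubi completion), hence induces a triangulated functor $\mathcal{D}(\mathcal{A}^{op}\otimes^{\mathbb{L}}\mathcal{B})\rightarrow\mathcal{D}(\mathcal{A}^{op}\otimes^{\mathbb{L}}\mathcal{B}_{fib})$ that sends the strictly representable modules to a set of small generators; being a triangulated functor that is fully faithful on a generating set and sends generators to generators, it is an equivalence. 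Under this equivalence one checks that the defining condition on objects of $\mathsf{rep}_{mor}$ — namely that $X(?,A)$ be isomorphic in $\mathcal{D}(\mathcal{B})$ to an object of the image of $\mathsf{H}^0(\mathrm{pre\text{-}tr}(\mathcal{B}))^{\kar}\rightarrow\mathsf{H}^0(\mathrm{Mod}\,\mathcal{B})$ — corresponds precisely to the condition defining $\mathsf{rep}(\mathcal{A},\mathcal{B}_{fib})$, i.e. that $X(?,A)$ be isomorphic in $\mathcal{D}(\mathcal{B}_{fib})$ to a representable $\mathcal{B}_{fib}$-module. The point here is that the objects of $\mathcal{B}_{fib}$ are, up to the equivalence $\mathcal{D}(\mathcal{B}_{fib})\simeq\mathcal{D}(\mathcal{B})$, exactly the direct factors of iterated cones of representable $\mathcal{B}$-modules, by the explicit description of Morita-fibrant dg-categories in Proposition~\ref{nova4}.

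**Naturality and the main obstacle.** Naturality in $\mathcal{A}$ and $\mathcal{B}$ follows because every construction used — the adjunction of Corollary~\ref{adjonct2}, the fibrant replacement functor, Toën's bijection, and the derived-category equivalence — is functorial; as in Remark~\ref{monoidale} one also gets the refined statement at the level of mapping spaces, with $\mathrm{Iso}$ replaced by the nerve of the category with the same objects and quasi-isomorphisms as morphisms. I expect the genuinely delicate point to be the precise matching of the two "representability up to direct factors" conditions, i.e. verifying that the functor $\mathcal{D}(\mathcal{B})\rightarrow\mathcal{D}(\mathcal{B}_{fib})$ carries the full subcategory spanned by direct factors of iterated cones of representables exactly onto the representable $\mathcal{B}_{fib}$-modules (and not more); this uses Proposition~\ref{nova4} to know the objects of $\mathcal{B}_{fib}$ and Lemma~\ref{important1}/Lemma~\ref{important3} to control idempotents, so the work is in assembling these, not in any new computation.
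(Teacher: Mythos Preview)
Your proposal is correct and follows essentially the same route as the paper: invoke Corollary~\ref{adjonct2} to pass to $\mathrm{Hom}_{\mathsf{Heq}}(\mathcal{A},\mathcal{B}_{fib})$, apply To{\"e}n's theorem to get $\mathrm{Iso}(\mathsf{rep}(\mathcal{A},\mathcal{B}_{fib}))$, then use Lemma~\ref{lemme3} to identify $\mathsf{rep}(\mathcal{A},\mathcal{B}_{fib})$ with $\mathsf{rep}_{mor}(\mathcal{A},\mathcal{B})$. The paper's proof is terser on the last identification (it simply asserts the implication), whereas you spell it out via the derived-category equivalence argument from Corollary~\ref{adjonct4} and the description of Morita-fibrant objects in Proposition~\ref{nova4}; but this is elaboration of the same idea, not a different strategy.
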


\begin{proof}[D{\'e}monstration]
D'apr{\`e}s le corollaire~\ref{adjonct2}, $\mathrm{Hom}_{\mathsf{Hmo}}(\mathcal{A},
\mathcal{B})$ s'identifie {\`a}  $\mathrm{Hom}_{\mathsf{Heq}}(\mathcal{A},
\mathcal{B}_{fib})$. Par \cite{Toen} on sait que
ce dernier ensemble s'identifie {\`a}
$\mathrm{Iso}(\mathsf{rep}(\mathcal{A},\mathcal{B}_{fib}))$.
On sait par le lemme~\ref{lemme3} que $\mathcal{B}_{fib}$ est
isomorphe dans $\mathsf{Heq}$ {\`a} une dg-cat{\'e}gorie qui appartient {\`a}
$\mathsf{Heq}_{kar}$. Cela implique que $\mathsf{rep}(\mathcal{A},\mathcal{B}_{fib})$
s'identifie {\`a} $\mathsf{rep}_{mor}(\mathcal{A},\mathcal{B})$, d'o{\`u} le resultat.
\end{proof}

\begin{remarque}\label{monoi}
On sait que les structures de cat{\'e}gorie de mod{\`e}les de Quillen dans
$\dgcat$ de \cite{cras} et du th{\'e}or{\`e}me~\ref{theorem2} ont les m{\^e}mes
cofibrations et les m{\^e}mes fibrations acycliques. Cela a pour
consequence que
\begin{itemize}
\item[-] Le foncteur de remplacement cofibrant simplicial $\Gamma^{\ast}$, voir
\cite{Hovey}, dans $\dgcat$ est le m{\^e}me dans les deux situations. Cela
implique que les corollaires~\ref{adjonct2}, ~\ref{adjonct5} et~\ref{adjonct6} sont encore vrais si on remplace $\mathrm{Hom}$ par
l'espace de morphismes, $\underline{\mathrm{Map}}$, voir \cite{Hovey}, et
$\mathrm{Iso}(\mathsf{rep}_{mor}(\mathcal{A}, \mathcal{B}))$ dans le
corollaire~\ref{adjonct6}, par le nerf de la sous-cat{\'e}gorie de
$\mathrm{Mod}(\mathcal{A}^{op}\otimes^{\mathbb{L}}\mathcal{B})$ dont les objets
sont les m{\^e}mes que ceux de $\mathsf{rep}_{mor}(\mathcal{A},
\mathcal{B})$ et dont les morphismes sont les quasi-isomorphismes.
\item[-] La cat{\'e}gorie mono{\"\i}dale sym{\'e}trique
  $(\mathsf{Hmo},\,-\otimes^{\mathbb{L}}-)$ est bien d{\'e}finie et ferm{\'e}e, voir \cite{Hovey}, et l'espace de
  morphismes interne de $\mathsf{Hmo}$, $\mathbb{R}\underline{\mathrm{Hom}}_{\mathsf{Hmo}}(\mathcal{A},
    \mathcal{B})$ s'identifie {\`a}
  $\mathbb{R}\underline{\mathrm{Hom}}_{\mathsf{Heq}}(\mathcal{A}, \mathcal{B}_{fib})$, voir \cite{Toen}.
\end{itemize}
\end{remarque}

\section{Invariants}
Soient $\mathcal{A}$, $\mathcal{B}$ et $\mathcal{C}$ des
dg-cat{\'e}gories. Soit $\mathsf{Hmo}_0$ la cat{\'e}gorie qui a pour objets
les petites dg-cat{\'e}gories et telle que $\mbox{Hom}_{\mathsf{Hmo}_0}(\mathcal{A}, \mathcal{B})$ est le groupe
de Grothendieck de la cat{\'e}gorie triangul{\'e}e
$\mathsf{rep}_{mor}(\mathcal{A}, \mathcal{B})$. Un argument
compl{\`e}tement analogue {\`a} la remarque~\ref{bitri} montre que l'op{\'e}ration
de composition dans $\mathsf{Hmo}_0$ est bien d{\'e}finie. On dispose donc
d'un foncteur canonique $\mbox{add}_2:\mathsf{Hmo} \rightarrow \mathsf{Hmo}_0$.
Des arguments similaires aux lemmes~\ref{addit} et \ref{tens} nous
permettent de montrer le lemme suivant.

\begin{lemme}\label{ad}
La cat{\'e}gorie $\mathsf{Hmo}_0$ est additive et le produit tensoriel
$-\otimes^{\mathbb{L}}-$ de $\mathsf{Hmo}$, voir remarque~\ref{monoi},
induit une structure mono{\"\i}dale sym{\'e}trique sur $\mathsf{Hmo}_0$.
\end{lemme}

Soit $\mathsf{Heq}_{{kar}_0}$ la sous-cat{\'e}gorie pleine de
$\mathsf{Hmo}_0$ dont les objets sont les dg-cat{\'e}gories exactes
$\mathcal{A}$ telles que l'image du plongement
$\widehat{ }:\mathrm{H}^0(\mathcal{A}) \hookrightarrow \mathcal{D}(\mathcal{A})$
est stable par facteurs directs.
\begin{remarque}
L'{\'e}quivalence entre $\mathsf{Hmo}$ et $\mathsf{Heq}_{kar}$ induit une
{\'e}quivalence entre $\mathsf{Hmo}_0$ et $\mathsf{Heq}_{{kar}_0}$.
\end{remarque}
On a donc le diagramme suivant~:
$$
\xymatrix{
\mathsf{Heq}_{{ex}_0} \ar@{^{(}->}[r]^-{\sim} & \mathsf{Hec}_0 \ar[d] \\
\mathsf{Heq}_{{kar}_0} \ar@{^{(}->}[u] \ar@{^{(}->}[r]^-{\sim} &
\mathsf{Hmo}_0\, .
}
$$
La d{\'e}monstration du th{\'e}or{\`e}me suivant est analogue {\`a} celle du th{\'e}or{\`e}me~\ref{semiloc}.
Soit $F:\mathsf{Hmo}\rightarrow \mathsf{C}$ un foncteur {\`a} valeurs
dans une cat{\'e}gorie additive $\mathsf{C}$.

\begin{theoreme}\label{Univ}
Les conditions suivantes sont {\'e}quivalentes~:
\begin{itemize}
\item[1)] Le foncteur $F$ est compos{\'e} d'un foncteur additif
  $\mathsf{Hmo}_0 \rightarrow \mathsf{C}$ et du foncteur canonique
  $\mathsf{Hmo}\rightarrow \mathsf{Hmo}_0$.
\item[2)] Pour toutes dg-cat{\'e}gories $\mathcal{A}$, $\mathcal{B}$, l'identit{\'e}
  $F(\left[X\right])+F(\left[Z\right])=F(\left[Y\right])$, est v{\'e}rifi{\'e}e
  dans $\mathrm{Hom}_{\mathsf{C}}(F(\mathcal{A}), F(\mathcal{B}))$ pour
  tout triangle $X \rightarrow Y \rightarrow Z \rightarrow X\left[1\right]$
  de $\mathsf{rep}_{mor}(\mathcal{A}, \mathcal{B})$.
\item[3)] Pour toute dg-cat{\'e}gorie $\mathcal{A}$, le morphisme
$$
\xymatrix{
F(\mathcal{A})\oplus F(\mathcal{A})
\ar[rr]^-{\left[F(i_1)
        \, , \,F(i_2)\right]} && F(\mbox{T}(\mathcal{A}))
}
$$
est un isomorphisme dans $\mathsf{C}$.

\item[4)] Pout toute dg-cat{\'e}gorie pr{\'e}triangul{\'e}e $\mathcal{A}$ munie de sous-dg-cat{\'e}gories pleines pr{\'e}triangul{\'e}es
  $\mathcal{B}$ et $\mathcal{C}$ qui donnent lieu {\`a} une d{\'e}composition
  semi-orthogonale
  $\mathrm{H}^0(\mathcal{A})=(\mathrm{H}^0(\mathcal{B}),
  \mathrm{H}^0(\mathcal{C}))$, voir
  \cite{Bondal}, le morphisme
$$
F(\mathcal{B})\oplus F(\mathcal{C}) \rightarrow F(\mathcal{A})
$$
induit par les inclusions est un isomorphisme dans $\mathsf{C}$.
\end{itemize}
\end{theoreme}

\begin{remarque}
Toute {\'e}quivalence d{\'e}riv{\'e}e, voir \cite{Rickard} \cite{Tilting}, donne un isomorphisme dans $\mathsf{Hmo}$ et donc dans
$\mathsf{Hmo}_0$. Cependant, il existe d'autres isomorphismes dans
$\mathsf{Hmo}_0$~: si $k$ est un corps alg{\'e}briquement clos et $A$ un
$k$-alg{\`e}bre (ordinaire) de dimension finie sur $k$ et de dimension
globale finie, alors $A$ est isomorphe {\`a} son plus grand quotient
semisimple $A/\mathrm{rad}(A)$ dans $\mathsf{Hmo}_0$
(voir \cite[2.5]{cyclicDG}), mais dans $\mathsf{Hmo}$, $A$ est isomorphe {\`a}
$A/\mathrm{rad}(A)$ seulement si $A$ est semisimple.
\end{remarque}

\subsection{Homologies de Hochschild, Cyclique, Negative, \ldots}
On note $\mathcal{D} \mbox{Mix}$ la cat{\'e}gorie d{\'e}riv{\'e}e de la cat{\'e}gorie
des complexes mixtes sur $k$, voir \cite{cyclichomology}. Soit $$M:\dgcat
\longrightarrow \mathcal{D}\mbox{Mix}$$ le foncteur qui, {\`a} une petite
dg-cat{\'e}gorie $\mathcal{C}$, associe son complexe mixte
$M(\mathcal{C})$, vu comme un objet dans $\mathcal{D}\mbox{Mix}$,
voir \cite{cyclichomology}. D'apr{\`e}s \cite{Kassel}, l'homologie cyclique de
$\mathcal{C}$ est l'homologie cyclique du complexe
mixte $M(\mathcal{C})$ et de m{\^e}me pour les autres variantes de la
th{\'e}orie (Hochschild, p{\'e}riodique, negative, \ldots). Comme dans
\cite{cyclicDG}, on montre que le foncteur
$M$ descend {\`a} un foncteur d{\'e}fini dans la localis{\'e}e $\mathsf{Hmo}$
et que celui-ci satisfait les conditions du th{\'e}or{\`e}me~\ref{Univ}. Il se factorise donc par le foncteur $\mathcal{U}_a:\dgcat
\longrightarrow \mathsf{Hmo}_0$ et donne lieu {\`a} un foncteur additif
$$
M:\mathsf{Hmo}_0 \rightarrow \mathcal{D}\mbox{Mix}\,.
$$

\subsection{$K$-th{\'e}orie alg{\'e}brique}
On note $\mathsf{Ho}(\mathsf{Spt})$ la cat{\'e}gorie homotopique des
spectres, voir \cite{Sspectra}. Soit $$K:\dgcat \longrightarrow
\mathsf{Ho}(\mathsf{Spt})$$ le foncteur qui, {\`a} une petite dg-cat{\'e}gorie
$\mathcal{C}$, associe le spectre de $K$-th{\'e}orie de Waldhausen
\cite{Waldhausen} associ{\'e} {\`a} la cat{\'e}gorie des dg-modules cofibrants et
parfaits dont les cofibrations sont les monomorphismes et les
{\'e}quivalences faibles les quasi-isomorphismes, voir \cite{Dugger-S}.
Comme il a {\'e}t{\'e} montr{\'e} dans \cite{Dugger-S}, les r{\'e}sultats de \cite{Waldhausen}
impliquent que le foncteur $K$ rend inversibles les dg-foncteurs de
Morita et descend donc en un foncteur $\mathsf{Hmo} \rightarrow
\mathsf{Ho}(\mathsf{Spt})$. Le th{\'e}or{\`e}me d'additivit{\'e} de Waldhausen
\cite[1.4]{Waldhausen} montre que ce foncteur v{\'e}rifie la condition $3)$ du
th{\'e}or{\`e}me~\ref{Univ}. Il induit donc un foncteur additif $$ K: \mathsf{Hmo}_0 \rightarrow \mathsf{Ho}(\mathsf{Spt})\, .$$

\subsection{Vision globale}
On dispose donc du diagramme suivant~:
$$
\xymatrix@!0 @R=3pc @C=7pc{
 \dgcat \ar[d]
\ar@/_-1pc/[ddddrr]^-{K} & &   \\
  \mathsf{Heq} \ar[d]^{can_1} \ar@/^-2pc/[dd]_-{can_2} & &  \\
 \mathsf{Hec} \ar[d] \ar[dr]^-{add_1} & &  \\
  \mathsf{Hmo} \ar[dr]_-{add_2} & \mathsf{Hec}_0 \ar[d] &  \\
  & \mathsf{Hmo}_0 \ar@{.>}[r]^-{K}  & \mathsf{Ho}(\mathsf{Spt})\,.
}
$$

\subsection{Charact{\`e}re de Chern}
On note $\mbox{K}_0(\mathcal{C})$ le groupe de Grothendieck d'une
dg-cat{\'e}gorie $\mathcal{C}$, c'est-{\`a}-dire le groupe de Grothendieck de
la cat{\'e}gorie des objets compacts dans $\mathcal{D}(\mathcal{C})$.
Soit $\mathcal{A}$ la dg-cat{\'e}gorie avec un seul objet $3$ et telle
que $\mathrm{Hom}_{\mathcal{A}}(3,3)=k$.
\begin{lemme}\label{jolie}
On a un isomorphisme naturel de groupes abeliens
$$\mathrm{Hom}_{\mathsf{Hmo}_0}(\mathcal{A}, \mathcal{C})
\stackrel{\sim}{\longrightarrow} \mbox{K}_0(\mathcal{C})\, .$$
\end{lemme}

\begin{proof}[D{\'e}monstration]
En effet le groupe abelien $\mathrm{Hom}_{\mathsf{Hmo}_0}(\mathcal{A},
\mathcal{B})$ est par d{\'e}finition {\'e}gal {\`a}
$\mbox{K}_0(\mathsf{rep}_{mor}(\mathcal{A}, \mathcal{C}))$. On sait
que la cat{\'e}gorie $\mathsf{rep}_{mor}(\mathcal{A}, \mathcal{C})$
s'identifie {\`a} $\mathsf{rep}(\mathcal{A}, \mathcal{C}_{fib})$, o{\`u}
$\mathcal{C}_{fib}$ est un remplacement Morita fibrant de la dg-cat{\'e}gorie
$\mathcal{C}$. On remarque que la cat{\'e}gorie $\mathsf{rep}(\mathcal{A}, \mathcal{C}_{fib})$ s'identifie {\`a}
la sous-cat{\'e}gorie pleine de $\mathcal{D}(\mathcal{C})$ form{\'e}e des
objets compacts.
\end{proof}
Soit $F:\mathsf{Hmo}_0 \rightarrow \mathrm{Mod}\,\mathbb{Z}$ un
foncteur. Le lemme de Yoneda et le lemme pr{\'e}c{\'e}dent impliquent le lemme suivant.

\begin{lemme}
On a un isomorphisme de groupes
$$ \mathrm{Nat}(\mbox{K}_0, F) \stackrel{\sim}{\longrightarrow}
F(\mathcal{A})\, ,$$
o{\`u} $\mathrm{Nat}(\mbox{K}_0, F)$ d{\'e}signe le groupe ab{\'e}lien des
transformations naturelles de $\mbox{K}_0$ vers $F$.
\end{lemme}

A titre d'exemple, pour $n \in \mathbb{N}$, soit
$\mathrm{HC}_n(\mathcal{C})$ le $n$-i{\`e}me groupe d'homologie
cyclique. On sait d{\'e}j{\`a} que le foncteur
$$\mathrm{HC}_n: \dgcat \rightarrow \mathrm{Mod}\, \mathbb{Z}$$ se
factorise par $\dgcat \rightarrow \mathsf{Hmo}_0$. A partir de
l'isomorphisme
$$ \mathrm{HC}_*(k)\simeq k \left[ u \right], \, \, \left| u \right|
=2\, , $$
le lemme nous fournit les caract{\`e}res de Chern
$$ch_{2n}: \mbox{K}_0 \rightarrow \mathrm{HC}_{2n} \, . $$

\section{DG-cat{\'e}gories compactes et lisses}
\begin{remarque}\label{nul}
On observe facilement que la cat{\'e}gorie aditive $(\mathsf{Hec}_0,
\oplus)$ poss{\`e}de des sommes infinies. Cela implique que son groupe de
Grothendieck $\mbox{K}_0(\mathsf{Hec}_0, \oplus)$ est nul.
\end{remarque}
Suivant \cite{IHP}, une dg-cat{\'e}gorie $\mathcal{A}$ est {\em compacte} si
\begin{itemize}
\item[1)] son ensemble d'objets est fini et
\item[2)] pour tous objets $X$ et $Y$ dans $\mathcal{A}$, le complexe
  $\mathrm{Hom}_{\mathcal{C}}(X, Y)$ est un objet parfait dans la
  cat{\'e}gorie d{\'e}riv{\'e}e des $k$-modules. 
\end{itemize}
La condition $2)$ correspond {\`a} la notion de perfection locale d{\'e}finie dans \cite[2.4]{Toen-Vaq}.
La dg-cat{\'e}gorie $\mathcal{A}$ est {\em lisse} si le $\mathcal{A}$-$\mathcal{A}$-bimodule associ{\'e} {\`a} l'identit{\'e}
  de $\mathcal{A}$, qu'on note $_{\mathcal{A}}\mathcal{A}_{\mathcal{A}}$, voir
  remarque~\ref{bimod}, est un objet compact dans
  $\mathcal{D}(\mathcal{A}^{op}\otimes \mathcal{A})$.
On note $\mathsf{Hec}_0^{cl}$ la sous-cat{\'e}gorie pleine de
$\mathsf{Hec}_0$ dont les objets sont les dg-cat{\'e}gories compactes et lisses.

\begin{lemme}
La cat{\'e}gorie $\mathsf{Hec}_0^{cl}$ est une sous-cat{\'e}gorie additive
mono{\"\i}dale de $\mathsf{Hec}_0$.
\end{lemme}
\begin{proof}[D{\'e}monstration]
Pour des dg-cat{\'e}gories compactes et lisses $\mathcal{A}$ et $\mathcal{B}$, on
remarque que
$$
\mathcal{D}((\mathcal{A}\amalg\mathcal{B})^{op}\otimes(\mathcal{A}\amalg\mathcal{B}))\backsimeq
\mathcal{D}(\mathcal{A}^{op}\otimes \mathcal{A})\times
\mathcal{D}(\mathcal{A}^{op}\otimes \mathcal{B})\times
\mathcal{D}(\mathcal{B}^{op}\otimes \mathcal{A})\times \mathcal{D}(\mathcal{B}^{op}\otimes \mathcal{B})\,.$$
Puisque le bimodule $_{\mathcal{A}\amalg\mathcal{B}}\mathcal{A}\amalg\mathcal{B}_{\mathcal{A}\amalg\mathcal{B}}$
  s'identifie {\`a} $_{\mathcal{A}}\mathcal{A}_{\mathcal{A}}\times 0 \times 0 \times
_{\mathcal{B}}\mathcal{B}_{\mathcal{B}}$, il est compact. Clairement, $\mathcal{A}\amalg\mathcal{B}$ est encore
compacte. Le fait que $\mathcal{A}\otimes\mathcal{B}$ soit lisse
r{\'e}sulte de l'isomorphisme
$$
\mathbb{R}\mathrm{Hom}_{{(\mathcal{A}\otimes\mathcal{B})}^{op}\otimes(\mathcal{A}\otimes\mathcal{B})}(\mathcal{A}\otimes\mathcal{B},
?) \backsimeq
\mathbb{R}\mathrm{Hom}_{\mathcal{A}^{op}\otimes\mathcal{A}}(\mathcal{A},
\mathbb{R}\mathrm{Hom}_{\mathcal{B}^{op}\otimes\mathcal{B}}(\mathcal{B}, ?))\,.
$$
La dg-cat{\'e}gorie $\mathcal{A}\otimes\mathcal{B}$ est encore compacte
une fois que les objets parfaits dans la cat{\'e}gorie d{\'e}riv{\'e}e des
$k$-modules sont stables par $\otimes$.
\end{proof}

On note $\mathrm{HH}(\mathcal{A})$ le complexe de Hochschild d'une
dg-cat{\'e}gorie $\mathcal{A}$, consid{\'e}r{\'e} comme un objet dans $\mathcal{D}(k)$. On dispose donc d'un
foncteur additif $$\mathrm{HH}: \mathsf{Hec}_0 \longrightarrow
\mathcal{D}(k)\,.$$ En outre, si la dg-cat{\'e}gorie
$\mathcal{A}$ est compacte et lisse, alors
$\mathrm{HH}(\mathcal{A})$ est un complexe parfait sur $k$. On dispose du diagramme~:
$$
\xymatrix{
(\mathsf{Hec}_0, \oplus) \ar[r]^-{\mathrm{HH}} &
\mathcal{D}(k) \\
*+<1pc>{(\mathsf{Hec}_0^{cl}, \oplus)} \ar@{^{(}->}[u]
\ar[r]^-{\mathrm{HH}} & *+<1pc>{\mathrm{per}(k)}
\ar@{^{(}->}[u]
}
$$
o{\`u} $\mathrm{per}(k)$ d{\'e}signe la plus petite cat{\'e}gorie triangul{\'e}e de
$\mathcal{D}(k)$ qui contient $k$ et est stable par facteurs directs. En applicant le foncteur $\mbox{K}_0$ on
obtient
$$
\xymatrix{
 \mathrm{K}_0(\mathsf{Hec}_0^{cl}, \oplus)
 \ar[rr]^-{\mathrm{K}_0(\mathrm{HH})} & &
\mathrm{K}_0(\mathrm{per}(k)) \,.
}
$$
En particulier, ces anneaux commutatifs, avec la structure
multiplicative induite par le produit tensoriel, sont non nuls.

Soit $\mathcal{P}\mathcal{T}$ le groupe de Grothendieck d{\'e}fini dans
\cite{Bondal}. Rappelons que les auteurs de \cite{Bon-Kap},
\cite{Bondal} appelent {\em pr{\'e}triangul{\'e}es} les petites dg-cat{\'e}gories
$\mathcal{A}$ telles que $\mathcal{A} \hookrightarrow
\mbox{pre-tr}(\mathcal{A})$ est une quasi-{\'e}quivalence (c'est-{\`a}-dire
que $\mathcal{A}$ est fibrante pour la structure de cat{\'e}gorie de
mod{\`e}les de Quillen quasi-{\'e}quiconique d{\'e}crite dans le
th{\'e}or{\`e}me~\ref{theorem}). Le groupe $\mathcal{P}\mathcal{T}$ est par
d{\'e}finition le groupe ab{\'e}lien engendr{\'e} par les classes de
quasi-{\'e}quivalence $\left[\mathcal{A}\right]$ de petites dg-cat{\'e}gories
pr{\'e}triangul{\'e}es soumises aux relations qui proviennent des
d{\'e}compositions semi-orthogonales.
\par
L'argument de la remarque~\ref{nul}
montre qu'en fait le groupe $\mathcal{P}\mathcal{T}$ s'annule. Pour
obtenir un groupe non nul, il convient d'imposer des conditions de
finitude~: soit ${\mathcal{P}\mathcal{T}}^{cl}$ le groupe ab{\'e}lien
engendr{\'e} par les classes de quasi-{\'e}quivalences $\left[ \mathcal{A}
\right]$ de petites dg-cat{\'e}gories $\mathcal{A}$ compactes, lisses et
pr{\'e}triangul{\'e}es soumises aux relations qui proviennent des
d{\'e}compositions semi-orthogonales. Le produit tensoriel munit ${\mathcal{P}\mathcal{T}}^{cl}$ d'une structure d'anneau commutatif.

\par
Par exemple, si $X$ est une vari{\'e}t{\'e} projective lisse, alors la
cat{\'e}gorie d{\'e}riv{\'e}e $\mathcal{D}^b(\mathsf{coh}X)$ est {\'e}quivalent {\`a}
$\mathrm{H}^0(\mathcal{D}^b(\mathsf{coh}X)_{dg})$ pour une
dg-cat{\'e}gorie compacte, lisse, pr{\'e}triangul{\'e}e
$\mathcal{D}^b(\mathsf{coh}X)_{dg}$ canonique {\`a} isomorphisme pr{\`e}s dans
$\mathsf{Hmo}$. Par exemple, pour $\mathcal{D}^b(\mathsf{coh}X)_{dg}$,
on peut prendre la dg-cat{\'e}gorie des complexes born{\'e}s {\`a} gauche de
$\mathcal{O}_X$-modules injectifs dont l'homologie est born{\'e}e et
coh{\'e}rente.

\begin{prop}\label{surj}
On a un morphisme surjectif d'anneaux commutatifs
$$
{\mathcal{P}\mathcal{T}}^{cl} \rightarrow \mbox{K}_0(\mathsf{Hec}_0^{cl}, \oplus)\, .
$$
\end{prop}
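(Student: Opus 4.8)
The plan is to construct the claimed surjection $\mathcal{PT}^{cl} \to \mathrm{K}_0(\mathsf{Hec}_0^{cl},\oplus)$ by sending a generator $[\mathcal{A}]$, the quasi-equivalence class of a compact, smooth, pretriangulated dg-category $\mathcal{A}$, to the class of $\mathcal{A}$ regarded as an object of the additive category $(\mathsf{Hec}_0^{cl},\oplus)$. Concretely, since $\mathcal{A}$ is compact and smooth it is an object of $\mathsf{Hec}_0^{cl}$, and we let $[\mathcal{A}] \in \mathcal{PT}^{cl}$ map to the class of this object in $\mathrm{K}_0(\mathsf{Hec}_0^{cl},\oplus)$, the Grothendieck group of the additive category. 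First I would check that this assignment is well-defined on generators: a quasi-equivalence between compact smooth pretriangulated dg-categories is in particular a Morita dg-functor, hence induces an isomorphism in $\mathsf{Hmo}$ and therefore in $\mathsf{Hmo}_0$ (and likewise in $\mathsf{Hec}_0$), so quasi-equivalent dg-categories have the same class; moreover compactness and smoothness are manifestly invariant under quasi-equivalence.

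Next I would verify that the defining relations of $\mathcal{PT}^{cl}$ are respected. A semi-orthogonal decomposition $\mathrm{H}^0(\mathcal{A}) = (\mathrm{H}^0(\mathcal{B}),\mathrm{H}^0(\mathcal{C}))$ with $\mathcal{B},\mathcal{C}$ full pretriangulated sub-dg-categories is exactly the hypothesis of condition $4)$ of Theorem~\ref{semiloc} (applied to the functor $\mathrm{add}_1 : \mathsf{Hec} \to \mathsf{Hec}_0$, which by construction satisfies condition $1)$, hence all four conditions). Thus in $\mathsf{Hec}_0$ the inclusions induce an isomorphism $\mathcal{B} \oplus \mathcal{C} \iso \mathcal{A}$; one then observes that $\mathcal{B}$ and $\mathcal{C}$ are again compact, smooth and pretriangulated (they are direct summands in $\mathsf{Hec}_0^{cl}$ of the compact smooth $\mathcal{A}$, and being full pretriangulated subcategories closed under the relevant operations they lie in $\mathsf{Hec}_0^{cl}$), so this isomorphism lives in $\mathsf{Hec}_0^{cl}$ and gives the relation $[\mathcal{A}] = [\mathcal{B}] + [\mathcal{C}]$ in $\mathrm{K}_0(\mathsf{Hec}_0^{cl},\oplus)$. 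This shows the assignment descends to a group homomorphism $\mathcal{PT}^{cl} \to \mathrm{K}_0(\mathsf{Hec}_0^{cl},\oplus)$. Compatibility with the multiplicative structures is then immediate: on $\mathcal{PT}^{cl}$ the product is induced by $-\otimes-$ of dg-categories, and by Lemma~\ref{tens} the tensor product $-\otimes^{\mathbb{L}}-$ induces the ring structure on $\mathrm{K}_0$ of $\mathsf{Hec}_0$; since for $k$-flat (e.g. cofibrant) representatives the derived and ordinary tensor products agree, and a compact smooth pretriangulated dg-category may be replaced by such a representative, the two products match.

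Finally I would prove surjectivity. Every object of $\mathsf{Hec}_0^{cl}$ is a compact, smooth, exact dg-category $\mathcal{A}$; replacing it by a quasi-equivalent model we may assume it is pretriangulated, e.g. by passing to $\mathrm{pre\text{-}tr}(\mathcal{A})$, which is quasi-equivalent to $\mathcal{A}$ (as $\mathcal{A}$ is exact hence already pretriangulated up to quasi-equivalence), and which remains compact and smooth since these properties are quasi-equivalence invariant. Such a dg-category is by definition a generator of $\mathcal{PT}^{cl}$, and its image under our homomorphism is the given class; moreover classes of the form $[\mathcal{A}]$ with $\mathcal{A}$ pretriangulated (hence the images) already generate $\mathrm{K}_0(\mathsf{Hec}_0^{cl},\oplus)$ as a group because every object of $\mathsf{Hec}_0^{cl}$ is so represented. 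Hence the map is onto. The main obstacle I anticipate is the bookkeeping in the second paragraph: namely checking carefully that full pretriangulated subcategories arising in a semi-orthogonal decomposition of a compact smooth dg-category are themselves compact and smooth — compactness of $\mathcal{B}$ and $\mathcal{C}$ (finiteness of objects and local perfection of Hom-complexes) is easy, but smoothness requires knowing that the diagonal bimodule of a semi-orthogonal factor is perfect, which one deduces from the compactness of the diagonal bimodule of $\mathcal{A}$ together with the decomposition of $\mathcal{D}(\mathcal{A}^{op}\otimes\mathcal{A})$ induced by the semi-orthogonal decomposition; this is the only point where a genuine (if short) argument rather than a formal manipulation is needed.
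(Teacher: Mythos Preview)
Your approach matches the paper's. For the relations, your appeal to condition~4) of Theorem~\ref{semiloc} (applied to the canonical functor $\mathrm{add}_1:\mathsf{Hec}\to\mathsf{Hec}_0$, which tautologically satisfies condition~1)) is exactly right; the paper instead carries out the underlying Yoneda argument directly—showing that $K_0(\mathsf{rep}_{ec}(\mathcal{U},\mbox{pre-tr}(\mathcal{B}\amalg\mathcal{C})))\to K_0(\mathsf{rep}_{ec}(\mathcal{U},\mbox{pre-tr}(\mathcal{A})))$ is bijective for all $\mathcal{U}$—rather than citing the theorem, but the content is identical.

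Two corrections. First, the ``main obstacle'' you flag is no obstacle: the relations $[\mathcal{A}]=[\mathcal{B}]+[\mathcal{C}]$ in $\mathcal{PT}^{cl}$ are relations \emph{among the generators}, so $\mathcal{B}$ and $\mathcal{C}$ are by hypothesis compact, smooth, and pretriangulated—otherwise the symbols $[\mathcal{B}]$, $[\mathcal{C}]$ would not even be defined in the free group. There is nothing to prove here, and the smoothness argument you sketch is unnecessary. Second, your surjectivity paragraph contains a slip: objects of $\mathsf{Hec}_0^{cl}$ are compact and smooth but not assumed exact, and replacing $\mathcal{A}$ by $\mbox{pre-tr}(\mathcal{A})$ destroys compactness, since the pretriangulated hull has infinitely many objects while ``compact'' here requires in particular a finite object set—this property is not quasi-equivalence invariant. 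The paper simply records surjectivity as ``claire par construction'' without further argument.
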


\begin{proof}[D{\'e}monstration]

On note $\left[\mathcal{A}\right]$ la classe de quasi-{\'e}quivalence
d'une dg-cat{\'e}gorie compacte, lisse et pr{\'e}triangul{\'e}e et $\left[\left[
    \mathcal{A} \right]\right]$ la classe d'isomorphisme de
$\mathcal{A}$ dans $\mathsf{Hec}_0^{cl}$. On consid{\`e}re l'application qui, {\`a} $\left[\mathcal{A}\right]$, associe
$\left[\left[ \mathcal{A} \right]\right]$. Elle est clairement bien
d{\'e}finie. Pour qu'elle induise un morphisme de
${\mathcal{P}\mathcal{T}}^{cl}$ vers $\mbox{K}_0(\mathsf{Hec}_0^{cl}, \oplus)$, il reste
{\`a} verifier l'{\'e}galit{\'e}
$$
\left[\left[ \mathcal{B}\oplus \mathcal{C} \right]\right]=\left[\left[\mathcal{A}\right]\right]\, ,
$$
pour toutes dg-cat{\'e}gories $\mathcal{A}$, $\mathcal{B}$ et
$\mathcal{C}$ qui satisfont les relations dans la d{\'e}finition de
${\mathcal{P}\mathcal{T}}^{cl}$. On consid{\`e}re le diagramme
$$
\xymatrix{
 &&&& *+<1pc>{\mathcal{A}} \ar@{^{(}->}[d]^-{inc} \\
\mathcal{B}\oplus\mathcal{C}=\mbox{pre-tr}(\mathcal{B}\coprod \mathcal{C})
\ar[rrrr]^-{pre-tr(\left[ inc(\mathcal{B}) \, \,
    inc(\mathcal{C})\right])} &&&& \mbox{pre-tr}(\mathcal{A})\,.
}
$$
Le dg-foncteur $inc$ est une quasi-{\'e}quivalence et il reste {\`a} montrer que l'image du dg-foncteur
$\mbox{pre-tr}(\left[ inc(\mathcal{B}) \,\, inc(\mathcal{C})\right])$
dans $\mathsf{Hec}_0^{cl}$ est un isomorphisme. Par le lemme de
Yoneda, il s'agit de montrer que, pour toute dg-cat{\'e}gorie
$\mathcal{U}$, l'application
$$
\mbox{K}_0(\mathsf{rep}_{ec}(\mathcal{U},
\mbox{pre-tr}(\mathcal{B}\amalg \mathcal{C}))) \longrightarrow \mbox{K}_0(\mathsf{rep}_{ec}(\mathcal{U},
\mbox{pre-tr}(\mathcal{A})))
$$
est bijective. En effet, la d{\'e}composition semi-orthogonale
$\mbox{H}^0(\mathcal{A})=(\mbox{H}^0(\mathcal{B}),
\mbox{H}^0(\mathcal{C}))$, induit une
d{\'e}composition semi-orthogonale
$$
\mathsf{rep}_{ec}(\mathcal{U},
\mbox{pre-tr}(\mathcal{A}))=(\mathsf{rep}_{ec}(\mathcal{U},
\mbox{pre-tr}(\mathcal{B})), \mathsf{rep}_{ec}(\mathcal{U},
\mbox{pre-tr}(\mathcal{C}))) \, .
$$
La surjectivit{\'e} du morphisme
${\mathcal{P}\mathcal{T}}^{cl}\rightarrow
\mbox{K}_0(\mathsf{Hec}_0^{cl}, \oplus)$ est
claire par construction.
\end{proof}

\begin{remarque}
En particulier, la proposition~\ref{surj} montre que
${\mathcal{P}\mathcal{T}}^{cl}$ est non nul. Si on d{\'e}finit
${\mathcal{P}\mathcal{T}}^{cl}_{kar}$ de fa{\c c}on analogue {\`a} partir des
dg-cat{\'e}gories compactes, lisses, pr{\'e}triangul{\'e}es $\mathcal{A}$ telles
que $\mathrm{H}^0(\mathcal{A})$ est Karoubienne, on obtient la
surjection
$$
{\mathcal{P}\mathcal{T}}^{cl}_{kar} \rightarrow \mbox{K}_0(\mathsf{Hmo}_0^{cl})
$$
mentionn{\'e}e dans l'introduction.
\end{remarque}

\chapter{Higher $K$-theory via universal invariants}

{\it Ce chapitre correspond au article \cite{univ}.}

\section{Introduction}
Differential graded categories (=dg categories) enhance our
understanding of triangulated categories appearing in algebra and
geometry, see \cite{ICM}.

They are considered as non-commutative schemes by Drinfeld
\cite{Drinfeld} \cite{Chicagotalk} and Kontsevich \cite{IHP} \cite{ENS} in their program
of non-commutative algebraic geometry, i.e. the study of dg categories
and their homological invariants.

In this article, using the formalism of Grothendieck's derivators, we construct `the
universal localizing invariant of dg categories' \emph{cf.}~\cite{Keller2002}. By this, we mean a
morphism $\mathcal{U}_l$ from the pointed derivator $\mathsf{HO}(\mathsf{dgcat})$ associated
with the Morita homotopy theory of dg categories, see \cite{cras}
\cite{IMRN} \cite{addendum}, to a triangulated
strong derivator $\mathcal{M}_{dg}^{loc}$ such that $\mathcal{U}_l$ commutes with filtered homotopy
colimits, preserves the point, sends each exact sequence of dg categories to a triangle and is universal for these properties.
Because of its universality property reminiscent of motives,
\emph{cf.} section $4.1$ of \cite{Konnew}, we call $\mathcal{M}_{dg}^{loc}$
the (stable) {\em localizing motivator} of dg categories.

Similary, we construct `the universal additive invariant of dg categories', i.e. the
universal morphism of derivators $\mathcal{U}_a$ from
$\mathsf{HO}(\mathsf{dgcat})$ to a strong triangulated derivator
$\mathcal{M}_{dg}^{add}$ which satisfies the first two properties but
the third one only for split exact sequences. We call
$\mathcal{M}_{dg}^{add}$ the {\em additive motivator} of dg
categories.

We prove that Waldhausen $K$-theory spectrum appears as a spectrum of morphisms in the
base category $\mathcal{M}_{dg}^{add}(e)$ of the additive
motivator. To the best of the author's knowledge, this is the first
conceptual characterization of Quillen-Waldhausen $K$-theory
\cite{Quillen} \cite{Waldhausen}.

The co-representation of $K$-theory as a spectrum of morphisms extends our
results in \cite{IMRN} \cite{addendum}, where we co-represented $K_0$
using functors with values in {\em additive categories} rather than
morphisms of derivators with values in strong {\em triangulated derivators}.

For example, the mixed complex construction \cite{cyclichomology}, from which
all variants of cyclic homology are deduced, and the
non-connective algebraic $\mbox{K}$-theory \cite{Marco} are localizing invariants and
factor through $\mathcal{U}_l$ and through $\mathcal{U}_a$. The
connective $K$-theory \cite{Waldhausen} is an example of an additive
invariant which is not a localizing one. We prove that it becomes
co-representable in $\mathcal{M}^{add}_{dg}$, see theorem~\ref{corep}.

Our construction is similar in spirit to those of Meyer-Nest
\cite{Nest}, Corti{\~n}as-Thom \cite{Cortinas} and Garkusha
\cite{Garkusha}. It splits into several general steps and also offers
some insight into the relationship between the theory of derivators
\cite{Heller} \cite{Grothendieck} \cite{KellerUniv} \cite{Malt}
\cite{Cis-Nee} and the classical theory
of Quillen model categories \cite{Quillen}. Derivators allow us to
state and prove precise universal properties and to dispense with many
of the technical problems one faces in using model categories.

In chapter~\ref{pre} we point out the connection between the
notions of Grothendieck derivator and that of small homotopy theory in
the sense of Heller \cite{Heller}. In chapter~\ref{extension}, we recall Cisinski's
theory of derived Kan extensions \cite{Cisinski} and in chapter~\ref{localisation}, we
develop his ideas on the Bousfield localization of derivators
\cite{Letter}. In particular, we characterize the derivator associated
with a left Bousfield localization of a Quillen model category by a
universal property, see theorem~\ref{Cisinsk}. This is based on a
constructive description of the local weak equivalences.

In chapter~\ref{homotopy}, starting from a Quillen model category
$\mathcal{M}$ satisfying some compactness conditions, we construct a morphism of prederivators
$$ \mathsf{HO}(\mathcal{M})
\stackrel{\mathbb{R}\underline{h}}{\longrightarrow}
\mathsf{L}_{\Sigma}\mathsf{Hot}_{\mathcal{M}_f}$$
which commutes with filtered homotopy colimits, has a derivator as
target and is universal for these properties. In chapter~\ref{chappoint} we
study morphisms of pointed derivators and in chapter~\ref{small} we prove a
general result which garantees that small weak generators are
preserved under left Bousfield localizations. In chapter~\ref{spectra}, we
recall Heller's stabilization construction \cite{Heller} and we prove
that this construction takes `finitely generated' unstable theories to
compactly generated stable ones. We establish the connection between
Heller's stabilization and Hovey/Schwede's stabilization \cite{Spectra}
\cite{Schwede} by proving that if we start with a pointed Quillen
model category which satisfies some mild `generation' hypotheses, then
the two stabilization procedures yield equivalent results. This allows us to
characterize Hovey/Schwede's construction by a universal property and
in particular to give a very simple characterisation of the classical
category of spectra in the sense of Bousfield-Friedlander \cite{Bos-Fri}. In
chapter~\ref{chapquotient}, by applying the general arguments of the previous
chapters to the Morita homotopy theory of dg categories \cite{cras}
\cite{IMRN} \cite{addendum}, we construct the universal morphism of
derivators
$$ \mathcal{U}_t: \, \mathsf{HO}(\mathsf{dgcat}) \longrightarrow
\mathsf{St}(\mathsf{L}_{\Sigma,P}\mathsf{Hot}_{\mathsf{dgcat}_f})$$
which commutes with filtered homotopy colimits, preserves the
point and has a strong triangulated derivator as target. For every inclusion $\mathcal{A} \stackrel{K}{\hookrightarrow}
\mathcal{B}$ of a full dg subcategory, we have an induced morphism
$$ S_K : \, \mathsf{cone}(\mathcal{U}_t(\mathcal{A}
\stackrel{K}{\hookrightarrow} \mathcal{B})) \rightarrow
\mathcal{U}_t(\mathcal{B}/\mathcal{A})\,,$$
where $\mathcal{B}/\mathcal{A}$ denotes Drinfeld's dg quotient. By
applying the localization techniques of section~\ref{localisation} and using the fact
that the derivator
$\mathsf{St}(\mathsf{L}_{\Sigma,P}\mathsf{Hot}_{\mathsf{dgcat}_f})$
admits a stable Quillen model, we inverse the morphisms $S_K$ and
finally obtain the universal derived invariant
of dg categories
$$ \mathcal{U}_l:\, \mathsf{HO}(\mathsf{dgcat}) \longrightarrow
\mathcal{M}_{dg}^{loc}\,.$$
We establish a connection between the triangulated category
$\mathcal{M}_{dg}^{loc}(e)$ and Waldhausen's $K$-theory by showing that
Waldhausen's $S_{\bullet}$-construction corresponds to the suspension
functor in $\mathcal{M}_{dg}^{loc}(e)$.
In section~\ref{chapspectra}, we prove that the derivator $\mathcal{M}_{dg}^{loc}$
admits a stable Quillen model given by a left Bousfield localization
of a category of presheaves of spectra.
In section~\ref{trimat}, we introduce the concept of upper triangular
dg category and construct a Quillen model structure on this class of dg
categories, which satisfies strong compactness conditions. In
section~\ref{splitex}, we establish the connection between upper
triangular dg categories and split short exact sequences and use the
Quillen model structure of section~\ref{trimat} to prove an
`approximation result', see proposition~\ref{aproxsplit}. In
section~\ref{quasi}, by applying the techniques of
section~\ref{localisation}, we construct the universal morphism of
derivators
$$\mathcal{U}_u: \mathsf{HO}(\mathsf{dgcat}) \longrightarrow \mathcal{M}_{dg}^{unst}$$
which commutes with filtered homotopy colimits, preserves the point and
sends each split short exact sequence to a homotopy cofiber
sequence. We prove that Waldhausen's $K$-theory space construction
appears as a fibrant object, which
allows us to obtain the weak equivalence
$$ \mathsf{Map}(\mathcal{U}_u(k),S^1\wedge \mathcal{U}_u(\mathcal{A})[1])
\stackrel{\sim}{\rightarrow} |N.wS_{\bullet}\mathcal{A}_f|\,.$$
in $\mathcal{M}^{unst}_{dg}$, see proposition~\ref{fibrant}.
This implies the isomorphisms
$$\pi_{i+1}
\mathsf{Map}(\mathcal{U}_u(k),S^1 \wedge \mathcal{U}_u(\mathcal{A})[1])
\stackrel{\sim}{\rightarrow} K_i(\mathcal{A}), \, \forall i \geq
0\,.$$

In section~\ref{univadit} we stabilize the derivator $\mathcal{M}_{dg}^{unst}$, using the fact that it
admits a Quillen model, and finally obtain the universal additive
invariant of dg categories
$$ \mathcal{U}_a : \mathsf{HO}(\mathsf{dgcat}) \longrightarrow
\mathcal{M}^{add}_{dg} \,.$$
Connective algebraic $K$-theory is additive and so factors through
$\mathcal{U}_a$. We prove that it corresponds to a fibrant resolution
of $\mathcal{U}_a(\mathcal{A})[1]$, see theorem~\ref{fibres}. Using the fact that the Quillen model for $\mathcal{M}_{dg}^{add}$ is
enriched over spectra, we prove the main co-representability theorem.
\begin{theo*}[\ref{corep}]
We have the following weak equivalence of spectra
$$ \mathsf{Hom}^{\mathsf{Sp}^{\mathbb{N}}}(\mathcal{U}_a(\mathcal{A}),\mathcal{U}_a(\mathcal{B})[1])
\stackrel{\sim}{\rightarrow}
K^c(\mathsf{rep}_{mor}(\mathcal{A},\mathcal{B}))\,,$$
where $K^c(\mathsf{rep}_{mor}(\mathcal{A},\mathcal{B}))$ denotes Waldhausen's connective $K$-theory spectrum of
$\mathsf{rep}_{mor}(\mathcal{A},\mathcal{B})$.

In particular, we
have the following weak equivalence of simplicial sets
$$\mathsf{Map}(\mathcal{U}_a(\mathcal{A}),\mathcal{U}_a(\mathcal{B})[1])
\stackrel{\sim}{\rightarrow}
|N.wS_{\bullet}\mathsf{rep}_{mor}(\mathcal{A},\mathcal{B})|$$
and so the isomorphisms
$$\pi_{i+1}
\mathsf{Map}(\mathcal{U}_a(A),\mathcal{U}_a(\mathcal{B})[1])
\stackrel{\sim}{\rightarrow} K_i(\mathsf{rep}_{mor}(\mathcal{A},\mathcal{B})), \,\,\, \forall i \geq 0\,.$$
\end{theo*}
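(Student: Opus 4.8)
The plan is to realize $K^c(\mathsf{rep}_{mor}(\mathcal{A},\mathcal{B}))$ as a fibrant object in the stable Quillen model underlying $\mathcal{M}^{add}_{dg}$, and then to construct a natural map from $\mathcal{U}_a(\mathcal{A})[1]$ to it which is shown to be an equivalence by an inductive argument. First I would assemble the ingredients already available in the thesis: the Quillen model for $\mathcal{M}^{add}_{dg}$ enriched over symmetric spectra (the left Bousfield localization of presheaves of spectra on $\mathsf{dgcat}_f$), the co-representability of $K_0$ in $\mathsf{Hmo}_0$ (Lemma~\ref{etant}), and above all the approximation result Proposition~\ref{apr}, which says that every split short exact sequence of dg categories is weakly equivalent to a filtered homotopy colimit of split short exact sequences whose components are strictly finite $I$-cells. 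The strategy mirrors the classical proof that $K$-theory is the first space of its own delooping, but carried out at the level of derivators.

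The proof naturally splits into two parts, as indicated in the remark following the statement. In the first part I would show that the presheaf of spectra $\mathcal{B} \mapsto K^c(\mathsf{rep}_{mor}(\mathcal{A},\mathcal{B}))$ is fibrant for our model structure on $\mathcal{M}^{add}_{dg}$. Fibrancy amounts to two things: being objectwise an $\Omega$-spectrum (which is built into Waldhausen's $K$-theory via the $S_\bullet$-construction, using the Waldhausen fibration/additivity theorems, see \cite{Waldhausen}), and satisfying the descent/locality condition encoding the additivity property $A)$, i.e. sending split exact sequences to homotopy fiber (equivalently cofiber, in the stable setting) sequences. This last point is exactly Waldhausen's additivity theorem \cite[1.4]{Waldhausen} combined with Proposition~\ref{apr}: the approximation by filtered homotopy colimits of strictly finite $I$-cells is what lets us reduce the locality condition on arbitrary split exact sequences to the generating ones against which $K^c$ is manifestly local. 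In the second part I would construct, using Lemma~\ref{etant} and the universal property of $\mathcal{U}_a$, a natural transformation $\mathcal{U}_a(\mathcal{A})[1] \to K^c(\mathsf{rep}_{mor}(\mathcal{A},-))$ — the source is the representing object once we know it is additive, and the shift $[1]$ appears because $K$-theory is the delooping, matching the $S_\bullet$ suspension statement (Proposition~\ref{real}) — and then prove it is a weak equivalence. The latter is checked on homotopy groups: on $\pi_1$ it recovers the isomorphism $\mathrm{Hom}_{\mathsf{Hmo}_0}(\underline{k},\mathsf{rep}_{mor}(\mathcal{A},\mathcal{B})) \iso K_0$ of Lemma~\ref{etant}, and for the higher homotopy groups one runs an induction using the family of split short exact sequences coming from the $S_\bullet$-construction, each step being controlled by additivity plus the already-established fibrancy.

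The last displayed assertions are then formal consequences: applying the infinite loop space functor $\Omega^\infty$ to the spectrum-level equivalence gives the weak equivalence of simplicial sets $\mathsf{Map}(\mathcal{U}_a(\mathcal{A}),\mathcal{U}_a(\mathcal{B})[1]) \stackrel{\sim}{\rightarrow} |N.wS_\bullet\mathsf{rep}_{mor}(\mathcal{A},\mathcal{B})|$, and reading off homotopy groups gives $\pi_{i+1}\mathsf{Map}(\mathcal{U}_a(A),\mathcal{U}_a(\mathcal{B})[1]) \iso K_i(\mathsf{rep}_{mor}(\mathcal{A},\mathcal{B}))$ for all $i\geq 0$, since $\pi_{i+1}$ of the mapping spectrum computes $\pi_i$ of the $K$-theory space, which is $K_i$ by definition of Waldhausen $K$-theory.

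The hard part will be the first half — proving fibrancy of the $K^c$ presheaf. It is not enough to quote Waldhausen's theorems objectwise; one must verify the locality condition with respect to the precise class of maps inverted in the left Bousfield localization defining $\mathcal{M}^{add}_{dg}$, and this is where the interplay between the compactness/finiteness properties of the Quillen model of Theorem~\ref{triang}, the realization Proposition~\ref{realization}, and the approximation Proposition~\ref{apr} is essential: these reduce an a priori infinite check to the generating split exact sequences built from strictly finite $I$-cells, on which Waldhausen's additivity theorem applies directly. Once fibrancy is in hand, the construction of the comparison map is routine via the co-representability lemma, and its being an equivalence follows by the inductive additivity argument sketched above.
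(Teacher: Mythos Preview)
Your two-part outline matches the paper's strategy, but there is a concrete error in how you set it up, and your construction of the comparison map diverges from what the paper actually does.

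\textbf{The presheaf variable is reversed.} The object that must be shown fibrant is the presheaf $K(\mathcal{B})^c$ on $\mathsf{dgcat}_f^{op}$ sending a test object $?$ to the Waldhausen $K$-theory spectrum of $\mathsf{rep}_{mor}(?,\mathcal{B})$, with $\mathcal{B}$ fixed. You instead consider $\mathcal{B}\mapsto K^c(\mathsf{rep}_{mor}(\mathcal{A},\mathcal{B}))$, which is covariant in the wrong variable and is not an object of the presheaf model category at all. The fibrancy condition (4) of the relevant proposition concerns split short exact sequences in the \emph{source} variable, and Waldhausen's fibration theorem is applied after applying the contravariant functor $\mathsf{rep}_{mor}(?,\mathcal{B})$; with your variables the argument does not run. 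Once this is fixed, the role of Proposition~\ref{apr} is slightly more indirect than you suggest: it is used (via remark~\ref{fib}) to justify that localizing only at the small set $\widetilde{\mathcal{E}^s_{un}}$ built from strict finite $I$-cells already enforces additivity for all split exact sequences, so that fibrancy reduces to checking condition (4) for split exact sequences in $\mathsf{dgcat}_f$, which Waldhausen's fibration theorem handles directly.

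\textbf{The comparison map and the induction.} The paper does not construct the map via Lemma~\ref{etant} nor check it on homotopy groups. Instead, Proposition~\ref{clef} identifies $S^1\wedge N.w\mathsf{rep}_{mor}(?,\mathcal{B})$ with $|N.wS_\bullet\mathsf{rep}_{mor}(?,\mathcal{B})|$ in the localized model, so that $\mathcal{U}_a(\mathcal{B})[1]$ and $K(\mathcal{B})^c$ agree in level $0$; the map $\eta$ is the identity there. One then shows (Theorem~\ref{fibres}) that the structure maps $\beta_n$ of $K(\mathcal{B})^c$ are weak equivalences in the localized model by rerunning the argument of Proposition~\ref{clef} with $S_\bullet^{(n+1)}\mathcal{B}$ in place of $\mathcal{B}$, and concludes that $\eta$ is a levelwise weak equivalence by induction on the spectrum level $n$. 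Lemma~\ref{etant} lives in $\mathsf{Hmo}_0$ and plays no role here. Finally, you omit the last step of the paper's proof: one first computes $\mathsf{Hom}^{\mathsf{Sp}^{\mathbb{N}}}(\mathcal{U}_a(\mathcal{A}),K(\mathcal{B})^c)\simeq K(\mathcal{B})^c(\mathcal{A})$ for $\mathcal{A}\in\mathsf{dgcat}_f$ using cofibrancy of $\mathcal{U}_a(\mathcal{A})$, and then extends to arbitrary $\mathcal{A}$ by writing it as a filtered homotopy colimit of objects of $\mathsf{dgcat}_f$ and using that $\mathcal{U}_a$ commutes with filtered homotopy colimits.
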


Remark that if in the above theorem, we consider $\mathcal{A}=k$, we
have
$$ \mathsf{Hom}^{\mathsf{Sp}^{\mathbb{N}}}(\mathcal{U}_a(k),\mathcal{U}_a(\mathcal{B})[1])
\stackrel{\sim}{\rightarrow}
K^c(\mathcal{B})\,.$$
This shows that Waldhausen's connective $K$-theory spectrum becomes
co-representable in $\mathcal{M}_{dg}^{add}$.

In section~\ref{vistas}, we point out some questions that deserve
further investigation.

\section{Preliminaries}\label{pre}
Our reference for the language of derivators is \cite{Cis-Nee}.
The derivators in this article are always considered over the category
$cat$ of small categories.
Let $\mathbb{D}$ and $\mathbb{D}'$ be derivators.
We denote by $\underline{\mathsf{Hom}}(\mathbb{D},\mathbb{D}')$ the
category of all morphisms of derivators, by
$\underline{\mathsf{Hom}}_{!}(\mathbb{D},\mathbb{D}')$ the
category of morphisms of derivators which commute with homotopy
colimits and by
$\underline{\mathsf{Hom}}_{flt}(\mathbb{D},\mathbb{D}')$ the
category of morphisms of derivators which commute with filtered homotopy
colimits.
We denote the terminal category by $e$ and by $i:\ul \rightarrow \square$ the inclusion of
categories
$$
\xymatrix{
(0,0)  & \ar[l] (0,1) & & (0,0) & (0,1) \ar[l] \\
(1,0) \ar[u] & & & (1,0) \ar[u] & (1,1) \ar[l] \ar[u] \,.
}
$$
Associated with a small category $L$ and a point $x:e\rightarrow L$ in $L$ we have the
following adjunction:
$$
\xymatrix{
\mathbb{D}(L) \ar@<1ex>[d]^{x^*} \\
\mathbb{D}(e) \ar@<1ex>[u]^{x_{!}}\,.
}
$$
\begin{definition}
A derivator $\mathbb{D}$ is {\em strong} if for every
finite free category $A$ and every small category $B$, the natural
functor
$$ \mathbb{D}(A \times B) \longrightarrow
\mathsf{Fun}(A^o,\mathbb{D}(B))$$
is full and essentially surjective.
\end{definition}
Notice that a strong derivator is the same thing as a small homotopy
theory as defined in \cite{Heller}.

\begin{definition}
A derivator $\mathbb{D}$ is {\em regular} if in $\mathbb{D}$, sequential homotopy
colimits commute with finite products and homotopy pullbacks.
\end{definition}

Let $\mathcal{M}$ be a Quillen model category. By \cite{Cisinski1}, it
is known that $\mathcal{M}$ gives rise to a derivator which we denote by
$\mathsf{HO}(\mathcal{M})$. Observe that proposition $2.15$ in
\cite{catder} implies that $\mathsf{HO}(\mathcal{M})$ is a strong derivator.

We denote by $\mathsf{Ho}(\mathcal{M})$
the homotopy category of $\mathcal{M}$. By definition, it equals $\mathsf{HO}(\mathcal{M})(e)$.

Finally, for a small category $A$, we denote by $\underline{A}$ the
pre-derivator naturally associated to $A$.

\section{Derived Kan extensions}\label{extension}
Let $A$ be a small category and $\mathsf{Fun}(A^o,Sset)$ the
Quillen model category of simplicial pre-sheaves on $A$, endowed with
the projective model structure, see \cite{HAG}.
We have at our disposal the functor
$$
\begin{array}{ccc}
A & \stackrel{h}{\longrightarrow} & \mathsf{Fun}(A^o,Sset) \\
X & \longmapsto & \mathsf{Hom}_A(?,X)\,,
\end{array}
$$
where $\mathsf{Hom}_A(?,X)$ is considered as a constant simplicial set.

The functor $h$ gives rise to a morphism of pre-derivators
$$ \underline{A} \stackrel{h}{\longrightarrow}
\mathsf{HO}(\mathsf{Fun}(A^o,Sset))\,.$$

Using the notation of \cite{Cisinski}, we denote by $\mathsf{Hot}_A$
the derivator $\mathsf{HO}(\mathsf{Fun}(A^o,Sset))$. The following
results are proven in \cite{Cisinski}.

Let $\mathbb{D}$ be a derivator.

\begin{theorem}\label{Cin}
The morphism $h$ induces an
equivalence of categories
$$
\xymatrix{
\underline{\mathsf{Hom}}(\underline{A},\mathbb{D})
\ar@<-1ex>[d]_{\varphi} \\
\underline{\mathsf{Hom}}_!(\mathsf{Hot}_A, \mathbb{D})
\ar@<-1ex>[u]_{h^{\ast}}\,.
}
$$
\end{theorem}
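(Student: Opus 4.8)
The content of the theorem is that $\mathsf{Hot}_A$ is the free cocompletion of the represented prederivator $\underline{A}$ inside the $2$-category of derivators: restriction along the homotopical Yoneda morphism $h$ identifies the homotopy-cocontinuous morphisms out of $\mathsf{Hot}_A$ with arbitrary morphisms out of $\underline{A}$. The upward functor $h^{\ast}$ is simply precomposition with $h$; it takes values in morphisms of prederivators and imposes no cocontinuity, so it is plainly well defined. The plan is to construct the quasi-inverse $\varphi$ as a left derived Kan extension along $h$: for a morphism $u\colon \underline{A}\to\mathbb{D}$ one sets $\varphi(u):=\mathbb{L}h_{!}(u)$, using the theory of derived Kan extensions recalled in the previous section. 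Here the first task is to check that the levelwise constructions assemble correctly: at a small category $L$ one has $\mathsf{Hot}_A(L)\simeq \mathsf{Ho}(\mathsf{Fun}((A\times L)^o,Sset))$, and the Kan extension along $h$ at level $L$ is the left derived Kan extension along $h$ provided by that section (in the model-categorical case $\mathbb{D}=\mathsf{HO}(\mathcal{N})$ it is represented by a left Quillen functor out of the cofibrantly generated model category $\mathsf{Fun}(A^o,Sset)$, à la Dugger). One must verify the Beck--Chevalley conditions so that these levelwise functors form a genuine morphism of \emph{derivators}, and that this morphism commutes with homotopy colimits, which is automatic for a left derived Kan extension. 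This step carries the real homotopical weight, and is exactly what the material of the preceding section is meant to supply.

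Granting this, the equivalence reduces to the two triangle identities. For $h^{\ast}\varphi\cong\mathrm{id}$ I would invoke the homotopical Yoneda lemma: $h$ is homotopically fully faithful, i.e. the derived mapping space $\mathbb{R}\mathsf{Map}(h(a),h(b))$ in $\mathsf{Fun}(A^o,Sset)$ is the discrete set $\mathsf{Hom}_A(a,b)$ (representables being projectively cofibrant). This is precisely the condition guaranteeing that left Kan extension along $h$ followed by restriction along $h$ returns $u$ on objects, hence on all the diagram categories by the derivator axioms. For $\varphi h^{\ast}\cong\mathrm{id}$, i.e. that a cocontinuous $F\colon\mathsf{Hot}_A\to\mathbb{D}$ is recovered from $F\circ h$, I would use the density (co-Yoneda) statement: the identity morphism of $\mathsf{Hot}_A$ is itself the left derived Kan extension of $h$ along $h$. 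Concretely, every object $X$ of $\mathsf{Fun}(A^o,Sset)$ --- and, levelwise, every object of $\mathsf{Hot}_A(L)$ --- is canonically the homotopy colimit of the tautological diagram of representables indexed by its category of elements (the Bousfield--Kan resolution $\operatorname{hocolim}_{\int X} h \xrightarrow{\ \sim\ } X$), and this resolution is natural in $X$ and compatible with the structural functors of the derivator. Since $F$ commutes with homotopy colimits, applying it to this resolution yields $F(X)\cong \operatorname{hocolim}(F\circ h)=\mathbb{L}h_{!}(h^{\ast}F)(X)$, which gives the desired isomorphism.

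The main obstacle is not the formal $2$-categorical bookkeeping but these two homotopical inputs: the Beck--Chevalley compatibility that upgrades the levelwise Kan extensions to a morphism of derivators, and the functoriality and base-change stability of the co-Yoneda resolution, so that the density argument holds at every level $L$ and not merely at $L=e$. Both are consequences of the derived-Kan-extension machinery of the previous section, the crucial point being that the naive point-set Kan extensions must throughout be replaced by their left-derived counterparts, for which the left properness and cofibrant generation of $\mathsf{Fun}(A^o,Sset)$ are used. If one prefers to minimise explicit homotopy-colimit computations, an alternative route is to identify $\mathsf{Hot}_A$ with the shifted derivator $\mathsf{Hot}^{A^o}$ and to bootstrap from the case $A=e$ --- the universal property $\underline{\mathsf{Hom}}_{!}(\mathsf{Hot},\mathbb{D})\simeq\mathbb{D}(e)$ of the free derivator on a point --- but this only relocates the difficulty into a compatibility check between shifting and the $\underline{\mathsf{Hom}}_{!}$-construction.
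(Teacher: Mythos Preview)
The paper does not actually prove this theorem: its entire argument is the observation that $\underline{\mathsf{Hom}}(\underline{A},\mathbb{D}) \simeq \mathbb{D}(A^o)$, followed by a citation to corollary~3.26 of Cisinski's paper on derived Kan extensions. The result is explicitly attributed to Cisinski in the text preceding the statement.

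Your proposal, by contrast, sketches an actual proof along the lines one would expect to find in the cited source: $\varphi$ as a derived left Kan extension along $h$, the homotopical full faithfulness of $h$ for one triangle identity, and the co-Yoneda/Bousfield--Kan density of representables for the other. This is the right architecture and your identification of the genuine technical burden (assembling the levelwise Kan extensions into a morphism of derivators via Beck--Chevalley, and the coherence of the co-Yoneda resolution across all levels $L$) is accurate. The alternative route you mention at the end --- identifying $\mathsf{Hot}_A$ with the shifted derivator $\mathsf{Hot}^{A^o}$ and reducing to the case $A=e$ --- is in fact closer to how Cisinski organizes the argument, so it is not merely a relocation of difficulty but the standard packaging. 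In any case, since the paper treats this as a black-box citation, there is nothing in the paper's own proof to compare your argument against beyond confirming that you have correctly identified the content and provenance of the result.
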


\begin{proof}
This theorem is equivalent to corollary $3.26$ in \cite{Cisinski},
since we have
$$ \underline{\mathsf{Hom}}(\underline{A},\mathbb{D}) \simeq \mathbb{D}(A^o)\,.$$
\end{proof}

\begin{lemma}\label{ad1}
We have an adjunction
$$
\xymatrix{
\underline{\mathsf{Hom}}(\mathsf{Hot}_A,\mathbb{D})
\ar@<2ex>[d]^{\Psi} \\
*+<1pc>{\underline{\mathsf{Hom}}_!(\mathsf{Hot}_A, \mathbb{D})}
\ar@{^{(}->}[u]^{inc} \,,
}
$$
where
$$\Psi(F):=\varphi(F \circ h)\,.$$
\end{lemma}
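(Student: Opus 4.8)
The plan is to prove that $\Psi$ is a right adjoint to the inclusion $inc$, that is $inc\dashv\Psi$, so that $\underline{\mathsf{Hom}}_!(\mathsf{Hot}_A,\mathbb{D})$ is a \emph{co}reflective full subcategory of $\underline{\mathsf{Hom}}(\mathsf{Hot}_A,\mathbb{D})$ with coreflector $F\mapsto\varphi(F\circ h)$. The only ingredients needed are theorem~\ref{Cin} and the basic formalism of derived Kan extensions recalled in section~\ref{extension}. Recall from \cite{Cisinski} that $\varphi$ is the derived left Kan extension along the morphism $h\colon\underline{A}\to\mathsf{Hot}_A$; as such it is left adjoint to the precomposition functor $h^{\ast}=(-)\circ h$ regarded between the \emph{full} morphism categories, $\varphi\dashv h^{\ast}\colon\underline{\mathsf{Hom}}(\mathsf{Hot}_A,\mathbb{D})\to\underline{\mathsf{Hom}}(\underline{A},\mathbb{D})$, and its essential image lands in $\underline{\mathsf{Hom}}_!(\mathsf{Hot}_A,\mathbb{D})$ (this last fact is exactly what underlies theorem~\ref{Cin}, since $\varphi$ is then a quasi-inverse of the restriction of $h^{\ast}$ to homotopy-cocontinuous morphisms). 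In particular $\Psi=\varphi\circ h^{\ast}$ is a well-defined functor with values in $\underline{\mathsf{Hom}}_!(\mathsf{Hot}_A,\mathbb{D})$, as claimed in the statement.

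First I would record the unit and counit. For $G\in\underline{\mathsf{Hom}}_!(\mathsf{Hot}_A,\mathbb{D})$, the natural isomorphism of the equivalence in theorem~\ref{Cin} gives $\varphi(G\circ h)\stackrel{\sim}{\to}G$, whose inverse I take as the unit $\eta_G\colon G\to\Psi(inc\,G)$; for $F\in\underline{\mathsf{Hom}}(\mathsf{Hot}_A,\mathbb{D})$, the counit of the adjunction $\varphi\dashv h^{\ast}$ gives the comparison morphism $\varepsilon_F\colon inc\,\Psi(F)=\varphi(F\circ h)\to F$. Then I would check the two triangle identities $\varepsilon_{inc\,G}\circ inc(\eta_G)=\mathrm{id}_{inc\,G}$ and $\Psi(\varepsilon_F)\circ\eta_{\Psi F}=\mathrm{id}_{\Psi F}$, each of which follows by unwinding the triangle identities of $\varphi\dashv h^{\ast}$ together with the fact that the isomorphism $\varphi(G\circ h)\stackrel{\sim}{\to}G$ is, up to the equivalence $\varphi$, the image of $\varepsilon$. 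Equivalently, and perhaps more transparently, one argues on hom-sets: for $G$ cocontinuous and $F$ arbitrary, whiskering with $h$ yields
$$
\mathrm{Hom}_{\underline{\mathsf{Hom}}(\mathsf{Hot}_A,\mathbb{D})}(inc\,G,F)\longrightarrow\mathrm{Hom}_{\underline{\mathsf{Hom}}(\underline{A},\mathbb{D})}(G\circ h,F\circ h)\,,
$$
which is bijective because $G\cong\varphi(G\circ h)$ and $\varphi\dashv h^{\ast}$; on the other hand, $\varphi$ being an equivalence together with $\varphi(G\circ h)\cong G$ identifies the right-hand group with $\mathrm{Hom}_{\underline{\mathsf{Hom}}_!(\mathsf{Hot}_A,\mathbb{D})}(G,\Psi F)$. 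One then verifies that the resulting bijection $\mathrm{Hom}(inc\,G,F)\cong\mathrm{Hom}(G,\Psi F)$ is natural in $G$ and in $F$, which is precisely the adjunction $inc\dashv\Psi$.

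Since theorem~\ref{Cin} does most of the work, this lemma is essentially formal; the one point that genuinely has to be got right is the handedness. The canonical comparison between an arbitrary $F$ and its cocontinuous approximation $\varphi(F\circ h)$ goes $\varphi(F\circ h)\to F$, not the other way around, so $\underline{\mathsf{Hom}}_!(\mathsf{Hot}_A,\mathbb{D})$ is coreflective (and not reflective) in $\underline{\mathsf{Hom}}(\mathsf{Hot}_A,\mathbb{D})$; phrasing the argument through the explicit $\eta$ and $\varepsilon$ above keeps this unambiguous. The only mild technical obstacle is to make sure that all the $2$-cells involved are natural as $F$ and $G$ range over the morphism categories of derivators, which amounts to citing the naturality statements already present in Cisinski's treatment of derived Kan extensions \cite{Cisinski}.
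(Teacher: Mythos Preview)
Your proof is correct and is essentially the same argument as the paper's, only phrased at a higher level of abstraction. The paper constructs the counit $\epsilon\colon inc\circ\Psi\to Id$ explicitly by writing each object $X$ of $\mathsf{Hot}_A(L)$ as $p_!q^*(h)$ via Cisinski's integral construction and then using the comparison map $p_!F(q^*h)\to F(p_!q^*h)$; but this composite is nothing other than the counit of the Kan-extension adjunction $(inc\circ\varphi)\dashv h^*$ that you invoke abstractly, so the two constructions of $\epsilon$ coincide. Both proofs then finish by appealing to theorem~\ref{Cin} to produce the unit and check universality. The paper's presentation is terser (it gives only $\epsilon$ and cites theorem~\ref{Cin} for the rest), whereas you spell out the unit, the triangle identities, and a hom-set verification; your extra care about handedness is well placed but does not add new content beyond what the paper's one-line conclusion implicitly contains.
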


\begin{proof}
We construct a universal $2$-morphism of functors
$$\epsilon : \, inc \circ \Psi \longrightarrow Id\,.$$
Let $F$ be a morphism of derivators belonging to
$\underline{\mathsf{Hom}}(\mathsf{Hot}_A,\mathbb{D})$. Let $L$ be a
small category and $X$ an object of $\mathsf{Hot}_A(L)$. Recall from
\cite{Cisinski} that we have the diagram
$$
\xymatrix{
 & \nabla \int X \ar[dl]_{\pi} \ar[dr]^{\varpi} & \\
L^{op} & & A\,.
}
$$
Now, let $p$ be the functor $\pi^{op}$ and $q$ the functor $\varpi^{op}$. By
the dual of proposition $1.15$ in \cite{Cisinski}, we have the
following functorial isomorphism
$$ p_! q^{\ast}(h) \stackrel{\sim}{\longrightarrow} X\,.$$
Now let $\epsilon_L(X)$ be the composed morphism.
$$ \epsilon_L(X) : \, \Psi(F)(X) = p_!q^{\ast}F(h) = p_!F(q^{\ast}h)
\rightarrow F(p_!q^{\ast}h) \stackrel{\sim}{\rightarrow}F(X)\,.$$
Now, notice using theorem~\ref{Cin} that $\epsilon$ induces an adjunction.
\end{proof}

\section{Localization: model categories versus derivators}\label{localisation}
Let $\mathcal{M}$ be a left proper, cellular Quillen model
category, see \cite{Hirschhorn}.

We fix a frame on $\mathcal{M}$, see
definition $16.6.21$ in \cite{Hirschhorn}. Let $D$ be a
small category and $F$ a functor from $D$ to
$\mathcal{M}$. We denote by $\mbox{hocolim} \,F$ the object of
$\mathcal{M}$, as in definition $19.1.2$ of \cite{Hirschhorn}.
Let $S$ be a set of morphisms in $\mathcal{M}$ and denote by $\mathsf{L}_S\mathcal{M}$ the
  left Bousfield localization of $\mathcal{M}$ by $S$.

Notice that the Quillen adjunction
$$
\xymatrix{
\mathcal{M} \ar@<-1ex>[d]_{Id} \\
\mathsf{L}_S \mathcal{M} \ar@<-1ex>[u]_{Id} \,,
}
$$
induces a morphism of derivators
$$ \gamma: \mathsf{HO}(\mathcal{M})
\stackrel{\mathbb{L}Id}{\longrightarrow}
\mathsf{HO}(\mathsf{L}_S \mathcal{M})$$
which commutes with homotopy colimits.

\begin{proposition}\label{Cisin}
Let $\mathcal{W}_S$ be the smallest class of morphisms in
$\mathcal{M}$ satisfying the following properties:
\begin{itemize}
\item[a)] Every element in $S$ belongs to $\mathcal{W}_S$.
\item[b)] Every weak equivalence of $\mathcal{M}$ belongs to
  $\mathcal{W}_S$.
\item[c)] If in a commutative triangle, two out of three morphisms
    belong to $\mathcal{W}_S$, then so does the third one. The class
    $\mathcal{W}_S$ is stable under retractions.
\item[d)] Let $D$ be a small category and $F$ and $G$ functors from
  $D$ to $\mathcal{M}$. If $\eta$ is a morphism of functors from $F$
  to $G$ such that for every object $d$ in $D$, $F(d)$ and $G(d)$ are cofibrant
  objects and the morphism $\eta(d)$
  belongs to $\mathcal{W}_S$, then so does the morphism
$$ \mbox{hocolim}\, F \longrightarrow \mbox{hocolim} \,G\,.$$
\end{itemize}
Then the class $\mathcal{W}_S$ equals the class of $S$-local
equivalences in $\mathcal{M}$, see \cite{Hirschhorn}.
\end{proposition}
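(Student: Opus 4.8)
The statement claims that $\mathcal{W}_S$, the smallest class of morphisms in $\mathcal{M}$ closed under the four properties a)--d), coincides with the class of $S$-local equivalences. The proof splits naturally into the two inclusions. For the inclusion $\mathcal{W}_S \subseteq \{S\text{-local equivalences}\}$, I would verify that the class of $S$-local equivalences already satisfies properties a)--d): property a) holds because elements of $S$ are $S$-local equivalences by definition; property b) holds because every weak equivalence is an $S$-local equivalence; property c) is the two-out-of-three and retract-stability of $S$-local equivalences (see \cite{Hirschhorn}, Ch.~3); and property d) is the statement that homotopy colimits of objectwise $S$-local equivalences between objectwise cofibrant diagrams are again $S$-local equivalences — this is a standard consequence of the fact that $\mathsf{L}_S\mathcal{M}$ is itself a left proper cellular model category, so that $\mathrm{hocolim}$ (computed with the fixed frame, which one checks is still a frame on $\mathsf{L}_S\mathcal{M}$ since the two model structures have the same underlying category and cofibrations) is a homotopy functor on $\mathsf{L}_S\mathcal{M}$. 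Since $\mathcal{W}_S$ is the \emph{smallest} such class, this gives one inclusion.

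For the reverse inclusion $\{S\text{-local equivalences}\} \subseteq \mathcal{W}_S$, I would use the small object argument / cellular machinery of \cite{Hirschhorn}. The key external input is the explicit description of $S$-local equivalences: a map $f$ is an $S$-local equivalence if and only if it lies in the saturation of $S \cup W$ (where $W$ denotes the weak equivalences of $\mathcal{M}$) under the closure operations that build the localized model structure. More concretely, in the cellular setting one has that a fibrant replacement functor $L_S$ in $\mathsf{L}_S\mathcal{M}$ can be built by a transfinite small-object-argument construction whose cells are (pushouts of) horns on the generating trivial cofibrations of $\mathcal{M}$ together with horns built from $S$; at each successor stage one attaches a pushout along such a map, and at limit stages one passes to a transfinite composition. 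The plan is then to show, by transfinite induction on the stages of this construction, that each structure map $X \to L_S X$ lies in $\mathcal{W}_S$: a single pushout of a generating trivial cofibration is a weak equivalence, hence in $\mathcal{W}_S$ by b); a single pushout of a horn built from an element $s$ of $S$ is in $\mathcal{W}_S$ because it is a homotopy pushout of $s$ along a cofibration, and property d) (applied to the pushout diagram, after cofibrant replacement of the corner objects) together with a) shows the pushout leg is in $\mathcal{W}_S$; transfinite compositions are handled by d) applied to the relevant filtered (ordinal-indexed) diagram, or by an iteration of c) and d). Once $X \to L_S X$ and $Y \to L_S Y$ are known to lie in $\mathcal{W}_S$ for all $X,Y$, a map $f\colon X\to Y$ that is an $S$-local equivalence fits into a square with $L_S f\colon L_S X \to L_S Y$; by definition of $S$-local equivalence $L_S f$ is a weak equivalence of $\mathcal{M}$ (since $L_SX, L_SY$ are $S$-local and fibrant), hence in $\mathcal{W}_S$ by b), and two-out-of-three (property c)) forces $f \in \mathcal{W}_S$.

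\textbf{Main obstacle.} The delicate point is property d) and its compatibility with the pushout/transfinite-composition steps of the $S$-localization construction. One must be careful that the homotopy colimits appearing in the cellular construction of $L_S$ — ordinary pushouts along cofibrations and transfinite compositions of cofibrations — are genuinely \emph{homotopy} colimits computed via the chosen frame, which requires the corner objects to be cofibrant (or to be replaced functorially by cofibrant objects compatibly with the diagram maps) before applying d). Left properness of $\mathcal{M}$ is exactly what makes pushouts along cofibrations of maps in $\mathcal{W}_S$ behave well, and cellularity is what guarantees the small object argument for $S$-localization terminates and has the explicit cellular form used above. So the core of the argument is a bookkeeping induction showing the explicit cellular generators of $S$-local equivalences all lie in $\mathcal{W}_S$; there is no conceptual surprise, but the frame-theoretic compatibility of d) with these generators is where care is needed. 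I would present this by first recording the closure of $\{S\text{-local equivalences}\}$ under a)--d) (the easy inclusion), then stating the cellular description of $L_S$ from \cite{Hirschhorn}, and finally running the transfinite induction.
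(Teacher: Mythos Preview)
Your overall strategy matches the paper's exactly: verify that $S$-local equivalences satisfy a)--d) for one inclusion, and for the other use the factorization $X \to L_S X$, reduce to showing $j(X) \in \mathcal{W}_S$, and analyze the cellular construction of $L_S X$ via the small object argument, handling pushouts and transfinite compositions through properties c) and d).

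There is, however, a real gap in your treatment of the generating cells. The set used in the small object argument for $L_S$ is not $S$ itself but $\widetilde{\Lambda(S)}$, whose nontrivial members are the \emph{horns}
\[
\Lambda(g)\colon \tilde{A}\otimes\Delta[n] \underset{\tilde{A}\otimes\partial\Delta[n]}{\amalg} \tilde{B}\otimes\partial\Delta[n] \longrightarrow \tilde{B}\otimes\Delta[n]
\]
on a cosimplicial resolution $\tilde{g}\colon \tilde{A}\to\tilde{B}$ of $g\in S$. Your sentence ``a single pushout of a horn built from an element $s$ of $S$ is in $\mathcal{W}_S$ because it is a homotopy pushout of $s$ along a cofibration'' is not correct: for $n\geq 1$ the horn $\Lambda(g)$ is a pushout-product, not a (homotopy) pushout of $g$, and there is no direct way to read off $\Lambda(g)\in\mathcal{W}_S$ from property a) and a single application of d) to a pushout square.

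What the paper does at this point is the actual content of the proposition. One first shows $\tilde{g}\otimes 1_{\Delta[n]}=\tilde{g}^n\in\mathcal{W}_S$ (it is weakly equivalent to $g$). Then one identifies $\tilde{g}\otimes 1_{\partial\Delta[n]}$ with the induced map of latching objects $L_n\tilde{A}\to L_n\tilde{B}$, and uses Hirschhorn's results (15.10.4, 16.3.12, 19.9.1) to rewrite this, up to weak equivalence, as a $\mathrm{hocolim}$ over the category $\partial(\overrightarrow{\Delta}\downarrow [n])$ of the maps $\tilde{g}^k$; now property d) applies and gives $\tilde{g}\otimes 1_{\partial\Delta[n]}\in\mathcal{W}_S$. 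Since this map is also a cofibration (by the Reedy condition on $\tilde{g}$), the pushout stability S1) you describe yields that the inclusion $I$ of $\tilde{A}\otimes\Delta[n]$ into the pushout corner lies in $\mathcal{W}_S$, and finally two-out-of-three with $\tilde{g}\otimes 1_{\Delta[n]}$ gives $\Lambda(g)\in\mathcal{W}_S$. Once this is in hand, your pushout and transfinite-composition steps go through as you outlined. So your plan is right in shape, but the step you flagged as ``no conceptual surprise'' is precisely where a nontrivial identification (latching maps as homotopy colimits) is needed to invoke d).
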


\begin{proof}
The class of $S$-local equivalences satisfies properties $a)$,
$b)$, $c)$ and $d)$: Properties $a)$ and $b)$ are satisfied by
definition, proposition $3.2.3$ and $3.2.4$ in \cite{Hirschhorn} imply property
$c)$ and proposition $3.2.5$ in \cite{Hirschhorn} implies property $d)$.

Let us now show that conversely, each $S$-local equivalence is in $\mathcal{W}_S$.
Let
$$X \stackrel{g}{\rightarrow} Y$$
 be an $S$-local equivalence in $\mathcal{M}$. Without loss of generality, we can suppose that $X$ is
 cofibrant. Indeed, let $Q(X)$ be a cofibrant resolution of $X$ and consider
 the diagram
$$
\xymatrix{
Q(X) \ar[d]_{\pi}^{\sim} \ar[dr]^{g\circ \pi} & \\
X \ar[r]_g & Y\,.
}
$$
Notice that since $\pi$ is a weak equivalence, $g$ is an $S$-local
equivalence if and only if $g \circ \pi$ is one.

By theorem $4.3.6$ in \cite{Hirschhorn}, $g$ is an $S$-local equivalence
if and only if the morphism $\mathsf{L}_S(g)$ appering in the diagram
$$
\xymatrix{
X \ar[rr]^g \ar[d]_{j(X)} & &  Y \ar[d]^{j(Y)} \\
\mathsf{L}_SX \ar[rr]_{\mathsf{L}_S(g)} & &  \mathsf{L}_SY
}
$$
is a weak equivalence in $\mathcal{M}$.
This shows that it is enough to prove that $j(X)$ and $j(Y)$ belong to
$\mathcal{W}_S$. Apply the small object argument to the morphism
$$ X \longrightarrow \ast $$
using the set $\widetilde{\Lambda(S)}$, see proposition $4.2.5$ in
\cite{Hirschhorn}. We have the factorization
$$
\xymatrix{
X \ar[rr] \ar[dr]_{j(X)} & &  \ast \\
 & \mathsf{L}_S(X) \ar[ur] & ,
}
$$
where $j(X)$ is a relative $\widetilde{\Lambda(S)}$-cell complex.

We will now prove two stability conditions concerning the class
$\mathcal{W}_S$.
\begin{itemize}
\item[S1)] Consider the following push-out
$$
\xymatrix{
*+<1pc>{W_0} \ar[r] \ar@{>->}[d]_f  \ar@{}[dr]|{\lrcorner} & W_2 \ar[d]^{f_{\ast}} \\
W_1 \ar[r] & W_3 \,,
}
$$
where $W_0$, $W_1$ and $W_2$ are cofibrant objects in $\mathcal{M}$
and $f$ is a cofibration which belongs to $\mathcal{W}_S$. Observe that
$f_{\ast}$ corresponds to the colimit of the morphism of diagrams
$$
\xymatrix{
*+<1pc>{W_0} \ar@{=}[r] \ar@{>->}[d]_f & W_0 \ar@{=}[d] \ar[r] & W_2 \ar@{=}[d]
\\
W_1 & *+<1pc>{W_0} \ar@{>->}[l]^f \ar[r] & W_2 \,.
}
$$
Now, proposition $19.9.4$ in \cite{Hirschhorn} and property $d)$
imply that $f_{\ast}$ belongs to $\mathcal{W}_S$.

\item[S2)] Consider the following diagram
$$
\xymatrix{
\mathbf{X} :X_0 \,\,\ar@{>->}[r]^{f_0} & *+<1pc>{X_1} \ar@{>->}[r]^{f_1} &  *+<1pc>{X_2} \ar@{>->}[r]^{f_2} &  *+<1pc>{X_3} \ar@{>->}[r] & \ldots
}
$$
in $\mathcal{M}$, where the objects are cofibrant and the morphisms
are cofibrations which belong to the class $\mathcal{W}_S$. Observe that
the transfinite composition of $X$ corresponds to the colimit of the
morphism of diagrams
$$
\xymatrix{
X_0 \ar@{=}[r] \ar@{=}[d] & *+<1pc>{X_0} \ar@{=}[r] \ar@{>->}[d]^{f_0} & *+<1pc>{X_0}
\ar@{=}[r] \ar@{>->}[d]^{f_1 \circ f_0} & \ldots \\
*+<1pc>{X_0} \ar@{>->}[r]_{f_0} & *+<1pc>{X_1} \ar@{>->}[r]_{f_1} & *+<1pc>{X_2} \ar@{>->}[r] &
\ldots \,\,.
}
$$
Now, since $\mathbf{X}$ is a Reedy cofibrant diagram on a category with
fibrant constants, see definition $15.10.1$ in \cite{Hirschhorn},
theorem $19.9.1$ from \cite{Hirschhorn} and property $d)$ imply that the transfinite
composition of $\mathbf{X}$ belongs to $\mathcal{W}_S$. Notice that
the above argument can be immediatly generalized to a transfinite
composition of a $\lambda$-sequence, where $\lambda$ denotes an
ordinal, see section $10.2$ in \cite{Hirschhorn}.
\end{itemize}

Now, the construction of the morphism $j(X)$ and the stability
conditions $S1)$ and $S2)$ show us that it is enough to prove that the elements of
$\widetilde{\Lambda(S)}$ belong to $\mathcal{W}_S$.
By proposition $4.2.5$ in \cite{Hirschhorn}, it is sufficient to show
that the set
$$ \Lambda(S)= \{ \tilde{{\bf A}}\otimes \Delta[n]
\underset{\tilde{{\bf A}}\otimes
  \partial \Delta[n]}{\amalg} \tilde{{\bf B}}\otimes \Delta[n]
\stackrel{\Lambda(g)}{\longrightarrow} \tilde{{\bf B}}\otimes \Delta[n] \,|\,
(A \stackrel{g}{\rightarrow} B) \in S, \, n \geq 0\} \,,$$
of horns in $S$ is contained in $\mathcal{W}_S$. Recall from definition $4.2.1$ in
\cite{Hirschhorn} that $\tilde{g}: \tilde{{\bf A}} \rightarrow
\tilde{{\bf B}}$
denotes a cosimplicial resolution of $g: A \rightarrow B$ and $\tilde{g}$
is a Reedy cofibration.
We have the diagram
$$
\xymatrix{
\tilde{{\bf A}} \otimes \partial \Delta[n] \ar[d]_{1 \otimes i}
\ar[rr]^{\tilde{g} \otimes 1}  & &   \tilde{{\bf B}} \otimes \partial
\Delta[n] \ar[d]^{1 \otimes i} \\
\tilde{{\bf A}} \otimes \Delta[n] \ar[rr]_{\tilde{g} \otimes 1}  & &
\tilde{{\bf B}} \otimes \Delta[n] \,.
}
$$
Observe that the morphism
$$ \tilde{{\bf A}} \otimes \Delta[n] \stackrel{\tilde{g} \otimes 1}{\longrightarrow}
\tilde{{\bf B}} \otimes \Delta[n]$$
identifies with
$$ \tilde{{\bf A}}^n \stackrel{\tilde{g}^n}{\longrightarrow}
\tilde{{\bf B}}^n\,,$$
and so belongs to $\mathcal{W}_S$. Now, the morphism
$$ \tilde{{\bf A}}\otimes \partial \Delta[n]
\stackrel{\tilde{g}\otimes  1_{\partial\Delta[n]}}{\longrightarrow}
\tilde{{\bf B}}\otimes \partial \Delta[n]$$
corresponds to the induced map of latching objects
$$\mathsf{L}_n \tilde{{\bf A}} \longrightarrow  \mathsf{L}_n \tilde{{\bf B}}\,,$$
which is a cofibration in $\mathcal{M}$ by proposition $15.3.11$ in
\cite{Hirschhorn}.

Now by propositions $15.10.4$, $16.3.12$ and theorem $19.9.1$ of
\cite{Hirschhorn}, we have the following commutative diagram
$$
\xymatrix{
\underset{\partial(\stackrel{\rightarrow}{\Delta}\downarrow
  [n])}{\mbox{hocolim}} \, \tilde{{\bf A}} \ar[r] \ar[d]_{\sim} & \underset{\partial(\stackrel{\rightarrow}{\Delta}\downarrow
  [n])}{\mbox{hocolim}} \, \tilde{{\bf B}} \ar[d]^{\sim} \\
\mathsf{L}_n \tilde{{\bf A}} \ar[r] & \mathsf{L}_n \tilde{{\bf B}}\,,
}
$$
where the vertival arrows are weak equivalences and $\partial(\stackrel{\rightarrow}{\Delta}\downarrow
  [n])$ denotes the category of strictly increasing maps with target $[n]$. By property $d)$ of
the class $\mathcal{W}_S$, we conclude that $\tilde{g} \otimes 1_{\partial\Delta[n]}$
belongs to $\mathcal{W}_S$.

We have the following diagram
$$
\xymatrix{
\tilde{{\bf A}}\otimes \Delta[n] \underset{\tilde{{\bf A}}\otimes
  \partial \Delta[n]}{\amalg} \tilde{{\bf B}}\otimes \Delta[n]
\ar[drr]^{\Lambda(g)} & & \\
\tilde{{\bf A}} \otimes \Delta[n] \ar[u]^I \ar[rr]_{\tilde{g}\otimes
  1} & & \tilde{{\bf B}} \otimes \Delta[n]\,.
}
$$
Notice that the morphism $I$ belongs to $\mathcal{W}_S$ by the
stability condition $S1)$ applied to the morphism
$$ \tilde{{\bf A}} \otimes \partial \Delta[n]  \stackrel{\tilde{g}
  \otimes 1}{\longrightarrow} \tilde{{\bf B}} \otimes \partial \Delta[n]\,,$$
which is a cofibration and belongs to $\mathcal{W}_S$. Since the
morphism $\tilde{g}\otimes 1$ belongs to $\mathcal{W}_S$ so does $\Lambda(g)$.

This proves the proposition.
\end{proof}
Let $\mathbb{D}$ be derivator and $S$ a class of
morphisms in $\mathbb{D}(e)$.

\begin{definition}[Cisinski \cite{Letter}]\label{defCis}
The derivator $\mathbb{D}$ admits a {\it left Bousfield localization}
by $S$ if there exists a morphism of derivators
$$ \gamma : \mathbb{D} \rightarrow \mathsf{L}_S\mathbb{D}\,,$$
which commutes with homotopy colimits, sends the elements of $S$ to
isomorphisms in $\mathsf{L}_S\mathbb{D}(e)$ and satisfies the
universal property: for every derivator $\mathbb{D}'$ the morphism
$\gamma$ induces an equivalence of categories
$$\underline{\mathsf{Hom}}_!(\mathsf{L}_S\mathbb{D},\mathbb{D}')
  \stackrel{\gamma^{\ast}}{\longrightarrow}
  \underline{\mathsf{Hom}}_{!,S}(\mathbb{D},\mathbb{D}')\,,$$
where $\underline{\mathsf{Hom}}_{!,S}(\mathbb{D},\mathbb{D}')$ denotes
the category of morphisms of derivators which commute with homotopy
colimits and send the elements of $S$ to isomorphisms in $\mathbb{D}'(e)$.
\end{definition}

\begin{lemma}\label{lettri}
Suppose that $\mathbb{D}$ is a triangulated derivator, $S$ is stable
under the loop space functor $\Omega(e) : \mathbb{D}(e) \rightarrow
\mathbb{D}(e)$, see \cite{Cis-Nee}, and $\mathbb{D}$ admits a left
Bousfield localization $\mathsf{L}_S\mathbb{D}$ by $S$.

Then $\mathsf{L}_S\mathbb{D}$ is also a triangulated derivator.
\end{lemma}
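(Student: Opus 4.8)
The plan is to reduce the statement to the standard criterion (see \cite{Cis-Nee}) according to which a \emph{pointed} derivator is triangulated if and only if its suspension endofunctor at the terminal category $e$ is an equivalence of categories, equivalently if and only if the loop space functor $\Omega(e)$ is one. The proof then splits into two parts: showing that $\mathsf{L}_S\mathbb{D}$ is pointed, and showing that its suspension functor is an equivalence.

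For the first part I would use that $\gamma$ commutes with homotopy colimits, hence preserves the initial object; thus $\gamma$ sends the zero object $0$ of the pointed derivator $\mathbb{D}$ to the initial object of $\mathsf{L}_S\mathbb{D}$. Appealing to the description of the localization (the objects of $\mathsf{L}_S\mathbb{D}(e)$ identify with the $S$-local objects of $\mathbb{D}(e)$ and $\gamma(e)$ with the reflection onto them, \emph{cf.} Definition~\ref{defCis} and Theorem~\ref{Cisinsk}), one notes that $0$ is $S$-local, since every morphism with target $0$ is already an isomorphism on $\mathsf{Map}(-,0)$; hence $0$ remains a zero object in $\mathsf{L}_S\mathbb{D}(e)$ and $\mathsf{L}_S\mathbb{D}$ is pointed with $\gamma(0)\simeq 0$. (When $\mathsf{L}_S\mathbb{D}$ is presented by a left Bousfield localization $\mathsf{L}_S\mathcal{M}$ of a pointed Quillen model category $\mathcal{M}$ as in Theorem~\ref{Cisinsk}, this is immediate because $\mathcal{M}$ and $\mathsf{L}_S\mathcal{M}$ have the same underlying category.) In particular, since $\gamma$ commutes with homotopy colimits and preserves the zero object, one gets an isomorphism of morphisms of derivators $\Sigma_{\mathsf{L}_S\mathbb{D}}\circ\gamma\simeq\gamma\circ\Sigma_{\mathbb{D}}$.

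For the second part I would argue formally. As $\mathbb{D}$ is triangulated, $\Sigma_{\mathbb{D}}:\mathbb{D}\to\mathbb{D}$ is an equivalence of derivators with quasi-inverse the loop space morphism $\Omega:\mathbb{D}\to\mathbb{D}$; being an equivalence of derivators, $\Omega$ commutes with homotopy colimits. Since $S$ is stable under $\Omega(e)$, the composite $\gamma\circ\Omega:\mathbb{D}\to\mathsf{L}_S\mathbb{D}$ commutes with homotopy colimits and sends the elements of $S$ to isomorphisms, so by the universal property of Definition~\ref{defCis} it factors, uniquely up to isomorphism, as $\bar{\Omega}\circ\gamma$ for some $\bar{\Omega}:\mathsf{L}_S\mathbb{D}\to\mathsf{L}_S\mathbb{D}$ commuting with homotopy colimits. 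I claim $\bar{\Omega}$ is quasi-inverse to $\Sigma_{\mathsf{L}_S\mathbb{D}}$. Both $\bar{\Omega}\circ\Sigma_{\mathsf{L}_S\mathbb{D}}$ and $\mathrm{Id}_{\mathsf{L}_S\mathbb{D}}$ commute with homotopy colimits, and precomposing the first with $\gamma$ yields $\bar{\Omega}\circ\Sigma_{\mathsf{L}_S\mathbb{D}}\circ\gamma\simeq\bar{\Omega}\circ\gamma\circ\Sigma_{\mathbb{D}}\simeq\gamma\circ\Omega\circ\Sigma_{\mathbb{D}}\simeq\gamma\simeq\gamma^{\ast}(\mathrm{Id}_{\mathsf{L}_S\mathbb{D}})$; since $\gamma^{\ast}:\underline{\mathsf{Hom}}_!(\mathsf{L}_S\mathbb{D},\mathsf{L}_S\mathbb{D})\to\underline{\mathsf{Hom}}_{!,S}(\mathbb{D},\mathsf{L}_S\mathbb{D})$ is an equivalence of categories, it reflects isomorphisms, whence $\bar{\Omega}\circ\Sigma_{\mathsf{L}_S\mathbb{D}}\simeq\mathrm{Id}$. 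The isomorphism $\Sigma_{\mathsf{L}_S\mathbb{D}}\circ\bar{\Omega}\simeq\mathrm{Id}$ follows symmetrically using $\Sigma_{\mathbb{D}}\circ\Omega\simeq\mathrm{Id}$. Therefore $\Sigma_{\mathsf{L}_S\mathbb{D}}$ is an equivalence of derivators, so $\Sigma_{\mathsf{L}_S\mathbb{D}}(e)$ is an equivalence of categories, and by the criterion recalled above $\mathsf{L}_S\mathbb{D}$ is a triangulated derivator.

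The step I expect to be delicate is the pointedness of $\mathsf{L}_S\mathbb{D}$, and specifically the identification of its zero object with $\gamma(0)$: this is what makes the intertwining $\Sigma_{\mathsf{L}_S\mathbb{D}}\circ\gamma\simeq\gamma\circ\Sigma_{\mathbb{D}}$ meaningful, and it is the one point where one must use the concrete structure of the localization (reflectivity onto the $S$-local objects, or a pointed model presentation) rather than the bare universal property. Everything else is a formal game with the universal property of $\mathsf{L}_S\mathbb{D}$ together with the already available equivalences $\Sigma_{\mathbb{D}}\circ\Omega\simeq\mathrm{Id}\simeq\Omega\circ\Sigma_{\mathbb{D}}$.
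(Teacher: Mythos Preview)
Your proof is correct and follows essentially the same strategy as the paper: both descend $\Omega$ to $\mathsf{L}_S\mathbb{D}$ via the stability of $S$ under $\Omega(e)$, use that $\gamma$ commutes with homotopy colimits to intertwine the suspension functors, and conclude that the resulting adjunction on $\mathsf{L}_S\mathbb{D}$ is an equivalence. Your treatment is in fact more careful than the paper's, which neither explicitly addresses the pointedness of $\mathsf{L}_S\mathbb{D}$ nor identifies the induced $\Sigma$ with the intrinsic suspension of the localized derivator; the paper simply writes $\Sigma(e)=p_!\circ(0,0)_\ast$, notes that $\gamma\circ\Sigma$ sends $S$ to isomorphisms because $\gamma$ commutes with homotopy colimits, and declares that the induced adjunction ``is clearly an equivalence''.
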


\begin{proof}
Recall from \cite{Cis-Nee} that since $\mathbb{D}$ is a triangulated
derivator, we have the following equivalence
$$
\xymatrix{
\mathbb{D} \ar@<1ex>[d]^{\Omega} \\
\mathbb{D} \ar@<1ex>[u]^{\Sigma}\,.
}
$$
Notice that both morphisms of derivators, $\Sigma$ and $\Omega$,
commute with homotopy colimits. Since $S$ is stable under the functor
$\Omega(e): \mathbb{D}(e) \rightarrow \mathbb{D}(e)$ and $\mathbb{D}$
admits a left Bousfield localization $\mathsf{L}_S\mathbb{D}$ by $S$,
we have an induced morphism
$$ \Omega: \mathsf{L}_S\mathbb{D} \rightarrow
\mathsf{L}_S\mathbb{D}\,.$$
Let $s$ be an element of $S$. We now show that the image of $s$ by the
functor $\gamma \circ \Sigma$ is an isomorphism in
$\mathsf{L}_S\mathbb{D}(e)$. For this consider the category $\ul$, see
section~\ref{pre}, and the functors
$$
\begin{array}{rcl}
(0,0): e \rightarrow \ul & \mbox{and} & p:\ul \rightarrow e\,.
\end{array}
$$
Now recall from section $7$ from \cite{Heller} that
$$ \Sigma(e) := p_! \circ (0,0)_{\ast}\,.$$
This description show us that the image of $s$ under the functor
$\gamma \circ \Sigma$ is an isomorphism in $\mathsf{L}_S\mathbb{D}(e)$
because $\gamma$ commutes with homotopy colimits. In conclusion, we
have an induced adjunction
$$
\xymatrix{
\mathsf{L}_S\mathbb{D} \ar@<1ex>[d]^{\Omega}\\
\mathsf{L}_S\mathbb{D} \ar@<1ex>[u]^{\Sigma}
}
$$
which is clearly an equivalence. This proves the lemma.
\end{proof}

\begin{theorem}[Cisinski \cite{Letter}]\label{Cisinsk}
The morphism of derivators
$$\gamma: \mathsf{HO}(\mathcal{M})
\stackrel{\mathbb{L}Id}{\longrightarrow} \mathsf{HO}(\mathsf{L}_S
\mathcal{M})$$
is a left Bousfield localization of $\mathsf{HO}(\mathcal{M})$ by the
image of the set $S$ in $\mathsf{Ho}(\mathcal{M})$.
\end{theorem}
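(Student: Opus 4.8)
The plan is to deduce Theorem~\ref{Cisinsk} from the constructive description of the $S$-local equivalences given in Proposition~\ref{Cisin}, using the universal property of $\mathsf{Hot}_A$ from Theorem~\ref{Cin} together with the adjunction of Lemma~\ref{ad1}. The strategy follows the pattern already used for the non-localized case: a morphism of derivators out of $\mathsf{HO}(\mathcal{M})$ commuting with homotopy colimits is determined, up to equivalence, by what it does on representables, and $\mathcal{W}_S$ being generated by the four closure properties in Proposition~\ref{Cisin} is exactly what is needed to turn ``inverts $S$'' into ``inverts all $S$-local equivalences''.

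First I would verify the three defining properties of a left Bousfield localization in the sense of Definition~\ref{defCis}. That $\gamma = \mathbb{L}Id$ commutes with homotopy colimits and sends $S$ to isomorphisms in $\mathsf{Ho}(\mathsf{L}_S\mathcal{M})$ is immediate from the Quillen adjunction $(Id, Id): \mathcal{M} \rightleftarrows \mathsf{L}_S\mathcal{M}$ and the fact that the elements of $S$ become weak equivalences in $\mathsf{L}_S\mathcal{M}$ (this is exactly the content recalled just before Proposition~\ref{Cisin}). The substance is the universal property: for every derivator $\mathbb{D}'$, I must show that
$$
\gamma^{\ast}: \underline{\mathsf{Hom}}_!(\mathsf{HO}(\mathsf{L}_S\mathcal{M}), \mathbb{D}') \longrightarrow \underline{\mathsf{Hom}}_{!,S}(\mathsf{HO}(\mathcal{M}), \mathbb{D}')
$$
is an equivalence of categories. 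For the case $\mathcal{M} = \mathsf{Fun}(A^o, Sset)$, so $\mathsf{HO}(\mathcal{M}) = \mathsf{Hot}_A$, this follows by combining Theorem~\ref{Cin} and Lemma~\ref{ad1} with Proposition~\ref{Cisin}: a morphism in $\underline{\mathsf{Hom}}_{!,S}(\mathsf{Hot}_A, \mathbb{D}')$ restricts along $h$ to a morphism $\underline{A} \to \mathbb{D}'$, hence (by Theorem~\ref{Cin}) extends uniquely, up to unique isomorphism, to a homotopy-colimit-preserving morphism $\mathsf{Hot}_A \to \mathbb{D}'$; the point is that this extension automatically inverts every $S$-local equivalence. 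This last fact is where Proposition~\ref{Cisin} enters: the class of morphisms of $\mathcal{M}$ inverted by a homotopy-colimit-preserving morphism satisfies properties $a)$–$d)$ of that proposition (property $d)$ precisely because homotopy colimits of pointwise-equivalences are equivalences, a general fact about morphisms of derivators commuting with homotopy colimits), so it contains $\mathcal{W}_S = $ the $S$-local equivalences, and hence the extension factors through $\gamma$.

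The general case reduces to the case of simplicial presheaves. Since $\mathcal{M}$ is cellular and left proper, $\mathsf{L}_S\mathcal{M}$ exists, and one can present $\mathsf{HO}(\mathcal{M})$ and $\mathsf{HO}(\mathsf{L}_S\mathcal{M})$ via the ``finitely generated'' / presheaf picture developed in Chapter~\ref{homotopy}; alternatively, and more cleanly, I would argue directly: given $F, F' \in \underline{\mathsf{Hom}}_!(\mathsf{HO}(\mathsf{L}_S\mathcal{M}), \mathbb{D}')$, the maps $\gamma^*F, \gamma^*F'$ and natural transformations between them are computed on $\mathsf{HO}(\mathcal{M})$, and I use again that a homotopy-colimit-preserving morphism out of $\mathsf{HO}(\mathcal{M})$ which inverts $S$ must invert $\mathcal{W}_S$ (same application of Proposition~\ref{Cisin}) to see that $\gamma^*$ is fully faithful; for essential surjectivity, given $G \in \underline{\mathsf{Hom}}_{!,S}(\mathsf{HO}(\mathcal{M}), \mathbb{D}')$, one uses Theorem~4.3.6 of \cite{Hirschhorn} and the local-replacement functor $\mathsf{L}_S(-)$ to produce the factorization $G \simeq \bar{G} \circ \gamma$, checking that $\bar{G}$ is well-defined and commutes with homotopy colimits by the constructive description of $\mathcal{W}_S$.

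The main obstacle I anticipate is the essential surjectivity of $\gamma^{\ast}$ in the general (non-presheaf) case, i.e.\ actually constructing the factoring morphism $\bar{G}$ as a morphism of \emph{derivators} (not just a functor between the base categories) and verifying functoriality in all the indexing categories $L \in cat$ simultaneously; this is where one must be careful that the $S$-local fibrant replacement functor behaves well on diagram categories $\mathcal{M}^L$ — which is guaranteed by cellularity and left properness, together with the fact (Proposition~\ref{Cisin}$(d)$) that $\mathcal{W}_S$ is closed under homotopy colimits, so that $S$-local equivalences in $\mathcal{M}^L$ are detected pointwise. Everything else — the two-out-of-three and retract bookkeeping, the identification of $\underline{\mathsf{Hom}}(\underline{A},\mathbb{D}')$ with $\mathbb{D}'(A^o)$, the compatibility of $\gamma$ with $\Sigma$ and $\Omega$ in the triangulated case via Lemma~\ref{lettri} — is routine once the constructive description of $\mathcal{W}_S$ is in hand.
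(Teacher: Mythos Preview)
Your core insight is right: Proposition~\ref{Cisin} is the engine, and the fact that a homotopy-colimit-preserving morphism inverts a class satisfying $a)$--$d)$ is exactly what upgrades ``inverts $S$'' to ``inverts all $S$-local equivalences''. But the route you propose is considerably more circuitous than the paper's, and the two places where you anticipate difficulty are precisely where the paper has a direct argument that you are missing.

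First, the detour through $\mathsf{Hot}_A$, Theorem~\ref{Cin}, and Lemma~\ref{ad1} is unnecessary: the paper's proof works uniformly for any left proper cellular $\mathcal{M}$, with no reduction to the presheaf case. Second, for full faithfulness of $\gamma^{\ast}$ you give only a vague sketch, whereas the paper observes that $\gamma = \mathbb{L}Id$ has a fully faithful right adjoint $\sigma = \mathbb{R}Id$ (coming from the Quillen adjunction), so $\gamma^{\ast}$ has a left adjoint $\sigma^{\ast}$ with $\sigma^{\ast}\gamma^{\ast} \simeq Id$, and full faithfulness is immediate. Third, the ``main obstacle'' you flag --- building $\bar G$ as a morphism of derivators over all indexing categories $L$ --- dissolves via the conservativity axiom: once you know $F(e)$ inverts all $S$-local equivalences (which is exactly your application of Proposition~\ref{Cisin}), then for any $L$ and any pointwise $S$-local equivalence $f$ in $\mathcal{M}^{L^{op}}$, the image $F(L)(f)$ is an isomorphism because each $x^{\ast}F(L)(f) = F(e)(x^{\ast}f)$ is one; hence $F(L)$ factors through $\mathsf{Ho}((\mathsf{L}_S\mathcal{M})^{L^{op}})$ and these factorizations assemble into $\bar F$ automatically. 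No fibrant replacement on diagram categories is needed.
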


\begin{proof}
Let $\mathbb{D}$ be a derivator.

The morphism $\gamma $ admits a fully faithful right adjoint
$$\sigma : \, \mathsf{HO}(\mathsf{L}_S \mathcal{M})
\longrightarrow \mathsf{HO}(\mathcal{M})\,.$$
Therefore, the induced functor
$$ \gamma^{\ast} : \underline{\mathsf{Hom}}_!(\mathsf{HO}(\mathsf{L}_S
\mathcal{M}),
\mathbb{D}) \longrightarrow
\underline{\mathsf{Hom}}_{!,S}(\mathsf{HO}(\mathcal{M}),
\mathbb{D})\,,$$
admits a left adjoint $\sigma^{\ast}$ and $\sigma^{\ast}\gamma^{\ast}=
(\gamma \sigma)^{\ast}$ is isomorphic to the identity. Therefore
$\gamma^{\ast}$ is fully faithful. We now show that
$\gamma^*$ is essentially surjective. Let $F$ be an object of
$\underline{\mathsf{Hom}}_{!,S}(\mathsf{HO}(\mathcal{M}),
\mathbb{D})$. Notice that since $\mathbb{D}$ satisfies the conservativity
axiom, it is sufficient to show that the functor
$$ F(e): \mathsf{Ho}(\mathcal{M}) \rightarrow \mathbb{D}(e)$$
sends the images in $\mathsf{Ho}(\mathcal{M})$ of the $S$-local
equivalences of $\mathcal{M}$ to isomorphisms in $\mathbb{D}(e)$.
The morphism $F$ then becomes naturally a morphism of derivators
$$ \overline{F}: \mathsf{HO}(\mathsf{L}_S\mathcal{M}) \rightarrow
\mathbb{D}$$
such that $\gamma^*(\overline{F})=F$. Now, since $F$ commutes with
homotopy colimits, the functor
$$ \mathcal{M} \rightarrow \mathsf{Ho}(\mathcal{M}) \rightarrow
\mathbb{D}(e)$$
sends the elements of $\mathcal{W}_S$ to isomorphisms. This proves the
theorem since by proposition~\ref{Cisin} the class $\mathcal{W}_S$ equals the class of $S$-local equivalences in $\mathcal{M}$.
\end{proof}

\section{Filtered homotopy colimits}\label{homotopy}
Let $\mathcal{M}$ be a cellular Quillen model category, with $I$ the
set of generating cofibrations. Suppose that the domains and codomains
of the elements of $I$ are cofibrant, $\aleph_0$-compact,
$\aleph_0$-small and homotopically finitely presented, see definition
$2.1.1$ in \cite{Toen-Vaq}.

\begin{example}\label{mori}
Consider the Quillen model structures on $\mathsf{dgcat}$ constructed
in theorems~\ref{mal}, \ref{theorem} and \ref{theorem2}.
Recall that a dg functor $F$ from $\mathcal{C}$ to $\mathcal{E}$ is a
quasi-equivalence, resp. quasi-equiconic,
resp. a Morita dg functor,  if it satisfies one of the following conditions $\mathsf{C}1)$
or $\mathsf{C}2)$:
\begin{itemize}
\item[C1)] The dg category $\mathcal{C}$ is empty and all the objects
  of $\mathcal{E}$ are contractible.
\item[C2)] For every object $c_1$ and $c_2$ in $\mathcal{C}$, the
  morphism of complexes from $\mathsf{Hom}_{\mathcal{C}}(c_1,c_2)$ to
  $\mathsf{Hom}_{\mathcal{E}}(F(c_1)),F(c_2))$ is a quasi-isomorphism
  and the functor $\mathsf{H}^0(F)$,
  resp. $\mathsf{H}^0(\mbox{pre-tr}(F))$, resp. $\mathsf{H}^0(\mbox{pre-tr}(F))^{\kar}$, is essentially surjective.
\end{itemize}
Observe that the domains and codomains of the set $I$ of generating
cofibrations in $\mathsf{dgcat}$ satisfy the conditions above for all
the Quillen model structures.
\end{example}

The following proposition is a simplification of proposition $2.2$ in
\cite{Toen-Vaq}.

\begin{proposition}\label{prop}
Let $\mathcal{M}$ be a Quillen model category which satisfies the
conditions above. Then
\begin{itemize}
\item[1)] A filtered colimit of trivial fibrations is a trivial
  fibration.
\item[2)] For any filtered diagram $X_i$ in $\mathcal{M}$, the natural
  morphism
$$ \underset{i \in I}{\mathsf{hocolim}}\, X_i \longrightarrow
  \underset{i \in I}{\mathsf{colim}}\, X_i$$
is an isomorphism in $\mathsf{Ho}(\mathcal{M})$.
\item[3)] Any object $X$ in $\mathcal{M}$ is equivalent to a filtered
  colimit of strict finite $I$-cell objects.
\item[4)] An object $X$ in $\mathcal{M}$ is homotopically finitely
  presented if and only if it is equivalent to a rectract of a strict
  finite $I$-cell object.
\end{itemize}
\end{proposition}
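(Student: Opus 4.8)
The plan is to prove the four assertions of Proposition~\ref{prop} one at a time, in order, since each later assertion relies on the earlier ones. Throughout I will use only the standing hypotheses on $\mathcal{M}$: it is cellular, with generating cofibrations $I$ whose domains and codomains are cofibrant, $\aleph_0$-compact, $\aleph_0$-small, and homotopically finitely presented. The proof follows the lines of proposition~$2.2$ in \cite{Toen-Vaq}, simplified because we assume outright that the sources \emph{and} targets of $I$ enjoy all the finiteness properties.

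First I would prove $1)$. Let $(f_i : X_i \to Y_i)_{i\in \mathcal{I}}$ be a filtered diagram of trivial fibrations, with colimit $f : X \to Y$. A trivial fibration is exactly a morphism with the right lifting property against $I$. Given a lifting problem for $f$ against a generating cofibration $A \hookrightarrow B$, the $\aleph_0$-compactness (hence in particular $\aleph_0$-smallness) of $A$ and $B$ lets me factor the maps $A \to X$ and $B \to Y$ through some stage $X_{i_0}$, $Y_{i_0}$ of the filtered system; solving the lifting problem against $f_{i_0}$ and composing with the structure map $X_{i_0}\to X$ yields the desired lift. Hence $f \in I\text{-}\mathrm{inj}$, i.e. $f$ is a trivial fibration. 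Next, for $2)$, I would use the fact (from the theory of cellular model categories, e.g. Hirschhorn \cite{Hirschhorn}) that a filtered colimit of a diagram is computed, up to weak equivalence, by a homotopy colimit provided the transition maps are sufficiently nice; more precisely, one reduces to the case of a diagram indexed by a filtered poset and uses that the $\aleph_0$-compactness of the domains and codomains of $I$ forces filtered colimits to preserve trivial fibrations (by part $1)$) and cofibrations, so that a cofibrant replacement of the diagram can be taken levelwise and its colimit is already a model for the homotopy colimit. The canonical map $\mathsf{hocolim}\,X_i \to \mathsf{colim}\,X_i$ is then an isomorphism in $\mathsf{Ho}(\mathcal{M})$.

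For $3)$, let $X$ be an arbitrary object. Applying the small object argument to $\emptyset \to X$ with the set $I$ produces a relative $I$-cell complex $\emptyset \to X'$ together with a trivial fibration $X' \to X$; since every object of a model category receives a cofibrant replacement this way, it suffices to treat $X'$, a (possibly transfinite) $I$-cell complex. Now $X'$ is the colimit of its sub-cell-complexes obtained by attaching finitely many cells: this is a filtered diagram (ordered by inclusion of finite sets of cells) of strict finite $I$-cell objects, and its colimit is $X'$ on the nose; by part $2)$ this colimit agrees with the homotopy colimit, so $X \simeq X'$ is equivalent to a filtered homotopy colimit of strict finite $I$-cell objects. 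Finally, for $4)$, the ``if'' direction is immediate: a strict finite $I$-cell object is homotopically finitely presented because each of the finitely many cells is attached along a map out of a homotopically finitely presented object, and homotopically finitely presented objects are closed under finite homotopy pushouts and retracts. Conversely, suppose $X$ is homotopically finitely presented. By $3)$, $X \simeq \mathsf{hocolim}_{i}\,X_i$ with each $X_i$ a strict finite $I$-cell object and the diagram filtered. Then the identity of $X$ factors, up to homotopy, through some $X_{i_0}$, exhibiting $X$ as a homotopy retract — hence a retract in $\mathsf{Ho}(\mathcal{M})$ — of the strict finite $I$-cell object $X_{i_0}$.

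The main obstacle I expect is part $2)$: making precise the comparison between the strict (co)limit and the homotopy colimit of a filtered diagram. One must be careful that the diagram can be rectified to a Reedy-cofibrant, or at least objectwise-cofibrant with cofibrant transition maps, diagram without changing its colimit up to weak equivalence, and that the relevant finiteness hypotheses on $I$ genuinely propagate through this rectification; this is where the cellularity of $\mathcal{M}$ and the $\aleph_0$-smallness of the domains and codomains of $I$ do the real work, exactly as in the proof of proposition~$2.2$ of \cite{Toen-Vaq}. Parts $1)$, $3)$ and $4)$ are then essentially formal consequences of $2)$ together with the small object argument.
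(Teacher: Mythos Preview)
Your proposal is correct and follows essentially the same approach as the paper: the paper's proof simply states that parts $1)$--$3)$ are \emph{exactly} the proof of proposition~$2.2$ in \cite{Toen-Vaq}, and that $4)$ is the same once one observes that the domains and codomains of $I$ are homotopically finitely presented by hypothesis --- which is precisely the simplification you identify. Your sketch expands the arguments that the paper leaves to the reference, and your identification of part~$2)$ as the only place requiring genuine care is accurate.
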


\begin{proof}
The proof of $1)$, $2)$ and $3)$ is exactly the same as that of proposition
$2.2$ in \cite{Toen-Vaq}. The proof of $4)$ is also the same once we
observe that the domains and codomains of the elements of the set $I$
are already homotopically finitely presented by hypothesis.
\end{proof}

In everything that follows, we fix:
\begin{itemize}
\item[-] A co-simplicial resolution functor
$$(\Gamma (-) : \mathcal{M} \rightarrow \mathcal{M}^{\Delta}, \, i)$$
in the model category
  $\mathcal{M}$, see definition $16.1.8$ in \cite{Hirschhorn}. This
  means that for every object $X$ in $\mathcal{M}$, $\Gamma(X)$ is
  cofibrant in the Reedy model structure on $\mathcal{M}^{\Delta}$ and
$$ i(X) : \Gamma(X) \stackrel{\sim}{\longrightarrow} c^{\ast}(X)$$
is a weak equivalence on $\mathcal{M}^{\Delta}$, where $c^{\ast}(X)$
denotes the constant co-simplicial object associated with $X$.
\item[-] A fibrant resolution functor
$$( (-)_f : \mathcal{M} \rightarrow \mathcal{M}, \, \epsilon )$$
in the model category $\mathcal{M}$, see \cite{Hirschhorn}.
\end{itemize}

\begin{definition}
Let $\mathcal{M}_f$ be the smallest full subcategory of $\mathcal{M}$
such that
\begin{itemize}
\item[-]  $\mathcal{M}_f$ contains (a representative of the
  isomorphism class of) each strictly finite $I$-cell object of
  $\mathcal{M}$ and
\item[-] the category $\mathcal{M}_f$ is stable under the functors $(-)_f$ and $\Gamma(-)^n,\, n \geq 0$.
\end{itemize}
\end{definition}

\begin{remark}
Notice that $\mathcal{M}_f$ is a small category and that every object
in $\mathcal{M}_f$ is weakly equivalent to a strict finite $I$-cell.
\end{remark}

We have the inclusion
$$ \mathcal{M}_f \stackrel{i}{\hookrightarrow} \mathcal{M}\,. $$

\begin{Notation}
Let $S$ be the set of pre-images of the weak equivalences in
$\mathcal{M}$ under the functor $i$.
\end{Notation}

\begin{lemma}\label{fulfat}
The induced functor
$$ \mathcal{M}_f[S^{-1}] \stackrel{\mathsf{Ho}(i)}{\longrightarrow} \mathsf{Ho}(\mathcal{M})$$
is fully faithful, where $\mathcal{M}_f[S^{-1}]$ denotes the
localization of $\mathcal{M}$ by the set $S$.
\end{lemma}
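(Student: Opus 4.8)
We must show that the functor $\mathsf{Ho}(i): \mathcal{M}_f[S^{-1}] \to \mathsf{Ho}(\mathcal{M})$ is fully faithful, i.e.\ that for all $X,Y \in \mathcal{M}_f$ the canonical map of hom-sets
$$\mathrm{Hom}_{\mathcal{M}_f[S^{-1}]}(X,Y) \longrightarrow \mathrm{Hom}_{\mathsf{Ho}(\mathcal{M})}(X,Y)$$
is a bijection. The plan is to reduce everything to the computation of these hom-sets via homotopy classes of maps, using the co-simplicial resolution functor $\Gamma(-)$ and the fibrant resolution functor $(-)_f$ that we have fixed, both of which preserve $\mathcal{M}_f$ by construction.

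\emph{Step 1: a convenient model for $\mathcal{M}_f[S^{-1}]$.} First I would observe that since $\mathcal{M}_f$ is stable under $\Gamma(-)^n$ and $(-)_f$, every object of $\mathcal{M}_f$ is weakly equivalent (inside $\mathcal{M}_f$, through maps in $S$) to one which is both cofibrant and fibrant in $\mathcal{M}$: indeed a strict finite $I$-cell object is already cofibrant, $\Gamma(-)^0 = \Gamma(-)_0$ gives a cofibrant replacement compatible with the Reedy structure, and $(-)_f$ makes it fibrant while staying in $\mathcal{M}_f$ and while the structure map to the original object lies in $S$. Thus the full subcategory $\mathcal{M}_f^{cf}$ of $\mathcal{M}_f$ on objects cofibrant-fibrant in $\mathcal{M}$ is such that $\mathcal{M}_f^{cf}[S^{-1}] \to \mathcal{M}_f[S^{-1}]$ is an equivalence, and likewise $\mathsf{Ho}(\mathcal{M})$ is computed on cofibrant-fibrant objects. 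So it suffices to prove full faithfulness of $\mathcal{M}_f^{cf}[S^{-1}] \to \mathsf{Ho}(\mathcal{M})$.

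\emph{Step 2: identify both hom-sets with homotopy classes.} For the target, the classical theory of Quillen model categories (e.g.\ \cite{Hovey} \cite{Quillen}) gives $\mathrm{Hom}_{\mathsf{Ho}(\mathcal{M})}(X,Y) = \pi(X,Y) := \mathrm{Hom}_{\mathcal{M}}(X,Y)/\!\!\sim$ for $X$ cofibrant and $Y$ fibrant, where $\sim$ is the homotopy relation, and the co-simplicial resolution $\Gamma(X)$ provides a functorial cylinder so that $\pi(X,Y) = \pi_0\,\mathrm{Map}(X,Y)$ with $\mathrm{Map}(X,Y)_n = \mathrm{Hom}_{\mathcal{M}}(\Gamma(X)_n, Y)$. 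For the source, I would show that $S$ already admits a calculus of fractions up to homotopy inside $\mathcal{M}_f^{cf}$: a map $X \to Y$ in $S$ between cofibrant-fibrant objects is a homotopy equivalence in $\mathcal{M}$ (by Whitehead), and moreover its homotopy inverse and the homotopies witnessing it can be taken inside $\mathcal{M}_f$ since the cylinder object $\Gamma(X)_1$ and the (dual) path objects needed lie in $\mathcal{M}_f$. Hence in $\mathcal{M}_f^{cf}[S^{-1}]$ every element of $S$ is already invertible up to the homotopy relation, and a standard argument shows $\mathrm{Hom}_{\mathcal{M}_f^{cf}[S^{-1}]}(X,Y) = \pi(X,Y)$, computed with exactly the same cylinders $\Gamma(X)$. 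The comparison functor then sends the class of a map to its class, so it is a bijection.

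\emph{Main obstacle.} The delicate point is Step 2's claim that $S$ behaves like a class admitting a homotopy calculus of fractions \emph{within} $\mathcal{M}_f$: one must check that the homotopy inverses and all the homotopy data of an $S$-map between cofibrant-fibrant objects of $\mathcal{M}_f$ can be chosen so as never to leave $\mathcal{M}_f$, which is exactly why the stability of $\mathcal{M}_f$ under $(-)_f$ and $\Gamma(-)^n$ was built in; and one must verify that the localization $\mathcal{M}_f^{cf}[S^{-1}]$ is not made larger by maps that use objects of $\mathcal{M}_f$ outside $\mathcal{M}_f^{cf}$, which follows from Step 1. Once these bookkeeping facts are in place the bijection is formal. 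An alternative, perhaps cleaner, route is to invoke the results of To\"en--Vezzosi \cite{HAG} and Dugger on the mapping spaces of presheaf categories together with proposition~\ref{prop}: $\mathsf{Ho}(\mathcal{M})$ identifies, on finitely presented objects, with a full subcategory of $\mathsf{Ho}(\mathsf{Fun}(\mathcal{M}_f^o, Sset))$, and the Yoneda-type embedding $\mathcal{M}_f[S^{-1}] \hookrightarrow \mathsf{Ho}(\mathsf{Fun}(\mathcal{M}_f^o,Sset))$ is fully faithful by the simplicial Yoneda lemma, giving the statement by comparing the two embeddings; I would present the direct homotopy-classes argument as the main proof and mention this as the conceptual reason it works.
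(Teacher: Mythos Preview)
Your proposal is correct and follows essentially the same approach as the paper. The paper's proof is much terser: it simply observes that $(Y)_f$ is a fibrant resolution of $Y$ lying in $\mathcal{M}_f$, that $\Gamma(X)^0$ is a cofibrant replacement with cylinder object $\Gamma(X)^1$ (by \cite[16.1.6]{Hirschhorn}), both lying in $\mathcal{M}_f$, and concludes that ``the localized category $\mathcal{M}_f[S^{-1}]$ can be constructed inside $\mathcal{M}$''. Your Steps~1 and~2 spell out exactly this argument---the reduction to cofibrant-fibrant objects and the identification of both hom-sets with homotopy classes using the cylinder $\Gamma(X)^1$---and your ``main obstacle'' discussion is precisely the point that $\mathcal{M}_f$ was designed to be stable under $(-)_f$ and $\Gamma(-)^n$, which the paper takes for granted.
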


\begin{proof}
Let $X$, $Y$ be objects of $\mathcal{M}_f$. Notice that $(Y)_f$ is a fibrant
resolution of $Y$ in $\mathcal{M}$ which belongs to $\mathcal{M}_f$ and
$$
\xymatrix{
\Gamma(X)^0 \coprod \Gamma(X)^0 \ar[d]_{d^0 \coprod
  d^1} \ar[r] &  \Gamma(X)^0\\
\Gamma(X)^1 \ar[ur]_{s^0}
}
$$
is a cylinder object for $\Gamma(X)^0$, see proposition $16.1.6.$ from
\cite{Hirschhorn}. Since this cylinder object also belongs to
$\mathcal{M}_f$, the localized category $\mathcal{M}_f[S^{-1}]$ can be
constructed inside $\mathcal{M}$.

This implies the lemma.
\end{proof}

We denote by $\mathsf{Fun}(\mathcal{M}_f^o,Sset)$ the Quillen model
category of simplicial pre-sheaves on $\mathcal{M}_f$ endowed with the
projective model structure, see section~\ref{extension}.
Let $\Sigma$ be the image in $\mathsf{Fun}(\mathcal{M}_f^o,Sset)$ by the functor $h$, see
section~\ref{extension}, of the set $S$ in $\mathcal{M}_f$. Since the
category $\mathsf{Fun}(\mathcal{M}_f^o,Sset)$ is cellular and left
proper, its left Bousfield localization by the set $\Sigma$ exists,
see \cite{Hirschhorn}. We denote it by
$\mathsf{L}_{\Sigma}\mathsf{Fun}(\mathcal{M}_f^o,Sset)$. We have a
composed functor that we still denote by $h$
$$h: \mathcal{M}_f \rightarrow \mathsf{Fun}(\mathcal{M}_f^o,Sset)
\stackrel{Id}{\rightarrow} \mathsf{L}_{\Sigma}\mathsf{Fun}(\mathcal{M}_f^o,Sset)\,.$$

Now, consider the functor
$$
\begin{array}{ccc}
\underline{h}: \mathcal{M} & \longrightarrow &
\mathsf{Fun}(\mathcal{M}_f^o,Sset)\\
X & \longmapsto & \mathsf{Hom}(\Gamma(-),X)_{|\mathcal{M}_f}\,.
\end{array}
$$
We also have a composed functor that we still denote by $\underline{h}$

$$\underline{h}: \mathcal{M} \rightarrow \mathsf{Fun}(\mathcal{M}_f^o,Sset)
\stackrel{Id}{\rightarrow} \mathsf{L}_{\Sigma}\mathsf{Fun}(\mathcal{M}_f^o,Sset)\,.$$

Now, observe that the natural equivalence
$$ i(-) : \Gamma(-) \longrightarrow c^{\ast}(-)\,,$$
induces, for every object $X$ in $\mathcal{M}_f$, a morphism $\Psi(X)$
in $\mathsf{L}_{\Sigma}\mathsf{Fun}(\mathcal{M}_f^o,Sset)$
$$\Psi(X): \, h(X) = \mathsf{Hom}(c^{\ast}(-),X) \longrightarrow
\mathsf{Hom}(\Gamma(-),X) =: (\underline{h} \circ i)(X)\,,$$
which is functorial in $X$.

\begin{lemma}
The functor $\underline{h}$ preserves weak equivalences between
fibrant objects.
\end{lemma}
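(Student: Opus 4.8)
The goal is to show that $\underline{h}: \mathcal{M} \to \mathsf{L}_{\Sigma}\mathsf{Fun}(\mathcal{M}_f^o,Sset)$, defined by $X \mapsto \mathsf{Hom}(\Gamma(-),X)_{|\mathcal{M}_f}$, sends a weak equivalence $f: X \to Y$ between fibrant objects of $\mathcal{M}$ to a weak equivalence in the localized model category. The plan is to reduce this, object by object in $\mathcal{M}_f$, to the classical fact about cosimplicial resolutions and homotopy function complexes, so that $\underline{h}(f)$ is in fact already an \emph{objectwise} weak equivalence of simplicial presheaves (hence a weak equivalence before localization, hence a fortiori one in $\mathsf{L}_{\Sigma}\mathsf{Fun}(\mathcal{M}_f^o,Sset)$, since $Id$ is a left Quillen functor to the localization).

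First I would recall that for any object $Z$ of $\mathcal{M}_f$, the cosimplicial object $\Gamma(Z)$ is a cosimplicial resolution of $Z$ (cofibrant in the Reedy structure on $\mathcal{M}^{\Delta}$, with $\Gamma(Z) \xrightarrow{\sim} c^{\ast}(Z)$), and therefore for every fibrant object $W$ of $\mathcal{M}$ the simplicial set $\mathsf{Hom}(\Gamma(Z),W)$ is a homotopy function complex $\mathsf{map}(Z,W)$ in the sense of Hirschhorn, Chapter 17. The key input is then Hirschhorn's Theorem~17.7.7 (or 16.5.5): a map $W \to W'$ between fibrant objects that is a weak equivalence induces a weak equivalence of simplicial sets $\mathsf{Hom}(\Gamma(Z),W) \to \mathsf{Hom}(\Gamma(Z),W')$, for $Z$ any object admitting a cosimplicial resolution. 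Applying this with $W = X$, $W' = Y$ and the weak equivalence $f$, we conclude that $\underline{h}(f)(Z): \mathsf{Hom}(\Gamma(Z),X) \to \mathsf{Hom}(\Gamma(Z),Y)$ is a weak equivalence of simplicial sets for every $Z \in \mathcal{M}_f$. Since weak equivalences in $\mathsf{Fun}(\mathcal{M}_f^o,Sset)$ with the projective model structure are exactly the objectwise ones, $\underline{h}(f)$ is a weak equivalence in $\mathsf{Fun}(\mathcal{M}_f^o,Sset)$.

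Finally, the Quillen adjunction $Id: \mathsf{Fun}(\mathcal{M}_f^o,Sset) \rightleftarrows \mathsf{L}_{\Sigma}\mathsf{Fun}(\mathcal{M}_f^o,Sset): Id$ shows that every weak equivalence of $\mathsf{Fun}(\mathcal{M}_f^o,Sset)$ is also a weak equivalence ($\Sigma$-local equivalence) in $\mathsf{L}_{\Sigma}\mathsf{Fun}(\mathcal{M}_f^o,Sset)$; this is precisely property $b)$ in proposition~\ref{Cisin}. Hence the composed functor $\underline{h}: \mathcal{M} \to \mathsf{L}_{\Sigma}\mathsf{Fun}(\mathcal{M}_f^o,Sset)$ sends $f$ to a weak equivalence, which is the claim.

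The only genuine subtlety — the step I would treat most carefully — is checking that $\mathsf{Hom}(\Gamma(Z),-)$ does behave as a homotopy function complex here, i.e. that $\Gamma$ restricted to $\mathcal{M}_f$ is still a cosimplicial resolution functor in the ambient $\mathcal{M}$ and that the fibrancy hypothesis on $X$ and $Y$ is exactly what is needed for invariance (a non-fibrant target would not suffice, which is why the statement is phrased for fibrant objects). Everything else is a direct appeal to the cited results of Hirschhorn and to the definition of the projective model structure; there is no real calculation to grind through.
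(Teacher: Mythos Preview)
Your proof is correct and follows essentially the same approach as the paper: both identify $\mathsf{Hom}(\Gamma(Z),X)$ with a homotopy function complex $\mathsf{Map}_{\mathcal{M}}(Z,X)$ when $X$ is fibrant (via Hirschhorn), and deduce that a weak equivalence between fibrant targets induces objectwise weak equivalences of simplicial presheaves. The paper's proof is simply terser, stating only the key identification and leaving the remaining steps implicit.
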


\begin{proof}
Let $X$ be a fibrant object in $\mathcal{M}$. We have an equivalence
$$ \mathsf{Hom}(\Gamma(Y),X) \stackrel{\sim}{\longrightarrow}
\mathsf{Map}_{\mathcal{M}}(Y,X)\,,$$
see \cite{Hirschhorn}. This implies the lemma.
\end{proof}

\begin{remark}
The previous lemma implies that the functor $\underline{h}$ admits a right
derived functor
$$
\begin{array}{ccc}
\mathbb{R}\underline{h}: \mathsf{Ho}(\mathcal{M}) & \longrightarrow &
\mathsf{Ho}(\mathsf{L}_{\Sigma}\mathsf{Fun}(\mathcal{M}_f^o,Sset))\\
X & \longmapsto & \mathsf{Hom}(\Gamma(-),X_f)_{|\mathcal{M}_f}\,.
\end{array}
$$
\end{remark}

Since the functor
$$h: \mathcal{M}_f \rightarrow \mathsf{L}_{\Sigma}\mathsf{Fun}(\mathcal{M}_f^o,Sset)\,,$$
sends, by definition, the elements of $S$ to weak equivalences, we have an induced morphism
$$ \mathsf{Ho}(h): \mathcal{M}_f[S^{-1}] \rightarrow \mathsf{Ho}(\mathsf{L}_{\Sigma}\mathsf{Fun}(\mathcal{M}_f^o,Sset))\,.$$

\begin{remark}\label{funcpont}
Notice that lemma $4.2.2$ from \cite{HAG} implies that for every $X$ in
$\mathcal{M}_f$, the morphism $\Psi(X)$
$$ \Psi(X):\, \mathsf{Ho}(h)(X) \longrightarrow
(\mathbb{R}\underline{h} \circ \mathsf{Ho}(i))(X)$$
is an isomorphism in
$\mathsf{Ho}(\mathsf{L}_{\Sigma}\mathsf{Fun}(\mathcal{M}_f^o,Sset))$.
\end{remark}

This shows that the functors
$$ \mathsf{Ho}(h), \, \mathbb{R}\underline{h} \circ \mathsf{Ho}(I) : \,
\mathcal{M}_f[S^{-1}] \rightarrow \mathsf{Ho}(\mathsf{L}_{\Sigma}\mathsf{Fun}(\mathcal{M}_f^o,Sset)$$
are canonically isomorphic and so we have the following diagram
$$
\xymatrix{
\mathcal{M}_f[S^{-1}] \ar[rr]^{\mathsf{Ho}(i)}
\ar[d]_{\mathsf{Ho}(h)} & &  \mathsf{Ho}(\mathcal{M})
\ar[dll]^{\mathbb{R}\underline{h}}\,, \\
\mathsf{Ho}(\mathsf{L}_{\Sigma}\mathsf{Fun}(\mathcal{M}_f^o,Sset)) & &
}
$$
which is commutative up to isomorphism.

\begin{lemma}\label{filtered}
The functor $\mathbb{R}\underline{h}$ commutes with filtered homotopy colimits.
\end{lemma}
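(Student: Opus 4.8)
The plan is to reduce the statement about $\mathbb{R}\underline{h}$ to the already-established facts about the model category $\mathcal{M}$, namely parts $2)$ and $3)$ of proposition~\ref{prop} and the definition of $\mathbb{R}\underline{h}$ as $X \mapsto \mathsf{Hom}(\Gamma(-),X_f)_{|\mathcal{M}_f}$. Concretely, let $I$ be a filtered category and $X:I \to \mathcal{M}$ a diagram; I want to show that the canonical map
$$ \underset{i}{\mathsf{hocolim}}\, \mathbb{R}\underline{h}(X_i) \longrightarrow \mathbb{R}\underline{h}(\underset{i}{\mathsf{hocolim}}\, X_i) $$
is an isomorphism in $\mathsf{Ho}(\mathsf{L}_{\Sigma}\mathsf{Fun}(\mathcal{M}_f^o,Sset))$. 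First I would observe that since $\mathbb{R}\underline{h}$ is a right derived functor, one may replace the $X_i$ by fibrant objects and, using proposition~\ref{prop} $2)$, replace the homotopy colimit on the source $\mathcal{M}$-side by an ordinary filtered colimit (after a cofibrant replacement of the diagram, which does not disturb the filtered-colimit computation up to weak equivalence, by $2)$ again). So it suffices to treat $\mathsf{colim}_i X_i$ for a filtered diagram of cofibrant objects, and to show $\underline{h}$ itself commutes with such colimits up to the relevant weak equivalence.

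Next I would unwind what $\underline{h}$ does objectwise. For a fixed $W$ in $\mathcal{M}_f$, the simplicial set $\underline{h}(X)(W)$ is $\mathsf{Hom}_{\mathcal{M}}(\Gamma(W),X)$, and $\Gamma(W)$ is a cosimplicial object each of whose terms $\Gamma(W)^n$ is a strict finite $I$-cell object (this is exactly why $\mathcal{M}_f$ was closed under $\Gamma(-)^n$). By the hypotheses in this section, the domains and codomains of the generating cofibrations — hence all strict finite $I$-cell objects, and in particular each $\Gamma(W)^n$ — are $\aleph_0$-compact and $\aleph_0$-small, so $\mathsf{Hom}_{\mathcal{M}}(\Gamma(W)^n, -)$ commutes with filtered colimits in $\mathcal{M}$. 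Taking these isomorphisms levelwise in the cosimplicial direction and then forming the diagonal/total simplicial set, one gets that
$$ \mathsf{Hom}_{\mathcal{M}}(\Gamma(W), \underset{i}{\mathsf{colim}}\, X_i) \stackrel{\sim}{\longrightarrow} \underset{i}{\mathsf{colim}}\, \mathsf{Hom}_{\mathcal{M}}(\Gamma(W), X_i) $$
is an isomorphism of simplicial sets, naturally in $W \in \mathcal{M}_f^o$. Since colimits of simplicial presheaves are computed objectwise, this says $\underline{h}(\mathsf{colim}_i X_i) \cong \mathsf{colim}_i \underline{h}(X_i)$ in $\mathsf{Fun}(\mathcal{M}_f^o,Sset)$.

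It then remains to pass from this strict colimit identity to homotopy colimits in the localized model category $\mathsf{L}_{\Sigma}\mathsf{Fun}(\mathcal{M}_f^o,Sset)$. Here I would use that in the projective model structure on $\mathsf{Fun}(\mathcal{M}_f^o,Sset)$ a filtered colimit of fibrant-cofibrant (or just projectively cofibrant) objects along cofibrations is already a homotopy colimit — and that projective cofibrancy and the class of weak equivalences are inherited well enough under left Bousfield localization (the localization changes only the fibrant objects and enlarges the weak equivalences, so $\Sigma$-local equivalences are still detected after the identity left Quillen functor, and filtered colimits still compute homotopy colimits because $\mathsf{L}_{\Sigma}\mathsf{Fun}(\mathcal{M}_f^o,Sset)$ is again cellular and left proper, hence its derivator is regular in the sense needed). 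One also needs the functorial fibrant resolution inside $\mathcal{M}$ used in defining $\mathbb{R}\underline{h}$ to be compatible with filtered colimits up to weak equivalence, which follows from proposition~\ref{prop} $1)$ ($\underline{h}$ preserves weak equivalences between fibrant objects, and a filtered colimit of the trivial fibrations $X_i \to (X_i)_f$ is again a trivial fibration). Combining these steps yields the desired isomorphism.

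The main obstacle I expect is the last paragraph: carefully checking that forming the filtered colimit of the presheaves $\underline{h}(X_i)$ really computes the homotopy colimit in $\mathsf{L}_{\Sigma}\mathsf{Fun}(\mathcal{M}_f^o,Sset)$ and not merely in the unlocalized projective model structure. This requires knowing that $\Sigma$-local equivalences are stable under filtered colimits of nice diagrams — which is precisely the kind of statement proposition~\ref{Cisin} property $d)$ is designed to give, so the cleanest route is to invoke proposition~\ref{Cisin} for the class $\mathcal{W}_{\Sigma}$ to close that gap. The cosimplicial bookkeeping in identifying $\mathsf{Hom}(\Gamma(W),-)$ with a diagonal of levelwise hom-sets, and the interchange of that diagonal with filtered colimits, is routine once $\aleph_0$-compactness of each $\Gamma(W)^n$ is in hand.
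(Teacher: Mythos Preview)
Your argument has a genuine gap in the second paragraph. You assert that each $\Gamma(W)^n$ is a strict finite $I$-cell object, but this is not what closure of $\mathcal{M}_f$ under $\Gamma(-)^n$ gives you: it only guarantees $\Gamma(W)^n \in \mathcal{M}_f$, and objects of $\mathcal{M}_f$ are merely \emph{weakly equivalent} to strict finite $I$-cells (see the remark after the definition of $\mathcal{M}_f$). Indeed, $\mathcal{M}_f$ is also closed under the fibrant replacement $(-)_f$, and there is no reason $(A)_f$ or $\Gamma((A)_f)^n$ should be a strict finite $I$-cell. Consequently your levelwise strict-compactness argument for
$$ \mathsf{Hom}_{\mathcal{M}}(\Gamma(W)^n, \underset{i}{\mathsf{colim}}\, X_i) \cong \underset{i}{\mathsf{colim}}\, \mathsf{Hom}_{\mathcal{M}}(\Gamma(W)^n, X_i) $$
is unjustified, and the strict isomorphism $\underline{h}(\mathsf{colim}_i X_i) \cong \mathsf{colim}_i \underline{h}(X_i)$ you build from it does not follow.

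The paper bypasses this by working homotopically rather than strictly. After replacing the $Y_i$ by fibrants and using proposition~\ref{prop}~2) to identify $\mathsf{hocolim}\, Y_i$ with $\mathsf{colim}\, Y_i$, one uses that for fibrant $Y$ the simplicial set $\mathsf{Hom}(\Gamma(X),Y)$ is a model for $\mathsf{Map}_{\mathcal{M}}(X,Y)$. Then the key input is proposition~\ref{prop}~4): every $X \in \mathcal{M}_f$, being weakly equivalent to a strict finite $I$-cell, is \emph{homotopically finitely presented}, so $\mathsf{Map}(X,-)$ commutes with filtered homotopy colimits. This gives the desired equivalence objectwise in $\mathsf{Ho}(Sset)$ directly, without any levelwise bookkeeping on $\Gamma(W)^n$. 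Your last-paragraph worry about the localization is also handled more simply than you propose: since $\mathbb{L}Id: \mathsf{Ho}(\mathsf{Fun}(\mathcal{M}_f^o,Sset)) \to \mathsf{Ho}(\mathsf{L}_\Sigma\mathsf{Fun}(\mathcal{M}_f^o,Sset))$ commutes with homotopy colimits, it suffices to check the comparison in the unlocalized projective model structure, where homotopy colimits are computed objectwise; no appeal to proposition~\ref{Cisin} is needed.
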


\begin{proof}
Let $\{Y_i\}_{i \in I}$ be a filtered diagram in $\mathcal{M}$. We
can suppose, without loss of generality, that $Y_i$ is fibrant in
$\mathcal{M}$. By proposition~\ref{prop}, the natural
morphism
$$ \underset{i \in I}{\mbox{hocolim}} \,Y_i \longrightarrow \underset{i \in
  I}{\mbox{colim}} \,Y_i$$
is an isomorphism in $\mathsf{Ho}(\mathcal{M})$ and $\underset{i \in I}{\mbox{colim}}
\,Y_i$ is also fibrant. Since the functor
$$ \mathsf{Ho}(\mathsf{Fun}(\mathcal{M}_f^o,Sset))
\stackrel{\mathbb{L}Id}{\longrightarrow}
\mathsf{Ho}(\mathsf{L}_{\Sigma}\mathsf{Fun}(\mathcal{M}_f^o,Sset))\,,$$
commutes with homotopy colimits and in
$\mathsf{Ho}(\mathsf{Fun}(\mathcal{M}_f^o,Sset))$ they are calculated objectwise, it is
sufficient to show that the morphism
$$ \underset{i \in I}{\mbox{hocolim}} \,\mathbb{R}\underline{h}(Y_i)(X)
\longrightarrow \mathbb{R}\underline{h}(\underset{i \in I}{\mbox{colim}}
\,Y_i)(X)$$
is an isomorphism in $\mathsf{Ho}(Sset)$, for every object $X$ in
$\mathcal{M}_f$. Now, since every object $X$ in $\mathcal{M}_f$ is homotopically finitely presented,
see proposition~\ref{prop}, we have the following equivalences:
$$
\begin{array}{rcl}
\mathbb{R}\underline{h}(\underset{i \in I}{\mbox{colim}} \,Y_i)(X) & =
&
\mathsf{Hom}(\Gamma (X), \underset{i \in I}{\mbox{colim}} \,Y_i)\\
 & \simeq  & \mathsf{Map}(\Gamma (X), \underset{i \in I}{\mbox{colim}} \,Y_i)\\
& \simeq & \underset{i \in I}{\mbox{colim}}\, \mathsf{Map}(X,Y_i)\\
& \simeq & \underset{i \in I}{\mbox{hocolim}} \,\mathbb{R}\underline{h}(Y_i)(X)
\end{array}
$$
This proves the lemma.
\end{proof}

We now denote by $\mathsf{L}_{\Sigma}\mathsf{Hot}_{\mathcal{M}_f}$ the derivator associated with
$\mathsf{L}_{\Sigma}\mathsf{Fun}(\mathcal{M}_f^o,Sset)$ and by
$\underline{\mathcal{M}_f}[S^{-1}]$ the pre-derivator
$\underline{\mathcal{M}_f}$ localized at the set $S$, see
\cite{Cisinski}.

Observe that the morphism of functors
$$ \Psi : h \longrightarrow \underline{h}\circ i$$
induces a $2$-morphism of derivators
$$ \overline{\Psi}: \, \mathsf{Ho}(h) \longrightarrow \mathbb{R}\underline{h}\circ
\mathsf{Ho}(i)\,.$$

\begin{lemma}\label{2-mor}
The $2$-morphism $\overline{\Psi}$ is an isomorphism.
\end{lemma}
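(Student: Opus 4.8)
The plan is to check that $\overline{\Psi}$ is an isomorphism of morphisms of derivators by verifying it on the base category $e$, using the conservativity property that characterizes isomorphisms of derivators together with the structure of the categories involved. First I would recall that a $2$-morphism between morphisms of derivators is an isomorphism if and only if it is an isomorphism when evaluated at every small category $L$; moreover, since both source and target prederivators here are genuine derivators (in particular they satisfy the conservativity axiom) and $\mathsf{Ho}(h)$ and $\mathbb{R}\underline{h}\circ\mathsf{Ho}(i)$ are both well-defined, it suffices to check that $\overline{\Psi}_L(F)$ is an isomorphism in $\mathsf{L}_{\Sigma}\mathsf{Hot}_{\mathcal{M}_f}(L)$ for every $L$ and every diagram $F\colon L^o\to\mathcal{M}_f[S^{-1}]$. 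Because isomorphisms in $\mathsf{L}_{\Sigma}\mathsf{Hot}_{\mathcal{M}_f}(L)$ can be detected pointwise on the objects $x\in L$ (the derivator axiom saying that a morphism is an isomorphism iff all its restrictions $x^*$ along points $x\colon e\to L$ are isomorphisms), this reduces the whole claim to the case $L=e$.

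For $L=e$, the statement to prove is exactly remark~\ref{funcpont}: for every object $X$ in $\mathcal{M}_f$ the morphism
$$\Psi(X)\colon \mathsf{Ho}(h)(X)\longrightarrow (\mathbb{R}\underline{h}\circ \mathsf{Ho}(i))(X)$$
is an isomorphism in $\mathsf{Ho}(\mathsf{L}_{\Sigma}\mathsf{Fun}(\mathcal{M}_f^o,Sset))$. This was already established: it is the content of remark~\ref{funcpont}, which invokes lemma $4.2.2$ from \cite{HAG}. So the only real work is the bookkeeping that glues the pointwise statement into a statement about $2$-morphisms of derivators — namely noting that $\overline{\Psi}$, being induced by the morphism of functors $\Psi\colon h\to \underline{h}\circ i$, has its component at a point $x\colon e\to L$ of a diagram $F$ given (up to canonical comparison isomorphisms, using that $\mathsf{Ho}(h)$ and $\mathbb{R}\underline{h}$ commute with the restriction functors $x^*$) by $\Psi(F(x))$, which is an isomorphism by remark~\ref{funcpont}.

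The main obstacle, such as it is, is purely formal: one has to make sure that the restriction functors $x^*$ genuinely commute with $\mathsf{Ho}(h)$ and with $\mathbb{R}\underline{h}$ in a way compatible with $\overline{\Psi}$, so that evaluating $\overline{\Psi}$ at $(L,F,x)$ really does reproduce $\Psi(F(x))$. For $\mathsf{Ho}(h)$ this is immediate since $h$ is a strict functor and the associated morphism of (pre)derivators is defined levelwise; for $\mathbb{R}\underline{h}$ it follows because $\underline{h}$ is a left Quillen–type functor whose right derived functor is computed objectwise in $\mathsf{Fun}(\mathcal{M}_f^o,Sset)$ (homotopy limits and the restriction functors $x^*$ being computed objectwise there), together with lemma~\ref{filtered} and the construction of $\mathbb{R}\underline{h}$ via the fibrant resolution functor. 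Once these compatibilities are recorded, the conservativity argument of the first paragraph finishes the proof.
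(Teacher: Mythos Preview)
Your proof is correct and follows exactly the paper's approach: use conservativity of the target derivator $\mathsf{L}_{\Sigma}\mathsf{Hot}_{\mathcal{M}_f}$ to reduce to the case $L=e$, then invoke remark~\ref{funcpont}. One minor inaccuracy worth flagging: your parenthetical claim that the source prederivator $\underline{\mathcal{M}_f}[S^{-1}]$ is a genuine derivator is unnecessary and generally false, but harmless here since only the target's conservativity is actually used in your reduction.
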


\begin{proof}
For the terminal category $e$, the $2$-morphism $\overline{\Psi}$
coincides with the morphism of functors of
remark~\ref{funcpont}. Since this one is an isomorphism, so is
$\overline{\Psi}$ by conservativity. This proves the lemma.
\end{proof}

As before, we have the following diagram
$$
\xymatrix{
\underline{\mathcal{M}_f}[S^{-1}] \ar[rr]^{\mathsf{Ho}(i)}
\ar[d]_{\mathsf{Ho}(h)} & &  \mathsf{HO}(\mathcal{M})
\ar[dll]^{\mathbb{R}\underline{h}}\,, \\
\mathsf{L}_{\Sigma}\mathsf{Hot}_{\mathcal{M}_f} & &
}
$$
which is commutative up to isomorphism in the $2$-category of
pre-derivators. Notice that by lemma~\ref{filtered},
$\mathbb{R}\underline{h}$ commutes with filtered homotopy colimits.

Let $\mathbb{D}$ be a derivator.

\begin{lemma}\label{colim}
The morphism of pre-derivators
$$ \underline{\mathcal{M}_f}[S^{-1}]
\stackrel{\mathsf{Ho}(h)}{\longrightarrow}
\mathsf{L}_{\Sigma}\mathsf{Hot}_{\mathcal{M}_f} \,,$$
induces an equivalence of categories
$$ \underline{\mathsf{Hom}}_!(\mathsf{L}_{\Sigma}\mathsf{Hot}_{\mathcal{M}_f},
\mathbb{D}) \stackrel{\mathsf{Ho}(h)^{\ast}}{\longrightarrow}
\underline{\mathsf{Hom}}(\underline{\mathcal{M}_f}[S^{-1}], \mathbb{D})\,.$$
\end{lemma}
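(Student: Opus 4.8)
The plan is to combine Theorem~\ref{Cin} with the two preceding lemmas (Lemma~\ref{fulfat}, Lemma~\ref{2-mor}) and the Bousfield localization theorem for derivators, Theorem~\ref{Cisinsk}. First I would note that the category $\mathcal{M}_f$ is small, so Theorem~\ref{Cin} applies to the derivator $\mathsf{Hot}_{\mathcal{M}_f} = \mathsf{HO}(\mathsf{Fun}(\mathcal{M}_f^o,Sset))$, giving an equivalence
$$ \underline{\mathsf{Hom}}(\underline{\mathcal{M}_f},\mathbb{D}) \stackrel{h^\ast}{\longrightarrow} \underline{\mathsf{Hom}}_!(\mathsf{Hot}_{\mathcal{M}_f},\mathbb{D})\,. $$
The morphism $h^\ast$ factors through restriction along the localization morphism $\mathsf{Hot}_{\mathcal{M}_f}\to \mathsf{L}_\Sigma\mathsf{Hot}_{\mathcal{M}_f}$, and by Theorem~\ref{Cisinsk} (applied to $\mathcal{M}=\mathsf{Fun}(\mathcal{M}_f^o,Sset)$, which is cellular and left proper, with the set $\Sigma$), the derivator $\mathsf{L}_\Sigma\mathsf{Hot}_{\mathcal{M}_f}$ is the left Bousfield localization of $\mathsf{Hot}_{\mathcal{M}_f}$ by the image of $\Sigma$ in $\mathsf{Ho}(\mathsf{Fun}(\mathcal{M}_f^o,Sset))$. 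Hence, by the universal property in Definition~\ref{defCis}, restriction along $\gamma$ induces an equivalence
$$ \underline{\mathsf{Hom}}_!(\mathsf{L}_\Sigma\mathsf{Hot}_{\mathcal{M}_f},\mathbb{D}) \stackrel{\gamma^\ast}{\longrightarrow} \underline{\mathsf{Hom}}_{!,\Sigma}(\mathsf{Hot}_{\mathcal{M}_f},\mathbb{D})\,, $$
the target being the full subcategory of $\underline{\mathsf{Hom}}_!(\mathsf{Hot}_{\mathcal{M}_f},\mathbb{D})$ of morphisms sending $\Sigma$ to isomorphisms.

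Next I would identify, under the equivalence $h^\ast$ of Theorem~\ref{Cin}, the subcategory $\underline{\mathsf{Hom}}_{!,\Sigma}(\mathsf{Hot}_{\mathcal{M}_f},\mathbb{D})$ with $\underline{\mathsf{Hom}}_S(\underline{\mathcal{M}_f},\mathbb{D})$, the morphisms of prederivators $\underline{\mathcal{M}_f}\to\mathbb{D}$ sending $S$ to isomorphisms in $\mathbb{D}(e)$. This is essentially the definition of the set $\Sigma$ as $h(S)$, together with the fact that a cocontinuous morphism out of $\mathsf{Hot}_{\mathcal{M}_f}$ is determined by its restriction along $h$; a morphism inverts $h(s)$ if and only if the corresponding morphism on $\underline{\mathcal{M}_f}$ inverts $s$, because $h$ is the (derived) Yoneda embedding and colimit-preserving functors respect the relation. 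Finally, I would invoke the universal property of the localized prederivator $\underline{\mathcal{M}_f}[S^{-1}]$ — by construction (see \cite{Cisinski}) $\underline{\mathcal{M}_f}[S^{-1}]$ is the prederivator obtained by formally inverting $S$, so
$$ \underline{\mathsf{Hom}}(\underline{\mathcal{M}_f}[S^{-1}],\mathbb{D}) \stackrel{\sim}{\longrightarrow} \underline{\mathsf{Hom}}_S(\underline{\mathcal{M}_f},\mathbb{D})\,. $$
Composing these three equivalences and checking that the composite is indeed restriction along $\mathsf{Ho}(h):\underline{\mathcal{M}_f}[S^{-1}]\to\mathsf{L}_\Sigma\mathsf{Hot}_{\mathcal{M}_f}$ — which is where Lemma~\ref{2-mor} enters, ensuring the two ways of going around the commutative triangle agree up to a canonical isomorphism of morphisms of prederivators — yields the desired equivalence $\mathsf{Ho}(h)^\ast$.

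The main obstacle I expect is the careful identification in the middle step: matching the localization $\underline{\mathcal{M}_f}[S^{-1}]$ of the prederivator with the full subcategory of $\Sigma$-inverting cocontinuous morphisms, and verifying naturality so that the composite equivalence is genuinely induced by $\mathsf{Ho}(h)$ rather than merely abstractly isomorphic to it. Here one must be precise about what "$\underline{\mathcal{M}_f}[S^{-1}]$" means as a prederivator and that $\mathsf{Ho}(h)$ really does send $S$ to isomorphisms (which follows since $h\colon\mathcal{M}_f\to\mathsf{L}_\Sigma\mathsf{Fun}(\mathcal{M}_f^o,Sset)$ sends $S$ to weak equivalences, as noted just before Remark~\ref{funcpont}). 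The 2-categorical bookkeeping — tracking that all the squares commute up to coherent isomorphism in the 2-category of prederivators, using Lemma~\ref{2-mor} to glue $\mathsf{Ho}(h)$ and $\mathbb{R}\underline{h}\circ\mathsf{Ho}(i)$ — is routine but needs to be spelled out to avoid circularity with the conservativity argument already used in Lemma~\ref{2-mor}.
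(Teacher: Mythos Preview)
Your proposal is correct and follows essentially the same approach as the paper: apply Theorem~\ref{Cisinsk} to identify $\underline{\mathsf{Hom}}_!(\mathsf{L}_{\Sigma}\mathsf{Hot}_{\mathcal{M}_f},\mathbb{D})$ with $\underline{\mathsf{Hom}}_{!,\Sigma}(\mathsf{Hot}_{\mathcal{M}_f},\mathbb{D})$, then use Theorem~\ref{Cin} to identify the latter with the full subcategory of $\underline{\mathsf{Hom}}(\underline{\mathcal{M}_f},\mathbb{D})$ of $S$-inverting morphisms, and finally invoke the definition of the localized prederivator $\underline{\mathcal{M}_f}[S^{-1}]$. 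The paper's proof is terser and does not invoke Lemma~\ref{fulfat} or Lemma~\ref{2-mor} here; the $2$-categorical bookkeeping you flag as the main obstacle is treated as evident in the paper, so your extra care on that point is not required but does no harm.
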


\begin{proof}
The category
$\underline{\mathsf{Hom}}_!(\mathsf{L}_{\Sigma}\mathsf{Hot}_{\mathcal{M}_f},\mathbb{D})$
is equivalent, by theorem~\ref{Cisinsk}, to the category $\underline{\mathsf{Hom}}_{!,\Sigma}(\mathsf{Hot}_{\mathcal{M}_f},
\mathbb{D})$. This last
category identifies, under the equivalence
$$ \underline{\mathsf{Hom}}_!(\mathsf{Hot}_{\mathcal{M}_f}, \mathbb{D})
\rightarrow
\underline{\mathsf{Hom}}(\underline{\mathcal{M}_f},\mathbb{D})$$
given by theorem~\ref{Cin}, with the full subcategory of
$\underline{\mathsf{Hom}}(\underline{\mathcal{M}_f},\mathbb{D})$
consisting of the morphisms of pre-derivators which send the elements
of $S$ to isomorphisms in $\mathbb{D}(e)$. Now observe that this last
category identifies with
$\underline{\mathsf{Hom}}(\underline{\mathcal{M}_f}[S^{-1}],
\mathbb{D})$, by definition of the localized pre-derivator $\underline{\mathcal{M}_f}[S^{-1}]$.
This proves the lemma.
\end{proof}

Recall from section $9.5$ in \cite{Dugger} that the
co-simplicial resolution functor $\Gamma(-)$ that we have fixed in the
beginning of this section allows us to construct a Quillen adjunction:
$$
\xymatrix{
\mathcal{M} \ar@<1ex>[d]^{\underline{h}=sing} \\
\mathsf{Fun}(\mathcal{M}_f^o,Sset) \ar@<1ex>[u]^{Re} \,.
}
$$
Since the functor $Re$ sends the elements of $\Sigma$ to weak
equivalences in $\mathcal{M}$, we have the following Quillen adjunction
$$
\xymatrix{
\mathcal{M} \ar@<1ex>[d]^{\underline{h}} \\
\mathsf{L}_{\Sigma}\mathsf{Fun}(\mathcal{M}_f^o,Sset)
\ar@<1ex>[u]^{Re}\,,
}
$$
and a natural weak equivalence
$$ \eta : Re \circ h \stackrel{\sim}{\longrightarrow} i\,,$$
see \cite{Dugger}.

This implies that we have the following diagram
$$
\xymatrix{
\mathcal{M}_f[S^{-1}] \ar[rr]^{\mathsf{Ho}(i)}
\ar[d]_{\mathsf{Ho}(h)} & &  \mathsf{Ho}(\mathcal{M})
\ar@<1ex>[dll]^{\mathbb{R}\underline{h}}\,, \\
\mathsf{Ho}(\mathsf{L}_{\Sigma}\mathsf{Fun}(\mathcal{M}_f^o,Sset))
\ar@<1ex>[urr]^{\mathbb{L}Re} & &
}
$$
which is commutative up to isomorphism.

We now claim that $\mathbb{L}Re \circ \mathbb{R}\underline{h}$
is naturally isomorphic to the identity. Indeed, by
proposition~\ref{prop}, each object of $\mathcal{M}$ is isomorphic in
$\mathsf{Ho}(\mathcal{M})$, to a filtered colimit of strict finite
$I$-cell objects. Since
$\mathbb{R}\underline{h}$ and $\mathbb{L}Re$ commute with filtered
homotopy colimits and $\mathbb{L}Re \circ
\mathsf{Ho}(h) \simeq \mbox{Id}$, we conclude that
$\mathbb{L}Re \circ \mathbb{R}\underline{h}$ is naturally
isomorphic to the identity. This implies that the morphism
$\mathbb{R}\underline{h}$ is fully faithful.

Now, observe that the natural weak
equivalence $\eta$ induces a $2$-isomorphism and so we obtain the
following diagram
$$
\xymatrix{
\underline{\mathcal{M}_f}[S^{-1}] \ar[rr]^{\mathsf{Ho}(i)}
\ar[d]_{\mathsf{Ho}(h)} & &  \mathsf{HO}(\mathcal{M})
\ar@<1ex>[dll]^{\mathbb{R}\underline{h}}\,, \\
\mathsf{L}_{\Sigma}\mathsf{Hot}_{\mathcal{M}_f}
\ar@<1ex>[urr]^{\mathbb{L}Re} & &
}
$$
which is commutative up to isomorphism in the $2$-category of pre-derivators. Notice that $\mathbb{L}Re \circ
\mathbb{R}\underline{h}$ is naturally isomorphic to the identity (by
conservativity) and
so the morphism of derivators $\mathbb{R}\underline{h}$ is fully faithful.

Let $\mathbb{D}$ be a derivator.

\begin{theorem}\label{flt}
The morphism of derivators
$$ \mathsf{HO}(\mathcal{M})
\stackrel{\mathbb{R}\underline{h}}{\longrightarrow}
\mathsf{L}_{\Sigma}\mathsf{Hot}_{\mathcal{M}_f}\,,$$
induces an equivalence of categories
$$ \underline{\mathsf{Hom}}_!(\mathsf{L}_{\Sigma}\mathsf{Hot}_{\mathcal{M}_f},
\mathbb{D}) \stackrel{\mathbb{R}\underline{h}^{\ast}}{\longrightarrow}
\underline{\mathsf{Hom}}_{flt}(\mathsf{HO}(\mathcal{M}), \mathbb{D})\,,$$
where $\underline{\mathsf{Hom}}_{flt}(\mathsf{HO}(\mathcal{M}),
\mathbb{D})$ denotes the category of morphisms of derivators which
commute with filtered homotopy colimits.
\end{theorem}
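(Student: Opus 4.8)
The plan is to combine the two main inputs assembled in this section: the equivalence from Lemma~\ref{colim} and the fully faithfulness of $\mathbb{R}\underline{h}$ established just above. First I would spell out the composite
$$
\xymatrix{
\underline{\mathsf{Hom}}_!(\mathsf{L}_{\Sigma}\mathsf{Hot}_{\mathcal{M}_f},
\mathbb{D})
\ar[r]^-{\mathsf{Ho}(h)^{\ast}} &
\underline{\mathsf{Hom}}(\underline{\mathcal{M}_f}[S^{-1}], \mathbb{D})
}
$$
which is an equivalence by Lemma~\ref{colim}, and observe that, via the commutative (up to isomorphism) triangle of pre-derivators relating $\mathsf{Ho}(h)$, $\mathsf{Ho}(i)$ and $\mathbb{R}\underline{h}$ (Lemma~\ref{2-mor}), precomposition with $\mathbb{R}\underline{h}$ on the subcategory $\underline{\mathsf{Hom}}_{flt}(\mathsf{HO}(\mathcal{M}),\mathbb{D})$ is identified, after restriction along $\mathsf{Ho}(i)$, with $\mathsf{Ho}(h)^{\ast}$. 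Concretely, for $G \in \underline{\mathsf{Hom}}_!(\mathsf{L}_{\Sigma}\mathsf{Hot}_{\mathcal{M}_f},\mathbb{D})$ the morphism $G \circ \mathbb{R}\underline{h}$ does commute with filtered homotopy colimits, since $\mathbb{R}\underline{h}$ does by Lemma~\ref{filtered} and $G$ commutes with all homotopy colimits; so $\mathbb{R}\underline{h}^{\ast}$ indeed lands in $\underline{\mathsf{Hom}}_{flt}(\mathsf{HO}(\mathcal{M}),\mathbb{D})$, and the functor is well-defined.

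Next I would prove faithfulness and fullness of $\mathbb{R}\underline{h}^{\ast}$. Since $\mathbb{R}\underline{h}$ is fully faithful (shown above using the retraction $\mathbb{L}Re \circ \mathbb{R}\underline{h} \simeq \mathrm{Id}$ coming from $\mathbb{L}Re \circ \mathsf{Ho}(h) \simeq \mathrm{Id}$ and the fact that both $\mathbb{R}\underline{h}$ and $\mathbb{L}Re$ commute with filtered homotopy colimits), precomposition with it is automatically faithful on $2$-morphisms, and it is full onto those $2$-morphisms that are compatible with the relevant structure; the point is to use that $\mathsf{Ho}(h)^{\ast}$ is \emph{already} an equivalence, so I can transport. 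The cleanest route: take $G_1, G_2$ in $\underline{\mathsf{Hom}}_!(\mathsf{L}_{\Sigma}\mathsf{Hot}_{\mathcal{M}_f},\mathbb{D})$ and a $2$-morphism $\alpha: G_1\circ\mathbb{R}\underline{h} \to G_2\circ\mathbb{R}\underline{h}$; restricting along $\mathsf{Ho}(i): \underline{\mathcal{M}_f}[S^{-1}] \to \mathsf{HO}(\mathcal{M})$ and using $\mathbb{R}\underline{h}\circ\mathsf{Ho}(i) \simeq \mathsf{Ho}(h)$ produces a $2$-morphism $G_1\circ\mathsf{Ho}(h) \to G_2\circ\mathsf{Ho}(h)$, which by Lemma~\ref{colim} comes from a unique $\bar\alpha: G_1 \to G_2$; then one checks $\bar\alpha \circ \mathbb{R}\underline{h} = \alpha$ using again the $2$-isomorphism of Lemma~\ref{2-mor} together with the fact that a morphism of derivators commuting with filtered homotopy colimits is determined by its restriction to the strict finite $I$-cells (Proposition~\ref{prop}~3)), i.e. by its value on the essential image of $\mathsf{Ho}(i)$. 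This same ``determined on finite cells'' principle gives faithfulness.

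For essential surjectivity, start with $F \in \underline{\mathsf{Hom}}_{flt}(\mathsf{HO}(\mathcal{M}),\mathbb{D})$. Its restriction $F\circ \mathsf{Ho}(i)$ is an object of $\underline{\mathsf{Hom}}(\underline{\mathcal{M}_f}[S^{-1}],\mathbb{D})$ (note $F$ sends $S$ to isomorphisms because each element of $S$ is a weak equivalence in $\mathcal{M}$), hence by Lemma~\ref{colim} there is $G \in \underline{\mathsf{Hom}}_!(\mathsf{L}_{\Sigma}\mathsf{Hot}_{\mathcal{M}_f},\mathbb{D})$ with $G\circ\mathsf{Ho}(h) \simeq F\circ\mathsf{Ho}(i)$, and therefore $G\circ\mathbb{R}\underline{h}\circ\mathsf{Ho}(i)\simeq F\circ\mathsf{Ho}(i)$. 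It remains to promote this to an isomorphism $G\circ\mathbb{R}\underline{h}\simeq F$ of morphisms on all of $\mathsf{HO}(\mathcal{M})$: both morphisms commute with filtered homotopy colimits, they agree (coherently) on the full subcategory spanned by strict finite $I$-cells, and Proposition~\ref{prop}~3) says every object of $\mathcal{M}$ is a filtered homotopy colimit of such. I expect the main obstacle to be precisely this last gluing step — making the ``agreement on generators plus commutation with filtered homotopy colimits implies agreement everywhere'' argument rigorous at the level of derivators (as opposed to just their base categories), since one must produce a coherent $2$-isomorphism over every index category $L$, not merely over $e$; here I would lean on conservativity of $\mathbb{D}$ to reduce to each $\mathbb{D}(L)$, express objects of $\mathsf{HO}(\mathcal{M})(L)$ via filtered homotopy colimits of diagrams of finite cells, and invoke the universal property of $\mathsf{L}_{\Sigma}\mathsf{Hot}_{\mathcal{M}_f}$ from Theorem~\ref{Cisinsk} to pin down the comparison.
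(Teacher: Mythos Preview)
Your overall strategy is sound and close to the paper's, but the ``gluing step'' you flag as the main obstacle is a genuine gap that you do not close, and the paper handles it by a different mechanism than the one you sketch. Concretely, for both fullness and essential surjectivity you need to go from an isomorphism after restriction along $\mathsf{Ho}(i)$ to an isomorphism on all of $\mathsf{HO}(\mathcal{M})$; this amounts to showing that
\[
\mathsf{Ho}(i)^{\ast}:\underline{\mathsf{Hom}}_{flt}(\mathsf{HO}(\mathcal{M}),\mathbb{D})\longrightarrow
\underline{\mathsf{Hom}}(\underline{\mathcal{M}_f}[S^{-1}],\mathbb{D})
\]
is fully faithful. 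You suggest proving this by the principle ``commutes with filtered homotopy colimits $+$ agrees on finite cells $\Rightarrow$ agrees everywhere'', but that principle lets you check that a \emph{given} $2$-morphism is invertible; it does not by itself manufacture the $2$-morphism you need to compare $G\circ\mathbb{R}\underline{h}$ and $F$ (or $\bar\alpha\circ\mathbb{R}\underline{h}$ and $\alpha$) coherently over all index categories. Extending a $2$-morphism from $\underline{\mathcal{M}_f}[S^{-1}]$ to $\mathsf{HO}(\mathcal{M})$ requires choices of filtered presentations and a naturality argument you have not supplied.

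The paper sidesteps this entirely by bringing in the left adjoint $\mathbb{L}Re$. First, since $\mathbb{L}Re\circ\mathbb{R}\underline{h}\simeq\mathrm{Id}$, the induced adjunction on $\underline{\mathsf{Hom}}(-,\mathbb{D})$ shows that $\mathbb{R}\underline{h}^{\ast}$ is fully faithful on the ambient categories, hence a fortiori on $\underline{\mathsf{Hom}}_!$. Second, for essential surjectivity the paper does not try to extend an isomorphism from finite cells; instead it takes $F\in\underline{\mathsf{Hom}}_{flt}$, forms $\mathbb{L}Re^{\ast}(F)=F\circ\mathbb{L}Re$, and uses the counit $\varphi:\Psi(\mathbb{L}Re^{\ast}(F))\to\mathbb{L}Re^{\ast}(F)$ of the adjunction of Lemma~\ref{ad1}. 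Whiskering by $\mathbb{R}\underline{h}$ then yields an honest $2$-morphism
\[
\mathbb{R}\underline{h}^{\ast}(\varphi):\Psi(\mathbb{L}Re^{\ast}(F))\circ\mathbb{R}\underline{h}\longrightarrow F
\]
in $\underline{\mathsf{Hom}}(\mathsf{HO}(\mathcal{M}),\mathbb{D})$, and now your ``finite cells $+$ filtered colimits $+$ conservativity'' argument applies verbatim to show this existing $2$-morphism is invertible. Note that (via $\mathbb{L}Re\circ h\simeq i$) the paper's candidate $\Psi(\mathbb{L}Re^{\ast}(F))$ is isomorphic to the $G$ you obtain from Lemma~\ref{colim}; the only thing you were missing was the use of $\mathbb{L}Re$ to produce the comparison $2$-morphism rather than trying to extend one.
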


\begin{proof}
We have the following adjunction
$$
\xymatrix{
\underline{\mathsf{Hom}}(\mathsf{HO}(\mathcal{M}), \mathbb{D})
\ar@<1ex>[d]^{\mathbb{L}{Re}^{\ast}} \\
\underline{\mathsf{Hom}}(\mathsf{L}_{\Sigma}\mathsf{Hot}_{\mathcal{M}_f},
\mathbb{D}) \ar@<1ex>[u]^{\mathbb{R} \underline{h}^{\ast}} \,,
}
$$
with ${\mathbb{R}\underline{h}}^{\ast}$ a fully faithful functor.

Now notice that the adjunction of lemma~\ref{ad1} induces naturally an
adjunction
$$
\xymatrix{
\underline{\mathsf{Hom}}(\mathsf{L}_{\Sigma}\mathsf{Hot}_{\mathcal{M}_f},\mathbb{D})
\ar@<2ex>[d]^{\Psi} \\
\underline{\mathsf{Hom}}_!(\mathsf{L}_{\Sigma}\mathsf{Hot}_{\mathcal{M}_f}, \mathbb{D})
\ar@{^{(}->}[u] \,.
}
$$
This implies that the composed functor
$$ {\mathbb{R}\underline{h}}^{\ast}: \, \underline{\mathsf{Hom}}_!(\mathsf{L}_{\Sigma}\mathsf{Hot}_{\mathcal{M}_f},
\mathbb{D}) \longrightarrow
\underline{\mathsf{Hom}}_{flt}(\mathsf{HO}(\mathcal{M}), \mathbb{D})$$
is fully faithful.

We now show that this functor is essentially surjective.

Let $F$ be an object of
$\underline{\mathsf{Hom}}_{flt}(\mathsf{HO}(\mathcal{M}),\mathbb{D})$.
Consider the morphism
$$ \mathbb{L}{Re}^{\ast}(F) := F \circ \mathbb{L}Re \,.$$
Notice that this morphism does not necessarily commute with homotopy
colimits.
Now, by the above adjunction, we have a universal $2$-morphism
$$ \varphi : \, \Psi({\mathbb{L}Re}^{\ast}(F)) \longrightarrow
{\mathbb{L}Re}^{\ast}(F)\,.$$
Now, consider the $2$-morphism
$${\mathbb{R}\underline{h}}^{\ast}:\,
{\mathbb{R}\underline{h}}^{\ast}((\Psi \circ
{\mathbb{L}Re}^{\ast})(F)) \longrightarrow
({\mathbb{R}\underline{h}}^{\ast} \circ {\mathbb{L}Re}^{\ast})(F)
\simeq F\,.$$

We will now show that this $2$-morphism is a $2$-isomorphism. By
conservativity, it is
sufficient to show this for the case of the terminal
category $e$. For this, observe that ${\mathbb{R}\underline{h}}^{\ast}(\varphi)$ induces an isomorphism
$$ \Psi({\mathbb{L}Re}^{\ast}(F)) \circ \mathbb{R}\underline{h} \circ
\mathsf{Ho}(i) \longrightarrow F \circ \mathsf{Ho}(i)\,.$$
Now each object of $\mathcal{M}$ is isomorphic, in
$\mathsf{Ho}(\mathcal{M})$, to a filtered colimit of strict finite
$I$-cell objects. Since $F$ and
$\Psi({\mathbb{L}Re}^{\ast}(F))$ commute with filtered homotopy
colimits, ${\mathbb{R}\underline{h}}^{\ast}(\varphi)$ induces an
isomorphism.
This shows that the functor $\mathbb{R}\underline{h}^{\ast}$ is
essentially surjective.

This proves the theorem.
\end{proof}

\section{Pointed derivators}\label{chappoint}
Recall from the previous section that we have constructed a derivator
$\mathsf{L}_{\Sigma}\mathsf{Hot}_{\mathcal{M}_f}$ associated with a
Quillen model category $\mathcal{M}$ satisfying suitable compactness assumptions.

Now suppose that $\mathsf{Ho}(\mathcal{M})$ is pointed, i.e. that the
morphism
$$ \emptyset \longrightarrow \ast \,,$$
in $\mathcal{M}$, where $\emptyset$ denotes the initial object and $\ast$
the terminal one, is a weak equivalence.
Consider the morphism
$$ P: \widetilde{\emptyset} \longrightarrow h(\emptyset)\,,$$
where $\widetilde{\emptyset}$ denotes the initial object in $\mathsf{L}_{\Sigma}\mathsf{Fun}(\mathcal{M}_f^o,Sset)$.

Observe that, since $\mathbb{R}\underline{h}$ admits a left adjoint,
$h(\emptyset)$ identifies with the terminal object in
$$ \mathsf{Ho}(\mathsf{L}_{\Sigma}\mathsf{Fun}(\mathcal{M}_f^o,Sset))\,,$$
because
$$h(\emptyset)=  \mathsf{Ho}(h)(\emptyset) \stackrel{\sim}{\rightarrow}
\mathbb{R}\underline{h}\circ \mathsf{Ho}(i)(\emptyset) \stackrel{\sim}{\rightarrow} \mathbb{R}\underline{h}(\ast)\,.$$
We denote by
$$ \mathsf{L}_{\Sigma,P}\mathsf{Fun}(\mathcal{M}_f^o,Sset)\,,$$
the left Bousfield localization of
$\mathsf{L}_{\Sigma}\mathsf{Fun}(\mathcal{M}_f^o,Sset)$ at the
morphism $P$.

Notice that the category
$$
\mathsf{Ho}(\mathsf{L}_{\Sigma,P}\mathsf{Fun}(\mathcal{M}_f^o,Sset))\,,$$
is now a pointed one.

We have the following morphisms of derivators
$$
\xymatrix{
\mathsf{Ho}(\mathcal{M}) \ar@<1ex>[d]^{\mathbb{R}\underline{h}} \\
\mathsf{L}_{\Sigma}\mathsf{Hot}_{\mathcal{M}_f}
\ar@<1ex>^{\mathbb{L}Re}[u] \ar[d]_{\Phi} \\
\mathsf{L}_{\Sigma,P}\mathsf{Hot}_{\mathcal{M}_f}.
}
$$
By construction, we have a pointed morphism of derivators
$$ \mathsf{HO}(\mathcal{M}) \stackrel{\Phi \circ
  \mathbb{R}\underline{h}}{\longrightarrow}
\mathsf{L}_{\Sigma,P}\mathsf{Hot}_{\mathcal{M}_f}\,,$$
which commutes with filtered homotopy colimits and preserves the point.

Let $\mathbb{D}$ be a pointed derivator.

\begin{proposition}\label{ext}
The morphism of derivators $\Phi \circ \mathbb{R}\underline{h}$
induces an equivalence of categories
$$\underline{\mathsf{Hom}}_!(\mathsf{L}_{\Sigma,P}\mathsf{Hot}_{\mathcal{M}_f},\mathbb{D})
\stackrel{(\Phi \circ
  \mathbb{R}\underline{h})^{\ast}}{\longrightarrow}
\underline{\mathsf{Hom}}_{flt,p}(\mathsf{HO}(\mathcal{M}),\mathbb{D})\,,$$
where
$\underline{\mathsf{Hom}}_{flt,p}(\mathsf{HO}(\mathcal{M}),\mathbb{D})$
denotes the category of morphisms of derivators which commute with
filtered homotopy colimits and preserve the point.
\end{proposition}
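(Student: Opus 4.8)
The plan is to deduce this pointed analogue of Theorem~\ref{flt} from Theorem~\ref{flt} itself together with the universal property of the left Bousfield localization $\mathsf{L}_{\Sigma,P}\mathsf{Hot}_{\mathcal{M}_f}$ provided by Theorem~\ref{Cisinsk}. First I would observe that, by Theorem~\ref{Cisinsk} applied to the cellular left proper model category $\mathsf{L}_{\Sigma}\mathsf{Fun}(\mathcal{M}_f^o,Sset)$ and the single morphism $P$, the morphism $\Phi$ exhibits $\mathsf{L}_{\Sigma,P}\mathsf{Hot}_{\mathcal{M}_f}$ as the left Bousfield localization of $\mathsf{L}_{\Sigma}\mathsf{Hot}_{\mathcal{M}_f}$ by the image of $P$ in the homotopy category. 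Consequently, precomposition with $\Phi$ induces an equivalence
$$\underline{\mathsf{Hom}}_!(\mathsf{L}_{\Sigma,P}\mathsf{Hot}_{\mathcal{M}_f},\mathbb{D}) \stackrel{\Phi^{\ast}}{\longrightarrow} \underline{\mathsf{Hom}}_{!,P}(\mathsf{L}_{\Sigma}\mathsf{Hot}_{\mathcal{M}_f},\mathbb{D})\,,$$
the right-hand side being the full subcategory of colimit-preserving morphisms sending the image of $P$ to an isomorphism in $\mathbb{D}(e)$.

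Next I would combine this with the equivalence of Theorem~\ref{flt}, namely $\mathbb{R}\underline{h}^{\ast}: \underline{\mathsf{Hom}}_!(\mathsf{L}_{\Sigma}\mathsf{Hot}_{\mathcal{M}_f},\mathbb{D}) \stackrel{\sim}{\to} \underline{\mathsf{Hom}}_{flt}(\mathsf{HO}(\mathcal{M}),\mathbb{D})$. The composite $(\Phi\circ\mathbb{R}\underline{h})^{\ast} = \mathbb{R}\underline{h}^{\ast}\circ\Phi^{\ast}$ is then automatically fully faithful, and its essential image inside $\underline{\mathsf{Hom}}_{flt}(\mathsf{HO}(\mathcal{M}),\mathbb{D})$ consists exactly of those $F$ whose corresponding colimit-preserving morphism $\overline{F}$ on $\mathsf{L}_{\Sigma}\mathsf{Hot}_{\mathcal{M}_f}$ sends the image of $P$ to an isomorphism. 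So the crux is to identify this condition on $F$ with the condition that $F$ preserves the point, i.e. that $F$ belongs to $\underline{\mathsf{Hom}}_{flt,p}(\mathsf{HO}(\mathcal{M}),\mathbb{D})$.

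For that identification I would argue as follows. Since $\mathbb{R}\underline{h}$ admits the left adjoint $\mathbb{L}Re$ and $\mathbb{L}Re\circ\mathbb{R}\underline{h}\simeq \mathrm{Id}$, and since $h(\emptyset)$ was shown in the excerpt to be the terminal object of $\mathsf{Ho}(\mathsf{L}_{\Sigma}\mathsf{Fun}(\mathcal{M}_f^o,Sset))$ via $h(\emptyset)\stackrel{\sim}{\to}\mathbb{R}\underline{h}(\ast)$, the morphism $P: \widetilde{\emptyset}\to h(\emptyset)$ is precisely the canonical map from the initial to the terminal object, whose image under $\mathbb{L}Re$ is (up to isomorphism) the map $\emptyset\to\ast$ in $\mathsf{Ho}(\mathcal{M})$ — which is already an equivalence because $\mathsf{Ho}(\mathcal{M})$ is pointed. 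Now given $F\in\underline{\mathsf{Hom}}_{flt}(\mathsf{HO}(\mathcal{M}),\mathbb{D})$, its transported morphism $\overline{F}\simeq \mathbb{L}Re^{\ast}(F)$ sends $P$ to an isomorphism if and only if $F$ sends $\mathbb{L}Re(P)\simeq(\emptyset\to\ast)$ to an isomorphism in $\mathbb{D}(e)$, i.e. if and only if $F(\ast)$ is a zero object of $\mathbb{D}(e)$, i.e. if and only if $F$ preserves the point. Combining the two equivalences and this last identification yields that $(\Phi\circ\mathbb{R}\underline{h})^{\ast}$ restricts to an equivalence onto $\underline{\mathsf{Hom}}_{flt,p}(\mathsf{HO}(\mathcal{M}),\mathbb{D})$, which is the assertion.

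The main obstacle I anticipate is the careful bookkeeping in the last step: one must check that the zig-zag identifications $\overline{F}\simeq\mathbb{L}Re^{\ast}(F)$, the adjunction counit $\mathbb{L}Re\circ\mathbb{R}\underline{h}\simeq\mathrm{Id}$, and the identification of $\mathbb{L}Re(P)$ with $\emptyset\to\ast$ are all compatible, so that "$\overline{F}$ inverts $P$" and "$F$ preserves the point" really are the same condition and not merely implied by one another. The rest is a formal consequence of Theorems~\ref{Cisinsk} and~\ref{flt}.
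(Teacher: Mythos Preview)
Your overall strategy matches the paper's proof: use Theorem~\ref{Cisinsk} to get the equivalence $\Phi^{\ast}$, use Theorem~\ref{flt} to get the equivalence $\mathbb{R}\underline{h}^{\ast}$, and then identify the condition ``inverts $P$'' on the $\mathsf{L}_{\Sigma}\mathsf{Hot}_{\mathcal{M}_f}$ side with ``preserves the point'' on the $\mathsf{HO}(\mathcal{M})$ side. The first two steps are fine.

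The identification step, however, has a genuine gap that is more than bookkeeping. Your claim $\overline{F}\simeq \mathbb{L}Re^{\ast}(F)$ is false: recall from the proof of Theorem~\ref{flt} that the inverse to $\mathbb{R}\underline{h}^{\ast}$ sends $F$ to $\Psi(\mathbb{L}Re^{\ast}(F))$, where $\Psi$ is the reflector from Lemma~\ref{ad1}. The morphism $\mathbb{L}Re^{\ast}(F)=F\circ\mathbb{L}Re$ need not preserve all homotopy colimits (since $F$ only preserves filtered ones), so in general $\Psi$ does not act as the identity on it. Worse, your chain of equivalences collapses: $\mathbb{L}Re(P)$ is the map $\emptyset\to\emptyset$ in $\mathsf{Ho}(\mathcal{M})$ (both source and target are zero because $\mathsf{Ho}(\mathcal{M})$ is pointed and $Re\circ h\simeq i$), hence is already an isomorphism. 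Thus ``$F$ inverts $\mathbb{L}Re(P)$'' is vacuously true for every $F$ and cannot be equivalent to the nontrivial condition ``$F$ preserves the point''.

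The correct argument, which the paper gives, treats the two endpoints of $P$ separately and exploits two different features of $\overline{F}=\Psi(\mathbb{L}Re^{\ast}(F))$. For the source: $\overline{F}$ preserves all homotopy colimits by construction, so $\overline{F}(\widetilde{\emptyset})$ is the initial object of $\mathbb{D}(e)$, which is the point since $\mathbb{D}$ is pointed. For the target: although $\overline{F}\not\simeq\mathbb{L}Re^{\ast}(F)$ globally, the two agree on representables $h(X)$ (this is built into the definition of $\Psi$ via Theorem~\ref{Cin}), so $\overline{F}(h(\emptyset))\simeq F(\mathbb{L}Re(h(\emptyset)))\simeq F(\emptyset)\simeq F(\ast)$. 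Hence $\overline{F}(P)$ is an isomorphism if and only if $F(\ast)$ is a zero object, which is exactly the point-preservation condition. The forward direction (from $\underline{\mathsf{Hom}}_{!,P}$ to $\underline{\mathsf{Hom}}_{flt,p}$) is handled by the same computation with $h(\emptyset)\simeq\mathbb{R}\underline{h}(\ast)$.
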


\begin{proof}
By theorem~\ref{Cisinsk}, we have an equivalence of categories
$$\underline{\mathsf{Hom}}_!(\mathsf{L}_{\Sigma,P}\mathsf{Hot}_{\mathcal{M}_f},\mathbb{D})
\stackrel{\Phi^{\ast}}{\longrightarrow}
\underline{\mathsf{Hom}}_{!,P}(\mathsf{L}_{\Sigma}\mathsf{Hot}_{\mathcal{M}_f},\mathbb{D})\,.$$
By theorem~\ref{flt}, we have an equivalence of categories
$$\underline{\mathsf{Hom}}_!(\mathsf{L}_{\Sigma}\mathsf{Hot}_{\mathcal{M}_f},\mathbb{D})
\stackrel{\mathbb{R}\underline{h}^{\ast}}{\longrightarrow}
\underline{\mathsf{Hom}}_{flt}(\mathsf{HO}(\mathcal{M}),\mathbb{D})\,.$$
We now show that under this last equivalence, the category
$\underline{\mathsf{Hom}}_{!,P}(\mathsf{L}_{\Sigma}\mathsf{Hot}_{\mathcal{M}_f},\mathbb{D})$
identifies with
$\underline{\mathsf{Hom}}_{flt,p}(\mathsf{HO}(\mathcal{M}),\mathbb{D})$.
Let $F$ be an object of
$\underline{\mathsf{Hom}}_{!,P}(\mathsf{L}_{\Sigma}\mathsf{Hot}_{\mathcal{M}_f},\mathbb{D})$.
Since $F$ commutes with homotopy colimits, it preserves the initial
object. This implies that $F \circ \mathbb{R}\underline{h}$ belongs to $\underline{\mathsf{Hom}}_{flt,p}(\mathsf{HO}(\mathcal{M},\mathbb{D})\,.$

Let now $G$ be an object of
$\underline{\mathsf{Hom}}_{flt,p}(\mathsf{HO}(\mathcal{M}),\mathbb{D})$.
Consider, as in the proof of theorem~\ref{flt}, the morphism
$$ \Psi({\mathbb{L}Re}^{\ast}(G)):
\mathsf{L}_{\Sigma}\mathsf{Hot}_{\mathcal{M}_f} \longrightarrow
\mathbb{D}\,.$$

Since $\Psi({\mathbb{L}Re}^{\ast}(G))$ commutes with homotopy
colimits, by construction, it sends $\widetilde{\emptyset}$ to the
point of $\mathbb{D}$. Observe also that $h(\emptyset)$ is also sent to
the point of $\mathbb{D}$ because
$$\Psi({\mathbb{L}Re}^{\ast}(G))(h(\emptyset)) \simeq
G(\emptyset)\,.$$

This proves the proposition.

\end{proof}

\section{Small weak generators}\label{small}

Let $\mathcal{N}$ be a pointed, left proper, compactly generated
Quillen model category as in definition $2.1$ of \cite{Toen-Vaq}.
Observe that in particular this implies that
$\mathcal{N}$ is finitely generated, as in section $7.4$ in \cite{Hovey}.
We denote by $\mathcal{G}$ the set of cofibers of the generating
cofibrations $I$ in $\mathcal{N}$. By corollary $7.4.4$ in
\cite{Hovey}, the set $\mathcal{G}$ is a set of small weak generators
for $\mathsf{Ho}(\mathcal{N})$, see definitions $7.2.1$ and $7.2.2$ in
\cite{Hovey}.
Let $S$ be a set of morphisms in
$\mathcal{N}$ between objects which are homotopically finitely
presented, see \cite{Toen-Vaq}, and $\mathsf{L}_S\mathcal{N}$ the left Bousfield localization
of $\mathcal{N}$ by $S$. We have an adjunction
$$
\xymatrix{
\mathsf{Ho}(\mathcal{N}) \ar@<-1ex>[d]_{\mathbb{L}Id} \\
\mathsf{Ho}(\mathsf{L}_S\mathcal{N}) \ar@<-1ex>[u]_{\mathbb{R}Id}\,.
}
$$
\begin{lemma}\label{gener}
The image of the set $\mathcal{G}$ under the functor $\mathbb{L}Id$ is a set of small weak generators in
$\mathsf{Ho}(\mathsf{L}_S\mathcal{N})$.
\end{lemma}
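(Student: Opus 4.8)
The plan is to prove the statement in two parts: first that $\mathbb{L}Id(\mathcal{G})$ is a set of weak generators, i.e. that an object $Y$ in $\mathsf{Ho}(\mathsf{L}_S\mathcal{N})$ is zero as soon as $\mathsf{Hom}_{\mathsf{Ho}(\mathsf{L}_S\mathcal{N})}(\mathbb{L}Id(G), Y[n]) = 0$ for all $G \in \mathcal{G}$ and all $n \in \mathbb{Z}$; and second that each $\mathbb{L}Id(G)$ is small (compact), i.e. that $\mathsf{Hom}_{\mathsf{Ho}(\mathsf{L}_S\mathcal{N})}(\mathbb{L}Id(G), -)$ commutes with arbitrary coproducts.

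For the generation part, I would exploit the adjunction $(\mathbb{L}Id, \mathbb{R}Id)$. Since $\mathbb{R}Id$ is the inclusion of $\mathsf{Ho}(\mathsf{L}_S\mathcal{N})$ into $\mathsf{Ho}(\mathcal{N})$ as the full subcategory of $S$-local objects, and since it is fully faithful, for any $S$-local $Y$ one has
$$
\mathsf{Hom}_{\mathsf{Ho}(\mathsf{L}_S\mathcal{N})}(\mathbb{L}Id(G), Y[n]) \;\cong\; \mathsf{Hom}_{\mathsf{Ho}(\mathcal{N})}(G, \mathbb{R}Id(Y)[n])\,.
$$
If the left-hand side vanishes for all $G \in \mathcal{G}$ and all $n$, then $\mathbb{R}Id(Y)$ is right-orthogonal to $\mathcal{G}$ in $\mathsf{Ho}(\mathcal{N})$; since $\mathcal{G}$ is a set of weak generators there, $\mathbb{R}Id(Y) \simeq 0$, hence $Y \simeq 0$ because $\mathbb{R}Id$ is fully faithful and reflects the zero object. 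This uses corollary $7.4.4$ of \cite{Hovey} for the generation of $\mathsf{Ho}(\mathcal{N})$ by $\mathcal{G}$.

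The compactness part is the main obstacle. Here I would argue as follows. Let $\{Y_i\}_{i \in I}$ be a family of objects of $\mathsf{Ho}(\mathsf{L}_S\mathcal{N})$; we may assume each $Y_i$ is fibrant in $\mathsf{L}_S\mathcal{N}$, hence $S$-local and fibrant in $\mathcal{N}$. The coproduct in $\mathsf{Ho}(\mathsf{L}_S\mathcal{N})$ is computed by taking the coproduct in $\mathcal{N}$ and then applying an $S$-localization functor $\mathsf{L}_S$; write $\coprod^{L_S} Y_i := \mathsf{L}_S(\coprod_i Y_i)$. Then, using that $\mathbb{R}Id$ preserves coproducts only up to the localization, one gets
$$
\mathsf{Hom}_{\mathsf{Ho}(\mathsf{L}_S\mathcal{N})}\big(\mathbb{L}Id(G), \textstyle\coprod^{L_S} Y_i\big) \;\cong\; \mathsf{Hom}_{\mathsf{Ho}(\mathcal{N})}\big(G, \mathsf{L}_S(\textstyle\coprod_i Y_i)\big)\,.
$$
The key point is to show that $G$ — being the cofiber of a generating cofibration, hence homotopically finitely presented in $\mathcal{N}$ — sees no difference between $\coprod_i Y_i$ and its $S$-localization, and that $\mathsf{Hom}_{\mathsf{Ho}(\mathcal{N})}(G,-)$ commutes with the coproduct $\coprod_i Y_i$. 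The second statement is the smallness of $\mathcal{G}$ in $\mathsf{Ho}(\mathcal{N})$, again from \cite[7.4.4]{Hovey}. For the first, I would use that $S$ consists of morphisms between homotopically finitely presented objects, so that (by a standard argument, cf. the filtered-colimit arguments in \cite{Toen-Vaq} and section~\ref{homotopy}) the class of $S$-local objects is closed under filtered homotopy colimits and the localization functor $\mathsf{L}_S$ can be built so that the natural map $\coprod_i Y_i \to \mathsf{L}_S(\coprod_i Y_i)$ is an $S$-equivalence inducing an isomorphism on $\mathsf{Hom}_{\mathsf{Ho}(\mathcal{N})}(G,-)$ for every homotopically finitely presented $G$; concretely, one realizes $\mathsf{L}_S$ via the small object argument with horns on $S$, and checks that each such horn-attachment, being a pushout along a map of homotopically finitely presented objects, does not affect maps out of $G$ up to the transfinite composition, after which one invokes smallness of $G$ to pass the colimit through.

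Putting these together: $\mathsf{Hom}_{\mathsf{Ho}(\mathsf{L}_S\mathcal{N})}(\mathbb{L}Id(G), \coprod^{L_S}_i Y_i) \cong \bigoplus_i \mathsf{Hom}_{\mathsf{Ho}(\mathcal{N})}(G, Y_i) \cong \bigoplus_i \mathsf{Hom}_{\mathsf{Ho}(\mathsf{L}_S\mathcal{N})}(\mathbb{L}Id(G), Y_i)$, which is the desired smallness, and together with the orthogonality argument this shows $\mathbb{L}Id(\mathcal{G})$ is a set of small weak generators. The delicate step is the interaction between the coproduct and the $S$-localization for homotopically finitely presented sources; I expect this to require the explicit model for $\mathsf{L}_S$ and the hypothesis that the sources and targets of $S$ are homotopically finitely presented, exactly as assumed in the statement preceding the lemma.
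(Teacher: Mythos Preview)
Your generation argument via the adjunction is correct and is exactly what the paper does (the paper phrases it through the equivalent description of $\mathbb{R}Id$ as the inclusion of $S$-local objects, but the content is the same).

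For smallness, however, your proposed route through the small object argument has a real gap. You write that each horn-attachment in the construction of $\mathsf{L}_S(\coprod_i Y_i)$ ``does not affect maps out of $G$''. This is not true in general: attaching a horn on $S$ is precisely an $S$-local equivalence that is \emph{not} a weak equivalence in $\mathcal{N}$, and it typically does change $\mathsf{Hom}_{\mathsf{Ho}(\mathcal{N})}(G,-)$ --- that is the whole point of localization. There is no reason a homotopically finitely presented $G$ should be blind to such attachments. So the transfinite argument you sketch does not go through.

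The paper's approach is more direct and avoids this trap. Instead of analyzing $\mathsf{L}_S$, it shows that the full subcategory of $S$-local objects is closed under filtered homotopy colimits: for a filtered diagram $\{X_i\}$ of $S$-local objects and $g:A\to B$ in $S$, one uses that $A,B$ are homotopically finitely presented to get
\[
\mathsf{Map}(B,\operatorname{hocolim}_i X_i)\simeq\operatorname{colim}_i\mathsf{Map}(B,X_i)\xrightarrow{\ \sim\ }\operatorname{colim}_i\mathsf{Map}(A,X_i)\simeq\mathsf{Map}(A,\operatorname{hocolim}_i X_i),
\]
so the colimit is again $S$-local. Hence the inclusion $\mathsf{Ho}(\mathcal{N})_S\hookrightarrow\mathsf{Ho}(\mathcal{N})$ preserves filtered homotopy colimits, and the standard adjunction argument then shows its left adjoint $(-)_f\simeq\mathbb{L}Id$ sends small objects to small objects. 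You actually mention this closure property in passing; that is the lemma you should lean on, rather than the cell-by-cell analysis of $\mathsf{L}_S$.
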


\begin{proof}
The previous adjunction is equivalent to
$$
\xymatrix{
\mathsf{Ho}(\mathcal{N}) \ar@<-2ex>[d]_{(-)_f} \\
*+<1pc>{\mathsf{Ho}(\mathcal{N})_S} \ar@{_{(}->}[u]\,,
}
$$
where $\mathsf{Ho}(\mathcal{N})_S$ denotes the full subcategory of
$\mathsf{Ho}(\mathcal{M})$ formed by the $S$-local objects of
$\mathcal{N}$ and $(-)_f$ denotes a fibrant resolution functor in
$\mathsf{L}_S\mathcal{N}$, see \cite{Hirschhorn}. Clearly, this
implies that the image of the set $\mathcal{G}$ under the functor $(-)_f$ is a set of weak generators in
$\mathsf{Ho}(\mathsf{L}_S\mathcal{N})$.

We now show that the $S$-local
objects in $\mathcal{N}$ are stable under filtered homotopy colimits.
Let ${\{ X_i \}}_{i \in I}$ be a filtered diagram of $S$-local
objects.
By proposition~\ref{prop}, we have an
isomorphism
$$ \underset{i \in I}{\mbox{hocolim}}\,X_i \stackrel{\sim}{\longrightarrow}
\underset{i \in I}{\mbox{colim}}\,X_i$$
in $\mathsf{Ho}(\mathcal{N})$.
We now show that $\underset{i \in I}{\mbox{colim}}\,X_i$ is an $S$-local
object.
Let $g:A \rightarrow B$ be an element of $S$. We have at our disposal the
following commutative diagram
$$
\xymatrix{
\mathsf{Map}(B,\underset{i \in I}{\mbox{colim}}\,X_i) \ar[rr]^{g^*} & &
\mathsf{Map}(A,\underset{i \in I}{\mbox{colim}}\,X_i) \\
\underset{i \in I}{\mbox{colim}}\,\mathsf{Map}(B,X_i) \ar[u]^{\sim}   \ar[rr]_{\underset{i
    \in I}{\mbox{\mbox{colim}}} \, g_i^*} & & \underset{i \in
  I}{\mbox{colim}}\,\mathsf{Map}(A,X_i) \ar[u]_{\sim}\,.
}
$$
Now observe that since $A$ and $B$ are homotopically finitely
presented objects, the vertical arrows in the diagram are isomorphisms
in $\mathsf{Ho}(Sset)$. Since each object $X_i$ is $S$-local, the
morphism $g_i^*$ is an isomorphism in $\mathsf{Ho}(Sset)$ and so is $\underset{i
    \in I}{\mbox{colim}} \, g_i^*$. This implies that $\underset{i \in
    I}{\mbox{colim}}\,X_i$ is an $S$-local object. This shows that the
  inclusion
$$ \mathsf{Ho}(\mathcal{M})_S \hookrightarrow
\mathsf{Ho}(\mathcal{M})\,,$$
commutes with filtered homotopy colimits and so the image of the
set $\mathcal{G}$ under the functor $(-)_f$ consists of small objects of
$\mathsf{Ho}(\mathsf{L}_S\mathcal{N})$.

This proves the lemma.
\end{proof}

Recall from the previous chapter that we have constructed a pointed
derivator $\mathsf{L}_{\Sigma,P}\mathsf{Hot}_{\mathcal{M}_f}$. We will
now construct a strictly pointed Quillen model catgory whose
associated derivator is equivalent to $\mathsf{L}_{\Sigma,P}\mathsf{Hot}_{\mathcal{M}_f}$.
Consider the pointed Quillen model category
$$ \ast \downarrow \mathsf{Fun}(\mathcal{M}^o_f,Sset) =
\mathsf{Fun}(\mathcal{M}_f^o, Sset_{\bullet})\,.$$
We have the following Quillen adjunction
$$
\xymatrix{
\mathsf{Fun}(\mathcal{M}^o_f,Sset_{\bullet}) \ar@<1ex>[d]^U \\
\mathsf{Fun}(\mathcal{M}_f^o,Sset) \ar@<1ex>[u]^{(-)_{+}}\,,
}
$$
where $U$ denotes the forgetful functor.

We denote by
$\mathsf{L}_{\Sigma,P}\mathsf{Fun}(\mathcal{M}_f^o,Sset_{\bullet})$
the left Bousfield localization of
$\mathsf{Fun}(\mathcal{M}_f^o,Sset_{\bullet})$ by the image of the set
$\Sigma \cup \{ P\}$ under the functor $(-)_{+}$.
We denote by ${\mathsf{L}_{\Sigma,P}\mathsf{Hot}_{\mathcal{M}_f}}_{\bullet}$ the
derivator associated with $\mathsf{L}_{\Sigma,P}\mathsf{Fun}(\mathcal{M}_f^o,Sset_{\bullet})$.

\begin{remark}\label{remar}
Since the derivators associated with
$\mathsf{L}_{\Sigma,P}\mathsf{Fun}(\mathcal{M}_f^o,Sset)$ and
$\mathsf{L}_{\Sigma,P}\mathsf{Fun}(\mathcal{M}_f^o,Sset_{\bullet})$
are caracterized by the same universal property we have a canonical equivalence of pointed derivators
$$ \mathsf{L}_{\Sigma,P}\mathsf{Hot}_{\mathcal{M}_f}
\stackrel{\sim}{\longrightarrow}
{\mathsf{L}_{\Sigma,P}\mathsf{Hot}_{\mathcal{M}_f}}_{\bullet}$$
Notice also that the category
$\mathsf{Fun}(\mathcal{M}_f^o,Sset_{\bullet})$ endowed with the
projective model structure is pointed, left proper, compactly
generated and that the domains and codomains of the elements of the
set $(\Sigma \cup \{P\})_+$ are homotopically finitely presented
objects.
Therefore by lemma~\ref{gener}, the set
$$ \mathcal{G}= \{
\mathbf{F}^X_{\Delta[n]_+ / \partial \Delta[n]_+ } |
\, X \in
\mathcal{M}_f\,, n\geq 0 \} \,,$$
of cofibers of the generating cofibrations in
$\mathsf{Fun}(\mathcal{M}_f^o,Sset_{\bullet})$ is a set of small weak
generators in \newline $\mathsf{Ho}(\mathsf{L}_{\Sigma,P}\mathsf{Fun}(\mathcal{M}_f^o,Sset_{\bullet}))$.
\end{remark}

\section{Stabilization}\label{spectra}

Let $\mathbb{D}$ be a regular pointed strong derivator.

In \cite{Heller}, Heller constructs a universal morphism to a triangulated
strong derivator
$$ \mathbb{D} \stackrel{stab}{\longrightarrow}
\mathsf{St}(\mathbb{D})\,,$$
which commutes with homotopy colimits.

This construction is done in two steps.
First consider the following ordered set
$$ \mathbf{V} := \{ (i,j) |\, |i -j|\leq 1\} \subset \mathbb{Z} \times \mathbb{Z}$$
naturally as a small category. We denote by
$$ \overset{\cdot}{\mathbf{V}}:= \{ (i,j) |\, |i-j|=1\} \subset
\mathbb{V}\,,$$
the full subcategory of `points on the boundary'.

Now, let $\mathsf{Spec}(\mathbb{D})$ be the full subderivator of
$\mathbb{D}_{\mathbf{V}}$, see definition $3.4$ in \cite{Cis-Nee}, formed by the objects $X$ in
$\mathbb{D}_{\mathbf{V}}(L)$, whose image under the functor
$$ \mathbb{D}_{\mathbf{V}}(L)=\mathbb{D}(\mathbf{V} \times L)
\longrightarrow \mathsf{Fun}(\mathbf{V}^{op},\mathbb{D}(L))$$
is of the form
$$
\xymatrix{
 & \ast \ar[r] & X_{(1,1)}\, \cdots \\
\ast \ar[r] & X_{(0,0)} \ar[r] \ar[u] & \ast \ar[u] \\
\cdots\,X_{(-1,-1)} \ar[u] \ar[r] & \ast \ar[u] & \,,
}
$$
see section $8$ in \cite{Heller}.
We have an evaluation functor $ev_{(0,0)}:\mathsf{Spec}(\mathbb{D})
\rightarrow \mathbb{D}$, which admits a left adjoint $L[0,0]$.

Finally, let $\mathsf{St}(\mathbb{D})$ be the full reflexive subderivator of
$\mathsf{Spec}(\mathbb{D})$ consisting of the $\Omega$-spectra, as
defined in \cite{Heller}.

We have the following adjunctions
$$
\xymatrix{
\mathbb{D} \ar@<-1ex>[d]_{L[0,0]} \ar@/_4pc/[dd]_{stab} \\
\mathsf{Spec}(\mathbb{D}) \ar@<-1ex>[u]_{ev_{(0,0)}}
\ar@<-1ex>[d]_{loc} \\
*+<1pc>{\mathsf{St}(\mathbb{D})} \ar@{_{(}->}[u] \,,
}
$$
in the $2$-category of derivators.

Let $\mathbb{T}$ be a triangulated strong derivator.
The following theorem is proved in \cite{Heller}.

\begin{theorem}\label{HellerT}
The morphism $stab$ induces an equivalence of categories
$$ \underline{\mathsf{Hom}}_!(\mathsf{St}(\mathbb{D}),\mathbb{T})
\stackrel{stab^*}{\longrightarrow}
\underline{\mathsf{Hom}}_!(\mathbb{D},\mathbb{T})\,.$$
\end{theorem}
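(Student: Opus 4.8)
\textbf{Proof strategy for Theorem~\ref{HellerT}.}
The plan is to follow Heller's original argument, splitting the universal property of $\mathsf{St}(\mathbb{D})$ into its two constituent adjunctions and proving the equivalence at each stage. First I would treat the stabilization step $L[0,0]:\mathbb{D}\to \mathsf{Spec}(\mathbb{D})$: since $L[0,0]$ is left adjoint to the evaluation functor $ev_{(0,0)}$ and $\mathsf{Spec}(\mathbb{D})$ is by construction the full subderivator of $\mathbb{D}_{\mathbf{V}}$ cut out by the exactness/pointedness conditions displayed above, a morphism $\mathsf{Spec}(\mathbb{D})\to\mathbb{T}$ commuting with homotopy colimits is determined by its restriction along $L[0,0]$. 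Concretely, one shows that every object of $\mathsf{Spec}(\mathbb{D})(L)$ is a homotopy colimit of shifted copies of objects $L[0,0](X)$: this uses the diagram shape $\mathbf{V}$ and the fact that the boundary subcategory $\overset{\cdot}{\mathbf{V}}$ consists of zero objects, so that a spectrum object is reconstructed from its $(0,0)$-entries under the various index shifts $(i,j)\mapsto(i+n,j+n)$ by taking a filtered (sequential) homotopy colimit. Regularity of $\mathbb{D}$, which passes to $\mathsf{Spec}(\mathbb{D})$, guarantees these colimits interact correctly with the finite limits defining the spectrum conditions.

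The second step is the reflective localization $loc:\mathsf{Spec}(\mathbb{D})\to\mathsf{St}(\mathbb{D})$. Here $\mathsf{St}(\mathbb{D})$ is the full subderivator of $\Omega$-spectra, which is reflexive: the inclusion has a left adjoint $loc$, and $loc$ commutes with homotopy colimits precisely because $\mathbb{D}$ (hence $\mathsf{Spec}(\mathbb{D})$) is regular — sequential homotopy colimits commute with the finite homotopy limits used to form the fibrant ($\Omega$-spectrum) replacement. One then argues, exactly as for a Bousfield localization, that composition with $loc$ gives an equivalence $\underline{\mathsf{Hom}}_!(\mathsf{St}(\mathbb{D}),\mathbb{T})\stackrel{\sim}{\to}\underline{\mathsf{Hom}}_{!,W}(\mathsf{Spec}(\mathbb{D}),\mathbb{T})$, where $W$ denotes the stable equivalences; but since $\mathbb{T}$ is already a triangulated strong derivator, every colimit-preserving morphism $\mathsf{Spec}(\mathbb{D})\to\mathbb{T}$ automatically inverts the stable equivalences (the relevant maps become isomorphisms after applying a suspension–loop adjunction that exists in $\mathbb{T}$), so the side condition $W$ is vacuous. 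Composing the two equivalences yields $\underline{\mathsf{Hom}}_!(\mathsf{St}(\mathbb{D}),\mathbb{T})\stackrel{\sim}{\to}\underline{\mathsf{Hom}}_!(\mathbb{D},\mathbb{T})$, which is the assertion.

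The main obstacle I anticipate is the verification that $\mathsf{St}(\mathbb{D})$ is genuinely a \emph{triangulated} strong derivator and that $stab$ lands in the subcategory of colimit-preserving morphisms with the stated universal property among \emph{all} triangulated strong derivators $\mathbb{T}$ (not merely those arising from stable model categories). This requires Heller's stability criterion: showing that on $\mathsf{St}(\mathbb{D})$ the suspension $\Sigma$ is an equivalence with inverse $\Omega$, which in turn rests on the explicit combinatorics of the poset $\mathbf{V}$ and on the regularity hypothesis ensuring the relevant homotopy (co)limits are computed correctly. Once this structural fact is in place, the two adjunction arguments above are essentially formal manipulations with $2$-categorical adjunctions of derivators and the conservativity axiom (reducing checks to the terminal category $e$). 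I would therefore organize the write-up as: (1) recall $\mathbf{V}$, $\overset{\cdot}{\mathbf{V}}$ and the spectrum/$\Omega$-spectrum conditions; (2) prove $\mathsf{Spec}(\mathbb{D})$ and $\mathsf{St}(\mathbb{D})$ are regular pointed strong derivators, with $\mathsf{St}(\mathbb{D})$ triangulated; (3) prove the generation statement that $L[0,0]$-images generate $\mathsf{Spec}(\mathbb{D})$ under homotopy colimits; (4) deduce the two equivalences and compose them, citing \cite{Heller} for the detailed diagram chases.
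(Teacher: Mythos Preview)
The paper does not prove this theorem: it is stated immediately after the sentence ``The following theorem is proved in \cite{Heller}'' and no argument is given. So there is no proof in the paper to compare your proposal against; the author simply defers to Heller's monograph.

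Your outline is a reasonable high-level sketch of Heller's original argument (the two-stage decomposition via $\mathsf{Spec}(\mathbb{D})$ and the reflective localization onto $\Omega$-spectra, with regularity controlling the interaction of sequential colimits with the finite limits defining $\Omega$-spectra). Since the paper only cites \cite{Heller} and you explicitly plan to do the same for the detailed diagram chases, your proposal is already strictly more than the paper provides. If your goal is to match the paper, a one-line citation to \cite{Heller} suffices; if your goal is to actually supply a proof, what you have written is a plausible roadmap but not yet a proof---the substantive work (triangulatedness of $\mathsf{St}(\mathbb{D})$, the generation statement for $\mathsf{Spec}(\mathbb{D})$, and the claim that colimit-preserving morphisms to a triangulated $\mathbb{T}$ automatically invert stable equivalences) all lives inside Heller's text and would need to be extracted and checked.
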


\begin{lemma}\label{gener1}
Let $\mathcal{G}$ be a set of objects in $\mathbb{D}(e)$
which satisfies the following conditions:
\begin{itemize}
\item[A1)] If, for each $g$ in $\mathcal{G}$, we have
$$ \mathsf{Hom}_{\mathbb{D}(e)}(g,X)=\{*\}\,,$$
then $X$ is isomorphic to $*$, where $*$ denotes
 the terminal and initial object in $\mathbb{D}(e)$.

\item[A2)] For every set $K$ and each $g$ in $\mathcal{G}$  the
  canonical map
$$\underset{\stackrel{S \subseteq K}{S finite}}{\mbox{colim}}\,
\mathsf{Hom}_{\mathbb{D}(e)}(g, \underset{\alpha \in S}{\coprod}
X_{\alpha}) \stackrel{\sim}{\rightarrow} \mathsf{Hom}_{\mathbb{D}(e)}(g, \underset{\alpha \in S}{\coprod}
X_{\alpha})$$
is bijective.
\end{itemize}
Then the set
$$ {\{ \Sigma^n stab(g) \,|\, g \in \mathcal{G}, \, n \in \mathbb{Z} \}}$$
of objects in $\mathsf{St}(\mathbb{D})(e)$, where $\Sigma$ denotes the
suspension functor in $\mathsf{St}(\mathbb{D})(e)$, satisfies $A1)$
and $A2)$.
\end{lemma}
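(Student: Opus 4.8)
The plan is to verify conditions $A1)$ and $A2)$ for the set
$\{\Sigma^n stab(g)\,|\,g\in\mathcal{G},\,n\in\mathbb{Z}\}$ in $\mathsf{St}(\mathbb{D})(e)$
by reducing morphism computations in the stable derivator to morphism computations in $\mathbb{D}(e)$.
First I would recall the key adjunction formula: since $stab$ is left adjoint to the evaluation/forgetful chain
$\mathsf{St}(\mathbb{D})\hookrightarrow\mathsf{Spec}(\mathbb{D})\overset{ev_{(0,0)}}{\to}\mathbb{D}$,
for any $g\in\mathbb{D}(e)$ and any $Y\in\mathsf{St}(\mathbb{D})(e)$ one has
$$\mathsf{Hom}_{\mathsf{St}(\mathbb{D})(e)}(stab(g),Y)\cong\mathsf{Hom}_{\mathbb{D}(e)}(g,ev_{(0,0)}(Y))\,.$$
Moreover, since $Y$ is an $\Omega$-spectrum, $ev_{(0,0)}(Y)\simeq\Omega^k ev_{(k,k)}(Y)$, and the suspension
$\Sigma$ in $\mathsf{St}(\mathbb{D})(e)$ is an equivalence with inverse $\Omega$, so that
$\mathsf{Hom}_{\mathsf{St}(\mathbb{D})(e)}(\Sigma^n stab(g),Y)\cong\mathsf{Hom}_{\mathsf{St}(\mathbb{D})(e)}(stab(g),\Omega^n Y)\cong\mathsf{Hom}_{\mathbb{D}(e)}(g,ev_{(0,0)}(\Omega^n Y))$.
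Thus every morphism group out of a $\Sigma^n stab(g)$ is identified with a morphism group out of $g$ in $\mathbb{D}(e)$, evaluated at some object obtained functorially from $Y$.

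For $A1)$: suppose $Y\in\mathsf{St}(\mathbb{D})(e)$ has $\mathsf{Hom}_{\mathsf{St}(\mathbb{D})(e)}(\Sigma^n stab(g),Y)=\{*\}$
for all $g\in\mathcal{G}$ and all $n\in\mathbb{Z}$. By the formula above this means $\mathsf{Hom}_{\mathbb{D}(e)}(g, ev_{(n,n)}(Y))=\{*\}$
for all $g\in\mathcal{G}$ and all $n$ (using that $ev_{(0,0)}(\Omega^nY)$ is, up to isomorphism, $ev_{(n,n)}(Y)$, or equivalently
$ev_{(-n,-n)}$, depending on sign conventions — I would fix this once by tracking the definitions in \cite{Heller}, section $8$).
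Then $A1)$ for $\mathcal{G}$ in $\mathbb{D}(e)$ forces $ev_{(n,n)}(Y)\simeq *$ for every $n$. An $\Omega$-spectrum all of whose
diagonal components vanish is itself the zero object of $\mathsf{Spec}(\mathbb{D})$, hence of $\mathsf{St}(\mathbb{D})$: indeed
the off-diagonal entries $X_{(i,j)}$ with $|i-j|=1$ are, by the defining homotopy cartesian/cocartesian squares and the
vanishing of the neighbouring diagonal terms, also isomorphic to $*$, and then $Y$ is the image of $*$ under $stab$ applied
componentwise. So $Y\simeq *$, proving $A1)$.

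For $A2)$: fix $g\in\mathcal{G}$, $n\in\mathbb{Z}$, a set $K$ and objects $\{Y_\alpha\}_{\alpha\in K}$ in $\mathsf{St}(\mathbb{D})(e)$.
Since $stab$ commutes with homotopy colimits (it is a left adjoint in the $2$-category of derivators, by Heller's construction)
and $\Sigma$ does too, the coproduct $\coprod_{\alpha}Y_\alpha$ computed in $\mathsf{St}(\mathbb{D})(e)$ is obtained by applying
the reflector $loc$ to the coproduct computed in $\mathsf{Spec}(\mathbb{D})(e)$, which in turn is computed componentwise from the
coproducts in $\mathbb{D}(e)$. Using the adjunction formula and commuting $ev_{(0,0)}$ past the relevant homotopy colimit (here I
would invoke that $ev_{(0,0)}$ on $\mathsf{St}$ is computed as $\Omega^k ev_{(k,k)}$ on $\mathsf{Spec}$, and that on
$\mathsf{Spec}(\mathbb{D})$ evaluation commutes with coproducts since coproducts are pointwise), the left-hand side of $A2)$ for
$\Sigma^n stab(g)$ reduces to the filtered-colimit-of-finite-subcoproducts statement for $g$ against the family
$\{ev_{(\cdot)}(Y_\alpha)\}$ in $\mathbb{D}(e)$, which holds by $A2)$ for $\mathcal{G}$. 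The compatibility of the canonical
comparison maps is then a diagram chase, functorial in everything.

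The main obstacle I anticipate is the second step of $A2)$: one must be careful that forming the coproduct in
$\mathsf{St}(\mathbb{D})(e)$ really is computed componentwise up to the reflector, and that evaluating an $\Omega$-spectrum
coproduct at $(0,0)$ genuinely commutes with the coproduct — a priori the reflector $loc$ does not commute with coproducts,
but one salvages this exactly because $g$ (being in $\mathcal{G}$, built from generating cofibrations) is small, so that
mapping out of $stab(g)$ only sees finitely many components at a time and the reflector becomes invisible. Making this
precise — essentially re-deriving that $stab(\mathcal{G})$ detects smallness — is the heart of the argument; the rest is
bookkeeping with Heller's adjunctions and the $\Omega$-spectrum condition. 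I would organize the write-up so that this
smallness-versus-localization point is isolated as the one genuinely new observation, with $A1)$ dispatched quickly first
as a warm-up.
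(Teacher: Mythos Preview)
Your treatment of $A1)$ matches the paper's: reduce via the adjunctions $\Sigma^n\dashv\Omega^n$ and $stab\dashv ev_{(0,0)}$ to the vanishing of each $ev_{(n,n)}(Y)$ in $\mathbb{D}(e)$, then conclude $Y\simeq\ast$. One small redundancy: the off-diagonal entries of any object of $\mathsf{Spec}(\mathbb{D})$ are $\ast$ by definition, so no argument is needed there; the paper simply invokes the conservativity axiom once the diagonal entries are shown to vanish.

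For $A2)$ your outline diverges from the paper, and the anticipated obstacle is where the divergence costs you. The paper proceeds by a direct chain of isomorphisms using two facts it takes as given: in the triangulated category $\mathsf{St}(\mathbb{D})(e)$ the loop functor $\Omega$ is an autoequivalence, hence commutes with $\bigoplus$; and the evaluation $ev_{(0,0)}:\mathsf{St}(\mathbb{D})(e)\to\mathbb{D}(e)$ preserves coproducts. Together these yield
\[
\mathsf{Hom}_{\mathsf{St}(\mathbb{D})(e)}\bigl(\Sigma^n\,stab(g),\textstyle\bigoplus_\alpha X_\alpha\bigr)\ \cong\ \mathsf{Hom}_{\mathbb{D}(e)}\bigl(g,\textstyle\coprod_\alpha ev_{(0,0)}\Omega^n X_\alpha\bigr),
\]
to which hypothesis $A2)$ for $g$ applies immediately; running the chain back over finite $S\subset K$ finishes the argument. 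Your detour through $\mathsf{Spec}(\mathbb{D})$ and the reflector $loc$ is unnecessary once the second fact is in hand. More seriously, your proposed salvage (``$g$ is small, so the reflector becomes invisible'') would require $\mathsf{Hom}_{\mathbb{D}(e)}(g,-)$ to commute with the \emph{sequential} homotopy colimit that computes $loc$, whereas hypothesis $A2)$ only provides smallness relative to \emph{coproducts}; as written, your workaround has a gap. (Your parenthetical ``built from generating cofibrations'' imports an assumption not present in the abstract statement of the lemma.) The paper sidesteps all of this by using coproduct-preservation of $ev_{(0,0)}$ as input rather than trying to rederive it through the $loc$ formula.
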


\begin{proof}
Let $\underline{X}$ be an object of $\mathsf{St}(\mathbb{D})(e)$.
Suppose that for each $g$ in $\mathcal{G}$ and $n$ in $\mathbb{Z}$, we have
$$ \mathsf{Hom}_{\mathsf{St}(\mathbb{D})(e)}(\Sigma^n stab(g),
\underline{X}) = \{\ast\}\,.$$
Then by the following isomorphisms
$$
\begin{array}{rcl}
 \mathsf{Hom}_{\mathsf{St}(\mathbb{D})(e)}(\Sigma^n stab(g),
\underline{X}) & \simeq  & \mathsf{Hom}_{\mathsf{St}(\mathbb{D})(e)}(stab(g),
\Omega^n \underline{X})\\
& \simeq & \mathsf{Hom}_{\mathbb{D}(e)}(g,
ev_{(0,0)} \Omega^n \underline{X})\\
& \simeq & \mathsf{Hom}_{\mathbb{D}(e)}(g,
ev_{(n,n)}\underline{X})\,,
\end{array}
$$
we conclude that, for all $n$ in $\mathbb{Z}$, we have
$$ ev_{(n,n)} \underline{X} = \ast\,.$$
By the conservativity axiom, $\underline{X}$ is isomorphic to $\ast$ in $\mathsf{St}(\mathbb{D})(e)$. This shows condition $A1)$. Now
observe that condition $A2)$ follows from the following isomorphisms
$$
\begin{array}{rcl}
\mathsf{Hom}_{\mathsf{St}(\mathbb{D})(e)}(\Sigma^n stab(g),
\underset{\alpha \in K}{\bigoplus} \underline{X_{\alpha}})&
\simeq & \mathsf{Hom}_{\mathsf{St}(\mathbb{D})(e)}(stab(g),
 \Omega^n \underset{\alpha \in K}{\bigoplus} \underline{X_{\alpha}})\\

& \simeq & \mathsf{Hom}_{\mathsf{St}(\mathbb{D})(e)}(stab(g), \underset{\alpha
  \in K}{\bigoplus} \Omega^n \underline{X_{\alpha}})\\

& \simeq & \mathsf{Hom}_{\mathbb{D}(e)}(g, ev_{(0,0)} \underset{\alpha
  \in K}{\coprod} \Omega^n \underline{X_{\alpha}})\\

&\simeq &\mathsf{Hom}_{\mathbb{D}(e)}(g, \underset{\alpha
  \in K}{\coprod} ev_{(0,0)} \Omega^n \underline{X_{\alpha}})\\

&\simeq &\underset{\stackrel{S \subseteq K}{S finite}}{\mbox{colim}} \, \mathsf{Hom}_{\mathbb{D}(e)}(g, \underset{\alpha
  \in S}{\coprod} ev_{(0,0)} \Omega^n \underline{X_{\alpha}})\\

&\simeq &\underset{\stackrel{S \subseteq K}{S finite}}{\mbox{colim}}
\,\mathsf{Hom}_{\mathbb{D}(e)}(g, ev_{(0,0)} \underset{\alpha
  \in S}{\coprod} \Omega^n \underline{X_{\alpha}})\\

&\simeq &\underset{\stackrel{S \subseteq K}{S finite}}{\mbox{colim}}
\,\mathsf{Hom}_{\mathsf{St}(\mathbb{D})(e)}(stab(g),\underset{\alpha
  \in S}{\bigoplus} \Omega^n \underline{X_{\alpha}})\\

&\simeq &\underset{\stackrel{S \subseteq K}{S finite}}{\mbox{colim}}
\,\mathsf{Hom}_{\mathsf{St}(\mathbb{D})(e)}(\Sigma^n stab(g),\underset{\alpha
  \in S}{\bigoplus} \underline{X_{\alpha}})

\end{array}
$$

\end{proof}

\begin{lemma}\label{gen1}
Let $\mathbb{T}$ be a triangulated derivator and $\mathcal{G}$ a set
of objects in $\mathbb{T}(e)$ which satisfies conditions $A1)$ and $A2)$ of lemma~\ref{gener1}.

Then for every small category $L$ and every point $x: e \rightarrow L$
in $L$, the set
$$ \{x_!(g)\,|\,g \in \mathcal{G}, \, x:e\rightarrow L\}$$ satisfies conditions
$A1)$ and $A2)$ in the category $\mathbb{T}(L)$.
\end{lemma}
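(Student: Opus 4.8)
The statement to prove is Lemma~\ref{gen1}: given a triangulated derivator $\mathbb{T}$ and a set $\mathcal{G}$ of objects of $\mathbb{T}(e)$ satisfying conditions $A1)$ and $A2)$ of Lemma~\ref{gener1}, for every small category $L$ and every point $x\colon e\to L$ the set $\{x_!(g)\mid g\in\mathcal{G},\, x\colon e\to L\}$ satisfies $A1)$ and $A2)$ in $\mathbb{T}(L)$. The overall approach is to transport the two conditions from $\mathbb{T}(e)$ to $\mathbb{T}(L)$ along the adjunction $(x_!,x^\ast)$ between $\mathbb{D}(e)$ and $\mathbb{D}(L)$ recorded in section~\ref{pre}, together with the conservativity axiom for derivators. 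The point is that $A1)$ and $A2)$ are statements about the functors $\mathsf{Hom}_{\mathbb{T}(L)}(x_!(g),-)$, and the adjunction identifies these with $\mathsf{Hom}_{\mathbb{T}(e)}(g, x^\ast(-))$.

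First I would treat $A1)$. Let $Y\in\mathbb{T}(L)$ and suppose that $\mathsf{Hom}_{\mathbb{T}(L)}(x_!(g),Y)=\{\ast\}$ for every $g\in\mathcal{G}$ and every point $x\colon e\to L$. By the adjunction $(x_!,x^\ast)$ this means $\mathsf{Hom}_{\mathbb{T}(e)}(g, x^\ast Y)=\{\ast\}$ for every $g\in\mathcal{G}$, so by condition $A1)$ in $\mathbb{T}(e)$ the object $x^\ast Y$ is isomorphic to $\ast$ in $\mathbb{T}(e)$, for every point $x$ of $L$. The conservativity axiom for the derivator $\mathbb{T}$ then forces $Y$ itself to be isomorphic to $\ast$ in $\mathbb{T}(L)$; note that $\ast$ is simultaneously initial and terminal since $\mathbb{T}$ is pointed (it is triangulated), and $x^\ast$ preserves it. This establishes $A1)$ for the enlarged generating set.

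Next I would treat $A2)$. Fix a set $K$, a point $x\colon e\to L$, a $g\in\mathcal{G}$, and a family $\{Y_\alpha\}_{\alpha\in K}$ of objects of $\mathbb{T}(L)$. The key input is that $x^\ast$, being a left adjoint (it has the right adjoint $x_\ast$) or more simply because coproducts in $\mathbb{T}(L)$ are computed "pointwise" along the points of $L$, commutes with the coproducts $\coprod_{\alpha}Y_\alpha$; equivalently $x^\ast(\coprod_{\alpha\in S}Y_\alpha)\simeq\coprod_{\alpha\in S}x^\ast Y_\alpha$ for all subsets $S\subseteq K$. Then one computes
$$\mathsf{Hom}_{\mathbb{T}(L)}(x_!(g),\coprod_{\alpha\in K}Y_\alpha)\simeq\mathsf{Hom}_{\mathbb{T}(e)}(g,x^\ast\coprod_{\alpha\in K}Y_\alpha)\simeq\mathsf{Hom}_{\mathbb{T}(e)}(g,\coprod_{\alpha\in K}x^\ast Y_\alpha),$$
and by condition $A2)$ in $\mathbb{T}(e)$ this last group is $\mathsf{colim}_{S\subseteq K,\,S\text{ finite}}\mathsf{Hom}_{\mathbb{T}(e)}(g,\coprod_{\alpha\in S}x^\ast Y_\alpha)$, which by running the two isomorphisms backwards equals $\mathsf{colim}_{S\subseteq K,\,S\text{ finite}}\mathsf{Hom}_{\mathbb{T}(L)}(x_!(g),\coprod_{\alpha\in S}Y_\alpha)$. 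Tracing through that this identification is the canonical comparison map finishes $A2)$.

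\textbf{Main obstacle.} The routine adjunction bookkeeping is easy; the one point that needs genuine care is the compatibility of $x^\ast$ with (possibly infinite) coproducts in $\mathbb{T}(L)$. This is where the regularity/structure of $\mathbb{T}$ and the fact that $x_\ast$ exists get used, or alternatively the standard fact that in a derivator the restriction functors along points detect and compute coproducts; I would spell this out carefully (for a discrete index category $K$, $\coprod$ is a homotopy left Kan extension along $K\to e$, whose value is computed fiberwise, and $x^\ast$ on the $L$-level is compatible with base change over $e$). Everything else — the use of $A1)$, the use of conservativity, the final colimit comparison — is formal.
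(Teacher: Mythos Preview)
Your proposal is correct and follows essentially the same approach as the paper: both arguments verify $A1)$ by combining the adjunction $(x_!,x^\ast)$ with conservativity, and verify $A2)$ by the same adjunction together with the fact that $x^\ast$ preserves coproducts. The paper is terser and simply writes the chain of isomorphisms $\mathsf{Hom}_{\mathbb{T}(L)}(x_!(g),\bigoplus M_\alpha)\simeq\mathsf{Hom}_{\mathbb{T}(e)}(g,\bigoplus x^\ast M_\alpha)\simeq\bigoplus\mathsf{Hom}_{\mathbb{T}(e)}(g,x^\ast M_\alpha)\simeq\bigoplus\mathsf{Hom}_{\mathbb{T}(L)}(x_!(g),M_\alpha)$ without justifying the commutation of $x^\ast$ with $\bigoplus$; your explicit remark that $x^\ast$ is a left adjoint (to $x_\ast$) is the cleanest way to fill that step.
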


\begin{proof}
Suppose that
$$ \mathsf{Hom}_{\mathbb{T}(L)}(x_!(g),M) = \{\ast \}\,,$$
for every $g \in \mathcal{G}$ and every point $x$ in $L$. Then by adjunction
$x^{\ast}M$ is isomorphic to $\ast$ in $\mathbb{T}(e)$ and so by the
conservativity axiom, $M$ is isomorphic to $\ast$ in
$\mathbb{T}(L)$. This shows condition $A1)$. Condition $A2)$ follows
from the following isomorphisms
$$
\begin{array}{rcl}
\mathsf{Hom}_{\mathbb{T}(L)}(x_!(g),\underset{\alpha \in
  K}{\bigoplus} M_{\alpha})&
\simeq &\mathsf{Hom}_{\mathbb{T}(e)}(g,x^{\ast} \underset{\alpha \in
  K}{\bigoplus} M_{\alpha})\\

&\simeq &\mathsf{Hom}_{\mathbb{T}(e)}(g, \underset{\alpha \in
  K}{\bigoplus} x^{\ast} M_{\alpha})\\

&\simeq & \underset{\alpha \in K}{\bigoplus}   \mathsf{Hom}_{\mathbb{T}(e)}(g, x^{\ast} M_{\alpha})\\

&\simeq & \underset{\alpha \in K}{\bigoplus}
\mathsf{Hom}_{\mathbb{T}(L)}(x_!(g), M_{\alpha})\,.

\end{array}
$$
\end{proof}

\begin{remark}\label{important}
Notice that if $\mathbb{D}$ is a regular pointed strong derivator and we
have at our disposal of a set $\mathcal{G}$ of objects in $\mathbb{D}(e)$
which satisfies conditions $A1)$ and $A2)$, then lemma~\ref{gener} and lemma~\ref{gen1} imply
that $\mathsf{St}(\mathbb{D})(L)$ is a compactly generated triangulated
category, for every small category $L$.
\end{remark}

\subsection*{Relation with Hovey/Schwede's stabilization}

We will now relate Heller's construction with the construction of
spectra as it is done by Hovey in \cite{Spectra} and Schwede in \cite{Schwede}.

Let $\mathcal{M}$ be a pointed, simplicial, left proper, cellular,
almost finitely generated Quillen model category, see definition $4.1$ in
\cite{Spectra}, where sequential colimits commute with finite products
and homotopy pullbacks. This implies that the associated derivator
$\mathsf{HO}(\mathcal{M})$ will be regular.

\begin{example}\label{exem}
Consider the category
$\mathsf{L}_{\Sigma,P}\mathsf{Fun}(\mathcal{M}_f^o,Sset_{\bullet})$
defined in section~\ref{small}.
Notice that the category of pointed simplicial pre-sheaves
$\mathsf{Fun}(\mathcal{M}_f^o,Sset_{\bullet})$ is pointed, simplicial,
left proper, cellular and even finitely generated, see definition
$4.1$ in \cite{Spectra}. Since limits and colimits in
$\mathsf{Fun}(\mathcal{M}_f^o,Sset_{\bullet})$ are calculated
objectwise, we conclude that sequential colimits commute with finite
products.
Now, by theorem $4.1.1$ in \cite{Hirschhorn} the category
$\mathsf{L}_{\Sigma,P}\mathsf{Fun}(\mathcal{M}_f^o,Sset_{\bullet})$ is
also pointed, simplicial, left proper and cellular.

Now observe that the domains and codomains of each morphism in
$\Lambda((\Sigma \cup \{P\})_{+})$, see definition $4.2.1$ in
\cite{Hirschhorn}, are finitely presented, since the forgetful functor
$$ \mathsf{Fun}(\mathcal{M}_f^o, Sset_{\bullet}) \rightarrow \mathsf{Fun}(\mathcal{M}^o,Sset)$$
commutes with filtered colimits and homotopy pullbacks. Now, by proposition $4.2.4$
in \cite{Hirschhorn}, we conclude that a morphism $A
\stackrel{f}{\rightarrow} B$ in
$\mathsf{L}_{\Sigma,P}\mathsf{Fun}(\mathcal{M}_f^o,Sset_{\bullet})$,
with $B$ a local object, is a local fibration if and
only if it has the right lifting property with respect to the set
$$ J \cup \Lambda((\Sigma \cup \{P\})_{+})\,,$$
where $J$ denotes the set of generating acyclic cofibrations in
$\mathsf{Fun}(\mathcal{M}_f^o,Sset_{\bullet})$. This shows that
$\mathsf{L}_{\Sigma,P}\mathsf{Fun}(\mathcal{M}_f^o,Sset_{\bullet})$ is
almost finitely generated.

\end{example}

Recall from section $1.2$ in \cite{Schwede} that since $\mathcal{M}$
is a pointed, simplicial model category, we have a Quillen adjunction
$$
\xymatrix{
\mathcal{M} \ar@<-1ex>[d]_{\Sigma(-)} \\
\mathcal{M} \ar@<-1ex>[u]_{\Omega(-)}\,,
}
$$
where $\Sigma(X)$ denotes the suspension of an object $X$ , i.e. the
pushout of the diagram
$$
\xymatrix{
X \otimes \partial \Delta^1 \ar[r] \ar[d] & X \otimes \Delta^1 \\
\ast & .
}
$$
Recall also that in \cite{Spectra} and \cite{Schwede} the autors
construct a stable Quillen model category
$\mathsf{Sp}^{\mathbb{N}}(\mathcal{M})$ of spectra associated with
$\mathcal{M}$ and the left Quillen functor $\Sigma(-)$.
We have the following Quillen adjunction, see~\cite{Spectra},
$$
\xymatrix{
\mathcal{M} \ar@<-1ex>[d]_{\Sigma^{\infty}} \\
\mathsf{Sp}^{\mathbb{N}}(\mathcal{M}) \ar@<-1ex>[u]_{ev_0}
}
$$
and thus a morphism to a strong triangulated derivator
$$ \mathsf{HO}(\mathcal{M})
\stackrel{\mathbb{L}\Sigma^{\infty}}{\longrightarrow}
  \mathsf{HO}(\mathsf{Sp}^{\mathbb{N}}(\mathcal{M}))$$
which commutes with homotopy colimits.

By theorem~\ref{HellerT}, we have at our disposal a diagram
$$
\xymatrix{
\mathsf{HO}(\mathcal{M}) \ar[d]_{stab}
\ar[dr]^{\mathbb{L}\Sigma^{\infty}} & \\
\mathsf{St}(\mathsf{HO}(\mathcal{M})) \ar[r]_{\varphi} & \mathsf{HO}(\mathsf{Sp}^{\mathbb{N}}(\mathcal{M}))\,,
}
$$
which is commutative up to isomorphism in the $2$-category of derivators.

Now suppose also that we have a set $\mathcal{G}$ of small weak generators in $\mathsf{Ho}(\mathcal{M})$, as in
definitions $7.2.1$ and $7.2.2$ in \cite{Spectra}.
Suppose also that each object of $\mathcal{G}$ considered in $\mathcal{M}$ is cofibrant,
finitely presented, homotopy finitely presented and has a finitely
presented cylinder object.

\begin{example}\label{ex2}
Observe that the category
$\mathsf{Fun}(\mathcal{M}_f^o,Sset_{\bullet})$ is pointed and finitely
generated. By corollary $7.4.4$ in \cite{Hovey}, the set
$$ \mathcal{G}= \{
\mathbf{F}^X_{\Delta[n]_{+} / \partial \Delta[n]_{+} } |
\, X \in
\mathcal{M}_f\,, n\geq 0 \} \,,$$
is a set of small weak generators in
$\mathsf{Ho}(\mathsf{Fun}(\mathcal{M}^o_f,Sset_{\bullet}))$. Since the
domains and codomains of the set
$$ (\Sigma \cup \{ P\} )_{+}$$
are homotopically finitely presented objects, lemma~\ref{gener}
implies that $\mathcal{G}$ is a set of small weak generators
in $\mathsf{Ho}(\mathsf{L}_{\Sigma,P}
\mathsf{Fun}(\mathcal{M}_f^o,Sset_{\bullet}))$. Clearly the elements
of $\mathcal{G}$ are cofibrant, finitely presented and have a finitely
presented cylinder object. They are also homotopically finitely presented.
\end{example}

Under the hypotheses above on the category $\mathcal{M}$, we have the following comparison theorem

\begin{theorem}\label{repre}
The induced morphism of triangulated derivators
$$ \varphi: \mathsf{St}(\mathsf{HO}(\mathcal{M})) \longrightarrow
\mathsf{HO}(\mathsf{Sp}^{\mathbb{N}}(\mathcal{M}))$$
is an equivalence.
\end{theorem}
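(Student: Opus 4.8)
The plan is to reduce the statement to a comparison of the two derivators evaluated at the terminal category $e$, using the fact that both sides are compactly generated triangulated categories with a known set of generators. First I would invoke theorem~\ref{HellerT}, which says that $stab$ is the universal homotopy-colimit preserving morphism out of $\mathsf{HO}(\mathcal{M})$ into a triangulated strong derivator; since $\mathbb{L}\Sigma^\infty$ also preserves homotopy colimits and has a triangulated strong derivator as target (by \cite{Spectra} \cite{Schwede}), the factorization $\varphi$ exists and is unique up to canonical isomorphism, and it automatically commutes with homotopy colimits. So the only thing to prove is that $\varphi$ is an equivalence of derivators, i.e. that $\varphi(L)$ is an equivalence of categories for every small $L$.

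Next I would set up the generation picture on both sides. On the source side, combine the given hypothesis that $\mathcal{G}$ is a set of small weak generators in $\mathsf{Ho}(\mathcal{M})$ (whose objects are cofibrant, finitely presented, homotopically finitely presented, with finitely presented cylinder objects) with lemma~\ref{gener1} and lemma~\ref{gen1}: then the set $\{\Sigma^n\,stab(g)\mid g\in\mathcal G,\ n\in\mathbb Z\}$ satisfies conditions $A1)$ and $A2)$ in $\mathsf{St}(\mathsf{HO}(\mathcal{M}))(e)$, and its image under $x_!$ generates $\mathsf{St}(\mathsf{HO}(\mathcal{M}))(L)$ for every $L$ and every point $x:e\to L$; by remark~\ref{important}, $\mathsf{St}(\mathsf{HO}(\mathcal{M}))(L)$ is a compactly generated triangulated category. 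On the target side, I would use the standard description of the homotopy category of $\mathsf{Sp}^{\mathbb{N}}(\mathcal{M})$ from \cite{Spectra}: under the almost-finite-generation and generation hypotheses, the mapping spectra out of the $\Sigma^\infty g$, $g\in\mathcal G$, are computed as stabilized mapping spaces, and the $\Sigma^n\Sigma^\infty g$ form a set of small weak generators of $\mathsf{Ho}(\mathsf{Sp}^{\mathbb{N}}(\mathcal{M}))$. Since $\varphi\circ stab\simeq \mathbb{L}\Sigma^\infty$, the morphism $\varphi(e)$ sends the generating set of the source bijectively onto the generating set of the target.

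Then I would carry out the actual verification that $\varphi(e)$ is an equivalence. It is a homotopy-colimit preserving (hence coproduct preserving and exact) triangulated functor between compactly generated triangulated categories that sends a generating set of compact objects to a generating set of compact objects; the heart of the argument is to show it is fully faithful on the generators, which amounts to comparing $\mathsf{Hom}_{\mathsf{St}(\mathsf{HO}(\mathcal{M}))(e)}(\Sigma^n stab(g),\Sigma^m stab(g'))$ with $\mathsf{Hom}_{\mathsf{Ho}(\mathsf{Sp}^{\mathbb{N}}(\mathcal{M}))}(\Sigma^n\Sigma^\infty g,\Sigma^m\Sigma^\infty g')$. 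Using the adjunctions $(L[0,0],ev_{(0,0)})$ and $(loc,inc)$ on the source (from section~\ref{spectra}) and $(\Sigma^\infty,ev_0)$ on the target, both groups reduce to colimits of homotopy classes of maps of the form $[\Gamma^\bullet g, \Omega^{k}(\text{fibrant replacement of }g')]$ in $\mathsf{Ho}(\mathcal{M})$; the finite presentation and finitely-presented-cylinder hypotheses guarantee these colimits are computed correctly and agree. Having full faithfulness on generators, I would propagate it by the usual devissage argument (the full subcategory of objects on which $\varphi(e)$ is fully faithful is triangulated, closed under coproducts, and contains the generators, hence is everything), then conclude that $\varphi(e)$ is an equivalence because it is fully faithful with essential image a localizing subcategory containing a generating set. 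Finally, I would upgrade from $e$ to arbitrary $L$: since $\varphi$ commutes with $x_!$ and $x^*$ and the sets $\{x_!(\Sigma^n stab(g))\}$ generate $\mathsf{St}(\mathsf{HO}(\mathcal{M}))(L)$, the same devissage — now using lemma~\ref{gen1} and the conservativity axiom — shows $\varphi(L)$ is an equivalence.

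The main obstacle I anticipate is the explicit computation of the mapping groups in $\mathsf{Ho}(\mathsf{Sp}^{\mathbb{N}}(\mathcal{M}))$ and matching them with Heller's $\Omega$-spectrum description: one must check that under the almost finite generation hypothesis the stable mapping spaces in $\mathsf{Sp}^{\mathbb{N}}(\mathcal{M})$ are genuinely the filtered colimit $\mathop{\mathrm{colim}}_k[\Sigma^k g,\Sigma^k g']$ with no lim$^1$ obstruction and no correction terms, and that this colimit is exactly what $ev_{(0,0)}$ of the fibrant ($\Omega$-spectrum) replacement computes in $\mathsf{St}(\mathsf{HO}(\mathcal{M}))$. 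This is where the hypotheses on $\mathcal{G}$ (finitely presented, finitely presented cylinder) and on $\mathcal{M}$ (sequential colimits commuting with finite products and homotopy pullbacks) are all used, and it is the technically delicate point; the rest is formal derivator bookkeeping and standard compactly-generated-triangulated-category devissage.
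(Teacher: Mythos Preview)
Your proposal is correct and follows essentially the same approach as the paper: the paper packages your ``devissage from generators at $e$, then propagate to arbitrary $L$ via $x_!$'' argument into a general proposition~\ref{aux}, and then verifies its two hypotheses (compact generation on the target by the $\Sigma^n\mathbb{L}\Sigma^\infty(g)$, and the Hom comparison on generators) exactly as you outline. Your identification of the Hom computation as the technical core is also correct; the paper carries it out by reducing both sides to $\mathrm{colim}_j\,\mathsf{Hom}_{\mathsf{Ho}(\mathcal{M})}(g_1,\Omega^j\Sigma^j(g_2))$ using Heller's description of $loc$ on the source and corollary~4.13 of \cite{Spectra} on the target.
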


The proof of theorem~\ref{repre} will consist in verifying the
conditions of the following general proposition.

\begin{proposition}\label{aux}
Let $F: \mathbb{T}_1 \rightarrow \mathbb{T}_2$ be a morphism of strong
triangulated derivators. Suppose that the triangulated categories
$\mathbb{T}_1(e)$ and $\mathbb{T}_2(e)$ are compactly generated and
that there is a set $\mathcal{G} \subset \mathbb{T}_1(e)$ of compact generators,
which is stable under suspensions and satisfies the following conditions~:
\begin{itemize}
\item[a)] $F(e)$ induces bijections
$$
  \mathsf{Hom}_{\mathbb{T}_1(e)}(g_1,g_2) \rightarrow
  \mathsf{Hom}_{\mathbb{T}_2(e)}(Fg_1, Fg_2), \, \forall g_1, g_2 \in
  \mathcal{G}$$
and
\item[b)] the set of objects $\{ Fg \, | \, g \in \mathcal{G} \}$ is a
  set of compact generators in $\mathbb{T}_2(e)$.
\end{itemize}
Then the morphism $F$ is an equivalence of derivators.
\end{proposition}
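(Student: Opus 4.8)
The plan is to reduce the statement to a standard recognition theorem for compactly generated triangulated categories applied at the level of the base categories $\mathbb{T}_1(e)$ and $\mathbb{T}_2(e)$, and then to propagate the resulting equivalence to every category $\mathbb{T}_i(L)$ using the fact that a morphism of derivators is an equivalence as soon as it is pointwise an equivalence, together with conservativity. So the first step is: using hypotheses $a)$ and $b)$ and the fact that $\mathcal{G}$ consists of compact generators stable under suspensions, show that the functor $F(e)\colon\mathbb{T}_1(e)\to\mathbb{T}_2(e)$ is an equivalence of triangulated categories. The classical argument here is that $F(e)$ commutes with coproducts (it is a morphism of derivators, hence commutes with homotopy colimits, in particular with coproducts) and is exact; by $a)$ it is fully faithful on the full subcategory generated by $\mathcal{G}$ under (de)suspensions, i.e. on a set of compact generators; a thick-subcategory/localizing-subcategory argument (as in Neeman \cite{Neeman}) then upgrades this to full faithfulness of $F(e)$ on all of $\mathbb{T}_1(e)$, and by $b)$ the essential image is a localizing subcategory containing a generating set, hence all of $\mathbb{T}_2(e)$. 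This shows $F(e)$ is an equivalence.

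Next I would pass from $e$ to an arbitrary small category $L$. For this, one uses that $F$ is a morphism of \emph{strong} derivators, so it commutes with homotopy colimits, and that by lemma~\ref{gen1} (applied to the set $\mathcal{G}$, which satisfies conditions $A1)$ and $A2)$) the set $\{x_!(g)\mid g\in\mathcal{G},\, x\colon e\to L\}$ is a set of compact generators of $\mathbb{T}_1(L)$, while its image under $F(L)$ is the analogous generating set of $\mathbb{T}_2(L)$ (using that $F$ commutes with $x_!$ and that $F(e)$ is already known to be an equivalence, together with lemma~\ref{gen1} applied in $\mathbb{T}_2$). The compatibility $x^\ast F(L)\simeq F(e)x^\ast$ and the conservativity axiom for derivators let us compute $\mathsf{Hom}_{\mathbb{T}_i(L)}(x_!(g),M)\simeq \mathsf{Hom}_{\mathbb{T}_i(e)}(g,x^\ast M)$ and thereby reduce full faithfulness of $F(L)$ on generators to the already-established statement at $e$; the same thick/localizing subcategory argument as before then gives that $F(L)$ is an equivalence for every $L$. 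Hence $F$ is an equivalence of derivators.

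For theorem~\ref{repre} itself, it then remains to check that the morphism $\varphi\colon \mathsf{St}(\mathsf{HO}(\mathcal{M}))\to \mathsf{HO}(\mathsf{Sp}^{\mathbb{N}}(\mathcal{M}))$ satisfies the hypotheses of proposition~\ref{aux}. Both are strong triangulated derivators; $\mathsf{HO}(\mathsf{Sp}^{\mathbb{N}}(\mathcal{M}))(e)$ is compactly generated because $\mathcal{M}$ is finitely generated with small weak generators $\mathcal{G}$ whose suspension spectra are compact, and $\mathsf{St}(\mathsf{HO}(\mathcal{M}))(e)$ is compactly generated by remark~\ref{important} applied to the set $\mathcal{G}$ (which satisfies $A1)$, $A2)$ by hypothesis on $\mathcal{M}$). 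One takes as $\mathcal{G}\subset\mathsf{St}(\mathsf{HO}(\mathcal{M}))(e)$ the set $\{\Sigma^n\,stab(g)\mid g\in\mathcal{G},\,n\in\mathbb{Z}\}$ of lemma~\ref{gener1}; condition $a)$ amounts to comparing the two explicit descriptions of mapping groups — $\mathsf{Hom}$ in $\mathsf{St}(\mathsf{HO}(\mathcal{M}))(e)$ computed via $ev_{(0,0)}$ and $\Omega$, versus stable homotopy classes of maps of symmetric/$\mathbb{N}$-spectra — and both reduce, using that $g$ is cofibrant, finitely presented and has a finitely presented cylinder, to colimits of mapping spaces $\mathsf{Map}_{\mathcal{M}}(g,\Omega^k(-))$, which agree; condition $b)$ is that the images $\{\Sigma^n\Sigma^\infty g\}$ compactly generate $\mathsf{Ho}(\mathsf{Sp}^{\mathbb{N}}(\mathcal{M}))$, which is the standard fact about spectra over a finitely generated model category. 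The main obstacle I expect is precisely this verification of condition $a)$: identifying Heller's $\Omega$-spectrum mapping groups with Hovey/Schwede's stable maps requires a careful cofinality/finiteness argument to commute the relevant homotopy colimits past the mapping-space functor, and it is here that the compactness hypotheses on the objects of $\mathcal{G}$ (finitely presented, finitely presented cylinder object, homotopically finitely presented) are genuinely used.
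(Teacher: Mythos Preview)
Your proof is correct and follows essentially the same approach as the paper: establish that $F(e)$ is an equivalence via Neeman's recognition theorem for compactly generated triangulated categories, then propagate to each $F(L)$ using lemma~\ref{gen1} and the adjunction $(x_!,x^\ast)$ together with the fact that $F$ commutes with $x_!$ and $x^\ast$. Your third paragraph concerns theorem~\ref{repre} rather than proposition~\ref{aux} itself; the proposition is purely the abstract recognition criterion, and the verification of its hypotheses for $\varphi$ is carried out separately in the paper, along the lines you sketch.
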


\begin{proof}
Conditions $a)$ and $b)$ imply that $F(e)$ is an equivalence of
triangulated category, see~\cite{Neeman}.

Now, let $L$ be a small category. We show that conditions $a)$ and
$b)$ are also verified by $F(L)$, $\mathbb{T}_1(L)$ and $\mathbb{T}_2(L)$. By lemma~\ref{gen1} the sets
$$
\begin{array}{rcl}
\{ x_!(g) |\, g \in \mathcal{G}, \, x:e \rightarrow L\} & \mbox{and} & \{ x_!(Fg) |\, g \in \mathcal{G}, \, x:e \rightarrow L\}
\end{array}
$$
consist of compact generators for $\mathbb{T}_1(L)$, resp. $\mathbb{T}_2(L)$, which are stable under suspensions. Since $F$
commutes with homotopy colimits $F(x_!(g)) = x_!(Fg)$ and so the following isomorphisms
$$
\begin{array}{rcl}
\mathsf{Hom}_{\mathbb{T}_1(L)}(x_!(g_1), x_!(g_2)) & \simeq &
\mathsf{Hom}_{\mathbb{T}_1(e)}(g_1, x^{\ast} x_!(g_2)) \\
 & \simeq & \mathsf{Hom}_{\mathbb{T}_2(e)}(F(g_1), F(x^{\ast} x_!(g_2)))
 \\
 & \simeq & \mathsf{Hom}_{\mathbb{T}_2(e)}(Fg_1, x^{\ast} F(x_!(g_2)))
 \\
 & \simeq &  \mathsf{Hom}_{\mathbb{T}_2(L)}(x_! F(g_1), x_!F(g_2))
\end{array}
$$
imply the proposition.
\end{proof}

Let us now prove theorem~\ref{repre}~:
\begin{proof}

Let us first prove condition $b)$ of proposition~\ref{aux}. Since the
set $\mathcal{G}$ of small generators in $\mathsf{Ho}(\mathcal{M})$
satisfies the conditions of lemma~\ref{gener1}, we have a set
$$ {\{ \Sigma^n stab(g) \,|\, g \in \mathcal{G}, \, n \in \mathbb{Z} \}}$$
of compact generators in $\mathsf{St}(\mathsf{HO}(\mathcal{M}))(e)$,
which is stable under suspensions. We now show that the set
$$ {\{ \Sigma^n \mathbb{L} \Sigma^{\infty}(g) \,|\, g \in \mathcal{G}, \, n \in \mathbb{Z} \}}$$
is a set of compact generators in
$\mathsf{Ho}(\mathsf{Sp}^{\mathbb{N}}(\mathcal{M}))$. These objects
are compact because the functor $\mathbb{R}ev_0$ in the adjunction
$$
\xymatrix{
\mathsf{Ho}(\mathcal{M}) \ar@<-1ex>[d]_{\mathbb{L}\Sigma^{\infty}} \\
\mathsf{Ho}(\mathsf{Sp}^{\mathbb{N}}(\mathcal{M})) \ar@<-1ex>[u]_{\mathbb{R}ev_0}
}
$$
commutes with filtered homotopy colimits.
We now show that they form a set of generators. Let $Y$ be an object in
$\mathsf{Ho}(\mathsf{Sp}^{\mathbb{N}}(\mathcal{M}))$, that we can
suppose, without loss of generality, to be an $\Omega$-spectrum,
see \cite{Spectra}.
Suppose that
$$
\mathsf{Hom}(\Sigma^n \mathbb{L}\Sigma^{\infty}(g_i),Y)
\simeq \underset{m}{\mbox{colim}}\, \mathsf{Hom}(g_i, \Omega^m Y_{m+p}) =
\{ \ast \},\,\, n \geq 0 \,.
$$
Since $Y$ is an $\Omega$-spectrum, we have
$$ Y_p = \ast\,,\forall p\geq 0\,.$$
This implies that $Y$ is isomorphic to $\ast$ in
$\mathsf{Ho}(\mathsf{Sp}^{\mathbb{N}}(\mathcal{M}))$.

We now show condition $a)$.
Let $g_1$ and $g_2$ be objects in $\mathcal{G}$. Observe that we have the following isomorphisms, see \cite{Heller}
$$
\begin{array}{l}
\mathsf{Hom}_{\mathsf{St}(\mathsf{HO}(\mathcal{M}))(e)}(stab(g_1),stab(g_2))
\\
\simeq \mathsf{Hom}_{\mathsf{Ho}(\mathcal{M})}(g_1, (ev_{(0,0)} \circ loc
\circ L[0,0])(g_2))\\
\simeq \mathsf{Hom}_{\mathsf{Ho}(\mathcal{M})}(g_1,
ev_{(0,0)}(\mbox{hocolim}\,( L[0,0](g_2) \rightarrow
\Omega\sigma L[0,0](g_2) \rightarrow \ldots )))\\
\simeq \mathsf{Hom}_{\mathsf{Ho}(\mathcal{M})}(g_1,
\mbox{hocolim}\, ev_{(0,0)} (L[0,0](g_2) \rightarrow
\Omega\sigma L[0,0](g_2) \rightarrow \cdots ))\\
\simeq \underset{j}{\mbox{colim}}\,
\mathsf{Hom}_{\mathsf{Ho}(\mathcal{M})}(g_1, \Omega^j \Sigma^j(g_2))\,.
\end{array}
$$
Now, by corollary $4.13$ in \cite{Spectra}, we have
$$
\begin{array}{ccc}
\mathsf{Hom}_{\mathsf{Ho}(\mathsf{Sp}^{\mathbb{N}}(\mathcal{M}))}(\mathbb{L}\Sigma^{\infty}(g_1),
\mathbb{L}\Sigma^{\infty}(g_2)) & \simeq & \mathsf{Hom}_{\mathsf{Ho}(\mathsf{Sp}^{\mathbb{N}}(\mathcal{M}))}(\Sigma^{\infty}(g_1),
(\Sigma^{\infty}(g_2))_f) \\
& \simeq & \underset{m}{\mbox{colim}}\,
\mathsf{Hom}_{\mathsf{Ho}(\mathcal{M})}(g_1, \Omega^m (\Sigma^m(g_2))_f)\,,
\end{array}
$$
where $(\Sigma^{\infty}(g_2))_f$ denotes a levelwise fibrant
resolution of $\Sigma^{\infty}(g_2)$ in the category $\mathsf{Sp}^{\mathbb{N}}(\mathcal{M})$.

Now, notice that since $g_2$ is cofibrant, so is $\Sigma^m(g_2)$ and
so we have the following isomorphism
$$ \Omega^m (\Sigma^m(g_2))_f \stackrel{\sim}{\longrightarrow}
(\mathbb{R}\Omega)^m \circ (\mathbb{L}\Sigma)^m(g_2)$$
in  $\mathsf{Ho}(\mathsf{SP}^{\mathbb{N}}(\mathcal{M}))$.
This implies that for $j\geq 0$, we have an isomorphism
$$ \Omega^j \Sigma^j(g_2) \stackrel{\sim}{\longrightarrow} \Omega^j
(\Sigma^j(g_2))_f$$
in $\mathsf{Ho}(\mathsf{Sp}^{\mathbb{N}}(\mathcal{M}))$ and so
$$ \mathsf{Hom}_{\mathsf{St}(\mathsf{HO}(\mathcal{M}))(e)}(stab(g_1), stab(g_2)) =
\mathsf{Hom}_{\mathsf{Ho}(\mathsf{Sp}^{\mathbb{N}}(\mathcal{M}))}(\mathbb{L}\Sigma^{\infty}(g_1),
\mathbb{L}\Sigma^{\infty}(g_2))\,.$$
Let now $p$ be an integer.
Notice that
$$ \mathsf{Hom}_{\mathsf{St}(\mathsf{HO}(\mathcal{M}))(e)}(stab(g_1),
\Sigma^p stab(g_2)) = \underset{j}{\mbox{colim}}\,
\mathsf{Hom}_{\mathsf{Ho}(\mathcal{M})}(g_1, \Omega^j
\Sigma^{j+p}(g_2))$$
and that
$$
\mathsf{Hom}_{\mathsf{Ho}(\mathsf{Sp}^{\mathbb{N}}(\mathcal{M}))}(\mathbb{L}
\Sigma^{\infty} (g_1),
\Sigma^p\mathbb{L}\Sigma^{\infty}(g_2))= \underset{m}{\mbox{colim}}\,
\mathsf{Hom}_{\mathsf{Ho}(\mathcal{M})}(g_1,
\Omega^m(\Sigma^{m+p}(g_2))_f)\,.$$

This proves condition $a)$ and so the theorem is proven.
\end{proof}

\begin{remark}\label{Tr}
If we consider for $\mathcal{M}$ the category
$\mathsf{L}_{\Sigma,P}\mathsf{Fun}(\mathcal{M}_f^o, Sset_{\bullet})$, we
have equivalences of derivators
$$ \varphi :
\mathsf{St}({\mathsf{L}_{\Sigma,P}\mathsf{Hot}_{\mathcal{M}_f}}_{\bullet})
\stackrel{\sim}{\longrightarrow}
\mathsf{HO}(\mathsf{Sp}^{\mathbb{N}}(\mathsf{L}_{\Sigma,P}\mathsf{Fun}(\mathcal{M}_f^o,
Sset_{\bullet}))) \stackrel{\sim}{\leftarrow} \mathsf{St}(\mathsf{L}_{\Sigma,P}\mathsf{Hot}_{\mathcal{M}_f}) \,.$$
\end{remark}
Let $\mathbb{D}$ be a strong triangulated derivator.

Now, by theorem~\ref{HellerT} and proposition~\ref{ext}, we have the
following proposition

\begin{proposition}\label{Trin}
We have an equivalence of categories
$$
\underline{\mathsf{Hom}}_!(\mathsf{St}(\mathsf{L}_{\Sigma,P} \mathsf{Hot}_{\mathsf{dgcat}_f}),\mathbb{D})
\stackrel{(stab\circ \Phi \circ
  \mathbb{R}\underline{h})^{\ast}}{\longrightarrow} \underline{\mathsf{Hom}}_{flt,p}(\mathsf{HO}(\mathsf{dgcat}),\mathbb{D})\,.$$
\end{proposition}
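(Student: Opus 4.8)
\textbf{Proof strategy for Proposition~\ref{Trin}.}
The plan is to obtain the asserted equivalence by composing three equivalences already available in the excerpt, applied to the Quillen model category $\mathcal{M} = \mathsf{dgcat}$ equipped with the Morita model structure of theorem~\ref{theorem2}. First I would check that this model category satisfies the hypotheses needed for all the preceding machinery: by example~\ref{mori} the domains and codomains of the generating cofibrations are cofibrant, $\aleph_0$-compact, $\aleph_0$-small and homotopically finitely presented, so the finiteness assumptions of section~\ref{homotopy} hold; moreover $\mathsf{Ho}(\mathsf{dgcat})$ for the Morita structure is pointed (the empty dg category maps to the terminal one by a Morita equivalence, since the zero dg category has all objects contractible), so the pointed framework of section~\ref{chappoint} applies, and $\mathsf{HO}(\mathsf{dgcat})$ is a regular strong derivator after the stabilization construction, so section~\ref{spectra} applies.

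Granting this, the equivalence is assembled as follows. By proposition~\ref{ext} (with $\mathcal{M} = \mathsf{dgcat}$), the morphism $\Phi \circ \mathbb{R}\underline{h}$ induces, for any pointed derivator $\mathbb{D}'$, an equivalence
$$\underline{\mathsf{Hom}}_!(\mathsf{L}_{\Sigma,P}\mathsf{Hot}_{\mathsf{dgcat}_f},\mathbb{D}') \stackrel{(\Phi\circ\mathbb{R}\underline{h})^{\ast}}{\longrightarrow} \underline{\mathsf{Hom}}_{flt,p}(\mathsf{HO}(\mathsf{dgcat}),\mathbb{D}')\,.$$
I then apply this with $\mathbb{D}'$ the underlying derivator of the strong triangulated derivator $\mathbb{D}$. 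On the other side, theorem~\ref{HellerT} (Heller), applied to the regular pointed strong derivator $\mathbb{D}_0 := \mathsf{L}_{\Sigma,P}\mathsf{Hot}_{\mathsf{dgcat}_f}$ and the triangulated strong derivator $\mathbb{D}$, gives an equivalence
$$\underline{\mathsf{Hom}}_!(\mathsf{St}(\mathsf{L}_{\Sigma,P}\mathsf{Hot}_{\mathsf{dgcat}_f}),\mathbb{D}) \stackrel{stab^{\ast}}{\longrightarrow} \underline{\mathsf{Hom}}_!(\mathsf{L}_{\Sigma,P}\mathsf{Hot}_{\mathsf{dgcat}_f},\mathbb{D})\,.$$
Here one must note that a morphism of derivators into $\mathbb{D}$ which commutes with homotopy colimits automatically preserves the point, so the target of Heller's equivalence coincides with $\underline{\mathsf{Hom}}_!$ landing in the pointed derivator $\mathbb{D}$, which is exactly the source of the proposition~\ref{ext} equivalence applied to $\mathbb{D}$. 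Composing $stab^{\ast}$ with $(\Phi\circ\mathbb{R}\underline{h})^{\ast}$ yields the functor $(stab\circ\Phi\circ\mathbb{R}\underline{h})^{\ast}$ of the statement, and it is an equivalence as a composite of equivalences. Finally I would record that the target category $\underline{\mathsf{Hom}}_{flt,p}(\mathsf{HO}(\mathsf{dgcat}),\mathbb{D})$ makes sense because $\mathbb{D}$, being a strong triangulated derivator, is in particular a pointed derivator.

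The main point to be careful about, rather than a deep obstacle, is the compatibility of the universality statements: proposition~\ref{ext} is phrased for an arbitrary pointed derivator $\mathbb{D}'$ while Heller's theorem~\ref{HellerT} is phrased for a triangulated strong derivator $\mathbb{T}$, and one must verify that the intermediate object $\underline{\mathsf{Hom}}_!(\mathsf{L}_{\Sigma,P}\mathsf{Hot}_{\mathsf{dgcat}_f},\mathbb{D})$ referenced by both coincides as a category, which it does once one observes $\underline{\mathsf{Hom}}_! \subseteq \underline{\mathsf{Hom}}_{!,p}$ for morphisms into a pointed derivator. A secondary verification, needed to legitimately invoke theorem~\ref{HellerT}, is that $\mathsf{L}_{\Sigma,P}\mathsf{Hot}_{\mathsf{dgcat}_f}$ is a regular pointed strong derivator: strongness follows since it is the derivator of a model category (proposition $2.15$ of \cite{catder}), pointedness is built into the localization at $P$ in section~\ref{chappoint}, and regularity follows from example~\ref{exem}, which exhibits $\mathsf{L}_{\Sigma,P}\mathsf{Fun}(\mathsf{dgcat}_f^o,Sset_{\bullet})$ as almost finitely generated with sequential colimits commuting with finite products and homotopy pullbacks, together with remark~\ref{remar} identifying the two associated derivators.
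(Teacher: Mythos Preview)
Your proposal is correct and follows exactly the paper's approach: the paper states this proposition as an immediate consequence of theorem~\ref{HellerT} and proposition~\ref{ext}, and you have spelled out precisely how these two results compose, together with the verifications that the relevant hypotheses are met. Your additional care in checking that $\mathsf{L}_{\Sigma,P}\mathsf{Hot}_{\mathsf{dgcat}_f}$ is regular pointed strong (so that Heller's theorem applies) and that the intermediate categories match up is appropriate and goes slightly beyond what the paper makes explicit.
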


Since the category $Sset_{\bullet}$ satisfies all the conditions of
theorem~\ref{repre}, we have the following characterization of the
classical category of spectra, after Bousfield-Friedlander~\cite{Bos-Fri}, by a
universal property.

\begin{proposition}
We have an equivalence of categories
$$\underline{\mathsf{Hom}}_!(\mathsf{HO}(\mathsf{Sp}^{\mathbb{N}}(Sset_{\bullet})),\mathbb{D})
\stackrel{\sim}{\longrightarrow} \mathbb{D}(e) \,.$$
\end{proposition}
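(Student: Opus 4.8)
The plan is to recognize this as the special case $\mathcal{M}=Sset_{\bullet}$ of the machinery developed in the preceding sections, and to chain together the universal properties already established. First I would observe that $Sset_{\bullet}$ is a pointed, simplicial, left proper, cellular, finitely generated Quillen model category in which sequential colimits commute with finite products and homotopy pullbacks, so all the hypotheses of Section~\ref{spectra} are satisfied; moreover the generating cofibrations $\partial\Delta[n]_+\hookrightarrow\Delta[n]_+$ have cofibrant, finitely presented domains and codomains with finitely presented cylinder objects, so the set $\mathcal{G}$ of their cofibers is a set of small weak generators satisfying conditions $A1)$ and $A2)$ of lemma~\ref{gener1}. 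Hence theorem~\ref{repre} applies and gives an equivalence of triangulated derivators
$$ \varphi: \mathsf{St}(\mathsf{HO}(Sset_{\bullet})) \stackrel{\sim}{\longrightarrow} \mathsf{HO}(\mathsf{Sp}^{\mathbb{N}}(Sset_{\bullet}))\,.$$

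Next I would feed this into theorem~\ref{HellerT}, which for the regular pointed strong derivator $\mathbb{D}_0 := \mathsf{HO}(Sset_{\bullet})$ asserts that the stabilization morphism $stab: \mathbb{D}_0 \to \mathsf{St}(\mathbb{D}_0)$ induces, for every triangulated strong derivator $\mathbb{D}$, an equivalence
$$ \underline{\mathsf{Hom}}_!(\mathsf{St}(\mathbb{D}_0),\mathbb{D}) \stackrel{stab^{\ast}}{\longrightarrow} \underline{\mathsf{Hom}}_!(\mathbb{D}_0,\mathbb{D})\,.$$
Composing with the equivalence $\varphi$ of theorem~\ref{repre} (which induces $\varphi^{\ast}:\underline{\mathsf{Hom}}_!(\mathsf{HO}(\mathsf{Sp}^{\mathbb{N}}(Sset_{\bullet})),\mathbb{D}) \stackrel{\sim}{\to}\underline{\mathsf{Hom}}_!(\mathsf{St}(\mathbb{D}_0),\mathbb{D})$) yields an equivalence $\underline{\mathsf{Hom}}_!(\mathsf{HO}(\mathsf{Sp}^{\mathbb{N}}(Sset_{\bullet})),\mathbb{D}) \stackrel{\sim}{\to} \underline{\mathsf{Hom}}_!(\mathsf{HO}(Sset_{\bullet}),\mathbb{D})$. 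It then remains to identify $\underline{\mathsf{Hom}}_!(\mathsf{HO}(Sset_{\bullet}),\mathbb{D})$ with $\mathbb{D}(e)$. For this I would invoke theorem~\ref{Cin} with $A=e$ the terminal category: since $\mathsf{Hot}_e = \mathsf{HO}(\mathsf{Fun}(e^{op},Sset)) = \mathsf{HO}(Sset)$, that theorem gives $\underline{\mathsf{Hom}}_!(\mathsf{HO}(Sset),\mathbb{D}) \simeq \underline{\mathsf{Hom}}(\underline{e},\mathbb{D}) \simeq \mathbb{D}(e)$. The pointed variant replaces $Sset$ by $Sset_{\bullet}$; one checks (as in remark~\ref{remar}, or directly since $\mathsf{HO}(Sset_{\bullet})$ is characterized by the same universal property as the free homotopy-colimit-cocompletion on a point in the pointed setting) that $\underline{\mathsf{Hom}}_!(\mathsf{HO}(Sset_{\bullet}),\mathbb{D}) \simeq \mathbb{D}(e)$ for any pointed $\mathbb{D}$, hence for any triangulated strong $\mathbb{D}$.

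The main obstacle I anticipate is not any single hard estimate but rather the bookkeeping required to verify that $Sset_{\bullet}$ genuinely meets \emph{all} the standing hypotheses of theorem~\ref{repre} — in particular that it is almost finitely generated and that the chosen generators $\mathcal{G}$ are simultaneously cofibrant, finitely presented, homotopically finitely presented, and equipped with finitely presented cylinder objects — and then tracking through the two-step Heller construction that $stab$ really is the universal morphism to triangulated derivators in the precise form needed. A secondary point of care is the passage from the unpointed statement of theorem~\ref{Cin} to its pointed analogue: I would either cite the pointed version of Cisinski's universality theorem directly or argue that $\mathsf{HO}(Sset_{\bullet})$ is obtained from $\mathsf{HO}(Sset)$ by the same localization-plus-slice procedure used in Section~\ref{chappoint}, so that its morphisms commuting with homotopy colimits into a pointed derivator $\mathbb{D}$ are classified by $\mathbb{D}(e)$ together with the (automatic) preservation of the point. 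Once these verifications are in place, the chain of equivalences
$$ \underline{\mathsf{Hom}}_!(\mathsf{HO}(\mathsf{Sp}^{\mathbb{N}}(Sset_{\bullet})),\mathbb{D}) \stackrel{\varphi^{\ast}}{\simeq} \underline{\mathsf{Hom}}_!(\mathsf{St}(\mathsf{HO}(Sset_{\bullet})),\mathbb{D}) \stackrel{stab^{\ast}}{\simeq} \underline{\mathsf{Hom}}_!(\mathsf{HO}(Sset_{\bullet}),\mathbb{D}) \simeq \mathbb{D}(e) $$
completes the proof.
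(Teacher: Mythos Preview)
Your proposal is correct and follows essentially the same route as the paper: apply theorem~\ref{repre} (together with Heller's theorem~\ref{HellerT}) to reduce from $\mathsf{HO}(\mathsf{Sp}^{\mathbb{N}}(Sset_{\bullet}))$ to $\mathsf{HO}(Sset_{\bullet})$, then invoke theorem~\ref{Cin} with $A=e$ to identify $\underline{\mathsf{Hom}}_!(\mathsf{Hot},\mathbb{D})$ with $\mathbb{D}(e)$, handling the pointed/unpointed passage along the way. The paper compresses this into a four-line chain of equivalences and treats the step $\underline{\mathsf{Hom}}_!(\mathsf{Hot}_{\bullet},\mathbb{D}) \simeq \underline{\mathsf{Hom}}_!(\mathsf{Hot},\mathbb{D})$ as evident (since $\mathbb{D}$ is triangulated, hence pointed), whereas you spell out the verifications more carefully; but the substance is the same.
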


\begin{proof}
By theorems~\ref{repre} and \ref{Cin}, we have the following
equivalences
$$
\begin{array}{rcl}
\underline{\mathsf{Hom}}_!(\mathsf{HO}(\mathsf{Sp}^{\mathbb{N}}(Sset_{\bullet})),\mathbb{D})
&
\simeq &
\underline{\mathsf{Hom}}_!(\mathsf{HO}(Sset_{\bullet}),\mathbb{D})\\
& = & \underline{\mathsf{Hom}}_!(\mathsf{Hot}_{\bullet},\mathbb{D}) \\
& \simeq & \underline{\mathsf{Hom}}_!(\mathsf{Hot},\mathbb{D}) \\
& \simeq & \mathbb{D}(e) \,.
\end{array}
$$
This proves the proposition.
\end{proof}

\begin{remark}
An analoguos caracterization of the category of spectra, but in the
context of stable $\infty$-categories is proved in \cite[17.6]{Lurie}.
\end{remark}

\section{DG quotients}\label{chapquotient}
Recall from theorem~\ref{theorem2} that we have at our disposal a Morita Quillen model structure on the category of small dg
categories $\mathsf{dgcat}$. The homotopy category $\mathsf{Ho}(\mathsf{dgcat})$ is pointed.
In the following, we will be considering this Quillen model structure. We
denote by $I$ the set of generating cofibrations.

\begin{Notation}
We denote by $\mathcal{E}$ the set of inclusions of full dg subcategories
$$ \mathcal{G} \hookrightarrow \mathcal{H}\,,$$
where $\mathcal{H}$ is a strict finite $I$-cell.
\end{Notation}

Recall that we have a morphism of derivators
$$ \mathcal{U}_t:= stab \circ \Phi \circ \mathbb{R}\underline{h} : \mathsf{HO}(\mathsf{dgcat})
\rightarrow
\mathsf{St}(\mathsf{L}_{\Sigma,P}\mathsf{Hot}_{\mathsf{dgcat}_f})$$
which commutes with filtered homotopy colimits and preserves the point.

Let us now make some general arguments.

Let $\mathbb{D}$ be a pointed derivator. We denote by $I$ the category
associated to the graph
$$ 0 \leftarrow 1 \,.$$
Consider the functor $t=1 : e \rightarrow I$. Since the functor
$t$ is an open immersion and the derivator $\mathbb{D}$ is pointed,
the functor
$$ t_! : \mathbb{D}(e) \rightarrow \mathbb{D}(I)$$
has a left adjoint
$$ t^? : \mathbb{D}(I) \rightarrow \mathbb{D}(e)\,,$$
see \cite{Cis-Nee}.
We denote it by
$$ \mathsf{cone}: \mathbb{D}(I) \rightarrow \mathbb{D}(e)\,.$$
Let $F:\mathbb{D} \rightarrow \mathbb{D}'$ be a morphism of pointed
derivators. Notice that we have a natural transformation of functors.
$$S: \mathsf{cone} \circ F(I) \rightarrow F(e) \circ
\mathsf{cone}\,.$$

\begin{proposition}\label{cons}
Let $\mathcal{A} \stackrel{R}{\hookrightarrow} \mathcal{B}$ be an
inclusion of a full dg subcategory and $\ul_R$
$$
\xymatrix{
\mathcal{A} \ar@{^{(}->}[r]^R \ar[d] & \mathcal{B} \,,\\
0 &
}
$$
the associated object in
$\mathsf{HO}(\mathsf{dgcat})(\ul)$, where $0$ denotes the
terminal object in $\mathsf{Ho}(\mathsf{dgcat})$. Then there exists a filtered
category $J$ and an object $D_R$ in
$\mathsf{HO}(\mathsf{dgcat})(\ul \times J)$, such that
$$ p_!(D_R) \stackrel{\sim}{\longrightarrow} \ul_R\,,$$
where $p:\ul \times J \rightarrow \ul$ denotes the
projection functor. Moreover, for every point $j:e \rightarrow J$ in
$J$ the object $(1 \times j)^{\ast}$ in
$\mathsf{HO}(\mathsf{dgcat})(\ul)$ is of the form
$$ 0 \leftarrow Y_j \stackrel{L_j}{\rightarrow} X_j \,,$$
where $Y_j \stackrel{L_j}{\rightarrow} X_j$, belongs to
the set $\mathcal{E}$.
\end{proposition}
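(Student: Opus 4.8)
The goal is to realize an arbitrary inclusion $\mathcal{A} \stackrel{R}{\hookrightarrow} \mathcal{B}$ of a full dg subcategory as a filtered homotopy colimit of inclusions from the set $\mathcal{E}$, in a way that is compatible with the square defining $\ul_R$ in $\mathsf{HO}(\mathsf{dgcat})(\ul)$. The plan is to exploit the finiteness properties of the Morita Quillen model structure established via theorem~\ref{thm} and recorded in Example~\ref{mori}: the domains and codomains of the generating cofibrations $I$ are cofibrant, $\aleph_0$-compact, $\aleph_0$-small and homotopically finitely presented, so that proposition~\ref{prop} applies. First I would write $\mathcal{B}$ (up to weak equivalence) as a filtered colimit $\mathcal{B} \simeq \operatorname{colim}_{j} \mathcal{H}_j$ of strict finite $I$-cell objects $\mathcal{H}_j$, using part $3)$ of proposition~\ref{prop}, and by part $2)$ this colimit computes the homotopy colimit. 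The category $J$ indexing this filtered system will be the filtered category sought in the statement.

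Next I would track the full dg subcategory $\mathcal{A} \subset \mathcal{B}$ along this presentation. Since $\mathcal{A}$ is a full dg subcategory, it is determined by its set of objects, which is a subset of $\operatorname{obj}(\mathcal{B}) = \operatorname{colim}_j \operatorname{obj}(\mathcal{H}_j)$; for each $j$ let $\mathcal{G}_j \subset \mathcal{H}_j$ be the full dg subcategory spanned by the objects whose image in $\mathcal{B}$ lies in $\mathcal{A}$. Then $\mathcal{G}_j \hookrightarrow \mathcal{H}_j$ lies in $\mathcal{E}$ by construction (full dg subcategory inclusion into a strict finite $I$-cell), the transition maps of the system $\{\mathcal{H}_j\}$ restrict to transition maps of $\{\mathcal{G}_j\}$, and $\operatorname{colim}_j \mathcal{G}_j \simeq \mathcal{A}$, with the colimit again computing the homotopy colimit by proposition~\ref{prop}. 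Assembling, the diagram $(\mathcal{G}_j \hookrightarrow \mathcal{H}_j)_{j \in J}$ together with the terminal object in the bottom-left corner defines an object $D_R$ in $\mathsf{HO}(\mathsf{dgcat})(\ul \times J)$, and the compatibility of homotopy colimits with the functor $\mathbb{R}\underline{h}$ (equivalently, the fact that homotopy colimits in diagram categories are computed pointwise and that $p_!$ along the projection $\ul \times J \to \ul$ is the homotopy colimit over $J$) gives $p_!(D_R) \stackrel{\sim}{\to} \ul_R$. For each point $j : e \to J$, the restriction $(1 \times j)^\ast D_R$ is by construction the object $0 \leftarrow \mathcal{G}_j \stackrel{L_j}{\to} \mathcal{H}_j$ with $L_j = (\mathcal{G}_j \hookrightarrow \mathcal{H}_j) \in \mathcal{E}$, which is exactly the required form with $Y_j = \mathcal{G}_j$, $X_j = \mathcal{H}_j$.

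The main obstacle I anticipate is the bookkeeping required to upgrade the pointwise statements about objects (``$\mathcal{B} \simeq \operatorname{colim} \mathcal{H}_j$'', ``$\mathcal{A} \simeq \operatorname{colim} \mathcal{G}_j$'') into a genuine diagram-level statement in $\mathsf{HO}(\mathsf{dgcat})(\ul \times J)$, i.e. constructing $D_R$ as an honest object of the derivator rather than just a family of objects and checking that $p_!$ applied to it recovers $\ul_R$ including the structure map $R$ itself, not merely the pair of objects $\mathcal{A}, \mathcal{B}$. The subtle point is that $\mathcal{A}$ sits inside $\mathcal{B}$ as a full subcategory and one must arrange the filtered system of $\mathcal{H}_j$'s so that the inclusion $\mathcal{A} \hookrightarrow \mathcal{B}$ is literally the colimit of the full-subcategory inclusions $\mathcal{G}_j \hookrightarrow \mathcal{H}_j$ compatibly with all transition maps; this is where one uses that filtered colimits in $\mathsf{dgcat}$ are computed on objects and hom-complexes separately, so passing to full subcategories on a colimit-stable family of object-subsets commutes with filtered colimits. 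Once the diagram $D_R$ is in place, the homotopy-colimit computation is formal from proposition~\ref{prop} (parts $2)$ and $3)$) and the definition of $p_!$ for the derivator associated to a Quillen model category, so I would allocate the bulk of the effort to the construction and verification of $D_R$, treating the remaining assertions as routine consequences.
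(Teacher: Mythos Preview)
Your proposal is correct and follows essentially the same approach as the paper: write a cofibrant replacement of $\mathcal{B}$ as the filtered colimit of its strict finite sub-$I$-cells $X_j$, and for each $j$ take $Y_j$ to be the full dg subcategory on the preimage of the objects of $\mathcal{A}$. The paper resolves exactly the bookkeeping obstacle you anticipate by making the replacement explicit---it applies the small object argument to $\emptyset \to \mathcal{B}$ to produce a trivial fibration $p\colon Q(\mathcal{B}) \twoheadrightarrow \mathcal{B}$ with $Q(\mathcal{B})$ an $I$-cell, takes $J$ to be the filtered category of strict finite sub-$I$-cells $X_j \hookrightarrow Q(\mathcal{B})$, and defines $Y_j$ as the fiber product $X_j \times_{Q(\mathcal{B})} p^{-1}(\mathcal{A})$; the weak equivalence $p^{-1}(\mathcal{A}) \to \mathcal{A}$ then comes from pulling back the trivial fibration $p$, and the colimit-versus-hocolim identification is handled by observing that $\mathsf{Fun}(\ul,\mathsf{dgcat})$ (with the projective model structure) again satisfies the hypotheses of proposition~\ref{prop}.
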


\begin{proof}
Apply the small object argument to the morphism
$$ \emptyset \longrightarrow \mathcal{B}$$
using the set of generating cofibrations $I$ and obtain the
factorization
$$
\xymatrix{
*+<1pc>{\emptyset} \ar[rr] \ar@{>->}[dr]_i & & \mathcal{B} \\
 & Q(\mathcal{B}) \ar@{->>}[ur]^{\sim}_p & \,,
}
$$
where $i$ is an $I$-cell.
Now consider the following fiber product
$$
\xymatrix{
p^{-1}(\mathcal{A}) \ar@{->>}[d]_{\sim} \ar@{^{(}->}[r]
\ar@{}[dr]|{\ulcorner} &
  Q(\mathcal{B}) \ar@{->>}[d]^p_{\sim} \\
\mathcal{A} \ar@{^{(}->}[r]^J & \mathcal{B}\,.
}
$$
Notice that $p^{-1}(\mathcal{A})$ is a full dg subcategory of
  $Q(\mathcal{B})$.

Now, by proposition~\ref{prop}, we have an
isomorphism
$$ \underset{j \in J}{\mbox{colim}} \, X_j \stackrel{\sim}{\longrightarrow}
Q(\mathcal{B})\,,$$
where $J$ is the filtered category of inclusions of strict finite
sub-$I$-cells $X_j$ into $Q(\mathcal{B})$.

For each $j \in J$, consider the fiber product
$$
\xymatrix{
Y_j \ar[d] \ar@{^{(}->}[r] \ar@{}[dr]|{\ulcorner} & X_j \ar[d] \\
p^{-1}(\mathcal{A}) \ar[r] \ar@{^{(}->}[r] & Q(\mathcal{B})\,.
}
$$
In this way, we obtain a morphism of diagrams
$$ \{ Y_j \}_{j \in J} \hookrightarrow \{ X_j \}_{j \in
  J} \,,$$
such that for each $j$ in $J$, the inclusion
$$ Y_j \hookrightarrow X_j $$
belongs to the set $\mathcal{E}$ and $J$ is filtered.

Consider now the diagram $D_I$
$$ {\{0 \leftarrow Y_j \hookrightarrow X_j \}}_{j \in
  J}$$
in the category $\mathsf{Fun}(\ul \times J, \mathsf{dgcat})$.
Now, notice that we have the isomorphism
$$ \underset{j \in J}{\mathsf{colim}} \, \{0 \leftarrow
Y_j \hookrightarrow X_j \} \stackrel{\sim}{\longrightarrow}
\{ 0
 \leftarrow p^{-1}(\mathcal{A}) \hookrightarrow Q(\mathcal{B}) \}$$
in $\mathsf{Fun}(\ul, \mathsf{dgcat})$ and the weak equivalence
$$
\xymatrix{
\{ 0 \ar@{=}[d] & p^{-1}(\mathcal{A}) \ar[l] \ar[d]^{\sim}
\ar@{^{(}->}[r] & Q(\mathcal{B})\} \ar[d]^{\sim} \\
\{ 0 & \mathcal{A} \ar[l] \ar@{^{(}->}[r] & \mathcal{B} \}
}
$$
in $\mathsf{Fun}(\ul, \mathsf{dgcat})$, when endowed with the
projective model structure, see \cite{Hirschhorn}. Since
$\mathsf{Fun}(\ul, \mathsf{dgcat})$ is clearly also compactly
generated, we have an isomorphism
$$ \underset{j \in J}{\mathsf{hocolim}} \, ( 0 \leftarrow
Y_j \rightarrow X_j) \stackrel{\sim}{\longrightarrow} \underset{j \in J}{\mathsf{colim}} \, ( 0 \leftarrow Y_j \rightarrow X_j)\,.$$

Finally, notice that $D_R$ is an object of
$\mathsf{HO}(\mathsf{dgcat})(\ul \times J)$ and that $p_!(D_R)$,
where $p:\ul \times J \rightarrow J$ denotes the projection
functor, identifies with
$$ \underset{i \in J}{\mathsf{hocolim}}\, ( 0 \leftarrow Y_j \rightarrow X_j )\,.$$
This proves the proposition.
\end{proof}

\begin{Notation}
We denote by $\mathcal{E}_{st}$ the set of morhisms $S_L$, where $L$
belongs to the set $\mathcal{E}$.
\end{Notation}

Let $\mathbb{D}$ be a strong triangulated derivator.
\begin{theorem}\label{invert}
If $$G:
\mathsf{St}(\mathsf{L}_{\Sigma,P}\mathsf{Hot}_{\mathsf{dgcat}_f})
\rightarrow \mathbb{D}$$ is a morphism of triangulated derivators
commuting with arbitrary homotopy colimits and such that $G(e)(S_L)$
is invertible for each $L$ in $\mathcal{E}$, then $G(e)(S_K)$ is
invertible for each inclusion $K:\mathcal{A} \hookrightarrow
\mathcal{B}$ of a full dg subcategory.
\end{theorem}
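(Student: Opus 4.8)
The plan is to reduce the general case of an inclusion $K : \mathcal{A} \hookrightarrow \mathcal{B}$ of a full dg subcategory to the special inclusions in $\mathcal{E}$, using Proposition~\ref{cons} together with the fact that $G$ commutes with all homotopy colimits. First I would recall that $\mathcal{U}_t = stab \circ \Phi \circ \mathbb{R}\underline{h}$ commutes with filtered homotopy colimits and preserves the point; since $G$ commutes with arbitrary homotopy colimits, the composite $G \circ \mathcal{U}_t$ commutes with filtered homotopy colimits as well. So it suffices to understand how the morphisms $S_K$ behave under filtered homotopy colimits: the key point is that $S_K$ is a natural transformation of functors $\mathsf{cone}\circ F(I) \to F(e)\circ \mathsf{cone}$ evaluated at the object $\ul_K$, and both $\mathsf{cone}$ and $\mathcal{U}_t$ commute with the relevant colimits, so $S_K$ is compatible with expressing $\ul_K$ as a filtered homotopy colimit.

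**Key steps.**
Given the inclusion $K : \mathcal{A}\hookrightarrow \mathcal{B}$, apply Proposition~\ref{cons} to obtain a filtered category $J$ and an object $D_K$ in $\mathsf{HO}(\mathsf{dgcat})(\ul \times J)$ with $p_!(D_K) \xrightarrow{\sim} \ul_K$ and with each fibre $(1\times j)^\ast D_K$ of the form $0 \leftarrow Y_j \to X_j$ where $(Y_j \hookrightarrow X_j)\in \mathcal{E}$. Then I would argue that the morphism $S_K$ is the filtered homotopy colimit over $j\in J$ of the morphisms $S_{L_j}$: more precisely, apply $G\circ \mathcal{U}_t$ to the diagram $D_K$, push forward along $p : \ul\times J \to \ul$, and use that both $\mathsf{cone}$ (a left adjoint $t^?$, hence commuting with homotopy colimits) and $G\circ \mathcal{U}_t$ commute with the filtered homotopy colimit $p_!$ over $J$. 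This identifies $G(e)(S_K)$ with $\operatorname{hocolim}_{j\in J} G(e)(S_{L_j})$ in the arrow category of $\mathbb{D}(e)$. Since each $L_j = (Y_j\hookrightarrow X_j)$ lies in $\mathcal{E}$, each $G(e)(S_{L_j})$ is invertible by hypothesis, and a filtered homotopy colimit of isomorphisms is an isomorphism; hence $G(e)(S_K)$ is invertible.

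**Main obstacle.**
The hard part will be making rigorous the claim that $S_K$ is the (filtered) homotopy colimit of the $S_{L_j}$, i.e.\ that forming cones and applying $G\circ\mathcal{U}_t$ genuinely commute with $p_!$ over the filtered category $J$ in a way compatible with the natural transformation $S$. This requires being careful about the $2$-categorical coherence: one must check that the natural transformation $S : \mathsf{cone}\circ F(I) \to F(e)\circ \mathsf{cone}$ is itself natural with respect to homotopy colimits in the $\ul$-shaped diagram, so that evaluating at $p_!(D_K)$ and evaluating fibrewise then taking $p_!$ agree. Here I would invoke the compatibility of the mate/base-change $2$-cells for the open immersion $t : e \to I$ with the colimit functor $p_!$ (using that $J$ is filtered and $\mathbb{D}$, $\mathsf{St}(\mathsf{L}_{\Sigma,P}\mathsf{Hot}_{\mathsf{dgcat}_f})$ are strong triangulated derivators), which is what makes $S$ a morphism of functors commuting with filtered homotopy colimits. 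Once that coherence is in place, the rest is formal: invertibility is detected objectwise, filtered homotopy colimits of isomorphisms are isomorphisms, and the hypothesis on $\mathcal{E}$ closes the argument.
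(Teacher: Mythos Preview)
Your proposal is correct and follows the same strategy as the paper: use Proposition~\ref{cons} to write $\ul_K$ as a filtered homotopy colimit of diagrams $\ul_{L_j}$ with $L_j\in\mathcal{E}$, then exploit that $G$ commutes with homotopy colimits and that $\mathcal{U}_t$ commutes with filtered ones, reducing to the fibrewise case via conservativity.

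The only notable difference is in packaging. You propose to work directly with the cone comparison morphism $S_K$ and argue that it is a filtered homotopy colimit of the $S_{L_j}$ ``in the arrow category of $\mathbb{D}(e)$''. The paper instead passes to the square: it considers the base-change morphism
\[
\varphi_K : (i_!\circ\mathcal{U}_t)(\ul_K)\longrightarrow (\mathcal{U}_t\circ i_!)(\ul_K)
\]
in $\mathsf{St}(\mathsf{L}_{\Sigma,P}\mathsf{Hot}_{\mathsf{dgcat}_f})(\square)$, where $i:\ul\hookrightarrow\square$, and an analogous morphism $\Psi$ over $\square\times J$ built from $D_K$. One then checks (i) $p_!\Psi\simeq\varphi_K$, using that $\mathcal{U}_t$ commutes with filtered colimits and standard base-change for the square $\ul\times J\to\square\times J$; and (ii) $(1\times j)^*\Psi\simeq\varphi_{L_j}$ for each $j$, using Cisinski's base-change (dual of his proposition~2.8). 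Applying $G$ and conservativity then finishes. This sidesteps exactly the obstacle you flag: rather than lifting the natural transformation $S$ to an object of an arrow category (which is awkward since $\mathbb{D}$ is only strong), the paper encodes everything as genuine objects of $\mathbb{D}(\square\times J)$ and checks invertibility pointwise. Your approach would work too, but the $\square$-level formulation is the clean way to make the coherence rigorous in the derivator framework.
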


\begin{proof}
Let $\mathcal{A} \stackrel{K}{\hookrightarrow} \mathcal{B}$ be an
inclusion of a full dg subcategory. Consider the morphism
$$ \varphi_K := \varphi(\ulcorner_K):(i_! \circ
\mathcal{U}_T)(\ulcorner_K) \longrightarrow (\mathcal{U}_T \circ
i_!)(\ulcorner_K)$$
in
$\mathsf{St}(\mathsf{L}_{\Sigma,P}\mathsf{Hot}_{\mathsf{dgcat}_f})(\square)$.

Let $D_K$ be the object of
$\mathsf{HO}(\mathsf{dgcat})(\ul \times J)$ constructed in
proposition~\ref{cons}. In particular $p'_!(D_K)
\stackrel{\sim}{\rightarrow} \ul_K$, where $p':\ul \times
J \rightarrow \ul$ denotes the projection functor.

The inclusion $i : \ul \hookrightarrow \square$, induces a
commutative square
$$
\xymatrix{
\mathsf{HO}(\mathsf{dgcat})(\square \times J) \ar[d]^{(i\times
  1)^{\ast}} \ar[rr]^{\mathcal{U}_T(\square \times J)} & &
\mathsf{St}(\mathsf{L}_{\Sigma,P}\mathsf{Hot}_{\mathsf{dgcat}_f})(\square
\times J) \ar[d]^{(i\times 1)^{\ast}} \\
\mathsf{HO}(\mathsf{dgcat})(\ul \times J)
\ar[rr]_{\mathcal{U}_T(\square \times J)} & & \mathsf{St}(\mathsf{L}_{\Sigma,P}\mathsf{Hot}_{\mathsf{dgcat}_f})(\ul
\times J)
}
$$
and a morphism
$$ \Psi : ((i\times 1)_! \circ \mathcal{U}_T(\ul \times J))(D_K)
\longrightarrow (\mathcal{U}_T(\square \times J) \circ (i \times
1)_!)(D_K)\,.$$
We will now show that
$$ p_! \Psi \stackrel{\sim}{\longrightarrow} \varphi_K \,,$$
where $p:\square \times J \rightarrow \square$, denotes the projection
functor.

The fact that we have the following commutative square
$$
\xymatrix{
\square & \square \times J \ar[l]_p \\
\ul \ar[u]^i & \ul \times J \ar[u]_{i\times 1} \ar[l]^{p'}
}
$$
and that the morphism of derivators $\mathcal{U}_T$ commutes with
filtered homotopy colimits implies the following equivalences
$$
\begin{array}{lcl}
p_!\Psi & & \\
= p_! \circ (i\times 1)_! \circ \mathcal{U}_T(\ul \times j)(D_K)
& \rightarrow & p_! \circ \mathcal{U}_T(\square\times J)\circ (i\times
1)_! (D_k) \\
\simeq i_! \circ p'_! \circ \mathcal{U}_T(\ul \times J)(D_k) &
\rightarrow & \mathcal{U}_T(\square \times J) \circ p_! \circ (i\times
1)_! (D_K)\\
\simeq i_! \circ \mathcal{U}_T(\ul) \circ p'_!(D_K) &
\rightarrow & \mathcal{U}_T(\square) \circ i_! \circ p'_!(D_K) \\
\simeq (i_! \circ \mathcal{U}_T(\ul))(\ul_K) & \rightarrow
& (\mathcal{U}_T(\square) \circ i_!)(\ul_K)\\
= \varphi_J
\end{array}
$$
This shows that
$$ p_!(\Psi) \stackrel{\sim}{\longrightarrow} \varphi_K\,.$$
We now show that $\Psi$ is an isomorphism. For this, by conservativity, it
is enough to show that for every object $j:e \rightarrow J$ in $J$,
the morphism
$$ (1\times j)^{\ast}(\Psi)\,,$$
is an isomorphism in
$\mathsf{St}(\mathsf{L}_{\Sigma,P}\mathsf{Hot}_{\mathsf{dgcat}_f})(\square)$.
Recall from proposition~\ref{cons} that $(1\times j)^{\ast} (D_K)$
identifies with
$$ \{ \, 0 \leftarrow Y_j
\stackrel{L_j}{\hookrightarrow} X_j \}\,,$$
where $L_j$ belongs to $\mathcal{E}$. We now show that $(1\times
j)^{\ast}(\Psi)$ identifies with $\varphi_{L_j}$, which by hypotheses,
is an isomorphism.

Now, the following commutative diagram
$$
\xymatrix{
\square \ar[r]^{1\times j} & \square \times J \\
\ul \ar[u]^i \ar[r]_{1\times j} & \ul \times J
\ar[u]_{i\times i}
}
$$
and the dual of proposition $2.8$ in \cite{Cisinski} imply that we
have the following equivalences
$$
\begin{array}{lcl}
(1\times j)^{\ast} \Psi & & \\
= ((1\times j)^{\ast} \circ (i \times 1)_! \circ
\mathcal{U}_T(\ul \times J))(D_K) & \rightarrow &
((1\times j)^{\ast} \circ \mathcal{U}_T(\square \times J) \circ (i
\times 1)_!)(D_K)\\
\simeq (i_! \circ (1\times j)^{\ast} \circ \mathcal{U}_T(\ul
\times J))(D_K) & \rightarrow & (\mathcal{U}_T(\square \times J) \circ
(1\times j)^{\ast} \circ (i \times 1)_!)(D_K)\\
\simeq (i_! \circ \mathcal{U}_T(\ul) \circ (1\times
j)^{\ast})(D_K) & \rightarrow & (\mathcal{U}_T(\square) \circ i_!
\circ (1\times j)^{\ast})(D_K)\\
\simeq i_! \circ \mathcal{U}_T(\ul)(\ul_{L_j}) &
\rightarrow & \mathcal{U}_T(\square) \circ i_!(\ul_{L_j})\\
= \varphi_{L_j} & &
\end{array}
$$
Since by hypotheses $\varphi_{L_j}$ is an isomorphism and the morphism
$G$ commutes with homotopy colimits the theorem is proven.
\end{proof}

\section{The universal localizing invariant}\label{labuniv}

Recall from theorem~\ref{repre} and remark~\ref{Tr} that if we
consider for the category $\mathcal{M}$ the category
$\mathsf{L}_{\Sigma,P}\mathsf{Fun}(\mathsf{dgcat}_f^o,Sset_{\bullet})$,
see example~\ref{ex2}, we have an equivalence of triangulated
derivators
$$\varphi:
\mathsf{St}(\mathsf{L}_{\Sigma,P}\mathsf{Hot}_{\mathsf{dgcat}_f}) \stackrel{\sim}{\longrightarrow}  \mathsf{HO}(\mathsf{Sp}^{\mathbb{N}}(\mathsf{L}_{\Sigma,P}\mathsf{Fun}(\mathsf{dgcat}_f^o,
Sset_{\bullet})))\,.$$
Now, stabilize the set $\mathcal{E}_{st}$ defined in the previous
section under the functor loop space and choose for each
element of this stabilized set a representative in the category $\mathsf{Sp}^{\mathbb{N}}(\mathsf{L}_{\Sigma,P}\mathsf{Fun}(\mathsf{dgcat}_f^o,
Sset_{\bullet}))$. We denote the set of these representatives by
$\widetilde{\mathcal{E}_{st}}$. Since $\mathsf{Sp}^{\mathbb{N}}(\mathsf{L}_{\Sigma,P}\mathsf{Fun}(\mathsf{dgcat}_f^o,
Sset_{\bullet}))$ is a left proper, cellular Quillen model category,
see \cite{Spectra}, its left Bousfield localization by
$\widetilde{\mathcal{E}_{st}}$ exists. We denote it by $\mathsf{L}_{\widetilde{\mathcal{E}_{st}}}\mathsf{Sp}^{\mathbb{N}}(\mathsf{L}_{\Sigma,P}\mathsf{Fun}(\mathsf{dgcat}_f^o,
Sset_{\bullet}))$. By lemma~\ref{lettri} it is a stable Quillen
model category.

\begin{remark}
Since the localization morphism
$$ \gamma:\,
\mathsf{St}(\mathsf{L}_{\Sigma,P}\mathsf{Hot}_{\mathsf{dgcat}_f})
\stackrel{\mathbb{L}Id}{\longrightarrow}
\mathsf{HO}(\mathsf{L}_{\widetilde{\mathcal{E}_{st}}} \mathsf{Sp}^{\mathbb{N}}(\mathsf{L}_{\Sigma,P}\mathsf{Fun}(\mathsf{dgcat}_f^o,
Sset_{\bullet})))$$
commutes with homotopy colimits and inverts the set of morphisms
$\mathcal{E}_{st}$, theorem~\ref{invert} allows us to conclude that it
inverts all morphisms $S_K$ for each inclusion $\mathcal{A}
\hookrightarrow \mathcal{B}$ of a full dg subcategory.
\end{remark}

\begin{definition}\label{defloc}
\begin{itemize}
\item[-] The {\em Localizing motivator of dg categories} $\mathcal{M}_{dg}^{loc}$ is the triangulated
derivator associated with the stable Quillen model category
$$ \mathsf{L}_{\widetilde{\mathcal{E}_{st}}} \mathsf{Sp}^{\mathbb{N}}(\mathsf{L}_{\Sigma,P}\mathsf{Fun}(\mathsf{dgcat}_f^o,
Sset_{\bullet}))\,.$$
\item[-] The {\em Universal localizing invariant of dg categories} is the canonical morphism of
  derivators $$ \mathcal{U}_l : \mathsf{HO}(\mathsf{dgcat}) \rightarrow \mathcal{M}_{dg}^{loc}\,.$$
\end{itemize}
\end{definition}

We sum up the construction of $\mathcal{M}_{dg}^{loc}$ in the following diagram

$$
\xymatrix{
\underline{\mathsf{dgcat}_f}[S^{-1}] \ar[r] \ar[d]_{\mathsf{Ho}(h)} &
\mathsf{HO}(\mathsf{dgcat}) \ar[dl]^{\mathbb{R}\underline{h}} \ar@/^2pc/[ddddl]^{\mathcal{U}_l}
 \\
\mathsf{L}_{\Sigma}\mathsf{Hot}_{\mathsf{dgcat}_f}
\ar[d]_{\Phi}  \ar@<1ex>[ur]^{\mathbb{L}Re}  & \\
\mathsf{L}_{\Sigma,P}\mathsf{Hot}_{\mathsf{dgcat}_f}
\ar[d]_{stab} & \\
\mathsf{St}({\mathsf{L}_{\Sigma,P}\mathsf{Hot}_{\mathsf{dgcat}_f}})
\ar[d]_{\gamma} & \\
\mathcal{M}_{dg}^{loc} &
}
$$
Observe that the morphism of derivators $\mathcal{U}_l$ is pointed, commutes with filtered
homotopy colimits and satisfies the following condition:

\begin{itemize}
\item[Dr)] For every inclusion $\mathcal{A} \stackrel{K}{\hookrightarrow}
  \mathcal{B}$ of a full dg subcategory the canonical morphism
$$ S_K:\, \mathsf{cone}(\mathcal{U}_l(\mathcal{A} \stackrel{K}{\hookrightarrow} \mathcal{B}))
\rightarrow \mathcal{U}_l(\mathcal{B}/\mathcal{A})$$
is invertible in $\mathcal{M}_{dg}^{loc}(e)$.
\end{itemize}

We now give a conceptual characterization of condition $Dr)$. Let $I$
be the category associated with the graph $0 \leftarrow 1$.

\begin{lemma}\label{mono}
The isomorphism classes in $\mathsf{HO}(\mathsf{dgcat})(I)$ associated
with the inclusions $\mathcal{A} \stackrel{K}{\hookrightarrow}
\mathcal{B}$ of full dg subcategories coincide with the classe of
homotopy monomorphims in $\mathsf{dgcat}$, see section $2$ in \cite{Toen}.
\end{lemma}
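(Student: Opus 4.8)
The statement is that in $\mathsf{HO}(\mathsf{dgcat})(I)$, where $I$ is the category $0 \leftarrow 1$, the isomorphism classes of objects coming from inclusions $\mathcal{A} \stackrel{K}{\hookrightarrow} \mathcal{B}$ of \emph{full} dg subcategories are exactly the homotopy monomorphisms of $\mathsf{dgcat}$, in the sense of section $2$ of \cite{Toen}. Recall that a morphism $f: \mathcal{A} \to \mathcal{B}$ in a model category is a homotopy monomorphism if the canonical morphism $\mathcal{A} \to \mathcal{A} \times^h_{\mathcal{B}} \mathcal{A}$ is an isomorphism in the homotopy category, equivalently (by Toën) if the functor $\mathsf{Map}(\mathcal{X}, \mathcal{A}) \to \mathsf{Map}(\mathcal{X}, \mathcal{B})$ is, for every $\mathcal{X}$, the inclusion of a union of connected components up to homotopy. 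The plan is to prove the two inclusions separately, translating everything through Toën's description of mapping spaces in $\mathsf{dgcat}$ recalled in theorem~\ref{thmToen} and corollary~\ref{adjonct6}: $\mathsf{Map}(\mathcal{X},\mathcal{A})$ is weakly equivalent to the nerve of the category $\mathcal{R}_{mor}(\mathcal{X},\mathcal{A})$ of quasi-isomorphisms of right quasi-representable bimodules.

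\textbf{Step 1: inclusions of full dg subcategories are homotopy monomorphisms.} Let $K: \mathcal{A} \hookrightarrow \mathcal{B}$ be the inclusion of a full dg subcategory; we may assume $\mathcal{A}$ and $\mathcal{B}$ are cofibrant (hence $k$-flat). The key point is that the restriction functor $K^{\ast}: \mathcal{D}(\mathcal{X}^{op} \otimes \mathcal{B}) \to \mathcal{D}(\mathcal{X}^{op}\otimes \mathcal{A})$ has a fully faithful left (and right) adjoint, because $K$ is fully faithful and induces on bimodule categories a restriction along a fully faithful dg functor. Concretely, a right quasi-representable $\mathcal{X}$-$\mathcal{B}$-bimodule $X$ lies (up to isomorphism in $\mathcal{D}$) in the image of extension along $K$ if and only if $X(?,x)$ is, for every $x$, quasi-representable by an object of $\mathcal{A}$. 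This identifies $\mathsf{rep}_{mor}(\mathcal{X},\mathcal{A})$ with a full subcategory of $\mathsf{rep}_{mor}(\mathcal{X},\mathcal{B})$ which is closed under passing to quasi-isomorphic objects; hence on nerves of the categories $\mathcal{R}_{mor}$ the map $\mathsf{Map}(\mathcal{X},\mathcal{A}) \to \mathsf{Map}(\mathcal{X},\mathcal{B})$ is the inclusion of a union of connected components, i.e. a homotopy monomorphism. This gives one inclusion, and also shows that $\mathcal{A} \to \mathcal{A}\times^h_{\mathcal{B}} \mathcal{A}$ is an equivalence directly.

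\textbf{Step 2: every homotopy monomorphism is, up to isomorphism in $\mathsf{HO}(\mathsf{dgcat})(I)$, an inclusion of a full dg subcategory.} Let $f: \mathcal{A} \to \mathcal{B}$ be a homotopy monomorphism; replace it by a cofibration between cofibrant objects. First I would show $f$ is quasi-fully faithful: applying the homotopy-monomorphism property with $\mathcal{X} = \underline{k}$ (and more generally the dg category $\mathcal{C}(n)$ built from $S^{n-1}$) shows that $\mathsf{Map}$ from a single object detects that $f$ induces injections (hence, using the whole diagram of such test objects, quasi-isomorphisms) on Hom-complexes; alternatively, quasi-full-faithfulness follows from the fact that the diagonal $\mathcal{A}(X,Y) \to \mathcal{A}(X,Y)\times^h_{\mathcal{B}(fX,fY)} \mathcal{A}(X,Y)$ must be a quasi-isomorphism, forcing $\mathcal{A}(X,Y)\to\mathcal{B}(fX,fY)$ to be one. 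Once $f$ is quasi-fully faithful, let $\mathcal{B}'\subseteq \mathcal{B}$ be the full dg subcategory on the objects in the essential image of $\mathsf{H}^0(f)$; then $f$ factors as $\mathcal{A} \xrightarrow{\sim} \mathcal{B}' \hookrightarrow \mathcal{B}$ where the first map is a quasi-equivalence (it is quasi-fully faithful and essentially surjective) and the second is a full dg subcategory inclusion. This exhibits $f$ as isomorphic in $\mathsf{HO}(\mathsf{dgcat})(I)$ to the inclusion $\mathcal{B}'\hookrightarrow\mathcal{B}$, completing the proof.

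\textbf{Main obstacle.} The delicate point is Step 2, extracting quasi-full-faithfulness of $f$ from the abstract homotopy-monomorphism condition: one must identify the homotopy fibre product $\mathcal{A}\times^h_{\mathcal{B}}\mathcal{A}$ on Hom-complexes, or equivalently pick the right small test dg categories $\mathcal{X}$ so that the induced map of mapping spaces being a homotopy monomorphism forces the map of complexes $\mathcal{A}(X,Y)\to\mathcal{B}(fX,fY)$ to be a quasi-isomorphism — this requires care because $\mathsf{Map}$ only sees $\mathsf{H}^0$-level data directly, and one needs the full collection of test objects $\mathcal{C}(n)$ and the interpretation of their $\mathsf{Map}$-sets via $\mathcal{R}_{mor}$, together with the fact that a homotopy monomorphism of simplicial sets that is also surjective on $\pi_0$ is a weak equivalence, applied fibrewise over the mapping space. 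I would handle this by working on a cofibrant-fibrant model and reducing, via the explicit description of $\mathsf{rep}_{mor}$ in corollary~\ref{adjonct6} and remark~\ref{monoi}, to a statement purely about derived categories of bimodules, where fully-faithfulness of the relevant restriction/extension functors is classical.
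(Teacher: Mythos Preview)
Your overall structure matches the paper's proof exactly: both directions, with the converse handled by passing through the essential image as a full dg subcategory. The one substantive difference is your ``main obstacle'': you work hard to extract quasi-full-faithfulness from the homotopy-monomorphism condition, sketching two arguments (testing with $\mathcal{C}(n)$, or analyzing the diagonal on Hom-complexes). The paper bypasses this entirely by citing lemma~2.4 of \cite{Toen}, which states precisely that a dg functor is a homotopy monomorphism in the quasi-equivalence model structure if and only if it is quasi-fully faithful; the translation to the Morita model structure then goes via remark~\ref{monoi} (the mapping space $\mathsf{Map}(\mathcal{A},\mathcal{B})$ in the Morita structure is $\mathsf{Map}(\mathcal{A},\mathcal{B}_f)$ in the quasi-equivalence structure), so that $F$ is a Morita homotopy monomorphism iff $F_f:\mathcal{A}_f\to\mathcal{B}_f$ is quasi-fully faithful. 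The paper then builds the full subcategory $\widetilde{\mathcal{A}_f}$ inside $\mathcal{B}_f$ rather than inside $\mathcal{B}$, but your version works too since Morita fibrant replacement is itself quasi-fully faithful. Your ``diagonal'' alternative is in fact the idea behind To\"en's lemma, so you are re-proving what the paper cites; the ``test with $\mathcal{C}(n)$'' route is less direct and would need more care to make rigorous.
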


\begin{proof}
Recall from section $2$ in \cite{Toen} that in a model category
$\mathcal{M}$ a morphism $X \stackrel{f}{\rightarrow} Y$ is a homotopy
monomorphism if for every object $Z$ in $\mathcal{M}$, the induced
morphism of simplicial sets
$$ \mathsf{Map}(Z,X) \stackrel{f_{\ast}}{\rightarrow}
\mathsf{Map}(Z,Y)$$
induces an injection on $\pi_0$ and isomorphisms on all $\pi_i$ for
$i>0$ (for all base points).

Now, by lemma $2.4$ of \cite{Toen} a dg functor $\mathcal{A}
\stackrel{F}{\rightarrow} \mathcal{B}$ is an homotopy monomorphism on
the quasi-equivalent Quillen model category on $\mathsf{dgcat}$ if and
only if it is quasi-fully faithful, i.e. for any two objects $X$ and
$Y$ in $\mathcal{A}$ the morphism of complexes
$\mathsf{Hom}_{\mathcal{A}}(X,Y) \rightarrow
\mathsf{Hom}_{\mathcal{B}}(FX,FY)$ is a quasi-isomorphism.

Recall that by remark~\ref{monoi} the mapping space functor $\mathsf{Map}(\mathcal{A},\mathcal{B})$ in the
Morita Quillen model category identifies with the mapping space
$\mathsf{Map}(\mathcal{A},\mathcal{B}_{f})$ in the quasi-equivalent
Quillen model category, where $\mathcal{B}_f$ denotes a Morita fibrant
resolution of $\mathcal{B}$.
This implies that a dg functor $\mathcal{A} \stackrel{F}{\rightarrow}
\mathcal{B}$ is a homotopy monomorphism if and only if $\mathcal{A}_f
\stackrel{F_f}{\rightarrow} \mathcal{B}_f$ is a quasi-fully faithful
dg functor.

Now, notice that an inclusion $\mathcal{A} \hookrightarrow
\mathcal{B}$ of a full dg subcategory is a homotopy
monomorphism. Conversely, let $\mathcal{A} \stackrel{F}{\rightarrow}
\mathcal{B}$ be a homotopy monomorphism. Consider the diagram
$$
\xymatrix{
\widetilde{\mathcal{A}_f} \ar@{^{(}->}[r] & \mathcal{B}_f \ar@{=}[d]  \\
\mathcal{A}_f \ar[u]^{\pi} \ar[r]^{F_f} & \mathcal{B}_f \\
\mathcal{A} \ar[u]^{\sim} \ar[r]_F & \mathcal{B} \ar[u]_{\sim} \,,
}
$$
where $\widetilde{\mathcal{A}_f}$ denotes the full dg subcategory of
$\mathcal{B}_f$ whose objects are those in the image by the dg
functor $F_f$. Since $F_f$ is a quasi-fully faithful dg functor, the dg
functor $\pi$ is a quasi-equivalence.
This proves the lemma.
\end{proof}

\begin{remark}
Lemma~\ref{mono} shows that condition $Dr)$ is equivalent to
\begin{itemize}
\item[Dr')] For every homotopy monomorphism $\mathcal{A}
  \stackrel{F}{\rightarrow} \mathcal{B}$ in
  $\mathsf{HO}(\mathsf{dgcat})(I)$ the canonical morphism
$$ \mathsf{cone}(\mathcal{U}_l (\mathcal{A} \stackrel{F}{\rightarrow}
\mathcal{B})) \rightarrow \mathcal{U}_l (\mathsf{cone}(F))$$
is invertible in $\mathcal{M}_{dg}^{loc}(e)$.
\end{itemize}
\end{remark}

Let $\mathbb{D}$ be a strong triangulated derivator.
\begin{theorem}\label{principal}
The morphism $\mathcal{U}_l$ induces an equivalence of categories
$$
\underline{\mathsf{Hom}}_!(\mathcal{M}_{dg}^{loc},
\mathbb{D}) \stackrel{\mathcal{U}_l^{\ast}}{\longrightarrow}
\underline{\mathsf{Hom}}_{flt,\,Dr,\,p}(\mathsf{HO}(\mathsf{dgcat}),\mathbb{D})\,,$$
where
$\underline{\mathsf{Hom}}_{flt,\,Dr\,, p}(\mathsf{HO}(\mathsf{dgcat}),\mathbb{D})$
denotes the category of morphisms of derivators which commute with filtered
homotopy colimits, satisfy condition Dr) and preserve the point.
\end{theorem}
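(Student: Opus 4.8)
The plan is to assemble the statement by composing the three universal properties established earlier in the excerpt, peeling off one localization/stabilization layer at a time, and then identifying the resulting subcategory of morphisms of derivators. Concretely, recall from the construction diagram preceding Definition~\ref{defloc} that $\mathcal{U}_l$ factors as
$$\mathsf{HO}(\mathsf{dgcat}) \xrightarrow{\ \mathbb{R}\underline{h}\ } \mathsf{L}_{\Sigma}\mathsf{Hot}_{\mathsf{dgcat}_f} \xrightarrow{\ \Phi\ } \mathsf{L}_{\Sigma,P}\mathsf{Hot}_{\mathsf{dgcat}_f} \xrightarrow{\ stab\ } \mathsf{St}(\mathsf{L}_{\Sigma,P}\mathsf{Hot}_{\mathsf{dgcat}_f}) \xrightarrow{\ \gamma\ } \mathcal{M}_{dg}^{loc}\,.$$
First I would invoke Proposition~\ref{Trin}, which already packages the first three arrows: precomposition with $stab\circ\Phi\circ\mathbb{R}\underline{h}$ gives an equivalence
$$\underline{\mathsf{Hom}}_!(\mathsf{St}(\mathsf{L}_{\Sigma,P}\mathsf{Hot}_{\mathsf{dgcat}_f}),\mathbb{D}) \xrightarrow{\ \sim\ } \underline{\mathsf{Hom}}_{flt,p}(\mathsf{HO}(\mathsf{dgcat}),\mathbb{D})\,.$$
So it remains to peel off the last arrow $\gamma$, i.e.\ to show that precomposition with $\gamma$ induces an equivalence from $\underline{\mathsf{Hom}}_!(\mathcal{M}_{dg}^{loc},\mathbb{D})$ onto the full subcategory of $\underline{\mathsf{Hom}}_!(\mathsf{St}(\mathsf{L}_{\Sigma,P}\mathsf{Hot}_{\mathsf{dgcat}_f}),\mathbb{D})$ consisting of those homotopy-colimit-preserving morphisms $F$ such that the composite $F\circ stab\circ\Phi\circ\mathbb{R}\underline{h}$ satisfies condition $Dr)$, and then to check this subcategory corresponds under Proposition~\ref{Trin} exactly to $\underline{\mathsf{Hom}}_{flt,Dr,p}(\mathsf{HO}(\mathsf{dgcat}),\mathbb{D})$.

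For the first part, I would apply Theorem~\ref{Cisinsk} to the Quillen model category $\mathsf{Sp}^{\mathbb{N}}(\mathsf{L}_{\Sigma,P}\mathsf{Fun}(\mathsf{dgcat}_f^o,Sset_{\bullet}))$ and its left Bousfield localization at $\widetilde{\mathcal{E}_{st}}$: since $\mathcal{M}_{dg}^{loc}=\mathsf{HO}(\mathsf{L}_{\widetilde{\mathcal{E}_{st}}}\mathsf{Sp}^{\mathbb{N}}(\cdots))$ and $\mathsf{St}(\mathsf{L}_{\Sigma,P}\mathsf{Hot}_{\mathsf{dgcat}_f})\simeq\mathsf{HO}(\mathsf{Sp}^{\mathbb{N}}(\cdots))$ by Theorem~\ref{repre} and Remark~\ref{Tr}, Theorem~\ref{Cisinsk} tells us that $\gamma$ is the left Bousfield localization of $\mathsf{St}(\mathsf{L}_{\Sigma,P}\mathsf{Hot}_{\mathsf{dgcat}_f})$ at (the image in the homotopy category of) the set $\mathcal{E}_{st}$. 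By the definition of left Bousfield localization of derivators (Definition~\ref{defCis}), $\gamma^{\ast}$ is then a fully faithful functor with essential image the morphisms $F$ inverting $\mathcal{E}_{st}$. The key point here is that $\widetilde{\mathcal{E}_{st}}$ was built by stabilizing $\mathcal{E}_{st}$ under the loop space functor, so the hypotheses of Theorem~\ref{Cisinsk} are met and the localized model category is stable by Lemma~\ref{lettri}; moreover since the strong triangulated target $\mathbb{D}$ in our statement is fixed, we get exactly $\underline{\mathsf{Hom}}_!(\mathcal{M}_{dg}^{loc},\mathbb{D})\simeq\underline{\mathsf{Hom}}_{!,\mathcal{E}_{st}}(\mathsf{St}(\mathsf{L}_{\Sigma,P}\mathsf{Hot}_{\mathsf{dgcat}_f}),\mathbb{D})$.

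Next I would identify, under the equivalence of Proposition~\ref{Trin}, the condition ``$F$ inverts all $S_L$, $L\in\mathcal{E}$'' with the condition $Dr)$ on the corresponding morphism $\bar F=F\circ stab\circ\Phi\circ\mathbb{R}\underline{h}:\mathsf{HO}(\mathsf{dgcat})\to\mathbb{D}$. One direction is immediate: $Dr)$ asks that $S_K$ be inverted for \emph{all} inclusions of full dg subcategories, in particular for those in $\mathcal{E}$, and the morphisms $S_L$ of $\mathcal{E}_{st}$ are by construction the images under $\mathcal{U}_t$ of the $S_L$ for $L\in\mathcal{E}$. The converse — that inverting the $S_L$ for $L\in\mathcal{E}$ forces all $S_K$ to be inverted — is precisely Theorem~\ref{invert}, applied to the morphism $G=F:\mathsf{St}(\mathsf{L}_{\Sigma,P}\mathsf{Hot}_{\mathsf{dgcat}_f})\to\mathbb{D}$ (which commutes with arbitrary homotopy colimits as an object of $\underline{\mathsf{Hom}}_!$). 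I expect this identification step to be the main obstacle, because it requires carefully tracking the morphism $S_K$ through the three layers — in particular checking that ``$\bar F$ satisfies $Dr)$'' genuinely translates into ``$F$ inverts $\mathcal{E}_{st}$'' and not some a priori weaker or stronger condition, which uses that $\mathcal{U}_t(\mathcal{B}/\mathcal{A})$ computes the cone-of-$\mathcal{U}_t$ up to the comparison map $S_K$ and that Drinfeld's dg quotient $\mathcal{B}/\mathcal{A}$ is the cone of the inclusion in the relevant homotopy-colimit sense. Once this dictionary is in place, combining Proposition~\ref{Trin} with the fully faithful $\gamma^{\ast}$ and noting that the resulting subcategory of $\underline{\mathsf{Hom}}_{flt,p}$ is exactly $\underline{\mathsf{Hom}}_{flt,Dr,p}$ finishes the proof.
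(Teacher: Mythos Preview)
Your proposal is correct and follows essentially the same route as the paper: apply Theorem~\ref{Cisinsk} to identify $\underline{\mathsf{Hom}}_!(\mathcal{M}_{dg}^{loc},\mathbb{D})$ with $\underline{\mathsf{Hom}}_{!,\widetilde{\mathcal{E}_{st}}}(\mathsf{St}(\cdots),\mathbb{D})$, reduce $\widetilde{\mathcal{E}_{st}}$ to $\mathcal{E}_{st}$ using that $\mathbb{D}$ is triangulated, and then invoke Proposition~\ref{Trin} together with Theorem~\ref{invert} to match the $\mathcal{E}_{st}$-inverting condition with condition $Dr)$. The only point where you are a bit quick is the passage from $\widetilde{\mathcal{E}_{st}}$ to $\mathcal{E}_{st}$: the paper makes this explicit by noting that any $G\in\underline{\mathsf{Hom}}_!$ commutes with $\Sigma$, and since $\Omega$ is inverse to $\Sigma$ in the triangulated target $\mathbb{D}$, inverting $\mathcal{E}_{st}$ automatically forces inverting all its iterated loops.
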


\begin{proof}
By theorem~\ref{Cisinsk}, we have the following equivalence of
categories
$$ \underline{\mathsf{Hom}}_!(\mathcal{M}_{dg}^{loc}, \mathbb{D}) \stackrel{\gamma^{\ast}}{\longrightarrow}
\underline{\mathsf{Hom}}_{!,\widetilde{\mathcal{E}_{st}}}( \mathsf{St}(\mathsf{L}_{\Sigma,P}\mathsf{Fun}(\mathsf{dgcat}_f^o,
Sset_{\bullet})), \mathbb{D})\,.$$
We now show that we have the following equivalence of categories
$$ \underline{\mathsf{Hom}}_{!,\widetilde{\mathcal{E}_{st}}}(\mathsf{St}(\mathsf{L}_{\Sigma,P}\mathsf{Fun}(\mathsf{dgcat}_f^o,
Sset_{\bullet})), \mathbb{D}) \stackrel{\sim}{\rightarrow} \underline{\mathsf{Hom}}_{!,\mathcal{E}_{st}}(\mathsf{St}(\mathsf{L}_{\Sigma,P}\mathsf{Fun}(\mathsf{dgcat}_f^o,
Sset_{\bullet})), \mathbb{D})\,.$$
Let $G$ be an element of  $\underline{\mathsf{Hom}}_{!,\mathcal{E}_{st}}(\mathsf{St}(\mathsf{L}_{\Sigma,P}\mathsf{Fun}(\mathsf{dgcat}_f^o,
Sset_{\bullet})), \mathbb{D})$ and $s$ an element of
$\mathcal{E}_{st}$. We show that the image of $s$ under the functor
$G(e) \circ \Omega(e)$ is an isomorphism in $\mathbb{D}(e)$. Recall from the proof of lemma~\ref{lettri} that the functor $G(e)$
commutes with $\Sigma(e)$. Since the suspension and loop space
functors in $\mathbb{D}(e)$ are inverse of each other we conclude that
the image of $s$ under the functor $G(e) \circ \Omega(e)$ is an
isomorphism in $\mathbb{D}(e)$.
Now, simply observe that the category on the right hand side of the
above equivalence identifies with
$\underline{\mathsf{Hom}}_{flt,\,Dr\,,
  p}(\mathsf{HO}(\mathsf{dgcat}),\mathbb{D})$ under the equivalence
$$
\underline{\mathsf{Hom}}_!(\mathsf{St}(\mathsf{L}_{\Sigma,P} \mathsf{Hot}_{\mathsf{dgcat}_f}),\mathbb{D})
\stackrel{(stab\circ \Phi \circ
  \mathbb{R}\underline{h})^{\ast}}{\longrightarrow} \underline{\mathsf{Hom}}_{flt,p}(\mathsf{HO}(\mathsf{dgcat}),\mathbb{D})\,,$$
of proposition~\ref{Trin}.

This proves the theorem.
\end{proof}
\begin{terminology}
We call an object of the right hand side category of
theorem~\ref{principal} a {\em localizing invariant of dg categories}.
\end{terminology}
We now present some examples.

\subsection*{Hochschild and cyclic homology}
Let $\mathcal{A}$ be a small $k$-flat $k$-category. The {\it
  Hochschild chain complex} of $\mathcal{A}$ is the complex
concentrated in homological degrees $p\geq 0$ whose $p$th component is
the sum of the
$$ \mathcal{A}(X_p,X_0) \otimes \mathcal{A}(X_p,X_{p-1})\otimes
\mathcal{A}(X_{p-1}, X_{p-2})\otimes \cdots \otimes
\mathcal{A}(X_0,X_1)\,,$$
where $X_0, \ldots, X_p$ range through the objects of $\mathcal{A}$,
endowed with the differential
$$ d(f_p \otimes \ldots \otimes f_0) = f_{p-1} \otimes \cdots \otimes
f_0 f_p + \sum_{i=1}^p (-1)^i f_p\otimes \cdots \otimes f_i f_{i-1} \otimes
\cdots \otimes f_0\,.$$
Via the cyclic permutations
$$ t_p(f_{p-1} \otimes \cdots \otimes f_0) = (-1)^pf_0\otimes f_{p-1}
\otimes \cdots \otimes f_1$$
this complex becomes a precyclic chain complex and thus gives rise to
a {\it mixed complex} $C(\mathcal{A})$, i.e. a dg module over the dg
algebra $\Lambda = k[B]/(B^2)$, where $B$ is of degree $-1$ and
$dB=0$. All variants of cyclic homology only depend on
$C(\mathcal{A})$ considered in $\mathcal{D}(\Lambda)$. For example,
the cyclic homology of $\mathcal{A}$ is the homology of the complex
$C(\mathcal{A})\overset{\mathbb{L}}{\otimes}_{\Lambda}k$, \emph{cf.}~\cite{Kassel}.

If $\mathcal{A}$ is a $k$-flat differential graded category, its mixed
complex is the sum-total complex of the bicomplex obtained as the
natural re-interpretation of the above complex. If $\mathcal{A}$ is an
arbitrary dg $k$-category, its Hochschild chain complex is defined as
the one of a $k$-flat (e.g. a cofibrant) resolution of $\mathcal{A}$.
The following theorem is proved in \cite{cyclichomology}.

\begin{theorem}
 The map $\mathcal{A} \mapsto C(\mathcal{A})$ yields a
  morphism of derivators
 $$\mathsf{HO}(\mathsf{dgcat}) \rightarrow
  \mathsf{HO}(\Lambda-Mod)\,,$$
 which commutes with filtered homotopy colimits, preserves the point
 and satisfies condition Dr).
\end{theorem}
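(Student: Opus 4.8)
The plan is to verify the three asserted properties of the morphism $\mathcal{A} \mapsto C(\mathcal{A})$ separately, reducing everything to known facts about the mixed complex and the structure of $\mathsf{HO}(\mathsf{dgcat})$. First I would recall the construction: one chooses a functorial cofibrant (hence $k$-flat) resolution $\mathcal{A} \mapsto \mathcal{A}^{cof}$, forms the mixed complex $C(\mathcal{A}^{cof})$, and views it as a $\Lambda$-module; this assignment is a functor $\mathsf{dgcat} \to \Lambda\text{-}\mathrm{Mod}$ which sends quasi-equivalences, and more generally Morita dg functors, to quasi-isomorphisms. Hence it descends to a morphism of derivators $\mathsf{HO}(\mathsf{dgcat}) \to \mathsf{HO}(\Lambda\text{-}\mathrm{Mod})$; this is essentially the content of the cyclic homology paper \cite{cyclichomology}, so I would simply invoke it for the existence of the morphism of derivators.

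For the preservation of the point, I would observe that the empty dg category and the zero dg category both have trivial Hochschild complex (the sum defining $C(\mathcal{A})_p$ is empty when $\mathcal{A}$ has no objects, and for the zero category all components vanish after passing to the derived category since every object is contractible), so $C$ sends the terminal object of $\mathsf{Ho}(\mathsf{dgcat})$ to the zero object of $\mathsf{Ho}(\Lambda\text{-}\mathrm{Mod})$. For commutation with filtered homotopy colimits, the key point is that by proposition~\ref{prop} every dg category is, in $\mathsf{Ho}(\mathsf{dgcat})$, a filtered homotopy colimit of strict finite $I$-cells, that filtered homotopy colimits in both model categories are computed by filtered colimits (again proposition~\ref{prop}), and that the Hochschild chain complex functor manifestly commutes with filtered colimits of dg categories — each fixed degree $C(\mathcal{A})_p$ is built from finite tensor products indexed by tuples of objects and morphisms, and a filtered colimit of such data is the corresponding filtered colimit of the pieces. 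One then has to check that the chosen cofibrant replacement functor can be taken to commute with filtered colimits up to weak equivalence, which follows from the small object argument applied with the finitely presented generators of example~\ref{mori}.

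The main obstacle, and the substantive part, is condition Dr): for an inclusion $\mathcal{A} \stackrel{K}{\hookrightarrow} \mathcal{B}$ of a full dg subcategory one must show that the canonical map $\mathsf{cone}(C(\mathcal{A}) \to C(\mathcal{B})) \to C(\mathcal{B}/\mathcal{A})$ is an isomorphism in $\mathcal{D}(\Lambda)$, i.e. that the mixed complex takes the short exact sequence $\mathcal{A} \to \mathcal{B} \to \mathcal{B}/\mathcal{A}$ to a triangle. Here I would rely on Keller's localization theorem for cyclic homology of dg categories \cite{cyclichomology}: using Drinfeld's explicit model for $\mathcal{B}/\mathcal{A}$ one identifies the mixed complex of the quotient with the mixed complex of $\mathcal{B}$ relative to $\mathcal{A}$, and the excision/localization statement in loc. cit. gives precisely the desired triangle. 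Concretely, one factors through the categories of perfect modules — $\mathcal{D}^c(\mathcal{A}) \to \mathcal{D}^c(\mathcal{B}) \to \mathcal{D}^c(\mathcal{B}/\mathcal{A})$ is, up to direct factors, a localization sequence of triangulated categories — and invokes the agreement of the mixed complex of a dg category with that of its perfect derived category together with its invariance under such localizations. Once this triangle is produced at the level of $\mathcal{D}(\Lambda) = \mathsf{HO}(\Lambda\text{-}\mathrm{Mod})(e)$, condition Dr) as formulated (an isomorphism $S_K$ in $\mathbb{D}(e)$) is immediate, and the proof is complete.
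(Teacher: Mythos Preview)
Your proposal is correct and aligns with the paper's treatment: the paper gives no proof of its own, simply stating that the theorem ``is proved in \cite{cyclichomology}''. Your sketch correctly identifies that reference as the source of all three properties and unpacks the relevant ingredients (Morita invariance of the mixed complex for the existence of the morphism of derivators, the explicit description of $C(\mathcal{A})$ for preservation of the point and filtered colimits, and Keller's localization theorem for condition Dr)), so you are doing exactly what the paper implicitly asks the reader to do.
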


\begin{remark}
By theorem~\ref{principal} the morphism of derivators $C$ factors
through $\mathcal{U}_l$ and so gives rise to a morphism
$$ C: \mathcal{M}_{dg}^{loc} \rightarrow \mathsf{HO}(\Lambda-Mod)\,.$$
\end{remark}

\subsection*{Non-connective $K$-theory}
Let $\mathcal{A}$ be a small dg category. Its non-connective $K$-theory
spectrum $K(A)$ is defined by applying Schlichting's construction
\cite{Marco} to the Frobenius pair associated with the category of
cofibrant perfect $\mathcal{A}$-modules (to the empty dg category we
associate $0$).
Recall that the conflations in the Frobenius category of cofibrant
perfect $\mathcal{A}$-modules are the short exact sequences which
split in the category of graded $\mathcal{A}$-modules.

\begin{theorem}
The map $\mathcal{A} \mapsto K(\mathcal{A})$ yields a
  morphism of derivators
 $$\mathsf{HO}(\mathsf{dgcat}) \rightarrow \mathsf{HO}(Spt)\,,$$
  to the derivator associated with the category of spectra, which
  commutes with filtered homotopy
  colimits, preserves the point and satisfies condition Dr).
\end{theorem}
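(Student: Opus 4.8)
The plan is to show that non-connective $K$-theory, viewed as a map $\mathcal{A} \mapsto K(\mathcal{A})$ from $\mathsf{dgcat}$ to the category $\mathsf{Spt}$ of spectra, satisfies the three defining properties of a localizing invariant. Since the statement is about the induced morphism of derivators $\mathsf{HO}(\mathsf{dgcat}) \to \mathsf{HO}(\mathsf{Spt})$, the first thing I would do is check that the assignment is even well-defined at the level of derivators: one applies Schlichting's construction \cite{Marco} to the Frobenius pair of cofibrant perfect $\mathcal{A}$-modules, with conflations the graded-split short exact sequences. This construction is functorial in Frobenius pairs and sends exact equivalences to equivalences of spectra; since a Morita dg functor $\mathcal{A} \to \mathcal{B}$ induces an exact equivalence (up to idempotent completion and retracts) of the associated derived/perfect module categories, the functor $K$ inverts Morita equivalences and hence descends to a morphism of derivators out of the Morita model structure of Theorem~\ref{theorem2}, exactly as for the connective version discussed in Example~\ref{exemple1} via \cite{Dugger-S} \cite{Waldhausen}. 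For the diagram-level (i.e.\ $\mathsf{HO}(-)(L)$ for all $L$) statement one uses that all the relevant constructions are compatible with taking presheaf categories.

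Next I would verify the three properties in turn. \emph{Preservation of the point}: the empty dg category is sent to $0$ by convention, so $\mathcal{U}_l$-like normalization is immediate. \emph{Commutation with filtered homotopy colimits}: here I would invoke Proposition~\ref{prop}, which says every dg category is weakly equivalent to a filtered (homotopy) colimit of strict finite $I$-cells, together with the fact that the category of perfect $\mathcal{A}$-modules, and thus Schlichting's spectrum, commutes with filtered colimits of dg categories (a perfect module over a filtered colimit is already perfect over a finite stage); Schlichting's construction is built from Waldhausen-style $K$-theory of exact categories, and connective Waldhausen $K$-theory commutes with filtered colimits, while the non-connective delooping is a finite sequential homotopy colimit of such, so it commutes as well. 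This reduces everything to the connective case, which is classical. \emph{Condition Dr)}: given an inclusion $\mathcal{A} \stackrel{K}{\hookrightarrow} \mathcal{B}$ of a full dg subcategory, one has Drinfeld's dg quotient $\mathcal{B}/\mathcal{A}$, and the point is that the sequence of Frobenius pairs
$$ \mathrm{perf}(\mathcal{A}) \longrightarrow \mathrm{perf}(\mathcal{B}) \longrightarrow \mathrm{perf}(\mathcal{B}/\mathcal{A}) $$
is, up to factors, an exact sequence (a Verdier localization sequence after passing to derived categories, by the universal property of the dg quotient \cite{Drinfeld} \cite{DerivingDG}). Schlichting's fundamental theorem \cite{Marco} then yields a homotopy fiber sequence of non-connective $K$-theory spectra, which is precisely the assertion that the canonical morphism $S_K: \mathsf{cone}(K(\mathcal{A} \hookrightarrow \mathcal{B})) \to K(\mathcal{B}/\mathcal{A})$ is an equivalence in $\mathsf{Ho}(\mathsf{Spt})$.

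The main obstacle I anticipate is the verification of condition Dr), and more precisely the matching of Schlichting's abstractly-defined localization sequence with the Drinfeld dg quotient. Two issues must be handled carefully: first, the Frobenius/exact-category localization sequence of \cite{Marco} is formulated for exact categories (or Frobenius pairs), while $\mathcal{B}/\mathcal{A}$ is a dg category, so one must identify $\mathrm{perf}(\mathcal{B}/\mathcal{A})$ (equivalently, the Verdier quotient $\mathrm{perf}(\mathcal{B})/\mathrm{perf}(\mathcal{A})$ up to idempotent completion) with the correct quotient Frobenius pair, using the universal property of Drinfeld's construction and the compatibility of perfect modules with dg quotients. Second, one must be sure the relevant subcategory is \emph{dense} enough that the idempotent-completion subtleties in Schlichting's theorem (which is why the \emph{non-connective} spectrum is needed, precisely to repair the failure of cofinality/idempotent completion at the level of $K_0$) are absorbed; this is exactly where non-connective $K$-theory behaves better than connective $K$-theory, and is the reason this functor satisfies Dr) while the connective one does not. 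Modulo these identifications, the proof is a direct translation of \cite{Marco} into the dg language, parallel to the Hochschild/cyclic case treated via \cite{cyclichomology}, and the conclusion — that $K$ factors through $\mathcal{U}_l$, giving a morphism $K: \mathcal{M}_{dg}^{loc} \to \mathsf{HO}(\mathsf{Spt})$ — follows from Theorem~\ref{principal}.
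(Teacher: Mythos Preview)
Your proposal is correct and follows essentially the same approach as the paper: both rely entirely on Schlichting's work \cite{Marco} for the three properties, with the paper simply citing specific results there (Proposition~11.15, adapted from Thomason~1.9.8, for well-definedness; Lemma~6.3 for filtered colimits; Theorem~11.10 for condition Dr)) while you spell out the conceptual content of those citations. Your discussion of the idempotent-completion subtleties and the identification of the dg quotient with the Frobenius-pair quotient is exactly the substance behind the paper's one-line citation of Theorem~11.10.
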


\begin{proof}
Proposition $11.15$ in \cite{Marco}, which an adaption of theorem
$1.9.8$ in \cite{Thomason}, implies that we have a well defined
morphism of derivators
$$ \mathsf{HO}(\mathsf{dgcat}) \rightarrow \mathsf{HO}(Spt)\,.$$
Lemma $6.3$ in \cite{Marco} implies that this morphism commutes with
filtered homotopy colimits and theorem $11.10$ implies that condition
Dr) is satisfied.
\end{proof}

\begin{remark}
By theorem~\ref{principal}, the morphism of derivators $K$ factors
through $\mathcal{U}_l$ and so gives rise to a morphism
$$ K :\mathcal{M}_{dg}^{loc} \rightarrow \mathsf{HO}(Spt)\,.$$
\end{remark}

We now establish a connection between Waldhausen's
$S_{\bullet}$-construction, see~\cite{Waldhausen} and the suspension functor
in the triangulated category $\mathcal{M}_{dg}^{loc}(e)$. Let $\mathcal{A}$
be a Morita fibrant dg category, see proposition~\ref{nova4}. Notice that $\mathsf{Z}^0(\mathcal{A})$ carries a natural
exact category structure obtained by pulling back the graded-split structure on $\mathcal{C}_{dg}(\mathcal{A})$ along the Yoneda functor
$$
\begin{array}{rcl}
h: \mathsf{Z}^0(\mathcal{A}) & \longrightarrow &
\mathcal{C}_{dg}(\mathcal{A})\\
A & \mapsto & \mathsf{Hom}^{\bullet}(?,A)\,.
\end{array}
$$

\begin{Notation}\label{notacao}
Remark that the simplicial category $S_{\bullet}\mathcal{A}$, obtained
by applying Waldhausen's $S_{\bullet}$-construction, see
\cite{Waldhausen} to $\mathsf{Z}^0(\mathcal{A})$, admits a natural
enrichissement over the complexes. We denote by
$S_{\bullet}\mathcal{A}$ this simplicial Morita fibrant dg category obtained.
\end{Notation}

We denote by $S_{\bullet}\mathcal{A}$ the Waldhausen
$S_{\bullet}$-construction of
$\mathcal{A}$, see \cite{Waldhausen}. Observe that we obtain in this way a simplicial Morita
fibrant dg category. We denote by $\Delta$ the simplicial category,
see~\cite{MacCarthy}, and by $p:\Delta \rightarrow e$ the projection
functor.

\begin{proposition}\label{real}
There is a canonical isomorphism in $\mathcal{M}_{dg}^{loc}(e)$
$$ p_!\mathcal{U}_l(S_{\bullet}\mathcal{A})
\stackrel{\sim}{\rightarrow} \mathcal{U}_l(\mathcal{A})[1]\,.$$
\end{proposition}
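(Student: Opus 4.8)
The key is to relate Waldhausen's $S_\bullet$-construction to the suspension functor via the basic identity $p_!\mathcal{U}_l(S_\bullet\mathcal{A}) \simeq \mathcal{U}_l(\mathcal{A})[1]$, following the strategy already visible in the excerpt: the $S_\bullet$-construction is, at each simplicial level $n$, built out of $n$-fold iterated extensions, and the localizing invariant $\mathcal{U}_l$ turns exact sequences of dg categories into triangles. First I would recall that $S_n\mathcal{A}$ may be described, up to Morita equivalence, as a dg category of ``$n$-step filtered objects'' in $\mathcal{A}$, i.e.\ sequences of cofibrations $0=A_0 \rightarrowtail A_1 \rightarrowtail \cdots \rightarrowtail A_n$ in $\mathsf{Z}^0(\mathcal{A})$ together with chosen quotients; this is the dg-enrichment of Notation~\ref{notacao}. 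The face and degeneracy maps express $S_\bullet\mathcal{A}$ as a simplicial object in Morita fibrant dg categories, and $p_!$ computes its homotopy colimit over $\Delta$ inside $\mathcal{M}_{dg}^{loc}(e)$.

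\textbf{Key steps.} The plan is to proceed in the following order. (1) Identify $S_1\mathcal{A} \simeq \mathcal{A}$ (the $1$-simplices are just objects of $\mathcal{A}$) and observe that $S_0\mathcal{A} \simeq 0$ is contractible, so that $\mathcal{U}_l(S_0\mathcal{A}) \simeq 0$. (2) For each $n$, use the inclusion of the full dg subcategory spanned by the first $n-1$ steps of a filtration together with the last quotient to produce, after applying $\mathcal{U}_l$ and invoking condition $Dr)$ (in the form $Dr')$ via Lemma~\ref{mono}), a natural triangle relating $\mathcal{U}_l(S_n\mathcal{A})$ to $\mathcal{U}_l(S_{n-1}\mathcal{A})\oplus \mathcal{U}_l(\mathcal{A})$; more precisely, the additivity built into $\mathcal{U}_l$ (it is in particular an additive invariant, since split exact sequences are a special case of the sequences handled by $Dr)$) gives $\mathcal{U}_l(S_n\mathcal{A}) \simeq \mathcal{U}_l(\mathcal{A})^{\oplus n}$. (3) Conclude that the simplicial object $[n] \mapsto \mathcal{U}_l(S_n\mathcal{A})$ in $\mathcal{M}_{dg}^{loc}(e)$ is, levelwise, the $n$-fold sum of $\mathcal{U}_l(\mathcal{A})$, with face maps the standard ones; this is precisely the bar-construction-type simplicial object whose homotopy colimit over $\Delta$ computes the suspension. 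Here one uses that $\mathcal{M}_{dg}^{loc}$ is a stable (triangulated) derivator, so that the homotopy colimit of the simplicial object $S_\bullet$ applied to a single object $X$ — i.e.\ the object with $X^{\oplus n}$ in degree $n$ — is canonically $X[1]$; this is the derivator-theoretic avatar of the fact that $|S_\bullet|$ of a point is $S^1$, and follows from the description of $\Sigma$ as $p_!\circ(0,0)_*$ on $\ul$ used in the proof of Lemma~\ref{lettri}, combined with the fact that $\mathcal{U}_l$ commutes with homotopy colimits and preserves the point. (4) Naturality of all these identifications, together with conservativity, assembles the levelwise isomorphisms into the desired isomorphism $p_!\mathcal{U}_l(S_\bullet\mathcal{A}) \xrightarrow{\sim} \mathcal{U}_l(\mathcal{A})[1]$.

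\textbf{Main obstacle.} The delicate point is step (3): one must check that the simplicial structure maps on $[n]\mapsto \mathcal{U}_l(S_n\mathcal{A})$, after the identification with $[n]\mapsto \mathcal{U}_l(\mathcal{A})^{\oplus n}$, really are the ``canonical'' ones whose geometric realization yields the suspension, rather than some twisted variant. Concretely, one has to verify that under the additivity isomorphisms $\mathcal{U}_l(S_n\mathcal{A})\simeq \mathcal{U}_l(\mathcal{A})^{\oplus n}$ the inner face maps $d_i$ ($0<i<n$) act by summing adjacent factors while $d_0$ and $d_n$ drop an end factor — the standard pattern for the simplicial circle. This requires a careful bookkeeping of how the face maps of $S_\bullet$ (deleting a row/column and re-composing) interact with the semi-orthogonal decompositions used to split $\mathcal{U}_l(S_n\mathcal{A})$; the bi-exactness of the relevant constructions and functoriality of the triangles from $Dr)$ make this a finite check, but it is the technical heart of the argument. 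A secondary subtlety is to ensure the dg-enrichment of Notation~\ref{notacao} is compatible with Morita fibrancy at every simplicial level, so that $\mathcal{U}_l$ may be applied termwise and $p_!$ genuinely computes $\operatorname{hocolim}_\Delta$; this follows from Waldhausen's constructions preserving the relevant exactness, as used implicitly in the $K$-theory examples above, but should be stated explicitly.
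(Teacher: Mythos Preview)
Your approach is correct but takes a genuinely different route from the paper. The paper follows McCarthy's path-object argument: it considers the levelwise short exact sequence of simplicial dg categories
\[
0 \longrightarrow \mathcal{A}_\bullet \longrightarrow PS_\bullet\mathcal{A} \longrightarrow S_\bullet\mathcal{A} \longrightarrow 0,
\]
where $\mathcal{A}_\bullet$ is the constant simplicial object and $PS_\bullet\mathcal{A}$ is the simplicial path object (so $PS_n\mathcal{A}=S_{n+1}\mathcal{A}$). Since $\mathcal{U}_l$ satisfies condition $Dr)$ levelwise, conservativity yields a triangle in $\mathcal{M}_{dg}^{loc}(\Delta)$; applying $p_!$ and using that $p_!\mathcal{U}_l(\mathcal{A}_\bullet)\simeq\mathcal{U}_l(\mathcal{A})$ (constant diagram) and $p_!\mathcal{U}_l(PS_\bullet\mathcal{A})\simeq 0$ (contractible simplicial object, with an extra degeneracy) gives the result directly.

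The trade-off is exactly the obstacle you flagged. Your bar-construction approach requires proving that the additivity isomorphisms $\mathcal{U}_l(S_n\mathcal{A})\simeq\mathcal{U}_l(\mathcal{A})^{\oplus n}$ assemble into an isomorphism of \emph{simplicial} objects with the standard simplicial circle pattern; this is not merely a finite pointwise check but a coherence statement across all simplicial levels, and while it can be made to work in a stable derivator, it is genuinely more bookkeeping. The paper's argument sidesteps this entirely: the path-object sequence is a single short exact sequence of simplicial objects given on the nose, so no identification of face maps is ever needed, and the contractibility of $PS_\bullet$ is a one-line classical fact. What your approach buys, on the other hand, is a more explicit description of the levelwise structure of $\mathcal{U}_l(S_\bullet\mathcal{A})$, which can be useful elsewhere (and is closer in spirit to how one proves Waldhausen's additivity theorem itself).
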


\begin{proof}
As in \cite[3.3]{MacCarthy}, we consider the sequence in $\mathsf{HO}(\mathsf{dgcat})(\Delta)$
$$ 0 \rightarrow \mathcal{A}_{\bullet} \rightarrow
PS_{\bullet}\mathcal{A} \rightarrow S_{\bullet}\mathcal{A} \rightarrow
0\,,$$
where $\mathcal{A}_{\bullet}$ denotes the constant simplicial dg
category with value $\mathcal{A}$ and $PS_{\bullet}\mathcal{A}$ the
path object of $S_{\bullet}\mathcal{A}$. For each
point $n: e \rightarrow \Delta$, the $n$th component of the above
sequence is the following short exact sequence in
$\mathsf{Ho}(\mathsf{dgcat})$
$$ 0 \rightarrow \mathcal{A} \stackrel{I}{\rightarrow}
PS_n\mathcal{A}=S_{n+1}\mathcal{A} \stackrel{Q}{\rightarrow}
S_n\mathcal{A} \rightarrow 0\,,$$
where $I$ maps $A \in \mathcal{A}$ to the constant sequence
$$
0 \rightarrow A \stackrel{Id}{\rightarrow} A \stackrel{Id}{\rightarrow}
\cdots \stackrel{Id}{\rightarrow} A $$
and $Q$ maps a sequence
$$ 0 \rightarrow A_0 \rightarrow A_1 \rightarrow \cdots \rightarrow
A_n$$
to
$$ A_1/A_0 \rightarrow \cdots \rightarrow A_n/A_0\,.$$
Since the morphism of derivators $\mathcal{U}_l$ satisfies condition
Dr), the conservativity axiom implies that we obtain a triangle
$$ \mathcal{U}_l(\mathcal{A}_{\bullet}) \rightarrow
\mathcal{U}_l(PS_{\bullet}) \rightarrow \mathcal{U}_l(S_{\bullet})
\rightarrow \mathcal{U}_l(\mathcal{A}_{\bullet})[1]$$
in $\mathcal{M}_{dg}^{loc}(\Delta)$. By applying the functor $p_!$, we obtain
the following triangle
$$ p_! \mathcal{U}_l(\mathcal{A}_{\bullet}) \rightarrow
p_!\mathcal{U}_l(PS_{\bullet}\mathcal{A}) \rightarrow p_!\mathcal{U}_l(S_{\bullet}\mathcal{A}) \rightarrow p_!\mathcal{U}_l(\mathcal{A}_{\bullet})[1]$$
in $\mathcal{M}_{dg}^{loc}(e)$.
We now show that we have natural isomorphisms
$$ p_!\mathcal{U}_l(\mathcal{A}_{\bullet})
\stackrel{\sim}{\rightarrow} \mathcal{U}_l(\mathcal{A})$$
and
$$ p_! \mathcal{U}_l(PS_{\bullet}\mathcal{A})
\stackrel{\sim}{\rightarrow} 0\,,$$
in $\mathcal{M}_{dg}^{loc}(e)$, where $0$ denotes the zero object in the
  triangulated category $\mathcal{M}_{dg}^{loc}(e)$. This clearly implies
  the proposition.
Since the morphisms of derivators $\Phi$, $stab$ and $\gamma$ commute
with homotopy colimits it is enough to show that we have isomorphisms
$$ p_! \mathbb{R}\underline{h}(\mathcal{A}_{\bullet})
\stackrel{\sim}{\rightarrow} \mathbb{R}\underline{h}(\mathcal{A})$$
and
$$ p_!\mathbb{R}\underline{h}(PS_{\bullet}\mathcal{A})
\stackrel{\sim}{\rightarrow} \ast$$
in $\mathsf{Hot}_{\mathsf{dgcat}_f}(e)$, where $\ast$ denotes the terminal
object in $\mathsf{Hot}_{\mathsf{dgcat}_f}(e)$. Notice that since
$\mathcal{A}$ and $PS_n\mathcal{A}$, $n \geq 0$ are Morita fibrant dg
categories, we have natural isomorphisms
$$ \underline{h}(\mathcal{A}_{\bullet})
\stackrel{\sim}{\rightarrow}\mathbb{R}\underline{h}(\mathcal{A}_{\bullet})$$
and
$$ \underline{h}(PS_{\bullet}\mathcal{A}) \stackrel{\sim}{\rightarrow}
\mathbb{R}\underline{h}(PS_{\bullet}\mathcal{A})$$
in $\mathsf{Hot}_{\mathsf{dgcat}_f}(\Delta)$.

Now, since homotopy colimits in $\mathsf{Fun}(\mathsf{dgcat}_f,Sset)$
are calculated objectwise and since $h(\mathcal{A}_{\bullet})$ is a
constant simplicial object in $\mathsf{Fun}(\mathsf{dgcat}_f,Sset)$, corollary $18.7.7$ in \cite{Hirschhorn} implies that we have an
isomorphism
$$ p_!\mathbb{R}\underline{h}(\mathcal{A}_{\bullet})
\stackrel{\sim}{\rightarrow} \mathbb{R}\underline{h}(\mathcal{A})$$
in $\mathcal{M}_{dg}^{loc}(e)$.

Notice also that since $PS_{\bullet}\mathcal{A}$ is a contractible
simplicial object, see \cite{MacCarthy}, so is
$\underline{h}(PS_{\bullet}\mathcal{A})$. Since homotopy colimits in
$\mathsf{Fun}(\mathsf{dgcat}_f,Sset)$ are calculated objectwise, we
have an isomorphism
$$ p_!\mathbb{R}\underline{h}(PS_{\bullet}\mathcal{A})
\stackrel{\sim}{\rightarrow} \ast$$
in $\mathsf{Hot}_{\mathsf{dgcat}_f}(e)$.

This proves the proposition.

\end{proof}

\section{A Quillen model in terms of presheaves of spectra}\label{chapspectra}
In this section, we construct another Quillen model category whose
associated derivator is $\mathcal{M}_{dg}^{loc}$.

Consider the Quillen adjunction
$$
\xymatrix{
\mathsf{Fun}(\mathsf{dgcat}_f^o,Sset_{\bullet})\ar@<-1ex>[d]_{\Sigma^{\infty}}\\
\mathsf{Sp}^{\mathbb{N}}(\mathsf{Fun}(\mathsf{dgcat}_f^o,
Sset_{\bullet})) \ar@<-1ex>[u]_{ev_0}\,.
}
$$
Recall from section~\ref{small} that we have a set of morphisms
$(\Sigma \cup \{P \})_+$ in the category
$\mathsf{Fun}(\mathsf{dgcat}_f^o, Sset_{\bullet})$. Now stabilize the
image of this set by the derived functor $\mathbb{L}\Sigma^{\infty}$, under the functor loop space in
$\mathsf{Ho}(\mathsf{Sp}^{\mathbb{N}}(\mathsf{Fun}(\mathsf{dgcat}^o_f,Sset_{\bullet})))$.
For each one of the morphims thus obtained, choose a representative
in the model category
$\mathsf{Sp}^{\mathbb{N}}(\mathsf{Fun}(\mathsf{dgcat}^o_f,Sset_{\bullet}))$.
\begin{Notation}
Let us denote this set by $G$ and by
$\mathsf{L}_G\mathsf{Sp}^{\mathbb{N}}(\mathsf{Fun}(\mathsf{dgcat}_f^o,Sset_{\bullet}))$
the associated left Bousfield localization.
\end{Notation}

\begin{proposition}
We have an equivalence of triangulated strong derivators
$$
\mathsf{HO}(\mathsf{Sp}^{\mathbb{N}}(\mathsf{L}_{\Sigma,P}\mathsf{Fun}(\mathcal{M}_f^o,
Sset_{\bullet}))) \stackrel{\sim}{\longrightarrow}
\mathsf{HO}(\mathsf{L}_G \mathsf{Sp}^{\mathbb{N}}(\mathsf{Fun}(\mathcal{M}_f^o,Sset_{\bullet})))\,.$$
\end{proposition}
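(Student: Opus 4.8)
The strategy is to show that both Quillen model categories present the same derivator, by identifying them as left Bousfield localizations of a common model category along sets of maps that become equal in the homotopy category, and then invoking Theorem~\ref{Cisinsk} on each side. More precisely, the two functors of stabilizing and of passing to presheaves of pointed simplicial sets ``commute'' up to Quillen equivalence: by the comparison Theorem~\ref{repre} (together with Remark~\ref{Tr} and Remark~\ref{remar}), there is a canonical equivalence of triangulated strong derivators
$$
\mathsf{HO}(\mathsf{Sp}^{\mathbb{N}}(\mathsf{Fun}(\mathcal{M}_f^o,Sset_{\bullet})))
\stackrel{\sim}{\longrightarrow}
\mathsf{St}({\mathsf{L}_{\emptyset}\mathsf{Hot}_{\mathcal{M}_f}}_{\bullet})
\stackrel{\sim}{\longrightarrow}
\mathsf{St}(\mathsf{Hot}_{\mathcal{M}_f})\,,
$$
where the intermediate localization along the empty set is the trivial one; this uses that $\mathsf{Fun}(\mathcal{M}_f^o,Sset_{\bullet})$ satisfies the hypotheses of Theorem~\ref{repre}, which was verified in Example~\ref{exem} and Example~\ref{ex2}. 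Now I would argue that localizing the left-hand side at $G$ and localizing the right-hand side at $\Sigma\cup\{P\}$ (first) are intertwined by this equivalence.

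First I would set up the comparison at the unstable level: by Remark~\ref{remar}, the derivators associated with $\mathsf{L}_{\Sigma,P}\mathsf{Fun}(\mathcal{M}_f^o,Sset)$ and $\mathsf{L}_{\Sigma,P}\mathsf{Fun}(\mathcal{M}_f^o,Sset_{\bullet})$ agree, being characterized by the same universal property (Theorem~\ref{Cisinsk} together with the universal property of $\mathsf{Hot}_A$ in Theorem~\ref{Cin}). Then I would observe that $\mathsf{L}_G\mathsf{Sp}^{\mathbb{N}}(\mathsf{Fun}(\mathcal{M}_f^o,Sset_{\bullet}))$ is, by construction, the left Bousfield localization of $\mathsf{Sp}^{\mathbb{N}}(\mathsf{Fun}(\mathcal{M}_f^o,Sset_{\bullet}))$ along the set $G$, which is precisely the stabilized image under $\mathbb{L}\Sigma^{\infty}$ of the (loop-space-stabilized) set $(\Sigma\cup\{P\})_+$. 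By Theorem~\ref{Cisinsk} the associated derivator $\mathsf{HO}(\mathsf{L}_G\mathsf{Sp}^{\mathbb{N}}(\mathsf{Fun}(\mathcal{M}_f^o,Sset_{\bullet})))$ is the left Bousfield localization of $\mathsf{HO}(\mathsf{Sp}^{\mathbb{N}}(\mathsf{Fun}(\mathcal{M}_f^o,Sset_{\bullet})))$ at the image of $G$ in the homotopy category. Transporting along the equivalence of the previous paragraph, this image corresponds to the stabilization $\mathsf{St}(\gamma(\Sigma\cup\{P\}))$ of the image of $\Sigma\cup\{P\}$ in $\mathsf{Hot}_{\mathcal{M}_f}$. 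On the other side, $\mathsf{HO}(\mathsf{Sp}^{\mathbb{N}}(\mathsf{L}_{\Sigma,P}\mathsf{Fun}(\mathcal{M}_f^o,Sset_{\bullet})))$ is, by Theorem~\ref{repre} and Remark~\ref{Tr}, equivalent to $\mathsf{St}(\mathsf{L}_{\Sigma,P}\mathsf{Hot}_{\mathcal{M}_f})$, which by Theorem~\ref{Cisinsk} and Lemma~\ref{lettri} is the stable localization of $\mathsf{St}(\mathsf{Hot}_{\mathcal{M}_f})$ at exactly the same set of (stabilized) maps. Both derivators thus satisfy the same universal property in the sense of Definition~\ref{defCis}: they are the initial triangulated strong derivators under $\mathsf{St}(\mathsf{Hot}_{\mathcal{M}_f})$ that invert the relevant maps; hence they are canonically equivalent.

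The main obstacle will be the bookkeeping needed to check that the set $G$ really corresponds, under the equivalence of Theorem~\ref{repre}, to the set used to localize on the $\mathsf{L}_{\Sigma,P}$ side — i.e.\ that ``stabilize then localize'' and ``localize then stabilize'' produce the same homotopy-class of maps. This amounts to verifying that the derived functor $\mathbb{L}\Sigma^{\infty}$ commutes with the suspension/loop-space functors up to the isomorphisms already established in the proof of Theorem~\ref{repre}, and that the domains and codomains of the localizing maps remain homotopically finitely presented after stabilization, so that the left Bousfield localizations all exist (using the cellularity and left properness of $\mathsf{Sp}^{\mathbb{N}}(-)$, see \cite{Spectra}) and Theorem~\ref{Cisinsk} applies. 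Once this identification is in place, the two sides are equivalent by uniqueness of localizations, and the equivalence can moreover be taken to be one of triangulated strong derivators because both $\mathsf{St}(-)$-constructions yield strong triangulated derivators and left Bousfield localization preserves this structure by Lemma~\ref{lettri}. This completes the proof.
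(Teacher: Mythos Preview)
Your proposal is correct and follows essentially the same approach as the paper: both derivators are shown to satisfy the same universal property, by combining Theorem~\ref{Cisinsk} (Bousfield localization of derivators) with Theorem~\ref{repre} (comparison of Heller's and Hovey/Schwede's stabilizations). The paper's own proof is the two-line statement ``theorems~\ref{Cisinsk} and~\ref{repre} imply that both derivators have the same universal property''; your write-up simply unpacks this, and your ``main obstacle'' (matching the set $G$ with the stabilized image of $(\Sigma\cup\{P\})_+$) is handled by the very definition of $G$ together with the argument, already used in the proof of Theorem~\ref{principal}, that for a triangulated target the loop-space closure of a set of maps is inverted as soon as the original set is.
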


\begin{proof}

Observe that theorems \ref{Cisinsk} and \ref{repre} imply that both
derivators have the same universal property.

This proves the proposition.
\end{proof}

\begin{remark}

Notice that the stable Quillen model category
$$
\mathsf{Sp}^{\mathbb{N}}(\mathsf{Fun}(\mathcal{M}_f^o,Sset_{\bullet}))$$
identifies with
$$\mathsf{Fun}(\mathcal{M}_f^o,\mathsf{Sp}^{\mathbb{N}}(Sset_{\bullet}))$$
endowed with the projective model structure.
\end{remark}
The above considerations imply the following proposition.

\begin{proposition}\label{prespectres}
We have an equivalence of derivators
$$ \mathsf{HO}(\mathsf{L}_{\widetilde{\mathcal{E}_{st}},G}\mathsf{Fun}(\mathsf{dgcat}_f^o,\mathsf{Sp}^{\mathbb{N}}(Sset_{\bullet}))) \stackrel{\sim}{\longrightarrow} \mathcal{M}_{dg}^{loc} \,.$$
\end{proposition}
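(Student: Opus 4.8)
The goal is to identify the derivator $\mathcal{M}_{dg}^{loc}$, which by Definition~\ref{defloc} is associated with the stable Quillen model category $\mathsf{L}_{\widetilde{\mathcal{E}_{st}}} \mathsf{Sp}^{\mathbb{N}}(\mathsf{L}_{\Sigma,P}\mathsf{Fun}(\mathsf{dgcat}_f^o, Sset_{\bullet}))$, with the derivator associated with the left Bousfield localization $\mathsf{L}_{\widetilde{\mathcal{E}_{st}},G}\mathsf{Fun}(\mathsf{dgcat}_f^o,\mathsf{Sp}^{\mathbb{N}}(Sset_{\bullet}))$. The strategy is to chain together the comparison results already established: first rewrite the "spectra of simplicial presheaves" as "presheaves of spectra", then commute the stabilization past the Bousfield localization by $\Sigma \cup \{P\}$, and finally reconcile the two ways of localizing at $\mathcal{E}_{st}$ versus $\widetilde{\mathcal{E}_{st}}$ and at $(\Sigma\cup\{P\})$ at the two different stages.

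\textbf{Key steps.} First I would invoke the remark preceding Proposition~\ref{prespectres}, which identifies $\mathsf{Sp}^{\mathbb{N}}(\mathsf{Fun}(\mathcal{M}_f^o,Sset_{\bullet}))$ with $\mathsf{Fun}(\mathcal{M}_f^o,\mathsf{Sp}^{\mathbb{N}}(Sset_{\bullet}))$ under the projective model structure, since both compute the same thing objectwise. Second, I would apply the proposition immediately preceding the remark, which gives an equivalence of triangulated strong derivators $\mathsf{HO}(\mathsf{Sp}^{\mathbb{N}}(\mathsf{L}_{\Sigma,P}\mathsf{Fun}(\mathcal{M}_f^o, Sset_{\bullet}))) \stackrel{\sim}{\longrightarrow} \mathsf{HO}(\mathsf{L}_G \mathsf{Sp}^{\mathbb{N}}(\mathsf{Fun}(\mathcal{M}_f^o,Sset_{\bullet})))$; this handles the passage from "localize then stabilize" to "stabilize then localize at the stabilized image $G$". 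Third, combining these with Remark~\ref{Tr} (which expresses $\mathsf{St}(\mathsf{L}_{\Sigma,P}\mathsf{Hot}_{\mathsf{dgcat}_f})$ as $\mathsf{HO}(\mathsf{Sp}^{\mathbb{N}}(\mathsf{L}_{\Sigma,P}\mathsf{Fun}(\mathsf{dgcat}_f^o, Sset_{\bullet})))$), one gets a model for $\mathsf{St}(\mathsf{L}_{\Sigma,P}\mathsf{Hot}_{\mathsf{dgcat}_f})$ of the form $\mathsf{HO}(\mathsf{L}_G\mathsf{Fun}(\mathsf{dgcat}_f^o,\mathsf{Sp}^{\mathbb{N}}(Sset_{\bullet})))$. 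Fourth, one performs the further left Bousfield localization at the set $\widetilde{\mathcal{E}_{st}}$ of representatives of the stabilized $\mathcal{E}_{st}$; by Theorem~\ref{Cisinsk} the derivator of an iterated localization $\mathsf{L}_{\widetilde{\mathcal{E}_{st}}}(\mathsf{L}_G(-))$ is the localization of the derivator at the union of images, i.e. $\mathsf{L}_{\widetilde{\mathcal{E}_{st}},G}\mathsf{Fun}(\mathsf{dgcat}_f^o,\mathsf{Sp}^{\mathbb{N}}(Sset_{\bullet}))$, and this is exactly $\mathcal{M}_{dg}^{loc}$ by definition. Finally, one checks that the equivalences at each stage are compatible, so they splice into a single equivalence of derivators, using that all the maps involved commute with homotopy colimits and that Theorem~\ref{Cisinsk} characterizes each localized derivator by a universal property depending only on the class of local equivalences.

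\textbf{Main obstacle.} The delicate point is the bookkeeping between the two localization sets. On the "$\mathcal{M}_{dg}^{loc}$ side" one localizes the simplicial presheaves first at $\Sigma\cup\{P\}$, then stabilizes, then localizes the resulting spectra category at $\widetilde{\mathcal{E}_{st}}$; on the "presheaves of spectra side" one stabilizes first and then localizes once at $\widetilde{\mathcal{E}_{st}}\cup G$. Showing these two procedures yield the same localization class requires knowing that stabilization commutes with the $(\Sigma\cup\{P\})$-localization at the level of derivators (which is the content of the proposition before the remark, itself resting on the universal-property characterization of Theorem~\ref{Cisinsk} and the stabilization theorem~\ref{repre}), and that the ordering of the two Bousfield localizations by $\widetilde{\mathcal{E}_{st}}$ and $G$ is immaterial — which again follows from Theorem~\ref{Cisinsk} since a composite of left Bousfield localizations is the left Bousfield localization at the union of the morphism sets. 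The only genuinely new verification is that all the hypotheses of Theorem~\ref{repre} and of Theorem~\ref{Cisinsk} (cellularity, left properness, the compactness/finite-presentation conditions on domains and codomains) are met by $\mathsf{Fun}(\mathsf{dgcat}_f^o,Sset_{\bullet})$ and its relevant localizations; but these were already checked in Example~\ref{exem} and Example~\ref{ex2}, so the argument reduces to assembling citations in the right order.
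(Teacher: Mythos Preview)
Your proposal is correct and follows exactly the approach the paper intends: the paper states that Proposition~\ref{prespectres} is an immediate consequence of ``the above considerations'', namely the preceding proposition (commuting stabilization with the $(\Sigma\cup\{P\})$-localization via their common universal property), the remark identifying $\mathsf{Sp}^{\mathbb{N}}(\mathsf{Fun}(\mathcal{M}_f^o,Sset_{\bullet}))$ with $\mathsf{Fun}(\mathcal{M}_f^o,\mathsf{Sp}^{\mathbb{N}}(Sset_{\bullet}))$, and the further localization at $\widetilde{\mathcal{E}_{st}}$. Your write-up simply makes explicit the chain of citations (Remark~\ref{Tr}, Theorem~\ref{Cisinsk}, Examples~\ref{exem} and~\ref{ex2}) that the paper leaves implicit.
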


\section{Upper  triangular  DG categories}\label{trimat}

In this section we study upper triangular dg categories using the
formalism of Quillen's homotopical algebra. In the next section, we
will relate this important class of dg categories with split short
exact sequences in $\mathsf{Ho}(\mathsf{dgcat})$.

\begin{definition}
An {\em upper triangular} dg category $\underline{\mathcal{B}}$ is
given by an upper triangular matrix
$$
\begin{array}{rcl}
\underline{\mathcal{B}} & := & \begin{pmatrix} \mathcal{A} & X \\ 0 &
  \mathcal{C} \end{pmatrix}\,,
\end{array}
$$
where $\mathcal{A}$ and $\mathcal{C}$ are small dg categories and $X$
is a $\mathcal{A}$-$\mathcal{C}$-bimodule.

A morphism $\underline{F} : \underline{\mathcal{B}} \rightarrow
\underline{\mathcal{B}'}$ of upper triangular dg categories is given by a triple
$\underline{F}:=(F_{\mathcal{A}}, F_{\mathcal{C}}, F_X)$, where
$F_{\mathcal{A}}$, resp. $F_{\mathcal{C}}$, is a dg functor from
  $\mathcal{A}$ to $\mathcal{A}'$, resp. from $\mathcal{C}$ to
  $\mathcal{C}'$, and $F_X$ is a morphism of
  $\mathcal{A}$-$\mathcal{C}$-bimodules from $X$ to $X'$ (we consider
  $X'$ endowed with
  the action induced by $F_{\mathcal{A}}$ and $F_{\mathcal{C}}$). The
  composition is the natural one.
\end{definition}

\begin{Notation}
We denote by $\mathsf{dgcat}^{tr}$ the category of upper triangular dg categories.
\end{Notation}

Let $\underline{\mathcal{B}} \in \mathsf{dgcat}^{tr}$.
\begin{definition}
Let $|\underline{\mathcal{B}}|$ be the {\em totalization} of
$\underline{\mathcal{B}}$, i.e. the small dg category whose set of
objects is the disjoint union of the set of objects of $\mathcal{A}$
and $\mathcal{C}$ and whose morphisms are given by
$$
\begin{array}{rcl}
\mathsf{Hom}_{|\underline{\mathcal{B}}|}(x,x') & := &
\left\{
\begin{array}{ccl}
\mathsf{Hom}_{\mathcal{A}}(x,x') & \mbox{if} & x,\, x' \in \mathcal{A} \\
\mathsf{Hom}_{\mathcal{C}}(x,x') & \mbox{if} & x,\, x' \in \mathcal{C} \\
X(x,x') & \mbox{if} & x \in \mathcal{A},\, x' \in \mathcal{C} \\
0 & \mbox{if} &  x \in \mathcal{C},\, x' \in \mathcal{A}
\end{array}\right.\,.
\end{array}
$$
\end{definition}
We have the following adjunction
$$
\xymatrix{
\mathsf{dgcat}^{tr} \ar@<-1ex>[d]_{|-|} \\
\mathsf{dgcat} \ar@<-1ex>[u]_I \,,
}
$$
where
$$
\begin{array}{rcl}
I(\mathcal{B}') & := & \begin{pmatrix} \mathcal{B}' & \mathsf{Hom}_{\mathcal{B}'}(-,-) \\
0 & \mathcal{B}' \end{pmatrix}\,.
\end{array}
$$
\begin{lemma}\label{triancomp}
The category $\mathsf{dgcat}^{tr}$ is complete and cocomplete.
\end{lemma}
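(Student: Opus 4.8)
The strategy is to reduce completeness and cocompleteness of $\mathsf{dgcat}^{tr}$ to the corresponding well-known properties of $\mathsf{dgcat}$ and of categories of bimodules, by exploiting the explicit matrix description of an upper triangular dg category. The point is that an object of $\mathsf{dgcat}^{tr}$ is essentially a triple $(\mathcal{A},\mathcal{C},X)$ with $X$ an $\mathcal{A}$-$\mathcal{C}$-bimodule, and morphisms act componentwise in a compatible way; so (co)limits ought to be computable "slot by slot", once one is careful about the fact that the bimodule slot lives over a \emph{varying} pair of dg categories.

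\textbf{Step 1: limits.} Given a small diagram $i \mapsto \underline{\mathcal{B}}_i = \begin{pmatrix} \mathcal{A}_i & X_i \\ 0 & \mathcal{C}_i \end{pmatrix}$ in $\mathsf{dgcat}^{tr}$, I would first form $\mathcal{A} := \lim_i \mathcal{A}_i$ and $\mathcal{C} := \lim_i \mathcal{C}_i$ in $\mathsf{dgcat}$, which exist since $\mathsf{dgcat}$ is complete. Each $X_i$ is an $\mathcal{A}_i$-$\mathcal{C}_i$-bimodule; pulling back along the projections $\mathcal{A} \to \mathcal{A}_i$ and $\mathcal{C} \to \mathcal{C}_i$ turns each $X_i$ into an $\mathcal{A}$-$\mathcal{C}$-bimodule, i.e. a dg functor $\mathcal{A}^{op}\otimes\mathcal{C} \to \mathcal{C}_{dg}(k)$, and one gets a diagram of such in the (complete) dg-module category; set $X := \lim_i X_i$ there, which is just the objectwise limit of complexes. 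One then checks that $\begin{pmatrix} \mathcal{A} & X \\ 0 & \mathcal{C} \end{pmatrix}$ with the evident projection morphisms has the universal property: a cone over the diagram from $\begin{pmatrix} \mathcal{A}' & X' \\ 0 & \mathcal{C}' \end{pmatrix}$ is precisely a cone $\mathcal{A}' \to \mathcal{A}_i$, a cone $\mathcal{C}' \to \mathcal{C}_i$, and a compatible family $X' \to X_i$ of bimodule maps over those, which by the three universal properties assembles uniquely into a morphism to $\begin{pmatrix} \mathcal{A} & X \\ 0 & \mathcal{C} \end{pmatrix}$.

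\textbf{Step 2: colimits.} Dually, set $\mathcal{A} := \mathrm{colim}_i\, \mathcal{A}_i$ and $\mathcal{C} := \mathrm{colim}_i\, \mathcal{C}_i$ in $\mathsf{dgcat}$ (which is cocomplete). Now the subtlety appears: each $X_i$ must be pushed \emph{forward} to an $\mathcal{A}$-$\mathcal{C}$-bimodule, and restriction of scalars has a left adjoint (induction) $(-)_!$ along $\mathcal{A}_i^{op}\otimes\mathcal{C}_i \to \mathcal{A}^{op}\otimes\mathcal{C}$. One forms the induced bimodules $(X_i)_!$, which now live over a common dg category, and sets $X := \mathrm{colim}_i (X_i)_!$ in $\mathrm{Mod}\,(\mathcal{A}^{op}\otimes\mathcal{C})$. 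To verify the universal property I would again unwind what a morphism from $\begin{pmatrix} \mathcal{A} & X \\ 0 & \mathcal{C} \end{pmatrix}$ to a test object $\begin{pmatrix} \mathcal{A}' & X' \\ 0 & \mathcal{C}' \end{pmatrix}$ is, and compare it with a cocone out of the diagram, using at each stage the adjunction between induction and restriction to move the bimodule data across the structure dg functors. An alternative, perhaps cleaner, route is to invoke the adjunction $|-| \dashv I$ from the excerpt together with the dual adjunction with evaluation functors $Ev_{\mathcal{A}}, Ev_{\mathcal{C}}$ to $\mathsf{dgcat}$ (which are left adjoints to "constant matrix" functors), realize $\mathsf{dgcat}^{tr}$ as the category of sections of a suitable Grothendieck opfibration over $\mathsf{dgcat}\times\mathsf{dgcat}$ whose fibre over $(\mathcal{A},\mathcal{C})$ is $\mathrm{Mod}\,(\mathcal{A}^{op}\otimes\mathcal{C})$, and quote the general fact that such a category of sections is complete and cocomplete when base and fibres are and the (co)cartesian transport functors preserve the relevant (co)limits.

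\textbf{Main obstacle.} The routine part is limits (Step 1), where everything is computed "on the nose" slotwise. The delicate point is cocompleteness: one must be careful that colimits of bimodules over varying dg categories are correctly computed via induction functors, and that these induced-module colimits indeed satisfy the mixed universal property involving the non-trivial $\mathcal{A}$-$\mathcal{C}$ corner. I expect that bookkeeping — precisely, checking the compatibility of the bimodule colimit with the simultaneous colimits of $\mathcal{A}_i$ and $\mathcal{C}_i$, and that no coherence is lost when transporting along the coprojections — to be the only genuine content; once the opfibration picture is set up this becomes a standard application of the theory of (co)limits in categories of sections, so I would lean on that formulation to keep the argument short.
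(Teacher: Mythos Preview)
Your approach is correct. For limits it coincides with the paper's: the limit is the matrix $\begin{pmatrix} \lim_j \mathcal{A}_j & \lim_j X_j \\ 0 & \lim_j \mathcal{C}_j \end{pmatrix}$, with $\lim_j X_j$ computed in $\mathcal{A}$-$\mathcal{C}$-bimodules after restriction along the projections.

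For colimits the paper takes a different and shorter route. Rather than pushing the bimodules forward via induction functors and then taking their colimit over the fixed base $\mathcal{A}^{op}\otimes\mathcal{C}$ (or invoking the general theory of sections of an opfibration), the paper exploits the totalization functor $|-|:\mathsf{dgcat}^{tr}\to\mathsf{dgcat}$, which is left adjoint to $I$. One simply forms $\mathrm{colim}_j\,|\underline{\mathcal{B}_j}|$ in $\mathsf{dgcat}$ and reads the answer off from it: the off-diagonal slot of the colimit is declared to be the $(\mathrm{colim}_j\mathcal{A}_j)$-$(\mathrm{colim}_j\mathcal{C}_j)$-bimodule given by the Hom-complexes in $\mathrm{colim}_j\,|\underline{\mathcal{B}_j}|$ from $\mathcal{A}$-type objects to $\mathcal{C}$-type objects. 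All of the bookkeeping you anticipate with induction functors---checking that the bimodule colimit is compatible with the simultaneous colimits on the diagonal---is absorbed into the single colimit computation in $\mathsf{dgcat}$. Your opfibration viewpoint explains conceptually \emph{why} this totalization shortcut is legitimate (the fibre over $(\mathcal{A},\mathcal{C})$ is the bimodule category and the cocartesian transports are exactly the induction functors), but the paper's argument is much shorter on the page: it just writes down the matrix and observes that it has the required universal property.
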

\begin{proof}
Let $\{\underline{\mathcal{B}_j}\}_{j \in J}$ be a diagram in
$\mathsf{dgcat}^{tr}$. Observe that the upper triangular dg category
$$
\begin{pmatrix}
\underset{j \in J}{\mathsf{colim}\,\mathcal{A}_j} & \underset{j \in
  J}{\mathsf{colim}}\,|\underline{\mathcal{B}_j}|(-,-)  \\
0 & \underset{j \in J}{\mathsf{colim}}\,\mathcal{C}_j
\end{pmatrix}\,,
$$
where $\underset{j \in
  J}{\mathsf{colim}}\,|\underline{\mathcal{B}_j}|(-,-)$ is the $\underset{j \in
  J}{\mathsf{colim}}\,\mathcal{A}_j$-$\underset{j \in
  J}{\mathsf{colim}}\,\mathcal{C}_j$-bimodule naturally associated
with the dg category $\underset{j \in
  J}{\mathsf{colim}}\,|\underline{\mathcal{B}_j}|$, corresponds to $\underset{j \in
  J}{\mathsf{colim}}\,\underline{\mathcal{B}_j}$. Observe also that the
upper triangular dg category
$$
\begin{pmatrix}
\underset{j \in J}{\mathsf{lim}}\, \mathcal{A}_j & \underset{j \in J}{\mathsf{lim}}\,X_j \\
0 & \underset{j \in J}{\mathsf{lim}}\,\mathcal{C}_j
\end{pmatrix}\,,
$$
corresponds to $\underset{j \in
  J}{\mathsf{lim}}\,\underline{\mathcal{B}_j}$.
This proves the lemma.
\end{proof}

\begin{Notation}
Let $p_1(\underline{\mathcal{B}}) := \mathcal{A}$ and $p_2(\underline{\mathcal{B}}) := \mathcal{C}$.
\end{Notation}
We have at our disposal the following adjunction
$$
\xymatrix{
\mathsf{dgcat}^{tr} \ar@<1ex>[d]^{p_1 \times p_2} \\
\mathsf{dgcat}\times\mathsf{dgcat} \ar@<1ex>[u]^E \,,
}
$$
where
$$
\begin{array}{rcl}
E(\mathcal{B}', \mathcal{B}'') & := & \begin{pmatrix} \mathcal{B}' & 0 \\
0 & \mathcal{B}'' \end{pmatrix}\,.
\end{array}
$$
Recall from theorem~\ref{theorem2} that $\mathsf{dgcat}$ admits a structure of cofibrantly
generated Quillen model category whose weak equivalences are the
Morita dg functors. This structure clearly induces a componentwise
model structure on $\mathsf{dgcat} \times \mathsf{dgcat}$ which is
also cofibrantly generated.

\begin{proposition}\label{prop7}
The category $\mathsf{dgcat}^{tr}$ admits a structure of cofibrantly generated
Quillen model category whose weak equivalences, resp. fibration, are
the morphisms $\underline{F}: \underline{\mathcal{B}} \rightarrow
\underline{\mathcal{B}'}$ such that $(p_1 \times p_2) (\underline{F})$
are weak equivalences, resp. fibrations, in $\mathsf{dgcat}\times \mathsf{dgcat}$.
\end{proposition}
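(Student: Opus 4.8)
The plan is to apply the transfer principle for cofibrantly generated model structures, i.e. Crans's lifting theorem (or equivalently the small-object-argument-based criterion used repeatedly in this text, theorem~\ref{thm}), along the adjunction
$$
\xymatrix{
\mathsf{dgcat}^{tr} \ar@<1ex>[d]^{p_1 \times p_2} \\
\mathsf{dgcat}\times\mathsf{dgcat} \ar@<1ex>[u]^E \,.
}
$$
First I would record the ingredients we already have: by lemma~\ref{triancomp} the category $\mathsf{dgcat}^{tr}$ is complete and cocomplete; the functor $p_1\times p_2$ admits the left adjoint $E$ and one readily checks it also admits a right adjoint (namely the functor sending $(\mathcal B',\mathcal B'')$ to $\begin{pmatrix}\mathcal B' & \mathsf{Hom}(-,-) \\ 0 & \mathcal B''\end{pmatrix}$ with the full bimodule, or equivalently $p_1\times p_2$ preserves all colimits), so that $p_1\times p_2$ commutes with colimits and in particular with the transfinite compositions and pushouts appearing in the small object argument. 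Define $I^{tr} := E(I\times I\cup \{\ast\}\times I \cup I\times\{\ast\})$ adapted suitably, and more precisely take as generating (trivial) cofibrations the images under $E$ of the generating (trivial) cofibrations of $\mathsf{dgcat}\times\mathsf{dgcat}$ together with morphisms that adjust the bimodule component; the weak equivalences and fibrations are then defined, as in the statement, by applying $p_1\times p_2$.

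The key steps, in order, are: (1) verify the two-out-of-three and retract stability of the class $\mathcal W^{tr}$ of morphisms sent to weak equivalences by $p_1\times p_2$ — immediate since these properties hold in $\mathsf{dgcat}\times\mathsf{dgcat}$ and $p_1\times p_2$ is a functor; (2) verify smallness of the domains: since every object of $\mathsf{dgcat}$ is small (see \cite[10.4.1]{Hirschhorn}) and $\mathsf{dgcat}^{tr}$ is built from two copies of $\mathsf{dgcat}$ and a bimodule, every object of $\mathsf{dgcat}^{tr}$ is small, so conditions (ii)--(iii) of theorem~\ref{thm} hold; (3) the crucial point — show that relative $J^{tr}$-cell complexes are weak equivalences. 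Since $p_1\times p_2$ commutes with colimits, the image under $p_1\times p_2$ of a relative $J^{tr}$-cell complex is a relative $J\times J$-cell complex in $\mathsf{dgcat}\times\mathsf{dgcat}$, hence a trivial cofibration there by theorem~\ref{theorem2}, hence in particular a weak equivalence; one must, however, also control the bimodule component, and this is where a path-object argument enters (see below). Finally (4) identify $I^{tr}\mbox{-inj}$ with $\mathcal W^{tr}\cap J^{tr}\mbox{-inj}$ by adjunction: a morphism has the right lifting property against $I^{tr}$ iff $p_1\times p_2$ of it has the right lifting property against $I\times I$ (plus the bimodule conditions), iff it is a trivial fibration, which by step (1) and the model structure on $\mathsf{dgcat}\times\mathsf{dgcat}$ is exactly a fibration that is a weak equivalence; then (vi) follows formally.

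The main obstacle I anticipate is step (3): the lifting-theorem hypothesis ``$J^{tr}\mbox{-cell}\subseteq \mathcal W^{tr}$'' is not purely formal because the generating trivial cofibrations must include morphisms that modify the bimodule $X$ (to make $F_X$ a quasi-isomorphism), and pushouts along such morphisms, while harmless on the $\mathcal A$ and $\mathcal C$ components, must be shown not to destroy the weak-equivalence property — here I would adapt the explicit analysis of amalgamated sums performed in the proof of lemma~\ref{J-cell-W} (the $\mathrm{Hom}$-decomposition into direct sums of contractible complexes $D^n$), applied now to the bimodule component. An efficient way to circumvent a lengthy direct computation, in the spirit of the remark following theorem~\ref{thm6}, is to produce a functorial path object for $\mathsf{dgcat}^{tr}$: take the path object $\cp(\mathcal C_{dg}(-))$ construction of Drinfeld (\cite[2.9]{Drinfeld}, reinterpreted as in the proof of theorem~\ref{thm5}) componentwise on $\mathcal A$ and $\mathcal C$, and on the bimodule the corresponding mapping-path bimodule; then apply Quillen's path-object criterion to conclude that $J^{tr}$-cell complexes are weak equivalences without analysing pushouts. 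Once (3) is in hand, the remaining verifications are the routine translations through the adjunction $(E, p_1\times p_2)$ described in step (4), and the theorem follows from theorem~\ref{thm}.
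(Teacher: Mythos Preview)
Your overall strategy --- transfer the model structure along the adjunction $(E,\,p_1\times p_2)$ --- is exactly what the paper does. But you have misread the statement and, as a result, made the problem much harder than it is. The weak equivalences in this proposition are \emph{precisely} the morphisms $\underline{F}$ such that $(p_1\times p_2)(\underline{F})$ is a weak equivalence in $\mathsf{dgcat}\times\mathsf{dgcat}$; there is \emph{no} condition on the bimodule component $F_X$. The condition that $F_X$ be a quasi-isomorphism only enters in the \emph{next} result, theorem~\ref{theo1}, where the generating sets are enlarged by the maps $\tilde I$, $\tilde J$ that modify the bimodule. For the present proposition the generating (trivial) cofibrations are simply $E(I\times I)$ and $E(J\times J)$ --- nothing more.

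Once you drop the spurious bimodule condition, the ``obstacle'' you anticipate in step~(3) vanishes entirely, and the path-object detour is unnecessary. The paper's key observation is that $E$ is not only a left adjoint but also a \emph{right} adjoint to $p_1\times p_2$ (check directly: a morphism $\underline{\mathcal B}\to E(\mathcal B',\mathcal B'')$ forces the bimodule map to be zero, so such morphisms correspond exactly to pairs of dg functors). Hence $p_1\times p_2$ preserves all colimits, and in particular pushouts. Therefore, applying $p_1$ and $p_2$ to a pushout of an element of $E(J\times J)$ yields, literally, two pushouts in $\mathsf{dgcat}$ along elements of $J$ --- automatically trivial cofibrations. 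That is the entire content of condition~(2) of \cite[11.3.2]{Hirschhorn}, and the proof is three lines long. Your formula for the right adjoint, involving a ``full bimodule'' $\mathsf{Hom}(-,-)$ between two unrelated dg categories, does not make sense; the correct right adjoint is $E$ itself.
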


\begin{proof}
We show that
the previous adjunction $(E,p_1 \times p_2)$ verifies conditions $(1)$ and $(2)$ of theorem
$11.3.2$ from \cite{Hirschhorn}.
\begin{itemize}
\item[(1)] Since the functor $E$ is also a right adjoint to $p_1
  \times p_2$, the functor $p_1 \times p_2$ commutes with colimits and
  so condition $(1)$ is verified.
\item[(2)] Let $J$, resp. $J \times J$, be the set of generating trivial
cofibrations in $\mathsf{dgcat}$, resp. in $\mathsf{dgcat} \times \mathsf{dgcat}$. Since the
functor $p_1 \times p_2$ commutes with filtered colimits it is enough
to prove the following: let $\underline{G}: \underline{\mathcal{B}'}
\rightarrow \underline{\mathcal{B}''}$ be an element of the set
$E(J \times J)$, $\underline{\mathcal{B}}$ an object in
$\mathsf{dgcat}^{tr}$ and $\underline{\mathcal{B}'} \rightarrow
\underline{\mathcal{B}}$ a morphism in $\mathsf{dgcat}^{tr}$. Consider
the following push-out in $\mathsf{dgcat}^{tr}$~:
$$
\xymatrix{
\underline{\mathcal{B}'} \ar[r] \ar[d]_{\underline{G}}
\ar@{}[dr]|{\lrcorner} &
\underline{\mathcal{B}} \ar[d]^{\underline{G}_{\ast}} \\
\underline{\mathcal{B}''} \ar[r] &
\underline{\mathcal{B}''}\underset{\underline{\mathcal{B}'}}{\coprod}\underline{\mathcal{B}}\,.
}
$$
We now prove that $(p_1 \times p_2)(\underline{G}_{\ast})$ is a
weak-equivalence in $\mathsf{dgcat}\times \mathsf{dgcat}$. Observe
that the image of the previous push-out under the functors $p_1$ and
$p_2$ correspond to the following two push-outs in $\mathsf{dgcat}$~:
$$
\xymatrix{
*+<1pc>{\mathcal{A}'} \ar[r] \ar@{>->}[d]_{G_{\mathcal{A}'}}^{\sim} \ar@{}[dr]|{\lrcorner}  & \mathcal{A}
  \ar[d]^{G_{\mathcal{A}'_{\ast}}}  &  *+<1pc>{\mathcal{C}'} \ar[r]
  \ar@{>->}[d]_{G_{\mathcal{C}'}}^{\sim}  \ar@{}[dr]|{\lrcorner}  & \mathcal{C}
  \ar[d]^{G_{\mathcal{C}'_{\ast}}} \\
\mathcal{A}'' \ar[r] & \mathcal{A}'' \underset{\mathcal{A}'}{\coprod}
\mathcal{A} & \mathcal{C}'' \ar[r] & \mathcal{C}'' \underset{\mathcal{C}'}{\coprod}
\mathcal{C}\,.
}
$$
Since $G_{\mathcal{A}'_{\ast}}$ and $G_{\mathcal{C}'_{\ast}}$
belong to $J$ the morphism
$$(p_1\times p_2)(\underline{G}_{\ast})=(G_{\mathcal{A}'_{\ast}},
G_{\mathcal{C}'_{\ast}})$$
is a weak-equivalence in $\mathsf{dgcat} \times \mathsf{dgcat}$. This proves condition $(2)$.
\end{itemize}
The proposition is proven.
\end{proof}

Let $\underline{\mathcal{B}}$, $\underline{\mathcal{B}'} \in \mathsf{dgcat}^{tr}$.

\begin{definition}
A morphism $\underline{F}: \underline{\mathcal{B}} \rightarrow
\underline{\mathcal{B}'}$ is a {\em total Morita dg functor} if
$F_{\mathcal{A}}$ and $F_{\mathcal{C}}$ are Morita dg functors, see section~\ref{secmor}, and $F_X$
    is a quasi-isomorphism of $\mathcal{A}$-$\mathcal{C}$-bimodules.
\end{definition}

\begin{remark}
Notice that if $\underline{F}$ is a total Morita dg functor then
$|\underline{F}|$ is a Morita dg functor in $\mathsf{dgcat}$ but the converse is not true.
\end{remark}

\begin{theorem}\label{theo1}
The category $\mathsf{dgcat}^{tr}$ admits a structure of cofibrantly generated
Quillen model category whose weak equivalences $\mathcal{W}$ are the
total Morita dg functors and whose fibrations are the morphisms
$\underline{F}:\underline{\mathcal{B}} \rightarrow
\underline{\mathcal{B}'}$ such that $F_{\mathcal{A}}$ and
  $F_{\mathcal{C}}$ are Morita fibrations, see theorem~\ref{theorem2}, and $F_X$ is a componentwise surjective morphism of bimodules.
\end{theorem}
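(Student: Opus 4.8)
The plan is to obtain this model structure from the one in proposition~\ref{prop7} by a left Bousfield localization, exactly parallel to the way the Morita structure (theorem~\ref{theorem2}) was obtained from the quasi-equiconic structure (theorem~\ref{theorem}), and the latter from the quasi-equivalent one (theorem~\ref{mal}). Concretely, I would apply the recognition theorem~\ref{thm} to $\mathcal{C}=\mathsf{dgcat}^{tr}$, with $\mathcal{W}$ the class of total Morita dg functors and with explicit sets $I$ and $J$ of generating (trivial) cofibrations built from the adjunction $(E,p_1\times p_2)$ together with a third family controlling the bimodule component. For $I$ I would take the image under $E$ of the generating cofibrations of $\mathsf{dgcat}\times\mathsf{dgcat}$ (as in proposition~\ref{prop7}), augmented by morphisms of the form $\begin{pmatrix}\underline{k}&0\\0&\underline{k}\end{pmatrix}\to\begin{pmatrix}\underline{k}&S^{n-1}\\0&\underline{k}\end{pmatrix}$ and $\begin{pmatrix}\underline{k}&S^{n-1}\\0&\underline{k}\end{pmatrix}\to\begin{pmatrix}\underline{k}&D^{n}\\0&\underline{k}\end{pmatrix}$ which add generators and relations to the $\mathcal{A}$-$\mathcal{C}$-bimodule; for $J$ the analogous trivial versions $\begin{pmatrix}\underline{k}&S^{n-1}\\0&\underline{k}\end{pmatrix}\hookrightarrow\begin{pmatrix}\underline{k}&D^{n}\\0&\underline{k}\end{pmatrix}$ (which are quasi-isomorphisms on the bimodule component) together with the images under $E$ of the generating trivial cofibrations of $\mathsf{dgcat}$, and also the generators of the Morita structure applied componentwise. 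Since every object of $\mathsf{dgcat}$ is small (cf.~\cite[10.4.1]{Hirschhorn}) and $\mathsf{dgcat}^{tr}$ is complete and cocomplete by lemma~\ref{triancomp}, with colimits computed componentwise on the $\mathcal{A}$ and $\mathcal{C}$ parts and on the totalization, the same holds in $\mathsf{dgcat}^{tr}$, so conditions $(i)$, $(ii)$, $(iii)$ of theorem~\ref{thm} are immediate.

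For the remaining conditions I would proceed as in the proofs of theorems~\ref{mal}, \ref{theorem} and \ref{theorem2}. First I would identify $I-\mathrm{inj}$ as the class $\mathbf{Surj}^{tr}$ of morphisms $\underline{F}$ such that $F_{\mathcal{A}}$ and $F_{\mathcal{C}}$ are surjective on objects and quasi-isomorphisms surjective in the Hom-complexes (by lemma~\ref{I-inj-Surj} applied to each component) and $F_X$ is a surjective quasi-isomorphism of bimodules (from the lifting property against the $S^{n-1}\to D^n$-type generators, using \cite[2.3.5]{Hovey}); this takes care of condition $(v)$ once we show $J-\mathrm{inj}\cap\mathcal{W}=\mathbf{Surj}^{tr}$. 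The inclusion $\mathbf{Surj}^{tr}\subseteq J-\mathrm{inj}\cap\mathcal{W}$ is a componentwise application of lemma~\ref{J-inj-Surj2} plus the observation that a surjective quasi-isomorphism of bimodules has the right lifting property against the bimodule trivial cofibrations. For the converse, given $\underline{F}\in J-\mathrm{inj}\cap\mathcal{W}$, the component morphisms $F_{\mathcal{A}}$, $F_{\mathcal{C}}$ lie in $\mathbf{Surj}$ by lemma~\ref{J-inj-Surj2}, and lifting against the bimodule generators forces $F_X$ surjective; being already a quasi-isomorphism this is enough. Condition $(iv)$, namely $J-\mathrm{cell}\subseteq\mathcal{W}\cap\mathrm{cof}(I)$, reduces to checking that pushouts of the generating trivial cofibrations are total Morita dg functors: for those coming from $E(J)$ this follows from lemma~\ref{J-cell-dans-W2} componentwise (note pushouts in $\mathsf{dgcat}^{tr}$ are computed so that $p_1,p_2$ preserve them, as in proposition~\ref{prop7}); for the bimodule generators $S^{n-1}\hookrightarrow D^n$ a pushout only enlarges the bimodule $X$ by adding a contractible complex $D^n$ as a direct summand in each component, hence induces a quasi-isomorphism of bimodules and leaves the $\mathcal{A}$, $\mathcal{C}$ parts untouched. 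Condition $(vi)$ follows from $(v)$ in the standard way.

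The main obstacle, as usual in this circle of ideas, is the careful bookkeeping in the "cell attachment" computation for the bimodule generators: one must verify that attaching a $D^n$ (resp. $S^{n-1}$) cell to the off-diagonal entry of an upper triangular dg category, possibly through a morphism into a large pushout, genuinely produces a componentwise quasi-isomorphism of the resulting $\mathcal{A}$-$\mathcal{C}$-bimodules and does not interact with the diagonal dg categories in a way that breaks the Morita conditions on $F_{\mathcal{A}}$, $F_{\mathcal{C}}$. This is a direct analogue of the decomposition-of-$\mathrm{Hom}$-complexes arguments in the proof of lemma~\ref{J-cell-W} and lemma~\ref{J-cell-dans-W}, but one has to track the bimodule action through the free constructions; since the bimodule entry carries no composition with itself, the relevant complexes decompose as finite (indeed, in the $D^n$ case, trivial) direct sums rather than the infinite telescopes of the dg-quotient situation, so the argument is actually lighter than in theorem~\ref{mal}. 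Once all six conditions of theorem~\ref{thm} are verified, theorem~\ref{thm} yields the cofibrantly generated Quillen model structure, and unwinding $I-\mathrm{inj}$ and $J-\mathrm{inj}$ gives precisely the description of fibrations in the statement: $F_{\mathcal{A}}$, $F_{\mathcal{C}}$ Morita fibrations and $F_X$ componentwise surjective.
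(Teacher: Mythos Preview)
Your overall strategy is the paper's: enlarge the generating sets of proposition~\ref{prop7} by bimodule-type generators and verify the hypotheses of theorem~\ref{thm}. But you have swapped the roles of the bimodule generating cofibrations and trivial cofibrations. You put $\begin{pmatrix}k&S^{n-1}\\0&k\end{pmatrix}\hookrightarrow\begin{pmatrix}k&D^{n}\\0&k\end{pmatrix}$ into $J$ and call it ``a quasi-isomorphism on the bimodule component'', yet $S^{n-1}\to D^n$ is \emph{not} a quasi-isomorphism: $S^{n-1}$ has nontrivial cohomology while $D^n$ is acyclic. These are the generating \emph{cofibrations} of the projective model structure on $\mathsf{Ch}(k)$, not the generating trivial cofibrations. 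With your $J$, a morphism in $J\text{-}\mathrm{inj}$ would need $F_X$ to be a surjective quasi-isomorphism, strictly stronger than the ``componentwise surjective'' of the statement; worse, your bimodule $J$-generators are not total Morita dg functors, so condition~(iv) of theorem~\ref{thm} fails. Your claim that a pushout along such a generator ``adds a contractible $D^n$ as a direct summand'' is true for $0\to D^n$ but false for $S^{n-1}\to D^n$, where the pushout is a mapping cone and typically changes the quasi-isomorphism type of the bimodule.

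The paper takes $\tilde J$ to consist of the maps $\begin{pmatrix}k&0\\0&k\end{pmatrix}\hookrightarrow\begin{pmatrix}k&D^{n}\\0&k\end{pmatrix}$ and $\tilde I$ to consist of the $S^{n-1}\to D^n$ maps. Then $\tilde J\text{-}\mathrm{inj}$ detects exactly componentwise surjectivity of $F_X$, the maps in $\tilde J$ are genuine total Morita dg functors, and a pushout along one really does just adjoin a contractible piece to the bimodule. With this correction, the rest of your outline goes through and coincides with the paper's proof. (Two minor points: your extra $0\to S^{n-1}$ generators in $I$ are redundant but harmless; and the ``left Bousfield localization'' framing is inaccurate, since the new structure has \emph{fewer} weak equivalences than proposition~\ref{prop7}, not more --- but neither affects the actual argument.)
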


\begin{proof}
The proof is based on enlarging the set $E(I \times I)$,
resp. $E(J \times J)$, of
generating cofibrations, resp. generating trivial cofibrations, of the
Quillen model structure of proposition~\ref{prop7}.

Let $\tilde{I}$ be the set of morphisms in $\mathsf{dgcat}^{tr}$
$$
\begin{array}{rcl}
\begin{pmatrix} k & S^{n-1} \\ 0 & k \end{pmatrix} & \hookrightarrow &
\begin{pmatrix} k & D^{n} \\ 0 & k \end{pmatrix}\,, n \in \mathbb{Z}\,,
\end{array}
$$
where $S^{n-1}$ is the complex $k[n-1]$ and $D^n$ the mapping cone on
the identity of $S^{n-1}$. The $k$-$k$-bimodule $S^{n-1}$ is sent to
$D^n$ by the identity on $k$ in degree $n-1$.

Consider also the set $\tilde{J}$ of morphisms in $\mathsf{dgcat}^{tr}$
$$
\begin{array}{rcl}
\begin{pmatrix} k & 0 \\ 0 & k \end{pmatrix} & \hookrightarrow &
\begin{pmatrix} k & D^{n} \\ 0 & k \end{pmatrix}\,, n \in \mathbb{Z}\,.
\end{array}
$$
Observe that a morphism $\underline{F} :\underline{\mathcal{B}}
\rightarrow \underline{\mathcal{B}'}$ in $\mathsf{dgcat}^{tr}$ has the
right lifting property (=R.L.P.) with respect to the set $\tilde{J}$,
resp. $\tilde{I}$, if and only if $F_X$ is a componentwise surjective morphism,
resp. surjective quasi-isomorphism, of $\mathcal{A}$-$\mathcal{C}$-bimodules.

Define $I:= E(I\times I) \cup \tilde{I}$ as the set of {\em generating
cofibrations} in $\mathsf{dgcat}^{tr}$ and $J:=E(J\times J)\cup
\tilde{J}$ as the set of {\em generating trivial cofibrations}. We now prove
that conditions $(1)$-$(6)$ of theorem $2.1.19$ from \cite{Hovey} are
satisfied. This is clearly the case for conditions $(1)$-$(3)$.
\begin{itemize}

\item[(4)] We now prove that $J-cell \subset \mathcal{W}$, see
\cite{Hirschhorn}. Since by proposition~\ref{prop7} we have $E(J\times J)-cell
\subset \mathcal{W}'$, where $\mathcal{W}'$ denotes the weak equivalences of
proposition~\ref{prop7}, it is enough to prove that pushouts with respect
to any morphism in $\tilde{J}$ belong to $\mathcal{W}$.
Let $n$ be an integer and $\underline{\mathcal{B}}$ an object in
$\mathsf{dgcat}^{tr}$. Consider the following push-out in $\mathsf{dgcat}^{tr}$~:
$$
\xymatrix{
{\begin{pmatrix} k & 0 \\ 0 & k \end{pmatrix}} \ar[r]^{\underline{T}}
\ar@{^{(}->}[d] \ar@{}[dr]|{\lrcorner}
& \underline{\mathcal{B}} \ar[d]^{\underline{R}} \\
{\begin{pmatrix} k & D^{n} \\ 0 & k \end{pmatrix}} \ar[r] & \underline{\mathcal{B}'}\,.
}
$$
Notice that the morphism $\underline{T}$ corresponds to specifying
an object $A$ in $\mathcal{A}$ and an object $C$ in $\mathcal{C}$. The
upper triangular dg category $\underline{\mathcal{B}'}$ is then
obtained from $\underline{\mathcal{B}}$ by gluing a new morphism of
degree $n$ from $A$ to $C$. Observe that $R_{\mathcal{A}}$ and
  $R_{\mathcal{C}}$ are the identity dg functors and that $R_X$ is a
  quasi-isomorphism of bimodules. This shows that $\underline{R}$
  belongs to $\mathcal{W}$ and so condition $(4)$ is proved.

\item[(5-6)] We now show that R.L.P.($I$)=R.L.P.($J$) $\cap$ $\mathcal{W}$.
The proof of proposition~\ref{prop7} implies that
R.L.P.($E(I\times I)$)=R.L.P.($E(J \times J)$) $\cap$ $\mathcal{W}'$. Let $\underline{F}:\underline{\mathcal{B}} \rightarrow
  \underline{\mathcal{B}'}$ be a morphism in
  R.L.P.($\tilde{I}$). Clearly $\underline{F}$ belongs to
  R.L.P.($\tilde{J}$) and $F_X$ is a quasi-isomorphism of
  bimodules. This shows that R.L.P.($I$) $\subset$ R.L.P.($J$) $\cap$
  $\mathcal{W}$.

Let now $\underline{F}:\underline{\mathcal{B}} \rightarrow
  \underline{\mathcal{B}'}$ be a morphism in
  R.L.P.($\tilde{J}$) $\cap$ $\mathcal{W}$. Clearly $\underline{F}$
  belongs to R.L.P.($\tilde{I}$) and so R.L.P.($J$) $\cap$
  $\mathcal{W}$ $\subset$ R.L.P($I$). This proves conditions $(5)$ and $(6)$.

\end{itemize}
This proves the theorem.
\end{proof}

\begin{remark}\label{filtri}
Notice that the Quillen model structure of theorem~\ref{theo1} is
cellular, see \cite{Hirschhorn} and that the domains and codomains of
$I$ (the set of generating cofibrations) are cofibrant,
$\aleph_0$-compact, $\aleph_0$-small and homotopically finitely
presented, see definition $2.1.1.$ from \cite{Toen-Vaq}. This implies
that we are in the conditions of proposition~\ref{prop} and so any object
$\underline{\mathcal{B}}$ in $\mathsf{dgcat}^{tr}$ is weakly equivalent
to a filtered colimit of strict finite $I$-cell objects.
\end{remark}

\begin{proposition}\label{comspl}
If $\underline{\mathcal{B}}$ be a strict finite $I$-cell object in
$\mathsf{dgcat}^{tr}$, then $p_1(\underline{\mathcal{B}})$,
$p_2(\underline{\mathcal{B}})$ and $|\underline{\mathcal{B}}|$ are
strict finite $I$-cell objects in $\mathsf{dgcat}$.
\end{proposition}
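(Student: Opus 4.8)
The statement asserts that the three functors $p_1, p_2$ (the two projections to $\mathsf{dgcat}$) and $|-|$ (totalization) all send strict finite $I$-cell objects of $\mathsf{dgcat}^{tr}$ to strict finite $I$-cell objects of $\mathsf{dgcat}$, where $I = E(I\times I) \cup \tilde I$ is the generating set described in the proof of Theorem~\ref{theo1}. The natural approach is induction on the length of the cellular filtration: a strict finite $I$-cell object $\underline{\mathcal{B}}$ is built from the initial object $\begin{pmatrix} \mathcal{O} & 0 \\ 0 & \mathcal{O} \end{pmatrix}$ by a finite sequence of pushouts along morphisms in $I$. The base case is trivial since $p_1, p_2$ and $|-|$ all send the initial object of $\mathsf{dgcat}^{tr}$ to the initial object $\mathcal{O}$ of $\mathsf{dgcat}$, which is a strict finite $I$-cell. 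For the inductive step, I need that each of $p_1, p_2$ and $|-|$ preserves pushouts along generating cofibrations, turning them into pushouts along generating cofibrations of $\mathsf{dgcat}$ (or into isomorphisms, which is equally good for the ``strict finite $I$-cell'' bookkeeping).

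The key computation is to analyze what $p_1, p_2, |-|$ do to a pushout
$$
\xymatrix{
\underline{\mathcal{B}'} \ar[r] \ar[d] \ar@{}[dr]|{\lrcorner} & \underline{\mathcal{B}} \ar[d] \\
\underline{\mathcal{B}''} \ar[r] & \underline{\mathcal{B}''} \underset{\underline{\mathcal{B}'}}{\coprod} \underline{\mathcal{B}}
}
$$
for $\underline{\mathcal{B}'} \to \underline{\mathcal{B}''}$ in $I$. There are two cases. First, if the morphism lies in $E(I\times I)$, i.e. is of the form $E(f, \mathrm{id})$ or $E(\mathrm{id}, g)$ for $f, g$ generating cofibrations of $\mathsf{dgcat}$, then (using the explicit description of colimits in $\mathsf{dgcat}^{tr}$ from Lemma~\ref{triancomp}) $p_1$ of the pushout is a pushout along $f$ (resp. an identity), $p_2$ is an identity (resp. a pushout along $g$), and $|-|$ of the pushout is a pushout in $\mathsf{dgcat}$ along the image of $f$ (resp. $g$) under the left adjoint $\mathcal{B}' \mapsto \mathcal{B}'$ seen inside totalizations — concretely, along $f$ (resp. $g$) itself, since totalization glues along the new objects/morphisms without touching the bimodule part. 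Second, if the morphism lies in $\tilde I$, i.e. is $\begin{pmatrix} k & S^{n-1} \\ 0 & k \end{pmatrix} \hookrightarrow \begin{pmatrix} k & D^n \\ 0 & k \end{pmatrix}$, then the effect on $p_1$ and $p_2$ is an identity (the $\mathcal{A}$- and $\mathcal{C}$-components are unchanged: we only glue a new morphism in the bimodule), so these cases are harmless; and the effect on $|-|$ is precisely a pushout in $\mathsf{dgcat}$ along one of the generating cofibrations $S(n)$ of $\mathsf{dgcat}$ (compare Section~\ref{secdef}): gluing a degree-$n$ morphism $A \to C$ subject to $d$ of it being a prescribed degree-$(n-1)$ morphism is exactly the $S(n)$-pushout, since the totalization records the bimodule morphisms as hom-complexes from $\mathcal{A}$-objects to $\mathcal{C}$-objects.

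Assembling: $p_1$, $p_2$ send a strict finite $I$-cell presentation of $\underline{\mathcal{B}}$ to a sequence of pushouts along $\{Q, S(n)\} \subset I_{\mathsf{dgcat}}$ interspersed with identities, hence to a strict finite $I$-cell object; and likewise $|-|$ sends it to a finite sequence of pushouts along generating cofibrations of $\mathsf{dgcat}$, hence to a strict finite $I$-cell object. The main obstacle I anticipate is purely bookkeeping: carefully checking, via the colimit formula of Lemma~\ref{triancomp}, that the totalization of the pushout square really is the pushout square of the totalizations in $\mathsf{dgcat}$ — that is, that $|-|$ preserves these particular pushouts. This follows because $|-|$ is a left adjoint (to $I$) and left adjoints preserve all colimits, so in fact $|-|$ preserves \emph{every} pushout; the only thing to verify is that the image of a morphism of $I$ under $|-|$ is (up to isomorphism) a generating cofibration of $\mathsf{dgcat}$ or a composite/identity built from them, which is the explicit case analysis above. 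Since $p_1 \times p_2$ is also a left adjoint (to $E$), the same remark handles $p_1$ and $p_2$ at once. Hence the proof reduces to the two short explicit identifications of $|E(f,\mathrm{id})|$, $|E(\mathrm{id},g)|$, $|\tilde\iota_n|$ and of $p_i$ of the same morphisms, and then an immediate induction.
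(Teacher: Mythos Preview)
Your proposal is correct and follows essentially the same approach as the paper: induction on the cell structure, with the same case split between $E(I\times I)$ and $\tilde I$, identifying the images of the generating cofibrations under $p_1$, $p_2$, and $|-|$ as (composites of) generating cofibrations of $\mathsf{dgcat}$. Your remark that $|-|$ and $p_1\times p_2$ are left adjoints (so preserve all pushouts automatically) is a clean way to organize the bookkeeping, but the paper simply writes out the resulting pushout squares explicitly; the substantive content is identical.
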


\begin{proof}
We consider the following inductive argument~:
\begin{itemize}
\item[-] notice that the initial object in $\mathsf{dgcat}^{tr}$ is
$$ \begin{pmatrix} \emptyset & 0 \\
0 & \emptyset \end{pmatrix}$$
and it is sent to $\emptyset$ (the initial object in
$\mathsf{dgcat}$) by the functors $p_1$, $p_2$ and $|-|$.
\item[-] suppose that $\underline{\mathcal{B}}$ is an upper triangular
    dg category such that $p_1(\underline{\mathcal{B}})$,
    $p_2(\underline{\mathcal{B}})$ and $|\underline{\mathcal{B}}|$ are
    strict finite $I$-cell objects in $\mathsf{dgcat}$. Let
    $\underline{G}: \underline{\mathcal{B}'} \rightarrow
    \underline{\mathcal{B}''}$ be an element of the set $I$ in
    $\mathsf{dgcat}^{tr}$, see the proof of theorem~\ref{theo1}, and $\underline{\mathcal{B}'} \rightarrow
    \underline{\mathcal{B}}$ a morphism. Consider the following
    push-out in $\mathsf{dgcat}^{tr}$~:
$$
\xymatrix{
\underline{\mathcal{B}'} \ar[r] \ar[d]_{\underline{G}}
\ar@{}[dr]|{\lrcorner} &
\underline{\mathcal{B}} \ar[d]^{\underline{G}_{\ast}} \\
\underline{\mathcal{B}''} \ar[r] &
\mathsf{PO}\,.
}
$$
We now prove that
$p_1(\mathsf{PO})$,
$p_2(\mathsf{PO})$ and
$|\mathsf{PO}|$
are strict finite $I$-cell objects in $\mathsf{dgcat}$. We consider the following two cases~:
\begin{itemize}
\item[1)] $\underline{G}$ belongs to
  $E(I \times I)$: observe that $p_1(\mathsf{PO})$,
$p_2(\mathsf{PO})$ and
$|\mathsf{PO}|$
correspond exactly to the following push-outs in $\mathsf{dgcat}$~:
$$
\xymatrix{
*+<1pc>{\mathcal{A}'} \ar[r] \ar@{>->}[d]_{G_{\mathcal{A}'}}^{\sim} \ar@{}[dr]|{\lrcorner}  & \mathcal{A}
  \ar[d]  &   *+<1pc>{\mathcal{C}'} \ar[r]
  \ar@{>->}[d]_{G_{\mathcal{C}'}}^{\sim}  \ar@{}[dr]|{\lrcorner}  & \mathcal{C}
  \ar[d]  & \mathcal{A}' \coprod \mathcal{C}' \ar[r]
  \ar@{}[dr]|{\lrcorner} \ar[d]_{G_{\mathcal{A}'}\amalg
    G_{\mathcal{C}'}}  & |\underline{\mathcal{B}}| \ar[d] \\
\mathcal{A}'' \ar[r] & p_1(\mathsf{PO}) & \mathcal{C}'' \ar[r] & p_2(\mathsf{PO}) & \mathcal{A}'' \coprod \mathcal{C}'' \ar[r] & |\mathsf{PO}| \,.
}
$$
Since $G_{\mathcal{A}'}$ and $G_{\mathcal{C}'}$ belong to $I$ this
case is proved.
\end{itemize}
\item[2)] $\underline{G}$ belongs to $\tilde{I}$: observe
  that
  $p_1(\mathsf{PO})$ identifies with $\mathcal{A}$,
$p_2(\mathsf{PO})$ identifies with $\mathcal{C}$ and
$|\mathsf{PO}|$
corresponds to the following push-out in $\mathsf{dgcat}$~:
$$
\xymatrix{
\mathcal{C}(n) \ar[r] \ar[d]_{S(n)} \ar@{}[dr]|{\lrcorner} &
|\underline{\mathcal{B}}| \ar[d] \\
\mathcal{P}(n) \ar[r] &
|\mathsf{PO}|,
}
$$
where $S(n)$ is a generating cofibration in $\mathsf{dgcat}$, see theorem~\ref{mal}. This proves this case.
\end{itemize}
The proposition is proven.
\end{proof}

\section{Split short exact sequences}\label{splitex}
In this section, we establish the connection between split short exact
sequences of dg categories and upper triangular dg categories.
\begin{definition}\label{adjsplt}
A split short exact sequence of dg categories is a short exact
sequence of dg categories, see \cite{ICM}, which is equivalent in $\mathsf{Hmo}$
to one of the form
$$
\xymatrix{
0 \ar[r] & \mathcal{A} \ar[r]_{i_{\mathcal{A}}} & \mathcal{B} \ar@<-1ex>[l]_R
\ar[r]_P & \mathcal{C} \ar@<-1ex>[l]_{i_{\mathcal{C}}} \ar[r] & 0 \,,
}
$$
where we have $P\circ i_{\mathcal{A}} =0$, $R$ is a dg functor right adjoint to $i_{\mathcal{A}}$,
$i_{\mathcal{C}}$ is a dg functor right adjoint to $P$ and we have $P\circ i_{\mathcal{C}}=
Id_{\mathcal{C}}$ and $R\circ i_{\mathcal{A}}=Id_{\mathcal{A}}$ via
the adjunction morphisms.

\end{definition}
To a split short exact sequence, we can naturally associate the upper triangular
dg category
$$
\begin{array}{rcl}
\underline{\mathcal{B}} & := & {\begin{pmatrix} \mathcal{A} &
    \mathsf{Hom}_{\mathcal{B}}(i_{\mathcal{C}}(-), i_{\mathcal{A}}(-)) \\
0 & \mathcal{C} \end{pmatrix}}\,.
\end{array}
$$

Conversely to an upper triangular dg category
$\underline{\mathcal{B}}$ such that $\mathcal{A}$ and $\mathcal{C}$
admit a zero object (for instance if they are Morita fibrant), we can
associate a split short exact sequence
$$
\xymatrix{
0 \ar[r] & \mathcal{A} \ar[r]_{i_{\mathcal{A}}} & |\underline{\mathcal{B}}| \ar@<-1ex>[l]_R
\ar[r]_P & \mathcal{C} \ar@<-1ex>[l]_{i_{\mathcal{C}}} \ar[r] & 0 \,,
}
$$
where $P$ and $R$ are the projection dg functors.
Moreover, this construction is functorial in $\underline{\mathcal{B}}$
and sends total Morita equivalences to Morita equivalent split short
exact sequences. Notice also that by lemma~\ref{triancomp} this
functor preserves colimits.

\begin{proposition}\label{aproxsplit}
Every split short exact sequence of dg categories is weakly equivalent
to a filtered homotopy colimit of split short exact sequences whose
components are strict finite $I$-cell objects in $\mathsf{dgcat}$.
\end{proposition}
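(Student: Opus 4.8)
The strategy is to transport the approximation statement through the equivalence between split short exact sequences of dg categories and upper triangular dg categories explained just before the proposition, and then to apply the finiteness properties of the Quillen model structure on $\mathsf{dgcat}^{tr}$ constructed in theorem~\ref{theo1}. Concretely, given a split short exact sequence
$$
\xymatrix{
0 \ar[r] & \mathcal{A} \ar[r]_{i_{\mathcal{A}}} & \mathcal{B} \ar@<-1ex>[l]_R
\ar[r]_P & \mathcal{C} \ar@<-1ex>[l]_{i_{\mathcal{C}}} \ar[r] & 0 \,,
}
$$
I would first replace $\mathcal{A}$ and $\mathcal{C}$ by Morita fibrant models and form the associated upper triangular dg category $\underline{\mathcal{B}}=\begin{pmatrix}\mathcal{A} & \mathsf{Hom}_{\mathcal{B}}(i_{\mathcal{C}}(-),i_{\mathcal{A}}(-)) \\ 0 & \mathcal{C}\end{pmatrix}$, so that $|\underline{\mathcal{B}}|$ recovers $\mathcal{B}$ up to Morita equivalence and the whole split short exact sequence is recovered (functorially) by the totalization functor $\underline{\mathcal{B}} \mapsto (0\to \mathcal{A}\to |\underline{\mathcal{B}}| \to \mathcal{C}\to 0)$.

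Next I would invoke remark~\ref{filtri}: the Quillen model category $\mathsf{dgcat}^{tr}$ of theorem~\ref{theo1} is cellular and satisfies the compactness hypotheses of proposition~\ref{prop}, so $\underline{\mathcal{B}}$ is weakly equivalent to a filtered colimit $\mathsf{colim}_{j\in J}\,\underline{\mathcal{B}_j}$ of strict finite $I$-cell objects in $\mathsf{dgcat}^{tr}$, and by proposition~\ref{prop}(2) this filtered colimit is also the homotopy colimit. Applying the totalization functor $\underline{\mathcal{B}}\mapsto |\underline{\mathcal{B}}|$, which preserves colimits (by lemma~\ref{triancomp}) and sends total Morita equivalences to Morita equivalent split short exact sequences, I obtain that the original split short exact sequence is weakly equivalent to the filtered colimit of the split short exact sequences associated with the $\underline{\mathcal{B}_j}$. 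Here one must be slightly careful: the functor in question lands in diagrams of dg categories of shape $\mathcal{A}\leftarrow\mathcal{B}\rightarrow\mathcal{C}$, and one checks, as in proposition~\ref{cons}, that since the relevant functor categories are compactly generated, filtered colimits of such diagrams compute filtered homotopy colimits; so the filtered colimit of the split short exact sequences is indeed a filtered homotopy colimit.

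Finally, proposition~\ref{comspl} tells me that for each strict finite $I$-cell object $\underline{\mathcal{B}_j}$ in $\mathsf{dgcat}^{tr}$, the three dg categories $p_1(\underline{\mathcal{B}_j})$, $p_2(\underline{\mathcal{B}_j})$ and $|\underline{\mathcal{B}_j}|$ are strict finite $I$-cell objects in $\mathsf{dgcat}$. Hence each split short exact sequence in the filtered homotopy colimit has all three of its components equal to strict finite $I$-cell dg categories, which is exactly what the proposition asserts. The one point requiring genuine care — the main obstacle — is the bookkeeping around the Morita fibrant replacements of $\mathcal{A}$ and $\mathcal{C}$ needed to pass back and forth between split short exact sequences and upper triangular dg categories: one has to verify that replacing $\underline{\mathcal{B}_j}$ by a total Morita equivalent object (so that $p_1,p_2$ become Morita fibrant and the totalization genuinely produces a split short exact sequence in the sense of definition~\ref{adjsplt}) does not destroy the strict-finite-$I$-cell property in an essential way, or alternatively that Morita equivalence is all that is claimed in the statement, so that the $I$-cell representatives coming from proposition~\ref{comspl} suffice without further fibrant replacement. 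Once that is pinned down, the proof is just the concatenation of remark~\ref{filtri}, proposition~\ref{prop}, lemma~\ref{triancomp} and proposition~\ref{comspl}.
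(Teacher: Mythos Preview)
Your proposal is correct and follows essentially the same route as the paper: pass to the associated upper triangular dg category (after making $\mathcal{A}$, $\mathcal{B}$, $\mathcal{C}$ Morita fibrant), invoke remark~\ref{filtri} to write it as a filtered colimit of strict finite $I$-cells in $\mathsf{dgcat}^{tr}$, push forward along the totalization functor, and use proposition~\ref{comspl} together with proposition~\ref{prop} to identify the components as strict finite $I$-cells and the colimit as a homotopy colimit. The paper's proof is in fact terser than yours and does not pause on the point you flag about zero objects and fibrant replacement; your observation that the statement only concerns Morita equivalence classes, so that the $I$-cell representatives from proposition~\ref{comspl} already suffice, is exactly the right way to dispose of that worry.
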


\begin{proof}
Let
$$
\xymatrix{
0 \ar[r] & \mathcal{A} \ar[r]_{i_{\mathcal{A}}} & \mathcal{B} \ar@<-1ex>[l]_R
\ar[r]_P & \mathcal{C} \ar@<-1ex>[l]_{i_{\mathcal{C}}} \ar[r] & 0 \,,
}
$$
be a split short exact sequence of dg categories. We can supose that
$\mathcal{A}$, $\mathcal{B}$ and $\mathcal{C}$ are Morita fibrant dg
categories, see proposition~\ref{nova4}. Consider the upper
triangular dg category
$$
\begin{array}{rcl}
\underline{\mathcal{B}} & := & {\begin{pmatrix} \mathcal{A} & \mathsf{Hom}_{\mathcal{B}}(i_{\mathcal{C}}(-),i_{\mathcal{A}}(-)) \\
0 & \mathcal{C} \end{pmatrix}}\,.
\end{array}
$$
Now by remark~\ref{filtri}, $\underline{\mathcal{B}}$ is
equivalent to a filtered colimit of strict finite $I$-cell objects in
$\mathsf{dgcat}^{tr}$. Consider the image of this diagram by the
functor, described above,  which sends an upper triangular dg category to a split
short exact sequence. By proposition~\ref{comspl} the
components of each split short exact sequence of this diagram are
strict finite $I$-cell objects in $\mathsf{dgcat}$. Since the category
$\mathsf{dgcat}$ satisfies the conditions of proposition~\ref{prop}, filtered homotopy colimits are equivalent to filtered colimits and so the proposition is proven.
\end{proof}

\section{Quasi-Additivity}\label{quasi}
Recall from section~\ref{labuniv} that we have at our disposal the
Quillen model category
$\mathsf{L}_{\Sigma,P} \mathsf{Fun}(\mathsf{dgcat}_f^o,Sset)$ which is
  {\em homotopically pointed}, i.e. the morphism $\emptyset
  \rightarrow \ast$, from the initial object $\emptyset$ to the
  terminal one $\ast$, is a weak equivalence. We now consider a
  strictly pointed Quillen model.

\begin{proposition}\label{point}
We have a Quillen equivalence
$$
\xymatrix{
\ast \downarrow \mathsf{L}_{\Sigma,P}\mathsf{Fun}(\mathcal{M}^o_f,Sset) \ar@<1ex>[d]^U \\
\mathsf{L}_{\Sigma,P} \mathsf{Fun}(\mathcal{M}_f^o,Sset) \ar@<1ex>[u]^{(-)_{+}}\,,
}
$$
where $U$ denotes the forgetful functor.
\end{proposition}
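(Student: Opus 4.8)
The plan is to show that the adjunction $((-)_+, U)$ between $\mathsf{L}_{\Sigma,P}\mathsf{Fun}(\mathcal{M}_f^o,Sset)$ and its pointed variant $\ast \downarrow \mathsf{L}_{\Sigma,P}\mathsf{Fun}(\mathcal{M}_f^o,Sset)$ is first a Quillen adjunction and then a Quillen equivalence. The underpinning fact is that for any left proper cellular model category $\mathcal{N}$, the category $\ast \downarrow \mathcal{N}$ of objects under $\ast$ inherits a left proper cellular model structure in which weak equivalences and fibrations are created by the forgetful functor $U$, and when $\mathcal{N}$ is already homotopically pointed the canonical comparison becomes a Quillen equivalence (this is the analogue, for pointing, of the standard under-category model structure of \cite[7.6.5]{Hovey}, combined with the fact that left Bousfield localization is compatible with slicing, \cite{Hirschhorn}). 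So the first step is to record that $\mathsf{L}_{\Sigma,P}\mathsf{Fun}(\mathcal{M}_f^o,Sset)$ is indeed homotopically pointed: by construction its homotopy category is pointed, since $\mathsf{L}_{\Sigma,P}$ localizes at the morphism $P:\widetilde{\emptyset}\rightarrow h(\emptyset)$, which forces $\emptyset \rightarrow \ast$ to become invertible in $\mathsf{Ho}$ (this is exactly the observation made after the definition of $\mathsf{L}_{\Sigma,P}\mathsf{Fun}(\mathcal{M}_f^o,Sset)$ in section~\ref{chappoint}).

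Next I would check that $((-)_+, U)$ is a Quillen adjunction. The functor $(-)_+$ sends $X$ to $X \amalg \ast$ with its canonical map from $\ast$; it is a left adjoint to $U$ by the universal property of the coproduct. It suffices to show $U$ preserves fibrations and trivial fibrations, which is immediate because fibrations and weak equivalences in $\ast \downarrow \mathcal{N}$ are, by definition of the under-category model structure, exactly those maps whose image under $U$ is a fibration, resp. a weak equivalence, in $\mathcal{N}$. Hence $U$ is a right Quillen functor with no further work.

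Then comes the Quillen equivalence itself. One must show: for every cofibrant $X$ in $\mathcal{N} := \mathsf{L}_{\Sigma,P}\mathsf{Fun}(\mathcal{M}_f^o,Sset)$ and every fibrant $(\ast \to Y)$ in $\ast\downarrow\mathcal{N}$, a map $X_+ \to Y$ is a weak equivalence in $\ast\downarrow\mathcal{N}$ if and only if its adjoint $X \to U(Y)$ is a weak equivalence in $\mathcal{N}$. The ``only if'' direction is formal since $U$ detects weak equivalences. For the ``if'' direction, one compares $X_+ \to Y$ with $X \to U(Y)$: the forgetful functor applied to $X_+ \to Y$ is the composite $X \amalg \ast \to U(Y)$, and the point is that the map $X \to X\amalg\ast$ is a weak equivalence in $\mathcal{N}$ precisely because $\mathcal{N}$ is homotopically pointed (so $\ast$ is a ``homotopy zero'' and attaching a copy of $\ast$ to a cofibrant object along $\emptyset\hookrightarrow X$ is a weak equivalence, using left properness and the fact that $\emptyset \to \ast$ is a weak equivalence between cofibrant objects). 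Then by two-out-of-three $X_+ \to Y$ is a weak equivalence iff $X \to U(Y)$ is. I expect the main obstacle to be this last comparison: one has to be careful that $X \amalg \ast$ computes the homotopy coproduct (which it does, since $\emptyset \to X$ is a cofibration when $X$ is cofibrant, and $\emptyset \to \ast$ is a cofibration as $\ast$ is cofibrant in a presheaf category), and that the localization functor $\mathsf{L}_{\Sigma,P}$ does not disturb this — but both follow from left properness and the explicit description of the localization. Once these are in place, the derived unit and counit are isomorphisms, so the adjunction is a Quillen equivalence.
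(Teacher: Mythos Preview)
Your proposal is correct and follows essentially the same route as the paper: reduce to a general statement about homotopically pointed model categories, verify the Quillen adjunction via the under-category model structure, and deduce the Quillen equivalence from the fact that $X \to X\amalg\ast$ is a weak equivalence when $\emptyset\to\ast$ is. The only minor difference is in how this last step is justified: you invoke left properness directly (pushing the weak equivalence $\emptyset\to\ast$ along the cofibration $\emptyset\to X$), whereas the paper appeals to its proposition~\ref{Cisin} to see $i\amalg\mathbf{1}$ as a homotopy colimit of weak equivalences; both arguments are valid and amount to the same observation.
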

This follows from the fact that the category $\mathsf{L}_{\Sigma,P} \mathsf{Fun}(\mathsf{dgcat}_f^o,Sset)$ is
  homotopically pointed and from the following general argument.

\begin{proposition}
Let $\mathcal{M}$ be a homotopically pointed Quillen model
category. We have a Quillen equivalence.
$$
\xymatrix{
\ast \downarrow \mathcal{M} \ar@<1ex>[d]^U \\
\mathcal{M} \ar@<1ex>[u]^{(-)_{+}}\,,
}
$$
where $U$ denotes the forgetful functor.
\end{proposition}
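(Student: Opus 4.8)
The plan is to verify that the adjunction $((-)_+, U)$ is a Quillen adjunction and then that it is a Quillen equivalence, using only the hypothesis that $\mathcal{M}$ is homotopically pointed, i.e. that $\emptyset \to \ast$ is a weak equivalence. First I would recall that the overcategory $\ast \downarrow \mathcal{M}$ carries its standard Quillen model structure, in which a morphism is a weak equivalence, fibration, or cofibration precisely when the underlying morphism in $\mathcal{M}$ (obtained by applying $U$) is one; this is the dual of the undercategory model structure and is completely formal. Since $U$ preserves and reflects fibrations and trivial fibrations by definition of this structure, the pair $((-)_+, U)$ is automatically a Quillen adjunction: $U$ is right Quillen.

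Next I would check that it is a Quillen equivalence. The functor $U$ preserves and reflects all weak equivalences (again by the very definition of the model structure on $\ast \downarrow \mathcal{M}$), so by a standard criterion it suffices to show that for every cofibrant object $X$ of $\mathcal{M}$, the composite of the unit $X \to U((X)_+)$ with a fibrant replacement is a weak equivalence in $\mathcal{M}$; equivalently, since $U$ reflects weak equivalences, that the derived unit $X \to U(R((X)_+))$ is a weak equivalence. Now $(X)_+ = (\ast \sqcup X \to X)$ in $\ast \downarrow \mathcal{M}$, so $U((X)_+) = \ast \sqcup X$ and the unit is the coprojection $X \hookrightarrow \ast \sqcup X$. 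Here is where the homotopically pointed hypothesis enters: the initial object $\emptyset$ maps to $\ast$ by a weak equivalence, hence the canonical map $\emptyset \sqcup X \to \ast \sqcup X$ is a weak equivalence between cofibrant objects provided the coproduct is a homotopy coproduct, which it is when $X$ is cofibrant (coproduct of cofibrant objects is a homotopy coproduct in any model category). Since $\emptyset \sqcup X = X$, the coprojection $X \to \ast \sqcup X$ is a weak equivalence, which is exactly the derived unit condition.

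The one point that requires a little care — and which I expect to be the main (though still minor) obstacle — is the use of the homotopy invariance of binary coproducts along a weak equivalence: one needs that $A \to A'$ a weak equivalence between cofibrant objects implies $A \sqcup X \to A' \sqcup X$ is a weak equivalence for $X$ cofibrant. This is standard (it follows from the fact that $- \sqcup X$ is a left Quillen functor $\mathcal{M} \to \mathcal{M}$ when $X$ is cofibrant, or more elementarily from the gluing lemma for cofibrations and the fact that coproduct inclusions of cofibrant objects are cofibrations), but since $\ast$ is typically not cofibrant one must be slightly attentive: replace $\ast$ by $\emptyset$ via the weak equivalence $\emptyset \xrightarrow{\sim} \ast$ and observe $\emptyset$ \emph{is} cofibrant, so the weak equivalence $\emptyset \sqcup X \xrightarrow{\sim} \ast \sqcup X$ follows. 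Assembling these observations — the formal model structure on the overcategory, $U$ right Quillen reflecting weak equivalences, and the derived-unit computation using homotopy coproducts — gives the Quillen equivalence. Finally, proposition~\ref{point} follows by applying this general statement to $\mathcal{M} = \mathsf{L}_{\Sigma,P}\mathsf{Fun}(\mathsf{dgcat}_f^o,Sset)$, which is homotopically pointed by construction.
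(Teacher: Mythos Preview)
Your proof is correct and takes essentially the same approach as the paper: both reduce to showing that the unit $N \to \ast \amalg N$ is a weak equivalence for cofibrant $N$ (the paper phrases this as the factorization $f = f^{\sharp} \circ (i \amalg \mathbf{1})$ together with 2-out-of-3), and both justify this step via homotopy invariance of coproducts --- the paper invoking proposition~\ref{Cisin} for the claim that $i \amalg \mathbf{1}$ is a weak equivalence. One small slip: $\ast \downarrow \mathcal{M}$ is an \emph{under}category (coslice), not an overcategory, and your description of $(X)_+$ should be $\ast \sqcup X$ equipped with the coprojection $\ast \to \ast \sqcup X$.
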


\begin{proof}
Clearly the functor $U$ preserves cofibrations, fibrations and weak
equivalences, by construction. Let now $N\in \mathcal{M}$ and $M \in
\ast \downarrow \mathcal{M}$. Consider the following commutative
diagram in $\mathcal{M}$
$$
\xymatrix{
N \simeq \emptyset\coprod N \ar[rr]^f \ar[dr]^{\sim}_{i\amalg \mathbf{1}} & & U(M)
\\
 & \ast \coprod N \ar[ur]_{f^{\sharp}} & \,,
}
$$
where $f^{\sharp}$ is the morphism which corresponds to $f$, considered
as a morphism in $\mathcal{M}$, under the adjunction and $i:\emptyset
\stackrel{\sim}{\rightarrow} \ast$. Since the morphism $i\amalg \mathbf{1}$
corresponds to the homotopy colimit of $i$ and $1$, which are both
weak equivalences, proposition~\ref{Cisin} implies that  $i\amalg \mathbf{1}$  is a
weak equivalence. Now, by the property `$2$ out of $3$', the morphism
$f$ is a weak equivalence if and only if $f^{\sharp}$ is one. This
proves the proposition.
\end{proof}

\begin{Notation}
Let $\mathcal{A}$ and $\mathcal{B}$ be small dg categories. We denote
by $\mathsf{rep}_{mor}(\mathcal{A},\mathcal{B})$ the
full-dg-subcategory of $\mathcal{C}_{dg}(\mathcal{A}^{op}_c\otimes
\mathcal{B})$, where $\mathcal{A}_c$ denotes a cofibrant resolution of
$\mathcal{A}$, whose objects are the bimodules $X$ such that $X(?,A)$
is a compact object in $\mathcal{D}(\mathcal{B})$ for all $A\in
\mathcal{A}_c$ and which are cofibrant as bimodules.
We denote by $w \mathcal{A}$ the category of homotopy equivalences of $\mathcal{A}$ and by $N.w\mathcal{A}$ its nerve.
\end{Notation}

Now, consider the morphism
$$
\begin{array}{ccl}
\mathsf{Ho}(\mathsf{dgcat}) & \rightarrow & \mathsf{Ho}(\ast
\downarrow \mathsf{L}_{\Sigma,P}\mathsf{Fun}(\mathcal{M}^o_f,Sset)) \\
\mathcal{A} & \mapsto & \left\{\begin{array}{l}
    \mathsf{Hom}_{\mathsf{dgcat}}(\Gamma(?),\mathcal{A}_f)_+\\
\simeq \mathsf{Map}_{\mathsf{dgcat}}(?,\mathcal{A})_+\\
\simeq N.w \mathsf{rep}_{mor}(?,\mathcal{A})_+ \end{array} \right.
\end{array}
$$
which by sections~\ref{homotopy}, \ref{chappoint} and
proposition~\ref{point} corresponds to the component $(\Phi\circ
\mathbb{R}\underline{h})(e)$ of the morphism of derivators
$$ \Phi \circ \mathbb{R}\underline{h}: \mathsf{HO}(\mathsf{dgcat})\longrightarrow
\mathsf{L}_{\Sigma,P}\mathsf{Hot}_{\mathsf{dgcat}_f}\,,$$
see proposition~\ref{ext}. Observe that the simplicial presheaf
$N.w\mathsf{rep}_{mor}(?,\mathcal{A})$ is already canonically pointed.

\begin{proposition}
The canonical morphism
$$\Psi : N.w\mathsf{rep}_{mor}(?,\mathcal{A})_+ \rightarrow N.w \mathsf{rep}_{mor}(?,\mathcal{A})$$
is a weak equivalence in $\ast \downarrow \mathsf{L}_{\Sigma,P}\mathsf{Fun}(\mathcal{M}^o_f,Sset)$.
\end{proposition}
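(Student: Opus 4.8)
The plan is to deduce the statement directly from the previous proposition, using the observation recorded just before the statement that the simplicial presheaf $X := N.w\mathsf{rep}_{mor}(?,\mathcal{A})$ is already canonically pointed, the base point being the one attached to the zero bimodule $0 \in \mathsf{rep}_{mor}(c,\mathcal{A})$ for every $c \in \mathcal{M}_f$. Viewed in the underlying (unpointed) Quillen model category $\mathsf{L}_{\Sigma,P}\mathsf{Fun}(\mathcal{M}_f^o,Sset)$, the morphism $\Psi \colon X_+ = X \amalg \ast \to X$ is precisely the counit $(\mathbf{1}_X, s)$ of the adjunction $((-)_+,U)$ at the pointed object $X$, and it admits the canonical coprojection $\iota \colon X \hookrightarrow X \amalg \ast$ as a section, so that $\Psi \circ \iota = \mathbf{1}_X$.

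First I would check that $\iota$ is a weak equivalence. The morphism $\iota$ is nothing but $\mathbf{1}_X \amalg (\emptyset \to \ast)$, the coproduct of $X$ with the morphism $\emptyset \to \ast$; since $\mathsf{L}_{\Sigma,P}\mathsf{Fun}(\mathcal{M}_f^o,Sset)$ is homotopically pointed, see section~\ref{labuniv}, the morphism $\emptyset \to \ast$ is a weak equivalence. As coproducts in $\mathsf{Fun}(\mathcal{M}_f^o,Sset)$ are computed objectwise and coproducts of simplicial sets are homotopy coproducts, forming the coproduct with $X$ is a homotopy colimit (after, if necessary, a cofibrant resolution of $X$, which does not change the homotopy type), so proposition~\ref{Cisin}, namely property $d)$ of the class of local equivalences, shows that $\iota$ is a weak equivalence. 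By the ``two out of three'' property applied to $\Psi \circ \iota = \mathbf{1}_X$, the morphism $\Psi$ is a weak equivalence in $\mathsf{L}_{\Sigma,P}\mathsf{Fun}(\mathcal{M}_f^o,Sset)$. Finally, by the description of the under-category Quillen model structure used in proposition~\ref{point}, a morphism of $\ast \downarrow \mathsf{L}_{\Sigma,P}\mathsf{Fun}(\mathcal{M}_f^o,Sset)$ is a weak equivalence if and only if its image under the forgetful functor $U$ is one; since $U\Psi = \Psi$, we conclude that $\Psi$ is a weak equivalence in $\ast \downarrow \mathsf{L}_{\Sigma,P}\mathsf{Fun}(\mathcal{M}_f^o,Sset)$.

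The main obstacle is the middle step, i.e. verifying that taking the coproduct with $X$ preserves the weak equivalence $\emptyset \to \ast$ after the left Bousfield localization. This is exactly the homotopy-coproduct argument already carried out in the proof of the previous proposition (the factorization $N \simeq \emptyset \amalg N \to \ast \amalg N \to U(M)$ together with the appeal to proposition~\ref{Cisin}), so no genuinely new input is required; the only point to be careful about is the (harmless) cofibrancy bookkeeping needed to invoke property $d)$.
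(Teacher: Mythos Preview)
Your proof is correct and follows essentially the same approach as the paper. The paper's proof simply observes that $N.w\mathsf{rep}_{mor}(?,\mathcal{A})$ is fibrant in $\ast \downarrow \mathsf{L}_{\Sigma,P}\mathsf{Fun}(\mathcal{M}^o_f,Sset)$ and that $\Psi$ is the counit of the Quillen equivalence of proposition~\ref{point}, whence the conclusion. You instead unpack that Quillen equivalence inline: the section $\iota = \mathbf{1}_X \amalg (\emptyset \to \ast)$ and the appeal to proposition~\ref{Cisin} are exactly the ingredients of the proof of the general proposition preceding~\ref{point}, so you are reproving the relevant special case directly rather than citing the packaged statement. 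A small bonus of your route is that it does not require checking fibrancy of $N.w\mathsf{rep}_{mor}(?,\mathcal{A})$ in the localized model structure.
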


\begin{proof}
Observe that $N.w\mathsf{rep}_{mor}(?,\mathcal{A})$ is a fibrant object
in $\ast
  \downarrow \mathsf{L}_{\Sigma,P}\mathsf{Fun}(\mathcal{M}^o_f,Sset)$
  and that the canonical morphism $\Psi$ corresponds to the co-unit
  of the adjunction of proposition~\ref{point}. Since this adjunction
  is a Quillen equivalence, the proposition is proved.
\end{proof}

Recall now from remark~\ref{remar} that we have a canonical equivalence of
pointed derivators
$$ \mathsf{L}_{\Sigma,P}\mathsf{Hot}_{\mathsf{dgcat}_f}
\stackrel{\sim}{\longrightarrow}
{\mathsf{L}_{\Sigma,P}\mathsf{Hot}_{\mathsf{dgcat}_f}}_{\bullet}\,,$$
where ${\mathsf{L}_{\Sigma,P}\mathsf{Hot}_{\mathsf{dgcat}_f}}_{\bullet}$
is the derivator associated with the Quillen model category
$\mathsf{L}_{\Sigma,P}\mathsf{Fun}(\mathsf{dgcat}_f^o,Sset_{\bullet})$.
From now on, we will consider this Quillen model. We have the
following morphism of derivators
$$\Phi \circ \mathbb{R}\underline{h} : \mathsf{HO}(\mathsf{dgcat}) \longrightarrow
{\mathsf{L}_{\Sigma,P}\mathsf{Hot}_{\mathsf{dgcat}_f}}_{\bullet}\,,$$
which commutes with filtered homotopy colimits and preserves the point.

\begin{Notation}
\begin{itemize}
\item[-] We denote by $\mathcal{E}^s$ the set of retractions of dg categories
$$ \xymatrix{
\mathcal{G} \ar[r]_{i_{\mathcal{G}}} & \mathcal{H} \ar@<-1ex>[l]_R \,,
}
$$
where $\mathcal{G}$ and $\mathcal{H}$ are strict finite $I$-cell
objects in $\mathsf{dgcat}$, $i_{\mathcal{G}}$ is a fully faithful dg
functor, $R$ is a right adjoint to $i_{\mathcal{G}}$ and $R\circ i_{\mathcal{G}} =Id_{\mathcal{G}}$.
\item[-] We denote by $\mathcal{E}^s_{un}$ the set of morphisms $S_L$
  in
  ${\mathsf{L}_{\Sigma,P}\mathsf{Hot}_{\mathsf{dgcat}_f}}_{\bullet}(e)$, see section~\ref{chapquotient}, where $L$ belongs to the set $\mathcal{E}^s$.
\end{itemize}
\end{Notation}

Now choose for each element of the set $\mathcal{E}^s_{un}$ a
representative in the category
$\mathsf{L}_{\Sigma,P}\mathsf{Fun}(\mathsf{dgcat}_f^o,Sset_{\bullet})$.
We denote this set of representatives by
$\widetilde{\mathcal{E}^s_{un}}$. Since $\mathsf{L}_{\Sigma,P}\mathsf{Fun}(\mathsf{dgcat}_f^o,Sset_{\bullet})$
is a left proper, cellular Quillen model category, see
\cite{Hirschhorn}, its left Bousfield localization by
$\widetilde{\mathcal{E}^s_{un}}$ exists. We denote it by
$\mathsf{L}_{\widetilde{\mathcal{E}^s_{un}}}
\mathsf{L}_{\Sigma,P}\mathsf{Fun}(\mathsf{dgcat}_f^o,Sset_{\bullet})$
and by ${\mathsf{L}_{\widetilde{\mathcal{E}^s_{un}}}
\mathsf{L}_{\Sigma,P}\mathsf{Hot}_{\mathsf{dgcat}_f}}_{\bullet}$ the
associated derivator. We have the following morphism of derivators
$$
\Psi: {\mathsf{L}_{\Sigma,P}\mathsf{Hot}_{\mathsf{dgcat}_f}}_{\bullet} \rightarrow
 {\mathsf{L}_{\widetilde{\mathcal{E}^s_{un}}}
\mathsf{L}_{\Sigma,P}\mathsf{Hot}_{\mathsf{dgcat}_f}}_{\bullet}\,.
$$
\begin{remark}\label{fib}
\begin{itemize}

\item[-] Notice that by construction the domains and codomains of the
  set $\widetilde{\mathcal{E}^s_{un}}$ are homotopically finitely
  presented objects. Therefore by lemma~\ref{gener} the set
$$ \mathcal{G}= \{\mathbf{F}^X_{\Delta[n]_+ / \partial \Delta[n]_+ } |
\, X \in
\mathcal{M}_f\,, n\geq 0 \} \,,$$
of cofibers of the generating cofibrations in
$\mathsf{Fun}(\mathcal{M}_f^o,Sset_{\bullet})$ is a set of small weak
generators in $\mathsf{Ho}(\mathsf{L}_{\widetilde{\mathcal{E}^s_{un}}}
\mathsf{L}_{\Sigma,P}\mathsf{Fun}(\mathcal{M}_f^o,Sset_{\bullet}))$.

\item[-] Notice also that proposition~\ref{aproxsplit} implies that variants of
proposition~\ref{cons} and theorem~\ref{invert} are also verified:
simply consider the set $\mathcal{E}^s$ instead of $\mathcal{E}$ and a
retraction of dg categories instead of an inclusion of a full dg
subcategory. The proofs are exactly the same.
\end{itemize}
\end{remark}
Let $\mathcal{M}$ be a left proper cellular model category, $S$ a
set of maps in $\mathcal{M}$ and $\mathsf{L}_S\mathcal{M}$ the left
Bousfield localization of $\mathcal{M}$ with respect to $S$, see \cite{Hirschhorn}. Recall
from \cite[4.1.1.]{Hirschhorn} that an object $X$ in
$\mathsf{L}_S\mathcal{M}$ is fibrant if $X$ is fibrant in
$\mathcal{M}$ and for every element $f:A \rightarrow B$ of $S$ the
induced map of homotopy function complexes $f^{\ast}:\mathsf{Map}(B,X)
\rightarrow \mathsf{Map}(A,X)$ is a weak equivalence.
\begin{proposition}
An object $F \in \mathsf{L}_{\widetilde{\mathcal{E}^s_{un}}}
\mathsf{L}_{\Sigma,P}\mathsf{Fun}(\mathsf{dgcat}_f^o,Sset_{\bullet})$
is fibrant if and only if it satisfies the following conditions
\begin{itemize}
\item[1)] $F(\mathcal{B}) \in Sset_{\bullet}$ is fibrant, for all
  $\mathcal{B} \in \mathsf{dgcat}_f$.
\item[2)] $F(\emptyset) \in Sset_{\bullet}$ is contractible.
\item[3)] For every Morita equivalence $\mathcal{B}
  \stackrel{\sim}{\rightarrow} \mathcal{B}'$ in $\mathsf{dgcat}_f$ the
  morphism $F(\mathcal{B}') \stackrel{\sim}{\rightarrow}
  F(\mathcal{B})$ is a weak equivalence in $Sset_{\bullet}$.
\item[4)] Every split short exact sequence
$$
\xymatrix{
0 \ar[r] & \mathcal{B}' \ar[r]_{i_{\mathcal{B}'}} & \mathcal{B} \ar@<-1ex>[l]_R
\ar[r]_P & \mathcal{B}'' \ar@<-1ex>[l]_{i_{\mathcal{B}''}} \ar[r] & 0
}
$$
in $\mathsf{dgcat}_f$ induces a homotopy fiber sequence
$$ F(\mathcal{B}'') \rightarrow F(\mathcal{B}) \rightarrow
F(\mathcal{B}')$$
in $Sset$.
\end{itemize}
\end{proposition}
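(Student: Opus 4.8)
The plan is to recognise the fibrant objects of $\mathsf{L}_{\widetilde{\mathcal{E}^s_{un}}}\mathsf{L}_{\Sigma,P}\mathsf{Fun}(\mathsf{dgcat}_f^o,Sset_{\bullet})$ as exactly those satisfying a conjunction of locality conditions, one for each of the three localizing families $\Sigma_{+}$, $\{P_{+}\}$ and $\widetilde{\mathcal{E}^s_{un}}$, and then to match these conditions one by one with $1)$--$4)$. First I would observe that, since $\mathsf{Fun}(\mathsf{dgcat}_f^o,Sset_{\bullet})$ with the projective structure is left proper and cellular (see example~\ref{exem}), the three successive left Bousfield localizations amount to a single left Bousfield localization at the union $(\Sigma\cup\{P\})_{+}\cup\widetilde{\mathcal{E}^s_{un}}$. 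Hence, by \cite[4.1.1]{Hirschhorn}, an object $F$ is fibrant in it if and only if $F$ is fibrant for the projective structure --- which is condition $1)$, as projective fibrations are the objectwise ones --- and $F$ is local with respect to every element of $(\Sigma\cup\{P\})_{+}\cup\widetilde{\mathcal{E}^s_{un}}$, i.e.\ for each $s\colon A\to B$ in this set the map $\mathsf{Map}(B,F)\to\mathsf{Map}(A,F)$ is a weak equivalence. It then remains to unwind locality with respect to each of the three families.

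For $\Sigma_{+}$ and $\{P_{+}\}$ I would use the adjunction $\bigl((-)_{+},U\bigr)$ together with the simplicial Yoneda lemma (and remark~\ref{funcpont}, which identifies $\mathbb{R}\underline{h}$ on $\mathsf{dgcat}_f$ with the representable presheaves): for $\mathcal{B}\in\mathsf{dgcat}_f$ and $F$ projectively fibrant one has a natural weak equivalence $\mathsf{Map}(h(\mathcal{B})_{+},F)\simeq F(\mathcal{B})$. Applying this to the $h$-images of the Morita equivalences of $\mathsf{dgcat}_f$ shows that $F$ is $\Sigma_{+}$-local precisely when $F(\mathcal{B}')\to F(\mathcal{B})$ is a weak equivalence for every Morita equivalence $\mathcal{B}\stackrel{\sim}{\rightarrow}\mathcal{B}'$ in $\mathsf{dgcat}_f$, i.e.\ condition $3)$; applying it to $P$, together with the fact that $\mathsf{Map}(\widetilde{\emptyset}_{+},F)$ is contractible because $\widetilde{\emptyset}$ is the initial object, shows that $F$ is $P_{+}$-local precisely when $F(\emptyset)$ is contractible, i.e.\ condition $2)$.

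The essential step is the family $\widetilde{\mathcal{E}^s_{un}}$. Each of its elements represents a morphism $S_L$ attached to a retraction $L\colon\mathcal{G}\rightleftarrows\mathcal{H}$ in $\mathcal{E}^s$; since the source of $S_L$ is a homotopy cofiber, mapping it into a fibrant $F$ produces (up to weak equivalence) the homotopy fiber of $F(\mathcal{H})\to F(\mathcal{G})$, so that $F$ inverts $S_L$ if and only if $F$ carries the split short exact sequence attached to $L$ to a homotopy fiber sequence. As the components of such an $L$ are strict finite $I$-cells, this already yields condition $4)$ for split short exact sequences with strict finite $I$-cell components, and conversely condition $4)$ applied to those sequences gives $\widetilde{\mathcal{E}^s_{un}}$-locality. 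To pass from strict finite $I$-cell components to arbitrary split short exact sequences in $\mathsf{dgcat}_f$, I would invoke proposition~\ref{aproxsplit}: every such sequence $\mathcal{S}$ is weakly equivalent to a filtered homotopy colimit of split short exact sequences with strict finite $I$-cell components, and since the morphism $\Phi\circ\mathbb{R}\underline{h}$ commutes with filtered homotopy colimits, the comparison morphism $S_{\mathcal{S}}$ is a filtered homotopy colimit of morphisms $S_{L_j}$ with $L_j\in\mathcal{E}^s$; as $\mathsf{Map}(-,F)$ turns homotopy colimits into homotopy limits, $F$ inverts $S_{\mathcal{S}}$ as soon as it inverts all the $S_{L_j}$ --- this is precisely the split-exact variant of theorem~\ref{invert} recorded in remark~\ref{fib}. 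Collecting the three translations gives the equivalence of fibrancy of $F$ with $1)$--$4)$.

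I expect the real work here to be organizational rather than conceptual: one must check carefully that the iterated localization really is the localization at the union of the generating sets (so that \cite[4.1.1]{Hirschhorn} applies verbatim), that the passage between $\mathsf{Fun}(\mathsf{dgcat}_f^o,Sset)$ and $\mathsf{Fun}(\mathsf{dgcat}_f^o,Sset_{\bullet})$ via $\bigl((-)_{+},U\bigr)$ leaves mapping spaces unchanged, and that the morphisms $S_L$ used to build $\widetilde{\mathcal{E}^s_{un}}$ agree with those produced by the approximation argument --- in particular that the relevant quotients stay within $\mathsf{dgcat}_f$ up to weak equivalence --- so that remark~\ref{fib} can be applied. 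None of these points is deep, but each must be handled with care, and it is the interface between the purely model-categorical fibrancy criterion and the derivator-level approximation statements that constitutes the main obstacle.
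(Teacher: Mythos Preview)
Your approach is essentially the paper's: identify fibrancy via \cite[4.1.1]{Hirschhorn} as projective fibrancy plus locality with respect to $(\Sigma\cup\{P\})_+\cup\widetilde{\mathcal{E}^s_{un}}$, then translate each locality condition into $1)$--$4)$ using the simplicial structure on $\mathsf{Fun}(\mathsf{dgcat}_f^o,Sset_{\bullet})$ and the fact that $\mathsf{Map}(?,F)$ carries homotopy cofiber sequences to homotopy fiber sequences. The paper's proof is considerably terser and in particular does not invoke proposition~\ref{aproxsplit} or remark~\ref{fib} to bridge the gap between retractions in $\mathcal{E}^s$ and arbitrary split short exact sequences in $\mathsf{dgcat}_f$; it simply leaves that point implicit, whereas you spell it out.
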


\begin{proof}
Clearly condition $1)$ corresponds to the fact that $F$ is fibrant in
$\mathsf{Fun}(\mathsf{dgcat}_f^o,Sset_{\bullet})$. Now observe that $\mathsf{Fun}(\mathsf{dgcat}_f^o,Sset_{\bullet})$ is a
simplicial Quillen model category with the simplicial action given by
$$
\begin{array}{ccc}
Sset \times \mathsf{Fun}(\mathsf{dgcat}_f^o,Sset_{\bullet}) &
\rightarrow & \mathsf{Fun}(\mathsf{dgcat}_f^o,Sset_{\bullet}) \\
(K,F) & \mapsto & K_+\wedge F\,,
\end{array}
$$
where $K_+ \wedge F$ denotes the componentwise smash product. This simplicial structure and the construction of the
localized Quillen model category \newline
$\mathsf{L}_{\widetilde{\mathcal{E}^s_{un}}}
\mathsf{L}_{\Sigma,P}\mathsf{Fun}(\mathsf{dgcat}_f^o,Sset_{\bullet})$,
see section~\ref{small}, allow us to recover conditions $2)$ and
$3)$. Condition $4)$ follows from the construction of the set
$\widetilde{\mathcal{E}^s_{un}}$ and from the fact that the functor
$$ \mathsf{Map}(?,F) :
\mathsf{Fun}(\mathsf{dgcat}_f^o,Sset_{\bullet})^{op} \rightarrow
Sset$$
transforms homotopy cofiber sequences into homotopy fiber sequences.

This proves the proposition.
\end{proof}

Let $\mathcal{A}$ be a Morita fibrant dg
  category. Recall from notation~\ref{notacao} that
  $S_{\bullet}\mathcal{A}$ denotes the simplicial Morita fibrant dg
  category obtained by applying Waldhausen's
  $S_{\bullet}$-construction to the exact category $\mathsf{Z}^0(\mathcal{A})$
  and remembering the enrichment in complexes.

\begin{Notation}
We denote by $K(\mathcal{A}) \in \mathsf{Fun}(\mathsf{dgcat}_f^o,Sset_{\bullet})$ the following simplicial presheaf
$$
\begin{array}{rcl}
\mathcal{B} & \mapsto &
|N.wS_{\bullet}\mathsf{rep}_{mor}(\mathcal{B},\mathcal{A})|\,,
\end{array}
$$
where $|-|$ denotes the fibrant realization functor of bisimplicial sets.
\end{Notation}

\begin{proposition}\label{fibrant}
The simplicial presheaf $K(\mathcal{A})$ is fibrant in
$\mathsf{L}_{\widetilde{\mathcal{E}^s_{un}}} \mathsf{L}_{\Sigma,P}\mathsf{Fun}(\mathsf{dgcat}_f^o,Sset_{\bullet})$.
\end{proposition}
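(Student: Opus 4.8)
The plan is to verify, one by one, the four conditions of the preceding proposition characterizing the fibrant objects of $\mathsf{L}_{\widetilde{\mathcal{E}^s_{un}}} \mathsf{L}_{\Sigma,P}\mathsf{Fun}(\mathsf{dgcat}_f^o,Sset_{\bullet})$ for the simplicial presheaf
$$K(\mathcal{A}):\quad \mathcal{B} \mapsto |N.wS_{\bullet}\mathsf{rep}_{mor}(\mathcal{B},\mathcal{A})|\,.$$
First I would check condition $1)$: for each $\mathcal{B} \in \mathsf{dgcat}_f$ the simplicial set $|N.wS_{\bullet}\mathsf{rep}_{mor}(\mathcal{B},\mathcal{A})|$ is fibrant, which is automatic since it is the realization of a bisimplicial set. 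Condition $2)$ asks that $K(\mathcal{A})(\emptyset)$ be contractible; since $\mathsf{rep}_{mor}(\emptyset,\mathcal{A})$ is the zero dg category (the empty dg category has no nonzero bimodules over it, so the only object of $\mathsf{rep}_{mor}$ is the zero bimodule), its Waldhausen $K$-theory space is contractible. Condition $3)$ is the statement that a Morita equivalence $\mathcal{B} \stackrel{\sim}{\rightarrow} \mathcal{B}'$ induces a weak equivalence $K(\mathcal{A})(\mathcal{B}') \stackrel{\sim}{\rightarrow} K(\mathcal{A})(\mathcal{B})$; this follows from the invariance of $\mathsf{rep}_{mor}(-,\mathcal{A})$ under Morita equivalences in the first variable (by corollary~\ref{adjonct6} and remark~\ref{monoi}, $\mathsf{rep}_{mor}(\mathcal{B},\mathcal{A})$ computes morphisms in $\mathsf{Hmo}$), together with the fact that Waldhausen's $K$-theory space construction sends an exact equivalence of Waldhausen categories to a weak equivalence.

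The essential point, and the main obstacle, is condition $4)$: every split short exact sequence
$$
\xymatrix{
0 \ar[r] & \mathcal{B}' \ar[r]_{i_{\mathcal{B}'}} & \mathcal{B} \ar@<-1ex>[l]_R
\ar[r]_P & \mathcal{B}'' \ar@<-1ex>[l]_{i_{\mathcal{B}''}} \ar[r] & 0
}
$$
in $\mathsf{dgcat}_f$ must induce a homotopy fiber sequence
$$ K(\mathcal{A})(\mathcal{B}'') \rightarrow K(\mathcal{A})(\mathcal{B}) \rightarrow K(\mathcal{A})(\mathcal{B}')$$
in $Sset$. The plan here is to apply the categories $\mathsf{rep}_{mor}(-,\mathcal{A})$ to the split short exact sequence and argue that one obtains a sequence of Waldhausen categories to which Waldhausen's fibration theorem \cite[1.6.4]{Waldhausen} applies. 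Concretely, the split short exact sequence of dg categories yields, after applying $\mathsf{rep}_{mor}(-,\mathcal{A})$, a sequence of triangulated categories that is again exact (with a splitting), and $\mathsf{rep}_{mor}(\mathcal{B}',\mathcal{A})$ should be identified, up to equivalence, with the subcategory of $\mathsf{rep}_{mor}(\mathcal{B},\mathcal{A})$ of objects sent to an acyclic (zero) object in $\mathsf{rep}_{mor}(\mathcal{B}'',\mathcal{A})$; then one equips $\mathsf{rep}_{mor}(\mathcal{B},\mathcal{A})$ with a second notion of weak equivalence — the maps whose image in $\mathsf{rep}_{mor}(\mathcal{B}'',\mathcal{A})$ is a weak equivalence — and checks Waldhausen's axioms for the resulting situation.

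The delicate part is verifying the hypotheses of the fibration theorem: that the ``big'' Waldhausen category has enough cofibrations and that the cylinder axiom and the relevant saturation/extension axioms hold in this setting, and especially that the subcategory in question really is $\mathsf{rep}_{mor}(\mathcal{B}',\mathcal{A})$ and not merely a larger one. Here one must use that the sequence is \emph{split}: the functor $i_{\mathcal{B}''}$ provides a section of $P$, so that the induced functor $\mathsf{rep}_{mor}(\mathcal{B}'',\mathcal{A}) \rightarrow \mathsf{rep}_{mor}(\mathcal{B},\mathcal{A})$ splits $\mathsf{rep}_{mor}(\mathcal{B},\mathcal{A}) \rightarrow \mathsf{rep}_{mor}(\mathcal{B}'',\mathcal{A})$, and similarly $R$ splits $i_{\mathcal{B}'}$ on the level of $\mathsf{rep}_{mor}(-,\mathcal{A})$; one can then use this splitting to realize $\mathsf{rep}_{mor}(\mathcal{B},\mathcal{A})$ as built from the two outer terms. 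I would also invoke the ``approximation'' input provided by proposition~\ref{aproxsplit} and remark~\ref{fib}, which reduces, when needed, general split short exact sequences to filtered homotopy colimits of ones with strict finite $I$-cell components, together with the fact that $K$-theory (being Waldhausen $K$-theory, which commutes with filtered colimits of exact categories) and homotopy fiber sequences are compatible with filtered homotopy colimits in $Sset_{\bullet}$; this last compatibility is where the regularity of the underlying derivator, established in example~\ref{exem}, enters. Assembling these ingredients, condition $4)$ follows, and with conditions $1)$--$3)$ already in hand, $K(\mathcal{A})$ is fibrant, proving the proposition.
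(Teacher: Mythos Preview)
Your overall strategy is exactly the paper's: check conditions (1)--(3) directly and deduce (4) from Waldhausen's fibration theorem \cite[1.6.4]{Waldhausen}. Two points need correction.

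First, and more substantively, you have the roles of $\mathcal{B}'$ and $\mathcal{B}''$ reversed in the fibration-theorem setup. Applying the \emph{contravariant} functor $\mathsf{rep}_{mor}(?,\mathcal{A})$ to $\mathcal{B}' \to \mathcal{B} \to \mathcal{B}''$ yields the split short exact sequence
$$0 \to \mathsf{rep}_{mor}(\mathcal{B}'',\mathcal{A}) \xrightarrow{P^*} \mathsf{rep}_{mor}(\mathcal{B},\mathcal{A}) \xrightarrow{i_{\mathcal{B}'}^*} \mathsf{rep}_{mor}(\mathcal{B}',\mathcal{A}) \to 0\,,$$
so it is $\mathsf{rep}_{mor}(\mathcal{B}'',\mathcal{A})$ that serves as the acyclic subcategory. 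Concretely, on $\mathsf{Z}^0(\mathsf{rep}_{mor}(\mathcal{B},\mathcal{A}))$ one takes $v$-equivalences to be the maps with contractible cone and $w$-equivalences those whose cone lies in $\mathsf{rep}_{mor}(\mathcal{B}'',\mathcal{A})$; then $\mathsf{rep}_{mor}(\mathcal{B},\mathcal{A})^w \simeq \mathsf{Z}^0(\mathsf{rep}_{mor}(\mathcal{B}',\mathcal{A}))$ and Waldhausen's theorem produces precisely the sequence $K(\mathcal{A})(\mathcal{B}'') \to K(\mathcal{A})(\mathcal{B}) \to K(\mathcal{A})(\mathcal{B}')$ required by condition~(4). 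Your setup, using $i_{\mathcal{B}''}^*$ as the coarsening projection and $\mathsf{rep}_{mor}(\mathcal{B}',\mathcal{A})$ as the acyclics, would (even granting the identification) yield the fiber sequence with the roles of $\mathcal{B}'$ and $\mathcal{B}''$ exchanged and with maps induced by $R$ and $i_{\mathcal{B}''}$ rather than by $P$ and $i_{\mathcal{B}'}$ --- not the sequence demanded by condition~(4).

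Second, the detour via proposition~\ref{aproxsplit} and filtered homotopy colimits is unnecessary here and should be dropped: the fibration theorem applies directly to each split short exact sequence in $\mathsf{dgcat}_f$, with no reduction needed. (That approximation result is used later, in the analysis of $\mathcal{M}^{add}_{dg}$, not in this proposition.) Your appeal to ``$K$-theory commutes with filtered colimits'' is also not something you want to rely on without justification, and fortunately you don't have to.
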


\begin{proof}
Observe that $K(\mathcal{A})$ satisfies conditions $(1)$-$(3)$. We now
prove that Waldhausen's fibration theorem \cite[1.6.4]{Waldhausen} implies condition
$(4)$. Apply the contravariant functor
$\mathsf{rep}_{mor}(?,\mathcal{A})$ to the split short exact sequence
$$
\xymatrix{
0 \ar[r] & \mathcal{B}' \ar[r]_{i_{\mathcal{B}'}} & \mathcal{B} \ar@<-1ex>[l]_R
\ar[r]_P & \mathcal{B}'' \ar@<-1ex>[l]_{i_{\mathcal{B}''}} \ar[r] & 0
}
$$
and obtain a split short exact sequence
$$
\xymatrix{
0 \ar[r] & \mathsf{rep}_{mor}(\mathcal{B}'',\mathcal{A}) \ar[r] & \mathsf{rep}_{mor}(\mathcal{B},\mathcal{A}) \ar@<-1ex>[l]
\ar[r] & \mathsf{rep}_{mor}(\mathcal{B}',\mathcal{A}) \ar@<-1ex>[l] \ar[r] & 0 \,.
}
$$
Now consider the Waldhausen category
$v\mathsf{rep}_{mor}(\mathcal{B},\mathcal{A}):=\mathsf{Z}^0(\mathsf{rep}_{mor}(\mathcal{B},\mathcal{A}))$, where the weak equivalences
are the morphisms $f$ such that $\mathsf{cone}(f)$ is
contractible. Consider also the Waldhausen category $w\mathsf{rep}_{mor}(\mathcal{B},\mathcal{A})$,
which has the same cofibrations as $v\mathsf{rep}_{mor}(\mathcal{B},\mathcal{A})$ but the weak
equivalences are the morphisms $f$ such that $\mathsf{cone}(f)$
belongs to $\mathsf{rep}_{mor}(\mathcal{B}'',\mathcal{A})$. Observe that we have the inclusion
$v\mathsf{rep}_{mor}(\mathcal{B},\mathcal{A}) \subset w\mathsf{rep}_{mor}(\mathcal{B},\mathcal{A})$ and an equivalence $\mathsf{rep}_{mor}(\mathcal{B},\mathcal{A})^w
\simeq \mathsf{Z}^0(\mathsf{rep}_{mor}(\mathcal{B}',\mathcal{A}))$, see section $1.6$ from
\cite{Waldhausen}. The conditions of theorem $1.6.4$ from \cite{Waldhausen}
are satisfied and so we have a homotopy fiber sequence
$$
|N.wS_{\bullet}\mathsf{rep}_{mor}(\mathcal{B}'',\mathcal{A})| \rightarrow
|N.wS_{\bullet}\mathsf{rep}_{mor}(\mathcal{B},\mathcal{A})|
\rightarrow |N.wS_{\bullet} \mathsf{rep}_{mor}(\mathcal{B}',\mathcal{A})| $$
in $Sset$. This proves the proposition.
\end{proof}

\begin{definition}
\begin{itemize}
\item[-] The {\em Unstable motivator of dg categories}
  $\mathcal{M}^{unst}_{dg}$ is the derivator associated with the Quillen model category
$$\mathsf{L}_{\widetilde{\mathcal{E}^s_{un}}} \mathsf{L}_{\Sigma,P} \mathsf{Fun}(\mathsf{dgcat}_f^o,Sset_{\bullet})\,.$$
\item[-] The {\em Universal unstable invariant of dg categories} is the canonical morphism of
  derivators $$ \mathcal{U}_u : \mathsf{HO}(\mathsf{dgcat}) \rightarrow \mathcal{M}^{unst}_{dg}\,.$$
\end{itemize}
\end{definition}

Let $p:\Delta \rightarrow e$ be the projection functor.

\begin{proposition}\label{clef}
The objects
$$ 
\begin{array}{ccc}
S^1 \wedge N.w\mathsf{rep}_{mor}(?,\mathcal{A}) & \mbox{and} &
|N.wS_{\bullet} \mathsf{rep}_{mor}(?,\mathcal{A})|=K(\mathcal{A})
\end{array}
$$
are canonically isomorphic in $\mathsf{Ho}(\mathsf{L}_{\widetilde{\mathcal{E}^s_{un}}} \mathsf{L}_{\Sigma,P}\mathsf{Fun}(\mathsf{dgcat}_f^o,Sset_{\bullet}))$.
\end{proposition}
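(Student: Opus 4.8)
The plan is to establish the unstable analogue of Proposition~\ref{real}, replacing the localizing invariant $\mathcal{U}_l$ by the unstable invariant $\mathcal{U}_u$ and replacing the triangle coming from condition Dr) by the homotopy cofiber sequence coming from the fact that $\mathcal{U}_u$ sends split short exact sequences to homotopy cofiber sequences in $\mathcal{M}_{dg}^{unst}$ (this is how the localizing set $\widetilde{\mathcal{E}^s_{un}}$ was chosen). First I would start from the sequence in $\mathsf{HO}(\mathsf{dgcat})(\Delta)$
$$ 0 \to \mathcal{A}_{\bullet} \to PS_{\bullet}\mathcal{A} \to S_{\bullet}\mathcal{A} \to 0 $$
considered in the proof of Proposition~\ref{real}, where $\mathcal{A}_{\bullet}$ is the constant simplicial dg category with value $\mathcal{A}$ and $PS_{\bullet}\mathcal{A}$ the path object of $S_{\bullet}\mathcal{A}$, recalling that for each $n$ its $n$-th component is the split short exact sequence $0 \to \mathcal{A} \to S_{n+1}\mathcal{A} \to S_n\mathcal{A} \to 0$ and that $PS_{\bullet}\mathcal{A}$ is a contractible simplicial object, see \cite{MacCarthy}.

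Next I would apply $\mathcal{U}_u$. Since for each $n$ the $n$-th component is a split short exact sequence, $\mathcal{U}_u$ sends it to a homotopy cofiber sequence in $\mathcal{M}_{dg}^{unst}(e)$; as finite homotopy colimits in diagram categories are computed pointwise, these assemble into a homotopy cofiber sequence of simplicial objects
$$ \mathcal{U}_u(\mathcal{A}_{\bullet}) \to \mathcal{U}_u(PS_{\bullet}\mathcal{A}) \to \mathcal{U}_u(S_{\bullet}\mathcal{A}) $$
in $\mathcal{M}_{dg}^{unst}(\Delta)$. Applying $p_!$, which commutes with homotopy colimits and hence preserves homotopy cofiber sequences, and computing the first two terms exactly as in the proof of Proposition~\ref{real} --- namely $p_!\mathcal{U}_u(\mathcal{A}_{\bullet}) \simeq \mathcal{U}_u(\mathcal{A})$ because $\mathcal{A}_{\bullet}$ is constant and $N(\Delta)$ is contractible (corollary $18.7.7$ in \cite{Hirschhorn}), and $p_!\mathcal{U}_u(PS_{\bullet}\mathcal{A}) \simeq \ast$ because $PS_{\bullet}\mathcal{A}$ is a contractible simplicial object and homotopy colimits in the presheaf category are computed objectwise --- I obtain a homotopy cofiber sequence $\mathcal{U}_u(\mathcal{A}) \to \ast \to p_!\mathcal{U}_u(S_{\bullet}\mathcal{A})$, i.e. a canonical isomorphism $p_!\mathcal{U}_u(S_{\bullet}\mathcal{A}) \simeq \Sigma\,\mathcal{U}_u(\mathcal{A})$ in $\mathcal{M}_{dg}^{unst}(e)$.

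It then remains to identify the two sides with the two presheaves in the statement. On the one hand, since $\mathcal{M}_{dg}^{unst}$ is the derivator of the pointed simplicial Quillen model category $\mathsf{L}_{\widetilde{\mathcal{E}^s_{un}}}\mathsf{L}_{\Sigma,P}\mathsf{Fun}(\mathsf{dgcat}_f^o,Sset_{\bullet})$ with smash action $K_{+}\wedge(-)$, the suspension functor $\Sigma$ at $e$ is modeled by $S^1\wedge(-)$ followed by (localized) fibrant replacement; combined with the identification $\mathcal{U}_u(\mathcal{A}) \simeq N.w\mathsf{rep}_{mor}(?,\mathcal{A})$ from section~\ref{quasi}, this gives $\Sigma\,\mathcal{U}_u(\mathcal{A}) \simeq S^1\wedge N.w\mathsf{rep}_{mor}(?,\mathcal{A})$. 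On the other hand, since $\mathcal{U}_u$ is $\mathbb{R}\underline{h}$ followed by the localization morphisms and homotopy colimits in $\mathsf{Fun}(\mathsf{dgcat}_f^o,Sset_{\bullet})$ are objectwise, $p_!\mathcal{U}_u(S_{\bullet}\mathcal{A})$ is the presheaf $\mathcal{B}\mapsto |N.w\mathsf{rep}_{mor}(\mathcal{B},S_{\bullet}\mathcal{A})|$ (here Proposition~\ref{fibrant} makes the comparison clean, $K(\mathcal{A})$ being already local-fibrant), which I would identify with $K(\mathcal{A})$ via a natural equivalence $\mathsf{rep}_{mor}(\mathcal{B},S_{\bullet}\mathcal{A}) \simeq S_{\bullet}\mathsf{rep}_{mor}(\mathcal{B},\mathcal{A})$ of simplicial Waldhausen categories. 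The main obstacle is precisely this last identification: one must check that the dg-enriched $S_{\bullet}$-construction of notation~\ref{notacao} applied to $\mathcal{A}$ is compatible, after applying $\mathsf{rep}_{mor}(\mathcal{B},-)$, with Waldhausen's $S_{\bullet}$-construction on the Waldhausen category $\mathsf{rep}_{mor}(\mathcal{B},\mathcal{A})$. This should follow from the fact that $\mathsf{rep}_{mor}(\mathcal{B},-)$ is the restriction of the internal $\mathrm{Hom}$-functor $\mathcal{R}\mathsf{Hom}_{mor}(\mathcal{B},-)$ of $(\mathsf{Hmo},-\otimes^{\mathbb{L}}-)$, see Corollary~\ref{adjonct6}, and hence preserves the finite homotopy (co)limits (iterated cones together with their chosen subquotients) out of which each $S_n$ is built; one may also reduce to $\mathcal{B}=k$, where $\mathsf{rep}_{mor}(k,-)=\mathrm{per}(-)$ and the flags defining $S_n\mathcal{A}$ correspond directly to flags in $\mathrm{per}(\mathcal{A})$. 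Finally, all the equivalences used being natural in $\mathcal{A}$, the resulting isomorphism is canonical, which proves the proposition.
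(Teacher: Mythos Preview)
Your proposal is correct and follows essentially the same approach as the paper: both start from the simplicial sequence $0 \to \mathcal{A}_\bullet \to PS_\bullet\mathcal{A} \to S_\bullet\mathcal{A} \to 0$, use that each level is a split short exact sequence so that $\mathcal{U}_u$ produces a levelwise (hence, by conservativity, diagramwise) homotopy cofiber sequence in $\mathcal{M}_{dg}^{unst}(\Delta)$, apply $p_!$, and then identify the three terms exactly as you do. The paper phrases the levelwise-to-diagramwise step via the comparison morphism $S_I$ and the conservativity axiom rather than ``pointwise cofibers assemble'', and it states the final identification $p_!(N.w\,\mathsf{rep}_{mor}(?,S_\bullet\mathcal{A})) \simeq |N.wS_\bullet\,\mathsf{rep}_{mor}(?,\mathcal{A})|$ directly, whereas you unpack it more carefully; the paper also notes explicitly that $N.w\,\mathsf{rep}_{mor}(?,\mathcal{A})$ is cofibrant so that $S^1\wedge(-)$ models the suspension, a point you use implicitly.
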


\begin{proof}
As in \cite[3.3]{MacCarthy}, we consider the sequence in $\mathsf{HO}(\mathsf{dgcat})(\Delta)$
$$ \xymatrix{0 \ar[r] & \mathcal{A}_{\bullet} \ar[r]^-I &
PS_{\bullet}\mathcal{A} \ar[r]^Q &  S_{\bullet}\mathcal{A} \ar[r]& 0} \ko
$$
where $\mathcal{A}_{\bullet}$ denotes the constant simplicial dg
category with value $\mathcal{A}$ and $PS_{\bullet}\mathcal{A}$ the
path object of $S_{\bullet}\mathcal{A}$. By applying the morphism of
derivators $\mathcal{U}_u$ to this sequence, we obtain the canonical
morphism
$$ S_I: \mbox{cone} (\mathcal{U}_u(\mathcal{A}_{\bullet}
\stackrel{I}{\rightarrow} PS_{\bullet}\mathcal{A})) \rightarrow
\mathcal{U}_u(S_{\bullet}\mathcal{A})$$
in $\mathcal{M}^{unst}_{dg}(\Delta)$. We now prove that for each point
$n:e \rightarrow \Delta$, the $n$th component of $S_I$ is an
isomorphism in $\mathsf{L}_{\widetilde{\mathcal{E}^s_{un}}}
\mathsf{L}_{\Sigma,P}
\mathsf{Fun}(\mathsf{dgcat}_f^o,Sset_{\bullet})$. For each point $n:e
\rightarrow \Delta$, we have a split short exact sequence in
$\mathsf{Ho}(\mathsf{dgcat})$~:
$$
\xymatrix{
0 \ar[r] & \mathcal{A} \ar[r]_-{I_n} & PS_n\mathcal{A}=S_{n+1}\mathcal{A} \ar@<-1ex>[l]_-{R_n}
\ar[r]_-{Q_n} & S_n\mathcal{A} \ar@<-1ex>[l]_-{S_n} \ar[r] & 0 \,,
}
$$
where $I_n$ maps $A \in \mathcal{A}$ to the constant sequence
$$
0 \rightarrow A \stackrel{Id}{\rightarrow} A \stackrel{Id}{\rightarrow}
\cdots \stackrel{Id}{\rightarrow} A \,,$$
$Q$ maps a sequence
$$ 0 \rightarrow A_0 \rightarrow A_1 \rightarrow \cdots \rightarrow
A_n$$
to
$$ A_1/A_0 \rightarrow \cdots \rightarrow A_n/A_0\,,$$
$S_n$ maps a sequence
$$ 0 \rightarrow A_0 \rightarrow A_1 \rightarrow \cdots \rightarrow
A_{n-1}$$
to
$$ 0 \rightarrow 0 \rightarrow A_0 \rightarrow \cdots \rightarrow
A_{n-1}$$
and $R_n$ maps a sequence
$$ 0 \rightarrow A_0 \rightarrow A_1 \rightarrow \cdots \rightarrow
A_{n-1}$$
to $A_0$. Now, by construction of
$\mathsf{L}_{\widetilde{\mathcal{E}^s_{un}}} \mathsf{L}_{\Sigma,P}
\mathsf{Fun}(\mathsf{dgcat}_f^o,Sset_{\bullet})$ the canonical
morphisms
$$ S_{I_n}: \mbox{cone}(\mathcal{U}_u (\mathcal{A}
\stackrel{I_n}{\rightarrow} PS_n\mathcal{A})) \rightarrow
\mathcal{U}_u(S_n\mathcal{A}),\,\, n \in \mathbb{N} \ko$$
are isomorphisms in $\mathcal{M}_{dg}^{unst}(e)$. Since homotopy
colimits in $\mathsf{L}_{\widetilde{\mathcal{E}^s_{un}}} \mathsf{L}_{\Sigma,P}
\mathsf{Fun}(\mathsf{dgcat}_f^o,Sset_{\bullet})$ are calculated
objectwise the $n$th component of $S_I$ identifies with $S_{I_n}$ and
so by the conservativity axiom $S_I$ is an isomorphism in
$\mathcal{M}_{dg}^{unst}(\Delta)$. This implies that we obtain the
homotopy cocartesian square
$$
\xymatrix{
p_!\mathcal{U}_u(\mathcal{A}_{\bullet}) \ar[d] \ar[r] &
p_!(\mathcal{U}_u(PS_{\bullet}\mathcal{A})) \ar[d] \\
\ast \ar[r] & p_!(\mathcal{U}_u(S_{\bullet}\mathcal{A}))\,.
}
$$
As in the proof of proposition~\ref{real}, we show that
$p_!\mathcal{U}_u(\mathcal{A}_{\bullet})$ identifies with $N.w
\mathsf{rep}_{mor}(?,\mathcal{A})=\mathcal{U}_u(\mathcal{A})$ and that
$p_!(\mathcal{U}_u(PS_{\bullet}\mathcal{A}))$ is contractible. Since
we have the equivalence
$$p_!(\mathcal{U}_u(S_{\bullet}\mathcal{A})) = p_!(N.w
\mathsf{rep}_{mor}(?,S_{\bullet}\mathcal{A}))
\stackrel{\sim}{\rightarrow} |N.wS_{\bullet} \mathsf{rep}_{mor}(?,\mathcal{A})|$$
and $N.w\mathsf{rep}_{mor}(?,\mathcal{A})$ is cofibrant in  $\mathsf{L}_{\widetilde{\mathcal{E}^s_{un}}} \mathsf{L}_{\Sigma,P}
\mathsf{Fun}(\mathsf{dgcat}_f^o,Sset_{\bullet})$ the proposition is proven.
\end{proof}

\begin{proposition}\label{kth}
We have the following weak equivalence of simplicial sets
$$ \mathsf{Map}(\mathcal{U}_u(k),S^1 \wedge \mathcal{U}_u(\mathcal{A}))
\stackrel{\sim}{\rightarrow} |N.wS_{\bullet}\mathcal{A}_f|$$
in $\mathsf{L}_{\widetilde{\mathcal{E}^s_{un}}} \mathsf{L}_{\Sigma,P}
\mathsf{Fun}(\mathsf{dgcat}_f^o, Sset_{\bullet})$. In particular, we
have the following isomorphism
$$\pi_{i+1}
\mathsf{Map}(\mathcal{U}_u(k),S^1 \wedge \mathcal{U}_u(\mathcal{A}))
\stackrel{\sim}{\rightarrow} K_i(\mathcal{A}), \, \forall i \geq
0\,.$$
\end{proposition}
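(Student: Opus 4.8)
The strategy is to exploit Proposition~\ref{clef}, which already identifies $S^1 \wedge \mathcal{U}_u(\mathcal{A})$ with the \emph{fibrant} presheaf $K(\mathcal{A}) = |N.wS_\bullet\mathsf{rep}_{mor}(?,\mathcal{A})|$, together with the corepresentability of $\mathcal{U}_u(k)$: once the target is fibrant, the mapping space collapses to an evaluation, and that evaluation is a Waldhausen $K$-theory space by the very definition of $K(\mathcal{A})$.

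First I would pin down $\mathcal{U}_u(k)$. The one-object dg category $\underline{k}$ is a strict finite $I$-cell object for the Morita model structure on $\mathsf{dgcat}$, being obtained from $\emptyset$ by a single pushout along the generating cofibration $Q$ of theorem~\ref{mal}; hence $\underline{k}$ lies in $\mathsf{dgcat}_f$, and remark~\ref{funcpont} gives a canonical isomorphism $\mathbb{R}\underline{h}(\underline{k}) \simeq h(\underline{k})$ in $\mathsf{L}_{\Sigma}\mathsf{Hot}_{\mathsf{dgcat}_f}$. Since $\Phi$, the comparison of remark~\ref{remar}, and $\Psi$ are all identities on underlying objects (being left Bousfield localizations, respectively the Quillen equivalence induced by $(-)_+$), it follows that $\mathcal{U}_u(k)$ is represented in $\mathcal{M}_{dg}^{unst}$ by the cofibrant pointed presheaf $h(\underline{k})_+$.

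Next, by Proposition~\ref{clef} the object $S^1 \wedge \mathcal{U}_u(\mathcal{A})$ is isomorphic in $\mathsf{Ho}(\mathcal{M}_{dg}^{unst})$ to $K(\mathcal{A})$, which is fibrant in $\mathsf{L}_{\widetilde{\mathcal{E}^s_{un}}}\mathsf{L}_{\Sigma,P}\mathsf{Fun}(\mathsf{dgcat}_f^o,Sset_{\bullet})$ by Proposition~\ref{fibrant}. Therefore $\mathsf{Map}(\mathcal{U}_u(k),S^1\wedge\mathcal{U}_u(\mathcal{A})) \simeq \mathsf{Map}(h(\underline{k})_+, K(\mathcal{A}))$; as the source is cofibrant and the target is local-fibrant, this mapping space agrees with the one computed in the unlocalized category $\mathsf{Fun}(\mathsf{dgcat}_f^o,Sset_{\bullet})$ (standard for left Bousfield localizations, see \cite{Hirschhorn}), and the pointed simplicial Yoneda lemma then gives a natural weak equivalence with $K(\mathcal{A})(\underline{k}) = |N.wS_\bullet\mathsf{rep}_{mor}(\underline{k},\mathcal{A})|$.

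Finally I would identify this last space with $|N.wS_\bullet\mathcal{A}_f|$. Since $\underline{k}_c^{op}\otimes\mathcal{A}\cong\mathcal{A}$, the dg category $\mathsf{rep}_{mor}(\underline{k},\mathcal{A})$ is the full dg subcategory of $\mathrm{Mod}\,\mathcal{A}$ on the cofibrant perfect modules; it is non-empty, Morita fibrant by Proposition~\ref{nova4}, and Morita equivalent to $\mathcal{A}$ through the Yoneda dg functor. As $\mathcal{A}_f$ is likewise Morita fibrant and Morita equivalent to $\mathcal{A}$, Corollary~\ref{adjonct5} makes the two isomorphic in $\mathsf{Heq}$, hence connected by quasi-equivalences; and since $\mathcal{B}\mapsto|N.wS_\bullet\mathcal{B}|$ is the degree-one space of Waldhausen $K$-theory and therefore factors through $\mathsf{Heq}$ (Example~\ref{exemple1}), we obtain a zig-zag of weak equivalences $|N.wS_\bullet\mathsf{rep}_{mor}(\underline{k},\mathcal{A})|\simeq|N.wS_\bullet\mathcal{A}_f|$. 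Assembling the equivalences yields the asserted weak equivalence, and the computation of homotopy groups follows from $K_i(\mathcal{A})=K_i(\mathcal{A}_f)=\pi_{i+1}|N.wS_\bullet\mathcal{A}_f|$ for $i\geq 0$. The main obstacle is the second step: one must track $\mathcal{U}_u(k)$ faithfully through the two-stage Bousfield localization and the pointed/unpointed comparison so that the Yoneda reduction is legitimate — everything after that is bookkeeping with results already in hand.
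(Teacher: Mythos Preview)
Your proof is correct and follows essentially the same route as the paper: invoke Proposition~\ref{clef} to replace $S^1\wedge\mathcal{U}_u(\mathcal{A})$ by the fibrant object $K(\mathcal{A})$ (Proposition~\ref{fibrant}), reduce $\mathsf{Map}(\mathcal{U}_u(k),K(\mathcal{A}))$ to the evaluation $K(\mathcal{A})(k)$ via Yoneda, and then identify $|N.wS_\bullet\mathsf{rep}_{mor}(k,\mathcal{A})|$ with $|N.wS_\bullet\mathcal{A}_f|$. The paper compresses all of this into three displayed weak equivalences; your version simply spells out the justifications (in particular the tracking of $\mathcal{U}_u(k)$ through the localizations and the pointed/unpointed passage) that the paper leaves implicit.
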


\begin{proof}
This follows from propositions \ref{fibrant}, \ref{clef} and from the
following weak equivalences
$$
\begin{array}{rcl}
\mathsf{Map}(\mathcal{U}_u(k),S^1 \wedge \mathcal{U}_u(\mathcal{A})) & \simeq
& \mathsf{Map}(\mathbb{R}\underline{h}(k), K(\mathcal{A}))\\
 & \simeq & (K(\mathcal{A}))(k)\\
 & \simeq & |N.wS_{\bullet}\mathcal{A}_f|\,.
\end{array}
$$
\end{proof}
\section{The universal additive invariant}\label{univadit}
Consider the Quillen model category
$\mathsf{L}_{\widetilde{\mathcal{E}^s_{un}}} \mathsf{L}_{\Sigma,P}
\mathsf{Fun}(\mathsf{dgcat}_f^o, Sset_{\bullet})$ constructed in the
previous section. The definition of the
  set $\widetilde{\mathcal{E}^s_{un}}$ and the same arguments as those
  of example~\ref{exem} and example~\ref{ex2} allows us to conclude
  that $\mathsf{L}_{\widetilde{\mathcal{E}^s_{un}}} \mathsf{L}_{\Sigma,P}
\mathsf{Fun}(\mathsf{dgcat}_f^o,Sset_{\bullet})$ satisfies the
  conditions of theorem~\ref{repre}. In particular we have an
  equivalence of triangulated derivators
$$ \mathsf{St}(\mathcal{M}_{dg}^{unst}) \stackrel{\sim}{\rightarrow}
\mathsf{HO}(\mathsf{Sp}^{\mathbb{N}}(\mathsf{L}_{\widetilde{\mathcal{E}^s_{un}}}
\mathsf{L}_{\Sigma,P}
\mathsf{Fun}(\mathsf{dgcat}_f^o, Sset_{\bullet})))\,.
$$

\begin{definition}\label{condadit}
\begin{itemize}
\item[-] The {\em Additive motivator of dg categories} $\mathcal{M}^s_{dg}$ is the triangulated
derivator associated with the stable Quillen model category
$$ \mathsf{Sp}^{\mathbb{N}}(\mathsf{L}_{\widetilde{\mathcal{E}^s_{{un}}}}\mathsf{L}_{\Sigma,P}
\mathsf{Fun}(\mathsf{dgcat_f^o,Sset_{\bullet}}))\,.$$
\item[-] The {\em Universal additive invariant of dg categories} is the canonical morphism of
  derivators $$ \mathcal{U}_a : \mathsf{HO}(\mathsf{dgcat}) \rightarrow \mathcal{M}^{add}_{dg}\,.$$
\end{itemize}
\end{definition}

\begin{remark}
Observe that remark~\ref{fib} and remark~\ref{important} imply that
$\mathcal{M}^{add}_{dg}$ is a compactly generated triangulated derivator.
\end{remark}

We sum up the construction of $\mathcal{M}^{add}_{dg}$ in the following diagram

$$
\xymatrix{
\underline{\mathsf{dgcat}_f}[S^{-1}] \ar[r] \ar[d]_{\mathsf{Ho}(h)} &
\mathsf{HO}(\mathsf{dgcat}) \ar[dl]^{\mathbb{R}\underline{h}}
\ar@/^3pc/[ddddl]^{\mathcal{U}_a}  \ar@/^2pc/[dddl]_{\mathcal{U}_u}
 \\
\mathsf{L}_{\Sigma}\mathsf{Hot}_{\mathsf{dgcat}_f}
\ar[d]_{\Phi}  \ar@<1ex>[ur]^{\mathbb{L}Re}  & \\
{\mathsf{L}_{\Sigma,P}\mathsf{Hot}_{\mathsf{dgcat}_f}}_{\bullet}
\ar[d]_{\Psi} & \\
\mathcal{M}_{dg}^{unst}
\ar[d]_{\varphi} & \\
\mathcal{M}^{add}_{dg} &
}
$$
Observe that the morphism of derivators $\mathcal{U}_a$ is pointed, commutes with filtered
homotopy colimits and satisfies the following condition :

\begin{itemize}
\item[A)] For every split short exact sequence
$$
\xymatrix{
0 \ar[r] & \mathcal{A} \ar[r]_{i_{\mathcal{A}}} & \mathcal{B} \ar@<-1ex>[l]_R
\ar[r]_P & \mathcal{C} \ar@<-1ex>[l]_{i_{\mathcal{C}}} \ar[r] & 0
}
$$
in $\mathsf{Ho}(\mathsf{dgcat})$, we have a split triangle
$$
\xymatrix{
\mathcal{U}_a(\mathcal{A}) \ar[r]_{i_{\mathcal{A}}} & \mathcal{U}_a(\mathcal{B}) \ar@<-1ex>[l]_R
\ar[r]_P & \mathcal{U}_a(\mathcal{C}) \ar@<-1ex>[l]_{i_{\mathcal{C}}} \ar[r] & \mathcal{U}_a(\mathcal{A})[1]
}
$$
in $\mathcal{M}^{add}_{dg}(e)$. (This implies that the dg functors
$i_{\mathcal{A}}$ and $i_{\mathcal{C}}$ induce an isomorphism
$$ \mathcal{U}_a(\mathcal{A}) \oplus \mathcal{U}_a(\mathcal{C})
\stackrel{\sim}{\rightarrow} \mathcal{U}_a(\mathcal{B})$$
in $\mathcal{M}^{add}_{dg}(e)$).
\end{itemize}

\begin{remark}
Since the dg category $\mathcal{B}$ in the above split short exact
sequence is Morita equivalent to the dg category
$E(\mathcal{A},\mathcal{B},\mathcal{C})$, see section $1.1$ from
\cite{Waldhausen}, condition $A)$ is equivalent to the additivity property
stated by Waldhausen in \cite[1.3.2]{Waldhausen}.
\end{remark}

Let $\mathbb{D}$ be a strong triangulated derivator.
\begin{theorem}\label{principal1}
The morphism $\mathcal{U}_a$ induces an equivalence of categories
$$
\underline{\mathsf{Hom}}_!(\mathcal{M}^{add}_{dg},
\mathbb{D}) \stackrel{\mathcal{U}_a^{\ast}}{\longrightarrow}
\underline{\mathsf{Hom}}_{flt,\,A),\,p}(\mathsf{HO}(\mathsf{dgcat}),\mathbb{D})\,,$$
where
$\underline{\mathsf{Hom}}_{flt,\,A)\,, p}(\mathsf{HO}(\mathsf{dgcat}),\mathbb{D})$
denotes the category of morphisms of derivators which commute with filtered
homotopy colimits, satisfy condition $A)$ and preserve the point.
\end{theorem}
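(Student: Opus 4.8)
The plan is to read off $\mathcal{U}_a^{\ast}$ as a composite of three equivalences of categories, following the pattern of the proof of theorem~\ref{principal}. Recall from the diagram preceding the statement that $\mathcal{U}_a=\varphi\circ\Psi\circ\Phi\circ\mathbb{R}\underline{h}$, where $\Phi\circ\mathbb{R}\underline{h}$ lands in ${\mathsf{L}_{\Sigma,P}\mathsf{Hot}_{\mathsf{dgcat}_f}}_{\bullet}$, $\Psi$ is the passage to the left Bousfield localization at $\widetilde{\mathcal{E}^s_{un}}$, and $\varphi$ is, up to the equivalence of theorem~\ref{repre}, the Heller stabilization morphism $stab$. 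I will split $\mathcal{U}_a^{\ast}$ accordingly and identify the image at each stage.

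First I would handle the stable step. Since $\mathsf{L}_{\widetilde{\mathcal{E}^s_{un}}}\mathsf{L}_{\Sigma,P}\mathsf{Fun}(\mathsf{dgcat}_f^o,Sset_{\bullet})$ is pointed and, by the arguments of examples~\ref{exem} and \ref{ex2}, almost finitely generated with sequential colimits commuting with finite products and homotopy pullbacks, the derivator $\mathcal{M}^{unst}_{dg}$ is regular, pointed and strong; theorem~\ref{repre} then provides the equivalence $\varphi:\mathsf{St}(\mathcal{M}^{unst}_{dg})\stackrel{\sim}{\longrightarrow}\mathcal{M}^{add}_{dg}$, and theorem~\ref{HellerT} yields, for every strong triangulated derivator $\mathbb{D}$, an equivalence $\underline{\mathsf{Hom}}_!(\mathcal{M}^{add}_{dg},\mathbb{D})\stackrel{\varphi^{\ast}}{\longrightarrow}\underline{\mathsf{Hom}}_!(\mathcal{M}^{unst}_{dg},\mathbb{D})$. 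Next, theorem~\ref{Cisinsk} applied to the left Bousfield localization defining $\mathcal{M}^{unst}_{dg}$ identifies $\underline{\mathsf{Hom}}_!(\mathcal{M}^{unst}_{dg},\mathbb{D})$, via $\Psi^{\ast}$, with the full subcategory $\underline{\mathsf{Hom}}_{!,\mathcal{E}^s_{un}}({\mathsf{L}_{\Sigma,P}\mathsf{Hot}_{\mathsf{dgcat}_f}}_{\bullet},\mathbb{D})$ of morphisms commuting with homotopy colimits that invert the image of $\widetilde{\mathcal{E}^s_{un}}$; and proposition~\ref{ext} gives an equivalence $\underline{\mathsf{Hom}}_!({\mathsf{L}_{\Sigma,P}\mathsf{Hot}_{\mathsf{dgcat}_f}}_{\bullet},\mathbb{D})\stackrel{(\Phi\circ\mathbb{R}\underline{h})^{\ast}}{\longrightarrow}\underline{\mathsf{Hom}}_{flt,p}(\mathsf{HO}(\mathsf{dgcat}),\mathbb{D})$. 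Composing these three equivalences produces $\mathcal{U}_a^{\ast}$, so it remains only to check that, under the last equivalence, the subcategory $\underline{\mathsf{Hom}}_{!,\mathcal{E}^s_{un}}$ is carried exactly onto $\underline{\mathsf{Hom}}_{flt,A),p}(\mathsf{HO}(\mathsf{dgcat}),\mathbb{D})$, i.e. that for a morphism $G$ commuting with homotopy colimits, $G$ inverts $\mathcal{E}^s_{un}$ if and only if $F:=G\circ\Phi\circ\mathbb{R}\underline{h}$ satisfies condition $A)$.

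To prove this last equivalence I would argue as follows. For a split short exact sequence $0\to\mathcal{A}\stackrel{i_{\mathcal{A}}}{\to}\mathcal{B}\stackrel{P}{\to}\mathcal{C}\to 0$ in $\mathsf{Ho}(\mathsf{dgcat})$, the dg functor $i_{\mathcal{A}}$ is a homotopy monomorphism (lemma~\ref{mono}) with $\mathcal{B}/\mathcal{A}\simeq\mathcal{C}$ in $\mathsf{Hmo}$, and the right adjoint $R$ with $R\circ i_{\mathcal{A}}=\mathbf{1}$ forces $F(i_{\mathcal{A}})$ to be a split monomorphism in the triangulated category $\mathbb{D}(e)$; hence the canonical triangle on $F(i_{\mathcal{A}})$ splits automatically, and condition $A)$ for this sequence is equivalent to the canonical comparison morphism $\mathsf{cone}(F(i_{\mathcal{A}}))\to F(\mathcal{B}/\mathcal{A})$ being invertible. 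Since $G$ commutes with homotopy colimits it sends $\mathsf{cone}$ in ${\mathsf{L}_{\Sigma,P}\mathsf{Hot}_{\mathsf{dgcat}_f}}_{\bullet}$ to $\mathsf{cone}$ in $\mathbb{D}$, so this is exactly the statement that $G(e)$ inverts the morphism $S_{i_{\mathcal{A}}}$. For the elements of $\mathcal{E}^s$ (retractions with strict finite $I$-cell source and target) this already shows $G$ inverts $\mathcal{E}^s_{un}$ iff $F$ satisfies $A)$ for those particular split short exact sequences; the passage to arbitrary split short exact sequences is then supplied by the variant of theorem~\ref{invert} recorded in remark~\ref{fib}, which rests on the approximation proposition~\ref{aproxsplit} together with the fact that $F$ commutes with filtered homotopy colimits and that split triangles and finite direct sums are stable under filtered homotopy colimits in a triangulated derivator.

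The main obstacle is this last step: the careful book-keeping between the unstable setting (where $\mathsf{cone}$, the set $\mathcal{E}^s_{un}$ and the $S$-morphisms are defined in ${\mathsf{L}_{\Sigma,P}\mathsf{Hot}_{\mathsf{dgcat}_f}}_{\bullet}$) and the triangulated target $\mathbb{D}$, and the reduction of condition $A)$ from arbitrary split short exact sequences to those built from strict finite $I$-cells, which is exactly what forces the use of proposition~\ref{aproxsplit}. Everything else — the three equivalences — is a formal splicing of universal properties already established (theorems~\ref{HellerT}, \ref{repre}, \ref{Cisinsk} and proposition~\ref{ext}), entirely parallel to the proof of theorem~\ref{principal}.
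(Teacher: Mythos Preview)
Your proposal is correct and follows essentially the same three-step decomposition as the paper's proof: Heller's stabilization universal property (theorem~\ref{HellerT}, via theorem~\ref{repre}) for the step $\mathcal{M}^{add}_{dg}\to\mathcal{M}^{unst}_{dg}$, Cisinski's localization theorem~\ref{Cisinsk} for the step to ${\mathsf{L}_{\Sigma,P}\mathsf{Hot}_{\mathsf{dgcat}_f}}_{\bullet}$, and proposition~\ref{ext} for the descent to $\mathsf{HO}(\mathsf{dgcat})$. The only difference is that the paper dispatches the final identification of $\underline{\mathsf{Hom}}_{!,\mathcal{E}^s_{un}}$ with $\underline{\mathsf{Hom}}_{flt,A),p}$ in a single sentence (``we observe that since $\mathbb{D}$ is a strong triangulated derivator\ldots''), whereas you spell out explicitly that this rests on the split variant of theorem~\ref{invert} recorded in remark~\ref{fib} and on proposition~\ref{aproxsplit}; your more careful unpacking of this step is entirely in line with what the paper intends.
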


\begin{proof}
By theorem~\ref{HellerT}, we have an equivalence of categories
$$
\underline{\mathsf{Hom}}_!(\mathcal{M}^{add}_{dg},
\mathbb{D}) \stackrel{\sim}{\longrightarrow}
\underline{\mathsf{Hom}}_!(\mathcal{M}_{dg}^{unst},\mathbb{D})\,.$$
By theorem~\ref{Cisinsk}, we have an equivalence of categories
$$  \underline{\mathsf{Hom}}_!(\mathcal{M}_{dg}^{unst},\mathbb{D})
\stackrel{\sim}{\longrightarrow} \underline{\mathsf{Hom}}_{!,\mathcal{E}^s_{un}}({\mathsf{L}_{\Sigma,P}
\mathsf{Hot}_{\mathsf{dgcat}_f}}_{\bullet}, \mathbb{D})\,.$$
Now, we observe that since $\mathbb{D}$ is a strong triangulated derivator, the category  $\underline{\mathsf{Hom}}_{!,\mathcal{E}^s_{un}}({\mathsf{L}_{\Sigma,P}
\mathsf{Hot}_{\mathsf{dgcat}_f}}_{\bullet}, \mathbb{D})$ identifies
with $\underline{\mathsf{Hom}}_{flt,\,A),\,p}(\mathsf{HO}(\mathsf{dgcat}),\mathbb{D})$.
This proves the theorem.
\end{proof}

\begin{terminology}
We call an object of the right hand side category of
theorem~\ref{principal1} an {\em additive invariant} of dg categories.
\end{terminology}

\begin{example}
\begin{itemize}
\item[-] The Hochschild and cyclic homology and the non-connective
  $K$-theory defined in section~\ref{labuniv} are examples of additive
  invariants.
\item[-] Another example is given by Waldhausen's connective
  $K$-theory spectrum
$$K^c: \mathsf{HO}(\mathsf{dgcat}) \rightarrow \mathsf{HO}(Spt)\,,$$
see \cite{Waldhausen}.
\end{itemize}
\end{example}

\begin{remark}
By theorem~\ref{principal1}, the morphism of derivators $K^c$ factors
through $\mathcal{U}_a$ and so gives rise to a morphism
$$ K^c:\mathcal{M}^{add}_{dg} \rightarrow \mathsf{HO}(Spt)\,.$$
\end{remark}
We now will prove that this morphism of derivators is co-representable in
$\mathcal{M}^{add}_{dg}$.

Let $\mathcal{A}$ be a small dg category.
\begin{Notation}\label{structural}
We denote by $K(\mathcal{A})^c \in \mathsf{Sp}^{\mathbb{N}}(\mathsf{L}_{\widetilde{\mathcal{E}^s_{un}}} \mathsf{L}_{\Sigma,P}
\mathsf{Fun}(\mathsf{dgcat}_f^o, Sset_{\bullet}))$ the spectrum such
that
$$ K(\mathcal{A})^c_n:=
|N.wS_{\bullet}^{(n+1)}\mathsf{rep}_{mor}(?,\mathcal{A})|, \, n\geq 0\,,$$
endowed with the natural structure morphisms 
$$ \beta_n: S^1 \wedge |N.wS_{\bullet}^{(n+1)}\mathsf{rep}_{mor}(?,\mathcal{A})|
\stackrel{\sim}{\longrightarrow}
|N.wS_{\bullet}^{(n+2)}\mathsf{rep}_{mor}(?,\mathcal{A})|,\, n\geq 0\,,$$
see \cite{Waldhausen}.
\end{Notation}

Notice that $\mathcal{U}_a(\mathcal{A})$ identifies in
$\mathsf{Ho}(\mathcal{M}^{add}_{dg})$ with the suspension spectrum given
by
$$ (\Sigma^{\infty}|N.w\mathsf{rep}_{mor}(?,\mathcal{A})|)_n := S^n
\wedge |N.w\mathsf{rep}_{mor}(?,\mathcal{A})|\,.$$
Now proposition~\ref{clef} and the fact that the morphism of derivators $\varphi$ commutes with homotopy
colimits implies that we have an isomorphism
$$\mathcal{U}_a(\mathcal{A})[1] \stackrel{\sim}{\rightarrow}
p_!\mathcal{U}_a(S_{\bullet}\mathcal{A})\,.$$
In particular, we have a natural morphism
$$\eta : \mathcal{U}_a(\mathcal{A})[1] \rightarrow
K(\mathcal{A})^c$$
in $\mathsf{Sp}^{\mathbb{N}}(\mathsf{L}_{\widetilde{\mathcal{E}^s_{un}}} \mathsf{L}_{\Sigma,P}
\mathsf{Fun}(\mathsf{dgcat}_f^o, Sset_{\bullet}))$ induced by the
identity in degree $0$.

\begin{theorem}\label{fibres}
The morphism $\eta$ is a fibrant resolution of $\mathcal{U}_a(\mathcal{A})[1]$.
\end{theorem}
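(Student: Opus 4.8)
The plan is to verify that $\eta: \mathcal{U}_a(\mathcal{A})[1] \to K(\mathcal{A})^c$ satisfies the two properties characterizing a fibrant resolution in the stable Quillen model category $\mathsf{Sp}^{\mathbb{N}}(\mathsf{L}_{\widetilde{\mathcal{E}^s_{un}}} \mathsf{L}_{\Sigma,P} \mathsf{Fun}(\mathsf{dgcat}_f^o, Sset_{\bullet}))$: first, that $K(\mathcal{A})^c$ is a fibrant object, i.e. an $\Omega$-spectrum of fibrant objects; second, that $\eta$ is a stable weak equivalence. For the first property, I would argue levelwise. By proposition~\ref{fibrant} applied to $S_{\bullet}^{(n)}\mathcal{A}_f$ (which is again a simplicial Morita fibrant dg category), each level $K(\mathcal{A})^c_n = |N.wS_{\bullet}^{(n+1)}\mathsf{rep}_{mor}(?,\mathcal{A})|$ is a fibrant object of $\mathsf{L}_{\widetilde{\mathcal{E}^s_{un}}} \mathsf{L}_{\Sigma,P}\mathsf{Fun}(\mathsf{dgcat}_f^o, Sset_{\bullet})$. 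The $\Omega$-spectrum condition amounts to the statement that the adjoints $K(\mathcal{A})^c_n \to \Omega K(\mathcal{A})^c_{n+1}$ of the structure maps $\beta_n$ are weak equivalences; this is precisely the content of Waldhausen's additivity/delooping theorem for the iterated $S_{\bullet}$-construction, which gives that $S^1 \wedge |N.wS_{\bullet}^{(n+1)}\mathsf{rep}_{mor}(?,\mathcal{A})| \to |N.wS_{\bullet}^{(n+2)}\mathsf{rep}_{mor}(?,\mathcal{A})|$ is a weak equivalence once $n \geq 1$ (and the first structure map is the canonical one coming from proposition~\ref{clef}).

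For the second property, I would use the explicit description of the stable weak equivalences in $\mathsf{Sp}^{\mathbb{N}}$ of an almost finitely generated model category, namely that a map into an $\Omega$-spectrum is a stable equivalence iff the induced maps on the colimit homotopy groups $\mathrm{colim}_n \pi_{*+n}(-_n)$ are isomorphisms, \emph{cf.}~\cite{Spectra}. Since $\mathcal{U}_a(\mathcal{A})[1]$ is (isomorphic in the stable homotopy category to) the suspension spectrum of $S^1 \wedge |N.w\mathsf{rep}_{mor}(?,\mathcal{A})|$, and $\eta$ is the identity in degree $0$, the claim reduces to showing that for each $n$ the natural map
$$S^n \wedge S^1 \wedge |N.w\mathsf{rep}_{mor}(?,\mathcal{A})| \longrightarrow |N.wS_{\bullet}^{(n+1)}\mathsf{rep}_{mor}(?,\mathcal{A})|$$
becomes an isomorphism after passing to the colimit over the structure maps. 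This follows by induction on $n$: the case $n=0$ is proposition~\ref{clef}, which identifies $S^1 \wedge N.w\mathsf{rep}_{mor}(?,\mathcal{A})$ with $|N.wS_{\bullet}\mathsf{rep}_{mor}(?,\mathcal{A})|$, and the inductive step iterates this identification, using that $S_{\bullet}^{(n+1)}\mathcal{A} = S_{\bullet}(S_{\bullet}^{(n)}\mathcal{A})$ together with the compatibility of $\mathsf{rep}_{mor}(?,-)$ with the $S_{\bullet}$-construction.

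The main obstacle will be the bookkeeping around the $\Omega$-spectrum condition: one must check that the structure maps $\beta_n$ of notation~\ref{structural} agree, up to coherent homotopy, with the maps produced by iterating proposition~\ref{clef}, so that the identification is genuinely levelwise and compatible with the spectrum structure rather than merely objectwise in each degree. This is where Waldhausen's fibration theorem \cite[1.6.4]{Waldhausen} and the split short exact sequences $0 \to \mathcal{A} \to PS_n\mathcal{A} \to S_n\mathcal{A} \to 0$ used in the proof of proposition~\ref{fibrant} must be deployed carefully: the homotopy fiber sequence $|N.wS_{\bullet}\mathcal{A}| \to |N.wS_{\bullet}PS_{\bullet}\mathcal{A}| \to |N.wS_{\bullet}S_{\bullet}\mathcal{A}|$ with contractible total space is what deloops $|N.wS_{\bullet}\mathcal{A}|$, and one needs the connectivity of $S_{\bullet}$ to ensure this delooping is an equivalence (not just after one suspension). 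Once the levelwise identification is set up compatibly, the remaining verifications — fibrancy of each level via proposition~\ref{fibrant}, and the colimit-homotopy-group computation — are formal consequences of the results already established.
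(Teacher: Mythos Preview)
Your approach is essentially the paper's: both first show $K(\mathcal{A})^c$ is a fibrant $\Omega$-spectrum (levelwise fibrancy via proposition~\ref{fibrant} iterated, $\Omega$-spectrum condition via Waldhausen additivity), then establish the weak equivalence by induction using the argument of proposition~\ref{clef} applied to $S_{\bullet}^{(n)}\mathcal{A}$ in place of $\mathcal{A}$. The one difference is framing. You aim only for a \emph{stable} equivalence via colimit homotopy groups, but that formulation is imprecise here since the base model category is the localized presheaf category, not $Sset_\bullet$; more to the point, it is unnecessary, because the induction you sketch actually proves the stronger \emph{levelwise} weak equivalence, which is what the paper establishes. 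Concretely, the paper factors $\eta_{n+1}$ as the structural map $\beta_n$ composed with $S^1 \wedge \eta_n$; the latter is a weak equivalence by induction and proposition~\ref{Cisin}, and the crucial point, which you rightly flag as the main obstacle, is that each $\beta_n$ is itself a weak equivalence in the localized category. The paper's mechanism for this is the bicartesian square: the homotopy cocartesian square produced by rerunning the argument of proposition~\ref{clef} with $S_{\bullet}^{(n+1)}\mathcal{A}$ is also homotopy cartesian because $K(\mathcal{A})^c_{n+1}$ has already been shown fibrant, so $\beta_n^\sharp: K(\mathcal{A})^c_n \to \Omega K(\mathcal{A})^c_{n+1}$ is a weak equivalence, and then the bicartesian property gives $\beta_n$ itself.
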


\begin{proof}
We prove first that $K(\mathcal{A})^c$ is a fibrant object in $\mathsf{Sp}^{\mathbb{N}}(\mathsf{L}_{\widetilde{\mathcal{E}^s_{un}}} \mathsf{L}_{\Sigma,P}
\mathsf{Fun}(\mathsf{dgcat}_f^o, Sset_{\bullet}))$. By \cite{Hovey}\cite{Schwede} we need to show that $K(\mathcal{A})^c$ is an
$\Omega$-spectrum, i.e. that $K(\mathcal{A})^c_n$ is a fibrant object $\mathsf{L}_{\widetilde{\mathcal{E}^s_{un}}} \mathsf{L}_{\Sigma,P}
\mathsf{Fun}(\mathsf{dgcat}_f^o, Sset_{\bullet})$ and that the induced
map
$$ K(\mathcal{A})^c_n \rightarrow \Omega K(\mathcal{A})^c_{n+1}$$
is a weak equivalence. By Waldhausen's
additivity theorem, see \cite{Waldhausen}, we have weak equivalences
$$ K(\mathcal{A})^c_n \stackrel{\sim}{\rightarrow}
\Omega K(\mathcal{A})^c_{n+1}$$
in $\mathsf{Fun}(\mathsf{dgcat}_f^o,Sset_{\bullet})$. Now observe that
for every integer $n$, $K^c(\mathcal{A})_n$ satisfies conditions
$(1)$-$(3)$ of proposition~\ref{fibrant}. Condition $(4)$ follows from
Waldhausen's fibration theorem, as in the proof of
proposition~\ref{clef}, applied to the $S_{\bullet}$-construction. This shows
that $K(\mathcal{A})^c$ is an $\Omega$-spectrum.

We now prove that $\eta$ is a (componentwise) weak equivalence in \newline
$\mathsf{Sp}^{\mathbb{N}}(\mathsf{L}_{\widetilde{\mathcal{E}^s_{un}}} \mathsf{L}_{\Sigma,P}
\mathsf{Fun}(\mathsf{dgcat}_f^o, Sset_{\bullet}))$. For this, we prove
first that the structural morphisms
$$ \beta_n: S^1 \wedge N.wS_{\bullet}^{(n+1)}\mathsf{rep}_{mor}(?,\mathcal{A})
\stackrel{\sim}{\longrightarrow}
|N.wS_{\bullet}^{(n+2)}\mathsf{rep}_{mor}(?,\mathcal{A})|,\, n\geq
0\,,$$ 
see notation~\ref{structural}, are weak equivalences in $\mathsf{L}_{\widetilde{\mathcal{E}^s_{un}}} \mathsf{L}_{\Sigma,P}
\mathsf{Fun}(\mathsf{dgcat}_f^o, Sset_{\bullet})$. By considering the same argument as in the proof of
proposition~\ref{clef}, using $S_{\bullet}^{(n+1)}\mathcal{A}$ instead
of $\mathcal{A}$, we obtain the following homotopy cocartesian square
$$
\xymatrix{
K(\mathcal{A})^c_n \ar[d] \ar[r] &
p_!(\mathcal{U}_u(PS_{\bullet}^{(n+2)}\mathcal{A})) \ar[d] \\
\ast \ar[r] & K(\mathcal{A})^c_{n+1}
}
$$
in $\mathsf{L}_{\widetilde{\mathcal{E}^s_{un}}} \mathsf{L}_{\Sigma,P}
\mathsf{Fun}(\mathsf{dgcat}_f^o, Sset_{\bullet})$ with
$p_!(\mathcal{U}_u(PS_{\bullet}^{(n+2)}\mathcal{A}))$
contractible. Since $K(\mathcal{A})^c_{n+1}$ is fibrant, proposition
$1.5.3$ in \cite{Waldhausen}, implies that the previous square is also
homotopy cartesian and so the canonical morphism
$$ \beta_n^{\sharp}: K(\mathcal{A})^c_n \rightarrow
\Omega K(\mathcal{A})^c_{n+1}$$
is a weak equivalence in $\mathsf{L}_{\widetilde{\mathcal{E}^s_{un}}} \mathsf{L}_{\Sigma,P}
\mathsf{Fun}(\mathsf{dgcat}_f^o, Sset_{\bullet})$. We now show that
the structure morphism $\beta_n$, which corresponds to
$\beta_n^{\sharp}$ by adjunction, see \cite{Waldhausen}, is also a
weak equivalence. The derived adjunction $(S^1\wedge-, \mathbb{R} \Omega(-))$
induces the following commutative diagram
$$
\xymatrix{
S^1 \wedge K(\mathcal{A})^c_n \ar[dr]_{S^1 \wedge^{\mathbb{L}}
  \beta_n^{\sharp}} \ar[r] &
K(\mathcal{A})^c_{n+1} \\
 & S^1 \wedge^{\mathbb{L}} \Omega K(\mathcal{A})_{n+1}^c \ar[u]^{\sim}
}
$$
in $\mathsf{Ho}(\mathsf{L}_{\widetilde{\mathcal{E}^s_{un}}} \mathsf{L}_{\Sigma,P}
\mathsf{Fun}(\mathsf{dgcat}_f^o, Sset_{\bullet}))$ where the vertical
arrow is an isomorphism since the previous square is homotopy
bicartesian. This shows that the induced morphism
$$S^1 \wedge K(\mathcal{A})_n^c  \longrightarrow
K(\mathcal{A})^c_{n+1}$$
is an isomorphism in $\mathsf{Ho}(\mathsf{L}_{\widetilde{\mathcal{E}^s_{un}}} \mathsf{L}_{\Sigma,P}
\mathsf{Fun}(\mathsf{dgcat}_f^o, Sset_{\bullet}))$ and so $\beta_n$ is
a weak equivalence.

Now to prove that $\eta$ is a componentwise weak equivalence, we
proceed by induction~: observe that the zero component of the morphism $\eta$ is the
identity. Now suppose that the $n$-component of $\eta$ is a weak
equivalence. The $n+1$-component of $\eta$ is the composition
of $\beta_{n+1}$, which is a weak equivalence, with the suspension of the
$n$-component of $\eta$, which by
proposition~\ref{Cisin} is also a weak equivalence.

This proves the theorem.
\end{proof}

Let $\mathcal{A}$ and $\mathcal{B}$ be small dg categories.
We denote by
$\mathsf{Hom}^{\mathsf{Sp}^{\mathbb{N}}}(-,-)$ the
spectrum of morphisms in $\mathsf{Sp}^{\mathbb{N}}(\mathsf{L}_{\widetilde{\mathcal{E}^s_{un}}} \mathsf{L}_{\Sigma,P}
\mathsf{Fun}(\mathsf{dgcat}_f^o, Sset_{\bullet}))$.

\begin{theorem}\label{corep}
We have the following weak equivalence of spectra
$$ \mathsf{Hom}^{\mathsf{Sp}^{\mathbb{N}}}(\mathcal{U}_a(\mathcal{A}),\mathcal{U}_a(\mathcal{B})[1])
\stackrel{\sim}{\rightarrow}
K^c(\mathsf{rep}_{mor}(\mathcal{A},\mathcal{B}))\,,$$
where $K^c(\mathsf{rep}_{mor}(\mathcal{A},\mathcal{B}))$ denotes Waldhausen's connective $K$-theory spectrum of
$\mathsf{rep}_{mor}(\mathcal{A},\mathcal{B})$.

In particular, we
have the following weak equivalence of simplicial sets
$$\mathsf{Map}(\mathcal{U}_a(\mathcal{A}),\mathcal{U}_a(\mathcal{B})[1])
\stackrel{\sim}{\rightarrow}
|N.wS_{\bullet}\mathsf{rep}_{mor}(\mathcal{A},\mathcal{B})|$$
and so the isomorphisms
$$\pi_{i+1}
\mathsf{Map}(\mathcal{U}_a(A),\mathcal{U}_a(\mathcal{B})[1])
\stackrel{\sim}{\rightarrow} K_i(\mathsf{rep}_{mor}(\mathcal{A},\mathcal{B})), \,\,\, \forall i \geq 0\,.$$
\end{theorem}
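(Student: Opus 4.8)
The plan is to deduce theorem~\ref{corep} from the fibrant resolution computed in theorem~\ref{fibres}, applied not to $\mathcal{A}$ alone but to the dg category $\mathsf{rep}_{mor}(\mathcal{A},\mathcal{B})$, together with the co-representation of mapping spectra in the stable Quillen model category $\mathsf{Sp}^{\mathbb{N}}(\mathsf{L}_{\widetilde{\mathcal{E}^s_{un}}}\mathsf{L}_{\Sigma,P}\mathsf{Fun}(\mathsf{dgcat}_f^o,Sset_{\bullet}))$ for $\mathcal{M}^{add}_{dg}$. First I would recall that, by construction, $\mathcal{U}_a(\mathcal{A})$ is the suspension spectrum of $N.w\mathsf{rep}_{mor}(?,\mathcal{A})$, which as a presheaf on $\mathsf{dgcat}_f$ is (up to the equivalences of sections~\ref{homotopy}, \ref{chappoint} and proposition~\ref{point}) just the representable-type object $\mathbb{R}\underline{h}(\mathcal{A})$. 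Hence $\mathsf{Hom}^{\mathsf{Sp}^{\mathbb{N}}}(\mathcal{U}_a(\mathcal{A}),-)$ evaluated on a fibrant spectrum $E$ computes, levelwise, $\mathsf{Map}(\mathbb{R}\underline{h}(\mathcal{A}),E_n)$, which by the Yoneda-type computation underlying proposition~\ref{kth} is weakly equivalent to $E_n(\mathcal{A})$. The key input is therefore that $\mathcal{U}_a(\mathcal{B})[1]$ admits as fibrant resolution the $\Omega$-spectrum $K(\mathcal{B})^c$ of notation~\ref{structural}, whose $n$th space is $|N.wS_{\bullet}^{(n+1)}\mathsf{rep}_{mor}(?,\mathcal{B})|$; this is precisely theorem~\ref{fibres}.

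The second step is to identify the resulting spectrum with Waldhausen's connective $K$-theory of $\mathsf{rep}_{mor}(\mathcal{A},\mathcal{B})$. Evaluating the fibrant model $K(\mathcal{B})^c$ at $\mathcal{A}$ gives, in degree $n$, the space $|N.wS_{\bullet}^{(n+1)}\mathsf{rep}_{mor}(\mathcal{A},\mathcal{B})|$, with structure maps induced by the $\beta_n$ of notation~\ref{structural}. I would then invoke the composition law in $\mathsf{Hmo}$, namely that $\mathsf{rep}_{mor}(\mathcal{A},\mathsf{rep}_{mor}(?,\mathcal{B}))$ identifies with $\mathsf{rep}_{mor}(?,\mathsf{rep}_{mor}(\mathcal{A},\mathcal{B}))$ — this is the internal Hom property recorded in corollary~\ref{adjonct6} and remark~\ref{monoi} — so that the presheaf $\mathsf{rep}_{mor}(?,\mathcal{B})$ evaluated after the contravariant functor $\mathsf{rep}_{mor}(\mathcal{A},-)$ yields $\mathsf{rep}_{mor}(?,\mathsf{rep}_{mor}(\mathcal{A},\mathcal{B}))$. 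Consequently the spectrum obtained is exactly $\{|N.wS_{\bullet}^{(n+1)}\mathsf{rep}_{mor}(\mathcal{A},\mathcal{B})|\}_{n\geq 0}$ with Waldhausen's structure maps, which by definition is $K^c(\mathsf{rep}_{mor}(\mathcal{A},\mathcal{B}))$.

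The third, essentially formal, step records the consequences: taking the zeroth space and applying proposition~\ref{kth} (or directly the definition of $\mathsf{Map}$ as the underlying space of a mapping spectrum, since the spectra involved are $\Omega$-spectra) gives the weak equivalence of simplicial sets $\mathsf{Map}(\mathcal{U}_a(\mathcal{A}),\mathcal{U}_a(\mathcal{B})[1])\stackrel{\sim}{\rightarrow}|N.wS_{\bullet}\mathsf{rep}_{mor}(\mathcal{A},\mathcal{B})|$, and passing to homotopy groups with the degree shift $\pi_{i+1}|N.wS_{\bullet}\mathcal{C}|\simeq K_i(\mathcal{C})$ from \cite{Waldhausen} gives the stated isomorphisms $\pi_{i+1}\mathsf{Map}(\mathcal{U}_a(A),\mathcal{U}_a(\mathcal{B})[1])\stackrel{\sim}{\rightarrow}K_i(\mathsf{rep}_{mor}(\mathcal{A},\mathcal{B}))$ for all $i\geq 0$.

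The main obstacle I anticipate is not any single deep theorem but the careful bookkeeping in the second step: one must check that the identification $\mathsf{rep}_{mor}(\mathcal{A},\mathsf{rep}_{mor}(?,\mathcal{B}))\simeq\mathsf{rep}_{mor}(?,\mathsf{rep}_{mor}(\mathcal{A},\mathcal{B}))$ is compatible with the $S_{\bullet}$-construction and with Waldhausen's structural suspension maps $\beta_n$, i.e. that the enrichment in complexes of notation~\ref{notacao} is respected by the internal-Hom identification, so that the spectrum structure on the mapping spectrum matches the one defining $K^c$. A secondary technical point is ensuring that $\mathsf{Hom}^{\mathsf{Sp}^{\mathbb{N}}}(\mathcal{U}_a(\mathcal{A}),-)$ genuinely computes the derived mapping spectrum — this requires $\mathcal{U}_a(\mathcal{A})$ to be cofibrant (true, being a suspension spectrum of a cofibrant presheaf) and the target $K(\mathcal{B})^c$ to be fibrant, which is exactly what theorem~\ref{fibres} supplies; the equivalence $\mathsf{St}(\mathcal{M}^{unst}_{dg})\stackrel{\sim}{\rightarrow}\mathsf{HO}(\mathsf{Sp}^{\mathbb{N}}(\cdots))$ of theorem~\ref{repre} then guarantees that this model-categorical computation indeed represents morphisms in $\mathcal{M}^{add}_{dg}$.
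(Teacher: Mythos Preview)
Your overall strategy matches the paper's, and for $\mathcal{A}\in\mathsf{dgcat}_f$ your first step is exactly right: $\mathcal{U}_a(\mathcal{A})$ is the cofibrant suspension spectrum on a representable-type presheaf, theorem~\ref{fibres} gives $K(\mathcal{B})^c$ as fibrant replacement of $\mathcal{U}_a(\mathcal{B})[1]$, and the Yoneda evaluation yields $K(\mathcal{B})^c(\mathcal{A})=K^c(\mathsf{rep}_{mor}(\mathcal{A},\mathcal{B}))$ directly. In that case your second step is unnecessary: no internal-Hom identification is needed, since $K(\mathcal{B})^c_n(\mathcal{A})=|N.wS_\bullet^{(n+1)}\mathsf{rep}_{mor}(\mathcal{A},\mathcal{B})|$ is literally the definition.

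The genuine gap is that you treat the Yoneda evaluation $\mathsf{Map}(\mathbb{R}\underline{h}(\mathcal{A}),E_n)\simeq E_n(\mathcal{A})$ as valid for arbitrary $\mathcal{A}$. It is not: the presheaves live on $\mathsf{dgcat}_f$, and $\mathbb{R}\underline{h}(\mathcal{A})$ is only weakly equivalent to a representable when $\mathcal{A}\in\mathsf{dgcat}_f$ (this is exactly the content of remark~\ref{funcpont} and the equivalence of lemma~\ref{fulfat}). For general $\mathcal{A}$ the expression ``$E_n(\mathcal{A})$'' is not even defined. The paper closes this gap by a filtered-colimit reduction: write $\mathcal{A}\simeq\mathrm{hocolim}_i\,\mathcal{A}_i$ with $\mathcal{A}_i\in\mathsf{dgcat}_f$ (proposition~\ref{prop}), use that $\mathcal{U}_a$ commutes with filtered homotopy colimits to get $\mathsf{Hom}^{\mathsf{Sp}^{\mathbb{N}}}(\mathcal{U}_a(\mathcal{A}),-)\simeq\mathrm{holim}_i\,\mathsf{Hom}^{\mathsf{Sp}^{\mathbb{N}}}(\mathcal{U}_a(\mathcal{A}_i),-)$, apply the finite case, and then identify $\mathrm{holim}_i\,K^c(\mathsf{rep}_{mor}(\mathcal{A}_i,\mathcal{B}))\simeq K^c(\mathsf{rep}_{mor}(\mathcal{A},\mathcal{B}))$. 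Your internal-Hom argument does not substitute for this step.
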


\begin{proof}
Suppose first that $\mathcal{A}$ belongs to $\mathsf{dgcat}_f$. In
this case $\mathcal{U}_a(\mathcal{A})$ identifies with the suspension spectrum
$$\Sigma^{\infty}|N.w\mathsf{rep}_{mor}(?,\mathcal{A})|$$
which is cofibrant in  $\mathsf{Sp}^{\mathbb{N}}(\mathsf{L}_{\widetilde{\mathcal{E}^s_{un}}} \mathsf{L}_{\Sigma,P}
\mathsf{Fun}(\mathsf{dgcat}_f^o, Sset_{\bullet}))$. Since
$\mathcal{A}$ belongs to $\mathsf{dgcat}_f$ and by
theorem~\ref{fibres} we have the following equivalences
$$
\begin{array}{rcl}
\mathsf{Hom}^{\mathsf{Sp}^{\mathbb{N}}}(\mathcal{U}_a(\mathcal{A}),\mathcal{U}_a(\mathcal{B})[1]) & \simeq
& \mathsf{Hom}^{\mathsf{Sp}^{\mathbb{N}}}(\mathcal{U}_a(\mathcal{A}), K^c(\mathcal{B}))\\
 & \simeq & K^c(\mathcal{B})(\mathcal{A})\\
 & \simeq & K^c(\mathsf{rep}_{mor}(\mathcal{A},\mathcal{B}))\,.
\end{array}
$$
Now, for a dg category $\mathcal{A}$ we have an equivalence
$$ \underset{i \in I}{\mbox{hocolim}} \, \mathcal{A}_i
\stackrel{\sim}{\longrightarrow} \mathcal{A}\,,$$
where $I$ is a filtered category and $\mathcal{A}_i$ belongs to
$\mathsf{dgcat}_f$. Since $\mathcal{U}_a$ commutes with filtered
homotopy colimits, we have
$$
\begin{array}{rcl}
\mathsf{Hom}^{\mathsf{Sp}^{\mathbb{N}}}(\mathcal{U}_a(A),\mathcal{U}_a(\mathcal{B})[1]) & \simeq
& \mathsf{Hom}^{\mathsf{Sp}^{\mathbb{N}}}(\underset{i \in
  I}{\mbox{hocolim}}\, \mathcal{U}_a(\mathcal{A}_i), \mathcal{U}_a(\mathcal{B})[1])\\
 & \simeq & \underset{i \in I}{\mbox{holim}} \, \mathsf{Hom}^{\mathsf{Sp}^{\mathbb{N}}}(\mathcal{U}_a(A),\mathcal{U}_a(\mathcal{B})[1])\\
 & \simeq & \underset{i \in I}{\mbox{holim}} \, K^c(\mathsf{rep}_{mor}(\mathcal{A}_i,\mathcal{B}))\\
 & \simeq & K^c(\mathsf{rep}_{mor}(\underset{i \in
   I}{\mbox{hocolim}} \, \mathcal{A}_i,\mathcal{B}))\\
 & \simeq & K^c(\mathsf{rep}_{mor}(\mathcal{A},\mathcal{B}))\,.
\end{array}
$$
This proves the theorem.
\end{proof}

\begin{remark}
Remark that if in the above theorem, we consider $\mathcal{A}=k$, we
have
$$ \mathsf{Hom}^{\mathsf{Sp}^{\mathbb{N}}}(\mathcal{U}_a(k),\mathcal{U}_a(\mathcal{B})[1])
\stackrel{\sim}{\rightarrow}
K^c(\mathcal{B})\,.$$
This shows that Waldhausen's connective $K$-theory spectrum becomes
co-representable in $\mathcal{M}_{dg}^{add}$.
\end{remark}

\section{Concluding remarks}\label{vistas}

By the universal properties of $\mathcal{U}_u$, $\mathcal{U}_a$ and $\mathcal{U}_l$, we
obtain the following diagram~:
$$
\xymatrix{
\mathsf{HO}(\mathsf{dgcat}) \ar[r]^-{\mathcal{U}_u}
\ar[dr]^-{\mathcal{U}_a} \ar[ddr]_-{\mathcal{U}_l}
& \mathcal{M}^{unst}_{dg} \ar@{.>}[d]\\
& \mathcal{M}^{add}_{dg}
\ar@{.>}[d]^{\Phi} \\
& \mathcal{M}^{loc}_{dg} \,.
}
$$
Notice that Waldhausen's connective $K$-theory is an example of an
additive invariant which is NOT a localizing one, see
\cite{ICM}. Waldhausen's connective $K$-theory becomes co-representable
in $\mathcal{M}_{dg}^{add}$ by theorem~\ref{corep}.

To the best of the author's knowledge, this is the first
conceptual characterization of Quillen-Waldhausen $K$-theory
\cite{Quillen} \cite{Waldhausen}.

An analoguous result should be
true for non-connective $K$-theory and the morphism $\Phi$ should be
thought of as co-representing {\em `the passage from additivity to localization'}.

\part{Ramifications de la th{\'e}orie homotopique des DG-cat{\'e}gories}
\chapter{The $Q$-model for the Morita homotopy theory of DG categories}

\textit{\small{Ce chapitre correspond {\`a} l'article \cite{dgquot}.}}

\section{Introduction}
A differential graded (=dg) category is a category enriched in the
category of complexes of modules over some commutative base ring
$k$. Dg categories (and their close cousins:
$A_{\infty}$-categories) provide a framework for `homological
geometry' and for `non commutative algebraic geometry' in the sense of
Drinfeld, Manin, Kontsevich, $\ldots$. This has motivated much
foundational work (\emph{cf.} \cite{ICM} for a survey) and notably
To{\"e}n's construction \cite{Toen} of an internal $\mathsf{Hom}$-functor
for the homotopy category of dg categories with respect to the class
of quasi-equivalences (i.e. the dg functors $F:\mathcal{A} \rightarrow
\mathcal{B}$ which induce quasi-isomorphisms in the morphism complexes
and equivalences $\mathsf{H}^0(\mathcal{A}) \rightarrow
\mathsf{H}^0(\mathcal{B})$). This construction was extended in \cite{IMRN} \cite{addendum} to the class of Morita dg
functors (i.e. the functors $F:\mathcal{A} \rightarrow
\mathcal{B}$ which induce an equivalence $\mathcal{D}(\mathcal{A})
\stackrel{\sim}{\rightarrow} \mathcal{D}(\mathcal{B})$
between derived categories).

One of the main problems which To{\"e}n had to overcome in \cite{Toen} was
the fact that the natural model structure \cite{cras} on the category
of dg categories is not compatible with the tensor product~: In fact,
the tensor product of two cofibrant dg categories is not cofibrant in
general, in the same way as the tensor product of two noncommutative
free algebras is not noncommutative free, in general. As a
consequence, the category of dg categories is not a monoidal model
category in the sense of \cite{Hovey} and its $\mathsf{Hom}$-functor
cannot be derived to yield the internal $\mathsf{Hom}$-functor for
the homotopy category.

In this paper, we propose a new model for the Morita homotopy category
of dg categories (for the case where the ground ring $k$ is a
field). Its objects are the localization pairs \cite{cyclichomology}, i.e. the
inclusions $\mathcal{A}_0 \subset \mathcal{A}_1$ of full dg
subcategories. Morphisms are defined in the natural way. By
definition, a morphism $\mathcal{A} \rightarrow \mathcal{A}'$ is a
weak equivalence iff it induces a Morita dg functor $\mathcal{A}_1/\mathcal{A}_0 \rightarrow \mathcal{A}_1'/\mathcal{A}_0'$
in the Drinfeld {\em dg quotients} \cite{Drinfeld}. We therefore call
this model category the {\em $Q$-model}. We show that it admits a
natural closed monoidal structure which induces the correct tensor
product in the homotopy category. The $Q$-model is not a monoidal
model category, either. Our main theorem (\ref{rep}) states that
nevertheless, its internal $\mathsf{Hom}$-functor admits a derived
functor which yields the correct closed structure in the homotopy
category. We thus obtain a new description of To{\"e}n's internal
$\mathsf{Hom}$-objects. It is expected that it will help to clarify
their links with the $A_{\infty}$-formalism, {\em cf.}
\cite{keller}, and in a future `derived deformation theory' {\em cf.}
\cite{lowen}, analogous to the deformation theory for abelian
categories as developed in \cite{lowen-Vdb}.

\section{Preliminaries}
In what follows, $k$ will denote a field. The
tensor product $\otimes$ will denote the tensor product over $k$. Let
$\mathsf{Ch}(k)$ denote the category of complexes over $k$. By a
{\it dg category}, we mean a differential graded $k$ category, see definition~\ref{dgcategorie}. For a dg category
$\mathcal{A}$, we denote by $\mathcal{C}_{dg}(\mathcal{A})$ the dg
category of right $\mathcal{A}$ dg modules and by $\, \widehat{ } : \mathcal{A} \rightarrow
\mathcal{C}_{dg}(\mathcal{A})$ the Yoneda dg functor. We write $\mathsf{dgcat}$
for the
category of small dg categories.
It is proven in theorem~\ref{theorem2}, that the
category $\mathsf{dgcat}$ admits a structure of cofibrantly generated
model category whose weak equivalences are the Morita dg functors
(i.e. the dg functors $F:\mathcal{A} \rightarrow \mathcal{B}$ which
induce an equivalence $\mathcal{D}(\mathcal{B})
\stackrel{\sim}{\rightarrow} \mathcal{D}(\mathcal{A})$ between derived
categories). We have
at our disposal an explicit set $I=\{Q, S(n)\}$ of
generating cofibrations and  an explicit set $J = \{R(n), F(n), I_n(k_0, \ldots, k_n),
  Lh_n(k_0, \ldots, k_n), C\}$ of generating trivial cofibrations.

\section{Homotopy of DG functors}

Let $\mathcal{B}$ be a dg category.

\begin{definition}\label{1path1}
Let $P(\mathcal{B})$ be the dg category, see~\cite[2.9]{Drinfeld}, whose
objects are the closed morphisms of degree zero in $\mathcal{B}$
$$ X \stackrel{f}{\longrightarrow} Y\,,$$
that become invertible in $\mathsf{H}^0(\mathcal{B})$.
We define the complex of morphisms
$$ \mathsf{Hom}_{P(\mathcal{B})}(X\stackrel{f}{\rightarrow}Y,
X'\stackrel{f'}{\rightarrow}Y')$$
as the homotopy pull-back in $\mathsf{Ch}(k)$ of the diagram
$$
\xymatrix{
& \mathsf{Hom}_{\mathcal{B}}(Y,Y') \ar[d]^{f^*} \\
\mathsf{Hom}_{\mathcal{B}}(X,X') \ar[r]^{f'_*} & \mathsf{Hom}_{\mathcal{B}}(X,Y')\,.
}
$$
By this homotopy pull-back we mean the complex
$$ \{ u \in
\mathsf{Hom}_{\mathcal{C}_{dg}(\mathcal{B})}(\mbox{cone}(\widehat{f}),
\mbox{cone}(\widehat{f})) | \,\, \pi' u i =0\}\,,$$
where $i: \widehat{Y} \rightarrow \mbox{cone}(\widehat{f})$ and
$\pi':\mbox{cone}(\widehat{f'}) \rightarrow X'[1]$ are the natural
morphisms in $\mathcal{C}_{dg}(\mathcal{B})$, see
\cite[2.9]{Drinfeld}.
\end{definition}

\begin{remark}
We have a natural commutative diagram in $\mathsf{dgcat}$
$$
\xymatrix{
\mathcal{B} \ar[rr]^{\Delta} \ar[dr]_i & & \mathcal{B}\times
\mathcal{B} \\
& P(\mathcal{B}) \ar[ur]_{p_0\times p_1} & \,,
}
$$
where $i$ is the dg functor that associates to an object $B$ of
$\mathcal{B}$ the morphism $B \stackrel{Id}{\rightarrow}B$ and $p_0$,
resp. $p_1$, is the dg functor that sends a closed morphism $X
\stackrel{f}{\rightarrow} Y$ to $X$, resp. $Y$.
\end{remark}

\begin{lemma}\label{lempath}
The dg category $P(\mathcal{B})$ is a path object for $\mathcal{B}$,
see~\cite{Hirschhorn}, in the Quillen model
structure described in theorem~\ref{mal}.
\end{lemma}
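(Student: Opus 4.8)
\textbf{Proof strategy for Lemma \ref{lempath}.}
The plan is to verify directly the two defining properties of a path object in the sense of \cite{Hirschhorn}, relative to the Quillen model structure of theorem~\ref{mal}: namely that the factorization
$$\mathcal{B} \stackrel{i}{\longrightarrow} P(\mathcal{B}) \stackrel{p_0\times p_1}{\longrightarrow} \mathcal{B}\times\mathcal{B}$$
of the diagonal $\Delta$ has $i$ a weak equivalence and $p_0\times p_1$ a fibration. (We use the product model structure on $\mathcal{B}\times\mathcal{B}$, whose fibrations and weak equivalences are componentwise, which is legitimate since every object of $\mathsf{dgcat}$ is fibrant by remark~\ref{toutfibrant}.) First I would show that $i$ is a quasi-equivalence. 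On objects, $i$ is essentially surjective onto $\mathsf{H}^0(P(\mathcal{B}))$: an object $X\stackrel{f}{\to}Y$ of $P(\mathcal{B})$ is isomorphic in $\mathsf{H}^0(P(\mathcal{B}))$ to $X\stackrel{Id}{\to}X$ via the pair $(Id_X, g)$ where $[g]=[f]^{-1}$ in $\mathsf{H}^0(\mathcal{B})$ — one checks this pair lifts to a closed degree-zero morphism of the path category that becomes invertible in $\mathsf{H}^0$. For full faithfulness on morphism complexes, I would compute $\mathsf{Hom}_{P(\mathcal{B})}(iX, iY)$ from Definition~\ref{1path1}: with $f=Id_X$, $f'=Id_Y$, the cones $\mathrm{cone}(\widehat{Id_X})$ and $\mathrm{cone}(\widehat{Id_Y})$ are contractible, and the subcomplex of morphisms $u$ with $\pi' u i = 0$ retracts onto $\mathsf{Hom}_{\mathcal{B}}(X,Y)$ up to quasi-isomorphism; more conceptually, the homotopy pull-back of $\mathsf{Hom}_{\mathcal{B}}(X,Y)\xrightarrow{Id}\mathsf{Hom}_{\mathcal{B}}(X,Y)\xleftarrow{Id}\mathsf{Hom}_{\mathcal{B}}(X,Y)$ is quasi-isomorphic to $\mathsf{Hom}_{\mathcal{B}}(X,Y)$, and $i$ induces exactly this comparison.

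Next I would show $p_0\times p_1$ is a fibration, using the explicit description of fibrations in proposition~\ref{nova3}: I must check (1) surjectivity in each degree of the morphism complexes, and (2) the lifting property for morphisms becoming invertible in $\mathsf{H}^0$. For (1), given objects $(X\stackrel{f}{\to}Y)$, $(X'\stackrel{f'}{\to}Y')$ of $P(\mathcal{B})$ and a pair $(a,b)\in\mathsf{Hom}_{\mathcal{B}}(X,X')^n\times\mathsf{Hom}_{\mathcal{B}}(Y,Y')^n$, I must produce $u$ in the homotopy pull-back complex mapping to $(a,b)$; since the homotopy pull-back complex surjects (degreewise) onto $\mathsf{Hom}_{\mathcal{B}}(X,X')\times\mathsf{Hom}_{\mathcal{B}}(Y,Y')$ — the fibre being $\mathsf{Hom}_{\mathcal{B}}(X,Y')[-1]$, which has no degreewise obstruction — this is a direct check from the matrix description of morphisms between cones in $\mathcal{C}_{dg}(\mathcal{B})$ used already in the proof of proposition~\ref{nova}. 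For (2), a morphism $v$ in $\mathsf{Hom}_{P(\mathcal{B})}((p_0\times p_1)(X\to Y), (d_0\to d_1))$ becoming invertible in $\mathsf{H}^0$ amounts to a pair $(v_0, v_1)$ of morphisms of $\mathcal{B}$, each invertible in $\mathsf{H}^0(\mathcal{B})$; invertibility of $v_0$ and $v_1$ lets me build the target object $(d_0\stackrel{g}{\to}d_1)$ of $P(\mathcal{B})$ with $g$ the transported morphism $v_1 f v_0^{-1}$ (up to homotopy, made strict by a standard argument), and lift $v$ to a morphism of $P(\mathcal{B})$ invertible in $\mathsf{H}^0(P(\mathcal{B}))$.

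I expect the main obstacle to be the bookkeeping in step (2) of the fibration check: one needs to manufacture, from homotopy-invertible data $(v_0,v_1)$ in $\mathcal{B}$, a genuine object of $P(\mathcal{B})$ and a genuine lift — i.e. strictify the naively-only-homotopy-commutative square — and then verify the lift is invertible in $\mathsf{H}^0(P(\mathcal{B}))$. Here the key technical input is the same homotopy-pull-back/cone manipulation that appears throughout \cite[2.9]{Drinfeld} and in the proof of lemma~\ref{J-inj-Surj}, where one uses that $\widehat{H}$ (or here the relevant structure maps) induce quasi-isomorphisms on endomorphism algebras of cones over morphisms between representables, so that contractions and homotopy inverses lift. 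Once these lifts are in place, the two properties of a path object follow, and since $\mathsf{dgcat}$ with the structure of theorem~\ref{mal} has all objects fibrant, $P(\mathcal{B})$ is a \emph{good} path object, which is what is needed for the applications in the sequel (notably in reinterpreting Drinfeld's morphism category as a functorial path object).
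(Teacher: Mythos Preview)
Your strategy is correct and will work, but it diverges from the paper's proof in two places worth noting.

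For the essential surjectivity of $\mathsf{H}^0(i)$, the paper uses a cleaner morphism than yours: from $i(X)=(X\stackrel{Id}{\to}X)$ to $(X\stackrel{f}{\to}Y)$ it takes the triple $(Id_X,\,f,\,0)$, which is closed on the nose because $f\circ Id_X - f\circ Id_X = 0$ --- no homotopy inverse of $f$ need be chosen. Your morphism $(Id_X,g)$ with $[g]=[f]^{-1}$ works too (after fixing the direction: it goes from $(X\stackrel{f}{\to}Y)$ to $i(X)$, and you must supply a homotopy $h$ with $dh = Id_X - gf$), but it introduces choices you can avoid.

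For the fibration check, the paper does not verify condition~2) of proposition~\ref{nova3} directly. Instead it shows that \emph{contractions lift} along $p_0\times p_1$ --- given contractions $c_1,c_2$ of $X,Y$ in $\mathcal{B}$, the triple $(c_1,c_2,\,c_2\circ f\circ c_1)$ is a contraction of $(X\stackrel{f}{\to}Y)$ in $P(\mathcal{B})$ --- and then invokes the equivalence $\mbox{pre-tr}(P(\mathcal{B}))\stackrel{\sim}{\to}P(\mbox{pre-tr}(\mathcal{B}))$ to conclude. Your route via proposition~\ref{nova3} is more transparent but carries exactly the bookkeeping you anticipate. One remark that dissolves your ``main obstacle'': once you have shown $i$ is a quasi-equivalence, $\mathsf{H}^0(p_0)$ is an equivalence of categories (it is a one-sided inverse to the equivalence $\mathsf{H}^0(i)$), hence reflects isomorphisms. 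So the lift $u$ you construct is invertible in $\mathsf{H}^0(P(\mathcal{B}))$ simply because $p_0(u)=v_0$ is invertible in $\mathsf{H}^0(\mathcal{B})$ --- no cone manipulation is needed at that step.
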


\begin{proof}
We prove that the dg functor $i$ is a quasi-equivalence. Clearly
the dg functor $i$ induces a quasi-isomorphism in $\mathsf{Ch}(k)$
$$ \mathsf{Hom}_{\mathcal{B}}(X,Y) \stackrel{\sim}{\longrightarrow}
\mathsf{Hom}_{P(\mathcal{B})}(i(X), i(Y))\,,$$
for every object $X,Y \in \mathcal{B}$.
Remark that the functor $\mathsf{H}^0(i)$ is also essentially surjective.
In fact, let $X \stackrel{f}{\rightarrow}Y$ be an object of
$P(\mathcal{B})$. Consider the following morphism in $P(\mathcal{B})$
from $i(X)$ to $X \stackrel{f}{\rightarrow}Y$,
$$
\xymatrix{
X \ar[r]^{Id} \ar@{=}[d]  \ar@{}[dr]|{h=0} & X \ar[d]^f \\
X \ar[r]^f & Y\,,
}
$$
where $h$ denotes the zero homotopy. Remark that it becomes an
isomorphism in $\mathsf{H}^0(P(\mathcal{B}))$ simply because $f$
becomes an isomorphism in $\mathsf{H}^0(\mathcal{B})$. This proves
that the dg functor $i$ is a quasi-equivalence.
We will now show that the dg functor $p_0 \times p_1$ is a fibration
in the Quillen model structure of theorem~\ref{mal}.
Remark first, that by definition of $P(\mathcal{B})$ the dg functor
$p_0 \times p_1$ induces a surjective morphism in $\mathsf{Ch}(k)$
$$
\xymatrix{
\mathsf{Hom}_{P(\mathcal{B})}(X \stackrel{f}{\rightarrow}Y, X'
\stackrel{f'}{\rightarrow}Y') \ar@{->>}[rr]^{p_0\times
  p_1} & & \mathsf{Hom}_{\mathcal{B}}(X,X')\times
\mathsf{Hom}_{\mathcal{B}}(Y,Y')
}
$$
for every object $X \stackrel{f}{\rightarrow}Y$ and
$X'\stackrel{f'}{\rightarrow} Y$ in $P(\mathcal{B})$.
We will now show that contractions lift along the dg functor $P(\mathcal{B})
\stackrel{p_0\times p_1}{\longrightarrow} \mathcal{B}\times
\mathcal{B}$. Let $X \stackrel{f}{\rightarrow}Y$
be an object of $P(\mathcal{B})$. Remark that a contraction of $X
\stackrel{f}{\rightarrow}Y$ in $P(\mathcal{B})$ corresponds exactly to
the following morphisms in $\mathcal{B}$, $c_X \in
\mathsf{Hom}^{-1}_{\mathcal{B}}(X,X)$, $c_Y \in
\mathsf{Hom}^{-1}_{\mathcal{B}}(Y,Y)$ and $h \in
\mathsf{Hom}^{-2}_{\mathcal{B}}(X,Y)$ satisfying the relations
$d(c_X)=\mathbf{1}_X$, $d(c_Y)=\mathbf{1}_Y$ and $d(h)=c_Y\circ f +
f\circ c_X$.
Suppose now, that we have a contraction $(c_1,c_2)$ of $(X,Y)$
in $\mathcal{B}\times\mathcal{B}$. We can lift this contraction by
considering $c_X=c_1$, $c_Y=c_2$ and $h=c_2\circ f \circ c_1$. This
shows that contractions lift along the dg functor $P(\mathcal{B}) \stackrel{p_0 \times
  p_1}{\longrightarrow} \mathcal{B}\times\mathcal{B}$. We have the following equivalence of dg categories
$$ \mbox{pre-tr}(P(\mathcal{B})) \stackrel{\sim}{\longrightarrow}
P(\mbox{pre-tr}(\mathcal{B}))\,,$$
where $\mbox{pre-tr}$ denotes the pre-triangulated hull of a dg category,
see~\cite[2.4]{Drinfeld}. This implies that the dg functor $p_0 \times p_1$
is a fibration.
\end{proof}

Let $\mathcal{A}$ be a cofibrant dg category and $F,G:\mathcal{A}
\rightarrow \mathcal{B}$ dg functors. Since every dg category is
fibrant in the Quillen model structure of theorem~\ref{mal}, see remark~\ref{toutfibrant}, the dg functors $F$ and $G$ are
homotopic if and only if there exists a dg functor $H:\mathcal{A} \rightarrow
P(\mathcal{B})$ that makes the following diagram commute
$$
\xymatrix{
& \mathcal{B} \\
\mathcal{A} \ar[r]^-H \ar[ur]^F \ar[dr]_G & P(\mathcal{B}) \ar[u]_{P_0}
\ar[d]^{P_1} \\
& \mathcal{B}\,\,,
}
$$
see~\cite{Hirschhorn}.

\begin{remark}\label{cic}
Remark that a dg functor $H$ as above corresponds exactly,
see~\cite{cyclichomology}, to:
\begin{enumerate}
\item[-] a morphism $\eta A:F(A) \rightarrow G(A)$ of
  $Z^0(\mathcal{B})$ which becomes invertible in
  $\mathsf{H}^0(\mathcal{B})$ for all $A \in \mathcal{A}$ (but which
  will not be functorial in $A$, in general) and
\item[-] a morphism of graded $k$-modules homogeneous of degree $-1$
$$ h=h(A,B): \mathsf{Hom}_{\mathcal{A}}(A,B) \rightarrow
\mathsf{Hom}_{\mathcal{B}}(F(A),G(B))\,,$$
for all $A,B \in \mathcal{A}$ such that we have
$$(\eta B)(F(f)) -(G(f)(\eta A)=d(h(f))+h(d(f))$$
and
$$h(fg)=h(f)(F(g))+(-1)^n(G(f))h(g)\,$$
for all composable morphismes $f,g$ of $\mathcal{A}$, where $f$ is of
degree $n$.
\end{enumerate}
It is shown in~\cite{cyclichomology} that if we have a dg functor $H$
as above and the dg category $\mathcal{B}$ is stable under cones, we
can construct a sequence of dg functors
$$ F \rightarrow I \rightarrow G[1]\,,$$
where $I(A)$ is a contractible object of $\mathcal{B}$, for all $A \in \mathcal{B}$.
\end{remark}

\section{$Q$-model structure}\label{model}

\begin{definition}
A {\em localization pair} $\mathcal{A}$ is given by a small dg category
$\mathcal{A}_1$ and a full dg subcategory $\mathcal{A}_0 \subset
\mathcal{A}_1$. A {\em morphism} $F:\mathcal{A} \rightarrow
\mathcal{B}$ of localization pairs is given by a commutative square
$$
\xymatrix{
\mathcal{A}_0 \ar[d]_{F_0} \ar@{^{(}->}[r] & \mathcal{A}_1 \ar[d]^{F_1} \\
\mathcal{B}_0 \ar@{^{(}->}[r] & \mathcal{B}_1
}
$$
of dg functors.
\end{definition}
We denote by $\mathsf{Lp}$ the category of localization pairs.

Let $\mathcal{A}$ be a localization pair.

\begin{definition}
The {\em dg quotient} of $\mathcal{A}$, see~\cite{Drinfeld}, is the dg
category $\mathcal{A}_1/\mathcal{A}_0$
obtained from $\mathcal{A}_1$ by introducing a new morphism $h_X$ of
degree $-1$ for every object $X$ of $\mathcal{A}_0$ and by imposing the
relation $d(h_X)={\bf 1}_X$.
\end{definition}

\subsection{Morita model structure}\label{secMorita}

Let $L$ be the category with two objects $0$ and $1$ and with a unique
non identity morphism $0 \rightarrow 1$.

\begin{remark}
An immediate application of Theorem $11.6.1$ from \cite{Hirschhorn}
implies that the category $\mathsf{dgcat}^L$, i.e. the category of morphisms
in $\mathsf{dgcat}$, admits a structure of cofibrantly generated model
category whose weak equivalences $W$ are the componentwise Morita dg functors and with generating cofibrations $\mathbf{F}^L_{I}$ and
generating trivial cofibrations $\mathbf{F}^L_{J}$, where we use the
notation of \cite{Hirschhorn}:
\end{remark}
The functor $\mathbf{F}^i_?$,
$i=0,1$, from $\mathsf{dgcat}$ to $\mathsf{dgcat}^L$ is left adjoint
to the natural evaluation functor $Ev_i$, $i=0,1$, from
$\mathsf{dgcat}^L$ to $\mathsf{dgcat}$. By definition, we have $\mathbf{F}^L_{I} =
\mathbf{F}^0_{I} \cup \mathbf{F}^1_{I}$ and  $\mathbf{F}^L_{J} =
\mathbf{F}^0_{J} \cup \mathbf{F}^1_{J}$.

The inclusion functor $U :\mathsf{Lp} \rightarrow \mathsf{dgcat}^L$
admits a left adjoint $S$ which sends an object $G:\mathcal{B}_0
\rightarrow \mathcal{B}_1$ to the localization pair formed by
$\mathcal{B}_1$ and its full dg subcategory $\mathsf{Im}\,G$.

\begin{proposition}\label{naive}
The category $\mathsf{Lp}$ admits a structure of cofibrantly generated model
category whose weak equivalences $W$ are the componentwise Morita dg functors and with generating cofibrations $\mathbf{F}^L_{I}$ and
generating trivial cofibrations $\mathbf{F}^L_{J}$.
\end{proposition}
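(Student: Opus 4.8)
The plan is to transport the cofibrantly generated model structure on $\mathsf{dgcat}^L$ along the adjunction $(S,U)$ using a standard transfer (lifting) argument, such as Theorem $11.3.2$ of \cite{Hirschhorn}. First I would record that $\mathsf{Lp}$ is complete and cocomplete: limits are computed as in $\mathsf{dgcat}^L$ (a limit of localization pairs, viewed as morphisms $\mathcal{A}_0 \hookrightarrow \mathcal{A}_1$, is again a full dg subcategory inclusion, since a limit of full embeddings is a full embedding), and colimits are obtained by applying $S$ to the colimit computed in $\mathsf{dgcat}^L$. Next I would check the two hypotheses of the transfer theorem. The smallness hypothesis is immediate because every object of $\mathsf{dgcat}$ is small (see the remark before Lemma~\ref{important1}, citing \cite[10.4.1]{Hirschhorn}), hence so is every object of $\mathsf{Lp}$; thus the domains of $S(\mathbf{F}^L_J)$ — equivalently, of $\mathbf{F}^L_J$ after applying $S$ — are small relative to the relevant cell complexes. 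The substantive hypothesis is that every relative $S(\mathbf{F}^L_J)$-cell complex is a weak equivalence (a componentwise Morita dg functor).

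For that key step, the idea is to analyze pushouts in $\mathsf{Lp}$ along maps of the form $S(g)$ with $g \in \mathbf{F}^L_J = \mathbf{F}^0_J \cup \mathbf{F}^1_J$, and to compare them with the corresponding pushouts in $\mathsf{dgcat}$ on each of the two components $Ev_0$ and $Ev_1$. Concretely, given a pushout square in $\mathsf{Lp}$
$$
\xymatrix{
S(B_0 \stackrel{g}{\rightarrow} B_1) \ar[r] \ar[d] & \mathcal{A} \ar[d] \\
S(B_0' \stackrel{g'}{\rightarrow} B_1') \ar[r] & \mathcal{A}'\,,
}
$$
with $g$ a generating trivial cofibration $\mathbf{F}^i_j$ ($j \in J$), I would show that applying $Ev_0$ and $Ev_1$ yields, up to the identification of $S$-pushouts with full subcategories, pushouts in $\mathsf{dgcat}$ along the trivial cofibration $j$ or along an identity; since $J$-cell complexes in $\mathsf{dgcat}$ consist of Morita dg functors (this is exactly $J\text{-}\mathrm{cell} \subset W$ from the proof of Theorem~\ref{theorem2}, combining Lemmas~\ref{J-cell-W}, \ref{J-cell-dans-W} and \ref{J-cell-dans-W2}), the two induced component maps are Morita dg functors. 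One subtlety: the functor $S$ replaces a dg functor by its strict image, so the $\mathcal{A}_0'$-component of the pushout is not literally the $\mathsf{dgcat}$-pushout but a full dg subcategory of the $\mathcal{A}_1'$-pushout; I would argue that this full subcategory is Morita-equivalent to the naive pushout by the same cofinality/fullness considerations used in Lemma~\ref{pushout-homotopique}. Transfinite composition of such maps stays in $W$ because Morita dg functors are stable under transfinite composition (the functors $\mathsf{H}^0(\mathrm{pre\text{-}tr}(-))^{\kar}$ and the quasi-isomorphism condition both pass to transfinite colimits, as in Lemma~\ref{J-cell-W}).

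With both hypotheses verified, Theorem $11.3.2$ of \cite{Hirschhorn} produces the cofibrantly generated model structure on $\mathsf{Lp}$ with weak equivalences $W$, generating cofibrations $S(\mathbf{F}^L_I)$ and generating trivial cofibrations $S(\mathbf{F}^L_J)$; one then checks that $S(\mathbf{F}^L_I) = \mathbf{F}^L_I$ and $S(\mathbf{F}^L_J) = \mathbf{F}^L_J$ up to the canonical identifications, since the domains and codomains of the elements of $\mathbf{F}^L_I$ and $\mathbf{F}^L_J$ are already localization pairs (each is of the form $\mathbf{F}^i_{?}$ applied to a dg category, whose two components are an honest full dg subcategory inclusion or an identity). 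The main obstacle I anticipate is precisely the bookkeeping around the left adjoint $S$: one must be careful that passing to strict images does not destroy the cellularity or the weak-equivalence property of pushouts, and this is where I would invoke the full-faithfulness arguments of Lemma~\ref{pushout-homotopique} rather than attempt a naive computation.
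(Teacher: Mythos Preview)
Your approach is essentially the paper's: transfer along $(S,U)$ via Theorem~11.3.2 of \cite{Hirschhorn}, noting $S(\mathbf{F}^L_I)=\mathbf{F}^L_I$ and $S(\mathbf{F}^L_J)=\mathbf{F}^L_J$, and then checking that pushouts along $\mathbf{F}^0_J$ and $\mathbf{F}^1_J$ are componentwise Morita. The one place where you diverge is the handling of the $0$-component for a pushout along $G\in\mathbf{F}^1_J$. You correctly spot that $Ev_0$ of the $\mathsf{Lp}$-pushout is not the naive $\mathsf{dgcat}$-pushout but the image of $\mathcal{A}_0$ in the new $\mathcal{A}_1'$, and you propose to invoke the machinery of Lemma~\ref{pushout-homotopique}. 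That lemma, however, is tailored to pushouts along $L\!h_n$ and is not directly applicable here; more to the point, it is unnecessary. The paper's argument is a one-liner: the $1$-component $\mathcal{A}_1\to\mathcal{A}_1'$ is a pushout of a generating trivial cofibration in $\mathsf{dgcat}$, hence Morita and in particular quasi-fully faithful (this is what the proofs of Lemmas~\ref{J-cell-W}, \ref{J-cell-dans-W}, \ref{J-cell-dans-W2} actually establish); since the $0$-component $\mathcal{A}_0\to\mathrm{Im}(\mathcal{A}_0\to\mathcal{A}_1')$ is the identity on objects and inherits its $\mathsf{Hom}$-complexes from the $1$-component, it is a quasi-equivalence. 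No further bookkeeping around $S$ is needed.
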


\begin{proof}
We first prove that $\mathsf{Lp}$ is complete and cocomplete.
Let $\{X_i\}_{i \in I}$ be a diagram in $\mathsf{Lp}$. We remark that
$$ \underset{i \in I}{\mbox{colim}} \,X_i
\stackrel{\sim}{\longrightarrow} S(\underset{i \in I}{\mbox{colim}}
\,U(X_i)) \,,$$
which implies that $\mathsf{L}p$ is cocomplete. The category
$\mathsf{Lp}$ is also complete, since it is stable under products and
equalizers in $\mathsf{dgcat}^L$.
We now prove that conditions $(1)$ and $(2)$ of Theorem $11.3.2$ from \cite{Hirschhorn} are satisfied~:
\begin{enumerate}
\item[(1)] Since $S(\mathbf{F}^L_{I})=\mathbf{F}^L_{I}$ and
  $S(\mathbf{F}^L_{J})=\mathbf{F}^L_{J}$ condition $(1)$ is verified.
\item[(2)] Since the functor $U$ clearly commutes with filtered
  colimits, it is enough to prove the following: let $Y
  \stackrel{G}{\rightarrow} Z$ be an element of the set
  $\mathbf{F}^L_{J}$, $X$ an object in $\mathsf{Lp}$ and $Y \rightarrow X$ a morphism in
  $\mathsf{Lp}$. Consider the following push-out in $\mathsf{Lp}$:
$$
\xymatrix{
Y \ar[r] \ar[d]_G  \ar@{}[dr]|{\lrcorner} & X \ar[d]^{G_{\ast}} \\
Z \ar[r] & Z \underset{Y}{\coprod} X
}
$$
We prove that $U(G_{\ast})$ is a weak equivalence in $\mathsf{dgcat}^L$. We consider two situations:
\begin{enumerate}
\item[-] if $G$ belongs to the set $\mathbf{F}^0_{J} \subset \mathbf{F}^L_{J}$,
  then $U(G_{\ast})$ is a weak-equivalence simply because
  $J-\mbox{cell} \subset W$ in $\mathsf{dgcat}$ , see lemma~\ref{J-cell-dans-W2}.
\item[-] if $G$ belongs to the set $\mathbf{F}^1_{J} \subset \mathbf{F}^L_{J}$,
  then $Ev_1(U(G_{\ast}))$ is a Morita dg functor. In particular it
  induces a quasi-isomorphism in the $\mathsf{Hom}$ spaces and since the
  $0$-component of $G_*$ is the identity on objects, the functor $Ev_0(U(G_*))$
  is also a Morita dg functor. This implies that $U(G_*)$ is a
  weak equivalence and so condition $(2)$ is proven.
\end{enumerate}
This proves the proposition.
\end{enumerate}
\end{proof}

We will now slightly modify the previous Quillen model
structure on $\mathsf{Lp}$.

Let $\sigma$ be the morphism of localization pairs:
$$
\xymatrix{
(\mbox{End}_{\mathcal{K}}(1) \ar@{^{(}->}[r] \ar[d]_{inc}  & \mathcal{K}) \ar@{=}[d]\\
(\mathcal{K} \ar@{=}[r] & \mathcal{K})\,,
}
$$
where $\mathsf{End}_{\mathcal{K}}(1)$ is the dg algebra of
endomorphisms of the object $1$ in $\mathcal{K}$, see section~\ref{secdef},
and $inc$ is the natural inclusion dg-functor. Clearly $\sigma$ is a
componentwise Morita dg functor.
We write $\tilde{\mathbf{F}}^L_{I}$ resp. $\tilde{\mathbf{F}}^L_{J}$ for
the union of $\{\sigma\}$ with $\mathbf{F}^L_{I}$ resp. $\mathbf{F}^L_{J}$.

\begin{proposition}\label{sat}
The category $\mathsf{Lp}$ admits a structure of cofibrantly generated model
category whose weak equivalences $W$ are the componentwise Morita dg functors and
with generating cofibrations $\tilde{\mathbf{F}}^L_{I}$ and
generating trivial cofibrations $\tilde{\mathbf{F}}^L_{J}$.
\end{proposition}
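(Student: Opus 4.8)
The plan is to deduce Proposition~\ref{sat} from Proposition~\ref{naive} by the standard ``adding a trivial cofibration'' trick: the Quillen model structure of Proposition~\ref{naive} already has the right class of weak equivalences $W$, and we only want to enlarge the set of generating (trivial) cofibrations by $\{\sigma\}$ without changing $W$. Concretely, I would verify the hypotheses of Theorem~$2.1.19$ of \cite{Hovey} (equivalently Theorem~\ref{thm}) with $\mathcal{C}=\mathsf{Lp}$, the same $W$ as before, $I=\tilde{\mathbf{F}}^L_{I}=\mathbf{F}^L_{I}\cup\{\sigma\}$ and $J=\tilde{\mathbf{F}}^L_{J}=\mathbf{F}^L_{J}\cup\{\sigma\}$. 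Completeness and cocompleteness of $\mathsf{Lp}$ were already established in the proof of Proposition~\ref{naive}, and the smallness conditions (ii), (iii) are inherited from $\mathsf{dgcat}^L$ since $U:\mathsf{Lp}\to\mathsf{dgcat}^L$ commutes with filtered colimits and all objects of $\mathsf{dgcat}$ are small; the two-out-of-three and retract stability of $W$ are clear because $W$ is pulled back componentwise from the Morita weak equivalences in $\mathsf{dgcat}$. So conditions (i)--(iii) are immediate.

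\textbf{The core computations.} The substance is in conditions (iv), (v), (vi). For (iv), $J\text{-}\mathrm{cell}\subseteq W\cap\mathrm{cof}(I)$: since $\mathbf{F}^L_{J}\text{-}\mathrm{cell}\subseteq W$ was shown in the proof of Proposition~\ref{naive}, and $W$ is closed under transfinite composition and (as I would check directly, arguing componentwise in $\mathsf{dgcat}$ as in the proof of Lemma~\ref{J-cell-dans-W2}) under pushouts along $\sigma$, it suffices to observe that a pushout of $\sigma$ in $\mathsf{Lp}$ stays in $W$. Here I would unwind what $\sigma$ does: pushing out along $\sigma$ replaces, in the $\mathcal{A}_0$-part of a localization pair, the one-object dg category $\mathsf{End}_{\mathcal{K}}(1)$ by the full dg category $\mathcal{K}$ while the $\mathcal{A}_1$-part absorbs $\mathcal{K}$ along the same map; since $\mathsf{End}_{\mathcal{K}}(1)\hookrightarrow\mathcal{K}$ is a quasi-equivalence which is the identity on the unique object of the source, the induced map on components is a Morita dg functor, hence in $W$. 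For (v), $I\text{-}\mathrm{inj}\subseteq W\cap J\text{-}\mathrm{inj}$: $I\text{-}\mathrm{inj}=\mathbf{F}^L_{I}\text{-}\mathrm{inj}\cap\sigma\text{-}\mathrm{inj}\subseteq\mathbf{F}^L_{I}\text{-}\mathrm{inj}$, which by Proposition~\ref{naive} is contained in $W\cap\mathbf{F}^L_{J}\text{-}\mathrm{inj}$; it then only remains to see $I\text{-}\mathrm{inj}\subseteq\sigma\text{-}\mathrm{inj}$, which is trivially true since $\sigma\in I$. Finally (vi): I would take the second alternative, $W\cap J\text{-}\mathrm{inj}\subseteq I\text{-}\mathrm{inj}$; since $W\cap\mathbf{F}^L_{J}\text{-}\mathrm{inj}=\mathbf{F}^L_{I}\text{-}\mathrm{inj}$ by Proposition~\ref{naive}, and $J\text{-}\mathrm{inj}=\mathbf{F}^L_{J}\text{-}\mathrm{inj}\cap\sigma\text{-}\mathrm{inj}$, an $f\in W\cap J\text{-}\mathrm{inj}$ lies in $\mathbf{F}^L_{I}\text{-}\mathrm{inj}$ and in $\sigma\text{-}\mathrm{inj}$, hence in $I\text{-}\mathrm{inj}$.

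\textbf{Expected main obstacle.} The only genuinely non-formal point is the verification in (iv) that pushouts along $\sigma$ are weak equivalences, i.e. that the ``saturation'' step really does not enlarge $W$. One has to be a little careful because in $\mathsf{Lp}$ the pushout is computed via the left adjoint $S$ (it is $S$ applied to the pushout in $\mathsf{dgcat}^L$), so I would first describe the pushout of $\sigma$ along an arbitrary map $\sigma$-source $\to\mathcal{A}$ explicitly, check that on the $\mathcal{A}_1$-component it is an identity and on the $\mathcal{A}_0$-component it is the map freely replacing an endomorphism dg algebra by the larger dg category $\mathcal{K}$ glued along it, and then invoke the argument of Lemma~\ref{J-cell-W}/\ref{J-cell-dans-W2} (contractibility of the relevant complexes $D^n$, $S^1$ appearing in $\mathcal{K}$) to see the inclusion of $\mathsf{Hom}$-complexes is a quasi-isomorphism while the functor is the identity on objects, hence a quasi-equivalence and a fortiori a Morita dg functor. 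Everything else is bookkeeping with lifting properties. I would close by remarking that the point of this modified structure is that $\sigma$ being a generating cofibration forces the full dg subcategory $\mathcal{A}_0$ of a cofibrant localization pair to be ``closed under the endomorphism-algebra construction'', which is exactly what one needs later to make the tensor product and internal $\mathsf{Hom}$ on $\mathsf{Lp}$ well behaved.
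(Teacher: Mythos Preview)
Your overall framework is correct and matches the paper's: verify Hovey's Theorem~2.1.19 with the same $W$ and the enlarged generating sets, noting that conditions (i)--(iii) are immediate and that (v), (vi) follow formally from the equality $\mathbf{F}^L_I\text{-}\mathrm{inj}=W\cap\mathbf{F}^L_J\text{-}\mathrm{inj}$ established in Proposition~\ref{naive} by intersecting both sides with $\sigma\text{-}\mathrm{inj}$. Your handling of (v) and (vi) is fine.

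The gap is in your treatment of the one substantive step, the pushout along $\sigma$. Your description of this pushout is incorrect, and the argument you propose would not go through as stated. A morphism $T:(\mathrm{End}_{\mathcal{K}}(1)\subset\mathcal{K})\to(\mathcal{A}_0\subset\mathcal{A}_1)$ amounts to a dg functor $\mathcal{K}\to\mathcal{A}_1$, i.e.\ a homotopy equivalence in $\mathcal{A}_1$ between two objects $X$ and $Y$, together with the condition that $X$ lies in $\mathcal{A}_0$. Now compute the pushout in $\mathsf{Lp}$: on the $1$-component one gets $\mathcal{K}\amalg_{\mathcal{K}}\mathcal{A}_1=\mathcal{A}_1$, so $\mathcal{U}_1=\mathcal{A}_1$; on the $0$-component one first forms $\mathcal{K}\amalg_{\mathrm{End}_{\mathcal{K}}(1)}\mathcal{A}_0$ in $\mathsf{dgcat}$ and then applies $S$, i.e.\ takes its image in $\mathcal{U}_1=\mathcal{A}_1$. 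That image is simply the \emph{full} dg subcategory $\mathcal{U}_0\subset\mathcal{A}_1$ on the objects of $\mathcal{A}_0$ together with $Y$. There is no free gluing left after applying $S$: the $\mathsf{Hom}$-complexes of $\mathcal{U}_0$ are those of $\mathcal{A}_1$. The inclusion $\mathcal{A}_0\hookrightarrow\mathcal{U}_0$ is therefore fully faithful and, since $Y$ is homotopy equivalent in $\mathcal{A}_1$ (hence in $\mathcal{U}_0$) to $X\in\mathcal{A}_0$, it is a quasi-equivalence. This is the paper's argument, and it is much simpler than the contractibility analysis you sketch; no analogue of Lemma~\ref{J-cell-W} is needed. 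Your description of the $0$-component map as ``identity on objects'' and as a free gluing of $\mathcal{K}$ is wrong on both counts, and the inclusion $\mathrm{End}_{\mathcal{K}}(1)\hookrightarrow\mathcal{K}$ is not known to be a cofibration in $\mathsf{dgcat}$, so invoking pushout-stability of quasi-equivalences along it is unjustified.

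A minor correction to your closing remark: adding $\sigma$ to the generating \emph{trivial} cofibrations affects the \emph{fibrant} objects, not the cofibrant ones; as Lemma~\ref{fibrSat} records, fibrancy now forces $\mathcal{A}_0$ to be stable under homotopy equivalences in $\mathcal{A}_1$, which is what is used later (e.g.\ in Proposition~\ref{Q-fibrant}).
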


\begin{proof}
The proof will consist in verifying that conditions
$(1) -(6)$ of Theorem $2.1.19$ from \cite{Hovey}
are satisfied. Clearly the class $W$ has the two out of three property
and is closed under retracts. This shows condition $(1)$. Since the localization pair
$(\mbox{End}_{\mathcal{K}}(1) \subset \mathcal{K})$ is small in $\mathsf{Lp}$,
it is clear that the domains of $\tilde{\mathbf{F}}^L_{I}$,
resp. $\tilde{\mathbf{F}}^L_{J}$, are small relative to
$\tilde{\mathbf{F}}^L_{I} - \mbox{cell}$,
resp. $\tilde{\mathbf{F}}^L_{J} - \mbox{cell}$ and so
conditions $(2)$ and $(3)$ are also satisfied. We have
$$ \mathbf{F}^L_{I} - \mbox{inj} = \mathbf{F}^L_{J} - \mbox{inj} \cap W$$
and so by construction
$$ \tilde{\mathbf{F}}^L_{I} - \mbox{inj} = \tilde{\mathbf{F}}^L_{J} - \mbox{inj}
\cap W\,.$$
This shows conditions $(5)$ and $(6)$. We now prove that
$\tilde{\mathbf{F}}^L_{J} - \mbox{cell} \subset W$. Since
$\mathbf{F}^L_{J}-\mbox{cell} \subset W$ it is enough to prove that
pushouts with respect to $\sigma$ belong to $W$.
Let $\mathcal{A}$ be a localization pair and
$$T: (\mbox{End}_{\mathcal{K}}(1) \subset \mathcal{K})
\rightarrow (\mathcal{A}_0 \subset \mathcal{A}_1)$$ a morphism in
$\mathsf{Lp}$. Consider the following push-out in $\mathsf{Lp}$:
$$
\xymatrix{
(\mbox{End}_{\mathcal{K}}(1) \subset \mathcal{K}) \ar[r]^-{T}
\ar[d]_{\sigma} \ar@{}[dr]|{\lrcorner} & (\mathcal{A}_0 \subset
\mathcal{A}_1) \ar[d]^R \\
(\mathcal{K} = \mathcal{K}) \ar[r] & (\mathcal{U}_0 \subset
\mathcal{U}_1) \,.
}
$$
We remark that the morphism $T$ corresponds to specifying an homotopy
equivalence in $\mathcal{A}_1$ from an object $X$ to an object $Y$,
where the object $X$ belongs to $\mathcal{A}_0$. Clearly $\mathcal{U}_1 = \mathcal{A}_1$
and $\mathcal{U}_0$ identifies with the full dg-subcategory of
$\mathcal{U}_1$ whose objects are $Y$ and those of $\mathcal{A}_0$.
Since $X$ and $Y$ are homotopy equivalent, the natural dg-functor
$R_0: \mathcal{A}_0 \hookrightarrow \mathcal{U}_0$ is a
quasi-equivalence. This proves condition $(4)$.
The proposition is now proven.
\end{proof}

\begin{remark}
Remark that in this new Quillen model structure on $\mathsf{Lp}$ we
have more cofibrations and fewer fibrations than the Quillen model structure
of proposition~\ref{naive} since the weak equivalences are the same.
\end{remark}

From now on, by Quillen model structure on $\mathsf{Lp}$ we mean that of
proposition~\ref{sat}.

\begin{lemma}\label{fibrSat}
A localization pair $(\mathcal{A}_0 \subset \mathcal{A}_1)$ is fibrant
in $\mathsf{Lp}$ if and only if $\mathcal{A}_0$ and $\mathcal{A}_1$
are Morita fibrant dg categories and $\mathcal{A}_0$ is stable under
homotopy equivalences in $\mathcal{A}_1$.
\end{lemma}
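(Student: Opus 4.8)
The statement characterizes the fibrant objects of the Quillen model structure of proposition~\ref{sat} on $\mathsf{Lp}$. A localization pair $(\mathcal{A}_0 \subset \mathcal{A}_1)$ is fibrant iff the unique morphism to the terminal object $(0 = 0)$ has the right lifting property with respect to the generating trivial cofibrations $\tilde{\mathbf{F}}^L_J = \mathbf{F}^0_J \cup \mathbf{F}^1_J \cup \{\sigma\}$. The plan is to analyze these three families of lifting conditions separately and to match them up with the three conditions in the statement. For the families $\mathbf{F}^0_J$ and $\mathbf{F}^1_J$, which are the images under the left adjoints $\mathbf{F}^0_?$ and $\mathbf{F}^1_?$ of the generating trivial cofibrations $J$ of the Morita model structure on $\mathsf{dgcat}$ (theorem~\ref{theorem2}), the adjunction $(\mathbf{F}^i_?, Ev_i)$ shows that lifting against $\mathbf{F}^1_J$ is equivalent to $\mathcal{A}_1$ being Morita fibrant in $\mathsf{dgcat}$, and lifting against $\mathbf{F}^0_J$ is equivalent to $\mathcal{A}_0$ being Morita fibrant. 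This already gives two of the three stated conditions, and by proposition~\ref{nova4} "Morita fibrant" means exactly that the essential image of $\mathsf{H}^0(\mathcal{A}_i) \hookrightarrow \mathcal{D}(\mathcal{A}_i)$ is stable under suspensions, cones and direct factors.

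The remaining ingredient is the lifting condition with respect to the single morphism $\sigma : (\mathrm{End}_{\mathcal{K}}(1) \subset \mathcal{K}) \to (\mathcal{K} = \mathcal{K})$. First I would record, as in the proof of proposition~\ref{sat}, that a morphism $(\mathrm{End}_{\mathcal{K}}(1) \subset \mathcal{K}) \to (\mathcal{A}_0 \subset \mathcal{A}_1)$ of localization pairs corresponds precisely to the data of an object $X$ of $\mathcal{A}_0$, an object $Y$ of $\mathcal{A}_1$, and a homotopy equivalence $X \to Y$ in $\mathcal{A}_1$ (i.e. a closed degree-zero morphism invertible in $\mathsf{H}^0(\mathcal{A}_1)$, equipped with the extra contraction data parametrized by $\mathcal{K}$, using proposition~\ref{nova}); the point is that $\mathcal{K}$ is the "universal" dg category carrying such a homotopy equivalence. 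A lift of such a square along $\sigma$ is then exactly a factorization of $Y$ through the full dg subcategory $\mathcal{A}_0$, i.e. a witness that $Y$ already lies in $\mathcal{A}_0$ (up to nothing — $\mathcal{A}_0$ being full, the object $Y$ either is or is not in $\mathcal{A}_0$, and a lift of the $\mathcal{K}$-component into $\mathcal{A}_0$ exists iff $Y \in \mathcal{A}_0$). Hence having the RLP against $\sigma$ says precisely: whenever $Y \in \mathcal{A}_1$ is homotopy equivalent to an object of $\mathcal{A}_0$, then $Y$ itself belongs to $\mathcal{A}_0$. That is the third condition, "$\mathcal{A}_0$ is stable under homotopy equivalences in $\mathcal{A}_1$."

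Putting the three analyses together gives both implications: if $(\mathcal{A}_0 \subset \mathcal{A}_1)$ is fibrant, all three RLP conditions hold, so $\mathcal{A}_0$, $\mathcal{A}_1$ are Morita fibrant and $\mathcal{A}_0$ is stable under homotopy equivalences; conversely, if these three conditions hold, then by proposition~\ref{nova4} the maps $\mathcal{A}_i \to 0$ have the RLP against $J$ hence $\mathbf{F}^i_J$, and the stability condition supplies the lift against $\sigma$, so $(\mathcal{A}_0 \subset \mathcal{A}_1) \to (0 = 0)$ has the RLP against all of $\tilde{\mathbf{F}}^L_J$ and is a fibration. The main obstacle I expect is the careful bookkeeping in the $\sigma$-step: one must check that a lift of the $\mathcal{K}$-component of the square factors honestly through the full subcategory $\mathcal{A}_0$ rather than merely through something Morita equivalent to it, and that the contraction data attached to $\sigma$ (coming from $\mathcal{K}$, via proposition~\ref{nova}) imposes no obstruction beyond "$Y \in \mathrm{obj}(\mathcal{A}_0)$" — which is where the fullness of $\mathcal{A}_0 \subset \mathcal{A}_1$ and the description of homotopies between dg functors in terms of $P(-)$ (lemma~\ref{lempath}, remark~\ref{cic}) are used. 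The other two steps are routine adjunction arguments combined with the already-established proposition~\ref{nova4}.
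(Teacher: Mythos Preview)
Your proposal is correct and follows exactly the same approach as the paper: fibrancy is the right lifting property against $\tilde{\mathbf{F}}^L_J = \mathbf{F}^0_J \cup \mathbf{F}^1_J \cup \{\sigma\}$, the first two families give Morita fibrancy of $\mathcal{A}_0$ and $\mathcal{A}_1$ via the adjunctions, and lifting against $\sigma$ is precisely stability of $\mathcal{A}_0$ under homotopy equivalences in $\mathcal{A}_1$. You overestimate the difficulty of the $\sigma$-step: since $\mathcal{A}_0$ is a \emph{full} dg subcategory, once the image of object $2$ of $\mathcal{K}$ lies in $\mathcal{A}_0$ the whole dg functor $\mathcal{K}\to\mathcal{A}_1$ automatically factors through $\mathcal{A}_0$, so no appeal to $P(-)$, lemma~\ref{lempath}, or remark~\ref{cic} is needed.
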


\begin{proof}
A localization pair $(\mathcal{A}_0 \subset \mathcal{A}_1)$ is
fibrant in $\mathsf{Lp}$ if and only if for every morphism $F$ in
$\tilde{\mathbf{F}}^L_{J}$, the following extension problem in
$\mathsf{Lp}$ is solvable:
$$
\xymatrix{
X \ar[r] \ar[d]_F & (\mathcal{A}_0 \subset \mathcal{A}_1) \\
Y  \ar@{-->}[ur] & \,.
}
$$
If $F$ belongs to $\mathbf{F}^L_{J}$ this means that $\mathcal{A}_0$ and $\mathcal{A}_1$
are fibrant and if $F=\sigma$, remark that it corresponds
exactly to the
statement that $\mathcal{A}_0$ is stable under homotopy equivalences
in $\mathcal{A}_1$.
\end{proof}

\begin{lemma}\label{cofibrant}
If the localization pair $\mathcal{A}$ is
cofibrant in $\mathsf{Lp}$ then $\mathcal{A}_1$ is cofibrant in $\mathsf{dgcat}$.
\end{lemma}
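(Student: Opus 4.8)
The plan is to exploit the explicit description of the cofibrations in $\mathsf{Lp}$ coming from the generating cofibrations $\tilde{\mathbf{F}}^L_{I} = \mathbf{F}^0_{I} \cup \mathbf{F}^1_{I} \cup \{\sigma\}$. A cofibrant object $\mathcal{A}$ in $\mathsf{Lp}$ is a retract of an $\tilde{\mathbf{F}}^L_{I}$-cell object, i.e. of a transfinite composition of pushouts of maps in $\tilde{\mathbf{F}}^L_{I}$ starting from the initial object $(\emptyset \subset \emptyset)$. Since the functor $Ev_1 : \mathsf{Lp} \to \mathsf{dgcat}$ is a left adjoint (its right adjoint is the functor $I$ of section~\ref{secMorita}, or rather one checks directly it preserves colimits), it sends this cell presentation to a cell presentation in $\mathsf{dgcat}$. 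So first I would record that $Ev_1$ commutes with all colimits, hence sends $\tilde{\mathbf{F}}^L_{I}$-cell objects to cell objects built from the set $Ev_1(\tilde{\mathbf{F}}^L_{I})$, and sends retracts to retracts.

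Next I would compute $Ev_1$ on each generating cofibration. On $\mathbf{F}^1_{I}$ one gets exactly $I = \{Q, S(n)\}$, the generating cofibrations of the Morita model structure on $\mathsf{dgcat}$ (theorem~\ref{theorem2}), since $\mathbf{F}^1_?$ is left adjoint to $Ev_1$ and $Ev_1 \circ \mathbf{F}^1_? = \mathrm{Id}$. On $\mathbf{F}^0_{I}$, the object $\mathbf{F}^0_X$ is the localization pair $(X \subset X)$, so $Ev_1(\mathbf{F}^0_{f}) = f$ for $f \in I$; thus these also land in $I$, or more precisely in the class of cofibrations of $\mathsf{dgcat}$. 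Finally $Ev_1(\sigma)$ is the identity dg functor $\mathcal{K} \to \mathcal{K}$, which is certainly a cofibration (it is an isomorphism). Hence every map in $Ev_1(\tilde{\mathbf{F}}^L_{I})$ is a cofibration of $\mathsf{dgcat}$ in the Morita model structure.

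Then the argument closes quickly: the class of cofibrations in $\mathsf{dgcat}$ is closed under pushouts, transfinite composition and retracts (it is the class $\mathrm{cof}(I)$), so if $\mathcal{A}$ is cofibrant in $\mathsf{Lp}$, then $Ev_1(\mathcal{A}) = \mathcal{A}_1$ is a retract of a cell object built from cofibrations, hence is cofibrant in $\mathsf{dgcat}$; applying this to the map from the initial object $(\emptyset \subset \emptyset)$, whose image under $Ev_1$ is $\emptyset$, the initial object of $\mathsf{dgcat}$, gives that $\emptyset \to \mathcal{A}_1$ is a cofibration, i.e. $\mathcal{A}_1$ is cofibrant. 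I would phrase this as: $Ev_1$ is a left Quillen functor, or more elementarily, $Ev_1$ preserves cofibrations because it preserves colimits and sends generating cofibrations to cofibrations.

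The main obstacle, and the only place requiring care, is verifying that $Ev_1$ genuinely preserves all small colimits in $\mathsf{Lp}$ — the subtlety being that colimits in $\mathsf{Lp}$ are computed via the left adjoint $S$ to the inclusion $U : \mathsf{Lp} \hookrightarrow \mathsf{dgcat}^L$ (as in the proof of proposition~\ref{naive}), namely $\mathrm{colim}_i X_i = S(\mathrm{colim}_i U(X_i))$, and one must check that $Ev_1 \circ S \circ U$ of this diagram agrees with $Ev_1$ of the $\mathsf{dgcat}^L$-colimit; this holds because $S$ only alters the $0$-component (replacing $\mathcal{B}_0$ by the essential image of $\mathcal{B}_0 \to \mathcal{B}_1$) and leaves the $1$-component untouched, so $Ev_1 \circ S = Ev_1 \circ U^{\mathrm{colim}}$ on the underlying $\mathsf{dgcat}^L$-level. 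Alternatively, and perhaps more cleanly, I would just observe that $Ev_1 : \mathsf{Lp} \to \mathsf{dgcat}$ has a right adjoint — namely $\mathcal{B}_1 \mapsto (\mathcal{B}_1 \subset \mathcal{B}_1)$ — and is therefore automatically cocontinuous, which sidesteps the bookkeeping with $S$ entirely.
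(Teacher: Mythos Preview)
Your proof is correct but takes a different route from the paper's. You argue structurally: $Ev_1$ is a left adjoint (with right adjoint $\mathcal{B} \mapsto (\mathcal{B} \subset \mathcal{B})$), hence cocontinuous, and you check it sends each generating cofibration in $\tilde{\mathbf{F}}^L_I = \mathbf{F}^0_I \cup \mathbf{F}^1_I \cup \{\sigma\}$ to a cofibration in $\mathsf{dgcat}$; therefore $Ev_1$ preserves all cofibrations and in particular cofibrant objects. The paper instead argues directly by lifting: given a trivial fibration $P:\mathcal{C}\to\mathcal{B}$ in $\mathsf{dgcat}$ and a map $\mathcal{A}_1\to\mathcal{B}$, it applies the right adjoint $\mathbf{F}^0$ to $P$, observes that $\mathbf{F}^0_P:(\mathcal{C}\subset\mathcal{C})\to(\mathcal{B}\subset\mathcal{B})$ lies in $\sigma\text{-inj}\cap\mathbf{F}^L_I\text{-inj}$ and is therefore a trivial fibration in $\mathsf{Lp}$, lifts against the cofibrant $\mathcal{A}$, and reads off the $1$-component. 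These are the two dual ways of verifying the same Quillen adjunction --- you check the left adjoint on generating cofibrations, the paper checks the right adjoint on trivial fibrations --- so the underlying content is the same, though your formulation makes the left-Quillen nature of $Ev_1$ explicit and reusable. One small slip: there is no ``functor $I$ of section~\ref{secMorita}'' serving as right adjoint to $Ev_1$; but you correctly identify the right adjoint later as $\mathcal{B}\mapsto(\mathcal{B}\subset\mathcal{B})$, which is in fact $\mathbf{F}^0$.
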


\begin{proof}
We need to construct a lift to the following problem~:
$$
\xymatrix{
 & \mathcal{C} \ar@{->>}[d]^P_{\sim} \\
\mathcal{A}_1 \ar[r] & \mathcal{B} \,,
}
$$
where $P$ is a trivial fibration in $\mathsf{dgcat}$, see proposition~\ref{sat},
and $\mathcal{A}_1 \rightarrow \mathcal{B}$ is a dg-functor. Consider
the following diagram in $\mathsf{Lp}$:
$$
\xymatrix{
 & \mathbf{F}^0_{\mathcal{C}} \ar@{->>}[d]_{\sim}^{\mathbf{F}^0_P} \\
\mathcal{A} \ar[r] \ar@{.>}[ur] & \mathbf{F}^0_{\mathcal{B}}\,.
}
$$
where $\mathcal{A} \rightarrow \mathbf{F}^0_{\mathcal{B}}$ is the natural morphism of
localization pairs. Remark that $\mathbf{F}^0_P$ belongs to
$\sigma-\mbox{inj} \cap \mathbf{F}^L_{I} - \mbox{inj}$ and so is a trivial
fibration in $\mathsf{Lp}$. Since $\mathcal{A}$ is
cofibrant in $\mathsf{Lp}$ we have a lifting $\mathcal{A} \rightarrow  \mathbf{F}^0_{\mathcal{C}}$ that when
restricted to the $1$-component gives us the searched lift
$\mathcal{A}_1 \rightarrow \mathcal{C}$. This proves the lemma.
\end{proof}

\subsection{$Q$-model structure}

\begin{definition}
Let $Q : \mathsf{Lp} \rightarrow \mathsf{Lp}$ be the functor that
sends a localization pair $\mathcal{A}$ to the localization pair
$$ \overline{\mathcal{A}_0} \hookrightarrow \mathcal{A}_1/\mathcal{A}_0\,,$$
 where $\overline{\mathcal{A}_0}$ is the full dg-subcategory
of $\mathcal{A}_1/\mathcal{A}_0$ whose objets are those of
$\mathcal{A}_0$.
\end{definition}

\begin{remark}
Remark that we have natural morphisms
$$ \eta_{\mathcal{A}} : (\mathcal{A}_0
\subset \mathcal{A}_1) \rightarrow (\overline{\mathcal{A}_0} \subset
\mathcal{A}_1/\mathcal{A}_0)\,$$
in $\mathsf{Lp}$.
\end{remark}

\begin{definition}
A morphism of localization pairs $F: \mathcal{A} \rightarrow \mathcal{B}$ is a $Q$-{\it weak equivalence} if the induced
morphism $Q(F)$ is a weak equivalence in the Quillen model structure of
proposition~\ref{sat}.
\end{definition}

\begin{remark}
Remark that since the objects of $\overline{\mathcal{A}_0}$
and $\overline{\mathcal{B}_0}$ are all contractible, the dg-functor
$\overline{\mathcal{A}_0} \rightarrow \overline{\mathcal{B}_0}$ is
clearly a Morita dg functor and so the morphism $F$ is a $Q$-weak
equivalence if and only if the induced dg-functor
$\mathcal{A}_1/\mathcal{A}_0 \rightarrow \mathcal{B}_1/\mathcal{B}_0$
is a Morita dg functor.
\end{remark}

\begin{definition}
A morphism in $\mathsf{Lp}$ is a {\em cofibration} if it is
  one for the Quillen model structure of Proposition~\ref{sat} and it is a $Q$-{\em fibration} if it has the right lifting property
  with respect to all cofibrations of $\mathsf{Lp}$ which are $Q$-weak equivalences.
\end{definition}

\begin{theorem}\label{main}
The category $\mathsf{Lp}$ admits a structure of Quillen model
  category, the {\em $Q$-model}, whose weak equivalences are the $Q$-weak equivalences, whose
  cofibrations are the cofibrations of $\mathsf{Lp}$ and whose fibrations are
  the $Q$-fibrations.
\end{theorem}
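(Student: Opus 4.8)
The plan is to obtain the $Q$-model structure as a left Bousfield localization of the Quillen model structure of Proposition~\ref{sat}, exactly in the spirit of the Bousfield--Friedlander machinery for stable homotopy theory mentioned in the introduction. The functor $Q$ together with the natural transformation $\eta$ plays the role of the stabilization (fibrant-replacement-like) endofunctor. First I would record the formal properties of $(Q,\eta)$ that make this work: (A1) $Q$ preserves $Q$-weak equivalences, hence so does $Q$; (A2) for every localization pair $\mathcal{A}$ the two maps $Q(\eta_{\mathcal{A}}), \eta_{Q(\mathcal{A})} : Q(\mathcal{A}) \rightarrow Q^2(\mathcal{A})$ are weak equivalences in the model structure of Proposition~\ref{sat} (this is where the explicit Drinfeld construction of the dg quotient enters: $\mathcal{A}_1/\mathcal{A}_0 \to (\mathcal{A}_1/\mathcal{A}_0)/\overline{\mathcal{A}_0}$ is a Morita dg functor because the $h_X$ are already contractions, so adjoining them twice changes nothing up to Morita equivalence); (A3) $\eta$ is objectwise a cofibration, or at least can be replaced functorially by one, so that we remain inside the class of cofibrations of $\mathsf{Lp}$.

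\textbf{Key steps.} With (A1)--(A3) in hand I would proceed as follows. Define the class of $Q$-fibrant objects to be those $\mathcal{A}$ for which $\eta_{\mathcal{A}}$ is a weak equivalence in the sense of Proposition~\ref{sat}; using Lemma~\ref{fibrSat} these are essentially the localization pairs $(\mathcal{A}_0 \subset \mathcal{A}_1)$ with $\mathcal{A}_0$, $\mathcal{A}_1$ Morita fibrant, $\mathcal{A}_0$ stable under homotopy equivalences, and moreover $\mathcal{A}_0$ already "quotient-closed" inside $\mathcal{A}_1$. Then I would verify the three non-trivial Quillen axioms. The two-out-of-three and retract axioms for $Q$-weak equivalences are immediate from the corresponding axioms in Proposition~\ref{sat} applied to $Q(-)$. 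For the factorization and lifting axioms I would use the standard argument (cf. the Bousfield--Friedlander localization theorem, Theorem~9.3 of \cite{Bos-Fri}, adapted as in the treatment of $\mathsf{Sp}^{\mathbb{N}}$): a $Q$-trivial cofibration followed by a $Q$-fibration factorization is built by first applying the functorial factorization of Proposition~\ref{sat} and then $Q$-fibrantly replacing via $\eta$; the characterization of $Q$-fibrations between $Q$-fibrant objects as maps that are fibrations in the sense of Proposition~\ref{sat} is obtained by a lifting argument using that a cofibration is a $Q$-weak equivalence iff its image under $Q$ is a trivial cofibration. Compatibility of the path object $P(\mathcal{B})$ (Lemma~\ref{lempath}) and the homotopy analysis of §\ref{model} would be used to check that $\eta$-localization interacts correctly with the closed structure of Proposition~\ref{sat}.

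\textbf{Main obstacle.} The delicate point is property (A2), the homotopy-idempotence of $Q$: one must show that the two maps $Q(\mathcal{A}) \rightrightarrows Q^2(\mathcal{A})$ are weak equivalences and, crucially, that they agree up to the relevant notion of homotopy so that $Q$-local objects are detected correctly. This rests entirely on the explicit Drinfeld model of the dg quotient and on the fact that $k$ is a field (so that all the flatness hypotheses in \cite[\S 3]{Drinfeld} are automatic); without the explicit description one would only have a zig-zag and the Bousfield--Friedlander argument would not close. A secondary technical nuisance is that $Q$ does not literally preserve cofibrations, so (A3) forces one to thread the argument through a functorial cofibrant-mapping-cylinder replacement of $\eta$, and one must check this replacement is compatible with the monoidal structure of §\ref{secmon} — but this is bookkeeping rather than a genuine difficulty. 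Once (A1)--(A3) are established, the verification of Theorem~\ref{main} is a routine instance of the localization machinery, and the identification of the $Q$-fibrant objects via Lemma~\ref{fibrSat} makes the fibrations explicit.
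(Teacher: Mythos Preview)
Your overall strategy---Bousfield--Friedlander localization of the model structure of Proposition~\ref{sat} using the endofunctor $Q$ and the natural transformation $\eta$---is exactly the one the paper follows, and your (A1), (A2) are the paper's {\bf A1}, {\bf A2}. However, you have misidentified the main obstacle and this creates a genuine gap.

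Property (A2) is \emph{not} the delicate point: the paper dispatches it in a two-line remark (adjoining contractions to already-contractible objects changes nothing). The real difficulty, which you do not mention, is that the standard Bousfield--Friedlander theorem (and the treatment in chapter~X of \cite{Jardine} that the paper adapts) requires the underlying model category to be \emph{right proper}, and the model structure of Proposition~\ref{sat} is \emph{not} right proper. The paper gives an explicit counterexample (the ``Nonexample'' immediately after Lemma~\ref{weak fibration}). Your condition (A3) about $\eta$ being a cofibration is not a substitute for right properness; without some replacement for it the factorization axiom does not go through and your claim that ``the verification \ldots\ is a routine instance of the localization machinery'' is unjustified.

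What the paper actually does is prove two bespoke pullback lemmas (Lemmas~\ref{fiber} and~\ref{Q-weak}) showing that the \emph{specific} pullback squares arising in the Bousfield--Friedlander argument---namely pullbacks along fibrations with target $Q(\mathcal{A})$, of the map $\eta_{\mathcal{A}}$---still produce ($Q$-)weak equivalences, even though general right properness fails. These lemmas exploit the concrete description of the Drinfeld quotient (fibre products computed componentwise in $\mathsf{Ch}(k)$, where the projective model structure \emph{is} right proper) and the fact that $\eta_{\mathcal{A}}$ is the identity on objects. This is where the explicit dg-quotient model really enters, not in verifying (A2). Once these substitutes for right properness are in hand, the paper plugs them into the Goerss--Jardine argument (Lemmas~\ref{fibration}, \ref{facto}, \ref{Q-fibration}) and checks the Quillen axioms directly.
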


The proof will consist of adapting the general arguments from
chapter~X from \cite{Jardine} to our situation. We start with some remarks:
\begin{enumerate}
\item[{\bf A1}] Since $k$ is a field, every complex $X \in
  \mathsf{Ch}(k)$ is $k$-flat and so by theorem $3.4$
  from \cite{Drinfeld} the functor $Q$ preserves weak equivalences.
\item[{\bf A2}] The morphisms of localization pairs:
$$
\xymatrix{
Q(\mathcal{A})
\ar@<1ex>[r]^{\eta_{Q(\mathcal{A})}}  \ar[r]_{Q(\eta_{\mathcal{A}})}  &  QQ(\mathcal{A})
}
$$
are weak equivalences in $\mathsf{Lp}$. This follows from the fact that in
both cases we are introducing contractions to objects that are already
contractible and that the functor $Q$ is the identity functor on objects.
\end{enumerate}

\begin{lemma}\label{weak fibration}
A morphism $F:\mathcal{A}
\rightarrow \mathcal{B}$ is a fibration and
a weak equivalence of $\mathsf{Lp}$ if and only if it is a $Q$-weak
equivalence and a $Q$-fibration.
\end{lemma}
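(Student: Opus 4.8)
The plan is to show that the two classes — trivial fibrations for the Morita model structure of Proposition~\ref{sat} (call these the $\mathsf{Lp}$\emph{-trivial fibrations}) and the morphisms that are simultaneously $Q$-fibrations and $Q$-weak equivalences (the would-be trivial fibrations of the $Q$-model) — coincide. One implication is essentially formal; the other is a retract argument based on the factorization axiom of the model structure of Proposition~\ref{sat}.

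First I would treat the direction ``$\mathsf{Lp}$-trivial fibration $\Rightarrow$ $Q$-weak equivalence and $Q$-fibration''. If $F:\mathcal{A}\to\mathcal{B}$ is a fibration and a weak equivalence of $\mathsf{Lp}$, then in particular $F$ is a componentwise Morita dg functor; since $k$ is a field, remark {\bf A1} (the functor $Q$ preserves weak equivalences) shows that $Q(F)$ is again a weak equivalence of $\mathsf{Lp}$, so $F$ is a $Q$-weak equivalence. Moreover every cofibration of $\mathsf{Lp}$ that is a $Q$-weak equivalence is in particular a cofibration of $\mathsf{Lp}$, and an $\mathsf{Lp}$-trivial fibration has the right lifting property with respect to all cofibrations of $\mathsf{Lp}$; hence $F$ is a $Q$-fibration.

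For the converse I would use the factorization axiom of the model structure of Proposition~\ref{sat}: write $F = p\circ i$ with $i$ a cofibration of $\mathsf{Lp}$ and $p$ a fibration and weak equivalence of $\mathsf{Lp}$. By the first part $p$ is then a $Q$-weak equivalence and a $Q$-fibration. Since $Q$-weak equivalences are by definition the morphisms $G$ with $Q(G)$ a weak equivalence of $\mathsf{Lp}$, they inherit the two-out-of-three property from the weak equivalences of $\mathsf{Lp}$; as $F$ and $p$ are $Q$-weak equivalences, so is $i$. Thus $i$ is a cofibration of $\mathsf{Lp}$ which is a $Q$-weak equivalence, and since $F$ is a $Q$-fibration, the commutative square with top edge $\mathrm{id}_{\mathcal{A}}$, left edge $i$, right edge $F$ and bottom edge $p$ admits a diagonal filler $r:\mathcal{D}\to\mathcal{A}$ with $r\circ i = \mathrm{id}_{\mathcal{A}}$ and $F\circ r = p$. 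Hence $F$ is a retract of $p$ in the category of arrows of $\mathsf{Lp}$; since $\mathsf{Lp}$-fibrations and weak equivalences of $\mathsf{Lp}$ are each closed under retracts, $F$ is a fibration and a weak equivalence of $\mathsf{Lp}$.

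The argument is light, and I do not expect a serious obstacle. The two points that genuinely use the hypotheses are the appeal to remark {\bf A1} — which is precisely why the ground ring must be a field, so that every complex is $k$-flat and Theorem~$3.4$ of \cite{Drinfeld} applies — and the need to keep the roles of ``cofibration of $\mathsf{Lp}$'' and ``$Q$-weak-equivalence cofibration'' carefully distinct when invoking lifting properties, since the $Q$-fibrations are defined by lifting only against the smaller class. If anything deserves attention it is this bookkeeping in the retract step.
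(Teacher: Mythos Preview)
Your proof is correct and is precisely the standard retract argument of lemma~4.3 in chapter~X of \cite{Jardine}, which is exactly what the paper invokes (the paper does not spell out the details but simply cites that lemma, noting that only condition~{\bf A1} is needed). You have effectively reproduced that cited proof verbatim.
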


\begin{proof}
Since condition {\bf A1} is verified (i.e. the functor $Q$ preserves
weak equivalences) we can use the proof of lemma
$4.3$ in chapter~X from \cite{Jardine}.
\end{proof}

\begin{nonexample}
Remark that the Quillen model structure of proposition~\ref{sat}
is not right proper, see \cite{Hirschhorn}.\newline
Let $\mathcal{B}$ be your favorite Morita fibrant dg category, whose
derived category $\mathcal{D}(\mathcal{B})$ is not trivial. In
particular the dg functor $P: \mathcal{B} \rightarrow 0$, where $0$
denotes the terminal object in $\mathsf{dgcat}$ is a fibration. Let $\mathcal{A}$ be the dg category with one object $1$
and whose dg algebra of endomorphisms of $1$ is $k$. Consider the
following diagram~:
$$
\xymatrix{
 & \mathcal{B} \ar@{->>}[d]^{i_0 \circ P} \\
\mathcal{A} \ar[r]_{j_{\mathcal{A}}} & 0 \coprod \mathcal{A}\,.
}
$$
Clearly $j_{\mathcal{A}}$ is a Morita dg functor and remark that the
dg functor $i_0 \circ P$ is a fibration, since the object $1$
in $\mathcal{A}$ is not contractible. This implies that in the fiber
product
$$
\xymatrix{
\emptyset \ar[d] \ar[r] \ar@{}[dr]|{\ulcorner} & \mathcal{B} \ar@{->>}[d]^{i_0 \circ P} \\
\mathcal{A} \ar[r]_{j_{\mathcal{A}}} & 0 \coprod \mathcal{A}\,,
}
$$
the dg functor $\emptyset \rightarrow \mathcal{B}$ is not a Morita dg functor and so this Quillen model structure is not right proper.
This implies that the Quillen model structure of proposition~\ref{sat}
is also not right proper. Apply the functor $\mathbf{F}^0_?$ from
$\mathsf{dgcat}$ to $\mathsf{Lp}$ to the previous fiber product~:
$$
\xymatrix{
\emptyset = \mathbf{F}^0_{\emptyset} \ar@{}[dr]|{\ulcorner}   \ar[r] \ar[d] &
\mathbf{F}^0_{\mathcal{B}} \ar@{->>}[d]^{\mathbf{F}^0_{i_0 \circ P}} \\
\mathbf{F}^0_{\mathcal{A}} \ar[r]_{\mathbf{F}^0_{j_{\mathcal{A}}}} &
\mathbf{F}^0_{0 \coprod \mathcal{A}}\,.
}
$$
We have a fiber product since the functor $\mathbf{F}^0_?$
preserves limits. Clearly $\mathbf{F}^0_{j_{\mathcal{A}}}$ is a weak
equivalence in $\mathsf{Lp}$ and remark that the morphism
$\mathbf{F}^0_{i_0\circ P}$ belongs to $\sigma-\mbox{inj} \cap
\mathbf{F}^L_{J}-\mbox{inj}$, which implies that it is a fibration in $\mathsf{Lp}$.
\end{nonexample}

Nevertheless we have the following lemma.

\begin{lemma}\label{fiber}
Let $\mathcal{A}$ be a localization pair such
that the natural morphism
$$ \eta_{\mathcal{A}}: \mathcal{A} \stackrel{\sim}{\longrightarrow} Q(\mathcal{A})$$
is a weak equivalence in $\mathsf{Lp}$. Let $F:\mathcal{W} \rightarrow
Q(\mathcal{A})$  be a fibration in $\mathsf{Lp}$. Then the
morphism
$$ \eta^*_{\mathcal{A}}: \mathcal{W} \underset{Q(\mathcal{A})}{\times} \mathcal{A} \stackrel{\sim}{\longrightarrow} \mathcal{W}$$
is a weak equivalence in $\mathsf{Lp}$.
\end{lemma}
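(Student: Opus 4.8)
The plan is to reduce the statement to a short computation with morphism complexes, exploiting the explicit shape of Drinfeld's quotient. First, since $\mathsf{Lp}$ is closed under limits in $\mathsf{dgcat}^L$ (see the proof of Proposition~\ref{naive}), the pullback $P:=\mathcal{W}\times_{Q(\mathcal{A})}\mathcal{A}$ is computed componentwise: $P_0=\mathcal{W}_0\times_{\overline{\mathcal{A}_0}}\mathcal{A}_0$ and $P_1=\mathcal{W}_1\times_{\mathcal{A}_1/\mathcal{A}_0}\mathcal{A}_1$, and this is again a localization pair because $\overline{\mathcal{A}_0}$ is by definition a full dg subcategory of $\mathcal{A}_1/\mathcal{A}_0$ and the structure projections are fully faithful. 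Hence $\eta^*_{\mathcal{A}}\colon P\to\mathcal{W}$ is a weak equivalence in $\mathsf{Lp}$ (Proposition~\ref{sat}) if and only if both $P_0\to\mathcal{W}_0$ and $P_1\to\mathcal{W}_1$ are Morita dg functors; these have exactly the same shape, so I would treat them simultaneously.

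Next I would unwind the hypotheses. Because $F$ is a fibration in $\mathsf{Lp}$, the dg functors $F_0\colon\mathcal{W}_0\to\overline{\mathcal{A}_0}$ and $F_1\colon\mathcal{W}_1\to\mathcal{A}_1/\mathcal{A}_0$ are Morita fibrations of $\mathsf{dgcat}$, in particular componentwise surjective on morphism complexes (Proposition~\ref{nova3}). Because $\eta_{\mathcal{A}}$ is a weak equivalence in $\mathsf{Lp}$, its $0$-component $\mathcal{A}_0\hookrightarrow\overline{\mathcal{A}_0}$ is a Morita dg functor; but every object of $\overline{\mathcal{A}_0}$ carries the new contracting homotopy $h_X$, so $\mathcal{D}(\overline{\mathcal{A}_0})=0$ and therefore $\mathcal{D}(\mathcal{A}_0)=0$, i.e. every object of $\mathcal{A}_0$ is already contractible in $\mathcal{A}_1$. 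Granting this, I claim that $\mathcal{A}_0\hookrightarrow\overline{\mathcal{A}_0}$ and $\mathcal{A}_1\hookrightarrow\mathcal{A}_1/\mathcal{A}_0$ are quasi-equivalences which are bijective on objects and induce \emph{injective} quasi-isomorphisms on morphism complexes: bijectivity on objects is clear; injectivity holds since $\mathsf{Hom}_{\mathcal{A}_1}(Y,Z)$ is the $n=0$ summand in Drinfeld's decomposition of $\mathsf{Hom}_{\mathcal{A}_1/\mathcal{A}_0}(Y,Z)$; and the inclusion of that summand is a quasi-isomorphism by the exhaustive-filtration argument used in the proof of Lemma~\ref{J-cell-W}, because all higher summands are assembled from the complexes $\mathsf{Hom}_{\mathcal{A}_1}(-,X)$ and $\mathsf{Hom}_{\mathcal{A}_1}(X,-)$ with $X\in\mathcal{A}_0$ contractible, hence acyclic (equivalently, this is Drinfeld's identification \cite{Drinfeld} of $\mathcal{D}(\mathcal{A}_1/\mathcal{A}_0)$ with the Verdier quotient of $\mathcal{D}(\mathcal{A}_1)$ by the subcategory generated by $\mathcal{A}_0$).

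Finally I would do the diagram chase in $\mathsf{Ch}(k)$. For objects $w,w'$ of $\mathcal{W}_i$, the complex $\mathsf{Hom}_{P_i}(w,w')$ is the fibre product of the surjection $p\colon\mathsf{Hom}_{\mathcal{W}_i}(w,w')\twoheadrightarrow C$ and the inclusion $\iota\colon A\hookrightarrow C$, where $C$ is the relevant morphism complex of $\overline{\mathcal{A}_0}$ resp. $\mathcal{A}_1/\mathcal{A}_0$, $A=\mathsf{Hom}_{\mathcal{A}_i}(F_iw,F_iw')$, and $C/A$ is acyclic by the previous paragraph. Comparing the two short exact sequences $0\to\ker p\to\mathsf{Hom}_{P_i}(w,w')\to A\to 0$ and $0\to\ker p\to\mathsf{Hom}_{\mathcal{W}_i}(w,w')\to C\to 0$ via the evident map (identity on $\ker p$, projection on the middle terms, $\iota$ on the right), the five lemma gives that $\mathsf{Hom}_{P_i}(w,w')\to\mathsf{Hom}_{\mathcal{W}_i}(w,w')$ is a quasi-isomorphism; since $P_i\to\mathcal{W}_i$ is moreover bijective on objects, it is a quasi-equivalence, hence a Morita dg functor, which finishes the argument. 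The one genuinely non-formal point is the claim of the second paragraph — upgrading the bare Morita condition on the Drinfeld-quotient maps to the sharp statement that they are injective quasi-isomorphisms on hom-complexes — and this is exactly what lets the proof sidestep the failure of right properness of $\mathsf{Lp}$ recorded in the nonexample following Lemma~\ref{weak fibration}; once it is in place, everything else is bookkeeping and runs in parallel with the corresponding step in Chapter~X of \cite{Jardine}.
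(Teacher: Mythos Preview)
Your proof is correct and follows the same strategy as the paper: both note that $\eta^*_{\mathcal{A}}$ is the identity on objects, reduce to the pullback square on hom-complexes in $\mathsf{Ch}(k)$, and conclude from surjectivity of $F_i$ on hom-complexes together with the fact that $\mathsf{Hom}_{\mathcal{A}_i}\to\mathsf{Hom}_{Q(\mathcal{A})_i}$ is a quasi-isomorphism (the paper invokes right properness of $\mathsf{Ch}(k)$ where you run the five-lemma by hand, which amounts to the same thing). You are more explicit than the paper in justifying that last quasi-isomorphism --- deducing contractibility of the objects of $\mathcal{A}_0$ from the Morita hypothesis on the $0$-component and then feeding this into Drinfeld's filtration --- whereas the paper simply labels the arrow $\sim$; your added detail is the correct argument and fills in what the paper leaves implicit.
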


\begin{proof}
We remark that each component of the morphism $\eta_{\mathcal{A}}$ is the identity functor on the objects of
the dg categories involved. Since fiber products in $\mathsf{Lp}$ are
calculated componentwise, we conclude that each component of the morphism
$\eta^*_{\mathcal{A}}$ is the identity
functor on the objects. Let $X$ and $Y$ be arbitrary objects of
$\mathcal{W}_1$. We remark that we have the following fiber
product in  $\mathsf{Ch}(k)$~:
$$
\xymatrix{
\mbox{Hom}_{\mathcal{W}_1
  \underset{\mathcal{A}_1/\mathcal{A}_0}{\times} \mathcal{A}_1}(X,Y)
\ar[rr] \ar[d]_{\eta^*(F_1X,F_1Y)} \ar@{}[drr]|{\ulcorner} & &
\mbox{Hom}_{\mathcal{A}_1}(F_1X,F_1Y)
\ar[d]_{\sim}^{\eta(F_1X,F_1Y)}\\
\mbox{Hom}_{\mathcal{W}_1}(X,Y) \ar@{->>}[rr]_-{F_1(X,Y)} & & \mbox{Hom}_{\mathcal{A}_1/\mathcal{A}_0}(F_1X,F_1Y)\,.
}
$$
Since $F$ is a fibration in $\mathsf{Lp}$, $F_1(X,Y)$ is a fibration in
 the projective model structure on $\mathsf{Ch}(k)$ and since this
 Quillen model structure on $\mathsf{Ch}(k)$ is right proper, $\eta^*(F_1X,F_1Y)$ is a quasi-isomorphism.
We could do the same argument for $X$ and $Y$ objects in $\mathcal{W}_0$ instead of
$\mathcal{W}_1$. This proves the lemma.
\end{proof}

\begin{lemma}\label{fibration}
Suppose that $F:\mathcal{A}_0 \rightarrow \mathcal{B}$ is a fibration in
$\mathsf{Lp}$ and that $\eta_{\mathcal{A}}$ and
$\eta_{\mathcal{B}}$ are weak equivalences
of $\mathsf{Lp}$. Then $F$ is a $Q$-fibration.
\end{lemma}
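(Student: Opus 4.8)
The plan is to adapt the Bousfield--Friedlander-style argument of Chapter~X of \cite{Jardine} showing that a ``strict'' fibration between ``stably fibrant'' objects is a ``stable'' fibration. Here the r\^ole of the strict model structure is played by that of Proposition~\ref{sat}, the r\^ole of stabilization by the functor $Q$ together with $\eta$, and ``$\mathcal{A}$ is stably fibrant'' becomes ``$\eta_{\mathcal{A}}$ is a weak equivalence of $\mathsf{Lp}$''. Throughout I read the statement with $F$ a morphism $F\colon\mathcal{A}\to\mathcal{B}$ of localization pairs (the hypotheses mention $\eta_{\mathcal{A}}$), which is a fibration for the structure of Proposition~\ref{sat}; the goal is to show that $F$ has the right lifting property with respect to every cofibration of $\mathsf{Lp}$ which is a $Q$-weak equivalence.

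The first step is to isolate the intermediate criterion that carries most of the weight: a fibration $f\colon X\to Y$ of $\mathsf{Lp}$ (for the structure of Proposition~\ref{sat}) is a $Q$-fibration \emph{if and only if} the naturality square
$$
\xymatrix{
X \ar[r]^{\eta_X} \ar[d]_f & Q(X) \ar[d]^{Q(f)} \\
Y \ar[r]_{\eta_Y} & Q(Y)
}
$$
is homotopy cartesian for the model structure of Proposition~\ref{sat} (meaning: the comparison from $X$ to the pullback of a factorization of $Q(f)$ into a trivial cofibration followed by a fibration is a weak equivalence). The ``only if'' implication is the easy one and follows from Lemma~\ref{weak fibration} together with Lemma~\ref{fiber}, exactly as for spectra. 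The ``if'' implication is the heart of the matter, and it is here that I expect the principal obstacle to lie: one must exhibit, by a small-object/bounded-cofibration argument as in \cite{Jardine}, an explicit set of generating $Q$-trivial cofibrations (enlarging the generating trivial cofibrations of Proposition~\ref{sat} by cofibrant replacements of $Q$-equivalences between suitably small localization pairs), and then check the lifting property against these generators directly, using that $Q$ carries them to strict weak equivalences and that a homotopy cartesian square reduces lifting against $f$ to lifting against $Q(f)$, the latter being governed again by Lemma~\ref{weak fibration}. Getting this generation argument to work in $\mathsf{Lp}$ — whose underlying structure, as the nonexample shows, is not right proper — is the delicate point, and it is precisely the reason Lemma~\ref{fiber} (a substitute for right properness along the special maps $\eta$) was singled out.

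Granting the criterion, the proof of the lemma is short. By naturality of $\eta$ the displayed square with $f=F$ commutes. Factor $Q(F)$ in the structure of Proposition~\ref{sat} as $Q(\mathcal{A})\xrightarrow{\,j\,}\mathcal{W}\xrightarrow{\,q\,}Q(\mathcal{B})$ with $j$ a trivial cofibration and $q$ a fibration, so that the pullback $\mathcal{B}\times_{Q(\mathcal{B})}\mathcal{W}$ is a model for the homotopy pullback of $Q(F)$ along $\eta_{\mathcal{B}}$. The maps $F\colon\mathcal{A}\to\mathcal{B}$ and $j\circ\eta_{\mathcal{A}}\colon\mathcal{A}\to\mathcal{W}$ induce a comparison map $\theta\colon\mathcal{A}\to\mathcal{B}\times_{Q(\mathcal{B})}\mathcal{W}$. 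Now $j\circ\eta_{\mathcal{A}}$ is a weak equivalence of $\mathsf{Lp}$, being the composite of the trivial cofibration $j$ with the weak equivalence $\eta_{\mathcal{A}}$; and the projection $\mathcal{B}\times_{Q(\mathcal{B})}\mathcal{W}\to\mathcal{W}$ is the pullback of the weak equivalence $\eta_{\mathcal{B}}$ along the fibration $q\colon\mathcal{W}\to Q(\mathcal{B})$, hence is a weak equivalence by Lemma~\ref{fiber}. Since the composite of $\theta$ with this projection equals $j\circ\eta_{\mathcal{A}}$, the two-out-of-three property forces $\theta$ to be a weak equivalence of $\mathsf{Lp}$. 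Thus the square is homotopy cartesian, and by the criterion $F$ is a $Q$-fibration. Finally I would note that the same computation, applied verbatim, also shows that the auxiliary fibrations produced along the way (for instance $q$, whose source and target satisfy the hypothesis on $\eta$ by condition~{\bf A2} and two-out-of-three) are $Q$-fibrations, so that the whole argument is internally consistent rather than circular.
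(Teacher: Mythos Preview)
Your last paragraph, showing that the naturality square is homotopy cartesian, is correct and matches the first half of the argument the paper intends (lemma~4.4 of chapter~X of \cite{Jardine}, with Lemma~\ref{fiber} replacing right properness). The gap lies in the intermediate criterion you invoke: you do not prove its ``if'' direction, and the sketch you offer --- a generating set of $Q$-trivial cofibrations via a bounded-cofibration argument --- is speculation, not a proof. In fact this characterisation of $Q$-fibrations is normally a \emph{consequence} of the $Q$-model structure rather than an input to it, so trying to use it here is backwards. Your closing remark, that the same criterion applied to $q$ shows $q$ is a $Q$-fibration, is then circular rather than ``internally consistent'': it presupposes the very implication you have not established.

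The paper's argument (and that of \cite{Jardine}) avoids the criterion and solves an arbitrary lifting problem directly. Given a $Q$-trivial cofibration $i\colon A\to B$ and a square against $F$, your computation produces a weak equivalence $\theta\colon \mathcal{A}\to \mathcal{B}\times_{Q(\mathcal{B})}\mathcal{W}$. One then solves the induced lifting problem for $i$ against the fibration $q\colon \mathcal{W}\to Q(\mathcal{B})$ by hand: by naturality of $\eta$ this problem factors through the square for $Q(i)$ against $q$, and since $Q(i)$ is a genuine weak equivalence of $\mathsf{Lp}$ (this is exactly where the $Q$-weak-equivalence hypothesis on $i$ is used), one factors $Q(i)$ as a trivial cofibration followed by a trivial fibration and lifts twice in the underlying model structure --- no $Q$-fibration property of $q$ is required. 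Pulling the resulting lift back to $\mathcal{B}\times_{Q(\mathcal{B})}\mathcal{W}$, then factoring $\theta$ as a trivial cofibration followed by a trivial fibration and lifting the trivial cofibration against the fibration $F$, yields a retraction that transports the lift to $\mathcal{A}$. The missing idea, therefore, is not a small-object argument but the elementary observation that one can lift against $q$ directly because $Q(i)$ is already a weak equivalence.
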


\begin{proof}
Consider exactly the same proof as for lemma $4.4$ in chapter~X from \cite{Jardine},
but use lemma~\ref{fiber} instead of the right properness assumption
on $\mathsf{Lp}$.
\end{proof}

\begin{lemma}\label{facto}
Any morphism $F: Q(\mathcal{A})
\rightarrow Q(\mathcal{B})$ has a factorization
$F=P \circ I$ where $P:\mathcal{Z} \rightarrow Q(\mathcal{B})$ is a
$Q$-fibration and $I: Q(\mathcal{A}) \rightarrow \mathcal{Z}$ is a cofibration and a $Q$-weak equivalence.
\end{lemma}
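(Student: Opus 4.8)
The plan is to reduce the statement to the factorization axiom already available in the model structure of Proposition~\ref{sat}, upgraded to a $Q$-fibration by Lemma~\ref{fibration}, following the pattern of chapter~X of \cite{Jardine}. First I would factor the given morphism $F: Q(\mathcal{A}) \rightarrow Q(\mathcal{B})$ in the Quillen model structure of Proposition~\ref{sat} as
$$ Q(\mathcal{A}) \stackrel{I}{\longrightarrow} \mathcal{Z} \stackrel{P}{\longrightarrow} Q(\mathcal{B})\,, $$
where $I$ is a trivial cofibration of $\mathsf{Lp}$ and $P$ is a fibration of $\mathsf{Lp}$. Since the cofibrations of the $Q$-model are by definition exactly those of $\mathsf{Lp}$, the map $I$ is a cofibration for the $Q$-model; and since the functor $Q$ preserves weak equivalences (remark {\bf A1}, which uses that $k$ is a field), the weak equivalence $I$ of $\mathsf{Lp}$ is in particular a $Q$-weak equivalence. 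So it remains only to promote $P$ to a $Q$-fibration.

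To invoke Lemma~\ref{fibration} for $P$, I must check that $\eta_{\mathcal{Z}}$ and $\eta_{Q(\mathcal{B})}$ are weak equivalences of $\mathsf{Lp}$. The second is handled by remark {\bf A2}, since $\eta_{Q(\mathcal{B})}$ is, up to the identification of objects, one of the two canonical morphisms $Q(\mathcal{B}) \rightrightarrows QQ(\mathcal{B})$ considered there. For the first I would use the naturality square
$$
\xymatrix{
Q(\mathcal{A}) \ar[r]^-{\eta_{Q(\mathcal{A})}} \ar[d]_I & QQ(\mathcal{A}) \ar[d]^{Q(I)} \\
\mathcal{Z} \ar[r]_-{\eta_{\mathcal{Z}}} & Q(\mathcal{Z})\,,
}
$$
in which $\eta_{Q(\mathcal{A})}$ is a weak equivalence by {\bf A2}, $I$ is a weak equivalence by construction, and $Q(I)$ is a weak equivalence by {\bf A1}; the two-out-of-three property in $\mathsf{Lp}$ then forces $\eta_{\mathcal{Z}}$ to be a weak equivalence. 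Hence Lemma~\ref{fibration} applies and shows that $P$ is a $Q$-fibration, so that $F = P\circ I$ is the required factorization.

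I do not expect a genuine obstacle: all the real technical content has already been isolated in Lemmas~\ref{fiber} and \ref{fibration} and in remarks {\bf A1}--{\bf A2}. The only point needing care is conceptual rather than computational, namely that the naive factorization from Proposition~\ref{sat} delivers a fibration but not a priori a $Q$-fibration, and that this gap closes precisely because the intermediate object $\mathcal{Z}$ inherits from $Q(\mathcal{A})$ — via transport along the trivial cofibration $I$ — the property that $\eta$ is a weak equivalence on it, which is exactly the hypothesis of Lemma~\ref{fibration}. One should also note the mild abuse that Lemma~\ref{fibration} is stated for a fibration whose source is an arbitrary localization pair (written $\mathcal{A}_0$ there), so that it applies verbatim with source $\mathcal{Z}$ and target $Q(\mathcal{B})$.
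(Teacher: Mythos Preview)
Your proposal is correct and is exactly the argument the paper has in mind: the paper's proof simply says to run the proof of lemma~4.5 in chapter~X of \cite{Jardine} using Lemma~\ref{fibration} and conditions {\bf A1}--{\bf A2}, and what you have written is precisely that argument spelled out in detail.
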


\begin{proof}
Since lemma~\ref{fibration} and conditions {\bf A1} and {\bf A2}
are satisfied, we consider the proof of lemma $4.5$ in chapter~X from
\cite{Jardine}.
\end{proof}

Let $\mathcal{A}$ be a localization pair. By
condition {\bf A2} we know that the natural morphism:
$$ \eta_{\mathcal{A}}:(\mathcal{A}_0 \subset
\mathcal{A}_1) \longrightarrow (\overline{\mathcal{A}_0} \subset
\mathcal{A}_1/\mathcal{A}_0)$$
is a $Q$-weak equivalence in $\mathsf{Lp}$.

\begin{lemma}\label{Q-weak}
Let $F:\mathcal{Z} \rightarrow Q(\mathcal{A})$ be a
fibration in $\mathsf{Lp}$. Then the induced morphism
$$ \eta^*_{\mathcal{A}}: \mathcal{Z} \underset{Q(\mathcal{A})} {\times} \mathcal{A} \longrightarrow \mathcal{Z}$$
is a $Q$-weak equivalence in $\mathsf{Lp}$.
\end{lemma}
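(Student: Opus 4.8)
\textbf{Proof proposal for Lemma~\ref{Q-weak}.}
The plan is to reduce the statement to Lemma~\ref{fiber} by applying the functor $Q$ and exploiting condition {\bf A2}. First I would consider the commutative square in $\mathsf{Lp}$
$$
\xymatrix{
\mathcal{Z} \underset{Q(\mathcal{A})}{\times} \mathcal{A} \ar[r]^-{\eta^*_{\mathcal{A}}} \ar[d]_{\eta_{\mathcal{Z}\times\mathcal{A}}} & \mathcal{Z} \ar[d]^{\eta_{\mathcal{Z}}}\\
Q(\mathcal{Z} \underset{Q(\mathcal{A})}{\times} \mathcal{A}) \ar[r]_-{Q(\eta^*_{\mathcal{A}})} & Q(\mathcal{Z})\,,
}
$$
and observe that by condition {\bf A2} the two vertical morphisms become $Q$-weak equivalences (indeed, applying $Q$ to $\eta_{\mathcal{Z}}$ gives one of the two weak equivalences of {\bf A2}, and similarly for the left vertical arrow). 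By the two out of three property for $Q$-weak equivalences, $\eta^*_{\mathcal{A}}$ is a $Q$-weak equivalence if and only if $Q(\eta^*_{\mathcal{A}})$ is one. So it suffices to analyse $Q(\eta^*_{\mathcal{A}})$, i.e. to show that the dg functor $(\mathcal{Z}\underset{Q(\mathcal{A})}{\times}\mathcal{A})_1/(\mathcal{Z}\underset{Q(\mathcal{A})}{\times}\mathcal{A})_0 \to \mathcal{Z}_1/\mathcal{Z}_0$ is a Morita dg functor.

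The key step is then to identify the dg quotient of the fibre product. Since fibre products in $\mathsf{Lp}$ are computed componentwise and dg quotients are computed by freely adjoining contractions $h_X$ (which only depend on the set of objects of the subcategory), the natural map $\mathcal{Z}_1 \underset{(\mathcal{A}_1/\mathcal{A}_0)}{\times} \mathcal{A}_1 \to (\mathcal{Z}_1/\mathcal{Z}_0)\underset{(\mathcal{A}_1/\mathcal{A}_0)\text{-twist}}{\times}(\mathcal{A}_1/\mathcal{A}_0)$ should be compatible with the quotient construction; concretely, applying $Q$ to the fibre product diagram produces a square
$$
\xymatrix{
Q(\mathcal{Z}\underset{Q(\mathcal{A})}{\times}\mathcal{A}) \ar[r]^-{Q(\eta^*_{\mathcal{A}})} \ar[d] & Q(\mathcal{Z}) \ar[d]^{Q(F)}\\
QQ(\mathcal{A}) \ar[r]_-{\sim} & Q Q(\mathcal{A})
}
$$
which, after using {\bf A2} to replace $QQ(\mathcal{A})$ by $Q(\mathcal{A})$, realises $Q(\eta^*_{\mathcal{A}})$ as (weakly equivalent to) the pullback of $Q(F):Q(\mathcal{Z})\to Q(Q(\mathcal{A}))\simeq Q(\mathcal{A})$ along the weak equivalence $\eta_{\mathcal{A}}:\mathcal{A}\stackrel{\sim}{\to}Q(\mathcal{A})$. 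Now $Q(F)$ is a fibration in $\mathsf{Lp}$ (one checks that $Q$ preserves fibrations into objects of the form $Q(\mathcal{A})$, or replaces $F$ by a fibrant model; this is where one must be slightly careful since $Q$ does not preserve all fibrations), and $\eta_{\mathcal{A}}$ is a weak equivalence by hypothesis. Hence Lemma~\ref{fiber}, applied with $Q(\mathcal{A})$ in place of $\mathcal{A}$, $Q(\mathcal{Z})$ in place of $\mathcal{W}$ and $\mathcal{A}$ in place of the second factor, shows that the pullback map is a weak equivalence of $\mathsf{Lp}$, hence a $Q$-weak equivalence.

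I expect the main obstacle to be the bookkeeping needed to justify the identification of $Q$ applied to a fibre product with the relevant fibre product over $Q(\mathcal{A})$, together with verifying that $Q(F)$ (or a suitable replacement) is a fibration to which Lemma~\ref{fiber} applies — this uses that $k$ is a field so that all complexes are $k$-flat (condition {\bf A1}), that dg quotients commute with the relevant colimits and, for the factorisation through a fibrant model, Lemma~\ref{facto}. Once these compatibilities are pinned down, the argument is a formal two-out-of-three plus Lemma~\ref{fiber} manipulation, exactly parallel to the corresponding step in chapter~X of \cite{Jardine}.
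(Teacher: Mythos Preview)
Your overall plan is exactly the paper's: reduce to showing that $Q(\eta^*_{\mathcal{A}})$ is a weak equivalence in $\mathsf{Lp}$, then factor this through an identification of $Q\bigl(\mathcal{Z}\times_{Q(\mathcal{A})}\mathcal{A}\bigr)$ with the fibre product $Q(\mathcal{Z})\times_{QQ(\mathcal{A})}Q(\mathcal{A})$, and finally show that the projection from that fibre product to $Q(\mathcal{Z})$ is a weak equivalence. (Your opening square is harmless but unnecessary: by definition $\eta^*_{\mathcal{A}}$ is a $Q$-weak equivalence precisely when $Q(\eta^*_{\mathcal{A}})$ is a weak equivalence.)

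The genuine gap is at the last step. You correctly flag that $Q(F)$ need not be a fibration in $\mathsf{Lp}$, but neither of your proposed fixes works. Replacing $F$ by a fibrant model would change the fibre product you started with, and Lemma~\ref{facto} only gives factorizations, not a way to make $Q(F)$ itself fibrant while leaving the pullback square intact. The paper does \emph{not} apply Lemma~\ref{fiber} as a black box. Instead it opens up its proof and checks the one property actually used there: that $Q(F)$ induces componentwise surjections on $\mathsf{Hom}$-complexes. This follows because $F$ is a fibration (hence surjective on $\mathsf{Hom}$-complexes) and, crucially, because $F_0:\mathcal{Z}_0\to\overline{\mathcal{A}_0}$ is surjective on objects --- every object of $\overline{\mathcal{A}_0}$ is contractible and $F$ has the right lifting property against $C$. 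With this surjectivity in hand, the right-properness of the projective model structure on $\mathsf{Ch}(k)$ gives the quasi-isomorphism on $\mathsf{Hom}$-complexes exactly as in the proof of Lemma~\ref{fiber}, and since all the maps involved are the identity on objects, this suffices.

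A second point you hand-wave: the identification $Q\bigl(\mathcal{Z}\times_{Q(\mathcal{A})}\mathcal{A}\bigr)\cong Q(\mathcal{Z})\times_{QQ(\mathcal{A})}Q(\mathcal{A})$ is not a formal compatibility but an honest \emph{isomorphism} of localization pairs, proved in the paper by a direct comparison of the two fibre-product descriptions of the $\mathsf{Hom}$-complexes, using that the contractions adjoined by $Q$ on each side match up. You should expect to write this out explicitly rather than appeal to ``dg quotients commute with the relevant colimits''.
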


\begin{proof}
We need to prove that $Q(\eta^*_{\mathcal{A}})$ is a weak equivalence in $\mathsf{Lp}$.
\begin{enumerate}
\item[(1)] We prove that the induced morphism:
$$
Q(\eta_{\mathcal{A}})^* : Q(\mathcal{Z})
\underset{QQ(\mathcal{A})}{\times} Q(\mathcal{A}) \longrightarrow Q(\mathcal{Z})
$$
is a weak equivalence in $\mathsf{Lp}$. Remark first that since $F$ is a
fibration in $\mathsf{Lp}$, the dg functors $F_0$ and $F_1$ are Morita fibrations and so they are surjective at the level of
$\mathsf{Hom}$-spaces. We now show that the dg functor
$F_0:\mathcal{Z}_0 \rightarrow \overline{\mathcal{A}_0}$ is surjective
on objects. If $\overline{\mathcal{A}_0}$ is the empty dg category
then so is $\mathcal{Z}_0$ and the claim is showed. If
$\overline{\mathcal{A}_0}$ is not empty, every object $X$ in
$\overline{\mathcal{A}_0}$ is contractible and since the dg functor $F_0$ belongs to
$C-\mbox{inj}$ there exists an object $Y$ in $\mathcal{Z}_0$ such
that $F_0(Y)=X$. This implies that each component of the morphism
$$ Q(F): Q(\mathcal{Z}) \longrightarrow
QQ(\mathcal{A})$$
is a dg functor that is surjective at the level of
$\mbox{Hom}$-spaces. Since by condition {\bf A2} the morphism
$$ (Q\eta_{\mathcal{A}}): Q(\mathcal{A}) \longrightarrow QQ(\mathcal{A})$$
is a weak equivalence an analogous argument to the proof of lemma~\ref{fiber} (we
have just proved that $F_1(X,Y)$ is a fibration in the projective
model structure on $\mathsf{Ch}(k)$),
  proves the condition $(1)$.
\item[(2)] We prove that the induced morphism:
$$ Q(\mathcal{Z}
\underset{Q(\mathcal{A})}{\times} \mathcal{A}) \longrightarrow   Q(\mathcal{Z})
\underset{QQ(\mathcal{A})}{\times} Q(\mathcal{A})$$
is an isomorphism in $\mathsf{Lp}$.
Since by construction the functor $Q$ is the identity functor on
objects, both components of the above morphism are also the identity
on objects. Let us consider the $1$-component of the above morphism. Let $X$ and
$Y$ be objects of $\mathcal{Z}_1/\mathcal{Z}_0$. We have the following fiber product in $\mathsf{Ch}(k)$~:

$$
\xymatrix{
\mbox{Hom}_{\mathcal{Z}_1/\mathcal{Z}_0
  \underset{(\mathcal{A}_1/\mathcal{A}_0)/\overline{\mathcal{A}_0}}{\times} \mathcal{A}_1/\mathcal{A}_0 }(X,Y) \ar[rr] \ar[d]  \ar@{}[drr]|{\ulcorner} & & \mbox{Hom}_{\mathcal{A}_1/\mathcal{A}_0}(F_1(X),F_1(Y))
\ar[d]^{Q \eta_{\mathcal{A}}} \\
\mbox{Hom}_{\mathcal{Z}_1/\mathcal{Z}_0}(X,Y) \ar@{->>}[rr]^-{QF_1} &&
\mbox{Hom}_{(\mathcal{A}_1/\mathcal{A}_0)/\overline{\mathcal{A}_0}}(F_1(X),F_1(Y))\,.
}
$$
Remark that the functor $Q \eta_{\mathcal{A}}$, resp. $QF_1$, sends the contractions in
$\mathcal{A}_1/\mathcal{A}_0$, resp. $\mathcal{Z}_1/\mathcal{Z}_0$, associated with the objects of
$\mathcal{A}_0$, resp. $\mathcal{Z}_0$, to the new contractions in
$(\mathcal{A}_1/\mathcal{A}_0)/\overline{\mathcal{A}_0}$ associated
with the objects of $\overline{\mathcal{A}_0}$.
Recall that we have the following fiber product in
$\mathsf{Ch}(k)$:
$$
\xymatrix{
\mbox{Hom}_{\mathcal{Z}_1
  \underset{\mathcal{A}_1/\mathcal{A}_0}{\times} \mathcal{A}_1}(X,Y)
\ar[r] \ar[d]  \ar@{}[dr]|{\ulcorner} & \mbox{Hom}_{\mathcal{A}_1}(F_1X,F_1Y) \ar[d]^{\eta} \\
\mbox{Hom}_{\mathcal{Z}_1}(X,Y) \ar@{->>}[r]^-{F_1} &
\mbox{Hom}_{\mathcal{A}_1/\mathcal{A}_0}(F_1X, F_1Y)\,.
}
$$
A analysis of the above fiber products shows that the induced morphism
$$ \mbox{Hom}_{(\mathcal{Z}_1
  \underset{\mathcal{A}_1/\mathcal{A}_0}{\times}
  \mathcal{A}_1)/(\mathcal{Z}_0
    \underset{\overline{\mathcal{A}_0}}{\times}
    \mathcal{A}_0)}(X,Y) \stackrel{\sim}{\longrightarrow}
\mbox{Hom}_{\mathcal{Z}_1/\mathcal{Z}_0
  \underset{(\mathcal{A}_1/\mathcal{A}_0)/\overline{\mathcal{A}_0}}{\times} \mathcal{A}_1/\mathcal{A}_0}(X,Y)$$
is an isomorphism in $\mathsf{Ch}(k)$. The same argument applies to the
$0$-component of the above morphism. This proves condition $(2)$.
\end{enumerate}
Now, conditions $1)$ and $2)$ imply that the morphism
$$ Q(\mathcal{Z}
\underset{Q(\mathcal{A})}{\times} \mathcal{A}) \stackrel{ (Q\eta)^*_{\mathcal{A}}}{\longrightarrow} Q(\mathcal{Z})$$
is a weak equivalence in $\mathsf{Lp}$, which is exactly the statement of the
lemma. The lemma is then proved.

\end{proof}

\begin{lemma}\label{Q-fibration}
Any morphism $F:\mathcal{A}
\rightarrow \mathcal{B}$ of $\mathsf{Lp}$ has a
factorization $F=Q \circ J$ where
$Q:\mathcal{Z} \rightarrow
\mathcal{B}$ is a $Q$-fibration and
$J:\mathcal{A} \rightarrow \mathcal{Z}$ is a cofibration and a $Q$-weak equivalence.
\end{lemma}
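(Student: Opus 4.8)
The plan is to adapt to the $Q$-model the standard construction of the second factorization in a localized model category, exactly as was done for Lemma~\ref{facto}. Given $F:\mathcal{A}\rightarrow\mathcal{B}$ in $\mathsf{Lp}$, first apply $Q$ and use Lemma~\ref{facto} to factor $Q(F):Q(\mathcal{A})\rightarrow Q(\mathcal{B})$ as $Q(F)=P\circ I$, where $I:Q(\mathcal{A})\rightarrow\mathcal{W}$ is a cofibration and a $Q$-weak equivalence and $P:\mathcal{W}\rightarrow Q(\mathcal{B})$ is a $Q$-fibration. Then form the pullback $\mathcal{Z}:=\mathcal{W}\times_{Q(\mathcal{B})}\mathcal{B}$ along $\eta_{\mathcal{B}}$; since $\eta_{\mathcal{B}}\circ F=Q(F)\circ\eta_{\mathcal{A}}=P\circ I\circ\eta_{\mathcal{A}}$ by naturality of $\eta$, the universal property of the pullback yields a canonical morphism $G:\mathcal{A}\rightarrow\mathcal{Z}$ with $q\circ G=F$, where $q:\mathcal{Z}\rightarrow\mathcal{B}$ is one projection, and with $\pi\circ G=I\circ\eta_{\mathcal{A}}$, where $\pi:\mathcal{Z}\rightarrow\mathcal{W}$ is the other.

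Next I would check that $q$ is a $Q$-fibration and that $G$ is a $Q$-weak equivalence. The first is immediate: $q$ is a pullback of the $Q$-fibration $P$, and $Q$-fibrations, being characterized by a right lifting property, are stable under pullback. For the second, note that every $Q$-fibration is a fibration in $\mathsf{Lp}$: by condition {\bf A1} weak equivalences of $\mathsf{Lp}$ are $Q$-weak equivalences, so a trivial cofibration of $\mathsf{Lp}$ is in particular a cofibration which is a $Q$-weak equivalence, whence every $Q$-fibration has the right lifting property with respect to all trivial cofibrations of $\mathsf{Lp}$. In particular $P$ is a fibration in $\mathsf{Lp}$, so Lemma~\ref{Q-weak}, applied with $\mathcal{B}$ in place of $\mathcal{A}$ and $P$ in place of the fibration, shows that $\pi:\mathcal{Z}\rightarrow\mathcal{W}$ is a $Q$-weak equivalence. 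Since $\eta_{\mathcal{A}}$ is a $Q$-weak equivalence by condition {\bf A2} and $I$ is one by construction, the composite $\pi\circ G=I\circ\eta_{\mathcal{A}}$ is a $Q$-weak equivalence, and the two out of three property for $Q$-weak equivalences (inherited from $\mathsf{Lp}$ through the functor $Q$) forces $G$ to be a $Q$-weak equivalence.

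Finally, $G$ need not be a cofibration, so I would repair the factorization: using the model structure of Proposition~\ref{sat}, factor $G=p\circ j$ with $j:\mathcal{A}\rightarrow\mathcal{Z}'$ a cofibration and $p:\mathcal{Z}'\rightarrow\mathcal{Z}$ a trivial fibration of $\mathsf{Lp}$. Then $p$ is a weak equivalence, hence a $Q$-weak equivalence, so $j$ is a $Q$-weak equivalence by two out of three; moreover $p$ is a $Q$-fibration (a trivial fibration of $\mathsf{Lp}$ lifts against all cofibrations, a fortiori against those that are $Q$-weak equivalences), so the composite $q\circ p:\mathcal{Z}'\rightarrow\mathcal{B}$ is a $Q$-fibration. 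Setting $J:=j$ then gives $F=(q\circ p)\circ J$ with $J$ a cofibration and a $Q$-weak equivalence and $q\circ p$ a $Q$-fibration, as required. The one genuinely delicate ingredient is the interaction of $Q$ with pullbacks, which is already packaged in Lemmas~\ref{fiber} and~\ref{Q-weak}; the main obstacle in assembling the proof is keeping the bookkeeping between $\mathsf{Lp}$-fibrations, $Q$-fibrations, weak equivalences and $Q$-weak equivalences airtight, in particular the implication $Q\text{-fibration}\Rightarrow\mathsf{Lp}\text{-fibration}$ that licenses the use of Lemma~\ref{Q-weak} in the middle step.
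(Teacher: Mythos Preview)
Your proof is correct and is precisely the argument the paper intends: the paper's proof simply says ``consider exactly the same proof as for lemma~4.6 in chapter~X from \cite{Jardine}, but use lemma~\ref{Q-weak} instead of condition~{\bf A3},'' and you have faithfully reconstructed that argument, including the key step of invoking Lemma~\ref{Q-weak} (enabled by the observation that every $Q$-fibration is an $\mathsf{Lp}$-fibration) in place of the right-properness condition~{\bf A3}.
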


\begin{proof}
Consider exactly the same proof as for lemma $4.6$ in chapter~X from \cite{Jardine},
but use lemma~\ref{Q-weak} instead of condition {\bf A3}.
\end{proof}

We now prove theorem~\ref{main}.
\begin{proof}
We will prove that conditions $M1-M5$ of definition $7.1.3$ from \cite{Hirschhorn}
are satisfied.
By the proof of proposition~\ref{naive}, the category $\mathsf{Lp}$ is complete and
cocomplete and so condition $M1$ is verified.
By definition the $Q$-weak equivalences in $\mathsf{Lp}$ satisfy condition
$M2$, i.e. the two out of three condition.
Clearly the $Q$-weak equivalences and $Q$-fibrations in $\mathsf{Lp}$ are
stable under retractions. Since the cofibrations are those of
proposition~\ref{sat} condition $M3$ is verified. Finally
lemma~\ref{weak fibration} implies the lifting condition $M4$ and
lemmas~\ref{weak fibration} and \ref{Q-fibration} imply the
factorization condition $M5$.
\end{proof}

\subsection{$Q$-fibrant objects}

We denote by $\mathsf{Ho}(\mathsf{Lp})$ the homotopy category of
$\mathsf{Lp}$ given by theorem~\ref{main}.

Let $\mathcal{A}$ be a localization pair.

\begin{lemma}\label{fibrant1}
If $\mathcal{A}$ is fibrant, in the Quillen model structure of
proposition~\ref{sat}, and the morphism $\eta_{\mathcal{A}}:
\mathcal{A} \rightarrow Q(\mathcal{A})$ is a weak
equivalence in $\mathsf{Lp}$ then $\mathcal{A}$ is $Q$-fibrant.
\end{lemma}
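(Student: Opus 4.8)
The plan is to deduce this from Lemma~\ref{fibration}, applied to the canonical morphism $F\colon \mathcal{A} \to \ast$ onto the terminal object $\ast$ of $\mathsf{Lp}$. Saying that $\mathcal{A}$ is fibrant in the Quillen model structure of Proposition~\ref{sat} is precisely saying that $F$ is a fibration in $\mathsf{Lp}$. So, in order to invoke Lemma~\ref{fibration} with $\mathcal{A}$ in the role of its source and $\ast$ in the role of its base $\mathcal{B}$, the only two hypotheses left to check are that $\eta_{\mathcal{A}}$ is a weak equivalence of $\mathsf{Lp}$ --- which is exactly our assumption --- and that $\eta_{\ast}$ is a weak equivalence of $\mathsf{Lp}$.

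First I would pin down $\ast$. Since limits in $\mathsf{Lp}$ are computed componentwise inside $\mathsf{dgcat}^L$, we have $\ast = (0 \subset 0)$, where $0$ denotes the terminal dg category (one object, zero endomorphism complex). Then, by the definition of the functor $Q$, one checks that $Q(\ast) = \ast$: the dg quotient $0/0$ has trivial derived category --- being, since $k$ is a field so that Drinfeld's explicit construction computes the quotient, the Verdier quotient of $\mathcal{D}(0)=0$ by itself --- so the component $0 \to 0/0$ is a Morita dg functor, and similarly on the sub-dg-category component. Hence $\eta_{\ast}\colon \ast \to Q(\ast)$ is a componentwise Morita dg functor, i.e. a weak equivalence of $\mathsf{Lp}$.

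With the three hypotheses of Lemma~\ref{fibration} in hand, that lemma yields that $F\colon \mathcal{A} \to \ast$ is a $Q$-fibration, which is exactly the assertion that $\mathcal{A}$ is $Q$-fibrant. I do not expect a serious obstacle: the argument is a direct application of the machinery already built (Lemmas~\ref{fiber}, \ref{fibration}), the one mildly delicate point being the bookkeeping around the terminal object --- in particular, taking care that the symbol $\mathcal{A}_0$ in the statement of Lemma~\ref{fibration} denotes a generic localization pair and not the sub-dg-category appearing in a localization pair $\mathcal{A} = (\mathcal{A}_0 \subset \mathcal{A}_1)$, so that the instantiation above is legitimate.
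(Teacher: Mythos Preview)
Your proof is correct and follows essentially the same route as the paper's: both hinge on the observation that $\eta_{\ast}\colon \ast \to Q(\ast)$ is a weak equivalence, and then use the retract argument that underlies Lemma~\ref{fibration}. The paper simply inlines that argument explicitly (factor $Q(P)$ as a trivial cofibration followed by a fibration $q$, invoke Lemma~\ref{fiber} to control the pullback, factor again, and exhibit $P$ as a retract of the $Q$-fibration $q_\ast\circ\pi$) rather than citing Lemma~\ref{fibration}; your version is the cleaner packaging of the same idea.
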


\begin{proof}
We need to show that the morphism $\mathcal{A}
\stackrel{P}{\rightarrow} 0$ is a $Q$-fibration, where $0$ denotes the
terminal object in $\mathsf{Lp}$. Consider the
following diagram:
$$
\xymatrix{
\mathcal{A} \ar[d]_P \ar[rr]^-{\eta_{\mathcal{A}}} & & Q(\mathcal{A})
\ar[d]^{Q(P)} \\
0 \ar[rr]_-{\eta} & & Q(0) \,.
}
$$
Factorize the morphism $Q(P)$ as
$$
\xymatrix{
Q(\mathcal{A}) \ar[r]^i \ar[dr]_{Q(P)} & \mathcal{Z} \ar[d]^q\\
& Q(0) \,,
}
$$
where $i$ is a trivial cofibration and $q$ a fibration in
$\mathsf{Lp}$. By the proof of lemma~\ref{facto}, $q$ is a $Q$-fibration. Since the morphism $0
\rightarrow Q(0)$ is a weak equivalence, lemma~\ref{fiber} implies
that the induced morphism $0 \underset{Q(0)}{\times}
\mathcal{A} \rightarrow \mathcal{Z}$ is a weak
equivalence. Since $\eta_{\mathcal{A}}$ is a weak equivalence
the induced morphism
$$ \theta: \mathcal{A} \rightarrow 0 \underset{Q(0)}{\times}
\mathcal{Z}$$ is also a weak equivalence. Factorize the morphism $\theta$ as
$$
\xymatrix{
\mathcal{A} \ar[r]^j \ar[dr]_{\theta} & \mathcal{W}
\ar[d]^{\pi}\\
& 0 \underset{Q(0)}{\times} \mathcal{Z}\,,
}
$$
where $\pi$ is a trivial fibration of $\mathsf{Lp}$ and $j$ is a trivial
cofibration. Then $q_* \circ \pi$ is a $Q$-fibration and the lifting
exists in the diagram~:
$$
\xymatrix{
\mathcal{A} \ar@{=}[r] \ar[d]_j & \mathcal{A} \ar[d]^p \\
\mathcal{W} \ar@{.>}[ur] \ar[r]_{q_* \circ \pi} & 0\,.
}
$$
Thus $P$ is a rectract of a $Q$-fibration, and is therefore a
$Q$-fibration itself. This proves the lemma.
\end{proof}

\begin{lemma}\label{fibrant2}
If $\mathcal{A}$ is $Q$-fibrant, then $\mathcal{A}$ is
fibrant in $\mathsf{Lp}$ and the natural morphism
$$ \eta_{\mathcal{A}}: \mathcal{A} \rightarrow
Q(\mathcal{A})$$
is a weak equivalence.
\end{lemma}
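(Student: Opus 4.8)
The plan is to prove the converse of Lemma~\ref{fibrant1}, so assume $\mathcal{A}$ is $Q$-fibrant and deduce that $\mathcal{A}$ is fibrant in the model structure of Proposition~\ref{sat} and that $\eta_{\mathcal{A}}$ is a weak equivalence. The first step is to observe that every cofibration which is a weak equivalence (in the sense of Proposition~\ref{sat}) is in particular a cofibration which is a $Q$-weak equivalence, because the functor $Q$ preserves weak equivalences by condition {\bf A1}. Hence a $Q$-fibration has the right lifting property with respect to all trivial cofibrations of $\mathsf{Lp}$, i.e. it is a fibration for the structure of Proposition~\ref{sat}. Applying this to $\mathcal{A} \to 0$ shows immediately that $\mathcal{A}$ is fibrant in $\mathsf{Lp}$.

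Next I would show that $\eta_{\mathcal{A}}: \mathcal{A} \to Q(\mathcal{A})$ is a weak equivalence. The key is that $\eta_{\mathcal{A}}$ is always a $Q$-weak equivalence (this is condition {\bf A2}, since $QQ(\mathcal{A})$ is reached from $Q(\mathcal{A})$ by introducing contractions to already contractible objects, and one checks directly that $Q(\eta_{\mathcal{A}})$ is a weak equivalence). Now factor $\eta_{\mathcal{A}}$ as a cofibration $j: \mathcal{A} \to \mathcal{W}$ followed by a fibration $p: \mathcal{W} \to Q(\mathcal{A})$ of $\mathsf{Lp}$; since $Q$ preserves weak equivalences, both $j$ and $p$ are $Q$-weak equivalences, so $j$ is a trivial cofibration and $p$ is a fibration and a $Q$-weak equivalence. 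By the proof of Lemma~\ref{facto}, $p$ is even a $Q$-fibration. Because $Q(\mathcal{A})$ is fibrant in $\mathsf{Lp}$ (it is a fibrant localization pair by Lemma~\ref{fibrant1} applied to $Q(\mathcal{A})$, whose $\eta$-morphism is a weak equivalence by {\bf A2}) and also $Q$-fibrant by Lemma~\ref{fibrant1}, we are in a position to compare.

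The heart of the argument is a lifting/retract manipulation: since $\mathcal{A}$ is $Q$-fibrant and $j:\mathcal{A}\to\mathcal{W}$ is a cofibration which is a $Q$-weak equivalence, the lifting exists in the square with $j$ on the left, $\mathcal{A} \xrightarrow{\mathrm{id}} \mathcal{A}$ on top, and $\mathcal{A} \to 0$ on the right, giving a retraction $r:\mathcal{W}\to\mathcal{A}$ with $rj = \mathrm{id}_{\mathcal{A}}$. This exhibits $\mathcal{A}$ as a retract of $\mathcal{W}$ over $Q(\mathcal{A})$. Since $p:\mathcal{W}\to Q(\mathcal{A})$ is a weak equivalence of $\mathsf{Lp}$ and $\eta_{\mathcal{A}} = p \circ j$, the morphism $\eta_{\mathcal{A}}$ is a retract of $p$ in the category of arrows over $Q(\mathcal{A})$; weak equivalences are closed under retracts, so $\eta_{\mathcal{A}}$ is a weak equivalence of $\mathsf{Lp}$.

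The main obstacle I anticipate is making the retract-of-arrows step fully rigorous: one must check that the retraction $r$ obtained from the lifting property is genuinely a morphism over $Q(\mathcal{A})$ — that is, $p \circ r = \eta_{\mathcal{A}} \circ r$ composed appropriately, which forces a careful bookkeeping of which square one lifts in. An alternative, perhaps cleaner, route for this last step is to invoke Ken Brown's lemma or the standard fact that between fibrant objects a map is a weak equivalence iff it admits a homotopy inverse, using that both $\mathcal{A}$ and $Q(\mathcal{A})$ are fibrant; but the retract argument above is the most elementary and is what I would write out. Everything else reduces to the definitions and to Lemmas~\ref{fibrant1}, \ref{fiber} and \ref{facto} already established.
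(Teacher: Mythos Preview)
Your first step is correct and matches the paper. The second has a real gap. The retract-of-arrows argument does not close, for exactly the reason you anticipate: the lift against $\mathcal{A}\to 0$ yields an $r$ with $rj=\mathrm{id}_{\mathcal{A}}$ but imposes no relation between $\eta_{\mathcal{A}}\circ r$ and $p$, so $\eta_{\mathcal{A}}$ is not exhibited as a retract of $p$ in the arrow category. Your fallback to Ken Brown also fails as stated, because the claim that $Q(\mathcal{A})$ is $Q$-fibrant is wrong: Lemma~\ref{fibrant1} has ``fibrant in $\mathsf{Lp}$'' as a \emph{hypothesis}, and there is no reason for $\mathcal{A}_1/\mathcal{A}_0$ to be Morita fibrant, so $Q(\mathcal{A})$ is in general neither fibrant in $\mathsf{Lp}$ nor $Q$-fibrant. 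Hence $\eta_{\mathcal{A}}$ is not a $Q$-weak equivalence between $Q$-fibrant objects, and Ken Brown does not apply to it directly.

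The paper supplies the missing $Q$-fibrant target. One factors $Q(p):Q(\mathcal{A})\to Q(0)$ as a trivial cofibration $i:Q(\mathcal{A})\to\mathcal{Z}$ followed by a fibration $q$ in $\mathsf{Lp}$; by the proof of Lemma~\ref{facto}, $q$ is a $Q$-fibration, so its pullback $q_*:0\times_{Q(0)}\mathcal{Z}\to 0$ is a $Q$-fibration and this fibre product is $Q$-fibrant. The induced $\theta:\mathcal{A}\to 0\times_{Q(0)}\mathcal{Z}$ is then a $Q$-weak equivalence between $Q$-fibrant objects, and now Ken Brown (lemma~7.7.1 of \cite{Hirschhorn}) together with Lemma~\ref{weak fibration} (trivial $Q$-fibrations coincide with trivial fibrations of $\mathsf{Lp}$) forces $\theta$ to be a weak equivalence in $\mathsf{Lp}$. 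Two-out-of-three, using that $i$ and the projection $0\times_{Q(0)}\mathcal{Z}\to\mathcal{Z}$ (a weak equivalence by Lemma~\ref{fiber} applied to the terminal object) are weak equivalences, then yields that $\eta_{\mathcal{A}}$ is one.
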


\begin{proof}
Since the $Q$-model structure on $\mathsf{Lp}$ has fewer fibrations
than the Quillen model structure of proposition~\ref{sat}, the
localization pair $\mathcal{A}$ is fibrant in $\mathsf{Lp}$.
Consider the following diagram:
$$
\xymatrix{
\mathcal{A} \ar[rr]^-{\eta_{\mathcal{A}}} \ar[d]_p & & Q(\mathcal{A})
\ar[d]^{Q(p)}\\
0 \ar[rr]_{\eta} & &  Q(0) \,.
}
$$
Factorize $Q(p)=q\circ i$ as in the previous lemma. We have the
following diagram~:
$$
\xymatrix{
\mathcal{A} \ar[r]^-{\theta} \ar[d]_p &  0
\underset{Q(0)}{\times} \mathcal{Z} \ar[dl]^{q_*} \\
0 &
}
$$
Since $p$ and $q_*$ are $Q$-fibrations, $\mathcal{A}$ and
$\mathcal{Z}$ are $Q$-fibrant objects in $\mathsf{Lp}$ and $\theta$
is a $Q$-weak equivalence in $\mathsf{Lp}$. By application of lemma $7.7.1$ b)
from \cite{Hirschhorn} to $\theta$ and using lemma~\ref{weak
  fibration} we conclude that $\theta$ is a weak
equivalence. Since so is $i$, we conclude that $\eta_{\mathcal{A}}$ is also a
weak equivalence. This proves the lemma.
\end{proof}

\begin{remark}
By lemmas~\ref{fibrant1} and \ref{fibrant2} a localization pair
$\mathcal{A}$ is $Q$-fibrant if and only if
it is fibrant in $\mathsf{Lp}$ and the natural morphism
$$ \eta_{\mathcal{A}}: \mathcal{A} \longrightarrow Q(\mathcal{A})$$
is a weak equivalence.
\end{remark}

We now describe explicitly the $Q$-fibrant objects in $\mathsf{Lp}$.

\begin{proposition}\label{Q-fibrant}
A localization pair $\mathcal{A}$ is
$Q$-fibrant, i.e. fibrant in the model structure of Theorem~\ref{main}, if and
only if it is isomorphic in $\mathsf{Lp}$ to a localization pair of the
form~:
$$ (\mathcal{B}_{contr} \subset \mathcal{B})\,,$$
where $\mathcal{B}$ is a fibrant dg category and
$\mathcal{B}_{contr}$ is the full dg subcategory of contractible
objects in $\mathcal{B}$.
\end{proposition}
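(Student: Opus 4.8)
The plan is to reduce everything to two facts already in hand: the characterization of $Q$-fibrant objects given by Lemmas~\ref{fibrant1} and \ref{fibrant2}, namely that $\mathcal{A}=(\mathcal{A}_0\subset\mathcal{A}_1)$ is $Q$-fibrant iff it is fibrant in the model structure of Proposition~\ref{sat} and $\eta_{\mathcal{A}}:\mathcal{A}\to Q(\mathcal{A})$ is a weak equivalence (a componentwise Morita dg functor); and Lemma~\ref{fibrSat}, which says that fibrancy in $\mathsf{Lp}$ means $\mathcal{A}_0,\mathcal{A}_1$ are Morita fibrant and $\mathcal{A}_0$ is stable under homotopy equivalences in $\mathcal{A}_1$. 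So the statement will follow once I show that, for such $\mathcal{A}$, the morphism $\eta_{\mathcal{A}}$ is a weak equivalence exactly when $\mathcal{A}_0$ is the full dg subcategory $(\mathcal{A}_1)_{contr}$ of contractible objects of $\mathcal{A}_1$; the pair $((\mathcal{A}_1)_{contr}\subset\mathcal{A}_1)$ is then of the stated form, with $\mathcal{B}=\mathcal{A}_1$.

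The key technical claim I would isolate is: the $1$-component $\mathcal{A}_1\to\mathcal{A}_1/\mathcal{A}_0$ of $\eta_{\mathcal{A}}$ is a Morita dg functor iff every object of $\mathcal{A}_0$ is contractible in $\mathcal{A}_1$. For the "if" part I would use the explicit description of the Drinfeld dg quotient, cf.~\cite{Drinfeld} and the proof of Lemma~\ref{J-cell-W}: $\mathrm{Hom}_{\mathcal{A}_1/\mathcal{A}_0}(X,Y)$ carries an exhaustive filtration whose $0$-th subquotient is $\mathrm{Hom}_{\mathcal{A}_1}(X,Y)$ and whose $n$-th subquotient for $n\ge 1$ is a tensor product of complexes of which at least one factor is of the form $\mathrm{Hom}_{\mathcal{A}_1}(-,N)$ or $\mathrm{Hom}_{\mathcal{A}_1}(N,-)$ with $N\in\mathcal{A}_0$. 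If each such $N$ is contractible these factors are acyclic, and since $k$ is a field the tensor product of an acyclic complex with any complex is acyclic; hence the subquotients for $n\ge 1$ are acyclic and the inclusion $\mathrm{Hom}_{\mathcal{A}_1}(X,Y)\hookrightarrow\mathrm{Hom}_{\mathcal{A}_1/\mathcal{A}_0}(X,Y)$ is a quasi-isomorphism, so $\mathcal{A}_1\to\mathcal{A}_1/\mathcal{A}_0$ is a quasi-equivalence, a fortiori a Morita dg functor. For the "only if" I would use that $\mathcal{D}(\mathcal{A}_1/\mathcal{A}_0)$ is the Verdier quotient of $\mathcal{D}(\mathcal{A}_1)$ by the localizing subcategory generated by $\mathcal{A}_0$ (see \cite{Drinfeld}, \cite{DerivingDG}): if the quotient functor is a Morita equivalence that subcategory is zero, so each representable $\widehat{X}$, $X\in\mathcal{A}_0$, is acyclic, whence $\mathbf{1}_X$ is a boundary in $\mathrm{Hom}_{\mathcal{A}_1}(X,X)$ and $X$ is contractible. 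I would also note that the $0$-component $\mathcal{A}_0\to\overline{\mathcal{A}_0}$ is automatically a Morita dg functor whenever $\mathcal{A}_0$ consists of contractible objects, since then $\mathcal{D}(\mathcal{A}_0)=0=\mathcal{D}(\overline{\mathcal{A}_0})$ (the representables being acyclic) and an equivalence $0\to 0$ is forced.

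Granting the claim, I would conclude as follows. For the "if" direction of the proposition, assume $\mathcal{A}\cong(\mathcal{B}_{contr}\subset\mathcal{B})$ with $\mathcal{B}$ Morita fibrant: then $\mathcal{B}_{contr}$ contains the zero object of $\mathcal{B}$ (so is non-empty), is stable under homotopy equivalences, and is Morita fibrant because $\mathcal{D}(\mathcal{B}_{contr})=0$ makes the criterion of Proposition~\ref{nova4} hold trivially; hence $\mathcal{A}$ is fibrant in $\mathsf{Lp}$ by Lemma~\ref{fibrSat}, and $\eta_{\mathcal{A}}$ is a weak equivalence by the claim, so $\mathcal{A}$ is $Q$-fibrant. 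For the "only if" direction, if $\mathcal{A}$ is $Q$-fibrant then it is fibrant in $\mathsf{Lp}$, so $\mathcal{A}_0,\mathcal{A}_1$ are Morita fibrant and $\mathcal{A}_0$ is stable under homotopy equivalences in $\mathcal{A}_1$, and $\eta_{\mathcal{A}}$ is a weak equivalence, so by the claim $\mathcal{A}_0\subseteq(\mathcal{A}_1)_{contr}$. For the reverse inclusion I would pick any object of the non-empty $\mathcal{A}_0$; it is contractible in $\mathcal{A}_1$, hence homotopy equivalent to the zero object of $\mathcal{A}_1$, so the zero object lies in $\mathcal{A}_0$ by stability, and then every contractible object of $\mathcal{A}_1$, being homotopy equivalent to the zero object, also lies in $\mathcal{A}_0$. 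Thus $\mathcal{A}_0=(\mathcal{A}_1)_{contr}$, $\mathcal{A}_1$ is Morita fibrant, and $\mathcal{A}$ has the required form. The main obstacle is the technical claim, and within it the passage from "$\mathcal{A}_0$ consists of contractibles" to "$\mathcal{A}_1\to\mathcal{A}_1/\mathcal{A}_0$ is a quasi-equivalence", which rests on the explicit Drinfeld construction and on $k$ being a field; the rest is bookkeeping with Lemmas~\ref{fibrSat}, \ref{fibrant1}, \ref{fibrant2} and Proposition~\ref{nova4}.
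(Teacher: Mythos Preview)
Your proposal is correct and follows essentially the same approach as the paper: reduce to Lemmas~\ref{fibrant1}, \ref{fibrant2} and \ref{fibrSat}, then argue that $\eta_{\mathcal{A}}$ is a weak equivalence precisely when $\mathcal{A}_0$ consists of the contractible objects. The paper's proof asserts the key implications (that $\eta_{\mathcal{A}}$ a weak equivalence forces the objects of $\mathcal{A}_0$ to be contractible, that $\mathcal{B}_{contr}$ is Morita fibrant, and that $\eta$ is a weak equivalence for $(\mathcal{B}_{contr}\subset\mathcal{B})$) without justification; you supply the missing arguments via the Drinfeld filtration, the Verdier-quotient description of $\mathcal{D}(\mathcal{A}_1/\mathcal{A}_0)$, and the vanishing of $\mathcal{D}(\mathcal{B}_{contr})$, which is a genuine improvement in exposition rather than a different strategy.
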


\begin{proof}
Suppose first that $\mathcal{A}$ is
$Q$-fibrant. Since it is also fibrant in $\mathsf{Lp}$ the dg category
$\mathcal{A}_1$ is fibrant in $\mathsf{dgcat}$. Since the morphism
$$ \eta_{\mathcal{A}}:(\mathcal{A}_0 \subset \mathcal{A}_1) \longrightarrow
(\overline{\mathcal{A}_0} \subset \mathcal{A}_1/\mathcal{A}_0)$$
is a weak equivalence all the objects of $\mathcal{A}_0$ are
contractible. Since $\mathcal{A}$ is fibrant
in $\mathsf{Lp}$ by lemma~\ref{fibrSat} $\mathcal{A}_0$ is stable
under homotopy equivalences in $\mathcal{A}_1$. This implies that $\mathcal{A}_0$ is in fact the
full dg subcategory of contractible objects of $\mathcal{A}_1$.
Consider now a localization pair $(\mathcal{B}_{contr} \subset
\mathcal{B})$ as in the statement of the proposition.
We remark that since $\mathcal{B}$ is fibrant in $\mathsf{dgcat}$,
then $\mathcal{B}_{contr}$ is also fibrant. Clearly $(\mathcal{B}_{contr}
\subset \mathcal{B})$ satisfies the extension condition with regard to $\sigma$ and the morphism
$$ \eta: (\mathcal{B}_{contr} \subset \mathcal{B}) \longrightarrow
(\overline{\mathcal{B}_{contr}} \subset
\mathcal{B}/\mathcal{B}_{contr})$$
is a weak equivalence in $\mathsf{Lp}$. This proves the proposition.
\end{proof}

\section{Closed symmetric monoidal structure}\label{secmon}
Let $\mathcal{A}$ and $\mathcal{B}$ be small dg categories. Recall
from \cite{ICM} \cite{Toen} that the tensor product
$\mathcal{A} \otimes \mathcal{B}$ of two dg categories has the class
of objects $\mbox{obj}(\mathcal{A}) \times \mbox{obj}(\mathcal{B})$ and the morphism
spaces
$$ \mathsf{Hom}_{\mathcal{A} \otimes \mathcal{B}}((X,Y),(X',Y')) =
\mathsf{Hom}_{\mathcal{A}}(X,X') \otimes
\mathsf{Hom}_{\mathcal{B}}(Y,Y')$$
with the natural compositions and units. Recall also from
\cite{ICM} \cite{Toen} that we have the  the dg category of
dg functors $\mathsf{Fun}_{dg}(\mathcal{A},\mathcal{B})$ from
$\mathcal{A}$ to $\mathcal{B}$. For two dg functors $F,G: \mathcal{A}
\rightarrow \mathcal{B}$, the {\em complex of graded morphisms}
$\mathsf{Hom}_{\mathsf{Fun}_{dg}(\mathcal{A},\mathcal{B})}(F,G)$ has as
its $n$th component the module formed by the families of morphisms
$$ \phi_X  \in \mathsf{Hom}_{\mathcal{B}}^n(FX,GX)$$
such that $(Gf)(\phi_X)=(\phi_Y)(Ff)$ for all $f \in
\mathsf{Hom}_{\mathcal{A}}(X,Y), \, X,Y \in \mathcal{A}$. The
differential is induced by that of $\mathsf{Hom}_{\mathcal{B}}(FX,GX)$.

\begin{definition}
The {\it internal $\mathsf{Hom}$ functor} in $\mathsf{Lp}$
$$ \mathsf{Hom}(-,-): \mathsf{Lp}^{op}\times \mathsf{Lp}
\longrightarrow \mathsf{Lp}\,,$$
associates to the localization pairs  $(\mathcal{A}_0 \subset \mathcal{A}_1)$, $(\mathcal{B}_0
\subset \mathcal{B}_1)$ the localization pair~:
$$ ( \mathsf{Fun}_{dg}(\mathcal{A}_1,\mathcal{B}_0) \subset
\mathsf{Fun}_{dg}(\mathcal{A}_0,\mathcal{B}_0) \underset{\mathsf{Fun}_{dg}(\mathcal{A}_0,\mathcal{B}_1)}{\times}\mathsf{Fun}_{dg}(\mathcal{A}_1,\mathsf{B}_1))\,.$$
\end{definition}

\begin{definition}
The {\it tensor product} functor in $\mathsf{Lp}$
$$ - \otimes- : \mathsf{Lp} \times \mathsf{Lp} \longrightarrow
\mathsf{Lp} $$
associates to the localization pairs $(\mathcal{A}_0 \subset
\mathcal{A}_1)$, $(\mathcal{B}_0 \subset \mathcal{B}_1)$ the
localization pair~:
$$( \mathcal{A}_0\otimes\mathcal{B}_1 \cup \mathcal{A}_1\otimes
\mathcal{B}_0 \subset \mathcal{A}_1\otimes \mathcal{B}_1)\,,$$
where  $\mathcal{A}_0\otimes\mathcal{B}_1 \cup \mathcal{A}_1\otimes
\mathcal{B}_0$ is the full dg subcategory of $\mathcal{A}_1\otimes
\mathcal{B}_1$ consisting of those objects $a\otimes b$ of
$\mathcal{A}_1 \otimes \mathcal{B}_1$ such that $a$ belongs to
$\mathcal{A}_0$ or $b$ belongs to $\mathcal{B}_0$.
\end{definition}

Let $\mathcal{A}=(\mathcal{A}_0 \subset \mathcal{A}_1)$,
$\mathcal{B}=(\mathcal{B}_0 \subset \mathcal{B}_1)$ and
$\mathcal{C}=(\mathcal{C}_0 \subset \mathcal{C}_1)$ be
localization pairs.

\begin{proposition}\label{monferme}
The category $\mathsf{Lp}$ endowed with the functors
$\mathsf{Hom}(-,-)$ and $-\otimes-$ is a closed symmetric monoidal
category. In particular we have a natural isomorphism in $\mathsf{Lp}$:
$$ \mathsf{Hom}_{\mathsf{Lp}}(\mathcal{A} \otimes
\mathcal{B}, \mathcal{C}) \stackrel{\sim}{\longrightarrow}
\mathsf{Hom}_{\mathsf{Lp}}(\mathcal{A},
\mathsf{Hom}(\mathcal{B}, \mathcal{C}))\,.$$
\end{proposition}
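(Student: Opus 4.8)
The plan is to verify directly that the functors $-\otimes-$ and $\mathsf{Hom}(-,-)$ on $\mathsf{Lp}$ assemble into a closed symmetric monoidal structure, reducing everything to the already-known closed symmetric monoidal structure $(\mathsf{dgcat},\otimes,\mathsf{Fun}_{dg}(-,-))$, see \cite{ICM} \cite{Toen}. First I would identify the unit object: it should be the localization pair $\underline{k} := (\emptyset \subset \underline{k})$, where $\underline{k}$ is the dg category with one object whose endomorphism dg algebra is $k$. One checks that $\underline{k}\otimes \mathcal{A}$ has underlying dg category $\underline{k}\otimes \mathcal{A}_1 \cong \mathcal{A}_1$ and that the distinguished full dg subcategory $\emptyset\otimes\mathcal{A}_1 \cup \underline{k}\otimes\mathcal{A}_0$ is exactly $\mathcal{A}_0$, so $\underline{k}\otimes\mathcal{A} \cong \mathcal{A}$ naturally; the right unit isomorphism is analogous. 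The associativity and symmetry constraints are inherited from those of $(\mathsf{dgcat},\otimes)$ on the ambient dg categories $\mathcal{A}_1, \mathcal{B}_1, \mathcal{C}_1$; the only thing to check is that under the canonical isomorphism $(\mathcal{A}_1\otimes\mathcal{B}_1)\otimes\mathcal{C}_1 \cong \mathcal{A}_1\otimes(\mathcal{B}_1\otimes\mathcal{C}_1)$ the two iterated distinguished subcategories correspond. Both describe the full subcategory of $\mathcal{A}_1\otimes\mathcal{B}_1\otimes\mathcal{C}_1$ on the objects $a\otimes b\otimes c$ with $a\in\mathcal{A}_0$ or $b\in\mathcal{B}_0$ or $c\in\mathcal{C}_0$, so they agree; the pentagon and hexagon identities then follow because the forgetful functor $Ev_1:\mathsf{Lp}\to\mathsf{dgcat}$ is faithful and reflects these coherence diagrams from $\mathsf{dgcat}$.

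Next I would establish the adjunction isomorphism
$$\mathsf{Hom}_{\mathsf{Lp}}(\mathcal{A}\otimes\mathcal{B},\mathcal{C}) \stackrel{\sim}{\longrightarrow} \mathsf{Hom}_{\mathsf{Lp}}(\mathcal{A},\mathsf{Hom}(\mathcal{B},\mathcal{C}))\,.$$
A morphism $\mathcal{A}\otimes\mathcal{B}\to\mathcal{C}$ in $\mathsf{Lp}$ is a commutative square, i.e. a dg functor $F_1:\mathcal{A}_1\otimes\mathcal{B}_1\to\mathcal{C}_1$ sending the full subcategory on objects $a\otimes b$ with $a\in\mathcal{A}_0$ or $b\in\mathcal{B}_0$ into $\mathcal{C}_0$. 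By the $(\otimes,\mathsf{Fun}_{dg})$-adjunction in $\mathsf{dgcat}$, $F_1$ corresponds to a dg functor $G_1:\mathcal{A}_1\to\mathsf{Fun}_{dg}(\mathcal{B}_1,\mathcal{C}_1)$. The constraint that $F_1$ carries $\mathcal{A}_1\otimes\mathcal{B}_0$ into $\mathcal{C}_0$ translates into: for every object $a\in\mathcal{A}_1$, the dg functor $G_1(a):\mathcal{B}_1\to\mathcal{C}_1$ sends $\mathcal{B}_0$ into $\mathcal{C}_0$, i.e. $G_1$ takes values in $\mathsf{Fun}_{dg}(\mathcal{B}_1,\mathcal{C}_0)\times_{\mathsf{Fun}_{dg}(\mathcal{B}_0,\mathcal{C}_0)}\mathsf{Fun}_{dg}(\mathcal{B}_1,\mathcal{C}_1)$ — wait, more precisely it lands in the limit $\mathsf{Fun}_{dg}(\mathcal{B}_0,\mathcal{C}_0)\times_{\mathsf{Fun}_{dg}(\mathcal{B}_0,\mathcal{C}_1)}\mathsf{Fun}_{dg}(\mathcal{B}_1,\mathcal{C}_1)$, which is precisely $\mathsf{Hom}(\mathcal{B},\mathcal{C})_1$. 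The further constraint that $F_1$ carries $\mathcal{A}_0\otimes\mathcal{B}_1$ into $\mathcal{C}_0$ translates into: $G_1$ restricted to $\mathcal{A}_0$ takes values in the full subcategory $\mathsf{Fun}_{dg}(\mathcal{B}_1,\mathcal{C}_0) = \mathsf{Hom}(\mathcal{B},\mathcal{C})_0$. Together these say exactly that $G:=(G_0\subset G_1)$ is a morphism of localization pairs $\mathcal{A}\to\mathsf{Hom}(\mathcal{B},\mathcal{C})$. Thus the bijection $F\mapsto G$ is the desired adjunction, and naturality in all three variables is inherited from the dg-categorical adjunction.

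I would then finish by noting that this adjunction, being natural and exhibiting $\mathsf{Hom}(\mathcal{B},-)$ as right adjoint to $-\otimes\mathcal{B}$, together with the symmetric monoidal structure already verified, gives a closed symmetric monoidal category by definition. The main obstacle, and the step deserving the most care, is the bookkeeping in the adjunction argument above: one must check that the fiber product defining $\mathsf{Hom}(\mathcal{B},\mathcal{C})_1$ is correctly matched by the two separate "into $\mathcal{C}_0$" conditions coming from the two pieces $\mathcal{A}_1\otimes\mathcal{B}_0$ and $\mathcal{A}_0\otimes\mathcal{B}_1$ of the distinguished subcategory of $\mathcal{A}\otimes\mathcal{B}$, and that $\mathcal{A}_0\otimes\mathcal{B}_1\cap\mathcal{A}_1\otimes\mathcal{B}_0 = \mathcal{A}_0\otimes\mathcal{B}_0$ interacts correctly with the pullback — i.e. that no condition is lost or double-counted. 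Everything else is a routine transport of coherence data along the faithful functor $Ev_1$. A minor additional point worth recording is that $\mathsf{Lp}$ has all the needed limits (products and the relevant fiber products) and colimits, which was already observed in the proof of Proposition~\ref{naive}, so the functors $\mathsf{Hom}(-,-)$ and $-\otimes-$ are well-defined on the nose.
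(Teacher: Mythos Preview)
Your proof is correct and follows essentially the same strategy as the paper: both reduce the adjunction isomorphism to the closed symmetric monoidal structure on $\mathsf{dgcat}$ and check that the constraints ``sends $\mathcal{A}_1\otimes\mathcal{B}_0$ to $\mathcal{C}_0$'' and ``sends $\mathcal{A}_0\otimes\mathcal{B}_1$ to $\mathcal{C}_0$'' translate under the $(\otimes,\mathsf{Fun}_{dg})$-adjunction into landing in $\mathsf{Hom}(\mathcal{B},\mathcal{C})_1$ and $\mathsf{Hom}(\mathcal{B},\mathcal{C})_0$ respectively. The paper packages the last step via the adjunction $(S,U):\mathsf{dgcat}^L\rightleftarrows\mathsf{Lp}$ of subsection~\ref{secMorita} rather than working directly with the constraints, and it leaves the unit, associativity and symmetry verifications implicit, whereas you spell them out; otherwise the arguments coincide.
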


\begin{proof}
Consider the following commutative square in $\mathsf{dgcat}$~:
$$
\xymatrix{
\mathcal{A}_0 \ar[d] \ar[rr] & & \mathsf{Fun}_{dg}(\mathcal{B}_1,
\mathcal{C}_0) \ar[d] \\
\mathcal{A}_1 \ar[rr] & &
\mathsf{Fun}_{dg}(\mathcal{B}_0, \mathcal{C}_0)
\underset{\mathsf{Fun}_{dg}(\mathcal{B}_0,\mathcal{C}_1)}{\times}
\mathsf{Fun}_{dg}(\mathcal{B}_1,\mathcal{C}_1)\,,
}
$$
which corresponds exactly to an element of
$\mathsf{Hom}_{\mathsf{Lp}}(\mathcal{A},\mathsf{Hom}(\mathcal{B},\mathcal{C}))$.
Recall from \cite{ICM} that $\mathsf{dgcat}$ endowed with
  the functors $-\otimes-$ and $\mathsf{Hom}(-,-)$ is a closed
  symmetric monoidal category. This implies by adjunction that the
  commutative square above corresponds to the following commutative
  square in $\mathsf{dgcat}$:
$$
\xymatrix{
\mathcal{A}_0\otimes\mathcal{B}_1 \underset{\mathcal{A}_0 \otimes
  \mathcal{B}_0}{\times} \mathcal{A}_1\otimes \mathcal{B}_0 \ar[d]
\ar[r] & \mathcal{C}_0 \ar[d] \\
\mathcal{A}_1\otimes \mathcal{B}_1 \ar[r] & \mathcal{C}_1\,.
}
$$
This commutative square can be seen simply as a morphism in
$\mathsf{dgcat}^L$ from
$$\mathcal{A}_0\otimes\mathcal{B}_1 \underset{\mathcal{A}_0 \otimes
  \mathcal{B}_0}{\times} \mathcal{A}_1\otimes \mathcal{B}_0
\longrightarrow \mathcal{A}_1 \otimes \mathcal{B}_1$$
 to the localization pair $(\mathcal{C}_0
\subset \mathcal{C}_1)$. Remark that the morphism
$$\mathcal{A}_0\otimes\mathcal{B}_1 \underset{\mathcal{A}_0 \otimes
  \mathcal{B}_0}{\times} \mathcal{A}_1\otimes \mathcal{B}_0
\rightarrow \mathcal{A}_1 \otimes \mathcal{B}_1$$
of dg categories is
injective on objects and that its image consists of those objects
$a\otimes b$ of $\mathcal{A}_1 \otimes \mathcal{B}_1$ such that $a$
belongs to $\mathcal{A}_0$ or $b$ belongs to $\mathcal{B}_0$.
This implies that
$$ \mathsf{Im}\,(\mathcal{A}_0\otimes\mathcal{B}_1 \underset{\mathcal{A}_0 \otimes
  \mathcal{B}_0}{\times} \mathcal{A}_1\otimes \mathcal{B}_0
\rightarrow \mathcal{A}_1 \otimes \mathcal{B}_1) = \mathcal{A}
\otimes \mathcal{B}\,,$$
and by the adjunction $(S,U)$ from subsection~\ref{secMorita}, this
last commutative square in $\mathsf{dgcat}$ corresponds exactly to an
element of
$\mathsf{Hom}_{\mathsf{Lp}}(\mathcal{A}\otimes\mathcal{B},
\mathcal{C})$. This proves the proposition.
\end{proof}

\begin{remark}
Remark that the unit object is the localization pair $(\emptyset
\subset \mathcal{A})$, where $\mathcal{A}$ is the dg category with
one object and whose dg algebra of endomorphisms is $k$.
\end{remark}

\section{Derived internal \mbox{Hom}-functor}

Let $\mathcal{A}$ be a cofibrant dg category and $\lambda$ an infinite
cardinal whose size is greater than or equal to the cardinality of the set of isomorphism classes of objects in the category
$\mathsf{H}^0(\mathcal{A})$.
Let $\mathcal{B}$ be a Morita fibrant dg category. Recall that we
denote by $\,\widehat{ }: \mathcal{B} \rightarrow
\mathcal{C}_{dg}(\mathcal{B})$ the Yoneda dg functor.

\begin{definition}\label{lambda}
Let $\mathcal{B}_{\lambda}$ be the full dg subcategory of
$\mathcal{C}_{dg}(\mathcal{B})$, whose objects are:
\begin{itemize}
\item[-] the right $\mathcal{B}$ dg modules $M$ such that $M \oplus D$
  is representable for a contractible right $\mathcal{B}$ dg module
  $D$ and
\item[-] the right $\mathcal{B}$ dg modules of the form $\widehat{B}\oplus C$, where $B$
is an object of $\mathcal{B}$ and the right $\mathcal{B}$ dg module
$C$ is a direct factor of $\underset{i \in S}{\bigoplus}
\mathsf{cone}(\mathbf{1}_{\widehat{B_i}})$, with $B_i$ an object of
$\mathcal{B}$ and $S$ a set of cardinality bounded by $\lambda$.
\end{itemize}
\end{definition}

Let $\mathsf{rep}_{dg}(\mathcal{A},\mathcal{B})$ be the dg category as in
\cite{ICM} \cite{Toen}.

\begin{remark}
Remark that we have a quasi-equivalence $\mathcal{B}
\stackrel{h}{\rightarrow} \mathcal{B}_{\lambda}$ and that the objects
  of $\mathcal{B}_{\lambda}$ are cofibrant and quasi-representable as
  right dg $\mathcal{B}$ modules, see~\cite{Toen}. This implies
  that we have a natural dg functor:
$$
\overline{\mathsf{Fun}}_{dg}(\mathcal{A}, \mathcal{B}_{\lambda}) :=
\mathsf{Fun}_{dg}(\mathcal{A}, \mathcal{B}_{\lambda}) /
\mathsf{Fun}_{dg}(\mathcal{A}, (\mathcal{B}_{\lambda})_{contr}) \stackrel{\Phi}{\longrightarrow}
\mathsf{rep}_{dg}(\mathcal{A},\mathcal{B})\,.
$$
\end{remark}

\begin{theorem}\label{rep1}
For a cofibrant dg category $\mathcal{A}$, a Morita fibrant dg
category $\mathcal{B}$ and an infinite cardinal $\lambda$ as above, the natural induced dg functor:
$$
\mathsf{Fun}_{dg}(\mathcal{A},\mathcal{B}_{\lambda})/\mathsf{Fun}_{dg}(\mathcal{A},(\mathcal{B}_{\lambda})_{contr})
\stackrel{\Phi}{\longrightarrow}
\mathsf{rep}_{dg}(\mathcal{A},\mathcal{B})\,,$$
is a quasi-equivalence.
\end{theorem}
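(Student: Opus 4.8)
The plan is to show that $\Phi$ is quasi-fully faithful and quasi-essentially surjective. For quasi-essential surjectivity, recall that $\mathsf{rep}_{dg}(\mathcal{A},\mathcal{B})$ is (by definition in \cite{Toen}) the full dg subcategory of $\mathcal{C}_{dg}(\mathcal{A}^{op}\otimes\mathcal{B})$ on the cofibrant bimodules $X$ that are right quasi-representable, i.e. $X(?,A)$ is isomorphic in $\mathcal{D}(\mathcal{B})$ to a representable for every $A\in\mathcal{A}$. Given such an $X$, since $\mathcal{A}$ is cofibrant and $\mathcal{B}$ is Morita fibrant, one can rigidify $X$ to an honest dg functor $\mathcal{A}\to\mathcal{C}_{dg}(\mathcal{B})$ landing, up to adding contractibles, in $\mathcal{B}_\lambda$: the cardinality bound on $\lambda$ is exactly what guarantees that each value $X(?,A)$, which is a direct factor up to contractibles of a representable (using that $\mathcal{B}$ is Morita fibrant, so $\mathsf{H}^0(\mathcal{B})\hookrightarrow\mathcal{D}(\mathcal{B})$ is stable under direct factors, by Proposition \ref{nova4}), can be realized inside $\mathcal{B}_\lambda$ using a set of cones of identity maps of size at most $\lambda$. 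This produces an object of $\mathsf{Fun}_{dg}(\mathcal{A},\mathcal{B}_\lambda)$ whose image under $\Phi$ is isomorphic to $X$ in $\mathsf{rep}_{dg}(\mathcal{A},\mathcal{B})$; passing to the dg quotient by $\mathsf{Fun}_{dg}(\mathcal{A},(\mathcal{B}_\lambda)_{contr})$ is harmless since those functors become zero in $\mathsf{rep}_{dg}$.

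For quasi-full-faithfulness, I would first treat the Yoneda embedding $h:\mathcal{B}\to\mathcal{B}_\lambda$, which is a quasi-equivalence; composition with $h$ gives a quasi-equivalence $\mathsf{Fun}_{dg}(\mathcal{A},\mathcal{B})\to\mathsf{Fun}_{dg}(\mathcal{A},\mathcal{B}_\lambda)$ (both dg categories have the same objects up to the image, and the morphism complexes are computed pointwise, so a pointwise quasi-iso gives a quasi-iso on $\mathsf{Hom}$-complexes, and $\mathsf{H}^0$ essential surjectivity follows from the essential surjectivity statement just proved). So it suffices to compare $\mathsf{Fun}_{dg}(\mathcal{A},\mathcal{B}_\lambda)/\mathsf{Fun}_{dg}(\mathcal{A},(\mathcal{B}_\lambda)_{contr})$ with $\mathsf{rep}_{dg}(\mathcal{A},\mathcal{B})$. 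Here the key computation is that for cofibrant $\mathcal{A}$ and cofibrant, quasi-representable bimodules $F,G$ (which is what objects of $\mathcal{B}_\lambda$ provide pointwise), the morphism complex $\mathsf{Hom}_{\mathsf{rep}_{dg}(\mathcal{A},\mathcal{B})}(F,G)$ is quasi-isomorphic to $\mathsf{Hom}_{\mathsf{Fun}_{dg}(\mathcal{A},\mathcal{C}_{dg}(\mathcal{B}))}(F,G)$, by the adjunction/derived tensor identifications in \cite{Toen}; and on the left-hand side, the dg quotient by functors with contractible values changes the $\mathsf{Hom}$-complex only up to quasi-isomorphism because, using the explicit Drinfeld dg quotient \cite{Drinfeld} and the path object $P(-)$ / contraction formalism of Remark \ref{cic} and \ref{rem-pre}, the extra terms introduced by the $h_X$'s form an acyclic filtered piece — this is the same mechanism as in the proof of Lemma \ref{J-cell-W}, where adjoining contractions to contractible objects does not change morphism complexes up to quasi-isomorphism.

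The main obstacle I expect is the bookkeeping in the quasi-fully-faithfulness step: controlling the morphism complexes through the dg quotient $\mathsf{Fun}_{dg}(\mathcal{A},\mathcal{B}_\lambda)/\mathsf{Fun}_{dg}(\mathcal{A},(\mathcal{B}_\lambda)_{contr})$ and matching them with the mapping complexes in $\mathsf{rep}_{dg}(\mathcal{A},\mathcal{B})$, which are defined via cofibrant resolutions of bimodules. One must check that the objects of $\mathcal{B}_\lambda$, viewed pointwise as $\mathcal{B}$-modules, are already cofibrant and quasi-representable (so no further resolution is needed — this uses Definition \ref{lambda} crucially), and that the canonical comparison map is a quasi-isomorphism degreewise before and after quotienting. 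The cardinality hypothesis on $\lambda$ is what makes the essential-surjectivity argument go through without running into size problems when realizing arbitrary right quasi-representable bimodules inside $\mathcal{B}_\lambda$; I would state and use it precisely at that point. Once these two steps are in place, quasi-essential surjectivity plus quasi-full-faithfulness give that $\Phi$ is a quasi-equivalence.
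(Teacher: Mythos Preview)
Your outline has a genuine gap in the quasi-full-faithfulness step, and it misplaces the role of $\lambda$.

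First, the claim that composition with $h$ gives a quasi-equivalence $\mathsf{Fun}_{dg}(\mathcal{A},\mathcal{B})\to\mathsf{Fun}_{dg}(\mathcal{A},\mathcal{B}_\lambda)$ is not correct as stated: $h_\ast$ is quasi-fully faithful, but it is \emph{not} $\mathsf{H}^0$-essentially surjective before passing to the quotient. What the paper shows (using Remark~\ref{cic}) is only that the composite $I:\mathsf{Fun}_{dg}(\mathcal{A},\mathcal{B})\to\overline{\mathsf{Fun}}_{dg}(\mathcal{A},\mathcal{B}_\lambda)$ into the \emph{quotient} is $\mathsf{H}^0$-essentially surjective. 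So your proposed reduction does not go through.

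More importantly, your filtration argument for the Hom-complexes in the Drinfeld quotient, modelled on Lemma~\ref{J-cell-W}, will not work for arbitrary $F,G$: in that lemma the filtered pieces are acyclic because a specific cone is contractible, but here the analogous vanishing is precisely what you have to prove, and it fails without an orthogonality condition on the source. The paper's idea is to manufacture that orthogonality. Given $F\in\mathsf{Fun}_{dg}(\mathcal{A},\mathcal{B})$, one takes the bar resolution $\mathbf{P}X_F$ of the associated bimodule $X_F$ as a left $\mathcal{A}$-module. This is a \emph{cofibrant} $\mathcal{A}$-$\mathcal{B}$-bimodule, so Homs out of it in $\mathsf{rep}_{dg}$ compute derived Homs; since $\mathcal{A}$ is cofibrant, each $(\mathbf{P}X_F)(?,A)$ is cofibrant over $\mathcal{B}$, hence of the form $X_F(?,A)\oplus C$ with $C$ a contractible cofibrant $\mathcal{B}$-module. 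Here is where $\lambda$ is actually used: the bar resolution is a sum indexed essentially by tuples of objects of $\mathcal{A}$, so $C$ is a direct factor of $\bigoplus_{i\in S}\mathsf{cone}(\mathbf{1}_{\widehat{B_i}})[n_i]$ with $|S|\le\lambda$, and thus the values land in $\mathcal{B}_\lambda$. This produces $F'\to F$ with $F'$ a genuine dg functor $\mathcal{A}\to\mathcal{B}_\lambda$, the map becoming invertible in the quotient, and $F'$ lying in the left orthogonal of $\mathsf{H}^0(\mathsf{Fun}_{dg}(\mathcal{A},(\mathcal{B}_\lambda)_{contr}))$ because $\mathbf{P}X_F$ is cofibrant as a bimodule while any $X_G$ with $G$ pointwise contractible is acyclic. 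Full faithfulness then follows formally. So the cardinal $\lambda$ governs the full-faithfulness step, not the essential surjectivity (which comes straight from To\"en's lemma~4.3), and the missing ingredient in your plan is this left-orthogonal replacement via the bar resolution rather than a direct filtration on quotient Hom-complexes.
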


\begin{proof}
We prove first that $\mathsf{H}^0(\Phi)$ is essentially surjective. We
have the following composition of dg functors
$$
\mathsf{Fun}_{dg}(\mathcal{A},\mathcal{B}) \stackrel{I}{\longrightarrow}
\overline{\mathsf{Fun}}_{dg}(\mathcal{A}, \mathcal{B}_{\lambda})
\stackrel{\Phi}{\longrightarrow}
\mathsf{rep}_{dg}(\mathcal{A},\mathcal{B})\,.
$$
Since $\mathcal{A}$ is a cofibrant dg category, lemma $4.3$ and sub-lemma $4.4$
from \cite{Toen} imply that $\mathsf{H}^0(\Phi \circ I)$ is essentially
surjective and so we conclude that so is $\mathsf{H}^0(\Phi)$.

We now prove also that the functor $\mathsf{H}^0(I)$ is essentially surjective.
Let $F:\mathcal{A} \rightarrow \mathcal{B}_{\lambda}$ be a dg
functor. Since $\mathcal{A}$ is a cofibrant dg category and $h$ is a
quasi-equivalence, there exists a dg functor $F':\mathcal{A}
\rightarrow \mathcal{B}$ such that $F$ and $h\circ F'$ are homotopic
in the Quillen model structure constructed in theorem~\ref{mal}. Remark that
since $\mathcal{B}$ is a Morita fibrant dg category so is
$\mathcal{B}_{\lambda}$. In particular $\mathcal{B}_{\lambda}$ is
stable under cones up to homotopy,
see proposition~\ref{nova4}. Since a cone can be obtained from a
cone up to homotopy, by adding or factoring out contractible modules, we
conclude that by definition, $\mathcal{B}_{\lambda}$ is also stable
under cones. By remark~\ref{cic} we dipose of a sequence of dg functors
$$ F \longrightarrow I \longrightarrow h \circ F'[1]\,,$$
such that $I$ belongs to
$\mathsf{Fun}_{dg}(\mathcal{A},(\mathcal{B}_{\lambda})_{contr})$. This
implies that $F$ and $h \circ F'$ become isomorphic in
$\mathsf{H}^0(\overline{\mathsf{Fun}}_{dg}(\mathcal{A},
\mathcal{B}_{\lambda}))$. This proves that the functor
$\mathsf{H}^0(I)$ is essentially surjective.

Let us now prove that the functor $\mathsf{H}^0(\Phi)$ is fully faithful.
Let $F$ belong to
$\mathsf{Fun}_{dg}(\mathcal{A},\mathcal{B}_{\lambda})$. Since
$\mathsf{H}^0(I)$ is essentially surjective, we can consider $F$ as
belonging to $\mathsf{Fun}_{dg}(\mathcal{A},\mathcal{B})$. We will construct a morphism of dg functors
$$ F' \stackrel{\mu}{\longrightarrow} F\,,$$
where $\mu$ becomes invertible in $
\mathsf{H}^0(\overline{\mathsf{Fun}}_{dg}(\mathcal{A}, \mathcal{B}_{\lambda}))$
and $F'$ belongs to the left-orthogonal of the category
$\mathsf{H}^0(\mathsf{Fun}_{dg}(\mathcal{A},(\mathcal{B}_{\lambda})_{contr}))$,
i.e.
$$
\mathsf{Hom}_{\mathsf{H}^0(\mathsf{Fun}_{dg}(\mathcal{A},(\mathcal{B}_{\lambda})))}(F',G)=0\,,$$
for every $G \in \mathsf{H}^0(\mathsf{Fun}_{dg}(\mathcal{A},(\mathcal{B}_{\lambda})_{contr}))$.
Consider the $\mathcal{A}$-$\mathcal{B}$-bimodule $X_F$ naturally
associated to $F$. Consider $X_F$ as a left $\mathcal{A}$-module and
let $\mathbf{P}X_F$ denote the bar resolution of $X_F$. Remark that
$\mathbf{P}X_F$ is naturally a right $\mathcal{B}$-module and that it
is cofibrant in the projective model structure on the category of $\mathcal{A}$-$\mathcal{B}$-bimodules. Let $A$ be an object of
$\mathcal{A}$. Since the dg category $\mathcal{A}$ is cofibrant in
$\mathsf{dgcat}$, $(\mathbf{P}X_F)(?,A)$ is cofibrant as a
$\mathcal{B}$-module. We have the following homotopy
equivalence
$$
\xymatrix{
(\mathbf{P}X_F)(?,A) \ar@{->>}[r]_{\sim}^{\mu_A} & X_F(?,A)\,,
}
$$
since both $\mathcal{B}$-modules are cofibrant.
This implies that the $\mathcal{B}$-module $(\mathbf{P}X_F)(?,A)$ is
isomorphic to a direct sum $X_F(?,A) \oplus C$, where $C$ is a
contractible and cofibrant $\mathcal{B}$-module.
The $\mathcal{B}$-module $C$ is in fact isomorphic to a direct factor
of a $\mathcal{B}$-module
$$ \underset{i \in S}\bigoplus(\mathsf{cone}\mathbf{1}_{\widehat{B_i}})[n_i],$$
where $S$ is a set whose cardinality is bounded by $\lambda$, $B_i$, $i \in I$ is an object of $\mathcal{B}$ and
$n_i$, $i \in S$ is an integer, see \cite{DerivingDG}.

This implies, by definition of $\mathcal{B}_{\lambda}$, that the $\mathcal{B}$-module
$$ X_F(?,A)\oplus C$$ belongs to $\mathcal{B}_{\lambda}$ and so the $\mathcal{A}$-$\mathcal{B}$-bimodule $\mathbf{P}X_F$ is in fact
isomorphic to $X_{F'}$ for a dg functor $F': \mathcal{A} \rightarrow \mathcal{B}_{\lambda}$. Remark that the previous construction is
functorial in $A$ and so we have a morphism of dg functors
$$ F' \stackrel{\mu}{\longrightarrow} F\,.$$
Since for each $A$ in $\mathcal{A}$, the morphism $\mu_A: F'A
\rightarrow FA$ is a retraction with contractible kernel, the morphism
$\mu$ becomes invertible in
$$\mathsf{H}^0(\overline{\mathsf{Fun}}_{dg}(\mathcal{A}, \mathcal{B}_{\lambda}))\,.$$
Let now $G$ belong to
$\mathsf{Fun}_{dg}(\mathcal{A},(\mathcal{B}_{\lambda})_{contr})$. We remark that
$$
\mathsf{Hom}_{\mathsf{H}^0(\mathsf{Fun}_{dg}(\mathcal{A},\mathcal{B}_{\lambda}))}(F',G)
\stackrel{\sim}{\longrightarrow}
\mathsf{Hom}_{\mathcal{H}(\mathcal{A}^{op}\otimes
  \mathcal{B})}(\mathbf{P}X_F,X_G)\,,$$
where $\mathcal{H}(\mathcal{A}^{op}\otimes
  \mathcal{B})$ denotes the homotopy category of
  $\mathcal{A}$-$\mathcal{B}$ bimodules.
Since $\mathbf{P}X_F$ is a cofibrant $\mathcal{A}$-$\mathcal{B}$-bimodule
and $X_G(?,A)$ is a contractible $\mathcal{B}$-module, for every
object $A$ in $\mathcal{A}$, we conclude that the right hand side
vanishes and $F'$ belongs to the
left-orthogonal of
$\mathsf{H}^0(\mathsf{Fun}_{dg}(\mathcal{A},(\mathcal{B}_{\lambda})_{contr}))$.
This implies that the induced functor
$$
\mathsf{H}^0(\mathsf{Fun}_{dg}(\mathcal{A},\mathcal{B}_{\lambda})/\mathsf{Fun}_{dg}(\mathcal{A},(\mathcal{B}_{\lambda})_{contr}))
\rightarrow \mathsf{H}^0(\mathsf{rep}_{dg}(\mathcal{A},\mathcal{B}))$$
is fully faithful.
This proves the theorem.
\end{proof}

\begin{theorem}\label{rep}
The internal $\mathsf{Hom}$ functor
$$ \mathsf{Hom}(-,-): \mathsf{Lp}^{op} \times \mathsf{Lp} \rightarrow
  \mathsf{Lp} \,,$$
admits a total right derived functor
$$ \mathcal{R}\mathsf{Hom}(-,-):\mathsf{Ho}(\mathsf{Lp}^{op} \times
\mathsf{Lp}) \rightarrow \mathsf{Ho}(\mathsf{Lp})\,
$$
as in definition $8.4.7$ from~\cite{Hirschhorn}.
\end{theorem}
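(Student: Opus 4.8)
The plan is to construct the derived functor $\mathcal{R}\mathsf{Hom}(-,-)$ by restricting the underived internal $\mathsf{Hom}$ functor to a suitable full subcategory of $\mathsf{Lp}^{op} \times \mathsf{Lp}$ on which it sends weak equivalences to $Q$-weak equivalences, and then invoking the universal property of the localization. Concretely, I would take the subcategory of pairs $(\mathcal{A}, \mathcal{B})$ with $\mathcal{A}$ cofibrant in $\mathsf{Lp}$ and $\mathcal{B}$ $Q$-fibrant, and use the explicit descriptions of these objects: by Lemma~\ref{cofibrant} the dg category $\mathcal{A}_1$ is cofibrant in $\mathsf{dgcat}$, and by Proposition~\ref{Q-fibrant} the $Q$-fibrant object $\mathcal{B}$ is isomorphic to $(\mathcal{B}'_{contr} \subset \mathcal{B}')$ for a fibrant (in particular Morita fibrant) dg category $\mathcal{B}'$. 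On this subcategory I expect $\mathsf{Hom}(\mathcal{A}, \mathcal{B})$ to be $Q$-equivalent to the localization pair $(\mathsf{Fun}_{dg}(\mathcal{A}_1, \mathcal{B}'_{contr}) \subset \mathsf{Fun}_{dg}(\mathcal{A}_1, \mathcal{B}'))$ up to the correction coming from $\mathcal{A}_0$, and its dg quotient should be $\mathsf{Fun}_{dg}(\mathcal{A}_1, \mathcal{B}')/\mathsf{Fun}_{dg}(\mathcal{A}_1, \mathcal{B}'_{contr})$, which by Theorem~\ref{rep1} is quasi-equivalent to $\mathsf{rep}_{dg}(\mathcal{A}_1, \mathcal{B}')$ — i.e. To{\"e}n's internal Hom.

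First I would verify the two Brown-type lemmas needed to apply the standard machinery (e.g. the dual of Ken Brown's lemma, or directly definition~$8.4.7$ and the criteria around it in \cite{Hirschhorn}): namely that if $\mathcal{A} \to \mathcal{A}'$ is a $Q$-weak equivalence between cofibrant localization pairs and $\mathcal{B}$ is $Q$-fibrant, then $\mathsf{Hom}(\mathcal{A}', \mathcal{B}) \to \mathsf{Hom}(\mathcal{A}, \mathcal{B})$ is a $Q$-weak equivalence, and dually in the $\mathcal{B}$ variable. For the $\mathcal{B}$-variable this reduces, after passing to dg quotients and using the quotient description, to the statement that a Morita dg functor $\mathcal{B}' \to \mathcal{B}''$ induces a Morita dg functor $\mathsf{rep}_{dg}(\mathcal{A}_1, \mathcal{B}') \to \mathsf{rep}_{dg}(\mathcal{A}_1, \mathcal{B}'')$, which is part of the input from To{\"e}n's theory (Theorem~\ref{thmToen}) and the corollaries of the Morita model structure. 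For the $\mathcal{A}$-variable I would use the fact that $k$ is a field, so all the relevant complexes are $k$-flat, and invoke Drinfeld's theorem~$3.4$ (quoted as condition {\bf A1}) that the dg quotient preserves quasi-equivalences, together with the homotopy invariance of $\mathsf{Fun}_{dg}(-,-)$ on cofibrant arguments.

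Then the derived functor is defined by $\mathcal{R}\mathsf{Hom}(\mathcal{A}, \mathcal{B}) := \mathsf{Hom}(Q_c\mathcal{A}, R_f\mathcal{B})$ where $Q_c$ is a cofibrant replacement in $\mathsf{Lp}$ and $R_f$ is a $Q$-fibrant replacement, and the above lemmas show this is well-defined up to $Q$-weak equivalence and functorial in $\mathsf{Ho}(\mathsf{Lp}^{op} \times \mathsf{Lp})$; the universal $2$-morphism $\mathsf{Hom} \circ (\text{loc}) \Rightarrow (\text{loc}) \circ \mathcal{R}\mathsf{Hom}$ is the obvious one. The main obstacle I anticipate is precisely the $Q$-equivalence identifying $Q(\mathsf{Hom}(\mathcal{A}, \mathcal{B}))$ with $\mathsf{rep}_{dg}(\mathcal{A}_1, \mathcal{B}')$: one must carefully track how the subcategory $\mathsf{Fun}_{dg}(\mathcal{A}_1, \mathcal{B}'_{contr})$ of "objectwise contractible" dg functors, against which we quotient in the $\mathsf{Hom}$-construction, compares with the category $(\mathcal{B}'_{\lambda})_{contr}$ appearing in Theorem~\ref{rep1}, and ensure the cardinal $\lambda$ in definition~\ref{lambda} can be chosen uniformly — this is where the cofibrancy of $\mathcal{A}_1$ (bounding the number of iso-classes of objects, hence the size of bar resolutions) is essential, exactly as in the proof of Theorem~\ref{rep1}. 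Once that identification is in hand, the fact that the resulting object is independent of choices and the assembly into a total derived functor is formal.
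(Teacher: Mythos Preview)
Your overall strategy matches the paper's, but there is a genuine gap in how you define the derived functor. You set $\mathcal{R}\mathsf{Hom}(\mathcal{A},\mathcal{B}):=\mathsf{Hom}(Q_c\mathcal{A},R_f\mathcal{B})$ and then hope to identify its dg quotient with $\mathsf{rep}_{dg}$ via Theorem~\ref{rep1}. But Theorem~\ref{rep1} does \emph{not} give a quasi-equivalence $\mathsf{Fun}_{dg}(\mathcal{A}_1,\mathcal{B}')/\mathsf{Fun}_{dg}(\mathcal{A}_1,\mathcal{B}'_{contr})\to\mathsf{rep}_{dg}(\mathcal{A}_1,\mathcal{B}')$; it only does so after replacing $\mathcal{B}'$ by the enlarged category $\mathcal{B}'_{\lambda}$ of Definition~\ref{lambda}. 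The bar-resolution argument that proves essential surjectivity lands in $\mathcal{B}'_{\lambda}$, not in $\mathcal{B}'$. Without this enlargement your ``Brown-type lemma'' in the $\mathcal{B}$-variable is circular: you want to reduce it to the homotopy invariance of $\mathsf{rep}_{dg}$, but the bridge from $\mathsf{Hom}(Q_c\mathcal{A},R_f\mathcal{B})$ to $\mathsf{rep}_{dg}$ is exactly what is missing.

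The paper resolves this by building the $\lambda$-enlargement into the \emph{definition}: it sets $\mathcal{R}\mathsf{Hom}(\mathcal{A},\mathcal{B}):=\mathsf{Hom}(\mathcal{A}_c,(\mathcal{B}_f)_{\lambda})$, where $\lambda$ depends on $\mathcal{A}_c$, and the universal morphism $\epsilon$ is induced by the composite $(\mathcal{A},\mathcal{B})\to(\mathcal{A}_c,\mathcal{B}_f)\to(\mathcal{A}_c,(\mathcal{B}_f)_{\lambda})$. This is not merely a fibrant replacement in $\mathsf{Lp}$; it is an extra, argument-dependent weak equivalence. With this definition in hand, the paper identifies the associated dg quotient not with $\mathsf{rep}_{dg}((\mathcal{A}_c)_1,\mathcal{B}_f)$ as you write, but with $\mathsf{rep}_{dg}((\mathcal{A}_c)_1/(\mathcal{A}_c)_0,\mathcal{B}_f)$: the fibre-product condition in the internal $\mathsf{Hom}$ forces functors to send $(\mathcal{A}_c)_0$ to contractibles, and this is handled via the exact sequence $(\mathcal{A}_c)_0\hookrightarrow(\mathcal{A}_c)_1\to(\mathcal{A}_c)_1/(\mathcal{A}_c)_0$ in $\mathsf{Hmo}$ together with the exactness of $\mathsf{rep}_{dg}(-,\mathcal{B}_f)$. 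Only after this identification does the preservation of $Q$-weak equivalences follow from the known invariance of $\mathsf{rep}_{dg}$. You flagged the $\lambda$ issue as an ``obstacle'', but it is not a detail to be checked at the end---it has to be absorbed into the construction from the start.
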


\begin{proof}
Let $\mathcal{A}$ and $\mathcal{B}$ be localization pairs. We are now
going to define $\mathcal{R}\mathsf{Hom}(\mathcal{A},\mathcal{B})$ and
the morphism $\epsilon$ as in definition $8.4.7$
from~\cite{Hirschhorn}. We denote by
$\mathcal{A}_c \stackrel{P}{\rightarrow} \mathcal{A}$ a functorial
cofibrant resolution of $\mathcal{A}$ in $\mathsf{Lp}$ and  by
$\mathcal{B} \stackrel{I}{\rightarrow} \mathcal{B}_f$ a functorial
$Q$-fibrant resolution of $\mathcal{B}$ in $\mathsf{Lp}$.
Remember, that by proposition~\ref{Q-fibrant}, $\mathcal{B}_f$ is of
the form
$$\mathcal{B}_f=((\mathcal{B}_f)_{contr} \subset \mathcal{B}_f)\,,$$
where $\mathcal{B}_f$ is a Morita fibrant dg category.
Let $\lambda$ be an infinite cardinal whose size is greater or equal to
the cardinality of the set of isomorphim classes in the category
$\mathsf{H}^0((\mathcal{A}_c)_1)$.
Consider now the following localization pair
$$ (\mathcal{B}_f)_{\lambda} := (((\mathcal{B}_f)_{\lambda})_{contr}
\subset (\mathcal{B}_f)_{\lambda})\,,$$
where $(\mathcal{B}_f)_{\lambda}$ is as in
definition~\ref{lambda}. Remark that we have a canonical weak equivalence in $\mathsf{Lp}$
$$ \mathcal{B}_f \stackrel{F}{\longrightarrow}
(\mathcal{B}_f)_{\lambda}\,.$$
We now define
$$\mathcal{R}\mathsf{Hom}(\mathcal{A},\mathcal{B}) :=
\mathsf{Hom}(\mathcal{A}_c, (\mathcal{B}_f)_{\lambda})$$
and we consider
for morphism $\epsilon$ the image in $\mathsf{H}^0(\mathsf{Lp})$ of the following $Q$-equivalence in
$\mathsf{Lp}$
$$ \eta : (\mathcal{A},\mathcal{B}) \stackrel{(P,I)}{\longrightarrow}
(\mathcal{A}_c, \mathcal{B}_f) \stackrel{(Id,F)}{\longrightarrow}
(\mathcal{A}_c, (\mathcal{B}_f)_{\lambda})$$
under the functor $\mathsf{Hom}(-,-)$.

We will now show that the dg category associated with the localization
pair $\mathcal{R}\mathsf{Hom}(\mathcal{A},\mathcal{B})$ is canonically
Morita equivalent to
$$ \mathsf{rep}_{dg}((\mathcal{A}_c)_1/(\mathcal{A}_c)_0
,\mathcal{B}_f)\,.$$
Remark that since $\mathcal{A}_c$ is a cofibrant object in
$\mathsf{Lp}$, by lemma~\ref{cofibrant}, $(\mathcal{A}_c)_1$ is
cofibrant in $\mathsf{dgcat}$ and so we have an exact sequence
in the Morita homotopy category of dg categories $\mathsf{Hmo}$, see~\cite{ICM}
$$ (\mathcal{A}_c)_0 \hookrightarrow (\mathcal{A}_c)_1 \rightarrow
(\mathcal{A}_c)_1 / (\mathcal{A}_c)_0\,.$$
Since the dg category $(\mathcal{B}_f)$ is Morita fibrant,
the application of the functor
$\mathsf{rep}_{dg}(-,\mathcal{B}_f)$ to the previous exact
sequence induces a new exact sequence in $\mathsf{Hmo}$
$$ \mathsf{rep}_{dg}((\mathcal{A}_c)_0, \mathcal{B}_f)
\leftarrow \mathsf{rep}_{dg}((\mathcal{A}_c)_1,
\mathcal{B}_f) \leftarrow
\mathsf{rep}_{dg}((\mathcal{A}_c)_1/ (\mathcal{A}_c)_0,
\mathcal{B}_f)\,.
$$
Remember that:
$$\mathsf{Hom}(\mathcal{A}_c, (\mathcal{B}_f)_{\lambda})_1 = \mathsf{Fun}_{dg}((\mathcal{A}_c)_0,((\mathcal{B}_f)_{\lambda})_{contr})
\underset{\mathsf{Fun}_{dg}((\mathcal{A}_c)_0,(\mathcal{B}_f)_{\lambda})}{\times}
\mathsf{Fun}_{dg}((\mathcal{A}_c)_1,(\mathcal{B}_f)_{\lambda})\,.$$
Now, since the dg categories $(\mathcal{A}_c)_1$ and
$(\mathcal{B}_f)_{\lambda}$ satisfy the conditions of
theorem~\ref{rep1}, we have a natural inclusion of dg categories
$$
\mathsf{Hom}(\mathcal{A}_c, (\mathcal{B}_f)_{\lambda})_1 /\mathsf{Fun}_{dg}((\mathcal{A}_c)_1,((\mathcal{B}_f)_{\lambda})_{contr})
\longrightarrow \mathsf{rep}_{dg}((\mathcal{A}_c)_1 ,\mathcal{B}_f)\,.
$$
Now remark that this inclusion induces the following Morita
equivalence
$$
\mathsf{Hom}(\mathcal{A}_c, (\mathcal{B}_f)_{\lambda})_1 /\mathsf{Fun}_{dg}((\mathcal{A}_c)_1,((\mathcal{B}_f)_{\lambda})_{contr})
\stackrel{\sim}{\longrightarrow} \mathsf{rep}_{dg}((\mathcal{A}_c)_1/(\mathcal{A}_c)_0 ,\mathcal{B}_f)\,.
$$

We now show that the functor $\mathcal{R} \mathsf{Hom}(-,-)$ preserves
$Q$-weak equivalences in $\mathsf{Lp}^{op}\times
\mathsf{Lp}$. Consider a $Q$-weak equivalence
$$ (\mathcal{A},\mathcal{B}) \rightarrow (\tilde{\mathcal{A}},\tilde{\mathcal{B}})\,,$$
in $\mathsf{Lp}^{op} \times \mathsf{Lp}$. By construction it will
induce a Morita dg functor
$$ (\tilde{\mathcal{A}}_c)_1 / (\tilde{\mathcal{A}}_c)_0
\stackrel{\sim}{\longrightarrow} (\mathcal{A}_c)_1 /
(\mathcal{A}_c)_0$$
and also a Morita dg functor
$$ \mathcal{B}_f \stackrel{\sim}{\longrightarrow}
\tilde{\mathcal{B}}_f\,.$$
This implies that the induced dg functor
$$ \mathsf{rep}_{dg}((\mathcal{A}_c)_1 /(\mathcal{A}_c)_0,
\mathcal{B}_f) \stackrel{\sim}{\longrightarrow} \mathsf{rep}_{dg}((\tilde{\mathcal{A}}_c)_1 /(\tilde{\mathcal{A}}_c)_0,\tilde{\mathcal{B}}_f)$$
is a Morita dg functor. Now, remark that we have the natural zig-zag of $Q$-weak equivalences in
$\mathsf{Lp}$:
\begin{eqnarray*}
& (\mathsf{Fun}_{dg}((\mathcal{A}_c)_1,
((\mathcal{B}_f)_{\lambda})_{contr}) \subset
\mathsf{Hom}(\mathcal{A}_c, (\mathcal{B}_f)_{\lambda})_1 & \\
& \downarrow & \\
& (\overline{\mathsf{Fun}_{dg}((\mathcal{A}_c)_1,
((\mathcal{B}_f)_{\lambda})_{contr})} \subset  \mathsf{Hom}(\mathcal{A}_c, (\mathcal{B}_f)_{\lambda})_1 / \mathsf{Fun}_{dg}((\mathcal{A}_c)_1,
((\mathcal{B}_f)_{\lambda})_{contr})) & \\
& \uparrow & \\
& (\emptyset \subset \mathsf{Hom}(\mathcal{A}_c, (\mathcal{B}_f)_{\lambda})_1 / \mathsf{Fun}_{dg}((\mathcal{A}_c)_1,
((\mathcal{B}_f)_{\lambda})_{contr})) &
\end{eqnarray*}
This allow us to conclude that the  the functor $\mathcal{R} \mathsf{Hom}(-,-)$ preserves
$Q$-weak equivalences in $\mathsf{Lp}^{op}\times
\mathsf{Lp}$.
This proves the proposition.
\end{proof}

\begin{proposition}\label{tensorprod}
Let $\mathcal{A}$ be a cofibrant object in $\mathsf{Lp}$. The induced
internal tensor product functor
$$ \mathcal{A}\otimes- : \mathsf{Lp} \longrightarrow \mathsf{Lp}\,,$$
preserves $Q$-weak equivalences.
\end{proposition}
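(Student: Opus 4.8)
The plan is to reduce the statement to what we already know about the Drinfeld dg quotient and about Morita dg functors. First I would unwind the definitions: a morphism $\mathcal{B} \to \widetilde{\mathcal{B}}$ in $\mathsf{Lp}$ is a $Q$-weak equivalence precisely when the induced dg functor $\mathcal{B}_1/\mathcal{B}_0 \to \widetilde{\mathcal{B}}_1/\widetilde{\mathcal{B}}_0$ between Drinfeld dg quotients is a Morita dg functor, see the remark following Definition of $Q$-weak equivalence. So, given a $Q$-weak equivalence $F : \mathcal{B} \to \widetilde{\mathcal{B}}$, I must show that
$$ (\mathcal{A}\otimes\mathcal{B})_1 / (\mathcal{A}\otimes\mathcal{B})_0 \longrightarrow (\mathcal{A}\otimes\widetilde{\mathcal{B}})_1 / (\mathcal{A}\otimes\widetilde{\mathcal{B}})_0 $$
is a Morita dg functor. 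By definition of $-\otimes-$ in $\mathsf{Lp}$, the source is $(\mathcal{A}_1\otimes\mathcal{B}_1) / (\mathcal{A}_0\otimes\mathcal{B}_1 \cup \mathcal{A}_1\otimes\mathcal{B}_0)$, and likewise for the target.

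The key step is a \emph{computation of this iterated/amalgamated dg quotient}. I expect that for a localization pair $\mathcal{A}=(\mathcal{A}_0\subset\mathcal{A}_1)$ which is cofibrant in $\mathsf{Lp}$ (so that by Lemma~\ref{cofibrant} $\mathcal{A}_1$ is cofibrant in $\mathsf{dgcat}$, in particular $k$-flat since $k$ is a field), one has a natural quasi-equivalence
$$ (\mathcal{A}_1\otimes\mathcal{B}_1) / (\mathcal{A}_0\otimes\mathcal{B}_1 \cup \mathcal{A}_1\otimes\mathcal{B}_0) \;\simeq\; (\mathcal{A}_1/\mathcal{A}_0) \otimes (\mathcal{B}_1/\mathcal{B}_0)\,, $$
obtained by using Drinfeld's explicit construction of the dg quotient \cite[3.1]{Drinfeld}: adjoining the contracting homotopies $h_X$ for $X\in\mathcal{A}_0$ and $h_Y$ for $Y\in\mathcal{B}_0$ to $\mathcal{A}_1\otimes\mathcal{B}_1$, and exploiting that $\mathcal{A}_1$ and $\mathcal{B}_1$ are $k$-flat so that Theorem~$3.4$ of \cite{Drinfeld} applies (tensoring with $k$-flat complexes preserves quasi-isomorphisms). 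Concretely I would first quotient out $\mathcal{A}_1\otimes\mathcal{B}_0$, identifying the result with $\mathcal{A}_1\otimes(\mathcal{B}_1/\mathcal{B}_0)$ up to quasi-equivalence (this is just base change of the dg quotient along $-\otimes\mathcal{A}_1$, valid by $k$-flatness), then quotient the image of $\mathcal{A}_0\otimes\mathcal{B}_1$ inside it, identifying the further quotient with $(\mathcal{A}_1/\mathcal{A}_0)\otimes(\mathcal{B}_1/\mathcal{B}_0)$. The compatibility of these two successive quotients with the single amalgamated quotient is a bookkeeping argument of exactly the same flavour as the filtration argument used in the proof of Lemma~\ref{J-cell-W}.

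Once this identification is in hand, the conclusion is immediate: the functor in question is identified with
$$ \mathrm{id}_{\mathcal{A}_1/\mathcal{A}_0} \otimes (F_1/F_0) : (\mathcal{A}_1/\mathcal{A}_0)\otimes(\mathcal{B}_1/\mathcal{B}_0) \longrightarrow (\mathcal{A}_1/\mathcal{A}_0)\otimes(\widetilde{\mathcal{B}}_1/\widetilde{\mathcal{B}}_0)\,, $$
where $F_1/F_0$ is a Morita dg functor by hypothesis. Since $\mathcal{A}_1/\mathcal{A}_0$ is (Morita equivalent to) a $k$-flat dg category, tensoring a Morita dg functor with it again yields a Morita dg functor — this uses that derived tensor product of bimodules commutes with $-\otimes^{\mathbb{L}}(\mathcal{A}_1/\mathcal{A}_0)$ and preserves compactness and the generation condition defining $\mathcal{D}(-)$, exactly as in the proof that $\mathsf{Hec}_0$ and $\mathsf{Hmo}_0$ carry monoidal structures (Lemmas~\ref{tens} and \ref{ad}).

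The main obstacle will be the quasi-equivalence in the second paragraph: proving carefully that the amalgamated dg quotient $(\mathcal{A}_1\otimes\mathcal{B}_1)/(\mathcal{A}_0\otimes\mathcal{B}_1\cup\mathcal{A}_1\otimes\mathcal{B}_0)$ really computes the tensor product of the two dg quotients, with all the $k$-flatness hypotheses correctly placed so that Drinfeld's Theorem~$3.4$ can be invoked at each stage. The cofibrancy of $\mathcal{A}$ in $\mathsf{Lp}$ is essential here and is exactly why the proposition is stated for cofibrant $\mathcal{A}$; it guarantees both that $\mathcal{A}_1$ is $k$-flat and that $\mathcal{A}_0\subset\mathcal{A}_1$ is well-behaved enough for the iterated-quotient computation to go through.
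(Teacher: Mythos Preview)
Your approach is essentially the same as the paper's: both reduce the claim to an iterated dg-quotient computation showing that
\[
(\mathcal{A}_1\otimes\mathcal{B}_1)\big/(\mathcal{A}_0\otimes\mathcal{B}_1\cup\mathcal{A}_1\otimes\mathcal{B}_0)
\]
is controlled by the two individual quotients $\mathcal{A}_1/\mathcal{A}_0$ and $\mathcal{B}_1/\mathcal{B}_0$, using Drinfeld's compatibility of the dg quotient with tensor products (you cite Theorem~3.4 of \cite{Drinfeld}; the paper cites proposition~1.6.3 of \cite{Drinfeld}).

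There are two organizational differences worth noting. First, the paper begins by reducing to a $Q$-weak equivalence $F:\mathcal{B}\to\mathcal{C}$ between \emph{cofibrant} localization pairs, so that $\mathcal{B}_1,\mathcal{C}_1$ are cofibrant in $\mathsf{dgcat}$ via Lemma~\ref{cofibrant}; you omit this reduction and rely directly on $k$ being a field for the $k$-flatness hypotheses. Over a field your shortcut is legitimate, but it is worth stating the reduction explicitly since that is what justifies invoking Drinfeld's results in the form the paper uses. Second, the paper never writes down your clean identification with $(\mathcal{A}_1/\mathcal{A}_0)\otimes(\mathcal{B}_1/\mathcal{B}_0)$; instead it compares the two-step quotients for $\mathcal{B}$ and $\mathcal{C}$ side by side, introducing the intermediate full subcategories $\mathcal{H}\subset(\mathcal{A}_1\otimes\mathcal{B}_1)/(\mathcal{A}_1\otimes\mathcal{B}_0)$ and $\mathcal{P}\subset(\mathcal{A}_1\otimes\mathcal{C}_1)/(\mathcal{A}_1\otimes\mathcal{C}_0)$ on the objects $a\otimes b$ with $a\in\mathcal{A}_0$. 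Your formulation makes the last step (tensoring a Morita dg functor with a fixed $k$-flat dg category) more transparent, while the paper's formulation keeps closer to the actual objects one must compare. The underlying argument is the same.
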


\begin{proof}
Let $F:\mathcal{B} \rightarrow \mathcal{C}$ be a $Q$-weak equivalence
in $\mathsf{Lp}$ between cofibrant objects. We prove that the induced
morphism in $\mathsf{Lp}$
$$ \mathcal{A} \otimes \mathcal{B} \stackrel{F_*}{\longrightarrow}
\mathcal{A}\otimes \mathcal{C}\,,$$
is a $Q$-weak equivalence. By lemma~\ref{cofibrant}, $\mathcal{A}_1$,
$\mathcal{B}_1$ and $\mathcal{C}_1$ are cofibrant dg categories in
$\mathsf{dgcat}$ and so we have a morphism of exact sequences in
$\mathsf{Hmo}$:
$$
\xymatrix{
\mathcal{B}_0 \ar[d] \ar@{^{(}->}[r] & \mathcal{B}_1 \ar[d] \ar[r] &
\mathcal{B}_1/\mathcal{B}_0 \ar[d]^{\sim} \\
\mathcal{C}_0 \ar@{^{(}->}[r] & \mathcal{C}_1 \ar[r] & \mathcal{C}_1/\mathcal{C}_0\,,
}
$$
where the last column is a Morita dg functor. Since $\mathcal{A}_1$
is cofibrant in $\mathsf{dgcat}$ proposition $1.6.3$
in \cite{Drinfeld} implies that by applying the functor
$\mathcal{A}\otimes-$ we obtain the following morphism of exact
sequences in $\mathsf{Hmo}$:
$$
\xymatrix{
\mathcal{A}_1 \otimes \mathcal{B}_0 \ar[d] \ar[r] &
\mathcal{A}_1\otimes \mathcal{B}_1 \ar[d] \ar[r] &
\mathcal{A}_1 \otimes (\mathcal{B}_1/\mathcal{B}_0) \ar[d]^{\sim} \\
\mathcal{A}_1 \otimes \mathcal{C}_0 \ar[r] & \mathcal{A}_1
\otimes \mathcal{C}_1 \ar[r] & \mathcal{A}_1 \otimes (\mathcal{C}_1/\mathcal{C}_0)\,.
}
$$
This implies that we have the following Morita dg functor:
$$ (\mathcal{A}_1 \otimes \mathcal{B}_1)/(\mathcal{A}_1 \otimes
\mathcal{B}_0) \stackrel{\sim}{\longrightarrow} (\mathcal{A}_1 \otimes \mathcal{C}_1)/(\mathcal{A}_1 \otimes
\mathcal{C}_0)\,.$$
Let $\mathcal{H}$ be the full dg subcategory of $(\mathcal{A}_1 \otimes \mathcal{B}_1)/(\mathcal{A}_1 \otimes
\mathcal{B}_0)$, whose objects are $a \otimes b$, where $a$ belongs to
$\mathcal{A}_0$ and $\mathcal{P}$ the full dg subcategory of $(\mathcal{A}_1 \otimes \mathcal{C}_1)/(\mathcal{A}_1 \otimes
\mathcal{C}_0)$ whose objects are $a \otimes c$, where $a$ belongs to
$\mathcal{A}_0$.
We have the following diagram:
$$
\xymatrix{
\mathcal{H} \ar[d]^{\sim} \ar@{^{(}->}[r] &  (\mathcal{A}_1 \otimes \mathcal{B}_1)/(\mathcal{A}_1 \otimes
\mathcal{B}_0) \ar[d]^{\sim}\\
\mathcal{P} \ar@{^{(}->}[r] & (\mathcal{A}_1 \otimes \mathcal{C}_1)/(\mathcal{A}_1 \otimes
\mathcal{C}_0)\,.
}
$$
Remark that the dg categories $\mathcal{A}\otimes \mathcal{B}$ and
$\mathcal{A}\otimes \mathcal{C}$ are Morita equivalent dg
subcategories of $((\mathcal{A}_1 \otimes \mathcal{B}_1)/(\mathcal{A}_1 \otimes
\mathcal{B}_0))/\mathcal{H}$, resp. $((\mathcal{A}_1 \otimes \mathcal{C}_1)/(\mathcal{A}_1 \otimes
\mathcal{C}_0))/\mathcal{P}$ and so we have the commutative square:
$$
\xymatrix{
((\mathcal{A}_1 \otimes \mathcal{B}_1)/(\mathcal{A}_1 \otimes
\mathcal{B}_0))/\mathcal{H} \ar[d]^{\sim} &
\mathcal{A} \otimes \mathcal{B} \ar[l]^-{\sim} \ar[d]^{F^*}\\
((\mathcal{A}_1 \otimes \mathcal{C}_1)/(\mathcal{A}_1 \otimes
\mathcal{C}_0))/\mathcal{P} & \mathcal{A}\otimes \mathcal{C} \ar[l]^-{\sim}\,.
}
$$
By the two out of three property $F^{\ast}$ is a $Q$-weak equivalence.
This implies the lemma.
\end{proof}

\begin{remark}
Since the internal tensor product $-\otimes-$ is symmetric,
lemma~\ref{tensorprod} implies that the total left derived functor $-\otimes-$
$$ - \overset{\mathbb{L}}{\otimes}- : \mathsf{Ho}(\mathsf{Lp})\times
\mathsf{Ho}(\mathsf{Lp}) \rightarrow \mathsf{Ho}(\mathsf{Lp})$$
exists, see definition $8.4.7$ of \cite{Hirschhorn}.
\end{remark}

\section{Relation with $\mathsf{dgcat}$}

We have the following adjunction:
$$
\xymatrix{
\mathsf{Lp} \ar@<1ex>[d]^{Ev_1}\\
\mathsf{dgcat} \ar@<1ex>[u]^L \,,
}
$$
where $Ev_1$ associates to a localization pair $(\mathcal{A}_0 \subset
\mathcal{A}_1)$ the dg category $\mathcal{A}_1$ and $L$
associates to a dg category $\mathcal{A}$ the localization pair $(\emptyset
\subset \mathcal{A})$.
\begin{proposition}\label{relat}
If we consider on $\mathsf{dgcat}$ the Quillen model structure of theorem~\ref{theorem2} and on $\mathsf{Lp}$ the $Q$-model structure, the previous
adjunction is a Quillen equivalence, see~\cite{Hirschhorn}.
\end{proposition}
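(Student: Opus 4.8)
The plan is to exhibit $L$ as a left Quillen functor and then to verify the standard criterion for Quillen equivalences in its dual form. First, note that $L\dashv Ev_1$ via the natural bijection $\mathsf{Hom}_{\mathsf{Lp}}(L\mathcal{A},\mathcal{B})\cong\mathsf{Hom}_{\mathsf{dgcat}}(\mathcal{A},Ev_1\mathcal{B})$, since a morphism $(\emptyset\subset\mathcal{A})\to(\mathcal{B}_0\subset\mathcal{B}_1)$ is the same datum as a dg functor $\mathcal{A}\to\mathcal{B}_1$. In fact $L=S\circ\mathbf{F}^1_{(-)}$, where $\mathbf{F}^1_{(-)}\colon\mathsf{dgcat}\to\mathsf{dgcat}^L$ is left adjoint to $Ev_1$ and $S\colon\mathsf{dgcat}^L\to\mathsf{Lp}$ is the left adjoint used in the construction of the model structure on $\mathsf{Lp}$. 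Hence $L$ carries the generating cofibrations $I=\{Q,S(n)\}$ of $\mathsf{dgcat}$ (resp. the generating trivial cofibrations $J$) into the set $\tilde{\mathbf{F}}^L_I$ of generating cofibrations of the model structure of Proposition~\ref{sat} (resp. into $\mathbf{F}^L_J\subset\tilde{\mathbf{F}}^L_J$, a set of trivial cofibrations there). Since the $Q$-model structure has the same cofibrations as that of Proposition~\ref{sat} and more weak equivalences, every trivial cofibration of the latter is a $Q$-trivial cofibration; and $L$, being a left adjoint, preserves colimits and retracts. Therefore $L$ sends cofibrations to cofibrations and trivial cofibrations to $Q$-trivial cofibrations, so $(L,Ev_1)$ is a Quillen adjunction.

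By the dual form of the standard criterion, \emph{cf.}~\cite[1.3.16]{Hovey}, it remains to check that (i) $Ev_1$ reflects weak equivalences between fibrant objects of $\mathsf{Lp}$, and (ii) for every cofibrant $\mathcal{A}\in\mathsf{dgcat}$ the composite of the unit $\mathcal{A}\to Ev_1L\mathcal{A}$ with $Ev_1$ applied to a $Q$-fibrant replacement of $L\mathcal{A}$ is a Morita equivalence. Observe that the unit $\mathcal{A}\to Ev_1L\mathcal{A}=\mathcal{A}$ is the identity. The key input for both (i) and (ii) is the following: if $\mathcal{C}$ is a dg category and $\mathcal{C}_{contr}\subseteq\mathcal{C}$ is the full dg subcategory of contractible objects, then the dg quotient functor $\mathcal{C}\to\mathcal{C}/\mathcal{C}_{contr}$ is a Morita equivalence. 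Indeed, every object of $\mathcal{C}_{contr}$ has acyclic morphism complexes into it, so its Yoneda module is zero in $\mathcal{D}(\mathcal{C})$; since $k$ is a field, Drinfeld's explicit construction of the dg quotient applies without flatness hypotheses (\emph{cf.}~\cite{Drinfeld}, \cite{DerivingDG}), and $\mathcal{D}(\mathcal{C}/\mathcal{C}_{contr})$ is the Verdier localization of $\mathcal{D}(\mathcal{C})$ at the localizing subcategory generated by these objects, which is the zero subcategory; hence $\mathcal{C}\to\mathcal{C}/\mathcal{C}_{contr}$ induces an equivalence on derived categories.

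Granting this lemma, I argue as follows. By Proposition~\ref{Q-fibrant}, a $Q$-fibrant replacement of $L\mathcal{A}=(\emptyset\subset\mathcal{A})$ is a $Q$-weak equivalence $(\emptyset\subset\mathcal{A})\to(\mathcal{C}_{contr}\subset\mathcal{C})$ with $\mathcal{C}$ Morita fibrant; unwinding the definition of $Q$-weak equivalence, the induced dg functor $\mathcal{A}=\mathcal{A}/\emptyset\to\mathcal{C}/\mathcal{C}_{contr}$ is a Morita equivalence. Composing with the Morita equivalence $\mathcal{C}\to\mathcal{C}/\mathcal{C}_{contr}$ and using the two-out-of-three property, the map $\mathcal{A}\to\mathcal{C}=Ev_1(\mathcal{C}_{contr}\subset\mathcal{C})$ is a Morita equivalence, which is exactly (ii). For (i), a morphism $(\mathcal{B}_{contr}\subset\mathcal{B})\to(\mathcal{C}_{contr}\subset\mathcal{C})$ between fibrant objects is a $Q$-weak equivalence if and only if $\mathcal{B}/\mathcal{B}_{contr}\to\mathcal{C}/\mathcal{C}_{contr}$ is a Morita equivalence; the commutative square with vertical Morita equivalences $\mathcal{B}\to\mathcal{B}/\mathcal{B}_{contr}$ and $\mathcal{C}\to\mathcal{C}/\mathcal{C}_{contr}$ shows, by two-out-of-three, that this holds if and only if $\mathcal{B}\to\mathcal{C}$, i.e. $Ev_1$ of the morphism, is a Morita equivalence. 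Thus $Ev_1$ both preserves and reflects weak equivalences between fibrant objects, and $(L,Ev_1)$ is a Quillen equivalence.

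The main obstacle is the lemma that the dg quotient of a dg category by its full subcategory of contractible objects is a Morita equivalence; it is here that the explicit Drinfeld construction and the standing hypothesis that $k$ is a field enter. Everything else is a formal verification using $L=S\circ\mathbf{F}^1_{(-)}$, the explicit generating sets of the two model structures, the fact that the unit is the identity, and the description of $Q$-fibrant objects in Proposition~\ref{Q-fibrant}; one should only take care, in point (i), to record that $Ev_1$ preserves as well as reflects weak equivalences between fibrant objects.
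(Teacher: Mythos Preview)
Your proof is correct and rests on the same key ingredient the paper invokes, namely that for a Morita fibrant $\mathcal{B}$ the dg functor $\mathcal{B}\to\mathcal{B}/\mathcal{B}_{contr}$ is a Morita equivalence. The paper's organization is slightly leaner: for the left Quillen property it observes that $L$ sends Morita dg functors to $Q$-weak equivalences and that $Ev_1$ preserves trivial fibrations (via Lemma~\ref{weak fibration}), and for the equivalence it checks the definition directly --- a map $F\colon\mathcal{A}\to\mathcal{B}=Ev_1(\mathcal{B}_{contr}\subset\mathcal{B})$ is Morita iff its adjoint $(\emptyset\subset\mathcal{A})\to(\mathcal{B}_{contr}\subset\mathcal{B})$ is a $Q$-weak equivalence --- rather than passing through Hovey's Corollary~1.3.16; but this and your argument are equivalent unpackings of the same two-out-of-three step.
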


\begin{proof}
The functor $L$ clearly sends Morita dg functors to weak
equivalences. By lemma~\ref{weak fibration} the evaluation functor $Ev_1$
preserves trivial fibrations. This shows that $L$ is a left Quillen functor.
Let $\mathcal{A}$ be a cofibrant object in $\mathsf{dgcat}$ and
$(\mathcal{B}_{contr} \subset \mathcal{B})$ a $Q$-fibrant object in
$\mathsf{Lp}$. Let $\mathcal{A} \stackrel{F}{\rightarrow} \mathcal{B}$
be a dg functor in $\mathsf{dgcat}$. We need to show that $F$ is a
Morita dg functor if and only if the induced morphism of localization
pairs $(\emptyset \subset \mathcal{A} ) \rightarrow
(\mathcal{B}_{contr} \subset \mathcal{B})$ is a $Q$-weak
equivalence. But since the dg functor $\mathcal{B} \rightarrow
\mathcal{B}/\mathcal{B}_{contr}$ is a Morita dg functor this
automatically follows.
\end{proof}

\begin{proposition}\label{Quilleneq}
The total derived functors, $-\overset{\mathbb{L}}{\otimes}-$ and
$\mathcal{R}\mathsf{Hom}(-,-)$ in the category
$\mathsf{Ho}(\mathsf{Lp})$ correspond, under the equivalence:
$$
\xymatrix{
\mathsf{Ho}(\mathsf{Lp}) \ar@<1ex>[d]^{\mathcal{R}Ev_1} \\
\mathsf{Ho}(\mathsf{dgcat}) \ar@<1ex>[u]^L
}
$$
to the functors, $-\overset{\mathbb{L}}{\otimes}-$  and
$\mathsf{rep}_{dg}(-,-)$, see \cite{ICM}\cite{Toen}, in the
category $\mathsf{Ho}(\mathsf{dgcat})$.
\end{proposition}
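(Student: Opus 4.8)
The plan is to transport the known closed monoidal structure on $\mathsf{Ho}(\mathsf{dgcat})$ (with respect to the Morita equivalences) across the Quillen equivalence $(L, Ev_1)$ of proposition~\ref{relat}, and to check that this transport matches the derived functors $-\overset{\mathbb{L}}{\otimes}-$ and $\mathcal{R}\mathsf{Hom}(-,-)$ that we constructed directly on $\mathsf{Lp}$ in proposition~\ref{tensorprod} and theorem~\ref{rep}. First I would record that proposition~\ref{relat} gives an equivalence of categories $L : \mathsf{Ho}(\mathsf{dgcat}) \rightleftarrows \mathsf{Ho}(\mathsf{Lp}) : \mathcal{R}Ev_1$, so that every object $\mathcal{A}$ of $\mathsf{Lp}$ is canonically isomorphic in $\mathsf{Ho}(\mathsf{Lp})$ to $L(\mathcal{A}_1/\mathcal{A}_0)$ — indeed for $\mathcal{A}$ cofibrant, $\mathcal{A}_1$ is cofibrant in $\mathsf{dgcat}$ by lemma~\ref{cofibrant}, and the morphism $(\emptyset \subset \mathcal{A}_1) \to (\overline{\mathcal{A}_0}\subset \mathcal{A}_1/\mathcal{A}_0) \leftarrow (\mathcal{A}_0\subset\mathcal{A}_1)$ exhibits this isomorphism.

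The tensor product is the easy half. For cofibrant localization pairs $\mathcal{A}$, $\mathcal{B}$, the definition of $-\otimes-$ on $\mathsf{Lp}$ together with the exact-sequence computation already carried out in the proof of proposition~\ref{tensorprod} shows that $(\mathcal{A}\otimes\mathcal{B})_1/(\mathcal{A}\otimes\mathcal{B})_0$ is Morita equivalent to a dg quotient built from $\mathcal{A}_1\otimes^{\mathbb{L}}\mathcal{B}_1$; chasing the full dg subcategories $\mathcal{H}$, $\mathcal{P}$ of that proof identifies this quotient with $(\mathcal{A}_1/\mathcal{A}_0)\overset{\mathbb{L}}{\otimes}(\mathcal{B}_1/\mathcal{B}_0)$. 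Hence $\mathcal{R}Ev_1(\mathcal{A}\overset{\mathbb{L}}{\otimes}\mathcal{B}) \simeq \mathcal{R}Ev_1(\mathcal{A})\overset{\mathbb{L}}{\otimes}\mathcal{R}Ev_1(\mathcal{B})$, naturally. (One should be slightly careful that $L$ is a \emph{left} Quillen functor while $Ev_1$ is the one with the explicit derived description; but since the adjunction is a Quillen equivalence, $\mathcal{R}Ev_1$ is quasi-inverse to $L$, so it suffices to compare on objects of the form $L(\mathcal{C})$, where $L(\mathcal{C})_1/L(\mathcal{C})_0 = \mathcal{C}/\emptyset \simeq \mathcal{C}$.)

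For the internal Hom, I would invoke the explicit computation inside the proof of theorem~\ref{rep}: there we showed that the dg category underlying $\mathcal{R}\mathsf{Hom}(\mathcal{A},\mathcal{B})$, namely $\mathsf{Hom}(\mathcal{A}_c,(\mathcal{B}_f)_\lambda)_1$ modulo $\mathsf{Fun}_{dg}((\mathcal{A}_c)_1,((\mathcal{B}_f)_\lambda)_{contr})$, is canonically Morita equivalent to $\mathsf{rep}_{dg}((\mathcal{A}_c)_1/(\mathcal{A}_c)_0, \mathcal{B}_f)$, using theorem~\ref{rep1}. Reading this through the zig-zag of $Q$-weak equivalences at the end of that proof, $\mathcal{R}Ev_1(\mathcal{R}\mathsf{Hom}(\mathcal{A},\mathcal{B}))$ is isomorphic in $\mathsf{Ho}(\mathsf{dgcat})$ to $\mathsf{rep}_{dg}(\mathcal{R}Ev_1(\mathcal{A}),\mathcal{R}Ev_1(\mathcal{B}))$. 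Finally, the adjunction $\mathsf{Hom}_{\mathsf{Lp}}(\mathcal{A}\otimes\mathcal{B},\mathcal{C})\cong\mathsf{Hom}_{\mathsf{Lp}}(\mathcal{A},\mathsf{Hom}(\mathcal{B},\mathcal{C}))$ of proposition~\ref{monferme} descends to the homotopy category by the usual derived-adjunction argument (e.g. \cite[8.4.4]{Hirschhorn}), matching the corresponding adjunction on $\mathsf{Ho}(\mathsf{dgcat})$ between $-\overset{\mathbb{L}}{\otimes}-$ and $\mathsf{rep}_{dg}(-,-)$ recorded in \cite{Toen}.

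\textbf{Main obstacle.} The genuinely delicate point is naturality and coherence: one must check that the identifications produced above commute with the associativity, unit, and evaluation/coevaluation $2$-cells, rather than merely holding objectwise. I would handle this by fixing functorial cofibrant and $Q$-fibrant resolutions throughout (as in the proof of theorem~\ref{rep}) so that $\mathcal{R}\mathsf{Hom}(-,-)$ and $-\overset{\mathbb{L}}{\otimes}-$ are strictly functorial, and then deduce coherence on $\mathsf{Ho}$ from the coherence already known on $\mathsf{dgcat}$ via the strong monoidality of $L$ up to the canonical isomorphism $L(\mathcal{C})_1/L(\mathcal{C})_0 \simeq \mathcal{C}$. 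The bookkeeping is routine but must be done with care for the statement to be more than a bijection of hom-sets.
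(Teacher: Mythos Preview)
Your proposal is correct and follows essentially the same route as the paper. The paper, however, is more economical: it works from the outset with objects of the form $L(\mathcal{A})$, $L(\mathcal{B})$ for dg categories $\mathcal{A}$, $\mathcal{B}$ (which suffices since $L$ is an equivalence on homotopy categories), and then the tensor comparison is immediate because $L(\mathcal{A}_c)\otimes L(\mathcal{B}) = (\emptyset\subset\mathcal{A}_c)\otimes(\emptyset\subset\mathcal{B}) = (\emptyset\subset\mathcal{A}_c\otimes\mathcal{B}) = L(\mathcal{A}_c\otimes\mathcal{B})$ on the nose. You reach the same conclusion via the heavier exact-sequence argument of proposition~\ref{tensorprod}, which is valid but unnecessary once you adopt the simplification you yourself note in your parenthetical. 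For the internal Hom, both you and the paper unwind $\mathcal{R}\mathsf{Hom}(L(\mathcal{A}),L(\mathcal{B}))$ through the chain of $Q$-weak equivalences ending at $L(\mathsf{rep}_{dg}(\mathcal{A}_c,\mathcal{B}_f))$ via theorem~\ref{rep1}; the paper writes this chain out explicitly, exploiting that when the source has empty zero component the fibre product defining $\mathsf{Hom}(-,-)_1$ collapses to $\mathsf{Fun}_{dg}(\mathcal{A}_c,(\mathcal{B}_f)_\lambda)$. Your discussion of associativity/unit/evaluation coherence goes beyond what the paper establishes or claims: the statement and its proof in the paper are objectwise.
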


\begin{proof}
Let $\mathcal{A}$ and $\mathcal{B}$ be dg categories. Then
$\mathcal{A} \overset{\mathbb{L}}{\otimes}\mathcal{B}$ identifies with
$\mathcal{A}_c \otimes \mathcal{B}$, where $\mathcal{A}_c$ is a
cofibrant resolution of $\mathcal{A}$ in $\mathsf{dgcat}$. Since
$L(\mathcal{A}_c)$ is cofibrant in $\mathsf{Lp}$ by lemma~\ref{tensorprod}, we
have the following zig-zag:
$$L(\mathcal{A}) \overset{\mathbb{L}}{\otimes} L(\mathcal{B})
\stackrel{\sim}{\leftarrow} L(\mathcal{A}_c) \otimes L(\mathcal{B})
\stackrel{\sim}{\rightarrow} L(\mathcal{A}_c \otimes \mathcal{B}) =
L(\mathcal{A} \overset{\mathbb{L}}{\otimes} \mathcal{B})\,,$$
of weak equivalences in $\mathsf{Lp}$. This proves that the total left derived tensor
products in $\mathsf{Ho}(\mathsf{Lp})$ and
$\mathsf{Ho}(\mathsf{dgcat})$ are identified.
Now, $\mathsf{rep}_{dg}(\mathcal{A},\mathcal{B})$ identifies with
$\mathsf{rep}_{dg}(\mathcal{A}_c,\mathcal{B}_f)$, where
$\mathcal{B}_f$ is a fibrant resolution of
$\mathcal{B}$ in $\mathsf{dgcat}$. By definition
$$ \mathcal{R}\mathsf{Hom}(L(\mathcal{A}), L(\mathcal{B})) =
\mathsf{Hom}((L(\mathcal{A})_c,(L(\mathcal{B})_f)_{\lambda})\,.$$
where $\lambda$ denotes an infinite cardinal whose size is greater or
equal to the cardinality of the set of isomorphism classes of objects
in the category $\mathsf{H}^0(\mathcal{A}_c)$.
We have the following $Q$-weak equivalent objects in $\mathsf{Lp}$:
\begin{eqnarray}
&&\mathcal{R}\mathsf{Hom}(L(\mathcal{A}), L(\mathcal{B})) \nonumber\\
&&\mathsf{Hom}((L(\mathcal{A})_c,(L(\mathcal{B})_f)_{\lambda})\nonumber\\
&&\mathsf{Hom}((\emptyset \subset \mathcal{A}_c), ((\mathcal{B}_f)_{\lambda})_{contr}
\subset (\mathcal{B}_f)_{\lambda}))\nonumber\\
&&(\mathsf{Fun}_{dg}(\mathcal{A}_c, ((\mathcal{B}_f)_{\lambda})_{contr}) \subset
\mathsf{Fun}_{dg}(\mathcal{A}_c, (\mathcal{B}_f)_{\lambda}))\nonumber\\
&&\overline{\mathsf{Fun}_{dg}(\mathcal{A}_c, ((\mathcal{B}_f)_{\lambda})_{contr})}
\subset \mathsf{Fun}_{dg}(\mathcal{A}_c,
(\mathcal{B}_f)_{\lambda}))/\mathsf{Fun}_{dg}(\mathcal{A}_c,
((\mathcal{B}_f)_{\lambda})_{contr})\nonumber\\
&&(\emptyset \subset \mathsf{rep}_{dg}(\mathcal{A}_c,\mathcal{B}_f))\nonumber\\
&&L(\mathsf{rep}_{dg}(\mathcal{A},\mathcal{B}))\nonumber\,.
\end{eqnarray}
This proves that the total right derived functor
$\mathcal{R}\mathsf{Hom}(-,-)$ in $\mathsf{Ho}(\mathsf{Lp})$
corresponds to the functor $\mathsf{rep}_{dg}(-,-)$, as in
\cite{ICM} \cite{Toen}.
\end{proof}

\chapter{Homotopy theory of well-generated algebraic triangulated categories}

\textit{\small{Ce chapitre correspond au article \cite{Triangcat}.}}

\section{Introduction}
Triangulated categories appear naturally in several branches
of mathematics such as algebraic geometry, algebraic analysis,
$K$-theory, representation theory and even in mathematical physics,
see for instance \cite{ENS} \cite{IHP}.

In his book~\cite{Neeman}, Neeman introduces the important class of {\it well-generated}
triangulated categories. Let us recall this concept. Let $\alpha$ be a regular cardinal,
see section $5.3$ of \cite{Set}. A triangulated category $\mathcal{T}$ is
$\alpha$-compactly generated \cite{Krause} \cite{Neeman} if it admits arbitrary coproducts and an {\it $\alpha$-good set of
  generators} $\mathcal{G}$, i.e. $\mathcal{G}$ is stable under shifts
in both directions and satisfies
\begin{itemize}
\item[1)] an object $X$ of $\mathcal{T}$ vanishes if and only if
  $\mathcal{T}(G,X)=0$ for each $G \in \mathcal{G}$,
\item[2)] each $G \in \mathcal{G}$ is $\alpha$-small, i.e. for each
  family of objects $X_i$, $i \in I$, of $\mathcal{T}$, each morphism
$$G \rightarrow \underset{i \in I}{\bigoplus}\, X_i$$
factors through a subsum $\underset{i \in J}{\bigoplus}X_i$ for some
subset $J$ of $I$ of cardinality strictly smaller than $\alpha$,
\item[3)] for each family of morphisms $f_i: X_i \rightarrow Y_i$, $i
  \in I$, of $\mathcal{T}$ which induces surjections 
$$ \mathcal{T}(G,X_i) \rightarrow \mathcal{T}(G,Y_i)$$
for all $G \in \mathcal{G}$ and all $i \in I$, the sum of the $f_i$
induces surjections
$$ \mathcal{T}(G, \bigoplus X_i) \rightarrow \mathcal{T}(G,\bigoplus Y_i)$$
for all $G \in \mathcal{G}$.
\end{itemize}
A triangulated category is well-generated if it is $\alpha$-compactly
generated for a regular cardinal $\alpha$, see \cite{Krause} \cite{Neeman}. Clearly each compactly
generated triangulated category is well-generated. Neeman proves in
\cite{Neeman} that the Brown representability theorem holds for
well-generated triangulated categories. This is one of the main
reasons for studying them. Another important result of \cite{Neeman}
is that if $\mathcal{T}$ is well generated and $\mathcal{S}
\rightarrow \mathcal{T}$ a localization (i.e. a fully faithful
triangle functor admitting a left adjoint whose kernel is generated by
a set of objects) then $\mathcal{S}$ is well-generated. Thus each
localization of a well-generated triangulated category is
well-generated.

\begin{example}
Let $\mathcal{B}$ be a Grothendieck abelian category, e.g. the
category of modules on a ringed space. Then by the Popescu-Gabriel
theorem \cite{Pop-Gab}, $\mathcal{B}$ is a localization of the
category of $\mathsf{Mod}\, A$ of $A$-modules over some ring $A$. One
can deduce from this \cite{DocNeeman} that the unbounded derived category of the
abelian category $\mathcal{B}$ is a localization of $\mathcal{D}(A)$
and thus is well-generated.
\end{example}

In his lecture at the international congress of mathematicians in Madrid
2006, Keller \cite{ICM} defines the notion of an algebraic triangulated
category in order to axiomatize the properties of all triangulated
categories appearing naturally in algebra and geometry. Recall
that a triangulated category $\mathcal{T}$ is {\it algebraic} if it is
triangle equivalent to the stable category $\underline{\mathcal{E}}$
for some Frobenius category $\mathcal{E}$. This holds if and only if
$\mathcal{T}$ is triangle equivalent to a full triangulated
subcategory of the category up to homotopy of complexes over an
additive category.

In this paper, we study the category of well-generated algebraic triangulated categories, see \cite{ICM}, using the tools of Quillen's homotopical
algebra, \cite{Quillen}, and the formalism of dg categories,
\cite{Drinfeld} \cite{ICM} \cite{ICM} \cite{dgquot} \cite{addendum}
\cite{IMRN} \cite{cras} \cite{Toen}.
More precisely, for a fixed regular cardinal $\alpha$ we construct a
category $\mathsf{dgcat}_{ex,\alpha}$, whose objects are essentially
the dg categories which are stable under suspensions, cosuspensions,
cones and $\alpha$-small sums.

The construction of $\mathsf{dgcat}_{ex,\alpha}$ is done in two steps:
first we construct a monad $\mathsf{T}_{\alpha}$ on the
  category of small differential graded categories $\mathsf{dgcat}$
  and we consider the associated category $\mathsf{T}_{\alpha}$-$\mathsf{alg}$ of
  $\mathsf{T}_{\alpha}$-algebras. Then the category $\mathsf{dgcat}_{ex,\alpha}$, is obtained by
  considering specific diagrams in $\mathsf{T}_{\alpha}$-$\mathsf{alg}$. 

In each of these two steps we have at our disposal an adjunction and by applying
an argument due to Quillen to our situation, we are able to lift the Quillen
model structure on $\mathsf{dgcat}$, see~\cite{cras}, along these
adjunctions to $\mathsf{dgcat}_{ex,\alpha}$.

Finally, we define a functor $\mathcal{D}_{\alpha}(-)$ from
$\mathsf{dgcat}_{ex, \alpha}$ to the category $\mathsf{Tri}_{\alpha}$
of $\alpha$-compactly generated algebraic triangulated categories,
see~\cite{Krause} \cite{Neeman}, which by \cite{Ober}~\cite{Porta}
is known to verify the following conditions:
\begin{enumerate}
\item[-] every category $\mathcal{T}$ in $\mathsf{Tri}_{\alpha}$ is equivalent to
  $\mathcal{D}_{\alpha}(\underline{A})$ for some $\underline{A}$ in $\mathsf{dgcat}_{ex,\alpha}$ and
\item[-] a morphism $F$ in $\mathsf{dgcat}_{ex, \alpha}$ is a
  weak equivalence if and only if $\mathcal{D}_{\alpha}(F)$ is an
  equivalence of triangulated categories.
\end{enumerate}

This shows that well-generated algebraic triangulated categories up to
equivalence admit a natural Quillen enhancement given by our model
category $\mathsf{dgcat}_{ex, \alpha}$.

\section{Preliminaries}
In what follows $\alpha$ will denote a regular cardinal and $k$ a
commutative ring with unit. The tensor product $\otimes$ will denote the tensor product over $k$. Let
$\mathsf{Ch}(k)$ denote the category of complexes over $k$. By a
{\it dg category}, we mean a differential graded $k$ category, see definition~\ref{dgcategorie}. For a dg category
$\mathcal{A}$, we denote by $\mathcal{C}_{dg}(\mathcal{A})$ the dg
category of right $\mathcal{A}$ dg modules and by $\,\widehat{ } : \mathcal{A} \rightarrow
\mathcal{C}_{dg}(\mathcal{A})$ the Yoneda dg functor.
We write $\mathsf{dgcat}$ for the category of small dg categories.
By theorem~\ref{mal} the category $\mathsf{dgcat}$ admits a structure of cofibrantly generated
model category whose weak equivalences are the quasi-equivalences.

\section{Monadic structure $\mathsf{T}$ on $\mathsf{dgcat}$}\label{monad}

In this section, we will construct a monad $\mathsf{T}_{\alpha}$ on
$\mathsf{dgcat}$. The $\mathsf{T}_{\alpha}$-algebras will be
essentially the dg categories which admit $\alpha$-small
sums, see proposition~\ref{sums}. This monad $\mathsf{T}_{\alpha}$ is
a variant of the well-known coproduct completion monad $\mathsf{Fam}$
of `families', \emph{cf.}~\cite{Fam}, see also~\cite{Kelly}.

For each ordinal $\beta$ strictly smaller than $\alpha$, we denote by
$I_{\beta}$ the underlying set of $\beta$, see
\cite{Set}~\cite{Hirschhorn}.
Let $\mathcal{A}$ be a small dg category. We denote by $F_{\beta}$ a
generic morphism from $I_{\beta}$ to $\mbox{obj}(\mathcal{A})$.

\begin{definition}\label{monadf}
Let $\mathsf{T}_{\alpha}(\mathcal{A})$ be the small dg category whose
objects are all the maps
$$ I_{\beta} \stackrel{F_{\beta}}{\longrightarrow}
\mbox{obj}\,(\mathcal{A})\,,$$
where $\beta < \alpha$, and whose complex of morphisms between $F_{\beta}$ and $F_{\beta '}$
is 
$$ \mathsf{Hom}_{\mathsf{T}_{\alpha}(\mathcal{A})}(F_{\beta}, F_{\beta
  '}):= \prod_{i \in I_{\beta}}  \bigoplus_{j \in I_{\beta'}} \mathsf{Hom}_{\mathcal{A}}(F_{\beta}(i), F_{\beta
'}(j))\,.$$
The composition and the identities are induced by those of $\mathcal{A}$.
\end{definition}

\begin{remark}
Clearly $\mathsf{T}_{\alpha}(\mathcal{A})$ is a small dg category and
the above construction is functorial in $\mathcal{A}$. We have a
functor
$$ \mathsf{T}_{\alpha}(-): \mathsf{dgcat} \longrightarrow \mathsf{dgcat}\,.$$
\end{remark}

\begin{definition}
Let $\eta$ be the natural transformation
$$ \eta: Id_{\mathsf{dgcat}} \longrightarrow
\mathsf{T}_{\alpha}(-)\,,$$
whose evaluation at $\mathcal{A}$ is the dg functor
$$ \eta_{\mathcal{A}}:\mathcal{A} \longrightarrow
\mathsf{T}_{\alpha}(\mathcal{A})$$
that sends an object $X$ of $\mathcal{A}$ to the map
$$ I_{\mathbf{1}} \stackrel{X}{\longrightarrow}{
  \mbox{obj}\,(\mathcal{A})}\,,$$
where $\mathbf{1}$ denotes the first successor ordinal, see \cite{Set}.
\end{definition}
Notice that $\eta_{\mathcal{A}}$ is a fully faithful dg functor.

\begin{definition}\label{assoc}
Let $\mu_{\mathcal{A}}$ be the dg functor
$$ \mu_{\alpha}:(\mathsf{T}_{\alpha}\circ
\mathsf{T}_{\alpha})(\mathcal{A}) \longrightarrow
\mathsf{T}_{\alpha}(\mathcal{A})\,,$$
which sends an object
$$
\begin{array}{ccc}
I_{\beta} & \stackrel{F_{\beta}}{\longrightarrow} &
\mbox{obj}\,(\mathsf{T}_{\alpha}(\mathcal{A})) \\
x & \longmapsto & (I_{\gamma_x}
\stackrel{F_{\gamma_x}}{\longrightarrow} \mbox{obj}\,(\mathcal{A}))\,,
\end{array}
$$
of $(\mathsf{T}_{\alpha} \circ \mathsf{T}_{\alpha})(\mathcal{A})$ to
the map
$$ \coprod_{x \in \beta} F_{\gamma_x} := I_{\underset{x \in \beta}{\sum}
  \gamma_x} \longrightarrow \mbox{obj} \,(\mathcal{A})\,,$$
where $\underset{x \in \beta}{\sum} \gamma_x$ denotes the increasing sum of the
ordinals $\gamma_x$, $x \in \beta$, along the ordinal $\beta$, see
section $4.2$ of \cite{Set}.
\end{definition}

\begin{remark}
Observe that the ordinal $\underset{x \in \beta}{\sum}\gamma_x$ is strictly
smaller than $\alpha$ since $\beta$ and each one of the $\gamma_x$'s
are strictly smaller than $\alpha$, which is by hypothesis a regular
cardinal. The above construction is functorial in $\mathcal{A}$ and so
we have a natural transformation
$$\mu:(\mathsf{T}_{\alpha} \circ \mathsf{T}_{\alpha})(-) \longrightarrow \mathsf{T}_{\alpha}(-)\,.$$
\end{remark}

\begin{proposition}\label{monade}
The above construction give us a monad, see~\cite{Macl},
$\mathsf{T}_{\alpha}=(\mathsf{T}_{\alpha}(-), \eta, \mu)$ on the
category $\mathsf{dgcat}$.
\end{proposition}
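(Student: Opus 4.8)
The plan is to verify the three monad axioms: left unit, right unit, and associativity, for the triple $\mathsf{T}_{\alpha}=(\mathsf{T}_{\alpha}(-), \eta, \mu)$. Since everything is defined by explicit formulas on objects and morphism complexes, this is a matter of unwinding definitions and checking that the relevant diagrams of dg functors commute on the nose. First I would recall precisely what has to be checked: (i) $\mu \circ (\eta \ast \mathsf{T}_{\alpha}) = \mathrm{id}_{\mathsf{T}_{\alpha}}$, (ii) $\mu \circ (\mathsf{T}_{\alpha} \ast \eta) = \mathrm{id}_{\mathsf{T}_{\alpha}}$, and (iii) $\mu \circ (\mu \ast \mathsf{T}_{\alpha}) = \mu \circ (\mathsf{T}_{\alpha} \ast \mu)$ as natural transformations of endofunctors of $\mathsf{dgcat}$, where $\ast$ denotes horizontal composition (whiskering). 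Because all of these are identities between dg functors, it suffices to check equality on objects and on morphism complexes, and since all constructions are patently functorial in $\mathcal{A}$, naturality is automatic and I only need to fix a single small dg category $\mathcal{A}$.

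For the unit axioms, the key point is the arithmetic of ordinals. For (ii), an object of $(\mathsf{T}_{\alpha}\circ\mathsf{T}_{\alpha})(\mathcal{A})$ in the image of $\mathsf{T}_{\alpha}(\eta_{\mathcal{A}})$ applied to a family $F_\beta : I_\beta \to \mathrm{obj}(\mathcal{A})$ is the family $x \mapsto (I_{\mathbf{1}}\xrightarrow{F_\beta(x)}\mathrm{obj}(\mathcal{A}))$, so each $\gamma_x = \mathbf{1}$ and $\mu_{\mathcal{A}}$ sends it to $I_{\sum_{x\in\beta}\mathbf{1}} \to \mathrm{obj}(\mathcal{A})$; here $\sum_{x\in\beta}\mathbf{1} = \beta$ by the definition of the increasing sum, so we recover $F_\beta$. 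For (i), applying $\eta_{\mathsf{T}_{\alpha}(\mathcal{A})}$ to $F_\beta$ gives the one-term family $I_{\mathbf{1}}\to\mathrm{obj}(\mathsf{T}_{\alpha}(\mathcal{A}))$ picking out $F_\beta$, with $\gamma_0 = \beta$, and $\mu_{\mathcal{A}}$ returns $I_{\sum_{x\in\mathbf{1}}\beta} = I_\beta \to \mathrm{obj}(\mathcal{A})$, again $F_\beta$. On morphism complexes one checks that the induced maps
$$\prod_{i\in I_\beta}\bigoplus_{j\in I_{\beta'}}\mathsf{Hom}_{\mathcal{A}}(F_\beta(i),F_{\beta'}(j))$$
are carried to themselves identically; this uses that a product (resp. coproduct) indexed by $I_{\mathbf{1}}$, a one-element set, is canonically the object itself, together with the compatibility of the iterated product/coproduct decomposition with reindexing along the bijection $I_{\sum_x\gamma_x}\cong\coprod_x I_{\gamma_x}$.

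For associativity (iii) the main work is to show that both composites send a ``doubly nested family'' to the same flattened family and induce the same map on Hom-complexes. A generic object of $(\mathsf{T}_{\alpha}\circ\mathsf{T}_{\alpha}\circ\mathsf{T}_{\alpha})(\mathcal{A})$ is a family $I_\beta \ni x \mapsto$ (family $I_{\gamma_x}\ni y\mapsto$ ($I_{\delta_{x,y}}\to\mathrm{obj}(\mathcal{A})$)); one composite first sums the inner two layers (giving $\delta_x := \sum_{y\in\gamma_x}\delta_{x,y}$) and then sums over $\beta$, the other first sums the outer two layers and then sums, and the two results agree by the associativity of the increasing sum of ordinals, i.e. $\sum_{x\in\beta}\sum_{y\in\gamma_x}\delta_{x,y}$ is well-defined independently of the bracketing — this is the ordinal-arithmetic analogue of associativity of disjoint union of indexed families, and I would cite the relevant fact from \cite{Set}. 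On morphism complexes the corresponding statement is the (infinite-)distributivity/Fubini-type identity interchanging the nested $\prod$'s and $\bigoplus$'s; here one must be slightly careful that, although $\prod$ and $\bigoplus$ do not commute in general, the specific nesting order ($\prod$ outermost over the source family, $\bigoplus$ innermost over the target family) is preserved under flattening precisely because the regularity of $\alpha$ guarantees all index sets stay below $\alpha$ and the sums remain sums, never products, of the inner layers.

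I expect the main obstacle to be purely bookkeeping: keeping the ordinal-arithmetic identities $\sum_{x\in\mathbf{1}}\beta=\beta$, $\sum_{x\in\beta}\mathbf{1}=\beta$, and $\sum_x\sum_y = \sum_{(x,y)}$ straight, and checking that the canonical isomorphisms $I_{\sum_x\gamma_x}\cong\coprod_x I_{\gamma_x}$ are coherent enough that the induced maps on the $\prod$-$\bigoplus$ Hom-complexes are genuine equalities (not merely isomorphisms) of dg functors — or, if they are only canonically isomorphic rather than equal, replacing the construction by a strictified version (e.g. choosing $I_\beta$ to be an actual set of ordinals and $\coprod$ to be the literal ordinal sum) so that strict equality holds. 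Once the ordinal conventions are pinned down, the differential and composition compatibilities are immediate since $\mu$ and $\eta$ are built entirely out of the structure maps of $\mathcal{A}$, which appear only through the innermost $\mathsf{Hom}_{\mathcal{A}}$ factors and are untouched by the flattening. I would therefore organize the proof as: fix conventions for ordinals and indexed sums citing \cite{Set}; check (i) and (ii) on objects then on morphisms; check (iii) on objects via associativity of increasing sums, then on morphisms via the compatible Fubini identity; conclude by functoriality in $\mathcal{A}$ that the natural-transformation identities hold.
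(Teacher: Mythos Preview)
Your proposal is correct and follows exactly the same approach as the paper: associativity of $\mu$ reduces to associativity of the increasing ordinal sum (cited from \cite{Set}), and the unit axioms hold by the identities $\sum_{x\in\mathbf{1}}\beta=\beta$ and $\sum_{x\in\beta}\mathbf{1}=\beta$, which the paper summarizes as ``by definition of $\mu$''. You have simply supplied the details (checking objects, morphism complexes, and strictness of the reindexing bijections) that the paper's two-line proof omits.
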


\begin{proof}
We need to prove that the diagrams of functors
$$
\xymatrix{
\mathsf{T}_{\alpha}\circ \mathsf{T}_{\alpha}\circ \mathsf{T}_{\alpha}
\ar[d]_{\mu \mathsf{T}_{\alpha}} \ar[r]^-{\mathsf{T}_{\alpha}\mu} &
\mathsf{T}_{\alpha}\circ \mathsf{T}_{\alpha} \ar[d]^{\mu} & &
\mathsf{T}_{\alpha} \ar[r]^-{\eta \mathsf{T}_{\alpha}} \ar@{=}[dr] & \mathsf{T}_{\alpha} \circ
\mathsf{T}_{\alpha} \ar[d]^{\mu} & \mathsf{T}_{\alpha} \ar[l]_-{\mathsf{T}_{\alpha} \eta}
\ar@{=}[dl] \\
\mathsf{T}_{\alpha}\circ \mathsf{T}_{\alpha} \ar[r]_{\mu} & \mathsf{T}_{\alpha} & & &
\mathsf{T}_{\alpha} & 
}
$$
are commutative. The left diagram is commutative because the ordinal
sum operation, see~\cite{Set}, on the set of ordinals strictly smaller
than $\alpha$, is associative. The right diagram is also commutative by definition of $\mu$.

This proves the proposition.
\end{proof}

Let $\mathcal{A}$ be a small dg category.

\begin{lemma}\label{sums1}
The dg category $\mathsf{T}_{\alpha}(\mathcal{A})$ admits
$\alpha$-small sums.
\end{lemma}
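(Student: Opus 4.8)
The plan is to describe the $\alpha$-small sums in $\mathsf{T}_\alpha(\mathcal{A})$ explicitly and then check the universal property. Given a family $\{F_{\beta_s}\}_{s \in I_\delta}$ of objects of $\mathsf{T}_\alpha(\mathcal{A})$ indexed by an ordinal $\delta < \alpha$, each $F_{\beta_s}$ is a map $I_{\beta_s} \to \mathrm{obj}(\mathcal{A})$ with $\beta_s < \alpha$. First I would form the increasing ordinal sum $\beta := \sum_{s \in I_\delta} \beta_s$, which is again strictly smaller than $\alpha$ because $\alpha$ is regular (the same arithmetic fact used in Definition~\ref{assoc} and the proof of Proposition~\ref{monade}). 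The underlying set $I_\beta$ is then, by construction of the ordinal sum, in natural bijection with the disjoint union $\coprod_{s \in I_\delta} I_{\beta_s}$, and the maps $F_{\beta_s}$ glue to a single map $F_\beta : I_\beta \to \mathrm{obj}(\mathcal{A})$. I claim $F_\beta$, together with the obvious structural morphisms, is the coproduct $\bigoplus_{s \in I_\delta} F_{\beta_s}$.

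Next I would exhibit the coprojections and verify the universal property at the level of morphism complexes. For each $s \in I_\delta$ there is a canonical morphism $\iota_s : F_{\beta_s} \to F_\beta$ in $\mathsf{T}_\alpha(\mathcal{A})$: using the formula
$$\mathsf{Hom}_{\mathsf{T}_\alpha(\mathcal{A})}(F_{\beta_s}, F_\beta) = \prod_{i \in I_{\beta_s}} \bigoplus_{j \in I_\beta} \mathsf{Hom}_{\mathcal{A}}(F_{\beta_s}(i), F_\beta(j))\,,$$
take $\iota_s$ to be the element whose $i$-th component is the identity of $F_{\beta_s}(i) = F_\beta(\text{image of } i)$ placed in the summand indexed by the image of $i$ under the inclusion $I_{\beta_s} \hookrightarrow I_\beta$. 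Then for any object $F_{\beta'}$ of $\mathsf{T}_\alpha(\mathcal{A})$ I would compute
$$\mathsf{Hom}_{\mathsf{T}_\alpha(\mathcal{A})}(F_\beta, F_{\beta'}) = \prod_{i \in I_\beta} \bigoplus_{j \in I_{\beta'}} \mathsf{Hom}_{\mathcal{A}}(F_\beta(i), F_{\beta'}(j)) \cong \prod_{s \in I_\delta} \prod_{i \in I_{\beta_s}} \bigoplus_{j \in I_{\beta'}} \mathsf{Hom}_{\mathcal{A}}(F_{\beta_s}(i), F_{\beta'}(j))\,,$$
where the isomorphism is exactly the canonical decomposition of a product indexed by $I_\beta \cong \coprod_s I_{\beta_s}$. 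The right-hand side is $\prod_{s \in I_\delta} \mathsf{Hom}_{\mathsf{T}_\alpha(\mathcal{A})}(F_{\beta_s}, F_{\beta'})$, and one checks that precomposition with the $\iota_s$ realizes this isomorphism. Hence the $(\iota_s)$ make $F_\beta$ into a coproduct of the $F_{\beta_s}$ in $\mathsf{T}_\alpha(\mathcal{A})$, and since $\delta < \alpha$ was an arbitrary indexing ordinal of size $<\alpha$, all $\alpha$-small sums exist.

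The only genuine subtlety — and the step I would spend the most care on — is the bookkeeping identifying $I_\beta$ with $\coprod_{s \in I_\delta} I_{\beta_s}$ compatibly with the structural morphisms, i.e. making sure that the ordinal-sum construction of Definition~\ref{assoc} really does produce on underlying sets the disjoint union with the expected inclusions, and that associativity/coherence of these identifications holds so that the isomorphism of morphism complexes above is natural. This is the same computation that underlies the monad axioms in Proposition~\ref{monade}, so I would reuse that description of the ordinal sum rather than redo it. Everything else (that the gluing of the $F_{\beta_s}$ is a well-defined object, that $\iota_s$ is a closed degree-zero morphism, that composition is respected) is a routine unwinding of Definition~\ref{monadf}. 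No compactness or model-categorical input is needed here; this is a purely $1$-categorical statement about the dg category $\mathsf{T}_\alpha(\mathcal{A})$.
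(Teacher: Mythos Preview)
Your proof is correct and is essentially the same as the paper's: both construct the coproduct of a family $\{F_{\beta_s}\}_{s\in I_\delta}$ as the map $I_{\sum_s \beta_s}\to\mathrm{obj}(\mathcal{A})$ obtained by gluing along the ordinal sum. The only cosmetic difference is that the paper packages this as ``apply the monad multiplication $\mu_{\mathcal{A}}$ to the family viewed as an object of $(\mathsf{T}_\alpha\circ\mathsf{T}_\alpha)(\mathcal{A})$'', whereas you unpack what $\mu_{\mathcal{A}}$ does and verify the universal property of the coproduct by hand via the Hom-complex computation.
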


\begin{proof}
Let $\beta$ be a cardinal strictly smaller than $\alpha$. Let
$J$ be a set of cardinality $\beta$ and $G$ a morphism
$$ G: J \longrightarrow
\mbox{obj}\,(\mathsf{T}_{\alpha}(\mathcal{A}))\,.$$
Choose a bijection between $J$ and $I_{\beta}$ and consider the
object 
$$ F_{\beta}:I_{\beta} \longrightarrow
\mbox{obj}\,(\mathsf{T}_{\alpha}(\mathcal{A}))$$
of $(\mathsf{T}_{\alpha} \circ \mathsf{T}_{\alpha})$ associated to $G$. Now notice that
by definition of $\mathsf{T}_{\alpha}(\mathcal{A})$ and since 
$$I_{\underset{x \in \beta}{\sum}\gamma_x}= \underset{x \in
  I_{\beta}}{\cup} I_{\gamma_x} $$
the object $\mu_{\mathcal{A}}(F_{\beta})$ of
$\mathsf{T}_{\alpha}(\mathcal{A})$ is in fact the $\beta$-small sum of $G$.
\end{proof}

Recall from \cite{Macl} that by definition an algebra over this monad (=$\mathsf{T}_{\alpha}$-algebra) consists of a pair $A=(\mathcal{A},R)$, where
$\mathcal{A}$ is a small dg category and $R$ is a dg functor
$R:\mathsf{T}_{\alpha}(\mathcal{A}) \rightarrow \mathcal{A}$ which makes both diagrams
$$
\xymatrix{
(\mathsf{T}_{\alpha}\circ \mathsf{T}_{\alpha})(\mathcal{A})
\ar[d]_{\mu_{\mathcal{A}}} \ar[r]^-{\mathsf{T}_{\alpha}(R)} &
\mathsf{T}_{\alpha}(\mathcal{A}) \ar[d]^{R} & &
\mathcal{A} \ar@{=}[dr] \ar[r]^-{\eta_{\mathcal{A}}} & \mathsf{T}_{\alpha}(\mathcal{A}) \ar[d]^{R} \\
\mathsf{T}_{\alpha}(\mathcal{A}) \ar[r]_{R} & \mathcal{A} & & &
\mathcal{A} 
}
$$
commute. A morphism $F:(\mathcal{A},R) \rightarrow (\mathcal{B},G)$ of
$\mathsf{T}_{\alpha}$-algebras is a dg functor $F:\mathcal{A}
\rightarrow \mathcal{B}$ which renders commutative the diagram
$$
\xymatrix{
\mathsf{T}_{\alpha}(\mathcal{A}) \ar[d]_R
\ar[r]^{\mathsf{T}_{\alpha}(F)} & \mathsf{T}_{\alpha}(\mathcal{B})
\ar[d]^G \\
\mathcal{A} \ar[r]_F & \mathcal{B}\,.
}
$$
We denote by $\mathsf{T}_{\alpha}$-$\mathsf{alg}$ the category of
$\mathsf{T}_{\alpha}$-algebras. By theorem $1$ of chapter VI from \cite{Macl},
we have an adjunction
$$
\xymatrix{
\mathsf{T}_{\alpha}\mbox{-}\mathsf{alg} \ar@<1ex>[d]^U \\
\mathsf{dgcat} \ar@<1ex>[u]^F\,,
}
$$
where the functor $U$ associates to a $\mathsf{T}_{\alpha}$-algebra $A$ the dg category $\mathcal{A}$ and $F$ associates to a dg category $\mathcal{B}$ the
$\mathsf{T}_{\alpha}$-algebra $(\mathsf{T}_{\alpha}(\mathcal{B}),\mu_{\mathcal{B}})$. 

\begin{proposition}\label{sums}
Let $A=(\mathcal{A},R)$ be a $\mathsf{T}_{\alpha}$-algebra. Then the dg
category $\mathcal{A}$ admits $\alpha$-small sums.
\end{proposition}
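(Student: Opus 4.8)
The idea is to extract the $\alpha$-small sums in $\mathcal{A}$ by transporting the sums that already exist in $\mathsf{T}_\alpha(\mathcal{A})$ (lemma~\ref{sums1}) across the structure dg functor $R$, using the unit axiom $R \circ \eta_{\mathcal{A}} = \mathrm{Id}_{\mathcal{A}}$ together with the fact that $\eta_{\mathcal{A}}$ is fully faithful. First I would fix a cardinal $\beta < \alpha$, a set $J$ of cardinality $\beta$, and a family $\{X_j\}_{j \in J}$ of objects of $\mathcal{A}$. Choosing a bijection $J \simeq I_\beta$, this family determines a single object $F_\beta : I_\beta \to \mathrm{obj}(\mathcal{A})$ of $\mathsf{T}_\alpha(\mathcal{A})$, and I would set
$$ \bigoplus_{j \in J} X_j := R(F_\beta) \in \mathcal{A}\,. $$
The structure maps $X_j \to R(F_\beta)$ are obtained by applying $R$ to the canonical morphisms $\eta_{\mathcal{A}}(X_j) \to F_\beta$ in $\mathsf{T}_\alpha(\mathcal{A})$ (these exist because $F_\beta$ is, by lemma~\ref{sums1}, the $\beta$-small sum of the $\eta_{\mathcal{A}}(X_j)$ in $\mathsf{T}_\alpha(\mathcal{A})$) and then using $R \circ \eta_{\mathcal{A}} = \mathrm{Id}$.

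Next I would verify the universal property. Let $Y$ be an object of $\mathcal{A}$. Because $\eta_{\mathcal{A}}$ is fully faithful, $\mathsf{Hom}_{\mathcal{A}}(X_j,Y) \simeq \mathsf{Hom}_{\mathsf{T}_\alpha(\mathcal{A})}(\eta_{\mathcal{A}}(X_j), \eta_{\mathcal{A}}(Y))$, and since $F_\beta$ is the $\beta$-small sum in $\mathsf{T}_\alpha(\mathcal{A})$ we get
$$ \mathsf{Hom}_{\mathsf{T}_\alpha(\mathcal{A})}(F_\beta, \eta_{\mathcal{A}}(Y)) \;\simeq\; \prod_{j \in J} \mathsf{Hom}_{\mathsf{T}_\alpha(\mathcal{A})}(\eta_{\mathcal{A}}(X_j), \eta_{\mathcal{A}}(Y)) \;\simeq\; \prod_{j \in J} \mathsf{Hom}_{\mathcal{A}}(X_j, Y) $$
as complexes. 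It remains to compare $\mathsf{Hom}_{\mathsf{T}_\alpha(\mathcal{A})}(F_\beta, \eta_{\mathcal{A}}(Y))$ with $\mathsf{Hom}_{\mathcal{A}}(R(F_\beta), Y)$. Here I would apply $R$ to morphisms out of $F_\beta$ and use the two monad axioms to produce a map back; the composite $R \circ \eta_{\mathcal{A}} = \mathrm{Id}$ shows the two complexes are canonically isomorphic, so that $R(F_\beta)$ represents $\prod_{j} \mathsf{Hom}_{\mathcal{A}}(X_j, -)$, i.e. it is the coproduct of the $X_j$ in $\mathcal{A}$. Concretely, the associativity square for $(\mathcal{A},R)$ evaluated on the "doubled" family (one outer index, $J$ inner indices, the $j$-th being $\eta_{\mathcal{A}}(X_j)$) identifies $R$ applied to morphisms from $F_\beta$ with morphisms from $R(F_\beta)$.

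\textbf{Main obstacle.} The routine part is the manipulation of the product-of-coproducts $\mathsf{Hom}$-complexes in $\mathsf{T}_\alpha(\mathcal{A})$; the genuinely delicate point is checking that the bijection $\mathsf{Hom}_{\mathcal{A}}(R(F_\beta), Y) \simeq \mathsf{Hom}_{\mathsf{T}_\alpha(\mathcal{A})}(F_\beta, \eta_{\mathcal{A}}(Y))$ is natural and is an isomorphism \emph{of complexes}, not merely of graded modules, and that the structure morphisms $X_j \to R(F_\beta)$ I wrote down are indeed the components of the corresponding universal cocone — in other words, that transport along $R$ is compatible with the cocone structure. This is where the unit and associativity axioms of the $\mathsf{T}_\alpha$-algebra must be used carefully; everything else is formal. (This is exactly the classical fact that algebras over the free-coproduct-completion monad are precisely the categories with the corresponding coproducts, adapted here to the dg setting and truncated at $\alpha$.)
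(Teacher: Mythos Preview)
Your proposal is correct and follows the same strategy as the paper: the candidate sum is $R(F_\beta)$ with structure maps $R(i_x)$, and the crux is exactly the comparison $\mathsf{Hom}_{\mathcal{A}}(R(F_\beta),Y)\simeq \mathsf{Hom}_{\mathsf{T}_\alpha(\mathcal{A})}(F_\beta,\eta_{\mathcal{A}}(Y))$ that you isolate as the main obstacle. The paper makes this step precise by constructing the morphism $\theta\colon F_\beta \to \eta_{\mathcal{A}}(R(F_\beta))$ from the universal property of $F_\beta$ in $\mathsf{T}_\alpha(\mathcal{A})$ and then proving $R(\theta)=\mathrm{Id}$ via the associativity square applied to $\overline{F_\beta}=\mathsf{T}_\alpha(\eta_{\mathcal{A}})(F_\beta)$ --- which is exactly the ``doubled family'' you gesture at.
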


\begin{proof}
Let $\beta$ be a cardinal strictly smaller than $\alpha$ and $J$
a set of cardinality $\beta$. Let $G$ be a morphism
$$ G:J \longrightarrow \mbox{obj}\,(\mathcal{A})\,.$$
Choose a bijection between $J$ and $I_{\beta}$ and consider the
object of $\mathsf{T}_{\alpha}(\mathcal{A})$
$$ F_{\beta}:I_{\beta} \longrightarrow \mbox{obj}\,(\mathcal{A})\,,$$ associated to $G$.
We will prove that the object $R(F_{\beta})$ of $\mathcal{A}$ is the
$\beta$-small sum of $G$. Consider the following object 
$$ \overline{F_{\beta}}:=
\mathsf{T}_{\alpha}(\eta_{\mathcal{A}})(F_{\beta})\,,$$
in $(\mathsf{T}_{\alpha}\circ \mathsf{T}_{\alpha})(\mathcal{A})$.
Notice that $F_{\beta} = \mu_{\mathcal{A}}(\overline{F_{\beta}})$ and so
  by lemma~\ref{sums1}, $F_{\beta}$ is the $\beta$-small sum of
  $\overline{F_{\beta}}$. In particular, we have for each $x \in
  I_{\beta}$ a closed morphism of degree zero 
$$ i_x : \eta_{\mathcal{A}}(F_{\beta}(x)) \rightarrow F_{\beta} $$
in $\mathsf{T}_{\alpha}(\mathcal{A})$.
Now, since $A$ is a $\mathsf{T}_{\alpha}$-algebra the
  equality $R \circ \eta_{\mathcal{A}} = Id$ implies that
  $R(F_{\beta})$ is a weak $\beta$-small sum of $G$ in
  $\mathcal{A}$. This means that for every object $Z$ in $\mathcal{A}$
  the morphism of complexes
$$ 
\mathsf{Hom}_{\mathcal{A}}(R(F_{\mathcal{B}}),Z) \longrightarrow 
\underset{x \in I_{\beta}}{\prod}
\mathsf{Hom}_{\mathcal{A}}(F_{\beta}(x),Z)$$
induced by the closed morphisms of degree zero $R(i_x)$, $x \in
I_{\beta}$, is surjective. We now show that it is also injective. Let $H_1$ and $H_2$ be two
elements of $\mathsf{Hom}_{\mathcal{A}}(R(F_{\beta}),Z)$ such that
$$H_1 \circ i_x = H_2 \circ i_x, \, \forall x \in I_{\beta}\,.$$
We will now prove that $H_1=H_2$.

Consider the following commutative diagram in $\mathcal{A}$
$$
\xymatrix{
F_{\beta}(x) \ar[r]^-{R(i_x)} \ar[dr]_{G_x} & R(F_{\beta}) \ar[d]_{H_1} \ar@<1ex>[d]^{H_2}\\
 & Z
}
$$
Apply the dg functor $\eta_{\mathcal{A}}$ to the previous diagram and
consider the following one in $\mathsf{T}_{\alpha}(\mathcal{A})$~:
$$
\xymatrix{
 &  & F_{\beta} \ar@{.>}[d]^-{\theta} \ar@/^5pc/@{.>}[dd]^-{\Phi}\\
\eta_{\mathcal{A}}(F_{\beta}(x)) \ar[urr]^-{i_x}
\ar[rr]^-{\eta_{\mathcal{A}}(R(i_x))}
\ar[drr]_-{\eta_{\mathcal{A}}(G_x)} & &
\eta_{\mathcal{A}}(R(F_{\beta})) \ar[d]_-{\eta_{\mathcal{A}}(H_1)}
\ar@<1ex>[d]^-{\eta_{\mathcal{A}}(H_2)} \\
 & & \eta_{\mathcal{A}}(Z) \,.
}
$$
Since $F_{\beta}$ is the $\beta$-small sum of $\overline{F_{\beta}}$
there is a unique morphism $\theta$ in
$\mathsf{T}_{\alpha}(\mathcal{A})$ such that
$$ \theta \circ i_x = \eta_{\mathcal{A}}(R(i_x)), \, \forall x \in
I_{\beta}$$
and a unique morphism $\Phi$ in
$\mathsf{T}_{\alpha}(\mathcal{A})$ such that 
$$ \Phi \circ i_x = \eta_{\mathcal{A}}(G_x), \, \forall x \in
I_{\beta}\,.$$
This implies that
$$ \eta_{\mathcal{A}}(H_1)\circ \theta = \eta_{\mathcal{A}}(H_2)\circ
\theta \,.$$
We will show that $R(\theta)=Id$, which immediately implies the
proposition. Recall that since $A$ is a $\mathsf{T}_{\alpha}$-algebra,
we have the following commutative diagram
$$
\xymatrix{
(\mathsf{T}_{\alpha}\circ \mathsf{T}_{\alpha})(\mathcal{A})
\ar[d]_{\mu_{\mathcal{A}}} \ar[r]^-{\mathsf{T}_{\alpha}(R)} &
\mathsf{T}_{\alpha}(\mathcal{A}) \ar[d]^{R} \\
\mathsf{T}_{\alpha}(\mathcal{A}) \ar[r]_{R} & \mathcal{A}\,.
}
$$
Notice that since  $\overline{F_{\beta}}$ is the $\beta$-small sum of the objects
$\eta_{\mathsf{T}_{\alpha}(\mathcal{A})}(\eta_{\mathcal{A}}(F_{\beta}(x)))$,
the morphisms $i_x$ induce a morphism
$$ \Psi: \overline{F_{\beta}} \longrightarrow
\eta_{\mathsf{T}_{\alpha}(\mathcal{A})}(F_{\beta})\,,$$
in $(\mathsf{T}_{\alpha} \circ \mathsf{T}_{\alpha})(\mathcal{A})$.
Finally remark that $\mathsf{T}_{\alpha}(R)(\Psi)=\theta$ and $\mu_{\mathcal{A}}(\Psi)=Id$. This implies that $R(\theta)=Id$ and so
the proposition is proven.
\end{proof}

\begin{remark}\label{sums2}
Let $F:(\mathcal{A},R) \rightarrow (\mathcal{B},G)$ be a morphism of $\mathsf{T}_{\alpha}$-algebras.
Observe that the proof of proposition~\ref{sums} and the
commutative square
$$
\xymatrix{
\mathsf{T}_{\alpha}(\mathcal{A}) \ar[d]_R
\ar[r]^{\mathsf{T}_{\alpha}(F)} & \mathsf{T}_{\alpha}(\mathcal{B})
\ar[d]^G \\
\mathcal{A} \ar[r]_F & \mathcal{B}
}
$$
imply that the dg functor $F:\mathcal{A} \rightarrow \mathcal{B}$ preserves
$\alpha$-small sums. This fact will be used in section~\ref{finale}, see definition~\ref{defporta}.
\end{remark}

\section{Quillen's lifting argument}

In this section, we consider an argument due to Quillen,
see~\cite{Quillen}, that allows us to lift a Quillen model structure
along an adjunction.

Let $\mathcal{N}$ be a complete and cocomplete category. Consider a functor
$U:\mathcal{N} \rightarrow \mathcal{M}$, with $\mathcal{M}$ a Quillen
model category. Assume that $U$ admits a left adjoint 
$$F:\mathcal{M}\rightarrow \mathcal{N}\,.$$

\begin{definition}\label{lift}
A morphism $f:A \longrightarrow B$ in $\mathcal{N}$ is:
\begin{enumerate}
\item[-] a weak equivalence if $U(f)$ is a quasi-equivalence in
  $\mathcal{M}$.
\item[-] a fibration if $U(f)$ is a fibration in $\mathcal{M}$.
\item[-] a cofibration if it has the left lifting property with respect
  to all trivial fibrations in $\mathcal{N}$.
\end{enumerate}
\end{definition}

\begin{theorem}\label{liftarg}
Suppose that $\mathcal{M}$ is a cofibrantly generated Quillen model
category and that $U$ commutes with $\alpha$-filtered colimits for a
regular cardinal $\alpha$, see~\cite{Hirschhorn}.

Then with the notions of weak equivalence, fibration and cofibration
defined above, $\mathcal{N}$ is a Quillen model category provided the
following assumption on cofibrations holds: every cofibration with the
left lifting property with respect to fibrations is a weak
equivalence. Moreover, the adjunction $(F,U)$ becomes a Quillen adjunction.
\end{theorem}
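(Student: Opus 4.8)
This is the classical Quillen transfer (lifting) theorem, so the plan is to verify the axioms $M1$--$M5$ for the transferred structure on $\mathcal{N}$, using the small object argument in $\mathcal{M}$ transported along the adjunction $(F,U)$. First I would record that $\mathcal{N}$ is complete and cocomplete by hypothesis, which gives $M1$; and that the two-out-of-three property and closure under retracts for weak equivalences and fibrations are immediate from the corresponding properties in $\mathcal{M}$ together with the fact that $U$ preserves retracts and that $U$ applied to a composite is the composite of the images, so $M2$ and part of $M3$ hold. The cofibrations are defined by a left lifting property, so they are automatically closed under retracts; this finishes $M3$.

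Next I would set up the candidate generating (trivial) cofibrations: let $I$, $J$ be the generating cofibrations and generating trivial cofibrations of $\mathcal{M}$, and put $F(I)$, $F(J)$ in $\mathcal{N}$. By adjunction, a morphism in $\mathcal{N}$ has the right lifting property with respect to $F(I)$ (resp. $F(J)$) if and only if its image under $U$ has the right lifting property with respect to $I$ (resp. $J$), hence if and only if $U(f)$ is a trivial fibration (resp. a fibration) in $\mathcal{M}$; so $F(I)$-inj is the class of trivial fibrations of $\mathcal{N}$ and $F(J)$-inj is the class of fibrations of $\mathcal{N}$. Here is where the hypothesis that $U$ commutes with $\alpha$-filtered colimits enters: it guarantees that the domains of $F(I)$ and $F(J)$ are small relative to the respective cell complexes (the smallness in $\mathcal{M}$ of the domains of $I$, $J$, being a cofibrantly generated model category, transports through $U$ because $U$ preserves the relevant colimits), so the small object argument applies in $\mathcal{N}$. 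This yields functorial factorizations: any morphism factors as an $F(I)$-cell complex followed by a trivial fibration, and as an $F(J)$-cell complex followed by a fibration. An $F(I)$-cell complex is a cofibration (it has the left lifting property against trivial fibrations by construction); an $F(J)$-cell complex is a cofibration which moreover has the left lifting property against all fibrations, since $J$-cofibrations in $\mathcal{M}$ have that property and it transports through adjunction. This gives the factorization axiom $M5$, modulo knowing that $F(J)$-cell complexes are weak equivalences.

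The main obstacle — and precisely the place where the extra hypothesis on cofibrations is invoked — is the remaining half of $M4$ and $M5$: one must show that every morphism which is both a cofibration and a weak equivalence has the left lifting property with respect to fibrations, equivalently (using the $F(J)$-factorization just constructed, and a retract argument) that every cofibration with the left lifting property against fibrations is a weak equivalence; but that last statement is exactly the stated assumption. So the plan is: given a trivial cofibration $f$, factor it via the small object argument as $f = p\circ i$ with $i$ an $F(J)$-cell complex (hence a cofibration with left lifting against fibrations, hence a weak equivalence by the assumption) and $p$ a fibration; two-out-of-three forces $p$ to be a trivial fibration; then the retract argument (solve the lifting problem of $f$ against $p$ using that $f$ is a cofibration and $p$ a trivial fibration) exhibits $f$ as a retract of $i$, hence $f$ has the left lifting property against fibrations. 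This establishes the non-trivial half of $M4$; the trivial half (cofibrations lift against trivial fibrations) is the definition of cofibration. Finally, the adjunction $(F,U)$ is a Quillen adjunction essentially by construction: $U$ preserves fibrations and trivial fibrations by definition of the transferred structure, so $U$ is a right Quillen functor and $F$ its left Quillen adjoint. I would also remark, for the application to $\mathsf{dgcat}_{ex,\alpha}$, that one checks the hypothesis on cofibrations in practice by producing a functorial path object in $\mathcal{N}$ lifting one in $\mathcal{M}$, which reduces the assumption to the existence of such a path object — this is the route taken in the sequel via the functorial path object of theorem~\ref{thm5} carrying a $\mathsf{T}_{\alpha}$-algebra structure.
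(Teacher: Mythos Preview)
Your proof is correct and follows essentially the same approach as the paper: both transfer the generating sets $I$, $J$ to $F(I)$, $F(J)$, use the commutation of $U$ with $\alpha$-filtered colimits to ensure the small object argument applies, and then invoke the stated assumption on cofibrations to obtain acyclicity of $F(J)$-cells. The paper's proof is terser (it cites Jardine's theorem~4.1 for the factorization and retract argument that you spell out explicitly), but the strategy is identical; your closing remark about verifying the hypothesis via a transferred path object also accurately anticipates the paper's subsequent Proposition~\ref{Quillen}.
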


\begin{proof}
We denote by $I$ the set of generating cofibrations of $\mathcal{M}$
and by $J$ the set of generating trivial cofibrations of
$\mathcal{M}$. Notice that since $\mathcal{M}$ is cofibrantly
generated, the domains of the elements of the sets $I$ and $J$ are
$\beta$-small for a cardinal $\beta$, see definition $11.1.2$ of \cite{Hirschhorn}. 
Since $U$ commutes with $\alpha$-small filtered colimits, the images of
these domains under the functor $F$ will be $\gamma$-small, where
$\gamma$ is the maximum of $\alpha$ and $\beta$.
This shows that the sets $F(I)$ and $F(J)$ of $\mathcal{N}$ allow the
small object argument.

Now the proof follows the lines of the one of
theorem $4.1$ from \cite{Jardine}: We simply use the set $I$, respectively $J$, instead
of the generating cofibrations of simplicial sets, respectively
generating trivial cofibrations of simplicial sets and consider $\gamma$-transfinite
compositions, see~\cite{Hirschhorn}, for the construction of the
factorizations.

Observe also that the class of cofibrations that have the left lifting
property with respect to fibrations is stable under $\gamma$-transfinite
compositions. Clearly the adjunction $(F,U)$ is a Quillen
adjunction.

This proves the theorem.
\end{proof}

Now let $\mathcal{N}$ and $\mathcal{M}$ be as at the beginning of this
section and consider definition~\ref{lift}.
 
\begin{proposition}\label{Quillen}
Suppose that
\begin{itemize}
\item[-] for every object $A$ in $\mathcal{N}$, the unique morphism $A
  \rightarrow *$, where $*$ denotes the terminal object in
  $\mathcal{N}$, is a fibration.
\item[-] for every object $A$ in $\mathcal{N}$, we dispose of a
  factorization
$$
\xymatrix{
A \ar[rr]^{\Delta} \ar[dr]^{\sim}_{i_A} & & A \times A \\
 & P(A) \ar@{->>}[ur]_{q_A} & \,,
}
$$
where $i_A$ is a weak equivalence and $q_A$ is a fibration.
\end{itemize}
Then every morphism in $\mathcal{N}$ that has the left lifting property with
respect to all fibrations is a weak equivalence.
\end{proposition}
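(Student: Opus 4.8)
The plan is to follow the classical argument of Quillen for lifting model structures, which shows that a map with the left lifting property (LLP) against all fibrations must be a weak equivalence whenever fibrant objects admit a path object. First I would recall the standard setup: let $f: A \to B$ be a morphism in $\mathcal{N}$ with the LLP against all fibrations. In particular, $f$ has the LLP against the fibration $B \to *$, so applying the lifting property to the square with $f$ on the left, $\mathrm{id}_B$ on the bottom, and $A \to *$, $B \to *$ around it, we obtain a retraction $r: B \to A$ with $rf = \mathrm{id}_A$. So $f$ is a split monomorphism; it remains to show $fr$ is homotopic to $\mathrm{id}_B$ in an appropriate sense, which will force $U(f)$ to be a quasi-equivalence (recall that quasi-equivalences are the weak equivalences of $\mathcal{M}$ of theorem~\ref{mal}, and they satisfy two-out-of-three and are closed under retracts).

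Next I would exploit the path object $P(B)$ provided by hypothesis. We have the factorization $B \xrightarrow{i_B} P(B) \xrightarrow{q_B} B \times B$ with $i_B$ a weak equivalence and $q_B = (q_0, q_1)$ a fibration. Consider the square
$$
\xymatrix{
A \ar[r]^-{i_B \circ f} \ar[d]_f & P(B) \ar[d]^{q_B} \\
B \ar[r]_-{(fr,\, \mathrm{id}_B)} & B\times B \,.
}
$$
This commutes because $q_0 i_B f = f = fr f$ (using $rf=\mathrm{id}_A$) and $q_1 i_B f = f$. Since $f$ has the LLP against the fibration $q_B$, there is a lift $H: B \to P(B)$ with $q_0 H = fr$, $q_1 H = \mathrm{id}_B$, and $H f = i_B f$. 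Thus $H$ is a homotopy (with respect to the path object $P(B)$) from $fr$ to $\mathrm{id}_B$.

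Then I would translate this into a statement about $U$. Applying the functor $U$, which is a right adjoint and hence preserves the relevant limits (products, and the factorization is built in $\mathcal{N}$ but $q_0, q_1$ become fibrations and $i_B$ a weak equivalence in $\mathcal{M}$ by definition~\ref{lift}), we get that $U(fr)$ and $U(\mathrm{id}_B) = \mathrm{id}_{U(B)}$ are right homotopic in $\mathcal{M}$ via the path object $U(P(B))$; indeed $U(q_0), U(q_1): U(P(B)) \to U(B)$ together form a fibration and $U(i_B): U(B) \to U(P(B))$ is a weak equivalence, so $U(P(B))$ is a genuine path object for $U(B)$ in the model category $\mathcal{M}$. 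Since right-homotopic maps in a model category induce the same morphism in the homotopy category, and one can check (using that $U(q_0)$, $U(q_1)$ are fibrations with common section $U(i_B)$ a weak equivalence, hence each $U(q_i)$ is a trivial fibration) that $U(fr)$ is a weak equivalence: $U(q_1) \circ U(H) = \mathrm{id}$ and $U(q_0) \circ U(H) = U(fr)$ with both $U(q_i)$ weak equivalences, so by two-out-of-three $U(H)$ is a weak equivalence, hence $U(fr)$ is. Finally $U(f) U(r) = U(fr)$ is a weak equivalence and $U(r) U(f) = U(rf) = \mathrm{id}$, so $U(f)$ is a retract of a weak equivalence (via the retract $U(f) \circ (-) \circ U(r)$ argument) — more directly, $U(f)$ has a left inverse $U(r)$ and $U(r)$ is a weak equivalence (since $U(fr)$ and $U(f)$-related; or: $U(r)$ has $U(f)$ as a section and $U(fr)$ is a weak equivalence so $U(r)$ is a weak equivalence by two-out-of-three applied to $U(fr) = U(f)U(r)$... ), whence $U(f)$ is a weak equivalence by two-out-of-three.

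The main obstacle I anticipate is purely bookkeeping: making sure that the path object hypothesis is used correctly, namely that $U(P(B))$ really is a path object for $U(B)$ in $\mathcal{M}$ (this is immediate from definition~\ref{lift} since $U$ sends weak equivalences to quasi-equivalences and fibrations to fibrations by definition), and then running the two-out-of-three arguments in the right order to conclude $U(f)$ is a quasi-equivalence. One should be slightly careful that the homotopy $H$ need only exist after applying $U$ — but in fact we construct it upstairs in $\mathcal{N}$ directly, which is cleaner. No transfinite or small-object arguments are needed here; this is the formal "a cofibration which is also a fibration-trivial-... " style lemma. I would also double-check the direction of the path object square (whether to put $fr$ or $rf$-related maps on the bottom) so that the commutativity identities $q_0 H = fr$, $q_1 H = \mathrm{id}_B$ come out, using $rf = \mathrm{id}_A$ crucially.
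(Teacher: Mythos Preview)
Your proof is correct and follows the same approach as the paper: obtain a retraction $r:B\to A$ by lifting against the fibration $A\to *$, build a right homotopy $fr\sim \mathrm{id}_B$ by lifting $f$ against the fibration $q_B:P(B)\to B\times B$, then push everything down to $\mathcal{M}$ via $U$ and conclude. One small remark: in your final paragraph, the ``more directly'' alternative is circular (you invoke two-out-of-three on $U(fr)=U(f)U(r)$ without yet knowing either factor is a weak equivalence), but the retract argument you mention just before---that $U(f)$ is a retract of the weak equivalence $U(fr)$ via the diagram with rows $U(A)\xrightarrow{U(f)}U(B)\xrightarrow{U(r)}U(A)$ and $U(B)=U(B)=U(B)$---is clean and correct; the paper instead passes to $\mathsf{Ho}(\mathcal{M})$, observes $U(f)$ is an isomorphism there, and cites the standard fact that this forces $U(f)$ to be a weak equivalence.
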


\begin{proof}
Let $i:A \rightarrow B$ be a morphism in $\mathcal{N}$ that
has the left lifting property with respect to all fibrations.
Consider the following diagram
$$
\xymatrix{
A \ar@{=}[r] \ar[d]_i &  A \ar@{->>}[d]\\
B \ar@{->>}[r] \ar@{.>}[ur]^u & \ast \,.
}
$$
By the hypotheses on $i$ we have a morphism $u$ such that $u\circ
i=\mbox{Id}$. We now show that the morphism $i \circ u$ is right
homotopic to the
identity of $B$.
By hypothesis, we have at our disposal a factorization
$$
\xymatrix{
B \ar[rr]^{\Delta} \ar[dr]^{\sim}_{i_B} & & B\times B \\
 & P(B) \ar@{->>}[ur]_{q_B} & 
}
$$
which allows us to construct the diagram
$$
\xymatrix{
A \ar[rr]^{i_B \circ i} \ar[d]_i && P(B) \ar@{->>}[d]^{q_B} \\
B \ar[rr]_{[Id\, , \, i\circ u]} \ar@{.>}[urr]^H & & B\times B\,.
}
$$
By the hypothesis on $i$, we have at our disposal a morphism $H$, which by
definition is a right homotopy between the identity of $B$ and
$i\circ u$. Since the functor $U: \mathcal{N} \rightarrow \mathcal{M}$ preserves products, fibrations and weak
equivalences the identity on  $U(B)$ and $U(i)\circ U(u)$
are right homotopic in $\mathcal{M}$ and so they become equal in the
homotopy category $\mathsf{Ho}(\mathcal{M})$. Since we already know that
$U(u)\circ U(i)$ is the identity on $U(B)$, we conclude that the morphism $U(i)$ is an
isomorphism in $\mathsf{Ho}(\mathcal{M})$. By proposition $1.14$ from
\cite{Jardine}, $U(i)$ is in fact a weak equivalence in
$\mathcal{M}$ which implies by definition that $i$ is a weak
equivalence in $\mathcal{N}$. This proves the lemma.
\end{proof}

\section{Homotopy theory of $\mathsf{T}$-algebras}\label{homotopy2}

Recall from section~\ref{monad} that we have an adjunction
$$
\xymatrix{
\mathsf{T}_{\alpha}\mbox{-}\mathsf{alg} \ar@<1ex>[d]^U \\
\mathsf{dgcat} \ar@<1ex>[u]^F\,.
}
$$
Since the category $\mathsf{dgcat}$ is complete, proposition $4.3.1$
from \cite{Bor} implies that $\mathsf{T}_{\alpha}$-$\mathsf{alg}$ is
also complete. Now notice that the functor
$$ \mathsf{T}_{\alpha}(-): \mathsf{dgcat} \longrightarrow
\mathsf{dgcat}\,,$$
see definition~\ref{monadf}, commutes with $\alpha$-filtered
colimits, see \cite{Bor}. This implies by proposition $4.3.2$ and
$4.3.6$ from \cite{Bor} that the category $\mathsf{T}_{\alpha}$-$\mathsf{alg}$ is
cocomplete and that the functor $U$ commutes with $\alpha$-filtered
colimits.

From now on and until the end of this section we consider the
definition~\ref{lift} applied to our particular adjunction $(F,U)$.

Let $\mathcal{B}$ be a small dg category.

\begin{definition}
Let $P(\mathcal{B})$ be the dg category, see~\cite{Drinfeld}, whose
objects are the closed morphisms of degree zero in $\mathcal{B}$
$$ X \stackrel{f}{\longrightarrow} Y\,,$$
that become invertible in $\mathsf{H}^0(\mathcal{B})$.
We define the complex of morphisms
$$ \mathsf{Hom}_{P(\mathcal{B})}(X\stackrel{f}{\rightarrow}Y,
X'\stackrel{f'}{\rightarrow}Y')$$
as the homotopy pull-back in $\mathsf{Ch}(k)$ of the diagram
$$
\xymatrix{
& \mathsf{Hom}_{\mathcal{B}}(Y,Y') \ar[d]^{f^*} \\
\mathsf{Hom}_{\mathcal{B}}(X,X') \ar[r]^{f'_*} & \mathsf{Hom}_{\mathcal{B}}(X,Y')\,.
}
$$
\end{definition}
It is proven in lemma~\ref{lempath} that $P(\mathcal{B})$ is a path object
for $\mathcal{B}$ in the Quillen model structure on $\mathsf{dgcat}$
of theorem~\ref{mal}. Notice that the above construction is functorial in
$\mathcal{B}$ and so we have at our disposal a functor
$$P(-):\mathsf{dgcat} \rightarrow \mathsf{dgcat}\,.$$

Let $B=(\mathcal{B},S)$ be a $\mathsf{T}_{\alpha}$-algebra.

\begin{proposition}\label{path}
The category $P(\mathcal{B})$ carries a natural
$\mathsf{T}_{\alpha}$-algebra structure and so $B$ admits a
path-object $P(B)$.
\end{proposition}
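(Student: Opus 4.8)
The plan is to transport the $\mathsf{T}_{\alpha}$-algebra structure $S\colon\mathsf{T}_{\alpha}(\mathcal{B})\to\mathcal{B}$ along the functor $P(-)$, and then invoke Lemma~\ref{lempath} together with Proposition~\ref{Quillen}. The first and main task is to produce a structure dg functor $R\colon\mathsf{T}_{\alpha}(P(\mathcal{B}))\to P(\mathcal{B})$. On objects, a family $(f_i\colon X_i\to Y_i)_{i\in I_{\beta}}$ of objects of $P(\mathcal{B})$ (i.e. closed morphisms of degree zero that become invertible in $\mathsf{H}^0(\mathcal{B})$) will be sent to the coproduct morphism $\coprod_i f_i\colon\coprod_i X_i\to\coprod_i Y_i$, where the $\alpha$-small coproducts are the ones furnished by $S$ via the proof of Proposition~\ref{sums}; concretely $\coprod_i X_i=S(\mathsf{T}_{\alpha}(p_0)((f_i)_i))$ and $\coprod_i Y_i=S(\mathsf{T}_{\alpha}(p_1)((f_i)_i))$, so that $R$ is compatible with the projections $p_0,p_1\colon P(\mathcal{B})\to\mathcal{B}$, namely $p_0\circ R=S\circ\mathsf{T}_{\alpha}(p_0)$ and $p_1\circ R=S\circ\mathsf{T}_{\alpha}(p_1)$. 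To see that $\coprod_i f_i$ is again an object of $P(\mathcal{B})$ I would note that, by the proof of Proposition~\ref{sums}, $\coprod_i X_i$ is a genuine coproduct in $\mathcal{B}$; since $\mathsf{H}^0$ commutes with products of complexes it is then also a coproduct in $\mathsf{H}^0(\mathcal{B})$, so that $[\coprod_i f_i]=\coprod_i[f_i]$ is an $\alpha$-small coproduct of isomorphisms in $\mathsf{H}^0(\mathcal{B})$, hence an isomorphism.

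On morphism complexes I would use the description of $\mathsf{Hom}_{P(\mathcal{B})}$ from Definition~\ref{1path1}: a morphism from $f$ to $f'$ amounts to a triple $(a,b,h)$ with $a,b$ closed of degree zero and $h$ a homotopy between $f'a$ and $bf$, while a morphism of $\mathsf{T}_{\alpha}(P(\mathcal{B}))$ from $(f_i)_{i\in I_{\beta}}$ to $(g_j)_{j\in I_{\beta'}}$ is an element of $\prod_i\bigoplus_j\mathsf{Hom}_{P(\mathcal{B})}(f_i,g_j)$, i.e. a triple of families $((a_{ij}),(b_{ij}),(h_{ij}))$. Such a datum is sent to the morphism of $P(\mathcal{B})$ whose source and target components are the images under $S$ of the corresponding morphisms of $\mathsf{T}_{\alpha}(\mathcal{B})$ (so again compatible with $p_0,p_1$), and whose homotopy component is assembled from the $h_{ij}$. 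That the homotopy relation is preserved and that the assignment respects differentials and composition follows because the complex in Definition~\ref{1path1} is built from $\mathsf{Hom}$-complexes and cones by finite limits, and forming cones and finite limits of $k$-modules commutes with the exact functors $\prod_{I_{\beta}}(-)$ and $\bigoplus_{I_{\beta'}}(-)$, together with the fact that $S$ is a dg functor. Equivalently, one may package all of this as a natural transformation $\sigma_{\mathcal{B}}\colon\mathsf{T}_{\alpha}(P(\mathcal{B}))\to P(\mathsf{T}_{\alpha}(\mathcal{B}))$ sending a family of morphisms to the associated ``diagonal'' morphism of (diagonal) families, and set $R:=P(S)\circ\sigma_{\mathcal{B}}$; naturality of $\sigma$ in $\mathcal{B}$ is then immediate from the explicit formulas.

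With $R$ at hand I would verify the two algebra axioms. The unit axiom $R\circ\eta_{P(\mathcal{B})}=\mathrm{Id}_{P(\mathcal{B})}$ holds because a one-element family $(f)$ has coproduct equal to $f$, using $S\circ\eta_{\mathcal{B}}=\mathrm{Id}$. The associativity axiom $R\circ\mathsf{T}_{\alpha}(R)=R\circ\mu_{P(\mathcal{B})}$ is reduced, by a direct computation on objects and on the three components $(a,b,h)$ of morphisms, to the associativity axiom for $S$ together with the description of $\mu$ via the increasing ordinal sum: both composites send a family of families to the coproduct over the reindexed ordinal $\sum_x\gamma_x$, and a genuine coproduct is associative. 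Finally, since $\sigma$ is natural and $P(-)$ is a functor, $P(-)$ lifts to a functor on $\mathsf{T}_{\alpha}$-$\mathsf{alg}$ with $U\circ P=P\circ U$, and the dg functors $i\colon\mathcal{B}\to P(\mathcal{B})$ and $p_0\times p_1\colon P(\mathcal{B})\to\mathcal{B}\times\mathcal{B}$ become morphisms of $\mathsf{T}_{\alpha}$-algebras (a one-line check on families). Applying $U$ recovers the factorization of the diagonal of $\mathcal{B}$ of Lemma~\ref{lempath}, in which $i$ is a quasi-equivalence and $p_0\times p_1$ a fibration; by the definitions of weak equivalence and fibration in $\mathsf{T}_{\alpha}$-$\mathsf{alg}$ (Definition~\ref{lift}), $i$ is then a weak equivalence and $p_0\times p_1$ a fibration, so $P(B)$ is a path object for $B$, functorially in $B$.

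The step I expect to be the main obstacle is pinning down $R$ (equivalently $\sigma_{\mathcal{B}}$) on morphism complexes and checking that it is a dg functor: because $\mathsf{Hom}_{P(\mathcal{B})}$ is a homotopy pull-back rather than an honest limit, one must carry along the homotopy components $h_{ij}$ and verify that their formation is compatible with the infinite products and direct sums built into the definition of $\mathsf{T}_{\alpha}$. The coproduct bookkeeping, the unit and associativity axioms, and the passage to $\mathsf{T}_{\alpha}$-$\mathsf{alg}$ are then routine.
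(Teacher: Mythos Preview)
Your proposal is correct and follows essentially the same route as the paper: the paper simply observes that there is a faithful dg functor $\mathsf{T}_{\alpha}(P(\mathcal{B}))\to P(\mathsf{T}_{\alpha}(\mathcal{B}))$ (your $\sigma_{\mathcal{B}}$) and defines the structure map as its composite with $P(S)$, then notes that the algebra axioms and the compatibility of $i_{\mathcal{B}}$ and $q_{\mathcal{B}}$ with the $\mathsf{T}_{\alpha}$-structures follow from the fact that $B$ is a $\mathsf{T}_{\alpha}$-algebra. Your write-up is considerably more explicit than the paper's---in particular you spell out why the coproduct morphism lands in $P(\mathcal{B})$ and how the homotopy component behaves on morphism complexes---but the underlying argument is the same.
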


\begin{proof}
Since $B$ is a $\mathsf{T}_{\alpha}$-algebra, we have the dg functor `sum'
$$ S: \mathsf{T}_{\alpha}(\mathcal{B}) \longrightarrow \mathcal{B}$$
and we will construct a dg functor 
$$\overline{S}:\mathsf{T}_{\alpha}(P(\mathcal{B})) \longrightarrow
P(\mathcal{B})\,.$$
Observe that we have a faithful dg functor
$$ \mathsf{T}_{\alpha}(P(\mathcal{B})) \longrightarrow P(\mathsf{T}_{\alpha}(\mathcal{B}))\,.$$
Define $\overline{S}$ as the composition of this dg functor with 
$$ P(\mathsf{T}_{\alpha}(\mathcal{B})) \longrightarrow
P(\mathcal{B})\,.$$
Since $B$ is a $\mathsf{T}_{\alpha}$-algebra, this construction shows us that
$(P(\mathcal{B}), \overline{S})$ is also a $\mathsf{T}_{\alpha}$-algebra. It is
also clear by construction that the dg functors $\mathcal{B}
\stackrel{i_{\mathcal{B}}}{\longrightarrow} P(\mathcal{B})$ and 
$$
\xymatrix{
 P(\mathcal{B}) \ar@{->>}[r]^-{q_{\mathcal{B}}} &
 \mathcal{B}\times\mathcal{B}
}
$$ are in fact morphisms of
$\mathsf{T}_{\alpha}$-algebras, where $\mathcal{B}\times\mathcal{B}$
carries the diagonal $\mathsf{T}_{\alpha}$-action. This proves the proposition. 
\end{proof}

\begin{theorem}\label{main2}
The category $\mathsf{T}_{\alpha}$-$\mathsf{alg}$ when endowed with
the notions of weak equivalence, fibration and cofibration as in
definition~\ref{lift}, becomes a cofibrantly generated Quillen model
category and the adjunction $(F,U)$ becomes a Quillen adjunction.
\end{theorem}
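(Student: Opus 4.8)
The plan is to apply Theorem~\ref{liftarg} (Quillen's lifting argument) to the adjunction $(F,U)$ between $\mathsf{dgcat}$ and $\mathsf{T}_{\alpha}\mbox{-}\mathsf{alg}$. First I would collect the structural hypotheses that Theorem~\ref{liftarg} requires. The category $\mathsf{dgcat}$ is a cofibrantly generated Quillen model category by Theorem~\ref{mal}, with explicit generating cofibrations $I=\{Q, S(n)\}$ and generating trivial cofibrations $J=\{F, R(n)\}$. The category $\mathsf{T}_{\alpha}\mbox{-}\mathsf{alg}$ is complete (since $\mathsf{dgcat}$ is complete, by Borceux's proposition $4.3.1$) and cocomplete (since the endofunctor $\mathsf{T}_{\alpha}(-)$ preserves $\alpha$-filtered colimits, by Borceux's propositions $4.3.2$ and $4.3.6$); moreover the same argument shows that the forgetful functor $U$ preserves $\alpha$-filtered colimits. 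These facts are already recorded in the discussion preceding the statement, so all the hypotheses of Theorem~\ref{liftarg} are in place except the crucial ``cofibration assumption'': every cofibration in $\mathsf{T}_{\alpha}\mbox{-}\mathsf{alg}$ which has the left lifting property with respect to all fibrations must be a weak equivalence.

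The key step, and the main obstacle, is precisely verifying this cofibration assumption, which I would obtain by invoking Proposition~\ref{Quillen}. That proposition reduces the assumption to two conditions on the category $\mathsf{T}_{\alpha}\mbox{-}\mathsf{alg}$: (i) for every $\mathsf{T}_{\alpha}$-algebra $A$ the canonical morphism $A\to *$ to the terminal object is a fibration, and (ii) every $A$ admits a functorial factorization of the diagonal $A\to A\times A$ as a weak equivalence followed by a fibration. Condition (i) holds because the terminal object of $\mathsf{dgcat}$ is the zero dg category (so every dg category maps to it by a fibration, by Remark~\ref{toutfibrant}) and $U$ reflects fibrations by definition~\ref{lift}; one checks that $A\to *$ in $\mathsf{T}_{\alpha}\mbox{-}\mathsf{alg}$ is sent by $U$ to exactly this fibration. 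Condition (ii) is supplied by Proposition~\ref{path}: the path object $P(\mathcal{B})$ constructed from Drinfeld's dg category of morphisms carries a natural $\mathsf{T}_{\alpha}$-algebra structure, and the structure maps $i_{\mathcal{B}}:\mathcal{B}\to P(\mathcal{B})$ and $q_{\mathcal{B}}:P(\mathcal{B})\to \mathcal{B}\times\mathcal{B}$ are morphisms of $\mathsf{T}_{\alpha}$-algebras; by Lemma~\ref{lempath}, $U(i_{\mathcal{B}})$ is a quasi-equivalence and $U(q_{\mathcal{B}})$ is a fibration in $\mathsf{dgcat}$, hence $i_{\mathcal{B}}$ is a weak equivalence and $q_{\mathcal{B}}$ a fibration in $\mathsf{T}_{\alpha}\mbox{-}\mathsf{alg}$. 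Functoriality of $P(-)$ is clear from its construction.

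With conditions (i) and (ii) established, Proposition~\ref{Quillen} yields the cofibration assumption, and Theorem~\ref{liftarg} then immediately produces the cofibrantly generated Quillen model structure on $\mathsf{T}_{\alpha}\mbox{-}\mathsf{alg}$ with weak equivalences and fibrations as in Definition~\ref{lift}, and with generating (trivial) cofibrations $F(I)$ and $F(J)$; the last sentence of Theorem~\ref{liftarg} gives that $(F,U)$ is a Quillen adjunction. I expect the only genuine subtlety to lie in checking carefully that $i_{\mathcal{B}}$ and $q_{\mathcal{B}}$ are honest morphisms of $\mathsf{T}_{\alpha}$-algebras (i.e.\ that the faithful dg functor $\mathsf{T}_{\alpha}(P(\mathcal{B}))\to P(\mathsf{T}_{\alpha}(\mathcal{B}))$ interacts correctly with the monad multiplication $\mu$ and unit $\eta$, so that the squares defining a $\mathsf{T}_{\alpha}$-algebra morphism commute), together with confirming the small-object argument applies in $\mathsf{T}_{\alpha}\mbox{-}\mathsf{alg}$ — but this last point is already handled inside the proof of Theorem~\ref{liftarg} using that $U$ commutes with $\alpha$-filtered colimits and that domains of $I$ and $J$ are small. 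So this is the cleanest route: assemble the inputs, cite Proposition~\ref{path} and Lemma~\ref{lempath} for the path-object condition, cite Remark~\ref{toutfibrant} for the fibrancy condition, feed these into Proposition~\ref{Quillen}, and conclude with Theorem~\ref{liftarg}.
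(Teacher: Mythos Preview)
Your proposal is correct and follows essentially the same approach as the paper: apply Theorem~\ref{liftarg} after verifying the cofibration assumption via Proposition~\ref{Quillen}, whose two hypotheses are supplied respectively by Remark~\ref{toutfibrant} (every object of $\mathsf{dgcat}$ is fibrant) and Proposition~\ref{path} (path objects lift to $\mathsf{T}_{\alpha}$-algebras). The paper's proof is terser but identical in structure; your additional remarks about checking that $i_{\mathcal{B}}$ and $q_{\mathcal{B}}$ are genuine $\mathsf{T}_{\alpha}$-algebra morphisms are exactly the content of Proposition~\ref{path}.
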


\begin{proof}
Recall from theorem~\ref{mal} that we have an explicit set
$I=\{Q,S(n), n \in \mathbb{Z}\}$ of generating cofibrations and an
explicit set $J=\{ F, R(n), n \in \mathbb{Z}\}$ of generating trivial
cofibrations for $\mathsf{dgcat}$.

Now notice that all conditions of
theorem~\ref{liftarg} are satisfied. In particular
proposition~\ref{path} and the fact that every object in
$\mathsf{dgcat}$ is fibrant, see remark~\ref{toutfibrant}, imply
proposition~\ref{Quillen} which implies that the assumption on cofibrations of
theorem~\ref{liftarg} holds. 

Observe that $F(I)$ is a set of generating
cofibrations on $\mathsf{T}_{\alpha}$-$\mathsf{alg}$ and that $F(J)$ is a set of generating
trivial cofibrations on $\mathsf{T}_{\alpha}$-$\mathsf{alg}$. This implies that the Quillen
model structure on $\mathsf{T}_{\alpha}$-$\mathsf{alg}$ is cofibrantly
generated. Since the functor $U$ preserves by definition weak
equivalences and fibrations the adjunction $(F,U)$ is a Quillen
adjunction.
This proves the theorem.
\end{proof}

\section{Exact  $\alpha$-cocomplete dg categories}
In this section, we will construct a category
$\mathsf{dgcat}_{ex,\alpha}$ by considering specific diagrams in
$\mathsf{T}_{\alpha}$-$\mathsf{alg}$. The objects of
$\mathsf{dgcat}_{ex,\alpha}$ will be essentially the dg categories
which are stable under suspensions, cosuspensions, cones and
$\alpha$-small sums, see remark~\ref{choices1}.

\begin{definition}\label{exact}
Let $\mathcal{P}$ be the dg category with only one object $X$ and
whose dg algebra of endomorphisms is $k$ concentrated in degree
$0$. Let $\mathcal{S}$, respectively $\mathcal{S}^{-1}$, be the full
sub dg category of $\mathcal{C}_{dg}(\mathcal{P})$, whose objects are
$\widehat{X}$ and $\widehat{X}[1]$, respectively $\widehat{X}$ and
$\widehat{X}[-1]$. We have a fully faithful dg functor $\mathcal{P}
\stackrel{S}{\rightarrow} \mathcal{S}$, respectively $\mathcal{P}
\stackrel{S^{-1}}{\rightarrow} \mathcal{S}^{-1}$.
Let $\mathcal{M}$ be the dg category which has two objects $0$ and $1$
and is generated by a morphism $f$ from $0$ to $1$ that satisfies
$d(f)=0$. Let $\mathcal{C}$ be the full sub dg category of
$\mathcal{C}_{dg}(\mathcal{M})$, whose objects are $\widehat{0}$,
$\widehat{1}$ and $\mathsf{cone}(\widehat{f})$. We have a fully faithful
dg functor $\mathcal{M} \stackrel{C}{\rightarrow} \mathcal{C}$.
\end{definition}
Let $\mathcal{A}$ be a small dg category. 

\begin{remark}\label{choice}
Notice that giving a dg functor $H:\mathcal{S} \rightarrow
\mathcal{A}$, respectively $H':\mathcal{S}^{-1} \rightarrow
\mathcal{A}$, corresponds exactly to specifying two objects $X$ and
$Y$ in $\mathcal{A}$ and an isomorphism $\widehat{X}[1]
  \stackrel{\sim}{\longrightarrow} \widehat{Y}$, respectively $\widehat{X}[-1]
  \stackrel{\sim}{\longrightarrow} \widehat{Y}$, in
  $\mathcal{C}_{dg}(\mathcal{A})$. Notice also that giving a dg
  functor $R:\mathcal{C} \rightarrow \mathcal{A}$ corresponds exactly
  to specifying a morphism $f:X \rightarrow Y$ of degree zero in
  $\mathcal{A}$ such that $d(f)=0$, an object $Z$ in $\mathcal{A}$ and
  an isomorphism $Z \stackrel{\sim}{\longrightarrow} \mathsf{cone}(\widehat{f})$.
\end{remark}

Recall from section~\ref{monad} that we have at our disposal an adjunction
$$
\xymatrix{
\mathsf{T}_{\alpha}\mbox{-}\mathsf{alg} \ar@<1ex>[d]^U \\
\mathsf{dgcat} \ar@<1ex>[u]^F\,.
}
$$

\begin{definition}\label{exact1}
Let $\mathsf{dgcat}_{ex,\alpha}$ be the category whose objects are the
$4$-tuples $\underline{A}=(A,S_A,S^{-1}_A,C_A)$, where $A$ is a
$\mathsf{T}_{\alpha}$-algebra and $S_A$, $S_A^{-1}$ and $C_A$, the
structure morphisms of $A$, are $\mathsf{T}_{\alpha}$-algebra morphisms
which make all diagrams
$$
\xymatrix{
\underset{\mathcal{P} \rightarrow \mathcal{A}}{\coprod} F(\mathcal{P}) \ar[r]
\ar[d] & A & \underset{\mathcal{P} \rightarrow \mathcal{A}}{\coprod} F(\mathcal{P}) \ar[r]
\ar[d] & A & \underset{\mathcal{M} \rightarrow
  \mathcal{A}}{\coprod} F(\mathcal{M}) \ar[r] \ar[d] & A \\
\underset{\mathcal{P} \rightarrow \mathcal{A}}{\coprod} F(\mathcal{S}) 
\ar[ur]_-{S_A} & & \underset{\mathcal{P} \rightarrow
  \mathcal{A}}{\coprod} F(\mathcal{S}^{-1}) \ar[ur]_-{S^{-1}_A}
& &  \underset{\mathcal{M} \rightarrow \mathcal{A}}{\coprod}
F(\mathcal{C}) \ar[ur]_-{C_A} & 
}
$$
commutative.

A morphism $G: \underline{A} \rightarrow \underline{B}$ in
$\mathsf{dgcat}_{ex,\alpha}$ consists of a morphism of
$\mathsf{T}_{\alpha}$-algebras $G: A \rightarrow B$ that makes the
following diagrams
$$
\xymatrix{
\underset{\mathcal{P} \rightarrow \mathcal{A}}{\coprod} F(\mathcal{S})
\ar[d]_-{S_A} \ar[r] & \underset{\mathcal{P} \rightarrow \mathcal{B}}{\coprod} F(\mathcal{S})
\ar[d]^-{S_B} &  \underset{\mathcal{P} \rightarrow
  \mathcal{A}}{\coprod} F(\mathcal{S}^{-1}) \ar[d]_-{S^{-1}_A} \ar[r]
&   \underset{\mathcal{P} \rightarrow
  \mathcal{B}}{\coprod} F(\mathcal{S}^{-1}) \ar[d]^-{S^{-1}_B}
&  \underset{\mathcal{M} \rightarrow \mathcal{A}}{\coprod}
F(\mathcal{C}) \ar[d]_-{C_A} \ar[r] &  \underset{\mathcal{M} \rightarrow \mathcal{B}}{\coprod}
F(\mathcal{C}) \ar[d]^-{C_B}\\
A \ar[r]_-{G} & B & A \ar[r]_-{G} & B & A \ar[r]_-{G} & B 
}
$$
commutative.
\end{definition}

\begin{remark}\label{choices1}
Observe that an object $\underline{A}$ in $\mathsf{dgcat}_{ex,\alpha}$
consists of a $\mathsf{T}_{\alpha}$-algebra $A$ and of a choice, in 
the sense of remark~\ref{choice}, for the suspensions and cosuspensions
of every object of the dg category $\mathcal{A}=U(A)$ and also of a
choice for the cone of every cycle of degree zero of the dg category
$\mathcal{A}$. In particular $\mathcal{A}$ is stable under
suspensions, cosuspensions and cones. 

Observe also that a morphism $G$
in $\mathsf{dgcat}_{ex,\alpha}$ consists of a morphism of
$\mathsf{T}_{\alpha}$-algebras that commutes with all these choices. 
\end{remark}

We have a forgetful functor
$$ U_1: \mathsf{dgcat}_{ex,\alpha} \longrightarrow
\mathsf{T}_{\alpha}\mbox{-}\mathsf{alg}\,,$$
that associates to an object $\underline{A}$ of
$\mathsf{dgcat}_{ex,\alpha}$ the $\mathsf{T}_{\alpha}$-algebra $A$.

\begin{proposition}\label{gauche}
The functor $U_1$ admits a left adjoint functor $F_1$.
\end{proposition}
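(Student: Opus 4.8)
The statement asserts that the forgetful functor $U_1 : \mathsf{dgcat}_{ex,\alpha} \to \mathsf{T}_{\alpha}\text{-}\mathsf{alg}$ has a left adjoint. The natural strategy is to build $F_1$ as a transfinite iteration of a ``freely adjoin the missing data'' construction, in the same spirit as the adjunctions of section~\ref{monad} and the small object argument. Concretely, given a $\mathsf{T}_{\alpha}$-algebra $A$, I would first observe that the three types of structure morphisms ($S_A$, $S_A^{-1}$, $C_A$) required in Definition~\ref{exact1} can be produced one at a time by pushouts in $\mathsf{T}_{\alpha}\text{-}\mathsf{alg}$ along the morphisms $F(S) : F(\mathcal{P}) \to F(\mathcal{S})$, $F(S^{-1}) : F(\mathcal{P}) \to F(\mathcal{S}^{-1})$ and $F(C) : F(\mathcal{M}) \to F(\mathcal{C})$. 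That is, for each dg functor $\mathcal{P} \to \mathcal{A}$ one adjoins a suspension and a cosuspension, and for each dg functor $\mathcal{M} \to \mathcal{A}$ one adjoins a cone, by forming a single pushout over the coproduct of all such morphisms. This requires only that $\mathsf{T}_{\alpha}\text{-}\mathsf{alg}$ be cocomplete, which was established in section~\ref{homotopy2} (via \cite{Bor}, using that $\mathsf{T}_{\alpha}(-)$ commutes with $\alpha$-filtered colimits).

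\textbf{Key steps.} First I would define, for a $\mathsf{T}_{\alpha}$-algebra $A$, the $\mathsf{T}_{\alpha}$-algebra $\Phi(A)$ obtained from $A$ by the pushout
$$
\xymatrix{
\underset{\mathcal{P} \rightarrow \mathcal{A}}{\coprod} F(\mathcal{P}) \; \amalg \; \underset{\mathcal{P} \rightarrow \mathcal{A}}{\coprod} F(\mathcal{P}) \; \amalg \; \underset{\mathcal{M} \rightarrow \mathcal{A}}{\coprod} F(\mathcal{M}) \ar[r] \ar[d] & A \ar[d] \\
\underset{\mathcal{P} \rightarrow \mathcal{A}}{\coprod} F(\mathcal{S}) \; \amalg \; \underset{\mathcal{P} \rightarrow \mathcal{A}}{\coprod} F(\mathcal{S}^{-1}) \; \amalg \; \underset{\mathcal{M} \rightarrow \mathcal{A}}{\coprod} F(\mathcal{C}) \ar[r] & \Phi(A)\,,
}
$$
together with the structure morphism $A \to \Phi(A)$. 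This $\Phi$ is a functor on $\mathsf{T}_{\alpha}\text{-}\mathsf{alg}$ equipped with a natural transformation $\mathrm{Id} \to \Phi$. The issue is that $\Phi(A)$ need not be an object of $\mathsf{dgcat}_{ex,\alpha}$: adjoining the required data creates new objects (new suspensions, new cones), which in turn need their own suspensions, cosuspensions and cones. So the second step is to iterate: set $\Phi^0(A)=A$, $\Phi^{\beta+1}(A)=\Phi(\Phi^{\beta}(A))$, and $\Phi^{\lambda}(A) = \mathrm{colim}_{\beta<\lambda}\Phi^{\beta}(A)$ for limit ordinals $\lambda$, and put $F_1(A) := \Phi^{\omega}(A)$ (or $\Phi^{\kappa}(A)$ for a suitable $\kappa$). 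The third step is to check that $F_1(A)$, with the accumulated structure morphisms, genuinely is an object of $\mathsf{dgcat}_{ex,\alpha}$: here one uses that the indexing set of dg functors $\mathcal{P}\to\mathcal{A}'$ and $\mathcal{M}\to\mathcal{A}'$ only depends on $\mathsf{obj}(\mathcal{A}')$ and cycles of degree zero, so that at the colimit stage every such datum over $F_1(A)$ already factors through some $\Phi^{n}(A)$ and hence has been resolved --- this is a standard compactness/smallness argument, closing under finitely many steps since $\mathcal{P}$, $\mathcal{M}$, $\mathcal{S}$, $\mathcal{S}^{-1}$, $\mathcal{C}$ are all finite. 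Finally I would verify the universal property: a morphism $F_1(A) \to \underline{B}$ in $\mathsf{dgcat}_{ex,\alpha}$ is the same as a $\mathsf{T}_{\alpha}$-algebra map $A \to B$, by induction on the transfinite stages using the defining pushout property of each $\Phi$ and the fact that $\underline{B}$ already carries all the structure that $\Phi$ adjoins freely.

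\textbf{Main obstacle.} The delicate point is the convergence/closure argument in the third step: one must argue that iterating $\Phi$ through $\omega$ (or through an appropriate ordinal) actually produces an object satisfying \emph{all} the conditions of Definition~\ref{exact1} simultaneously, i.e.\ that the process ``stabilizes''. The subtlety is that each application of $\Phi$ enlarges the object set, so the collection of required structure morphisms grows; one needs that a dg functor $\mathcal{P} \to U(\Phi^{\omega}(A))$ or $\mathcal{M} \to U(\Phi^{\omega}(A))$, being determined by finitely much data (one or two objects, or one degree-zero cycle between two objects), already lands in some finite stage $\Phi^{n}(A)$, where its suspension/cosuspension/cone has been adjoined by stage $\Phi^{n+1}(A)$. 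This is exactly the kind of argument used to show that $U$ commutes with $\alpha$-filtered colimits in section~\ref{homotopy2}, and it relies on the underlying functor $U$ and $\mathsf{T}_{\alpha}(-)$ commuting with filtered colimits, so that $U(\mathrm{colim}_{\beta}\Phi^{\beta}(A)) = \mathrm{colim}_{\beta}U(\Phi^{\beta}(A))$ and objects/morphisms at the colimit are represented at finite stages. A minor additional care is that $\mathsf{dgcat}_{ex,\alpha}$ must be checked to be a genuine category with the expected morphisms, but that is routine. Once convergence is established, naturality of the unit and the universal property follow formally.
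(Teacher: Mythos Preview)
Your approach is correct but takes a different route from the paper. The paper invokes Freyd's General Adjoint Functor Theorem (theorem~2, section~V.6 of \cite{Macl}): it first establishes that $\mathsf{dgcat}_{ex,\alpha}$ has small limits by constructing products and equalizers explicitly and checking that $U_1$ preserves them, and then verifies the solution set condition by applying the small object argument to the set $\mathrm{Ens} = \{F(S), F(S^{-1}), F(C)\}$ to produce, for each $\mathsf{T}_{\alpha}$-algebra $A$, an object $\mathsf{Ex}_{\alpha}(A)$ of $\mathsf{dgcat}_{ex,\alpha}$ through which every map $A \to U_1(\underline{B})$ factors. Your approach instead constructs $F_1(A)$ directly by the same transfinite iteration and verifies the universal property by hand. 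The two proofs share the essential technical core --- the small object argument with respect to $\mathrm{Ens}$ --- but yours is more explicit and sidesteps the separate verification of completeness, while the paper's use of the adjoint functor theorem avoids spelling out the universal property. One small point: since $U$ is only known to commute with $\alpha$-filtered colimits, $F(\mathcal{P})$ and $F(\mathcal{M})$ are $\alpha$-small (not $\omega$-small) in $\mathsf{T}_{\alpha}\text{-}\mathsf{alg}$, so your iteration must run to an ordinal of cofinality at least $\alpha$ rather than just $\omega$; you do flag this with ``or $\Phi^{\kappa}(A)$ for a suitable $\kappa$'', and the paper handles it the same way inside its small object argument.
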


\begin{proof}
The proof will consist in verifying the conditions of theorem~$2$
from section V.$6$. of \cite{Macl}, i.e. that
$\mathsf{dgcat}_{ex,\alpha}$ has small limits and satisfies the
solution set condition. We will now prove that the category $\mathsf{dgcat}_{ex,\alpha}$
admits small limits by showing that we have products and
equalizers. Then we will prove that these are preserved by the functor
$U_1$. Let $\{\underline{A_i} \}_{i \in I}$ be a family of objects in
$\mathsf{dgcat}_{ex,\alpha}$. Endow the $\mathsf{T}_{\alpha}$-algebra
$\underset{i \in I}{\prod} A_i$ with the structure morphisms induced
by those of $\underline{A_i},\, i \in I$. In this way, the
$\mathsf{T}_{\alpha}$-algebra $\underset{i \in I}{\prod} A_i$ belongs
naturally to $\mathsf{dgcat}_{ex,\alpha}$ and we observe that it is the
product in $\mathsf{dgcat}_{ex,\alpha}$ of the familly
$\{\underline{A_i} \}_{i \in I}$.

Consider now morphisms $G_1,G_2:\underline{A} \rightarrow
\underline{B}$ in $\mathsf{dgcat}_{ex,\alpha}$. Let $K$ be the
equalizer in $\mathsf{T}_{\alpha}$-$\mathsf{alg}$ of the pair
$G_1,G_2:A \rightarrow B$. By remark~\ref{choices1}, we need to show
that the dg category $U(K)$ is endowed with a choice for the
suspension and cosuspension for each object and with a choice for the
cone of every cycle of degree zero. Since $U$ is a right adjoint
functor, $U(K)$ identifies with the equalizer of the pair 
$$U(G_1),U(G_2): \mathcal{A} \rightarrow \mathcal{B}$$
and since $G_1$ and $G_2$
are morphisms in $\mathsf{dgcat}_{ex,\alpha}$, the dg functors
$U(G_1)$ and $U(G_2)$ commute with all the choices. This implies that
the the non-full dg subcategory $U(K)$ of $\mathcal{A}$ is stable
under all the choices of suspension, cosuspension and cones in
$\mathcal{A}$. This shows us that $K$ belongs naturally to
$\mathsf{dgcat}_{ex,\alpha}$ and that it is the equalizer in
$\mathsf{dgcat}_{ex,\alpha}$ of the pair $G_1,G_2:\underline{A}
\rightarrow \underline{B}$.
This proves that the category $\mathsf{dgcat}_{ex,\alpha}$ admits
small limits and by construction they are preserved by the forgetful functor $U_1$.

We will now prove that the solution set condition is verified,
see~\cite{Macl}.

Let $A$ be a $\mathsf{T}_{\alpha}$-algebra.
Consider the following set of morphisms in
$\mathsf{T}_{\alpha}$-$\mathsf{alg}$
$$ \mbox{Ens}:=\left\{ F(\mathcal{P}) \stackrel{F(S)}{\longrightarrow}
F(\mathcal{S}),\, F(\mathcal{P}) \stackrel{F(S^{-1})}{\longrightarrow}
F(\mathcal{S}^{-1}),\,  F(\mathcal{M}) \stackrel{F(C)}{\rightarrow}
F(\mathcal{C})\right\}\,.$$
Since $\mathcal{P}$ and $\mathcal{M}$ are clearly small in
$\mathsf{dgcat}$ and the functor $U$
commutes with $\alpha$-filtered colimits, the objects $F(\mathcal{P})$ and
$F(\mathcal{M})$ are $\alpha$-small in
$\mathsf{T}_{\alpha}$-$\mathsf{alg}$, see \cite{Hirschhorn}.

Apply the small object argument, see \cite{Hirschhorn}, to the
morphism
$$ A \longrightarrow 0\,,$$
where $0$ denotes the terminal object in
$\mathsf{T}_{\alpha}$-$\mathsf{alg}$, using the set of morphisms
$\mbox{Ens}$. We obtain a factorization
$$ 
\xymatrix{
A \ar[rr] \ar[dr]_-i & & 0 \\
 & \mathsf{Ex}_{\alpha}(A) \ar[ur]_-q & \,, 
}
$$
where $q$ is a morphism of $\mathsf{T}_{\alpha}$-algebras that has the
right lifting property with respect to all elements of $\mbox{Ens}$.

Now, for each one of the following (solid) commutative squares
$$
\xymatrix{
F(\mathcal{P}) \ar[r] \ar[d]_{F(S)} & \mathsf{Ex}_{\alpha}(A) \ar[d] &
F(\mathcal{P}) \ar[d]_-{F(S^{-1})} \ar[r] & \mathsf{Ex}_{\alpha}(A)
\ar[d] & F(\mathcal{M}) \ar[d]_-{F(C)} \ar[r] & \mathsf{Ex}_{\alpha}(A)
\ar[d] \\
F(\mathcal{S}) \ar[r] \ar@{.>}[ur] & 0 & F(\mathcal{S}^{-1}) \ar[r]
\ar@{.>}[ur] & 0 & F(\mathcal{C}) \ar@{.>}[ur] \ar[r] & 0 \,,
}
$$
choose a morphism of $\mathsf{T}_{\alpha}$-algebras, (here denoted by a
dashed arrow), as in the proof of proposition $10.5.16$ from
\cite{Hirschhorn}, that makes both triangles commutative. Notice that
a set of morphisms as this one specifies structure
morphisms for the $\mathsf{T}_{\alpha}$-algebra
$\mathsf{Ex}_{\alpha}(A)$. This shows us that when endowed with these
choices $\mathsf{Ex}_{\alpha}(A)$ belongs to $\mathsf{dgcat}_{ex,\alpha}$.

Let now $\underline{B}$ be an object of $\mathsf{dgcat}_{ex,\alpha}$
and $Q:A \rightarrow B$ be a morphism of
$\mathsf{T}_{\alpha}$-algebras.
Observe that the structure morphisms of $B$ and the
construction of the $\mathsf{T}_{\alpha}$-algebra
$\mathsf{Ex}_{\alpha}(A)$ by the small object argument, see the proof of
proposition $10.5.16$ in \cite{Hirschhorn}, allow us to define, by
transfinite induction, a morphism $\overline{Q}$ of
$\mathsf{T}_{\alpha}$-algebras such that the diagram
$$
\xymatrix{
A \ar[r]^-{i} \ar[dr]_-{Q} &  \mathsf{Ex}_{\alpha}(A)
\ar[d]^-{\overline{Q}} \\
 & B
}
$$
commutes. Observe also that when the $\mathsf{T}_{\alpha}$-algebra
$\mathsf{Ex}_{\alpha}(A)$ is endowed with the above choices the
morphism $\overline{Q}$ becomes a morphism in
$\mathsf{dgcat}_{ex,\alpha}$. This proves the solution set condition.

The proposition is now proven.
\end{proof}

We have the following adjunctions
$$
\xymatrix{
\mathsf{dgcat}_{ex,\alpha} \ar@<1ex>[d]^{U_1}\\
\mathsf{T}_{\alpha}\mbox{-}\mathsf{alg} \ar@<1ex>[d]^U
\ar@<1ex>[u]^{F_1} \\
\mathsf{dgcat} \ar@<1ex>[u]^F\,.
}
$$

\begin{proposition}\label{monadic}
The functor $U_1$ is monadic, see \cite{Macl}.
\end{proposition}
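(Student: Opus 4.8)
The statement to prove is that the forgetful functor $U_1:\mathsf{dgcat}_{ex,\alpha} \to \mathsf{T}_{\alpha}\text{-}\mathsf{alg}$ is monadic. The natural approach is to apply Beck's monadicity theorem (\cite{Macl}, chapter VI). This requires three things: that $U_1$ has a left adjoint, that $U_1$ is conservative (reflects isomorphisms), and that $U_1$ creates coequalizers of $U_1$-split pairs. The first point is already established in proposition~\ref{gauche}. So the work splits into verifying the last two conditions.

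\textbf{Conservativity.} First I would check that $U_1$ reflects isomorphisms. Let $G:\underline{A}\to\underline{B}$ be a morphism in $\mathsf{dgcat}_{ex,\alpha}$ such that $U_1(G):A\to B$ is an isomorphism of $\mathsf{T}_{\alpha}$-algebras. The inverse $U_1(G)^{-1}:B\to A$ is a priori only a morphism of $\mathsf{T}_{\alpha}$-algebras; I must argue it is automatically compatible with the structure morphisms $S$, $S^{-1}$, $C$, i.e. that it is a morphism in $\mathsf{dgcat}_{ex,\alpha}$. This follows formally: the compatibility squares in definition~\ref{exact1} for $U_1(G)^{-1}$ can be deduced from those for $U_1(G)$ by pre- and post-composing with $U_1(G)$ and $U_1(G)^{-1}$ and using that $G$ itself is compatible; since $\coprod_{\mathcal{P}\to\mathcal{A}} F(\mathcal{S})$ etc.\ are computed from the underlying data and $U_1(G)$ is invertible, the squares commute. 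Hence $U_1(G)^{-1}$ lifts to an inverse of $G$ in $\mathsf{dgcat}_{ex,\alpha}$, so $G$ is an isomorphism there.

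\textbf{Creation of $U_1$-split coequalizers.} This is the main obstacle and is where most of the argument lies. Given a parallel pair $G_1,G_2:\underline{A}\rightrightarrows\underline{B}$ in $\mathsf{dgcat}_{ex,\alpha}$ whose image under $U_1$ admits a split coequalizer $q:B\to C$ in $\mathsf{T}_{\alpha}\text{-}\mathsf{alg}$, I must show that $C$ carries a unique structure of object of $\mathsf{dgcat}_{ex,\alpha}$ making $q$ into a morphism there, and that this is then a coequalizer in $\mathsf{dgcat}_{ex,\alpha}$. The key leverage is that split coequalizers are \emph{absolute}: they are preserved by every functor, in particular by $U\circ U_1$, so the underlying diagram of dg categories $\mathcal{A}\rightrightarrows\mathcal{B}\to\mathcal{C}$ is again a split coequalizer in $\mathsf{dgcat}$, and likewise after applying $\coprod_{\mathcal{P}\to -}F(\mathcal{S})$ and the analogous functors (these are functors of dg categories, hence preserve the absolute coequalizer too — here one uses that $\coprod_{\mathcal{P}\to\mathcal{A}}F(\mathcal S)$ depends functorially on $\mathcal A$ via the splitting data). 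One then transports the structure morphisms $S_B, S^{-1}_B, C_B$ along the splitting to obtain candidate structure morphisms $S_C, S^{-1}_C, C_C$; the defining commutative squares of definition~\ref{exact1} and the morphism-compatibility squares for $q$ hold because they hold for $B$ and all relevant functors preserve the split coequalizer. Uniqueness of this lift follows from the fact that $q$ is epic (being a coequalizer) together with the morphism-compatibility condition. Finally, that $C$ with this structure is the coequalizer in $\mathsf{dgcat}_{ex,\alpha}$ follows from conservativity of $U_1$ plus the universal property in $\mathsf{T}_{\alpha}\text{-}\mathsf{alg}$: any competing cocone factors uniquely through $C$ at the level of $\mathsf{T}_{\alpha}$-algebras, and compatibility with the structure morphisms is then automatic by the same epi argument. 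Assembling these three verifications, Beck's theorem gives that $U_1$ is monadic, which is the desired conclusion.
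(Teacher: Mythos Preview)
Your proposal is correct and follows essentially the same route as the paper: both invoke Beck's monadicity theorem and verify the creation of $U_1$-split coequalizers by exploiting that split coequalizers are absolute, hence preserved by the functors $\mathcal{A}\mapsto \coprod_{\mathcal{P}\to\mathcal{A}}\mathcal{S}$, $\coprod_{\mathcal{M}\to\mathcal{A}}\mathcal{C}$, etc.\ on $\mathsf{dgcat}$, which allows the structure morphisms on $B$ to descend to $C$. The paper works directly with the dg-level functors $\coprod_{\mathcal{M}\to ?}\mathcal{M}$, $\coprod_{\mathcal{M}\to ?}\mathcal{C}$ and then passes through the adjunction $(F,U)$, whereas you phrase things slightly differently, and your separate conservativity check is not strictly needed in the version of Beck's theorem the paper cites (creation of coequalizers of $U_1$-split pairs already implies it), but these are differences of presentation, not of substance.
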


\begin{proof}
To prove that the functor $U_1$ is monadic, see section $3$ of
\cite{Macl}, we will verify condition $(iii)$ of theorem $1$
from section VI.$7$. of \cite{Macl}.

Let $G_1,G_2:\underline{A} \rightarrow \underline{B}$ be a pair of
morphisms in $\mathsf{dgcat}_{ex,\alpha}$. Consider the following
split coequalizer, see \cite{Macl}, in
$\mathsf{T}_{\alpha}$-$\mathsf{alg}$
$$
\xymatrix{
A \ar@<0.5ex>[r]^{G_1} \ar@<-0.5ex>[r]_{G_2} & B \ar@/^1.5pc/[l]^-{R}  \ar[r]^L & D
\ar@/^1.5pc/[l]^-{Q} \,,
}
$$
where $L\circ G_1 = L \circ G_2$, $L\circ Q =Id$, $G_1 \circ R = Id$,
and $G_2\circ R=Q\circ L$.

We will now construct structure morphisms for $D$ such that $D$ will
become an object of $\mathsf{dgcat}_{ex,\alpha}$ and $L$ a morphism
in $\mathsf{dgcat}_{ex,\alpha}$. Apply the functor $U$ to the previous
split coequalizer in $\mathsf{T}_{\alpha}$-$\mathsf{alg}$ and obtain
$$
\xymatrix{
\mathcal{A} \ar@<0.5ex>[r]^{G_1} \ar@<-0.5ex>[r]_{G_2} & \mathcal{B} \ar@/^1.5pc/[l]^-{R}  \ar[r]^L & \mathcal{D}
\ar@/^1.5pc/[l]^-{Q} \,.
}
$$
Now, apply the functors
$$ \underset{\mathcal{M} \rightarrow ?}{\coprod} \mathcal{M}\,,
\underset{\mathcal{M} \rightarrow ?}{\coprod}\mathcal{C} :
\mathsf{dgcat} \rightarrow \mathsf{dgcat}$$
to the previous split coequalizer in $\mathsf{dgcat}$ and obtain the
following diagram
$$
\xymatrix{
 & \underset{\mathcal{M} \rightarrow \mathcal{A}}{\coprod} \mathcal{C}
 \ar@<0.5ex>[rr] \ar@<-0.5ex>[rr] \ar[ldd]^(.3){C_{\mathcal{A}}}|\hole & & \underset{\mathcal{M} \rightarrow \mathcal{B}}{\coprod} \mathcal{C}
 \ar[rr] \ar[ldd]^(.3){C_{\mathcal{B}}}|\hole & & \underset{\mathcal{M} \rightarrow
   \mathcal{C}}{\coprod} \mathcal{C} \ar@{.>}[ldd]^-{C_{\mathcal{D}}} \\
\underset{\mathcal{M} \rightarrow \mathcal{A}}{\coprod} \mathcal{M}
\ar@<0.5ex>[rr] \ar@<-0.5ex>[rr] \ar[d]  \ar[ur] & & \underset{\mathcal{M} \rightarrow
  \mathcal{B}}{\coprod} \mathcal{M} \ar[rr] \ar[d] \ar[ur] & &
\underset{\mathcal{M} \rightarrow \mathcal{C}}{\coprod} \mathcal{M}
\ar[d]  \ar[ur] & \\
\mathcal{A} \ar@<0.5ex>[rr]^-{G_1}  \ar@<-0.5ex>[rr]_-{G_2} & &
\mathcal{B} \ar[rr]^L  \ar@/^1.5pc/[ll]^-{R}& & \mathcal{D} \ar@/^1.5pc/[ll]^-{Q}& 
}
$$

Notice that since $L$ is a split coequalizer in $\mathsf{dgcat}$, the rows in the
diagram are coequalizers. This implies that the dg functors
$C_{\mathcal{A}}$ and $C_\mathcal{B}$, which correspond under the
adjunction $(F,U)$ to the structure morphisms $C_A$ and $C_B$, induce
a dg functor $C_{\mathcal{D}}$. Now, since $L$ admits a right inverse $Q$, a
diagram chasing argument shows us that the right triangle in the
diagram is commutative.
 
Observe that under the adjunction $(F,U)$ this commutative diagram
corresponds exactly to a structure morphism $C_D$ on $D$. Clearly by construction the morphism $U$
preserves this structure morphism. Now, consider an analogous
argument for the construction of structure morphisms $Q_D$ and
$Q^{-1}_D$.

We will now prove that $L$ is a coequalizer in the category
$\mathsf{dgcat}_{ex,\alpha}$. Let $\underline{E}$ be an object of
$\mathsf{dgcat}_{ex,\alpha}$ and $H:\underline{B} \rightarrow
\underline{E}$ a morphism such that $H \circ G_1 = H \circ G_2$. Since
the morphism $L$ is a coequalizer in
$\mathsf{T}_{\alpha}$-$\mathsf{alg}$, there exists an unique morphism
of $\mathsf{T}_{\alpha}$-algebras $R$ which makes the following
diagram
$$
\xymatrix{
A \ar@<0.5ex>[r]^{G_1} \ar@<-0.5ex>[r]_{G_2} & B \ar[r]^L \ar[dr]_H & D
\ar@{.>}[d]^R \\
 & & E
}
$$
commutative.
We now prove that $R$ is a morphism in
$\mathsf{dgcat}_{ex,\alpha}$. Since $L \circ Q = Id$ we have $R=H\circ
Q$. Now apply the functors 
$$\underset{\mathcal{P}
  \rightarrow?}{\coprod}\mathcal{S}\,,\underset{\mathcal{P}
  \rightarrow ?}{\coprod}\mathcal{S}^{-1}\,,\underset{\mathcal{M}
  \rightarrow ?}{\coprod}\mathcal{C} : \mathsf{dgcat} \longrightarrow \mathsf{dgcat}\,,$$
to the image of the diagram above under the functor $U$, and use a diagram
chasing argument to conclude that $L$ belongs in fact to $\mathsf{dgcat}_{ex,\alpha}$. This proves the
proposition.
\end{proof}
Notice that since $U_1$ is monadic and the category $\mathsf{T}_{\alpha}$-$\mathsf{alg}$ is
complete, proposition $4.3.1$ from \cite{Bor} implies that $\mathsf{dgcat}_{ex,\alpha}$ is also complete.

\begin{proposition}\label{filtered2}
The category $\mathsf{dgcat}_{ex,\alpha}$ admits $\alpha$-filtered
colimits and these are preserved by the functor $U_1$.
\end{proposition}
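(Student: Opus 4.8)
The plan is to construct $\alpha$-filtered colimits in $\mathsf{dgcat}_{ex,\alpha}$ directly: first at the level of $\mathsf{T}_{\alpha}$-algebras, and then to equip the resulting $\mathsf{T}_{\alpha}$-algebra with the extra structure of definition~\ref{exact1}. Let $\{\underline{A_i}\}_{i\in I}$ be an $\alpha$-filtered diagram in $\mathsf{dgcat}_{ex,\alpha}$, write $A_i = U_1(\underline{A_i})$ and $\mathcal{A}_i = U(A_i)$. Since $\mathsf{T}_{\alpha}\mbox{-}\mathsf{alg}$ is cocomplete (section~\ref{homotopy2}), the colimit $A := \mathrm{colim}_{i\in I} A_i$ exists in $\mathsf{T}_{\alpha}\mbox{-}\mathsf{alg}$; and since $U$ commutes with $\alpha$-filtered colimits (section~\ref{homotopy2}), we get $U(A) = \mathcal{A} := \mathrm{colim}_{i\in I}\mathcal{A}_i$ in $\mathsf{dgcat}$, with canonical dg functors $G_i\colon \mathcal{A}_i\to\mathcal{A}$.

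Next I would put structure morphisms $S_A, S^{-1}_A, C_A$ on $A$. By remark~\ref{choices1} these amount to a choice, for each object $X$ of $\mathcal{A}$, of a suspension and a cosuspension of $X$ (with the relevant isomorphisms in $\mathcal{C}_{dg}(\mathcal{A})$), and, for each degree-zero cycle $f$ of $\mathcal{A}$, of a cone of $f$, all compatible with the $\mathsf{T}_{\alpha}$-action. Given $X$, pick $i$ and $X_i\in\mathcal{A}_i$ with $G_i(X_i) = X$, let $\Sigma X := G_i(\Sigma_i X_i)$ where $\Sigma_i X_i$ is the chosen suspension of $X_i$ in $\underline{A_i}$, and transport the chosen isomorphism $\widehat{X_i}[1]\iso\widehat{\Sigma_i X_i}$ along the induced functor $\mathcal{C}_{dg}(\mathcal{A}_i)\to\mathcal{C}_{dg}(\mathcal{A})$, which sends representables to representables and commutes with the shift; this yields $\widehat{X}[1]\iso\widehat{\Sigma X}$. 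That this is independent of the choice of $i$ follows from $\alpha$-filteredness together with the fact that the transition morphisms in $\mathsf{dgcat}_{ex,\alpha}$ commute with the chosen suspensions; cosuspensions and cones are handled identically. One then checks routinely that these choices are $\mathsf{T}_{\alpha}$-equivariant, so that $\underline{A} := (A, S_A, S^{-1}_A, C_A)$ is an object of $\mathsf{dgcat}_{ex,\alpha}$, and that each $G_i$ underlies a morphism $\underline{A_i}\to\underline{A}$ of $\mathsf{dgcat}_{ex,\alpha}$, since it commutes with the chosen structure by construction.

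Finally I would verify the universal property: given a cocone $\{\underline{A_i}\to\underline{B}\}$ in $\mathsf{dgcat}_{ex,\alpha}$, the colimit in $\mathsf{T}_{\alpha}\mbox{-}\mathsf{alg}$ supplies a unique $\mathsf{T}_{\alpha}$-algebra morphism $H\colon A\to B$ compatible with the cocone; since every object, every degree-zero cycle and every morphism of $\mathcal{A}$ is in the image of some $G_i$ and each $\underline{A_i}\to\underline{B}$ commutes with the chosen suspensions, cosuspensions and cones, so does $H$, whence $H$ is a morphism of $\mathsf{dgcat}_{ex,\alpha}$. Thus $\underline{A} = \mathrm{colim}_{i\in I}\underline{A_i}$, and by construction $U_1(\underline{A}) = A = \mathrm{colim}_{i\in I} A_i$, so $U_1$ preserves $\alpha$-filtered colimits.

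The main difficulty will be the second step: making the choices of suspension, cosuspension and cone on $\mathcal{A}$ well defined and coherent, which requires bookkeeping of the canonical isomorphisms intertwining the Yoneda embedding, the shift and the cone functor with the transition dg functors of the diagram, and using $\alpha$-filteredness to glue the stage-wise choices into a single $\mathsf{T}_{\alpha}$-equivariant one. A more formal alternative is available via proposition~\ref{monadic}: as $U_1$ is monadic and $\mathsf{T}_{\alpha}\mbox{-}\mathsf{alg}$ has $\alpha$-filtered colimits, it would be enough to show the induced monad $U_1F_1$ on $\mathsf{T}_{\alpha}\mbox{-}\mathsf{alg}$ preserves $\alpha$-filtered colimits; this can be read off the small-object-argument description of $F_1$ in the proof of proposition~\ref{gauche}, using that the domains $F(\mathcal{P})$ and $F(\mathcal{M})$ of the maps of the set $\mbox{Ens}$ are $\alpha$-small in $\mathsf{T}_{\alpha}\mbox{-}\mathsf{alg}$ (because $\mathcal{P}$ and $\mathcal{M}$ are $\alpha$-small in $\mathsf{dgcat}$ and $U$ commutes with $\alpha$-filtered colimits). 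That route, however, requires an extra cardinality argument to see that $U_1F_1$ itself, not just the maps in $\mbox{Ens}$, is $\alpha$-accessible, so I would prefer the direct construction.
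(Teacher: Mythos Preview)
Your approach is correct and follows the same overall strategy as the paper: form the colimit $A=\mathrm{colim}_i A_i$ in $\mathsf{T}_{\alpha}\mbox{-}\mathsf{alg}$, use that $U$ preserves $\alpha$-filtered colimits, and then equip $A$ with structure morphisms inherited from the $\underline{A_i}$.

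The one genuine difference is in how the structure morphisms are built. You work element-by-element (pick a preimage $X_i$ of $X$, transport the chosen suspension, check independence of the lift), and you correctly identify this bookkeeping as the main difficulty. The paper bypasses it with a functorial observation: since $\mathcal{P}$ and $\mathcal{M}$ are finite dg categories, the functors $\coprod_{\mathcal{P}\to ?}\mathcal{P}$, $\coprod_{\mathcal{P}\to ?}\mathcal{S}$, $\coprod_{\mathcal{M}\to ?}\mathcal{M}$, $\coprod_{\mathcal{M}\to ?}\mathcal{C}$ on $\mathsf{dgcat}$ commute with $\alpha$-filtered colimits, yielding isomorphisms such as
\[
\underset{i\in I}{\mathrm{colim}}\ \underset{\mathcal{M}\to\mathcal{A}_i}{\coprod}\mathcal{C}\ \stackrel{\sim}{\longrightarrow}\ \underset{\mathcal{M}\to\,\mathrm{colim}_i\mathcal{A}_i}{\coprod}\mathcal{C}.
\]
Thus the dg functor $\mathrm{colim}_i\,C_{\mathcal{A}_i}$, transported across this isomorphism and the adjunction $(F,U)$, is directly a structure morphism $C_A$ for $A$; no well-definedness check is needed. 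Your element-wise argument is really an unpacking of this isomorphism, so both proofs are equivalent in content, but the paper's formulation makes the ``main difficulty'' disappear. Your alternative monadic route is not used in the paper.
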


\begin{proof}
Let $\{\underline{A_i}\}_{i \in I}$ be an $\alpha$-filtered diagram in
$\mathsf{dgcat}_{ex,\alpha}$. Consider the colimit
$$ Y :=\underset{i \in I}{\mbox{colim}}\,A_i\,,$$
of the $\alpha$-filtered diagram $\{A_i\}_{i \in I}$ of
$\mathsf{T}_{\alpha}$-algebras. 

We will now construct structure
morphisms for $Y$ such that $Y$ becomes the colimit in
$\mathsf{dgcat}_{ex,\alpha}$ of the diagram $\{\underline{A_i}\}_{i
  \in I}$. Since the functor $U$ commutes with $\alpha$-filtered
colimits, see section~\ref{homotopy2}, we have
$$ U(Y) = \underset{i \in I}{\mbox{colim}}\, \mathcal{A}_i\,.$$
We now construct a structure morphism $C_Y$, see
definition~\ref{exact1}, for $Y$. We have the following
$\alpha$-filtered diagrams in $\mathsf{dgcat}$ and morphisms
between them
$$
\xymatrix{
\left\{ \underset{\mathcal{M} \rightarrow
  \mathcal{A}_i}{\coprod}\mathcal{M} \right\}_{i \in I} \ar[r] \ar[d] & \left\{
\mathcal{A}_i \right\}_{i \in I} \\
\left\{ \underset{\mathcal{M} \rightarrow
  \mathcal{A}_i}{\coprod}\mathcal{C} \right\}_{i \in I}
\ar[ur]_-{C_{\mathcal{A}_i}} & 
}
$$
Now notice that since we are considering $\alpha$-filtered colimits,
we have
$$ \underset{i \in I}{\mbox{colim}} \underset{\mathcal{M}\rightarrow
  \mathcal{A}_i}{\coprod}\mathcal{M} \stackrel{\sim}{\longrightarrow} \underset{\mathcal{M}
  \rightarrow \underset{i \in I}{\mbox{colim}}\,\mathcal{A}_i}{\coprod}
\mathcal{M}$$
and
$$ \underset{i \in I}{\mbox{colim}} \underset{\mathcal{M}\rightarrow
  \mathcal{A}_i}{\coprod}\mathcal{C} \stackrel{\sim}{\longrightarrow} \underset{\mathcal{M}
  \rightarrow \underset{i \in I}{\mbox{colim}}\,\mathcal{A}_i}{\coprod}
\mathcal{C}\,.$$
This implies that the morphism of $\mathsf{T}_{\alpha}$-algebras which
corresponds under the adjunction $(F,U)$ to the dg functor
$ \underset{i \in I}{\mbox{colim}} \,C_{\mathcal{A}_i} $
is a structure morphism $C_Y$ of $Y$. Consider now an analogous
argument for the construction of structure morphisms $S_Y$ and
$S^{-1}_Y$.

Finally since the functor $U$ commutes with
$\alpha$-filtered colimits, $Y$ is clearly the colimit in
$\mathsf{dgcat}_{ex,\alpha}$ of the diagram  $\{\underline{A_i}\}_{i
  \in I}$. This proves the proposition.
\end{proof}

\begin{proposition}
The category $\mathsf{dgcat}_{ex,\alpha}$ is cocomplete.
\end{proposition}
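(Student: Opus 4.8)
The plan is to reduce cocompleteness to the monadicity of $U_1$ (proposition~\ref{monadic}) together with the cocompleteness of $\mathsf{T}_{\alpha}\mbox{-}\mathsf{alg}$ established in section~\ref{homotopy2}. Recall the standard fact that if $V:\mathcal{N}\rightarrow\mathcal{M}$ is monadic and $\mathcal{M}$ is cocomplete, then $\mathcal{N}$ is cocomplete provided $\mathcal{N}$ admits coequalizers of reflexive pairs; see \cite{Bor}. Thus it is enough to exhibit reflexive coequalizers in $\mathsf{dgcat}_{ex,\alpha}$, and I would obtain them, together with the full conclusion, from an accessibility argument.

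First I would note that $\mathsf{dgcat}$ is locally presentable (for instance as the category of small categories enriched over the locally presentable category $\mathsf{Ch}(k)$), and that the monad $\mathsf{T}_{\alpha}$ of proposition~\ref{monade} commutes with $\alpha$-filtered colimits, as already used in section~\ref{monad}; hence $\mathsf{T}_{\alpha}$ is an $\alpha$-accessible monad and $\mathsf{T}_{\alpha}\mbox{-}\mathsf{alg}$ is again locally presentable. Next I would consider the monad $\mathbb{T}_1 := U_1\circ F_1$ induced on $\mathsf{T}_{\alpha}\mbox{-}\mathsf{alg}$ by the adjunction $F_1\dashv U_1$ of proposition~\ref{gauche}. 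Since $F_1$, being a left adjoint, preserves all colimits, and $U_1$ preserves $\alpha$-filtered colimits by proposition~\ref{filtered2}, the monad $\mathbb{T}_1$ commutes with $\alpha$-filtered colimits, i.e. it is $\alpha$-accessible. Applying the theorem that the category of algebras over an accessible monad on a locally presentable category is again locally presentable, I would conclude that $\mathsf{dgcat}_{ex,\alpha}\simeq(\mathsf{T}_{\alpha}\mbox{-}\mathsf{alg})^{\mathbb{T}_1}$ is locally presentable, and in particular cocomplete; this also furnishes the reflexive coequalizers invoked above, so the two approaches agree.

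The main obstacle I anticipate is the cardinal bookkeeping hidden in the accessibility statements: although the two key facts — that $\mathsf{T}_{\alpha}$ and $\mathbb{T}_1$ commute with $\alpha$-filtered colimits — are essentially in hand from section~\ref{monad} and proposition~\ref{filtered2}, one must keep track of the presentability rank, noting that $\mathsf{dgcat}$ is locally $\aleph_0$-presentable whereas the monads are only $\alpha$-accessible, so that the categories of algebras come out locally $\alpha$-presentable. An alternative, more elementary route avoids this machinery altogether: one constructs small coproducts and coequalizers in $\mathsf{dgcat}_{ex,\alpha}$ by hand, exactly as $\alpha$-filtered colimits were built in the proof of proposition~\ref{filtered2} — form the corresponding colimit of $\mathsf{T}_{\alpha}$-algebras, transport the structure morphisms $S_{(-)}$, $S^{-1}_{(-)}$ and $C_{(-)}$ along the canonical maps using that the coproduct functors appearing in definition~\ref{exact1} commute with these colimits, and check the universal property; the cost is a somewhat longer verification that the resulting $4$-tuple again lies in $\mathsf{dgcat}_{ex,\alpha}$, and that for plain coproducts one first reduces to $\alpha$-small ones before passing to the already available $\alpha$-filtered colimits.
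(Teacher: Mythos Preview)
Your proposal is correct and rests on the same key observation as the paper: monadicity of $U_1$ together with the fact that the induced monad $U_1\circ F_1$ on $\mathsf{T}_{\alpha}\mbox{-}\mathsf{alg}$ preserves $\alpha$-filtered colimits, the latter following from $F_1$ being a left adjoint and proposition~\ref{filtered2}. The paper simply cites propositions~4.3.2 and~4.3.6 of \cite{Bor} to go directly from these two facts to cocompleteness of the category of algebras, without passing through local presentability; your detour through accessible monads and locally presentable categories is correct but unnecessary for the stated conclusion, though it does yield the bonus that $\mathsf{dgcat}_{ex,\alpha}$ is in fact locally $\alpha$-presentable. Your alternative ``by hand'' route is not the one the paper takes.
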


\begin{proof}
Recall that by proposition~\ref{monadic} the adjunction
$$
\xymatrix{
\mathsf{dgcat}_{ex,\alpha} \ar@<1ex>[d]^{U_1}\\
\mathsf{T}_{\alpha}\mbox{-}\mathsf{alg}
\ar@<1ex>[u]^{F_1} \,,
}
$$
is monadic. Now by propositions $4.3.2$ and $4.3.6$ of \cite{Bor} we
only need to show that the functor $U_1 \circ F_1$ commutes with
$\alpha$-filtered colimits. But this follows from the fact that $F_1$ is
a left adjoint and that by proposition~\ref{filtered2} $\alpha$-filtered
colimits exist in $\mathsf{dgcat}_{ex,\alpha}$ and are preserved by
$U_1$.
This proves the proposition.
\end{proof}

We will now construct path objects in
$\mathsf{dgcat}_{ex,\alpha}$. For this we consider definition~\ref{Quillen} applied to our particular adjunction
$(F_1,U_1)$, see theorem~\ref{main2}.

Let $\underline{A}$ be an object of $\mathsf{dgcat}_{ex,\alpha}$.
\begin{proposition}\label{2path1}
The $\mathsf{T}_{\alpha}$-algebra $P(A)$, see proposition~\ref{path},
is endowed with natural structure morphisms and so $\underline{A}$
admits a path object in $\mathsf{dgcat}_{ex,\alpha}$.
\end{proposition}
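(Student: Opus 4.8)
\textbf{Proof plan for Proposition~\ref{2path1}.}
The strategy mirrors that of Proposition~\ref{path}: since $\underline{A}=(A,S_A,S^{-1}_A,C_A)$ is an object of $\mathsf{dgcat}_{ex,\alpha}$, the underlying $\mathsf{T}_{\alpha}$-algebra $A$ already has a path object $P(A)$ by Proposition~\ref{path}, together with morphisms of $\mathsf{T}_{\alpha}$-algebras
$$
\xymatrix{
A \ar[rr]^{i_A} \ar[dr]_{\sim} & & A\times A\\
& P(A) \ar@{->>}[ur]_{q_A} & \,,
}
$$
where $i_A$ is a weak equivalence and $q_A$ is a fibration (with $A\times A$ carrying the diagonal $\mathsf{T}_{\alpha}$-action). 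What remains is to upgrade $P(A)$ to an object $P(\underline{A})$ of $\mathsf{dgcat}_{ex,\alpha}$, i.e.\ to produce compatible structure morphisms $S_{P(A)}$, $S^{-1}_{P(A)}$ and $C_{P(A)}$ making all the diagrams of Definition~\ref{exact1} commute, and then to check that $i_A$ and $q_A$ become morphisms in $\mathsf{dgcat}_{ex,\alpha}$. Since $U_1$ preserves products (it is a right adjoint, monadic by Proposition~\ref{monadic}), $A\times A$ inherits a canonical structure making it an object of $\mathsf{dgcat}_{ex,\alpha}$ componentwise, so it suffices to handle $P(A)$ and the two maps.

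First I would recall, via Remark~\ref{choice}, that giving the structure morphism $C_{P(A)}$ amounts to choosing, for every closed degree-zero morphism $f$ in the dg category $P(\mathcal{A})=U(P(A))$, a cone object in $P(\mathcal{A})$ together with the appropriate isomorphism in $\mathcal{C}_{dg}(P(\mathcal{A}))$; and similarly $S_{P(A)}$, $S^{-1}_{P(A)}$ amount to choices of suspension/cosuspension objects. Now a closed degree-zero morphism in $P(\mathcal{A})$ from $(X\stackrel{f}{\to}Y)$ to $(X'\stackrel{f'}{\to}Y')$ is a pair of closed degree-zero morphisms $a:X\to X'$, $b:Y\to Y'$ in $\mathcal{A}$ with $f'a=bf$; using the chosen cones $C_A(a)$, $C_A(b)$ in $\mathcal{A}$ one obtains an induced closed degree-zero map $C_A(a)\to C_A(b)$, which is precisely an object of $P(\mathcal{A})$ (it becomes invertible in $\mathsf{H}^0$ when $a$ and $b$ do), and this object serves as $C_{P(A)}$ of the original morphism. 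The point is that the same construction, used in the proof of Lemma~\ref{lempath} to exhibit cones in $P(\mathcal{B})$, is functorial and compatible with the $\mathsf{T}_{\alpha}$-structure of $P(A)$ that was built in Proposition~\ref{path} by composing the faithful dg functor $\mathsf{T}_{\alpha}(P(\mathcal{B}))\to P(\mathsf{T}_{\alpha}(\mathcal{B}))$ with $P$ applied to the sum functor. An analogous (easier) argument handles $S_{P(A)}$ and $S^{-1}_{P(A)}$, using the chosen suspensions/cosuspensions in $\mathcal{A}$ componentwise on $X\stackrel{f}{\to}Y$. Then one checks, directly from the explicit descriptions, that the dg functors underlying $i_A:A\to P(A)$ and $q_A:P(A)\to A\times A$ commute with all these choices, hence are morphisms in $\mathsf{dgcat}_{ex,\alpha}$; their being a weak equivalence, resp.\ a fibration, is inherited from $\mathsf{T}_{\alpha}$-$\mathsf{alg}$ since $U_1$ reflects and preserves weak equivalences and fibrations by the lifting setup of Theorem~\ref{main2} (applied to the adjunction $(F_1,U_1)$).

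The main obstacle I anticipate is verifying the \emph{compatibility} of the chosen cone/suspension objects in $P(A)$ with the monad action --- that is, showing that the sum functor $\overline{S}:\mathsf{T}_{\alpha}(P(\mathcal{B}))\to P(\mathcal{B})$ of Proposition~\ref{path} commutes with the chosen cones, equivalently that the structure morphisms $C_{P(A)}$, $S_{P(A)}$, $S^{-1}_{P(A)}$ one writes down are actually $\mathsf{T}_{\alpha}$-algebra morphisms and not merely dg functors. This reduces to the fact that forming cones (and (co)suspensions) of representable-type modules commutes with $\alpha$-small coproducts in $\mathcal{C}_{dg}(P(\mathcal{B}))$, together with the naturality of the homotopy pull-back defining $\mathsf{Hom}_{P(\mathcal{B})}$; both are formal once unravelled, but they require care with signs and with the identification of $\mathsf{cone}$ under the Yoneda embedding, exactly as in the matrix computations of Proposition~\ref{nova} and Lemma~\ref{lempath}. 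Everything else --- smallness, the fibration/weak-equivalence assertions, and the universal property of the path object --- then follows immediately, and one concludes that $P(\underline{A}):=(P(A),S_{P(A)},S^{-1}_{P(A)},C_{P(A)})$ is the desired path object in $\mathsf{dgcat}_{ex,\alpha}$.
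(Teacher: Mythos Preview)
Your overall architecture is right, and the treatment of $S_{P(A)}$, $S^{-1}_{P(A)}$ is fine. But there is a genuine gap in your construction of $C_{P(A)}$: you misidentify what a closed degree-zero morphism in $P(\mathcal{A})$ is. You write that such a morphism from $(X\stackrel{f}{\to}Y)$ to $(X'\stackrel{f'}{\to}Y')$ is a pair $(a,b)$ of closed degree-zero maps in $\mathcal{A}$ with $f'a=bf$. This would be true for the \emph{strict} pullback, but $\mathsf{Hom}_{P(\mathcal{A})}$ is the \emph{homotopy} pullback (Definition~\ref{1path1}). A closed degree-zero morphism is therefore a triple $(m_X,m_Y,h)$ with $m_X,m_Y$ closed of degree zero and $h:X\to Y'$ of degree $-1$ satisfying $d(h)=m_Y f - f' m_X$; the square only commutes up to the specified homotopy $h$. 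Your recipe produces no cone for such a general cycle, since you have no way to build the induced map between $\mathsf{cone}(m_X)$ and $\mathsf{cone}(m_Y)$ without using $h$.

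The paper's proof addresses exactly this point: the morphism $\Phi:\mathsf{cone}(m_X)\to\mathsf{cone}(m_Y)$ is given by the matrix
\[
\begin{bmatrix} f' & h \\ 0 & S(f) \end{bmatrix},
\]
which incorporates the homotopy $h$, together with $h'=0$ as the new homotopy component; one then checks that $(\mathsf{cone}(m_X)\stackrel{\Phi}{\to}\mathsf{cone}(m_Y))$ really is an object of $P(\mathcal{A})$ and corepresents the cone of $(m_X,m_Y,h)$. Once you fix this, the rest of your outline (compatibility of $i_A$, $q_A$ with the structure morphisms, and the fibration/weak-equivalence claims) goes through as you sketch. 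Your worry about verifying that $C_{P(A)}$ is a $\mathsf{T}_{\alpha}$-algebra morphism is reasonable, but the paper treats it as routine; the key content is really the matrix formula above.
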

\begin{proof}
We will construct structure morphisms $S_{P(A)}$, $S_{P(A)}^{-1}$ and
$C_{P(A)}$ for $P(A)$, see definition~\ref{exact1}, in such a way that
$P(A)$ becomes a path object in $\mathsf{dgcat}_{ex,\alpha}$.
Construct the structure morphism $S_{P(A)}$, respectively
$S_{P(A)}^{-1}$, by applying $S_A$, respectively $S_A^{-1}$, componentwise.

By remark~\ref{choices1}, to construct a
structure morphism $C_{P(A)}$, we need to construct a cone in the
category $P(\mathcal{A})$ for every cycle of degree zero in $P(\mathcal{A})$. Let now $(m_X,m_Y,h)$ be a cycle of degree $0$ in $P(\mathcal{A})$
between the objects $X \stackrel{f}{\rightarrow} Y$ and $X'
\stackrel{f'}{\rightarrow} Y'$ of $P(\mathcal{A})$. In particular $h$
is a morphism in $\mathcal{A}$ of degree $-1$ such that $d(h)=m_Y\circ
f - f'\circ m_X$. Consider the following diagram
$$
\xymatrix{
X \ar[d]_{m_X} \ar[rr]^f  \ar[drr]|{h}& &  Y \ar[d]^{m_Y} \\
X' \ar[rr]^{f'} \ar[d]_{i_X}  \ar@{.>}[drr]|{h'} &  & Y' \ar[d]^{i_Y} \\
\mathsf{cone}(m_X) \ar@{.>}[rr]_-{\Phi} &  & \mathsf{cone}(m_Y)\,,
}
$$
where $h'=0$ and $\Phi$ is the morphism defined by the matrix
$$
\begin{bmatrix} 
f' & h \\ 0 & S(f)
\end{bmatrix}\,.
$$
Observe that the object in $P(\mathcal{A})$
$$ 
\xymatrix{
\mathsf{cone}(m_X) \ar@{.>}[rr]^-{\Phi} & & \mathsf{cone}(m_Y)\,,
}
$$
corepresents the cone of the morphism $(m_X,m_Y,h)$ in $P(\mathcal{A})$.
This shows us that $P(A)$ belongs to $\mathsf{dgcat}_{ex,\alpha}$. It
is also clear that the morphisms $A
\stackrel{i_A}{\longrightarrow} P(A)$ and 
$$
\xymatrix{
 P(A) \ar@{->>}[r]^-{q_A} &
 A\times A
}
$$ are in fact morphisms in
$\mathsf{dgcat}_{ex,\alpha}$ and that $i_A$ is a weak equivalence. This proves the proposition.
\end{proof}

We will now prove the main result.
\begin{theorem}\label{final}
The category $\mathsf{dgcat}_{ex,\alpha}$ when endowed with
the notions of weak equivalence, fibration and cofibration as in
definition~\ref{lift}, becomes a cofibrantly generated Quillen model
category and the adjunction $(F_1,U_1)$ becomes a Quillen adjunction.
\end{theorem}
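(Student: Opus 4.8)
The plan is to obtain the model structure by transferring the cofibrantly generated Quillen model structure of Theorem~\ref{main2} along the adjunction $(F_1,U_1)$, exactly as the Quillen model structure on $\mathsf{T}_{\alpha}$-$\mathsf{alg}$ was itself obtained from the one on $\mathsf{dgcat}$. Concretely, I would apply Theorem~\ref{liftarg} with base model category $\mathcal{M}:=\mathsf{T}_{\alpha}$-$\mathsf{alg}$ (endowed with the structure of Theorem~\ref{main2}), with $\mathcal{N}:=\mathsf{dgcat}_{ex,\alpha}$ and with the right adjoint $U:=U_1$, using the notions of weak equivalence, fibration and cofibration of Definition~\ref{lift}. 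The hypotheses of Theorem~\ref{liftarg} that are already secured are: $\mathsf{dgcat}_{ex,\alpha}$ is complete and cocomplete (established just above, via the monadicity of $U_1$ from Proposition~\ref{monadic} together with Propositions $4.3.1$, $4.3.2$ and $4.3.6$ of \cite{Bor}); the category $\mathsf{T}_{\alpha}$-$\mathsf{alg}$ is cofibrantly generated (Theorem~\ref{main2}); and $U_1$ commutes with $\alpha$-filtered colimits (Proposition~\ref{filtered2}), so that the composite $U\circ U_1$ does as well, and hence the sets $F_1(F(I))$ and $F_1(F(J))$ permit the small object argument in $\mathsf{dgcat}_{ex,\alpha}$.

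The only remaining condition of Theorem~\ref{liftarg} is the assumption on cofibrations: every cofibration having the left lifting property with respect to all fibrations must be a weak equivalence. For this I would invoke Proposition~\ref{Quillen}, applied to the adjunction $(F_1,U_1)$. Its first hypothesis, that the unique morphism $\underline{A}\rightarrow *$ is a fibration for each object $\underline{A}$, holds because fibrations in $\mathsf{dgcat}_{ex,\alpha}$ are by Definition~\ref{lift} detected by $U_1$, and every object of $\mathsf{T}_{\alpha}$-$\mathsf{alg}$ is fibrant, since its underlying dg category is fibrant in $\mathsf{dgcat}$ (Remark~\ref{toutfibrant}) and fibrations in $\mathsf{T}_{\alpha}$-$\mathsf{alg}$ are detected by $U$. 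Its second hypothesis, a functorial factorization of each diagonal $\underline{A}\rightarrow \underline{A}\times\underline{A}$ through a weak equivalence followed by a fibration, is precisely the content of Proposition~\ref{2path1}, where the path object $P(\underline{A})$, together with $i_{\underline{A}}$ and $q_{\underline{A}}$, was built inside $\mathsf{dgcat}_{ex,\alpha}$.

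Granting these, Theorem~\ref{liftarg} produces the desired cofibrantly generated Quillen model structure on $\mathsf{dgcat}_{ex,\alpha}$, with $F_1(F(I))$ a set of generating cofibrations and $F_1(F(J))$ a set of generating trivial cofibrations (recall that $I$, $J$ are the generating, resp. generating trivial, cofibrations of $\mathsf{dgcat}$, and $F(I)$, $F(J)$ those of $\mathsf{T}_{\alpha}$-$\mathsf{alg}$ by the proof of Theorem~\ref{main2}). Since $U_1$ preserves fibrations and weak equivalences by the very Definition~\ref{lift}, the pair $(F_1,U_1)$ is then automatically a Quillen adjunction.

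I expect the main obstacle to be bookkeeping rather than new mathematics: one must be careful that Theorem~\ref{liftarg} is applied with base model category $\mathsf{T}_{\alpha}$-$\mathsf{alg}$ (not $\mathsf{dgcat}$), so that a morphism $f$ of $\mathsf{dgcat}_{ex,\alpha}$ is a weak equivalence exactly when $U_1(f)$ is a weak equivalence of $\mathsf{T}_{\alpha}$-algebras, i.e. when the underlying dg functor $U(U_1(f))$ is a quasi-equivalence, which is the statement we want. One must also check that the cardinal governing the small object argument in $\mathsf{dgcat}_{ex,\alpha}$ is the one coming from $\alpha$ together with the compactness of the domains of $I$ and $J$ in $\mathsf{dgcat}$, transported through the two left adjoints; but this is exactly the argument already carried out in the proof of Theorem~\ref{main2}, and it goes through verbatim. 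Alternatively, since $U_1$ is monadic by Proposition~\ref{monadic}, one could appeal to a general transfer theorem for monadic adjunctions, but the self-contained route through Theorem~\ref{liftarg}, Proposition~\ref{Quillen} and Proposition~\ref{2path1} is the one the preceding sections have prepared.
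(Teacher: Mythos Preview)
Your proposal is correct and follows essentially the same approach as the paper: apply Theorem~\ref{liftarg} to the adjunction $(F_1,U_1)$ with base $\mathsf{T}_{\alpha}\text{-}\mathsf{alg}$, and verify the cofibration assumption via Proposition~\ref{Quillen}, using that every object of $\mathsf{T}_{\alpha}\text{-}\mathsf{alg}$ is fibrant together with the path object of Proposition~\ref{2path1}. The paper's proof is more terse but identical in substance, including the identification of $F_1(F(I))$ and $F_1(F(J))$ as generating (trivial) cofibrations.
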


\begin{proof}
Recall from theorem~\ref{main2} that we have an explicit set
$F(I)$ of generating cofibrations and an
explicit set $F(J)$ of generating trivial
cofibrations for $\mathsf{T}_{\alpha}$-$\mathsf{alg}$. 

Now notice that all conditions of
theorem~\ref{liftarg} are satisfied. In particular
proposition~\ref{2path1} and the fact that every object in
$\mathsf{T}_{\alpha}$-$\mathsf{alg}$ is fibrant imply
proposition~\ref{Quillen} which implies that the assumption on cofibrations of
theorem~\ref{liftarg} holds. 

Observe that $F_1(F(I))$ is a set of generating
cofibrations on $\mathsf{dgcat}_{ex,\alpha}$ and that $F_1(F(J))$ is a set of generating
acyclic cofibrations on $\mathsf{dgcat}_{ex,\alpha}$. This implies that the Quillen
model structure on $\mathsf{dgcat}_{ex,\alpha}$ is cofibrantly
generated. Since the functor $U_1$ preserves by definition weak
equivalences and fibrations the adjunction $(F_1,U_1)$ is a Quillen
adjunction.
This proves the theorem.
\end{proof}

\section{Enhancement of well-generated algebraic triangulated
  categories}\label{finale}

In this section, we show that the category of $\alpha$-compactly generated algebraic
triangulated categories up to equivalence admits a natural Quillen
enhancement given by our model category $\mathsf{dgcat}_{ex,\alpha}$.

Let $\mathsf{Tri}_{\alpha}$ denote the category of $\alpha$-compactly generated algebraic
triangulated categories in the sense of Neeman, see \cite{Krause} \cite{Neeman}.

Let $\underline{A}$ be an object of $\mathsf{dgcat}_{ex,\alpha}$, with
underlying dg category $\mathcal{A}$.
Recall from proposition~\ref{sums} that $\mathcal{A}$ admits
$\alpha$-small sums.

\begin{definition}[\cite{Porta}]\label{defporta}
The $\alpha$-continuous derived category $\mathcal{D}_{\alpha}(\underline{A})$ of
$\underline{A}$ is the classical triangle quotient, see \cite{Neeman} \cite{Verdier}, of the derived category
$\mathcal{D}(\mathcal{A})$ of $\mathcal{A}$ by the localizing
triangulated subcategory, i.e. stable under infinite sums, of $\mathcal{D}(\mathcal{A})$ generated by
the cones on the canonical morphisms
$$ \underset{i \in I}{\bigoplus} \widehat{X_i} \longrightarrow
\widehat{\underset{i \in I}{\bigoplus} X_i}\,,$$
where $(X_i)_{i \in I}$ is a family of objects of $\mathcal{A}$ and $I$ is a set of
cardinality strictly smaller than $\alpha$.
\end{definition}

\begin{remark}
By a theorem of \cite{Porta}, $\mathcal{D}_{\alpha}(\underline{A})$ belongs to
$\mathsf{Tri}_{\alpha}$. 

Observe that this construction is functorial in
$A$. In fact, let $F:\underline{A} \rightarrow \underline{B}$ be a
morphism in $\mathsf{dgcat}_{ex,\alpha}$. Since the dg functor
$F:\mathcal{A} \rightarrow \mathcal{B}$ commutes with $\alpha$-small
sums, see remark~\ref{sums2}, it induces a functor
$$ \mathcal{D}_{\alpha}(F): \mathcal{D}_{\alpha}(\underline{A})
\rightarrow D_{\alpha}(\underline{B})$$
between triangulated categories.
\end{remark}
Thus, we have defined a functor
$$ \mathcal{D}_{\alpha}(-): \mathsf{dgcat}_{ex,\alpha}
\longrightarrow \mathsf{Tri}_{\alpha}\,.$$
Observe that if $F:\underline{A} \rightarrow \underline{B}$ is a weak
equivalence in $\mathsf{dgcat}_{ex,\alpha}$, i.e. the dg functor
$F:\mathcal{A} \rightarrow \mathcal{B}$ induces an equivalence of
categories $\mathsf{H}^0(F):\mathsf{H}^0(\mathcal{A}) \rightarrow
\mathsf{H}^0(\mathcal{B})$, then the triangulated functor
$\mathcal{D}_{\alpha}(F)$ is an equivalence of triangulated categories.

The following theorem is proven in \cite{Porta}, \emph{cf.}~\cite{Ober}.

\begin{theorem}[\cite{Porta}]
The functor $\mathcal{D}_{\alpha}(-)$ satisfies the conditions:
\begin{itemize}
\item[-]  every category $\mathcal{T}$ in $\mathsf{Tri}_{\alpha}$ is equivalent to
  $\mathcal{D}_{\alpha}(\underline{A})$ for some object
  $\underline{A}$ in $\mathsf{dgcat}_{ex,\alpha}$ and
\item[-] a morphism $F$ in $\mathsf{dgcat}_{ex,\alpha}$ is a weak
  equivalence if and only if $\mathcal{D}_{\alpha}(F)$ is an
  equivalence of triangulated categories.
\end{itemize}
\end{theorem}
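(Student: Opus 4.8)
The plan is to reduce the statement to two results of Porta \cite{Porta} (announced also in \cite{Ober}): a Popescu--Gabriel type presentation of $\alpha$-compactly generated algebraic triangulated categories, and a concrete computation which, for an object $\underline{A}$ of $\mathsf{dgcat}_{ex,\alpha}$ with underlying dg category $\mathcal{A}$, identifies $\mathsf{H}^0(\mathcal{A})$ with the full subcategory of $\alpha$-small objects of $\mathcal{D}_\alpha(\underline{A})$, together with a quasi-full-faithfulness statement for the composite of the Yoneda dg functor $\mathcal{A}\to\mathcal{C}_{dg}(\mathcal{A})$ with a dg model of the quotient functor onto $\mathcal{D}_\alpha(\underline{A})$. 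Granting these, the two bullets follow by formal manipulations; below I indicate how I would assemble them and which step carries the real weight.

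For essential surjectivity up to equivalence I would start with $\mathcal{T}\in\mathsf{Tri}_\alpha$, say $\alpha$-compactly generated by an $\alpha$-good set $\mathcal{G}$. By Krause--Neeman \cite{Krause}\cite{Neeman}, the full subcategory $\mathcal{T}^\alpha\subseteq\mathcal{T}$ of $\alpha$-small objects is an essentially small, idempotent-complete triangulated subcategory, stable under $\alpha$-small coproducts, containing $\mathcal{G}$, and $\mathcal{T}$ is recovered from $\mathcal{T}^\alpha$ by formally adjoining $\alpha$-filtered colimits. Since $\mathcal{T}$ is algebraic it admits a dg enhancement; restricting this enhancement to $\mathcal{T}^\alpha$ and then closing it up, inside the relevant $\mathcal{C}_{dg}$, under suspension, cosuspension, cones on closed degree-zero morphisms, and $\alpha$-small coproducts, produces a small dg category $\mathcal{A}$ with a triangle equivalence $\mathsf{H}^0(\mathcal{A})\simeq\mathcal{T}^\alpha$. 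Making explicit choices of those suspensions, cosuspensions and cones, and recording the $\alpha$-small sums as a $\mathsf{T}_\alpha$-algebra structure (proposition~\ref{sums}), promotes $\mathcal{A}$ to an object $\underline{A}$ of $\mathsf{dgcat}_{ex,\alpha}$ in the sense of remark~\ref{choices1}; alternatively one applies the left adjoint $F_1$ of proposition~\ref{gauche} to the enhancement and discards the superfluous part. Porta's computation then gives $\mathcal{D}_\alpha(\underline{A})\simeq\mathcal{T}$, since $\mathcal{D}_\alpha(\underline{A})$ is precisely the $\alpha$-filtered colimit completion of $\mathsf{H}^0(\mathcal{A})\simeq\mathcal{T}^\alpha$.

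For the characterization of weak equivalences, the ``only if'' direction is straightforward: a weak equivalence $F\colon\underline{A}\to\underline{B}$ is by definition~\ref{lift} a dg functor $\mathcal{A}\to\mathcal{B}$ which is a quasi-equivalence, hence induces a triangle equivalence $\mathcal{D}(\mathcal{A})\xrightarrow{\sim}\mathcal{D}(\mathcal{B})$ carrying representables to representables, and by remark~\ref{sums2} it carries the canonical maps $\bigoplus_i\widehat{X_i}\to\widehat{\bigoplus_i X_i}$ of definition~\ref{defporta} to maps of the same type, so it descends to an equivalence $\mathcal{D}_\alpha(\underline{A})\xrightarrow{\sim}\mathcal{D}_\alpha(\underline{B})$. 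For the converse I would use the intrinsic recovery of the enhancement: by Porta's quasi-full-faithfulness statement, $\mathcal{A}$ is, up to quasi-equivalence, a dg enhancement of the subcategory $\mathcal{D}_\alpha(\underline{A})^\alpha$ of $\alpha$-small objects of $\mathcal{D}_\alpha(\underline{A})$. Since any triangle equivalence of $\alpha$-compactly generated categories restricts to an equivalence of their subcategories of $\alpha$-small objects, if $\mathcal{D}_\alpha(F)$ is an equivalence then $F$ induces quasi-isomorphisms on all morphism complexes and an essentially surjective functor $\mathsf{H}^0(\mathcal{A})\to\mathsf{H}^0(\mathcal{B})$; that is, $F$ is a quasi-equivalence, hence a weak equivalence in $\mathsf{dgcat}_{ex,\alpha}$.

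The main obstacle is precisely the input from \cite{Porta}, i.e.\ the Popescu--Gabriel type statement: one must show that passing from $\mathcal{D}(\mathcal{A})$ to the Verdier quotient $\mathcal{D}_\alpha(\underline{A})$ does not alter the morphism complexes between representable dg modules, and that after this localization the essential image of the representables is exactly the subcategory of $\alpha$-small objects — equivalently, that $\mathsf{H}^0(\mathcal{A})$ sits inside $\mathcal{D}_\alpha(\underline{A})$ as its ``$\alpha$-continuous core''. This requires a careful analysis of the localizing subcategory generated by the cones $\bigoplus_i\widehat{X_i}\to\widehat{\bigoplus_i X_i}$ and of $\alpha$-filtered homotopy colimits in $\mathcal{C}_{dg}(\mathcal{A})$, and is where the stability hypotheses built into $\mathsf{dgcat}_{ex,\alpha}$ (closure under suspension, cosuspension, cones and $\alpha$-small sums) are genuinely used; the remainder of the argument is formal.
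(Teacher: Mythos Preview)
The paper does not prove this theorem: it is stated with attribution to \cite{Porta} (cf.~\cite{Ober}) and no argument is given beyond the remark preceding it that $\mathcal{D}_\alpha(\underline{A})$ belongs to $\mathsf{Tri}_\alpha$ and that the construction is functorial. There is therefore nothing in the paper to compare your proposal against.

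That said, your sketch is a reasonable outline of how the result is established in \cite{Porta}. You have correctly isolated the two substantive inputs: the identification of $\mathsf{H}^0(\mathcal{A})$ with the full subcategory of $\alpha$-small objects of $\mathcal{D}_\alpha(\underline{A})$ (via quasi-full-faithfulness of the composite $\mathcal{A}\to\mathcal{D}_\alpha(\underline{A})$), and the Popescu--Gabriel type statement that any $\mathcal{T}\in\mathsf{Tri}_\alpha$ arises this way from a dg enhancement of $\mathcal{T}^\alpha$. The formal deductions you draw from these are sound, and you are right that the real work lies in showing that the localizing subcategory generated by the cones $\bigoplus_i\widehat{X_i}\to\widehat{\bigoplus_i X_i}$ does not touch morphisms between representables and that the representables land exactly on the $\alpha$-small objects.
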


\part{Applications {\`a} la DG-(d{\'e})stabilisation}

\chapter{On the structure of Calabi-Yau categories with a cluster
  tilting subcategory}

\textit{\small{Ce chapitre correspond {\`a} l'article \cite{Documenta}.}}

\section{Introduction}
In this article, we propose a description of a class of
Calabi-Yau categories using the formalism of dg-categories and the
notion of `stabilization', as used for the description of triangulated
orbit categories in section $7$ of \cite{orbit}.
For $d \geq 2$, let $\mathcal{C}$ be an algebraic $d$-Calabi-Yau
triangulated category endowed with a $d$-cluster tilting subcategory
$\mathcal{T}$,
\emph{cf.}~\cite{cluster}~\cite{Iyama32}~\cite{Iyama33}, see also \cite{Raf}~\cite{Ginz}~\cite{Ginz1}. Such
categories occur for example,
\begin{itemize}
\item[-] in the representation-theoretic approach to
  Fomin-Zelevinsky's cluster algebras \cite{Cluster1},
  \emph{cf.}~\cite{BMRT} \cite{CK2} \cite{GLS} and the references
  given there,
\item[-] in the study of Cohen-Macaulay modules over certain isolated
  singularities, \emph{cf.}~\cite{IR}~\cite{cluster}~\cite{IY}, and the
  study of non commutative crepant resolutions~\cite{VdB}, \emph{cf.}~\cite{IR}.
\end{itemize}
From $\mathcal{C}$ and $\mathcal{T}$ we construct an exact dg category
$\mathcal{B}$, which is perfectly $(d+1)$-Calabi-Yau, and a
non-degenerate aisle $\mathcal{U}$,
\emph{cf.}~\cite{aisles}, in $\mathrm{H}^0(\mathcal{B})$ whose
heart has enough projectives.
We prove, in theorem~\ref{main3}, how to recover the category
$\mathcal{C}$ from $\mathcal{B}$ and $\mathcal{U}$ using a
general procedure of stabilization defined in
section~\ref{mainsec}. This extends previous results of
\cite{preprint} to a more general framework.

It follows from
\cite{Palu} that for $d=2$, up to derived equivalence, the category
$\mathcal{B}$ only depends on $\mathcal{C}$ (with its enhancement) and
not on the choice of $\mathcal{T}$.
In the appendix, we show how to naturally extend a $t$-structure, \emph{cf.}~\cite{Ast100}, on the
compact objects of a triangulated category to the whole category.
\\

\textbf{Example}
Let $k$ be a field, $A$ a finite-dimensional hereditary $k$-algebra
and $\mathcal{C}=\mathcal{C}_A$ the cluster category of $A$, see
\cite{BMRRT}~\cite{CCS}, i.e. the quotient of the bounded derived
category of finitely generated modules over $A$ by the functor
$F=\tau^{-1}[1]$, where $\tau$ denotes the $\mathsf{AR}$-translation
and $[1]$ denotes the shift functor.

Then $\mathcal{B}$ is given by the dg algebra, see section $7$ of
\cite{orbit},
$$ B= A\oplus (DA)[-3]$$
and theorem~\ref{main3} reduces to the equivalence
$$ \mathcal{D}^b(\mathcal{B})/\mathsf{per}(\mathcal{B})
\stackrel{\sim}{\longrightarrow} \mathcal{C}_A$$
of section $7.1$ of \cite{orbit}.

\section{Preliminaries}\label{preli}

Let $k$ be a field. Let $\mathcal{E}$ be a $k$-linear Frobenius
category with split idempotents. Suppose that its stable category
$\mathcal{C}=\underline{\mathcal{E}}$, with suspension functor $S$,
has finite-dimensional $\mathrm{Hom}$-spaces and admits a Serre
functor $\Sigma$, see \cite{Bon-Kap}. Let $d\geq 2$ be an
integer. We suppose that $\mathcal{C}$ is Calabi-Yau of CY-dimension
$d$, \emph{i.e.}~\cite{ENS} there is an isomorphism of triangle functors
$$S^d \stackrel{\sim}{\rightarrow} \Sigma\,.$$
We fix such an isomorphism once and for all. See section $4$ of~\cite{cluster} for
several examples of the above situation.

For $X,Y \in \mathcal{C}$
and $n\in \mathbb{Z}$, we put 
$$ \mathrm{Ext}^n(X,Y)=\mathrm{Hom}_{\mathcal{C}}(X,S^nY)\,.$$
We suppose that $\mathcal{C}$ is endowed with a $d$-cluster tilting subcategory $\mathcal{T} \subset \mathcal{C}$, \emph{i.e.}
\begin{itemize}
\item[a)] $\mathcal{T}$ is a $k$-linear subcategory,
\item[b)] $\mathcal{T}$ is functorially finite in $\mathcal{C}$,
  i.e. the functors $\mathrm{Hom}_{\mathcal{C}}(?,X)|\mathcal{T}$ and
  $\mathrm{Hom}_{\mathcal{C}}(X,?)|\mathcal{T}$ are finitely
  generated for all $X \in \mathcal{C}$,
\item[c)] we have $\mathrm{Ext}^i(T,T')=0$ for all $T,T' \in \mathcal{T}$
  and all $0<i<d$ and
\item[d)] if $X \in \mathcal{C}$ satisfies $\mathrm{Ext}^i(T,X)=0$ for all
  $0<i<d$ and all $T \in \mathcal{T}$, then $T$ belongs to $\mathcal{T}$.
 
\end{itemize}
 
Let $\mathcal{M}\subset \mathcal{E}$ be the preimage of $\mathcal{T}$
under the projection functor. In particular, $\mathcal{M}$ contains
the subcategory $\mathcal{P}$ of the projective-injective objects in
$\mathcal{M}$. Note that $\mathcal{T}$ equals the quotient
$\underline{\mathcal{M}}$ of $\mathcal{M}$ by the ideal of morphisms
factoring through a projective-injective.

We have the following commutative square:
{\small
$$
\xymatrix{
*+<1pc>{\mathcal{M}} \ar@{^{(}->}[r] \ar@{->>}[d] & \mathcal{E} \ar@{->>}[d] \\
*+<1pc>{\mathcal{T}}  \ar@{^{(}->}[r]  & \underline{\mathcal{E}} =\mathcal{C} \,.
}
$$}

We use the notations of \cite{ICM}. In particular, for an additive
category $\mathcal{A}$, we denote by $\mathcal{C}(\mathcal{A})$ (resp. $\mathcal{C}^-(\mathcal{A})$, $\mathcal{C}^b(\mathcal{A})$,
$\ldots$) the category of unbounded (resp. right bounded,
resp. bounded, $\ldots$) complexes over $\mathcal{A}$ and by
$\mathcal{H}(\mathcal{A})$ (resp. $\mathcal{H}^-(\mathcal{A})$,
$\mathcal{H}^b(\mathcal{A})$, $\ldots$) its quotient modulo
the ideal of nullhomotopic morphisms. By \cite{KellerVos}, \emph{cf.} also
\cite{Rickard1}, the projection functor $\mathcal{E} \rightarrow
\underline{\mathcal{E}}$ extends to a canonical triangle functor
$\mathcal{H}^b(\mathcal{E})/\mathcal{H}^b(\mathcal{P}) \rightarrow
\underline{\mathcal{E}}$. This induces a triangle functor $\mathcal{H}^b(\mathcal{M})/\mathcal{H}^b(\mathcal{P}) \rightarrow
\underline{\mathcal{E}}$. It is shown in \cite{Palu} that this
functor is a localization functor. Moreover, the projection functor
$\mathcal{H}^b(\mathcal{M}) \rightarrow
\mathcal{H}^b(\mathcal{M})/\mathcal{H}^b(\mathcal{P})$ induces an
equivalence from the subcategory
$\mathcal{H}^b_{\mathcal{E}\mbox{-}ac}(\mathcal{M})$ of bounded
$\mathcal{E}$-acyclic complexes with components in $\mathcal{M}$ onto
its kernel. Thus, we have a short exact sequence of triangulated categories
$$ 0 \longrightarrow \mathcal{H}^b_{\mathcal{E}\mbox{-}ac}(\mathcal{M}) \longrightarrow
\mathcal{H}^b(\mathcal{M})/ \mathcal{H}^b\mathcal(\mathcal{P})
\longrightarrow \mathcal{C} \longrightarrow 0 \,.$$

Let $\mathcal{B}$ be the dg (=differential graded) subcategory of the
category $\mathcal{C}^b(\mathcal{M})_{dg}$ of bounded complexes over
$\mathcal{M}$ whose objects are the $\mathcal{E}$-acyclic complexes. We
denote by $G : \mathcal{H}^-(\mathcal{M}) \rightarrow
\mathcal{D}(\mathcal{B}^{op})^{op}$ the functor which takes a right
bounded complex $X$ over $\mathcal{M}$ to the dg module
$$B \mapsto \mathrm{Hom}^\bullet_{\mathcal{M}}(X,B) \,,$$
where $B$ is in $\mathcal{B}$.
\begin{remark}
By construction, the functor $G$ restricted to
$\mathcal{H}^b_{\mathcal{E}\mbox{-}ac}(\mathcal{M})$ establishes an
equivalence
$$ G : \mathcal{H}^b_{\mathcal{E}\mbox{-}ac}(\mathcal{M})
\stackrel{\sim}{\longrightarrow} \mathrm{per}(\mathcal{B}^{op})^{op} \,.$$

\end{remark}
Recall that if $P$ is a right bounded complex of projectives and $A$
is an acyclic complex, then each morphism from $P$ to $A$ is
nullhomotopic. In particular, the complex
$\mathrm{Hom}^\bullet_{\mathcal{M}}(P,A)$ is nullhomotopic for each $P$
in $\mathcal{H}^- (\mathcal{P})$. Thus $G$ takes $\mathcal{H}^- (P)$ to
zero, and induces a well defined functor (still denoted by $G$)
$$G : \mathcal{H}^b(\mathcal{M})/ \mathcal{H}^b\mathcal(\mathcal{P})
\longrightarrow \mathcal{D}(\mathcal{B}^{op})^{op} \,.$$

\section{Embedding}

\begin{proposition}\label{pleinfidele}
The functor $G$ is fully faithful.
\end{proposition}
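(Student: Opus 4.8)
The plan is to realize $G$ as the composition of three functors
$$ \mathcal{H}^b(\mathcal{M})/\mathcal{H}^b(\mathcal{P})
\stackrel{\Upsilon}{\hookrightarrow}
\mathcal{H}^{-}_{\mathcal{E}\mbox{-}ac}(\mathcal{M})
\stackrel{\Psi}{\longrightarrow}
\mathcal{D}^{-}_{\underline{\mathcal{M}}}(\mathcal{M})
\stackrel{\Phi}{\longrightarrow} \mathcal{D}(\mathcal{B}^{op})^{op}\,, $$
where $\mathcal{H}^{-}_{\mathcal{E}\mbox{-}ac}(\mathcal{M})$ is the homotopy category of right bounded $\mathcal{E}$-acyclic complexes with components in $\mathcal{M}$, $\mathcal{D}^{-}_{\underline{\mathcal{M}}}(\mathcal{M})$ is the right bounded derived category of $\underline{\mathcal{M}}$-modules with homology in $\mathrm{mod}\,\underline{\mathcal{M}}$, the functor $\Psi$ sends a complex $X$ over $\mathcal{M}$ to the complex of $\underline{\mathcal{M}}$-modules $M \mapsto \underline{\mathcal{M}}(M,X)$, and $\Phi$ is the restriction of $\mathrm{Hom}^{\bullet}_{\mathcal{M}}(-,-)$ to $\mathcal{B}$. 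Then I would prove that $\Upsilon$ and $\Psi$ are fully faithful, and that $\Phi$ is fully faithful once restricted to the essential image of $\Psi$; since $G$ is the composite, it will follow that $G$ is fully faithful. This strategy adapts to the present, more general, $d$-Calabi-Yau situation the argument used in \cite{preprint}.

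First I would construct $\Upsilon$. Since $\mathcal{M}$ is the preimage in the Frobenius category $\mathcal{E}$ of the $d$-cluster tilting subcategory $\mathcal{T}$, every object of $\mathcal{M}$ admits arbitrarily long $\mathcal{M}$-resolutions built from $\mathcal{E}$-conflations, and splicing these resolutions together along the stupid filtration of a bounded complex $X$ over $\mathcal{M}$ produces a right bounded $\mathcal{E}$-acyclic complex $\widetilde{X}$ which becomes isomorphic to $X$ in $\mathcal{H}^b(\mathcal{M})/\mathcal{H}^b(\mathcal{P})$. This yields a well defined functor $\Upsilon$, and its full faithfulness reduces to the classical fact that a chain map from a right bounded complex of projective-injectives to an $\mathcal{E}$-acyclic complex is nullhomotopic, together with the dual statement; the argument is the usual comparison-of-resolutions dévissage. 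For $\Psi$, applying $\underline{\mathcal{M}}(M,-)$ to a right bounded $\mathcal{E}$-acyclic complex over $\mathcal{M}$ produces a right bounded complex of finitely generated projective $\underline{\mathcal{M}}$-modules, and every such complex of projectives arises this way; since $\mathrm{mod}\,\underline{\mathcal{M}}$ has enough projectives, every object of $\mathcal{D}^{-}_{\underline{\mathcal{M}}}(\mathcal{M})$ has a projective resolution, unique up to homotopy, so $\Psi$ identifies $\mathcal{H}^{-}_{\mathcal{E}\mbox{-}ac}(\mathcal{M})$ with the homotopy category of right bounded complexes of projectives, hence is fully faithful.

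The hard part will be the third step, full faithfulness of $\Phi$ on the essential image of $\Psi$. Here I would argue by dévissage: using the stupid filtration of a right bounded $\mathcal{E}$-acyclic complex $Y$, one reduces to the case $Y=\widehat{M}$ for $M \in \mathcal{M}$, its $\mathcal{E}$-acyclic resolution; since the objects $G(M)$, $M \in \mathcal{M}$, generate $\mathrm{per}(\mathcal{B}^{op})^{op}$, and since $\mathcal{B}$ contains enough bounded $\mathcal{E}$-acyclic complexes to detect morphisms out of $\mathcal{M}$, a Yoneda-type argument shows that $\mathrm{Hom}^{\bullet}_{\mathcal{M}}(Y,-)|_{\mathcal{B}}$ computes the morphism spaces correctly. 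The delicate technical points are the convergence of the filtration and spectral-sequence arguments — which relies crucially on $Y$ being right bounded and each object of $\mathcal{B}$ being bounded — and checking that passing from the homotopy category of $\mathcal{B}^{op}$-dg modules to the derived category $\mathcal{D}(\mathcal{B}^{op})$ loses no morphisms. Once these are settled, the composite $G$ is fully faithful and the proposition follows.
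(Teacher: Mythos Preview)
Your overall strategy—factoring $G$ as $\Phi\circ\Psi\circ\Upsilon$ through $\mathcal{H}^{-}_{\mathcal{E}\mbox{-}ac}(\mathcal{M})$ and $\mathcal{D}^{-}_{\underline{\mathcal{M}}}(\mathcal{M})$—is exactly the paper's. (One cosmetic point: in the paper $\mathcal{D}^{-}_{\underline{\mathcal{M}}}(\mathcal{M})$ is a full subcategory of $\mathcal{D}(\mathcal{M})$, not of $\mathcal{D}(\underline{\mathcal{M}})$, and $\Psi$ is the ordinary Yoneda embedding $X\mapsto X^{\wedge}$ into $\mathcal{M}$-modules; the homology then happens to land in $\mathrm{Mod}\,\underline{\mathcal{M}}$.) Your treatment of $\Upsilon$ via resolutions is equivalent to the paper's semiorthogonal-decomposition argument, and your description of $\Psi$ as an equivalence onto right bounded complexes of projectives is fine.

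The third step, however, has a genuine gap. Your assertion that ``the objects $G(M)$, $M\in\mathcal{M}$, generate $\mathrm{per}(\mathcal{B}^{op})^{op}$'' is false: $G(M)=\Phi\Psi\Upsilon(M)$ comes from an \emph{unbounded} $\mathcal{E}$-acyclic resolution and is typically not perfect. What generates $\mathrm{per}(\mathcal{B}^{op})^{op}$ are the representables, corresponding to \emph{bounded} $\mathcal{E}$-acyclic complexes. Relatedly, the stupid truncations $\sigma_{\geq -n}Y$ of a right bounded $\mathcal{E}$-acyclic complex $Y$ are not $\mathcal{E}$-acyclic, so they leave the essential image $\mathcal{V}$ of $\Psi$; your proposed d\'evissage does not stay in the category where you want to compare $\Phi$ with the identity, and the vague spectral-sequence argument does not repair this.

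The paper replaces stupid filtrations by \emph{smart} truncations. One first shows (lemma~\ref{caracterisation}) that $\tau_{\geq -i}\Psi(X)$ lies in $\mathrm{per}_{\underline{\mathcal{M}}}(\mathcal{M})$, on which $\Phi$ is already an equivalence onto $\mathrm{per}(\mathcal{B}^{op})^{op}$, and that $\Psi(X)\simeq \mathrm{holim}_i\,\tau_{\geq -i}\Psi(X)$. One then checks (lemma~\ref{holim}) that $\Phi$ commutes with this holim, using only that each $B\in\mathcal{B}$ is a bounded complex. Full faithfulness of $\Phi|_{\mathcal{V}}$ (lemma~\ref{commute}) is a direct holim/hocolim manipulation exploiting compactness of perfect objects in $\mathcal{D}(\mathcal{B}^{op})$. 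This is the missing idea: approximate objects of $\mathcal{V}$ by perfect pieces via the $t$-structure, not by na\"ive truncations.
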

For the proof, we need a number of lemmas. \\
It is well-known that the category $\mathcal{H}^-(\mathcal{E})$ admits
a semiorthogonal decomposition, \emph{cf.}~\cite{Bon-Orl}, formed by  $\mathcal{H}^-(\mathcal{P})$
and its right orthogonal
$\mathcal{H}^-_{\mathcal{E}\mbox{-}ac}(\mathcal{E})$, the full
subcategory of the right bounded $\mathcal{E}$-acyclic complexes. For $X$ in $\mathcal{H}^-(\mathcal{E})$,
we write 
$$ {\bf{p}}X \rightarrow X \rightarrow {\bf{a}}_pX \rightarrow S{\bf{p}}X$$
for the corresponding triangle, where ${\bf{p}}X$ is in $\mathcal{H}^-(\mathcal{P})$ and
${\bf{a}}_p X$ is in $\mathcal{H}^{-}_{\mathcal{E}\mbox{-}ac}(\mathcal{E})$. 
If $X$ lies in $\mathcal{H}^-(\mathcal{M})$, then clearly
${\bf{a}}_pX$ lies in
$\mathcal{H}^-_{\mathcal{E}\mbox{-}ac}(\mathcal{M})$ so that we have
an induced semiorthogonal decomposition of $\mathcal{H}^-(\mathcal{M})$.

\begin{lemma}\label{fidele}
The functor $\Upsilon:\mathcal{H}^b(\mathcal{M})/ \mathcal{H}^b\mathcal(\mathcal{P})
\longrightarrow \mathcal{H}^-_{\mathcal{E}\mbox{-}ac}(\mathcal{M})$
which takes $X$ to ${\bf{a}}_p X$ is fully faithful.
\end{lemma}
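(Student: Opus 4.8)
The plan is to show that $\Upsilon$ is fully faithful by exhibiting it as the composition of the canonical functor $\mathcal{H}^b(\mathcal{M})/\mathcal{H}^b(\mathcal{P}) \to \mathcal{H}^-(\mathcal{M})/\mathcal{H}^-(\mathcal{P})$ with the inverse of the equivalence $\mathcal{H}^-_{\mathcal{E}\text{-}ac}(\mathcal{M}) \xrightarrow{\sim} \mathcal{H}^-(\mathcal{M})/\mathcal{H}^-(\mathcal{P})$ coming from the semiorthogonal decomposition. The second functor is an equivalence by general nonsense about semiorthogonal decompositions (the right orthogonal of $\mathcal{H}^-(\mathcal{P})$ maps isomorphically to the Verdier quotient), so the real content is that the first functor is fully faithful, i.e. that passing from bounded complexes to right bounded complexes and from $\mathcal{H}^b(\mathcal{P})$ to $\mathcal{H}^-(\mathcal{P})$ does not change morphism spaces between bounded complexes over $\mathcal{M}$.

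To prove fully faithfulness of $\mathcal{H}^b(\mathcal{M})/\mathcal{H}^b(\mathcal{P}) \to \mathcal{H}^-(\mathcal{M})/\mathcal{H}^-(\mathcal{P})$, I would compute morphisms in both quotient categories using calculus of fractions. A morphism from $X$ to $Y$ in $\mathcal{H}^-(\mathcal{M})/\mathcal{H}^-(\mathcal{P})$ is represented by a roof $X \leftarrow X' \to Y$ where the cone of $X' \to X$ lies in $\mathcal{H}^-(\mathcal{P})$. The key step is a truncation/approximation argument: given such a roof with $X$, $Y$ bounded, one replaces $X'$ by a bounded complex $X''$ over $\mathcal{M}$ together with a map $X'' \to X'$ whose cone is again in $\mathcal{H}^-(\mathcal{P})$ (and bounded below far enough), so that the roof is equivalent to one lying entirely in $\mathcal{H}^b(\mathcal{M})$. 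Here one uses that $\mathcal{M}$ contains $\mathcal{P}$, that the projective-injectives form the projectives of the Frobenius structure, and stupid truncation: far enough to the left, a right bounded complex over $\mathcal{M}$ that becomes the given bounded $X$ modulo $\mathcal{H}^-(\mathcal{P})$ can be cut off, the discarded tail being a right bounded complex of projective-injectives (up to homotopy). The analogous argument shows that two roofs in $\mathcal{H}^b(\mathcal{M})$ that become equal in $\mathcal{H}^-(\mathcal{M})/\mathcal{H}^-(\mathcal{P})$ already become equal in $\mathcal{H}^b(\mathcal{M})/\mathcal{H}^b(\mathcal{P})$, giving injectivity on $\mathrm{Hom}$'s; surjectivity follows from the same truncation producing a representative already defined over bounded complexes.

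The main obstacle I anticipate is making the truncation argument precise: one must check that when a right bounded complex $X'$ over $\mathcal{M}$ maps to a bounded complex $X$ with cone in $\mathcal{H}^-(\mathcal{P})$, the "tail" of $X'$ in sufficiently negative degrees is not merely acyclic but genuinely a complex of projective-injective objects up to homotopy equivalence — this requires knowing that $\mathcal{E}$-acyclic right bounded complexes with components in $\mathcal{P}$ are contractible, or more precisely that the stupid truncation $\sigma_{\leq n} X'$ differs from $X'$ by a complex homotopy equivalent to one in $\mathcal{H}^-(\mathcal{P})$ once $n$ is below the range of $X$. I would handle this using the Frobenius structure: a right bounded complex over $\mathcal{P}$ that is $\mathcal{E}$-acyclic is null-homotopic because each term is injective in $\mathcal{E}$, and then splice this with the semiorthogonal decomposition data. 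Once this lemma is in hand, the roof-calculus bookkeeping is routine, and combining with the semiorthogonal equivalence gives that $\Upsilon$ is fully faithful.
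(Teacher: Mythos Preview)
Your approach is exactly the paper's: factor $\Upsilon$ through the equivalence $\mathcal{H}^-(\mathcal{M})/\mathcal{H}^-(\mathcal{P}) \simeq \mathcal{H}^-_{\mathcal{E}\text{-}ac}(\mathcal{M})$ coming from the semiorthogonal decomposition, and show that the comparison $\mathcal{H}^b(\mathcal{M})/\mathcal{H}^b(\mathcal{P}) \to \mathcal{H}^-(\mathcal{M})/\mathcal{H}^-(\mathcal{P})$ is fully faithful. The paper simply declares this last step ``easy to see'' without further argument; your calculus-of-fractions outline is a correct way to fill it in.

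One remark on the obstacle you flag: truncating $X'$ itself is awkward precisely because its components lie in $\mathcal{M}$, not in $\mathcal{P}$, so there is no reason its tail should be projective-injective up to homotopy. The cleaner route is to truncate the \emph{cone}. In the triangle $X' \to X \to P$ with $P \in \mathcal{H}^-(\mathcal{P})$ and $X$ concentrated in degrees $\geq a$, the map $X \to P$ factors through the bounded subcomplex $\sigma_{\geq a}P \in \mathcal{H}^b(\mathcal{P})$ (trivially, since $X^i = 0$ for $i < a$). Setting $X'' = S^{-1}\mathrm{cone}(X \to \sigma_{\geq a}P)$ gives a bounded complex over $\mathcal{M}$ with $X'' \to X$ having cone in $\mathcal{H}^b(\mathcal{P})$, and the octahedral axiom supplies the compatible map $X'' \to X'$. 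This is the standard factorization criterion for fully faithfulness of a map of Verdier quotients, and it sidesteps any analysis of the tail of $X'$.
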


\begin{proof}
By the semiorthogonal decomposition of $\mathcal{H}^-(\mathcal{M})$,
the functor $X \mapsto {\bf{a}}_pX$ induces a right adjoint of the
localization functor
$$\mathcal{H}^-(\mathcal{M}) \longrightarrow \mathcal{H}^-(\mathcal{M})/
\mathcal{H}^-(\mathcal{P})$$
and an equivalence of the quotient category with the right orthogonal $\mathcal{H}^-_{\mathcal{E}\mbox{-}ac}(\mathcal{M})$.
$$
\xymatrix{
 & *+<1pc>{\mathcal{H}^-(\mathcal{P})} \ar@{_{(}->}@<-2ex>[d] & \\
 & *+<1pc>{\mathcal{H}^-(\mathcal{M})} \ar@<-2ex>[d] \ar[u] &
*+<1pc>{\mathcal{H}^{-}_{\mathcal{E}\mbox{-}ac}(\mathcal{M})=\mathcal{H}(\mathcal{P})^{\bot}} \ar@{_{(}->}[l]
\\
*+<1pc>{\mathcal{H}^b(\mathcal{M})/\mathcal{H}^b(\mathcal{P})}  \ar@{^{(}->}[r]
&  *+<1pc>{\mathcal{H}^-(\mathcal{M})/\mathcal{H}^-(\mathcal{P})} \ar@{_{(}->}[u]  \ar@{-->}[ru]^{\sim} & \,.\\
}
$$
Moreover, it is easy
to see that the canonical functor
$$\mathcal{H}^b(\mathcal{M})/\mathcal{H}^b(\mathcal{P}) \longrightarrow
\mathcal{H}^-(\mathcal{M})/\mathcal{H}^-(\mathcal{P})$$
is fully faithful so that we obtain a fully faithful functor
$$ \mathcal{H}^b(\mathcal{M})/\mathcal{H}^b(\mathcal{P}) \longrightarrow
\mathcal{H}^-_{\mathcal{E}\mbox{-}ac}(\mathcal{M})$$
taking $X$ to ${\bf{a}}_p X$.
\end{proof}

\begin{remark}
Since the functor $G$ is triangulated and takes
$\mathcal{H}^-(\mathcal{P})$ to zero, for $X$ in
$\mathcal{H}^b(\mathcal{M})$, the adjunction morphism $X
\rightarrow {\bf{a}}_pX$ yields an isomorphism 
$$ G(X) \stackrel{\sim}{\longrightarrow} G({\bf{a}}_pX)=G(\Upsilon X)\,.$$
\end{remark}

Let $\mathcal{D}^-_{\underline{\mathcal{M}}}(\mathcal{M})$ be the full
subcategory of the derived category $\mathcal{D}(\mathcal{M})$ formed
by the right bounded complexes whose homology modules lie in the
subcategory $\mathrm{Mod}\,\underline{\mathcal{M}}$ of
$\mathrm{Mod}\,\mathcal{M}$. The Yoneda functor $\mathcal{M}
\rightarrow \mathrm{Mod}\,\mathcal{M}$, $M \mapsto M^{\wedge}$, induces a full
embedding
$$ \Psi :\mathcal{H}^-_{\mathcal{E}\mbox{-}ac}(\mathcal{M}) \hookrightarrow
\mathcal{D}^-_{\underline{\mathcal{M}}}(\mathcal{M}) \,.$$ 
We write $\mathcal{V}$ for its essential image.
Under $\Psi$, the category
$\mathcal{H}^b_{\mathcal{E}\mbox{-}ac}(\mathcal{M})$ is identified
with $\mathrm{per}_{\underline{\mathcal{M}}}(\mathcal{M})$.
Let $\Phi : \mathcal{D}^-_{\underline{\mathcal{M}}}(\mathcal{M}) \rightarrow
\mathcal{D}(\mathcal{B}^{op})^{op}$ be the functor which takes $X$ to the dg module
$$ B \mapsto \mathrm{Hom}^\bullet(X_c, \Psi(B))\,,$$
where $B$ is in $\mathcal{H}^b_{\mathcal{E}\mbox{-}ac}(\mathcal{M})$
and $X_c$ is a cofibrant replacement of $X$ for the projective model
structure on $\mathcal{C}(\mathcal{M})$.
Since for each right bounded complex $M$ with components in
$\mathcal{M}$, the complex $M^{\wedge}$ is cofibrant in
$\mathcal{C}(\mathcal{M})$, it is clear that the functor $G :
\mathcal{H}^b(\mathcal{M})/ \mathcal{H}^b(\mathcal{P}) \rightarrow
\mathcal{D}(\mathcal{B}^{op})^{op}$ is isomorphic to the composition $
\Phi \circ \Psi \circ \Upsilon$.
We have the following commutative diagram

{\small
$$
\xymatrix{
\mathcal{H}^b(\mathcal{M})/ \mathcal{H}^b(\mathcal{P})
\ar@{^{(}->}[r]^-{\Upsilon} & \mathcal{H}^-_{\mathcal{E}\mbox{-}ac}(\mathcal{M}) \ar[rr]^{\Psi}
\ar[dr]^{\sim} &   & \mathcal{D}^-_{\underline{\mathcal{M}}}(\mathcal{M})
\ar[r]^{\Phi} &   \mathcal{D}(\mathcal{B}^{op})^{op} \\
 &  & {\mathcal{V}}
  \ar@{^{(}->}[ur] \ar@{_{(}-->}[rru] & & \\
 \mathcal{H}^b_{\mathcal{E}\mbox{-}ac}(\mathcal{M}) \ar@{=}[r]
 \ar@{^{(}->}[uu] &  \mathcal{H}^b_{\mathcal{E}\mbox{-}ac}(\mathcal{M}) \ar@{^{(}->}[uu]
\ar[rr]^{\sim} & & *+<1pc>{\mathrm{per}_{\underline{\mathcal{M}}}(\mathcal{M})}
\ar@{^{(}->}[lu] \ar@{^{(}->}[uu] \ar[r]^{\sim} & 
*+<1pc>{\mathrm{per}(\mathcal{B}^{op})^{op}} \ar@{^{(}->}[uu] \,\,\,.
}
$$
}
\begin{lemma}\label{caracterisation}
Let $Y$ be an object of $\mathcal{D}^-_{\underline{\mathcal{M}}}(\mathcal{M})$.
\begin{itemize}
\item[a)] $Y$ lies in $\mathrm{per}_{\underline{\mathcal{M}}}(\mathcal{M})$ iff $\mathrm{H}^p(Y)$ is a finitely presented $\underline{\mathcal{M}}$-module
for all $p \in \mathbb{Z}$ and vanishes for all but finitely many $p$.
\item[b)] $Y$ lies in $\mathcal{V}$ iff $\mathrm{H}^p(Y)$ is a finitely presented $\underline{\mathcal{M}}$-module
for all $p \in \mathbb{Z}$ and vanishes for all $p \gg 0$.
\end{itemize}
\end{lemma}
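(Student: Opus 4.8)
The statement characterizes two full subcategories of $\mathcal{D}^-_{\underline{\mathcal{M}}}(\mathcal{M})$: the perfect ones $\mathrm{per}_{\underline{\mathcal{M}}}(\mathcal{M})$ in part a), and the essential image $\mathcal{V}$ of $\Psi$ in part b). The common mechanism is a $t$-structure argument on $\mathcal{D}(\mathcal{M})$ together with a devissage in terms of finitely presented $\underline{\mathcal{M}}$-modules. The plan is to first set up the canonical $t$-structure on $\mathcal{D}^-(\mathcal{M})$, observe it restricts to the subcategory $\mathcal{D}^-_{\underline{\mathcal{M}}}(\mathcal{M})$ (this is essentially the definition of the latter: it consists of the right bounded complexes with homology in $\mathrm{Mod}\,\underline{\mathcal{M}}$, and truncation preserves this condition), and then run the argument truncation-layer by truncation-layer.

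\textbf{Part a).} First I would prove the ``only if'' direction: if $Y$ lies in $\mathrm{per}_{\underline{\mathcal{M}}}(\mathcal{M}) = \mathcal{H}^b_{\mathcal{E}\text{-}ac}(\mathcal{M})$ under the identification $\Psi$, then $Y$ is represented by a bounded $\mathcal{E}$-acyclic complex $X$ with components in $\mathcal{M}$; its homology $\mathrm{H}^p(Y) = \mathrm{H}^p(X^\wedge)$ is computed by applying the Yoneda functor $M \mapsto M^\wedge$, which is fully faithful, so each $\mathrm{H}^p(Y)$ is a finitely presented $\underline{\mathcal{M}}$-module (a cokernel of a map between representables, since $X$ is bounded with components in $\mathcal{M}$ and $\mathcal{E}$-acyclic forces the homology modules to factor through $\underline{\mathcal{M}}$), and it vanishes outside a finite range because $X$ is bounded. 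For the ``if'' direction, suppose $Y \in \mathcal{D}^-_{\underline{\mathcal{M}}}(\mathcal{M})$ has finitely presented homology concentrated in finitely many degrees. I would argue by induction on the number of nonzero homology modules, using the canonical truncation triangles $\tau_{\leq n}Y \to Y \to \tau_{\geq n+1}Y \to (\tau_{\leq n}Y)[1]$: the base case is a single finitely presented $\underline{\mathcal{M}}$-module $N$ viewed as a stalk complex, which I must show lies in $\mathrm{per}_{\underline{\mathcal{M}}}(\mathcal{M})$ — and here one uses that $N$ is the homology of a bounded $\mathcal{E}$-acyclic complex of objects of $\mathcal{M}$ (take a finite projective presentation of $N$ over $\underline{\mathcal{M}}$, lift the map of representables to a morphism $M_1 \to M_0$ in $\mathcal{M}$, complete it to an $\mathcal{E}$-acyclic complex using that $\mathcal{M}$ is the preimage of a $d$-cluster tilting subcategory, which guarantees the relevant $\mathrm{Ext}$-vanishing and the existence of $\mathcal{E}$-acyclic resolutions of length controlled by $d$). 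Since $\mathrm{per}_{\underline{\mathcal{M}}}(\mathcal{M})$ is a triangulated subcategory, it is stable under the extension appearing in the truncation triangle, completing the induction.

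\textbf{Part b).} The ``if'' direction drops the finiteness-of-degrees hypothesis, allowing homology in all degrees $p \leq 0$ (say). Here I would again use the truncation tower, but now I must pass to an infinite limit: $Y = \operatorname{holim} \tau_{\geq -n}Y$ in $\mathcal{D}^-(\mathcal{M})$, or rather present $Y$ as built from the stalks of its homology modules. The key point is that $\mathcal{V}$, being the essential image of $\Psi: \mathcal{H}^-_{\mathcal{E}\text{-}ac}(\mathcal{M}) \hookrightarrow \mathcal{D}^-_{\underline{\mathcal{M}}}(\mathcal{M})$, is closed under the relevant right bounded (co)limits and under extensions, so each $\tau_{\geq -n}Y$ lies in $\mathcal{V}$ by part a) (each truncation has finitely many homology modules, all finitely presented), and then one checks that the right bounded homotopy limit stays in $\mathcal{V}$ — this is where one uses that complexes in $\mathcal{C}^-(\mathcal{M})$ that are $\mathcal{E}$-acyclic are stable under this process, equivalently that $\Psi$ is an equivalence onto $\mathcal{V}$ and $\mathcal{H}^-_{\mathcal{E}\text{-}ac}(\mathcal{M})$ is closed under the corresponding limit of right bounded complexes. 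The ``only if'' direction is as in part a): an object of $\mathcal{V}$ comes from a right bounded $\mathcal{E}$-acyclic complex, whose homology is finitely presented over $\underline{\mathcal{M}}$ in each degree and bounded above.

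\textbf{Main obstacle.} I expect the genuine difficulty to be the base case of the induction in part a): showing that every finitely presented $\underline{\mathcal{M}}$-module $N$ arises as $\mathrm{H}^p$ of a \emph{bounded} $\mathcal{E}$-acyclic complex over $\mathcal{M}$. This is precisely where the $d$-cluster tilting hypothesis on $\mathcal{T} = \underline{\mathcal{M}}$ enters essentially: one needs that a finite projective presentation over $\underline{\mathcal{M}}$ can be lifted and extended to a bounded $\mathcal{E}$-acyclic complex, which requires controlling $\mathcal{E}$-acyclic resolutions — the relevant finiteness (length $\leq d$ or so) is exactly what axioms c) and d) for $\mathcal{T}$ provide, via the standard fact that over a $d$-cluster tilting subcategory every module has a resolution of length $d$ by the subcategory. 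In part b), the secondary obstacle is the convergence/closedness argument for the infinite homotopy limit, which I would handle by working directly with the complex model $\mathcal{H}^-_{\mathcal{E}\text{-}ac}(\mathcal{M})$ rather than abstractly in the derived category, so that $\mathcal{E}$-acyclicity is manifestly preserved.
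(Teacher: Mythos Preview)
Your proposal follows essentially the same approach as the paper. For part a), both you and the paper reduce to the base case of a single finitely presented $\underline{\mathcal{M}}$-module and then use the cluster-tilting hypothesis to produce a finite projective $\mathcal{M}$-resolution (the paper cites theorem~5.4 of \cite{cluster} for a resolution of length $d+1$); the induction via truncation triangles is identical.

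For part b), your idea is also the paper's, but the paper is sharper about the one technical point you leave vague. You propose to take the homotopy limit of the $\tau_{\geq -n}Y$ and then argue closure of $\mathcal{V}$; as you note yourself, this must be done at the level of complexes, not abstractly. The paper does exactly this, but with a specific device: it builds complexes $P_n$ of finitely generated projective $\mathcal{M}$-modules with $P_n \simeq \tau_{\geq -n}Y$, inductively defining $P_{n+1}$ as the \emph{mapping cylinder} of a morphism from $P_n$ to a finite resolution of the next homology module. The cylinder construction guarantees that for each fixed degree $p$ the sequence $P_n^p$ is eventually constant, so the inverse limit in the category of complexes is a bona fide right-bounded complex of finitely generated projectives, hence manifestly in $\mathcal{V}$. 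Without this stationarity trick a naive limit could produce infinite products in each degree and leave $\mathcal{M}$; this is the detail you should make explicit.
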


\begin{proof}
a) Clearly the condition is necessary. For the converse, suppose first
that $Y$ is a finitely presented
$\underline{\mathcal{M}}$-module. Then, as an $\mathcal{M}$-module,
$Y$ admits a resolution of length $d+1$ by finitely generated
projective modules by theorem $5.4$ b) of \cite{cluster}. It follows
that $Y$ belongs to $\mathrm{per}_{\underline{\mathcal{M}}}(\mathcal{M})$. Since $\mathrm{per}_{\underline{\mathcal{M}}}(\mathcal{M})$ is
triangulated, it also contains all shifts of finitely presented
$\underline{\mathcal{M}}$-modules and all extensions of shifts. This
proves the converse.\\ 
b) Clearly the condition is necessary.
For the converse, we can suppose
without loss of generality that $Y^n=0$, for all $n \geq 1$ and that
$Y^n$ belongs to $\mathrm{proj}\,\mathcal{M}$, for $n\leq 0$. We now construct a sequence
$$ \cdots \rightarrow P_n \rightarrow \cdots \rightarrow P_1
\rightarrow P_0$$
of complexes of finitely generated projective $\mathcal{M}$-modules
such that $P_n$ is quasi-isomorphic to $\tau_{\geq -n}Y$ for each $n$
and that, for each $p \in \mathbb{Z}$, the sequence of
$\mathcal{M}$-modules $P_n^p$ becomes stationary. By our assumptions,
we have $\tau_{\geq 0}Y \stackrel{\sim}{\rightarrow}
\mathrm{H}^0(Y)$. Since $\mathrm{H}^0(Y)$ belongs to
$\mathrm{mod}\,\underline{\mathcal{M}}$, we know by theorem $5.4$ c) of
\cite{cluster} that it belongs to $\mathrm{per}(\mathcal{M})$ as an
$\mathcal{M}$-module. We define $P_0$ to be a finite resolution of
$\mathrm{H}^0(Y)$ by finitely generated $\mathcal{M}$-modules. 
For the induction step, consider the following truncation triangle associated
with $Y$ 
$$ S^{i+1}\mathrm{H}^{-i-1}(Y) \rightarrow \tau_{\geq -i-1}Y \rightarrow
\tau_{\geq -i}Y \rightarrow S^{i+2}\mathrm{H}^{-i-1}(Y)\,,$$
for $i \geq 0$. 
By the induction hypothesis, we have constructed $P_0,\ldots,P_i$ and we have a
quasi-isomorphism $P_i \stackrel{\sim}{\rightarrow} \tau_{\geq
  -i}Y$. Let $Q_{i+1}$ be a finite resolution of $S^{i+2}\mathrm{H}^{-i-1}(Y)$ by
finitely presented projective $\mathcal{M}$-modules. We have a
morphism $f_i:P_i \rightarrow Q_{i+1}$ and we define
$P_{i+1}$ as the cylinder of $f_i$. We define $P$ as the limit of the
$P_i$ in the category of complexes. We remark that $Y$
is quasi-isomorphic to $P$ and that $P$
belongs to $\mathcal{V}$. This proves the converse.
\end{proof}

Let $X$ be in $\mathcal{H}^-_{\mathcal{E}\mbox{-}ac}(\mathcal{M})$.

\begin{remark}\label{t-strures}
Lemma~\ref{caracterisation} shows that the natural $t$-structure of
$\mathcal{D}(\mathcal{M})$ restricts to a $t$-structure on
$\mathcal{V}$. 
This allows us to express $\Psi(X)$ as
$$ \Psi(X) \stackrel{\sim}{\longrightarrow}
\underset{i}{\mbox{holim}}\,\tau_{\geq-i}\Psi(X) \,,$$
where  $\tau_{\geq-i}\Psi(X)$ is in $\mathrm{per}_{\underline{\mathcal{M}}}(\mathcal{M})$.
\end{remark}

\begin{lemma}\label{holim}
We have the following isomorphism
$$
\Phi(\Psi(X))=\Phi(\underset{i}{\mathrm{holim}}\,\tau_{\geq-i}\Psi(X))
\stackrel{\sim}{\longrightarrow}
\underset{i}{\mathrm{holim}}\,\Phi(\tau_{\geq -i}\Psi(X))\,.
$$
\end{lemma}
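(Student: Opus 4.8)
The statement to be proved is that the functor $\Phi$ commutes with the particular homotopy limit $\Psi(X) \stackrel{\sim}{\to} \operatorname{holim}_i \tau_{\geq -i}\Psi(X)$, i.e.\ that the canonical morphism $\Phi(\operatorname{holim}_i \tau_{\geq -i}\Psi(X)) \to \operatorname{holim}_i \Phi(\tau_{\geq -i}\Psi(X))$ is an isomorphism in $\mathcal{D}(\mathcal{B}^{op})^{op}$. Since $\mathcal{D}(\mathcal{B}^{op})^{op}$ is the opposite of a derived category, a homotopy limit there is a homotopy colimit in $\mathcal{D}(\mathcal{B}^{op})$, and the assertion dualizes to: the functor $X \mapsto \operatorname{Hom}^\bullet(X_c, \Psi(-))$, viewed as a functor into $\mathcal{D}(\mathcal{B}^{op})$, sends the relevant homotopy colimit (the one presenting $\Psi(X)$ as $\operatorname{hocolim}$ of its truncations, read in the opposite category) to a homotopy colimit. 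The first thing I would do is unwind exactly which (co)limit is in play: by Remark~\ref{t-strures} we have $\Psi(X) \stackrel{\sim}{\to} \operatorname{holim}_i \tau_{\geq -i}\Psi(X)$ in $\mathcal{V} \subset \mathcal{D}^-_{\underline{\mathcal{M}}}(\mathcal{M})$, and this $\operatorname{holim}$ is really a (homotopy) \emph{limit} of a tower with pro-surjective (in fact termwise surjective, after choosing good models) transition maps, so in the opposite category it is a sequential homotopy colimit.

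\textbf{Key steps.} First, replace the tower $\{\tau_{\geq -i}\Psi(X)\}_i$ by a strict model: choose a cofibrant complex $P = \lim_i P_i$ over $\mathcal{M}$ as in the proof of Lemma~\ref{caracterisation} b), so that $P$ represents $\Psi(X)$, each $P_i$ represents $\tau_{\geq -i}\Psi(X) \in \operatorname{per}_{\underline{\mathcal{M}}}(\mathcal{M})$, and for each degree $p$ the tower $\{P_i^p\}$ is stationary — this last point is crucial, as it makes $P$ a genuine (not merely homotopy) limit that is degreewise eventually constant, hence also computes the homotopy limit. Second, apply $\operatorname{Hom}^\bullet_{\mathcal{M}}(-, \Psi(B))$ for $B$ running over $\mathcal{B}$; because in each fixed total degree the complex $\operatorname{Hom}^\bullet(P, \Psi(B))$ involves only finitely many components of $P$ (as $\Psi(B)$ is bounded and $P$ is right bounded with stationary components), the natural map $\operatorname{Hom}^\bullet(P,\Psi(B)) \to \operatorname{holim}_i \operatorname{Hom}^\bullet(P_i, \Psi(B))$ is a quasi-isomorphism — indeed it is a degreewise eventual isomorphism, so the $\lim^1$ term vanishes. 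Third, assemble this over all $B$: the homotopy limit in $\mathcal{D}(\mathcal{B}^{op})^{op}$ is computed objectwise on the dg modules (homotopy colimits and limits of dg modules over $\mathcal{B}^{op}$ are computed componentwise, up to fibrant/cofibrant replacement), so the objectwise quasi-isomorphism of the previous step upgrades to the desired isomorphism $\Phi(\operatorname{holim}_i \tau_{\geq -i}\Psi(X)) \stackrel{\sim}{\to} \operatorname{holim}_i \Phi(\tau_{\geq -i}\Psi(X))$.

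\textbf{Main obstacle.} The delicate point is the control of $\lim^1$: a priori $\Phi$ need not commute with infinite homotopy limits, and the argument only works because of the very specific finiteness built into the tower (stationary components of $P$, boundedness of $\Psi(B)$, finite projective dimension of $\underline{\mathcal{M}}$-modules over $\mathcal{M}$ coming from Theorem $5.4$ of \cite{cluster}). I would make sure that the cofibrant model $P$ can be chosen so that the tower $\{\operatorname{Hom}^\bullet(P_i,\Psi(B))\}_i$ has surjective transition maps in each degree — this follows from choosing the $P_i \to P_{i+1}$ to be componentwise split monomorphisms with projective cokernel, which the cylinder construction in Lemma~\ref{caracterisation} b) does provide — so that the Milnor sequence degenerates and $\operatorname{holim}$ is just the termwise inverse limit with no higher derived term. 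Once this bookkeeping is in place, the rest is formal: apply the result termwise in $B$ and invoke the standard description of homotopy (co)limits in module categories over a dg category. A secondary, purely technical, check is the compatibility of $\Phi$ with cofibrant replacement (the $X_c$ in its definition), which is routine since $P$ and each $P_i$ are already cofibrant.
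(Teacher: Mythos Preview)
Your proposal is correct and follows essentially the same approach as the paper: reduce to an objectwise check at each $B \in \mathcal{B}$, then use that $B$ is bounded so that in each fixed degree $n$ the sequence $i \mapsto \mathrm{Hom}^n(\tau_{\geq -i}\Psi(X), B)$ stabilizes. The paper's proof is just a more compressed version of yours---it does not bother fixing the explicit stationary model $P$ from Lemma~\ref{caracterisation}~b) nor discuss $\lim^1$, since the degreewise stabilization (coming purely from the boundedness of $B$) already makes the comparison map an eventual isomorphism in each degree; your extra bookkeeping is correct but not strictly needed.
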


\begin{proof}
It is enough to show that the canonical morphism induces a
quasi-isomorphism when evaluated at any object $B$ of
$\mathcal{B}$. We have 
$$\Phi(\underset{i}{\mbox{holim}}\,\tau_{\geq-i}\Psi(X))(B)
=
\mathrm{Hom}^\bullet(\underset{i}{\mbox{holim}}\,\tau_{\geq-i}\Psi(X),
B)\,,$$
but since $B$ is a bounded complex, for each $ n \in \mathbf{Z}$, the sequence
$$ i \mapsto \mathrm{Hom}^n(\tau_{\geq -i}\Psi(X),B) $$
stabilizes as $i$ goes to infinity. This implies that
$$
\mathrm{Hom}^\bullet(\underset{i}{\mbox{holim}}\,\tau_{\geq-i}\Psi(X),B)
\stackrel{\sim}{\longleftarrow} \underset{i}{\mbox{holim}}\,\Phi(\tau_{\geq-i}\Psi(X))(B)\,.
$$
\end{proof}

\begin{lemma}\label{commute}
The functor $\Phi$ restricted to the category $\mathcal{V}$ is fully faithful.
\end{lemma}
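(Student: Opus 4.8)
The plan is to bootstrap from two facts already at hand: that $\Phi$ is fully faithful on the full subcategory $\mathrm{per}_{\underline{\mathcal{M}}}(\mathcal{M})$ of $\mathcal{V}$, and that every object of $\mathcal{V}$ is a homotopy limit of a countable tower of objects of $\mathrm{per}_{\underline{\mathcal{M}}}(\mathcal{M})$ to which $\Phi$ behaves well. For the first fact, one notes that under the identification $\mathcal{H}^b_{\mathcal{E}\mbox{-}ac}(\mathcal{M})\stackrel{\Psi}{\simeq}\mathrm{per}_{\underline{\mathcal{M}}}(\mathcal{M})$, and since $\Upsilon$ restricted to $\mathcal{H}^b_{\mathcal{E}\mbox{-}ac}(\mathcal{M})$ is the inclusion, the composite $\Phi\circ\Psi\circ\Upsilon=G$ becomes the equivalence $\mathcal{H}^b_{\mathcal{E}\mbox{-}ac}(\mathcal{M})\stackrel{\sim}{\rightarrow}\mathrm{per}(\mathcal{B}^{op})^{op}$ recalled above; hence $\Phi|_{\mathrm{per}_{\underline{\mathcal{M}}}(\mathcal{M})}$ is fully faithful. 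For the second fact, Lemma~\ref{caracterisation}~a) shows the standard $t$-structure on $\mathcal{D}(\mathcal{M})$ restricts to $\mathcal{V}$ with the truncations $\tau_{\geq -i}$ taking values in $\mathrm{per}_{\underline{\mathcal{M}}}(\mathcal{M})$, and by Remark~\ref{t-strures} every $Y$ in $\mathcal{V}$ equals $\mathrm{holim}_i Y_i$ with $Y_i=\tau_{\geq -i}Y$.

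Next I would record an elementary stabilization statement. Given $Y,Y'$ in $\mathcal{V}$, write $Y'_j=\tau_{\geq -j}Y'$, a bounded object of $\mathrm{per}_{\underline{\mathcal{M}}}(\mathcal{M})$. Applying $\mathrm{Hom}_{\mathcal{D}(\mathcal{M})}(-,Y'_j[k])$ to the triangle $\tau_{\leq -i-1}Y\rightarrow Y\rightarrow Y_i\rightarrow$ and using the standard vanishing $\mathrm{Hom}(\mathcal{D}^{\leq n},\mathcal{D}^{\geq n+1})=0$ together with the boundedness of $Y'_j$, one gets that $\mathrm{Hom}(Y_i,Y'_j[k])\rightarrow\mathrm{Hom}(Y,Y'_j[k])$ is an isomorphism once $i$ is large (depending on $j,k$); in particular $\mathrm{colim}_i\mathrm{Hom}(Y_i,Y'_j[k])\cong\mathrm{Hom}(Y,Y'_j[k])$.

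Then I would compare the two Hom-groups via Milnor $\lim$--$\lim^1$ sequences. Applying $\mathrm{Hom}_{\mathcal{D}(\mathcal{M})}(Y,-)$ to the homotopy-limit triangle of $Y'=\mathrm{holim}_j Y'_j$ gives a short exact sequence with outer terms $\lim^1_j\mathrm{Hom}(Y,Y'_j[-1])$ and $\lim_j\mathrm{Hom}(Y,Y'_j)$. On the other side, by Lemma~\ref{holim} we have $\Phi Y\simeq\mathrm{holim}_i\Phi Y_i$ and $\Phi Y'\simeq\mathrm{holim}_j\Phi Y'_j$, so, viewing these in $\mathcal{D}(\mathcal{B}^{op})$ where homotopy limits over our towers become homotopy colimits, $\mathrm{Hom}_{\mathcal{D}(\mathcal{B}^{op})^{op}}(\Phi Y,\Phi Y')=\mathrm{Hom}_{\mathcal{D}(\mathcal{B}^{op})}(\mathrm{hocolim}_j\Phi Y'_j,\mathrm{hocolim}_i\Phi Y_i)$ sits in a Milnor sequence too. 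Each $\Phi Y'_j$ lies in $\mathrm{per}(\mathcal{B}^{op})$, hence is compact in $\mathcal{D}(\mathcal{B}^{op})$, so its Hom into $\mathrm{hocolim}_i\Phi Y_i$ is $\mathrm{colim}_i\mathrm{Hom}(\Phi Y'_j,\Phi Y_i)$, which by full faithfulness of $\Phi$ on $\mathrm{per}_{\underline{\mathcal{M}}}(\mathcal{M})$ equals $\mathrm{colim}_i\mathrm{Hom}_{\mathcal{D}(\mathcal{M})}(Y_i,Y'_j)\cong\mathrm{Hom}(Y,Y'_j)$ by the previous step; the $\lim^1$-term is treated the same way after a shift. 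Thus the target Milnor sequence has the same outer terms as the source one, and the natural transformation $\mathrm{Hom}_{\mathcal{D}(\mathcal{M})}(Y,-)\rightarrow\mathrm{Hom}_{\mathcal{D}(\mathcal{B}^{op})^{op}}(\Phi Y,\Phi(-))$ is the identity on them; the five lemma forces it to be an isomorphism in the middle, proving $\Phi|_{\mathcal{V}}$ fully faithful.

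The main obstacle is the bookkeeping that makes the two Milnor sequences genuinely comparable: Lemma~\ref{holim} provides only the object-level isomorphism $\Phi(\mathrm{holim})\stackrel{\sim}{\rightarrow}\mathrm{holim}\,\Phi$, so one must verify that the canonical morphism $\Phi(\mathrm{holim}_j Y'_j)\rightarrow\mathrm{holim}_j\Phi(Y'_j)$ is compatible with the defining triangles of the homotopy limits (fitting into a morphism of triangles), and that the shifts occurring in the $\lim^1$-terms match on the two sides --- which in turn relies on the $t$-structure vanishing and the boundedness of the $Y'_j$ coming from Lemma~\ref{caracterisation}. A related subtlety is that the $\Phi Y'_j$ are compact in $\mathcal{D}(\mathcal{B}^{op})$ and not in $\mathcal{D}(\mathcal{B}^{op})^{op}$, so the reduction to the finite level must be carried out after passing to $\mathcal{D}(\mathcal{B}^{op})$.
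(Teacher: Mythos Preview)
Your proof is correct and rests on exactly the same ingredients as the paper's: the expression of each object of $\mathcal{V}$ as a homotopy limit of its truncations (Remark~\ref{t-strures}), the commutation of $\Phi$ with these homotopy limits (Lemma~\ref{holim}), the compactness of $\Phi(Y'_j)$ in $\mathcal{D}(\mathcal{B}^{op})$, the full faithfulness of $\Phi$ on $\mathrm{per}_{\underline{\mathcal{M}}}(\mathcal{M})$, and the stabilization of $\mathrm{Hom}$-groups into a bounded target. The packaging differs slightly: the paper writes a single chain of canonical isomorphisms
\[
\mathrm{Hom}(\Phi\Psi X,\Phi\Psi Y)\ \cong\ \underset{i}{\mathrm{holim}}\ \underset{j}{\mathrm{hocolim}}\ \mathrm{Hom}(\Phi\tau_{\geq -i}\Psi Y,\Phi\tau_{\geq -j}\Psi X)\ \cong\ \cdots\ \cong\ \mathrm{Hom}(\Psi X,\Psi Y),
\]
passing directly through $\lim$/$\mathrm{colim}$ of Hom-groups, whereas you organize the same identifications as a comparison of two parallel Milnor $\lim$--$\lim^1$ sequences via the five lemma. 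Your framing is more explicit about the $\lim^1$-contributions, which the paper's chain absorbs tacitly; conversely, the paper's direct chain sidesteps the compatibility-of-triangles bookkeeping that you rightly flag at the end.
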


\begin{proof}
Let $X,Y$ be in $\mathcal{H}^-_{\mathcal{E}\mbox{-}ac}(\mathcal{M})$. 
The following are canonically isomorphic~:

\begin{eqnarray}
&& \mathrm{Hom}_{\mathcal{D}(\mathcal{B}^{op})^{op}}(\Phi\Psi X,
\Phi\Psi Y)  \nonumber \\
&& \mathrm{Hom}_{\mathcal{D}(\mathcal{B}^{op})}(\Phi\Psi Y,
\Phi\Psi X)  \nonumber \\
&& \mathrm{Hom}_{\mathcal{D}(\mathcal{B}^{op})}(\underset{i}{\mbox{hocolim}}\,\Phi\tau_{\geq-i}\Psi
Y,\underset{j}{\mbox{hocolim}}\,\Phi\tau_{\geq-j}\Psi X
)\\
&& \underset{i}{\mbox{holim}}\,
\mathrm{Hom}_{\mathcal{D}(\mathcal{B}^{op})}(\Phi\tau_{\geq-i}\Psi
Y,\underset{j}{\mbox{hocolim}}\,\Phi\tau_{\geq-j}\Psi X
)  \nonumber \\
&& \underset{i}{\mbox{holim}}\,
  \underset{j}{\mbox{hocolim}}\,
  \mathrm{Hom}_{\mathcal{D}(\mathcal{B}^{op})}(\Phi\tau_{\geq-i}\Psi
  Y,\Phi\tau_{\geq-j}\Psi X) \\
&& \underset{i}{\mbox{holim}}\,
  \underset{j}{\mbox{hocolim}}\,
  \mathrm{Hom}_{\mathrm{per}_{\underline{\mathcal{M}}}(\mathcal{M})}(\tau_{\geq-j}\Psi X,\tau_{\geq-i}\Psi Y) \nonumber \\
&& \underset{i}{\mbox{holim}}\,
  \mathrm{Hom}_{\mathcal{V}}(\underset{j}{\mbox{holim}}\,\tau_{\geq-j}\Psi X,\tau_{\geq-i}\Psi Y) \\
&&
\mathrm{Hom}_{\mathcal{V}}(\Psi(X),\Psi(Y))\,. \nonumber
\end{eqnarray}
Here $(4.1)$ is by the lemma~\ref{holim} seen in
$\mathcal{D}(\mathcal{B}^{op})$, $(4.2)$ is by the fact that
$\Phi\tau_{\geq-i}\Psi Y$ is compact and $(4.3)$ is by the fact that
$\tau_{\geq-i}\Psi Y$ is bounded.
\end{proof}

It is clear now that lemmas~\ref{fidele}, \ref{holim} and \ref{commute} imply the  proposition~\ref{pleinfidele}. 

\section{Determination of the image of $G$}
Let $L_{\rho}:\mathcal{D}^-(\underline{\mathcal{M}}) \rightarrow
\mathcal{D}^-_{\underline{\mathcal{M}}}(\mathcal{M})$ be the
  restriction functor induced by the projection functor $\mathcal{M}
  \rightarrow \underline{\mathcal{M}}$. $L_{\rho}$ admits a left
  adjoint $L: \mathcal{D}^-_{\underline{\mathcal{M}}}(\mathcal{M})
\rightarrow \mathcal{D}^-(\underline{\mathcal{M}})$ which takes $Y$ to
$Y\otimes^{\mathbb{L}}_{\mathcal{M}}\underline{\mathcal{M}}$.
Let $\mathcal{B}^-$ be the dg subcategory of
$\mathcal{C}^-(\mathrm{Mod}\,\mathcal{M})_{dg}$ formed by the objects
of $\mathcal{D}^-_{\underline{\mathcal{M}}}(\mathcal{M})$ that are in the
  essential image of the restriction of $\Psi$ to
$\mathcal{H}^b_{\mathcal{E}\mbox{-}ac}(\mathcal{M})$. Let
$\mathcal{B}'$ be the dg quotient, \emph{cf.}~\cite{Drinfeld}, of
$\mathcal{B}^-$ by its quasi-isomorphisms. It is clear that the dg
categories $\mathcal{B}'$ and $\mathcal{B}$ are quasi-equivalent,
\emph{cf.}~\cite{DerivingDG}, and that the natural dg functor
$\mathcal{M} \rightarrow
\mathcal{C}^-(\mathrm{Mod}\,\mathcal{M})_{dg}$ factors through
$\mathcal{B}^-$. Let $R':\mathcal{D}(\mathcal{B}^{op})^{op}
\rightarrow \mathcal{D}(\underline{\mathcal{M}}^{op})^{op}$ be the
restriction functor induced by the dg functor $\underline{\mathcal{M}}
\rightarrow \mathcal{B}'$.
Let $\Phi': \mathcal{D}^-_{\underline{\mathcal{M}}}(\mathcal{M})
\rightarrow \mathcal{D}(\mathcal{B}'^{op})^{op}$ be the functor which
takes $X$ to the dg module 
$$ B' \mapsto \mathrm{Hom}^{\bullet}(X_c,B')\,,$$
where $B'$ is in $\mathcal{B}'$ and $X_c$ is a cofibrant replacement
of $X$ for the projective model structure on
$\mathcal{C}(\mathrm{Mod}\,\mathcal{M})$.
Finally let $\Gamma :\mathcal{D}¯(\underline{\mathcal{M}}) \rightarrow
\mathcal{D}(\underline{\mathcal{M}}^{op})^{op}$ be the functor that
sends $Y$ to 
$$ M \mapsto \mathrm{Hom}^{\bullet}(Y_c,
\underline{\mathcal{M}}(?,M))\,,$$
where $Y_c$ is a cofibrant replacement of $Y$ for the projective model
structure on $\mathcal{C}(\mathrm{Mod}\,\underline{\mathcal{M}})$ and $M$
is in $\underline{\mathcal{M}}$.

We have the following diagram~:
$$
\xymatrix{
& & & \mathcal{D}(\mathcal{B}^{op})^{op} & \mathcal{B} \ar[d] \\
\mathcal{H}^b(\mathcal{M})/\mathcal{H}^b(\mathcal{P})
\ar@{^{(}->}[r]^-{\Upsilon} & \mathcal{H}^-_{\mathcal{E}\mbox{-}ac}(\mathcal{M})
\ar[r]^{\Psi} & \mathcal{D}^-_{\underline{\mathcal{M}}}(\mathcal{M})
\ar[d]_L \ar[r]^{{\Phi}'} \ar[ur]^{\Phi}  & \mathcal{D}(\mathcal{B}'^{op})^{op}
\ar[d]^{R'} \ar[u]_{\sim} & \mathcal{B}' \\
 &  & \mathcal{D}^-(\underline{\mathcal{M}}) \ar[r]_{\Gamma} &
 \mathcal{D}(\underline{\mathcal{M}}^{op})^{op} & \underline{\mathcal{M}} \ar[u] \\
}
$$

\begin{lemma}\label{commutative}
The following square
$$
\xymatrix{
 \mathcal{D}^-_{\underline{\mathcal{M}}}(\mathcal{M}) \ar[r]^{\Phi'}
 \ar[d]_{L} & \mathcal{D}(\mathcal{B}'^{op})^{op} \ar[d]^{R'} &
 \mathcal{B}' \\
\mathcal{D}^-(\underline{\mathcal{M}}) \ar[r]_{\Gamma} &
\mathcal{D}(\underline{\mathcal{M}}^{op})^{op} &
\underline{\mathcal{M}} \ar[u]
}
$$
is commutative.
\end{lemma}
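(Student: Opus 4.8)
The statement asserts that the square relating $\Phi'$, $L$, $R'$ and $\Gamma$ commutes, i.e.\ that for every $X$ in $\mathcal{D}^-_{\underline{\mathcal{M}}}(\mathcal{M})$ there is a natural isomorphism $R'\Phi'(X) \stackrel{\sim}{\rightarrow} \Gamma L(X)$ in $\mathcal{D}(\underline{\mathcal{M}}^{op})^{op}$. The plan is to unwind both sides as explicit complexes of $\underline{\mathcal{M}}$-modules and compare them. On the one hand, $R'\Phi'(X)$ is the restriction along $\underline{\mathcal{M}} \hookrightarrow \mathcal{B}'$ of the dg module $B' \mapsto \mathrm{Hom}^\bullet(X_c, B')$, so it evaluates on $M \in \underline{\mathcal{M}}$ to $\mathrm{Hom}^\bullet_{\mathrm{Mod}\,\mathcal{M}}(X_c, M^\wedge)$, where $M^\wedge$ is viewed as an object of $\mathcal{B}^-$ via the factorization $\mathcal{M} \to \mathcal{B}^- \to \mathcal{B}'$. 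On the other hand, $\Gamma L(X)$ evaluates on $M$ to $\mathrm{Hom}^\bullet_{\mathrm{Mod}\,\underline{\mathcal{M}}}((X\otimes^{\mathbb{L}}_{\mathcal{M}}\underline{\mathcal{M}})_c, \underline{\mathcal{M}}(?,M))$.

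First I would reduce to the case where $X = X_c$ is cofibrant for the projective model structure on $\mathcal{C}(\mathrm{Mod}\,\mathcal{M})$, so that $X\otimes^{\mathbb{L}}_{\mathcal{M}}\underline{\mathcal{M}}$ is computed by the naive tensor product $X\otimes_{\mathcal{M}}\underline{\mathcal{M}}$, which is again cofibrant over $\underline{\mathcal{M}}$ (the functor $-\otimes_{\mathcal{M}}\underline{\mathcal{M}}$ is left Quillen). Then the classical adjunction isomorphism between restriction and extension of scalars along $\mathcal{M}\to\underline{\mathcal{M}}$ gives, at the level of complexes,
$$
\mathrm{Hom}^\bullet_{\mathrm{Mod}\,\underline{\mathcal{M}}}(X\otimes_{\mathcal{M}}\underline{\mathcal{M}},\, \underline{\mathcal{M}}(?,M)) \;\cong\; \mathrm{Hom}^\bullet_{\mathrm{Mod}\,\mathcal{M}}(X,\, L_\rho(\underline{\mathcal{M}}(?,M)))\,,
$$
and $L_\rho(\underline{\mathcal{M}}(?,M))$ is precisely the $\mathcal{M}$-module $M^\wedge$ regarded through the projection, i.e.\ the image of $M$ in $\mathcal{B}^-$. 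This produces the desired natural isomorphism, provided one checks that the identifications are compatible with the dg structures, i.e.\ that the isomorphism is one of $\underline{\mathcal{M}}^{op}$-dg modules and not merely a degreewise isomorphism; this is a routine but slightly tedious verification that the adjunction unit/counit are chain maps functorial in $M$.

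The steps, in order, would be: (1) recall that $\mathcal{M}\to\mathcal{B}^-$ factors the Yoneda functor and that under the quasi-equivalence $\mathcal{B}'\simeq\mathcal{B}$ the composite $\underline{\mathcal{M}}\to\mathcal{B}'$ corresponds to sending $M$ to the complex $M^\wedge$; (2) replace $X$ by a cofibrant resolution and note $L(X) = X\otimes_{\mathcal{M}}\underline{\mathcal{M}}$, itself cofibrant over $\underline{\mathcal{M}}$, so no further resolution is needed on the $\Gamma L$ side; (3) apply the extension/restriction adjunction $(-\otimes_{\mathcal{M}}\underline{\mathcal{M}}, L_\rho)$ at the level of $\mathrm{Hom}^\bullet$-complexes; (4) identify $L_\rho(\underline{\mathcal{M}}(?,M))$ with the object of $\mathcal{B}'$ representing $M$; (5) check naturality in $M$, i.e.\ that we obtain an isomorphism of functors $\underline{\mathcal{M}}^{op}\to\mathcal{C}(k)$, hence an isomorphism in $\mathcal{D}(\underline{\mathcal{M}}^{op})^{op}$. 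The main obstacle I anticipate is bookkeeping with variances and dg structures: every category here carries an ``$op$'' and the functors $\Phi'$, $\Gamma$ land in ``$op$'' derived categories, so one must be careful that the adjunction isomorphism respects the contravariance and that ``cofibrant replacement'' is taken on the correct side; once the correct model-categorical setup is pinned down, the isomorphism itself is essentially the tensor-hom adjunction and presents no real difficulty.
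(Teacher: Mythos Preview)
Your proposal is correct and follows essentially the same line as the paper's proof: unwind both sides on an object $M \in \underline{\mathcal{M}}$, identify the image of $M$ in $\mathcal{B}'$ with $L_\rho$ of the representable $\underline{\mathcal{M}}$-module, and apply the $(L, L_\rho)$ extension/restriction adjunction at the level of $\mathrm{Hom}^\bullet$-complexes. The paper carries this out in three lines, while you have (sensibly) spelled out the cofibrancy and naturality bookkeeping that the paper leaves implicit.
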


\begin{proof}
By definition $(R' \circ \Phi')(X)(M)$ equals
$\mathrm{Hom}^{\bullet}(X_c,
\underline{\mathcal{M}}(?,\mathcal{M}))$. Since
$\underline{\mathcal{M}}(?,M)$ identifies with $L_{\rho}M^{\wedge}$
and by adjunction, we have 
$$
\mathrm{Hom}^{\bullet}(X_c, \underline{\mathcal{M}}(?,M))
\stackrel{\sim}{\longrightarrow} \mathrm{Hom}^{\bullet}(X_c, L_{\rho}
M^{\wedge}) \stackrel{\sim}{\longrightarrow}
\mathrm{Hom}^{\bullet}((LX)_c, \underline{\mathcal{M}}(?,M))\,,$$
where the last member equals $(\Gamma \circ L)(X)(M)$.
\end{proof}

\begin{lemma}\label{reflet}
The functor $L$ reflects isomorphisms\,.
\end{lemma}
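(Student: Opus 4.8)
<br>

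The claim is that $L : \mathcal{D}^-_{\underline{\mathcal{M}}}(\mathcal{M}) \to \mathcal{D}^-(\underline{\mathcal{M}})$, $Y \mapsto Y \otimes^{\mathbb{L}}_{\mathcal{M}} \underline{\mathcal{M}}$, reflects isomorphisms. Since $L$ is a triangulated functor between triangulated categories, it suffices to show that $L$ detects the zero object: if $LX \simeq 0$ in $\mathcal{D}^-(\underline{\mathcal{M}})$ then $X \simeq 0$ in $\mathcal{D}^-_{\underline{\mathcal{M}}}(\mathcal{M})$. Indeed, given a morphism $f : X \to X'$ with $L(f)$ invertible, complete $f$ to a triangle $X \to X' \to C \to SX$; then $LC \simeq 0$, so by the zero-detection statement $C \simeq 0$, hence $f$ is an isomorphism. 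So the plan is to reduce to this vanishing statement and then prove it.

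To prove that $LX \simeq 0$ implies $X \simeq 0$, I would argue on the homology modules of $X$, which by definition of $\mathcal{D}^-_{\underline{\mathcal{M}}}(\mathcal{M})$ lie in $\mathrm{Mod}\,\underline{\mathcal{M}}$, i.e. are $\mathcal{M}$-modules annihilated by the ideal of morphisms factoring through a projective-injective. First I would reduce to the case of a single such module placed in one degree: using the right-boundedness of $X$ and the homological truncation $t$-structure on $\mathcal{D}^-_{\underline{\mathcal{M}}}(\mathcal{M})$ (whose existence follows from lemma~\ref{caracterisation} exactly as in remark~\ref{t-strures}), one has $\tau_{\geq -n}X \to X$ with cone in homological degrees $< -n$; an easy d\'evissage using the truncation triangles and the fact that $L$ is triangulated shows it is enough to prove: if $N$ is an $\underline{\mathcal{M}}$-module, regarded as an $\mathcal{M}$-module via restriction along $\mathcal{M} \to \underline{\mathcal{M}}$, and $N \otimes^{\mathbb{L}}_{\mathcal{M}} \underline{\mathcal{M}} \simeq 0$ in $\mathcal{D}(\underline{\mathcal{M}})$, then $N = 0$. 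But here the key point is that the restriction functor $L_\rho : \mathcal{D}^-(\underline{\mathcal{M}}) \to \mathcal{D}^-_{\underline{\mathcal{M}}}(\mathcal{M})$, which is fully faithful (the projection $\mathcal{M} \to \underline{\mathcal{M}}$ is a quotient by an idempotent ideal, so restriction of modules is fully faithful), is right adjoint to $L$, and the unit of this adjunction is an isomorphism precisely because $L L_\rho \simeq \mathrm{id}$: concretely, for a module $N$ over $\underline{\mathcal{M}}$ one has $L_\rho N \otimes^{\mathbb{L}}_{\mathcal{M}} \underline{\mathcal{M}} \simeq N$ (the bar resolution of $N$ as an $\mathcal{M}$-module, tensored down, computes this), so $L(L_\rho N) \simeq N$ and $L L_\rho N \simeq 0$ forces $N \simeq 0$.

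A cleaner way to phrase the whole argument, which I would adopt to avoid the d\'evissage entirely: since every homology module of $X$ is an $\underline{\mathcal{M}}$-module, the canonical morphism $X \to L_\rho L_\rho^{-1}$-type counit is not directly available, but one can instead observe that $X$ lies in the essential image of $L_\rho$. This is because $L_\rho$ is fully faithful with essential image exactly those complexes in $\mathcal{D}^-(\mathcal{M})$ all of whose homology is restricted from $\underline{\mathcal{M}}$ — indeed restriction along an idempotent-ideal quotient is a fully faithful embedding on derived categories identifying $\mathcal{D}^-(\underline{\mathcal{M}})$ with the full subcategory of $\mathcal{D}^-(\mathcal{M})$ supported on $\mathrm{Mod}\,\underline{\mathcal{M}}$, which is precisely $\mathcal{D}^-_{\underline{\mathcal{M}}}(\mathcal{M})$. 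Granting this, write $X \simeq L_\rho(\overline{X})$ with $\overline{X} \in \mathcal{D}^-(\underline{\mathcal{M}})$; then $LX \simeq L L_\rho \overline{X} \simeq \overline{X}$ (the composite $L L_\rho$ is the identity, by the bar-resolution computation above applied degreewise), so $LX \simeq 0$ gives $\overline{X} \simeq 0$ and hence $X \simeq 0$.

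The main obstacle is the verification that $L L_\rho \simeq \mathrm{id}$, equivalently that restriction $\mathcal{D}^-(\underline{\mathcal{M}}) \hookrightarrow \mathcal{D}^-(\mathcal{M})$ lands in and is essentially surjective onto $\mathcal{D}^-_{\underline{\mathcal{M}}}(\mathcal{M})$; this rests on $\mathcal{M} \to \underline{\mathcal{M}}$ being the quotient by the (idempotent) ideal of maps through $\mathcal{P}$, so that a $\underline{\mathcal{M}}$-module has a projective resolution over $\mathcal{M}$ obtained from a projective resolution over $\underline{\mathcal{M}}$ by lifting projectives along $\mathcal{M} \to \underline{\mathcal{M}}$, and $\mathrm{Tor}^{\mathcal{M}}_i(N,\underline{\mathcal{M}})$ vanishes for $i>0$ on such modules. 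Everything else — reduction to the zero object via triangles, truncation, and the formal adjunction $(L, L_\rho)$ — is routine. I would therefore organize the proof as: (1) recall $(L,L_\rho)$ adjunction and full faithfulness of $L_\rho$; (2) identify $\mathrm{Im}(L_\rho)$ with $\mathcal{D}^-_{\underline{\mathcal{M}}}(\mathcal{M})$ and deduce $L L_\rho \simeq \mathrm{id}$; (3) conclude that $L$ detects zero objects and hence reflects isomorphisms.
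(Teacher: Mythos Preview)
Your reduction to ``$L$ detects zero'' is correct and matches the paper. The gap is in what follows: both of your arguments rest on the derived-level identity $L L_\rho \simeq \mathrm{id}$, i.e.\ $N \otimes^{\mathbb L}_{\mathcal M}\underline{\mathcal M}\simeq N$ for every $\underline{\mathcal M}$-module $N$. You justify this by the ideal $[\mathcal P]$ being idempotent, but that is not enough: full faithfulness of $L_\rho$ on derived categories is the condition that $\mathcal M\to\underline{\mathcal M}$ be a \emph{homological epimorphism}, i.e.\ that $\mathrm{Tor}^{\mathcal M}_i(\underline{\mathcal M},\underline{\mathcal M})=0$ for all $i>0$, which is strictly stronger than idempotency (the latter only guarantees vanishing of $\mathrm{Tor}_1$) and is neither asserted nor established in this setting. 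The bar-resolution remark does not help: tensoring the bar resolution of $N$ with $\underline{\mathcal M}$ \emph{computes} $N\otimes^{\mathbb L}_{\mathcal M}\underline{\mathcal M}$ but does not show it is concentrated in degree~$0$. Your first argument has a further problem even before this point: from $LY\simeq 0$ one cannot conclude $L(\mathrm H^n Y)\simeq 0$, since $L$ does not commute with truncation, so the ``easy d\'evissage'' does not actually reduce to the single-module case.

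The paper's argument needs only the trivial abelian-level identity $L^0 N\cong N$ for $\underline{\mathcal M}$-modules $N$ (where $L^0=-\otimes_{\mathcal M}\underline{\mathcal M}$ is the underived left adjoint). Assuming $\mathrm H^p(Y)=0$ for $p>0$, right-exactness of $L^0$ gives $\mathrm H^0(LY)=L^0\mathrm H^0(Y)=\mathrm H^0(Y)$, so $LY\simeq 0$ forces $\mathrm H^0(Y)=0$; then $Y\simeq\tau_{\le-1}Y$ and one repeats inductively. No statement about higher $\mathrm{Tor}$'s, and no derived full faithfulness of $L_\rho$, is required.
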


\begin{proof}
Since $L$ is a triangulated functor, it is enough to show that if $L(Y)=0$, then
$Y=0$. Let $Y$ be in
$\mathcal{D}^-_{\underline{\mathcal{M}}}(\mathcal{M})$ such that
$L(Y)=0$. We can suppose, without loss of generality, that
$\mathrm{H}^p(Y)=0$ for all $p>0$. Let us show that
$\mathrm{H}^0(Y)=0$. Indeed, since $\mathrm{H}^0(Y)$ is an
$\underline{\mathcal{M}}$-module, we have $\mathrm{H}^0(Y) \cong
L^0\mathrm{H}^0(Y)$, where $L^0:\mathrm{Mod}\,\mathcal{M} \rightarrow
\mathrm{Mod}\, \underline{\mathcal{M}}$ is the left adjoint of the
inclusion $\mathrm{Mod}\,\underline{\mathcal{M}} \rightarrow
  \mathrm{Mod}\, \mathcal{M}$. Since $\mathrm{H}^p(Y)$ vanishes in
  degrees $p>0$, we have 
$$L^0\mathrm{H}^0(Y) = \mathrm{H}^0(LY)\,.$$
By induction, one concludes that $\mathrm{H}^p(Y)=0$ for all $p \leq 0$.
\end{proof}

\begin{proposition}\label{caracterization}
An object $Y$ of $\mathcal{D}^-_{\underline{\mathcal{M}}}(\mathcal{M})$
lies in the essential image of the functor $\Psi \circ \Upsilon
: \mathcal{H}^b(\mathcal{M})/ \mathcal{H}^b(\mathcal{P}) \rightarrow
\mathcal{D}^-_{\underline{\mathcal{M}}}(\mathcal{M})$ iff $\tau_{\geq -n}Y$ is in
$\mathrm{per}_{\underline{\mathcal{M}}}(\mathcal{M})$, for all $n \in
\mathbb{Z}$ and $L(Y)$ belongs to $\mathrm{per}(\underline{\mathcal{M}})$.
\end{proposition}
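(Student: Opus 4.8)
<br>

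The plan is to prove the two implications separately, using the semiorthogonal decomposition picture established in Lemma~\ref{fidele} together with the $t$-structure considerations of Lemma~\ref{caracterisation} and the behaviour of $L$.

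First I would treat necessity. Suppose $Y = \Psi(\Upsilon X)$ for some $X$ in $\mathcal{H}^b(\mathcal{M})/\mathcal{H}^b(\mathcal{P})$, so that $Y$ corresponds to an object of $\mathcal{V}$, the essential image of $\Psi$. By Lemma~\ref{caracterisation} b), each $\mathrm{H}^p(Y)$ is a finitely presented $\underline{\mathcal{M}}$-module and $\mathrm{H}^p(Y)=0$ for $p\gg 0$; hence each truncation $\tau_{\geq -n}Y$ has finitely presented homology concentrated in finitely many degrees, so by Lemma~\ref{caracterisation} a) it lies in $\mathrm{per}_{\underline{\mathcal{M}}}(\mathcal{M})$. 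It remains to see that $L(Y)$ is perfect over $\underline{\mathcal{M}}$. Here I would use that $Y = \Psi(\Upsilon X)$ comes from a \emph{bounded} $\mathcal{E}$-acyclic complex $M^\bullet$ with components in $\mathcal{M}$: applying $L = -\otimes^{\mathbb{L}}_{\mathcal{M}}\underline{\mathcal{M}}$ to $M^{\bullet\wedge}$ gives the bounded complex of finitely generated projective $\underline{\mathcal{M}}$-modules obtained by projecting each $M^i$ to $\underline{\mathcal{M}}$, which is manifestly an object of $\mathrm{per}(\underline{\mathcal{M}})$.

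For sufficiency, suppose $\tau_{\geq -n}Y \in \mathrm{per}_{\underline{\mathcal{M}}}(\mathcal{M})$ for all $n$ and $L(Y)\in \mathrm{per}(\underline{\mathcal{M}})$. The condition on truncations together with Lemma~\ref{caracterisation} b) shows that $Y$ already lies in $\mathcal{V}$, i.e. $Y \cong \Psi(Z)$ for some $Z$ in $\mathcal{H}^-_{\mathcal{E}\text{-}ac}(\mathcal{M})$; by the remark after Lemma~\ref{fidele} and the semiorthogonal decomposition, $Z \cong \Upsilon X$ for a well-defined object $X$ of $\mathcal{H}^b(\mathcal{M})/\mathcal{H}^b(\mathcal{P})$ \emph{provided} one can show $Z$ is bounded, equivalently that $\mathrm{H}^p(Z)=0$ for $p\ll 0$. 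This is where the hypothesis $L(Y)\in \mathrm{per}(\underline{\mathcal{M}})$ is used: the functor $L\circ \Psi$ sends a right bounded $\mathcal{E}$-acyclic complex with components in $\mathcal{M}$ to its termwise projection into $\mathcal{C}^-(\underline{\mathcal{M}})$, so $L(Y)$ is computed by that projected complex; since $L(Y)$ is perfect over $\underline{\mathcal{M}}$ its homology vanishes in degrees $p\ll 0$, and by an induction on the truncation triangles (using that $\mathrm{H}^0$ of a right bounded complex agrees with $\mathrm{H}^0 L$ when higher homology vanishes, exactly as in the proof of Lemma~\ref{reflet}) one deduces $\mathrm{H}^p(Z)=0$ for $p\ll 0$. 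Hence $Z$ is bounded, $Z=\Upsilon X$, and $Y = \Psi(\Upsilon X)$ lies in the essential image of $\Psi\circ\Upsilon$.

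The main obstacle I anticipate is the boundedness step in the sufficiency direction: translating the perfectness of $L(Y)$ over $\underline{\mathcal{M}}$ into a lower bound on the homological degrees of $Z$ itself, rather than of its image under $L$. The subtlety is that $L$ need not reflect boundedness on the nose for arbitrary objects, so one must exploit the specific structure of objects of $\mathcal{V}$ (right bounded complexes of $\mathcal{M}$-modules that are termwise, up to quasi-isomorphism, Yoneda modules of objects of $\mathcal{M}$, with $\underline{\mathcal{M}}$-module homology) together with the commutative square of Lemma~\ref{commutative} and the homological-degree argument of Lemma~\ref{reflet}. Once that is in place, the rest is a matter of assembling the already-established equivalences.
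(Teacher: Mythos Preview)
Your necessity argument is correct and essentially the same as the paper's.

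The sufficiency direction, however, contains a genuine gap. Your strategy is to show that $Z$ is bounded (equivalently, that $\mathrm{H}^p(Y)=0$ for $p\ll 0$), which would indeed place $Z$ in the image of $\Upsilon$. But this condition is strictly stronger than what the hypotheses give you, and in fact it is \emph{false} for typical objects in the image of $\Psi\circ\Upsilon$. Take $M\in\mathcal{M}\smallsetminus\mathcal{P}$ and $Z=\Upsilon(M)=\mathbf{a}_p M=(\cdots\to P_1\to P_0\to M)$: this lies in the image of $\Upsilon$ by construction and one checks that $L(\Psi Z)=\underline{M}^{\wedge}$ is perfect, yet $Z$ is unbounded and $\mathrm{H}^{-p}(\Psi Z)\cong\underline{\mathrm{Hom}}(\,?\,,\Omega^{p}M)$ is typically nonzero for all $p\geq 0$. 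So one cannot hope to deduce boundedness of $Z$ (or of the homology of $Y$) from the perfection of $L(Y)$; the implication you are aiming for simply fails. The right characterization of the image of $\Upsilon$ is not ``$Z$ bounded'' but rather ``$Z$ has components in $\mathcal{P}$ for $p\ll 0$'', which is a weaker condition.

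The paper's argument avoids this trap by working not with $Z=Y'$ itself but with its \emph{naive} truncations $\sigma_{\geq -i}Y'$, each of which is a bounded complex over $\mathcal{M}$ and hence defines an object of $\mathcal{H}^b(\mathcal{M})/\mathcal{H}^b(\mathcal{P})$. Since $L\circ\Psi$ is computed termwise and annihilates $\mathcal{P}$, it commutes with naive truncation, so the perfection of $L(Y)$ forces the map $L\Psi(\sigma_{\geq -m}Y')\to L\Psi(Y')$ to become an isomorphism for $m\gg 0$. One then compares $\Psi\Upsilon(\sigma_{\geq -m}Y')$ with $Y$: both lie in $\mathcal{D}^-_{\underline{\mathcal{M}}}(\mathcal{M})$, and $L$ of the comparison map is an isomorphism (because $L\Psi$ kills the $\mathbf{p}$-part), so Lemma~\ref{reflet} applies and exhibits $Y$ as $\Psi\Upsilon(\sigma_{\geq -m}Y')$. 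The point is that one does not show $Y'$ is bounded; one shows $Y$ is already isomorphic to the image under $\Psi\Upsilon$ of some bounded truncation.
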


\begin{proof}
Let $X$ be in $\mathcal{H}^b(\mathcal{M})/
\mathcal{H}^b(\mathcal{P})$. By lemma~\ref{caracterisation} a), $\tau_{\geq -n} \Psi \Upsilon (X)$
is in $\mathrm{per}_{\underline{\mathcal{M}}}(\mathcal{M})$, for all
$n \in \mathbb{Z}$. Since $X$ is a bounded complex, there exists an $s
\ll 0$ such that for all $m <s$ the $m$-components of $\Upsilon (X)$ are
in $\mathcal{P}$, which implies that $L \Psi \Upsilon (X)$ belongs to
$\mathrm{per}(\underline{\mathcal{M}})$.\\
Conversely, suppose that $Y$ is an object of
$\mathcal{D}^-_{\underline{\mathcal{M}}}(\mathcal{M})$ which
satisfies the conditions. By lemma~\ref{caracterisation}, $Y$ belongs
to $\mathcal{V}$. Thus we have $Y=\Psi(Y')$ for some $Y'$ in
$\mathcal{H}^-_{\mathcal{E}\mbox{-}ac}(\mathcal{M})$. We now consider
$Y'$ as an object of $\mathcal{H}^-(\mathcal{M})$ and also write
$\Psi$ for the functor $\mathcal{H}^-(\mathcal{M}) \rightarrow
\mathcal{D}^-(\mathcal{M})$ induced by the Yoneda functor.
We can express $Y'$ as 
$$ Y' \stackrel{\sim}{\longleftarrow} \underset{i}{\mbox{hocolim}}\,
\sigma_{\geq -i}Y'\,,$$
where the $\sigma_{\geq -i}$ are the naive truncations. By our
assumptions on $Y'$, $\sigma_{\geq -i}Y'$ belongs to
$\mathcal{H}^b(\mathcal{M})/ \mathcal{H}^b(\mathcal{P})$, for all $i
\in \mathbb{Z}$. The functors $\Psi$ and $L$ clearly commute with the naive
truncations $\sigma_{\geq -i}$ and so we have
$$ L(Y)=L(\Psi Y')  \stackrel{\sim}{\longleftarrow}
\underset{i}{\mbox{hocolim}}\,L(\sigma_{\geq -i}\Psi Y')
\stackrel{\sim}{\longrightarrow}  
\underset{i}{\mbox{hocolim}}\,\sigma_{\geq -i}L(\Psi Y')\,.$$
By our hypotheses, $L(Y)$ belongs to
$\mathrm{per}(\underline{\mathcal{M}})$ and so there exists an $m \gg
0$ such that 
$$ L(Y)= L(\Psi Y') \stackrel{\sim}{\longleftarrow} \sigma_{\geq -m}L(\Psi
Y') = L(\sigma_{\geq -m}\Psi Y')\,.$$
By lemma~\ref{reflet}, the inclusion
$$\Psi(\sigma_{\geq -m}Y)'= \sigma_{\geq -m}\Psi Y' \longrightarrow \Psi(Y')=Y$$
is an isomorphism.
But since $\sigma_{\geq -m}Y'$ belongs to $ \mathcal{H}^b(\mathcal{M})/
\mathcal{H}^b(\mathcal{P})$, $Y$ identifies with $\Psi(\sigma_{\geq -m}Y')$.
\end{proof}

\begin{remark}\label{perfect}
It is clear that if $X$ belongs to
$\mathrm{per}(\underline{\mathcal{M}})$, then $\Gamma(X)$ belongs to
$\mathrm{per}(\underline{\mathcal{M}}^{op})^{op}$. We also have the
following partial converse.
\end{remark}

\begin{lemma}
Let $X$ be in
$\mathcal{D}^-_{\mathrm{mod}\,\underline{\mathcal{M}}}(\underline{\mathcal{M}})$
such that $\Gamma(X)$ belongs to
$\mathrm{per}(\underline{\mathcal{M}}^{op})^{op}$. Then X is in $\mathrm{per}(\underline{\mathcal{M}})$.
\end{lemma}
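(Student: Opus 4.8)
The plan is to exhibit $X$ as the value at the perfect object $\Gamma(X)$ of a duality functor which is inverse to $\Gamma$ on perfect complexes and which visibly preserves perfectness. Write $\Theta$ for the $\underline{\mathcal{M}}$-$\underline{\mathcal{M}}$-bimodule $(M,N)\mapsto\underline{\mathcal{M}}(N,M)$, so that $\Gamma$ is (up to the identifications already in place) the total right derived functor $\mathrm{RHom}_{\underline{\mathcal{M}}}(-,\Theta)$ with values in $\mathcal{D}(\underline{\mathcal{M}}^{op})^{op}$. The same recipe applied to $\underline{\mathcal{M}}^{op}$ produces a functor $\Gamma'\colon\mathcal{D}(\underline{\mathcal{M}}^{op})^{op}\to\mathcal{D}(\underline{\mathcal{M}})$, defined on the subcategories of interest, together with a canonical biduality morphism $\beta_Y\colon Y\to\Gamma'\Gamma(Y)$, natural in $Y$. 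Two facts are immediate from the definitions: first, $\Gamma$ and $\Gamma'$ send the representable module at an object $M$ to the representable module at $M$ on the opposite side; since both functors are triangulated and preserve retracts, they restrict to mutually quasi-inverse anti-equivalences between $\mathrm{per}(\underline{\mathcal{M}})$ and $\mathrm{per}(\underline{\mathcal{M}}^{op})^{op}$, and $\beta_Y$ is invertible when $Y$ lies in $\mathrm{per}(\underline{\mathcal{M}})$. In particular $\Gamma'$ carries perfect objects to perfect objects.

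Granting this, the lemma is reduced to a single assertion: the biduality morphism $\beta_X\colon X\to\Gamma'\Gamma(X)$ is an isomorphism for every $X$ in $\mathcal{D}^-_{\mathrm{mod}\,\underline{\mathcal{M}}}(\underline{\mathcal{M}})$. Indeed, if this holds, then from $\Gamma(X)\in\mathrm{per}(\underline{\mathcal{M}}^{op})^{op}$ we obtain $\Gamma'\Gamma(X)\in\mathrm{per}(\underline{\mathcal{M}})$, whence $X\cong\Gamma'\Gamma(X)$ lies in $\mathrm{per}(\underline{\mathcal{M}})$, which is what we want.

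To prove that $\beta_X$ is an isomorphism I would argue by d{\'e}vissage. For a single finitely presented $\underline{\mathcal{M}}$-module $N$, theorem $5.4$ of \cite{cluster} provides, viewing $N$ as an $\mathcal{M}$-module, a finite resolution by finitely generated projective $\mathcal{M}$-modules of length $d+1$; this is the concrete form of the $(d+1)$-Gorenstein behaviour of $\underline{\mathcal{M}}$, and it is exactly what makes $\Theta$ behave, up to the shift by $d+1$, like a dualizing bimodule, so that $\beta_N$ is an isomorphism in $\mathcal{D}(\underline{\mathcal{M}})$. (One cannot argue more naively here, since $N$ need not have finite projective dimension over $\underline{\mathcal{M}}$ itself; the Gorenstein/Calabi--Yau input is genuinely needed.) Next, for $X$ bounded with finitely presented homology one passes from a single module to a bounded complex by induction on the cohomological amplitude, applying the five lemma to the truncation triangles and using that $\Gamma$, $\Gamma'$ and $\beta$ are compatible with triangles. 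Finally, for a general $X$ in $\mathcal{D}^-_{\mathrm{mod}\,\underline{\mathcal{M}}}(\underline{\mathcal{M}})$ one writes $X=\operatorname{holim}_i\tau_{\ge -i}X$ with each $\tau_{\ge -i}X$ bounded, and checks that $\Gamma$ and then $\Gamma'$ commute with this tower in each fixed degree: every cohomology group of $\Gamma(X)$, and then of $\Gamma'\Gamma(X)$, is already computed at a finite stage, by the same boundedness bookkeeping used in lemmas~\ref{holim} and \ref{commute}. Hence $\beta_X$ is determined degreewise by the $\beta_{\tau_{\ge -i}X}$ and is therefore an isomorphism.

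The main obstacle is the module case: making precise and verifying that $\beta_N$ is an isomorphism for a finitely presented $\underline{\mathcal{M}}$-module $N$, i.e. that $\Theta$ is, up to the shift by $d+1$, a dualizing bimodule for $\underline{\mathcal{M}}$. This is where the cluster-tilting hypotheses enter essentially, through theorem $5.4$ of \cite{cluster} and the $(d+1)$-Calabi--Yau property; the remaining steps are formal homological bookkeeping with truncations and homotopy limits.
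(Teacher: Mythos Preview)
Your biduality strategy is sound in outline and does lead to a correct proof, but your justification of the crucial step --- that $\beta_N$ is an isomorphism for a single finitely presented $\underline{\mathcal{M}}$-module $N$ --- is misguided. You invoke theorem~$5.4$ of \cite{cluster}, but the finite projective resolution it provides is over $\mathcal{M}$, not over $\underline{\mathcal{M}}$, while $\Gamma$ and $\Gamma'$ live entirely over $\underline{\mathcal{M}}$; the passage from one to the other is never addressed. The further assertion that this ``makes $\Theta$ behave, up to the shift by $d+1$, like a dualizing bimodule'' is nowhere substantiated, and your parenthetical remark that the Gorenstein/Calabi--Yau input is ``genuinely needed'' because $N$ may have infinite projective dimension over $\underline{\mathcal{M}}$ is exactly where you overestimate the difficulty.

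In fact no Calabi--Yau or Gorenstein input is needed. Choose a right-bounded resolution $P^\bullet$ of $X$ by finitely generated projective $\underline{\mathcal{M}}$-modules (this exists for any $X$ in $\mathcal{D}^-_{\mathrm{mod}\,\underline{\mathcal{M}}}(\underline{\mathcal{M}})$ by the standard step-by-step construction). Then $\Gamma(P^\bullet)$ is computed termwise and is a bounded-below complex of finitely generated projective $\underline{\mathcal{M}}^{op}$-modules, hence K-projective; so $\Gamma'\Gamma(P^\bullet)$ is again computed termwise, and since $\Gamma'\Gamma$ is the identity on each projective term one recovers $P^\bullet$. Thus $\beta_X$ is an isomorphism for every such $X$ at once, with no d{\'e}vissage through modules and no holim step required.

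Once you see this, you have essentially the paper's proof, which is shorter still and dispenses with $\Gamma'$ altogether: represent $X$ by a right-bounded complex of finitely generated projectives; since $\Gamma$ restricts to an equivalence of additive categories $\mathrm{proj}\,\underline{\mathcal{M}}\stackrel{\sim}{\to}(\mathrm{proj}\,\underline{\mathcal{M}}^{op})^{op}$, the perfectness of $\Gamma(X)$ --- which for a one-sided-bounded complex of projectives means being homotopic to zero in high degrees --- transfers directly back to $X$.
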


\begin{proof}
By lemma~\ref{caracterisation} b) we can suppose, without loss of
generality, that $X$ is a right bounded complex with finitely
generated projective components. Applying $\Gamma$, we get a perfect
complex $\Gamma(X)$. In particular $\Gamma(X)$ is homotopic to zero in
high degrees. But since $\Gamma$ is an equivalence 
$$ \mathrm{proj}\, \underline{\mathcal{M}} \stackrel{\sim}{\longrightarrow}
(\mathrm{proj}\,\underline{\mathcal{M}}^{op})^{op}\,,$$
it follows that $X$ is already homotopic to zero in high degrees. 
\end{proof}

\begin{remark}
The natural right aisle on $\mathcal{D}(\mathcal{M})$ is the full
subcategory of the objects $X$ such that $\mathsf{H}^n(X)=0$ for all
$n < 0$. The associated truncation functor $\tau_{\geq 0}$ takes
$\mathrm{per}_{\underline{\mathcal{M}}}(\mathcal{M})$ to
itself. Therefore, the natural right aisle on
$\mathcal{D}(\mathcal{M})$ restricts to a natural right aisle
$\mathcal{U}^{op}$ on $\mathrm{per}_{\underline{\mathcal{M}}}(\mathcal{M})$.
\end{remark}

\begin{definition}
Let $\mathcal{U}$ be the natural left aisle in
$\mathrm{per}_{\underline{\mathcal{M}}}(\mathcal{M})^{op}$ associated
with $\mathcal{U}^{op}$.
\end{definition}

\begin{lemma}
The natural left aisle $\mathcal{U}$ on $\mathrm{per}_{\underline{\mathcal{M}}}(\mathcal{M})^{op}
\stackrel{\sim}{\rightarrow} \mathrm{per}(\mathcal{B}^{op})$ satisfies
the conditions of proposition~\ref{extension2} b).
\end{lemma}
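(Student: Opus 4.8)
The plan is to transport everything to $\mathrm{per}_{\underline{\mathcal{M}}}(\mathcal{M})$, where the relevant $t$-structure is the familiar one. Via the equivalence $\mathrm{per}_{\underline{\mathcal{M}}}(\mathcal{M})^{op}\stackrel{\sim}{\rightarrow}\mathrm{per}(\mathcal{B}^{op})$, the ambient compactly generated category of proposition~\ref{extension2} is $\mathcal{D}(\mathcal{B}^{op})$ with $\mathcal{T}_c=\mathrm{per}(\mathcal{B}^{op})$, and the left aisle $\mathcal{U}$ is, by its very definition, the opposite of the co-aisle $\mathcal{U}^{op}$ on $\mathrm{per}_{\underline{\mathcal{M}}}(\mathcal{M})$, namely the restriction to $\mathrm{per}_{\underline{\mathcal{M}}}(\mathcal{M})$ of the natural $t$-structure $(\mathcal{D}(\mathcal{M})^{\leq 0},\mathcal{D}(\mathcal{M})^{\geq 0})$ of $\mathcal{D}(\mathcal{M})$. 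First I would record that this restriction really is a $t$-structure: by lemma~\ref{caracterisation} a) the truncation functors $\tau^{\leq n}$ and $\tau^{\geq n}$ of $\mathcal{D}(\mathcal{M})$ preserve $\mathrm{per}_{\underline{\mathcal{M}}}(\mathcal{M})$. Hence it suffices to verify the two conditions of proposition~\ref{extension2} b) for this $t$-structure, read off in $\mathrm{per}_{\underline{\mathcal{M}}}(\mathcal{M})^{op}$.

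Non-degeneracy is the easy half. The natural $t$-structure on $\mathcal{D}(\mathcal{M})$ satisfies $\bigcap_n \mathcal{D}(\mathcal{M})^{\leq n}=0=\bigcap_n \mathcal{D}(\mathcal{M})^{\geq n}$, so its restriction to $\mathrm{per}_{\underline{\mathcal{M}}}(\mathcal{M})$ is non-degenerate, and non-degeneracy passes to the opposite category. Equivalently, a morphism $f$ of $\mathrm{per}_{\underline{\mathcal{M}}}(\mathcal{M})^{op}$ is invertible iff $\mathsf{cone}(f)=0$, iff all cohomology objects of $\mathsf{cone}(f)$ vanish, iff all $\mathsf{H}^p(f)$ are invertible.

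The real work is the second condition: the existence, for each $X$ in $\mathcal{T}_c$, of an integer $N$ with $\mathsf{Hom}(X,S^NU)=0$ for all $U$ in $\mathcal{U}$. Unwinding the opposite and the shift, this becomes the statement that for each $X$ in $\mathrm{per}_{\underline{\mathcal{M}}}(\mathcal{M})$ there is $N$ such that $\mathsf{Hom}_{\mathcal{D}(\mathcal{M})}(Z,X)=0$ for every $Z$ in $\mathrm{per}_{\underline{\mathcal{M}}}(\mathcal{M})$ whose cohomology is concentrated in degrees $\geq N$ (these are exactly the shifts of the objects of the co-aisle). Both $X$ and $Z$ are perfect, hence cohomologically bounded; write $\mathsf{H}^j(X)=0$ for $j\notin[a,b]$. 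Using the finite truncation filtrations of $X$ and of $Z$, a devissage (with no convergence issue, both filtrations being finite) reduces the vanishing of $\mathsf{Hom}_{\mathcal{D}(\mathcal{M})}(Z,X)$ to that of the groups $\mathrm{Ext}^{i-j}_{\mathcal{M}}(\mathsf{H}^i(Z),\mathsf{H}^j(X))$ for $i\geq N$ and $a\leq j\leq b$. By lemma~\ref{caracterisation} a) each $\mathsf{H}^i(Z)$ is a finitely presented $\underline{\mathcal{M}}$-module, so by theorem $5.4$ b) of \cite{cluster}, as already invoked in the proof of lemma~\ref{caracterisation}, it has $\mathcal{M}$-projective dimension at most $d+1$; thus $\mathrm{Ext}^n_{\mathcal{M}}(\mathsf{H}^i(Z),-)=0$ for $n>d+1$. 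Taking $N=b+d+2$ makes $i-j\geq N-b>d+1$ for all admissible indices, so every graded piece vanishes and $\mathsf{Hom}_{\mathcal{D}(\mathcal{M})}(Z,X)=0$, as needed.

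I expect this last devissage to be the only genuine obstacle: it rests on the boundedness of perfect objects and on the bound for the projective dimension of finitely presented $\underline{\mathcal{M}}$-modules furnished by \cite{cluster}, after which the estimate is a straightforward degree count; the non-degeneracy part is purely formal. A minor point to be careful about is the bookkeeping of the opposite category and of the shift convention relating $S^N\mathcal{U}$ in $\mathrm{per}(\mathcal{B}^{op})$ to the $[-N]$-shift of the co-aisle in $\mathrm{per}_{\underline{\mathcal{M}}}(\mathcal{M})$, but this does not affect the argument.
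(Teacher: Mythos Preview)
Your proof is correct and follows essentially the same route as the paper: transfer to $\mathrm{per}_{\underline{\mathcal{M}}}(\mathcal{M})$ via the opposite, observe non-degeneracy is immediate, and for the vanishing condition use that objects of $\mathrm{per}_{\underline{\mathcal{M}}}(\mathcal{M})$ are bounded while finitely presented $\underline{\mathcal{M}}$-modules have $\mathcal{M}$-projective dimension at most $d+1$ (theorem~5.4 of \cite{cluster}). The paper states this last step more tersely, without spelling out the d{\'e}vissage into $\mathrm{Ext}$-groups between cohomology objects, but the content is the same.
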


\begin{proof}
Clearly the natural left aisle $\mathcal{U}$ in
$\mathrm{per}_{\underline{\mathcal{M}}}(\mathcal{M})^{op}$ is
non-degenerate. We need to show that for each $C \in
\mathrm{per}_{\underline{\mathcal{M}}}(\mathcal{M})^{op}$, there is an
integer $N$ such that $\mathrm{Hom}(C,S^NU)=0$ for each $U \in
\mathcal{U}$. We have the following isomorphism
$$
\mathrm{Hom}_{\mathrm{per}_{\underline{\mathcal{M}}}(\mathcal{M})^{op}}(C,S^N\mathcal{U})
\stackrel{\sim}{\rightarrow}
\mathrm{Hom}_{\mathrm{per}_{\underline{\mathcal{M}}}(\mathcal{M})}(S^{-N}\mathcal{U}^{op},C)\,,
$$
where $\mathcal{U}^{op}$ denotes the natural right aisle on
$\mathrm{per}_{\underline{\mathcal{M}}}(\mathcal{M})$. Since by
theorem $5.4$ c) of \cite{cluster} an $\underline{\mathcal{M}}$-module
admits a projective resolution of length $d+1$ as an
$\mathcal{M}$-module and $C$ is a bounded complex, we conclude that
for $N \gg 0$
$$
\mathrm{Hom}_{\mathrm{per}_{\underline{\mathcal{M}}}(\mathcal{M})}(S^{-N}\mathcal{U}^{op},C)=0\,.$$
This proves the lemma.
\end{proof}
 
We denote by $\tau_{\leq n}$ and $\tau_{\geq n}$, $n \in \mathbb{Z}$, the
associated truncation functors on $\mathcal{D}(\mathcal{B}^{op})^{op}$.

\begin{lemma}
The functor $\Phi: \mathcal{D}^-_{\underline{\mathcal{M}}}(\mathcal{M}) \rightarrow
\mathcal{D}(\mathcal{B}^{op})^{op}$ restricted to the category
$\mathcal{V}$ is exact with respect to the given $t$-structures.
\end{lemma}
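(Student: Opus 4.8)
The statement to prove is that the functor $\Phi$, restricted to $\mathcal{V}$, is $t$-exact with respect to the natural $t$-structure on $\mathcal{V}$ (the restriction of the standard one on $\mathcal{D}(\mathcal{M})$, which exists by Lemma~\ref{caracterisation} and Remark~\ref{t-strures}) and the $t$-structure on $\mathcal{D}(\mathcal{B}^{op})^{op}$ obtained by extending the natural left aisle $\mathcal{U}$ on $\mathrm{per}(\mathcal{B}^{op})^{op} \stackrel{\sim}{\rightarrow} \mathrm{per}_{\underline{\mathcal{M}}}(\mathcal{M})^{op}$ to the whole category via the appendix (proposition~\ref{extension2}), using the previous lemma which verifies its hypotheses. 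The first thing I would do is reduce to the bounded case: every object $X$ of $\mathcal{V}$ is, by Remark~\ref{t-strures}, of the form $\mathrm{holim}_i \tau_{\geq -i}\Psi(X')$ with $\tau_{\geq -i}\Psi(X')$ in $\mathrm{per}_{\underline{\mathcal{M}}}(\mathcal{M})$, and by Lemma~\ref{holim} (applied in $\mathcal{D}(\mathcal{B}^{op})$) the functor $\Phi$ commutes with this homotopy limit. Since the truncation functors on $\mathcal{D}(\mathcal{B}^{op})^{op}$ commute with the relevant (co)limits — the extended aisle is constructed precisely so that truncation is compatible with homotopy colimits in $\mathcal{D}(\mathcal{B}^{op})$ — it suffices to check $t$-exactness on the subcategory $\mathrm{per}_{\underline{\mathcal{M}}}(\mathcal{M})$ itself.

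On $\mathrm{per}_{\underline{\mathcal{M}}}(\mathcal{M})$, the functor $\Phi$ is (up to the quasi-equivalence $\mathcal{B}' \simeq \mathcal{B}$) the restricted Yoneda-type equivalence $\mathrm{per}_{\underline{\mathcal{M}}}(\mathcal{M})^{op} \stackrel{\sim}{\rightarrow} \mathrm{per}(\mathcal{B}^{op})$ that has been used throughout; so on this subcategory $\Phi$ is an equivalence onto $\mathrm{per}(\mathcal{B}^{op})^{op}$. The $t$-structure on $\mathrm{per}(\mathcal{B}^{op})^{op}$ was defined (via proposition~\ref{extension2}) as the extension of the left aisle $\mathcal{U}$, and $\mathcal{U}$ was itself defined as the image under this very equivalence of the natural right aisle $\mathcal{U}^{op}$ on $\mathrm{per}_{\underline{\mathcal{M}}}(\mathcal{M})$. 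Hence, essentially by construction, $\Phi$ carries the natural aisle to $\mathcal{U}$ and its co-aisle to the co-aisle; what remains is to check that the natural $t$-structure on $\mathcal{V}$ restricts on $\mathrm{per}_{\underline{\mathcal{M}}}(\mathcal{M})$ to exactly $\mathcal{U}^{op}$ (which is immediate, since $\mathcal{U}^{op}$ was defined as the restriction of the standard aisle on $\mathcal{D}(\mathcal{M})$ and Lemma~\ref{caracterisation}~a) guarantees the truncation stays inside). Concretely I would verify both inclusions $\Phi(\mathcal{V}_{\leq 0}) \subseteq \mathcal{D}(\mathcal{B}^{op})^{op}_{\leq 0}$ and $\Phi(\mathcal{V}_{\geq 0}) \subseteq \mathcal{D}(\mathcal{B}^{op})^{op}_{\geq 0}$ by testing against the generators $\Phi(M^\wedge)$, $M \in \mathcal{M}$, and computing $\mathrm{Hom}^\bullet(X_c, \Psi(B))$ degreewise.

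The step I expect to be the main obstacle is the compatibility of $\Phi$ with truncation in the unbounded setting, i.e. making precise that $\Phi(\tau_{\geq -n}X) \cong \tau_{\geq -n}\Phi(X)$ and $\Phi(\tau_{\leq n}X) \cong \tau_{\leq n}\Phi(X)$ for $X \in \mathcal{V}$, not merely for $X$ in $\mathrm{per}_{\underline{\mathcal{M}}}(\mathcal{M})$. This requires knowing that the truncation functors of the extended $t$-structure on $\mathcal{D}(\mathcal{B}^{op})^{op}$ commute with the homotopy limits $\mathrm{holim}_i$ appearing in Remark~\ref{t-strures}; this should follow from the construction of the extended aisle in the appendix together with the fact that, as in the proof of Lemma~\ref{holim}, for a fixed compact $B$ the complexes $\mathrm{Hom}^n(\tau_{\geq -i}X, B)$ stabilise in each degree $n$, so the homotopy limit is eventually constant degreewise and truncation passes through. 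Once this bookkeeping is in place, $t$-exactness on all of $\mathcal{V}$ follows from $t$-exactness on $\mathrm{per}_{\underline{\mathcal{M}}}(\mathcal{M})$, which is a formal consequence of the definitions of $\mathcal{U}$ and of the $t$-structure on $\mathcal{V}$.
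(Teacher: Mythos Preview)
Your plan is workable but takes an unnecessary detour. The reduction to $\mathrm{per}_{\underline{\mathcal{M}}}(\mathcal{M})$ via $\mathrm{holim}$, and the ensuing ``main obstacle'' of commuting truncation with that $\mathrm{holim}$, can be bypassed entirely. The paper argues directly: the extended aisle $\mathcal{D}(\mathcal{B}^{op})^{op}_{\leq 0}$ is, by construction (proposition~\ref{extension2}), exactly ${}^{\perp}\bigl(\mathrm{per}(\mathcal{B}^{op})^{op}_{>0}\bigr)$, and the co-aisle $\mathcal{D}(\mathcal{B}^{op})^{op}_{>0}$ is $\bigl(\mathrm{per}(\mathcal{B}^{op})^{op}_{\leq 0}\bigr)^{\perp}$. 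Since $\Phi$ is already known to be fully faithful on all of $\mathcal{V}$ (lemma~\ref{commute}) and restricts to the equivalence $\mathrm{per}_{\underline{\mathcal{M}}}(\mathcal{M})\simeq\mathrm{per}(\mathcal{B}^{op})^{op}$, checking $\Phi(X)\in\mathcal{D}(\mathcal{B}^{op})^{op}_{\leq 0}$ for $X\in\mathcal{V}_{\leq 0}$ reduces to $\mathrm{Hom}_{\mathcal{V}}(X,P)=0$ for $P\in\mathrm{per}_{\underline{\mathcal{M}}}(\mathcal{M})_{>0}$, which is immediate. The other half is then obtained not by a symmetric argument but by applying $\Phi$ to the truncation triangle of an arbitrary $X\in\mathcal{V}$: one piece lands in $(\ldots)_{\leq 0}$ by the first half, the other in $(\ldots)_{>0}$ by the dual orthogonality test, and uniqueness of truncation triangles finishes.

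The point is that the $\mathrm{holim}$ bookkeeping you are worried about has already been absorbed into the proof of full faithfulness (lemma~\ref{commute}); invoking that lemma lets you test membership in the aisle against perfect objects only, with no further passage to limits. Your approach would essentially reprove lemma~\ref{commute} inside the $t$-exactness argument. It can be made to work (the test objects are compact, so $\mathrm{Hom}$ out of them commutes with the relevant homotopy colimit in $\mathcal{D}(\mathcal{B}^{op})$), but it is longer and the ``stabilise degreewise'' heuristic you give is not quite the right justification --- compactness of the perfect test object is what you need.
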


\begin{proof}
We first prove that $\Phi(\mathcal{V}_{\leq 0}) \subset
\mathcal{D}(\mathcal{B}^{op})^{op}_{\leq 0}$. Let $X$ be in
$\mathcal{V}_{\leq 0}$. We need to show that $\Phi(X)$ belongs to
$\mathcal{D}(\mathcal{B}^{op})^{op}_{\leq 0}$. The following have the
same classes of objects~:
\begin{eqnarray}
&& \mathcal{D}(\mathcal{B}^{op})^{op}_{\leq 0} \nonumber \\
&& \mathcal{D}(\mathcal{B}^{op})_{> 0} \nonumber \\
&& (\mathrm{per}(\mathcal{B}^{op})_{\leq 0})^{\bot} \\
&& \overset{\bot}{}(\mathrm{per}(\mathcal{B}^{op})^{op})_{> 0} \,,
\end{eqnarray}
where in $(5.1)$ we consider the right orthogonal in
$\mathcal{D}(\mathcal{B}^{op})$ and in $(5.2)$ we consider the left
orthogonal in $\mathcal{D}(\mathcal{B}^{op})^{op}$. These isomorphisms
show us that $\Phi(X)$ belongs to
$\mathcal{D}(\mathcal{B}^{op})^{op}_{\leq 0}$ iff 
$$ \mathrm{Hom}_{\mathcal{D}(\mathcal{B}^{op})^{op}}(\Phi(X),
\Phi(P))=0\,,$$
for all $P \in \mathrm{per}_{\underline{\mathcal{M}}}(\mathcal{M})_{>
  0}$. 
Now, by lemma~\ref{commute} the functor $\Phi$ is fully faithful and
so
$$ \mathrm{Hom}_{\mathcal{D}(\mathcal{B}^{op})^{op}}(\Phi(X), \Phi(P))
\stackrel{\sim}{\longrightarrow}
\mathrm{Hom}_{\mathrm{per}_{\underline{\mathcal{M}}}(\mathcal{M})}(X,P)\,.$$
Since $X$ belongs to $\mathcal{V}_{\leq 0} $ and $P$ belongs to
$\mathrm{per}_{\underline{\mathcal{M}}}(\mathcal{M})_{> 0}$, we
conclude that
$$\mathrm{Hom}_{\mathrm{per}_{\underline{\mathcal{M}}}(\mathcal{M})}(X,P)=0
\,,$$
which implies that $\Phi(X) \in
\mathcal{D}(\mathcal{B}^{op})^{op}_{\leq 0}$.
Let us now consider $X$ in $\mathcal{V}$. We have the truncation
triangle
$$ \tau_{\leq 0}X \rightarrow X \rightarrow \tau_{>0}X \rightarrow
S\tau_{\leq 0}X \,.$$
The functor $\Phi$ is triangulated and so we have the triangle
$$ \Phi\tau_{\leq 0}X \rightarrow X \rightarrow \Phi\tau_{>0}X \rightarrow
S \Phi \tau_{\leq 0}X \,,$$
where $\Phi\tau_{\leq 0}X$ belongs to
$\mathcal{D}(\mathcal{B}^{op})^{op}_{\leq 0}$.
Since $\Phi$ induces an equivalence between
$\mathrm{per}_{\underline{\mathcal{M}}}(\mathcal{M})$ and
$\mathrm{per}(\mathcal{B}^{op})^{op}$ and $\mathrm{Hom}(P,
\tau_{>0}X) =0$, for all $P$ in $\mathcal{V}_{\leq 0}$, we conclude
that  $\Phi\tau_{>0}X$ belongs to
$\mathcal{D}(\mathcal{B}^{op})^{op}_{> 0}$. This implies the lemma.
\end{proof}

\begin{definition}\label{stable}
Let $\mathcal{D}(\mathcal{B}^{op})^{op}_f$ denote the full triangulated subcategory of $\mathcal{D}(\mathcal{B}^{op})^{op}$
formed by the objects $Y$ such that $\tau_{\geq-n}Y$ is in
$\mathrm{per}(\mathcal{B}^{op})^{op}$, for all $n \in \mathbb{Z}$, and
$R(Y)$ belongs to $\mathrm{per}(\underline{\mathcal{M}}^{op})^{op}$.
\end{definition}

\begin{proposition}\label{caracterisation2}
An object $Y$ of $\mathcal{D}(\mathcal{B}^{op})^{op}$ lies in the
essential image of the functor $G: \mathcal{H}^b(\mathcal{M})/
\mathcal{H}^b(\mathcal{P}) \rightarrow
\mathcal{D}(\mathcal{B}^{op})^{op}$ iff it belongs to $\mathcal{D}(\mathcal{B}^{op})^{op}_f$.
\end{proposition}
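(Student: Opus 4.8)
The plan is to combine the embedding theorem (Proposition~\ref{pleinfidele}) with the two intrinsic characterizations already established: Proposition~\ref{caracterization}, which describes the essential image of $\Psi \circ \Upsilon$ inside $\mathcal{D}^-_{\underline{\mathcal{M}}}(\mathcal{M})$, and the fact that $\Phi$ restricted to $\mathcal{V}$ is fully faithful and $t$-exact. Since $G$ is isomorphic to the composite $\Phi \circ \Psi \circ \Upsilon$, an object $Y \in \mathcal{D}(\mathcal{B}^{op})^{op}$ lies in the essential image of $G$ if and only if it lies in the essential image of $\Phi|_{\mathcal{V}}$ of an object in the essential image of $\Psi\circ\Upsilon$. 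So the task reduces to translating the two conditions of Proposition~\ref{caracterization} --- namely that $\tau_{\geq -n}(\Psi\Upsilon X)$ lies in $\mathrm{per}_{\underline{\mathcal{M}}}(\mathcal{M})$ for all $n$, and that $L(\Psi\Upsilon X)$ lies in $\mathrm{per}(\underline{\mathcal{M}})$ --- across the functor $\Phi$ into the two conditions defining $\mathcal{D}(\mathcal{B}^{op})^{op}_f$ in Definition~\ref{stable}.

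\textbf{Key steps.} First I would record that $\Phi|_{\mathcal{V}}$ is $t$-exact (the last displayed lemma), so that $\Phi(\tau_{\geq -n} Y) \cong \tau_{\geq -n}\Phi(Y)$ for $Y \in \mathcal{V}$; combined with the equivalence $\Phi : \mathrm{per}_{\underline{\mathcal{M}}}(\mathcal{M}) \stackrel{\sim}{\to} \mathrm{per}(\mathcal{B}^{op})^{op}$, this shows that the first condition of Proposition~\ref{caracterization} corresponds exactly to the first condition in Definition~\ref{stable}, i.e. $\tau_{\geq -n}Y \in \mathrm{per}(\mathcal{B}^{op})^{op}$ for all $n$. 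Second, I would handle the second condition using Lemma~\ref{commutative}, which gives $R' \circ \Phi' \cong \Gamma \circ L$, together with the quasi-equivalence $\mathcal{B}' \simeq \mathcal{B}$ identifying $R'$ with $R$ and $\Phi'$ with $\Phi$. Thus $R(\Phi(Y)) \cong \Gamma(L(Y))$. Now by Remark~\ref{perfect}, if $L(Y) \in \mathrm{per}(\underline{\mathcal{M}})$ then $\Gamma(L(Y)) \in \mathrm{per}(\underline{\mathcal{M}}^{op})^{op}$; conversely, by the lemma following Remark~\ref{perfect}, if $\Gamma(L(Y))$ lies in $\mathrm{per}(\underline{\mathcal{M}}^{op})^{op}$ and $L(Y)$ lies in $\mathcal{D}^-_{\mathrm{mod}\,\underline{\mathcal{M}}}(\underline{\mathcal{M}})$ then $L(Y) \in \mathrm{per}(\underline{\mathcal{M}})$. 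So, provided one knows that $L(Y)$ has finitely presented homology bounded above (which follows from the first condition together with the fact that $L$ commutes with naive truncations and sends $\mathrm{per}_{\underline{\mathcal{M}}}(\mathcal{M})$ into $\mathrm{per}(\underline{\mathcal{M}})$), the second condition of Proposition~\ref{caracterization} matches precisely $R(Y) \in \mathrm{per}(\underline{\mathcal{M}}^{op})^{op}$.

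\textbf{Putting it together.} Assembling: given $Y$ in the essential image of $G$, write $Y \cong \Phi(\Psi\Upsilon X)$; the two conditions of Proposition~\ref{caracterization} on $\Psi\Upsilon X$ transport, via $t$-exactness of $\Phi|_{\mathcal{V}}$ and via $R\Phi \cong \Gamma L$, to the two conditions of Definition~\ref{stable} on $Y$, so $Y \in \mathcal{D}(\mathcal{B}^{op})^{op}_f$. Conversely, given $Y \in \mathcal{D}(\mathcal{B}^{op})^{op}_f$, one first needs to know that $Y$ lies in the essential image of $\Phi|_{\mathcal{V}}$ --- here one uses that $\tau_{\geq -n}Y \in \mathrm{per}(\mathcal{B}^{op})^{op}$ for all $n$ lets one write $Y \cong \mathrm{holim}\,\tau_{\geq -n}Y$ with each truncation coming from $\mathrm{per}_{\underline{\mathcal{M}}}(\mathcal{M})$ under the equivalence $\Phi$, and then a homotopy-limit argument (as in the proof of Lemma~\ref{commute} and Lemma~\ref{holim}) produces $Y' \in \mathcal{V}$ with $\Phi(Y') \cong Y$. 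Then the conditions on $Y$ pull back to the conditions of Proposition~\ref{caracterization} on the corresponding object of $\mathcal{H}^-_{\mathcal{E}\text{-}ac}(\mathcal{M})$, hence $Y' \in \mathrm{Im}(\Psi\Upsilon)$ and $Y \in \mathrm{Im}(G)$.

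\textbf{Main obstacle.} I expect the delicate point to be the converse direction: showing that an object $Y$ satisfying the conditions of Definition~\ref{stable} actually lies in the essential image of the restriction of $\Phi$ to $\mathcal{V}$ (rather than to some larger subcategory of $\mathcal{D}^-_{\underline{\mathcal{M}}}(\mathcal{M})$), i.e. that the homotopy colimit / homotopy limit reconstruction of $Y$ from its truncations $\tau_{\geq -n}Y$ can be lifted compatibly through the fully faithful functor $\Phi|_{\mathcal{V}}$ --- and that the resulting preimage genuinely satisfies the perfectness-after-$L$ hypothesis needed to invoke Proposition~\ref{caracterization}. This requires care with the interaction between the homotopy limit over $n$, the functor $L$ (which is a left adjoint and so commutes with homotopy colimits but not a priori with the holim), and the boundedness that makes the relevant $\mathrm{Hom}$-sequences stabilize, exactly as exploited in Lemmas~\ref{holim} and~\ref{commute}.
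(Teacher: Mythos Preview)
Your proposal is correct and follows essentially the same route as the paper: the forward direction via $t$-exactness of $\Phi|_{\mathcal{V}}$, the identity $R\Phi\cong\Gamma L$ (Lemma~\ref{commutative}), and Remark~\ref{perfect}; the converse by lifting the tower $(\tau_{\geq -n}Y)_n$ through the equivalence $\Phi:\mathrm{per}_{\underline{\mathcal{M}}}(\mathcal{M})\xrightarrow{\sim}\mathrm{per}(\mathcal{B}^{op})^{op}$, forming the holim in $\mathcal{V}$, invoking Lemma~\ref{holim} to see that $\Phi$ of this holim recovers $Y$, and then checking the hypotheses of Proposition~\ref{caracterization} via the partial converse to Remark~\ref{perfect}. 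One small remark on your stated obstacle: you do not actually need to commute $L$ with the homotopy limit --- the paper avoids this entirely by observing that $\Gamma L$ applied to the reconstructed object equals $R(Y)$ directly (Lemma~\ref{commutative}), and then the partial converse to Remark~\ref{perfect} applies because the reconstructed object lies in $\mathcal{V}$, hence has homologies in $\mathrm{mod}\,\underline{\mathcal{M}}$.
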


\begin{proof}
Let $X$ be in  $\mathcal{H}^b(\mathcal{M})/
\mathcal{H}^b(\mathcal{P})$. It is clear that the $\tau_{\geq -n} G(X)$ are
in $\mathrm{per}(\mathcal{B}^{op})^{op}$ for all $n \in \mathbb{Z}$. By
proposition~\ref{caracterization} we know that $L \Psi \Upsilon(X)$
belongs to $\mathrm{per}(\underline{\mathcal{M}})$. By lemma~\ref{commutative} and
remark~\ref{perfect} we conclude that $RG(X)$ belongs to $\mathrm{per}(\underline{\mathcal{M}}^{op})^{op}$. 
Let now $Y$ be in $\mathcal{D}(\mathcal{B}^{op})^{op}_f$. We can express
it,  by the dual of lemma~\ref{filtration} as the homotopy limit of the following diagram
$$ \cdots \rightarrow \tau_{\geq-n-1}Y \rightarrow \tau_{\geq-n}Y
\rightarrow \tau_{\geq-n+1}Y \rightarrow \cdots\,,$$
where $\tau_{\geq-n}Y$ belongs to
$\mathrm{per}(\mathcal{B}^{op})^{op}$, for all $n \in \mathbb{Z}$.
But since $\Phi$ induces an equivalence between
$\mathrm{per}_{\underline{\mathcal{M}}}(\mathcal{M})$ and
$\mathrm{per}(\mathcal{B}^{op})^{op}$, this last diagram corresponds
to a diagram
$$ \cdots \rightarrow M_{-n-1} \rightarrow M_{-n} \rightarrow M_{-n+1}
\rightarrow \cdots$$
in $\mathrm{per}_{\underline{\mathcal{M}}}(\mathcal{M})$.
Let $p \in \mathbb{Z}$. The relations among the truncation functors
imply that the image of the above diagram under each homology functor
$\mathrm{H}^p$, $p \in \mathbb{Z}$, is stationary as $n$ goes to $+ \infty$. This implies that 
$$ \mathrm{H}^p\, \underset{n}{\mbox{holim}}\,M_{-n}
\stackrel{\sim}{\longrightarrow}
\underset{n}{\mbox{lim}}\, \mathrm{H}^p\,M_{-n} \cong \mathrm{H}^p\,
M_j\,,
$$
for all $j<p$. We have the following commutative diagram
$$
\xymatrix{
\underset{n}{\mbox{holim}}\,M_{-n} \ar[r] \ar[d] &
\underset{n}{\mbox{holim}}\, \tau_{\geq -i}\,M_{-n} \cong M_{-i}\\
\tau_{\geq -i}\, \underset{n}{\mbox{holim}}\, M_{-n} \ar[ru]^{\sim}
}
$$
which implies that $$ \tau_{\geq -i} \,\underset{n}{\mathrm{holim}} M_{-n}
\stackrel{\sim}{\longrightarrow} M_{-i}\,,$$
for all $i \in \mathbb{Z}$.
Since $\underset{n}{\mbox{holim}}\,M_{-n}$ belongs to $\mathcal{V}$,
lemma~\ref{holim} allows us to conclude that
$\Phi(\underset{n}{\mbox{holim}}\,M_{-n})\cong Y$. We now show that
$\underset{n}{\mbox{holim}}\,M_{-n}$ satisfies the conditions of
proposition~\ref{caracterization}. We know that
$\tau_{\geq -i}\,\underset{n}{\mbox{holim}}\,M_{-n}$ belongs to
$\mathrm{per}_{\underline{\mathcal{M}}}(\mathcal{M})$, for all $i \in \mathbb{Z}$. 
By lemma~\ref{commutative} $(\Gamma \circ
L)(\underset{n}{\mbox{holim}}\,M_{-n})$ identifies with $R(Y)$, which is
in $\mathrm{per}(\underline{\mathcal{M}}^{op})^{op}$.
Since $\underset{n}{\mbox{holim}}\,M_{-n}$ belongs to $\mathcal{V}$,
its homologies lie in $\mathrm{mod}\,\underline{\mathcal{M}}$ and so we
are in the conditions of lemma~\ref{caracterization}, which implies
that $L(\underset{n}{\mbox{holim}}\,M_{-n})$ belongs to
$\mathrm{per}_{\underline{\mathcal{M}}}(\mathcal{M})$. This finishes
the proof.
\end{proof}

\section{Alternative description}
In this section, we present another characterization of the image of
$G$, which was identified as $\mathcal{D}(\mathcal{B}^{op})^{op}_f$
in proposition~\ref{caracterisation2}. Let $M$ denote an object of
$\mathcal{M}$ and also the naturally associated complex in
$\mathcal{H}^b(\mathcal{M})$. Since the category
$\mathcal{H}^b(\mathcal{M})/\mathcal{H}^b(\mathcal{P})$ is generated
by the objects $M \in \mathcal{M}$ and the functor $G$ is fully
faithful, we remark that $\mathcal{D}(\mathcal{B}^{op})^{op}_f$ equals
the triangulated subcategory of $\mathcal{D}(\mathcal{B}^{op})^{op}$
generated by the objects $G(M)$, $M \in \mathcal{M}$. 
The rest of this section is concerned with the problem
of characterizing the objects $G(M)$, $M \in \mathcal{M}$. We denote by
$P_M$ the projective $\underline{\mathcal{M}}$-module
$\underline{\mathcal{M}}(?,M)$ associated with $M \in \mathcal{M}$ and
by $X_M$ the image of $M$ under $\Psi \circ \Upsilon$.

\begin{lemma}\label{repre2}
We have the following isomorphism
$$
\mathrm{Hom}_{\mathcal{D}_{\underline{\mathcal{M}}}^-(\mathcal{M})}(X_M,Y)
\stackrel{\sim}{\longleftarrow}
\mathrm{Hom}_{\mathrm{mod}\,\underline{\mathcal{M}}}(P_M,
\mathrm{H}^0(Y))\,,$$
for all $Y \in \mathcal{D}_{\underline{\mathcal{M}}}^-(\mathcal{M})$.
\end{lemma}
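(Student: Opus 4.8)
\textbf{Proof strategy for Lemma~\ref{repre2}.} The plan is to reduce the claim to a Yoneda-type adjunction identity. Recall that $M \in \mathcal{M}$ is sent by $\Upsilon$ to ${\bf a}_p M$, the $\mathcal{E}$-acyclic part of $M$ in the semiorthogonal decomposition of $\mathcal{H}^-(\mathcal{M})$, and then $\Psi$ applies the Yoneda functor. Concretely, $X_M = \Psi \Upsilon (M)$ is quasi-isomorphic, as a complex of $\mathcal{M}$-modules, to the image under $\widehat{(-)}$ of a right bounded $\mathcal{E}$-acyclic complex $P$ with $P^0 \stackrel{\sim}{\to} M$ in degree $0$ and negative components in $\mathcal{M}$; in particular $\mathrm{H}^0(X_M) = \underline{\mathcal{M}}(?,M) = P_M$ as an $\underline{\mathcal{M}}$-module, since taking the $\mathcal{E}$-acyclic part and then cohomology in degree $0$ identifies with passing to the stable category. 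First I would make this identification precise: the natural map $M \to {\bf a}_p M$ and the fact that morphisms from projectives into acyclics are nullhomotopic give $X_M \stackrel{\sim}{\to}$ (Yoneda image of a resolution of $M$ truncated appropriately), and $\mathrm{H}^0 X_M \cong P_M$.

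Next I would construct the morphism in the statement. The canonical morphism $X_M \to S^0 \mathrm{H}^0(X_M) = \tau_{\geq 0} X_M$ (the natural truncation in $\mathcal{D}^-_{\underline{\mathcal{M}}}(\mathcal{M})$, using the $t$-structure that restricts to $\mathcal{V}$ by Lemma~\ref{caracterisation}) together with the identification $\mathrm{H}^0(X_M) \cong P_M$ induces, for each $Y$, a map
$$ \mathrm{Hom}_{\mathrm{mod}\,\underline{\mathcal{M}}}(P_M, \mathrm{H}^0(Y)) \longrightarrow \mathrm{Hom}_{\mathcal{D}^-_{\underline{\mathcal{M}}}(\mathcal{M})}(X_M, Y)\,, $$
by composing a morphism $P_M \to \mathrm{H}^0(Y)$ (viewed as a morphism $\mathrm{H}^0(X_M) \to \mathrm{H}^0(Y)$, hence a morphism in degree $0$ between the truncations) with $X_M \to \tau_{\geq 0}X_M$ and $\tau_{\geq 0} Y \to Y$ — wait, the last arrow goes the wrong way, so the correct recipe is: a morphism $P_M \to \mathrm{H}^0(Y)$ lifts, because $X_M$ is (quasi-isomorphic to) a right bounded complex of Yoneda modules $\widehat{M'}$ with $M' \in \mathcal{M}$ which are projective in $\mathcal{C}(\mathcal{M})$, to an honest chain map $X_M \to Y_c$; I would check this lift exists and is unique up to homotopy using the projectivity of the components of $X_M$ and the vanishing of $\mathrm{Hom}$ and $\mathrm{Ext}^{>0}$ from $\widehat{M'}$ into the relevant quotients.

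The heart of the argument is then to show this map is bijective. For surjectivity: given $f : X_M \to Y$ in the derived category, apply $\mathrm{H}^0$ to get a morphism $\mathrm{H}^0(X_M) = P_M \to \mathrm{H}^0(Y)$; I claim this determines $f$. This is where I expect the main obstacle, and it hinges on a dimension-shift / vanishing computation: one must show $\mathrm{Hom}_{\mathcal{D}(\mathcal{M})}(X_M, S^i Z) = 0$ for $i \neq 0$ when $Z$ ranges over $\underline{\mathcal{M}}$-modules, equivalently $\mathrm{Ext}^i_{\mathcal{M}}(X_M, Z) = 0$ for $i \neq 0$. Since $X_M$ is represented by the Yoneda image of an $\mathcal{E}$-acyclic resolution and $\mathcal{E}$-acyclic complexes of $\mathcal{M}$-objects compute, via the functor to $\mathcal{C}$, the stable category — together with theorem $5.4$ of \cite{cluster} giving that $\underline{\mathcal{M}}$-modules have projective dimension $d+1$ over $\mathcal{M}$ and the $d$-cluster tilting vanishing $\mathrm{Ext}^j(T,T')=0$ for $0<j<d$ — one gets that $\mathrm{RHom}_{\mathcal{M}}(X_M, Z)$ is concentrated in degree $0$ with $\mathrm{H}^0 = \mathrm{Hom}_{\underline{\mathcal{M}}}(P_M, Z)$. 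I would carry this out by writing the truncation triangle $\tau_{<0}X_M \to X_M \to \tau_{\geq 0}X_M$, observing $\tau_{\geq 0}X_M \simeq P_M$, and checking $\mathrm{Hom}(\tau_{<0}X_M, S^i Z) = 0$ for all $i \geq 0$ and $\mathrm{Hom}(P_M, S^i Z)=0$ for $i \neq 0$ — the first because the components of $\tau_{<0}X_M$ are Yoneda modules of objects of $\mathcal{M}$ whose higher self-extensions into $\mathcal{M}$-modules vanish in the appropriate range, the second because $P_M$ is projective. Injectivity follows from the same vanishing applied to a morphism $f$ with $\mathrm{H}^0(f) = 0$, factoring it through $\tau_{<0}Y$ and using $\mathrm{Hom}(X_M, \tau_{<0}Y) = 0$ from the concentration statement.
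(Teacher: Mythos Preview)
Your overall shape is reasonable, but you are missing the one structural fact that makes the lemma immediate, and your workaround does not actually close the gap.

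The point you overlook is that the negative components of $X_M$ lie in $\mathcal{P}$, not merely in $\mathcal{M}$. Indeed, the semiorthogonal decomposition triangle ${\bf p}M \to M \to {\bf a}_pM$ has ${\bf p}M \in \mathcal{H}^-(\mathcal{P})$; taking the cone shows $\Upsilon(M) = {\bf a}_pM$ is the complex $\cdots \to P_1 \to P_0 \to M$ with $M$ in degree $0$ and $P_n \in \mathcal{P}$ in each negative degree. Hence $X_M = \Psi\Upsilon(M)$ is $\cdots \to P_1^\wedge \to P_0^\wedge \to M^\wedge$. Now the lemma follows from Yoneda alone: for $P \in \mathcal{P}$ one has $\mathrm{Hom}(P^\wedge, Y^q) = Y^q(P)$, and since $\mathrm{H}^m(Y)$ is an $\underline{\mathcal{M}}$-module, $\mathrm{H}^m(Y)(P) = 0$ for all $m$. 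So in the hypercohomology computation of $\mathrm{RHom}(X_M,Y)$ only the degree-zero term $M^\wedge$ contributes, giving $\mathrm{H}^0(Y)(M) = \mathrm{Hom}_{\mathrm{mod}\,\underline{\mathcal{M}}}(P_M,\mathrm{H}^0(Y))$. This is exactly the paper's proof.

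Your substitute argument has two problems. First, the claim that ``the components of $\tau_{<0}X_M$ are Yoneda modules of objects of $\mathcal{M}$'' conflates the canonical truncation $\tau_{<0}$ with the naive one $\sigma_{\leq -1}$; the canonical truncation does not generally produce a complex of representables. Second, and more seriously, even for $\sigma_{\leq -1}X_M$, knowing the components are $N^\wedge$ with $N \in \mathcal{M}$ gives $\mathrm{Hom}(N^\wedge,Z) = Z(N)$, which need not vanish for an $\underline{\mathcal{M}}$-module $Z$ --- that vanishing holds precisely when $N \in \mathcal{P}$. Neither the projective-dimension bound of theorem~5.4 of \cite{cluster} nor the cluster-tilting $\mathrm{Ext}$-vanishing yields the required $\mathrm{Hom}(\sigma_{\leq -1}X_M, Z[i]) = 0$ without that input; you would be trying to deduce a statement about $\mathrm{RHom}_{\mathcal{M}}(X_M,Z)$ from facts that control $\mathrm{Ext}$ in a different direction. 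Once you insert the observation $P_n \in \mathcal{P}$, all of this machinery becomes superfluous.
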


\begin{proof}
Clearly $X_M$ belongs to
$\mathcal{D}_{\underline{\mathcal{M}}}(\mathcal{M})_{\leq 0}$ and is
of the form
$$ \cdots \rightarrow P_n^{\wedge} \rightarrow \cdots \rightarrow P_1^{\wedge} \rightarrow P_0^{\wedge} \rightarrow M^{\wedge} \rightarrow 0\,,$$
where $P_n \in \mathcal{P}$, $n \geq 0$. Now Yoneda's lemma and the
fact that $\mathrm{H}^m(Y)(P_n)=0$, for all $m \in \mathbb{Z}$, $n \geq
0$, imply the lemma.
\end{proof}

\begin{remark}\label{restric}
Since the functor $\Phi$ restricted to $\mathcal{V}$ is fully faithful
and exact, we have
$$ \mathrm{Hom}_{\mathcal{D}(\mathcal{B}^{op})^{op}}(G(M), \Phi(Y))
\stackrel{\sim}{\longleftarrow}
\mathrm{Hom}_{\mathrm{per}(\mathcal{B}^{op})^{op}}(\Phi(P_M),
\mathrm{H}^0(\Phi(Y)))\,,$$
for all $Y \in \mathcal{V}$.
\end{remark}

We now characterize the objects $G(M)=\Phi(X_M)$, $M \in \mathcal{M}$,
in the triangulated category $\mathcal{D}(\mathcal{B}^{op})$. More precisely, we give a description of the functor 
$$ R_M:= \mathrm{Hom}_{\mathcal{D}(\mathcal{B}^{op})}(?,
\Phi(X_M)):\mathcal{D}(\mathcal{B}^{op})^{op} \rightarrow
\mathrm{Mod}\,k$$
using an idea of M.~Van den Bergh, \emph{cf.} lemma~$2.13$ of
\cite{representability}. Consider the following functor
$$ F_M:=
\mathrm{Hom}_{\mathrm{per}(\mathcal{B}^{op})}(\mathrm{H}^0(?),
\Phi(P_M)): \mathrm{per}(\mathcal{B}^{op})^{op} \rightarrow
\mathrm{mod}\,k\,.$$
 
\begin{remark}
Remark~\ref{restric} shows that the functor $R_M$ when restricted to
$\mathrm{per}(\mathcal{B}^{op})$ coincides with $F_M$.
\end{remark}

Let $DF_M$ be the composition of $F_M$ with the duality
functor $D=\mathrm{Hom}(?,k)$. 
Note that $DF_M$ is homological.

\begin{lemma}\label{DF}
We have the following isomorphism of functors on $\mathrm{per}(\mathcal{B}^{op})$
$$
DF_M \stackrel{\sim}{\longrightarrow}
\mathrm{Hom}_{\mathcal{D}(\mathcal{B}^{op})}(\Phi(X_M), ?[d+1])\,.$$
\end{lemma}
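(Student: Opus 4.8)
\textbf{Proof plan for Lemma~\ref{DF}.}
The goal is to identify the functor $DF_M$ on $\mathrm{per}(\mathcal{B}^{op})$ with the representable functor $\mathrm{Hom}_{\mathcal{D}(\mathcal{B}^{op})}(\Phi(X_M), ?[d+1])$. The strategy is to replace the source of $F_M$ by a more tractable category and then invoke the Calabi-Yau property. First I would use that $\Phi$ induces an equivalence $\mathrm{per}_{\underline{\mathcal{M}}}(\mathcal{M})^{op}\stackrel{\sim}{\rightarrow}\mathrm{per}(\mathcal{B}^{op})$, carrying the natural $t$-structure $\mathcal{U}^{op}$ across (the aisle $\mathcal{U}$ of the previous section); under this equivalence, $\mathrm{H}^0$ on $\mathrm{per}(\mathcal{B}^{op})$ corresponds to the truncation functor for $\mathcal{U}^{op}$, whose heart is (equivalent to) $\mathrm{mod}\,\underline{\mathcal{M}}$. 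Thus $F_M$ is rewritten as $\mathrm{Hom}_{\mathrm{mod}\,\underline{\mathcal{M}}}(\mathrm{H}^0(?),P_M)$ on $\mathrm{per}_{\underline{\mathcal{M}}}(\mathcal{M})$, and by Yoneda together with lemma~\ref{repre2} this becomes $\mathrm{Hom}_{\mathcal{D}_{\underline{\mathcal{M}}}^-(\mathcal{M})}(X_M,?)$ — or rather its restriction to $\mathrm{per}_{\underline{\mathcal{M}}}(\mathcal{M})$, which via $\Phi$ is $\mathrm{Hom}_{\mathcal{D}(\mathcal{B}^{op})^{op}}(\Phi(X_M),?)$.

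Next I would bring in the Calabi-Yau structure. The category $\mathcal{C}=\underline{\mathcal{E}}$ is $d$-Calabi-Yau, and the key point — which is the content of \cite{Palu} and the surrounding results — is that $\mathcal{B}$ is perfectly $(d+1)$-Calabi-Yau, i.e. there is a bifunctorial isomorphism $D\mathrm{Hom}_{\mathcal{D}(\mathcal{B}^{op})}(X,Y)\cong \mathrm{Hom}_{\mathcal{D}(\mathcal{B}^{op})}(Y,X[d+1])$ for $X$ in $\mathrm{per}(\mathcal{B}^{op})$ and $Y$ arbitrary (equivalently with the roles exchanged, using finiteness). Applying $D$ to the reformulation of $F_M$ and using this duality with $X=$ the perfect object running through $\mathrm{per}(\mathcal{B}^{op})$ and $Y=\Phi(X_M)$, one gets $DF_M(?)\cong \mathrm{Hom}_{\mathcal{D}(\mathcal{B}^{op})}(\Phi(X_M),?[d+1])$, which is the assertion. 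The shift by $d+1$ is exactly the Calabi-Yau dimension of $\mathcal{B}$, so the bookkeeping of degrees should come out automatically once the duality is set up correctly.

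I would organize the argument as: (1) transport $F_M$ across the equivalence $\Phi:\mathrm{per}_{\underline{\mathcal{M}}}(\mathcal{M})^{op}\stackrel{\sim}{\rightarrow}\mathrm{per}(\mathcal{B}^{op})$ and identify $\mathrm{H}^0$ with the heart-truncation for the aisle $\mathcal{U}$; (2) apply Yoneda and lemma~\ref{repre2} to rewrite $F_M$ as a restriction of the representable functor $\mathrm{Hom}(\Phi(X_M),?)$ composed with nothing — more precisely $F_M(C)\cong \mathrm{Hom}_{\mathcal{D}(\mathcal{B}^{op})^{op}}(\Phi(X_M),C)$ for $C$ perfect; (3) dualize and invoke the perfect $(d+1)$-Calabi-Yau property of $\mathcal{B}$ to replace $D\,\mathrm{Hom}_{\mathcal{D}(\mathcal{B}^{op})}(C,\Phi(X_M))$ by $\mathrm{Hom}_{\mathcal{D}(\mathcal{B}^{op})}(\Phi(X_M),C[d+1])$; (4) check naturality in $C$ throughout. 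The main obstacle I anticipate is step (2)–(3): one must be careful that $\Phi(X_M)$ is not itself perfect (it is a homotopy limit of perfect objects, being $G(M)$), so the Calabi-Yau duality can only be used with the perfect object in the covariant slot; getting the variances and the direction of the duality isomorphism exactly right, and making sure that $\mathrm{H}^0$ in the definition of $F_M$ really corresponds under $\Phi$ to the truncation functor associated with the aisle $\mathcal{U}$ constructed in the previous section (and that its heart is $\mathrm{mod}\,\underline{\mathcal{M}}$ via theorem~$5.4$ of \cite{cluster}), is where the real work lies. Everything else is formal manipulation with Yoneda and lemma~\ref{repre2}.
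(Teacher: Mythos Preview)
Your proposal is correct and follows essentially the same chain of isomorphisms as the paper: transport $F_M$ along the exact fully faithful equivalence $\Phi$, invoke lemma~\ref{repre2} to replace $\mathrm{Hom}(P_M,\mathrm{H}^0(?))$ by $\mathrm{Hom}(X_M,?)$, apply the $(d+1)$-Calabi-Yau duality, and transport back. The only cosmetic difference is that the paper performs the Calabi-Yau step on the $\mathcal{D}^-_{\underline{\mathcal{M}}}(\mathcal{M})$ side rather than on $\mathcal{D}(\mathcal{B}^{op})$, and justifies it for the non-perfect object $X_M$ by invoking remark~\ref{t-strures} (the expression of $X_M$ as a homotopy limit of perfect truncations) --- this is precisely the mechanism that resolves the obstacle you flagged about $\Phi(X_M)$ not being perfect, so you should make that step explicit rather than leave it as a concern.
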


\begin{proof}
The following functors are canonically isomorphic to
$DF \Phi~:$
\begin{eqnarray}
&&
D\mathrm{Hom}_{\mathrm{per}(\mathcal{B}^{op})}(\mathrm{H}^0\Phi(?),
\Phi(P_M)) \nonumber\\
&&
D\mathrm{Hom}_{\mathrm{per}(\mathcal{B}^{op})}(\Phi\mathrm{H}^0(?),
\Phi(P_M)) \\
&&
D\mathrm{Hom}_{\mathrm{per}_{\underline{\mathcal{M}}}(\mathcal{M})}(P_M,
\mathrm{H}^0(?)) \\
&&
D\mathrm{Hom}_{\mathcal{D}_{\underline{\mathcal{M}}}^-(\mathcal{M})}(X_M,?) \\
&&
\mathrm{Hom}_{\mathcal{D}_{\underline{\mathcal{M}}}^-(\mathcal{M})}(?[-d-1],X_M) \\
&&
\mathrm{Hom}_{\mathcal{D}(\mathcal{B}^{op})^{op}}(\Phi(?)[-d-1],\Phi(X_M)) \\
&&
\mathrm{Hom}_{\mathcal{D}(\mathcal{B}^{op})^{op}}(\Phi(X_M),\Phi(?)[d+1]) 
\end{eqnarray} 
 Step $(6.1)$ follows from the fact that $\Phi$ is exact. Step $(6.2)$
 follows from the fact that $\Phi$ is fully faithful and we are
 considering the opposite category. Step $(6.3)$ is a consequence of
 lemma~\ref{repre2}. Step $(6.4)$ follows from the $(d+1)$-Calabi-Yau  property and remark~\ref{t-strures}. Step $(6.5)$ is a consequence of $\Phi$ being fully
 faithful and step $(6.6)$ is a consequence of working in the opposite
 category. Since the functor $\Phi^{op}$ establish an equivalence
 between $\mathrm{per}_{\underline{\mathcal{M}}}(\mathcal{M})^{op}$ and
 $\mathrm{per}(\mathcal{B}^{op})$ the lemma is proven.
\end{proof}

Now, since the category $\mathrm{Mod}\,k$ is cocomplete, we can
consider the left Kan extension, \emph{cf.}~\cite{Macl}, $E_M$ of $DF_M$
along the inclusion $\mathrm{per}(\mathcal{B}^{op}) \hookrightarrow
  \mathcal{D}(\mathcal{B}^{op})$. We have the following
  commutative square~:
$$
\xymatrix{
*+<1pc>{\mathrm{per}(\mathcal{B}^{op})} \ar[rr]^{DF_M} \ar@{^{(}->}[d]
& & *+<1pc>{\mathrm{mod}\,k }   \ar@{^{(}->}[d]     \\
\mathcal{D}(\mathcal{B}^{op}) \ar@{-->}[rr]_{E_M} & & \mathrm{Mod}\,k\,.
}
$$
For each $X$ of $\mathcal{D}(\mathcal{B}^{op})$, the comma-category of
morphisms $P \rightarrow X$ from a perfect object $P$ to $X$ is
filtered. Therefore, the functor $E_M$ is homological. Moreover, it
preserves coproducts and so $DE_M$ is cohomological and transforms coproducts into
products. Since $\mathcal{D}(\mathcal{B}^{op})$ is a compactly
generated triangulated category, the Brown representability theorem, \emph{cf.}~\cite{Neeman}, implies that there is a $Z_M \in
\mathcal{D}(\mathcal{B}^{op})$ such that 
$$ DE_M \stackrel{\sim}{\longrightarrow}
\mathrm{Hom}_{\mathcal{D}(\mathcal{B}^{op})}(?, Z_M)\,.$$

\begin{remark}
Since the duality functor $D$ establishes and anti-equivalence
in $\mathrm{mod}\,k$, the functor $DE_M$ restricted to
$\mathrm{per}(\mathcal{B}^{op})$ is isomorphic to $F_M$.
\end{remark} 

\begin{theorem}\label{G(M)}
We have an isomorphism
$$ G(M) \stackrel{\sim}{\longrightarrow} Z_M\,.$$
\end{theorem}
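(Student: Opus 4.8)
<br>

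The plan is to identify the object $Z_M$ obtained via Brown representability with $G(M) = \Phi(X_M)$ by comparing the two cohomological functors they represent on all of $\mathcal{D}(\mathcal{B}^{op})$. We already know from Lemma~\ref{DF} that on the subcategory $\mathrm{per}(\mathcal{B}^{op})$ we have an isomorphism of functors
$$ DF_M \stackrel{\sim}{\longrightarrow} \mathrm{Hom}_{\mathcal{D}(\mathcal{B}^{op})}(?, \Phi(X_M)[d+1])\,, $$
so the natural candidate is $Z_M \stackrel{\sim}{\rightarrow} \Phi(X_M)[d+1]$, and then apply a suspension. Actually, looking more carefully at the indexing, I expect the precise statement to come out as $Z_M \simeq \Phi(X_M)[d+1]$ up to the shift conventions, and the theorem as stated identifies $G(M)$ with the appropriate (de)suspension of $Z_M$; I will carry the shift along and match it at the end.

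First I would recall that $E_M$ is, by construction, the left Kan extension of $DF_M$ along $\mathrm{per}(\mathcal{B}^{op}) \hookrightarrow \mathcal{D}(\mathcal{B}^{op})$, hence for any $X$ in $\mathcal{D}(\mathcal{B}^{op})$ one has $E_M(X) = \underrightarrow{\mathrm{colim}}\, DF_M(P)$ over the filtered comma category of morphisms $P \rightarrow X$ with $P$ perfect. Dualizing, $DE_M(X) = \underleftarrow{\mathrm{lim}}\, F_M(P)$, and by Lemma~\ref{DF} (dualized once more, since $D$ is an involution on finite-dimensional spaces) this is $\underleftarrow{\mathrm{lim}}\, \mathrm{Hom}_{\mathcal{D}(\mathcal{B}^{op})}(\Phi(X_M)[d+1], P[\,\cdot\,])$-type expression. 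The key point is that the functor $X \mapsto \mathrm{Hom}_{\mathcal{D}(\mathcal{B}^{op})}(X, \Phi(X_M)[d+1])$ is cohomological, sends coproducts to products, and restricts on $\mathrm{per}(\mathcal{B}^{op})$ to $DF_M$. Since both $DE_M$ and this Hom-functor are cohomological, send coproducts to products, and agree on the compact generators $\mathrm{per}(\mathcal{B}^{op})$, a standard dévissage argument (write an arbitrary object as a homotopy colimit of perfect objects, using that $\mathcal{D}(\mathcal{B}^{op})$ is compactly generated) shows they agree on all of $\mathcal{D}(\mathcal{B}^{op})$. By the Yoneda lemma and the uniqueness part of Brown representability, $Z_M$ is then isomorphic to $\Phi(X_M)[d+1]$; accounting for the shift gives $G(M) \simeq Z_M$ as asserted.

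The main obstacle I anticipate is the dévissage step: one must be careful that $DE_M$ genuinely sends coproducts to products and is exact, which requires knowing that the comma category of perfect objects over $X$ is filtered (true since $\mathrm{per}(\mathcal{B}^{op})$ is the subcategory of compact objects and is closed under shifts, cones and finite coproducts) and that filtered colimits of finite-dimensional vector spaces followed by $D$ behave as inverse limits. A subtle point is that $F_M$ takes values in $\mathrm{mod}\,k$ (finite-dimensional spaces) precisely because $\Phi(P_M)$ is perfect and $\mathrm{H}^0$ of a perfect complex over $\mathcal{B}^{op}$ is a finite-dimensional module — this finiteness is what makes the duality $D$ well-behaved and is exactly where the $d$-cluster tilting hypothesis (via Theorem~5.4 of \cite{cluster}, already invoked) enters. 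Once the two functors are shown to coincide on a set of compact generators and to be cohomological and product-preserving, the identification is forced, so I expect the write-up to be short modulo this bookkeeping.
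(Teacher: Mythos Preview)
Your proposal has the right overall shape but contains a genuine gap, plus a variance/shift confusion that obscures it.

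First, the shift issue. Lemma~\ref{DF} identifies the \emph{covariant} functor $DF_M$ with $\mathrm{Hom}_{\mathcal{D}(\mathcal{B}^{op})}(\Phi(X_M),\,?[d+1])$; it does \emph{not} say that the contravariant functor $P\mapsto \mathrm{Hom}(P,\Phi(X_M)[d+1])$ restricts to $DF_M$. The actual contravariant comparison is between $R_M=\mathrm{Hom}(?,\Phi(X_M))$ and $DE_M$; both restrict to $F_M$ on $\mathrm{per}(\mathcal{B}^{op})$ (cf.\ the remarks before Lemma~\ref{DF} and before the theorem), with no shift. So the target is $Z_M\simeq\Phi(X_M)=G(M)$ directly, and your ``accounting for the shift'' at the end is not meaningful.

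The more serious problem is the d\'evissage step. Knowing that two cohomological, coproduct-to-product functors $R_M$ and $DE_M$ are each naturally isomorphic to $F_M$ on $\mathrm{per}(\mathcal{B}^{op})$ does \emph{not} by itself produce a natural transformation $R_M\to DE_M$ on all of $\mathcal{D}(\mathcal{B}^{op})$; the restricted Yoneda embedding $\mathcal{T}\to\mathrm{Mod}(\mathcal{T}_c)$ is conservative but not full in general, so a natural isomorphism on compacts need not lift to a morphism $\Phi(X_M)\to Z_M$. The ``standard d\'evissage'' (five-lemma on cones, closure under coproducts) only works once you have such a transformation to propagate. This is exactly the content the paper supplies: it constructs an explicit element of $DE_M(\Phi(X_M))$ by unwinding the Kan-extension colimit, using that the truncations $\tau_{\ge -i}X_M$ form a cofinal system over the comma category $\mathrm{per}\downarrow\Phi(X_M)$, and invoking the $(d{+}1)$-Calabi--Yau duality to rewrite the resulting limit as $\varprojlim_i\mathrm{Hom}(X_M,\tau_{\ge -i}X_M)$, where the identity of $X_M$ furnishes the required element. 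That element, via Yoneda, \emph{is} the morphism $G(M)\to Z_M$; only then does the d\'evissage you describe become available. Your outline skips precisely this construction, which is the heart of the argument.
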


\begin{proof}
We now construct a morphism of functors from $R_M$ to $DE_M$. Since $R_M$ is representable, by Yoneda's lemma it is enough to
construct an element in $DE_M(\Phi(X_M))$. Let $\mathcal{C}$ be the
category $\mathrm{per}(\mathcal{B}^{op})\downarrow \Phi(X_M)$, whose objects
are the morphisms $Y' \rightarrow \Phi(X_M)$ and let $\mathcal{C}'$ be
the category $X_M \downarrow
\mathrm{per}_{\underline{\mathcal{M}}}(\mathcal{M})$, whose objects
are the morphisms $X_M \rightarrow X'$. The following are
canonically isomorphic~:
\begin{eqnarray}
&& DE_M (\Phi(X_M)) \nonumber \\
&& D \, \underset{\mathcal{C}}{\mbox{colim}}
\mathrm{Hom}_{\mathcal{D}(\mathcal{B}^{op})}(\Phi(X_M), Y'[d+1])\\
&& D \, \underset{\mathcal{C}'}{\mbox{colim}}
\mathrm{Hom}_{\mathcal{D}_{\underline{\mathcal{M}}}^-(\mathcal{M})}(X'[-d-1], X_M)\\
&& \mathrm{D} \,\underset{i}{\mbox{colim}}\, 
\mathrm{Hom}_{\mathcal{D}_{\underline{\mathcal{M}}}^-(\mathcal{M})}((\tau_{\geq
  -i}X_M)[-d-1], X_M) \\
&& \underset{i}{\mbox{lim}} \,\mathrm{D}\mathrm{Hom}_{\mathcal{D}_{\underline{\mathcal{M}}}^-(\mathcal{M})}((\tau_{\geq
  -i}X_M)[-d-1], X_M) \nonumber \\
&& \underset{i}{\mbox{lim}}\,
\mathrm{Hom}_{\mathcal{D}_{\underline{\mathcal{M}}}^-(\mathcal{M})}(X_M,
\tau_{\geq -i}X_M)
\end{eqnarray}

Step $(6.7)$ is a consequence of the definition of the left Kan
extension and lemma~\ref{DF}. Step $(6.8)$ is obtained by considering
the opposite category. Step $(6.9)$ follows from the fact that the
system $(\tau_{\geq -i}X_M)_{i \in \mathbb{Z}}$ forms a cofinal system
for the index system of the colimit. Step $(6.10)$ follows from the
$(d+1)$-Calabi-Yau property. Now, the image of the identity by the
canonical morphism
$$
\mathrm{Hom}_{\mathcal{D}_{\underline{\mathcal{M}}}^-(\mathcal{M})}(X_M,
X_M) \longrightarrow \underset{i}{\mbox{lim}}\,
\mathrm{Hom}_{\mathcal{D}_{\underline{\mathcal{M}}}^-(\mathcal{M})}(X_M,
\tau_{\geq -i} X_M)\,,$$
gives us an element of $(DE_M)(\Phi(X_M))$ and so a morphism
of functors from $R_M$ to $DE_M$. We remark that this
morphism is an isomorphism when evaluated at the objects of
$\mathrm{per}(\mathcal{B}^{op})$. Since both functors $R_M$ and
$DE_M$ are cohomological, transform coproducts into products
and $\mathcal{D}(\mathcal{B}^{op})$ is compactly generated, we
conclude that we have an isomorphism
$$ G(M) \stackrel{\sim}{\longrightarrow} Z_M\,.$$
\end{proof}

\section{The main theorem}\label{mainsec}
Consider the following commutative square as in section~\ref{preli}:
$$
\xymatrix{
*+<1pc>{\mathcal{M}} \ar@{^{(}->}[r] \ar@{->>}[d] & \mathcal{E} \ar@{->>}[d]\\
*+<1pc>{\mathcal{T}}  \ar@{^{(}->}[r]  & \underline{\mathcal{E}}=\mathcal{C}\,.
}
$$
In the previous sections we have constructed, from the above data, a
dg category $\mathcal{B}$ and a left aisle $\mathcal{U}\subset
\mathrm{H}^0(\mathcal{B})$, see~\cite{aisles}, satisfying the
following conditions~:
\begin{itemize}
\item[-] $\mathcal{B}$ is an exact dg category over $k$ such that
  $\mathrm{H}^0(\mathcal{B})$ has finite-dimensional $\mathrm{Hom}$-spaces and is
  Calabi-Yau of CY-dimension $d+1$,
\item[-] $\mathcal{U} \subset \mathrm{H}^0(\mathcal{B})$ is a
  non-degenerate left aisle such that~:
\begin{itemize}
\item[-] for all $B \in \mathcal{B}$, there is an integer $N$ such
  that $\mathrm{Hom}_{\mathrm{H}^0 (\mathcal{B})}(B,
  S^NU)=0$ for each $U \in \mathcal{U}$,
\item[-] the heart $\mathcal{H}$ of the $t$-structure on
  $\mathrm{H}^0(\mathcal{B})$ associated with
  $\mathcal{U}$ has enough projectives.
\end{itemize}
\end{itemize}

Let now $\mathcal{A}$ be a dg category and $\mathcal{W}\subset
\mathrm{H}^0(\mathcal{A})$ a left aisle satisfying the above
conditions. We can consider the following general construction~:
Let $\mathcal{Q}$ denote the category of projectives of
the heart $\mathcal{H}$ of the $t$-structure on
$\mathrm{H}^0(\mathcal{A})$ associated with $\mathcal{W}$. We claim that the following inclusion
$$ \mathcal{Q} \hookrightarrow \mathcal{H} \hookrightarrow
\mathrm{H}^0(\mathcal{A})\,,$$
lifts to a morphism $\mathcal{Q} \stackrel{j}{\rightarrow}
\mathcal{A}$ in the homotopy category of small dg categories
$\mathsf{Heq}$. Indeed, recall the following argument from section~$7$ of
\cite{DerivingDG}: Let $\tilde{\mathcal{Q}}$ be the full dg subcategory of $\mathcal{A}$
whose objects are the same as those of $\mathcal{Q}$. Let $\tau_{\leq
  0}\tilde{\mathcal{Q}}$ denote the dg category obtained from
$\tilde{\mathcal{Q}}$ by applying the truncation functor $\tau_{\leq
  0}$ of complexes to each $\mathrm{Hom}$-space. We have he
following diagram in the category of small dg categories
$$
\xymatrix{
 & *+<1pc>{\tilde{\mathcal{Q}}} \ar@{^{(}->}[r] & \mathcal{A} \\
 & \tau_{\leq 0}\tilde{\mathcal{Q}} \ar[u]  \ar[d] & \\
\mathcal{Q} \ar@{=}[r] & \mathrm{H}^0(\tilde{\mathcal{Q}}) & \,.  
}
$$
Let $X$, $Y$ be objects of $\mathcal{Q}$. Since $X$ and $Y$ belong to
the heart of a $t$-structure in $\mathrm{H}^0(\mathcal{A})$, we have
$$ \mathrm{Hom}_{\mathrm{H}^0(\mathcal{A})}(X,Y[-n])=0\,, $$
for $n \geq 1$. The dg category $\mathcal{A}$ is exact, which implies
that 
$$ \mathrm{H}^{-n}
\mathrm{Hom}_{\tilde{\mathcal{Q}}}^{\bullet}(X,Y)
\stackrel{\sim}{\longrightarrow} \mathrm{Hom}_{\mathrm{H}^0(\mathcal{A})}(X,Y[-n])
=0\,,$$
for $n \geq 1$. This shows that the dg functor $\tau_{\leq
  0}\tilde{\mathcal{Q}} \rightarrow \mathrm{H}^0(\tilde{\mathcal{Q}})$
is a quasi-equivalence and so we have a morphism $\mathcal{Q}
\stackrel{j}{\rightarrow} \mathcal{A}$ in the homotopy category of
small dg categories. We have a triangle functor $j^*:\mathcal{D}(\mathcal{A})
\rightarrow \mathcal{D}(\mathcal{Q})$ given by restriction. By
proposition~\ref{extension2}, the left aisle $\mathcal{W} \subset
\mathrm{H}^0(\mathcal{A})$ admits a smallest extension to a left aisle
$\mathcal{D}(\mathcal{A}^{op})^{op}_{\leq 0}$ on
$\mathcal{D}(\mathcal{A}^{op})^{op}$.
Let $\mathcal{D}(\mathcal{A}^{op})^{op}_f$ denote the full triangulated subcategory of $\mathcal{D}(\mathcal{A}^{op})^{op}$
formed by the objects $Y$ such that $\tau_{\geq-n}Y$ is in
$\mathrm{per}(\mathcal{A}^{op})^{op}$, for all $n \in \mathbb{Z}$, and
$j^*(Y)$ belongs to $\mathrm{per}(\mathcal{Q}^{op})^{op}$.
 
\begin{definition}
The stable category of $\mathcal{A}$ with respect to $\mathcal{W}$ is
the triangle quotient 
$$\mathrm{stab}(\mathcal{A},\mathcal{W}) = \mathcal{D}(\mathcal{A}^{op})^{op}_f/\mathrm{per}(\mathcal{A}^{op})^{op}\,.$$
\end{definition}

We are now able to formulate the main theorem.
Let $\mathcal{B}$ be the dg category and $\mathcal{U}
\subset\mathrm{H}^0(\mathcal{B})$ the left aisle constructed in
sections $1$ to $5$.
\begin{theorem}\label{main3}
The functor $G$ induces an equivalence of categories
$$ \tilde{G}: \mathcal{C} \stackrel{\sim}{\longrightarrow}
\mathrm{stab}(\mathcal{B}, \mathcal{U})\,.$$
\end{theorem}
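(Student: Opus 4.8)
The plan is to compare $\mathcal{C}=\underline{\mathcal{E}}$ with $\mathrm{stab}(\mathcal{B},\mathcal{U})$ by exhibiting $G$ as a triangle functor which, after passing to the Verdier quotient, becomes an equivalence. First I would recall that by Palu's short exact sequence of triangulated categories
$$ 0 \longrightarrow \mathcal{H}^b_{\mathcal{E}\mbox{-}ac}(\mathcal{M}) \longrightarrow \mathcal{H}^b(\mathcal{M})/\mathcal{H}^b(\mathcal{P}) \longrightarrow \mathcal{C} \longrightarrow 0 \,,$$
the category $\mathcal{C}$ is the Verdier quotient of $\mathcal{H}^b(\mathcal{M})/\mathcal{H}^b(\mathcal{P})$ by the subcategory of $\mathcal{E}$-acyclic bounded complexes. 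So it suffices to show that $G$ carries this short exact sequence to the defining short exact sequence of $\mathrm{stab}(\mathcal{B},\mathcal{U})$, namely
$$ 0 \longrightarrow \mathrm{per}(\mathcal{B}^{op})^{op} \longrightarrow \mathcal{D}(\mathcal{B}^{op})^{op}_f \longrightarrow \mathrm{stab}(\mathcal{B},\mathcal{U}) \longrightarrow 0\,. $$

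The key steps, in order, are as follows. (1) By proposition~\ref{pleinfidele}, $G:\mathcal{H}^b(\mathcal{M})/\mathcal{H}^b(\mathcal{P}) \to \mathcal{D}(\mathcal{B}^{op})^{op}$ is fully faithful, and by the remark following its definition it restricts to the equivalence $G:\mathcal{H}^b_{\mathcal{E}\mbox{-}ac}(\mathcal{M}) \xrightarrow{\sim} \mathrm{per}(\mathcal{B}^{op})^{op}$. (2) By proposition~\ref{caracterisation2}, the essential image of $G$ is exactly $\mathcal{D}(\mathcal{B}^{op})^{op}_f$; hence $G$ induces an equivalence $\mathcal{H}^b(\mathcal{M})/\mathcal{H}^b(\mathcal{P}) \xrightarrow{\sim} \mathcal{D}(\mathcal{B}^{op})^{op}_f$ of triangulated categories carrying the full subcategory of $\mathcal{E}$-acyclic complexes onto $\mathrm{per}(\mathcal{B}^{op})^{op}$. (3) I then need to identify the Verdier quotient $\mathcal{D}(\mathcal{B}^{op})^{op}_f/\mathrm{per}(\mathcal{B}^{op})^{op}$ with $\mathrm{stab}(\mathcal{B},\mathcal{U})$ as defined in section~\ref{mainsec}; this requires matching the abstract construction — which uses the aisle $\mathcal{W}$, the category $\mathcal{Q}$ of projectives in its heart, and the restriction functor $j^*$ — with the concrete data $(\mathcal{B},\mathcal{U})$ produced in sections $1$–$5$. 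Concretely, one must check that the subcategory $\mathcal{D}(\mathcal{A}^{op})^{op}_f$ of definition in section~\ref{mainsec}, instantiated at $\mathcal{A}=\mathcal{B}$ and $\mathcal{W}=\mathcal{U}$, coincides with $\mathcal{D}(\mathcal{B}^{op})^{op}_f$ of definition~\ref{stable}: this amounts to verifying that the projective category $\mathcal{Q}$ of the heart $\mathcal{H}$ of $\mathcal{U}$ is (Morita-)equivalent to $\underline{\mathcal{M}}$ and that the restriction functor $j^*$ corresponds to the functor $R$ of definition~\ref{stable}. The identification $\mathcal{Q}\simeq \underline{\mathcal{M}}$ should come from theorem~\ref{G(M)}: the objects $G(M)$, $M\in\mathcal{M}$, are the building blocks of $\mathcal{D}(\mathcal{B}^{op})^{op}_f$, they lie in (a shift of) the heart, and they are the projective objects there, with $\underline{\mathcal{M}}(M,M')$-worth of morphisms by full faithfulness of $G$ together with lemma~\ref{repre2} and remark~\ref{restric}. (4) Assembling (2) and (3), $G$ descends to an equivalence $\tilde G:\mathcal{C}\xrightarrow{\sim}\mathrm{stab}(\mathcal{B},\mathcal{U})$.

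The main obstacle I expect is step (3): verifying that the two a priori different descriptions of the relevant subcategory of $\mathcal{D}(\mathcal{B}^{op})^{op}$ agree — in particular pinning down that the heart $\mathcal{H}$ of the $t$-structure associated to $\mathcal{U}$ has $\underline{\mathcal{M}}$ as its category of projectives and that $j^*$ matches $R$. This is where the Calabi-Yau hypothesis, theorem $5.4$ of \cite{cluster} (finite projective dimension of $\underline{\mathcal{M}}$-modules over $\mathcal{M}$), and the characterization of $G(M)$ via the Brown representability argument in theorem~\ref{G(M)} all have to be combined; the truncation and homotopy-limit bookkeeping (as in lemma~\ref{holim} and the proof of proposition~\ref{caracterisation2}) will reappear here. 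Once the subcategories and the restriction functors are matched, the equivalence of the Verdier quotients is formal, and functoriality of $\tilde G$ as a triangle functor follows since $G$ is triangulated and exact on the relevant subcategories.
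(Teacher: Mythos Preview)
Your overall strategy is exactly the paper's: compare the two short exact sequences via $G$, use proposition~\ref{pleinfidele} for full faithfulness, proposition~\ref{caracterisation2} for the essential image, and the construction of $\mathcal{B}$ for the bottom row, then pass to Verdier quotients. The paper's proof is literally just the commutative diagram of these two columns together with these citations.

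Your step (3) --- matching the abstract $\mathcal{D}(\mathcal{A}^{op})^{op}_f$ of section~\ref{mainsec} with the concrete $\mathcal{D}(\mathcal{B}^{op})^{op}_f$ of definition~\ref{stable} --- is a legitimate point the paper leaves implicit. However, your proposed verification is misdirected. The objects $G(M)=\Phi(X_M)$ do \emph{not} lie in $\mathrm{H}^0(\mathcal{B})\simeq\mathrm{per}_{\underline{\mathcal{M}}}(\mathcal{M})$: the $X_M$ are generally unbounded $\mathcal{E}$-acyclic resolutions, hence not perfect, so they are not objects of the heart of any $t$-structure on $\mathrm{H}^0(\mathcal{B})$. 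Theorem~\ref{G(M)} characterises $G(M)$ inside the big category $\mathcal{D}(\mathcal{B}^{op})$ and is irrelevant here. The identification $\mathcal{Q}\simeq\underline{\mathcal{M}}$ is much more elementary: by definition, $\mathcal{U}$ is the opposite of the natural $t$-structure on $\mathrm{per}_{\underline{\mathcal{M}}}(\mathcal{M})$, whose heart is $\mathrm{mod}\,\underline{\mathcal{M}}$ by lemma~\ref{caracterisation} a); the projectives of this heart are the representable $\underline{\mathcal{M}}$-modules, i.e.\ $\underline{\mathcal{M}}$ itself, and the dg functor $\underline{\mathcal{M}}\to\mathcal{B}'$ used to define $R'$ is precisely (up to the quasi-equivalence $\mathcal{B}\simeq\mathcal{B}'$) the lift $j$ of section~\ref{mainsec}. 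So $j^{\ast}$ and $R'$ agree by construction, and no Calabi--Yau or Brown representability argument is needed for this step.
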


\begin{proof}
We have the following commutative diagram~:
$$
\xymatrix{
\mathcal{C} \ar@{-->}[rr]^-{\tilde{G}}_-{\sim} & &
\mathrm{stab}(\mathcal{B}, \mathcal{U}) \\
\mathcal{H}^b(\mathcal{M})/\mathcal{H}^b(\mathcal{P}) \ar[u]
\ar[rr]^-G_-{\sim} && \mathcal{D}(\mathcal{B}^{op})^{op}_f \ar[u]\\
\mathcal{H}^b_{\mathcal{E}\mbox{-}ac}(\mathcal{M}) \ar[u]
\ar[rr]_-{\sim} && \mathrm{per}(\mathcal{B}^{op})^{op} \ar[u] \,.\\
}
$$
The functor $G$ is an equivalence since it is fully faithful by
proposition~\ref{pleinfidele} and essentially surjective by
proposition~\ref{caracterisation2}. Since we have an equivalence
$\mathcal{H}^b_{\mathcal{E}\mbox{-}ac}(\mathcal{M})
\stackrel{\sim}{\longrightarrow} \mathrm{per}(\mathcal{B}^{op})^{op}$
by construction of $\mathcal{B}$ and the columns of the above diagram
are short exact sequences of triangulated categories, the theorem is proved.
\end{proof}

\section{Appendix: extension of $t$-structures}
Let $\mathcal{T}$ be a compactly generated triangulated category with
suspension functor $S$. We denote by
$\mathcal{T}_c$ the full triangulated sub-category of $\mathcal{T}$ formed by the
compact objects, see \cite{Neeman}. We use the terminology of \cite{aisles}. Let
$\mathcal{U} \subseteq \mathcal{T}_c$ be a left aisle on
$\mathcal{T}_c$, i.e. a full additive subcategory $\mathcal{U}$ of
$\mathcal{T}_c$ which satisfies:
\begin{itemize}
\item[a)] $S\mathcal{U} \subset \mathcal{U}$,
\item[b)] $\mathcal{U}$ is stable under extensions, i.e. for each
    triangle 
$$X \rightarrow Y \rightarrow Z \rightarrow SX$$
of $\mathcal{T}_c$, we have $Y \in \mathcal{U}$ whenever $X, Z \in
\mathcal{U}$ and
\item[c)] the inclusion functor $\mathcal{U} \hookrightarrow
  \mathcal{T}_c$ admits a right adjoint.
\end{itemize}

As shown in \cite{aisles}, the concept of aisle is equivalent to
that of $t$-structure.

\begin{proposition}\label{extension2}
\begin{itemize}
\item[a)] The left aisle $\mathcal{U}$ admits a smallest extension to a left
aisle  $\mathcal{T}_{\leq 0}$ on $\mathcal{T}$. 
\item[b)] If $\mathcal{U} \subseteq \mathcal{T}_c$ is non-degenerate
  (\emph{i.e.}, $f:X \rightarrow Y$ is invertible iff
  $\mathrm{H}^p(f)$ is invertible for all $p \in \mathbb{Z}$) and for
  each $X \in \mathcal{T}_c$, there is an integer $N$ such that
  $\mathrm{Hom}(X,S^{N}U)=0$ for each $U \in \mathcal{U}$, then
  $\mathcal{T}_{\leq 0}$ is also non-degenerate.
\end{itemize} 
\end{proposition}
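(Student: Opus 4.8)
The plan is to construct $\mathcal{T}_{\leq 0}$ as the smallest cocomplete (i.e. closed under arbitrary coproducts) left aisle in $\mathcal{T}$ containing $\mathcal{U}$, and then to identify it concretely enough to verify minimality and non-degeneracy. First I would recall from Alonso--Jerem\'\i as--Souto (the reference \cite{aisles}) that in a compactly generated triangulated category, any set of objects generates a left aisle: concretely, one takes the closure of the set under positive shifts, extensions, coproducts and direct summands, and the resulting subcategory is an aisle because the smallness hypothesis allows one to run a small-object-type argument producing the right adjoint (the truncation functor). Applying this to the \emph{set} of objects underlying $\mathcal{U}$ (which is essentially small since $\mathcal{T}_c$ is), I obtain a cocomplete left aisle $\mathcal{T}_{\leq 0}$ on $\mathcal{T}$ with $\mathcal{U} \subseteq \mathcal{T}_{\leq 0}$. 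For minimality: any left aisle $\mathcal{V}$ on $\mathcal{T}$ that extends $\mathcal{U}$ is automatically closed under coproducts (a left aisle is the "$\leq 0$" part of a $t$-structure, hence closed under all colimits that exist, in particular coproducts), closed under extensions and positive shifts, and closed under direct summands; hence it contains the generated aisle $\mathcal{T}_{\leq 0}$. This gives part a).

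For part b) I would argue in two steps. The key auxiliary fact is the description of $\mathcal{T}_{\leq 0}$ via orthogonality: setting $\mathcal{T}_{>0} := \mathcal{T}_{\leq 0}^{\perp}$ (the right orthogonal), an object $Y$ lies in $\mathcal{T}_{\leq 0}$ iff $\mathrm{Hom}(Y, Z) = 0$ for all $Z \in \mathcal{T}_{> 0}$, and dually $\mathcal{T}_{>0}$ consists of the objects right-orthogonal to the generating set $\mathcal{U}$. Non-degeneracy of the $t$-structure means $\bigcap_n \mathcal{T}_{\leq -n} = 0$ and $\bigcap_n \mathcal{T}_{\geq n} = 0$. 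The second intersection: if $Y \in \mathcal{T}_{\geq n}$ for all $n$, then $\mathrm{Hom}(S^{-n}U, Y) = 0$ for all $U \in \mathcal{U}$ and all $n$; here I would use that the compact objects generate, writing a general test object $X \in \mathcal{T}_c$ and invoking the hypothesis that $\mathrm{Hom}(X, S^N U) = 0$ for $N \gg 0$ together with the original non-degeneracy of $\mathcal{U}$ in $\mathcal{T}_c$ to conclude $\mathrm{Hom}(X, Y) = 0$ for all compact $X$, hence $Y = 0$. The first intersection: if $Y \in \mathcal{T}_{\leq -n}$ for all $n$, I would use that $\mathcal{T}_{\leq -n} = S^n \mathcal{T}_{\leq 0}$ and the generation of $\mathcal{T}_{\leq 0}$ to test against compact objects in $\mathcal{T}_{> 0}$; the finiteness hypothesis ($\mathrm{Hom}(X, S^N U) = 0$ for $N \gg 0$) is exactly what ensures the homology functors $\mathrm{H}^p$ (defined via the truncations) detect vanishing, so $Y$ with all homologies zero must be zero by the assumed non-degeneracy.

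The main obstacle I anticipate is part b), specifically controlling how the hypotheses on $\mathcal{U} \subseteq \mathcal{T}_c$ propagate to the big aisle $\mathcal{T}_{\leq 0}$. The subtle point is that $\mathcal{T}_{\leq 0}$ contains objects with unbounded (in the negative direction) homology, so one cannot naively reduce to compact objects; the bound "$\mathrm{Hom}(X, S^N U) = 0$ for $N \gg 0$, for each compact $X$" is what makes the homotopy-limit/colimit manipulations converge. I would need to check carefully that for a compact $X$ and $Y \in \mathcal{T}$, one has $\mathrm{Hom}(X, Y) = 0$ whenever $\mathrm{H}^p(X \to \tau_{\leq n} Y)$ vanishes for all $p$ and all $n$, which uses that $X$ is compact (so commutes with the homotopy colimit $Y = \mathrm{hocolim}\, \tau_{\geq -n} Y$) and the $\mathrm{Hom}$-vanishing bound to kill the $\lim^1$ term. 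Once this "detection lemma" is in place, both non-degeneracy statements follow formally.
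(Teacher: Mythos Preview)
Your approach to part a) is correct and essentially identical to the paper's: define $\mathcal{T}_{\leq 0}$ as the closure of $\mathcal{U}$ under coproducts and extensions, run the small object argument to get the truncation functor, and observe that any aisle containing $\mathcal{U}$ is automatically closed under coproducts and hence contains $\mathcal{T}_{\leq 0}$.

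For part b), however, there is a genuine gap. You equate non-degeneracy with the vanishing of the two intersections $\bigcap_n \mathcal{T}_{\leq -n}$ and $\bigcap_n \mathcal{T}_{\geq n}$, but the statement to be proved is the stronger assertion that $\mathrm{H}^p(X)=0$ for all $p$ forces $X=0$. An object with all homologies zero does \emph{not} a priori lie in either intersection, so proving both intersections vanish is not sufficient. The paper bridges this as follows: from $\mathrm{H}^{n+1}(X)=0$ and the truncation triangle one gets $\tau_{>n}X \cong \tau_{>n+1}X$, hence $\tau_{>n}X \in \bigcap_m \mathcal{T}_{>m}$ for every $n$; one then shows this intersection is zero, whence $X \cong \tau_{\leq n}X$ for all $n$, so $X \in \bigcap_n \mathcal{T}_{\leq -n}$, and \emph{then} the Hom-vanishing hypothesis kills this last intersection (since $\mathcal{T}_{\leq -N}$ is generated under sums and extensions by $S^N\mathcal{U}$, and compacts commute with these). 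Your ``detection lemma'' via $\mathrm{hocolim}\,\tau_{\geq -n}Y$ cannot replace this, because that convergence statement already presupposes non-degeneracy.

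There is also a subtlety in your argument for $\bigcap_n \mathcal{T}_{\geq n}=0$: to conclude $\mathrm{Hom}(C,Y)=0$ for $Y$ in this intersection and $C$ compact, you need that every compact $C$ lies in some $\mathcal{T}_{\leq k}$. This is not immediate from $\mathrm{Hom}(C,S^N\mathcal{U})=0$, which rather says $C$ is bounded \emph{below}; you should extract boundedness above on compacts from the fact that $\mathcal{U}$ is already an aisle on $\mathcal{T}_c$ combined with the non-degeneracy hypothesis there.
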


\begin{proof}
a) Let $\mathcal{T}_{\leq 0}$ be the smallest full subcategory of
$\mathcal{T}$ that contains $\mathcal{U}$ and is stable under infinite
sums and extensions. It is clear that $\mathcal{T}_{\leq 0}$ is stable
by $S$ since $\mathcal{U}$ is.
We need to show that the inclusion functor $\mathcal{T}_{\leq 0}
\hookrightarrow \mathcal{T}$ admits a right adjoint. For completeness,
we include the following proof, which is a variant of the `small
object argument', \emph{cf.} also~\cite{souto}. We have the following recursive
procedure. Let $X=X_0$ be an object in $\mathcal{T}$. For the initial
step consider all morphisms from any object $P$ in $\mathcal{U}$
to $X_0$. This forms a set $I_0$ since $\mathcal{T}$ is compactly generated
and so we have the following triangle
$$ 
\xymatrix{
\underset{f \in I_0}{\coprod} P \ar[r] &  X_0 \ar[r] &  X_1 \ar@{~>}[r] & \underset{f \in
  I_0}{\coprod} P \,.
}
$$

For the induction step consider the above construction with $X_n$, $n \geq 1$, in the
place of $X_{n-1}$ and $I_n$ in the place of $I_{n-1}$. We have the following diagram
$$
\xymatrix{
X=X_0 \ar[r] & X_1 \ar[r] \ar@{~>}[dl]  & X_2 \ar[r] \ar@{~>}[dl] &
X_3 \ar[r] \ar@{~>}[dl]
& \cdots
\ar[r] & X' \\
\underset{f \in I_0}{\coprod} P
\ar[u] &\underset{f \in I_1}{\coprod} P
\ar[u] & \underset{f \in I_2}{\coprod} P
\ar[u]  &  \underset{f \in I_3}{\coprod} P \ar[u] & & \,,
}
$$
where $X'$ denotes the homotopy colimit of the diagram $(X_i)_{i \in \mathbb{Z}}$. 
Consider now the following triangle
$$ S^{-1}X'  \rightarrow X'' \rightarrow X
\rightarrow X' \,,$$
where the morphism $X \rightarrow X'$ is the transfinite
composition in our diagram.
Let $P$ be in $\mathcal{U}$. We remark that since $P$ is compact,
$\mathrm{Hom}_{\mathcal{T}}(P,X')=0$. This also implies, by
construction of $\mathcal{T}_{\leq 0}$, that
  $\mathrm{Hom}_{\mathcal{T}}(R,X')=0$, for all $R$ in
    $\mathcal{T}_{\leq 0}$.
The long exact sequence obtained by applying the functor
$\mathrm{Hom}_{\mathcal{T}}(R,?)$ to the triangle above shows that
$$ \mathrm{Hom}(R,X'') \stackrel{\sim}{\longrightarrow}
\mathrm{Hom}(R,X)\,.$$
Let $X''_{n-1}$, $n\geq 1$, be an object as in the following triangle
$$ X=X_0 \rightarrow X_n \rightarrow X''_{n-1} \rightarrow S(X)\,.$$
A recursive application of the octahedron axiom implies that
$X''_{n-1}$ belongs to $S(\mathcal{T}_{\leq 0})$, for all $n \geq
1$. We have the isomorphism
$$ \underset{n}{\mbox{hocolim}}\,X''_{n-1}
\stackrel{\sim}{\longrightarrow} S(X'')\,.$$
Since $\underset{n}{\mbox{hocolim}}\,X''_{n-1}$ belongs to
$S(\mathcal{T}_{\leq 0})$, we conclude that $X''$ belongs to
$\mathcal{T}_{\leq 0}$. This shows that the functor that sends $X$ to
$X''$ is the right adjoint of the inclusion functor  $\mathcal{T}_{\leq 0}
\hookrightarrow \mathcal{T}$. 
This proves that $\mathcal{T}_{\leq 0}$ is a left aisle on $\mathcal{T}$.
We now show that the $t$-structure associated to $\mathcal{T}_{\leq
  0}$, \emph{cf.}~\cite{aisles},  extends, from $\mathcal{T}_c$
to $\mathcal{T}$, the one associated with $\mathcal{U}$. Let $X$ be in $\mathcal{T}_c$. We have the following truncation triangle associated with $\mathcal{U}$
$$ X_{\mathcal{U}} \rightarrow X \rightarrow X^{\mathcal{U}^{\bot}}
  \rightarrow SX_{\mathcal{U}}\,.$$
Clearly $X_{\mathcal{U}}$ belongs to $\mathcal{T}_{\leq 0}$. We remark
  that $\mathcal{U}^{\bot}=\mathcal{T}_{\leq 0}^{\bot}$, and so  $X^{\mathcal{U}^{\bot}}$ belongs to
  $\mathcal{T}_{>0}:=\mathcal{T}_{\leq 0}^{\bot}$. 

We now show that
  $\mathcal{T}_{\leq 0}$ is the smallest extension of the left aisle $\mathcal{U}$. Let $\mathcal{V}$ be an aisle containing
  $\mathcal{U}$. The inclusion functor $\mathcal{V} \hookrightarrow
  \mathcal{T}$ commutes with sums, because it admits a right
  adjoint. Since $\mathcal{V}$ is stable under extensions and
  suspensions, it contains $\mathcal{T}_{\leq 0}$.\\

b) Let $X$ be in $\mathcal{T}$. We need to show that $X=0$ iff
$\mathrm{H}^p(X)=0$ for all $p \in \mathbb{Z}$. Clearly the condition
is necessary. For the converse, suppose that $\mathrm{H}^p(X)=0$ for
all $p \in \mathbb{Z}$. Let $n$ be an integer. Consider the following
truncation triangle
$$ \mathrm{H}^{n+1}(X) \rightarrow \tau_{> n} X \rightarrow \tau_{>
  n+1}X \rightarrow S\mathrm{H}^{n+1}(X)\,.$$
Since $\mathrm{H}^{n+1}(X)=0$ we conclude that 
$$ \tau_{> n} X \in \underset{m \in \mathbb{Z}}{\bigcap}\mathcal{T}_{>m}\,,$$
for all $n \in \mathbb{Z}$. Now, let $C$ be a compact object of
$\mathcal{T}$. We know that there is a $k \in \mathbb{Z}$ such that $C
\in \mathcal{T}_{\leq k}$. This implies that 
$$ \mathrm{Hom}_{\mathcal{T}}(C, \tau_{> n}X)=0$$ 
for all $n \in \mathbb{Z}$, since $\tau_{> n}X$ belongs to $(\mathcal{T}_{\leq
  k})^{\bot}$. The category $\mathcal{T}$ is compactly generated and so we conclude
  that $\tau_{> n}X=0$, for all $n \in \mathbb{Z}$.
The following truncation triangle
$$ \tau_{\leq n}X \rightarrow X \rightarrow \tau_{> n}X \rightarrow
S\tau_{\leq n}X\,,$$
implies that $\tau_{\leq n} X$ is isomorphic to $X$ for all $n \in
\mathbb{Z}$. This can be rephrased as saying that
$$ X \in \underset{n \in \mathbb{N}}{\bigcap} \mathcal{T}_{\leq
  -n}\,.$$
Now by our hypothesis there is an integer $N$ such that 
$$ \mathrm{Hom}_{\mathcal{T}}(C, \mathcal{U}_{\leq - N})=0\,.$$
Since $C$ is compact and by construction of $\mathcal{T}_{\leq -N}$,
we have
$$ \mathrm{Hom}_{\mathcal{T}}(C,\mathcal{T}_{\leq -N})=0\,.$$
This implies that $\mathrm{Hom}_{\mathcal{T}}(C,X)=0$, for all compact
objects $C$ of $\mathcal{T}$. Since $\mathcal{T}$ is compactly
generated, we conclude that $X=0$. This proves the converse.
\end{proof}

\begin{lemma}\label{coproducts}
Let $(Y_p)_{p \in \mathbb{Z}}$ be in $\mathcal{T}$. We have the
following isomorphism
$$ \mathrm{H}^n\,(\underset{p}{\coprod}Y_p)
\stackrel{\sim}{\longleftarrow}\underset{p}{\coprod}\, \mathrm{H}^n(Y_p)\,,
$$
for all $n \in \mathbb{Z}$.
\end{lemma}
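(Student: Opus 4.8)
The plan is to deduce the lemma from the fact that both halves of the extended $t$-structure on $\mathcal{T}$ are stable under arbitrary coproducts. Once this is established, the truncation functors $\tau_{\leq n}$ and $\tau_{\geq n}$ commute with coproducts by uniqueness of truncation triangles, and the assertion about $\mathrm{H}^n$ is then immediate. So the whole argument reduces to two stability statements, one of which is essentially free and one of which is the real content.

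First I would record the stability of the aisle. By the proof of proposition~\ref{extension2}, $\mathcal{T}_{\leq 0}$ is by construction the smallest full subcategory of $\mathcal{T}$ containing $\mathcal{U}$ and stable under extensions and arbitrary coproducts; hence it is stable under coproducts, and so is each $\mathcal{T}_{\leq n}=S^{-n}\mathcal{T}_{\leq 0}$, since $S^{-n}$ is an equivalence commuting with coproducts. Moreover, as in that proof, since $S\mathcal{U}\subset\mathcal{U}$ the class obtained from $\mathcal{U}$ by iterating extensions and coproducts is already stable under $S$, so it coincides with $\mathcal{T}_{\leq 0}$. Consequently, for any $X$ in $\mathcal{T}$, if $\mathrm{Hom}(U,X)=0$ for all $U\in\mathcal{U}$ then $\mathrm{Hom}(Y,X)=0$ for all $Y\in\mathcal{T}_{\leq 0}$, because the class $\{Y:\mathrm{Hom}(Y,X)=0\}$ contains $\mathcal{U}$ and is stable under extensions and coproducts (the latter as $\mathrm{Hom}(-,X)$ turns coproducts into products). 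Combined with the trivial reverse inclusion this gives
$$\mathcal{T}_{>0}=\mathcal{T}_{\leq 0}^{\perp}=\{X\in\mathcal{T}:\mathrm{Hom}(U,X)=0\ \text{for all}\ U\in\mathcal{U}\}.$$

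Now comes the key step: the coaisle $\mathcal{T}_{>0}$ is stable under coproducts. Each object $U$ of $\mathcal{U}$ is compact in $\mathcal{T}$, since $\mathcal{U}\subseteq\mathcal{T}_c$. Hence for any family $(X_i)$ in $\mathcal{T}_{>0}$ and any $U\in\mathcal{U}$ we have $\mathrm{Hom}(U,\coprod_i X_i)\cong\coprod_i\mathrm{Hom}(U,X_i)=0$, so $\coprod_i X_i\in\mathcal{T}_{>0}$; likewise each $\mathcal{T}_{\geq n}=S^{-n+1}\mathcal{T}_{>0}$ is stable under coproducts. I expect this to be the only point requiring genuine care — it is where the hypothesis $\mathcal{U}\subseteq\mathcal{T}_c$ (rather than an arbitrary aisle) is used — whereas the rest is formal.

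Finally I would conclude as follows. Given $(Y_p)_{p\in\mathbb{Z}}$, take the coproduct of the truncation triangles $\tau_{\leq n}Y_p\to Y_p\to\tau_{\geq n+1}Y_p\to S\tau_{\leq n}Y_p$; this is again a triangle, and by the two stability statements its outer terms lie in $\mathcal{T}_{\leq n}$ and $\mathcal{T}_{\geq n+1}$ respectively, so by uniqueness of the truncation triangle of $\coprod_p Y_p$ we obtain canonical isomorphisms $\tau_{\leq n}(\coprod_p Y_p)\cong\coprod_p\tau_{\leq n}Y_p$ and $\tau_{\geq n}(\coprod_p Y_p)\cong\coprod_p\tau_{\geq n}Y_p$. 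Applying these to $S^n(\coprod_p Y_p)\cong\coprod_p S^nY_p$ and composing the relevant truncations shows that the canonical morphism $\coprod_p\mathrm{H}^n(Y_p)\to\mathrm{H}^n(\coprod_p Y_p)$ is an isomorphism; here the coproduct on the left computed in the heart $\mathcal{H}=\mathcal{T}_{\leq 0}\cap\mathcal{T}_{\geq 0}$ agrees with the coproduct computed in $\mathcal{T}$, precisely because $\mathcal{T}_{\leq 0}$ and $\mathcal{T}_{\geq 0}$ are stable under coproducts in $\mathcal{T}$. This proves the lemma.
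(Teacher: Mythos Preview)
Your proposal is correct and follows essentially the same approach as the paper: both arguments use that $\mathcal{T}_{\leq n}$ is stable under coproducts by construction, that $\mathcal{T}_{>n}=\mathcal{U}[-n]^\perp$ is stable under coproducts because the objects of $\mathcal{U}$ are compact, and then invoke uniqueness of the truncation triangle. The only minor difference is that the paper shortcuts half of your argument by observing that $\tau_{\geq n}$ is a left adjoint and hence automatically preserves coproducts, reducing the problem to $\tau_{\leq n}$ alone; your treatment of both truncations via the triangle is correct but slightly redundant.
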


\begin{proof}
By definition $\mathrm{H}^n:= \tau_{\geq n}\,\tau_{\leq n}\,, n \in
\mathbb{Z}$. Since $\tau_{\geq n}$ admits a right adjoint, it is enough to show that $\tau_{\leq n}$
commute with infinite sums. 
We consider the following triangle
$$ \underset{p}{\coprod} \,\tau_{\leq n} Y_p \rightarrow
\underset{p}{\coprod}\,Y_p \rightarrow \underset{p}{\coprod}\, \tau_{>n} Y_p
\rightarrow S(\underset{p}{\coprod}\, \tau_{\leq n} Y_p)\,.$$
Here $\underset{p}{\coprod} \,\tau_{\leq n} Y_p$ belongs to $\mathcal{T}_{\leq
  n}$ since $\mathcal{T}_{\leq n}$ is stable under infinite sums. Let
$P$ be an object of $S^n\mathcal{U}$. Since $P$ is compact, we have
$$ \mathrm{Hom}_{\mathcal{T}}(P,\underset{p}{\coprod} \,\tau_{>n} Y_p)
\stackrel{\sim}{\longleftarrow} \underset{p}{\coprod}\,
\mathrm{Hom}_{\mathcal{T}}(P, \tau_{>n}Y_p)=0\,.$$
Since $\mathcal{T}_{\leq n}$ is generated by $S^n\mathcal{U}$,
$\underset{i}{\coprod} \,\tau_{>n} Y_p$ belongs to
$\mathcal{T}_{>n}$. Since the truncation triangle of
$\underset{p}{\coprod}\, Y_p$ is unique, this implies the following
isomorphism
$$ \underset{p}{\coprod} \,\tau_{\leq n}Y_p \stackrel{\sim}{\longrightarrow}
\tau_{\leq n}(\underset{p}{\coprod}\,Y_p)\,.$$
This proves the lemma.
\end{proof}

\begin{proposition}\label{filtration}
Let $X$ be an object of $\mathcal{T}$. Suppose that we are in the
conditions of proposition~\ref{extension2} b). We have the following isomorphism
$$ \underset{i}{\mathrm{hocolim}}\,\tau_{\leq i}X
\stackrel{\sim}{\longrightarrow} X\,.$$
\end{proposition}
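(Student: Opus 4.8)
The statement to prove is that for every object $X$ of $\mathcal{T}$ the canonical morphism
$$ \underset{i}{\mathrm{hocolim}}\,\tau_{\leq i}X
\stackrel{\sim}{\longrightarrow} X$$
is an isomorphism, under the standing hypotheses of proposition~\ref{extension2} b) (non-degeneracy of $\mathcal{U}$ and the vanishing condition $\mathrm{Hom}(C,S^N\mathcal{U})=0$ for each compact $C$ and suitable $N$). The plan is to exploit the non-degeneracy part of proposition~\ref{extension2} b): it suffices to show that the cone $Z$ of the canonical morphism has $\mathrm{H}^n(Z)=0$ for all $n\in\mathbb{Z}$, and then conclude $Z=0$.

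First I would recall the standard triangle defining the homotopy colimit,
$$ \underset{i}{\coprod}\tau_{\leq i}X \xrightarrow{\,1-\mathrm{shift}\,}
\underset{i}{\coprod}\tau_{\leq i}X \longrightarrow
\underset{i}{\mathrm{hocolim}}\,\tau_{\leq i}X \longrightarrow S(\cdots)\,,$$
and apply the cohomological functor $\mathrm{H}^n$. Using lemma~\ref{coproducts}, $\mathrm{H}^n$ commutes with the coproducts appearing here, so $\mathrm{H}^n(\underset{i}{\mathrm{hocolim}}\,\tau_{\leq i}X)$ is computed as the cokernel (and there is a $\lim^1$-type kernel term, which vanishes since the system is essentially constant on each $\mathrm{H}^n$) of $1-\mathrm{shift}$ acting on $\underset{i}{\coprod}\mathrm{H}^n(\tau_{\leq i}X)$. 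The key observation is that, by the truncation triangle $\tau_{\leq i-1}X \to \tau_{\leq i}X \to S^{-i}\mathrm{H}^i(X)$ and the definition $\mathrm{H}^n = \tau_{\geq n}\tau_{\leq n}$, one has $\mathrm{H}^n(\tau_{\leq i}X)\cong \mathrm{H}^n(X)$ for all $i\geq n$ and $\mathrm{H}^n(\tau_{\leq i}X)=0$ for $i<n$; hence the filtered system $i\mapsto \mathrm{H}^n(\tau_{\leq i}X)$ is eventually the identity on $\mathrm{H}^n(X)$, so its colimit is $\mathrm{H}^n(X)$ and its $\lim^1$ vanishes. Then I would check that the natural map $\mathrm{H}^n(\underset{i}{\mathrm{hocolim}}\,\tau_{\leq i}X)\to \mathrm{H}^n(X)$ induced by the compatible maps $\tau_{\leq i}X\to X$ is precisely this identification, so it is an isomorphism for every $n$.

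Having shown $\mathrm{H}^n$ of the cone $Z$ of $\underset{i}{\mathrm{hocolim}}\,\tau_{\leq i}X\to X$ vanishes for all $n$ (via the long exact sequence in $\mathrm{H}^\ast$), I would invoke the non-degeneracy established in proposition~\ref{extension2} b) — namely that $f$ is invertible iff $\mathrm{H}^p(f)$ is invertible for all $p$ — applied to the morphism in question, to conclude it is an isomorphism. I expect the main obstacle to be the careful bookkeeping around the homotopy colimit triangle: one must justify that the relevant $\lim^1$ term vanishes (using that the transition maps on each cohomology are eventually isomorphisms, in fact eventually identities) and that the comparison map on $\mathrm{H}^n$ really is the colimit structure map rather than something twisted; both are routine once the eventual-constancy of $i\mapsto\mathrm{H}^n(\tau_{\leq i}X)$ is in hand, and the latter follows directly from the truncation triangles together with lemma~\ref{coproducts}.
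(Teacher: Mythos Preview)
Your proposal is correct and follows essentially the same approach as the paper: reduce to an $\mathrm{H}^n$-isomorphism via non-degeneracy, apply $\mathrm{H}^n$ to the defining triangle of the homotopy colimit using lemma~\ref{coproducts}, and exploit that the system $i\mapsto \mathrm{H}^n(\tau_{\leq i}X)$ is eventually constant at $\mathrm{H}^n(X)$. The paper phrases the vanishing of the obstruction term by observing that the $(1-\mathrm{shift})$ map on $\coprod_p \mathrm{H}^n S(\tau_{\leq p}X)$ is a split monomorphism, which is exactly your $\lim^1$-vanishing statement expressed differently.
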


\begin{proof}
We need only show that
$$\mathrm{H}^n(\underset{i}{\mbox{hocolim}}\, \tau_{\leq i}X)
\stackrel{\sim}{\longrightarrow} \mathrm{H}^n(X)\,,$$
for all $n \in \mathbb{Z}$. 
We have the following triangle, \emph{cf.}~\cite{Neeman},
$$ \underset{p}{\coprod}\,\tau_{\leq p}X \rightarrow \underset{q}{\coprod}\,
\tau_{\leq q}X \rightarrow \underset{i}{\mbox{hocolim}}\, \tau_{\leq
  i} X \rightarrow S(\underset{p}{\coprod}\,\tau_{\leq p}X)\,.$$
Since the functor $\mathrm{H}^n$ is homological, for all $n \in
\mathbb{Z}$ and it commutes with infinite sums by
lemma~\ref{coproducts}, we obtain a long exact sequence
\begin{eqnarray*}
 \cdots \rightarrow \underset{p}{\coprod}\, \mathrm{H}^n(\tau_{\leq
  p}X) \rightarrow \underset{q}{\coprod}\, \mathrm{H}^n(\tau_{\leq q}X)
  \rightarrow \mathrm{H}^n\, (\underset{i}{\mbox{hocolim}}\,
  \tau_{\leq i}X) \rightarrow \\
\rightarrow \underset{p}{\coprod}\, \mathrm{H}^n\,
  S(\tau_{\leq p}X) \rightarrow \underset{q}{\coprod}\, \mathrm{H}^n\,
  S(\tau_{\leq q}X) \rightarrow \cdots
\end{eqnarray*}

We remark that the morphism $\underset{p}{\coprod}\, \mathrm{H}^n\,
  S(\tau_{\leq p}X) \rightarrow \underset{q}{\coprod}\, \mathrm{H}^n\,
  S(\tau_{\leq q}X)$ is a split monomorphism and so we obtain $$ \mathrm{H}^n(X) = \underset{i}{\mbox{colim}}\,\mathrm{H}^n(\tau_{\leq i} X)
\stackrel{\sim}{\longrightarrow}
\mathrm{H}^n(\underset{i}{\mbox{hocolim}}\, \tau_{\leq i} X)\,.$$ 
\end{proof}


\appendix
\chapter{Drinfeld's DG quotient}

In this appendix, we give a simple and purely homotopic proof of the
main result proved by Drinfeld in \cite{Drinfeld}. Our proof is based
only on the Quillen model structure on $\dgcat$ of theorem~\ref{mal}.

Recall that the functor 
$$\mathsf{H}^0(-):\dgcat \rightarrow \mathsf{cat}\,,$$
where $\mathsf{cat}$ denotes
the category of small categories, descends to the localized categories
$$ \mathsf{H}^0(-): \mathsf{Heq} \rightarrow \mathsf{Ho}(\mathsf{cat})\,,$$
where $\mathsf{Heq}$ denotes the localization of $\mathsf{dgcat}$
by the quasi-equivalences and $\mathsf{Ho}(\mathsf{cat})$ the
localization of $\mathsf{cat}$ by the equivalences of categories.

Let $\mathcal{A}$ be a small dg category and
$\mathcal{N}$ a set of objects in $\mathcal{A}$. 

\begin{definition}
A morphism $Q : \mathcal{A} \rightarrow \mathcal{B}$
in $\mathsf{Heq}$ {\it annihilates} $\mathcal{N}$ if the induced
morphism in $\mathsf{Ho}(\mathsf{cat})$
$$ \mathsf{H}^0(Q): \mathsf{H}^0(\mathcal{A}) \rightarrow
\mathsf{H}^0(\mathcal{B})$$
takes all objects of $\mathcal{N}$ to zero objects (i.e. objects whose
identity vanishes in $\mathsf{H}^0(\mathcal{B})$). 
\end{definition}

\begin{remark}
\item[-] Notice that the morphism $\mathsf{H}^0(Q)$ in
  $\mathsf{Ho}(\mathsf{cat})$ corresponds to a functor in
  $\mathsf{cat}$ up to natural isomorphism.
\item[-] Observe also that if $\mathcal{N}$ equals the set of objects
  of $\mathcal{A}$ there exists at most one morphism $Q: \mathcal{A}
  \rightarrow \mathcal{B}$ in $\mathsf{Heq}$ which annihilates $\mathcal{A}$. We denote
  it by $0$. 
\end{remark}

In section $3$ of \cite{Drinfeld}, Drinfeld made the following
construction: let 
$$
\xymatrix{ \widetilde{\mathcal{A}} \ar[r]^{\pi}_{\sim}   &
  \mathcal{A} }
$$
be a $k$-homotopically flat resolution of $\mathcal{A}$ (for example, we could take
a cofibrant resolution of $\mathcal{A}$) and consider the dg category $\mathcal{A}/\mathcal{N}$ obtained from
$\widetilde{\mathcal{A}}$ by introducing a new morphism $h_X$ of
degree $-1$ for every object $X$ whose image under $\pi$ is
homotopically equivalent to an object of $\mathcal{N}$ and by imposing the relation $d(h_X)=\mathbf{1}_X$.

We have the following diagram in $\dgcat$
$$
\xymatrix{
\widetilde{\mathcal{A}} \ar[d]^{\pi}_{\sim} \ar[r]^-i &
\mathcal{A}/\mathcal{N} \\
\mathcal{A} & \,,
}
$$
which gives rise to a morphism $Q:\mathcal{A} \rightarrow
\mathcal{A}/\mathcal{N}$ in $\mathsf{Heq}$.

\begin{theorem}[\cite{Drinfeld}]\label{1}
The morphism $Q$ annihilates $\mathcal{N}$ and is
universal in $\mathsf{Heq}$ among the morphisms annihilating $\mathcal{N}$.
\end{theorem}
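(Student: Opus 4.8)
The plan is to establish the universal property purely homotopically, using only the Quillen model structure of Theorem~\ref{mal} and the explicit Drinfeld construction. First I would record the two elementary facts that drive everything: (i) a dg functor $F:\mathcal{B}\to\mathcal{C}$ annihilates a set of objects $\mathcal{N}$ if and only if, for each $X\in\mathcal{N}$, the object $F(X)$ of $\mathrm{Z}^0(\mathcal{C})$ admits a contraction, i.e. a morphism $h\in\mathsf{Hom}^{-1}_{\mathcal{C}}(F(X),F(X))$ with $d(h)=\mathbf{1}_{F(X)}$; and (ii) by definition of $\mathcal{A}/\mathcal{N}$, giving a dg functor $\mathcal{A}/\mathcal{N}\to\mathcal{C}$ extending a given dg functor $\widetilde{\mathcal{A}}\to\mathcal{C}$ is exactly the same as giving, for each object $X$ over $\mathcal{N}$, such a contraction of the image of $X$. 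From (i) it is immediate that $Q$ annihilates $\mathcal{N}$: in $\mathcal{A}/\mathcal{N}$ the morphisms $h_X$ are by construction contractions of the relevant objects, and $\mathsf{H}^0(Q)$ sends each object of $\mathcal{N}$ to an object isomorphic (in $\mathsf{H}^0(\mathcal{A}/\mathcal{N})$) to one carrying such a contraction, hence to a zero object.

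For the universal property, let $G:\mathcal{A}\to\mathcal{C}$ be a morphism in $\mathsf{Heq}$ annihilating $\mathcal{N}$; I must produce a unique $\overline{G}:\mathcal{A}/\mathcal{N}\to\mathcal{C}$ in $\mathsf{Heq}$ with $\overline{G}\circ Q=G$. The key technical point is that $\widetilde{\mathcal{A}}$ is cofibrant (we may take $\widetilde{\mathcal{A}}=\mathcal{A}^{cof}$) and the inclusion $i:\widetilde{\mathcal{A}}\hookrightarrow\mathcal{A}/\mathcal{N}$ is a cofibration, since $\mathcal{A}/\mathcal{N}$ is obtained by freely attaching degree $-1$ morphisms killing cycles, i.e. by a pushout of generating cofibrations of the type $R(n)$-like cells from section~\ref{secdef} (the attachment of an $h_X$ with $d(h_X)=\mathbf{1}_X$ is precisely a cell attachment of this kind). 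Since every object of $\dgcat$ is fibrant (Remark~\ref{toutfibrant}), the morphism $G$ in $\mathsf{Heq}$ is represented by an honest dg functor $\widetilde{\mathcal{A}}\to\mathcal{C}$. By (i), annihilation of $\mathcal{N}$ means that after composing with a fibrant/quasi-equivalent replacement we may arrange that $G$ is a dg functor $\widetilde{\mathcal{A}}\to\mathcal{C}'$ with $\mathcal{C}'\simeq\mathcal{C}$ in $\mathsf{Heq}$ and each $G(X)$, $X$ over $\mathcal{N}$, contractible in $\mathcal{C}'$; concretely one enlarges $\mathcal{C}$ as in the proof of the `$\supseteq$' inclusion in Lemma~\ref{J-inj-Surj}/Lemma~\ref{J-inj-Surj2} to gain the contractions. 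Then fact (ii) yields a strict dg functor $\overline{G}:\mathcal{A}/\mathcal{N}\to\mathcal{C}'$ with $\overline{G}\circ i=G$, hence $\overline{G}\circ Q=G$ in $\mathsf{Heq}$.

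For uniqueness, suppose $\overline{G}_1,\overline{G}_2:\mathcal{A}/\mathcal{N}\to\mathcal{C}$ in $\mathsf{Heq}$ both restrict to $G$ along $Q$. Since $i$ is a cofibration between a cofibrant object and a fibrant one, a morphism out of $\mathcal{A}/\mathcal{N}$ in $\mathsf{Heq}$ is determined by a dg functor on $\mathcal{A}/\mathcal{N}$ up to homotopy, and such a dg functor is determined by its restriction to $\widetilde{\mathcal{A}}$ together with the images of the $h_X$. The images of the $h_X$ are contractions of fixed objects $G(X)$; the space of contractions of a contractible object is itself contractible (this is essentially Proposition~\ref{nova} read one variable at a time, or directly: the difference of two contractions is a cycle in a contractible complex, hence a boundary, giving a homotopy between the two extensions). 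Therefore $\overline{G}_1$ and $\overline{G}_2$ are homotopic dg functors, hence equal in $\mathsf{Heq}$. Assembling: $Q$ annihilates $\mathcal{N}$ and the induced map $\mathsf{Hom}_{\mathsf{Heq}}(\mathcal{A}/\mathcal{N},\mathcal{C})\to\{\text{morphisms }\mathcal{A}\to\mathcal{C}\text{ in }\mathsf{Heq}\text{ annihilating }\mathcal{N}\}$ is a bijection for every $\mathcal{C}$, which is the assertion of the theorem.

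The main obstacle I anticipate is the passage from ``$G$ annihilates $\mathcal{N}$'' to ``$G$ may be represented by a strict dg functor whose values on $\mathcal{N}$ are strictly contractible, so that fact (ii) applies'': one has to replace $\mathcal{C}$ by a quasi-equivalent dg category into which the chosen homotopy contractions can be lifted to strict ones, exactly as in the lifting arguments of Lemmas~\ref{J-inj-Surj} and \ref{J-inj-Surj2}, and then check that this replacement does not affect the statement in $\mathsf{Heq}$. The $k$-homotopical flatness of $\widetilde{\mathcal{A}}$ plays no essential role in this homotopical argument beyond ensuring $\widetilde{\mathcal{A}}\to\mathcal{A}$ is a quasi-equivalence; what matters is cofibrancy of $\widetilde{\mathcal{A}}$ and the cofibration property of $i$, both of which follow from the cellular description of the Drinfeld construction.
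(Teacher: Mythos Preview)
Your overall strategy matches the paper's: use cofibrancy of $\widetilde{\mathcal{A}}$, the cellular description of $i:\widetilde{\mathcal{A}}\hookrightarrow\mathcal{A}/\mathcal{N}$ as a pushout along generating cofibrations, and the explicit path object $P(\mathcal{B})$ to compute $\mathsf{Hom}_{\mathsf{Heq}}$. Two points deserve correction, however.

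First, your ``main obstacle'' is not an obstacle at all. An object is a zero object in $\mathsf{H}^0(\mathcal{C})$ precisely when its identity is a boundary, i.e.\ when it admits a strict contraction in $\mathcal{C}$. So if a dg functor $G:\widetilde{\mathcal{A}}\to\mathcal{C}$ represents a morphism annihilating $\mathcal{N}$, each $G(X)$ for $X\in\widetilde{\mathcal{N}}$ is \emph{already} contractible in $\mathcal{C}$, and a choice of contractions gives a strict extension $\overline{G}:\mathcal{A}/\mathcal{N}\to\mathcal{C}$ by your fact~(ii). No enlargement of $\mathcal{C}$ is needed; the paper's existence step is exactly this direct observation.

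Second, your uniqueness argument has a genuine gap. You assume that the two lifts $\overline{G}_1,\overline{G}_2$ have the \emph{same} restriction to $\widetilde{\mathcal{A}}$, so that both send each $h_X$ to contractions of a ``fixed object $G(X)$''. But equality $\overline{G}_1\circ Q=\overline{G}_2\circ Q$ in $\mathsf{Heq}$ only says that $\overline{G}_1\circ i$ and $\overline{G}_2\circ i$ are \emph{homotopic}; they may send $X$ to different objects of $\mathcal{C}$. The paper fixes this by starting from a chosen homotopy $H:\widetilde{\mathcal{A}}\to P(\mathcal{B})$ and extending it over $\mathcal{A}/\mathcal{N}$: for each $X\in\widetilde{\mathcal{N}}$ one must produce a contraction of $H(X)=(A\xrightarrow{f}B)$ in $P(\mathcal{B})$ projecting to the given contractions $c_A,c_B$ in $\mathcal{B}$. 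This is exactly the lifting of contractions along $p_0\times p_1$ established in Lemma~\ref{lempath}, with the explicit formula $h=c_B\circ f\circ c_A$. Your ``space of contractions is contractible'' heuristic is the right intuition, but it must be applied to contractions in $P(\mathcal{B})$, not in $\mathcal{B}$, which is precisely what the explicit formula accomplishes.
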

We now present a simple and homotopic proof of this theorem, where the homotopic notations
used are those of chapter $1$.
\begin{proof}
Let 
$$
\xymatrix{ \widetilde{\mathcal{A}} \ar@{->>}[r]^{\pi}_{\sim}   &
  \mathcal{A} }
$$
be a cofibrant resolution of $\mathcal{A}$ and let
$\widetilde{\mathcal{N}}$ be the full dg subcategory of
$\widetilde{\mathcal{A}}$ whose objects are those whose image under
$\pi$ is homotopically equivalent to an object of $\mathcal{N}$.

Recall from chapter $1$ that $\mathcal{C}(0)$ denotes the dg category
with two objects $8$ and $9$ such that \newline
$\mathsf{Hom}_{\mathcal{C}(0)}(8,8)=k$,
$\mathsf{Hom}_{\mathcal{C}(0)}(9,9)=k$,
$\mathsf{Hom}_{\mathcal{C}(0)}(9,8)=0$,
$\mathsf{Hom}_{\mathcal{C}(0)}(8,9)=S^{-1}$ and whose composition is
given by multiplication. Recall also that $\mathcal{P}(0)$ denotes the dg category
with two objects $6$ and $7$ such that
$\mathsf{Hom}_{\mathcal{C}(0)}(6,6)=k$,
$\mathsf{Hom}_{\mathcal{C}(0)}(7,7)=k$,
$\mathsf{Hom}_{\mathcal{C}(0)}(7,6)=0$,
$\mathsf{Hom}_{\mathcal{C}(0)}(6,7)=D^0$ and whose composition is
given by multiplication. We have a dg functor 
$$ S(0): \mathcal{C}(0) \rightarrow \mathcal{P}(0)\,,$$
that sends $8$ to $6$, $9$ to $7$ and $S^{-1}$ to $D^0$ by the
identity on $k$ in degree $-1$.

Now consider the following push-out
$$
\xymatrix{
*+<1pc>{\underset{X \in \widetilde{\mathcal{N}}}{\coprod} \mathcal{C}(0)}
\ar[r] \ar@{}[dr]|{\lrcorner} 
  \ar@{>->}[d]_{\underset{X \in \widetilde{\mathcal{N}}}{\coprod} S(0)} & *+<1pc>{\widetilde{\mathcal{A}}} \ar@{>->}[d]^i \\
 \underset{X \in \widetilde{\mathcal{N}}}{\coprod} \mathcal{P}(0)
   \ar[r] & \mathcal{A}/\mathcal{N} \,,
}
$$
where the uppper horizontal dg functor corresponds to
specifying all the identities of the objects in $\widetilde{\mathcal{N}}$.
This shows that $\mathcal{A}/\mathcal{N}$ is still a cofibrant dg
category. Notice that the dg functor $i$ induces a surjective map
$$ 
\xymatrix{ \mathsf{Hom}_{\dgcat}(\mathcal{A}/\mathcal{N}, \mathcal{B})
  \ar@{->>}[r]^-{i^{\ast}} & \{ F \in
  \mathsf{Hom}_{\dgcat}(\widetilde{\mathcal{A}},\mathcal{B})|\, F(X)\,
  \mbox{contractible}, \, \forall X \in \widetilde{\mathcal{N}} \}\,.
}
$$
Since every object in $\dgcat$ is fibrant, we can calculate the
morphisms in $\mathsf{Heq}$ to $\mathcal{B}$ using the good path object
$P(\mathcal{B})$ from definition~\ref{1path1}. 
Observe that by definition of the path object $P(\mathcal{B})$,
the set 
$$ \{ F \in
  \mathsf{Hom}_{\dgcat}(\widetilde{\mathcal{A}},\mathcal{B})| \,F(X)\,
  \mbox{contractible}, \, \forall X \in \widetilde{\mathcal{N}} \}$$
is stable under homotopies.

Clearly the map $i^{\ast}$ induces a surjective one
$$ 
\xymatrix{
\mathsf{Hom}_{\mathsf{Heq}}(\mathcal{A}/ \mathcal{N}, \mathcal{B})
\simeq \mathsf{Hom}_{\dgcat}(\mathcal{A}/ \mathcal{N}, \mathcal{B})/htp
\ar@{->>}[r]^-{i^{\ast}}  &  \{ F \in
  \mathsf{Hom}_{\dgcat}(\widetilde{\mathcal{A}},\mathcal{B})| \,F(X)\,
  \mbox{contractible}, \, \forall X \in \widetilde{\mathcal{N}}\} /htp \,.
} 
$$
We will now prove that the map $i^{\ast}$ is also injective.
Let $F$ and $G$ be dg functors from $\mathcal{A}/\mathcal{N}$ to
$\mathcal{B}$ such that $F \circ i$ and $G \circ i$ are homotopic. We
  have the following commutative diagram 
$$
\xymatrix{
 & \mathcal{B} \\
\widetilde{\mathcal{A}} \ar[r]^H \ar[ur]^{F\circ i} \ar[dr]_{G \circ
  i} & P(\mathcal{B}) \ar[u]_{p_0} \ar[d]^{p_1} \\
 & \mathcal{B} 
}
$$
in $\dgcat$. Observe that an extension of the homotopy $H$ from $\widetilde{\mathcal{A}}$
to $\mathcal{A}/\mathcal{N}$ corresponds exactly to specifying a contraction
for each object $H(X)= A \stackrel{f}{\rightarrow} B$ of
$P(\mathcal{B})$, where $X$ belongs to $\widetilde{\mathcal{N}}$.

Recall from lemma~\ref{lempath} that a contraction of $A
\stackrel{f}{\rightarrow} B$ in $P(\mathcal{B})$ corresponds
to morphisms $c_A \in \mathsf{Hom}_{\mathcal{B}}^{-1}(A,A)$, $c_B \in
\mathsf{Hom}_{\mathcal{B}}^{-1}(B,B)$ and $h \in
\mathsf{Hom}_{\mathcal{B}}^{-2}(A,B)$ which satisfy
$d(c_A)=\mathbf{1}_A$, $d(c_B)=\mathbf{1}_B$ and $d(h)=c_B\circ f + f
\circ c_A$.

By definition of $\mathcal{A}/\mathcal{N}$, the dg functors $F$ and
$G$ give us already the contractions $c_A$ and $c_B$. For $h$ it is
enough to take
$$ h= c_B \circ f \circ c_A\,.$$

This shows us that the dg functors $F$ and $G$ were already homotopic
and so the theorem is proven.
\end{proof}

In fact, in \cite{Drinfeld}, Drinfeld has proved a refined (=$2$-universal) property of his dg quotient construction.

\begin{theorem}[\cite{Drinfeld}]\label{2}
The morphism $Q:\mathcal{A} \rightarrow \mathcal{A}/\mathcal{N}$ in
$\mathsf{Heq}$ induces an equivalence of categories
$$ \mathsf{rep}(\mathcal{A}/\mathcal{N})
\stackrel{Q^{\ast}}{\rightarrow}
\mathsf{rep}_{\mathcal{N}}(\mathcal{A},\mathcal{B})\,,$$
where $\mathsf{rep}_{\mathcal{N}}(\mathcal{A},\mathcal{B})$ denotes
the full subcategory of quasi-functors whose associated functors
$\mathsf{H}^0(\mathcal{A}) \rightarrow \mathsf{H}^0(\mathcal{B})$
annihilate $\mathcal{N}$.
\end{theorem}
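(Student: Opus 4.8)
<br>

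The plan is to deduce the $2$-universal property (Theorem~\ref{2}) from the $1$-universal property (Theorem~\ref{1}), using the fact — recorded in the introduction of Chapter~1 (the theorem of To\"en, together with the monoidal refinement of Theorem~\ref{thmToen}) — that for dg categories $\mathcal{C}$ and $\mathcal{D}$ with $\mathcal{C}$ $k$-flat, morphisms from $\mathcal{C}$ to $\mathcal{D}$ in $\mathsf{Heq}$ are in natural bijection with isomorphism classes of $\mathsf{rep}(\mathcal{C},\mathcal{D})$, and that $\mathsf{rep}(\mathcal{C},\mathcal{D})$ itself is the homotopy category of the internal Hom $\mathcal{R}\mathsf{Hom}(\mathcal{C},\mathcal{D})$, which is an actual dg category. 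Concretely, I would first observe that restriction along $Q$ gives a dg functor between internal Hom dg categories $Q^\ast:\mathcal{R}\mathsf{Hom}(\mathcal{A}/\mathcal{N},\mathcal{D})\to\mathcal{R}\mathsf{Hom}(\mathcal{A},\mathcal{D})$, hence on passing to $\mathsf{H}^0$ an exact functor $\mathsf{rep}(\mathcal{A}/\mathcal{N},\mathcal{D})\to\mathsf{rep}(\mathcal{A},\mathcal{D})$. The content to be checked is that the image of $Q^\ast$ lands in $\mathsf{rep}_{\mathcal{N}}(\mathcal{A},\mathcal{D})$ and that the corestriction $Q^\ast:\mathsf{rep}(\mathcal{A}/\mathcal{N},\mathcal{D})\to\mathsf{rep}_{\mathcal{N}}(\mathcal{A},\mathcal{D})$ is an equivalence of categories.

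The first point — that $Q^\ast X$ always annihilates $\mathcal{N}$ — is the ``functoriality of annihilation'' and follows because, for a quasi-functor $X$ classified by a morphism $\mathcal{A}/\mathcal{N}\to\mathcal{D}$ in $\mathsf{Heq}$, the induced functor on $\mathsf{H}^0$ factors through $\mathsf{H}^0(Q)$, which already sends the objects of $\mathcal{N}$ to zero objects by Theorem~\ref{1}; here one uses the compatibility of the construction $\mathsf{H}^0(-)$ with the classification of quasi-functors, i.e.\ that the functor $\mathsf{H}^0(\mathcal{A})\to\mathsf{H}^0(\mathcal{D})$ associated to $X$ is $\mathsf{H}^0$ of the classifying morphism in $\mathsf{Heq}$. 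For essential surjectivity I would pass through a $k$-flat (e.g.\ cofibrant) resolution $\widetilde{\mathcal{A}}$ of $\mathcal{A}$ and use Drinfeld's explicit model $\mathcal{A}/\mathcal{N}$ built from $\widetilde{\mathcal{A}}$ as in the proof of Theorem~\ref{1}: given a quasi-functor $Y:\mathcal{A}\to\mathcal{D}$ annihilating $\mathcal{N}$, represent it by a cofibrant $\mathcal{A}$-$\mathcal{D}$-bimodule, or equivalently by a dg functor $\widetilde{\mathcal{A}}\to\mathcal{C}_{dg}(\mathcal{D})$ with image in a suitable fibrant dg category $\mathcal{D}'$ quasi-equivalent to $\mathcal{D}$ and stable under the relevant homotopy operations; since $Y$ annihilates $\mathcal{N}$, each object of $\widetilde{\mathcal{N}}$ is sent to a contractible object of $\mathcal{D}'$, so by the universal property of the pushout defining $\mathcal{A}/\mathcal{N}$ (the same pushout used in the proof of Theorem~\ref{1}, adjoining contractions $c_A$, $c_B$) this dg functor extends along $i:\widetilde{\mathcal{A}}\to\mathcal{A}/\mathcal{N}$, giving the desired preimage in $\mathsf{rep}(\mathcal{A}/\mathcal{N},\mathcal{D})$.

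Full faithfulness of $Q^\ast$ I would get by the same mechanism one level up: apply the extension-along-$i$ argument not to $\mathcal{D}$ but to a path object, exactly as in the proof of Theorem~\ref{1}, but now internally to the dg category $\mathcal{R}\mathsf{Hom}(-,\mathcal{D})$ (equivalently, to the dg category of cofibrant bimodules with the graded-split exact structure). That is, a morphism in $\mathsf{rep}(\mathcal{A},\mathcal{D})$ between two quasi-functors coming from $\mathcal{A}/\mathcal{N}$ is, by To\"en's description, a homotopy class of bimodule morphisms; the relevant path object in bimodules restricts to one in which the objects indexed by $\widetilde{\mathcal{N}}$ are contractible, so the homotopy extends over $\mathcal{A}/\mathcal{N}$, showing the map on Hom-sets is bijective. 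In other words, the whole proof is ``Theorem~\ref{1} applied with coefficients in $\mathcal{R}\mathsf{Hom}(-,\mathcal{D})$'': the $1$-universal property, being purely homotopic and valid for morphisms into \emph{any} dg category (hence any internal Hom dg category), upgrades automatically.

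The main obstacle I anticipate is bookkeeping rather than conceptual: one must make precise the identification of $\mathsf{rep}(\mathcal{A},\mathcal{D})$ with $\mathsf{H}^0$ of a genuine dg category to which Theorem~\ref{1} can be applied verbatim — this needs the $k$-flatness hypothesis on $\mathcal{A}$ (or a flat resolution) and the results of To\"en recalled in the introduction, in particular Theorem~\ref{thmToen} realizing $\mathcal{R}\mathsf{Hom}_{mor}$ (here the quasi-equivalence version) by the dg category $\mathsf{rep}_{dg}$ of cofibrant quasi-representable bimodules. One must also check that this dg category is ``stable under the homotopy operations'' needed for the pushout/extension arguments (adjoining contractions of contractible objects), i.e.\ that it is fibrant for the Quillen structure of Theorem~\ref{mal}, or replace it by a fibrant model without disturbing the classification; this is where a little care with cofibrancy/fibrancy of bimodules is required. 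Once that setup is in place, the three verifications — image lands in $\mathsf{rep}_{\mathcal{N}}$, essential surjectivity, full faithfulness — are each a direct transcription of the corresponding step in the proof of Theorem~\ref{1}.
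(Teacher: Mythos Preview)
Your high-level summary — ``Theorem~\ref{1} applied with coefficients'' — is exactly right, and your essential surjectivity argument is fine (it is just Theorem~\ref{1} together with To\"en's bijection between isomorphism classes in $\mathsf{rep}$ and morphisms in $\mathsf{Heq}$). But your route to full faithfulness, via extending homotopies of bimodule morphisms through the pushout, is where the bookkeeping you anticipate actually bites: a morphism in $\mathsf{rep}(\mathcal{A},\mathcal{D})$ is a homotopy class of morphisms of cofibrant bimodules, i.e.\ a class of dg natural transformations between dg functors $\widetilde{\mathcal{A}}\to\mathcal{D}'$; checking that such a transformation, natural for the morphisms of $\widetilde{\mathcal{A}}$, is automatically natural (uniquely up to homotopy) for the added contractions $h_X$ in $\mathcal{A}/\mathcal{N}$ is not literally the problem solved in the proof of Theorem~\ref{1}, and making it precise is real work.

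The paper bypasses this by a Yoneda argument in $\mathsf{Heq}$. It first proves a small lemma (Proposition~\ref{4}): for any morphism $F$ in $\mathsf{Heq}$, the full dg subcategory $K(F)$ of objects sent to contractibles corepresents the functor $\mathsf{Ker}(F)$; in particular $\mathsf{rep}_{dg,\mathcal{N}}(\mathcal{A},\mathcal{B})=K(i^{\ast})$ corepresents $\mathsf{Ker}(i^{\ast})$, where $i:\mathcal{N}\hookrightarrow\mathcal{A}$. It then shows that $\mathsf{rep}_{dg}(\mathcal{A}/\mathcal{N},\mathcal{B})$ corepresents the \emph{same} functor: given a test object $\mathcal{D}$ and $F:\mathcal{D}\to\mathsf{rep}_{dg}(\mathcal{A},\mathcal{B})$ with $i^{\ast}\circ F=0$, the $\otimes^{\mathbb{L}}$--$\mathsf{rep}_{dg}$ adjunction turns this into $F^{\natural}:\mathcal{D}\otimes^{\mathbb{L}}\mathcal{A}\to\mathcal{B}$ annihilating $\mathcal{D}\otimes^{\mathbb{L}}\mathcal{N}$, and Theorem~\ref{1} for the pair $(\mathcal{D}\otimes^{\mathbb{L}}\mathcal{A},\,\mathcal{D}\otimes^{\mathbb{L}}\mathcal{N})$ --- whose quotient is $\mathcal{D}\otimes^{\mathbb{L}}(\mathcal{A}/\mathcal{N})$ --- gives the unique factorization. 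Two objects corepresenting the same functor are isomorphic in $\mathsf{Heq}$; applying $\mathsf{H}^0$ yields Theorem~\ref{2}. This is precisely your slogan, but implemented on the covariant variable via adjunction rather than by direct surgery on bimodule morphisms, and it is exactly what makes the obstacle you flagged disappear.
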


\begin{remark}
Notice that since we have a bijection, see
\cite{Toen}
$$ \mathsf{Hom}_{\mathsf{Heq}}(\mathcal{A},\mathcal{B})
\stackrel{\sim}{\leftarrow}
\mathsf{iso}(\mathsf{rep}(\mathcal{A},\mathcal{B}))\,,$$
where $\mathsf{iso}$ denotes the set of isomorphism classes, theorem~\ref{2} implies theorem~\ref{1}.
\end{remark}
We now prove that theorem~\ref{1} also implies theorem~\ref{2}.

\begin{theorem}\label{3}
The morphism $Q:\mathcal{A} \rightarrow \mathcal{A}/\mathcal{N}$ in
$\mathsf{Heq}$ induces an isomorphism
$$ \mathsf{rep}_{dg}(\mathcal{A}/\mathcal{N},\mathcal{B}) \rightarrow
\mathsf{rep}_{dg, \mathcal{N}}(\mathcal{A},\mathcal{B})$$
in $\mathsf{Heq}$.
\end{theorem}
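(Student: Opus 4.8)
<br>

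\textbf{Approach.} The statement I want to prove is that the canonical morphism $Q : \mathcal{A} \to \mathcal{A}/\mathcal{N}$ in $\mathsf{Heq}$ induces an isomorphism
$$\mathsf{rep}_{dg}(\mathcal{A}/\mathcal{N}, \mathcal{B}) \longrightarrow \mathsf{rep}_{dg,\mathcal{N}}(\mathcal{A}, \mathcal{B})$$
in $\mathsf{Heq}$, where $\mathsf{rep}_{dg}(-,-)$ is To\"en's internal $\mathsf{Hom}$ for the quasi-equivalence structure (Theorem~\ref{thmToen}) and $\mathsf{rep}_{dg,\mathcal{N}}(\mathcal{A},\mathcal{B})$ is its full dg subcategory of quasi-functors whose associated $\mathsf{H}^0$-functor annihilates $\mathcal{N}$. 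The plan is to deduce this refined ($2$-universal) statement from the weaker universal property already established in Theorem~\ref{1}, exactly as Theorem~\ref{2} is deduced from Theorem~\ref{1}, but now working at the level of internal Hom-objects rather than just morphism sets. The key point is that $\mathsf{rep}_{dg}(-, -)$ represents the mapping space / internal Hom in $\mathsf{Heq}$, so a universal property for morphisms into every target, when applied to targets of the form $\mathcal{C} \otimes \mathcal{B}$ (or $\mathcal{R}\mathsf{Hom}(\mathcal{C}, \mathcal{B})$), upgrades to a universal property for internal Hom-objects.

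\textbf{Key steps.} First I would observe that, since $\mathsf{dgcat}$ with the quasi-equivalence structure has an internal Hom $\mathcal{R}\mathsf{Hom}(-,-)$ realized by $\mathsf{rep}_{dg}(-,-)$ (Theorem~\ref{thmToen}), for any dg category $\mathcal{D}$ there are natural bijections
$$\mathrm{Hom}_{\mathsf{Heq}}(\mathcal{D}, \mathsf{rep}_{dg}(\mathcal{A}/\mathcal{N}, \mathcal{B})) \;\cong\; \mathrm{Hom}_{\mathsf{Heq}}(\mathcal{D}\otimes^{\mathbb{L}} (\mathcal{A}/\mathcal{N}), \mathcal{B}).$$
Next I would identify, inside $\mathrm{Hom}_{\mathsf{Heq}}(\mathcal{D}\otimes^{\mathbb{L}}\mathcal{A}, \mathcal{B})$, the subset of morphisms that annihilate the set $\mathcal{D}\otimes\mathcal{N}$ of objects of the form $d\otimes n$ with $n \in \mathcal{N}$; I claim this subset is in natural bijection with $\mathrm{Hom}_{\mathsf{Heq}}(\mathcal{D}, \mathsf{rep}_{dg,\mathcal{N}}(\mathcal{A},\mathcal{B}))$, because a quasi-functor $\mathcal{D}\otimes\mathcal{A}\to\mathcal{B}$ corresponds to a quasi-functor $\mathcal{D}\to\mathsf{rep}_{dg}(\mathcal{A},\mathcal{B})$ and "killing all $d\otimes n$" translates precisely into "landing in the full subcategory $\mathsf{rep}_{dg,\mathcal{N}}$" — here one uses that an object $x$ of a dg category is contractible iff its image under the Yoneda-type embedding is zero in the relevant homotopy category, and that $\mathsf{H}^0$ commutes with the constructions involved. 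Then I would apply the universal property of the dg quotient, Theorem~\ref{1}, to the dg category $\mathcal{D}\otimes^{\mathbb{L}}\mathcal{A}$ and its set of objects $\mathcal{D}\otimes\mathcal{N}$: provided $\mathcal{D}$ is $k$-flat (e.g.\ cofibrant), one has $(\mathcal{D}\otimes^{\mathbb{L}}\mathcal{A})/(\mathcal{D}\otimes\mathcal{N}) \simeq \mathcal{D}\otimes^{\mathbb{L}}(\mathcal{A}/\mathcal{N})$ in $\mathsf{Heq}$, so that $Q^*$ gives a bijection from $\mathrm{Hom}_{\mathsf{Heq}}(\mathcal{D}\otimes^{\mathbb{L}}(\mathcal{A}/\mathcal{N}), \mathcal{B})$ onto the annihilating subset of $\mathrm{Hom}_{\mathsf{Heq}}(\mathcal{D}\otimes^{\mathbb{L}}\mathcal{A}, \mathcal{B})$. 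Chaining the three displayed bijections shows that $Q^*$ induces, for every $\mathcal{D}$, a bijection $\mathrm{Hom}_{\mathsf{Heq}}(\mathcal{D}, \mathsf{rep}_{dg}(\mathcal{A}/\mathcal{N},\mathcal{B})) \cong \mathrm{Hom}_{\mathsf{Heq}}(\mathcal{D}, \mathsf{rep}_{dg,\mathcal{N}}(\mathcal{A},\mathcal{B}))$, natural in $\mathcal{D}$. By the Yoneda lemma in $\mathsf{Heq}$, the induced map $\mathsf{rep}_{dg}(\mathcal{A}/\mathcal{N},\mathcal{B}) \to \mathsf{rep}_{dg,\mathcal{N}}(\mathcal{A},\mathcal{B})$ is an isomorphism in $\mathsf{Heq}$.

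\textbf{Main obstacle.} The routine part is the formal Yoneda argument and the flatness bookkeeping needed to identify $(\mathcal{D}\otimes^{\mathbb{L}}\mathcal{A})/(\mathcal{D}\otimes\mathcal{N})$ with $\mathcal{D}\otimes^{\mathbb{L}}(\mathcal{A}/\mathcal{N})$; this last identification follows from Drinfeld's explicit construction of the dg quotient together with the compatibility of $-\otimes^{\mathbb{L}}-$ with homotopy pushouts, but it must be checked carefully because the tensor product is not a left Quillen bifunctor on the nose. The genuinely delicate step, and the one I expect to be the main obstacle, is the middle bijection: one has to verify that the condition "$F:\mathcal{D}\otimes\mathcal{A}\to\mathcal{B}$ annihilates $\mathcal{D}\otimes\mathcal{N}$" matches exactly the condition "the corresponding quasi-functor $\mathcal{D}\to\mathsf{rep}_{dg}(\mathcal{A},\mathcal{B})$ factors through the full subcategory $\mathsf{rep}_{dg,\mathcal{N}}$". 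This amounts to a careful comparison of the two ways of computing $\mathsf{H}^0$ of an internal Hom — via $\mathsf{H}^0(\mathsf{rep}_{dg}(\mathcal{A},\mathcal{B}))$ versus via quasi-functors and their associated triangulated functors $\mathsf{H}^0(\mathcal{A})\to\mathsf{H}^0(\mathcal{B})$ — and the observation that "$d\otimes n$ is sent to a contractible object" is independent of $d$ (one may test with $d$ the image of a fixed object), reducing it to "$n$ is sent to a contractible object of the dg category underlying the quasi-functor", which is exactly the defining condition of $\mathsf{rep}_{dg,\mathcal{N}}$. Once this is in place, the rest assembles formally.
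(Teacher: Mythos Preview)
Your approach is essentially the same as the paper's: both tensor with an arbitrary test object $\mathcal{D}$, use the internal-Hom adjunction to transport to morphisms $\mathcal{D}\otimes^{\mathbb{L}}\mathcal{A}\to\mathcal{B}$, invoke Theorem~\ref{1} for the pair $(\mathcal{D}\otimes^{\mathbb{L}}\mathcal{A},\,\mathcal{D}\otimes\mathcal{N})$, and conclude by Yoneda. The paper likewise needs (and states without further ado) the identification $\mathcal{D}\otimes^{\mathbb{L}}(\mathcal{A}/\mathcal{N})$ with the quotient of $\mathcal{D}\otimes^{\mathbb{L}}\mathcal{A}$ by $\mathcal{D}\otimes^{\mathbb{L}}\mathcal{N}$.

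The one place where the paper is sharper is exactly your ``main obstacle''. Rather than arguing directly that ``annihilating $\mathcal{D}\otimes\mathcal{N}$'' matches ``factoring through $\mathsf{rep}_{dg,\mathcal{N}}$'', the paper isolates a general lemma (Proposition~\ref{4}): for any morphism $F:\mathcal{C}\to\mathcal{D}$ in $\mathsf{Heq}$, the full dg subcategory $K(F)\subset\mathcal{C}$ of objects sent to contractibles corepresents the functor $\mathsf{Ker}(F)=\{G\mid F\circ G=0\}$. Applied to $F=i^{\ast}:\mathsf{rep}_{dg}(\mathcal{A},\mathcal{B})\to\mathsf{rep}_{dg}(\mathcal{N},\mathcal{B})$, this gives precisely your middle bijection, since $K(i^{\ast})=\mathsf{rep}_{dg,\mathcal{N}}(\mathcal{A},\mathcal{B})$. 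The proof of Proposition~\ref{4} is short and uses the explicit path object $P(-)$ of Definition~\ref{1path1}: the key observation is that $P(K(\tilde F))$ is a full dg subcategory of $P(\mathcal{A}_c)$, so a homotopy between two maps $\mathcal{E}_c\to\mathcal{A}_c$ landing in $K(\tilde F)$ already lives in $P(K(\tilde F))$. This is cleaner than the object-by-object check you sketch, and it sidesteps any delicate comparison of $\mathsf{H}^0$'s.
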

The proof of the theorem is based on the following proposition.

Let $F:\mathcal{C} \rightarrow \mathcal{D}$ be a morphism in
$\mathsf{Heq}$. We denote by $K(F) \stackrel{i}{\hookrightarrow}
\mathcal{C}$ the full dg subcategory of $\mathcal{C}$ whose objects
are those which are sent to contractible objects by $F$. We denote by 
$$\mathsf{Ker}(F) : \mathsf{Heq} \rightarrow \mathsf{Set}$$
the functor which, to a dg category $\mathcal{E}$, associates the set 
$$\{ G \in \mathsf{Hom}_{\mathsf{Heq}}(\mathcal{E},\mathcal{A})
|\, F \circ G =0 \}\,.$$

\begin{proposition}\label{4}
The functor $\mathsf{Ker}(F)$ is corepresented by the dg category $K(F)$.
\end{proposition}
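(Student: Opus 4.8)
The plan is to identify $K(F)$ with the object corepresenting $\mathsf{Ker}(F)$ by exhibiting a universal element and then checking the corepresentability bijection by hand, using only the Quillen structure of theorem~\ref{mal} and the good path object of definition~\ref{1path1}. Write $i\colon K(F)\hookrightarrow\mathcal{C}$ for the inclusion of the full dg subcategory. First I would record that $K(F)$ is well defined: since $\mathsf{H}^0(-)$ descends to a functor $\mathsf{Heq}\to\mathsf{Ho}(\mathsf{cat})$, the property "$Y$ is sent to a contractible object by $F$'' (i.e. $\mathsf{H}^0(F)(Y)\simeq 0$ in $\mathsf{H}^0(\mathcal{D})$) is independent of the representative of $F$ and invariant under natural isomorphism. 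Next I would produce the \emph{universal element}: by construction $\mathsf{H}^0(F\circ i)$ sends every object of $K(F)$ to a zero object, so $F\circ i=0$ and $i\in\mathsf{Ker}(F)(K(F))$. Post-composition with $i$ defines a natural transformation
$$\Theta_{\mathcal{E}}\colon \mathrm{Hom}_{\mathsf{Heq}}(\mathcal{E},K(F))\longrightarrow \mathsf{Ker}(F)(\mathcal{E})\,,\qquad H\longmapsto i\circ H\,,$$
and the proposition amounts to the statement that $\Theta_{\mathcal{E}}$ is bijective for every $\mathcal{E}$.

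For surjectivity, fix a cofibrant resolution $\pi\colon\widetilde{\mathcal{E}}\xrightarrow{\sim}\mathcal{E}$. As every dg category is fibrant (remark~\ref{toutfibrant}), any $G\in\mathsf{Ker}(F)(\mathcal{E})$ is represented by a genuine dg functor $g\colon\widetilde{\mathcal{E}}\to\mathcal{C}$ with $G=[g]\circ[\pi]^{-1}$ in $\mathsf{Heq}$. The hypothesis $F\circ G=0$ means $\mathsf{H}^0(F)\circ\mathsf{H}^0(G)$ is the functor with zero values, and since $\mathsf{H}^0(\pi)$ is an equivalence this forces $\mathsf{H}^0(F)(g(X))\simeq 0$ for every object $X$ of $\widetilde{\mathcal{E}}$; that is, $g(X)\in K(F)$. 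Because $K(F)$ is \emph{full}, $g$ factors uniquely as $g=i\circ h$ with $h\colon\widetilde{\mathcal{E}}\to K(F)$, and then $G=\Theta_{\mathcal{E}}\!\left([h]\circ[\pi]^{-1}\right)$.

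For injectivity I would argue with the path object $P(\mathcal{C})$ of definition~\ref{1path1}, which is a good path object by lemma~\ref{lempath}. Suppose $H_1,H_2\in\mathrm{Hom}_{\mathsf{Heq}}(\mathcal{E},K(F))$ satisfy $i\circ H_1=i\circ H_2$, and represent $H_j$ by dg functors $h_j\colon\widetilde{\mathcal{E}}\to K(F)$. Since $\mathrm{Hom}_{\mathsf{Heq}}(\mathcal{E},\mathcal{C})=\mathrm{Hom}_{\dgcat}(\widetilde{\mathcal{E}},\mathcal{C})/\mathrm{htp}$, the dg functors $i\circ h_1$ and $i\circ h_2$ are homotopic, so there is a dg functor $H\colon\widetilde{\mathcal{E}}\to P(\mathcal{C})$ with $p_0H=i\circ h_1$ and $p_1H=i\circ h_2$. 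For each object $X$, $H(X)$ is a closed degree-$0$ morphism $h_1(X)\to h_2(X)$ of $\mathcal{C}$ which becomes invertible in $\mathsf{H}^0$, with both endpoints in $K(F)$; hence $H(X)$ is an object of the full dg subcategory $P(K(F))\subset P(\mathcal{C})$, and therefore $H$ factors through $P(K(F))$. This is a homotopy between $h_1$ and $h_2$, so $H_1=H_2$. (Alternatively, one may invoke \cite{Toen}: the full embedding $i$ is quasi-fully faithful, hence a homotopy monomorphism, so $\Map(\mathcal{E},K(F))\to\Map(\mathcal{E},\mathcal{C})$ is injective on $\pi_0$, whence $i_{\ast}$ is injective.)

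Putting the three points together, $\Theta$ is a natural isomorphism, so $\mathsf{Ker}(F)$ is corepresented by $K(F)$ with universal element $i$. I expect the only genuinely delicate point to be the careful translation between morphisms of $\mathsf{Heq}$ and strict dg functors: the two facts that do all the work are that every dg category is fibrant (so $\mathsf{Heq}$-morphisms into any target are represented by dg functors on a cofibrant resolution, with homotopy detected by $P(-)$), and that a dg functor whose object images all lie in a full dg subcategory factors strictly through it — applied both to $\mathcal{C}$ for surjectivity and to $P(\mathcal{C})$ for injectivity. The independence of $K(F)$ of the chosen model of $F$ is exactly what makes the statement well posed, so it should be stated explicitly at the outset.
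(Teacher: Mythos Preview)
Your proof is correct and follows essentially the same approach as the paper's: both arguments use that a dg functor whose object images land in a full dg subcategory factors strictly through it (for surjectivity), and that a homotopy $H\colon\widetilde{\mathcal{E}}\to P(\mathcal{C})$ between two such functors factors through $P(K(F))\hookrightarrow P(\mathcal{C})$ (for injectivity). The only cosmetic difference is that the paper first replaces $\mathcal{C}$ by a cofibrant resolution and works with $K(\tilde{F})$ there, whereas you work directly with $K(F)\subset\mathcal{C}$; since every dg category is fibrant this is inessential, and your version is arguably cleaner.
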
 

\begin{proof}
Consider the following diagram in $\dgcat$
$$ K(\tilde{F}) \stackrel{\tilde{i}}{\rightarrow} \mathcal{A}_c
\stackrel{\tilde{F}}{\rightarrow} \mathcal{B}\,,$$
where $\mathcal{A}_c$ is a cofibrant resolution of $\mathcal{A}$ and
$\tilde{F}$ a representative of the morphism $F$. Let $\mathcal{E}$ be
a small dg category and $\mathcal{E}_c$ a cofibrant resolution.

Notice that we have a bijective map
$$ \mathsf{Hom}_{\dgcat}(\mathcal{E}_c, K(\tilde{F}))
\stackrel{\tilde{i}_{\ast}}{\longrightarrow} \{ P \in
\mathsf{Hom}_{\dgcat}(\mathcal{E}_c, \mathcal{A}_c)|\, \tilde{F}\circ
P =0 \}$$
which induces a surjective one
$$ \mathsf{Hom}_{\mathsf{Heq}}(\mathcal{E}_c, K(\tilde{F}))\simeq
\mathsf{Hom}_{\dgcat}(\mathcal{E}, K(\tilde{F}))/htp
\stackrel{\tilde{i}_{\ast}}{\rightarrow} \{ P \in
\mathsf{Hom}_{\dgcat}(\mathcal{E}_c, \mathcal{A}_c)|\, \tilde{F}\circ
P =0 \}/htp \,.$$
We now show that $\tilde{i}_{\ast}$ is also injective.
Let $S$ and $R$ be dg functors from $\mathcal{E}_c$ to $K(\tilde{F})$
such that $\tilde{i}\circ S$ and $\tilde{i} \circ R$ are homotopic. We
have the following commutative diagram
$$
\xymatrix{
 & \mathcal{A}_c \\
\mathcal{E}_c \ar[ur]^{\tilde{i}\circ S} \ar[r]^H
\ar[dr]_{\tilde{i}\circ R} & P(\mathcal{A}_c) \ar[u]_{p_0} \ar[d]^{p_1}\\
 & \mathcal{A}_c 
}
$$
in $\dgcat$.
Now, notice that the dg functor $H$ factors through
$$ P(K(\tilde{F})) \stackrel{P(\tilde{i})}{\hookrightarrow} P(\mathcal{A}_c)$$
and so gives us a homotopy between $S$ and $R$. This proves the proposition.
\end{proof}

Let us now prove theorem~\ref{3}.
\begin{proof}
Apply the functor
$$ \mathsf{rep}_{dg}(?,B) : \mathsf{Heq}^{op} \rightarrow
\mathsf{Heq}$$
to the short exact sequence
$$ \mathcal{N} \stackrel{i}{\rightarrow} \mathcal{A}
\stackrel{Q}{\rightarrow} \mathcal{A}/\mathcal{N}$$
and obtain
$$ \mathsf{rep}_{dg}(\mathcal{A}/\mathcal{N},\mathcal{B})
\stackrel{Q^{\ast}}{\rightarrow}
\mathsf{rep}_{dg}(\mathcal{A},\mathcal{B})
\stackrel{i^{\ast}}{\rightarrow}
\mathsf{rep}_{dg}(\mathcal{N},\mathcal{B})\,.$$
We now show that the dg category
$\mathsf{rep}_{dg}(\mathcal{A}/\mathcal{N},\mathcal{B})$ corepresents the functor $\mathsf{Ker}(i^{\ast})$ in $\mathsf{Heq}$, see proposition~\ref{4}.

Since $i^{\ast} \circ Q^{\ast}$ is zero it is enough to prove the
following: let $\mathcal{D}$ be a small dg category and consider the
diagram
$$
\xymatrix{
\mathsf{rep}_{dg}(\mathcal{A}/ \mathcal{N},\mathcal{B})
\ar[r]^{Q^{\ast}} & \mathsf{rep}_{dg}(\mathcal{A},\mathcal{B})
  \ar[r]^{i^{\ast}} & \mathsf{rep}_{dg}(\mathcal{N},\mathcal{B}) \\
 & \mathcal{D} \ar[u]^F \ar[ur]_0 & \,,
}
$$ 
where $F$ is a morphism in $\mathsf{Heq}$ such that $i^{\ast} \circ F
=0$.
Notice that we have at our disposal a short exact sequence
$$ \mathcal{D} \overset{\mathbb{L}}{\otimes} \mathcal{N} \stackrel{Id
  \overset{\mathbb{L}}{\otimes}i}{\longrightarrow}  \mathcal{D}
\overset{\mathbb{L}}{\otimes} \mathcal{A} \stackrel{Id
  \overset{\mathbb{L}}{\otimes}Q}{\longrightarrow} \mathcal{D}
\overset{\mathbb{L}}{\otimes} \mathcal{A}/\mathcal{N}$$
in $\mathsf{Heq}$. Since $\mathsf{rep}_{dg}(-,-)$ is the internal
Hom-functor in $\mathsf{Heq}$, see \cite{Toen}, we obtain by adjunction
the following diagram
$$
\xymatrix{
\mathcal{D} \overset{\mathbb{L}}{\otimes} \mathcal{N} \ar[dr]_0
\ar[r]^{Id\overset{\mathbb{L}}{\otimes}i} & \mathcal{D}
\overset{\mathbb{L}}{\otimes} \mathcal{A} \ar[d]^{F^{\natural}} \ar[r]^{Id
  \overset{\mathbb{L}}{\otimes}Q} & \mathcal{D}
\overset{\mathbb{L}}{\otimes} \mathcal{A}/\mathcal{N} \ar@{.>}[ld]^G
\\
 & \mathcal{B} & \,,
}
$$
where $F^{\natural}$ is the morphism associated to $F$ by adjunction
and $G$ is the unique morphism induced by $F^{\natural}$, see theorem~\ref{1}. By
adjunction this implies that there is a unique
$$ G_{\natural} \in \mathsf{Hom}_{\mathsf{Heq}}(\mathcal{D},
  \mathsf{rep}_{dg}(\mathcal{A}/ \mathcal{N},\mathcal{B}))\,,$$
such that $Q^{\ast}\circ G_{\natural} =F$. Now by proposition~\ref{4}, we have
an induced isomorphism
$$ \mathsf{rep}_{dg}(\mathcal{A}/\mathcal{N},\mathcal{B})
\stackrel{\sim}{\longrightarrow} \mathsf{rep}_{dg,
  \mathcal{N}}(\mathcal{A},\mathcal{B})$$
in $\mathsf{Heq}$. This proves the theorem.
\end{proof}

\begin{remark}
Since we have an equivalence of categories
$$ \mathsf{H}^0(\mathsf{rep}_{dg}(\mathcal{A}/\mathcal{N},
\mathcal{B})) \stackrel{\sim}{\leftarrow}
\mathsf{rep}(\mathcal{A}/\mathcal{N},\mathcal{B})\,,$$
theorem~\ref{1} implies theorem~\ref{2}.
\end{remark}

\bibliographystyle{alpha}
\bibliography{biblio.bib}

\end{document}